\documentclass[11pt,a4paper]{article}
\usepackage[pdfusetitle]{hyperref}
\usepackage{mymacros}

\usepackage[titles]{tocloft}
\setlength{\cftbeforesecskip}{0.1ex}

\newcommand{\GFF}{\mathrm{GFF}}
\newcommand{\SG}{\mathrm{SG}}
\newcommand{\FF}{\mathrm{FF}}
\usepackage{slashed}
\newcommand{\Dirac}{\slashed{\partial}}
\newcommand{\wick}[1]{\mathopen{:}#1\mathclose{:}}
\newcommand{\LL}{\bm{L}}

\newcommand{\loc}{{\rm loc}}
\newcommand{\dnorm}[1]{|\!|\!|#1|\!|\!|}

\renewcommand{\H}{\mathcal H}
\newcommand{\1}{\mathbf 1}
\newcommand{\msmj}{\hat{m}}

\newcommand{\subproof}[1]{\smallskip\emph{Proof of #1.}}
\newcommand{\subproofx}[1]{\smallskip\emph{#1.}}

\usepackage{tikz}
\usetikzlibrary {arrows.meta}

\author{Roland Bauerschmidt\footnote{Courant Institute of Mathematical Sciences, NYU. E-mail: {\tt bauerschmidt@cims.nyu.edu}.}
  \and Scott Mason\footnote{Courant Institute of Mathematical Sciences, NYU. E-mail: {\tt sm12814@cims.nyu.edu}.}
  \and Christian Webb\footnote{University of Helsinki, Department of Mathematics and Statistics. E-mail: {\tt christian.webb@helsinki.fi}.}}

\title{Twisted Dirac operators and fractional correlations of \\ the massless sine-Gordon model at the free fermion point}
\date{\vspace*{-2em}}

\begin{document}
\maketitle
\begin{abstract}
  For the massless sine-Gordon model at the free fermion point, in infinite volume,
  we define the fractional (charge or vertex operator) correlation functions from the probabilistic path integral
  and prove that they are given by renormalized determinants of massive twisted Dirac operators.
  The fractional correlation functions are 
  the moments of the imaginary multiplicative chaos, a random generalized function that we construct with respect to the infinite-volume massless
  sine-Gordon measure.
  The renormalized determinants are the tau functions of Sato--Miwa--Jimbo as identified by Palmer.

  The construction and a priori control of the imaginary multiplicative chaos combines methods from stochastic analysis (of singular SPDE flavor) for short-scale regularity
  with qualitative input from integrability for large-scale control.
  The exact identification of the correlation functions with the renormalized determinants relies on finite-volume approximation, regularity estimates for the mass perturbation, and analytic continuation in the coupling constant.
  
  The combination of existing results for tau functions with our identification implies various predictions for the sine-Gordon model
  such as that
  the fractional two-point functions are expressed as Fredholm determinants and
  satisfy certain PDEs as predicted by Bernard--LeClair.
  Using asymptotics of Fredholm determinants of Basor--Tracy and mixing of the massless sine-Gordon
  model at the free fermion point, which we prove,
  we further derive the exact formula for the one-point function
  predicted by Lukyanov--Zamolodchikov (at the free fermion point).
\end{abstract}

\setcounter{tocdepth}{1}
\tableofcontents

\section{Introduction and main results}\label{sec:intro}

\subsection{Introduction}
\label{sec:intro-intro}

The massless sine-Gordon model is a fascinating topic in integrable systems. Its classical version is the sine-Gordon equation
$\partial_t^2\varphi -\partial_x^2\varphi = - 2\sin(\varphi)$, an integrable hyperbolic PDE,
with explicit soliton and breather solutions and many further interesting features \cite{MR1995460}.
The sine-Gordon QFT is expected to share the interesting behavior of the classical model,
along with further intricate properties such as its expected exact mapping  to the Massive Thirring Model \cite{PhysRevD.11.2088}.
It has connections to various apparently unrelated problems in two-dimensional statistical physics,
such as the Coulomb gas \cite{MR0434278,MR649810},
the Ising model \cite{MR4149524}, the dimer model \cite{2209.11111}, and massive SLE \cite{2203.15717},
and its instance at the free fermion point (the focus of this paper) even features in Polyakov's instanton heuristics for
the mass gap of the $O(3)$ nonlinear sigma model \cite[Section~6.1]{MR1122810}.

The probabilistic Euclidean field theory point of view is essential for these connections.
From this point of view, 
the massless sine-Gordon model at $\beta \in (0,8\pi)$ should be a probability measure on $\cS'(\R^2)$ 
formally given by the path integral
\begin{equation} \label{e:SG-pathintegral}
  \exp\qa{- 2\int_{\R^2}\partial\varphi \bar\partial\varphi \,dx + 2z\int_{\R^2}\wick{\cos(\sqrt{\beta}\varphi)} \, dx} \, d\varphi,
  \qquad \partial = \frac12(-i\partial_0 + \partial_1),
\end{equation}
where $z\in\R$, $z\neq 0$ is a coupling constant and $\wick{\cdot}$ denotes an infinite multiplicative renormalization,
see \eqref{e:SGdef} below for the precise definition.
The nonzero value of $z$ is unimportant for the massless model
and could be chosen to be $1$ by scaling -- as in the classical equation.
On the other hand, the value of $\beta$ is important and does not have a classical counterpart.

Due to the noncompact $\Z$-symmetry $\varphi \to \varphi+\frac{2\pi}{\sqrt{\beta}} n$, $n\in \Z$ of the sine-Gordon action,
the construction of this massless sine-Gordon model as a probability measure must rely on spontaneous breaking of this symmetry and is only
possible on the infinite space $\R^2$. This symmetry breaking and the expected associated  mass generation
are not understood mathematically for general values of $\beta$.
Mass generation refers to the phenomenon that the ``massless'' model (where massless refers to the absence of a \emph{bare} mass term 
$\frac12 m^2 \varphi^2$ in the action)
should have a strictly positive \emph{physical} mass gap (manifesting itself by exponential correlation decay
and other properties such as nonvanishing of the nonneutral charge correlation functions studied in the present paper).
In addition to these long distance difficulties,
for $\beta\geq 4\pi$ including $\beta=4\pi$,
ultraviolet singularities (the measure is not locally absolutely continuous relative to the free field) complicate the analysis.

For the special value $\beta=4\pi$, \emph{the free fermion point},
the massless infinite-volume sine-Gordon measure was constructed in \cite{MR4767492},
by relying on its description in terms of massive free fermions -- Bosonization, or Coleman correspondence \cite{PhysRevD.11.2088} --
the most fundamental instance of the predicted integrability of the model.
The expectation of this measure is denoted $\avg{\cdot}_{\SG(4\pi,z)}$,
see Section~\ref{sec:intro-SG} for the precise definition.
For $\beta=4\pi$, Bosonization is the statement that the bosonic path integral
\eqref{e:SG-pathintegral} is `equivalent' to a massive Dirac field, given by the fermionic path integral
\begin{equation}
  \label{e:FF-pathintegral}
  \exp\qa{-\int_{\R^2} \psi \Dirac \bar\psi \,dx - \mu \int_{\R^2}  (\psi_1\bar\psi_1+\psi_2\bar\psi_2) \, dx } %
  \, d_\psi d_{\bar\psi},
  \qquad
  \Dirac = \begin{pmatrix} 0 & 2\bar\partial \\ 2\partial & 0 \end{pmatrix},
\end{equation}
with the mass $\mu$ proportional to the coupling constant $z$, 
and the correspondence of correlation functions given, up to multiplicative constants, by
\begin{align}
  \label{e:corr1}
  \wick{e^{+i\sqrt{{4\pi}}\varphi}}
  &\quad\leftrightarrow\quad
    \bar\psi_1\psi_1
    =
    \frac{1}{2} {\bar\psi ({\bf 1}+\gamma^5) \psi},
  \\
  \label{e:corr2}
  \wick{e^{-i\sqrt{{4\pi}}\varphi}}
  &\quad\leftrightarrow\quad
      \bar\psi_2\psi_2
    =
    \frac{1}{2} {\bar\psi ({\bf 1}-\gamma^5) \psi},
  \\
  \label{e:corr3}
  -i \partial \varphi
  &\quad\leftrightarrow\quad
    \bar\psi_2\psi_1
    =
    \frac{1}{2} {\bar\psi (i\gamma^0+\gamma^1) \psi},
  \\
  \label{e:corr4}
  +i \bar\partial \varphi
  &\quad\leftrightarrow\quad
    \bar\psi_1\psi_2
    =
    \frac{1}{2} {\bar\psi (-i\gamma^0+\gamma^1) \psi},
\end{align}
where $\gamma^\mu$ are Euclidean $\gamma$-matrices, see \cite{MR4767492}  for further details on this and the rigorous implementation of the Coleman correspondence at $\beta=4\pi$.

Our goal is to construct and then compute the correlation functions of $\wick{e^{i\sqrt{4\pi}\alpha\varphi}}$ for $\alpha \in (-\frac12,\frac12)$
from the path integral.
The former exponentials are also known as vertex operators.
Since $\alpha \not\in\Z$, these fractional correlation functions are not covered by the Bosonization dictionary
\eqref{e:corr1}--\eqref{e:corr4}.
Instead, they turn out to be related to \emph{twisted} free fermions (also known as \emph{branched} fermions or fermions with \emph{winding}).
The winding will correspond to $\alpha$ in the fractional correlation functions.
Our main result establishes this  correspondence from the path integral.

Assuming the correspondence between fractional correlation functions and twisted fermions,
the fractional correlation functions were formally studied by Bernard and LeClair \cite{MR1297289,MR1462303}.
They argued that the fermionic correlations functions can be expressed as a Fredholm determinant and that they satisfy certain PDEs.
The Fredholm determinant appearing in the analysis of \cite{MR1297289} has also been studied rigorously and it is known as
the \emph{tau function} for the twisted Dirac operator (or the branched Dirac operator)
-- see e.g. \cite{MR1233355} and \cite{MR555666,MR566086}.

Again assuming the correspondence of fractional correlations and twisted fermions,
Lukyanov and Zamolodchikov formally computed the fractional one-point function %
exactly  \cite{MR1453266}:
\begin{equation}
  \langle\wick{e^{i\sqrt{4\pi}\alpha\varphi(0)}}\rangle_{\SG(4\pi,z)}
  = \left(\frac{\mu}{2}\right)^{\alpha^2}
  \exp\left(\int_0^\infty \frac{dt}{t}\left[\frac{\sinh^2(\alpha t)}{\sinh^2t}-\alpha^2e^{-2t}\right]\right),
  \qquad \mu \propto z.
\end{equation}
Inspired by this expression for $\beta=4\pi$ and similar looking asymptotics as $\beta\to 0$,
they further conjectured an explicit expression for the fractional one-point functions for all values of $\beta \in (0,8\pi)$,
see \eqref{e:LZ} below.
In Corollary~\ref{cor:LZ}, we prove the above Lukyanov--Zamolodchikov formula from the Euclidean path integral,
for all $\alpha\in(-\frac12,\frac12)$ when $\beta=4\pi$. More precisely, we define the correlation functions
as the moments of the \emph{imaginary multiplicative chaos}
(see Section~\ref{sec:intro-corr})
which we construct in infinite volume for the massless sine-Gordon model at $\beta=4\pi$.

Finally, we mention that the massless sine-Gordon model also has an equivalent representation as a Coulomb gas,
in which the correlation functions of the imaginary exponentials $\wick{e^{+i\sqrt{4\pi}\varphi}}$ and $\wick{e^{-i\sqrt{4\pi}\varphi}}$ correspond to correlations of $+$ and $-$ charges.
The exponentials  $\wick{e^{i\alpha \sqrt{4\pi}\varphi}}$ with $\alpha \not\in\Z$
correspond to noninteger charges in the Coulomb gas representation (and are therefore also called fractional).
This explains the terminology fractional charge correlation function.

\subsection{Twisted Dirac operators and main result}\label{sec:introferm}

As discussed above, our main results establish the
equivalence of fractional sine-Gordon correlation functions with those of twisted free fermions,
which we define next.

\begin{figure}
\centering
\begin{tikzpicture}
    \coordinate (A) at (0, 0);
    \coordinate (B) at (-1.5, 2.5);
    \coordinate (C) at (3, -1);
    \coordinate (D) at (4, 1.5);
 
    \draw[-,thick] (A) -- (10,0); 
    \draw[-,thick] (B) -- (10,2.5); 
    \draw[-,thick] (C) -- (10,-1); 
    \draw[-,thick] (D) -- (10,1.5); 

    \fill (A) circle (2pt);
    \fill (B) circle (2pt);
    \fill (C) circle (2pt);
    \fill (D) circle (2pt);

    \node[below] at (A) {$x_1$};
    \node[below] at (B) {$x_2$};
    \node[below] at (C) {$x_3$};
    \node[below] at (D) {$x_4$};

    \draw[-{Latex},thick,draw=red]([shift={(4:0.7)}]A) arc[start angle=4, end angle=356, radius=0.7] node[below right]{$\color{red}\alpha_1$};
    \draw[-{Latex},thick,draw=red]([shift={(4:0.7)}]B) arc[start angle=4, end angle=356, radius=0.7] node[below right]{$\color{red}\alpha_2$};
    \draw[-{Latex},thick,draw=red]([shift={(4:0.7)}]C) arc[start angle=4, end angle=356, radius=0.7] node[below right]{$\color{red}\alpha_3$};
    \draw[-{Latex},thick,draw=red]([shift={(4:0.7)}]D) arc[start angle=4, end angle=356, radius=0.7] node[below right]{$\color{red}\alpha_4$};
\end{tikzpicture}
  \caption{Illustration of the branch points $x_i$, the windings $\alpha_i$, and the branch cuts $\Gamma$.\label{fig:branches}}
\end{figure}
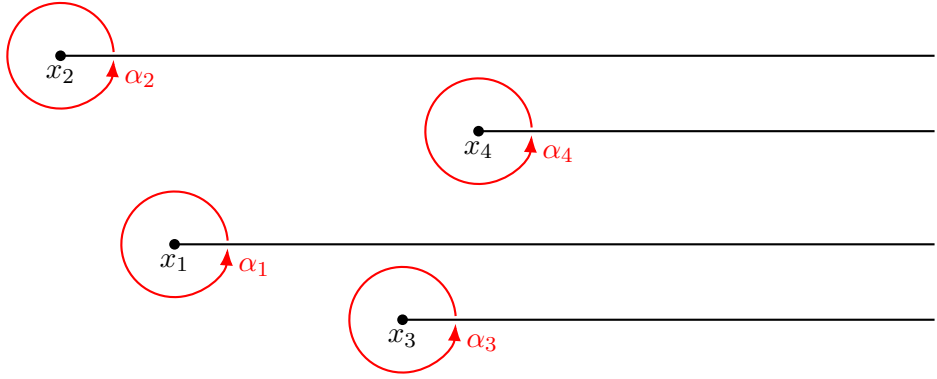

It is natural here (and in many other places) to identify $\R^2$ with $\C$.
Given points $x_1,\dots, x_n \in \C$ (which we refer to as branch or insertion points or punctures of $\C$
-- depending on the context)
and winding numbers $\alpha_1, \dots, \alpha_n \in \R$ (which are also referred to as the monodromy data),
the twisted Dirac operator acts on a complex line bundle over $\C\setminus\{x_1,\dots,x_n\}$  with flat connection 
or on complex-valued functions on the universal cover of $\C \setminus \{x_1,\dots,x_n\}$
with prescribed winding around the points $x_i$.
For our purposes, it is most convenient and concrete to fix branch cuts on $\C$
rather than to work directly with the universal cover.
The branching structure will be encoded in terms of a function $\rho: \C\setminus \Gamma \to \C$ defined by 
\begin{equation}\label{eq:rho}
\rho(z)=\prod_{j=1}^n (z-x_j)^{\alpha_j}=\prod_{j=1}^n e^{\alpha_j\log(z-x_j)},
\end{equation}
where $\Gamma \subset \C \cong \R^2$ is the union of branch cuts, see Figure~\ref{fig:branches}.
The exact choice of the branches for the logarithm is not important.
To connect with Palmer~\cite{MR1233355},
we later choose the branch cut of the logarithm to be on the positive real axis, and fix the branch by requiring the argument to be in $[0,2\pi)$ and assume that $\mathrm{Im}(x_i)\neq \mathrm{Im}(x_j)$ for $i\neq j$.
The twisted Dirac operator $\Dirac_\rho$ on $\C$ with this branching structure is formally given by
\begin{equation} \label{e:Dirac-rho}
  \Dirac_\rho 
  = \begin{pmatrix}
      0 & 2 \rho^{-1} \bar\partial \rho \\
      2 \bar\rho \partial  \bar\rho^{-1} & 0
    \end{pmatrix},
\end{equation}
see Sections~\ref{sec:massless-bosonization}--\ref{sec:Green} for details,
and can, again formally, also be interpreted as a Dirac operator with a singular gauge potential $\Dirac - i\slashed{A}$ supported along the
branch cuts.
As reviewed in Appendix~\ref{app:det}, it can be argued that
\begin{equation}
  \text{``} \frac{\det(\Dirac_\rho)}{\det(\Dirac)} \text{''}  \propto
   \prod_{1\leq r< s\leq n} |x_r-x_s|^{2\alpha_r\alpha_s},
\end{equation}
where $\Dirac$ denotes $\Dirac_\rho$ with $\rho=1$ (no twisting), and the left-hand side would require a suitable interpretation (which we however do not need).
The tau functions of Sato--Miwa--Jimbo \cite{MR566086} were interpreted by Palmer~\cite{MR1233355} as the corresponding renormalized determinants of the massive twisted Dirac operator $\Dirac_\rho+\mu$, i.e.,
\begin{equation}
  \tau_\rho(\mu) \propto \text{``} \frac{\det(\Dirac_\rho+\mu)}{\det(\Dirac+\mu)} \text{''}.
\end{equation}
The tau functions do not depend on the choice of branch cuts $\Gamma$ and are continuous in the distinct branch points $x_1,\dots,x_n$.
Thus despite our notation $\tau_\rho(\mu)$ in which their dependence on the branch points and windings is indicated through $\rho$,
the tau functions should be viewed as a function of the points $x_1,\dots, x_n$ and the windings $\alpha_1,\dots,\alpha_n$.

For $\mu\neq 0$, the function $\tau_\rho(\mu)$ is not explicit but can be seen as a special function itself and is related to
differential equations of Painlev\'e type -- see e.g.~the results of Sato--Miwa--Jimbo who introduced and studied the tau
functions from a related but slightly different perspective \cite{MR499666,MR533348,MR555666,MR566086,MR594916}.
To make our paper most accessible, we only use Palmer's treatment  \cite{MR1233355}
as our reference for tau functions
(except for relatively general regularity statements for solutions to the twisted Dirac equation).
Compared with Palmer's notation, we write $\mu$ instead of $-m$ (which instead refers
to an additional bare mass term regularization used in the construction of the sine-Gordon model),
$\alpha_i$ instead of $\lambda_i$, and $x_i$ instead of $a_i$,
see Section~\ref{sec:Palmer-notation} for a precise translation of the conventions.

\medskip

Our main result identifies the fractional correlation functions of the massless sine-Gordon model at the free fermion point with these tau functions.
The  massless sine-Gordon measure with expectation $\avg{\cdot}_{\SG(4\pi,z)}$ and its fractional correlation functions defined in terms of the imaginary multiplicative chaos $M_\alpha$
appearing in the statement are defined in detail
in Section~\ref{sec:intro-SG}--\ref{sec:intro-corr} below.
In essence, there are random variables $M_\alpha(f)$ with respect to $\avg{\cdot}_{\SG(4\pi,z)}$ defined by
\begin{equation} \label{e:Malpha-main}
  M_{\alpha}(f) = \lim_{\epsilon\to 0} \epsilon^{-\alpha^2} \int e^{i\sqrt{4\pi}\alpha (\eta_\epsilon * \varphi)(x)} f(x)\, dx, \qquad f\in C_c^\infty(\R^2),
\end{equation}
where $\eta \in C_c^\infty(\R^2)$ is a radial mollifier and $\eta_\epsilon(x)=\epsilon^{-2}\eta(x/\epsilon)$.
This definition is independent of the mollifier $\eta$ except for a multiplicative constant,
see Section~\ref{sec:intro-corr}.

\begin{theorem} \label{thm:main-tau}
  Let $\alpha_1,\dots,\alpha_n \in (-\frac12,\frac12)$ with $\sum_i \alpha_i = 0$, and let $x_1,\dots, x_n\in \C$.
  Then for all test functions $f_1,\dots,f_n\in C_c^\infty(\C)$ with disjoint supports,
  \begin{equation} \label{e:main-tau}
    \avg{\prod_{j=1}^n M_{\alpha_j}(f_j)}_{\SG(4\pi, z)}
    \propto
    \int dx_1\cdots dx_n\; f_1(x_1)\cdots f_n(x_n) \tau_\rho(\mu),
  \end{equation}
  where $z\in \R$ and $\mu =Az$ with $A>0$ a regularization dependent constant
  (which with our choice is $4\pi e^{-\gamma/2}$ where $\gamma$ is the Euler--Mascheroni constant).
\end{theorem}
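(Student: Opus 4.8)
\emph{Proof strategy.} The plan is to approximate the infinite-volume massless measure $\avg{\cdot}_{\SG(4\pi,z)}$ by finite-volume sine-Gordon measures with an infrared bare mass, to compute the mollified fractional moments for these approximations via the Coleman correspondence at $\beta=4\pi$, to recognize the fermionic side as a regularized twisted Dirac determinant and hence, by Palmer, as a tau function, and finally to remove the mollifier and the regularizations and analytically continue in $z$. Concretely, I would first work with the finite-volume, bare-mass approximations $\avg{\cdot}_{\SG(4\pi,z),\Lambda,m}$ (on a torus $\Lambda$, with bare mass $m>0$) used to construct $\avg{\cdot}_{\SG(4\pi,z)}$. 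For these the mollified correlation $\epsilon^{-\sum_j\alpha_j^2}\avg{\prod_j e^{i\sqrt{4\pi}\alpha_j(\eta_\epsilon*\varphi)(y_j)}}_{\SG(4\pi,z),\Lambda,m}$ is a classical quantity, and the first task is to establish bounds on it that are uniform in $\epsilon,m,\Lambda$ and locally uniform in the $y_j$ off the diagonals, together with its $\epsilon\to0$ convergence. At $\beta=4\pi$ this is the delicate short-scale part: it involves the borderline ultraviolet renormalization of the sine-Gordon model at threshold, and I would rely on the stochastic-analysis (singular-SPDE-type) estimates used in the construction of $M_\alpha$; morally these amount to imaginary Gaussian multiplicative chaos bounds, available for $\alpha_j^2<1/4$, and under the neutrality constraint $\sum_j\alpha_j=0$ the limiting kernel $\prod_{r<s}|y_r-y_s|^{2\alpha_r\alpha_s}$ --- into which the prefactor $\epsilon^{-\sum_j\alpha_j^2}$ absorbs the Gaussian self-energy --- has a locally integrable singularity.

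Next I would apply the Coleman correspondence at $\beta=4\pi$ in the presence of the fractional insertions. Since the $\alpha_j$ are not integers, the dictionary \eqref{e:corr1}--\eqref{e:corr4} does not apply directly; instead a product of fractional vertex operators corresponds on the fermionic side to free fermions twisted by monodromy $e^{2\pi i\alpha_j}$ around $y_j$, that is, to a ratio of regularized determinants of the twisted Dirac operator $\Dirac_\rho+\mu$ of \eqref{e:Dirac-rho}, with $\mu$ proportional to $z$ and $\alpha_j\in(-\frac12,\frac12)$ being exactly the range in which the monodromy is realized by a single branch point of winding $\alpha_j$ as in \eqref{eq:rho}. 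I would carry this out first for the $\epsilon$-mollified vertex operators, where the twist is smeared over scale $\epsilon$, and show that the resulting $\epsilon$-regularized twisted determinant converges as $\epsilon\to0$. Combining this with Palmer's identification \cite{MR1233355} of the renormalized twisted determinant with the tau function, and with the computation of the massless (``Gaussian'') factor $\prod_{r<s}|y_r-y_s|^{2\alpha_r\alpha_s}$ in Appendix~\ref{app:det}, I would conclude that the limit equals --- up to a multiplicative constant and a free massive determinant that depends on $\Lambda,m$ but cancels in the normalized correlation --- the tau function $\tau_\rho^{\Lambda,m}(\mu)$ associated to branch points $y_j$, windings $\alpha_j$, mass $\mu$, and the $\Lambda,m$ regularizations.

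Then I would remove the regularizations and conclude. Taking $m\to0$ and $\Lambda\to\infty$, the chaos side converges by the infinite-volume construction of $M_\alpha$, which uses the mass gap of the massless sine-Gordon model at $\beta=4\pi$ --- the large-scale integrability input --- to control the infrared limit, while on the fermionic side regularity estimates for the mass perturbation $\Dirac_\rho+\mu$ and its finite-volume and bare-mass analogues give $\tau_\rho^{\Lambda,m}(\mu)\to\tau_\rho(\mu)$, continuously in the distinct branch points. Hence the mollifier-and-regularization limit of $\epsilon^{-\sum_j\alpha_j^2}\avg{\prod_j e^{i\sqrt{4\pi}\alpha_j(\eta_\epsilon*\varphi)(y_j)}}_{\SG(4\pi,z)}$ equals a constant times $\tau_\rho(\mu)$ as a function of the distinct points $(y_1,\dots,y_n)$; integrating against $f_1\otimes\cdots\otimes f_n$ --- legitimate because the supports are disjoint and by the uniform bounds above --- and recalling \eqref{e:Malpha-main} gives \eqref{e:main-tau}, with $A=4\pi e^{-\gamma/2}$ read off by tracking the constants through the Coleman correspondence for the chosen mollifier normalization. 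Since the bosonization step and the uniform bounds are cleanest for $z$ (equivalently $\mu$) in a favourable range, while both sides of \eqref{e:main-tau} are analytic in $z$, the identity then extends to all $z\in\R$, $z\neq0$ by analytic continuation in the coupling constant.

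The main obstacle will be the second step: making the Coleman correspondence precise for twisted (fractional) insertions with the mollifier still in place, and controlling the $\epsilon\to0$ limit well enough to identify the limiting function of $y_1,\dots,y_n$ not as some a priori unknown correlation but precisely as Palmer's tau function. This requires marrying the stochastic/SPDE short-scale analysis of the first step --- which supplies convergence and bounds but no identification --- with the integrable-systems input (explicit twisted Dirac Green's functions and Palmer's determinant formula), together with a careful bookkeeping of the multiplicative constant and of the massless factor $\prod_{r<s}|y_r-y_s|^{2\alpha_r\alpha_s}$. Throughout, the neutrality constraint $\sum_j\alpha_j=0$ is indispensable: it is what renders the twisted Dirac determinant free of a monodromy anomaly and the limiting kernel locally integrable, and on the probabilistic side it is what allows the chaos moment to be nonzero and finite in the massless (infrared) limit.
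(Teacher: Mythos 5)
Your high-level plan --- finite-volume approximation with an infrared mass, Bosonization at $\beta=4\pi$, Palmer's tau function on the fermionic side, analytic continuation in the coupling --- is the skeleton of the paper's argument, but there are two genuine gaps that would make the proof as you describe it fail.

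First, you close the argument by saying that ``both sides of \eqref{e:main-tau} are analytic in $z$'' and extend the identity by analytic continuation. This is precisely what the paper warns against: analyticity in $z$ (even in a thin complex neighbourhood of the real axis) \emph{fails} for the infinite-volume massless measure, because the spontaneous symmetry breaking and the non-locality of $2z\int_{\R^2}\wick{\cos(\sqrt{4\pi}\varphi)}$ destroy complex analyticity in the limit. The analytic continuation must be performed entirely at finite volume: the paper proves that $z\mapsto\langle\prod_j M_{\alpha_j}(f_j)\rangle_{\SG(4\pi,z\mid\Lambda,m)}$ extends to a $\Lambda$-dependent complex neighbourhood $I$ of $\R$ (Theorem~\ref{thm:sg-finvol}), that the same holds after taking $m\to 0$ (Theorem~\ref{thm:finvol-massless-corr}), and separately that $\mu\mapsto Z_\rho(\mu\1_\Lambda)$ is analytic in a $\Lambda$-dependent neighbourhood (Lemma~\ref{le:pfanalyticity}); the two analytic functions are shown to agree (Theorem~\ref{thm:massive-bosonization}), and only afterwards does one send $\Lambda\to\R^2$, where Theorem~\ref{thm:convergence-to-palmer} identifies the limit of $Z_\rho(\mu\1_\Lambda)$ with $\tau_\rho(\mu)$ via convergence of logarithmic derivatives. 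You cannot swap this order.

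Second, your central identification step is a ``Coleman correspondence for twisted insertions with the mollifier still in place,'' which you correctly flag as the main obstacle, but the plan you sketch for it (smearing the twist over scale $\epsilon$, passing to an $\epsilon$-regularized determinant, then letting $\epsilon\to 0$) is not what the paper does and is not clearly achievable --- there is no natural ``$\epsilon$-regularized twisted determinant'' to compare to, and the Coleman correspondence of \cite{MR4767492} covers only the integer dictionary \eqref{e:corr1}--\eqref{e:corr4}. The paper sidesteps this entirely: it first constructs the IMC $M_\alpha$ as an $\epsilon\to 0$ limit (Theorem~\ref{thm:sg-finvol}), then matches the Taylor coefficients of $z\mapsto\langle\prod_j M_{\alpha_j}(f_j)\rangle_{\SG(4\pi,z\mid\Lambda,0)}$ at $z=0$ with those of $\mu\mapsto\int\prod_j f_j\,Z_\rho(\mu\1_\Lambda)$ at $\mu=0$; both sets of coefficients are exact Gaussian/massless free-fermion quantities, and the matching is the elementary massless Bosonization of Proposition~\ref{le:boso0} together with Proposition~\ref{prop:boso00}. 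No ``mollified Coleman correspondence'' is ever formulated. Your proposal is therefore missing the actual identification mechanism, and the integrability input you invoke (``mass gap of the massless sine-Gordon model'') is used in the paper for something different, namely to establish the mixing property and superexponential moments that make the infrared $\Lambda\to\R^2$, $m\to 0$ limit of $M_\alpha$ well defined (Section~\ref{sec:mixing} and Proposition~\ref{prop:coupling-infvol}), not to control the $\epsilon\to 0$ Bosonization step.
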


In the statement of the theorem (and throughout the paper) $\propto$ denotes proportionality with a constant
that depends on the specific regularization used to define the correlation functions on the left-hand side
(and thus depends on the $\alpha_i$ and $z$);
see however Section~\ref{sec:intro-corr}
and also the discussion around \eqref{e:2pt-LZ-norm} for a canonical a posteriori normalization of the correlation functions.

\begin{remark}
  The neutrality assumption $\sum_i \alpha_i=0$ can be removed on the left-hand side by
  using the mixing property of the massless sine-Gordon measure which we establish in Section~\ref{sec:mixing}.
  This is illustrated in Corollary~\ref{cor:onepoint-twopoint} in the instance of the one-point function corresponding to $n=1$.

  Moreover, we expect it is possible to show the multiplicative chaos for the sine-Gordon measure
  is analytic in $\alpha$
  in a suitable complex domain (and in a suitable space); we postpone this question to future work,
  but see Remark~\ref{e:GMC-analyticity} for further discussion.
  This would allow to analytically extend the classical definition of the tau functions
  (as distributions),
  which is restricted to monodromies $\alpha_i\in(-\frac12,\frac12)$ in \cite{MR555666,MR1233355},
  to a wider range of possibly complex monodromies.
\end{remark}

\begin{remark}
  Part of the proof of Theorem~\ref{thm:main-tau} is a finite-volume version of the correspondence,
  involving finite-volume versions of the tau functions that we define in Section~\ref{sec:bosonization}.
  For the massless limit of the finite-volume sine-Gordon model, the neutrality condition $\sum_i \alpha_i=0$ is important, and
  the correct extension to the infinite-volume model requires the correct handling of the spontaneous symmetry
  breaking of the massless sine-Gordon model.
\end{remark}

\begin{remark}
  We expect our arguments could be extended to obtain mixed correlations involving combinations of
  fractional exponentials,
  integer ones, and derivatives of the massless sine-Gordon field, and
  more precisely, that the Bosonization dictionary \eqref{e:corr1}--\eqref{e:corr4}
  holds if bosonic correlations are evaluated using the massless sine-Gordon measure weighted by fractional exponentials
  (instances of the imaginary multiplicative chaos)
  and fermionic correlations are evaluated as massive twisted free fermions with propagator as in
  Proposition~\ref{pr:ivlim}.
\end{remark}

\subsection{Applications of Theorem~\ref{thm:main-tau}}

Next we demonstrate some applications of Theorem~\ref{thm:main-tau} in combination with existing results for tau functions.
We make the standing assumption that $z\in \R$ and $\mu = A|z|$ with the constant $A$ as in Theorem~\ref{thm:main-tau}.

In particular, we discuss the representation of the fractional correlation functions in terms of Fredholm and Hilbert--Schmidt determinants,
their short- and long-distance asymptotics, the computation of the fractional one-point function given by the formula of Lukyanov--Zamolodchikov,
and the PDEs for the fractional two-point function of Bernard--LeClair.

For distinct $x_1,\dots, x_n$,
the fractional correlation functions $\avg{\wick{e^{i\sqrt{4\pi}\alpha_i(x_1)}}\cdots \wick{e^{i\sqrt{4\pi}\alpha_n(x_n)}}}_{\SG(4\pi,z)}$ are defined such that
for $f_1,\dots,f_n\in C_c^\infty(\R^2)$ with disjoint supports,
\begin{equation} \label{e:corr-M}
  \avg{\prod_{j=1}^n M_{\alpha_j}(f_j)}_{\SG(4\pi, z)}
  =
  \int dx_1\cdots dx_n\; f_1(x_1)\cdots f_n(x_n) 
  \avg{\prod_{j=1}^n\wick{e^{i\sqrt{4\pi}\alpha_j \varphi(x_j)}}}_{\SG(4\pi, z)}.
\end{equation}
It follows from Theorem~\ref{thm:main-tau} and continuity of the tau functions that
they are continuous functions for distinct $x_1,\dots, x_n$.

\subsubsection{Determinantal formulas and Basor--Tracy asymptotics}

By combining Theorem~\ref{thm:main-tau} with Palmer's analysis of the tau functions \cite{MR1233355},
we obtain the representation of the fractional two-point function as a Fredholm determinant 
stated in Corollary~\ref{cor:2ptFredholm} below.

\begin{corollary} \label{cor:2ptFredholm}
For $\alpha \in (-\frac12,\frac12)$, the fractional two-point function has the following representation as a Fredholm determinant:
\begin{equation} \label{e:2ptFredholm}
\langle \wick{e^{i\sqrt{4\pi}\alpha\varphi(x)}} \wick{e^{-i\sqrt{4\pi}\alpha\varphi(y)}}\rangle_{\SG(4\pi,z)} \propto \det(1-K),
\end{equation}
where $K$ is an integral operator on $L^2(0,\infty)$ with kernel
\begin{equation}
K(a,b)=K_{x,y,\alpha}(a,b)=-\frac{\sin^2(\pi \alpha)}{\pi^2} \int_0^\infty du \frac{e^{-\frac{|x-y|}{2}\omega(a)-\frac{|x-y|}{2}\omega(b)-|x-y|\omega(u)} }{(a+u)(b+u)}\left(\frac{ab}{u^2}\right)^\alpha,
\end{equation}
where 
$\omega(a)=\frac{\mu}{2}(a+a^{-1})$.
\end{corollary}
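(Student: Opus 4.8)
\medskip
\noindent\emph{Proof strategy.}
The plan is to read off the corollary from Theorem~\ref{thm:main-tau} by specializing to two branch points and then invoking the known Fredholm determinant representation of the two-point tau function. First I would take $n=2$, $\alpha_1=\alpha$, $\alpha_2=-\alpha$ (so the neutrality condition $\sum_i\alpha_i=0$ of Theorem~\ref{thm:main-tau} holds for every $\alpha\in(-\tfrac12,\tfrac12)$), $x_1=x$, $x_2=y$, and $\rho(w)=(w-x)^{\alpha}(w-y)^{-\alpha}$. Comparing Theorem~\ref{thm:main-tau} with the definition \eqref{e:corr-M} of the pointwise fractional correlation functions and using continuity of $\tau_\rho(\mu)$ in the distinct points $x,y$, one gets that the fractional two-point function equals $c\,\tau_\rho(\mu)$ as a continuous function on $\{x\neq y\}$, where $c$ is a regularization-dependent constant (absorbed into $\propto$) and $\mu$ is as in Theorem~\ref{thm:main-tau}; by the standing assumption $\mu>0$. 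Since the tau functions do not depend on the branch cuts and are continuous in the branch points, $\tau_\rho(\mu)$ is invariant under the Euclidean motions of $\C\cong\R^2$, hence a function of $t:=|x-y|$ alone, consistently with the claimed kernel $K_{x,y,\alpha}$ depending on $x,y$ only through $|x-y|$.

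Next I would invoke Palmer's analysis of the massive twisted Dirac operator $\Dirac_\rho+\mu$ in \cite{MR1233355} (building on Sato--Miwa--Jimbo \cite{MR566086}), using the translation of conventions in Section~\ref{sec:Palmer-notation} (Palmer's $-m$, $\lambda_i$, $a_i$ being our $\mu$, $\alpha_i$, $x_i$, and the branch of $\log$ on the positive real axis with argument in $[0,2\pi)$ as arranged before \eqref{e:Dirac-rho}). For a neutral two-point configuration with opposite windings $\pm\alpha$, this yields a Fredholm determinant representation $\det(1-\widetilde K)$ of the tau function, with $\widetilde K$ a Hilbert--Schmidt operator assembled from the Green's function of the free massive Dirac operator $(\Dirac+\mu)^{-1}$ and the monodromy data of $\rho$ along the cut. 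Carrying out the radial/angular separation (the two-point configuration having the one-dimensional modulus $t$) and passing to the Joukowski spectral variable in which the relevant Bessel-type radial kernel diagonalizes, so that $\omega(a)=\tfrac{\mu}{2}(a+a^{-1})$, transforms $\widetilde K$ into the operator $K$ on $L^2(0,\infty)$ with exactly the displayed kernel: the exponential $e^{-\frac{t}{2}\omega(a)-\frac{t}{2}\omega(b)-t\omega(u)}$ comes from a product of three free massive propagators, the factor $(ab/u^2)^{\alpha}$ encodes the branching, and the prefactor $-\sin^2(\pi\alpha)/\pi^2$ reflects the monodromy $e^{\pm2\pi i\alpha}$ of $\rho$ across the cut (through the reflection formula for the Gamma function in the local expansions of the twisted Dirac solutions); the configuration-independent normalization constants in Palmer's formula are again absorbed into $\propto$.

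I would then record that $\det(1-K)$ is well-defined for $\alpha\in(-\tfrac12,\tfrac12)$. Factoring the kernel as $K(a,b)=-\tfrac{\sin^2(\pi\alpha)}{\pi^2}\int_0^\infty g(a,u)\,g(b,u)\,du$ with $g(a,u)=\frac{e^{-\frac{t}{2}\omega(a)-\frac{t}{2}\omega(u)}}{a+u}\,a^{\alpha}u^{-\alpha}$ exhibits $K$ as $-\tfrac{\sin^2(\pi\alpha)}{\pi^2}$ times $\mathcal{G}\mathcal{G}^*$, where $\mathcal{G}$ is the integral operator with kernel $g$. Since $\mu>0$, the function $g$ is square-integrable on $(0,\infty)^2$ (near $0$ in each variable the factor $e^{-t\omega(\cdot)}$ decays super-algebraically and dominates the algebraic factors, which are in any case integrable at $0$ for $|\alpha|<\tfrac12$; near $\infty$ the same exponential dominates), so $\mathcal{G}$ is Hilbert--Schmidt, $K$ is trace class, the Fredholm determinant is defined, and it is continuous in $t>0$ and $\alpha$, in agreement with the continuity noted after \eqref{e:corr-M}. (The factorization also gives $K\le 0$, hence $\det(1-K)\ge 1$, matching the interpretation of the tau function as an unnormalized correlation.)

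The step I expect to be the real work is the conventions-matching in the second paragraph: extracting from Palmer's (ultimately Sato--Miwa--Jimbo's) formalism the precise integral operator for the two-point case, fixing the spectral measure on the half-line and the branch/monodromy weights, and confirming that after the Joukowski substitution the result is literally $K$ with prefactor $\sin^2(\pi\alpha)/\pi^2$, exponent $\tfrac{|x-y|}{2}\omega(a)+\tfrac{|x-y|}{2}\omega(b)+|x-y|\omega(u)$, and branching factor $(ab/u^2)^\alpha$, rather than a version rescaled in $a,b,u$ or differing by a further constant.
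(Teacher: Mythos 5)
Your approach is essentially identical to the paper's: specialize Theorem~\ref{thm:main-tau} to $n=2$ with opposite windings $\pm\alpha$, and then invoke Palmer's Fredholm determinant representation of the resulting two-point tau function, with the conventions-matching ($\lambda_1=-\lambda_2$, $m=|\mu|$, $\omega$ as on p.~299, and the factor-of-$2$ and mass-term bookkeeping) carried out via Section~\ref{sec:Palmer-notation}. The trace-class verification through the factorization $K\propto -\,\mathcal G\mathcal G^{*}$ is a correct but supplementary sanity check not used in the paper's proof, which simply cites \cite[p.~332]{MR1233355}.
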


\begin{proof}
  Using the identification of the fractional two-point function as a tau function from \eqref{e:main-tau},
  the result follows from \cite[p.~332]{MR1233355}.
  As translated in detail in Section~\ref{sec:Palmer-notation},
  our $\alpha$ and $\mu$ correspond to $\lambda_i$ and $m$ in that reference
  according to $\lambda_1=-\lambda_2=\alpha$ and $m = |\mu|$, 
  and $\omega$ is defined in \cite[p.~299]{MR1233355}.
  In particular, note the factors $2$ in the definition
  of the Dirac operator in \cite{MR1233355} are compensated by an additional factor $2$ in the definition
  of the Green's function, 
  and a factor $\frac12$ in front of the mass term, see \eqref{e:Palmer-Dirac} and \eqref{e:Palmer-green} in Section~\ref{sec:Palmer-notation}.
\end{proof}

The asymptotics of the Fredholm determinants \eqref{e:2ptFredholm} 
have actually been analyzed by Basor--Tracy~\cite{MR1187544}.
The implications of their results in our context are stated in Corollary~\ref{cor:BasorTracy} below.
Together with the mixing property of the massless sine-Gordon model which we establish, see Theorem~\ref{thm:SG4pi} and Section~\ref{sec:mixing},
Corollaries~\ref{cor:2pt-LZ}--\ref{cor:LZ} and the discussion following these then relate this to the
Lukyanov--Zamolodchikov formula for the fractional one-point correlation functions \cite{MR1453266}.

\begin{corollary} \label{cor:BasorTracy}
As $|x-y|\to \infty$ respectively $|x-y|\to 0$,
\begin{align}
  \det(1-K_{x,y,\alpha}) &\to 1 \qquad (|x-y|\to\infty)
  \\
  \det(1-K_{x,y,\alpha}) & \sim  \left(\frac{\mu}{2}|x-y|\right)^{-2\alpha^2}G(1+\alpha)^2G(1-\alpha)^2\qquad (|x-y|\to 0),
\end{align}
where $G$ is the Barnes $G$-function with the following integral representation for $|z|<1$:
\begin{equation}
  \log G(1+z)
=\frac{z}{2}\log(2\pi)+\int_0^\infty \frac{dt}{t}\left[\frac{1-e^{-2zt}}{4 \sinh^2 t}+\frac{z^2}{2}e^{-2t}-\frac{z}{2t}\right].
\end{equation}
\end{corollary}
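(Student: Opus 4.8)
The plan is to invoke the Basor--Tracy analysis \cite{MR1187544} of precisely the Fredholm determinant appearing in Corollary~\ref{cor:2ptFredholm}, after matching conventions, and then to reconcile the short-distance normalization with the Barnes $G$-function identity. First I would note that the kernel $K_{x,y,\alpha}$ depends on $x,y$ only through $t := \mu|x-y|/2$ (rescaling $\omega$), so it suffices to track the two limits $t\to\infty$ and $t\to 0$ of $\det(1-K_t)$. For the long-distance limit, the exponential factors $e^{-t(\omega(a)+\omega(b)+2\omega(u))}$ with $\omega \ge \mu$ force the Hilbert--Schmidt norm of $K_t$ to zero as $t\to\infty$; since $\det(1-K_t)$ is continuous in the trace-class (or Hilbert--Schmidt, after the standard regularized-determinant bookkeeping) topology and $K_t\to 0$, we get $\det(1-K_t)\to 1$. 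This first limit is essentially soft and I would present it as a direct Hilbert--Schmidt estimate rather than quoting Basor--Tracy.

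For the short-distance limit, the substantive input is the asymptotic expansion of this exact Fredholm determinant as $t\to 0$ obtained by Basor--Tracy \cite{MR1187544} (their analysis is of the Painlev\'e V / sine-kernel-type family that these branched-fermion tau functions realize). Their result gives $\det(1-K_t) \sim C_\alpha\, t^{-2\alpha^2}$ for an explicit constant $C_\alpha$, and the remaining task is to identify $C_\alpha = G(1+\alpha)^2 G(1-\alpha)^2$. I would do this by comparing with the known short-distance behavior of the tau function $\tau_\rho(\mu)$ from Palmer \cite{MR1233355}: as the two branch points collide, $\tau_\rho(\mu) \sim |x-y|^{2\alpha_1\alpha_2} = |x-y|^{-2\alpha^2}$ times the massless normalization constant, and the constant is pinned down by the operator product expansion of the vertex operators (equivalently, by the $\mu\to 0$ limit where the tau function degenerates to the Gaussian free field two-point structure). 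Matching the Barnes-$G$ normalization then reduces to the classical Barnes $G$ identity relating $\prod$ of Gamma-function-type products to $G(1+\alpha)G(1-\alpha)$, together with the integral representation for $\log G(1+z)$ stated in the corollary; I would verify the two integral representations (the one for the tau-function prefactor coming from \eqref{e:corr-M}/Lukyanov--Zamolodchikov-type formulas and the one for $\log G$) agree by differentiating in $\alpha$ and comparing, which collapses to an elementary identity of the integrands.

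The main obstacle I anticipate is purely bookkeeping: Basor--Tracy, Palmer, and Sato--Miwa--Jimbo each use different conventions for the Dirac operator (factors of $2$), the mass ($m$ versus $\mu$ versus $-m$), and the monodromy labels ($\lambda_i$ versus $\alpha_i$), and the constant in the $t\to 0$ asymptotics is convention-sensitive. I would therefore devote care to the translation already set up in Section~\ref{sec:Palmer-notation}, in particular checking that the factors of $2$ in Palmer's Dirac operator are compensated by the factor $2$ in his Green's function and the factor $\tfrac12$ in front of the mass term, so that the kernel $K_{x,y,\alpha}$ above with $\omega(a)=\tfrac{\mu}{2}(a+a^{-1})$ is \emph{exactly} the Basor--Tracy kernel with their parameter identified as $\mu|x-y|$. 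Once the conventions are aligned, the two asymptotic statements follow: the first from the Hilbert--Schmidt estimate, the second by quoting \cite{MR1187544} and simplifying the constant via the Barnes $G$ integral representation.
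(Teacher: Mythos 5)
Your overall plan — map the Fredholm determinant onto the Basor--Tracy family, use a soft Hilbert--Schmidt estimate for $|x-y|\to\infty$, and quote their $t\to 0$ asymptotics — matches the paper's route, but there is a genuine gap in how you propose to pin down the constant $C_\alpha$, and the workaround you sketch would be circular.

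The Basor--Tracy theorem does not give ``$\det(1-K_t)\sim C_\alpha\, t^{-2\alpha^2}$ for an explicit constant $C_\alpha$'' that you then have to identify by an independent argument. Their result is of the form $\tau(t,\theta,\lambda)\sim \tau_0(\theta,\lambda)\,t^{\frac12(\sigma^2-\theta^2)}$ with $\tau_0$ already expressed \emph{in closed form in terms of Barnes $G$-functions}: with $2p=\sigma$, $2q=\theta$,
\begin{equation*}
\tau_0(\theta,\lambda)=2^{-2(p^2-q^2)}\frac{G(1+p+q)G(1+p-q)G(1-p+q)G(1-p-q)}{G(1+2p)G(1-2p)},
\end{equation*}
where $\sigma$ is determined from $(\theta,\lambda)$ by the transcendental equation $\pi^2\lambda^2=\sin\bigl(\tfrac{\pi}{2}(\sigma+\theta)\bigr)\sin\bigl(\tfrac{\pi}{2}(\sigma-\theta)\bigr)$. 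What you actually need to do (and what the paper does) is: first show that after the change of variables $v_{2i-1}=a_i$, $v_{2i}=u_i$ the trace expansion of $\log\det(1-K_{x,y,\alpha})$ coincides term-by-term with the Basor--Tracy $\tau(t,\theta,\lambda)$ with the identifications $t=\mu|x-y|$, $\theta=2\alpha$, $\lambda^2=-\sin^2(\pi\alpha)/\pi^2$; then verify that for this purely imaginary $\lambda$ the constraint $2\pi^2\lambda^2-\cos\pi\theta<1$ holds automatically and that the equation for $\sigma$ reduces to $\cos(\pi\sigma)=1$, giving $\sigma=0$. Plugging $p=0$, $q=\alpha$ into $\tau_0$ yields $2^{2\alpha^2}G(1+\alpha)^2G(1-\alpha)^2$ directly, and $\tfrac12(\sigma^2-\theta^2)=-2\alpha^2$, giving exactly the stated constant once you restore $t=\mu|x-y|$. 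No further input is needed.

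Your proposed alternative — matching the constant against Palmer's short-distance expansion of $\tau_\rho(\mu)$ together with ``the operator product expansion of the vertex operators'' or the Lukyanov--Zamolodchikov normalization — is circular in the logic of the paper. The corollary you are proving \emph{feeds into} Corollaries~\ref{cor:2pt-LZ} and~\ref{cor:LZ}: it is precisely this Basor--Tracy asymptotic that is used to fix the OPE normalization \eqref{e:2pt-LZ-norm} and derive the Lukyanov--Zamolodchikov one-point constant. You cannot use the latter to pin down the former. The way out is to realize that Basor--Tracy already supply the constant in Barnes-$G$ form, so the ``identification'' step is a mechanical plug-in, not a matching problem.

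One more small correction: describing Basor--Tracy's object as ``the Painlev\'e V / sine-kernel-type family'' is misleading; their family is the Bessel/modified-Bessel-kernel type tau function associated with the (branched) Dirac operator, which is the Painlev\'e III (or V) flavor relevant to massive two-dimensional field theory, not the sine-kernel (Painlev\'e V for the bulk eigenvalue gap). This matters because the form of the $\sigma$-equation and the $\tau_0$ formula are specific to their setup.
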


\begin{proof}
  This follows from the results of Basor--Tracy \cite{MR1187544}, explained in detail in Section~\ref{sec:BasorTracy}.
\end{proof}

\begin{remark}
Generalizations of the formula \eqref{e:2ptFredholm}
for general higher point fractional correlation functions
exist in terms of Hilbert--Schmidt determinants, see \cite[p.~330]{MR1233355}.
\end{remark}

\subsubsection{Lukyanov--Zamolodchikov formula}

Corollaries~\ref{cor:2ptFredholm}--\ref{cor:BasorTracy}
imply that the fractional two-point function is asymptotic to a multiple of $|x-y|^{-2\alpha^2}$ as $|x-y|\to 0$.
As in \cite{MR1453266}, we may therefore normalize the correlation functions such that this proportionality constant is $1$, i.e., we can include an appropriate
multiplicative constant in the definition of $\wick{e^{i\sqrt{4\pi}\alpha \varphi}}$ so that
\begin{equation} \label{e:2pt-LZ-norm}
  \langle \wick{e^{i\sqrt{4\pi}\alpha\varphi(x)}} \wick{e^{-i\sqrt{4\pi}\alpha\varphi(y)}}\rangle_{\SG(4\pi,z)}
  \sim |x-y|^{-2\alpha^2} \qquad (|x-y|\to 0).
\end{equation}

\begin{corollary} \label{cor:2pt-LZ}
  Normalizing the correlation functions as in \eqref{e:2pt-LZ-norm},
  the fractional two-point function is given by
  \begin{align} \label{e:2pt-LZ}
    &\langle \wick{e^{i\sqrt{4\pi}\alpha\varphi(x)}} \wick{e^{-i\sqrt{4\pi}\alpha\varphi(y)}}\rangle_{\SG(4\pi,z)}\nnb
  &= \pa{\frac{\mu}{2}}^{2\alpha^2} \frac{1}{G(1+\alpha)^2G(1-\alpha)^2}
  \det(1-K_{x,y,\alpha})
  \nnb
  &=
  \qa{\left(\frac{\mu}{2}\right)^{\alpha^2}\exp\left(\int_0^\infty \frac{dt}{t}\left[\frac{\sinh^2(\alpha t)}{\sinh^2t}-\alpha^2e^{-2t}\right]\right)}^2
    \det(1-K_{x,y,\alpha}).
\end{align}
\end{corollary}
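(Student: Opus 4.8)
\emph{Proof idea.} The plan is to combine Corollaries~\ref{cor:2ptFredholm} and~\ref{cor:BasorTracy} to pin down the proportionality constant, and then to check that the resulting constant coincides with the bracketed factor in \eqref{e:2pt-LZ} via an elementary identity for the Barnes $G$-function. First I would write the conclusion of Corollary~\ref{cor:2ptFredholm} as
\[
  \langle \wick{e^{i\sqrt{4\pi}\alpha\varphi(x)}} \wick{e^{-i\sqrt{4\pi}\alpha\varphi(y)}}\rangle_{\SG(4\pi,z)} = c\,\det(1-K_{x,y,\alpha})
\]
with $c$ a constant that a priori depends on $\alpha$, $z$ and the chosen regularization but not on $x,y$, and combine it with the short-distance asymptotics $\det(1-K_{x,y,\alpha}) \sim (\tfrac{\mu}{2}|x-y|)^{-2\alpha^2} G(1+\alpha)^2 G(1-\alpha)^2$ from Corollary~\ref{cor:BasorTracy}. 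This yields $\langle\cdots\rangle_{\SG(4\pi,z)} \sim c\,(\tfrac{\mu}{2})^{-2\alpha^2} G(1+\alpha)^2 G(1-\alpha)^2\,|x-y|^{-2\alpha^2}$ as $|x-y|\to 0$, so the normalization \eqref{e:2pt-LZ-norm} (which is precisely the requirement that the coefficient of $|x-y|^{-2\alpha^2}$ be $1$) forces $c = (\tfrac{\mu}{2})^{2\alpha^2}\,G(1+\alpha)^{-2}G(1-\alpha)^{-2}$. This is the first equality in \eqref{e:2pt-LZ}.

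For the second equality I would prove the identity
\[
  G(1+\alpha)^{-1}G(1-\alpha)^{-1} = \exp\Bigl(\int_0^\infty \frac{dt}{t}\Bigl[\frac{\sinh^2(\alpha t)}{\sinh^2 t}-\alpha^2 e^{-2t}\Bigr]\Bigr),
\]
after which squaring this (the factor $(\mu/2)^{2\alpha^2}$ being already present on both sides) reproduces exactly the bracketed factor in \eqref{e:2pt-LZ}. To establish the identity I would substitute $z=\alpha$ and then $z=-\alpha$ into the integral representation of $\log G(1+z)$ quoted in Corollary~\ref{cor:BasorTracy} and add the two (convergent) integrals. The contributions $\tfrac{z}{2}\log(2\pi)$ and $-\tfrac{z}{2t}$ cancel between the two substitutions, leaving
\[
  \log\bigl(G(1+\alpha)G(1-\alpha)\bigr) = \int_0^\infty \frac{dt}{t}\Bigl[\frac{2-e^{-2\alpha t}-e^{2\alpha t}}{4\sinh^2 t}+\alpha^2 e^{-2t}\Bigr].
\]
Since $2-e^{2\alpha t}-e^{-2\alpha t} = -(e^{\alpha t}-e^{-\alpha t})^2 = -4\sinh^2(\alpha t)$, the bracket equals $-\bigl[\tfrac{\sinh^2(\alpha t)}{\sinh^2 t}-\alpha^2 e^{-2t}\bigr]$, which is the desired identity, and exponentiating completes the proof of \eqref{e:2pt-LZ}.

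I do not expect a genuine obstacle: the argument is bookkeeping of constants together with a classical $G$-function computation. The two points warranting a line of justification are (i) that \eqref{e:2pt-LZ-norm} pins down the regularization-dependent constant $c$ uniquely, which holds because the Basor--Tracy leading coefficient $(\tfrac{\mu}{2})^{-2\alpha^2}G(1+\alpha)^2G(1-\alpha)^2$ is nonzero for $\alpha\in(-\tfrac12,\tfrac12)$; and (ii) that adding the two integral representations and then recombining the integrands is legitimate, which is immediate since each representation is an absolutely convergent integral (and as a sanity check the individually borderline $t\to0$ contributions coming from $\tfrac{1-e^{-2zt}}{4\sinh^2 t}$ and $-\tfrac{z}{2t}$ cancel in the sum, so that the combined integrand is $O(t)$ near the origin and decays exponentially at infinity).
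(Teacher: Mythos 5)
Your proof is correct and follows the same route as the paper's: the first equality is obtained by combining Corollaries~\ref{cor:2ptFredholm} and~\ref{cor:BasorTracy} with the normalization \eqref{e:2pt-LZ-norm} to fix the proportionality constant, and the second equality reduces to the Barnes $G$ identity
\[
-\log G(1+z)-\log G(1-z)=\int_0^\infty \frac{dt}{t}\Bigl[\frac{\sinh^2(zt)}{\sinh^2 t}-z^2 e^{-2t}\Bigr],
\]
which you derive by adding the integral representations at $z=\pm\alpha$ and using $e^{2zt}-2+e^{-2zt}=4\sinh^2(zt)$, exactly as the paper does. Your extra remarks on uniqueness of the constant and absolute convergence are harmless elaborations of the same argument.
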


\begin{proof}
The first equality in \eqref{e:2pt-LZ} follows from Corollary~\ref{cor:BasorTracy}.
The second equality follows from the following identity for the Barnes function:
\begin{align}
-\log G(1+z)-\log G(1-z)&=\int_0^\infty \frac{dt}{t}\left[\frac{e^{-2zt}-2+e^{2z t}}{4\sinh^2 t}-z^2 e^{-2t}\right]\nnb
&=\int_0^\infty \frac{dt}{t}\left[\frac{\sinh^2(zt)}{\sinh^2 t}-z^2 e^{-2t}\right],
\end{align}
completing the proof.
\end{proof}

In Section~\ref{sec:intro-corr}, we show that the correlation functions of the massless sine-Gordon model at the free fermion point
factorize in the large distance limit. In particular, also using symmetry, as $|x-y|\to\infty$,
  \begin{equation}
    \avga{ \wick{e^{i\sqrt{4\pi}\alpha\varphi(x)}} \wick{e^{-i\sqrt{4\pi}\alpha\varphi(y)}} }_{\SG(4\pi,z)}
    \to
    \avga{ \wick{e^{i\sqrt{4\pi}\alpha\varphi(0)}}}_{\SG(4\pi,z)}^2 %
    ,
\end{equation}%
see Corollary~\ref{cor:onepoint-twopoint}.  
Since the determinant $\det(1-K_{x,y,\alpha})$ tends to $1$ as $|x-y|\to\infty$,
we therefore obtain
from Corollary~\ref{cor:2pt-LZ} that the one-point function is given as follows.

\begin{corollary} \label{cor:LZ}
Normalizing the correlation functions as in \eqref{e:2pt-LZ-norm}, the one-point function equals,
for $\alpha \in (-\frac12,\frac12)$,
\begin{equation} \label{e:LZ-cor}
  \langle\wick{e^{i\sqrt{4\pi}\alpha\varphi(0)}}\rangle_{\SG(4\pi,z)}
  = \left(\frac{\mu}{2}\right)^{\alpha^2}
  \exp\left(\int_0^\infty \frac{dt}{t}\left[\frac{\sinh^2(\alpha t)}{\sinh^2t}-\alpha^2e^{-2t}\right]\right).
\end{equation}
\end{corollary}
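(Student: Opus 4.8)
\textbf{Proof plan for Corollary~\ref{cor:LZ}.}
The plan is to combine the two-point function identity of Corollary~\ref{cor:2pt-LZ} with the large-distance factorization of the massless sine-Gordon correlation functions and the long-distance asymptotics of the Fredholm determinant from Corollary~\ref{cor:BasorTracy}. First I would recall that, with the normalization \eqref{e:2pt-LZ-norm}, Corollary~\ref{cor:2pt-LZ} gives
\begin{equation*}
  \langle \wick{e^{i\sqrt{4\pi}\alpha\varphi(x)}} \wick{e^{-i\sqrt{4\pi}\alpha\varphi(y)}}\rangle_{\SG(4\pi,z)}
  = \qa{\left(\frac{\mu}{2}\right)^{\alpha^2}\exp\left(\int_0^\infty \frac{dt}{t}\left[\frac{\sinh^2(\alpha t)}{\sinh^2t}-\alpha^2e^{-2t}\right]\right)}^2
    \det(1-K_{x,y,\alpha}),
\end{equation*}
an exact identity valid for all distinct $x,y$. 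The strategy is then to send $|x-y|\to\infty$ on both sides: the right-hand side converges to the bracketed square times $1$ by Corollary~\ref{cor:BasorTracy}, while the left-hand side converges to $\langle\wick{e^{i\sqrt{4\pi}\alpha\varphi(0)}}\rangle_{\SG(4\pi,z)}^2$ by the factorization of correlations (Corollary~\ref{cor:onepoint-twopoint}), using translation invariance and the symmetry $\alpha\mapsto-\alpha$ of the one-point function (the measure is invariant under $\varphi\mapsto-\varphi$ since $\cos$ is even, so $\langle\wick{e^{i\sqrt{4\pi}\alpha\varphi}}\rangle=\langle\wick{e^{-i\sqrt{4\pi}\alpha\varphi}}\rangle$). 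Equating the two limits yields
\begin{equation*}
  \langle\wick{e^{i\sqrt{4\pi}\alpha\varphi(0)}}\rangle_{\SG(4\pi,z)}^2
  = \left(\frac{\mu}{2}\right)^{2\alpha^2}
  \exp\left(2\int_0^\infty \frac{dt}{t}\left[\frac{\sinh^2(\alpha t)}{\sinh^2t}-\alpha^2e^{-2t}\right]\right),
\end{equation*}
and taking the positive square root (justified because the one-point function is a limit of strictly positive mollified correlations, hence nonnegative, and the right-hand side is strictly positive) gives \eqref{e:LZ-cor}.

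The single nontrivial input is the large-distance factorization of the two-point function, i.e. Corollary~\ref{cor:onepoint-twopoint}, which in turn rests on the mixing property of the massless sine-Gordon measure at $\beta=4\pi$ established in Section~\ref{sec:mixing} (Theorem~\ref{thm:SG4pi}). I would cite this as the key ingredient: once one knows that the measure is mixing (with respect to spatial translations), the multiplicative-chaos observables $M_\alpha(f_x)$ and $M_{-\alpha}(f_y)$ — where $f_x,f_y$ are bumps localized near $x,y$ — decorrelate as $|x-y|\to\infty$, so that $\langle M_\alpha(f_x) M_{-\alpha}(f_y)\rangle \to \langle M_\alpha(f_x)\rangle\langle M_{-\alpha}(f_y)\rangle$; dividing by the normalizing integrals of $f_x,f_y$ and using translation invariance turns this into the stated factorization of the pointwise correlation functions. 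One should note that the neutrality hypothesis $\alpha+(-\alpha)=0$ in Theorem~\ref{thm:main-tau} is what makes the two-point function nontrivial in the first place; for the one-point function the neutrality constraint is precisely what is being relaxed here, and it is the mixing/symmetry-breaking structure that allows a single nonneutral insertion to have a nonzero (indeed explicit) expectation.

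The only computational point to verify is that the second bracketed expression in Corollary~\ref{cor:2pt-LZ} is exactly the square of the claimed one-point function, which is immediate from the identity $-\log G(1+\alpha)-\log G(1-\alpha)=\int_0^\infty \frac{dt}{t}[\sinh^2(\alpha t)/\sinh^2 t - \alpha^2 e^{-2t}]$ already recorded in the proof of Corollary~\ref{cor:2pt-LZ}. I expect the main obstacle to be conceptual rather than technical: ensuring that the limit $|x-y|\to\infty$ may be taken inside the exact identity, i.e. that both the probabilistic factorization and the Basor--Tracy asymptotics hold in a compatible sense (pointwise in distinct $x,y$, after the continuity statement following \eqref{e:corr-M} has been established), and that the resulting equality of limits is an equality of the honest one-point function rather than of some averaged version. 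Since all correlation functions here are continuous in the distinct insertion points and the factorization is stated for the pointwise correlation functions in Corollary~\ref{cor:onepoint-twopoint}, this causes no real difficulty, and the corollary follows by taking square roots.
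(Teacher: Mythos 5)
Your proposal is correct and follows essentially the same route as the paper: combine the exact two-point identity of Corollary~\ref{cor:2pt-LZ} with the large-distance factorization from Corollary~\ref{cor:onepoint-twopoint} (mixing plus $\varphi\mapsto-\varphi$ symmetry) and the Basor--Tracy limit $\det(1-K_{x,y,\alpha})\to1$, then take the nonnegative square root. The paper treats this corollary as immediate from the paragraph preceding it, so your added justifications (nonnegativity of the one-point function, pointwise continuity in distinct insertion points) are correct but not things the paper felt it needed to spell out.
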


The formula \eqref{e:LZ-cor} is the conjecture of Lukyanov--Zamolodchikov in the special case $\beta=4\pi$, see \cite[(19)]{MR1453266}.
For all $\beta\in(0,8\pi)$ and $|\re \alpha| < \sqrt{4\pi/\beta}$, they conjecture
\begin{multline} \label{e:LZ}
  \langle\wick{e^{i\sqrt{4\pi}\alpha\varphi(0)}}\rangle_{\SG(\beta,z)}
  = \left(\frac{\mu \Gamma(\frac12 + \frac{\xi}{2})\Gamma(1-\frac{\xi}{2})}{4\sqrt{\pi}}\right)^{\alpha^2}
  \\
  \times \exp\left(\int_0^\infty \frac{dt}{t}\left[\frac{\sinh^2(\sqrt{\frac{\beta}{4\pi}}\alpha t)}{2\sinh(t\frac{\beta}{8\pi})\sinh(t)\cosh((1-\frac{\beta}{8\pi})t)}-\alpha^2e^{-2t}\right]\right),
\end{multline}
where
\begin{equation} \label{e:xi-m}
  \xi = \frac{\beta}{8\pi-\beta}, \qquad \mu = 2M \sin(\frac{\pi}{2}\xi),
\end{equation}
and $M$ is proportional to $|z|^{1/(2-\beta/4\pi)}$ with another explicit $\beta$-dependent proportionality constant, see \cite[(12)]{MR1453266},
where what we denote by $\mu$ is denoted $m$ there, our $z$ is proportional to their $\mu$,
our $\alpha$ corresponds to their $\sqrt{2}a$, and our $\beta/4\pi$ corresponds to their $2\beta^2$.
See also \cite[Section~2]{MR4258290} for an overview of predictions on the sine- and related sinh-Gordon models.

The special case $\beta=4\pi$ of the formula (which we prove) is an important ingredient in arriving at the above
conjecture for general $\beta$.
It is always assumed above that the massless sine-Gordon model exists, see Section~\ref{sec:ref}
for a summary of some mathematical results on this.
An important step would be to prove that, for general $\beta$, the one-point function is strictly positive or that there is a strictly positive physics mass.
For the physics predictions that the physical mass is explicitly given by the formula for $\mu$ stated in \eqref{e:xi-m},
see the discussion in \cite{MR1453266} and also \cite{10.1142/S0217751X9500053X,MR1059830}.

That the one-point function and the physical mass (assuming they are strictly positive)
are proportional to $(|z|^{1/(2-\beta/4\pi)})^{\alpha^2}$ respectively  $|z|^{1/(2-\beta/4\pi)}$
(with the normalization \eqref{e:Malpha-main} for the one-point function)
simply follows from scaling, as already observed in \cite[(4.20)]{MR0434278} for the physical mass
and discussed for the one-point function in \eqref{e:onepoint-scaling} below;
however, that the proportionality constants are nonzero does not (and is in general difficult to establish).

\begin{remark} \label{e:GMC-analyticity}
  Using analyticity, we expect that Corollary~\ref{cor:LZ} can be extended to suitable $\alpha \in \C$.
  For example, in \cite{MR4149524}, it is shown that the complex GMC $M_\alpha$ is an analytic function of $\alpha$ with values in $H^{-2}_{\loc}(\R^2)$,
  for $\alpha \in \C$ corresponding to the $L^1$ phase of the complex GMC. For our methods, analyticity of the complex GMC in a space of
  distributions that can be multiplied with functions in $C^r$, any $r<1$, would be useful.
  The Besov spaces $C^{-s}$, $s<r$, have this property, but they do not seem to be the correct setting for the real GMC \cite{1905.12027}.
  The space of positive measures also have this property, but they are not the correct setting for the imaginary GMC.
  We expect that a suitable setting can be found, but postpone this question about the GMC to future work.
\end{remark}

\subsubsection{Bernard--LeClair differential equation}

Palmer's analysis also implies
a version of the differential equation for the fractional two-point function
predicted by Bernard--LeClair \cite{MR1297289,MR1462303}.

\begin{corollary} \label{cor:main-diffeq}
  Let $\alpha \in (-\frac12,\frac12)$.
  Then 
  $\Sigma(x)=\log \avg{\wick{e^{i\sqrt{4\pi}\alpha\varphi(x)}} \wick{e^{-i\sqrt{4\pi}\alpha\varphi(0)}}}_{\SG(4\pi,z)}$
  satisfies
  \begin{equation}
    \Delta \Sigma = \frac12 \mu^2(1-\cos(2\psi)) %
    ,
  \end{equation}
  for $x\neq 0$, and
  where $\psi=\psi(r)$ with $r=|x|$  solves the differential equation
  \begin{equation}
    \big(\partial_r^2+\frac{1}{r}\partial_r \big)\psi = \frac12 \mu^2 \sin(2\psi)
    + \frac{(2\alpha)^2}{r^2}\tan(\psi)(1+\tan^2(\psi)).
  \end{equation}
  The mass parameter $\mu$ is $\mu=A|z|$ with the same constant $A$ as in Theorem~\ref{thm:main-tau}.
\end{corollary}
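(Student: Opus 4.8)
The plan is to transfer the statement entirely to the tau-function side, where it becomes a classical fact of the Sato–Miwa–Jimbo–Palmer theory, and then translate back through Theorem~\ref{thm:main-tau}. By Theorem~\ref{thm:main-tau} (applied with $n=2$, $\alpha_1=-\alpha_2=\alpha$, and $x_1=x$, $x_2=0$) together with the continuity of the tau functions in the distinct branch points, the two-point function $\avg{\wick{e^{i\sqrt{4\pi}\alpha\varphi(x)}}\wick{e^{-i\sqrt{4\pi}\alpha\varphi(0)}}}_{\SG(4\pi,z)}$ equals, up to a multiplicative constant, the tau function $\tau_\rho(\mu)$ with $\rho(w)=(w-x)^\alpha w^{-\alpha}$ and $\mu=A|z|$. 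Hence $\Sigma(x)=\log\tau_\rho(\mu)+\text{const}$, and it suffices to establish the two stated PDEs for $\log\tau_\rho(\mu)$ as a function of $r=|x|$ (the tau function depends on $x$ only through $r$ by rotation and translation invariance of the massive twisted Dirac operator with two antipodal-charge punctures).

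The core input is Palmer's analysis \cite{MR1233355} of the two-monodromy tau function, which is exactly the setting of his deformation equations. There the logarithmic derivative of $\tau_\rho(\mu)$ in the separation $r$ is expressed through an auxiliary function (call it $\psi(r)$, a ``Painlev\'e-type'' variable) that solves a radial sinh-Gordon/Painlev\'e~III-type ODE; this is precisely the content of the Sato–Miwa–Jimbo holonomic deformation equations specialized to $n=2$ and of Palmer's reworking of them. Concretely, I would: (i) cite from \cite{MR1233355} (and, as needed, \cite{MR555666,MR566086}) the representation $\Delta\log\tau_\rho(\mu)=\tfrac12\mu^2\big(1-\cos(2\psi)\big)$, where the Laplacian acts in $x\in\R^2$ and collapses to $(\partial_r^2+\tfrac1r\partial_r)$ on the radial function; (ii) cite the ODE satisfied by $\psi$, which in Palmer's variables is the radial equation $(\partial_r^2+\tfrac1r\partial_r)\psi=\tfrac12\mu^2\sin(2\psi)+\tfrac{(2\alpha)^2}{r^2}\tan(\psi)(1+\tan^2\psi)$, the monodromy $\alpha$ entering through the $1/r^2$ term with coefficient $(2\alpha)^2$; (iii) reconcile conventions. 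Step~(iii) is where care is needed: as in the proof of Corollary~\ref{cor:2ptFredholm}, one must track the factors of $2$ in Palmer's Dirac operator versus \eqref{e:Dirac-rho}, the factor $\tfrac12$ in his mass term, the identification $m=|\mu|$, and $\lambda_1=-\lambda_2=\alpha$, using the translation table of Section~\ref{sec:Palmer-notation}; these rescalings are exactly what produce the explicit constants $\tfrac12\mu^2$ and $(2\alpha)^2$ in the statement rather than bare $m^2$ and $\lambda^2$. I would also note that the additive constant in $\Sigma$ (coming from the proportionality constant in Theorem~\ref{thm:main-tau}) is annihilated by $\Delta$, so it plays no role.

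The main obstacle is not analytic but bookkeeping: matching Bernard–LeClair's normalization \cite{MR1297289,MR1462303} and Palmer's normalization \cite{MR1233355} so that the deformation ODE comes out in exactly the displayed form, including the precise appearance of $\tan(\psi)(1+\tan^2(\psi))=\tan(\psi)\sec^2(\psi)=\tfrac12\,\tfrac{d}{d\psi}\tan^2(\psi)$ (some references phrase the short-distance data through a function related to $\psi$ by a M\"obius or trigonometric change of variables, e.g.\ $u=\tan\psi$ or $u=e^{2i\psi}$, and one must verify which convention reproduces the stated right-hand side). A secondary, purely technical point is justifying that $\Sigma$ inherits the smoothness in $x\neq 0$ needed to even speak of $\Delta\Sigma$ classically; this follows from the regularity of solutions to the twisted Dirac equation cited in the excerpt (``relatively general regularity statements for solutions to the twisted Dirac equation'') and the continuity of $\tau_\rho(\mu)$, upgraded to smoothness away from coincidence by elliptic regularity for $\Dirac_\rho+\mu$ on $\C\setminus\{x,0\}$. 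Once conventions are fixed, the corollary is immediate from \cite{MR1233355}.
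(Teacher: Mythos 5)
Your overall strategy — reduce to the tau function via Theorem~\ref{thm:main-tau} with $\lambda_1=-\lambda_2=\alpha$, then invoke Palmer \cite{MR1233355} with the convention translation of Section~\ref{sec:Palmer-notation} — is exactly the paper's route. However, item~(i) of your plan contains a genuine gap: you propose to \emph{cite} the Laplacian identity
\[
\Delta\log\tau_\rho(\mu)=\tfrac12\mu^2\bigl(1-\cos(2\psi)\bigr)
\]
directly from \cite{MR1233355}, but Palmer does not state the identity in that form. What (5.37) of \cite{MR1233355} gives is the first-order radial derivative
\[
\partial_r \log \tau = - \frac{1}{2r} \bigl( r^2 \psi'(r)^2 - \lambda^2 \tan^2(\psi) - m^2 r^2 \sin^2(\psi)\bigr),
\]
with $\lambda=\lambda_2-\lambda_1=2\alpha$, together with the second-order $\psi$-equation (5.40). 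To obtain the displayed Laplacian identity one must differentiate Palmer's first-order formula once more, form $(\log\tau)''+\tfrac1r(\log\tau)'$, and then use (5.40) to cancel all the $\psi'$- and $\psi''$-terms, leaving only $m^2\sin^2\psi=\tfrac{m^2}{2}(1-\cos 2\psi)$. This is a short but substantive algebraic step, not convention bookkeeping, and the paper carries it out explicitly. So the claim that ``once conventions are fixed, the corollary is immediate'' overstates what Palmer provides: the $\psi$-ODE is indeed directly quotable (and converts to a radial Laplacian trivially), but the $\tau$-identity requires this extra derivation. Everything else you say — the reduction via Theorem~\ref{thm:main-tau}, the rotation/translation invariance making $\tau$ radial, the constant killed by $\Delta$, the convention-matching with factors of $2$, and the remark that Bernard--LeClair's hyperbolic version is obtained by $\psi\mapsto i\psi$ — matches the paper.
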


\begin{proof}
  This follows from the results of Palmer \cite{MR1233355}, see Section~\ref{sec:BernardLeclair}
  for details.
\end{proof}

We remark that Bernard--LeClair \cite{MR1297289,MR1462303} obtain the above equations with
trigonometric functions replaced by hyperbolic functions.
This difference was already observed in \cite[p.~341]{MR1233355}.
The versions with hyperbolic functions can be obtained by replacing $\psi$ with $i\psi$.

\subsection{The massless sine-Gordon measure}
\label{sec:intro-SG}

We now state the definition and main properties of the Euclidean massless sine-Gordon measure at the free fermion point $\beta=4\pi$,
denoted $\nu^{\SG(4\pi,z)}$ and constructed in \cite{MR4767492}. Further details are given in Section~\ref{sec:mixing}.

In general, 
the Euclidean massless sine-Gordon model with coupling constants $\beta\in (0,8\pi)$ and $z\in \R$ 
should be defined in terms of limits $\epsilon\to 0$, $m\to 0$, $\Lambda\to\R^2$ of the regularized probability measures 
\begin{equation} \label{e:SGdef}
  \nu^{\SG(\beta,z|\Lambda,m,\epsilon)}(d\varphi) \propto 
  \exp\qa{2z \int_{\Lambda}\epsilon^{-\beta/4\pi} \cos(\sqrt{\beta}\varphi(x)) \, dx} \nu^{\GFF(m,\epsilon)} (d\varphi),
\end{equation}
where $\nu^{\GFF(m,\epsilon)}$ is the Gaussian free field (GFF) on $\R^2$  with mass $m>0$  and small scale regularization $\epsilon>0$.
The precise choice of the regularizations is not crucial, but
for concreteness, we take 
$\nu^{\GFF(m,\epsilon)}$ to be the Gaussian measure supported on $C^\infty(\R^2)$ with covariance kernel
\begin{equation}
   \int_{\epsilon^2}^\infty ds\, e^{-s(-\Delta +m^2)}(x,y).
\end{equation}
The expectation with respect to the measure $\nu^{\SG(\beta,z|\Lambda,m,\epsilon)}$ will be denoted by $\avg{\cdot}_{\SG(\beta,z|\Lambda,m,\epsilon)}$,
and omission of an argument means that the corresponding limit has been taken (implying it exists).
The measure $\nu^{\SG(\beta,z|\Lambda,m)}$ is the massive finite-volume sine-Gordon measure (with what is known as free boundary conditions in this context).

\begin{theorem} \label{thm:SG4pi}
  For $z\in \R$, $z\neq 0$,
  there is a unique probability measure $\nu^{\SG(4\pi,z)}$ on $\cS'(\R^2)$ that is symmetric under $\varphi \mapsto -\varphi$, mixing,
  and satisfies, for any $f \in C_c^\infty(\R^2)$ with $\int f\, dx = 0$,
  \begin{equation} \label{e:SG4pi}
    \avg{e^{i(\varphi,f)}}_{\SG(4\pi,z)} = \lim_{\Lambda \to \R}\lim_{m\to 0} \lim_{\epsilon\to 0} \avg{e^{i(\varphi,f)}}_{\SG(4\pi,z|\Lambda,m,\epsilon)}.
  \end{equation}
  This measure has superexponential moments, is Euclidean invariant, is scale covariant, and does not depend on the sign of $z \in \R$.
  For the definitions of mixing, Euclidean invariance, and scale covariance,
  see \eqref{e:intro-transinv}--\eqref{e:intro-mixing} below.
\end{theorem}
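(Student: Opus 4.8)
The plan is to take existence of the measure together with the ``kinematic'' properties (symmetry under $\varphi\mapsto-\varphi$, the limit formula \eqref{e:SG4pi}, superexponential moments, Euclidean invariance, scale covariance, and independence of the sign of $z$) from the construction in \cite{MR4767492}, to establish the mixing property as the main new input, and then to deduce uniqueness from mixing together with the reflection symmetry.

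\emph{Existence and basic properties.} I recall from \cite{MR4767492} that for $z\neq0$ the triple limit in \eqref{e:SG4pi} exists and that, together with continuity estimates coming from the moment bounds, it is the characteristic functional of a probability measure $\nu^{\SG(4\pi,z)}$ on $\cS'(\R^2)$; the full characteristic functional is in fact available on all of $C_c^\infty(\R^2)$, only its restriction to mean-zero test functions being singled out in \eqref{e:SG4pi}. The decisive tool is the Coleman correspondence at $\beta=4\pi$: after completing the square in the Gaussian free field, the regularized functional $\avg{e^{i(\varphi,f)}}_{\SG(4\pi,z|\Lambda,m,\epsilon)}$ is a sine-Gordon partition function with a (complex, spatially varying) coupling, which equals a ratio of free massive Dirac partition functions, i.e.\ a renormalized determinant of $\Dirac+\mu$ perturbed by a multiplication operator localized near $\mathrm{supp}\,f$, and this survives removing the three cutoffs. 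Symmetry under $\varphi\mapsto-\varphi$ comes from evenness of $\cos$ in the regularizations; Euclidean invariance and scale covariance from the (approximate) symmetries of the regularizations — a radial mollifier, an exhaustion of $\R^2$ by balls, and the conformal covariance of the massless free field together with the $\epsilon^{-\beta/4\pi}$ counterterm; the superexponential moments from boundedness of the interaction density and the two-dimensional bound on Gaussian exponential moments; and independence of the sign of $z$ from the fermionic representation (or, a posteriori, from uniqueness below). These statements are recalled in the form needed in Section~\ref{sec:mixing}.

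\emph{Mixing.} This is the main new ingredient. By translation invariance and reflection symmetry it suffices to prove, for all $f,g\in C_c^\infty(\R^2)$ and translations $\tau_a$,
\[
  \avg{e^{i(\varphi,f)+i(\varphi,g\circ\tau_a)}}_{\SG(4\pi,z)}\ \longrightarrow\ \avg{e^{i(\varphi,f)}}_{\SG(4\pi,z)}\,\avg{e^{i(\varphi,g)}}_{\SG(4\pi,z)}\qquad(|a|\to\infty),
\]
since the exponentials $e^{i(\varphi,f)}$ form a determining class and this factorization then propagates to cylinder events based on well-separated regions and, using the moment bounds and a standard approximation, to arbitrary bounded observables. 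The display is proved from the determinantal representation above: the left-hand side is a renormalized determinant of $\Dirac+\mu$ perturbed by two multiplication operators supported near $\mathrm{supp}\,f$ and near $a+\mathrm{supp}\,g$ respectively; since the free massive Dirac resolvent $(\Dirac+\mu)^{-1}$ has a kernel decaying like $e^{-\mu|x-y|}$, the blocks of the relevant Hilbert--Schmidt operator coupling the two regions are $O(e^{-c|a|})$, and a trace/Hilbert--Schmidt-norm estimate shows the determinant factorizes into the two one-region determinants up to an error $O(e^{-c|a|})$. Carrying this out with bounds uniform in $a$ while removing the cutoffs, and then letting $|a|\to\infty$, gives the claim; the details are in Section~\ref{sec:mixing}. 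This is the step I expect to be the main obstacle: the crucial point is the exponential decay of the massive Dirac resolvent and its conversion, on the bosonic side, into factorization of the characteristic functional across well-separated regions with errors uniform in all cutoffs.

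\emph{Uniqueness.} Let $\nu$ and $\nu'$ both be probability measures on $\cS'(\R^2)$ that are symmetric under $\varphi\mapsto-\varphi$, mixing, and satisfy \eqref{e:SG4pi}. Then $\avg{e^{i(\varphi,f)}}_\nu=\avg{e^{i(\varphi,f)}}_{\nu'}$ for every $f\in C_c^\infty(\R^2)$ with $\int f=0$, hence the joint laws of finite collections of pairings $((\varphi,g_1),\dots,(\varphi,g_k))$ with all $\int g_j=0$ agree; this determines $\nu$ on the $\sigma$-algebra generated by the gradient field, and it remains to control the zero mode. Fix $\chi\in C_c^\infty(\R^2)$ with $\int\chi=1$; with $\chi_y=\chi(\cdot-y)$, $\Lambda_L$ the ball of radius $L$, and $\bar\chi_L=\frac1{|\Lambda_L|}\int_{\Lambda_L}\chi_y\,dy$, we have $\int\bar\chi_L=1$, so $\chi-\bar\chi_L\in C_c^\infty(\R^2)$ is mean-zero, while $(\varphi,\bar\chi_L)=\frac1{|\Lambda_L|}\int_{\Lambda_L}(\varphi,\chi_y)\,dy$. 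Under $\nu^{\SG(4\pi,z)}$ we have $(\varphi,\chi)\in L^2$, mixing implies ergodicity under translations, and the von Neumann ergodic theorem gives $(\varphi,\bar\chi_L)\to\avg{(\varphi,\chi)}$ in $L^2$, which vanishes by reflection symmetry; hence $(\varphi,\chi)=\lim_{L\to\infty}(\varphi,\chi-\bar\chi_L)$ in $L^2$, exhibiting $(\varphi,\chi)$ — jointly with any mean-zero pairings — as a limit of quantities whose law is common to $\nu$ and $\nu'$. A short argument using translation invariance, reflection symmetry, and mixing (which excludes an independent or heavy-tailed additive zero mode) upgrades this to convergence in $\nu'$-probability as well; since the pairings with $\chi$ together with those with mean-zero test functions generate the Borel $\sigma$-algebra of $\cS'(\R^2)$, it follows that $\nu=\nu'$. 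In particular $\nu^{\SG(4\pi,-z)}$, which satisfies the same three characterizing properties, equals $\nu^{\SG(4\pi,z)}$. The conceptual content here is routine once mixing is available; the only delicate point is the integrability needed to pin the zero mode.
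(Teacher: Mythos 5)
Your overall architecture — take existence and the kinematic properties from \cite{MR4767492}, establish mixing as the new input, then derive uniqueness from mixing and reflection symmetry — matches the paper's. But in both of the two steps where you depart from it, you have a real gap.

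\emph{Mixing.} The paper does not prove mixing by factorizing a determinant. Proposition~\ref{prop:moments} extracts the cumulants of $\varphi(f_1),\dots,\varphi(f_n)$ as explicit, absolutely convergent Fourier integrals $\hat C_\mu^{\sf T}(p_1,\dots,p_{n-1})$ from the free-fermion bilinear cumulants of \cite[Theorem~1.1]{MR4767492}; local integrability of these kernels plus the Riemann--Lebesgue lemma gives clustering for cylinder monomials, which extends to all of $L^2$ by density of cylinder polynomials (itself proved from the superexponential moments via an analytic-continuation argument). Your route instead posits a representation of $\avg{e^{i(\varphi,f)}}_{\SG(4\pi,z)}$ as a renormalized determinant of $\Dirac+\mu$ perturbed by a multiplication operator localized near $\mathrm{supp}\,f$, and factorizes it via exponential decay of the resolvent and a Hilbert--Schmidt norm estimate. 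That representation is not established in \cite{MR4767492} — the rigorous Coleman correspondence there gives correlations of $\wick{e^{\pm i\sqrt{4\pi}\varphi}}$ and $\partial\varphi$ in terms of $\bar\psi_a\psi_b$ bilinears, not a determinant formula for the generating functional of $\varphi(f)$ — and obtaining it with bounds uniform in $\epsilon,m,\Lambda$ would be a substantial project in itself. You correctly identify the root cause (exponential decay of the massive Dirac propagator), but the conversion of that decay into mixing in the paper goes through explicit cumulant kernels and Riemann--Lebesgue, not determinants; as written, your mixing step is not a proof. (A smaller point: several of the ``kinematic'' properties you attribute wholesale to \cite{MR4767492} — notably the superexponential moments, Corollary~\ref{cor:superexp-moments} — are new in this paper, derived from the cumulant bound of Proposition~\ref{prop:moments} plus Gaussian domination.)

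\emph{Uniqueness.} Your ergodic-averaging argument for the zero mode is a genuinely different route from the paper's, and it has the gap you flag but do not close: to apply a von~Neumann (or Birkhoff) averaging theorem under the abstract candidate $\nu'$, you need $(\varphi,\chi)\in L^1(\nu')$ or $L^2(\nu')$, and the three defining properties do not grant this — \eqref{e:SG4pi} only constrains the law of mean-zero pairings, and mixing in the sense of \eqref{e:intro-mixing} is a statement about observables already assumed to be in $L^2$, not a device for showing a given observable is integrable. Without an a priori moment on the zero mode under $\nu'$, the step $(\varphi,\bar\chi_L)\to 0$ in $\nu'$-probability does not follow; a measure whose zero mode is heavy-tailed relative to its gradient field is not obviously excluded by your three axioms alone. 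The paper sidesteps this entirely by never leaving the level of characteristic functionals, which are bounded by one regardless of moments: reflection symmetry makes $\avg{e^{i(\varphi,f)}}_{\nu'}$ real, mixing and translation invariance give $\avg{e^{i(\varphi,f)}}_{\nu'}^2 = \lim_{|x|\to\infty}\avg{e^{i(\varphi,f-f_x)}}_{\nu'}$, the right side is fixed by \eqref{e:SG4pi} since $f-f_x$ has mean zero, and the sign is resolved by nonnegativity of the characteristic functional (the Ginibre-type inequality \cite[Theorem~3.1]{MR496191}). That argument needs no integrability whatsoever and is the mechanism you are missing.
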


The proof of the theorem is deduced from the results of \cite{MR4767492},
where however mixing, superexponential moments, and uniqueness are not established, and  is given in Section~\ref{sec:mixing}.

We emphasize that the main difficulty in proving the theorem is that it applies to the massless model
and that $\nu^{\SG(4\pi,z)}$ is a measure on $\cS'(\R^2)$
rather than $\cS'(\R^2)/\text{constants}$.
The massive model or the massless measure on  $\cS'(\R^2)/\text{constants}$ can be constructed for all $\beta$.
As mentioned before, the existence of the massless measure on $\cS'(\R^2)$ corresponds to the spontaneous breaking of the
noncompact symmetry $\varphi \mapsto \varphi + \frac{2\pi}{\sqrt{\beta}}n$, $n\in \Z$.
Symmetry breaking does not take place for $z=0$:
the massless Gaussian free field can only be constructed modulo constants on $\R^2$.

Euclidean invariance means that the measure is invariant under spatial translations and rotations: for all integrable $F$,
\begin{equation} \label{e:intro-transinv}
  \avg{T_xF}_{\SG(\beta,z)} =     \avg{F}_{\SG(\beta,z)},
\end{equation}
where $T_x$ is the action of translation by $x\in \R^d$, and analogously for rotations.
By scale covariance for the massless sine-Gordon model we mean that
\begin{equation} \label{e:intro-scaleinv}
  \avg{F}_{\SG(\beta,z)}
  =
  \avg{R_sF}_{\SG(\beta,zs^{2-\beta/4\pi})},
\end{equation}
where $R_s$ is the action of spatial rescaling by $s>0$:
$R_sF(\varphi)= F(R_s\varphi)$ and $R_s\varphi(x) = \varphi(x/s)$ in the distributional sense, 
i.e., $(R_s\varphi, f)= s^{2}(\varphi,R_{s^{-1}}f)$ where $R_sf(x) = f(x/s)$.
Mixing means that,  for all $F,G\in L^2(\nu^{\SG(4\pi,z)})$,
\begin{equation} \label{e:intro-mixing}
  \avg{(T_xF)G}_{\SG(\beta,z)} \to      \avg{F}_{\SG(\beta,z)} \avg{G}_{\SG(\beta,z)}, \qquad (|x|\to\infty).
\end{equation}

\subsection{Imaginary multiplicative chaos and fractional correlation functions}
\label{sec:intro-corr}

For $\alpha_1,\dots,\alpha_n \in \R$ and $x_1,\dots, x_n\in \R^2$ distinct, the fractional correlation functions should formally be given by
\begin{equation} \label{e:corrdef}
  \avg{\prod_{i=1}^n\wick{e^{i \sqrt{4\pi} \alpha_i\varphi(x_i)}}}_{\SG(4\pi, z)}.
\end{equation}
Since the random field $\varphi$ is not defined pointwise, this definition must be interpreted in a renormalized and distributional sense.
Since the sine-Gordon measure (with $\beta\geq 4\pi$) is not locally absolutely continuous with respect to the
Gaussian free field, one also cannot simply reduce the definition to the Gaussian case, where the meaning of the correlation functions is well understood.

To define the correlation functions, we actually construct the \emph{imaginary multiplicative chaos} (IMC for short) $M_\alpha$,
which is formally given by $\wick{e^{i\sqrt{4\pi}\alpha\varphi}}$ defined as
a random generalized function (up to a choice of normalization constant).
This IMC is defined with respect to the infinite-volume massless sine-Gordon measure $\nu^{\SG(4\pi,z)}$.
Its analogue for the Gaussian free field is well studied, see \cite{MR3339158,MR4149524}.
It can be obtained as a limit of its regularized version:
\begin{equation} \label{e:Meps}
  M_\alpha^\epsilon(f) = \wick{e^{i\sqrt{4\pi}\alpha \varphi_\epsilon}}_\epsilon(f) =  \epsilon^{-\alpha^2} \int e^{i\sqrt{4\pi}\alpha\varphi_\epsilon(x)} f(x)\, dx
  ,
\end{equation}
where, for a smooth, nonnegative, compactly supported mollifier $\eta_\epsilon(x) = \epsilon^{-d} \eta(x/\epsilon)$ with
$\int \eta_\epsilon \, dx =1$, we denote the smoothed field by
\begin{equation}
  \varphi_\epsilon = \eta_\epsilon* \varphi
  .
\end{equation}
The limiting Gaussian IMC is independent of the mollifier $\eta$ except for a multiplicative constant,
resulting from the dependence of $\lim_{\epsilon \to 0} [\var(\varphi_\epsilon(0)) - \frac{1}{2\pi} \log \epsilon^{-1}]$ on $\eta$,
see Propositions~\ref{prop:GMC}--\ref{prop:hkimc}.

Our result for the infinite-volume massless sine-Gordon model is the following theorem.
Its proof also includes a finite-volume version with strong convergence estimates.

\begin{theorem} \label{thm:Mexists}
  Let $\alpha \in (-1,1)$. Then 
  in probability  with respect to the infinite-volume massless sine-Gordon measure $\nu^{\SG(4\pi,z)}$,
  the generalized function $M_\alpha^\epsilon$ converges to a random limit $M_\alpha$  as $\epsilon \to 0$
  that is independent of the mollifier $\eta$ except for a deterministic multiplicative constant.
  For any $s>\alpha^2$, the limit is an element of the Besov-H\"older space $C^{-s}_{\rm loc}$,
  the convergence takes place in that space, and $\chi M_\alpha$, $\chi\in C_c^\infty$ has moments in $C^{-s}$ norm of all order.
\end{theorem}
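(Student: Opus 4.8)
The plan is to construct $M_\alpha$ for the infinite-volume massless sine-Gordon measure by transferring known results about the Gaussian free field IMC through an absolute-continuity (or near absolute-continuity) argument at short scales, combined with the integrability input (Bosonization, finite-volume tau functions) for the large-scale, infinite-volume behavior. The key difficulty is that at $\beta = 4\pi$ the sine-Gordon measure is \emph{not} locally absolutely continuous with respect to the GFF, so one cannot directly reduce to the Gaussian case; instead, one must use a renormalized Radon--Nikodym density or work with the Coleman/fermionic description.

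\medskip

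\emph{Step 1: Finite-volume with a bare mass.} First I would fix $\Lambda$ and $m > 0$ and study $\nu^{\SG(4\pi,z\mid\Lambda,m)}$, which (by \cite{MR4767492}) does exist and for which the short-scale structure should be comparable to a massive GFF. For this regularized measure I would estimate moments of $M_\alpha^\epsilon(\chi f)$ directly: the $2k$-th moment of $M_\alpha^\epsilon$ is a $2k$-fold integral of a regularized correlation function of imaginary exponentials, and using the finite-volume tau function / Bosonization identity (a finite-volume version of Theorem~\ref{thm:main-tau}, which the paper says is established in Section~\ref{sec:bosonization}) these can be expressed via massive twisted Dirac determinants. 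The needed a priori bound is an integrable singularity estimate, roughly $|\langle \prod \wick{e^{i\sqrt{4\pi}\alpha_i\varphi(x_i)}}\rangle| \lesssim \prod_{i<j} |x_i - x_j|^{2\alpha_i\alpha_j}$ for a neutral configuration, which is locally integrable precisely when $\alpha^2 < 1$ (for the two-point case $2\alpha^2 > -2$ is automatic; the relevant constraint comes from the $L^1$-phase of the IMC, $\alpha^2 < 1$). Combined with a comparison of $\epsilon$-regularized moments and a Kolmogorov/Besov-space criterion (as in \cite{MR4149524, 1905.12027}), this gives convergence of $M_\alpha^\epsilon$ in $C^{-s}_{\loc}$ for $s > \alpha^2$, with $C^{-s}$-moments of all orders for $\chi M_\alpha$.

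\medskip

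\emph{Step 2: Removing the mass and taking the infinite volume.} Next I would pass to the limits $m \to 0$ and $\Lambda \to \R^2$. Here I would use the convergence $\avg{e^{i(\varphi,f)}}_{\SG(4\pi,z\mid\Lambda,m,\epsilon)} \to \avg{e^{i(\varphi,f)}}_{\SG(4\pi,z)}$ from Theorem~\ref{thm:SG4pi} together with the uniformity of the moment bounds in Step~1 (which should be uniform in $m \le 1$ and in $\Lambda$ because the Dirac-determinant estimates are, and because the short-scale singularity is mass-independent). The uniform moment bounds give tightness of the laws of $\chi M_\alpha^\epsilon$ under $\nu^{\SG(4\pi,z\mid\Lambda,m)}$ in $C^{-s}_{\loc}$; the convergence of finite-dimensional characteristic functions identifies the limit, and a standard argument upgrades distributional convergence to convergence in probability (since $M_\alpha^\epsilon$ is $\sigma(\varphi)$-measurable and the target measure is fixed). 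The mollifier-independence up to a multiplicative constant follows as in the Gaussian case: the ratio $M_\alpha^{\epsilon,\eta}/M_\alpha^{\epsilon,\tilde\eta}$ concentrates on $\exp(\alpha^2 \lim_\epsilon[\var_\eta(\varphi_\epsilon(0)) - \var_{\tilde\eta}(\varphi_\epsilon(0))])$, a deterministic constant depending only on $\eta,\tilde\eta$, because the difference of the two mollifications is a field of bounded variance and the sine-Gordon measure is close enough to Gaussian at short scales for this computation to go through (e.g. via the Girsanov-type / flow-equation control alluded to in the abstract).

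\medskip

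\emph{Main obstacle.} The hard part is Step~1 in the regime where the measure is genuinely non-Gaussian at short scales: one must prove the integrable-singularity moment bound $|\langle\prod\wick{e^{i\sqrt{4\pi}\alpha_i\varphi(x_i)}}\rangle| \lesssim \prod_{i<j}|x_i-x_j|^{2\alpha_i\alpha_j}$ with constants uniform in $m$ and $\Lambda$, and to do so \emph{without} a Cameron--Martin density argument. I expect this to require either (a) a singular-SPDE / Polchinski-flow decomposition of $\varphi$ into a Gaussian part plus a $C^r$ remainder for some $r < 1$, reducing the estimate to the Gaussian IMC plus a perturbation that a $C^{-s}$ (with $s < r$) distribution tolerates being multiplied by; or (b) direct use of the fermionic representation and known regularity of solutions of the twisted Dirac equation near the branch points (the operator-product structure $\rho(z) \sim (z-x_j)^{\alpha_j}$ produces exactly the $|x_i-x_j|^{2\alpha_i\alpha_j}$ fusion), with the necessary uniformity coming from monotonicity or explicit kernel bounds for the massive Dirac Green's function. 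The delicate point in either route is that the estimate must survive the $m \to 0$ limit, which is where the spontaneous symmetry breaking enters and where the naive Gaussian comparison breaks down; the neutrality hypothesis $\sum_i\alpha_i = 0$ in the companion Theorem~\ref{thm:main-tau} is the manifestation of this, though for the \emph{existence} statement of $M_\alpha$ (single $\alpha$, with the $\chi$ cutoff) one only needs the even-moment configurations, which are automatically neutral, so the argument closes.
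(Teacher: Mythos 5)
Your proposal is organized around two possible routes: (b) direct moment estimates for $M_\alpha^\epsilon$ via the Bosonization/tau-function identity, which you favor in Steps 1--2, and (a) a Polchinski-flow/SPDE decomposition $\varphi = Z + \tilde\varphi$ with $Z$ Gaussian and $\tilde\varphi\in C^r$, which you mention as an alternative under ``Main obstacle.'' The paper actually takes route (a): it constructs the IMC pathwise as $M_\alpha^{\SG}(f) = M_\alpha^{\GFF}(e^{i\sqrt{4\pi}\alpha\tilde\varphi}f)$, where $M_\alpha^{\GFF}$ is the Gaussian IMC for the log-correlated $Z$ and $\tilde\varphi$ is the H\"older-continuous remainder. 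Convergence in probability of the regularized version then follows almost for free from the Besov multiplication estimate $\|fg\|_{C^{-s}} \lesssim \|f\|_{C^{-s}}\|g\|_{C^r}$ ($r>s$), the convergence in probability of the Gaussian IMC, and the a.s.~convergence $\tilde\varphi_\epsilon\to\tilde\varphi$ in $C^r$. Bosonization enters the paper's proof in a different place than you propose: it supplies the uniform moment bound \eqref{e:SG-moments-bis} needed to get tightness of the large-scale part of the decomposition in the massless, infinite-volume limit (Proposition~\ref{prop:coupling-infvol} and Corollary~\ref{cor:superexp-moments}), not to bound the moments of $M_\alpha^\epsilon$ itself.

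The genuine gap in your route (b) is the passage from distributional convergence to convergence in probability. You write that ``a standard argument upgrades distributional convergence to convergence in probability (since $M_\alpha^\epsilon$ is $\sigma(\varphi)$-measurable and the target measure is fixed),'' but this is not a valid principle: a sequence of random variables on a fixed probability space, measurable with respect to a fixed $\sigma$-algebra, can converge in distribution to a random variable that also lives there without converging in probability. To make route (b) work you would have to prove an $L^2$-Cauchy estimate, i.e.\ that $\E\bigl[M_\alpha^\epsilon(f)\,\overline{M_\alpha^{\epsilon'}(f)}\bigr]$ converges as $\epsilon,\epsilon'\to 0$ independently, which requires two-scale mixed-regularization correlation identities and bounds uniform in both $\epsilon$ and $\epsilon'$. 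Your one-scale moment estimate via the tau function does not produce this. The decomposition route avoids the problem entirely because the randomness driving $M_\alpha^\epsilon$ is explicitly split into the Gaussian piece (where the $L^2$-Cauchy property is classical) and a multiplier by a fixed continuous function. A second, smaller gap: your Step~2 tightness argument is in the right spirit, but the actual passage to the massless infinite-volume limit is delicate because the field is only defined modulo constants at $\beta=4\pi$ before symmetry breaking is resolved; the paper handles this by constructing a Skorokhod-type coupling (Proposition~\ref{prop:coupling-infvol-convergence}) that tracks the zero mode $X_{i,j,k}$ and shows it cancels in the neutral moments of the IMC.
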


\begin{remark}
  We will sometimes abuse notation slightly by also including an additional deterministic multiplicative constant in the definition of
  $M_\alpha$ without emphasizing this explicitly,
  for example to fix the short-distance asymptotics \eqref{e:2pt-LZ-norm}.
\end{remark}

The extension of the existence of the IMC from the Gaussian free field to
the massless sine-Gordon measure is nontrivial for two reasons.

The first reason is that the sine-Gordon measure with $\beta\geq 4\pi$ (in particular for $\beta=4\pi$)
is not absolutely continuous with respect to law of the free field
-- even with a mass and a finite-volume sine-Gordon interaction.
Our construction of the IMC uses the optimal Besov-H\"older regularity of the IMC
with respect to the Gaussian free field and the  (presumably) optimal H\"older regularity of the difference
between the sine-Gordon field and the Gaussian free field in a probabilistic coupling.

This decomposition is addressed in Section~\ref{sec:coupling-finvol}.
Such decompositions in regularity are familiar in the field of singular SPDEs, where global in time solutions
typically yield such decompositions, see for example \cite{MR3693966} (and the recent references
\cite{MR4528966,2401.13648,2402.05544,2410.15493}
for results applying to finite-volume and massive sine-Gordon models).
However, for our purposes to eventually identify the correlation functions with the tau functions
(see Theorem~\ref{thm:main-tau}),
particular attention is needed for a construction with \emph{complex} coupling constants $z$
(for which the naive definition of the sine-Gordon measure would not be a probability measure -- in fact not even a complex measure).

The second reason is that the existence of the IMC with respect to the massless infinite-volume sine-Gordon measure relies on spontaneous symmetry
breaking of the latter, which seems difficult (if not impossible) to establish directly using stochastic analytic or SPDE methods
as in the finite-volume or massive part of the analysis.
This is addressed in Section~\ref{sec:coupling-infvol}, by a combination of the stochastic analytic methods with additional input from integrability
(Bosonization) to establish the symmetry breaking.
Particular attention is also needed for the convergence of the IMC with respect to finite-volume sine-Gordon measure
to the massless infinite-volume version.

In terms of the IMC for the massless sine-Gordon measure constructed in Theorem~\ref{thm:Mexists}, the fractional sine-Gordon correlation functions can now be defined as follows.

\begin{definition} \label{defn:corr-chaos}
For $\alpha_1,\dots,\alpha_n \in (-1,1)$ and $f_1,\dots, f_n \in C_c^\infty(\R^2)$, the smeared fractional correlation function is
defined as
\begin{equation}
  \avg{M_{\alpha_1}(f_1) \cdots M_{\alpha_n}(f_n)}_{\SG(4\pi,z)}.
\end{equation}
For distinct points $x_1,\dots,x_n \in \R^2$ the pointwise correlation functions \eqref{e:corrdef} are then defined by
\begin{equation} \label{e:corrfunc-formal-def}
      \int dx_1\cdots dx_n\; f_1(x_1)\cdots f_n(x_n) 
    \avg{\prod_{j=1}^n\wick{e^{i\sqrt{4\pi}\alpha_j \varphi(x_j)}}}_{\SG(4\pi, z)}.
\end{equation}
The definitions are possibly up to a positive multiplicative constant which can be fixed as discussed above Corollary~\ref{cor:2pt-LZ}.
\end{definition}

Using that the massless sine-Gordon measure $\nu^{\SG(4\pi,z)}$ is translation invariant and mixing,
as discussed in \eqref{e:intro-transinv} and \eqref{e:intro-mixing} and established in Section~\ref{sec:mixing},
one can in fact recover the correlation functions with $\sum_i \alpha_i \neq 0$ from those with $\sum_i \alpha_i=0$.
For illustration we consider the computation of the one-point function from the neutral two-point function.
Mixing implies
\begin{equation}
  \avg{M_\alpha(f) M_{-\alpha}(T_x g)}_{\SG(4\pi,z)} \to \avg{M_\alpha(f)}_{\SG(4\pi,z)} \avg{M_{-\alpha}(g)}_{\SG(4\pi,z)} \qquad (|x|\to\infty),
\end{equation}
where $T_x$ is the action of translation as in \eqref{e:intro-transinv}.
Using the symmetry $\varphi \mapsto -\varphi$ of the sine-Gordon measure,
$\avg{M_{-\alpha}(g)}_{\SG(4\pi,z)}=\avg{M_{\alpha}(g)}_{\SG(4\pi,z)}$ on the right-hand side,
and it follows from correlation inequalities that this one-point function is in fact nonnegative,
see the discussion above \eqref{e:SG4pi-ffx}.
Translation invariance implies that the one-point function is simply a constant, i.e.,
\begin{equation}
  \avg{M_{\alpha}(g)}_{\SG(4\pi,z)} = \avg{\wick{e^{i\sqrt{4\pi}\alpha \varphi(0)}}}_{\SG(4\pi,z)} \int g\, dx,
\end{equation}
for some constant $\avg{\wick{e^{i\alpha \sqrt{4\pi}\varphi(0)}}}_{\SG(4\pi,z)} \geq 0$ which is the definition of the one-point function.
Therefore we obtained the following representation of the one-point function.

\begin{corollary} \label{cor:onepoint-twopoint}
  For $\alpha\in (-1,1)$, the one-point function is given by
  \begin{equation}
    \avg{\wick{e^{i\sqrt{4\pi}\alpha \varphi(0)}}}_{\SG(4\pi,z)} 
    = \avg{M_{\alpha}(g)}_{\SG(4\pi,z)} = \lim_{|x|\to\infty} \sqrt{  \avg{M_\alpha(g) M_{-\alpha}(T_x g)}_{\SG(4\pi,z)}},
\end{equation}
where $g\in C_c^\infty(\R^2)$ is arbitrary with $\int g \, dx =1$.
\end{corollary}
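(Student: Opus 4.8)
The plan is to deduce the corollary from the mixing property of $\nu^{\SG(4\pi,z)}$ established in Theorem~\ref{thm:SG4pi}, combined with translation invariance, the reflection symmetry $\varphi\mapsto-\varphi$, and the nonnegativity of the one-point function. The only point requiring genuine verification is that the random variables to which mixing is applied lie in $L^2(\nu^{\SG(4\pi,z)})$, so that \eqref{e:intro-mixing} is applicable; this is where Theorem~\ref{thm:Mexists} enters. Fix $g\in C_c^\infty(\R^2)$ with $\int g\,dx=1$, choose $\chi\in C_c^\infty(\R^2)$ with $\chi\equiv 1$ on the support of $g$, and fix $s>\alpha^2$. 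Then $M_\alpha(g)=(\chi M_\alpha)(g)$ is bounded by a constant depending on $g$ times $\|\chi M_\alpha\|_{C^{-s}}$, and by Theorem~\ref{thm:Mexists} the latter has moments of all orders; hence $M_\alpha(g)$ and $M_{-\alpha}(g)$, as well as their translates, belong to $L^2(\nu^{\SG(4\pi,z)})$.

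Next I would observe, by a change of variables in \eqref{e:Meps}, that $M_{-\alpha}(T_x g)$ equals the translate of the random variable $M_{-\alpha}(g)$ appearing in \eqref{e:intro-mixing} (up to replacing $x$ by $-x$, which is irrelevant for the limit $|x|\to\infty$). Hence mixing \eqref{e:intro-mixing} applied with $F=M_\alpha(g)$ and $G=M_{-\alpha}(g)$ gives
\[
  \avg{M_\alpha(g)\,M_{-\alpha}(T_x g)}_{\SG(4\pi,z)} \longrightarrow \avg{M_\alpha(g)}_{\SG(4\pi,z)}\,\avg{M_{-\alpha}(g)}_{\SG(4\pi,z)} \qquad (|x|\to\infty).
\]
Since $M_{-\alpha}(g)(\varphi)=M_\alpha(g)(-\varphi)$ and the measure is invariant under $\varphi\mapsto-\varphi$, we have $\avg{M_{-\alpha}(g)}_{\SG(4\pi,z)}=\avg{M_\alpha(g)}_{\SG(4\pi,z)}$; combining this with $\overline{M_\alpha(g)(\varphi)}=M_{-\alpha}(g)(\varphi)$ (valid since $g$ is real) also shows that $c:=\avg{M_\alpha(g)}_{\SG(4\pi,z)}$ is real, so the right-hand side above is $c^2\ge 0$.

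Finally I would identify $c$ with the one-point function and remove the square root. The functional $g\mapsto\avg{M_\alpha(g)}_{\SG(4\pi,z)}$ is linear and, by translation invariance of the measure, invariant under $g\mapsto T_x g$; by the bound of the first paragraph it is continuous on each $C_c^\infty(K)$, hence defines a translation-invariant distribution on $\R^2$, which is therefore a constant multiple of Lebesgue measure. Thus $\avg{M_\alpha(g)}_{\SG(4\pi,z)}=c\int g\,dx=c$, which is precisely the definition of $\avg{\wick{e^{i\sqrt{4\pi}\alpha\varphi(0)}}}_{\SG(4\pi,z)}$ given above \eqref{e:SG4pi-ffx}, and there $c\ge 0$ follows from correlation inequalities (which also ensure nonnegativity of the neutral two-point function under the square root). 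Hence $\sqrt{c^2}=c$, giving the claimed chain of equalities. I expect the only (minor) obstacle to be the bookkeeping in the first step: verifying the $L^2$ membership and matching the translation conventions precisely, so that the mixing statement of Theorem~\ref{thm:SG4pi} can be invoked verbatim; once that is in place the argument is formal.
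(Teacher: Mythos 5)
Your proof is correct and follows essentially the same route as the paper: apply mixing (after verifying the $L^2$ membership via the moment bounds of Theorem~\ref{thm:Mexists}, a point the paper leaves implicit), use the $\varphi\mapsto-\varphi$ symmetry to identify $\avg{M_{-\alpha}(g)}$ with $\avg{M_\alpha(g)}$ and establish reality, use translation invariance to reduce the linear functional $g\mapsto\avg{M_\alpha(g)}$ to a constant multiple of $\int g\,dx$, and invoke the Ginibre-type correlation inequality to get nonnegativity of that constant so the square root can be removed. The only difference is bookkeeping: you spell out the $L^2$ verification and the translation-invariant-distribution argument that the paper states in one line.
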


The $\varphi \mapsto -\varphi$ symmetry and scale covariance \eqref{e:intro-scaleinv} of the massless sine-Gordon measure also imply that
the dependence of the one-point function on $z$ is given by
\begin{equation} \label{e:onepoint-scaling}
  \avg{\wick{e^{i\alpha \sqrt{4\pi}\varphi(0)}}}_{\SG(\beta,z)} =
  C_{\alpha,\beta} \pB{|z|^{1/(2-\beta/4\pi)}}^{\alpha^2},
\end{equation}
if $\beta=4\pi$ (and in general if the measure exist) as in \eqref{e:LZ},
but without determining the constant $C_{\alpha,\beta} =   \avg{\wick{e^{i\alpha \sqrt{4\pi}\varphi(0)}}}_{\SG(\beta,1)}$.
Indeed, by \eqref{e:intro-scaleinv} and assuming $z>0$ (without loss of generality),
\begin{equation}
  F_{\alpha,\beta} (z)
  =  \avg{\wick{e^{i\alpha \sqrt{4\pi}\varphi(0)}}}_{\SG(\beta,z)}
  = s^{\alpha^2} \avg{\wick{e^{i\alpha \sqrt{4\pi}\varphi(0)}}}_{\SG(\beta,s^{2-\beta/4\pi}z)}
  = s^{\alpha^2}F_{\alpha,\beta}(s^{2-\beta/4\pi}z),
\end{equation}
which gives $F_{\alpha,\beta}(z) = (z^{1/(2-\beta/4\pi)})^{\alpha^2} F_{\alpha,\beta}(1)$.

\begin{remark} \label{rk:ssb}
  We emphasize that, for $\alpha \neq 0$,
\begin{equation}
  \avg{\wick{e^{i\sqrt{4\pi}\alpha \varphi(0)}}}_{\SG(4\pi,z)} \neq \lim_{L\to\infty}\lim_{m\to 0}   \avg{\wick{e^{i\sqrt{4\pi}\alpha \varphi(0)}}}_{\SG(4\pi,z|L,m)} = 0,
\end{equation}
reflecting the spontaneous symmetry breaking of the sine-Gordon measure.
That the left-hand side is nonzero follows from Corollary~\ref{cor:LZ}.
We omit the proof that the right-hand side vanishes.
(Since it vanishes already without the limit $L\to\infty$, one could prove it using Section~\ref{sec:finvol-massless}.)
\end{remark}

\subsection{Further references}
\label{sec:ref}

Aside from the references mentioned already in the previous subsections, we mention the following ones.

\paragraph{Multiplicative chaos}
For Gaussian log-correlated random fields, the imaginary multiplicative chaos (and more generally the complex multiplicative chaos)
were defined and studied in a number of references,  see in particular \cite{MR3339158,MR4047992,MR4149524,MR4507934}
for examples that seem most closely related to our work.
The imaginary multiplicative chaos
is an ingredient in our construction and regularity of the fractional correlation functions of the sine-Gordon model.
We do not attempt to survey the vast literature on the real Gaussian multiplicative chaos and instead refer to \cite{MR3274356} for an overview.

\paragraph{Tau functions}
One of the motivations for the introduction of the tau functions by Sato--Miwa--Jimbo is the understanding of the spin correlation
functions of the two-dimensional near-critical Ising model, see
\cite{MR499666,MR594916,MR695532} and references in these,
and related ideas have also recently played a role in the analysis of the finite domain correlation functions of the Ising model,
see for example \cite{MR3296821,1811.06636}.

There is a large literature on the general theory of tau functions
associated with monodromy preserving deformations of ordinary differential equations related to Painlev\'e equations,
starting with \cite{MR555666,MR0575994},
that has turned out to have applications to various problems in integrable systems, integrable probability, and integrable quantum field theory.
We will not attempt to review these, but mention a few specific applications in contexts that seem most closely related to ours.

In the context of the dimer model (with Gaussian height function limit),
twisted fermions and the corresponding fractional correlations (``electric correlators'')
were studied by Dubédat~\cite{MR3369909}.
This is essentially a discrete version of the massless case for us (free field instead of sine-Gordon field).
The sine-Gordon model at the free fermion point is the limiting height function of a near-critical dimer model~\cite{2209.11111,BMR25},
and we expect a result analogous to Dubédat's result also to hold in this context (with the fractional free field correlations replaced by the sine-Gordon
ones constructed and computed in this paper).
A proof of this (which is already difficult in the above work with Gaussian limit) would be very interesting.

For tau functions in the context of the double dimer model (which also has a Gaussian height function limit
but more interesting monodromy),
see \cite{MR3880204,MR4269427}.

Painlev\'e equations (of a different type) are also important in the description of correlation functions of
noncompact conformal field theories such as Liouville CFT \cite{MR3322384}.
The recent probabilistic construction of the latter (see \cite{2403.12780} for a review) enabled a probabilistic
description of the conformal blocks \cite{MR4748792}.
The difficulties in this probabilistic realization are different to those in our context:
The base space is compact and the measures involved are absolutely continuous, while the lack of these properties
and the associated spontaneous symmetry breaking are the sources of the some of the main difficulties in our context
of the sine-Gordon model.
On the other hand, the tau functions have different algebraic structure, resulting in different challenges from this.

As opposed to our context (which is ``massive'') the above examples all correspond to ``massless'' (or conformal)
field theories.

\paragraph{Massive Thirring model}
For general values of the coupling constants $(\beta,z) \in (0,8\pi) \times \R$,
the massless sine-Gordon model is predicted to be in correspondence with the massive Thirring model,
see Coleman's original paper \cite{PhysRevD.11.2088} and also \cite[Section~1.3]{MR4767492} for a high-level summary.
In particular, at the free fermion point $\beta=4\pi$ studied in this paper, the massive Thirring model becomes massive free fermions.
This correspondence is in the sense of the Bosonization dictionary \eqref{e:corr1}--\eqref{e:corr4}
and does not immediately extend to the fractional correlations studied in this paper.

The Massive Thirring model with small coupling constant was constructed from the fermionic path integral in \cite{MR2308750} and a
finite-volume Bosonization result proved in \cite{MR2461991}. In the conjectured Coleman correspondence
the small coupling would correspond to $\beta$ near $4\pi$ and a rigorous proof of this correspondence in the infinite-volume limit
would be very interesting
(extending the result \cite{MR4767492} for $\beta=4\pi$).
It would be even more interesting to extend the ``fermionization'' of the fractional correlation functions
to  $\beta \neq 4\pi$.

\paragraph{Sine-Gordon model}
While our results focus on the large scale behavior of the sine-Gordon model,
the thorough understanding of its small scale behavior (the ultraviolet problem) is a prerequisite.
Since we are interested in  the free fermion point $\beta=4\pi$, there is a nontrivial ultraviolet problem 
and the sine-Gordon measure is not locally absolutely continuous with respect to the Gaussian free field.
This results in various difficulties for us as we need  strong estimates to
connect with the results from exact integrability, but the solution is essentially the same for $\beta \in [4\pi,6\pi)$.
To handle the short-distance singularities,
our analysis uses generalizations and extensions of the solution of the Polchinski equation
from \cite{MR914427}.
Other recent treatments motivated by SPDE techniques (in finite volume or with external mass)
related to methods we use include
\cite{MR4303014,MR4399156,2410.15493,2401.13648,2402.05544}.
For the resolution of the ultraviolet problem for all $\beta\in (0,8\pi)$, we mention 
the reference \cite{MR849210} (which is based on \cite{MR814849,MR649810})
and \cite{MR1777310} (based on \cite{MR1240586,MR1048698}),
and the recent work \cite{2508.13778} (based on \cite{MR2523458,MR2917175}).

The above works all consider the massive or finite-volume version of the sine-Gordon model,
not the more subtle massless model.
The existence of the massless infinite-volume limit is related to the existence of a mass gap for the formally massless model (see
the discussion in Section~\ref{sec:intro-intro}), a problem that is mathematically open for general $\beta \in (0,8\pi)$.
Aside from the results for $\beta=4\pi$ from \cite{MR4767492} which we heavily rely on and extend in this paper in various aspects,
the existence of a mass gap is essentially known for small $\beta>0$ (or could be established from these methods),
see \cite{MR574172,MR923850} and also \cite{MR818828}.

\subsection{Outline}

This paper has two major parts and a global difficulty arises from the necessity to design both in such a way they can ultimately be glued together
(and complete each other).

\paragraph{Part I (Sections~\ref{sec:mixing}--\ref{sec:coupling-infvol})}
The first part concerns the construction and regularity (analyticity, infinite-volume convergence)
of the fractional correlation functions of the sine-Gordon model at the free fermion point. Much of this involves
stochastic analysis of similar flavor as in singular SPDEs and applies to general values of $\beta$,
but the extension of these methods to the massless infinite-volume
model at $\beta=4\pi$ requires combination with input from the integrability at the free fermion point
(which we use to establish the spontaneous symmetry breaking).

\paragraph{Part II (Sections~\ref{sec:Green}--\ref{sec:Palmer})}
The second main part concerns the construction and regularity of renormalized determinants of twisted Dirac operators.
The infinite-volume version of these determinants are the tau functions of Sato--Miwa--Jimbo and Palmer, but to establish
the connection with the sine-Gordon model it is essential for us to have
suitable finite-volume versions (which we also refer to as renormalized partition functions)
with good analyticity properties.

To define the finite-volume tau functions, we first develop a theory for the Green's functions of twisted Dirac operators with finite-volume mass. It is based on PDE techniques and functional analysis
(such as Fredholm theory),
in contrast to, for example, Palmer's use of geometric-algebraic tools like Grassmannians and determinant bundles or SMJ's
integrable systems framework in the infinite-volume situation.
Using the Green's functions, we define our finite-volume versions of the tau functions and prove various regularity properties
(which are false in infinite volume) on the one hand, and
on the other hand, connect them to the usual tau functions in the infinite-volume limit, from which the exact results
in our applications follow.

\paragraph{Connection}
The high level strategy for the proof of Theorem~\ref{thm:main-tau}, connecting the two parts above,
is inspired by that of the Coleman correspondence in \cite{MR4767492},
but the details and execution of this strategy are different and more difficult in all parts.
The starting point is an analogous statement for $z=\mu=0$ (the massless case)
-- and in this situation it is actually elementary, see Section~\ref{sec:massless-bosonization}.
To show Theorem~\ref{thm:main-tau} when $z\neq 0$, we approximate both sides of the identity \eqref{e:main-tau} by finite-volume versions,
corresponding to the two parts discussed above.
The approximation of the left-hand side requires special care because of the spontaneous symmetry breaking of the sine-Gordon measure,
see Sections~\ref{sec:imc} and~\ref{sec:coupling-infvol},
while the approximation of the tau functions on the right-hand side requires special care because of the singularity of the 
formal determinants, see Section~\ref{sec:Palmer}.

For the finite-volume versions of the two sides of \eqref{e:main-tau} we show,
with different arguments, analyticity in $z$ respectively $\mu$,
both in a complex neighborhood of $\R$ (that does depend on the volume but always contains the full real axis),
see Section~\ref{sec:imc} and \ref{sec:coupling-finvol} for the sine-Gordon correlation functions and
Section~\ref{sec:Green} and \ref{sec:bosonization} for the tau functions.
We identify the series expansions of both sides and use analytic continuation to extend the result to all coupling constants.

We emphasize that analyticity does not hold in the infinite-volume limit, necessitating this involved approximation scheme.

\paragraph{Overview}
In more detail, the article is outlined as follows.

In Section~\ref{sec:mixing}, we summarize the definition and properties of the infinite-volume massless sine-Gordon measure $\nu^{\SG(4\pi,z)}$,
and prove that this measure has superexponential moments and is mixing.

In Section~\ref{sec:Gauss}, we include some background and notation for the Gaussian free field, its scale decomposition,
its fractional correlation functions,
and the associated Gaussian imaginary multiplicative chaos.

In Section~\ref{sec:imc}, we construct the imaginary multiplicative chaos with respect to the sine-Gordon field, both in the massive finite-volume case
and in the massless infinite-volume case. The construction relies on the decompositions of the sine-Gordon field established in Section~\ref{sec:coupling-finvol}
in finite volume and in Section~\ref{sec:coupling-infvol} in the massless infinite-volume situation.
We further prove that the massless limit of the fractional correlation functions of the finite-volume sine-Gordon measure
exists and that these have a series expansion in terms of free field correlations.

In Section~\ref{sec:massless-bosonization}, we discuss the Bosonization formulas for massless twisted free fermions.
These are elementary, but important input for the massive Bosonization.

In Section~\ref{sec:Green}, we establish existence and various estimates of the Green's function of a twisted Dirac operator with a finite-volume mass term.
In particular, we establish analyticity in the mass $\mu$ in a neighborhood of the real axis.

In Section~\ref{sec:bosonization}, we combine the analyticity estimates from Sections~\ref{sec:coupling-finvol} and \ref{sec:Green},
together with the massless Bosonization identities from Section~\ref{sec:massless-bosonization}, to extend the Bosonization to the finite-volume situation.
As part of this, in particular we define our candidate for the finite-volume renormalized determinants of the twisted Dirac operator with finite-volume mass (or finite-volume tau function).

In Section~\ref{sec:Palmer}, we show that the finite-volume renormalized determinants indeed converge, in the infinite-volume limit, to the tau functions.

In Section~\ref{sec:main-proofs}, we finally prove the main results by combining the ingredients from the previous sections. 

In Appendix~\ref{app:det}, we provide a formal motivation of the definition of renormalized determinants of the twisted Dirac operators without mass.
It is not needed for our results, but we hope that it provides helpful intuition.

In Appendices~\ref{app:GMCGFF} and \ref{app:Gauss}, we include proofs of some estimates of relatively standard flavor for the Gaussian multiplicative chaos,
the Gaussian free field and its scale decomposition.

In Appendices~\ref{app:fracrenorm} and \ref{app:renormpart}, we provide a construction of the renormalized potential
for multiple periodic interactions with different scales and use it to derive
integrability estimates for fractional free correlations. %
This provides a priori bounds used in particular in Sections~\ref{sec:Gauss-corr} and~\ref{sec:finvol-massless}.
Our analysis extends \cite[Sections 4--5]{MR4767492} (which in turn is based on \cite{MR914427}).

In Appendix~\ref{app:SMJ}, using the framework of \cite{MR555666,MR1233355},
we show that the exponential decay of certain solutions to the massive twisted Dirac equation is locally uniform in the branch points.

\subsection{Notation}
\label{sec:notation}

Our convention for the Fourier transform is 
\begin{equation}
\hat f(p)=\int_{\R^2}dx\, f(x) e^{-ip\cdot x} \qquad \text{and} \qquad f(x)=\frac{1}{(2\pi)^2}\int_{\R^2} dp \, e^{ip\cdot x} \hat f(p).
\end{equation}
Throughout we identify $\R^2$ and $\C$ and use whichever interpretation is most convenient.

\subsubsection{Function spaces}

We use various function spaces throughout the paper.
We use both global versions of the spaces on $\R^2$, weighted versions with a polynomial weight $\rho$ with exponent $\sigma$,
\begin{equation} \label{e:weight-def}
  \rho(x)=(1+|x|^2)^{-\sigma/2}
\end{equation}
that permits functions or distributions to grow at infinity when $\sigma>0$, and local versions of these spaces indicated with subscript loc.
In particular, we use standard $L^p$ spaces (both on $\R^2$ and in probability), and write
\begin{equation}
  \|f\|_{L^\infty(\rho)} = \|\rho f\|_{L^\infty(\R^2)}
\end{equation}
for the weighted norm and $L^\infty_\loc(\R^2)$ for the space of locally bounded measurable functions on $\R^2$.
The space $C^k$ of $k$-times continuously differentiable functions has weighted norm (and similarly without weight)
\begin{equation}
  \|f\|_{C^k(\rho)} = \sum_{i=0}^k \|\rho \nabla^k f\|_{L^\infty(\R^2)}, \qquad \|f\|_{C^k(\R^2)} = \sum_{i=0}^k \|\nabla^k f\|_{L^\infty(\R^2)}.
\end{equation}
The H\"older space $C^r$ with $r\in (0,1)$ has norm (and similarly for global and local version,
as well as for $r>1$ which we however do not need)
\begin{equation}
  \|f\|_{C^r(\rho)} = \|\rho f\|_{L^\infty(\R^2)} + [f]_{C^r(\rho)}, \qquad [f]_{C^r(\rho)} =\sup_{\substack{x,y\in \R^2\\ |x-y|\leq 1}} \rho(x) \frac{|f(x)-f(y)|}{|x-y|^r}.
\end{equation}
For the Besov--H\"older spaces $C^{-s}$ with $s\in(0,1)$ we use a variant of the $B^{-s}_{\infty,\infty}$ norm %
given by
\begin{equation} \label{e:Besov-def}
  \|f\|_{C^{-s}(\rho)} = \sup_{R\leq 1} R^s \|\Psi_R * f\|_{L^\infty(\rho)},
\end{equation}
where $\Psi \in C_c^\infty(\R^2)$ is a fixed test function with support in $B_1(0)$
and $\int \Psi \, dx = 1$ which we can pick radially symmetric and nonnegative,
and set $\Psi_R(x) = R^{-2}\Psi(x/R)$.
For a summary of the main properties of these norms sufficient for what we need, see \cite[Appendix~A]{2504.08606}.

To ensure that spaces are separable, we define the spaces corresponding to the above norms as the completion of $C_c^\infty(\R^2)$.
As discussed in \cite{MR3693966} these spaces are smaller than the standard ones (of distributions with finite norm),
but ensure separability. For example, $C^0(\R^2)$ corresponds to the space of continuous functions decaying to $0$ as $|x|\to \infty$,
typically denoted by $C_0(\R^2)$.
Separability ensures that the Bochner theory of Banach space valued random variables is applicable with standard properties, e.g.,
continuous functions are measurable.
See \cite[Part I]{MR3236753} for an introduction to integration and stochastic integration on Banach spaces
which we will use.

Finally, $\cS(\R^2)$ denotes the Schwartz space of test functions (with its usual topology)
and $\cS'(\R^2)$ the space of Schwartz distributions,
and $C_c^\infty(\R^2)$ denotes compactly supported test functions and $\cD'(\R^2)$ denotes the space of distributions
which we often refer to as generalized functions to avoid confusion with the distribution of a random variable.
We also write $L_c^\infty(\R^2)$ for the space of bounded measurable functions on $\R^2$ with compact support.

\subsubsection{Expectations}

We generally use $\E$ for the expectation with respect to a probability space on which a cylindrical Brownian motion is defined,
see Section~\ref{sec:Gauss}. In particular, a scale decomposition of the Gaussian free field
and of the finite-volume sine-Gordon measure (and its regularized version) will be realized on this probability space as, respectively,
\begin{equation}
  (\Phi^{\GFF(m)}_{t})_{t}, \qquad   (\Phi^{\SG(\beta,z|\Lambda,m)}_t)_t,
\end{equation}
see Section~\ref{sec:Gauss} for the GFF and Section~\ref{sec:coupling-finvol} for the finite-volume sine-Gordon measure.

In the above, $\Phi^{\GFF(m)}_0$ and $\Phi^{\SG(\beta,z|\Lambda,m)}_0$ correspond to the full realizations of the GFF and the finite-volume sine-Gordon model
(the version with parameter $t$ corresponding to versions in which the short-distance scales from $0$ to $t$ are missing).
We denote the expectation with respect to laws of these distributions, respectively, by
\begin{equation}
  \avg{\cdot}_{\GFF(m)}, \qquad
  \avg{\cdot}_{\SG(\beta,z|\Lambda,m)}.
\end{equation}
More generally, we will use angle brackets $\avg{\cdot}$ for expectations with respect to probability measures referring to the
law of a field on $\R^2$ (typically distribution valued).

Truncated expectations (also called cumulants) will be denoted by a superscript $\sf T$ and semicolons between the ``slots''
of the cumulants. For example, 
\begin{equation}
  \E^{\sf T}[A_1;A_2] = \ddp{}{t_1}\ddp{}{t_2}\Big|_{t_1=t_2=0} \log \E[e^{t_1A_1+t_2A_2}],
\end{equation}
where the expression also makes sense when the exponential moment is defined in terms of a formal power series.
We will use various equivalent combinatorial definitions throughout the paper.
For some basic properties (from which others are easy to derive), we refer to \cite[Appendix~A.1]{MR4767492}.

\section{Definition and mixing of the massless sine-Gordon measure}
\label{sec:mixing}

In this section, we summarize the construction and main properties of the massless sine-Gordon measure at the free fermion point $\nu^{\SG(4\pi,z)}$,
culminating in the proof of Theorem~\ref{thm:SG4pi}.
For most statements, we refer to the relevant statements in \cite{MR4767492}.
In addition, we establish the existence of superexponential moments and the mixing property of the massless sine-Gordon model.

\subsection{Modulo constants sine-Gordon measure}
\label{sec:mixing-modconst}

As discussed in Section~\ref{sec:intro-SG},
given regularization parameters $\epsilon>0$, $m>0$, $\Lambda \subset \R^2$ with $|\Lambda|<\infty$,
we define the regularized sine-Gordon measure with support in fields in $C^\infty(\R^2)$ by
\begin{equation} \label{e:SGdef-bis}
  \nu^{\SG(\beta,z|\Lambda,m,\epsilon)}(d\varphi) \propto 
  \exp\qa{2z \int_{\Lambda}\epsilon^{-\beta/4\pi} \cos(\sqrt{\beta}\varphi(x)) \, dx} \nu^{\GFF(m,\epsilon)} (d\varphi),
\end{equation}
where $\nu^{\GFF(m,\epsilon)}$ is a regularized Gaussian free field on $C^\infty(\R^2)$ which we choose with covariance
\begin{equation}
   \int_{\epsilon^2}^\infty ds\, e^{-s(-\Delta +m^2)}(x,y).
\end{equation}
For $\beta\in (0,6\pi)$ and $z\in \R$, and assuming that $m>0$ and $|\Lambda|<\infty$,
the weak limit $\epsilon \to 0$ of these measures exists as a measure $\nu^{\SG(\beta,z|\Lambda,m)}$ on $\cS'(\R^2)$.
There are various proofs of this result, and we refer to the presentation contained in \cite{MR4767492} which in turn builds on the method of \cite{MR914427}.
The measure $\nu^{\SG(\beta,z|\Lambda,m)}$ will be referred to as the massive finite-volume sine-Gordon measure.

Correlation inequalities of Ginibre type \cite[Corollary~3.2]{MR496191} (see also \cite{MR0475380})
imply that the characteristic functional $\avg{e^{i(\varphi,g)}}_{\SG(\beta,z|\Lambda,m)}$, with $g\in C_c^\infty(\R^2)$ a real-valued test function,
is increasing in $\Lambda \subset \R^2$ and in $z>0$,
as well as in $m>0$, see \cite[Section~3.4]{MR4767492} for detailed references.
Variants of these correlation inequalities also imply that
the moment generating functional $\avg{e^{(\varphi,g)}}_{\SG(\beta,z|\Lambda,m)}$ is decreasing in $\Lambda \subset \R^2$ and $z>0$,
see \cite[Section~5.2]{1711.04720}.

In particular, it follows that the following Gaussian domination bound holds
for any $g \in C_c^\infty(\R^2)$ with $\hat g(0)=0$:
\begin{equation} \label{e:SG-variance-0}
  \avg{e^{(\varphi,g)}}_{\SG(\beta,z|\Lambda,m)} \leq
  \avg{e^{(\varphi,g)}}_{\GFF(m)}
  \leq
  e^{\frac12 \|g\|_{\dot H^{-1}(\R^2)}^2}, \qquad \|g\|_{\dot H^{-1}(\R^2)} = (g, (-\Delta)^{-1}g)^{1/2}.
\end{equation}
By monotonicity of the characteristic functional,
the massless sine-Gordon model can then be constructed modulo constants,
i.e., as a probability measure $\nu^{\SG(\beta,z)}$ on the space $\cS'(\R^2)/\text{constants}$,
see \cite[Proof of Theorem~1.6]{MR4767492} for details.
In other words, the gradient of the field can be constructed, but not the field itself.
It also follows from \eqref{e:SG-variance-0} that
the measure is, in fact, supported on $C^{-s}(\rho)/\text{constants}$ where $\rho(x)=(1+|x|^2)^{-\sigma/2}$ with any $\sigma>0$ and any $s>0$,
i.e., there is a version of the field such that for any $\eta \in \cS(\R^2)$ with $\hat\eta(0)=1$ and any $p>0$ one has
\begin{equation} \label{e:SG-Besov}
  \avg{\|\varphi-(\varphi,\eta)\|_{C^{-s}(\rho)}^p}_{\SG(\beta,z)} < \infty,
\end{equation}
see Proposition~\ref{prop:GFF-Besov} in Appendix~\ref{app:Gauss}.
Finally, monotonicity in $\Lambda$ implies that the resulting measure is Euclidean invariant,
see \cite[Section VIII.6]{MR0489552} for the standard argument.

\subsection{Removal of the modulo constants restriction} %
\label{sec:mixing-withoutmod}

Assuming that for $g\in C_c^\infty(\R^2)$ with $\hat g(0)=0$,
\begin{equation} \label{e:SG-variance}
  \avg{(\varphi,g)^2}_{\SG(\beta,z)} \leq C(\beta,z) \|g\|_{L^2(\R^2)}^2,
\end{equation}
the massless sine-Gordon measure can be extended in a canonical way from $\cS'(\R^2)/\text{constants}$ to $\cS'(\R^2)$  as we explain now
(as in the proof of \cite[Theorem~1.3]{MR4767492}).
For the free fermion point $\beta=4\pi$ the variance bound \eqref{e:SG-variance} was derived in \cite{MR4767492} from Bosonization.
Indeed, it is immediate from  \cite[Theorem~1.3]{MR4767492} since $\hat C_\mu(p)$ is bounded in $p$.

Extension of the measure $\nu^{\SG(\beta,z)}$ from $\cS'(\R^2)/\text{constants}$
to $\cS'(\R^2)$ using \eqref{e:SG-variance} proceeds as follows.
Let $\eta \in \cS(\R^2)$ be any Schwartz function with $\hat\eta(0)=1$ and define
$P_\eta\varphi$ by
\begin{equation} \label{e:Peta-def}
  P_\eta\varphi = \varphi-(\varphi,\eta), \quad \text{i.e.,} \quad
  (P_\eta\varphi,f) = (\varphi,f_\eta), \qquad f_\eta = f-\hat f(0)\eta.
\end{equation}
Thus the regularized field $P_\eta\varphi$ is defined under $\nu^{\SG(\beta,z)}$ restricted to $\cS'(\R^2)/\text{constants}$.
To simplify notation, we also write $P_T$ for $P_{\eta_T}$ when $\hat\eta_T(p) = \hat\eta(T p)$ and the Schwartz function $\eta$ is fixed.
Thus $\hat\eta_T(p) \to 0$ as $T \to \infty$ whenever $p\neq 0$.
The two-point function bound \eqref{e:SG-variance} implies that the distribution of $P_T\varphi$ has a weak limit as $T \to \infty$
that is independent of the function $\eta$, see the proof of \cite[Theorem~1.3]{MR4767492}, i.e.,
there exists a probability measure $\nu^{\SG(\beta,z)}$ on $\cS'(\R^2)$ whose expectation we denote by $\avg{\cdot}_{\SG(\beta,z)}$
such that, without the restriction $\hat g(0)=0$,
\begin{equation} \label{e:SG-def}
  \avg{e^{i(\varphi,g)}}_{\SG(\beta,z)} = \lim_{T \to\infty} \avg{e^{i(P_T \varphi,g)}}_{\SG(\beta,z)}.
\end{equation}
Moreover, since $\|\eta_T\|_{L^2} = T^{-d/2}\|\eta\|_{L^2} = O(T^{-d/2})$, the two-point function bound \eqref{e:SG-variance} implies
\begin{align}
  \avg{(P_T\varphi,g)^2}_{\SG(\beta,z)}
  \leq C(\beta,z) \|g-\hat g(0)\eta_T\|_{L^2}^2
  &\leq C(\beta,z) \qB{\|g\|_{L^2} + |\hat g(0)| \|\eta_T\|_{L^2}}^2
    \nnb
  &  \leq C(\beta,z) \qB{\|g\|_{L^2} + O(T^{-d/2}) |\hat g(0)|}^2.
\end{align}
Thus, also without the restriction $\hat g(0)=0$, by \cite[Theorem~3.4]{MR1700749},
\begin{equation} \label{e:SG-variance-limit}
  \avg{(\varphi,g)^2}_{\SG(\beta,z)} \leq \liminf_{T\to\infty} \avg{(P_T\varphi,g)^2}_{\SG(\beta,z)} \leq C(\beta,z)\|g\|_{L^2}^2.
\end{equation}
To guarantee that the extension has moments of any order we further assume that there is a norm on $\cS(\R^2)$
such that the following moment bound holds:
for any $p>0$ and $g\in C_c^\infty(\R^2)$,
\begin{equation} \label{e:SG-moments}
  \liminf_{T\to\infty} \avg{|(\varphi,g- \hat g(0)\eta_T)|^p}_{\SG(\beta,z)} \leq C_p(\beta,z)\|g\|^p.
\end{equation}
It then follows that the limiting infinite-volume measure satisfies, for all $g\in C_c^\infty(\R^2)$,
\begin{equation} \label{e:SG-moments-limit}
  \avg{|(\varphi,g)|^p}_{\SG(\beta,z)} \leq C_p(\beta,z)\|g\|^p.
\end{equation}
For $\beta=4\pi$,
the moment bound \eqref{e:SG-moments}
is derived in Proposition~\ref{prop:moments} or alternatively Corollary~\ref{cor:superexp-moments} below from the results of \cite{MR4767492}.

It also follows from the construction that the measure defined by \eqref{e:SG-def} is Euclidean invariant:
\begin{equation} \label{e:Euclinv}
  \avg{F(\varphi)}_{\SG(\beta,z)} = \avg{F(T\varphi)}_{\SG(\beta,z)},
\end{equation}
where $T$ is any rotation or translation acting on $\varphi \in \cS'(\R^2)$ and $f\in \cS(\R^2)$ in the standard way:
\begin{equation}
  T\varphi(f)) = \varphi(T^*f), \qquad T^*f(x) = f(Tx).
\end{equation}
Indeed, for the restriction of the field to mean $0$ test functions, Euclidean invariance is discussed above. %
By the construction of the extension of the measure to $\cS'(\R^2)$ it is clear that this property carries over to $\nu^{\SG(\beta,z)}$.

\begin{proof}[Proof of Theorem~\ref{thm:SG4pi}]
  The measure is defined by \eqref{e:SG-def}.
  It follows from the definition that \eqref{e:SG4pi} holds and that $\nu^{\SG(4\pi,z)}$ is symmetric under $\varphi\mapsto -\varphi$
  because its restriction     modulo constants has this property.
  Euclidean invariance follows from the discussion around \eqref{e:Euclinv}.
  Superexponential moments are established in Corollary~\ref{cor:superexp-moments} below,
  and mixing is proved in Corollary~\ref{cor:mixing} below.

  To see that the measure is unique, let $f\in C_c^\infty(\R^2)$ be arbitrary.
  The symmetry $\varphi \mapsto -\varphi$ implies that $\avg{e^{i(\varphi,f)}}_{\SG(4\pi,z)} = \avg{e^{-i(\varphi,f)}}_{\SG(4\pi,z)} \in \R$
  and therefore also $\avg{e^{i(\varphi,f)}}_{\SG(4\pi,z)} \geq 0$ by \cite[Theorem~3.1]{MR496191}.
  Mixing and translation invariance thus imply, with the definition $f_x(y) = f(y-x)$,
  \begin{equation} \label{e:SG4pi-ffx}
    \avg{e^{i(\varphi,f)}}_{\SG(4\pi,z)}^2 = \lim_{x\to\infty} \avg{e^{i(\varphi,f-f_x)}}_{\SG(4\pi,z)}.
  \end{equation}
  Since $g=f-f_x\in C_c^\infty(\R^2)$ satisfies $\int g\, dx =0$,
  the right-hand side is determined by \eqref{e:SG4pi}.
  Since the left-hand side determines $\nu^{\SG(4\pi,z)}$ uniquely (as $\avg{e^{i(\varphi,f)}}_{\SG(4\pi,z)} \geq 0$), this concludes the proof of uniqueness.
  
  Finally, we consider scale covariance. Let $R_s\varphi(x) = \varphi(x/s)$.
  The regularized GFF satisfies
  \begin{equation}
    \avg{F(\varphi)}_{\GFF(m,\epsilon)} = \avg{F(R_s\varphi)}_{\GFF(ms, \epsilon/s))}.
  \end{equation}
  Since, clearly,
  \begin{equation}
    \epsilon^{-\beta/4\pi} \cos(\sqrt{\beta}\varphi) = s^{-\beta/4\pi} \times  (\epsilon/s)^{-\beta/4\pi} \cos(\sqrt{\beta}\varphi),
  \end{equation}
  and thus
  \begin{equation}
    \epsilon^{-\beta/4\pi} \int_{\Lambda} \cos(\sqrt{\beta}\varphi) \, dx=
    s^{2-\beta/4\pi} \int_{\Lambda/s}  (\epsilon/s)^{-\beta/4\pi} \cos(\sqrt{\beta}R_{1/s}\varphi) \, dx,
  \end{equation}
it follows that
  \begin{equation}
    \avg{F(\varphi)}_{\SG(\beta,z|\Lambda,m,\epsilon)}
    =
    \avg{F(R_s\varphi)}_{\SG(\beta,zs^{2-\beta/4\pi}|\Lambda/s,ms,\epsilon/s)}.
  \end{equation}
  Taking the limit $\epsilon\to 0$, $m\to 0$, $\Lambda\to\R^2$, for $F(\varphi)= e^{i(\varphi,f)}$, $f\in C_c^\infty(\R^2)$ with $\int f\, dx = 0$,
  \begin{equation}
    \avg{F(\varphi)}_{\SG(\beta,z)}
    =
    \avg{F(R_s\varphi)}_{\SG(\beta,zs^{2-\beta/4\pi})}.
  \end{equation}
  This applies for any $\beta>0$ (for which the $\epsilon\to 0$ limit in finite volume exists). For $\beta=4\pi,$
  using that the measure without the restriction $\int f\, dx =0$ is characterized by
  \eqref{e:SG4pi-ffx}, the analogous statement extends to the full infinite-volume measure.

  That the measure does not depend on the sign of $z$ follows for example from \eqref{e:CThat} below,
  which only depends on $\mu^2 = (Az)^2$.
  More generally, it follows (in finite volume, and as a consequence in particular in the limit)
  from the Coulomb gas representation which only involves neutral charge configurations
  (and thus only depends on $z^2$).   Since we do not need this, we omit the details.
\end{proof}

\subsection{Moments and mixing at the free fermion point}

We now prove the moment bound  \eqref{e:SG-moments} for the massless sine-Gordon model with $\beta=4\pi$
as well as the mixing property \eqref{e:intro-mixing}.
These extend the analysis of the two-point function from \cite{MR4767492}.

\begin{proposition} \label{prop:moments}
  For any functions $f_1,\dots, f_n \in \cS(\R^2)$, %
  the truncated moments (cumulants) of the
  massless sine-Gordon measure $\avg{\cdot}_{\SG(4\pi,z)}$ vanish if $n$ is odd and are given for even $n$ by
  \begin{multline}
    \avg{\varphi(f_1); \cdots ;\varphi(f_n)}^{\sf T}_{\SG(4\pi,z)}
    = \int _{\C^{n-1}} dp \, \widehat f_1(p_1)\cdots \widehat f_n(-p_1-\cdots -p_{n-1}) \hat C_{\mu}^{\sf T}(p_1,\dots,p_{n-1})
  \end{multline}
  where $\mu=Az$ (with a constant $A>0$), and 
  \begin{multline} \label{e:CThat}
    \hat C_{\mu}^{\sf T}(p_1,\dots,p_{n-1}) = 
    -\left(\frac{-i\sqrt{\pi}}{2\pi^2}\right)^n(n-1)! \frac{1}{p_1\cdots p_{n-1}(-p_1-\cdots-p_{n-1})}
    \\\times \int_{\C}dq  \frac{q(q+p_1)\cdots (q+p_1+\cdots +p_{n-1})}{(|q|^2+\mu^2)\cdots (|q+p_1+\cdots p_{n-1}|^2+\mu^2)}.
  \end{multline}
  Moreover, $\hat C_\mu^{\sf T}(p_1,\dots,p_{n-1})$ is locally integrable and there is a constant $C=C(z)$ and a norm $\|\cdot\|$ on $\cS(\R^2)$
  satisfying  $\|\eta_T\|\to 0$ as $T\to\infty$ %
  such that
  \begin{equation} \label{e:momentsT-bd}
    \avg{\varphi(f_1); \cdots ;\varphi(f_n)}^{\sf T}_{\SG(4\pi,z)}
    \leq C^n n!  \prod_{i=1}^n \|f_i\|.
  \end{equation}
  For the norm, one can take
  \begin{equation} \label{e:moments-norm}
    \|f\|=\|(1+|p|)|p|^{-1} \hat f\|_{L^{3/2}} + \|\hat f\|_{L^2} .
  \end{equation}
\end{proposition}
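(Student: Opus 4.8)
The plan is to derive the cumulant formula from the Coleman correspondence at $\beta=4\pi$ of \cite{MR4767492}, observing that the right-hand side of \eqref{e:CThat} is precisely a single closed massive fermion loop with $n$ insertions. The vanishing of odd cumulants is immediate: by the $\varphi\mapsto-\varphi$ symmetry of $\nu^{\SG(4\pi,z)}$ (Theorem~\ref{thm:SG4pi}), all odd moments, hence all odd cumulants, vanish. Fix then an even $n$ and Schwartz functions $f_1,\dots,f_n$.

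The first step is to reduce cumulants of $\varphi$ to cumulants of a fermion bilinear. Identifying $\R^2\cong\C$ and using $4\partial\bar\partial=\Delta$, one has $\widehat{\partial\varphi}(p)\propto p\,\hat\varphi(p)$ (with $p$ the complex momentum), so $\varphi(f)=(\partial\varphi)(h)$ with $\hat h(p)\propto p^{-1}\hat f(p)$; after imposing $\sum_i p_i=0$ by translation invariance, the $n$ such factors contribute exactly the prefactor $1/(p_1\cdots p_{n-1}(-p_1-\cdots-p_{n-1}))$ in \eqref{e:CThat}, and the handling of the constant mode in the extension of the measure to $\cS'(\R^2)$ recalled in Section~\ref{sec:mixing-withoutmod} is encoded in the mere local integrability of these factors. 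By the Coleman correspondence \eqref{e:corr3} made rigorous in \cite{MR4767492}, $\partial\varphi$ equals, up to an explicit multiplicative constant, the bilinear $\bar\psi_2\psi_1$ of the free massive Dirac field of mass $\mu=Az$ on $\R^2$ (the $\Lambda\to\R^2$, $m\to0$, $\epsilon\to0$ limit of the finite-volume Bosonized model used to construct $\nu^{\SG(4\pi,z)}$), so the cumulants of the $(\partial\varphi)(h_i)$ equal, up to constants, the connected correlations of the $\bar\psi_2\psi_1$. Expanding the latter by Wick's theorem for the Grassmann-Gaussian fermion, the connected contractions form a single fermion loop: $(n-1)!$ cyclic orderings of the $n$ insertions, each with a sign and a product of entries of the massive Dirac Green's function, whose momentum-space kernel $\hat S=(\Dirac+\mu)^{-1}$ has off-diagonal entry proportional to $q/(|q|^2+\mu^2)$. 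Fourier transforming this position-space loop produces the single integral $\int_\C dq$ of \eqref{e:CThat}, with numerator $q(q+p_1)\cdots(q+p_1+\cdots+p_{n-1})$ and denominator $\prod(|q+\cdots|^2+\mu^2)$; tracking the constants through the Bosonization dictionary and the Fourier conventions yields the combinatorial factor $(n-1)!$ and the prefactor $-\big(\frac{-i\sqrt\pi}{2\pi^2}\big)^n$. For $n=2$ this reproduces the kernel $\hat C_\mu$ of \cite{MR4767492}: there the $q$-integral converges only after the angular integration (the leading term $\propto\int e^{2i\theta}d\theta$ vanishes), whereas for even $n\ge4$ the integrand is $O(|q|^{-n})$ and converges absolutely.

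Next I would establish local integrability of $\hat C_\mu^{\sf T}$ and the bound \eqref{e:momentsT-bd}. The $q$-integral is bounded uniformly in $(p_1,\dots,p_{n-1})$: applying $|a|/(|a|^2+\mu^2)\le 1/(2\mu)$ to all but three of the $n$ fractions and the elementary estimate $\sup_{a,b,c\in\C}\int_\C \frac{d^2q}{(|q-a|+\mu)(|q-b|+\mu)(|q-c|+\mu)}\le C/\mu$ to the remaining three (three being the minimum for absolute convergence at $q=\infty$) gives $\big|\int_\C dq\,(\cdots)\big|\le C(z)^n$, hence $|\hat C_\mu^{\sf T}(p_1,\dots,p_{n-1})|\le C(z)^n(n-1)!\,|p_1\cdots p_{n-1}(-p_1-\cdots-p_{n-1})|^{-1}$. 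Since $|p|^{-1}\in L^1_{\loc}(\C)$ and the hyperplanes $\{p_i=0\}$ and $\{\sum p_i=0\}$ are transverse, $\hat C_\mu^{\sf T}\in L^1_{\loc}(\C^{n-1})$ and the integral defining the cumulant converges absolutely for Schwartz $f_i$. For \eqref{e:momentsT-bd}, writing $G_i(p)=|p|^{-1}|\hat f_i(p)|$,
\begin{equation}
  \big|\avg{\varphi(f_1);\cdots;\varphi(f_n)}^{\sf T}_{\SG(4\pi,z)}\big|
  \le C(z)^n(n-1)!\int_{\C^{n-1}} G_1(p_1)\cdots G_{n-1}(p_{n-1})\,G_n(p_1+\cdots+p_{n-1})\,dp ,
\end{equation}
and the last integral is an $n$-fold convolution evaluated at $0$, to which Young's inequality applies: estimating $n-2$ of the factors by $\|G_i\|_{L^1}\le C\|(1+|p|)|p|^{-1}\hat f_i\|_{L^{3/2}}$ (the weight $(1+|p|)$ taming the integral at infinity, the exponent $3/2$ the $|p|^{-1}$ singularity near $0$) and the remaining two by $\|G_i\|_{L^2}$, controlled using $\|\hat f_i\|_{L^2}$ together with the decay of the loop for separated momenta, gives the bound $C^n n!\prod_i\|f_i\|$ with the norm \eqref{e:moments-norm}. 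A one-line scaling computation with $\hat\eta_T(p)=\hat\eta(Tp)$ then gives $\|\eta_T\|=O(T^{-1/3})\to0$, as required in \eqref{e:SG-moments}.

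I expect the main obstacle to be twofold. Conceptually, one must ensure the fermion-loop formula truly holds for the \emph{infinite-volume, massless} measure, i.e.\ that the finite-volume Bosonized computation passes to the limit together with the spontaneous symmetry breaking — the point already confronted for the two-point function in \cite{MR4767492}, here to be tracked through the loop combinatorics. Technically, the heart of the argument is the uniform control of the loop integral across the ultraviolet and infrared regimes and the precise $L^1/L^{3/2}/L^2$ bookkeeping in the $n$-fold convolution needed to land exactly on the norm \eqref{e:moments-norm}, together with the borderline case $n=2$, which must be handled through the angular cancellation and matched to \cite{MR4767492}.
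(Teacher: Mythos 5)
Your overall strategy is the same as the paper's: use the Coleman correspondence at $\beta=4\pi$ (established in \cite{MR4767492}) to rewrite the cumulants of $\varphi$ as a single massive-fermion loop, Fourier transform the position-space loop, and bound the resulting kernel. Cosmetic differences: you use $\partial\varphi$ and \eqref{e:corr3} where the paper uses $\bar\partial\varphi$ and \eqref{e:corr4}, and you prove odd-$n$ vanishing from $\varphi\mapsto-\varphi$ while the paper observes that the $\sigma$ and $\sigma^{-1}$ contributions cancel in the explicit cyclic sum — both are fine. Your estimate of the $q$-integral (peel off all but three factors with $|a|/(|a|^2+\mu^2)\lesssim 1/\mu$ and bound the remaining three) is valid and equivalent to the paper's uniform H\"older bound with all exponents equal to $n$.

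The genuine gap is in the final Young's inequality step. You propose bounding $n-2$ of the $G_i$ in $L^1$ and the remaining two in $L^2$, with $G_i(p)=|\hat f_i(p)|/|p|$. But $G_i\notin L^2(\C)$: near $p=0$, $|G_i(p)|^2\sim |\hat f_i(0)|^2|p|^{-2}$, which is not locally integrable in two dimensions unless $\hat f_i(0)=0$ — and Proposition~\ref{prop:moments} is stated for all $f_i\in\cS(\R^2)$, with a norm \eqref{e:moments-norm} that is finite on all Schwartz functions. The phrase ``controlled using $\|\hat f_i\|_{L^2}$ together with the decay of the loop for separated momenta'' does not resolve this; the uniform kernel bound $|\hat C^{\sf T}_\mu|\leq C(\mu)^n n!\,|p_1\cdots p_{n-1}(\sum p_i)|^{-1}$ has no such decay built in. The paper sidesteps the issue by taking \emph{all} exponents in Young's inequality equal to $n/(n-1)\leq 3/2<2$, so the local singularity $|p|^{-n/(n-1)}$ is integrable in 2D, and then H\"older-interpolating the $L^{n/(n-1)}$ norm (inserting a factor $(1+|p|)^{\pm 1}$) down to the fixed $L^{3/2}$ weighted norm $\|(1+|p|)|p|^{-1}\hat f\|_{L^{3/2}}$; the $\|\hat f\|_{L^2}$ term in \eqref{e:moments-norm} is needed only for the $n=2$ case, which is handled separately by citation to \cite{MR4767492}. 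You should also note that the reduction ``$\varphi(f)=(\partial\varphi)(h)$ with $\hat h\propto p^{-1}\hat f$'' needs justification: $h$ is the Cauchy transform of $f$, which is not compactly supported (not even Schwartz), so one must first compute with $h\in C_c^\infty$ and pass to the limit via a cutoff $\chi_R$, controlling the boundary error using the a priori Besov estimate \eqref{e:SG-Besov}, as the paper does.
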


\begin{remark} \label{rk:etaT}
Note $\|\eta_T\|_{L^2} = T^{-1} \|\eta\|_{L^2} \to 0$
and that the norm \eqref{e:moments-norm}
also satisfies $\|\eta_T\|\to 0$: Denoting by $\|\cdot\|'$ the first term in the definition of the norm,
\begin{align} \label{e:etaT-norm}
  \|\eta_T\|'
  &= \pa{\int (1+|p|)^{3/2}|p|^{-3/2} |\hat \eta(T p)|^{3/2} \, dp}^{2/3}\nnb
  &= \pa{T^{-2+3/2} \int (1+|p|/T)^{3/2}|p|^{-3/2} |\hat \eta(p)|^{3/2} \, dp}^{2/3} \leq CT^{-1/3},
\end{align}
for $T\geq 1$.
\end{remark}

The proof of the proposition is given in Section~\ref{sec:pf-moments} below.
The bounds \eqref{e:momentsT-bd} imply that the moment generating function
exists at least for $w\in \C$ such that $2 C(z) |w| \|f\| \leq 1$ and then
\begin{equation} \label{e:expmoment-small}
\avg{e^{w\varphi(f)}}_{\SG(4\pi,z)} \leq e.
\end{equation}
By combining the bounds with the Gaussian domination bound \eqref{e:SG-variance-0}, one can actually deduce almost Gaussian moment bounds,
see Corollary~\ref{cor:superexp-moments} below.
First we show that local integrability of $\hat C_\mu^{\sf T}$ also implies the following mixing property at long distances.

\begin{corollary} \label{cor:mixing}
  The sine-Gordon measure with $\beta=4\pi$ is clustering (mixing):
  for all $F,G \in L^2$,
  \begin{equation}
    \avg{(T_xF)G}_{\SG(4\pi,z)} \to \avg{F}_{\SG(4\pi,z)}\avg{G}_{\SG(4\pi,z)}, \qquad (|x|\to\infty),
  \end{equation}
  where $T_x$ is the action of translation by $x\in\R^2$.
\end{corollary}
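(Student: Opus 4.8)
The plan is to prove mixing by reducing it to the decay of correlations of exponentials $e^{i(\varphi,f)}$ and then to the decay of truncated two-point-type quantities, which is controlled by the local integrability of $\hat C_\mu^{\sf T}$ from Proposition~\ref{prop:moments}. First I would recall the standard functional-analytic reduction: it suffices to prove the clustering statement on a dense subset of $L^2(\nu^{\SG(4\pi,z)})$, and then extend by a uniform $L^2$ bound and a density/approximation argument. A convenient dense family is the linear span of $F=e^{i(\varphi,f)}$ with $f\in C_c^\infty(\R^2)$; here the superexponential moments from Corollary~\ref{cor:superexp-moments} (equivalently the bounds \eqref{e:momentsT-bd}) guarantee these are in $L^2$ and that their span is dense, since the characteristic functional determines the measure. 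Thus the task becomes showing, for $f,g\in C_c^\infty(\R^2)$,
\begin{equation}
  \avg{e^{i(\varphi,f)+i(\varphi,T_x g)}}_{\SG(4\pi,z)} \to \avg{e^{i(\varphi,f)}}_{\SG(4\pi,z)}\avg{e^{i(\varphi,T_x g)}}_{\SG(4\pi,z)}, \qquad (|x|\to\infty),
\end{equation}
and the right-hand side is independent of $x$ by translation invariance.

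Next I would take logarithms and use the cumulant expansion from Proposition~\ref{prop:moments}. Since all odd cumulants vanish and the field has all moments, $\log\avg{e^{i(\varphi,h)}}_{\SG(4\pi,z)}$ is given by a convergent series in the truncated moments $\avg{\varphi(h);\cdots;\varphi(h)}^{\sf T}$ (convergence for fixed $h\in C_c^\infty$ following from \eqref{e:momentsT-bd}). Applying this with $h=f+T_x g$ and expanding multilinearly, the difference between $\log$ of the joint expectation and the sum of the two individual ones is a sum over cumulants of order $n\ge 2$ that involve \emph{both} $f$ and $T_xg$ in at least one slot each. So the heart of the matter is to show that each such mixed cumulant tends to $0$ as $|x|\to\infty$, with bounds summable in $n$ uniformly in $x$ so that dominated convergence applies to the series. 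A mixed cumulant of order $n$ with $k$ slots holding $f$ and $n-k$ slots holding $T_xg$ (with $1\le k\le n-1$) is, by the formula in Proposition~\ref{prop:moments}, an integral against $\widehat f(p_i)$ for the $f$-slots and $e^{-ix\cdot p_j}\widehat g(p_j)$ for the $T_xg$-slots, weighted by $\hat C_\mu^{\sf T}$. Because $\hat C_\mu^{\sf T}$ is locally integrable and the integrand is in $L^1$ (using Schwartz decay of $\widehat f,\widehat g$ and the moment norm bounds), the oscillatory factor $e^{-ix\cdot(p_{j_1}+\cdots)}$ forces the integral to $0$ as $|x|\to\infty$ by the Riemann--Lebesgue lemma; crucially the sum of the $f$-momenta plus the sum of the $g$-momenta is zero, so the phase is genuinely a nonzero combination of the $g$-type momenta and does not degenerate.

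After establishing termwise decay, I would assemble the pieces: the bound \eqref{e:momentsT-bd} gives $|\text{mixed cumulant of order }n|\le C^n n!\,(\|f\|+\|g\|)^n/\,?$ — more precisely, after dividing by the combinatorial factor $n!$ coming from the exponential series, the general term in the series for $\log$ is bounded by $(C'(\|f\|+\|g\|))^n$, which is summable once $f,g$ are scaled small; to handle general $f,g$ one either notes that mixing of $e^{i(\varphi,f)}$ for all $f$ follows from mixing for small multiples together with the group property $T_{x+y}=T_xT_y$ and a standard tensorization/semigroup argument, or one works directly with the real and imaginary parts and uses that the quantity $\avg{e^{i(\varphi,f)}e^{i(\varphi,T_xg)}}$ is bounded by $1$ so dominated convergence applies to the ($x$-indexed) family once termwise convergence and an $x$-uniform summable majorant are in hand. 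Finally, the extension from the dense span of exponentials to all $F,G\in L^2$ is the routine $\varepsilon/3$ argument using $\|T_xF\|_{L^2}=\|F\|_{L^2}$ (translation invariance) and Cauchy--Schwarz. I expect the main obstacle to be the bookkeeping in the second paragraph: getting a clean $x$-uniform, $n$-summable bound on the mixed-cumulant terms so that Riemann--Lebesgue can be applied inside the infinite sum — i.e.\ interchanging the limit $|x|\to\infty$ with the cumulant series — rather than any single estimate; the individual ingredients (local integrability of $\hat C_\mu^{\sf T}$, the factorial bound \eqref{e:momentsT-bd}, Riemann--Lebesgue) are all already available from Proposition~\ref{prop:moments}.
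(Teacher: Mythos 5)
Your plan is close in spirit to the paper's, but you and the paper make a different choice of ``test class'' and this choice matters. The paper proves clustering first for \emph{cylinder monomials}
$F=\prod_{i\le n}(\varphi,f_i)$, $G=\prod_{j\le m}(\varphi,g_j)$. For these, the moments-to-cumulants formula expresses $\avg{(T_xF)G}$ as a \emph{finite} sum over partitions $\pi$ of $\{1,\dots,n+m\}$; each block mixing an $f$-index with a $g$-index contributes a truncated correlation that vanishes as $|x|\to\infty$ by Riemann--Lebesgue (using local integrability of $\hat C_\mu^{\sf T}$), and the non-mixing partitions reassemble into $\avg{F}\avg{G}$. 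No series, no interchange of limits. Linearity then handles cylinder polynomials, and a separate analyticity argument (the $\psi(w)=\E[F e^{w(\varphi,g)}]$ step) shows cylinder polynomials are dense in $L^2$, after which the standard $\varepsilon/3$ argument finishes.

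Your choice is exponentials $e^{i(\varphi,f)}$, with the log/cumulant generating functional, and here there is a genuine gap, which you yourself flag but do not resolve. The expansion
$\log\avg{e^{i(\varphi,h)}}_{\SG(4\pi,z)}=\sum_n \frac{i^n}{n!}\avg{\varphi(h);\cdots;\varphi(h)}^{\sf T}$
is, by the bound \eqref{e:momentsT-bd}, controlled only by $\sum_n (C\|h\|)^n$, so it is an absolutely convergent series only when $\|h\|$ is small, and the needed interchange of the $|x|\to\infty$ limit with the infinite sum only has a summable $x$-uniform majorant in that regime. Your first proposed fix --- appealing to the ``group property $T_{x+y}=T_xT_y$'' plus a ``tensorization/semigroup argument'' --- has no clear mechanism: mixing is a statement about the \emph{limit} $|x|\to\infty$, not about algebraic composition, and mixing of $e^{i\epsilon(\varphi,f)}$ for small $\epsilon$ does not linearly produce mixing of $e^{i(\varphi,f)}$. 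Your second fix --- dominated convergence because $|\avg{\cdots}|\le 1$ --- confuses boundedness of the functional with existence of a summable majorant for the \emph{cumulant series}; the latter is exactly what fails for general $f,g$. The correct salvage is to observe that the closed linear span of $\{e^{i\epsilon(\varphi,f)}: \epsilon\text{ small}, f\in C_c^\infty\}$ equals the closed linear span of cylinder polynomials (via divided differences/Taylor expansion with $L^2$ error control from the moment bounds), and then run the density argument --- but at that point you have essentially converted your proof into the paper's. In short: right ingredients (Riemann--Lebesgue on locally integrable $\hat C_\mu^{\sf T}$, translation invariance, $L^2$ density), but the exponential/cumulant-series route has an interchange-of-limits problem that the paper's polynomial route avoids by keeping all sums finite.
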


\begin{proof}
  By Proposition~\ref{prop:moments},
  the correlation kernels $\hat C_{\mu}^{\sf T}(p_1,\dots, p_{n-1})$ are locally integrable
  (and also integrable after multiplication by Schwartz functions).
  The Riemann--Lebesgue lemma  and the moments to cumulants formula therefore imply the
  claim when $F$ and $G$ are cylinder monomials, i.e., with $f_i \in \cS(\R^2)$ and $g_i\in\cS(\R^2)$,
  \begin{equation}
    F(\varphi) = (\varphi,f_1) \cdots (\varphi,f_n), \qquad
    G(\varphi) = (\varphi,g_1) \cdots (\varphi,g_m).
  \end{equation}
  By linearity the claim extends to cylinder polynomials.
  The proof is completed by the fact that cylinder polynomials are dense in $L^2(\nu^{\SG(4\pi,z)})$.
  This is essentially a consequence of the existence of exponential moments.
  Indeed, let $\cF$ be the subspace of $L^2$ which is orthogonal to all cylinder polynomials.
  Fix $F\in \cF$ and $g\in \cS(\R^2)$, and consider the function 
  \begin{equation}
    \psi(w)=\E(F(\varphi)e^{w(\varphi,g)}).
  \end{equation}
  By \eqref{e:expmoment-small},
  this function $\psi$ is well defined and analytic for $|\re w| \leq \epsilon = 1/(2C(z)\|g\|)$,
  and for $|w|\leq \epsilon$ the series expansion of the exponential implies that $\psi(w)=0$.
  By unique analytic continuation, this implies $\psi(w)=0$ for all $|\re w|\leq \epsilon$.
  In particular, $\psi(it)=0$ for all $t\in \R$,
  so $\mathcal F$ is orthogonal to $\mathrm{span}\{e^{i(\varphi,g)}: g\in \mathcal S(\R^2)\}$.
  Since the exponentials are dense in $L^2$, it follows that $\mathcal F=\{0\}$. This means that also cylinder polynomials are dense in $L^2$.

  For completeness,
  we also include the argument that the exponentials $e^{i(\varphi,g)}$ with $g\in \cS(\R^2)$ are dense in $L^2$.
  Indeed, for every Schwartz cylinder functional,
  \begin{equation}
    F(\varphi) = \tilde F((\varphi,f_1), \cdots, (\varphi, f_n)),
  \end{equation}
  denoting by $\hat F$ the Fourier transform of $\tilde F$,
  we have
  \begin{equation}
    F(\varphi) = \frac{1}{(2\pi)^n}
    \int \hat F(s_1,\dots, s_n) e^{i(\varphi,s_1f_1+\cdots + s_n f_n)} \, ds_1\cdots ds_n,
  \end{equation}
  and it suffices to show that Schwartz cylinder functionals are dense.
  Since any bounded Borel cylinder functional can be approximated by Schwartz
  cylinder functionals, it suffices to show that bounded Borel cylinder functionals are dense.
  The monotone class theorem implies that bounded Borel cylinder functionals are dense
  in the class of all bounded cylinder-measurable functionals. But every $F \in L^2$
  is the monotone limit of the bounded $F \wedge N$ as $N\to \infty$.
\end{proof}

By combining the cumulant bound \eqref{e:momentsT-bd} with the Gaussian domination bound
\eqref{e:SG-variance-0}, one can also obtain almost Gaussian moment bounds in a suitable norm.
This norm is defined by
\begin{equation}
  \dnorm{f} = \|f-\hat f(0)\eta\|_{\dot H^{-1}} + |\hat f(0)|,
\end{equation}
where $\eta \in C_c^\infty(\R^2)$ with $\hat\eta(0)=1$ is fixed.

\begin{corollary} \label{cor:superexp-moments}
  The sine-Gordon measure with $\beta=4\pi$ satisfies, 
  for any $f \in \cS(\R^2)$, $p \geq 2$, %
  \begin{equation} \label{e:superexp-moment1}
    \avg{|(\varphi,f)|^p}_{\SG(\beta,z)}
    \leq  C^p (p \log p)^{p/2} \dnorm{f}^p 
  \end{equation}
  and
  \begin{equation} \label{e:superexp-moment2}
    \avg{|(\varphi,f-\hat f(0)\eta_T)|^p}_{\SG(\beta,z)}
    \leq C^p \qB{ (p\log p)^{p/2} + T^{-p/3} p^p } \dnorm{f}^p.
  \end{equation}
  In particular, \eqref{e:superexp-moment1} implies that $\nu^{\SG(4\pi,z)}$ has superexponential (almost Gaussian) moments.
\end{corollary}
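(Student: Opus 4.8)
The plan is to combine the Gaussian domination bound \eqref{e:SG-variance-0}, which controls $(\varphi,g)$ with genuine Gaussian moments whenever $\hat g(0)=0$ but is blind to the constant (``zero'') mode of the field, with the factorial cumulant estimate \eqref{e:momentsT-bd} of Proposition~\ref{prop:moments}, which applies to arbitrary test functions but by itself yields only order-$p$ moments; the almost-Gaussian bound will come from interpolating between the two. First I rephrase the inputs in moment form. From \eqref{e:SG-variance-0}, for every $g\in\cS(\R^2)$ with $\hat g(0)=0$ one has $\avg{e^{t(\varphi,g)}}_{\SG(4\pi,z)}\le e^{\frac12 t^2\|g\|_{\dot H^{-1}}^2}$ for all real $t$, hence, optimising the Markov bound, $\avg{|(\varphi,g)|^p}_{\SG(4\pi,z)}\le (Cp)^{p/2}\|g\|_{\dot H^{-1}}^p$ for $p\ge1$. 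From \eqref{e:momentsT-bd}, together with the vanishing of the mean and of the odd cumulants of $(\varphi,h)$ (by the $\varphi\mapsto-\varphi$ symmetry of $\nu^{\SG(4\pi,z)}$), the cumulant generating function $t\mapsto\log\avg{e^{t(\varphi,h)}}_{\SG(4\pi,z)}$ is bounded by $2(Ct\|h\|)^2$ on $|t|\le 1/(2C\|h\|)$, where $\|\cdot\|$ is the norm \eqref{e:moments-norm}; taking $t=1/(2C\|h\|)$, using $|(\varphi,h)|^p\le p!\,t^{-p}e^{t|(\varphi,h)|}$ and Stirling's formula, this gives $\avg{|(\varphi,h)|^p}_{\SG(4\pi,z)}\le (C'p\|h\|)^p$ for all $p\ge1$ and all $h\in\cS(\R^2)$ --- no mean-zero restriction, but only exponential growth in $p$.

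The core step is to estimate the single functional $(\varphi,\eta)$, which carries the zero mode, and for this I split $(\varphi,\eta)=(\varphi,\eta-\eta_S)+(\varphi,\eta_S)$ for a scale $S\ge 2$ to be chosen. The function $\eta-\eta_S$ satisfies $\widehat{\eta-\eta_S}(0)=0$, and a routine Fourier-side computation (using $|\hat\eta(p)-\hat\eta(Sp)|\lesssim S|p|$ for $|p|\le 1/S$ and $\le 2$ otherwise, and the logarithmic divergence of $\int|p|^{-2}\,dp$ near the origin in two dimensions) gives $\|\eta-\eta_S\|_{\dot H^{-1}}^2\le C\log S$; by the first input above, $\avg{|(\varphi,\eta-\eta_S)|^p}^{1/p}\le C\sqrt{p\log S}$. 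For the second term, Remark~\ref{rk:etaT} (in particular the computation \eqref{e:etaT-norm} together with $\|\widehat{\eta_S}\|_{L^2}=2\pi S^{-1}\|\eta\|_{L^2}$) gives $\|\eta_S\|\le CS^{-1/3}$, so the second input gives $\avg{|(\varphi,\eta_S)|^p}^{1/p}\le CpS^{-1/3}$. Choosing $S=p^3$ balances the two contributions and yields $\avg{|(\varphi,\eta)|^p}_{\SG(4\pi,z)}\le (C\sqrt{p\log p})^p$ for all $p\ge 2$.

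It remains to assemble the pieces. Writing $(\varphi,f)=(\varphi,f-\hat f(0)\eta)+\hat f(0)(\varphi,\eta)$, bounding the first summand by the Gaussian input with $\|f-\hat f(0)\eta\|_{\dot H^{-1}}\le\dnorm f$, the second by the previous paragraph with $|\hat f(0)|\le\dnorm f$, and raising the resulting $L^p$-norm bound to the $p$-th power, gives \eqref{e:superexp-moment1}. For \eqref{e:superexp-moment2} I write $(\varphi,f-\hat f(0)\eta_T)=(\varphi,f-\hat f(0)\eta)+\hat f(0)((\varphi,\eta)-(\varphi,\eta_T))$, bound $(\varphi,f-\hat f(0)\eta)$ and $(\varphi,\eta)$ as above, bound $(\varphi,\eta_T)$ directly by the cumulant input using $\|\eta_T\|\le CT^{-1/3}$ from Remark~\ref{rk:etaT}, and again raise to the $p$-th power; the two resulting terms are (a constant times $\dnorm f^p$ times) $(p\log p)^{p/2}$ and $T^{-p/3}p^p$.

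The only genuine difficulty is the zero-mode estimate in the second paragraph: Gaussian domination cannot see the constant mode, and the cumulant bound alone gives only sub-exponential (order-$p$) moments, so the almost-Gaussian bound \eqref{e:superexp-moment1} truly requires combining the two inputs --- offloading the zero mode onto the spread-out mollifier $\eta_S$, whose $\dot H^{-1}$-distance to $\eta$ grows only logarithmically (a two-dimensional effect) while its cumulant norm decays like $S^{-1/3}$, and balancing at $S=p^3$. The extra $\log p$ factor is intrinsic to this mechanism, and the subsidiary estimates (the $\dot H^{-1}$ logarithmic bound and the $S^{-1/3}$ decay of $\|\eta_S\|$) are elementary, the latter being essentially \eqref{e:etaT-norm}.
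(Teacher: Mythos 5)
Your proof is correct and takes essentially the same approach as the paper: both split off the zero mode via $f = (f-\hat f(0)\eta) + \hat f(0)\eta$, handle the mean-zero part by Gaussian domination \eqref{e:SG-variance-0}, and bound $(\varphi,\eta)$ by writing $\eta = (\eta-\eta_S) + \eta_S$ with the logarithmic $\dot H^{-1}$-estimate on $\eta-\eta_S$ and the cumulant bound together with $\|\eta_S\|\lesssim S^{-1/3}$ on $\eta_S$, then choosing $S$ as a power of $p$. The only differences are cosmetic: you take $S=p^3$ where the paper takes $T=p^{3/2}$ (both choices balance to the same $(p\log p)^{p/2}$ leading term), and the paper derives \eqref{e:superexp-moment2} directly from \eqref{e:superexp-moment1} plus the $\eta_T$-bound rather than re-decomposing.
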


\begin{proof}
  We first observe that \eqref{e:expmoment-small} and \eqref{e:etaT-norm} imply that
  (using that $\avg{e^{tX}} \leq e$ for $t=1/\sigma$ implies
  $\avg{X^p} \leq \frac{p!}{t^p} \avg{e^{tX}} \leq (C\sigma)^pp^p$), for any $T \geq 1$ and $p\geq 1$,
  \begin{equation}
    \avg{|(\varphi,\eta_T)|^p}_{\SG(4\pi,z)} \leq C^p T^{-p/3} p^p.
  \end{equation}
  Thus \eqref{e:superexp-moment2} follows from \eqref{e:superexp-moment1}.
  To show the latter bound, let $f\in \cS(\R^2)$ and write 
  \begin{equation}
    f = (f -\hat f(0)\eta_1) + \hat f(0) (\eta_1-\eta_T) + \hat f(0)\eta_T,
  \end{equation}
  with the choice $T=p^{3/2}$.
  Then the third term is bounded by the above by:
  \begin{equation}
    \avg{|(\varphi,\hat f(0)\eta_T)|^p}_{\SG(4\pi,z)} \leq (CT^{-1/3}|\hat f(0)|)^p p^p \leq  C^p \dnorm{f}^p p^{p/2}.
  \end{equation}
  For the first and middle term we use \eqref{e:SG-variance-0}  which gives (using the fact that $\avg{e^{tX}} \leq e^{\frac12 t^2 \sigma^2}$ implies with $t=\sqrt{p}/\sigma$ that $\avg{X^p} \leq \frac{p!}{t^p} \avg{e^{tX}} \leq (C\sigma)^p p^{p/2}$)
  \begin{equation}
    \avg{|(\varphi,f-\hat f(0)\eta_1)|^p}_{\SG(4\pi,z)} \leq C^p \|f-\hat f(0)\eta_1\|_{\dot H^{-1}}^p p^{p/2} \leq C^p\dnorm{f}^p p^{p/2}.
  \end{equation}
  and 
  \begin{align}
    \avg{|(\varphi,\hat f(0)(\eta_1-\eta_T))|^p}_{\SG(4\pi,z)}
    &\leq C^p |\hat f(0)|^p\|\eta_1-\eta_T\|_{\dot H^{-1}}^p p^{p/2}
      \nnb
    &\leq C^p \dnorm{f}^p (\log T)^{p/2} p^{p/2} \leq C^p  \dnorm{f}^p(p \log p)^{p/2},
  \end{align}
  where we used that
  \begin{equation}
    \|\eta_T -\eta_1\|_{\dot H^{-1}}^2 = \int \frac{dp}{(2\pi)^2} \frac{1}{|p|^2}|\hat\eta(Tp)-\hat\eta(p)|^2 = O(\log T).
  \end{equation}
  This completes the proof.
\end{proof}

\subsection{Proof of Proposition~\ref{prop:moments}}
\label{sec:pf-moments}

For notational convenience, we assume $\mu>0$.
For $n=2$, a much more explicit version of the claim was already shown in \cite[Theorem~1.3]{MR4767492}
which implies that
\begin{equation}
  |\hat C^{\sf T}_\mu(p)|  \leq C(\mu) . %
\end{equation}
Thus 
\begin{equation} \label{e:twopointpf}
  \avg{\varphi(f_1); \varphi(f_2)}_{\SG(\beta,z)}^{\sf T}
  \leq C(\mu) \|\hat f_1\|_{L^2} \|\hat f_2\|_{L^2}
  .
\end{equation}
We therefore assume that $n\geq 3$ in the following.
We first assume that $h_1, \dots, h_n\in C_c^\infty(\C)$ are fixed and that $f_1, \dots, f_n$ are given by
\begin{equation}
  f_j = \bar\partial h_j,
\end{equation}
where we identify $\R^2$ with $\C$ from now on.
Under this assumption,
we will now compute cumulants of the random variables $\varphi(f_j)$.
We start with the following claim
for the truncated correlation functions of bilinears of free fermions with mass $\mu$.
These are given by (see \cite[Equation (1.10)]{MR4767492}):
\begin{equation}
  \langle \bar \psi_1\psi_2(x_1);\cdots; \bar\psi_1\psi_2(x_n)\rangle^\mathsf T=(-1)^{n+1}\sum_{\sigma}\left(-\frac{1}{\pi}\right)^n
  \prod_{i=1}^n \bar\partial_{x_{\sigma^i(1)}} K_0(\mu |x_{\sigma^i(1)}-x_{\sigma^{i+1}(1)}|),
\end{equation}
where the sum is over all cyclic permutations (with no fixed points allowed) and $K_0$ is the
modified Bessel function of the second kind.
For $n$ odd, each term in the product above is odd, and hence in the sum the terms corresponding to $\sigma$ and $\sigma^{-1}$ cancel.

\begin{lemma} \label{lem:hint3}
Let $n\geq 3$. For any $h_1,\dots, h_n$ as above, the following integral is absolutely convergent:
\begin{equation} \label{e:hint3}
  \int_{\C^n}dx_1\, \cdots \, dx_n\, h_1(x_1)\cdots h_n(x_n) \langle \bar \psi_1\psi_2(x_1);\dots;\bar\psi_1\psi_2(x_n)\rangle^\mathsf T
  .
\end{equation}
\end{lemma}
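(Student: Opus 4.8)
The plan is to bound the integrand pointwise in a way that is integrable over $\C^n$. The truncated correlation function is a sum over cyclic permutations $\sigma$ of the indices $\{1,\dots,n\}$ (no fixed points, which here just means the cycle has full length $n$) of products $\prod_{i=1}^n \bar\partial_{x_{\sigma^i(1)}} K_0(\mu|x_{\sigma^i(1)}-x_{\sigma^{i+1}(1)}|)$. Each factor involves the derivative of a Bessel function, so I would first record the two relevant facts about $K_0$: near the origin, $K_0(\mu r)\sim -\log r$ and $|\nabla_x K_0(\mu|x|)| = \mu|K_0'(\mu|x|)| \lesssim 1/|x|$ for $|x|\le 1$; and at infinity, $K_0(\mu r)$ and all its derivatives decay like $e^{-\mu r}/\sqrt{r}$. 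Since the $h_j$ are compactly supported, the exponential decay at infinity is not actually needed for convergence here — it is the local $1/|x|$ singularities that must be controlled — but it does no harm to keep it.

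The key point is that the graph of the cyclic permutation $\sigma$ is a single $n$-cycle on the vertices $x_1,\dots,x_n$, so each vertex $x_j$ appears in exactly two factors of the product. Fix a term indexed by $\sigma$. Relabel so that the cycle is $x_{j_1}\to x_{j_2}\to\cdots\to x_{j_n}\to x_{j_1}$, and change variables to the differences $y_k = x_{j_k} - x_{j_{k+1}}$ for $k=1,\dots,n-1$ together with, say, $x_{j_n}$ as the remaining free variable; the Jacobian is $1$. In these coordinates the product becomes $\bigl(\prod_{k=1}^{n-1} g(y_k)\bigr)\, g(y_1+\cdots+y_{n-1})$ where $g(y) = \bar\partial_y K_0(\mu|y|)$ satisfies $|g(y)|\lesssim \min(|y|^{-1}, e^{-\mu|y|/2})$, and the test functions contribute a factor $\prod_j h_j(x_j)$ which, after the change of variables, is a bounded compactly supported function of $(y_1,\dots,y_{n-1},x_{j_n})$. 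Integrating out $x_{j_n}$ over the compact support of the $h_j$'s costs a finite constant and leaves
\begin{equation}
  \int_{\C^{n-1}} dy_1\cdots dy_{n-1}\; \Bigl(\prod_{k=1}^{n-1}|g(y_k)|\Bigr)\, \bigl|g(y_1+\cdots+y_{n-1})\bigr|\, \chi(y_1,\dots,y_{n-1}),
\end{equation}
where $\chi$ is bounded with compact support. Here $|g(y_k)|\lesssim |y_k|^{-1}$ is locally integrable in two real dimensions (since $\int_{|y|\le 1} |y|^{-1}\,dy < \infty$), and the remaining factor $|g(y_1+\cdots+y_{n-1})|$ is bounded by a constant away from the hyperplane $y_1+\cdots+y_{n-1}=0$ and is dominated near it by $|y_1+\cdots+y_{n-1}|^{-1}$, which, after translating one of the variables, is again locally integrable in $\C$; Fubini then gives a finite bound. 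Summing the finitely many $\sigma$-terms (there are $(n-1)!$ of them) yields absolute convergence of \eqref{e:hint3}.

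The only mild subtlety — and the step I would treat most carefully — is the last one: making sure that the product of $n$ singular factors $\prod |y_k|^{-1}$ together with the one ``extra'' singular factor $|y_1+\cdots+y_{n-1}|^{-1}$ really does integrate, i.e. that the singularities do not overlap badly. This is where $n\ge 3$ and the cyclic (rather than arbitrary) structure of $\sigma$ matter: because $\sigma$ is a single $n$-cycle, the factors organize into a chain so that after the difference substitution the first $n-1$ singularities live in the independent variables $y_1,\dots,y_{n-1}$ and are handled one at a time by Fubini, while the last singularity is a single additional locally-$L^1$ factor in one of those same variables. One can make this completely rigorous by, for each $k$, using $|g(y_k)|\le C\,(1+|y_k|)^{-1/2-\delta}$ globally (combining the local bound with the exponential decay) and Young's inequality / iterated Fubini, or simply by the compact-support reduction above followed by the elementary observation that a finite product of the form $\prod_{k}|\ell_k(y)|^{-1}$, where the $\ell_k$ are affine functions of $y\in\C^{n-1}$ no two of which are parallel, is locally integrable on $\C^{n-1}$. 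I expect no real obstacle here beyond bookkeeping; the heart of the matter is the single-cycle combinatorics together with local integrability of $|y|^{-1}$ in two dimensions.
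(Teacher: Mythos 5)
Your overall strategy matches the paper's: bound $|\bar\partial_x K_0(\mu|x-y|)|\lesssim |x-y|^{-1}e^{-\mu|x-y|/2}$, observe that the cyclic permutation produces a single closed chain, change variables to the differences along that chain plus one free base point, integrate out the base point using the test functions, and then control the remaining integral of the product of singular factors. That much is right and is exactly the reduction the paper performs.

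The gap is in the last step, and it is a real one, not just bookkeeping. After the reduction you claim ``Fubini then gives a finite bound'' because each $|y_k|^{-1}$ is locally integrable in $\C$ and the extra factor $|y_1+\cdots+y_{n-1}|^{-1}$ is ``a single additional locally-$L^1$ factor in one of those same variables.'' But Fubini one variable at a time does not close: the inner integral
\[
\int_{|y_{n-1}|\le R} \frac{dy_{n-1}}{|y_{n-1}|\,|y_1+\cdots+y_{n-1}|}
\]
is \emph{not} bounded uniformly in $(y_1,\dots,y_{n-2})$ --- it diverges like $\log\bigl(1/|y_1+\cdots+y_{n-2}|\bigr)$ as the partial sum approaches $0$ --- so the argument has to track these logarithmic losses through $n-1$ layers, which you do not do. The fallback you offer makes it worse: the global bound $|g(y)|\le C(1+|y|)^{-1/2-\delta}$ is simply false, since $g(y)=\bar\partial_y K_0(\mu|y|)$ has a $|y|^{-1}$ singularity at the origin that $(1+|y|)^{-1/2-\delta}$ does not see, and that singularity is the entire difficulty; moreover $(1+|y|)^{-1/2-\delta}\notin L^2(\C)$ for small $\delta$, so it would not feed into Young's inequality anyway. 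Your third suggestion (local integrability of $\prod_k|\ell_k(y)|^{-1}$ for pairwise non-parallel affine $\ell_k$ on $\C^{n-1}$) is not an ``elementary observation'': a radial scaling at the common zero gives $\int r^{-n}\,r^{2n-3}\,dr$, which converges precisely because $n\ge 3$ --- so this is exactly where $n\ge3$ is used, and one still has to control the angular integral and lower strata, i.e.\ it is essentially equivalent to the lemma you were trying to avoid.

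The paper resolves this cleanly. After the same change of variables it applies Cauchy--Schwarz in the base point $x_1$ (your compact-support observation works equally well) and recognizes the remaining integral over the differences as the $n$-fold convolution $\rho^{*n}(0)$ of $\rho(u)=e^{-\mu|u|/2}/|u|$ with itself. Then it invokes the generalized Young convolution inequality $|\rho^{*n}(0)|\le\prod_{i=1}^n\|\rho\|_{p_i}$ with $\sum 1/p_i=n-1$, choosing $p_i=n/(n-1)<2$ so that $\|\rho\|_{p_i}<\infty$; this is where $n\ge3$ enters. You gesture at Young's inequality as a possible fix, so you have identified the right tool, but you do not set it up correctly: you would need the correct singular bound on $g$ and the correct exponents. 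In short: same route, but the concluding estimate is not actually established in your proposal.
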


\begin{proof}
Note that we have the bound $|\bar\partial_xK_0(\mu|x-y|)|\leq C_\mu|x-y|^{-1} e^{-\frac{\mu}{2}|x-y|}$, so it is enough to prove that for $g_1\dots,g_n\in L^p(\C)$ and nonnegative for each $p>1$ (including $p=\infty$),  
\begin{equation}
\int_{\C^n}dx\, g_1(x_1)\cdots g_n(x_n)\frac{e^{-\frac{\mu}{2}(|x_1-x_2|+\dots+|x_{n-1}-x_n|+|x_n-x_1|)}}{|x_1-x_2|\cdots |x_{n-1}-x_n| |x_{n}-x_1|}<\infty.
\end{equation}
For $j\geq 2$, let us make the change of variables $x_j=x_1+\sum_{k=2}^j u_k$ so that the integral becomes 
\begin{align}
\int_{\C^n}dx_1\, du_2\cdots du_n\, g_1(x_1) g_2(x_1+u_2)\cdots g_n(x_1+u_2+\cdots +u_n) \prod_{j=2}^n \frac{e^{-\frac{\mu}{2}|u_j|}}{|u_j|} \frac{e^{-\frac{\mu}{2}|\sum_{j=2}^nu_j|}}{|\sum_{j=2}^n u_j|}.
\end{align}
To bound this, we use that by the Cauchy-Schwarz inequality,
\begin{align}
\int dx_1\, g_1(x_1)g_2(x_1+u_2)\cdots g_n(x_1+u_2+\dots+u_n)\leq \|g_1\|_2\|g_2\|_2 \prod_{j=3}^n \|g_j\|_\infty.
\end{align}
Thus it remains to control 
\begin{equation}
\int_{\C^{n-1}}du_2\cdots du_n \prod_{j=2}^n \frac{e^{-\frac{\mu}{2}|u_j|}}{|u_j|} \frac{e^{-\frac{\mu}{2}|\sum_{j=2}^nu_j|}}{|\sum_{j=2}^n u_j|}=(\rho*\cdots *\rho)(0),
\end{equation}
where the right-hand side is the $(n-1)$-fold convolution of the function $\rho(u)=\frac{e^{-\frac{\mu}{2}|u|}}{|u|}$ with itself.
It can be bounded by a generalization of Young's convolution inequality which states that if $\sum_{i=1}^k \frac{1}{p_i}=k-1$ for some $p_1,\dots,p_k\geq 1$, then 
\begin{equation}
|\rho_1*\cdots *\rho_k(0)|\leq \|\rho_1\|_{p_1}\cdots \|\rho_k\|_{p_k}. 
\end{equation}
We apply this inequality with $k=n$ and $p_i=p=\frac{n}{n-1}$.
For $n\geq 3$, one has $p<2$ so that $\|\rho\|_p<\infty$. This concludes the proof.
\end{proof}

\begin{lemma}
  Let $n\geq 3$.
  For any $h_1, \dots, h_n \in C_c^\infty(\C)$ and  $f_i = \bar\partial h_i$, %
  \begin{equation} \label{e:SG-CThat}
    \avg{\varphi(f_1); \cdots ;\varphi(f_n)}^{\sf T}_{\SG(4\pi,z)}
    = \int _{\C^{n-1}} dp\, \widehat f_1(p_1)\cdots \widehat f_n(-p_1-\cdots -p_{n-1}) \hat C_{\mu}^{\sf T}(p_1,\dots,p_{n-1})
  \end{equation}
  where $\mu=Az$ and $\hat C_\mu^{\sf T}$ is as in \eqref{e:CThat}.
\end{lemma}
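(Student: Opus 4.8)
\emph{Proof sketch.} The plan is to reduce the bosonic truncated correlation to the explicit fermionic one recorded above via the Coleman correspondence of \cite{MR4767492}, and then to pass to Fourier variables.

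Since $f_j=\bar\partial h_j$ with $h_j\in C_c^\infty(\C)$, integration by parts gives $\varphi(f_j)=-(\bar\partial\varphi)(h_j)$; in particular $\widehat f_j(0)=0$, so $\varphi(f_j)$ is a functional of the gradient field, for which the Coleman correspondence at $\beta=4\pi$ from \cite{MR4767492} is directly applicable. By that correspondence together with the dictionary \eqref{e:corr4}, the law of the smeared current $(\bar\partial\varphi)(h)$ under $\nu^{\SG(4\pi,z)}$ coincides, up to one overall multiplicative constant per insertion and the linear identification, with the law of the fermion bilinear $\bar\psi_1\psi_2(h)$ for \emph{massive free fermions} of mass $\mu=Az$, where $A>0$ is fixed by the chosen renormalization (and equals $4\pi e^{-\gamma/2}$), as established in \cite{MR4767492}. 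Because cumulants are universal polynomials in the joint moments, this transfers to truncated correlations: there is a constant $c=c(\beta,z)$ with $\avg{\varphi(f_1);\cdots;\varphi(f_n)}^{\sf T}_{\SG(4\pi,z)}=c^n\,\langle \bar\psi_1\psi_2(h_1);\cdots;\bar\psi_1\psi_2(h_n)\rangle^{\sf T}$, the fermionic side being the infinite-volume massive free-fermion truncated correlation recalled above (from \cite[(1.10)]{MR4767492}).

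Now insert that explicit formula, i.e.\ the sum over fixed-point-free cyclic permutations $\sigma$ of $\prod_{i=1}^n\bar\partial_{x_{\sigma^i(1)}}K_0(\mu|x_{\sigma^i(1)}-x_{\sigma^{i+1}(1)}|)$. After smearing against $h_1\otimes\cdots\otimes h_n$, Lemma~\ref{lem:hint3} guarantees absolute convergence of the resulting $n$-fold integral, which licenses Fubini and the passage to Fourier variables. Using $\widehat{K_0(\mu|\cdot|)}(p)=2\pi(|p|^2+\mu^2)^{-1}$ and hence $\widehat{\bar\partial K_0(\mu|\cdot|)}(p)=i\pi\,\mathsf p\,(|p|^2+\mu^2)^{-1}$, with $\mathsf p$ the complex coordinate of $p\in\R^2\cong\C$, each cyclic order becomes a one-loop integral: the $n$ propagators carry momenta $q,\,q+p_1,\,\dots,\,q+p_1+\cdots+p_{n-1}$ (with $q$ the loop momentum, $p_j$ conjugate to $x_j$, and $\sum_j p_j=0$ imposed automatically), the $\bar\partial$-numerators produce the factor $q(q+p_1)\cdots(q+p_1+\cdots+p_{n-1})$, and re-expressing $\widehat h_j$ through $\widehat f_j=\widehat{\bar\partial h_j}$ produces the prefactor $\big(p_1\cdots p_{n-1}(-p_1-\cdots-p_{n-1})\big)^{-1}$. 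The loop integral $\int dq$ converges absolutely precisely because $n\geq3$ (the integrand decays like $|q|^{-n}$), which is also why $n=2$ must be, and is, handled separately. Summing the $(n-1)!$ cyclic contributions and collecting all numerical constants — the $2\pi$-factors from Fourier inversion, the $(-1)^{n+1}(-1/\pi)^n$ from \cite[(1.10)]{MR4767492}, the $(i\pi)^n$ from the numerators, and $c^n$ from the correspondence — yields exactly the kernel \eqref{e:CThat}, with $\mu=Az$; for $n$ odd the reflection $\sigma\leftrightarrow\sigma^{-1}$ makes both sides vanish, consistently.

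The conceptual input — that the massless infinite-volume sine-Gordon current is the massive free-fermion current — is imported wholesale from \cite{MR4767492}, so the remaining work is Fourier bookkeeping. The points requiring care are (i) that the position-space cumulant formula, absolutely convergent after smearing by Lemma~\ref{lem:hint3}, may be Fourier transformed term by term and that the loop momentum integral then converges, and (ii) that the numerical constants and the combinatorial factor $(n-1)!$ are carried through to land exactly on the normalization in \eqref{e:CThat}. Neither is a genuine obstacle once the structure is set up; the only substantive ingredient beyond \cite{MR4767492} is the absolute-convergence estimate of Lemma~\ref{lem:hint3}, which is the reason it was proved first. \qed
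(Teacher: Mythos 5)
Your proposal follows the same route as the paper: reduce via the Coleman/Bosonization correspondence from \cite{MR4767492} to the explicit truncated correlations of fermion bilinears, invoke Lemma~\ref{lem:hint3} for absolute convergence to justify Fubini, pass to Fourier variables via $\frac{1}{2\pi}K_0(\mu|\cdot|)=(-\Delta+\mu^2)^{-1}$, and re-express $\widehat h_j$ through $\widehat f_j$ to collect the $(n-1)!$ and the kernel \eqref{e:CThat}. The only cosmetic differences are that the paper pins down the per-insertion constant as $-i\sqrt{\pi}$ rather than leaving it abstract, and that you phrase the correspondence in terms of ``laws'' where it is really an identity of correlation functions — but your restatement at the level of truncated correlations is the one you actually use, and is correct.
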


\begin{proof}
By the Bosonization identity \cite[Theorem~1.1]{MR4767492}
and using that $\varphi(f_i) = \varphi(\bar\partial h_i) = -\bar\partial \varphi(h_i)$ in the sense of distributions,
the left-hand side of \eqref{e:SG-CThat} equals  \eqref{e:hint3} up a multiplicative constant $(-iB)^n = (-i\sqrt{\pi})^n$.
(The published version of \cite{MR4767492} contains an unimportant sign error in the Bosonization identity; the correct
sign can be found in the arXiv version.)
We compute  \eqref{e:hint3} in terms of the Fourier transform.
Since
\begin{equation}
  \frac{1}{2\pi}K_0(\mu|x-y|)=(-\Delta+\mu^2)^{-1}(x,y) =
  \int \frac{dq}{(2\pi)^2} \, \frac{e^{i q \cdot (x-y)}}{|q|^2+\mu^2},
\end{equation}
first write
\begin{align}
  &\avg{\bar\psi_1\psi_2(x_1);\dots.;\bar\psi_1\psi_2(x_n)}^\mathsf T\nnb
  &=(-1)^{n+1}\sum_\sigma \left(-\frac{1}{\pi}\right)^n \prod_{i=1}^n \bar \partial_{x_{\sigma^i(1)}}\frac{1}{2\pi}\int dq_i \frac{e^{i q_i (x_{\sigma^i(1)}-x_{\sigma^{i+1}(1)})}}{|q_i|^2+\mu^2}\nnb
  &=(-1)^{n+1}\sum_\sigma \left(-\frac{i}{4\pi^2}\right)^n \prod_{i=1}^n \int dq_i \frac{q_i}{|q_i|^2+\mu^2}e^{i q_i (x_{\sigma^i(1)}-x_{\sigma^{i+1}(1)})}
    .
\end{align}
Using Lemma~\ref{lem:hint3}, we thus have
\begin{align}
&\int_{\C^n}dx \, h_1(x_1)\cdots h_n(x_n) \avg{\bar\psi_1\psi_2(x_1);\dots.;\bar\psi_1\psi_2(x_n)}^\mathsf T\nnb
&=(-1)^{n+1}\left(-\frac{i}{4\pi^2}\right)^n\sum_{\sigma}\int_{\C^n} dx\, \frac{1}{(2\pi)^{2n}}\int _{\C^n} dp \,\widehat h_1(p_1)\cdots \widehat h_1(p_n) e^{i\sum_{j=1}^n p_j x_j}\nnb
&\qquad \qquad \times \int_{\C^n}dq\, \prod_{j=1}^n \frac{q_j}{|q_j|^2+\mu^2} e^{i\sum_{j=1}^n q_j(x_{\sigma^j(1)}-x_{\sigma^{j+1}(1)})} \nnb
&=(-1)^{n+1}\left(-\frac{i}{4\pi^2}\right)^n\sum_{\sigma} \frac{1}{(2\pi)^{2n}}\int _{\C^n} dp\, \widehat h_1(p_1)\cdots \widehat h_1(p_n) \nnb
  &\qquad \qquad \times \int_{\C^n}dq\, \prod_{j=1}^n \frac{q_j}{|q_j|^2+\mu^2}(2\pi)^{2n}\prod_{j=1}^n \delta(p_j+q_{\sigma^{-j}(1)}-q_{\sigma^{-j+1}(1)}).
\end{align}
Renaming the $q$-variables according to $\xi_j=q_{\sigma^{-j}(1)}$, the delta-function constraint becomes
\begin{equation}
  \xi_2=p_1+\xi_1, \quad \xi_3=p_2+\xi_2=p_1+p_2+\xi_1, \quad \cdots, \quad \xi_n=\sum_{j=1}^{n-1}p_j+\xi_1,
\end{equation}
and for $j=n$, we find $\delta(\sum_{j=1}^n p_j)$. As nothing depends on $\sigma$ anymore and there are $(n-1)!$ cyclic permutations, this becomes 
\begin{align}
&(-1)^{n+1}\left(-\frac{i}{4\pi^2}\right)^n(n-1)! \int _{\C^{n-1}} dp \, \widehat h_1(p_1)\cdots \widehat h_n(-p_1-\cdots -p_{n-1}) \nnb
&\qquad \qquad \times \int_{\C}dq_1 \, \frac{q_1(q_1+p_1)\cdots (q_1+p_1+\cdots +p_{n-1})}{(|q_1|^2+\mu^2)\cdots (|q_1+p_1+\cdots p_{n-1}|^2+\mu^2)}.
\end{align}
Applying this with $\widehat f_j(p)=\widehat{\bar\partial h_j}(p)=\frac{i}{2}p\widehat h_j(p)$, we can write this as 
\begin{align}
&(-1)^{n+1}\left(-\frac{1}{2\pi^2}\right)^n(n-1)! \int _{\C^{n-1}} dp\, \widehat f_1(p_1)\cdots \widehat f_n(-p_1-\cdots -p_{n-1}) \nnb
&\qquad \times \frac{1}{p_1\cdots p_{n-1}(-p_1-\cdots-p_{n-1})} \int_{\C}dq_1  \,\frac{q_1(q_1+p_1)\cdots (q_1+p_1+\cdots +p_{n-1})}{(|q_1|^2+\mu^2)\cdots (|q_1+p_1+\cdots p_{n-1}|^2+\mu^2)}.
\end{align}
This is the desired claim.
\end{proof}

Together with the $n=2$ case already handled by \eqref{e:twopointpf},
the following lemma completes the proof of Proposition~\ref{prop:moments} for test functions $f_i = \bar\partial h_i$ with $h_i \in C_c^\infty(\R^2)$.

\begin{lemma} \label{lem:momentn3}
For $n\geq 3$,
\begin{equation}
  |\hat C_{\mu}^{\sf T}(p_1,\dots,p_{n-1})|
  \leq  \frac{C(\mu)^n n!}{|p_1|\cdots |p_{n-1}| |p_1+\cdots+p_{n-1}|}
\end{equation}
and %
\begin{equation}
  \avg{\varphi(f_1); \cdots; \varphi(f_n)}^{\sf T}_{\SG(4\pi,z)} \leq C(\mu)^nn! \prod_{j=1}^n \|f_j\|
,\qquad
  \|f\|= \|(1+|p|)|p|^{-1} \hat f\|_{L^{3/2}}.
\end{equation}
\end{lemma}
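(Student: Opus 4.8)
The plan is to reduce the estimate to two elementary inequalities: the generalized H\"older inequality for the $q$-integral in the definition \eqref{e:CThat} of $\hat C_\mu^{\sf T}$, and the generalized Young convolution inequality (already used in Lemma~\ref{lem:hint3}) for the $(n-1)$-fold $p$-integral in \eqref{e:SG-CThat}. Throughout I identify $\C\cong\R^2$ and use $|z_1\cdots z_k|=|z_1|\cdots|z_k|$.

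\emph{Pointwise bound on $\hat C_\mu^{\sf T}$.} Set $s_0=0$ and $s_i=p_1+\cdots+p_i$ for $1\le i\le n-1$, so that the numerator and denominator in \eqref{e:CThat} are products over $i=0,\dots,n-1$ of $q+s_i$ and $|q+s_i|^2+\mu^2$ respectively ($n$ factors each). Since $|q+s_i|\le\sqrt{|q+s_i|^2+\mu^2}$, the $q$-integrand is bounded in modulus by $\prod_{i=0}^{n-1}g(q+s_i)$ with $g(w)=|w|/(|w|^2+\mu^2)$. The generalized H\"older inequality with all $n$ exponents equal to $n$, together with translation invariance of Lebesgue measure, then gives $\bigl|\int_\C dq\,(\cdots)\bigr|\le\prod_{i=0}^{n-1}\|g(\cdot+s_i)\|_{L^n(\R^2)}=\|g\|_{L^n(\R^2)}^n$, uniformly in the shifts. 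Splitting $\R^2$ into $\{|w|\le\mu\}$, where $g\le\tfrac1{2\mu}$, and $\{|w|>\mu\}$, where $g\le|w|^{-1}$ and $|w|^{-n}$ is integrable precisely because $n\ge3$, one finds $\|g\|_{L^n(\R^2)}\le C(\mu)$ uniformly in $n\ge3$. Feeding this into \eqref{e:CThat}, the explicit factor $\tfrac1{p_1\cdots p_{n-1}(-p_1-\cdots-p_{n-1})}$ is all that remains, and the prefactor $\bigl(\tfrac{\sqrt\pi}{2\pi^2}\bigr)^n(n-1)!$ is absorbed into $C(\mu)^n(n-1)!\le C(\mu)^n n!$, which is the first claim.

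\emph{Cumulant bound.} Insert the bound just obtained into \eqref{e:SG-CThat}: the left-hand side is dominated by $C(\mu)^n n!$ times the $n$-fold convolution at $0$ of the nonnegative functions $\psi_j(p)=|\hat f_j(p)|/|p|$, $j=1,\dots,n$. The generalized Young inequality with all $n$ exponents equal to $\tfrac{n}{n-1}$ (which satisfy $n\cdot\tfrac{n-1}{n}=n-1$) bounds this convolution by $\prod_{j=1}^n\|\psi_j\|_{L^{n/(n-1)}(\R^2)}$. Finally, writing $\psi_j=\bigl((1+|p|)|p|^{-1}\hat f_j\bigr)\cdot(1+|p|)^{-1}$ and applying H\"older with exponents $\tfrac32$ and $s=\tfrac{3n}{n-3}$ (interpreted as $s=\infty$, with $\|(1+|p|)^{-1}\|_{L^\infty}=1$, when $n=3$) gives $\|\psi_j\|_{L^{n/(n-1)}(\R^2)}\le\|f_j\|\,\|(1+|p|)^{-1}\|_{L^s(\R^2)}$, where $\|f_j\|=\|(1+|p|)|p|^{-1}\hat f_j\|_{L^{3/2}}$. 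Since $s\ge3>2$ for every $n\ge3$, the factor $\|(1+|p|)^{-1}\|_{L^s(\R^2)}$ is bounded by a universal constant uniformly in $n$; raising it to the power $n$ and absorbing it into $C(\mu)^n$ completes the proof (using also that \eqref{e:SG-CThat} identifies the left-hand side with the cumulant for the test functions under consideration).

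\emph{Main obstacle.} The only genuine subtlety is arranging the exponents so that the single norm $\|f\|=\|(1+|p|)|p|^{-1}\hat f\|_{L^{3/2}}$ suffices for all $n\ge3$ simultaneously: the Young step naturally asks for $L^{n/(n-1)}$, which equals $L^{3/2}$ only at the endpoint $n=3$, and it is exactly the $(1+|p|)$-weight that lets one trade $L^{n/(n-1)}$ for $L^{3/2}$ at the cost of the uniformly bounded factor $\|(1+|p|)^{-1}\|_{L^{3n/(n-3)}(\R^2)}$. One should also record in passing that both the $q$- and the $p$-integrals are absolutely convergent --- the former since $g\in L^n(\R^2)$ exactly when $n\ge3$, the latter being precisely the quantity estimated by the Young step --- which legitimizes the above manipulations and the use of Fubini.
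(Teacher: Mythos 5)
Your proposal is correct and follows essentially the same route as the paper's proof: generalized H\"older with all exponents equal to $n$ plus a split of the radial integral to get the pointwise bound on $\hat C_\mu^{\sf T}$, then generalized Young with exponents $n/(n-1)$ followed by a weighted H\"older step to trade $L^{n/(n-1)}$ for the fixed norm $\|f\|=\|(1+|p|)|p|^{-1}\hat f\|_{L^{3/2}}$; your exponent $s=3n/(n-3)$ is exactly the paper's $r$, and your $n=3$ degenerate case ($s=\infty$) corresponds to the paper handling $n=3$ separately.
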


\begin{proof}
By Hölder's inequality,  for $n \geq 3$ and $\mu \neq 0$,
\begin{align}
&\int_{\C}dq  \left|\frac{q(q+p_1)\cdots (q+p_1+\cdots +p_{n-1})}{(|q|^2+\mu^2)\cdots (|q+p_1+\cdots p_{n-1}|^2+\mu^2)}\right|\nnb
&\leq \left(\int_{\C}dq \, \frac{|q|^n}{(|q|^2+\mu^2)^n}\right)^{1/n}\cdots \left(\int_{\C}dq\, \frac{|q+p_1+\cdots +p_{n-1}|^n}{(|q+p_1+\cdots +p_{n-1}|^2+\mu^2)^n}\right)^{1/n} \nnb
&=\int_{\C}dq \, \frac{|q|^n}{(|q|^2+\mu^2)^n}
  \leq \int_{|q|\leq 1}dq \, \frac{|q|^n}{\mu^{2n}} + \int_{|q|>1} dq \, \frac{1}{|q|^{n}}
  \leq C(\mu)^n.
\end{align}
Therefore,
\begin{equation}
  |\hat C_{\mu}^{\sf T}(p_1,\dots,p_{n-1})|
  \leq  \frac{C(\mu)^n n!}{|p_1|\cdots |p_{n-1}| |p_1+\cdots+p_{n-1}|}.
\end{equation}
Moreover, again by Young's convolution inequality, for any $f_1,\dots, f_n \in C_c^\infty(\R^2)$,
with the notation $\hat g(p) = \hat f(p)/|p|$,
\begin{align}
  &\int dp_1\, \cdots \, dp_{n-1} \, \frac{|\widehat f_1(p_1)|}{|p_1|}\cdots \frac{|\widehat f_n(p_1+\dots+p_{n-1})|}{|p_1+\cdots +p_{n-1}|}
    \nnb
  &= (\hat g_1* \cdots * \hat g_n)(0) \leq \prod_{i=1}^n \|\hat g_i\|_{L^{n/(n-1)}} %
    \leq \prod_{i=1}^{n} \left(\int dp \frac{|\widehat f_i(p)|^{\frac{n}{n-1}}}{|p|^{\frac{n}{n-1}}}\right)^{\frac{n-1}{n}}
    ,
\end{align}
which is finite for $n\geq 3$. It follows that (with a different constant)
\begin{equation}
  \avg{\varphi(f_1); \cdots; \varphi(f_n)}^{\sf T}_{\SG(4\pi,z)} \leq C(\mu)^nn! \prod_{j=1}^n \|f_j\|
\end{equation}
with
\begin{equation}
  \|f\|= \|(1+|p|)|p|^{-1} \hat f\|_{L^{3/2}}.
\end{equation}
Indeed, for $n=3$ this bound is immediate from the bound above.
For $n>3$, H\"older's inequality with $p=3(n-1)/(2n)$ and $1/q = 1-2n/(3(n-1)) =(n-3)/(3n-3)$ gives
\begin{align}
  \pa{\int dp\, |\hat g|^{n/(n-1)}}^{(n-1)/n}
  &= \pa{\int dp\, (1+|p|)^{-n/(n-1)} (1+|p|)^{n/(n-1)} |\hat g|^{n/(n-1)}}^{(n-1)/n} \nnb
  &\leq \pa{\int dp\, (1+|p|)^{3/2}|\hat g|^{3/2} \, dp}^{2/3} \pa{\int (1+|p|)^{-r}}^{1/r}
\end{align}
where
\begin{equation}
   r = \frac{n}{n-1} \frac{3n-3}{n-3}  \geq 3
\end{equation}
and
\begin{align}
  \pa{\int dp\, (1+|p|)^{-r}}^{1/r}
  \leq C
  \pa{1 + \frac{1}{r-2}}^{1/r}
  =
   C \pa{\frac{r-1}{r-2}}^{1/r} \leq C.
\end{align}
This completes the proof.
\end{proof}

To complete the proof, we now extend the statement to all test functions $f_1, \dots, f_n \in \cS(\C)$ with $\int f_i \, dx = 0$.
Since $C_c^\infty(\C)$ is dense in $\cS(\C)$ and $f_i \to 0$ in $\cS(\C)$ implies $\|f_i\| \to 0$,
it suffices to fix $f_1, \dots, f_n \in C_c^\infty(\C)$ with $\int f_j \, dx =0$.
We can write such an $f_j$ as 
\begin{equation}
f_j(x)=\bar \partial h_j(x),
\end{equation}
where  $h_j$ is the Cauchy transform of $f_j$:
\begin{equation}
h_j(x)=\frac{1}{\pi}\int_{\C}du \, \frac{f_j(u)}{x-u}.
\end{equation}
The Cauchy transform is not a Schwartz function (and certainly not compactly supported as we assumed above), but we have the following fact.

\begin{lemma}
  Let $f \in C_c^\infty(\C)$ with $\int f\, dx =0$. Then its Cauchy transform $h$ satisfies, for each $k\geq 0$,
  \begin{equation}
    \nabla^kh(x) = O(|x|^{-2-k}), \qquad |x|\to\infty,
  \end{equation}
  (where the implied constant may depend on $k$) and   $h\in C^\infty(\C)\cap L^p(\C)$ for any $p \in (1,\infty]$.
\end{lemma}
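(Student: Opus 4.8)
The plan is to exploit that $h$ is holomorphic away from $\mathrm{supp}(f)$ and that the neutrality $\int f\,dx=0$ annihilates the leading Laurent coefficient of $h$ at infinity. First, smoothness: write $h=\tfrac1\pi\,\tfrac1{z}\ast f$. Since $\tfrac1{z}\in L^1_{\mathrm{loc}}(\C)$ and $f\in C_c^\infty(\C)$, the convolution is $C^\infty$ with $\partial^\alpha h=\tfrac1\pi\,\tfrac1{z}\ast\partial^\alpha f$; equivalently, in the representation $h(x)=\tfrac1\pi\int f(x-v)\,\frac{dv}{v}$ one differentiates under the integral with all derivatives falling on $f$, and each $\partial^\alpha h$ is continuous because $\partial^\alpha f\in C_c(\C)$. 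Hence $h\in C^\infty(\C)$.

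Next, the decay. Fix $R$ with $\mathrm{supp}(f)\subseteq B_R(0)$. Since $\bar\partial\,\tfrac1{\pi z}=\delta_0$ we have $\bar\partial h=f$, so $h$ is holomorphic on $\C\setminus\overline{B_R}$ (this is also immediate from $h(x)=\tfrac1\pi\int_{B_R}\frac{f(u)}{x-u}\,du$, manifestly holomorphic in $x\notin\overline{B_R}$), and $|h(x)|\le \tfrac{\|f\|_{L^1}}{\pi(|x|-R)}\to0$ as $|x|\to\infty$. Therefore $h$ admits a Laurent expansion $h(x)=\sum_{n\ge1}a_n x^{-n}$ for $|x|>R$, with
\[
a_1=\lim_{|x|\to\infty}x\,h(x)=\lim_{|x|\to\infty}\tfrac1\pi\int_{B_R}\tfrac{x}{x-u}\,f(u)\,du=\tfrac1\pi\int f(u)\,du=0
\]
by dominated convergence and the neutrality assumption. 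Hence $h(x)=O(|x|^{-2})$, and Cauchy's estimates applied on circles of radius $\sim|x|$ give $\partial_z^k h(x)=O(|x|^{-2-k})$ for every $k\ge0$. Since $h$ is holomorphic there, any real partial derivative of order $k$ is a fixed constant multiple of $\partial_z^k h$ (on holomorphic functions $\partial_{x_1}=\partial_z$, $\partial_{x_2}=i\partial_z$), so $\nabla^k h(x)=O(|x|^{-2-k})$, as claimed. One could instead Taylor-expand the kernel, $\nabla^k_x\frac1{x-u}=c\,x^{-(k+1)}+O(|u|\,|x|^{-(k+2)})$ for $u\in B_R$ and $|x|\ge 2R$, and use neutrality to kill the $x^{-(k+1)}$ term, but the holomorphic argument is cleaner.

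Finally, integrability follows by combining the two steps: $h$ is continuous, hence bounded on $\overline{B_R}$, and $|h(x)|\le C|x|^{-2}$ for $|x|>R$, so $h\in L^\infty(\C)$; and for $1<p<\infty$,
\[
\int_{\C}|h|^p\,dx\le\int_{\overline{B_R}}|h|^p\,dx+C^p\int_{|x|>R}|x|^{-2p}\,dx<\infty
\]
because $2p>2$. Thus $h\in L^p(\C)$ for all $p\in(1,\infty]$; the endpoint $p=1$ is genuinely excluded in general, since when $\int u\,f(u)\,du\ne0$ one has $h(x)\sim c\,|x|^{-2}$, so $h\notin L^1(\C)$. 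I do not anticipate a real obstacle here: the only steps needing a line of justification are the differentiation/limit interchanges under the integral sign (routine from the compact support of $f$ and local integrability of $1/z$) and the reduction of $\nabla^k$ to $\partial_z^k$ on holomorphic functions.
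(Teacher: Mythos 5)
Your proof is correct and takes essentially the same approach as the paper's: neutrality annihilates the $x^{-1}$ Laurent coefficient at infinity to give $h=O(|x|^{-2})$, higher-order decay comes from differentiating the expansion (you via Cauchy estimates, the paper via termwise differentiation of the Laurent series — equivalent), and smoothness comes from transferring derivatives to $f$ through the convolution structure (you via $h=\tfrac1\pi\,\tfrac1z*f$ directly, the paper via the log-potential representation, which is the same thing after one $\partial_x$).
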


\begin{proof}
Since $\int_{\C}du \, f(u)=0$, as $x\to\infty$,
\begin{equation}
h(x)=\frac{1}{\pi x}\int_{\C}du f(u)+O(|x|^{-2})=O(|x|^{-2}).
\end{equation}
In fact, we have a convergent series expansion in $1/x$ for large enough $x$, and the bound on $\nabla^k h$ follows by termwise differentiation.
Smoothness of $h$ follows from the fact that
\begin{equation}
h(x)=\partial_x \frac{2}{\pi}\int_{\C}du f(u)\log |x-u|,
\end{equation}
so by induction (on the order of the differential operator), one can prove that for any differential operator $D$ we have 
\begin{equation}
Dh(x)=\frac{2}{\pi}\int_{\C}du\, D_u \partial_u f(u)\log |x-u|.
\end{equation}
Combining smoothness with our decay estimate at infinity, we get the claim.
\end{proof}

\begin{proof}[Proof of Proposition~\ref{prop:moments}]
By \eqref{e:SG-moments-limit}, it suffices to assume
$f_1, \dots, f_n \in C_c^\infty(\R^2)$ satisfy $\int f_i \, dx = 0$.
Let $\chi_R \in C_c^\infty(\R^2)$ be equal to $1$ in $B_{R}(0)$, vanish outside $B_{2R}(0)$,
and satisfy $|\nabla^k\chi_R|\leq C_k R^{-k}$.
Given the Cauchy transforms $h_1, \dots, h_n$ of $f_1, \dots, f_n$,
we can apply Lemma~\ref{lem:momentn3} and the discussion preceding it
with $h_j$ replaced by $h_j \chi_R\in C_c^\infty(\R^2)$.
For the general statement, we then need to show that
\begin{equation}
  \avg{\varphi(f_1)\cdots \varphi(f_n)}_{\SG(\beta,z)} =  \lim_{R\to\infty} \avg{\varphi(\bar\partial (h_1\chi_R))\cdots \varphi(\bar\partial (h_n\chi_R))}_{\SG(\beta,z)}.
\end{equation}
For sufficiently large $R$ (such that $B_R(0)$ contains the support of $f_j$), we have
\begin{equation}
  \bar\partial (h_j \chi_R) = (\bar\partial h_j)\chi_R + h_j \bar\partial \chi_R = f_j + h_j \bar\partial \chi_R.
\end{equation}
Since $\int f_j \, dx =0$ by assumption and $\int \bar\partial(h_j\chi_R)\, dx =0$ also $\int h_j \bar\partial\chi_R \, dx = 0$.
Since, for any $r<1$ (see \cite[Appendix A]{2504.08606} for the definition of the norm, there denoted $|\!|\!|\cdot |\!|\!|_{r,\rho^{-1}}$),
and the definition of the weight \eqref{e:weight-def},
\begin{equation}
  \|h_j \bar\partial \chi_R \|_{B^{r}_{1,1}(\rho^{-1})} \lesssim \int_{B_{2R}(0)\setminus B_R(0)} \rho(x)^{-1} \frac{1}{|x|^2} \frac{1}{|x|} \,dx \lesssim R^{-1+\sigma}
\end{equation}
we have by duality of $C^{-s}(\rho)$ and $B^{r}_{1,1}(\rho^{-1})$ where $r>s$, see \cite[Remark~A.2]{2504.08606},
\begin{equation}
  \varphi(h_j \bar\partial\chi_R)
  \lesssim \|\varphi-(\varphi,\eta)\|_{C^{-s}(\rho)} R^{-1+\sigma} \to 0,
\end{equation}
where we used $\int h_j \bar\partial\chi_R \, dx =0$ to replace $\varphi$ by $\varphi-(\varphi,\eta)$.
The claim thus follows from \eqref{e:SG-Besov}.
\end{proof}

\section{Background on Gaussian free field}
\label{sec:Gauss}

In this section, we collect properties of the Gaussian free field, its scale decomposition,
its correlation functions, and the associated imaginary multiplicative chaos.
Most properties are relatively standard and proofs are given in Appendices~\ref{app:GMCGFF}--\ref{app:renormpart} when no convenient reference is available.

\subsection{Scale decomposition}
\label{sec:Gauss-decomp}

In many places throughout the article, we will need a scale decomposition for the Gaussian free field (GFF).
For concreteness, we fix the heat kernel decomposition, i.e., for $m > 0$, we decompose the covariance of the massive GFF as
\begin{equation}
  (-\Delta_x +m^2)^{-1} = \int_0^\infty \dot C_s \, ds = \int_0^\infty e^{-m^2 s} e^{\Delta_x s} \, ds.
\end{equation}
The decomposed field can be realized as a process
\begin{equation} \label{e:GFF-Wiener}
  \Phi^{\GFF(m)}_t = \int_t^\infty \sqrt{\dot C_s}\, dW_s = \int_t^\infty e^{-\frac12 m^2 s} e^{\frac12 \Delta_x s}\, dW_s
\end{equation}
where $(W_t)$ is a cylindrical Brownian motion on $L^2(\R^2)$ defined on some probability space fixed from now on,
see for example \cite[Chapters 4.1.2 and 4.3]{MR3236753} for background on this, and \cite{MR4798104}
for discussion of such a scale decomposition.
The process $\Phi^{\GFF(m)}$ takes values in $C((0,\infty),C^\infty(\R^2))$,
and various more quantiative regularity estimates we need are included in Appendix~\ref{app:Gauss}.

The full massive Gaussian free field on $\R^2$ is then realized as $\Phi^{\GFF(m)}_0$ and $\Phi_{\epsilon^2}^{\GFF(m)}$ provides
a small scale regularized version of the field (along with a large scale regularization provided by the mass $m>0$).
This regularization will be referred to as the heat kernel regularized GFF.
On the other hand, a large scale (heat kernel) regularized version of the GFF is given by
\begin{equation}
  \Phi_{0,t}^{\GFF(m)}  = \Phi^{\GFF(m)}_0-\Phi_t^{\GFF(m)} = \int_0^t \sqrt{\dot C_s} \, dW_s
\end{equation}
with covariance
\begin{equation}
  \int_0^t e^{-m^2 s} e^{\Delta_x s} \, ds.
\end{equation}
Since the upper bound by $t$ plays an analogous role to the mass scale $1/m^2$, the massless version of $\Phi^{\GFF(m)}_0-\Phi^{\GFF(m)}_t$
where $m=0$ is well defined for $t<\infty$ as well.

In addition to the heat kernel regularized GFF, we will also use 
the convolution regularized GFF (for example in the next subsection),
defined by $\eta_\epsilon * \Phi^{\GFF(m)}_0$ where $\eta \in C_c^\infty(\R^2)$
is a smooth radially symmetric mollifier and $\eta_\epsilon=\epsilon^{-2}\eta(x/\epsilon)$.

\subsection{Gaussian IMC and its Besov regularity}

Let  $\sqrt{2\pi}\varphi$  be a log-correlated Gaussian random field on $\R^2$ which for simplicity of the discussion
we assume to be translation invariant (stationary),
having covariance kernel
\begin{equation}
  \log\frac{1}{|x-y|} + g(x-y),
\end{equation}
with $g$ continuous. In particular, this applies 
with $\varphi = \Phi_0^{\GFF(m)}-\Phi_{t}^{\GFF(m)}$
with $(t,m) \in (0,\infty) \times [0,\infty) \cup (0,\infty] \cup (0,\infty)$,
where the notation is as in Section~\ref{sec:Gauss-decomp}.

The imaginary multiplicative chaos (IMC) is the limit $\epsilon\to 0$ in probability, when it exists, of the random generalized function (distribution)
\begin{equation} \label{e:Meps-bis}
  M_\alpha^\epsilon(f) = \wick{e^{i\alpha \sqrt{4\pi} \varphi_\epsilon}}_\epsilon(f) =  \epsilon^{-\alpha^2} \int e^{i\alpha \sqrt{4\pi}\varphi_\epsilon(x)} f(x)\, dx.
\end{equation}
The regularized field $\varphi_\epsilon$ is defined as $\varphi_\epsilon = \eta_\epsilon*\varphi$ with a nonnegative mollifier $\eta \in C_c^\infty(\R^2)$
and $\eta_\epsilon(x) = \epsilon^{-2} \eta(x/\epsilon)$,
and we assume $\eta$ is rotationally invariant and has support in the unit ball.

The IMC was introduced as the imaginary Gaussian multiplicative chaos in \cite{MR3339158,MR4149524}, but we use the term IMC
which is more natural when the underlying field is non-Gaussian (as our goal is to study its version with respect to the sine-Gordon measure).
In the above Gaussian setting,
the limit $M_\alpha$  exists in $\cD'(\R^2)$ if and only if $\alpha\in (-1,1)$, and the limit is independent of $\eta$ except for a multiplicative constant that could depend on $\eta$.
In the standard definition, the IMC would be normalized by $e^{\frac12 \alpha^2 \var(\sqrt{2\pi}\varphi_\epsilon(0))}$ instead of $\epsilon^{-\alpha^2}$.
The variance incorporates a mollifier dependent constant, which makes the corresponding limit independent of $\eta$,
but for our use it will be better to use the explicit renormalization.
Indeed, the variance regularization is not the correct one in the massless limit -- it would not lead to a nontrivial limit (or require another counterterm $m^{\alpha^2}$).

The Besov--H\"older regularity of the IMC  established in \cite{MR4149524} will play an important role for us,
and we now recall the statements in our context.
For definitions and properties of Besov spaces, see Section~\ref{sec:notation}. 
We write $C^{-s}_{\loc}(\R^2)$ %
for the space defined by the seminorms 
\begin{equation} \label{e:Cs-seminorm}
  \|\chi f\|_{C^{-s}}, \qquad \chi \in C_c^\infty(\R^2).
\end{equation}
The Besov--H\"older norms have the following important multiplication property
which we use for $r \in(0,1)$ and $s \in (0,1)$ with $r-s>0$ (see for example \cite[Proposition~A.3]{2504.08606}):
\begin{equation} \label{e:Besov-mult}
  \|fg\|_{C^{-s}} \lesssim \|f\|_{C^{-s}}\|g\|_{C^r}.
\end{equation}
It suffices to consider a countable family of $\chi$ in \eqref{e:Cs-seminorm}, for example defined such that $\chi_n(x)=1$
for $x\in B_n(0)$ and $n\in \N$, and $C^{-s}_{\loc}$ is metrizable, see for example \cite[Remark~2.20]{MR3724565}.

\begin{proposition} \label{prop:GMC}
  For any $s>\alpha^2$,
  the regularized Gaussian imaginary multiplicative chaos \eqref{e:Meps-bis}
  defined in terms of a log-correlated Gaussian random  field  
  converges 
  as an element of the local Besov--H\"older space $C^{-s}_\loc$ to a limit $M_\alpha \in C^{-s}_{\loc}$ in probability.
  In particular, for any $\delta>0$:
  \begin{equation}
    \lim_{\epsilon \to 0}\P(\|\chi M^\epsilon_\alpha-\chi M_\alpha\|_{C^{-s}}>\delta)=0 \qquad \text{for all $\chi\in C_c^\infty(\R^2)$.}
  \end{equation}
  For any $p<\infty$ and $\chi\in C_c^\infty(\R^2)$, the limit satisfies the moment bounds
  \begin{equation} \label{e:GMC-moments}
    \E\qa{\|\chi M_\alpha\|_{C^{-s}}^p} < \infty.
  \end{equation}
  The law of the limit is independent of the mollifier up to a deterministic multiplicative constant.
\end{proposition}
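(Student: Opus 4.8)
The plan is to prove this by the moment method, essentially as in \cite{MR4149524}, the only point of care being that we retain the explicit normalization $\epsilon^{-\alpha^2}$ in \eqref{e:Meps-bis} rather than the variance normalization. Since $\eta$ is compactly supported (say in $B_1(0)$), radially symmetric, and $g$ is continuous, the mean value property gives that the covariance $\Gamma_\epsilon$ of $\varphi_\epsilon=\eta_\epsilon*\varphi$ agrees with $\tfrac1{2\pi}(\log\tfrac1{|y-y'|}+g(y-y'))$ whenever $|y-y'|>2\epsilon$, while $\var(\varphi_\epsilon(0))=\tfrac1{2\pi}(\log\epsilon^{-1}+c_\eta)+o(1)$ with $c_\eta=\int_{\C^2}\eta(z)\eta(w)\log\tfrac1{|z-w|}\,dz\,dw+g(0)$. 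In particular $\epsilon^{-\alpha^2}=e^{-\alpha^2 c_\eta}e^{\frac12(\sqrt{4\pi}\alpha)^2\var(\varphi_\epsilon(0))}(1+o(1))$, so the scalar relating our normalization to the variance one converges; this is irrelevant for all estimates and only produces the mollifier-dependent constant in the final assertion.

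First I would write down the Gaussian moments. With $M^\epsilon_\alpha(y)=\epsilon^{-\alpha^2}e^{i\sqrt{4\pi}\alpha\varphi_\epsilon(y)}$ and signs $\sigma_j=\pm1$, Wick's theorem gives, for pairwise distinct points,
\[
\E\Big[\prod_{j=1}^{2k}M^\epsilon_\alpha(y_j)^{\sigma_j}\Big]
= e^{-2k\alpha^2 c_\eta}\,(1+o(1))\prod_{1\le j<l\le 2k}\big(e^{-g(y_j-y_l)}|y_j-y_l|\big)^{2\alpha^2\sigma_j\sigma_l},
\]
with a uniform majorant of the same shape in which $|y_j-y_l|$ is replaced by $\epsilon\vee|y_j-y_l|$. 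For $\E[|M^\epsilon_\alpha(\Psi^x_R)|^{2k}]$, where $\Psi^x_R=\chi\,\Psi_R(x-\cdot)$, there are $k$ charges $+1$ and $k$ charges $-1$, so $\sum_{j<l}\sigma_j\sigma_l=-k$; after the substitution $y_j=x+Rz_j$,
\[
\E\big[|M^\epsilon_\alpha(\Psi^x_R)|^{2k}\big]\ \lesssim\ R^{-2k\alpha^2}\int_{\C^{2k}}\prod_{j=1}^{2k}\Psi(z_j)\prod_{1\le j<l\le 2k}|z_j-z_l|^{2\alpha^2\sigma_j\sigma_l}\,dz\ \le\ C_k\,R^{-2k\alpha^2},
\]
uniformly in $\epsilon\le R\le1$ and $x\in\C$ (for $R<\epsilon$ one uses $|M^\epsilon_\alpha|\le\epsilon^{-\alpha^2}$ directly). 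The finiteness of the $z$-integral \emph{for every} $k$ is the characteristic feature of the imaginary (as opposed to real) chaos: a cluster of $m$ collapsing charges carries an exponent $2\alpha^2\sum\sigma_j\sigma_l\ge-\alpha^2 m$ against a region of real dimension $2(m-1)$, which is integrable precisely because $m(2-\alpha^2)>2$ whenever $m\ge2$ and $\alpha^2<1$ --- this is where an Onsager-type lower bound on the Coulomb energy is used and where the restriction $\alpha\in(-1,1)$ enters. The same estimate with $\Psi$ replaced by $\partial_i\Psi$ gives $\E[|\nabla(\Psi_R*(\chi M^\epsilon_\alpha))(x)|^{2k}]\le C_kR^{-2k(1+\alpha^2)}$.

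Given these pointwise bounds, a Kolmogorov/chaining argument on $\C$ upgrades them to Besov bounds: fixing $s>\alpha^2$ and $p<\infty$, I choose $k$ with $2k\ge p$ and $2k(s-\alpha^2)>2$, cover $K=\mathrm{supp}\,\chi+B_1(0)$ by $\sim R^{-2}$ balls of radius $R$, and obtain $\E[\sup_{x\in K}|\Psi_R*(\chi M^\epsilon_\alpha)(x)|^{2k}]\le C_kR^{-2-2k\alpha^2}$; summing over dyadic $R=2^{-n}\le1$ (to which $\sup_{R\le1}$ in \eqref{e:Besov-def} reduces up to a constant) yields $\E[\|\chi M^\epsilon_\alpha\|_{C^{-s}}^{2k}]\le C_k\sum_n 2^{-n(2k(s-\alpha^2)-2)}<\infty$ uniformly in $\epsilon$, hence also $\sup_\epsilon\E[\|\chi M^\epsilon_\alpha\|_{C^{-s}}^p]<\infty$. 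For the limit I would first note that for fixed $\phi\in C_c^\infty(\C)$ the three mixed second moments $\E[M^\epsilon_\alpha(\phi)\overline{M^{\epsilon'}_\alpha(\phi)}]$, $\E[|M^\epsilon_\alpha(\phi)|^2]$, $\E[|M^{\epsilon'}_\alpha(\phi)|^2]$ all converge, as $\epsilon,\epsilon'\to0$, to $e^{-2\alpha^2 c_\eta}I[\phi]$ with $I[\phi]=\int_{\C^2}\phi(y)\overline{\phi(y')}e^{2\alpha^2 g(y-y')}|y-y'|^{-2\alpha^2}\,dy\,dy'$ (absolutely convergent since $2\alpha^2<2$, the near-diagonal region handled by dominated convergence), so $(M^\epsilon_\alpha(\phi))_\epsilon$ is $L^2(\Omega)$-Cauchy; this defines $M_\alpha\in\cD'(\C)$ with $M^\epsilon_\alpha\to M_\alpha$ in $\cD'(\C)$ in probability. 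Rerunning the moment/chaining estimate on the difference $M^\epsilon_\alpha-M^{\epsilon'}_\alpha$, and using that $\Gamma_\epsilon$ and $\Gamma_{\epsilon'}$ differ only at scales $\le2(\epsilon\vee\epsilon')$ together with $\|\eta_\epsilon*\eta_\epsilon*g-g\|_{L^\infty(K)}\to0$, gives $\E[\|\chi(M^\epsilon_\alpha-M^{\epsilon'}_\alpha)\|_{C^{-s}}^{2k}]\to0$; by Markov's inequality $\chi M^\epsilon_\alpha$ is Cauchy in probability in $C^{-s}$, hence convergent (necessarily to $\chi M_\alpha$), and $\E[\|\chi M_\alpha\|_{C^{-s}}^p]<\infty$ by Fatou.

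Finally, for the mollifier: given two admissible mollifiers $\eta,\tilde\eta$, set $\kappa=e^{-\alpha^2(c_\eta-c_{\tilde\eta})}$. Repeating the second-moment computation for the pairings built from $\eta$ and $\tilde\eta$ shows that each of $\E[|M^\epsilon_{\alpha,\eta}(\phi)|^2]$, $\E[M^\epsilon_{\alpha,\eta}(\phi)\overline{M^\epsilon_{\alpha,\tilde\eta}(\phi)}]$, $\E[|M^\epsilon_{\alpha,\tilde\eta}(\phi)|^2]$ converges to $I[\phi]$ times $e^{-2\alpha^2 c_\eta}$, $e^{-\alpha^2(c_\eta+c_{\tilde\eta})}$, $e^{-2\alpha^2 c_{\tilde\eta}}$ respectively (the same kernel $I[\phi]$ because both mollifications collapse to a point, contributing the same $e^{2\alpha^2 g(y-y')}|y-y'|^{-2\alpha^2}$ off the diagonal); since $e^{-2\alpha^2 c_\eta}-2\kappa e^{-\alpha^2(c_\eta+c_{\tilde\eta})}+\kappa^2 e^{-2\alpha^2 c_{\tilde\eta}}=0$ by $\kappa e^{-\alpha^2 c_{\tilde\eta}}=e^{-\alpha^2 c_\eta}$, we get $\E[|M^\epsilon_{\alpha,\eta}(\phi)-\kappa M^\epsilon_{\alpha,\tilde\eta}(\phi)|^2]\to0$, hence $M_{\alpha,\eta}(\phi)=\kappa\,M_{\alpha,\tilde\eta}(\phi)$ a.s.\ for each $\phi$; testing against a countable dense family gives $M_{\alpha,\eta}=\kappa\,M_{\alpha,\tilde\eta}$ a.s.\ in $C^{-s}_{\loc}$. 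I expect the main obstacle to be the uniform moment bound of the second paragraph: finiteness of \emph{all} moments with the correct power of $R$ is exactly what distinguishes the imaginary chaos and rests on the Onsager-type control of the near-diagonal Coulomb-gas integral; the chaining, the $L^2$ convergence, and the mollifier comparison are then routine Gaussian bookkeeping, with the only other mildly technical point being the convergence-rate bookkeeping for the difference $M^\epsilon_\alpha-M^{\epsilon'}_\alpha$.
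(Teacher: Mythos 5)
Your proof is correct in substance but takes a genuinely different and more self-contained route than the paper's. The paper's proof (Appendix~\ref{app:GMC}) is essentially a citation to \cite{MR4149524}: it invokes the existence of $M_\alpha$ and the moment bound \eqref{e:GMC-moments} from the proof of item~(i) of Theorem~3.16 there, upgrades those bounds to be uniform in $\epsilon$ by a small modification, deduces convergence in $C^{-s}_{\loc}$ from tightness using compactness of the embedding $C^{-s}_\loc\subset C^{-s'}_\loc$ for $s'>s$, and handles the mollifier constant via Lemma~\ref{le:hkcov}. You instead reconstruct the moment bounds from scratch (Wick's theorem together with an Onsager-type electrostatic inequality to get finiteness of all $2k$-th moments), upgrade to Besov bounds by a Kolmogorov/chaining argument, extract a distributional limit by an $L^2$-Cauchy argument, and establish Cauchy-ness in $C^{-s}$ by rerunning the moment estimate on the difference $M^\epsilon_\alpha-M^{\epsilon'}_\alpha$. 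Both routes are sound; yours makes the $\alpha^2<1$ threshold transparent as a neutral-pair collapse condition and handles the mollifier dependence by a clean second-moment polarization identity, whereas the paper's is shorter at the cost of delegating the core estimate. Two caveats are worth flagging. First, the Onsager-type bound you gesture at---that a collapsing $m$-cluster carries singularity $|\cdot|^{-\alpha^2 m}$ against real dimension $2(m-1)$---is exactly the technical content the paper offloads to the reference; promoting it from a worst-case count to a genuine pointwise lower bound on the Coulomb energy is where the work lies, and this deserves more than a parenthetical. Second, the passage from pointwise $2k$-th moment bounds to $\E[\sup_{x\in K}|\Psi_R*(\chi M^\epsilon_\alpha)(x)|^{2k}]\lesssim R^{-2-2k\alpha^2}$ via a grid plus a gradient bound is circular as stated, since controlling the variation inside a grid cell is itself a supremum over a continuum; the clean fix is to trade the supremum for a high $L^p$ norm via a Besov embedding, as the paper does in the closely analogous Proposition~\ref{prop:GFF-Besov}. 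Neither is a fatal gap, but both are the steps a referee would press on.
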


The proposition is essentially proved in  \cite{MR4149524}, but since the statement is not exactly the same, we discuss
the adaption in Appendix~\ref{app:GMC}.

It will be helpful that $M_{\alpha}$ %
can be constructed through the convolution regularization (as in Proposition~\ref{prop:GMC})
or alternatively through a heat kernel regularization as in Section~\ref{sec:Gauss-decomp}.
This fact is recorded in the following proposition whose proof is provided in Appendix~\ref{app:GMCGFF-cov}.

\begin{proposition}\label{prop:hkimc}
For $f\in C_c^\infty(\R^2)$, $\alpha\in (-1,1)$, let $M_\alpha(f)$ be the limit of \eqref{e:Meps-bis} with $\varphi=\Phi_0^{\GFF(m)}$:
\begin{equation}
M_\alpha(f)=\lim_{\epsilon\to 0} \epsilon^{-\alpha^2}\int_{\R^2}dx\, e^{i\sqrt{4\pi}\alpha(\eta_\epsilon * \Phi_0^{\GFF(m)})(x)}f(x),
\end{equation}
with convergence in probability. 
Then this convergence also holds in $L^p$ for each $p\in [1,\infty)$ and 
\begin{equation}
  M_\alpha(f) = C_{\eta}^{\alpha^2}\lim_{\epsilon\to 0}  \epsilon^{-\alpha^2}\int_{\R^2}dx\, e^{i\sqrt{4\pi}\alpha\Phi_{\epsilon^2}^{\GFF(m)}(x)}f(x),
\end{equation}
with convergence in $L^p$ for each $p\in [1,\infty)$, and  the constant is given by
\begin{equation} \label{e:Ceta}
  C_\eta^{\alpha^2}=e^{-\alpha^2(-\frac{\gamma}{2}+\log 2+\int_{\R^2\times \R^2}dudv\, \eta(u)\eta(v)\log \frac{1}{|u-v|})},
\end{equation}
where $\gamma$ is the Euler-Mascheroni constant.
\end{proposition}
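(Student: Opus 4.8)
The plan is to reduce the statement to explicit Gaussian second-moment computations, importing from imaginary multiplicative chaos theory only the uniform-in-$\epsilon$ control of higher moments. Write $M_\alpha^\epsilon(f)=\epsilon^{-\alpha^2}\int_{\R^2} e^{i\sqrt{4\pi}\alpha\varphi_\epsilon(x)}f(x)\,dx$ for the convolution-regularized object in the statement (with $\varphi=\Phi_0^{\GFF(m)}$ and $\varphi_\epsilon=\eta_\epsilon*\varphi$) and $N_\alpha^\epsilon(f)=\epsilon^{-\alpha^2}\int_{\R^2} e^{i\sqrt{4\pi}\alpha\Phi_{\epsilon^2}^{\GFF(m)}(x)}f(x)\,dx$ for the heat-kernel-regularized one. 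Both $\varphi_\epsilon$ and $\Phi_{\epsilon^2}^{\GFF(m)}$ are smooth centered Gaussian fields whose covariances converge, off the diagonal, to $C:=(-\Delta+m^2)^{-1}$, and the same holds for the cross-covariance $\operatorname{Cov}(\varphi_\epsilon(x),\Phi_{\epsilon^2}^{\GFF(m)}(y))=\big(\eta_\epsilon*\!\int_{\epsilon^2}^\infty e^{-m^2 s}e^{\Delta_x s}(\cdot,y)\,ds\big)(x)$, where we use that $\Phi_0^{\GFF(m)}-\Phi_{\epsilon^2}^{\GFF(m)}$ is independent of $\Phi_{\epsilon^2}^{\GFF(m)}$. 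Using $C(u,v)=\tfrac{1}{2\pi}K_0(m|u-v|)$ and $K_0(r)=-\log(r/2)-\gamma+o(1)$, one finds on the diagonal (where, by translation invariance, the variances do not depend on the point) that $2\pi\var(\varphi_\epsilon(x))=\log\tfrac{1}{\epsilon}+\kappa_1+o(1)$ with $\kappa_1=\int_{\R^2}\!\int_{\R^2}\eta(u)\eta(v)\log\tfrac{1}{|u-v|}\,du\,dv+\log 2-\log m-\gamma$, and $2\pi\var(\Phi_{\epsilon^2}^{\GFF(m)}(x))=\tfrac{1}{2}\int_{\epsilon^2}^\infty\tfrac{e^{-m^2 s}}{s}\,ds=\log\tfrac{1}{\epsilon}+\kappa_2+o(1)$ with $\kappa_2=-\tfrac{\gamma}{2}-\log m$. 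Then $\kappa_1-\kappa_2=\int_{\R^2}\!\int_{\R^2}\eta(u)\eta(v)\log\tfrac{1}{|u-v|}\,du\,dv+\log 2-\tfrac{\gamma}{2}$, so $e^{-\alpha^2(\kappa_1-\kappa_2)}$ is exactly the constant $C_\eta^{\alpha^2}$ of \eqref{e:Ceta}; in particular the mass-dependent terms cancel.

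First I would record that for $\alpha\in(-1,1)$ the moments $\E|M_\alpha^\epsilon(f)|^{2k}$ and $\E|N_\alpha^\epsilon(f)|^{2k}$ are bounded uniformly in $\epsilon\in(0,1]$, for every $k$. This is the standard integrability of imaginary multiplicative chaos in its $L^2$ phase: by Wick's theorem these moments are, after cancelling the self-energies against $\epsilon^{-2k\alpha^2}$, mollified versions of the Coulomb-gas integrals $\int\prod_{i<j}|x_i-x_j|^{2\alpha^2\sigma_i\sigma_j}\,dx$ (with $\sigma_i\in\{\pm1\}$ the charges) over the compact support of $f$, and these converge precisely for $\alpha^2<1$, the binding constraint coming from a single $+/-$ collision; see \cite{MR3339158,MR4149524} and the proof of Proposition~\ref{prop:GMC}, where these uniform bounds are established. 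Given the convergence in probability $M_\alpha^\epsilon(f)\to M_\alpha(f)$ from Proposition~\ref{prop:GMC}, uniform integrability then upgrades it to convergence in $L^p$ for every $p<\infty$, which is the first assertion.

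Next I would compute the relevant second moments. By Wick's theorem,
\[
  \E\big[M_\alpha^\epsilon(f)\,\overline{N_\alpha^\epsilon(f)}\big]
  =\epsilon^{-2\alpha^2}\int_{\R^2}\int_{\R^2} f(x)\overline{f(y)}\;
  e^{-2\pi\alpha^2\big(\var\varphi_\epsilon(x)+\var\Phi_{\epsilon^2}^{\GFF(m)}(y)-2\operatorname{Cov}(\varphi_\epsilon(x),\Phi_{\epsilon^2}^{\GFF(m)}(y))\big)}\,dx\,dy.
\]
Using $0\le\operatorname{Cov}(\varphi_\epsilon(x),\Phi_{\epsilon^2}^{\GFF(m)}(y))\le\tfrac{1}{2\pi}\log\tfrac{1}{|x-y|}+O(1)$ on the support of $f$, the integrand is dominated by $\lesssim|f(x)\overline{f(y)}|\,|x-y|^{-2\alpha^2}$, which is integrable since $\alpha^2<1$, while pointwise (for $x\ne y$) the bracket equals $\tfrac{1}{2\pi}\big(2\log\tfrac{1}{\epsilon}+\kappa_1+\kappa_2\big)-2C(x,y)+o(1)$; hence by dominated convergence, with $I(f):=\int_{\R^2}\int_{\R^2} f(x)\overline{f(y)}\,e^{4\pi\alpha^2 C(x,y)}\,dx\,dy$,
\[
  \E\big[M_\alpha^\epsilon(f)\,\overline{N_\alpha^\epsilon(f)}\big]\to e^{-\alpha^2(\kappa_1+\kappa_2)}I(f),\qquad
  \E|M_\alpha^\epsilon(f)|^2\to e^{-2\alpha^2\kappa_1}I(f),\qquad
  \E|N_\alpha^\epsilon(f)|^2\to e^{-2\alpha^2\kappa_2}I(f),
\]
the last two being the same computation with one regularization used twice. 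Since $C_\eta^{\alpha^2}=e^{-\alpha^2(\kappa_1-\kappa_2)}$ is real, these combine to give
\[
  \E\big|M_\alpha^\epsilon(f)-C_\eta^{\alpha^2}N_\alpha^\epsilon(f)\big|^2
  \to\big(e^{-2\alpha^2\kappa_1}-2C_\eta^{\alpha^2}e^{-\alpha^2(\kappa_1+\kappa_2)}+(C_\eta^{\alpha^2})^2 e^{-2\alpha^2\kappa_2}\big)I(f)=e^{-2\alpha^2\kappa_1}I(f)\,(1-2+1)=0.
\]
Thus $M_\alpha^\epsilon(f)-C_\eta^{\alpha^2}N_\alpha^\epsilon(f)\to0$ in $L^2$, and, being bounded in every $L^{2k}$ by the previous step, also in every $L^p$. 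Combined with $M_\alpha^\epsilon(f)\to M_\alpha(f)$ in $L^p$, this gives $C_\eta^{\alpha^2}N_\alpha^\epsilon(f)\to M_\alpha(f)$ in $L^p$ for all $p<\infty$, which is the second assertion.

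The only genuinely non-elementary ingredient is the uniform-in-$\epsilon$ bound on the higher moments of the imaginary chaos --- equivalently, the convergence of the Coulomb-gas integrals for $\alpha\in(-1,1)$ --- which is the heart of imaginary GMC theory and is already invoked through Proposition~\ref{prop:GMC}; everything else is a routine second-moment calculation. The main point requiring attention is the bookkeeping of the short-distance asymptotics of the three covariances above, and in particular checking that the $\log m$ contributions cancel so that $C_\eta$ depends only on the mollifier and the Euler--Mascheroni constant, as in \eqref{e:Ceta}.
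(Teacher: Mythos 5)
Your proof is correct and follows essentially the same route as the paper's (Appendix B.2): uniform $L^{2k}$ moment bounds plus convergence in probability give $L^p$ convergence by uniform integrability (the paper uses Vitali), and the identification of the constant is then a Gaussian second-moment computation using the variance and cross-covariance asymptotics of Lemma~\ref{le:hkcov}, with $|x-y|^{-2\alpha^2}$ as the dominating kernel. The only place to be a little careful is where you invoke Proposition~\ref{prop:GMC} and \cite{MR3339158,MR4149524} for the uniform moment bounds on the \emph{heat-kernel}-regularized object $N_\alpha^\epsilon$: those references treat convolution regularizations, and the paper instead appeals to its own Theorem~\ref{th:fracapp} for this case; your Coulomb-gas reasoning is the right idea, but that specific citation is the correct one.
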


\subsection{Fractional correlation functions for the massless GFF}
\label{sec:Gauss-corr}

For distinct points $x_1,\dots, x_n \in \R^2$ and $\alpha_1,\dots,\alpha_n \in \C$ we define the fractional
charge (or vertex operator) correlation functions of the massless free field by
\begin{align} \label{e:gffcorr}
  &  \avg{\wick{e^{i\alpha_1  \sqrt{4\pi} \varphi(x_1)}} \cdots \wick{e^{i\alpha_n \sqrt{4\pi}  \varphi(x_n)}}}_{\GFF(0)}
  \nnb
  &=
  \lim_{m \to 0}\lim_{\epsilon \to 0}
  \E\left[\wick{e^{i\alpha_1 \sqrt{4\pi}  \Phi_{\epsilon^2}^{\GFF(m)}(x_1)}}_\epsilon \cdots \wick{e^{i\alpha_n \sqrt{4\pi} \Phi_{\epsilon^2}^{\GFF(m)}(x_n)}}_\epsilon\right]
\end{align}
where
\begin{equation}\label{eq:fracwick}
  \wick{e^{i\alpha \sqrt{4\pi}  \Phi_{\epsilon^2}^{\GFF(m)}(x)}}_\epsilon = \epsilon^{-\alpha^2} e^{i\alpha \sqrt{4\pi} \Phi_{\epsilon^2}^{\GFF(m)}(x)}.
\end{equation}
The existence of this limit (and an explicit formula for it) is proven in Lemma \ref{le:gfffrac}. We also define analogous correlation functions involving trigonometric functions by writing (within correlation functions) $\wick{\cos(\sqrt{4\pi}\alpha\varphi(x))}=\frac{1}{2}(\wick{e^{i\sqrt{4\pi}\alpha\varphi(x)}}+\wick{e^{-i\sqrt{4\pi}\alpha\varphi(x)}})$ and similarly for the regularized field.
Truncated correlation functions (cumulants) can be defined in the usual way and are denoted
\begin{equation}
  \avg{\wick{e^{i\alpha_1  \sqrt{4\pi} \varphi(x_1)}} ; \cdots ; \wick{e^{i\alpha_n \sqrt{4\pi}  \varphi(x_n)}}}_{\GFF(0)}^{\sf T}.
\end{equation}
Our definition of  \eqref{e:gffcorr} is in terms of the heat kernel regularized correlation functions;
an alternative definition in terms of the convolution regularized GFF would involve the factors \eqref{e:Ceta}.
In particular, for $\alpha_i \in (-1,1)$ for which the imaginary multiplicative chaos $M_{\alpha_i}$ is defined as a random distribution,
it is not difficult to show that for $f_1,\dots, f_n \in C_c^\infty(\R^2)$,
\begin{multline}
  \lim_{m\to 0}\avg{M_{\alpha_1}(f_1) \cdots M_{\alpha_n}(f_n)}_{\GFF(m)}
  \\
  =  C_\eta^{\alpha_1^2} \cdots C_\eta^{\alpha_n^2}
  \int f_1(x_1) \cdots f_n(x_n)
  \avg{\wick{e^{i\alpha_1  \sqrt{4\pi} \varphi(x_1)}} \cdots \wick{e^{i\alpha_n \sqrt{4\pi}  \varphi(x_n)}}}_{\GFF(0)} \, dx_1 \dots dx_n
  .
\end{multline}
We will need an extension of this statement in which some of the $\alpha_i$ are not in $(-1,1)$,
and the imaginary multiplicative chaos does not exist.
The precise statement
that we need is the following proposition, proved in Appendix~\ref{app:renormpart}. It is essential that this statement
is for truncated correlation functions (as the one-point function can diverge for $\alpha_i \not\in (-1,1)$, see \cite[Proposition~1.4]{MR4767492}).

\begin{proposition}\label{th:fracapp-bis}
  Let $\beta \in (0,6\pi)$ and $\alpha_1,\dots,\alpha_n \in \R$ with $\alpha_i^2 < \min\{1,2-\beta/4\pi\}$. Then for $p\geq 0$,
  \begin{equation}
    \avg{\wick{e^{i\alpha_1 \sqrt{4\pi} \varphi(x_1)}}\cdots \wick{e^{i\alpha_n \sqrt{4\pi} \varphi(x_n)}}; \wick{\cos(\sqrt{\beta}\varphi(y_1))}; \cdots; \wick{\cos(\sqrt{\beta}\varphi(y_p))}}_{\GFF(0)}^{\mathsf T}\label{e:finvol-corfunc-deriv-explicit-bis}
  \end{equation}
  is locally integrable in $x_1,\dots, x_n, y_1, \dots, y_p\in \C$. Moreover,
  \begin{multline}\label{eq:gffregcf}
    \E^\mathsf T\bigg[\wick{e^{i\alpha_1\sqrt{4\pi}\Phi_{\delta^2}^{\GFF(m)}(x_1)}}_\delta\cdots \wick{e^{i\alpha_n\sqrt{4\pi}\Phi_{\delta^2}^{\GFF(m)}(x_n)}}_\delta;
      \\
      \wick{\cos(\sqrt{\beta}\Phi_{\epsilon^2}^{\GFF(m)}(y_1))}_\epsilon;\dots; \wick{\cos(\sqrt{\beta}\Phi_{\epsilon^2}^{\GFF(m)}(y_p))}_\epsilon \bigg]%
  \end{multline}
  is locally uniformly integrable (with uniformity in $0<\delta<\epsilon<1$ and $0 < m<1$)
  and for $f_1,\dots,f_n\in C_c^\infty(\R^2)$ with disjoint supports, and $\Lambda\subset \R^2$ compact 
  \begin{align} \label{eq:gffregcfconv}
 &\lim_{m\to 0}\lim_{\epsilon\to 0}\lim_{\delta\to 0}\E^\mathsf T\bigg[\wick{e^{i\alpha_1\sqrt{4\pi}\Phi_{\delta^2}^{\GFF(m)}}}_\delta(f_1)\cdots \wick{e^{i\alpha_n\sqrt{4\pi}\Phi_{\delta^2}^{\GFF(m)}}}_\delta (f_n);\nnb
&\qquad \qquad\qquad \qquad \wick{\cos(\sqrt{\beta}\Phi_{\epsilon^2}^{\GFF(m)})}_\epsilon(\1_\Lambda);\dots; \wick{\cos(\sqrt{\beta}\Phi_{\epsilon^2}^{\GFF(m)})}_\epsilon (\1_\Lambda) \bigg]  \nnb
    &=\int_{\R^{2n}\times \Lambda^p}dxdy\, f_1(x_1)\cdots f_n(x_n)\avg{\wick{e^{i\alpha_1\sqrt{4\pi}\varphi(x_1)}}\cdots \wick{e^{i\alpha_n\sqrt{4\pi}\varphi(x_n)}};
      \nnb
      &\qquad\qquad\qquad\qquad\wick{\cos(\sqrt{\beta}\varphi(y_1))};\dots; \wick{\cos(\sqrt{\beta}\varphi(y_p))}}_{\GFF(0)}^\mathsf T\nnb
      &=:    \avg{\wick{e^{i\alpha_1 \sqrt{4\pi} \varphi}}(f_1)\cdots \wick{e^{i\alpha_n \sqrt{4\pi} \varphi}}(f_n); \wick{\cos(\sqrt{\beta}\varphi)}(\1_\Lambda); \cdots; \wick{\cos(\sqrt{\beta}\varphi)}(\1_\Lambda)}_{\GFF(0)}^{\mathsf T}.
  \end{align}
  Here $\E^\mathsf T$ denotes a cumulant and there are $p$ cosine-terms in the cumulants.
\end{proposition}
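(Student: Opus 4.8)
The plan is to evaluate the Gaussian cumulant \eqref{eq:gffregcf} by Wick combinatorics and then control the resulting sum near coincidences and in the limit $m\to0$. First I would expand each $\wick{\cos(\sqrt\beta\,\cdot)}_\epsilon=\tfrac12(\wick{e^{i\sqrt\beta\,\cdot}}_\epsilon+\wick{e^{-i\sqrt\beta\,\cdot}}_\epsilon)$ and use multilinearity of truncated expectations to reduce \eqref{eq:gffregcf}, for fixed $y_1,\dots,y_p$, to a finite sum over sign patterns $\sigma\in\{\pm1\}^p$ of cumulants of pure vertex operators $\wick{e^{iu_k\Phi^{\GFF(m)}(z_k)}}$, where $(z_k,u_k)$ runs over the $(x_i,\sqrt{4\pi}\alpha_i)$ (regularized at scale $\delta$) and the $(y_j,\sigma_j\sqrt\beta)$ (regularized at scale $\epsilon$). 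For a Gaussian field such a cumulant has the exact linked-cluster (Mayer) form: up to explicit Wick-ordering constants it equals $\sum_{\mathcal G}\prod_{\{k,l\}\in\mathcal G}(e^{-u_ku_l C_{kl}}-1)$, the sum over connected graphs $\mathcal G$ on the index set, $C_{kl}$ being the regularized covariance between $z_k$ and $z_l$. Away from coincidences, the limits $\delta\to0$, $\epsilon\to0$ of the $C_{kl}$ and of the constants are elementary; the substance of the proposition is local integrability of the limiting kernel and the $m\to0$ limit, both governed by power counting.

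For $\beta<4\pi$ this is essentially immediate. Each factor $e^{-u_ku_lC_{kl}}-1$ has near $z_k=z_l$ a singularity of order $|z_k-z_l|^{-u_ku_l/2\pi}$, with exponents $2|\alpha_i\alpha_j|$, $2|\alpha_i|\sqrt{\beta/4\pi}$, $\beta/2\pi$ for vertex--vertex, vertex--cosine, cosine--cosine pairs respectively, all $<2$ under $\alpha_i^2<1$ and $\beta<4\pi$ and hence locally integrable in two dimensions; a connected graph touches every index, so the product is locally integrable, and in fact $2-\beta/2\pi>0$ makes the star-cluster power counting converge at every multiplicity. The $m\to0$ limit then follows from pointwise convergence $C_{kl}\to\frac1{2\pi}K_0(m|z_k-z_l|)$ at distinct points, the integrable bound just obtained, dominated convergence, and the fact that all positions lie in the fixed compact supports of $f_1,\dots,f_n,\1_\Lambda$.

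For $\beta\in[4\pi,6\pi)$, in particular the main case $\beta=4\pi$, the naive graph-by-graph bound fails, because a cluster of two or more cosine insertions carries the exponent $\beta/2\pi\ge2$ in the identity channel of its operator product expansion --- the sine-Gordon vacuum-energy divergence. The key point, and the reason it is essential that \eqref{eq:gffregcf} is a \emph{truncated} expectation, is that the identity (constant) channel of any subcluster is annihilated inside a cumulant of order $\ge2$, so these divergences cancel among the moment terms and never enter the cumulant; correspondingly, in the linked-cluster sum the over-singular graph contributions cancel and the cluster behaves like its (resummed) leading OPE coefficient. With the identity channels removed, power counting for the surviving channels is integrable precisely when $\beta<6\pi$ --- the worst configuration is a triple of cosines with signs $(+,+,-)$, which carries nonzero residual charge (so it is not killed by the cumulant) and contributes $\int_0^1\lambda^{2-\beta/2\pi}\,d\lambda<\infty$ --- while the constraint $\alpha_i^2<\min\{1,2-\beta/4\pi\}$ keeps the vertex pair singularities and the clustering of cosines onto a pinned vertex within the super-renormalizable range. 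To make this quantitative, and uniform in $0<\delta<\epsilon<1$ and $0<m<1$, I would use the renormalized potential of Appendix~\ref{app:fracrenorm}, which extends the Polchinski-flow construction of \cite[Sections~4--5]{MR4767492} (based on \cite{MR914427}) by building a potential $U_t$ for the combined system of the cosine interaction $z\int_\Lambda\wick{\cos(\sqrt\beta\varphi)}$ and the $n$ fractional vertex sources pinned at $x_1,\dots,x_n$ --- ``multiple periodic interactions at the different UV scales'' $\alpha_i^2$ and $\beta/4\pi$; the a priori bounds on $U_t$ and its source derivatives then give the local uniform integrability of \eqref{eq:gffregcf} asserted in the proposition.

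Given the uniform integrability, \eqref{eq:gffregcfconv} follows by taking limits in order: $\delta\to0$ first (cheap, since $\delta<\epsilon$ keeps the cosines smooth so that only vertex--vertex coincidences, with integrable exponents $2|\alpha_i\alpha_j|<2$, occur), then $\epsilon\to0$ (where the cancellation of the cosine-cluster identity channels does the work), then $m\to0$ (dominated convergence, using the locally integrable limiting kernel from the previous steps and boundedness of all distances), and matching the result with $\int f_1(x_1)\cdots f_n(x_n)\avg{\cdots}^{\mathsf T}_{\GFF(0)}$ by definitions. The hard part is the $\epsilon\to0$ step for $\beta\in[4\pi,6\pi)$: making the multiscale cancellation of the individually divergent identity-channel contributions of cosine clusters both effective (recovering the correct power counting) and uniform in the pinned positions $x_i$ and in $m$, which is exactly what the renormalized-potential construction and its a priori estimates are built to deliver, and where the thresholds $\beta<6\pi$ and $\alpha_i^2<2-\beta/4\pi$ enter.
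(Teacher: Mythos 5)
Your proposal follows essentially the same route as the paper's: reduce to Gaussian cumulants of vertex operators, identify the cancellation of the identity channel of neutral cosine pairs as the reason the truncated expectation is better behaved than the individual moments, and delegate the quantitative uniform bounds to the renormalized-potential machinery of Appendix~\ref{app:fracrenorm}, after which the limits follow by dominated convergence. One remark on the heuristic aside: the statement that the $\beta<6\pi$ threshold is saturated by a triple of cosines with signs $(+,+,-)$ is not quite right. That triple, carrying residual charge and therefore not suppressed by the cumulant, has power counting $\int_0^1\lambda^{3-\beta/2\pi}\,d\lambda$ (four real coordinates for the two free relative positions, scaled exponent $-\beta/2\pi$), which is convergent up to $\beta<8\pi$. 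The actual marginal configuration corresponds to the $n=4$ term in the renormalized potential bounds (Proposition~\ref{pr:vtnorm}), e.g.\ a neutral quadruple $(+,+,-,-)$ collapsing, which gives $\int_0^1\lambda^{5-\beta/\pi}\,d\lambda$ in distance variables, equivalently $\int_0^t s^{2-\beta/2\pi}\,ds$ in the heat-kernel scale parameter appearing in the proof of Proposition~\ref{pr:vtnorm}; both are convergent precisely for $\beta<6\pi$. This does not affect the validity of the proposal since the proof you invoke rests on the appendix estimates rather than on this heuristic.
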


Note that all of the $\alpha$-entries are in the same slot in the above cumulants (see Section~\ref{sec:notation}),
while each of the $\beta$-entries is in its own slot.

We also mention that we expect that the restrictions on the parameters could be relaxed to some degree, but as this is the setting we need, we do not explore this question further.

\section{Construction and regularity of imaginary multiplicative chaos}
\label{sec:imc}

In this section, we
construct the imaginary multiplicative chaos (IMC) with respect to the sine-Gordon field,
both in the massive finite-volume case and in the massless infinite-volume case,
and establish the important properties we need -- in particular, analyticity in finite volume,
its series expansion, and convergence to the infinite-volume version.
The proofs rely on the decomposition results for the sine-Gordon measures proved
in Sections~\ref{sec:coupling-finvol}--\ref{sec:coupling-infvol} and summarized here.

\subsection{Finite- and infinite-volume sine-Gordon IMC}

The massive finite-volume sine-Gordon measure $\nu^{\SG(\beta,z|\Lambda,m)}$  and its massless infinite-volume limit $\nu^{\SG(\beta,z)}$
were defined in Sections~\ref{sec:intro-SG} and~\ref{sec:mixing}.

We will prove that the IMC, the limit of \eqref{e:Meps-bis} as a random distribution (generalized function), exists with respect to both
$\varphi \sim \nu^{\SG(\beta,z|\Lambda,m)}$ and $\varphi\sim \nu^{\SG(\beta,z)}$ and establish regularity properties of the corresponding limits that
are essential for the proof of the main results.
In particular, we prove analyticity in $z$ in a neighborhood $I$ of the real axis of the moments of $M_\alpha$ when $\Lambda \subset \R^2$ is bounded,
and we prove convergence of the law of $M_\alpha$ in the massless infinite-volume limit.

We begin with the finite-volume statement that we need. We only require the case $\beta=4\pi$, but the proof is the same for all $\beta\in (0,6\pi)$.
The assumption on $\Lambda$ is more general than the case we require
but could easily be relaxed further.

\begin{theorem} \label{thm:sg-finvol}
  Let $\beta\in (0,6\pi)$,
  let $\Lambda \subset \R^2$ be a bounded domain,
  and denote by $\avg{\cdot}_{\SG(\beta,z|\Lambda,m)}$
  the expectation of the massive finite-volume sine-Gordon measure with mass $m>0$.
  There exists a complex neighborhood $I$ of $\R$ (which may depend on $\Lambda \subset \R^2$ and $\beta \in (0,6\pi)$ but is independent of $m>0$)
  such that the following hold for any $m>0$:
  \smallskip
  
  \noindent
  (i) For $m>0$, $z\in \R$, and $\alpha\in \R:$ $\alpha^2 < \min\{1,2-\beta/4\pi\}$,
  the imaginary multiplicative chaos $M_\alpha$ exists as a random distribution
  with respect to the measure $\nu^{\SG(\beta,z|\Lambda,m)}$,
  obtained as the $\epsilon \to 0$ limit in probability of its convolution regularized version \eqref{e:Meps-bis}.
  The limit does not depend on the mollifier $\eta$ except for a multiplicative constant (the same as in Proposition~\ref{prop:hkimc}).

  \smallskip\noindent
  (ii)
  $M_\alpha$ has moments of all order: for any $p<\infty$, any $s>\alpha^2$, and any $\chi\in C_c^\infty(\R^2)$,
  \begin{equation} \label{e:MSG-moments}
    \avg{\|\chi M_\alpha\|_{C^{-s}}^p}_{\SG(\beta,z|\Lambda,m)} < \infty.
  \end{equation}

  \smallskip\noindent
  (iii)
  For $\alpha_1,\dots, \alpha_n \in \R$ satisfying $\alpha_i^2 < \min\{1,2-\beta/4\pi\}$ 
  and any $f_1,\dots, f_n \in C_c^\infty(\R^2)$, %
  the smeared correlation functions
  \begin{equation} \label{e:coupling-finvol-corfunc}
    \avg{M_{\alpha_1}(f_1)\cdots M_{\alpha_n}(f_n)}_{\SG(\beta,z|\Lambda,m)}
  \end{equation}
  extend to analytic functions in $z\in I$ and
  satisfy bounds that are uniform in $m>0$.

  \smallskip\noindent
  (iv)
  The same holds for the correlation functions of the regularized measure
  $\avg{\cdot}_{\GFF+\SG(\beta,z|\Lambda,m,\epsilon)}$ defined precisely in \eqref{e:SGeps-2} below,
  uniformly in $\epsilon \in (0,1]$. Moreover,
  for any $\rho>0$,
  \begin{equation} \label{e:Malpha-uniform-bd}
    \sup_{z\in I \cap B_\rho(0)}\sup_{\epsilon,m\in(0,1)}|\avg{M_{\alpha_1}(f_1)\cdots M_{\alpha_n}(f_n)}_{\GFF+\SG(\beta,z|\Lambda,m,\epsilon)}|<\infty,
  \end{equation}
  and
  \begin{equation} \label{e:corrfunc-as-limit}
    \avg{M_{\alpha_1}(f_1)\cdots M_{\alpha_n}(f_n)}_{\SG(\beta,z|\Lambda,m)}
    =
    \lim_{\epsilon \to 0} %
    \avg{M_{\alpha_1}(f_1)\cdots M_{\alpha_n}(f_n)}_{\GFF+\SG(\beta,z|\Lambda,m,\epsilon)}.
  \end{equation}

\end{theorem}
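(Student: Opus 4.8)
The plan is to reduce the construction to the Gaussian imaginary multiplicative chaos via the probabilistic coupling of Section~\ref{sec:coupling-finvol}, and to obtain the analyticity and uniformity in $z$ from the $\epsilon$-regularized model together with a normal families argument. \emph{Construction and moments (items (i)--(ii)).} First I would invoke the coupling of Section~\ref{sec:coupling-finvol}: on the probability space carrying the cylindrical Brownian motion of Section~\ref{sec:Gauss} one realizes $\varphi \sim \nu^{\SG(\beta,z|\Lambda,m)}$ as $\varphi=\Phi_0^{\GFF(m)}+v$ with $\Phi_0^{\GFF(m)}$ the massive GFF and $v$ a random element of $C^r_\loc$, where $r\in(0,1)$ can be chosen with $r>s>\alpha^2$ (recall $\alpha^2<\min\{1,2-\beta/4\pi\}$), with all $C^r_\loc$-moments finite uniformly in $m\in(0,1]$. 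Mollifying gives $\varphi_\epsilon=(\Phi_0^{\GFF(m)})_\epsilon+v_\epsilon$, so that \eqref{e:Meps-bis} factorizes as the pointwise product of the convolution-regularized Gaussian chaos $\epsilon^{-\alpha^2}e^{i\sqrt{4\pi}\alpha(\Phi_0^{\GFF(m)})_\epsilon}$ and the smooth random function $e^{i\sqrt{4\pi}\alpha v_\epsilon}$. By Proposition~\ref{prop:GMC} the first factor converges in $C^{-s}_\loc$ in probability to a Gaussian chaos $M_\alpha^{\GFF}$; since $x\mapsto e^{ix}$ is smooth and $v_\epsilon\to v$ in $C^r_\loc$ in probability, the second factor converges in $C^r_\loc$ in probability to $e^{i\sqrt{4\pi}\alpha v}$. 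The localized multiplication inequality \eqref{e:Besov-mult} (using $r>s$) then gives convergence of the product in $C^{-s}_\loc$ in probability, defining $M_\alpha:=M_\alpha^{\GFF}\,e^{i\sqrt{4\pi}\alpha v}$, hence (i); the mollifier enters only through $M_\alpha^{\GFF}$, so the $\eta$-dependence is exactly the constant of Proposition~\ref{prop:hkimc}. For (ii), \eqref{e:Besov-mult} and Cauchy--Schwarz on the coupling space bound $\avg{\|\chi M_\alpha\|_{C^{-s}}^p}$ by a product of a Gaussian-chaos moment (finite by Proposition~\ref{prop:GMC}) and a moment of $\|v\|_{C^r}$ near the support of $\chi$ (finite, uniformly in $m$).

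\emph{The regularized model (item (iv)) and uniformity.} For $\avg{\cdot}_{\GFF+\SG(\beta,z|\Lambda,m,\epsilon)}$ of \eqref{e:SGeps-2} the density relative to $\nu^{\GFF(m)}$ is $e^{2zW_\epsilon}/Z_\epsilon(z)$ with $W_\epsilon=\int_\Lambda\wick{\cos(\sqrt{\beta}\varphi(x))}_\epsilon\,dx$ bounded (by $|\Lambda|\,\epsilon^{-\beta/4\pi}$) and $Z_\epsilon$ entire with $Z_\epsilon>0$ on $\R$; thus the measure is absolutely continuous with bounded density, the construction of the first step applies verbatim to give the IMC and its moments under it, and for fixed $\epsilon,m$ the map $z\mapsto\avg{\prod_j M_{\alpha_j}(f_j)}_{\GFF+\SG(\beta,z|\Lambda,m,\epsilon)}=\avg{\prod_j M_{\alpha_j}(f_j)\,e^{2zW_\epsilon}}_{\GFF(m)}/\avg{e^{2zW_\epsilon}}_{\GFF(m)}$ is analytic near $\R$ by Morera and dominated convergence. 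The content of (iv) is to produce a neighbourhood $I\supset\R$ and bounds uniform in $\epsilon\in(0,1]$ and $m\in(0,1]$: one expands numerator and denominator in powers of $z$, re-expresses the coefficients (taking the internal chaos scale $\delta\to0$, the $\alpha$-insertions forming a single cumulant slot as in Proposition~\ref{th:fracapp-bis}) through the mixed fractional/cosine truncated GFF correlation functions, and controls these locally uniformly in $(\delta,\epsilon,m)$ by Proposition~\ref{th:fracapp-bis} together with the renormalized-potential and partition-function estimates of Appendices~\ref{app:fracrenorm}--\ref{app:renormpart} (extending \cite{MR914427}); this yields absolute convergence of both series and a uniform lower bound on $|Z_\epsilon(z)|$ on $I$, hence \eqref{e:Malpha-uniform-bd}. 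Matching the $z$-series with the construction of the first step along the convergence $\GFF+\SG(\beta,z|\Lambda,m,\epsilon)\to\SG(\beta,z|\Lambda,m)$ (uniform integrability from the moment bounds) gives \eqref{e:corrfunc-as-limit}, which for complex $z$ serves as the definition of its left-hand side.

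\emph{Analyticity of the limit (item (iii)).} The functions $z\mapsto\avg{\prod_j M_{\alpha_j}(f_j)}_{\GFF+\SG(\beta,z|\Lambda,m,\epsilon)}$ are analytic on the interior of $I$, locally uniformly bounded over $\epsilon\in(0,1]$ and $m\in(0,1]$ by \eqref{e:Malpha-uniform-bd}, and converge pointwise as $\epsilon\to0$ by \eqref{e:corrfunc-as-limit}. Vitali's theorem then shows that the limit $z\mapsto\avg{\prod_j M_{\alpha_j}(f_j)}_{\SG(\beta,z|\Lambda,m)}$ is analytic on $I$, that the convergence is locally uniform, and that the bounds persist uniformly in $m$; together with the moment bound \eqref{e:MSG-moments} from the first step (which extends to $z\in I$ by the same Hölder argument applied to the $v^{(z)}$-factor) this completes (iii)--(iv).

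\emph{Main obstacle.} The delicate point is obtaining the neighbourhood $I$ and all the bounds \emph{simultaneously} independent of $m>0$ and $\epsilon\in(0,1]$. This rests on the uniform-in-$(m,\delta,\epsilon)$ local integrability of the mixed fractional/cosine GFF correlation functions (Proposition~\ref{th:fracapp-bis}) and on the uniform non-vanishing and boundedness of the renormalized partition function on $I$, the latter being exactly the role of the renormalized-potential construction in Appendices~\ref{app:fracrenorm}--\ref{app:renormpart}; because the sine-Gordon measure at $\beta=4\pi$ is not absolutely continuous with respect to the free field, it is essential that these quantities be controlled through the renormalized (RG/Polchinski) objects rather than any naive density. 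By comparison, once the $C^r$-coupling of Section~\ref{sec:coupling-finvol} is in hand, items (i)--(ii) are comparatively soft.
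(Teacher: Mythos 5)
Your construction of $M_\alpha$ in items (i)--(ii) via the coupling $\varphi = \Phi^{\GFF}_0 + v$ and the Besov multiplication inequality \eqref{e:Besov-mult} is essentially the paper's argument, which defines $M_\alpha^{\SG}(f) = M_\alpha^{\GFF}(e^{i\sqrt{4\pi}\alpha\tilde\varphi}f)$ in \eqref{e:MSG-def-finvol}; that part is sound.

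The difficulty is in items (iii)--(iv), where your route is genuinely different from the paper's and has a gap. The paper establishes analyticity by proving that the coupling field itself, $z\in I\mapsto \tilde\varphi(z)\in C^r(K,\C)$, is almost surely complex analytic (Proposition~\ref{prop:finvol-coupling}(iv)); then $z\mapsto M_\alpha^{\SG}$ is a.s. analytic, and Morera plus the moment bounds give analyticity of the correlations on all of $I$. You instead propose to work entirely through the $\epsilon$-regularized density $e^{2zW_\epsilon}/Z_\epsilon(z)$ and a Vitali argument. For fixed $\epsilon>0$ this ratio is indeed analytic wherever $Z_\epsilon(z)\ne 0$, but Vitali requires local uniform boundedness on a fixed neighborhood $I$ of all of $\R$, \emph{independently of $\epsilon$}. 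Your justification for that uniform bound --- ``expands numerator and denominator in powers of $z$ ... controlled ... by Proposition~\ref{th:fracapp-bis} together with the renormalized-potential and partition-function estimates of Appendices~\ref{app:fracrenorm}--\ref{app:renormpart}'' --- only delivers convergence of the $z$-series and control of $Z_\epsilon(z)$ in a small disk around $z=0$: the renormalized-potential expansion of the appendices is a small-$t$ perturbative series, whose radius of convergence in $z$ shrinks with the scale $t$. It does not propagate to a neighborhood of the \emph{entire} real axis.

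What the paper actually uses to cross from a small disk around $z=0$ to a strip $I\supset\R$ is the non-perturbative large-$t$ analysis of Section~\ref{sec:coupling-finvol}: the representation \eqref{e:Vt-Vt0}--\eqref{e:nablaVt-Vt0}, the Gaussian tail bounds of Lemma~\ref{le:gaussianest}, and especially Lemma~\ref{lem:re-im-eV}, which shows that $\re e^{-V_t(\varphi,z)+V_{t_0}(\varphi,z)}$ has a uniform positive lower bound for $z$ in a $\Lambda$-dependent (but $\epsilon,m$-independent) strip. This is exactly the statement that the partition function stays uniformly away from zero on $I$; it is not a consequence of the series expansion. Without it, your Vitali argument lacks the required hypothesis and \eqref{e:Malpha-uniform-bd} does not follow, nor does the analyticity in (iii) beyond a small disk at $z=0$. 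Your ``main obstacle'' paragraph correctly identifies the danger, but attributes the resolution to the wrong mechanism: it is the SDE/maximum-principle estimates of Section~\ref{sec:coupling-finvol}, not the appendix expansion, that supply the uniform strip.
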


The most essential property that we need later for the identification of the sine-Gordon correlation functions with
their fermionic counterparts is the analyticity of the correlation functions in the coupling constant $z$.
This relies on $|\Lambda|<\infty$ (and is false in the infinite-volume limit).

\medskip

The infinite-volume statement given in the next theorem is stated only for $\beta=4\pi$.
The proof could be extended to $\beta\in (0,6\pi)$ under the condition \eqref{e:SG-moments},
see Section~\ref{sec:coupling-infvol},
which we have however only verified when $\beta=4\pi$ (using Bosonization).
The theorem implies Theorem~\ref{thm:Mexists}.

\begin{theorem} \label{thm:sg-infvol}
  For $z\in \R$, let $\avg{\cdot}_{\SG(4\pi, z)}$ denote the expectation with respect to
  the massless infinite-volume sine-Gordon measure at the free fermion point, see Theorem~\ref{thm:SG4pi}.

  \smallskip\noindent
  (i) For $\alpha\in (-1,1)$, the imaginary multiplicative chaos $M_\alpha$ exists as a random distribution
  with respect to the measure $\nu^{\SG(\beta,z)}$,
  again obtained as the $\epsilon \to 0$ limit in probability of its convolution regularized version \eqref{e:Meps-bis}.
  The limit does not depend on the mollifier $\eta$ except for a multiplicative constant (the same as in Proposition~\ref{prop:hkimc}).

  \smallskip\noindent
  (ii)
  $M_\alpha$ has moments of all order: for any $p<\infty$, any $s>\alpha^2$, and any $\chi\in C_c^\infty(\R^2)$,
  \begin{equation} \label{e:infvol-moments}
    \avg{\|\chi M_\alpha\|_{C^{-s}}^p}_{\SG(4\pi,z)} < \infty.
  \end{equation}

  \smallskip\noindent
  (iii)
  $M_\alpha$ is translation and rotation invariant, i.e.,
  $T M_\alpha$ has the same distribution as $M_\alpha$ where $T$ is the action of a deterministic translation or rotation.

  \smallskip\noindent
  (iv)
  For any $\alpha_1,\dots, \alpha_n \in (-1,1)$ with $\sum_i \alpha_i=0$ and $f_1,\dots, f_n \in C_c^\infty(\R^2)$,
 \begin{equation} \label{e:MSG-conv-massless}
    \avg{M_{\alpha_1}(f_1)\cdots M_{\alpha_n}(f_n)}_{\SG(4\pi,z)}
    = \lim_{\Lambda\to\R^2}\lim_{m\to 0}     \avg{M_{\alpha_1}(f_1)\cdots M_{\alpha_p}(f_p)}_{\SG(4\pi,z|\Lambda,m)},
  \end{equation}
  where the limit on the right-hand side is along suitable subsequences $\Lambda \to \R^2$ and $m \to 0$.
\end{theorem}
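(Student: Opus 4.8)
The plan is to deduce Theorem~\ref{thm:sg-infvol} from the probabilistic coupling of the massless infinite-volume field constructed in Section~\ref{sec:coupling-infvol}, combined with the finite-volume statements of Theorem~\ref{thm:sg-finvol} and the symmetry and mixing properties of Theorem~\ref{thm:SG4pi}. The structural input I would use is that a field $\varphi \sim \nu^{\SG(4\pi,z)}$ can be realized on a probability space together with an additive decomposition $\varphi = \Phi + \Xi$, where $\Phi$ is a (locally) log-correlated Gaussian field to which Propositions~\ref{prop:GMC}--\ref{prop:hkimc} apply and $\Xi$ is a remainder with $\Xi \in C^r_\loc$ almost surely, having moments of $\|\chi \Xi\|_{C^r}$ of every order for $\chi \in C_c^\infty$, where $r$ may be taken arbitrarily close to $1$, in particular $r > \alpha^2$. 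Since $\nu^{\SG(4\pi,z)}$ is not absolutely continuous with respect to the law of the free field at $\beta = 4\pi$, this decomposition is not a Cameron--Martin/Girsanov shift and must be built scale by scale (Polchinski-flow-type analysis), and it is only available in infinite volume once spontaneous symmetry breaking has been established, which in turn uses integrability input (Bosonization, via the two-point-function control of Theorem~\ref{thm:SG4pi}).

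Granting such a coupling, parts (i) and (ii) follow as in the Gaussian case. Regularizing by a radial mollifier $\eta_\epsilon$ and writing $\eta_\epsilon * \varphi = \eta_\epsilon * \Phi + \eta_\epsilon * \Xi$, the regularized chaos $M_\alpha^\epsilon(f) = \epsilon^{-\alpha^2}\int e^{i\sqrt{4\pi}\alpha(\eta_\epsilon*\varphi)(x)} f(x)\,dx$ equals the Gaussian regularized chaos of $\Phi$ paired against $e^{i\sqrt{4\pi}\alpha(\eta_\epsilon*\Xi)}f$. Choosing $s$ with $\alpha^2 < s < r$ and using that $\eta_\epsilon * \Xi \to \Xi$ in $C^{r'}_\loc$ for any $r'<r$ while the Gaussian regularized chaos converges in $C^{-s}_\loc$ in probability, the Besov multiplication inequality \eqref{e:Besov-mult} gives $M_\alpha^\epsilon \to M_\alpha := e^{i\sqrt{4\pi}\alpha\Xi}M_\alpha^\Phi$ in $C^{-s}_\loc$ in probability, with $M_\alpha^\Phi$ the Gaussian IMC of Proposition~\ref{prop:GMC}; mollifier-independence up to the deterministic constant of Proposition~\ref{prop:hkimc} is inherited from the Gaussian case because $\Xi$ is continuous. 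For (ii), the pointwise bound $\|\chi M_\alpha\|_{C^{-s}} \lesssim \|\tilde\chi M_\alpha^\Phi\|_{C^{-s}}\,\|\chi e^{i\sqrt{4\pi}\alpha\Xi}\|_{C^r}$ (with $\tilde\chi \equiv 1$ on the support of $\chi$), Hölder's inequality in probability, the Gaussian moment bound \eqref{e:GMC-moments}, and the moment bounds on $\|\tilde\chi\Xi\|_{C^r}$ yield \eqref{e:infvol-moments}; this, together with the convergence just shown, gives Theorem~\ref{thm:Mexists}. For (iii), the map $\varphi \mapsto M_\alpha$ is a measurable functional of $\varphi$, being a limit in probability of the translation- and rotation-covariant maps $\varphi \mapsto M_\alpha^\epsilon$ for radial $\eta$; since $\nu^{\SG(4\pi,z)}$ is Euclidean invariant by Theorem~\ref{thm:SG4pi} and the limit is mollifier-independent up to a deterministic constant, the law of $M_\alpha$ is translation and rotation invariant.

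For (iv), the neutrality assumption $\sum_i \alpha_i = 0$ is essential: it makes $\prod_j M_{\alpha_j}(f_j)$ a functional of the field modulo constants, and on $\cS'(\R^2)/\text{constants}$ the finite-volume measures $\nu^{\SG(4\pi,z|\Lambda,m)}$ converge, along suitable subsequences $m\to 0$, $\Lambda\to\R^2$, to $\nu^{\SG(4\pi,z)}$ (Section~\ref{sec:mixing}). I would realize all of these measures on one probability space, together with their decompositions $\varphi = \Phi_{\Lambda,m} + \Xi_{\Lambda,m}$ from Section~\ref{sec:coupling-finvol} and the infinite-volume decomposition above, in such a way that $(\Phi_{\Lambda,m},\Xi_{\Lambda,m}) \to (\Phi,\Xi)$ along the subsequence in the appropriate topology (convergence of the log-correlated Gaussian part, so that the corresponding Gaussian IMCs converge in $C^{-s}_\loc$, and convergence of the remainders in $C^{r'}_\loc$); this compatibility of the finite- and infinite-volume couplings is part of the analysis in Section~\ref{sec:coupling-infvol}. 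By the continuity argument of the previous paragraph, $M_{\alpha_j}^{\Lambda,m}(f_j) = \langle M_{\alpha_j}^{\Phi_{\Lambda,m}}, e^{i\sqrt{4\pi}\alpha_j\Xi_{\Lambda,m}}f_j\rangle \to \langle M_{\alpha_j}^{\Phi}, e^{i\sqrt{4\pi}\alpha_j\Xi}f_j\rangle = M_{\alpha_j}(f_j)$ in probability, jointly in $j$, hence $\prod_j M_{\alpha_j}^{\Lambda,m}(f_j) \to \prod_j M_{\alpha_j}(f_j)$ in probability. Combined with the uniform-in-$(\Lambda,m,\epsilon)$ $L^p$ bound on these products, for some $p>1$, furnished by \eqref{e:Malpha-uniform-bd} (and its extension uniform in $\Lambda$, which comes from the same $\Lambda$-uniform regularity estimates on $\Xi_{\Lambda,m}$ together with the Gaussian domination bound \eqref{e:SG-variance-0}), uniform integrability upgrades this to $L^1$ convergence, which is precisely \eqref{e:MSG-conv-massless}.

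The main obstacle is the coupling underlying parts (i) and (iv): producing, for the massless infinite-volume measure, a decomposition $\varphi = \Phi + \Xi$ into a log-correlated Gaussian field plus a Hölder remainder that is compatible with the finite-volume couplings and that survives the double limit $m\to 0$, $\Lambda\to\R^2$. Short-scale regularity (the remainder $\Xi$) is produced by stochastic-analytic/SPDE-type arguments, but the large-scale control needed to pass to the massless infinite volume, i.e.\ the spontaneous symmetry breaking, is not accessible by those methods alone and must be imported from integrability; reconciling these two inputs within a single coupling — so that the Gaussian IMCs and the remainders converge jointly — is the heart of the matter.
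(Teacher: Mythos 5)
Your proposal follows the same architecture as the paper's proof. Items (i)--(iii) are carried out essentially as the paper does them, using the decomposition $\varphi = Z + \tilde\varphi$ from Proposition~\ref{prop:coupling-infvol}, the Besov multiplication inequality \eqref{e:Besov-mult}, the estimate on $\|e^{if}\|_{C^r}$, the Gaussian moment bounds, and Euclidean invariance of $\nu^{\SG(4\pi,z)}$. You also correctly identify for (iv) that the argument must be run modulo constants via the neutrality condition, that a joint coupling of the finite- and infinite-volume fields is needed, and that uniform integrability from the $\Lambda$-uniform $C^r(\rho)$ moment bounds on the remainder closes the argument.

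The one point where your plan is weaker than what is actually done, and where I think there is a genuine gap to close, is in the structure of the coupling for (iv). You propose to realize all measures on one probability space "in such a way that $(\Phi_{\Lambda,m},\Xi_{\Lambda,m}) \to (\Phi,\Xi)$ \dots (convergence of the log-correlated Gaussian part, so that the corresponding Gaussian IMCs converge in $C^{-s}_\loc$)." Convergence of the Gaussian fields does not, by itself, give convergence of the associated Gaussian IMCs: $M_\alpha^\Phi$ is a limit of divergent regularizations, not a continuous functional of $\Phi$. The paper avoids this issue entirely by building the coupling (Proposition~\ref{prop:coupling-infvol-convergence}) so that the Gaussian part $Z$ is \emph{literally the same} for all $(i,j,k)$; only the remainders $\tilde\varphi_{i,j,k}$ vary, and they converge in $C^r(\rho)$ \emph{modulo a random constant} $X_{i,j,k}$, which the neutrality condition then eliminates. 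This is achieved by applying Skorokhod only to the large-scale tail $P_T\Phi^\SG_{t_0}$, extending the resulting space by a fresh cylindrical Brownian motion generating the fixed Gaussian $Z$ and the scales below $t_0$, and re-solving the Polchinski flow equation for the remainder on this common space (Lemma~\ref{lem:H0}). If you instead allow $\Phi_{\Lambda,m}$ to vary with $(\Lambda,m)$, you would need a separate, nontrivial continuity statement for the Gaussian IMC under that convergence, and this is precisely the difficulty the fixed-$Z$ device is designed to sidestep. You gesture at this ("reconciling these two inputs \dots is the heart of the matter"), but the concrete trick that resolves it is missing from your plan.
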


\subsection{Decompositions of the sine-Gordon field}

The proofs of Theorems~\ref{thm:sg-finvol} and~\ref{thm:sg-infvol} rely on decompositions of the sine-Gordon field
and a strong convergence statement,
proved in Sections~\ref{sec:coupling-finvol} and~\ref{sec:coupling-infvol}.
In both situations, finite and infinite volume, we construct the sine-Gordon field and the Gaussian free field (GFF) on the same probability space as
\begin{equation}
\varphi = Z+ \tilde \varphi  
\end{equation}
where $Z$ is essentially a massive Gaussian free field and $\tilde\varphi$ is a H\"older continuous random field.

For $z\in \R$ and $|\Lambda|<\infty$ or $m>0$,
this construction is similar to the one in \cite{MR4399156}
and the closely related constructions such as \cite{2401.13648} or the SPDE based constructions \cite{2410.15493}.
For our purposes, the important novelties are the analytic extension of such a decomposition to $z\not\in \R$, in finite volume,
and a version for the massless infinite-volume case if $z\in \R$, along with the convergence estimates as $m\to 0$ and $\Lambda \to \R^2$
that are sufficiently strong to guarantee convergence of the IMC.
The infinite-volume result relies crucially on input from the Bosonization results derived in Section~\ref{sec:mixing}.

\paragraph{Finite-volume statement}
We begin with the statement for the decomposition of the massive finite-volume sine-Gordon field.

\begin{proposition} \label{prop:finvol-coupling} 
  Let $\beta\in(0,6\pi)$, and
  let $\Lambda \subset \R^2$ be a bounded domain.
  Then there is a complex neighborhood $I$ of the real axis and a function $\rho\in [0,\infty) \mapsto \eta(\rho) \in [0,\infty)$,
  both depending on $\Lambda$ and $\beta$ but independent of $m \in (0,1]$,
  and a probability space on which
  random variables $Z$ and $\tilde \varphi = \tilde \varphi(z,m)$ with $z\in I$ and $m\in (0,1]$ are defined such that
  the following hold.

  \smallskip
  \noindent
  (i) For $z\in \R$, the massive finite-volume sine-Gordon field can be decomposed as
  \begin{equation}
    Z+\tilde \varphi \sim \nu^{\SG(\beta,z|\Lambda,m)}.
  \end{equation}

  \smallskip
  \noindent
  (ii)
  The random field $Z$ is a real-valued log-correlated Gaussian random field satisfying $Z \in C^{-s}_\loc$ almost surely
  for any $s>0$. It is independent of $z \in I$.

  \smallskip
  \noindent
  (iii)
  The random field  $\tilde \varphi$   is locally H\"older continuous with $|\im \tilde\varphi| \leq \eta(\rho)$ if $|z|\leq \rho$
  and $\im \tilde\varphi=0$ if $z\in \R$ and the following moment bound holds for any $p>0$   and $\chi\in C_c^\infty(\R^2)$
  and for any fixed $r<2-\beta/4\pi$ and any fixed $m\in (0,1]$:
  \begin{equation} \label{e:finvol-coupling-moment}
    \E\qB{\|\chi \tilde \varphi\|_{C^{r}}^p} \leq C_p(\beta,z,\Lambda,m) < \infty,
  \end{equation}
  and the H\"older seminorm is bounded uniformly in $m\in (0,1]$:
  \begin{equation}  \label{e:finvol-coupling-moment-unif-m}
    \E\qB{ [\chi\tilde\varphi]_{C^r}^p} \leq C_p(\beta,z,\Lambda) < \infty.
  \end{equation}

  \smallskip
  \noindent
  (iv)
  The map $z\in I \mapsto \chi\tilde\varphi(z) \in C^{r}(\R^2)$ is complex analytic almost surely.

  \smallskip
  \noindent
  (v) The same holds with $\tilde\varphi$ replaced by a regularized version $\tilde\varphi_\epsilon$
  such that $Z+\tilde\varphi_\epsilon \sim \nu^{\GFF+\SG(\beta,z|\Lambda,m,\epsilon)}$
  when $z\in \R$,
  defined in \eqref{e:SGeps-2} below, with all bounds uniform in $\epsilon$
  and convergence in $C^r$ as $\epsilon \to 0$.
\end{proposition}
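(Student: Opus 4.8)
\medskip

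The plan is to build the coupling from the renormalization group (Polchinski) flow of the cosine interaction. First I would invoke the renormalized potential $V_t=V_t(z)$ constructed in Appendix~\ref{app:fracrenorm} (extending \cite{MR914427,MR4767492}): for a \emph{bounded} domain $\Lambda$ there is a complex neighborhood $I$ of $\R$, depending on $\Lambda$ and $\beta\in(0,6\pi)$ but not on $m\in(0,1]$, such that $V_t(z)$ is complex analytic in $z\in I$, real for $z\in\R$, reduces (after renormalization) to the bare interaction as $t\downarrow 0$, and obeys scale-by-scale estimates of the form $\|\nabla V_t(\varphi)\|_{L^\infty(\Lambda)}\le C(z,\Lambda)\,t^{-\beta/8\pi}$ for $t\le1$ and $\|\nabla V_t(\varphi)\|_{L^\infty(\Lambda)}\le C(z,\Lambda)$ for all $t$, together with matching Lipschitz bounds, all uniform in $m\in(0,1]$; the finiteness of $\Lambda$ and the condition $\beta<6\pi$ (so that $\beta/8\pi<1$ and the relevant $s$-integrals converge) are used here. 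For the Gaussian part of the decomposition one takes $Z:=\Phi_0^{\GFF(m)}=\int_0^\infty\sqrt{\dot C_s}\,dW_s$, the massive log-correlated GFF, which lies in $C^{-s}_\loc$ for every $s>0$ and is manifestly independent of $z$; this gives item~(ii).

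Next I would realize $\tilde\varphi$ as the drift of the Polchinski flow, i.e.\ consider the process solving, for $t>0$,
\[
  \Phi_t=\Phi_t^{\GFF(m)}-\int_t^\infty \dot C_s\,\nabla V_s(\Phi_s)\,ds ,
\]
and set $\tilde\varphi:=\Phi_0-Z=-\int_0^\infty\dot C_s\,\nabla V_s(\Phi_s)\,ds$. Since $\Phi_t^{\GFF(m)}$ is smooth for $t>0$ and $\dot C_s$ is the heat semigroup, the bounds above and the smoothing estimate $\|\dot C_s g\|_{C^r}\lesssim e^{-m^2 s}s^{-r/2}\|g\|_{L^\infty}$ make the integrand of order $s^{-r/2-\beta/8\pi}$ near $s=0$, which is integrable exactly when $r<2-\beta/4\pi$; a Banach fixed-point argument on $C([\delta,\infty);C^r_\loc)$ followed by $\delta\downarrow0$ produces $\Phi_t$ and shows the integral defining $\tilde\varphi$ converges in $C^r$, yielding the moment bound \eqref{e:finvol-coupling-moment}. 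The uniform-in-$m$ seminorm bound \eqref{e:finvol-coupling-moment-unif-m} comes from the $\int_0^1$ part, where the variance of the integrated-out field is comparable to its value at $m=0$; for the large-scale part one uses that $\nabla V_s(\Phi_s)$ is supported in $\Lambda$, so $\|\dot C_s\nabla V_s(\Phi_s)\|_{C^r}\lesssim(|\Lambda|/s)\|\nabla V_s\|_{L^\infty}$ for $s\ge1$, which is integrable. For $z\in\R$ the drift is real, hence $\tilde\varphi$ is real; for $z\in I$ with $|z|\le\rho$ the analyticity bounds control $\im\nabla V_s$ and hence $\|\im\tilde\varphi\|_{L^\infty}\le\eta(\rho)$. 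That $\Phi_0\sim\nu^{\SG(\beta,z|\Lambda,m)}$ for real $z$ is the standard fact that the Polchinski flow samples $e^{-V_0}\,d\nu^{\GFF(m)}$, provable by identifying the conditional laws along the flow using the Polchinski equation for $V_t$ (or by a Girsanov argument); see \cite{MR4399156,2401.13648,2410.15493}. This establishes (i) and (iii).

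For (iv), the maps $z\in I\mapsto V_s(z)$ are analytic with locally uniform bounds, so the Picard iterates producing $\Phi_t(z)$ are analytic $C^r_\loc$-valued and converge locally uniformly on $I$; by Morera's theorem (or the holomorphic implicit function theorem in Banach spaces) the limit, and hence $\chi\tilde\varphi(z)=\chi(\Phi_0(z)-Z)$, is a.s.\ complex analytic in $z\in I$ with values in $C^r(\R^2)$. For (v), let $V^\epsilon_t$ be the renormalized potential of the $\epsilon$-regularized interaction — the Polchinski flow of the bare interaction started at scale $\epsilon^2$, which for $t\ge\epsilon^2$ agrees (by the choice of the $\epsilon^{-\beta/4\pi}$ normalization, up to the fixed mollifier constant) with $V_t$ and is absent below $\epsilon^2$ — and run $\Phi^\epsilon_t=\Phi_t^{\GFF(m)}-\int_{t\vee\epsilon^2}^\infty\dot C_s\nabla V^\epsilon_s(\Phi^\epsilon_s)\,ds$ on the same probability space. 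Splitting $\Phi_0^{\GFF(m)}$ at scale $\epsilon^2$ and using that the regularized interaction depends only on $\Phi_{\epsilon^2}^{\GFF(m)}$ identifies the law of $\Phi_0^\epsilon$ with $\nu^{\GFF+\SG(\beta,z|\Lambda,m,\epsilon)}$ for $z\in\R$; putting $\tilde\varphi_\epsilon:=\Phi_0^\epsilon-Z$, all estimates of the previous paragraphs transfer with constants uniform in $\epsilon\in(0,1]$, and $\tilde\varphi_\epsilon\to\tilde\varphi$ in $C^r$ (a.s.\ and in $L^p$) because $V^\epsilon_s\to V_s$ locally uniformly in $s$ and the cutoff $\epsilon^2\downarrow0$, the integrand being dominated by the integrable bound from the previous paragraph.

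The main obstacle is the complex-$z$ input underlying everything: one must show that the renormalized potential of the (multi-scale, fractional) cosine interaction extends analytically to a genuine complex neighborhood of $\R$ — necessarily depending on $\Lambda$ — while retaining estimates strong enough to run the flow all the way down to scale $0$ and uniform in the mass $m$. The real-$z$, positive-$m$ construction is by now essentially standard (\cite{MR4399156,2401.13648,2410.15493}), but the analytic continuation in $z$, together with the attendant control of $\im\tilde\varphi$ and of the $\epsilon\to0$ limit, is the part that requires the most care and that makes essential use of $|\Lambda|<\infty$.
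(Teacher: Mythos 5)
Your proposal is correct and follows essentially the same strategy as the paper: complex-analytic control of the renormalized potential on a bounded domain, the Polchinski/Itô backward SDE to realize the sine-Gordon field, and extraction of $\tilde\varphi$ as the drift. The one notable cosmetic difference is your choice $Z=\Phi_0^{\GFF(m)}$ and $\tilde\varphi=-\int_0^\infty e^{-m^2s}e^{\Delta s}\nabla_\varphi V_s(\Phi_s)\,ds$; the paper instead takes $Z=\Phi_{0,t_0}^{\GFF(m)}$ (only the small scales up to a fixed $t_0$) and folds $\Phi_{t_0}^{\GFF(m)}$ into $\tilde\varphi$ along with the drift. Your version is in fact simpler for this finite-volume proposition (the moment bound on $\tilde\varphi$ is then deterministic rather than via Gaussian moment estimates for $\Phi_{t_0}^\GFF$), and the paper explicitly remarks this; the split at $t_0$ is chosen only so the same definitions carry over unchanged to the massless infinite-volume setting of Section~\ref{sec:coupling-infvol}, where the full massive GFF no longer has a good $m\to 0$ limit.

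Two small precision points worth noting. First, the renormalized potential you need is not the multi-scale one of Appendix~\ref{app:fracrenorm} but the single-scale version set up in Section~\ref{sec:renpot} (Proposition~\ref{prop:Vt}); the appendix version is used elsewhere for the correlation-function series expansion. Second, the paper separates the potential estimates into a small-$t$ regime controlled by the convergent series expansion (where everything is $\Lambda$-independent and the complex strip can be taken $\Lambda$-uniform) and a large-$t$ regime controlled nonperturbatively by writing $V_t$ as a conditional expectation of $V_{t_0}$ and applying Gaussian tail bounds plus the maximum principle; your sketch collapses these into one statement, which is fine at this level of detail but hides where $|\Lambda|<\infty$ enters at large scales (through the $L^1\to L^\infty$ smoothing bound, exactly as you indicate).
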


For $z\not \in \R$, the proposition extends the finite-volume massive sine-Gordon measure as
a probability measure on complex-valued fields rather than as a complex measure on real-valued fields as one might expect
from \eqref{e:SGdef-bis}.
Nonetheless we show that expectations of analytic functionals under this measure agree with those of the
sine-Gordon expectation defined as in \eqref{e:SGdef-bis} with complex $z$ in the density.
The latter is \emph{not} a positive (or probability) measure for $z\not\in\R$.
Its small scale regularized version would be a complex measure (but supported on real-valued fields),
but we actually expect that for $\beta \geq 4\pi$ the $\epsilon\to 0$ limit does not even exist as a complex measure (while our probabilistic version does).
For analytic observables  both constructions provide the unique analytic continuation of the sine-Gordon measure with real coupling constants.
Under our above decomposition, the difference between the sine-Gordon field and the GFF is H\"older continuous and in $C^{r}_{\rm loc}$ for any $r<2-\beta/4\pi$, even when $z$ is complex.

\paragraph{Infinite-volume statements}
The infinite-volume statements involve weighted H\"older spaces, whose definition we recall from Section~\ref{sec:notation}.
We also recall the following moment bound established in Section~\ref{sec:mixing} when $\beta=4\pi$: for any $g\in C_c^\infty(\R^2)$,
\begin{equation} \label{e:SG-moments-bis2}
  \sup_{T\geq 1}
  \avg{|(\varphi,g-\hat g(0)\eta_T)|^p}_{\SG(\beta,z)} \leq C_p(\beta,z)\|g\|^p.
\end{equation}

\begin{proposition}  \label{prop:coupling-infvol}
  Let  $\beta\in (0,6\pi)$, $z\in \R$, and assume the moment bound
  \eqref{e:SG-moments-bis2} holds with $p\geq 2$.
  Then there is a probability space and random variables $Z$ and $\tilde \varphi$ such that the following hold.

  \smallskip
  \noindent
  (i) The massless sine-Gordon field is decomposed as
  \begin{equation}
    Z+\tilde\varphi \sim \nu^{\SG(\beta,z)}.
  \end{equation}
  (ii) The random field $Z$ has the same properties as in Proposition~\ref{prop:finvol-coupling}.
  
  \smallskip\noindent
  (iii)  For any $r<2-\beta/4\pi$ and weight of the form $\rho(x)=(1+|x|^2)^{-\sigma/2}$ for large enough $\sigma$,
  the field $\tilde\varphi$ is a random element in $C^r(\rho)$ satisfying (for the same $p$ as in the assumption)
  \begin{equation}
    \E\qa{\|\tilde\varphi\|_{C^r(\rho)}^p} \leq C_{p,r}(\beta, z).
  \end{equation}
\end{proposition}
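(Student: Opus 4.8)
The plan is to obtain the decomposition in infinite volume by passing to the limit in the finite-volume coupling of Proposition~\ref{prop:finvol-coupling}, using the moment bound \eqref{e:SG-moments-bis2} together with volume-uniform renormalization estimates to control the large-scale behaviour of the remainder. First I would invoke Proposition~\ref{prop:finvol-coupling}: for each bounded domain $\Lambda$ and each $m\in(0,1]$ it provides, on a probability space, a decomposition $\varphi_{\Lambda,m}=Z+\tilde\varphi_{\Lambda,m}\sim\nu^{\SG(\beta,z|\Lambda,m)}$ with $Z$ a log-correlated Gaussian field whose law is the same for all $\Lambda,m,z$ and which lies in $C^{-s}_\loc$ for every $s>0$, and with $\tilde\varphi_{\Lambda,m}$ locally H\"older continuous; note, however, that the seminorm bound \eqref{e:finvol-coupling-moment-unif-m} is asserted only uniformly in $m$, with constants depending on $\Lambda$, and is only local (localised by a cutoff $\chi$).

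The crux is to upgrade this to a bound uniform in the volume that also controls growth at infinity: for every $r<2-\beta/4\pi$, every $\sigma$ large enough depending on $r$ and the exponent $p$ from the hypothesis, and that same $p$,
\begin{equation}
  \sup_{\Lambda,\ m\in(0,1]}\E\qB{\|\tilde\varphi_{\Lambda,m}\|_{C^r(\rho)}^p}<\infty,\qquad \rho(x)=(1+|x|^2)^{-\sigma/2}.
\end{equation}
For the H\"older seminorm I would revisit the construction behind Proposition~\ref{prop:finvol-coupling}: $\tilde\varphi$ is built from the renormalized potential of the interaction $2z\int_\Lambda\cos(\sqrt\beta\varphi)$ (Appendices~\ref{app:fracrenorm}--\ref{app:renormpart}, extending \cite{MR914427}), whose derivative estimates in the ultraviolet regime $\beta<6\pi$ are volume-uniform; combined with translation invariance of these estimates and monotonicity in $\Lambda$ and $m$ of the correlation functions (Ginibre-type inequalities, Section~\ref{sec:mixing} and \cite{1711.04720}), this should promote \eqref{e:finvol-coupling-moment-unif-m} to a $\Lambda$- and $m$-uniform, polynomially weighted bound on $[\tilde\varphi_{\Lambda,m}]_{C^r(\rho)}$. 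For the non-oscillatory part I would test against translated bumps $\psi_x=\Psi(\cdot-x)$ with $\int\Psi=1$ and decompose $\tilde\varphi_{\Lambda,m}(\psi_x)=\varphi_{\Lambda,m}(\psi_x-\eta_T)+\varphi_{\Lambda,m}(\eta_T)-Z(\psi_x)$ for a fixed $T$: the first term has $p$-th moment bounded uniformly in $x,\Lambda,m$ by the finite-volume form of \eqref{e:SG-moments-bis2} (the point being that $\|\psi_x-\eta_T\|$ does not depend on $x$, since $|\widehat{\psi_x}|=|\widehat\Psi|$), the second is an $x$-independent random variable with finite $p$-th moment under $\nu^{\SG(\beta,z)}$ (Section~\ref{sec:mixing-withoutmod}, and Corollary~\ref{cor:superexp-moments} when $\beta=4\pi$), and $Z(\psi_x)$ has $x$-uniform Gaussian moments. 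A union bound over a lattice of centres then converts these estimates into an almost sure bound $|\tilde\varphi_{\Lambda,m}(\psi_x)|\lesssim(\log(2+|x|))^{c}$ with random prefactor having finite $p$-th moment, which together with the weighted H\"older seminorm estimate yields the displayed inequality.

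With this uniform bound I would conclude by tightness: using the compact embedding $C^r(\rho)\hookrightarrow C^{r'}(\rho')$ for $r'<r$ and faster-decaying $\rho'$, and that the law of $Z$ is fixed, the joint laws of $(Z,\tilde\varphi_{\Lambda,m})$ are tight, so along suitable subsequences $m\to0$ and then $\Lambda\to\R^2$ one gets $(Z,\tilde\varphi_{\Lambda,m})\to(Z,\tilde\varphi)$ in law with $\tilde\varphi\in C^{r'}(\rho')$; set $\varphi=Z+\tilde\varphi$. Since $\varphi_{\Lambda,m}=Z+\tilde\varphi_{\Lambda,m}\to\varphi$ and, by the construction recalled in Section~\ref{sec:mixing-withoutmod} (and Theorem~\ref{thm:SG4pi} for $\beta=4\pi$), the measures $\nu^{\SG(\beta,z|\Lambda,m)}$ converge to $\nu^{\SG(\beta,z)}$ on $\cS'(\R^2)$ once the modulo-constants restriction is removed --- which is exactly where \eqref{e:SG-moments-bis2} is used, to pin down the constant mode --- the limit has law $\nu^{\SG(\beta,z)}$, giving (i). Item (ii) is immediate since $Z$ is unchanged (the weighted statement $Z\in C^{-s}(\rho)$ being a routine Gaussian tail estimate, $Z$ being stationary), and (iii) follows from the uniform bound above and lower semicontinuity of the $C^r(\rho)$ norm (Fatou). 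The hard part is the crux step: making the H\"older and weighted bounds on $\tilde\varphi_{\Lambda,m}$ genuinely uniform in the volume $\Lambda$, which requires tracking the $\Lambda$-dependence through the renormalization-group estimates of Proposition~\ref{prop:finvol-coupling} and combining it with the correlation inequalities, together with the careful handling of the constant mode when matching the limit to the infinite-volume measure (which exists on $\cS'(\R^2)$ only because of spontaneous symmetry breaking).
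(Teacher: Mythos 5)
Your overall plan — finite-volume coupling, volume-uniform moment bounds, tightness, limit — matches the paper's strategy in outline, but the key intermediate claim is wrong as stated, and the gap is exactly the point of the proposition.

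You claim $\sup_{\Lambda,\ m\in(0,1]}\E\|\tilde\varphi_{\Lambda,m}\|_{C^r(\rho)}^p<\infty$. This is false: $\tilde\varphi_{\Lambda,m}=\varphi_{\Lambda,m}-Z$ with $Z=\Phi^{\GFF(m)}_{0,t_0}$ a field of bounded variance, while the constant mode of the finite-volume sine-Gordon field $\varphi_{\Lambda,m}\sim\nu^{\SG(\beta,z|\Lambda,m)}$ diverges as $m\to 0$ (the paper is explicit about this in Section~\ref{sec:finvol-massless}: for $|\Lambda|<\infty$ the measures $\nu^{\SG(\beta,z|\Lambda,m)}$ have no $m\to 0$ limit because of the zero mode). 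In your decomposition $\tilde\varphi_{\Lambda,m}(\psi_x)=\varphi_{\Lambda,m}(\psi_x-\eta_T)+\varphi_{\Lambda,m}(\eta_T)-Z(\psi_x)$ you attribute to the middle term a finite $p$-th moment ``under $\nu^{\SG(\beta,z)}$'', but at this point $\varphi_{\Lambda,m}$ is distributed according to the \emph{finite-volume} measure, under which $\E|\varphi_{\Lambda,m}(\eta_T)|^p$ blows up as $m\to 0$; the assumption \eqref{e:SG-moments-bis2} is a statement about the infinite-volume measure and does not control this. The paper avoids this by applying the projection $P_T$ (which kills the constant mode and makes the test functions mean zero, so Gaussian domination \eqref{e:SG-variance-0} applies) and proving tightness of the $P_T$-projected triple $(P_T\Phi^{\SG}_{t_0},P_T\Phi^{\Delta}_{0,t_0},P_T\Phi^{\GFF}_{0,t_0})$ in Lemma~\ref{lem:tight}; the constant mode of the limit then comes from the infinite-volume construction (i.e.\ from taking $T\to\infty$ last), which is where spontaneous symmetry breaking enters. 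Without this projection, tightness fails.

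Two further, less severe, omissions: you gesture at Ginibre inequalities to make the renormalized-potential estimates volume-uniform, but these control correlation functionals of the measure, not the flow $\nabla_\varphi V_t$ itself; the paper's tool is the maximum principle (Proposition~\ref{prop:Vt-infvol}), which gives a deterministic, $\Lambda$- and $m$-uniform $L^\infty$ bound on $e^{\Delta t_0}\nabla_\varphi V_t$ for $t\ge t_0$, and hence deterministic bounds on high derivatives of $\Phi^\Delta_{t_0}$ (Lemma~\ref{lem:thirdderivative}). And the union-bound-over-bumps argument you sketch would yield only a crude growth estimate; the paper exploits that $\nabla^k\Phi^{\Delta}_{t_0}$ is deterministically bounded for $k\ge 3$, so $\Phi^{\SG}_{t_0}$ is a degree-two Taylor polynomial plus a bounded remainder, and controls the finitely many Taylor coefficients with a test-function trick (Proposition~\ref{prop:Phit0-tightCk}). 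That structural input is what makes the weighted $C^r(\rho)$ bound with $\sigma>2$ (not merely $\sigma$ large) achievable.
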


The next proposition addresses a strong form of convergence of the finite-volume sine-Gordon  measure to the infinite-volume one (strong enough to imply the convergence of the IMC).

\begin{proposition} \label{prop:coupling-infvol-convergence}
  Under the same assumptions as in Proposition~\ref{prop:coupling-infvol}, one can find sequences $m_i \to 0$ and $\Lambda_j \to \R^2$ and $T_k \to \infty$
  and a probability space and random variables $Z$ and $\tilde \varphi_{i,j,k}$ and $\tilde \varphi$ such that the following hold.

  \smallskip\noindent
  (i)
  The distribution $(P_{T_k})_\#\nu^{\SG(\beta,z|\Lambda_j,m_i)}$  of $P_{T_k}\varphi$ where $\varphi \sim \nu^{\SG(\beta,z|\Lambda_j,m_i)}$ and $P_T$ is defined in \eqref{e:Peta-def}
  coincides with the distribution of $Z + \tilde \varphi_{i,j,k}$:
  \begin{equation}
     \varphi_{i,j,k}= Z + \tilde \varphi_{i,j,k} \sim (P_{T_k})_\#\nu^{\SG(\beta,z|\Lambda_j,m_i)}.
  \end{equation}
  \smallskip\noindent
  (ii) The random field $Z$ has the same properties as in Proposition~\ref{prop:finvol-coupling}
  and is independent of $i,j,k$.

  \smallskip
  \noindent
  (iii) The random field  $\tilde \varphi_{i,j,k}$ is a random element in $C^r(\rho)$, and there are real-valued random variables $X_{i,j,k}$
  and a random element $\tilde\varphi$ in $C^r(\rho)$ such that
  \begin{equation}
    \lim_{k\to\infty}\lim_{j\to\infty}\lim_{i\to\infty}  \|\tilde \varphi_{i,j,k} - \tilde \varphi  + X_{i,j,k}\|_{C^r(\rho)} \to 0,
  \end{equation}
  almost surely,
  as well as
  \begin{equation}
    \limsup_{i,j,k}\E\qa{\|\tilde\varphi_{i,j,k}\|_{C^r(\rho)}^p} \leq C_{p,r}(\beta, z).
  \end{equation}

  \smallskip\noindent
  (iv)
  In particular, the law of $Z+\tilde\varphi$ agrees with that of $\nu^{\SG(\beta,z)}$ modulo constants.
\end{proposition}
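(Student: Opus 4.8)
The plan is to obtain the coupling by realizing \emph{all} of the finite-volume approximations on one probability space, sharing a single Gaussian component $Z$, and then to pass to the three iterated limits using a uniform weighted-H\"older moment bound together with the a priori estimate \eqref{e:SG-moments-bis2}. Concretely, following the construction underlying Proposition~\ref{prop:finvol-coupling} in Section~\ref{sec:coupling-finvol}, but now taking $Z$ to be the scale-below-one part of the \emph{massless} heat-kernel GFF -- so that $Z$ is independent of $m$, $\Lambda$, $z$ and has the properties in Proposition~\ref{prop:finvol-coupling}(ii) -- one realizes, on a common probability space driven by a fixed cylindrical Brownian motion, fields
\[
  \varphi^{(i,j)} = Z + \psi_{i,j} \sim \nu^{\SG(\beta,z|\Lambda_j,m_i)}, \qquad i,j\in\N,
\]
where $\psi_{i,j}\in C^r_{\loc}$ for any $r<2-\beta/4\pi$, being the sum of the large-scale Gaussian part of the massive GFF, the smooth small-scale correction between the massive and massless GFF, and the sine-Gordon drift. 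Setting $\tilde\varphi_{i,j,k}:=P_{T_k}\varphi^{(i,j)}-Z=P_{T_k}\psi_{i,j}-(Z,\eta_{T_k})$ gives (i) at once, and (ii) is inherited from the choice of $Z$.

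The crucial estimate is $\sup_{i,j,k}\E[\|\tilde\varphi_{i,j,k}\|_{C^{r}(\rho)}^p]<\infty$ with $\rho(x)=(1+|x|^2)^{-\sigma/2}$, $\sigma$ large; I would in fact prove the slightly stronger bound with $r$ replaced by some $r'\in(r,2-\beta/4\pi)$ and $\rho$ by $\rho^{1/2}$, to leave room for a compactness step. The smooth small-scale correction and the sine-Gordon drift are controlled locally, uniformly in $m$, by the H\"older seminorm bound \eqref{e:finvol-coupling-moment-unif-m}; the large-scale Gaussian part has covariance $\int_1^\infty e^{-m^2 s}e^{\Delta s}\,ds$, whose fluctuations away from the $\log(1/m)$-divergent zero mode are $O(1)$ in weighted H\"older uniformly in $m$ and $\Lambda$, and the divergent zero mode is precisely what $P_{T_k}$ removes. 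What \eqref{e:finvol-coupling-moment-unif-m} does \emph{not} provide is uniformity in $\Lambda$ and $T_k$; this is supplied by \eqref{e:SG-moments-bis2}: its finite-volume form -- valid uniformly in $\Lambda$ and $m$ by the Ginibre-type monotonicity of the characteristic functional together with the Gaussian domination \eqref{e:SG-variance-0} and the cumulant bounds of Proposition~\ref{prop:moments} near the real axis -- controls $\E[|(P_{T_k}\varphi^{(i,j)},g)|^p]$ for mollified bumps $g$, and subtracting the Gaussian bounds for $(Z,g)$ and running a Kolmogorov/Besov-embedding argument on the weight-adapted scales yields the weighted H\"older bound.

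With the uniform bound established, convergence follows from stability of the construction: as $m_i\to 0$ the small-scale massive/massless correction vanishes, as $\Lambda_j\to\R^2$ the sine-Gordon drift converges (stability of the Polchinski-type renormalized potential under $\Lambda\to\R^2$, established alongside Proposition~\ref{prop:coupling-infvol}), and the large-scale Gaussian part converges modulo its zero mode by explicit covariance estimates. Only the constant mode fails to converge, so subtracting a suitable real constant $X_{i,j,k}$ -- e.g.\ minus the average of $\tilde\varphi_{i,j,k}$ against a fixed mollifier -- one gets $\tilde\varphi_{i,j,k}+X_{i,j,k}\to\tilde\varphi$ almost surely in $C^r(\rho)$ (passing to subsequences if needed to upgrade convergence in probability, and using the $C^{r'}(\rho^{1/2})$-bound together with Fatou to move between topologies and to retain $\limsup_{i,j,k}\E[\|\tilde\varphi_{i,j,k}\|_{C^r(\rho)}^p]\le C_{p,r}(\beta,z)$); this proves (iii). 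For (iv), \eqref{e:SG4pi} gives that the characteristic functional of $(P_{T_k})_\#\nu^{\SG(\beta,z|\Lambda_j,m_i)}$ on mean-zero test functions converges, as $i,j,k\to\infty$, to that of $\nu^{\SG(\beta,z)}$, so the almost-sure limit $Z+\tilde\varphi$, well defined modulo constants, has the law of $\nu^{\SG(\beta,z)}$ modulo constants -- consistently with Proposition~\ref{prop:coupling-infvol}.

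The main obstacle I anticipate is the uniform-in-$\Lambda$ weighted H\"older estimate: the stochastic-analytic regularity behind Proposition~\ref{prop:finvol-coupling} is local and only uniform in the mass, so making it uniform as $\Lambda\to\R^2$ genuinely forces one to feed in the integrability bound \eqref{e:SG-moments-bis2}, which itself rests on Bosonization. This is exactly the point where the stochastic-analysis and integrability sides of the paper have to be glued, and care is needed both in combining the two kinds of estimate and in the bookkeeping of the constant modes removed by $P_{T_k}$ (hence of the $X_{i,j,k}$).
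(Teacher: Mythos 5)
Your plan is to build all the finite-volume fields $\varphi^{(i,j)}\sim\nu^{\SG(\beta,z|\Lambda_j,m_i)}$ on a single probability space driven by a fixed cylindrical Brownian motion (so the Gaussian part $Z$ is shared), and then argue that $P_{T_k}\varphi^{(i,j)}-Z+X_{i,j,k}$ converges in $C^r(\rho)$ in probability, hence a.s.\ along subsequences. The gap is that on this common probability space there is no reason for $\Phi^{\SG(\beta,z|\Lambda,m)}_{t_0}$ — the part of the SDE solution above the fixed scale $t_0$ — to converge pathwise (or in probability) as $m_i\to 0$, $\Lambda_j\to\R^2$. The moment bound \eqref{e:SG-moments-bis2}, and more specifically Proposition~\ref{prop:Phit0-tightCk} which it feeds into, only yields \emph{tightness} of the laws of $P_T\Phi^{\SG}_{t_0}$; it says nothing about pathwise stability of the nonlinear SDE. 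The large-scale drift $\nabla_\varphi V_t(\cdot|\Lambda,m)$ for $t>t_0$ depends genuinely on $\Lambda$ and $m$, and the paper establishes only the \emph{uniform bound} \eqref{e:gradVt-maxprinciple}, not convergence as $\Lambda\to\R^2$, $m\to 0$. Your appeal to ``stability of the Polchinski-type renormalized potential under $\Lambda\to\R^2$, established alongside Proposition~\ref{prop:coupling-infvol}'' is not available: Proposition~\ref{prop:coupling-infvol} is itself a weak-convergence statement obtained by tightness and subsequential weak limits. And weak convergence cannot be upgraded to in-probability convergence by passing to a subsequence — it is a statement about laws, not about the random variables on your fixed probability space.

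This is exactly the difficulty the paper's proof is designed around. The resolution is a refined coupling: one applies the Skorokhod representation theorem \emph{only} to the tight family $P_T\Phi^{\SG(\beta,z|\Lambda,m)}_{t_0}\in C^k(\rho)$, obtaining a.s.\ convergence of this piece in the right topology on a \emph{new} probability space; one then extends that space by an \emph{independent} fresh cylindrical Brownian motion to build the small-scale Gaussian fields $(\Phi^{\GFF(m)}_{t,t_0})_{t\in(0,t_0]}$ and solves the drift ODE backward from $t_0$ to $0$ via the deterministic solution map $H_0(\cdot|\Lambda,m)$ of Lemma~\ref{lem:H0}, which is uniformly Lipschitz from a weighted space $X^{-s}(\rho)$ into $C^r(\rho)$. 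Convergence of the solution maps as $\Lambda\to\R^2,\ m\to 0$ is then extracted by an Arzelà--Ascoli argument in $C(K_N,D)$ with $K_N$, $D$ compact — \emph{not} from any pathwise stability of the drift on a common probability space — and combined with the a.s.\ convergence $\Phi^{\GFF(m)}_{0,t_0}\to\Phi^{\GFF(0)}_{0,t_0}$ in $C^r(\rho)$ (Lemma~\ref{lem:GaussXrho-bis}). A naive Skorokhod application to the whole triple would, as the paper explicitly warns, degrade the convergence of the Gaussian part to the weak $C^{-s}(\rho)$ topology and again lose the $C^r(\rho)$ convergence you need for the IMC. Splitting the construction into a Skorokhod-coupled large-scale part and an independently rebuilt small-scale part is the essential idea, and it is missing from your proposal.
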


Thus the $(Z,\tilde\varphi)$ in the two propositions have the same law possibly up to a random constant.
The two propositions differ as follows: The first proposition only applies to the infinite-volume limit (but not simultaneously to the finite-volume
regularized versions of the fields).
The second proposition provides convergence in a sufficiently strong topology, but the convergence is only modulo constants
(and hence the limit as well). Therefore both versions will be useful.

\subsection{Sine-Gordon IMC -- proofs of Theorem~\ref{thm:sg-finvol} (i)--(iii) and Theorem~\ref{thm:sg-infvol}}
\label{sec:sg-chaos}

To prove Theorem~\ref{thm:sg-finvol}, we start from the probability space of Proposition~\ref{prop:finvol-coupling}.
We denote the Gaussian IMC associated with the Gaussian field $Z$ according to Proposition~\ref{prop:GMC} by $M^{\GFF}_\alpha$
to distinguish it from the IMC associated with the sine-Gordon field $Z+ \tilde \varphi \sim \nu^{\SG(\beta,z|\Lambda,m)}$ which
which will be  denoted $M^{\SG}_\alpha$ and is defined by
\begin{equation} \label{e:MSG-def-finvol}
  M_\alpha^{\SG}(f)
    = M_\alpha^\GFF(e^{i\sqrt{4\pi}\alpha \tilde\varphi} f)
  .
\end{equation}
The right-hand side is well-defined
when $\alpha^2 < \min\{1,2-\beta/4\pi\}$
in the sense of the multiplication properties of Besov spaces \eqref{e:Besov-mult}.
This is precisely stated in the following proposition and proof.

\begin{proof}[Proof of Theorem~\ref{thm:sg-finvol} (i)--(iii)]
\subproof{items (i)--(ii)}
Let $1>r>s>0$ and fix $\chi \in C_c^\infty$ and denote by $K \subset \R^2$ its support.
The multiplication properties of Besov--H\"older norms
\eqref{e:Besov-mult}
imply
\begin{equation} \label{e:MSG-Besov-mult}
  \|M_\alpha^{\SG}\chi \|_{C^{-s}}
  \lesssim  \|M_\alpha^{\GFF}\chi\|_{C^{-s}} \|e^{i\sqrt{4\pi}\alpha\tilde\varphi}\|_{C^{r}(K)}
  .
\end{equation}
We now use the following elementary inequality for $f: \R^2 \to \C$ with $|\im f|\leq h$:
\begin{equation} \label{e:eif-Holder}
  \|e^{if}\|_{C^r(K)} = \|e^{if}\|_{L^\infty(K)} + \sup_{x,y\in K} \frac{|e^{if(x)}-e^{if(y)}|}{|x-y|^r} \leq e^{h}\pB{1+ [f]_{C^r(K)}}.
\end{equation}
Indeed, if $if=ia+b$ with $a,b$ real and $|b|\leq h$,
\begin{align} \label{e:Holder-exp}
  |e^{ia(x)+b(x)}-e^{ia(y)+b(y)}|
  &=\sqrt{ e^{2b(x)}+e^{2b(y)}-2e^{b(x)+b(y)}\cos(a(x)-a(y))}
    \nnb
  &=\sqrt{ (e^{b(x)}-e^{b(y)})^2+2e^{b(x)+b(y)}(1-\cos(a(x)-a(y)))}
    \nnb
  &\leq e^h\sqrt{ (1-e^{-|b(y)-b(x)|})^2+|a(x)-a(y)|^2}
    \nnb
  &\leq e^h \sqrt{|b(x)-b(y)|^2+|a(x)-a(y)|^2} = e^h |f(x)-f(y)|
    .
\end{align}
For any $\rho>0$ there exists $\eta = \eta(\rho)$ such that for $z\in B_\rho(0) \cap I$, one has
$|\im \tilde\varphi|\leq \eta$ almost surely. 
It follows that $M^\SG_\alpha$ is well-defined and the moment bounds \eqref{e:MSG-moments} are also a direct consequence
from their Gaussian versions \eqref{e:GMC-moments} and from the moment bound on the H\"older seminorm \eqref{e:finvol-coupling-moment-unif-m}.

To show that $M^{\SG,\epsilon}_\alpha$ defined by \eqref{e:Meps}, with $\varphi_\epsilon$ now the convolution regularized
sine-Gordon field, converges to $M^{\SG}_\alpha$ in $C^{-s}_{\rm loc}$ in probability as $\epsilon \to 0$,
we start from the Gaussian version of this statement, which holds by Proposition~\ref{prop:GMC}:
$M^{\GFF,\epsilon}_\alpha \to M^\GFF_\alpha$ in $C^{-s}_{\rm loc}$ in probability.
Since $\tilde\varphi$ is in $C^{r}(K)$, it follows that
$\tilde\varphi_\epsilon = \eta_\epsilon * \tilde\varphi \to \tilde\varphi$ in $C^{r}(K)$ almost surely.
For $f$ compactly supported,
\begin{align}
  M^{\SG,\epsilon}_\alpha(f)
  &= \int \epsilon^{-\alpha^2} e^{i\alpha \sqrt{4\pi}\varphi_\epsilon(x)} f(x) \, dx
    \nnb
  &= \int \epsilon^{-\alpha^2} e^{i\alpha \sqrt{4\pi}(Z_\epsilon(x)+\tilde\varphi_\epsilon(x))} f(x) \, dx
  = M^{\GFF,\epsilon}_\alpha(e^{i\alpha \sqrt{4\pi}\tilde\varphi_\epsilon} f).
\end{align}
Since $|\im \tilde\varphi| \leq \eta$,
it follows from \eqref{e:Holder-exp} and $\tilde\varphi_\epsilon \to \tilde\varphi$ in $C^r(K)$
that $e^{i\alpha \sqrt{4\pi}\tilde\varphi_\epsilon}$ is also bounded in $C^r(K)$ by $C(1+[\tilde\varphi]_{C^r})$.
Similarly, it follows that it converges to $e^{i\alpha \sqrt{4\pi}\tilde\varphi}$ in $C^r(K)$:
\begin{equation}
  \|e^{i\alpha \sqrt{4\pi}\tilde\varphi}-e^{i\alpha \sqrt{4\pi}\tilde\varphi_\epsilon}\|_{C^r(K)} \lesssim \|\tilde\varphi-\tilde\varphi_\epsilon\|_{C^r(K)}.
\end{equation}
The multiplicative inequality for Besov spaces \eqref{e:Besov-mult} thus implies
\begin{align}
  \|(M^{\SG}_\alpha-M^{\SG,\epsilon}_\alpha)\chi\|_{C^{-s}}
  &\leq
    \|(M^{\GFF}_\alpha-M^{\GFF,\epsilon}_\alpha)\chi\|_{C^{-s}}\|e^{i\alpha \sqrt{4\pi}\tilde\varphi_\epsilon}\|_{C^r(K)}
    \nnb
    &\qquad + \|M^{\GFF}_\alpha\chi\|_{C^{-s}}\|e^{i\alpha \sqrt{4\pi}\tilde\varphi}-e^{i\alpha \sqrt{4\pi}\tilde\varphi_\epsilon}\|_{C^r(K)} \to 0,
\end{align}
where the convergence is in probability, as desired.
It is also clear from this construction that dependence on the mollifier $\eta$ is the same as in the Gaussian version
of the IMC.

\subproof{item (iii)}
By Proposition~\ref{prop:finvol-coupling},
the map $z \in I \mapsto \tilde\varphi(z) \in C^{r}(K,\C)$
and therefore also  $e^{i\alpha \sqrt{4\pi} \tilde\varphi}$ is bounded and analytic from $z\in I$ into $C^r(K,\C)$.
This implies that $M_\alpha^\SG$ is almost surely analytic in $z\in I$.
Using Morera's and Fubini's theorem and that $\|\chi M_\alpha^\SG\|_{C^{-s}}$ has moments of all orders,
since
\begin{equation}
  \avg{M_{\alpha_1}(f_1)\cdots M_{\alpha_n}(f_n)}_{\SG(\beta,z|\Lambda,m)}
  = \E\qB{M_{\alpha_1}^{\SG}(f_1)\cdots M_{\alpha_n}^{\SG}(f_n)}
\end{equation}
this gives the existence and analyticity of the correlation functions.
The uniform in $m>0$ bounds on the correlation functions follow from the uniform bound on the moments of $\|\chi M_\alpha\|_{C^{-s}}$
stated in item (ii).
\end{proof}

The proofs of items (i) and (ii) of Theorem~\ref{thm:sg-infvol} are essentially identical
to the corresponding statements in Theorem~\ref{thm:sg-finvol},
with the only change that we now start from the probability space constructed in Proposition~\ref{prop:coupling-infvol},
which however has the same properties,
and again define
\begin{equation} \label{e:MSG-def}
  M_\alpha^{\SG}(f)
  = M_\alpha^\GFF(e^{i\sqrt{4\pi}\alpha \tilde\varphi} f) .
\end{equation}
The following proof therefore focuses on the other properties.

\begin{proof}[Proof of Theorem~\ref{thm:sg-infvol}]
  \subproof{items (i) and (ii)}
  As already mentioned, the existence and construction of $M_\alpha$ with respect to $\nu^{\SG(\beta,z)}$
  is completely analogous to the massive version.
  The proofs that $M_\alpha^{\SG}$ is given as the $\epsilon\to 0$ limit of \eqref{e:Meps-bis} and finiteness of moments are also identical.

  \subproof{item (iii)}
  For Euclidean invariance, we use the fact that the measure $\nu^{\SG(\beta,z)}$ is Euclidean invariant, see \eqref{e:Euclinv}.
  Since we chose our mollifier in \eqref{e:Meps-bis} to be invariant under rotations,
  this means that the law of $\varphi^\epsilon$ is invariant under rotations and translations,
  so the same is true of the law of the regularized IMC from \eqref{e:Meps-bis},
  and thus also for the limiting law $M_\alpha^\SG$.

  \subproof{item (iv)}  
  To show the massless convergence of the correlation functions  \eqref{e:MSG-conv-massless},
  we start from the probability space given by Proposition~\ref{prop:coupling-infvol-convergence} on which
  \begin{equation}
    \|\tilde\varphi_{i,j,k}-\tilde\varphi -X_{i,j,k}\|_{C^r(\rho)} \to 0.
  \end{equation}
  Therefore, if $\sum_i\alpha_i=0$, and with $\Lambda=\Lambda_j$ and $m=m_i$,
  \begin{align}
  \avg{\prod_{l=1}^n M_{\alpha_l}(f_l)}_{\SG(\beta,z|\Lambda,m)}
  &= \E\qa{\prod_{l=1}^n M_{\alpha_l}^\GFF(e^{i\sqrt{4\pi}\alpha_l \tilde\varphi_{i,j,k}} f_l)}
  \\
  &= \E\qa{\prod_{l=1}^n M_{\alpha_l}^\GFF(e^{i\sqrt{4\pi}\alpha_l (\tilde\varphi_{i,j,k}+X_{i,j,k})} f_l)}
    \label{e:M-integrand}
    .
\end{align}
Using that the moment bound follows as in \eqref{e:MSG-Besov-mult}--\eqref{e:eif-Holder}:
\begin{equation}
  \sup_{i,j,k}\E\qa{|M_{\alpha}^\GFF(e^{i\sqrt{4\pi}\alpha (\tilde\varphi_{i,j,k}+X_{i,j,k})} f)|^p}
  = \sup_{i,j,k}\E\qa{|M_{\alpha}^\GFF(e^{i\sqrt{4\pi}\alpha (\tilde\varphi_{i,j,k})} f)|^p}
  < \infty,
\end{equation}
the integrand in \eqref{e:M-integrand} is uniformly integrable and \eqref{e:M-integrand} converges to
\begin{align}
  \E\qa{\prod_{l=1}^n M_{\alpha_l}^\GFF(e^{i\sqrt{4\pi}\alpha_l \tilde\varphi} f_l)}
  =
  \avg{\prod_{l=1}^n M_{\alpha_l}(f_l)}_{\SG(\beta,z)},
\end{align}
where the limits $m\to 0$ and $\Lambda\to \R^2$ are possibly along the subsequences of Proposition~\ref{prop:coupling-infvol-convergence}.
\end{proof}

\subsection{Proof of Theorem~\ref{thm:sg-finvol} (iv)}

To prove item (iv) of Theorem~\ref{thm:sg-finvol} we first define the ultraviolet
regularized measure appearing in the statement with  expectation denoted $\avg{F(\varphi)}_{\GFF+\SG(\beta,z|\Lambda,m,\epsilon)}$.
For motivation,
recall that $\nu^{\SG(\beta,z|\Lambda,m,\epsilon)}$ is defined in \eqref{e:SGdef} by
\begin{align} \label{e:SGeps-1}
  \avg{F}_{\SG(\beta,z|\Lambda,m,\epsilon)}
  &= \frac{1}{Z(\beta,z|\Lambda,m,\epsilon)}\E\qB{e^{-V_{\epsilon^2}(\Phi^{\GFF}_{\epsilon^2},z|\Lambda,m,\epsilon)} F(\Phi^{\GFF}_{\epsilon^2})},
\end{align}
where
the decomposed GFF is defined in Section~\ref{sec:Gauss-decomp},
$V_{\epsilon^2}(\varphi,z|\Lambda,m,\epsilon)$ denotes the term in the exponential in \eqref{e:SGdef},
and $Z(\beta,z|\Lambda,m,\epsilon)$ is the partition function (normalization constant).

The small scales below $\epsilon^2$ are absent
in the above ultraviolet regularization.
The expectation with index $\GFF+\SG(\beta,z|\Lambda,m,\epsilon)$ next defined
is a variant of this regularized measure in which the small scales are Gaussian
(rather than missing).
This has the advantage that we can define the IMC  for this regularized version of the sine-Gordon measure
using the same counterterm $\epsilon^{-\alpha^2}$ as for the full measure.
Concretely, for any $z\in \C$ for which the partition function is nonzero, $|\Lambda|<\infty$ and $m,\epsilon>0$, define
\begin{align} \label{e:SGeps-2}
  \avg{F(\varphi)}_{\GFF+\SG(\beta,z|\Lambda,m,\epsilon)}
  &= \E_{C_{\epsilon^2}} \qB{ \avg{F(\varphi+\zeta)}_{\SG(\beta,z|\Lambda,m,\epsilon)}}\nnb
  &= \frac{1}{Z(\beta,z|\Lambda,m,\epsilon)}\E_{C_{\epsilon^2}}\qB{\E_{C_\infty-C_{\epsilon^2}}\qB{e^{-V_{\epsilon^2}(\varphi,z|\Lambda,m,\epsilon)} F(\varphi+\zeta)}}
    \nnb
  &= \frac{1}{Z(\beta,z|\Lambda,m,\epsilon)}\E\qB{e^{-V_{\epsilon^2}(\Phi^{\GFF}_{\epsilon^2},z|\Lambda,m,\epsilon)} F(\Phi^{\GFF}_{0})},
\end{align}
where on the first two lines the Gaussian expectation with covariance $C_{\epsilon^2}$ applies to the variable $\zeta$ and the second expectation to $\varphi$.
Note that for $\epsilon>0$ fixed, the random field $\varphi$
respectively $\Phi_{\epsilon^2}^\GFF$
is smooth whereas  $\zeta$ respectively $\Phi^\GFF_{0,\epsilon^2}$ has the regularity of a GFF (and is thus only a generalized function).
The partition functions in \eqref{e:SGeps-1} and \eqref{e:SGeps-2} are the same.

\begin{proof}[Proof of Theorem~\ref{thm:sg-finvol} (iv)]
  Proposition~\ref{prop:finvol-coupling} (v) implies that, for $z\in \R$,
  \begin{equation}
      \avg{F(\varphi)}_{\GFF+\SG(\beta,z|\Lambda,m,\epsilon)}
      = \E\qB{F(Z+\tilde\varphi_\epsilon(z,m))}.
  \end{equation}
  The analyticity, estimates, and convergence of the IMC moments now
  all follow in the same way as the statements in the proof of Theorem~\ref{thm:sg-finvol} (i)--(iii).
\end{proof}

\subsection{Series expansion of the finite-volume correlation functions}
\label{sec:finvol-massless}

Using Theorem~\ref{thm:sg-finvol} for the massive sine-Gordon model as input,
this section establishes that
the massless limit of the fractional sine-Gordon correlation functions exists in finite
volume, that the limiting correlation functions remain analytic in $z$, and
that their series expansion is given in terms of free field correlations.

\begin{theorem} \label{thm:finvol-massless-corr}
Under the assumptions of Theorem~\ref{thm:sg-finvol},
the $m\to 0$ limit of the smeared correlation function \eqref{e:coupling-finvol-corfunc} exists,
is also analytic in $z\in I$, and the derivatives at $z=0$ are given by
\begin{multline}\label{e:finvol-corfunc-deriv}
    \pa{\prod_{i=1}^n C_{\eta}^{-\alpha_i^2}}
    \frac{1}{2^p}\ddp{^{p}}{z^{p}} \avg{M_{\alpha_1}(f_1)\cdots M_{\alpha_n}(f_n)}_{\SG(\beta,z|\Lambda,0)} \bigg|_{z=0}
    \\
    =
    \avg{\wick{e^{i\alpha_1 \sqrt{4\pi} \varphi}}(f_1)\cdots \wick{e^{i\alpha_n \sqrt{4\pi} \varphi}}(f_n); \wick{\cos(\sqrt{\beta}\varphi)}(\1_{\Lambda}); \cdots; \wick{\cos(\sqrt{\beta}\varphi)}(\1_{\Lambda})}_{\GFF(0)}^{\mathsf T},
  \end{multline}
  where $C_\eta$ is the constant from Proposition~\ref{prop:hkimc}, the right-hand side is defined in Proposition \ref{th:fracapp-bis}, and there are $p$ cosine terms in the cumulant. %
\end{theorem}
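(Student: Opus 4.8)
The plan is to prove \eqref{e:finvol-corfunc-deriv} by differentiating the ultraviolet-regularized measure $\avg{\cdot}_{\GFF+\SG(\beta,z|\Lambda,m,\epsilon)}$ in $z$ at $z=0$ — which turns it into a free field mixed cumulant involving the cosine interaction — and then removing the regularizations in the order $\delta\to 0$ (the chaos regularization), $\epsilon\to 0$, and finally $m\to 0$, controlling each step by an input already available: the uniform-in-$\epsilon$ analyticity and bounds of Theorem~\ref{thm:sg-finvol}(iv), the $L^q$-approximation of $M_\alpha$ by heat-kernel-regularized vertex operators from Proposition~\ref{prop:hkimc}, and the triple-limit statement \eqref{eq:gffregcfconv} of Proposition~\ref{th:fracapp-bis}. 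Throughout we take the $f_i$ with disjoint supports, as needed for the right-hand side of \eqref{e:finvol-corfunc-deriv} to be defined.

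Fix $m,\epsilon>0$ and set $W_\epsilon=\wick{\cos(\sqrt\beta\Phi^{\GFF(m)}_{\epsilon^{2}})}_\epsilon(\1_\Lambda)$, so that by \eqref{e:SGeps-2} the regularized weight equals $e^{2zW_\epsilon}$ and $W_\epsilon$ is, for fixed $\epsilon$, a bounded random variable. Since $\prod_j M_{\alpha_j}(f_j)$ has finite moments of all orders under $\nu^{\GFF(m)}$, the numerator $z\mapsto\E[\prod_j M_{\alpha_j}(f_j)\,e^{2zW_\epsilon}]$ and denominator $z\mapsto\E[e^{2zW_\epsilon}]$ (expectations under $\nu^{\GFF(m)}$) are entire, the denominator equals $1$ at $z=0$, and the standard moment-to-cumulant formula for the Taylor coefficients of the quotient gives
\begin{equation}
  \tfrac{1}{2^{p}}\ddp{^{p}}{z^{p}}\Big|_{z=0}\avg{\prod_j M_{\alpha_j}(f_j)}_{\GFF+\SG(\beta,z|\Lambda,m,\epsilon)}
  =\avg{\prod_j M_{\alpha_j}(f_j);\,W_\epsilon;\cdots;W_\epsilon}^{\sf T}_{\GFF(m)},
\end{equation}
with $p$ copies of $W_\epsilon$ (the factor $2^{p}$ absorbing $\partial_z^{p}e^{2zW_\epsilon}$). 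By Proposition~\ref{prop:hkimc}, $M_{\alpha_j}(f_j)=C_\eta^{\alpha_j^{2}}\lim_{\delta\to 0}\wick{e^{i\alpha_j\sqrt{4\pi}\Phi^{\GFF(m)}_{\delta^{2}}}}_\delta(f_j)$ in $L^{q}$ for every $q<\infty$, so using multilinearity of cumulants, their continuity in $L^{q}$, and boundedness of $W_\epsilon$, the right-hand side equals $\prod_j C_\eta^{\alpha_j^{2}}$ times $\lim_{\delta\to0}\avg{\prod_j\wick{e^{i\alpha_j\sqrt{4\pi}\Phi^{\GFF(m)}_{\delta^{2}}}}_\delta(f_j);\,\wick{\cos(\sqrt\beta\Phi^{\GFF(m)}_{\epsilon^{2}})}_\epsilon(\1_\Lambda);\cdots}^{\sf T}_{\GFF(m)}$.

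Now let $\epsilon\to 0$ with $m$ fixed. By Theorem~\ref{thm:sg-finvol}(iv), $z\mapsto\avg{\prod_j M_{\alpha_j}(f_j)}_{\GFF+\SG(\beta,z|\Lambda,m,\epsilon)}$ is analytic on $I$, uniformly bounded in $\epsilon$ on $I\cap B_\rho(0)$ by \eqref{e:Malpha-uniform-bd}, and converges as $\epsilon\to 0$ to $\avg{\prod_j M_{\alpha_j}(f_j)}_{\SG(\beta,z|\Lambda,m)}$ by \eqref{e:corrfunc-as-limit}; Cauchy's integral formula therefore lets the derivative at $z=0$ pass to the limit. Hence for each fixed $m>0$, $\prod_j C_\eta^{-\alpha_j^{2}}\tfrac{1}{2^{p}}\partial_z^{p}|_{z=0}\avg{\prod_j M_{\alpha_j}(f_j)}_{\SG(\beta,z|\Lambda,m)}$ equals the iterated limit $\lim_{\epsilon\to0}\lim_{\delta\to0}$ of the Gaussian cumulant above (which therefore exists), and \eqref{eq:gffregcfconv} identifies its $m\to 0$ limit with the free field truncated correlation function on the right-hand side of \eqref{e:finvol-corfunc-deriv}. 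It remains to show the $m\to 0$ limit of \eqref{e:coupling-finvol-corfunc} itself exists and is analytic: by Theorem~\ref{thm:sg-finvol}(iii) the functions $g_m(z)=\avg{\prod_j M_{\alpha_j}(f_j)}_{\SG(\beta,z|\Lambda,m)}$ are analytic on $I$ and bounded on compacts of $I$ uniformly in $m>0$, so Cauchy estimates on a disk $B_r(0)\subset I$ give $|g_m^{(k)}(0)|\le C\,k!\,r^{-k}$, while the above (applied with $p=k$ for every $k\geq 0$) shows $g_m^{(k)}(0)$ converges as $m\to0$; dominated convergence in the Taylor expansion then gives pointwise convergence of $g_m$ near $0$, and Vitali's theorem upgrades this to locally uniform convergence on all of $I$ to an analytic limit $g$ with $g^{(k)}(0)=\lim_{m\to0}g_m^{(k)}(0)$. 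This is precisely \eqref{e:finvol-corfunc-deriv}.

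The main obstacle is purely one of organizing the interchange of the three limits $\delta\to0$, $\epsilon\to0$, $m\to0$ with differentiation in $z$, and of checking that the cumulant expansion of the regularized measure matches the Gaussian correlation functions of Proposition~\ref{th:fracapp-bis} up to the explicit constant $\prod_j C_\eta^{-\alpha_j^{2}}$; there is no new analytic difficulty, since the delicate inputs — the stochastic decomposition of the finite-volume sine-Gordon field behind Theorem~\ref{thm:sg-finvol}, and the local integrability and regularized convergence of fractional Gaussian correlation functions in Proposition~\ref{th:fracapp-bis} — are exactly the statements we are free to use, leaving only the bookkeeping of limits and an application of Vitali's theorem.
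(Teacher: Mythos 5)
Your proof is correct and follows essentially the same route as the paper: differentiate the $\epsilon$-regularized measure to obtain the Gaussian cumulants (the paper's display \eqref{e:correxp-pf4}), remove the regularizations in the order $\delta\to 0$, $\epsilon\to 0$, $m\to 0$ using Proposition~\ref{prop:hkimc}, Theorem~\ref{thm:sg-finvol}(iv), and Proposition~\ref{th:fracapp-bis}, and then upgrade convergence of Taylor coefficients to convergence of the analytic functions on $I$. The only difference from the paper's argument is cosmetic: the paper first extracts a subsequential limit via Montel's theorem and then invokes uniqueness of analytic continuation to make the limit independent of the subsequence, whereas you establish pointwise convergence near the origin directly from the Cauchy bound on the coefficients plus a dominated-convergence argument in the Taylor series, then apply Vitali's theorem; these are interchangeable tools and the substantive analytic content is identical.
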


Before turning to the proof of the theorem, we emphasize that (because of the zero mode)
the probability measure $\avg{\cdot}_{\SG(\beta,z|\Lambda,m)}$ itself does not have a $m\to 0$ limit if $|\Lambda|<\infty$,
but that the fractional correlation functions \eqref{e:coupling-finvol-corfunc} still do,
though they could vanish (which happens for nonneutral correlations).
This is different from the correlation functions of  the infinite-volume measure in Theorem~\ref{thm:sg-infvol},
which do not vanish for nonneutral correlations.
As discussed in Remark~\ref{rk:ssb}, the nonequivalence of limits is a consequence of spontaneous symmetry breaking.

\begin{proof}[Proof of Theorem~\ref{thm:finvol-massless-corr}]
Let $f_1,\dots, f_n \in C_c^\infty(\R^2)$ and $\alpha_1,\dots, \alpha_n \in \R$ with $\alpha_i^2 < \min\{1,2-\beta/4\pi\}$.
By Theorem~\ref{thm:sg-finvol}, the function
\begin{equation} \label{e:correxp-pf2}
  z \in I \mapsto   \avg{M_{\alpha_1}(f_1)\cdots M_{\alpha_n}(f_n)}_{\SG(\beta,z|\Lambda,m)}
\end{equation}
is analytic and bounded uniformly in $z\in I \cap B_\rho(0)$ and bounded uniformly in $m\in (0,1]$.
By Montel's theorem, this family of functions depending on $m\in (0,1]$ is therefore relatively compact,
i.e., there is an analytic function (at this point possibly not unique)
\begin{equation} \label{e:correxp-pf3}
  z \in I \mapsto   \avg{M_{\alpha_1}(f_1)\cdots M_{\alpha_n}(f_n)}_{\SG(\beta,z|\Lambda,0)}
\end{equation}
and a subsequence $m \to 0$ such  the functions \eqref{e:correxp-pf2} converge to \eqref{e:correxp-pf3}
along this subsequence, uniformly on compact subsets of $z\in I$.

In the remainder of the proof,
we show that  the series expansion of \eqref{e:correxp-pf3} at $z=0$ has coefficients given by \eqref{e:finvol-corfunc-deriv}.
In particular, this expansion is independent of the subsequence $m \to 0$. Since every subsequential limit
\eqref{e:correxp-pf3} has the same series expansion in a neighborhood of $z=0$ and is analytic in $I$,
uniqueness of analytic continuation then implies that the limit is also unique and the convergence holds as $m\to 0$
without passing to a subsequence.

To show the series expansion, we begin with the corresponding expansion for the regularized expectation  \eqref{e:SGeps-2} with $\epsilon>0$.
In this case, since $\wick{\cos(\sqrt{\beta}\Phi_{\epsilon^2}^{\GFF(m)})}_{\epsilon}$ is bounded, it is clear that
\begin{equation} \label{e:correxp-pf1}
  z \in I \mapsto   \avg{M_{\alpha_1}(f_1)\cdots M_{\alpha_n}(f_n)}_{\GFF+\SG(\beta,z|\Lambda,m,\epsilon)}
\end{equation}
has a series expansion around $z=0$ given by:
\begin{multline} \label{e:correxp-pf4}
  \avg{M_{\alpha_1}(f_1)\cdots M_{\alpha_n}(f_n)}_{\GFF+\SG(\beta,z|\Lambda,m,\epsilon)}
  \\
  =
  \sum_{p=0}^\infty 2^p\frac{z^p}{p!}
  \E^\mathsf T \qa{M_{\alpha_1}(f_1)\cdots M_{\alpha_n}(f_n); \wick{\cos(\sqrt{\beta}\Phi_{\epsilon^2}^{\GFF(m)})}_\epsilon({\bf 1}_\Lambda); \cdots ; \wick{\cos(\sqrt{\beta}\Phi_{\epsilon^2}^{\GFF(m)})}_\epsilon({\bf 1}_\Lambda)},
\end{multline}
where $\E^\mathsf T$ denotes a cumulant, and there are $p$ entries of the cosine terms in the cumulant.

By Theorem~\ref{thm:sg-finvol} (iv), the function \eqref{e:correxp-pf1}
is analytic and bounded uniformly in $z\in I \cap B_\rho(0)$ and uniformly in $\epsilon,m\in (0,1]$, and it converges locally uniformly to
\eqref{e:correxp-pf2} as $\epsilon\to 0$,
and as then $m\to 0$ along a suitable subsequence, we have local uniform convergence to \eqref{e:correxp-pf3}.
This implies that the Taylor coefficients of \eqref{e:correxp-pf3} are the $\epsilon\to 0$ and then $m\to 0$ limits (along a subsequence) of the series coefficients of \eqref{e:correxp-pf1}.

On the other hand, using Proposition~\ref{prop:hkimc},  the coefficients on the right-hand side of \eqref{e:correxp-pf4} are equal to
\begin{multline}
2^p  C_\eta^{\alpha_1^2} \cdots   C_\eta^{\alpha_n^2}
  \lim_{\delta\to 0}
  \E^{\sf T}\Big[
  \wick{e^{i\alpha_1\sqrt{4\pi} \Phi_{\delta^2}^{\GFF(m)}}}_\delta (f_1)\cdots \wick{e^{i\alpha_n \sqrt{4\pi} \Phi_{\delta^2}^{\GFF(m)}}}_\delta (f_n);
    \\ \wick{\cos(\sqrt{\beta}\Phi_{\epsilon^2}^{\GFF(m)})}_\epsilon({\bf 1}_\Lambda); \cdots ; \wick{\cos(\sqrt{\beta}\Phi_{\epsilon^2}^{\GFF(m)})}_\epsilon({\bf 1}_\Lambda)
    \Big],
\end{multline}
which by Proposition \ref{th:fracapp-bis} converges to the desired limit as $\epsilon \to 0$ and then $m\to 0$ (without any subsequence assumption).
Since the series expansion of the analytic function \eqref{e:correxp-pf3} does not depend on which subsequence we picked, the function cannot either, so we have convergence as $m\to 0$. This concludes the proof.
\end{proof}

\section{Decomposition of massive sine-Gordon field -- Proposition~\ref{prop:finvol-coupling}}
\label{sec:coupling-finvol}

The goal of this section is to prove Proposition~\ref{prop:finvol-coupling} which we restate for convenience below.
The constructions of this section also provide
the starting point for the proofs of Propositions~\ref{prop:coupling-infvol}--~\ref{prop:coupling-infvol-convergence} concerning
the massless infinite-volume model in Section~\ref{sec:coupling-infvol},
where similar constructions for small scales are combined with input from integrability for large scales.

\begin{proposition} \label{prop:finvol-coupling-bis} 
  Let $\beta\in(0,6\pi)$, and
  let $\Lambda \subset \R^2$ be a bounded domain.
  Then there is a complex neighborhood $I$ of the real axis and a function $\rho\in [0,\infty) \mapsto \eta(\rho) \in [0,\infty)$,
  both depending on $\Lambda$ and $\beta$ but independent of $m \in (0,1]$,
  and a probability space on which
  random variables $Z$ and $\tilde \varphi = \tilde \varphi(z,m)$ with $z\in I$ and $m\in (0,1]$ are defined such that
  that the following hold.

  \smallskip
  \noindent
  (i) For $z\in \R$, the massive finite-volume sine-Gordon field can be decomposed as
  \begin{equation}
    Z+\tilde \varphi \sim \nu^{\SG(\beta,z|\Lambda,m)}.
  \end{equation}

  \smallskip
  \noindent
  (ii)
  The random field $Z$ is a real-valued log-correlated Gaussian random field satisfying $Z \in C^{-s}_\loc$ almost surely
  for any $s>0$. It is independent of $z \in I$.

  \smallskip
  \noindent
  (iii)
  The random field  $\tilde \varphi$   is locally H\"older continuous with $|\im \tilde\varphi| \leq \eta(\rho)$ if $|z|\leq \rho$
  and $\im \tilde\varphi=0$ if $z\in \R$ and the following moment bound holds for any $p>0$   and $\chi\in C_c^\infty(\R^2)$
  and for any fixed $r<2-\beta/4\pi$ and any fixed $m\in (0,1]$:
  \begin{equation} \label{e:finvol-coupling-moment-bis}
    \E\qB{\|\chi \tilde \varphi\|_{C^{r}}^p} \leq C_p(\beta,z,\Lambda,m) < \infty,
  \end{equation}
  and the H\"older seminorm is bounded uniformly in $m\in (0,1]$:
  \begin{equation}  \label{e:finvol-coupling-moment-unif-m-bis}
    \E\qB{ [\chi\tilde\varphi]_{C^r}^p} \leq C_p(\beta,z,\Lambda) < \infty.
  \end{equation}

  \smallskip
  \noindent
  (iv)
  The map $z\in I \mapsto \chi\tilde\varphi(z) \in C^{r}(\R^2)$ is complex analytic almost surely.

  \smallskip
  \noindent
  (v) The same holds with $\tilde\varphi$ replaced by a regularized version $\tilde\varphi_\epsilon$
  such that $Z+\tilde\varphi_\epsilon \sim \nu^{\GFF+\SG(\beta,z|\Lambda,m,\epsilon)}$
  when $z\in \R$,
  defined in \eqref{e:SGeps-2}, with all bounds uniform in $\epsilon$
  and convergence in $C^r$ as $\epsilon \to 0$.
\end{proposition}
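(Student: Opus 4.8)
The plan is to realize the coupling via the Polchinski-type renormalization group flow for the finite-volume sine-Gordon potential and then to read off the decomposition from the scale decomposition of the Gaussian free field. Concretely, I would let $Z = \Phi^{\GFF(m)}_{0,t_0} = \Phi^{\GFF(m)}_0 - \Phi^{\GFF(m)}_{t_0}$ be the large-scale part of the heat-kernel decomposed GFF cut off at a fixed scale $t_0 \in (0,\infty)$ — this is the log-correlated Gaussian field claimed in item (ii), manifestly independent of $z$, and it is elementary that $Z \in C^{-s}_{\loc}$ a.s. for all $s>0$. The remaining small-scale part $\Phi^{\GFF(m)}_{t_0}$ carries the sine-Gordon interaction, and the key point is that, because the interaction is restricted to $\Lambda$ bounded and only the scales below $t_0$ are involved, the effective potential $V_t$ obtained by integrating out scales from $0$ to $t$ via the Polchinski flow converges (as the ultraviolet cutoff is removed) for all $\beta \in (0,6\pi)$, with bounds on a fixed complex disc $z \in I$ around $\R$ whose radius depends only on $\Lambda,\beta$ and not on $m\in(0,1]$; this is exactly the content of the renormalized-potential estimates extending \cite{MR914427} and \cite[Sections 4--5]{MR4767492} that the excerpt places in Appendices~\ref{app:fracrenorm}--\ref{app:renormpart}. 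Once $V_{t_0}$ is controlled, $\tilde\varphi$ is defined by a stochastic flow: one writes the law of the interacting small-scale field (tilted GFF with bounded smooth density once a regularization is in place) via its Föllmer-type drift representation, so that $\tilde\varphi = \Phi^{\GFF(m)}_{t_0} + (\text{drift})$ where the drift is $\int_0^{t_0} \nabla_\varphi V_s(\Phi_s)\,ds$-type and solves an SDE driven by the same cylindrical Brownian motion $(W_t)$.

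The main steps I would carry out, in order: (1) Fix the heat-kernel scale decomposition of Section~\ref{sec:Gauss-decomp} and set $Z = \Phi^{\GFF(m)}_0 - \Phi^{\GFF(m)}_{t_0}$; verify (ii). (2) For the $\epsilon$-regularized model \eqref{e:SGeps-2}, the tilt is by a bounded smooth function, so by Girsanov/Föllmer the law of $\Phi^{\GFF(m)}_0$ under $\nu^{\GFF+\SG(\beta,z|\Lambda,m,\epsilon)}$ equals the law of $W$ pushed forward by adding a drift $b^\epsilon_s$ that is an explicit functional of the effective potential at scale $s$; define $\tilde\varphi_\epsilon = \Phi^{\GFF(m)}_{t_0} + \int_0^{t_0}(\dot C_s)^{1/2}b^\epsilon_s\,ds$ (with the correct book-keeping so that $Z + \tilde\varphi_\epsilon$ has exactly the desired law). (3) Prove the analyticity in $z$ and the $\epsilon$-uniform $C^r$ bounds on $\tilde\varphi_\epsilon$: this comes from differentiating the Polchinski ODE for $V_s$ in $z$ (the flow is polynomial/analytic in $z$ with the same a priori estimates) and from parabolic/Schauder smoothing, which upgrades the drift regularity by essentially two derivatives relative to $V_s$, giving $\tilde\varphi_\epsilon \in C^r_{\loc}$ for any $r<2-\beta/4\pi$ with moments of all orders; the imaginary part bound $|\im\tilde\varphi_\epsilon|\le\eta(\rho)$ for $|z|\le\rho$ follows because the drift is purely real when $z\in\R$ and depends analytically on $z$ with controlled modulus of continuity. (4) Pass $\epsilon\to 0$: the effective potentials $V^\epsilon_s$ converge to $V_s$ uniformly on $I\cap B_\rho$ (renormalized-potential convergence), hence the drifts converge in the relevant norm and $\tilde\varphi_\epsilon \to \tilde\varphi$ in $C^r_{\loc}$, with all bounds preserved; this gives (i), (iii), (v) for the limiting object, and (iv) follows from local uniform convergence of analytic $C^r$-valued maps (Weierstrass/Morera in Banach space). (5) The uniformity in $m\in(0,1]$ of the H\"older \emph{seminorm} bound \eqref{e:finvol-coupling-moment-unif-m-bis} is obtained because the small-scale contributions to increments of $\tilde\varphi$ are governed by the scales $s\le t_0$ where the mass $m\le 1$ enters only through the harmless factor $e^{-m^2 s}\le 1$; the full norm \eqref{e:finvol-coupling-moment-bis} is allowed to depend on $m$ (the zero-mode/large-scale fluctuations do), so no extra work is needed there.

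I expect the main obstacle to be step (3)–(4): obtaining \emph{analyticity in $z$ on a complex neighborhood of $\R$ that is uniform in $m$ and $\epsilon$}, together with the precise regularity $r$ arbitrarily close to $2-\beta/4\pi$. The naive tilted-measure picture breaks down for complex $z$ because $e^{2z\int_\Lambda \cos(\sqrt\beta\varphi)}$ is not a probability density (indeed not even a complex measure in the $\epsilon\to0$ limit for $\beta\ge 4\pi$), so one cannot simply run Girsanov; instead one must \emph{define} $\tilde\varphi$ directly through the (complexified) Polchinski flow and its associated drift SDE, and then separately verify that for $z\in\R$ this reproduces the honest sine-Gordon measure. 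Controlling the complexified flow requires the renormalized-potential estimates of Appendices~\ref{app:fracrenorm}--\ref{app:renormpart} to hold with complex coupling, with a radius of analyticity bounded below uniformly in $m,\epsilon$ — this is where the restriction $\beta<6\pi$ and the boundedness of $\Lambda$ are used, and where the bulk of the technical work lies. The H\"older exponent $r<2-\beta/4\pi$ is the scaling-critical threshold at which the drift $\nabla_\varphi V_s$, which behaves like the derivative of a charge-$1$ vertex operator of dimension $\beta/4\pi$, ceases to be integrable against the heat kernel; extracting the full $r<2-\beta/4\pi$ (rather than a lossy exponent) requires a careful Besov/Schauder argument on the drift integral, but this is by now standard in the singular-SPDE-flavored treatments cited (\cite{MR4399156,2401.13648,2410.15493}) and I would follow those.
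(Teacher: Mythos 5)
Your proposal follows the same strategy as the paper: run the complexified Polchinski-type SDE \eqref{e:SDE}, set $Z = \Phi^{\GFF(m)}_{0,t_0}$ (which is the ultraviolet part of the decomposed GFF, not the large-scale part as you call it), and define $\tilde\varphi$ from the drift, proving the distributional identity for $z\in\R$ via It\^o's formula (this is precisely Propositions~\ref{prop:SDE-existence}--\ref{prop:SDE-Ito} and~\ref{prop:SG-tildeSG-identification}) and deducing analyticity in $z$ and $C^r$ regularity from the renormalized-potential bounds. You correctly identify the central difficulty: for $z\notin\R$ the tilted density is not even a complex measure in the continuum, so one must take the complexified flow as the \emph{definition} of $\tilde\varphi$ and then show it agrees with the honest sine-Gordon measure when $z\in\R$; this is exactly how the paper proceeds.

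Two corrections to the book-keeping. First, your formula $\tilde\varphi_\epsilon = \Phi^{\GFF(m)}_{t_0} + \int_0^{t_0}(\dot C_s)^{1/2}b^\epsilon_s\,ds$ truncates the drift at scale $t_0$, but the full drift $\Phi^\Delta_0 = -\int_0^\infty e^{-m^2 s}e^{\Delta_x s}\nabla_\varphi V_s(\Phi_s)\,ds$ must be absorbed into $\tilde\varphi$ together with the infrared Gaussian $\Phi^{\GFF}_{t_0}$, since otherwise $Z+\tilde\varphi$ does not equal $\Phi^{\SG}_0=\Phi^{\GFF}_0+\Phi^\Delta_0$; the paper's choice is $\tilde\varphi = \Phi^{\Delta}_{0,t_0} + \Phi^{\SG}_{t_0}$, which unwinds to $\Phi^\Delta_0 + \Phi^{\GFF}_{t_0}$. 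Second, the estimates you actually need for the drift bounds, Lipschitz continuity, and complex analyticity are those of Proposition~\ref{prop:Vt} in Section~\ref{sec:renpot}; Appendices~\ref{app:fracrenorm}--\ref{app:renormpart} address a different question (uniform integrability of the series expansion with fractional-charge insertions, used in Section~\ref{sec:finvol-massless}).
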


\subsection{Renormalized potential}\label{sec:renpot}

As the first preliminary ingredient of the proof of Theorem~\ref{thm:sg-finvol}, we begin with the definition of the renormalized potential
for the sine-Gordon model.
Its construction we use goes back to \cite{MR914427} and we use the precise set-up given in \cite[Section 4]{MR4767492}.
For bounded $\Lambda\subset \R^2$, $\varphi:\Lambda\to \C$ bounded and continuous, and $\epsilon>0$,  define the bare potential
\begin{equation}
  V_{\epsilon^2}(\varphi,z|\Lambda,\epsilon) = \int_\Lambda 2z\epsilon^{-\beta/4\pi} \cos(\sqrt{\beta}\varphi(x)) \, dx.
\end{equation}
For $t>\epsilon^2$ and $m \geq 0$, the renormalized potential is then defined by
\begin{equation} \label{e:Vtdef}
  V_t(\varphi,z|\Lambda,m,\epsilon)
  = -\log \E[e^{-V_{\epsilon^2}(\varphi+\Phi^\GFF_{\epsilon^2,t},z|\Lambda,\epsilon)}]
  = -\log \E_{C_t-C_{\epsilon^2}}[e^{-V_{\epsilon^2}(\varphi+\zeta,z|\Lambda,\epsilon)}],
\end{equation}
where, for $z$ or $\varphi$ complex, we use the standard branch cut for the logarithm to define \eqref{e:Vtdef}.
The notation $\Phi^{\GFF}$ is defined in Section~\ref{sec:Gauss} and (as in \cite{MR4767492} and illustrated in the
second equality above) we sometimes alternatively use
$\E_{C}$ for the expectation of the Gaussian measure with mean zero and covariance $C$ applying in the random variable $\zeta$.
In \eqref{e:Vtdef} the covariance is
\begin{equation}
  C_t-C_{\epsilon^2} = \int_{\epsilon^2}^t e^{-m^2 s} e^{\Delta s}\, ds,
\end{equation}
in which case this measure is realized as a probability measure supported on $C^\infty(\R^2)$.
Somewhat informally we will sometimes write this as $\zeta \sim \cN(0,C_t-C_{\epsilon^2})$.
Even though $\Phi_{\epsilon^2,t}^\GFF$ respectively $\zeta$ are defined on all of $\R^2$, only their restrictions to $\Lambda$ are relevant
for the finite-volume renormalized potential defined above.

We will need the following facts about the renormalized potential. Except for the statements concerning complex $z$ and $\varphi$
and analyticity,
the proof of these properties is very similar to those in \cite{MR4767492,MR4399156} (see also \cite{MR914427,MR4303014}), to which we will often refer.
The statements involve the space of continuous functions $C(\bar\Lambda,\C)$ with the $L^\infty$ norm
(see Section~\ref{sec:notation} for the precise definition)
and the space
of bounded Lipschitz continuous functions ${\rm Lip}(\Lambda,\C)$ with the norm $\|f\|_{L^\infty(\Lambda)}+\|\nabla f\|_{L^\infty(\Lambda)}$.
Given $\eta>0$, we define the open subset $B_\eta \subset C(\bar\Lambda,\C)$ given by
\begin{equation} \label{e:BI}
  B_{\eta} =\{ \varphi \in C(\bar\Lambda,\C): |\im \varphi(x)| < \eta \text{ for all } x\in \Lambda\},
\end{equation}
and the open unit disk of radius $\rho$ in $\C$ is denoted
\begin{equation}
  B_\rho(0) = \{z \in \C: |z|<\rho \}.
\end{equation}

For some background on analyticity of functions on Banach spaces that is sufficient for us, we refer to~\cite[Appendix~A]{MR0894477}.

\begin{proposition} \label{prop:Vt}
  Let $\beta\in (0,6\pi)$. For every bounded domain $\Lambda \subset \R^2$,
  there exists a complex neighborhood of the real axis $I$ and $\eta(\rho)>0$ for any $\rho>0$,
  such that the following  estimates hold
  for any $z\in I \cap B_\rho(0)$ and $\eta=\eta(\rho)$ where $\rho>0$ is arbitrary.
  
\smallskip\noindent(i)
For every %
$\varphi \in B_{\eta(\rho)}$ that is Lipschitz continuous,
  the limit
  \begin{equation} \label{e:Vteps-conv}
    V_t(\varphi,z|\Lambda,m)-V_t(0,z|\Lambda,m) = \lim_{\epsilon\to 0} \qB{V_t(\varphi,z|\Lambda,m,\epsilon) - V_t(0,z|\Lambda,m,\epsilon)}
  \end{equation}
  exists, and the limit
  \begin{equation}
    V_t(\varphi,z|\Lambda,0)-V_t(0,z|\Lambda,0)= \lim_{m\to 0} \qb{V_t(\varphi,z|\Lambda,m)-V_t(0,z|\Lambda,m)}
  \end{equation}
  also exists.
  (Thus we do not define $V_t(\varphi,z|\Lambda,m)$, only 
  differences of $V_t$ with different $\varphi$.)

\smallskip\noindent(ii)
For any $t>0$, the map
\begin{equation}
  (\varphi,z) \in (B_{\eta(\rho)}\cap {\rm Lip}(\Lambda,\C)) \times (I \cap B_\rho(0)) \mapsto V_t(\varphi,z|\Lambda,m)-V_t(0,z|\Lambda,m),
\end{equation}
where $B_{\eta(\rho)} \cap {\rm Lip}(\Lambda,\C)$ is equipped with the norm $\|f\|_{L^\infty}+\|\nabla f\|_{L^\infty}$,
is Fréchet differentiable with derivative $\nabla_\varphi V_t(\varphi,z|\Lambda,m) \in C(\bar\Lambda)$
for any Lipschitz continuous $\varphi \in B_{\eta(\rho)}$.

\smallskip\noindent(iii)
  For 
  $\varphi,\varphi' \in B_{\eta(\rho)}\cap {\rm Lip}(\Lambda,\C)$,
  the gradient $\nabla_\varphi V_t(\varphi,z|\Lambda,m)$ continued as $0$ outside $\bar\Lambda$
  satisfies, for any $k =0,1,\dots$,
  \begin{gather} \label{e:nablaV-bd-finvol}
    \|(\sqrt{t}\nabla_x)^ke^{\Delta_x t} \nabla_\varphi V_t(\varphi,z,\cdot |\Lambda,m)\|_{L_x^\infty}
    \leq C_k(\beta,\rho,\Lambda)
    (1+t^{-\beta/8\pi})
    (1+t)^{-1}
    ,
    \\
    \label{e:nablaV-imbd}
    \|\im e^{\Delta_x t} \nabla_\varphi V_t(\varphi,z,\cdot |\Lambda,m)\|_{L_x^\infty}
    \leq C(\beta,\rho,\Lambda) (|\im z|+\|\im \varphi\|_{L^\infty_x})
    (1+t^{-\beta/8\pi})
    ,
  \end{gather}
  where the $L_x^\infty$ norms are over all of $\R^2$, and
  \begin{multline}\label{e:nablaV-Lipschitz-finvol}
    \|e^{\Delta_x t}\nabla_\varphi V_{t}(\varphi,z,x|\Lambda,m)-e^{\Delta_x t}\nabla_{\varphi'} V_{t}(\varphi',z,x|\Lambda,m)\|_{L^\infty_x}
    \\
    \leq C(\beta,\rho,\Lambda) (1+t^{-\beta/8\pi}) \|\varphi-\varphi'\|_{L^\infty_x}.
  \end{multline}
  The bounds do not depend on the Lipschitz constants of $\varphi$ and $\varphi'$ and $e^{\Delta_xt}\nabla_\varphi V_t(\varphi,z|\Lambda,m)$
  can be extended continuously to $\varphi \in B_{\eta(\rho)}$.
  Moreover, as $\epsilon \to 0$, uniformly in $\varphi \in B_{\eta(\rho)}$,
  \begin{equation} \label{e:nablaVteps-conv}
    \|(\sqrt{t}\nabla_x)^ke^{\Delta_x t}\nabla_\varphi V_t(\varphi,z|\Lambda,m,\epsilon)
    -
    (\sqrt{t}\nabla_x)^ke^{\Delta_x t}\nabla_\varphi V_t(\varphi,z|\Lambda,m)\|_{L_x^\infty} \to 0.
  \end{equation}

\smallskip\noindent(iv)
For any $t>0$, the map
\begin{equation}
  (\varphi,z) \in B_{\eta(\rho)} \times (I\cap B_\rho(0)) \mapsto e^{\Delta_x t} \nabla_\varphi V_t(\varphi,z|\Lambda,m) \in L^\infty(\R^2)
\end{equation}
is analytic.
\end{proposition}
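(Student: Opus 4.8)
The plan is to run the Polchinski (renormalisation-group) flow for the renormalised potential, following \cite{MR914427} in the form used in \cite[Section~4]{MR4767492} and \cite{MR4399156} (see also \cite{MR4303014}). For \emph{real} $z$ and real-valued Lipschitz $\varphi$, assertion (i), the first bound in (iii), the Lipschitz bound \eqref{e:nablaV-Lipschitz-finvol}, the convergences \eqref{e:nablaVteps-conv} and \eqref{e:Vteps-conv}, and the $m\to 0$ limit are essentially contained in those references; so the genuinely new content is the extension to complex $z$ in a neighbourhood $I$ of $\R$ and complex $\varphi$ with small imaginary part (on which the logarithm in \eqref{e:Vtdef} must first be shown to be defined), the imaginary-part estimate \eqref{e:nablaV-imbd}, and the differentiability/analyticity statements (ii) and (iv). I would organise everything around the flowed gradient $W_t(\varphi,z):=e^{\Delta_x t}\nabla_\varphi V_t(\varphi,z\,|\,\Lambda,m,\epsilon)$, extended by $0$ off $\bar\Lambda$.

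First I would recall that $V_t$ from \eqref{e:Vtdef} solves the Polchinski equation
\[
  \partial_t V_t = \tfrac12\big\langle \dot C_t,\Delta_\varphi V_t\big\rangle - \tfrac12\big\langle \nabla_\varphi V_t,\dot C_t\,\nabla_\varphi V_t\big\rangle,\qquad \dot C_t=e^{-m^2t}e^{\Delta_x t},
\]
with initial datum $V_{\epsilon^2}$, differentiate once in $\varphi$, apply $e^{\Delta_x t}$, and recast the result as a Duhamel integral equation for $W_t$. The relevant Banach space consists of maps $(\varphi,z)\in B_{\eta(\rho)}\times(I\cap B_\rho(0))\mapsto W_t(\varphi,z)\in L^\infty(\R^2)$ holomorphic in $(\varphi,z)$, with a scale-weighted norm $\sup_{t\ge\epsilon^2}\Phi(t)^{-1}\sum_{k\le K}\|(\sqrt t\nabla_x)^k W_t\|_{L^\infty}$, where $\Phi(t)=(1+t^{-\beta/8\pi})(1+t)^{-1}$; holomorphy in $\varphi$ is needed so that Cauchy estimates on a slightly smaller disc control the $\varphi$-gradient of $W$ that appears in the quadratic term of the flow. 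The crucial a priori input of \cite{MR914427} is that, thanks to the counterterm $\epsilon^{-\beta/4\pi}$, the part of $W_t$ first order in $z$ has a finite $\epsilon\to 0$ limit with precisely the weight $\Phi(t)$ (the Wick-variance factor $e^{-\frac\beta2\var(\zeta_t)}\sim (t/\epsilon^2)^{-\beta/8\pi}$ arising in the $\Delta_\varphi$ term cancels $\epsilon^{-\beta/4\pi}$ up to $t^{-\beta/8\pi}$), and that for $\beta<6\pi$ the contributions of higher order in $z$ are summable; this is what makes the flow map a contraction (uniformly in $m\in(0,1]$, $\epsilon\in(0,1]$) and yields \eqref{e:nablaV-bd-finvol}, with \eqref{e:nablaV-Lipschitz-finvol} and \eqref{e:nablaVteps-conv} following from the standard difference version of the same estimate and \eqref{e:Vteps-conv} by integrating in $\varphi$; the $m\to 0$ limit comes from the uniform-in-$m$ bounds together with $\dot C_t\to e^{\Delta_x t}$ and dominated convergence on the bounded set $\Lambda$ for $t$ in compacta.

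The main obstacle is the complexification together with the imaginary-part control, and this is where $I$ and $\eta(\rho)$ get chosen. I would treat the complex flow as a perturbation of the real one: write $z=z_0+iw$, $\varphi=\varphi_0+i\phi$ with $z_0\in\R$, $\varphi_0$ real, $w,\phi$ small, and set up a contraction for $W_t(\varphi,z)-W_t(\varphi_0,z_0)$ in the same weighted space, the smallness now being supplied by $|w|+\|\phi\|_{L^\infty}$. Since the equation for $\im W_t$ is linear in $\im W$ with coefficients given by the (already controlled) real flow, and since the first-order-in-$z$ contribution to $\im W_t$ has weighted norm $\lesssim(|w|+\|\phi\|_{L^\infty})$ by the Wick cancellation above, a Gr\"onwall-type argument propagates $\|\im W_t\|_{L^\infty}\le C(\beta,\rho,\Lambda)(|\im z|+\|\im\varphi\|_{L^\infty})(1+t^{-\beta/8\pi})$, which is \eqref{e:nablaV-imbd}. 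Integrating this in $\varphi$ shows that $\im\big(V_t(\varphi,z)-V_t(0,z)\big)$ stays uniformly bounded, so that $\E_{C_t-C_{\epsilon^2}}[e^{-V_{\epsilon^2}(\varphi+\zeta,z)}]$ remains within a factor $e^{o(1)}$ of its positive value at $w=\phi=0$ and is in particular nonzero on $B_{\eta(\rho)}\times(I\cap B_\rho(0))$, once $\eta(\rho)$ and the width of $I$ are taken small enough relative to the real-background constants $C_k(\beta,\rho,\Lambda)$; this legitimises the logarithm in \eqref{e:Vtdef}.

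Finally, for (ii) and (iv): each finite-$\epsilon$ object $e^{\Delta_x t}\nabla_\varphi V_t(\cdot,\cdot\,|\,\Lambda,m,\epsilon)$ is holomorphic in $(\varphi,z)$ on the domain just identified, being a $\varphi$-derivative of the logarithm of a nowhere-vanishing holomorphic function, and by \eqref{e:nablaVteps-conv} it converges locally uniformly, so the limit $e^{\Delta_x t}\nabla_\varphi V_t(\cdot,\cdot\,|\,\Lambda,m)$ is holomorphic by Weierstrass, giving (iv) (alternatively, the fixed point of the holomorphic contraction above is holomorphic in the parameters). Fréchet differentiability of $\varphi\mapsto V_t(\varphi,z\,|\,\Lambda,m)-V_t(0,z\,|\,\Lambda,m)$ on $B_{\eta(\rho)}\cap\mathrm{Lip}(\Lambda,\C)$ with derivative $\nabla_\varphi V_t\in C(\bar\Lambda)$ then follows by integrating $\nabla_\varphi V_t=e^{-\Delta_x t}W_t$ along line segments in $\varphi$, the integrals converging by \eqref{e:nablaV-bd-finvol}; continuity of $e^{\Delta_x t}\nabla_\varphi V_t$ up to $\varphi\in B_{\eta(\rho)}$ and its independence of the Lipschitz constants of $\varphi,\varphi'$ are built into the contraction norm. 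I expect the delicate point to be exactly this complexification step: propagating \eqref{e:nablaV-imbd} with the sharp weight $(1+t^{-\beta/8\pi})$ while simultaneously keeping the partition-function-type expectation away from zero, i.e.\ calibrating $I$ and $\eta(\rho)$ against the real-coupling estimates.
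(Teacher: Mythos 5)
Your overall scheme is recognizably in the spirit of the paper, but the route you propose through the proof differs from what is actually done, and there is one genuine gap and one piece of structure you are missing.

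\textbf{Where you diverge from the paper.} The paper does \emph{not} run a unified fixed-point argument for $W_t=e^{\Delta_x t}\nabla_\varphi V_t$ over all scales. Instead it splits the scale axis at an explicit $t_0=t_0(\beta,\rho)$. For $t\le t_0$ it controls $V_t$ and $\nabla_\varphi V_t$ directly from the convergent $z$-series expansion (Lemma~\ref{le:vtsmall}, using the kernel bounds of Lemma~\ref{lem:tildeVn}); the logarithm in \eqref{e:Vtdef} is well defined there simply because the series converges. For $t>t_0$ it switches to the non-perturbative Gaussian representation
\[
V_t(\varphi,z)-V_{t_0}(\varphi,z) = -\log \E_{C_t-C_{t_0}}\!\big[e^{-V_{t_0}(\varphi+\zeta,z)+V_{t_0}(\varphi,z)}\big],
\]
and the derivative formula \eqref{e:nablaVt-Vt0}. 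The factor $(1+t)^{-1}$ in \eqref{e:nablaV-bd-finvol} then comes from the $L^1\to L^\infty$ smoothing of $e^{\Delta_x(t-t_0)}$ applied to $e^{\Delta_xt_0}\nabla_\varphi V_{t_0}$, whose $L^1$-norm is bounded by $|\tilde\Lambda|t_0^{-\beta/8\pi}$ (the estimate \eqref{e:nablaV-L1}); this $L^1/L^\infty$ interplay is essential for the large-$t$ decay and is absent from your weighted $L^\infty$ norm. A contraction estimate of the form you propose does not close across all scales: with $\Phi(t)=(1+t^{-\beta/8\pi})(1+t)^{-1}$, the Duhamel contribution of the quadratic term scales like $\int_0^t\Phi(s)^2\,ds$, which converges to a constant as $t\to\infty$, rather than decaying like $\Phi(t)$.

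\textbf{The genuine gap: non-vanishing of the partition function.} Your argument for the well-definedness of the logarithm is circular. You want to use the flow estimate on $\im W_t$, obtained by Gr\"onwall, to conclude that $\im(V_t(\varphi,z)-V_t(0,z))$ is small and hence that $\E[e^{-V_{\epsilon^2}(\varphi+\zeta,z)}]$ avoids zero. But to know $V_t$ and $W_t$ even exist as single-valued holomorphic objects you must already know this expectation does not vanish. The paper breaks the circle: it first establishes, from the series at $t_0$, the deterministic estimates \eqref{e:delta-Vt0}--\eqref{e:imdelta-Vt0} on $\delta_\varphi V_{t_0}$ (Lemma~\ref{le:logest}), then combines them with the Gaussian tail bounds of Lemma~\ref{le:gaussianest} to prove Lemma~\ref{lem:re-im-eV}: for $\eta(\rho)$ small enough relative to the \emph{real}-coupling constants, $\re\,e^{-V_t(\varphi,z)+V_{t_0}(\varphi,z)}\ge e^{-C(\Lambda)}>0$ pointwise, and $|\im\,e^{-V_t(\varphi,z)+V_{t_0}(\varphi,z)}|\lesssim (|\im z|+\|\im\varphi\|_\infty)$. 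It is this quantitative lower bound on the real part of the \emph{exponential}---not a bound on $\im V_t$---that legitimizes the logarithm and simultaneously feeds the estimate \eqref{e:nablaV-imbd}. You would need to reproduce this step before any fixed-point argument on $W_t$ gets off the ground.

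\textbf{Where your proposal is fine.} Your mechanism for (ii) and (iv) matches the paper's: analyticity of the finite-$\epsilon$ objects combined with locally uniform convergence (or the implicit function theorem in the proof of Proposition~\ref{prop:SDE-existence}), and the Lipschitz bound \eqref{e:nablaV-Lipschitz-finvol} and imaginary-part bound \eqref{e:nablaV-imbd} via Cauchy estimates in $\varphi$ once analyticity is in hand---this is exactly how the paper proves these at large $t$. Your observation that the Wick variance factor cancels $\epsilon^{-\beta/4\pi}$ up to $t^{-\beta/8\pi}$ is correct and is indeed the source of the $t^{-\beta/8\pi}$ weight in the small-$t$ regime. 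But you should replace the across-all-scales contraction with the paper's two-regime split, and you must supply the non-vanishing estimate on the partition function before invoking the logarithm anywhere.
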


It is possible to obtain additional estimates on the renormalized potential that hold uniformly in $\Lambda$.
These are not needed in this section and are stated in Section~\ref{sec:coupling-infvol} where they are used.

\subsection{Proof of Proposition~\ref{prop:Vt} -- small $t$}\label{sec:smallt}

It is convenient to prove Proposition~\ref{prop:Vt} in two separate cases: $t$ is small and $t$ is large. We begin with the case where $t$ is small.

Following the method of \cite{MR914427} as presented in \cite[Section 4]{MR4767492},
recall that for all $\varphi: \Lambda \to \C$ and $z\in \C$ for which the series converges,
the regularized renormalized potential has the representation
\begin{equation} \label{e:Vt-series}
  V_t(\varphi,z|\Lambda,m,\epsilon)= \sum_{n=1}^\infty \frac{z^n}{n!} \int_{(\Lambda \times \{\pm 1\})^n} d\xi_1\cdots d\xi_n \tilde V_t^n(\xi_1,\dots,\xi_n|m,\epsilon) e^{i\sqrt{\beta}\sum_j \sigma_j \varphi(x_j)}
\end{equation}
where $\xi_i=(x_i,\sigma_i)$ and
\begin{equation}
  \int_{\Lambda\times \{\pm 1\}}d\xi=\sum_{\sigma\in \{\pm 1\}}\int_\Lambda dx.
\end{equation}

The following lemma summarizes the main properties of the kernels $\tilde V_t^n$ we need. We suppress the dependence on various parameters when convenient.
The coefficients $\tilde V_t^n(\xi_1,\dots,\xi_n|m,\epsilon)$ can be defined by an explicit recursion, see \cite[(4.8)]{MR4767492},
but we do not make use of this here (only of the consequences of this).
Strictly speaking, it may have only been proven in the literature that \eqref{e:Vt-series} holds for real-valued fields $\varphi$.
For a justification that the extension to complex valued fields is allowed, see
Proposition~\ref{pr:rpexp}
in Appendix~\ref{app:fracrenorm} (in which a more general setting is considered).

\begin{lemma} \label{lem:tildeVn}
  Let $\beta\in(0,6\pi)$.
  Then $\tilde V_t^n(\xi_1,\dots,\xi_n|m,\epsilon)$ is permutation invariant, independent of $\Lambda$, and
  and there are functions $h_t^n$ such that
  \begin{equation}
    |\tilde V_t^n(\xi_1,\dots,\xi_n|m,\epsilon)| \leq h_t^n(\xi_1,\dots,\xi_n)
  \end{equation}
  and 
  the following estimates hold
  for $\epsilon^2 < t < 1< m^{-2}$
  with  a constant $C_\beta$ depending only on $\beta$.

  \smallskip\noindent (i)
  The $n=1$ term satisfies
  \begin{equation} \label{e:tildeV1}
    h_t^1(\xi_1) \leq C_{\beta}t^{-\beta/8\pi}.
\end{equation}

  \smallskip\noindent (ii)
For $\beta<4\pi$ or $\beta <6\pi$ and $n\neq 2$,
\begin{equation} \label{e:tildeVn-bd}
  \sup_{\xi_1}\|h_t^n(\xi_1,\dots,\xi_n)\|_{L^1((\Lambda \times \{\pm 1\})^{n-1})} \leq n^{n-2} t^{-1} (C_\beta t^{1-\beta/8\pi})^n,
\end{equation}

\smallskip\noindent (iii)
The $n=2$ term $\tilde V_t^2(\xi_1,\xi_2|m,\epsilon)$ is given by
 \begin{equation} \label{eq:n2explicit}
   \tilde V_t^2(\xi_1,\xi_2|m,\epsilon)
   = \tilde V_t^1(\xi_1|m,\epsilon)\tilde V_t^1(\xi_2|m,\epsilon)(1-e^{-\beta\sigma_1\sigma_2 (C_t(x_1,x_2)-C_{\epsilon^2}(x_1,x_2))}).
 \end{equation}
 For $\beta\in [4\pi,6\pi)$, the same estimate as in (ii) holds for $\tilde V_t^2(\xi_1,\xi_2|m,\epsilon)\1_{\sigma_1+\sigma_2\neq 0}$ (the charged part of $\tilde V_t^2$)
whereas $\tilde V_t^2(\xi_1,\xi_2|m,\epsilon)\1_{\sigma_1+\sigma_2= 0}$ (the neutral part of $\tilde V_t^2$) is bounded by
\begin{equation}\label{eq:V2bd}
  \sup_{\xi_1}\int_{\Lambda\times \{\pm 1\}}d\xi_2\, \frac{|x_1-x_2|}{\sqrt{t}} h_t^2(\xi_1,\xi_2)\1_{\sigma_1+ \sigma_2=0}
  \leq  t^{-1} (C_\beta t^{1-\beta/8\pi})^2.
\end{equation}

\smallskip\noindent
For any distinct $x_1,\dots, x_n$,
the kernels $\tilde V_t^n(\xi_1,\dots,\xi_n|m,\epsilon)$ converge
as $\epsilon\to 0$ and $m \to 0$.
\end{lemma}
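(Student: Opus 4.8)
The plan is to derive all the stated properties from the scale recursion for the coefficients $\tilde V_t^n$, which is exactly the one used in \cite{MR914427} and \cite[Section~4]{MR4767492}; the only points that are genuinely new relative to those references are that we allow complex $z\in I$ and complex fields (which enter only through the validity of the expansion \eqref{e:Vt-series}, and this is precisely Proposition~\ref{pr:rpexp} in Appendix~\ref{app:fracrenorm}), and that we also need convergence of the $\tilde V_t^n$ in the additional limit $m\to 0$. First I would record the recursion: differentiating \eqref{e:Vt-series} in $t$ and using the Polchinski equation $\dot V_t = \tfrac12\Delta_{\dot C_t}V_t - \tfrac12(\nabla V_t,\dot C_t\nabla V_t)$ together with $\Delta_{\dot C_t}e^{i\sqrt\beta\sum_j\sigma_j\varphi(x_j)} = -\beta\big(\sum_{i,j}\sigma_i\sigma_j\dot C_t(x_i,x_j)\big)e^{i\sqrt\beta\sum_j\sigma_j\varphi(x_j)}$ yields a closed recursion in which the ``linear'' part multiplies $\tilde V_t^n$ by $-\tfrac{\beta}{2}\sum_j\dot C_t(x_j,x_j)$ and the ``quadratic'' part joins $\tilde V_t^{|A|}$ and $\tilde V_t^{|B|}$ over ordered partitions $A\sqcup B=\{1,\dots,n\}$ with connecting weight $-\beta\sum_{i\in A,\,j\in B}\sigma_i\sigma_j\dot C_t(x_i,x_j)$. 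Permutation invariance and $\Lambda$-independence of $\tilde V_t^n$ are manifest from this (the domain $\Lambda$ enters \eqref{e:Vt-series} only through the range of integration), and $h_t^n$ is defined as the term-by-term majorant obtained by replacing each connecting charge factor by its absolute value.

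Items (i) and (iii) follow by solving the recursion explicitly in low order. The $n=1$ coefficient solves the linear part alone, so $\tilde V_t^1(x,\sigma\,|\,m,\epsilon) = \epsilon^{-\beta/4\pi}e^{-\frac{\beta}{2}\int_{\epsilon^2}^t \dot C_s(x,x)\,ds}$; since $\dot C_s(x,x) = e^{-m^2s}/(4\pi s)$ the exponent equals $-\tfrac{\beta}{8\pi}\log(t/\epsilon^2)$ up to an error bounded uniformly in $\epsilon^2<t<1<m^{-2}$, so $|\tilde V_t^1|\le C_\beta t^{-\beta/8\pi}$, giving \eqref{e:tildeV1}; this formula also makes the $\epsilon\to 0$ and then $m\to 0$ limits transparent. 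The $n=2$ ODE closes because the charges are $\pm1$, and solving it reproduces \eqref{eq:n2explicit} (equivalently one reads it off the second-order term $-\tfrac12\mathrm{Var}_{C_t-C_{\epsilon^2}}$ of $\log\E[e^{-V_{\epsilon^2}}]$). Writing $\Delta C=(C_t-C_{\epsilon^2})(x_1,x_2)\ge0$, the charged part ($\sigma_1\sigma_2=+1$) carries $1-e^{-\beta\Delta C}\in[0,1]$, which is $\lesssim\Delta C$ off the diagonal, so the heat-kernel bounds on $C_t-C_{\epsilon^2}$ give $\int dx_2\,|1-e^{-\beta\Delta C}|\lesssim t$ and hence the same $L^1$ bound as in (ii); the neutral part ($\sigma_1\sigma_2=-1$) carries $|1-e^{+\beta\Delta C}|$, which behaves like $|x_1-x_2|^{-\beta/2\pi}$ for $\epsilon\lesssim|x_1-x_2|\lesssim\sqrt t$ and is therefore not integrable for $\beta\ge4\pi$, but against the weight $|x_1-x_2|/\sqrt t$ one gets $\tfrac{1}{\sqrt t}\int_{|u|\le\sqrt t}|u|^{-\beta/2\pi}|u|\,du\lesssim t^{-1}(C_\beta t^{1-\beta/8\pi})^2$, finite precisely because $2-\beta/2\pi>-1\iff\beta<6\pi$; this is \eqref{eq:V2bd}, and it is the point where the threshold $6\pi$ appears.

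For item (ii) with general $n$ I would iterate the recursion from scale $\epsilon^2$ up to $t$, obtaining the standard tree representation: $h_t^n$ is a sum over labelled trees $T$ on $\{1,\dots,n\}$ of an integral over intermediate scales of $\prod_{\text{edges}}|\dot C_s(x_i,x_j)|$ times $\prod_{\text{vertices}}$ single-vertex factors controlled by (i). Using $\int dy\,|\dot C_s(x,y)| = e^{-m^2s}\le1$ for the spatial integrals, $|\dot C_s(x,x)|\lesssim s^{-1}$ for the diagonal weights, the Cayley count $n^{n-2}$ of labelled trees, and elementary scale integrals, one obtains \eqref{e:tildeVn-bd}; the subtlety is that a tree may contain an internal $2$-vertex subfactor in its neutral sector, for which one must use the weighted bound \eqref{eq:V2bd} rather than an $L^1$ bound, the extra weight being absorbed by the adjacent $\dot C_s$-edge --- this is exactly the bookkeeping of \cite[Section~4]{MR4767492} and is where $\beta<6\pi$ is again needed, and it is the one genuinely delicate step. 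Finally, since all the above bounds are uniform in $0<\epsilon^2<t$ and $m\in(0,1]$, and each tree contribution is a scale integral whose integrand converges as $\epsilon\to0$ (the $\epsilon$-prefactors combining with the lower scale endpoints exactly as in the $n=1,2$ cases) and then as $m\to0$, dominated convergence with these uniform majorants yields convergence of $\tilde V_t^n$. The main obstacle is thus the ultraviolet regime $\beta\in[4\pi,6\pi)$: there the neutral pair interaction cannot be estimated in $L^1$, and the weighted estimate \eqref{eq:V2bd} must be propagated consistently through the tree induction; everything else is a routine adaptation of \cite{MR914427,MR4767492}.
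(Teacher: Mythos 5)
Your proposal is correct and follows essentially the same route as the paper: the paper's own proof of this lemma is a pointer to the scale recursion and estimates of \cite{MR914427} and \cite[Section~4]{MR4767492} (with the complex-$\varphi$, complex-$z$ extension supplied by Proposition~\ref{pr:rpexp}), and your sketch is just a more explicit unpacking of the same recursion-plus-tree-sum argument, with the same treatment of the neutral $n=2$ singularity via the weighted bound and the same identification of $\beta<6\pi$ as the integrability threshold. The one thing worth flagging for the write-up is bookkeeping in the neutral $n=2$ case: the factor $1-e^{\beta\Delta C}$ itself scales like $(|x_1-x_2|/\sqrt t)^{-\beta/2\pi}$ rather than $|x_1-x_2|^{-\beta/2\pi}$; it is only after multiplying by the two $\tilde V_t^1\sim t^{-\beta/8\pi}$ prefactors that the $t$-powers cancel and the full $h_t^2$ behaves like $|x_1-x_2|^{-\beta/2\pi}$, which is the quantity your integral estimate actually uses.
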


\begin{proof}
  These kinds of estimates are proved in \cite[Section~4]{MR4767492} (and in a more general setting also in Appendix~\ref{app:fracrenorm}).  
  Permutation invariance follows by induction using the recursion formula \cite[(4.8)]{MR4767492}, that the sum appearing there is over all bipartitions of $[n]$ and that the function $\dot{u}_s^{m^2}$ appearing there is symmetric. The independence of $\Lambda$ is clear by inspection of \cite[(4.8)]{MR4767492}.
  For the estimates with $n\neq 2$, see \cite[Proposition~4.1]{MR4767492} and in particular, \cite[(4.73)]{MR4767492}.

  For $\beta\geq 4\pi$, the $n=2$ term is singular but explicitly given by
  \eqref{eq:n2explicit}, see \cite[(4.60)]{MR4767492}.
  The estimates for the charged and neutral part of $\tilde V_t^2$ follow
  by a direct calculation from this expression. In particular, for the neutral part
  (see \cite[(4.47)]{MR4767492}),
 \begin{align}
   |\tilde V_t^2(\xi_1,\xi_2|m,\epsilon)|
   &\lesssim \tilde V_t^1(\xi_1|\epsilon)\tilde V_t^1(\xi_2|\epsilon) (\frac{|x_1-x_2|}{\sqrt{t}})^{\sigma_1\sigma_2(\beta/2\pi)}
     e^{-c|x_1-x_2|/\sqrt{t}}
     \nnb
   &\lesssim (t^{-\beta/8\pi})^2 (\frac{|x_1-x_2|}{\sqrt{t}})^{\sigma_1\sigma_2(\beta/2\pi)}
     e^{-c|x_1-x_2|/\sqrt{t}},\label{eq:n2bound}
 \end{align}
 where $c$ and the implied constants are independent of $\Lambda,m,\epsilon$.

 The pointwise convergence as $\epsilon \to 0$ and then $m\to 0$
 follows from the recursive representation for the kernels,
 see for example \cite[(4.8)]{MR4767492} or \eqref{eq:nnrec} in Appendix~\ref{app:fracrenorm}.
\end{proof}

The following lemma gives the small-$t$ version of Proposition~\ref{prop:Vt}.

\begin{lemma}\label{le:vtsmall}
  For any $\rho>0$, there are $t_0=t_0(\beta,\rho)>0$ and $\eta=\eta(\beta,\rho)>0$
  such that the assertions of Proposition~\ref{prop:Vt} hold for $t\leq t_0$ and $|z|\leq \rho$,
  and in this case the constants are in fact independent of $\Lambda$. More precisely, concerning (iv),
  the mapping 
  \begin{equation}
    B_\eta\times B_\rho(0) \ni (\varphi,z)\mapsto e^{\Delta_x t}\nabla_\varphi V_t(\varphi,z|\Lambda,m) .
  \end{equation}
  is analytic when $t \leq t_0(\beta,\rho)$.
\end{lemma}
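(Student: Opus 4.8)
The plan is to establish all the assertions of Proposition~\ref{prop:Vt} directly from the series representation \eqref{e:Vt-series} together with the kernel bounds of Lemma~\ref{lem:tildeVn}, exploiting that for small $t$ the geometric-type factor $C_\beta t^{1-\beta/8\pi}$ in \eqref{e:tildeVn-bd} is strictly less than $1$, so the $n$-sum converges with room to spare (uniformly in $\Lambda$). First I would fix $\rho>0$ and choose $\eta=\eta(\beta,\rho)$ and $t_0=t_0(\beta,\rho)$ at the end of the argument; for now keep $\eta,t_0$ as parameters to be shrunk. For $\varphi\in B_\eta\cap{\rm Lip}(\Lambda,\C)$ one has $|e^{i\sqrt\beta\sigma_j\varphi(x_j)}|\le e^{\sqrt\beta\eta}$, so term $n$ of \eqref{e:Vt-series} for the \emph{difference} $V_t(\varphi)-V_t(0)$ is bounded, using permutation invariance and the $L^1$ bound \eqref{e:tildeVn-bd} on $n-1$ of the variables together with \eqref{e:tildeV1} on the remaining one, by something like $\frac{\rho^n}{n!}\,|\Lambda|\cdot n\cdot n^{n-2} t^{-1}(C_\beta t^{1-\beta/8\pi})^n \cdot 2e^{\sqrt\beta\eta}$ in the non-singular cases, and the $n=2$ singular case is handled separately by \eqref{eq:n2explicit}: the neutral part is integrable by \eqref{eq:V2bd}/\eqref{eq:n2bound} and the charged part obeys the same bound as \eqref{e:tildeVn-bd}. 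Using $n^{n-2}/n!\le C^n$ (Stirling), the series converges absolutely once $C_\beta C \rho\, t^{1-\beta/8\pi}<1$, i.e. for $t\le t_0(\beta,\rho)$, giving a bound independent of $m,\epsilon$ and (after factoring $|\Lambda|$) with the $\Lambda$-dependence only through an overall $|\Lambda|$. This proves existence of the $\epsilon\to0$ limit \eqref{e:Vteps-conv} and then the $m\to0$ limit in (i): termwise convergence of the kernels (last line of Lemma~\ref{lem:tildeVn}) plus dominated convergence against the summable majorant.

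Next, for the gradient statements (ii)--(iv) I would differentiate \eqref{e:Vt-series} in $\varphi$ formally, obtaining
\[
  \nabla_\varphi V_t(\varphi,z|\Lambda,m,\epsilon)(y)=\sum_{n\ge1}\frac{z^n}{n!}\int d\xi_1\cdots d\xi_n\,\tilde V_t^n\Big(i\sqrt\beta\sum_{j}\sigma_j\delta(x_j-y)\Big)e^{i\sqrt\beta\sum_j\sigma_j\varphi(x_j)},
\]
and then apply the heat semigroup $e^{\Delta_x t}$ (at the point $y$), which replaces each $\delta(x_j-y)$ by the smooth kernel $e^{\Delta t}(x_j,\cdot)$ with $\|e^{\Delta t}(x_j,\cdot)\|_{L^1}=1$ and $\|(\sqrt t\nabla)^k e^{\Delta t}(x_j,\cdot)\|_{L^\infty}\lesssim t^{-1}$. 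So $(\sqrt t\nabla_x)^k e^{\Delta_x t}\nabla_\varphi V_t$ is again bounded by a convergent series of the same type, now with one extra factor $t^{-1}$ (from the smoothed delta) and, when $k\ge1$, controlled derivative loss absorbed into the constant; tracking the $n=1$ term gives the stated $(1+t^{-\beta/8\pi})$, and for $t\le t_0\le1$ the factor $(1+t)^{-1}$ is harmless, yielding \eqref{e:nablaV-bd-finvol}. The imaginary-part bound \eqref{e:nablaV-imbd} comes from observing that $\im\big(z^n e^{i\sqrt\beta\sum\sigma_j\varphi}\big)$ and the whole expression vanish identically when $z\in\R$ and $\varphi\in\R$, so by analyticity (see below) the imaginary part is $O(|\im z|+\|\im\varphi\|_\infty)$ near the real locus, with the constant coming from the same majorant; more concretely, one writes $\im F = \im\int_0^1 \partial_s F(\text{real part}+s\cdot(\text{imaginary shift}))\,ds$ and bounds the derivative of the series in the complexified directions. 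The Lipschitz-in-$\varphi$ estimate \eqref{e:nablaV-Lipschitz-finvol} follows the same way from the elementary bound $|e^{ia}-e^{ib}|\le e^{\max(|\im a|,|\im b|)}|a-b|$ applied termwise (bounding $|\sum_j\sigma_j(\varphi(x_j)-\varphi'(x_j))|\le n\|\varphi-\varphi'\|_\infty$ and reabsorbing the $n$). The convergence \eqref{e:nablaVteps-conv} as $\epsilon\to0$ is again termwise convergence of kernels plus the dominated-convergence majorant; the extension to merely $\varphi\in B_\eta$ (not Lipschitz) holds because the bounds on $e^{\Delta_x t}\nabla_\varphi V_t$ never used the Lipschitz constant, so one passes to the limit along Lipschitz approximants.

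For the analyticity claims (ii)--(iv), I would argue that each term of the series is jointly analytic in $(\varphi,z)\in B_\eta\times B_\rho(0)$ as a $C(\bar\Lambda)$- (resp. $L^\infty(\R^2)$-) valued map — a polynomial in $z$ times an entire function of the finitely many "evaluations" $\varphi(x_j)$, composed continuously — and that the series converges \emph{locally uniformly} in the operator/sup norm by the majorant above (valid on a slightly larger polydisk, shrinking $\eta,t_0$); a locally uniform limit of Banach-space-valued analytic functions is analytic, cf.\ \cite[Appendix~A]{MR0894477}. Fréchet differentiability in (ii) is then automatic from analyticity. The main obstacle I expect is the $n=2$ term for $\beta\in[4\pi,6\pi)$: there $\tilde V_t^2$ is not integrable (it behaves like $(|x_1-x_2|/\sqrt t)^{-\beta/2\pi}$ near the diagonal for the neutral part), so one must keep that term in the explicit form \eqref{eq:n2explicit} throughout and verify by hand that after applying $e^{\Delta_x t}$ it still produces a bounded, analytic contribution — the heat kernel regularization at scale $t$ cutting off the singularity — and that the extra weight $|x_1-x_2|/\sqrt t$ in \eqref{eq:V2bd} is exactly what is needed to control $\nabla_\varphi$ acting on it; getting the singular $n=2$ bookkeeping consistent with the clean $n\ne2$ estimates, uniformly in $m,\epsilon,\Lambda$, is the delicate point, but it is precisely the content of Lemma~\ref{lem:tildeVn}(iii), so it reduces to careful assembly rather than new ideas. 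Finally I fix $t_0=t_0(\beta,\rho)$ and $\eta=\eta(\beta,\rho)$ as the thresholds making all the above series converge and the imaginary-part estimates quantitative, and set $I\supset\R$ to be any complex neighborhood contained in $\bigcup_\rho B_\rho(0)$ on which these hold; since all constants were independent of $\Lambda$, so are $t_0,\eta,I$.
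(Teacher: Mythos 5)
Your proposal is correct and follows essentially the same route as the paper's own argument: expand $V_t$ in the series \eqref{e:Vt-series}, control the $n\neq2$ terms by the $L^1$ bound \eqref{e:tildeVn-bd}, keep the $n=2$ term in the explicit form \eqref{eq:n2explicit} and use the extra $|x_1-x_2|$ weight \eqref{eq:V2bd} to tame the neutral part, then choose $t_0(\beta,\rho)$ and $\eta(\beta,\rho)$ small enough that $e^{\sqrt\beta\eta}C_\beta\rho\, t_0^{1-\beta/8\pi}<1$, which makes the series converge geometrically with all constants $\Lambda$-free; the $\epsilon\to0$, $m\to0$ limits follow from termwise convergence plus the summable majorant, and analyticity follows from termwise analyticity plus uniform convergence, appealing to \cite[Appendix~A]{MR0894477}.

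The one place where you genuinely diverge from the paper is the proof of the imaginary-part bound \eqref{e:nablaV-imbd}. You propose to first establish analyticity and then use a Cauchy-type/fundamental-theorem-of-calculus argument from the fact that the imaginary part vanishes on the real locus. That works, and indeed the paper uses exactly this Cauchy-estimate device in the large-$t$ regime; but for $t\le t_0$ the paper instead argues directly with $|z^n-\bar z^n|=O(n|\im z||z|^{n-1})$ together with the kernel symmetry $\tilde V_t^n((-\sigma_1,x_1),\dots,(-\sigma_n,x_n))=\tilde V_t^n((\sigma_1,x_1),\dots,(\sigma_n,x_n))$, which makes the real-locus cancellation explicit without invoking analyticity. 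Your phrasing that ``$\im(z^n e^{i\sqrt\beta\sum\sigma_j\varphi})$ \dots vanishes identically when $z,\varphi\in\R$'' is literally false for a single $\sigma$-configuration; the vanishing of $\im\nabla_\varphi V_t$ on the real locus only holds after summing over $\sigma$'s using the kernel symmetry, so that observation is an ingredient you would still need even in the FTC version. Logically your ordering (analyticity first, then the imaginary bound) is sound since the uniform-convergence estimates that give analyticity do not rely on \eqref{e:nablaV-imbd}; it is just a different route to the same estimate, and slightly less self-contained than the paper's direct cancellation.
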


\begin{proof}
The proofs here are very similar to (and rely on) those of \cite[Sections~4 and~5]{MR4767492},
but as the statements are not exactly ones found in the literature, we offer the required arguments here to fill in the details. In this part of the proof, it turns out that we can take the neighborhood $I$ to be a strip $I_\eta=\{z\in \C: |\im z|<\eta\}$,
 where the parameter $\eta>0$ is  chosen sufficiently small depending on $\beta$ and $\rho$ and $\Lambda$ at various (but finitely many) steps of the proof
  (but not depending on $m>0$).
We will also write $B_\eta$ instead of $B_{I_\eta}$ in the remainder of this section.

\subproof{item (i)}
The role of $t_0$ is that we see from \eqref{e:tildeVn-bd} that for any $\beta\in(0,6\pi)$ and $z\in \C$, $\eta>0$ there exists a $t_0=t_0(\beta,z,\eta)$ such that for $t\leq t_0$ and $\varphi\in B_\eta$, the series expansion representation for $V_t$ stated in \eqref{e:Vt-series} converges.
More precisely, we have by \eqref{e:tildeVn-bd},
\begin{align}\label{temp46uj}
  |V_t(\varphi,z|\Lambda,m,\epsilon)|&\leq \sum_{n=1}^\infty \frac{|z|^n}{n!} \int_{(\Lambda \times \{\pm 1\})^n} d\xi_1\cdots d\xi_n |\tilde V_t^n(\xi_1,\dots,\xi_n)| |e^{i\sqrt{\beta}\sum \sigma_j \varphi(x_j)}|
  \\
&\leq   \sum_{n\neq 2}^\infty |z|^n e^{\sqrt{\beta}\eta n} \frac{1}{t} (C_\beta t^{1-\beta/8\pi})^n |\Lambda|+\frac{|z|^2}{2}\int_{(\Lambda\times \{\pm 1\})^2}d\xi_1 d\xi_2 |\tilde V_t^2(\xi_1,\xi_2)| e^{2\sqrt{\beta}\eta}. \nonumber
\end{align}
Thus for $t_0>0$ such that $|z| e^{\sqrt{\beta}\eta}C_\beta t_0^{1-\beta/8\pi}<1$, we see that for $t\leq t_0$, the series with the $n=2$ terms removed converges uniformly in $\epsilon,m$.
The $n=2$ term is not bounded in the above sense in the $\epsilon,m\to 0$ limit, but for any strictly positive $\epsilon,m$ it is finite.
Let us fix such a $t_0$ now -- we will later take $\eta=\eta(\rho)$ so that $t_0$ will only depend on $\beta,\rho$.

For such a choice of $t_0$, we can then write for $t\leq t_0$,
\begin{multline}\label{eq:Vtdiffsum}
  V_t(\varphi,z|\Lambda,m,\epsilon)-V_t(0,z|\Lambda,m,\epsilon)
  \\
  =\sum_{n=1}^\infty \frac{z^n}{n!} \int_{(\Lambda \times \{\pm 1\})^n} d\xi_1\cdots d\xi_n \tilde V_t^n(\xi_1,\dots,\xi_n) (e^{i\sqrt{\beta}\sum_j \sigma_j \varphi(x_j)}-1).
\end{multline}
Note that for $\varphi$ Lipschitz continuous on $\Lambda$, we have for $x,y\in \Lambda$, 
\begin{equation}
  |e^{i\sqrt{\beta}(\varphi(x)-\varphi(y))}-1|\leq \sqrt{\beta}\|\nabla \varphi\|_{L^\infty(\Lambda)}|x-y|.
\end{equation}
Thus with a similar estimate as the one ensuring uniform convergence (now using \eqref{eq:V2bd} as well as the fact that the charged part of $\tilde V^2$ satisfies \eqref{e:tildeVn-bd}), we see that for $\varphi\in B_{\eta}$ that is %
 Lipschitz continuous,
the series \eqref{eq:Vtdiffsum} converges uniformly in $\epsilon$ and $m$. 

To prove that $V_t(\varphi,z|\Lambda,m,\epsilon)-V_t(0,z|\Lambda,m,\epsilon)$ converges as we let $\epsilon,m\to 0$, it is thus sufficient to prove that pointwise, the kernels $\tilde V_t^n(\xi_1,\dots,\xi_n|m,\epsilon)$ converge (at distinct points) when we first let $\epsilon\to 0$ and then $m\to 0$,
as given by Lemma~\ref{lem:tildeVn}.

\subproof{item (ii)} 
We note that we have for $t\leq t_0$, %
\begin{align} \label{e:Vt-diff}
&V_t(\varphi+f,z|\Lambda,m)-V_t(\varphi,z|\Lambda,m)\nnb
&=\sum_{n=1}^\infty \frac{z^n}{n!}\int_{(\Lambda\times \{\pm 1\})^n}d\xi_1\cdots d\xi_n \tilde V_t^n (\xi_1,\dots,\xi_n)e^{i\sqrt{\beta}\sum_j \sigma_j \varphi(x_j)}(e^{i\sqrt{\beta}\sum_j \sigma_j f(x_j)}-1)\nnb
&=i\sqrt{\beta}\sum_{n=1}^\infty \frac{z^n}{n!}\int_{(\Lambda\times \{\pm 1\})^n}d\xi_1\cdots d\xi_n \tilde V_t^n (\xi_1,\dots,\xi_n)e^{i\sqrt{\beta}\sum_j \sigma_j \varphi(x_j)}\sum_{j=1}^n \sigma_j f(x_j)\nnb
  &\qquad +\sum_{n=1}^\infty \frac{z^n}{n!}\int_{(\Lambda\times \{\pm 1\})^n}d\xi_1\cdots d\xi_n \tilde V_t^n (\xi_1,\dots,\xi_n)e^{i\sqrt{\beta}\sum_j \sigma_j \varphi(x_j)} R_f(\xi_1,\dots,\xi_n)
\end{align}
where
\begin{equation}
  R_f(\xi_1,\dots,\xi_n) = e^{i\sqrt{\beta}\sum_j \sigma_j f(x_j)}-i\sqrt{\beta}\sum_j \sigma_j f(x_j)-1.
\end{equation}
Using that $\tilde V_t^n(\xi_1,\dots,\xi_n)$ is permutation invariant (see Lemma~\ref{lem:tildeVn}),
the first line on the right-hand side of \eqref{e:Vt-diff} equals $(f,\nabla_\varphi V_t(\varphi,z))$ with
\begin{multline} \label{e:nablaV-def}
  \nabla_\varphi V_t(\varphi,z,x)
  \\= \sum_{n=1}^\infty \frac{z^n}{(n-1)!} \sum_{\sigma_1} \int_{(\Lambda \times \{\pm 1\})^{n-1}}d\xi_2 \cdots d\xi_n\, \tilde V_t^n((\sigma_1,x),\xi_2,\dots,\xi_n) i\sqrt{\beta}\sigma_1 e^{i\sqrt{\beta}\sum \sigma_j \varphi(x_j)}.
\end{multline}

The function $\nabla_\varphi V_t(\varphi,z)$ is in $C(\bar\Lambda)$ for every Lipschitz continuous $\varphi \in B_\eta$.
Indeed, we again see from our geometric convergence estimates (see \eqref{temp46uj}) that the $n\neq 2$ and charged $n=2$ terms are in $C(\bar\Lambda)$
and that their sum converges for uniformly in $x_1$.
For the $n=2$ term in the neutral case, by \eqref{eq:n2explicit},
\begin{align}
&\left|\sum_{\sigma_1\in \{\pm 1\}}\int_{\Lambda\times \{\pm 1\}} dx_2\, \tilde V_t((x_1,\sigma_1),(x_2,-\sigma_1))\sigma_1e^{i\sqrt{\beta}\sum_j \sigma_j\varphi(x_j)}\right|\nnb
  &\leq  \int_{\Lambda}dx_2 \, |\tilde V_t^2((x_1,1),(x_2,-1))||e^{i\sqrt{\beta}(\varphi(x_1)-\varphi(x_2))}-e^{-i\sqrt{\beta}(\varphi(x_1)-\varphi(x_2))}|\nnb
&\leq 2\sqrt{\beta}\|\nabla\varphi\|_{L^\infty(\Lambda)}e^{2\sqrt{\beta}\eta}\int_{\Lambda}dx_2 \, |\tilde V_t^2((x_1,1),(x_2,-1))||x_1-x_2|.
\end{align}
Thus, by \eqref{eq:n2bound}, our series converges uniformly in $x_1$ and is thus in $C(\bar\Lambda)$.
To bound the second line of \eqref{e:Vt-diff}, for $f\in B_{\eta}$, we note that 
\begin{align}
  |R_f(\xi_1,\dots,\xi_n)|
  \leq \left|\sum_j \sigma_j f(x_j)\right|^2e^{n\sqrt{\beta}\eta}.
\end{align}
Thus when $n\neq 2$ or $n=2$ and we have a charged configuration, we can simply bound this by 
\begin{equation}
    |R_f(\xi_1,\dots,\xi_n)| \leq
n^2\|f\|^2_{L^\infty(\Lambda)}e^{n\sqrt{\beta}\eta}\leq n^2\|f\|^2_{L^\infty(\Lambda)}e^{n\sqrt{\beta}\eta},
\end{equation}
whereas, for the neutral $n=2$ case, we have 
\begin{align}
    |R_f(\xi_1,\xi_2)| \leq 
  |x_1-x_2|^2\|\nabla f\|_{L^\infty(\Lambda)}^2 e^{2\sqrt{\beta}\eta}.
\end{align}
Thus with the geometric convergence for $t\leq t_0$ for our series and \eqref{eq:V2bd} (the integral is finite when $|x_1-x_2|$ is replaced by $|x_1-x_2|^\alpha$ for any $\alpha>\beta/2\pi-2$ -- see e.g. \cite[Proof of Lemma 4.5]{MR4767492}), we see
\begin{multline}
  \sum_{n=1}^\infty \frac{z^n}{n!}\int_{(\Lambda\times \{\pm 1\})^n}d\xi_1\cdots d\xi_n \tilde V_t^n (\xi_1,\dots,\xi_n)e^{i\sqrt{\beta}\sum_j \sigma_j \varphi(x_j)}
  R_f(\xi_1,\dots,\xi_n)
  \\
 =O(\|f\|_{L^\infty(\Lambda)}+\|\nabla f\|_{L^\infty(\Lambda)})^2.
\end{multline}
We conclude that, for $t\leq t_0$, the map $\varphi\mapsto V_t(\varphi,z|\Lambda,m)$ is Fréchet differentiable and  its derivative in the direction
$f$ is given by \eqref{e:nablaV-def}.

\subproof{item (iii)}
It was shown in item~(ii) that the gradient $\nabla_\varphi V_t(\varphi,z)$ given by \eqref{e:nablaV-def} is in $L^\infty$ for any %
 Lipschitz continuous $\varphi$
with bound depending on the Lipschitz constant of $\varphi$. However, as we now observe, its smoothed version $e^{\Delta_x t} \nabla_\varphi V_t(\varphi,z)$
can be bounded uniformly in $\varphi \in B_\eta$ (without the Lipschitz condition). Indeed,
\begin{multline} \label{e:nablaV-def-bis}
  (f,\nabla_\varphi V_t(\varphi,z))
  \\
  = \sum_{n=1}^\infty \frac{z^n}{(n-1)!} \int_{(\Lambda \times \{\pm 1\})^n} d\xi_1\cdots d\xi_n\, f(x_1) \tilde V_t^n((\sigma_1,x_1),\xi_2,\dots,\xi_n) i\sqrt{\beta}\sigma_1 e^{i\sqrt{\beta}\sum \sigma_j \varphi(x_j)}.
\end{multline}
For smooth $f$, this series converges uniformly in $\varphi \in B_\eta$ when $t\leq t_0$. Indeed, by Lemma~\ref{lem:tildeVn},
\begin{align} \label{e:nablaVbd}
  &|(f,\nabla_\varphi V_t(\varphi,z))|\nnb
  &\leq \sum_{n=1}^\infty n \sqrt{\beta}\|f\|_{L^1(\Lambda)} |z|^n e^{\sqrt{\beta} \eta n}\frac{1}{t}(C_\beta t^{1-\beta/8\pi})^n \nnb
&\quad +2\sqrt{\beta}\left|z^2\int_{\Lambda^2}dx_1\, dx_2\, \tilde V_t^2((1,x_1),(-1,x_2)) f(x_1)\sin(\sqrt{\beta}(\varphi(x_1)-\varphi(x_2)))\right|\nnb
&=\sum_{n=1}^\infty n \sqrt{\beta}\|f\|_{L^1(\Lambda)} |z|^n e^{\sqrt{\beta} \eta n}\frac{1}{t}(C_\beta t^{1-\beta/8\pi})^n \nnb
&\quad +\sqrt{\beta}\left|z^2\int_{\Lambda^2}dx_1\, dx_2\, \tilde V_t^2((1,x_1),(-1,x_2)) (f(x_1)-f(x_2))\sin(\sqrt{\beta}(\varphi(x_1)-\varphi(x_2)))\right|.
\end{align}
To estimate the last integral above, we note first of all that for $\varphi\in B_\eta$, $|\sin(\sqrt{\beta}(\varphi(x_1)-\varphi(x_2)))|\leq e^{2\sqrt{\beta}\eta}$. For the next step, let us assume for simplicity that $f$ is defined on $\R^2$ and $\nabla f\in L^1(\R^2)$ %
-- we will shortly be applying our estimate to such a function. Using \eqref{eq:n2bound}, we have (since $\beta<6\pi$) 
\begin{align} \label{e:nablaV2bd}
&\int_{\Lambda^2}dx_1\, dx_2\, |\tilde V_t^2((1,x_1),(-1,x_2))||f(x_1)-f(x_2)|\nnb
&\lesssim t^{-\frac{\beta}{4\pi}}\int_{\Lambda\times\Lambda} dx_1\, dx_2\, \left(\frac{\sqrt{t}}{|x_1-x_2|}\right)^{\beta/2\pi}e^{-c|x_1-x_2|/\sqrt{t}}\int_0^1 du \, |\nabla f(x_1+u(x_2-x_1))||x_2-x_1|\nnb
&= t^{-\frac{\beta}{4\pi}}\sqrt{t}\int_{\R^2}dy\, \left(\frac{\sqrt{t}}{|y|}\right)^{\frac{\beta}{2\pi}-1}e^{-c\frac{|y|}{\sqrt{t}}}\int_0^1 du\, \int_{\Lambda} dx_1 \, |\nabla f(x_1+uy)| {\bf 1}_{x_1+y \in \Lambda}\nnb
& \leq t^{-\frac{\beta}{4\pi}}\sqrt{t}\int_{\R^2}dy\, \left(\frac{\sqrt{t}}{|y|}\right)^{\frac{\beta}{2\pi}-1}e^{-c\frac{|y|}{\sqrt{t}}} \int_{\tilde\Lambda} dx \, |\nabla f(x)| \nnb
&\lesssim \|\nabla f\|_{L^1(\tilde\Lambda)} t^{\frac{3}{2}-\frac{\beta}{4\pi}},
\end{align}
where the implied constants are independent of $t$ and $f$ and $\tilde\Lambda$ denotes the convex hull of $\Lambda$.
(In our eventual application, $\Lambda$ is convex, so $\tilde \Lambda=\Lambda$.)

We now want to apply this to  $f(x_1)=(\sqrt{t}\nabla_x)^k e^{\Delta_xt}(x,x_1)$ (with $k\geq 0$) since
\begin{equation}
(\sqrt{t}\nabla_x)^k  e^{\Delta_x t}\nabla_\varphi V_t(\varphi,z,x) = ((\sqrt{t}\nabla_x)^ke^{\Delta_x t}(x,\cdot), \nabla_\varphi V_t(\varphi,z)).
\end{equation}
For such $f$, $\|f\|_{L^1(\R^2)}$ and $\sqrt{t}\|\nabla f\|_{L^1(\R^2)}$ are bounded in $t$,
and we conclude that for $t\leq t_0$,
\begin{align}
\|(\sqrt{t}\nabla_x)^k e^{\Delta_x t}\nabla_\varphi V_t(\varphi,z,\cdot|\Lambda,m)\|_{L_x^\infty}&\lesssim \sum_{n=1}^\infty n\sqrt{\beta} |z|^n e^{\sqrt{\beta}n}\frac{1}{t} (C_\beta t^{1-\frac{\beta}{8\pi}})^n+t^{1-\frac{\beta}{4\pi}}\nnb
&\lesssim t^{-\frac{\beta}{8\pi}},
\end{align}
where the implied constants depend only on $k$, $\beta$, and $z$.

We remark that these $L_x^\infty$ estimates are independent of $\Lambda$
in this small $t$-regime we considered.
One can also obtain the following $\Lambda$-dependent $L_x^1$ estimates.
Namely, since
\begin{equation}
  \int dx\, \|(\sqrt{t} \nabla_x)^k e^{\Delta_x t}(x,\cdot)\|_{L^1(\tilde\Lambda)}
  \leq \int_{\R^2} dx \int_{\tilde\Lambda} dy\, |(\sqrt{t}\nabla_x)^k \frac{e^{-\frac{|x-y|^2}{4t}}}{4\pi t}|
  \leq C_k|\tilde\Lambda|,
\end{equation}
it also follows from the above that
\begin{equation} \label{e:nablaV-L1}
  \|(\sqrt{t}\nabla_x)^k e^{\Delta_x t}\nabla_\varphi V_t(\varphi,z,\cdot|\Lambda,m)\|_{L_x^1}
  \lesssim |\tilde\Lambda| t^{-\frac{\beta}{8\pi}} = C(\beta,\rho,\Lambda)  t^{-\frac{\beta}{8\pi}}.
\end{equation}

The proof of \eqref{e:nablaV-imbd} is  similar:
we use \eqref{e:nablaV-def-bis} with $f(x_1)=e^{\Delta_x t}(x,x_1)$ and note that 
\begin{align}
|z^n-\bar z^n|=O(n |\im z||z|^{n-1})
\end{align} 
and since $\tilde V_t^n((-\sigma_1,x_1),\dots,(-\sigma_n,x_n))=\tilde V_t^n((\sigma_1,x_1),\dots,(\sigma_n,x_n))$ (this follows by induction from the recursive representation of the kernels \cite[(4.8)]{MR4399156})
\begin{align}
&\sum_{\sigma_1,\dots,\sigma_n\in \{-1,1\}}\tilde V_t^n((\sigma_1,x_1),\dots,(\sigma_n,x_n))i\sqrt{\beta}\sigma_1 \left(e^{i\sqrt{\beta}\sum \sigma_j\varphi(x_j)}+ e^{-i\sqrt{\beta}\sum \sigma_j \overline{\varphi(x_j)}}\right)\nnb
&=\sum_{\sigma_1,\dots,\sigma_n\in \{-1,1\}}\tilde V_t^n((\sigma_1,x_1),\dots,(\sigma_n,x_n))i\sqrt{\beta}\sigma_1 \left(e^{i\sqrt{\beta}\sum \sigma_j\varphi(x_j)}- e^{i\sqrt{\beta}\sum \sigma_j \overline{\varphi(x_j)}}\right).
\end{align}
For $n\neq 2$, the difference is bounded by $\|\im \varphi\|_\infty e^{n\sqrt{\beta}\eta}$, while the $n=2$ term is estimated in a similar manner as before along with noting that  
\begin{align}
\left|\sin(\sqrt{\beta}(\varphi(x_1)-\varphi(x_2)))-\sin(\sqrt{\beta}(\overline{\varphi(x_1)}-\overline{\varphi(x_2)}))\right|\lesssim e^{2\sqrt{\beta}\eta}\|\im \varphi\|_\infty
\end{align}

For \eqref{e:nablaV-Lipschitz-finvol}, the argument is again similar. We use \eqref{e:nablaV-def-bis} and for $n\neq 2$, we write 
\begin{align}
\left|e^{i\sqrt{\beta}\sum \sigma_j\varphi(x_j)}-e^{i\sqrt{\beta}\sum \sigma_j\varphi'(x_j)}\right|\lesssim \|\varphi-\varphi'\|_\infty e^{n\sqrt{\beta}\eta},
\end{align}
while for the $n=2$-term, we proceed as before and write 
\begin{align}
\left|\sin(\sqrt{\beta}(\varphi(x_1)-\varphi(x_2)))-\sin(\sqrt{\beta}(\varphi'(x_1)-\varphi'(x_2)))\right|\lesssim e^{2\sqrt{\beta}\eta}\|\varphi-{\varphi'}\| _\infty.
\end{align}
Again, the constants obtained are independent of $\Lambda$.

Finally, for the convergence \eqref{e:nablaVteps-conv} of $\nabla_x^k e^{\Delta_x t}\nabla_\varphi V_t$ in $L_x^\infty$,
one can for example observe that the series representation \eqref{e:nablaV-def} of $\nabla_\varphi V_t$ converges in $L_x^\infty$
if the neutral part of the $n=2$ term is removed, uniformly in $\varphi$. This is similar to the bound in \eqref{e:nablaVbd}
and the pointwise convergence of the kernels $\tilde V^n_t$ used in the proof of item (i).
Since $(\sqrt{t}\nabla_x)^k e^{\Delta_x t}$ is bounded on $L^\infty_x$ a version of \eqref{e:nablaVteps-conv}
with the neutral part of the $n=2$ term removed follows. The neutral $n=2$ term can be verified by hand, similarly to \eqref{e:nablaV2bd}.

\subproof{item (iv)}
Recall (see \cite[Appendix~A]{MR0894477}) that if $E$ and $F$ are complex Banach spaces and $U\subset E$ is open, then a function $f:U\to F$ is analytic if it is continuously differentiable on $U$. Another fact we need from \cite[Appendix~A]{MR0894477} is that if $(f_n)$ is a sequence of analytic functions on $U$ and $f_n\to f$ uniformly, then $f$ is also analytic (see \cite[Appendix~A: Theorem 2]{MR0894477}).

We will first prove that for each $\rho>0$, there exists a $t_0(\beta,\rho)$ such that for $t\leq t_0$, the mapping 
\begin{equation}
B_\eta\times B_\rho(0) \ni (\varphi,z)\mapsto e^{\Delta_x t}\nabla_\varphi V_t(\varphi,z|\Lambda,m) 
\end{equation}
is analytic.
To see why this is true, note that in the proofs of the claims of item (iii), we saw that  for suitable $t_0$, the series \eqref{e:nablaV-def-bis} with $f(x_1)=e^{t\Delta_x}(x,x_1)$ converges uniformly in $(\varphi,z)\in B_\eta\times B_\rho(0)$.
Thus it is sufficient to prove that each term in the series is analytic. One readily sees that each term in the series is continuously differentiable, so indeed we have analyticity in $B_\eta\times B_\rho(0)$ for $t\leq t_0(\beta,\rho)$.
\end{proof}

\subsection{Proof of Proposition~\ref{prop:Vt} -- large $t$}

In this section, we prove the remainder of Proposition~\ref{prop:Vt},
namely we treat the case of large $t$. We begin with some preliminary considerations before going into the actual proof.

Let us write $V_t(\varphi,z)=V_t(\varphi,z|\Lambda,m,\epsilon)$ and assume $m,\epsilon>0$. The latter two assumptions will be relaxed later. 

We will obtain crude estimates for $V_t(\varphi,z)$ for $t>t_0$ from the following representation. Recall the definition of $V_t$ in \eqref{e:Vtdef}
and note that if $\epsilon^2< t_0<t$ and $\zeta\sim \mathcal{N}(0,C_t-C_{\epsilon^2})$,
by convolution of Gaussians, we can write $\zeta=\zeta_1+\zeta_2$ where $\zeta_1\sim \mathcal{N}(0,C_t-C_{t_0})$ and $\zeta_2\sim \mathcal{N}(0,C_{t_0}-C_{\epsilon^2})$ are independent Gaussian fields. Fubini's theorem therefore gives
\begin{align}
  e^{-V_t(\varphi,z)}=\E_{C_t-C_{\epsilon^2}}[e^{-V_{\epsilon^2}(\varphi+\zeta,z)}]
  &=\E_{C_t-C_{t_0}}\Big[\E_{C_{t_0}-C_{\epsilon^2}}\Big[e^{-V_{\epsilon^2}(\varphi+\zeta_1+\zeta_2,z)}\Big]\Big]
    \nnb
    &=\E_{C_t-C_{t_0}}\Big[e^{-V_{t_0}(\varphi+\zeta_1,z)}\Big],
\end{align} 
and hence, relabelling $\zeta_1$ as $\zeta$, the following representation holds:
\begin{equation} \label{e:Vt-Vt0}
  V_t(\varphi,z)-V_{t_0}(\varphi,z)
  = -\log \E_{C_t-C_{t_{0}}}\qB{e^{-V_{t_0}(\varphi+\zeta,z)+V_{t_0}(\varphi,z)}}
\end{equation}
provided the logarithm on the right-hand side is well defined. This will be established in the next lemma. 
Similarly, 
\begin{align} \label{e:nablaVt-Vt0}
  \nabla_\varphi V_t(\varphi,z)
  &= e^{+V_t(\varphi,z)}\E_{C_t-C_{t_0}}\qB{e^{-V_{t_0}(\varphi+\zeta,z)} \nabla_\varphi V_{t_0}(\varphi+\zeta,z)}
    \nnb
  &= e^{+V_t(\varphi,z)-V_{t_0}(\varphi,z)}\E_{C_t-C_{t_0}}\qB{e^{-V_{t_0}(\varphi+\zeta,z)+V_{t_0}(\varphi,z)} \nabla_\varphi V_{t_0}(\varphi+\zeta,z)}.
\end{align}
The point is that the right-hand sides can be expressed in terms of $V_{t_0}$ which is already controlled.
Our estimates will be valid for any bounded $\Lambda\subset\R^2$ (but unlike the $t\leq t_0$ estimates not uniformly in $\Lambda$).

To bound the right-hand sides of \eqref{e:Vt-Vt0} and \eqref{e:nablaVt-Vt0} we need estimates on $V_{t_0}$ (which follow as in the previous
subsection from the series expansion) and standard tail estimates on the Gaussian field with covariance $C_t-C_{t_0}$. Both are stated in the following two lemmas.
Together these imply rough stability estimates on $V_t$ for $t>t_0$ stated in the third lemma below.

The subtraction of $V_{t_0}(\varphi,z)$ above is not needed for $\beta<4\pi$ for which the renormalized potential itself exists, but it is needed if $\beta\geq 4\pi$
when only differences of the renormalized potential are defined. 

We write $\delta_\varphi F(\zeta) = F(\varphi+\zeta)-F(\varphi)$.

\begin{lemma}\label{le:logest}
  For $z \in I \cap B_\rho(0)$, uniformly in $\varphi \in B_{\eta(\rho)}$,
  for any Lipschitz continuous $\zeta: \Lambda \to \R$,
  \begin{align} \label{e:delta-Vt0}
  |\delta_\varphi V_{t_0}(\zeta,z)|
    &\leq C(\beta,\rho) |\Lambda| (1+\|\nabla_x\zeta\|_{L^\infty(\Lambda)})
    \\
    \label{e:imdelta-Vt0}
    |\im \delta_\varphi V_{t_0}(\zeta,z)|
    &\leq C(\beta,\rho) (|\im z| + \|\im \varphi\|_{L^\infty(\Lambda)}) |\Lambda| (1+\|\nabla_x\zeta\|_{L^\infty(\Lambda)}).
  \end{align}
  The same estimates hold for $\epsilon>0$.
\end{lemma}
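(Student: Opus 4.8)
The plan is to bound $\delta_\varphi V_{t_0}(\zeta,z)=V_{t_0}(\varphi+\zeta,z)-V_{t_0}(\varphi,z)$ directly from the convergent series representation of the renormalized potential that was set up in Section~\ref{sec:smallt}. Recall that for $t_0=t_0(\beta,\rho)$ chosen small enough, $|z|\le\rho$, and fields in $B_{\eta(\rho)}$, the expansion \eqref{eq:Vtdiffsum} converges uniformly in $m,\epsilon$; applied with the base field $\varphi$ and increment $\zeta$ one obtains
\begin{equation}
  \delta_\varphi V_{t_0}(\zeta,z)
  = \sum_{n=1}^\infty \frac{z^n}{n!}\int_{(\Lambda\times\{\pm1\})^n} d\xi_1\cdots d\xi_n\,
    \tilde V_{t_0}^n(\xi_1,\dots,\xi_n)\,
    e^{i\sqrt{\beta}\sum_j\sigma_j\varphi(x_j)}\bigl(e^{i\sqrt{\beta}\sum_j\sigma_j\zeta(x_j)}-1\bigr).
\end{equation}
Since $|\im\varphi|\le\eta$, the factor $|e^{i\sqrt\beta\sum\sigma_j\varphi(x_j)}|\le e^{n\sqrt\beta\eta}$, and since $\zeta$ is real the factor $|e^{i\sqrt\beta\sum\sigma_j\zeta(x_j)}-1|\le 2$ and also $\le\sqrt\beta\sum_j|\zeta(x_j)-\zeta(x_{j'})|\le C(n)\|\nabla\zeta\|_{L^\infty(\Lambda)}\mathrm{diam}(\Lambda)$ for a neutral pair. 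I would split into the $n\neq2$ terms plus the charged part of $n=2$, where \eqref{e:tildeVn-bd} gives the $L^1$ bound $\sup_{\xi_1}\|h_{t_0}^n\|_{L^1}\le n^{n-2}t_0^{-1}(C_\beta t_0^{1-\beta/8\pi})^n$, and the neutral part of $n=2$, handled via \eqref{eq:V2bd} (or \eqref{eq:n2bound}) using the $|x_1-x_2|$ weight against $\|\nabla\zeta\|_{L^\infty}$; the extra integration over $x_1\in\Lambda$ produces the factor $|\Lambda|$. Summing the geometric series (valid for $t_0$ small depending on $\beta,\rho,\eta$) yields \eqref{e:delta-Vt0}.

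For the imaginary-part bound \eqref{e:imdelta-Vt0} the mechanism is the one already used for \eqref{e:nablaV-imbd}: using that $\tilde V_{t_0}^n$ is invariant under $\sigma\mapsto-\sigma$ one can pair each configuration with its charge conjugate, so that
\begin{equation}
  \im\Bigl(z^n e^{i\sqrt\beta\sum\sigma_j\varphi(x_j)}\bigl(e^{i\sqrt\beta\sum\sigma_j\zeta(x_j)}-1\bigr)\Bigr)
\end{equation}
gets an extra smallness factor $O(|\im z|)$ from $|z^n-\bar z^n|\le n|\im z||z|^{n-1}$ plus $O(\|\im\varphi\|_{L^\infty})$ from $|e^{i\sqrt\beta\sum\sigma_j\varphi(x_j)}-\overline{e^{i\sqrt\beta\sum\sigma_j\varphi(x_j)}}|\lesssim n\|\im\varphi\|_{L^\infty}e^{n\sqrt\beta\eta}$ (note $\zeta$ being real contributes nothing to the imaginary part after conjugation). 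The rest of the estimate is then identical to the one for \eqref{e:delta-Vt0}, giving the extra prefactor $(|\im z|+\|\im\varphi\|_{L^\infty(\Lambda)})$. The claim that the same bounds hold for $\epsilon>0$ is automatic since all the kernel estimates from Lemma~\ref{lem:tildeVn} are uniform in $\epsilon^2<t_0<1<m^{-2}$, and the series converges uniformly in $\epsilon$.

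The only genuinely delicate point is the neutral $n=2$ contribution when $\beta\in[4\pi,6\pi)$, where $\tilde V_{t_0}^2$ is not absolutely integrable and one must use the $(|x_1-x_2|/\sqrt{t_0})$-weighted bound \eqref{eq:V2bd} together with the Lipschitz estimate on $\zeta$ — exactly as in \cite[Proof of Lemma~4.5]{MR4767492}; here the integrability is borderline and one exploits that $|e^{i\sqrt\beta(\zeta(x_1)-\zeta(x_2))}-1|$ vanishes linearly in $|x_1-x_2|$ to beat the $(\sqrt{t_0}/|x_1-x_2|)^{\beta/2\pi}$ singularity (which is integrable against any power $>\beta/2\pi-2$, and $1>\beta/2\pi-2$ since $\beta<6\pi$). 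I expect this to be the main obstacle, though it is essentially a repetition of estimates already carried out for the gradient in \eqref{e:nablaV2bd}.
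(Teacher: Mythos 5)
Your proof is correct and takes essentially the same route as the paper's: the paper also reduces to the convergent series \eqref{eq:Vtdiffsum} for $\delta_\varphi V_{t_0}(\zeta,z)$, bounds $|e^{i\sqrt\beta\sum\sigma_j\zeta(x_j)}-1|\le 2$ for the $n\neq2$ and charged $n=2$ terms while using the Lipschitz bound paired with the $|x_1-x_2|$-weighted estimate \eqref{eq:V2bd} for the neutral $n=2$ term, and derives \eqref{e:imdelta-Vt0} by the same charge-conjugation mechanism already used for \eqref{e:nablaV-imbd}, with $\zeta$ real so that only $z^n$ and $\varphi$ contribute to the imaginary part. One cosmetic slip: the $\mathrm{diam}(\Lambda)$ bound you write in passing for the neutral pair is not the bound you actually use — as your final paragraph correctly explains, it is the pointwise $\sqrt\beta\|\nabla\zeta\|_{L^\infty}|x_1-x_2|$ bound, with the $|x_1-x_2|$ factor retained against the $(\sqrt{t_0}/|x_1-x_2|)^{\beta/2\pi}$ singularity, that makes the integral converge.
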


\begin{proof}
  These estimates follow from the series expansion
  \begin{multline} 
    \delta_\varphi V_{t_0}(\zeta,z) = V_{t_0}(\varphi+\zeta,z)-V_{t_0}(\varphi,z)
    \\
  =\sum_{n=1}^\infty \frac{z^n}{n!} \int_{(\Lambda \times \{\pm 1\})^n} d\xi_1\cdots d\xi_n \tilde V_{t_0}^n(\xi_1,\dots,\xi_n) e^{i\sqrt{\beta}\sum_j \sigma_j \varphi(x_j)}(e^{i\sqrt{\beta}\sum_j \sigma_j \zeta(x_j)}-1),
  \end{multline}
  as in the proof of Lemma~\ref{le:vtsmall}.
  For the proof of \eqref{e:delta-Vt0},
  see in particular \eqref{eq:Vtdiffsum} and the discussion following it.
  The estimate \eqref{e:imdelta-Vt0} can be obtained as in the proof of \eqref{e:nablaV-imbd} for $t \leq t_0$.
\end{proof}

\begin{lemma}\label{le:gaussianest}
  There is $C=C(\Lambda)>0$ such that for $T\geq T(\Lambda)$ and $p\geq 1$, uniformly in $t>t_0$,
  \begin{align}
    \label{e:Gaussian1}
    \E_{C_t-C_{t_0}}\qB{e^{-p\|\nabla \zeta\|_{L^\infty(\Lambda)}}}
    &\geq e^{-Cp},
      \\
    \label{e:Gaussian2}
      \E_{C_t-C_{t_0}}\qB{e^{+p\|\nabla \zeta\|_{L^\infty(\Lambda)}}\1_{\|\nabla \zeta\|_{L^\infty(\Lambda)} > T}}
    &\leq e^{+C p^2} e^{-T^2/C},
            \\
    \label{e:Gaussian3}
    \E_{C_t-C_{t_0}}\qB{e^{+p\|\nabla \zeta\|_{L^\infty(\Lambda)}}(1+\|\nabla \zeta\|_{L^\infty(\Lambda)})}
    &\leq e^{+C p^2}.      
\end{align}
\end{lemma}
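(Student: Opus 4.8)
The plan is to regard $N:=\|\nabla\zeta\|_{L^\infty(\Lambda)}$ as the supremum over the compact index set $\bar\Lambda\times S^1$ of the centered, almost surely smooth Gaussian process $(x,e)\mapsto\langle\nabla\zeta(x),e\rangle$ (so $N=\sup_{x,e}\langle\nabla\zeta(x),e\rangle$), and then to apply the Borell--TIS concentration inequality once two quantities have been bounded \emph{uniformly in $t>t_0$ and in $m\ge0$}: the variance parameter $\sigma^2:=\sup_{x\in\bar\Lambda,\,e\in S^1}\var\big(\langle\nabla\zeta(x),e\rangle\big)$ and the mean $M:=\E[N]$. First I would compute, from $(C_t-C_{t_0})(x,y)=\int_{t_0}^t e^{-m^2 s}\,\tfrac1{4\pi s}e^{-|x-y|^2/(4s)}\,ds$ by differentiating under the integral, that $\E\big[(\partial_i\zeta(x))^2\big]=\tfrac1{8\pi}\int_{t_0}^t e^{-m^2 s}s^{-2}\,ds\le\tfrac1{8\pi t_0}$, hence $\sigma^2\le\tfrac1{4\pi t_0}$; more generally $\E\big[(\partial^\alpha\zeta(x))^2\big]\lesssim\int_{t_0}^t s^{-1-|\alpha|}\,ds\le C(t_0,\alpha)$ for every multi-index with $|\alpha|\ge1$. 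Feeding the bounds for $|\alpha|\in\{1,2,3\}$ (which crucially never involve the non-uniform order-zero variance $\E[\zeta(x)^2]=\tfrac1{4\pi}\int_{t_0}^t e^{-m^2 s}s^{-1}\,ds$) into the Sobolev embedding $H^2(\R^2)\hookrightarrow L^\infty(\R^2)$ applied to $\psi\nabla\zeta$, with $\psi\in C_c^\infty$ equal to $1$ on a neighbourhood of $\bar\Lambda$ — or, alternatively, into Dudley's entropy bound using the uniformly controlled one-point variance $\E|\nabla\zeta(x)|^2\le\tfrac1{4\pi t_0}$ and increments $\E|\nabla\zeta(x)-\nabla\zeta(y)|^2\lesssim|x-y|^2\int_{t_0}^t s^{-3}\,ds$ — gives $M\le C(\Lambda)$, uniformly in $t>t_0,\,m\ge0$. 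Borell--TIS then yields $\P(N>M+u)\le e^{-u^2/(2\sigma^2)}$ for $u>0$, again uniformly.

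With $\sigma^2$ and $M$ bounded by constants depending only on $\Lambda$ (and the fixed $t_0$), the three displays follow by elementary manipulations. For \eqref{e:Gaussian1}, Jensen's inequality gives $\E_{C_t-C_{t_0}}\big[e^{-pN}\big]\ge e^{-pM}\ge e^{-Cp}$. For \eqref{e:Gaussian3}, I would bound $1+N\le 2e^{N}$ and split the expectation of $e^{(p+1)N}$ at $N=M$: integrating the tail bound gives $\E\big[e^{qN}\big]\lesssim(1+q)\,e^{qM+q^2\sigma^2/2}$, and with $q=p+1$ and $p\ge1$ the linear and constant terms are absorbed into a quadratic, producing $e^{Cp^2}$. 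For \eqref{e:Gaussian2}, I would set $T(\Lambda):=2M$, so that for $T\ge T(\Lambda)$ and $u\ge T$ one has $u-M\ge u/2$ and hence $\P(N>u)\le e^{-u^2/(8\sigma^2)}$; writing $\E\big[e^{pN}\1_{N>T}\big]=e^{pT}\P(N>T)+p\int_T^\infty e^{pu}\P(N>u)\,du$ and completing the square in $pu-\tfrac{u^2}{16\sigma^2}\le 4\sigma^2 p^2$ bounds the integrand by $e^{Cp^2}e^{-u^2/(16\sigma^2)}$ (the boundary term being handled the same way), whose integral over $[T,\infty)$ is $\lesssim e^{Cp^2}e^{-T^2/C}$.

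I do not anticipate a genuine obstacle here: the one structural point is the uniformity in $t$ and $m$, which rests entirely on the convergence of the proper-time integrals $\int_{t_0}^\infty s^{-1-k}\,ds$ for $k\ge1$. This is precisely why the lower cutoff $t_0>0$ in $C_t-C_{t_0}$ matters and why it is essential that $\|\nabla\zeta\|_{L^\infty(\Lambda)}$ involves only derivatives of $\zeta$ and never $\zeta$ itself; the order-zero variance $\E[\zeta(x)^2]$ does grow with $t$ (and blows up as $m\to0$) but plays no role. Beyond this, everything reduces to standard Gaussian concentration.
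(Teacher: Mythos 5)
Your proof is correct and follows essentially the same route as the paper: establish a uniform Gaussian tail bound $\P\big[\|\nabla\zeta\|_{L^\infty(\Lambda)} > u\big] \lesssim e^{-cu^2}$ via Dudley/Sobolev and Borell--TIS, then derive \eqref{e:Gaussian1}--\eqref{e:Gaussian3} by elementary tail-integral manipulations. The only differences are cosmetic: the paper imports the tail bound directly from \cite[Section~5.1, (5.17)]{MR4767492} rather than rederiving the covariance estimates from the heat-kernel representation as you do, it uses a direct CDF integral instead of Jensen for \eqref{e:Gaussian1}, and your treatment of \eqref{e:Gaussian2} — which correctly retains the boundary term $e^{pT}\P(N>T)$ in the integration by parts and bounds it by the same completed-square device — is in fact slightly more careful than the paper's.
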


The proof of these Gaussian estimates is given in Appendix~\ref{app:gaussianest}.

\begin{lemma} \label{lem:re-im-eV}
  For any $\rho>0$, there exist $C=C(\beta,\rho,\Lambda)<\infty$ and $\eta=\eta(\rho,\Lambda)>0$ such that for $|\im z| + \|\im \varphi\|_{L^\infty(\Lambda)} \leq \eta$,
  \begin{align}
    \re e^{-V_t(\varphi,z)+V_{t_0}(\varphi,z)}
    &\geq
      e^{-C}\label{temp76ys}
    \\
    \E_{C_t-C_{t_0}}\qB{e^{-\re (V_{t_0}(\varphi+\zeta,z)-V_{t_0}(\varphi,z))}} &\leq e^{+C}
    \\
    |\im e^{-V_{t}(\varphi,z)+V_{t_0}(\varphi,z)}|
    &\leq {e^{+C}}(|\im z|+\|\im \varphi\|_{L^\infty(\Lambda)}).\label{temp65trm}
  \end{align}
  The estimates are uniform in the real part of $\varphi$.
\end{lemma}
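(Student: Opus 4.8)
Here is the plan.

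The plan is to work from the representation \eqref{e:Vt-Vt0}, rewritten (with $t_0=t_0(\beta,\rho)$ as in Lemma~\ref{le:vtsmall} and $\zeta\sim\cN(0,C_t-C_{t_0})$, which is real-valued and almost surely smooth on $\bar\Lambda$ since $t_0>0$) as
\begin{equation*}
  e^{-V_t(\varphi,z)+V_{t_0}(\varphi,z)} = \E_{C_t-C_{t_0}}\big[e^{-\delta_\varphi V_{t_0}(\zeta,z)}\big], \qquad \delta_\varphi V_{t_0}(\zeta,z)=V_{t_0}(\varphi+\zeta,z)-V_{t_0}(\varphi,z).
\end{equation*}
The right-hand side is a convergent Gaussian integral by the bounds below, and the first estimate of the lemma is exactly what licenses defining $V_t-V_{t_0}$ via the principal branch of the logarithm; so I would first prove that estimate for the integral $\E_{C_t-C_{t_0}}[e^{-\delta_\varphi V_{t_0}(\zeta,z)}]$ itself, and only then read off \eqref{e:Vt-Vt0} and \eqref{e:nablaVt-Vt0}. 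Throughout, take $\eta\le\eta(\rho)$ so that $\varphi,\varphi+\zeta\in B_{\eta(\rho)}$ (recall $\zeta$ is real), write $X=\|\nabla\zeta\|_{L^\infty(\Lambda)}$, $R=\re\delta_\varphi V_{t_0}(\zeta,z)$, $I=\im\delta_\varphi V_{t_0}(\zeta,z)$, so that $e^{-\delta_\varphi V_{t_0}}=e^{-R}(\cos I-i\sin I)$, and recall from Lemma~\ref{le:logest} the pointwise bounds $|R|\le|\delta_\varphi V_{t_0}(\zeta,z)|\le C(\beta,\rho)|\Lambda|(1+X)$ and $|I|\le C(\beta,\rho)(|\im z|+\|\im\varphi\|_{L^\infty(\Lambda)})|\Lambda|(1+X)$, uniform for $\varphi\in B_{\eta(\rho)}$ --- in particular uniform in $\re\varphi$ --- and uniform in $\epsilon>0$.

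The second and third estimates are then immediate from Lemma~\ref{le:gaussianest}. For the second, $e^{-R}\le e^{C(\beta,\rho)|\Lambda|(1+X)}$ and \eqref{e:Gaussian3} with $p=C(\beta,\rho)|\Lambda|$ give $\E_{C_t-C_{t_0}}[e^{-R}]\le e^{C'(\beta,\rho,\Lambda)}$. For the third, $|\im e^{-V_t+V_{t_0}}|=|\E_{C_t-C_{t_0}}[e^{-R}\sin I]|\le\E_{C_t-C_{t_0}}[e^{-R}|I|]$; bounding $|I|$ as above and using \eqref{e:Gaussian3} to control $\E_{C_t-C_{t_0}}[e^{C(\beta,\rho)|\Lambda|(1+X)}(1+X)]$ gives $|\im e^{-V_t+V_{t_0}}|\le e^{C'(\beta,\rho,\Lambda)}(|\im z|+\|\im\varphi\|_{L^\infty(\Lambda)})$. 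All bounds are uniform in $t>t_0$ and $\epsilon,m>0$; the dependence on $m$ is harmless because the covariance $C_t-C_{t_0}$ is dominated by its $m=0$ version, so the estimates of Lemma~\ref{le:gaussianest} persist.

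The first estimate is the crux, and I expect it to be the main obstacle: $e^{-R}$ is only exponentially integrable, not bounded, while $\cos I$ need not be nonnegative once $X$ is large, so a direct bound fails. I would split $\re\E_{C_t-C_{t_0}}[e^{-\delta_\varphi V_{t_0}}]=\E_{C_t-C_{t_0}}[e^{-R}\cos I]$ over $\{X\le T\}$ and $\{X>T\}$, with $T$ fixed first (depending only on $\beta,\rho,\Lambda$). On $\{X\le T\}$ one has $|I|\le C(\beta,\rho)\eta|\Lambda|(1+T)$, so after shrinking $\eta$ (depending on $T$, hence on $\beta,\rho,\Lambda$) we get $\cos I\ge\tfrac12$ there; combined with $e^{-R}\ge e^{-C(\beta,\rho)|\Lambda|(1+T)}$ on that event and with $\P(X\le T)\ge\tfrac12 e^{-C(\Lambda)}>0$ --- obtained from \eqref{e:Gaussian1} with $p=1$ via $\E_{C_t-C_{t_0}}[e^{-X}]\le\P(X\le T)+e^{-T}$ and $T$ large in $\Lambda$ --- this contributes at least a positive constant $c_1=c_1(\beta,\rho,\Lambda)$. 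On $\{X>T\}$, $|\E_{C_t-C_{t_0}}[e^{-R}\cos I\,\1_{X>T}]|\le\E_{C_t-C_{t_0}}[e^{C(\beta,\rho)|\Lambda|(1+X)}\1_{X>T}]\le e^{C''(\beta,\rho,\Lambda)}e^{-T^2/C(\Lambda)}$ by \eqref{e:Gaussian2}, which after enlarging $T$ (depending on $\beta,\rho,\Lambda$) is at most $c_1/2$. Hence $\re\E_{C_t-C_{t_0}}[e^{-\delta_\varphi V_{t_0}}]\ge c_1/2=:e^{-C}$ with $C=C(\beta,\rho,\Lambda)$, uniformly in $t>t_0$, $\epsilon,m>0$, and in $\re\varphi$; this lies in the open right half-plane, which both proves the first estimate and retroactively justifies the logarithm in \eqref{e:Vt-Vt0} and the formula \eqref{e:nablaVt-Vt0}. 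The only delicate point is the order of quantifiers --- fix $T$ large enough for both the probability lower bound and the tail bound, then shrink $\eta$; everything else is a routine combination of Lemma~\ref{le:logest} and Lemma~\ref{le:gaussianest} (taking the final $C$ to be the maximum of the finitely many constants produced).
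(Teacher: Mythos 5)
Your proof is correct and follows essentially the same strategy as the paper's: decompose $e^{-\delta_\varphi V_{t_0}}=e^{-R}(\cos I - i\sin I)$, handle estimates (2) and (3) directly via Lemma~\ref{le:logest} together with \eqref{e:Gaussian3}, and prove the real-part lower bound by splitting the Gaussian expectation on whether $\|\nabla\zeta\|_{L^\infty(\Lambda)}$ is small (where $\cos I\geq\tfrac12$ after shrinking $\eta$) or large (where the contribution is absorbed by the Gaussian tail estimate \eqref{e:Gaussian2}). The only cosmetic difference is in the order of quantifiers for the crux step --- you fix the cutoff $T$ first and then shrink $\eta$, while the paper uses an $\eta$-dependent threshold $\{C\eta|\Lambda|(1+\|\nabla\zeta\|_{L^\infty})>\tfrac12\}$ --- but this is the same argument.
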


\begin{proof}
We remark (but will not use) that, for real $z$ and $\varphi$,
the first estimate would simply follow from Jensen's inequality which gives
\begin{equation}
  \E_{C_t-C_{t_{0}}}[e^{-\delta_\varphi V_{t_0}(\zeta,z|\Lambda,m)}]
  \geq
  e^{-\E_{C_t-C_{t_{0}}}[\delta_\varphi V_{t_0}(\zeta,z|\Lambda,m)]}
\end{equation}
and the right-hand side is bounded from below by a constant depending on $\Lambda$
by \eqref{e:delta-Vt0} and \eqref{e:Gaussian1}.
In general, using that
\begin{equation}
  \re e^{-\delta_\varphi V_{t_0}(\zeta,z)} = e^{-\re \delta_\varphi V_{t_0}(\zeta,z)} \cos(\im \delta_\varphi V_{t_0}(\zeta,z)),
\end{equation}
with $\cos(x) \geq \frac12$ if $|x| \leq \frac12$ and $\cos(x) \geq -1$ for all $x\in \R$,
for $|\im z|+\|\im \varphi\|_{L^\infty(\Lambda)} \leq \eta$, it follows
from \eqref{e:delta-Vt0} and \eqref{e:imdelta-Vt0} that 
\begin{align}
  &\re e^{-\delta_\varphi V_{t_0}(\zeta,z)}\nnb
  &\geq 
    \frac12 e^{-C|\Lambda| (1+\|\nabla\zeta\|_{L^\infty(\Lambda)})} {\bf 1}\qa{C\eta|\Lambda| (1+\|\nabla\zeta\|_{L^\infty(\Lambda)})\leq \frac12}
    \nnb
    &\qquad \qquad - e^{+C|\Lambda|(1+\|\nabla\zeta\|_{L^\infty(\Lambda)})}{\bf 1}\qa{C\eta |\Lambda| (1+\|\nabla\zeta\|_{L^\infty(\Lambda)})> \frac12}
    \nnb
  &\geq \frac12 e^{-C|\Lambda| (1+\|\nabla\zeta\|_{L^\infty(\Lambda)})} - \frac{3}{2}e^{+C|\Lambda|(1+\|\nabla\zeta\|_{L^\infty(\Lambda)})}{\bf 1}\qa{C\eta |\Lambda| (1+\|\nabla\zeta\|_{L^\infty(\Lambda)})> \frac12}
 .
\end{align}
Using the Gaussian estimates \eqref{e:Gaussian1}--\eqref{e:Gaussian2}, there is a another small constant $c>0$ (which can depend on $\Lambda$) such that
\begin{equation}
  \re e^{-V_t(\varphi,z)+V_{t_0}(\varphi,z)}
  =
  \E_{C_t-C_{t_0}}\qB{ \re  e^{-\delta_\varphi V_{t_0}(\zeta,z)}}
  \geq 
  \frac12 e^{-|\Lambda|/c} - e^{+|\Lambda|^2/c} e^{-c/(\eta|\Lambda|)^2} \geq e^{-C(\Lambda)}
\end{equation}
for some $C(\Lambda)>0$, provided $\eta = \eta(\rho,\Lambda)$ is sufficiently small.
The other two estimates are similar. In particular, by \eqref{e:delta-Vt0} and the Gaussian estimate  \eqref{e:Gaussian2},
\begin{equation}
  \E_{C_t-C_{t_0}}\qB{e^{-\re (V_{t_0}(\varphi+\zeta,z)-V_{t_0}(\varphi,z))}}
  \leq
  \E_{C_t-C_{t_0}}\qB{e^{C|\Lambda|(1+\|\nabla \zeta\|_{L^\infty(\Lambda)})}} \leq e^{+C(\Lambda)},
\end{equation}
and from
\begin{equation}
  \im e^{-\delta_\varphi V_{t_0}(\zeta,z)} = e^{-\re \delta_\varphi V_{t_0}(\zeta,z)} \sin (\im \delta_\varphi V_{t_0}(\zeta,z)),
\end{equation}
using \eqref{e:delta-Vt0} and \eqref{e:imdelta-Vt0} and the Gaussian bound \eqref{e:Gaussian3}, 
we conclude
\begin{align}
  |\im e^{-\delta_\varphi V_{t}(\zeta,z)}|
  &\leq \E\qB{e^{|\delta_\varphi V_{t_0}(\zeta,z)|} |\im \delta_\varphi V_{t_0}(\zeta,z)|}
\nnb
  &\leq \E\qB{e^{C|\Lambda|(1+\|\nabla\zeta\|_{L^\infty(\Lambda)})} C(|\im z|+\|\im \varphi\|_{L^\infty(\Lambda)})|\Lambda|(1+\|\nabla\zeta\|_{L^\infty(\Lambda)})}
    \nnb
    &
  \leq C(\beta,z,\Lambda)(|\im z|+\|\im \varphi\|_{L^\infty(\Lambda)}),\label{temp4gcvf}
\end{align}
this inequality, together with \eqref{e:Vt-Vt0}, proves \eqref{temp65trm}.
\end{proof}

\begin{proof}[Proof of Proposition~\ref{prop:Vt}]
  The case $t\leq t_0$ was proven in Lemma \ref{le:vtsmall}.
  We therefore focus on $t>t_0$ in the following (we are assuming that $\eta>0$ is such that Lemma \ref{le:vtsmall} holds for $t_0$).
  
  \subproof{item (i)}
  We begin by writing 
  \begin{align}
  V_t(\varphi,z|\Lambda,m,\epsilon)-V_t(0,z|\Lambda,m,\epsilon)&=V_t(\varphi,z|\Lambda,m,\epsilon)-V_{t_0}(\varphi,z|\Lambda,m,\epsilon)\nnb
  &\quad +V_{t_0}(\varphi,z|\Lambda,m,\epsilon)-V_{t_0}(0,z|\Lambda,m,\epsilon)\nnb
  &\quad -(V_t(0,z|\Lambda,m,\epsilon)-V_{t_0}(0,z|\Lambda,m,\epsilon)).
  \end{align}
  We know that the middle term on the right-hand side converges.
  Since the last term is a special case of the first one,
  it is thus sufficient to prove that the first term converges for $z\in I \cap B_\rho(0)$ and $\varphi\in B_{\eta(\rho)}$.
  We write this term as   
 \begin{equation}
   V_t(\varphi,z|\Lambda,m,\epsilon)-V_{t_0}(\varphi,z|\Lambda,m,\epsilon)=-\log \E_{C_t-C_{t_0}}\left[e^{-V_{t_0}(\varphi+\zeta,z|\Lambda,m,\epsilon)+V_{t_0}(\varphi,z|\Lambda,m,\epsilon)}\right]
   .
 \end{equation}
  Combining Lemma \ref{le:logest} and Lemma \ref{le:gaussianest}, we see that for small enough $\eta>0$ (depending on $\beta,\rho,\Lambda$), the logarithm is well defined and using dominated convergence, we can take the $\epsilon\to 0$ limit using the convergence of $V_{t_0}$ (provided by Lemma \ref{le:vtsmall}). All of the estimates are also uniform in $m$, so the $m\to 0$ limit follows from this argument as well. Let us note that our proof yields the following representation:
  \begin{align}\label{eq:largetrep}
    V_t(\varphi,z)-V_t(0,z)&=-\log \E_{C_t-C_{t_0}}\left[e^{-V_{t_0}(\varphi+\zeta,z)+V_{t_0}(\varphi,z)}\right]\nnb
                             &\quad +V_{t_0}(\varphi,z)-V_{t_0}(0,z)\nnb
  &\quad +\log \E_{C_t-C_{t_0}}\left[e^{-V_{t_0}(\zeta,z)+V_{t_0}(0,z)}\right],
  \end{align}
  where we have again suppressed the dependence on $\Lambda$ (and possibly $m$) in our notation.
  
  This concludes the proof of item (i).
   
  \subproof{item (ii)}
  For the derivative, we note that by the Leibniz rule, we find from \eqref{eq:largetrep} in item (i) and differentiability of $V_{t_0}$ (from Lemma \ref{le:vtsmall}) 
  \begin{equation}\label{eq:nablatrep}
  \nabla_\varphi V_t(\varphi,z)=\frac{\E_{C_t-C_{t_0}}[e^{-V_{t_0}(\varphi+\zeta,z)+V_{t_0}(\varphi,z)}\nabla_\varphi V_{t_0}(\varphi+\zeta,z)]}{\E_{C_t-C_{t_0}}[e^{-V_{t_0}(\varphi+\zeta,z)+V_{t_0}(\varphi,z)}]}.
\end{equation}
  Note that we can also write this as stated in \eqref{e:nablaVt-Vt0}:
    \begin{equation} \label{e:nablaVt-Vt0-bis}
    \nabla_\varphi V_t(\varphi,z)
    = e^{+V_t(\varphi,z)-V_{t_0}(\varphi,z)}\E_{C_t-C_{t_0}}\qB{e^{-V_{t_0}(\varphi+\zeta,z)+V_{t_0}(\varphi,z)} \nabla_\varphi V_{t_0}(\varphi+\zeta,z)}.
  \end{equation}
  To be precise, the integral on the right-hand side is a Bochner integral of the random variable $\nabla_\varphi V_{t_0}(\varphi+\zeta,z)$
  which takes values in the separable Banach space $C(\bar\Lambda)$,
  and for avoidance of doubt we observe that the Leibniz rule can be verified by hand:
  \begin{align}\label{eq:nablatrep-pf}
        &V_t(\varphi,z)-V_t(\varphi+f,z)
          \nnb
        &=\int_0^1 \ddp{}{h} \log \E_{C_t-C_{t_0}}[e^{-V_{t_0}(\varphi+hf+\zeta,z)+V_{t_0}(\varphi+hf,z)}] \, dh
          \nnb
        &=\int_0^1 \frac{\E_{C_t-C_{t_0}}[e^{-V_{t_0}(\varphi+hf+\zeta,z)+V_{t_0}(\varphi+hf,z)}(f,\nabla_\varphi V_{t_0}(\varphi + hf+\zeta,z))]}{\E_{C_t-C_{t_0}}[e^{-V_{t_0}(\varphi+hf+\zeta,z)+V_{t_0}(\varphi+hf,z)}]}\, dh
          \nnb
        &=\frac{\E_{C_t-C_{t_0}}[e^{-V_{t_0}(\varphi+\zeta,z)+V_{t_0}(\varphi,z)}(f,\nabla_\varphi V_{t_0}(\varphi + \zeta,z))]}{\E_{C_t-C_{t_0}}[e^{-V_{t_0}(\varphi+\zeta,z)+V_{t_0}(\varphi,z)}]}
          + O(\|f\|_{L^\infty(\Lambda)}^2+\|\nabla f\|_{L^\infty(\tilde\Lambda)}^2)
          \nnb
        &= (f,\nabla_\varphi V_t(\varphi,z)) + O(\|f\|_{L^\infty(\Lambda)}^2+\|\nabla f\|_{L^\infty(\tilde\Lambda)}^2),
  \end{align}
  using Lemma \ref{le:logest}, Lemma \ref{le:gaussianest}, and our proof of item (ii) in Lemma \ref{le:vtsmall}.
  
  \subproof{item (iii)}
  We start from the representation \eqref{e:nablaVt-Vt0-bis} and consider separately the cases $t\geq 2t_0$ and $t_0\leq t<2t_0$. Our argument relies on the fact that
  \begin{equation}
    \|(\sqrt{t}\nabla)^ke^{\Delta t}f\|_{L^\infty}\leq C_k \|f\|_{L^\infty},
    \qquad
    \|(\sqrt{t}\nabla)^ke^{\Delta t}f\|_{L^\infty}\leq C_k t^{-1}\|f\|_{L^1},
  \end{equation}
  for some constant $C_k$ depending only on $k$ and the fact that $\nabla^k e^{t\Delta}f=e^{t\Delta}\nabla^k f$ for smooth $f$.
  
  In the case $t\geq 2t_0$, we make use of the bound $\|(\sqrt{t}\nabla)^ke^{\Delta t}f\|_{L^\infty}\leq C_k t^{-1}\|f\|_{L^1}$ to deduce that
  (for a possibly different $C_k$ but still depending only on $k$)
  \begin{align}
  \|(\sqrt{t}\nabla_x)^k e^{\Delta_x t}\nabla_\varphi V_{t_0}(\varphi,z)\|_{L_x^\infty}&=\left(\frac{t}{t-t_0}\right)^{k/2}\|(\sqrt{t-t_0}\nabla_x)^k e^{\Delta_x(t-t_0)}e^{\Delta_x t_0}\nabla_\varphi V_{t_0}(\varphi,z)\|_{L_x^\infty}\nnb
    &\leq C_k t^{-1} \|e^{\Delta_x t_0}\nabla_\varphi V_{t_0}(\varphi,z)\|_{L_x^1}
      .
  \end{align}
  For $|\im z|+\|\im \varphi\|_{L^\infty(\Lambda)}$ sufficiently small, the right-hand side is bounded by
  the small-$t$ estimate  \eqref{e:nablaV-L1} with $t=t_0$ (which is a constant) which implies
  \begin{equation}
    \sup_{\varphi\in B_\eta} \|e^{\Delta_x {t_0}}\nabla_\varphi V_{t_0}(\varphi,z)\|_{L_x^1} \leq C_{ k}(\beta,\rho,\Lambda).
  \end{equation}
  Therefore, for $t\geq 2t_0$, by \eqref{e:nablaVt-Vt0-bis},
  \begin{align}
    &\|(\sqrt{t}\nabla_x)^ke^{\Delta_x t}\nabla V_t(\varphi,z)\|_{L_x^\infty}
      \nnb
    &\leq C_{k}(\beta,\rho,\Lambda) t^{-1}|e^{+V_t(\varphi,z)-V_{t_0}(\varphi,z)}| \E_{C_t-C_{t_0}}\qB{|e^{-V_{t_0}(\varphi+\zeta,z)+V_{t_0}(\varphi,z)}|}
      \nnb
    &= C_{k}(\beta,\rho,\Lambda) t^{-1} |e^{+V_t(\varphi,z)-V_{t_0}(\varphi,z)}|
      \E_{C_t-C_{t_0}}\qB{e^{-\re (V_{t_0}(\varphi+\zeta,z)-V_{t_0}(\varphi,z))}}
      .
  \end{align}
  For $z$ and $\varphi$ real, we remark that one would have the simple identity
  \begin{equation}
    |e^{V_t(\varphi,z)-V_{t_0}(\varphi,z)}| \E_{C_t-C_{t_0}}\qB{e^{-\re (V_{t_0}(\varphi+\zeta,z)-V_{t_0}(\varphi,z))}} = 1.
  \end{equation}
  In general, the first factor is bounded (with a $\Lambda$-dependent constant) by Lemma~\ref{lem:re-im-eV} since
  \begin{equation}
    |e^{V_t(\varphi,z)-V_{t_0}(\varphi,z)}|
    \leq  \frac{1}{|\re e^{-V_t(\varphi,z)+V_{t_0}(\varphi,z)}|} \leq e^{+C}.
  \end{equation}  
  The second factor is treated in Lemma \ref{lem:re-im-eV}.
  
  For the $t_0\leq t<2t_0$-case, we write instead 
  \begin{align}
  \|(\sqrt{t}\nabla_x)^k e^{\Delta_x t}\nabla_\varphi V_{t_0}(\varphi,z)\|_{L_x^\infty}&=\left(\frac{t}{t_0}\right)^{k/2}\|e^{\Delta_x(t-t_0)} (\sqrt{t_0}\nabla_x)^k e^{\Delta_x t_0}\nabla_\varphi V_{t_0}(\varphi,z)\|_{L_x^\infty}\\
  &\leq C_k \|(\sqrt{t_0}\nabla_x)^k e^{\Delta_x t_0}\nabla_\varphi V_{t_0}(\varphi,z)\|_{L_x^\infty}
  \end{align}
  and make use of \eqref{e:nablaV-bd-finvol}, which allows completing the argument as in the case of $t\geq 2t_0$.
   
Next we prove \eqref{e:nablaV-Lipschitz-finvol} followed by \eqref{e:nablaV-imbd}.
We use a Cauchy estimate and the analyticity from item (iv),
emphasizing that the proof for the large $t$ regime of item (iv) given below is independent of item (iii).
Pick any $\varphi_0\in B_\eta$ with $\im \varphi_0=0$ and note that
the $L_x^\infty$ ball of radius $\eta$ around $\varphi_0$ lies inside $B_\eta$.
Using \eqref{e:nablaV-bd-finvol} on this ball with $k=0$,
the standard Cauchy estimate in $L_x^\infty$ gives, for $\varphi$ in the $\eta/2$ ball around $\varphi_0$,
\begin{equation}
\|\nabla_\varphi e^{\Delta t}\nabla_\varphi V_t(\varphi,z)\|_{L^\infty_x} \leq \frac{C(\beta,\rho,\Lambda)}{\eta/2}(1+t^{-\beta/8\pi}).
\end{equation}
Hence when we have that for $\varphi,\varphi'\in B_\eta$ in this ball with $\|\varphi-\varphi'\|_{L^\infty_x} \leq \eta/2$,
the claim \eqref{e:nablaV-Lipschitz-finvol} holds by the fundamental theorem of calculus in $L^\infty_x$.
To see \eqref{e:nablaV-imbd}, observe that $\|\im e^{\Delta t}\nabla_\varphi V_t(\varphi,z)\|_{L^\infty_x} =\| \im (e^{\Delta t}\nabla_\varphi V_t(\varphi,z)- e^{\Delta t}\nabla_\varphi V_t(\re\varphi,\re z))\|_{L^\infty_x} $, which is bounded above by
\begin{equation}
  \| e^{\Delta t}\nabla_\varphi V_t(\varphi,z)-e^{\Delta t}\nabla_\varphi V_t(\varphi,\re z)\|_{L^\infty_x}
  +\| e^{\Delta t}\nabla_\varphi V_t(\varphi,\re z)-e^{\Delta t}\nabla_\varphi V_t(\re \varphi,\re z)\|_{L^\infty_x}.
\end{equation}
So \eqref{e:nablaV-imbd} holds by using  \eqref{e:nablaV-Lipschitz-finvol} to bound the second term, and the first term is bounded by applying the standard (one-dimensional) Cauchy estimate and the fundamental theorem of calculus in the $z$ coordinate.
Note that we have shrunk the width $\eta$ of $B_\eta$ by $\eta/2$, but this does not matter.

The proof of \eqref{e:nablaVteps-conv} is analogous to that of \eqref{e:Vteps-conv}.

\subproof{item (iv)}
We begin with the following consequence of \eqref{eq:nablatrep} (justified by Fubini
to exchange the heat kernel integral with the expectation): For every $\rho>0$ and $z\in B_\rho(0)$,
\begin{equation}
  e^{\Delta_x t}\nabla_\varphi V_t(\varphi,z)=  \frac{\E_{C_t-C_{t_0}}[e^{-V_{t_0}(\varphi+\zeta,z)+V_{t_0}(\varphi,z)}e^{\Delta_x t}\nabla_\varphi V_{t_0}(\varphi+\zeta,z)]}{\E_{C_t-C_{t_0}}[e^{-V_{t_0}(\varphi+\zeta,z)+V_{t_0}(\varphi,z)}]},
\end{equation}
where we recall that $t_0 = t_0(\beta, \rho)$.
We will show that the numerator and denominator are both analytic functions
in $(\varphi,z) \in B_{\eta(\rho)} \times B_\rho(0)$,
and that the denominator does not vanish for $z$ in the intersection of $B_\rho(0)$ with a neighborhood of the real axis. 

Indeed, 
the analyticity of the 
quantities within the expectations in $B_{\eta(\rho)} \times B_\rho(0)$
follow from the series expansions used in the proof for $t\leq t_0$
by choice of $t_0$ and $\eta$.
With an application of the Leibniz rule (justified as in \eqref{eq:nablatrep-pf}),
we conclude that the expectations in the numerator and denominator are analytic in this set as well.
It remains to show that there exists a neighborhood $I$ of the real axis such that the denominator does not vanish in $B_{\eta(\rho)}\times (I \cap B_\rho(0))$. This follows from writing the denominator as $e^{-V_t(\varphi,z)+V_{t_0}(\varphi,z)}$ and using Lemma \ref{lem:re-im-eV}.
\end{proof}

\subsection{Coupling and analyticity in finite volume}
\label{sec:coupling}

From Section~\ref{sec:Gauss-decomp}, recall that for 
for $m >0$, we realize the scale decomposition of the Gaussian free field as
\begin{equation} \label{e:GFF-Wiener-bis}
  \Phi^{\GFF(m)}_t = \int_t^\infty \sqrt{\dot C_s}\, dW_s = \int_t^\infty e^{-\frac12 m^2 s} e^{\frac12 \Delta_x s}\, dW_s
\end{equation}
where $(W_t)$ is a cylindrical Brownian motion on $L^2(\R^2)$ defined on some probability space fixed from now on.
In particular, $\Phi^{\GFF(m)}_0$ is distributed
as a Gaussian free field with mass $m >0$ on $\R^2$
and the decomposition $\Phi^{\GFF(m)}$ takes values in $C((0,\infty),C^\infty(\R^2))$.

We will prove that the sine-Gordon field with large scale regularization $m>0$ and $\Lambda \subset \R^2$ a bounded domain can be realized as the $t\to 0$ limit of the solution of the SDE
\begin{equation} \label{e:SDE}
  \Phi_t = -\int_t^\infty e^{-m^2 s} e^{\Delta_x s} \nabla_\varphi V_s(\Phi_s,z,x|\Lambda,m)\, ds + \Phi^{\GFF(m)}_t,
\end{equation}
where $V_t$ is the renormalized potential (see Proposition~\ref{prop:Vt}),
and that the solution extends analytically to $z$ in a complex neighborhood $I$ of the real axis.
This construction is similar to the one from \cite{MR4399156} or \cite{MR4798104}, with the small
difference that we work directly in the continuum, and the more important difference that we allow $z$ to be complex.
One should think of the scale parameter $t \in [0,+\infty]$ to run backwards starting from $t=+\infty$ with initial condition $\Phi_\infty = 0$.

To show this, we first show that the SDE is well posed and has the required regularity in its parameters.
We write $C^r(\R^2)$, $r \in (0,1)$ for the space of H\"older continuous functions on $\R^2$ vanishing at infinity, see Section~\ref{sec:notation}.

It will be useful later to consider the $\epsilon$ regularized version of the equation as well,
in which case $t$ is restricted to $[\epsilon^2,+\infty]$
and $\nabla_\varphi V_s(\Phi_s,z,x|\Lambda,m)$ is replaced by $\nabla_\varphi V_s(\Phi_s,z,x|\Lambda,m,\epsilon)$.
For the continuity in $\epsilon$ in the
following proposition, we extend this solution from $[\epsilon^2,+\infty]$ to $[0,+\infty]$ by
\begin{equation} \label{e:SDE-eps}
  \Phi_t = -\int_{t\vee \epsilon^2} ^\infty e^{-m^2 s} e^{\Delta_x s} \nabla_\varphi V_s(\Phi_s,z,x|\Lambda,m,\epsilon)\, ds + \Phi^{\GFF(m)}_t.
\end{equation}
Thus the scales $t<\epsilon^2$ are defined only in terms of the driving free field term.

\begin{proposition} \label{prop:SDE-existence}
  Let $\beta\in (0,6\pi)$ and $r < 2-\beta/4\pi$.
  Then for any %
  bounded domain $\Lambda \subset \R^2$
  and $m>0$,
  there is a neighborhood $I \subset \C$ containing the real axis (that depends on $\Lambda$ but is independent of $m>0$)
  such that for
  $z\in I$,
  there is a unique strong solution
  \begin{equation}
    (\Phi_t) = (\Phi^{\SG(\beta,z|\Lambda,m)}_t)
  \end{equation}
  to \eqref{e:SDE} such that
  \begin{equation}
    \Phi^{\Delta(\beta,z|\Lambda,m)}=(\Phi^{\SG(\beta,z|\Lambda,m)}_t-\Phi^{\GFF(m)}_t) \in 
    C([0,+\infty],C^r(\R^2)) \cap C((0,+\infty], C^\infty(\R^2)).
  \end{equation}
  The solution $\Phi^{\Delta(\beta,z|\Lambda,m)}$ is analytic as a function
  of $z \in I$ taking values in $C([0,+\infty],C^r(\R^2))$,
  and $|\im \Phi^{\Delta(\beta,z|\Lambda,m)}| \leq \eta$ where $\eta>0$ can be made arbitrarily small (by making $I$ small).
  
  The same statement also holds uniformly in the ultraviolet regularization $\epsilon>0$
  as in \eqref{e:SDE-eps},  and the resulting
  solutions converge uniformly as $\epsilon \to 0$, in these sense that almost surely:
  \begin{equation}
    \sup_{t\geq 0}\|\Phi^{\SG(\beta,z|\Lambda,m)}_t- \Phi^{\SG(\beta,z|\Lambda,m,\epsilon)}_t\|_{C^r(\R^2)} \to 0.
  \end{equation}
\end{proposition}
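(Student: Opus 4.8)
The plan is to solve the SDE \eqref{e:SDE} by a fixed-point argument in the Banach space $C([0,+\infty],C^r(\R^2))$ for the difference field $\Phi^\Delta = \Phi - \Phi^{\GFF(m)}$. Writing the equation for $\Phi^\Delta$ as
\begin{equation}
  \Phi^\Delta_t = -\int_t^\infty e^{-m^2 s} e^{\Delta_x s} \nabla_\varphi V_s(\Phi^{\GFF(m)}_s + \Phi^\Delta_s, z, \cdot \,|\Lambda, m)\, ds =: \cF(\Phi^\Delta)_t,
\end{equation}
I would first verify that $\cF$ maps the space into itself and is a contraction on a suitable ball (of radius controlling $\|\im\Phi^\Delta\|_\infty$ so as to stay inside the domain $B_{\eta(\rho)}$ of Proposition~\ref{prop:Vt}). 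The key inputs are the smoothing/decay estimate \eqref{e:nablaV-bd-finvol}, which gives $\|e^{\Delta_x s}\nabla_\varphi V_s\|_{L^\infty} \lesssim (1+s^{-\beta/8\pi})(1+s)^{-1}$ — integrable at $s=0$ exactly because $\beta/8\pi < 1$, and integrable at $s=\infty$ because of the mass $m>0$ and the $(1+s)^{-1}$ factor — together with the corresponding H\"older bound obtained by interpolating \eqref{e:nablaV-bd-finvol} with $k=0$ and $k=1$: since $\nabla^k e^{\Delta_x s}\nabla_\varphi V_s = e^{\Delta_x s}\nabla^k_x(\cdots)$ one gets a $C^r$ bound scaling like $s^{-r/2}(1+s^{-\beta/8\pi})(1+s)^{-1}$, which is integrable near $s=0$ precisely when $r/2 + \beta/8\pi < 1$, i.e. $r < 2-\beta/4\pi$. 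The Lipschitz estimate \eqref{e:nablaV-Lipschitz-finvol} gives the contraction property (on a short scale interval $[T_0,\infty]$ where the integral of the prefactor is small, then bootstrapping down to $t=0$ using that the equation is ``causal'' in $s$), while \eqref{e:nablaV-imbd} controls $\|\im\Phi^\Delta\|_\infty \lesssim (|\im z| + \|\im\Phi^\Delta\|_\infty)\int(\cdots)\,ds$, so for $I$ a sufficiently thin neighborhood of $\R$ (thinness depending on $\Lambda$, $\beta$, but not $m$ since the estimates of Proposition~\ref{prop:Vt} are uniform in $m\in(0,1]$ near $s=0$ and the mass only helps at large $s$) one can absorb and obtain $\|\im\Phi^\Delta\|_\infty \leq \eta$. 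Standard SDE theory (the driving term $\Phi^{\GFF(m)}$ is a continuous process with values in $C^\infty(\R^2)$ on $(0,\infty]$, by Section~\ref{sec:Gauss-decomp}) then yields a unique strong solution, and the $C^\infty$ regularity on $(0,+\infty]$ follows since for $t>0$ the integrand $e^{\Delta_x s}\nabla_\varphi V_s$ with $s\geq t$ is smooth and the free-field part is smooth there.

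For analyticity in $z$, I would use that $z \mapsto e^{\Delta_x s}\nabla_\varphi V_s(\varphi,z\,|\Lambda,m)$ is analytic from $I\cap B_\rho(0)$ into $L^\infty(\R^2)$ by Proposition~\ref{prop:Vt}(iv), jointly analytic in $(\varphi,z)$ on $B_{\eta(\rho)}\times(I\cap B_\rho(0))$. The fixed point $\Phi^\Delta(z)$ is then obtained as a uniform limit of Picard iterates, each of which is analytic in $z$ (compositions and Bochner integrals of analytic maps are analytic, cf.~\cite[Appendix~A]{MR0894477}), and a uniform limit of analytic Banach-space-valued functions is analytic; alternatively one can invoke an analytic implicit function theorem applied to $\Phi^\Delta - \cF(\Phi^\Delta) = 0$, whose Fr\'echet derivative in $\Phi^\Delta$ is $I$ minus a contraction, hence invertible. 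That $\Phi^\Delta$ solves the sine-Gordon decomposition for real $z$ (i.e. $\Phi_0 \sim \nu^{\SG(\beta,z|\Lambda,m)}$) is the standard ``Polchinski/Parisi--Sourlas'' flow identity: one checks that $e^{-V_t(\Phi_t,z\,|\Lambda,m)}$ times the Girsanov density along the flow is a martingale in $t$, so the law of $\Phi_0$ is the $\epsilon\to 0$ limit of \eqref{e:SGdef}; this is exactly as in \cite{MR4399156} and I would cite that, noting the extension to complex $z$ is formal since for real $z$ everything is a genuine probability measure and both sides are analytic.

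For the $\epsilon$-regularized statement, the same fixed-point works on $[\epsilon^2,\infty]$ with $V_s$ replaced by $V_s(\cdot,z\,|\Lambda,m,\epsilon)$, with all bounds uniform in $\epsilon$ because the estimates \eqref{e:nablaV-bd-finvol}--\eqref{e:nablaV-Lipschitz-finvol} hold uniformly in $\epsilon$ (Proposition~\ref{prop:Vt}(iii)) and the driving free-field part on $[0,\epsilon^2]$ contributes a term going to zero in $C^r$ as $\epsilon\to 0$ (by the regularity estimates for $\Phi^{\GFF(m)}$ in Appendix~\ref{app:Gauss}, $\|\Phi^{\GFF(m)}_0 - \Phi^{\GFF(m)}_{\epsilon^2}\|_{C^r}\to 0$ a.s. for $r<0$ — here one uses that after the $\nabla_\varphi V$ term the difference field is actually H\"older of positive regularity, while the $\Phi^{\GFF}_t-\Phi^{\GFF}_{t\vee\epsilon^2}$ correction in \eqref{e:SDE-eps} is handled separately). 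Convergence as $\epsilon\to 0$ then follows by subtracting the two fixed-point equations: $\Phi^{\Delta,\epsilon}_t - \Phi^\Delta_t$ is controlled by (i) the difference of the nonlinearities at equal argument, which tends to $0$ uniformly by \eqref{e:nablaVteps-conv} of Proposition~\ref{prop:Vt}(iii) after applying $e^{\Delta_x s}$ and the interpolation to get $C^r$, plus (ii) a contraction term $\lesssim \int(\cdots)\|\Phi^{\Delta,\epsilon}_s-\Phi^\Delta_s\|_{C^r}\,ds$ absorbed by Gr\"onwall/iteration, plus (iii) the boundary scales $s<\epsilon^2$ contribution which vanishes. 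I expect the main obstacle to be the bookkeeping of the two competing singularities at $s\to 0$ — the $s^{-\beta/8\pi}$ from the renormalized potential and the $s^{-r/2}$ cost of measuring in $C^r$ — and confirming that their sum of exponents stays below $1$ precisely under the hypothesis $r < 2-\beta/4\pi$; once the right interpolation inequality for $\|e^{\Delta_x s}g\|_{C^r}$ in terms of $\|e^{\Delta_x s}g\|_{L^\infty}$ and $\|\sqrt{s}\nabla e^{\Delta_x s}g\|_{L^\infty}$ is set up, the rest is routine contraction-mapping and analyticity arguments of the kind already used in \cite{MR4399156,MR4798104}.
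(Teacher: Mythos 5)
Your proposal is essentially the paper's argument: a contraction-mapping construction of the fixed point $\Phi^\Delta = \cF(\Phi^\Delta)$ from the estimates of Proposition~\ref{prop:Vt}, analyticity in $z$ via the analytic implicit function theorem (or Picard iterates), and a Gronwall argument using \eqref{e:nablaVteps-conv} for $\epsilon\to 0$. Two differences worth noting. First, the paper does not run the contraction in $C([0,\infty],C^r)$ with a bootstrap over short intervals; instead it works in $C([0,\infty),C^0(\R^2,\C))$ with a time-dependent exponential weight $w(t)=\exp\bigl[\int_t^\infty 2C(\beta,\rho)e^{-m^2 s}(1+s^{-\beta/8\pi})\,ds\bigr]$ chosen so that $\cF$ is a $\tfrac12$-contraction on the whole time axis at once; the $C^r$ regularity of the fixed point is then obtained a posteriori by integrating the H\"older bound \eqref{e:Vt-Holder}. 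Your route is workable, but you should be explicit that the Lipschitz estimate \eqref{e:nablaV-Lipschitz-finvol} is $L^\infty\to L^\infty$ (the $s^{-r/2}$ cost of upgrading to $C^r$ would spoil a naive contraction in the $C^r$ metric), so the contraction really lives in the $L^\infty_x$ topology and $C^r$ membership is a by-product of the range of $\cF$, exactly as in the paper. Second, your final remarks on the Polchinski/Girsanov flow identity identifying $\text{Law}(\Phi_0)=\nu^{\SG(\beta,z|\Lambda,m)}$ go beyond the statement at hand: that identification is the content of Proposition~\ref{prop:SDE-Ito} and is proved there separately via It\^o's formula, while Proposition~\ref{prop:SDE-existence} only asserts well-posedness, regularity, analyticity, and $\epsilon\to 0$ convergence.
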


In the remainder of this section (and in  Section~\ref{sec:coupling-infvol}) we will frequently
denote by $\Phi^{\Delta}_t$ and $\Phi^\Delta_{t,t'}$ the difference fields
\begin{equation}
  \Phi^\Delta_t = \Phi^\Delta_{t,\infty},\qquad
  \Phi^{\Delta}_{t,t'} %
  = -\int_t^{t'} e^{-m^2 s} e^{\Delta_x s} \nabla_\varphi V_s(\Phi_s,z,x|\Lambda,m)\, ds
\end{equation}
appearing in the formulation of the SDE \eqref{e:SDE}.

\begin{proof}
  Throughout the proof, $\Lambda\subset \R^2$ bounded and $m>0$ will be fixed, and we will thus omit them from the notation.
  Pathwise in $\Phi^\GFF$,
  we use a fixed point argument in a weighted space of trajectories which $\Phi^\Delta_t$ will take values in.
  For $u \in D_\eta \subset X$ where
  $X= C^0([0,\infty), C^0(\R^2,\C))$
  and the domain $D_\eta$ is defined below,
  define
  \begin{equation}
    F(u,z)_t = -\int_t^\infty e^{-m^2 s} e^{\Delta_x s} \nabla_\varphi V_s(\Phi^{\GFF}_s+u_s,z) \, ds,
  \end{equation}
  so that the problem is to find a fixed point of $F(\cdot,z)$ regarded as a map acting
  on $D_\eta$. %
  Let $\eta>0$ and $\rho>0$ be fixed below.
  We will impose the conditions $|z|\leq \rho$ and $\|\im u\|_{L^\infty} \leq \eta$ eventually.
  We use the following norm on $u: [0,\infty) \to C^0(\R^2, \C)$: %
  \begin{equation}
    \|u\|_X
    = \sup_{t\geq 0} w(t)^{-1} \|u_t\|_{L^\infty(\R^2)}
  \end{equation}
  with time-dependent weight $w(t)$.
  In terms of a large constant $C(\beta,\rho)>0$ given in terms of the implicit constants in Proposition~\ref{prop:Vt}, we choose the weight by solving
  \begin{equation}
    w(t)^{-1}  C(\beta,\rho) \int_t^\infty w(s)  e^{-m^2 s}(1+s^{-\beta/8\pi}) \, ds = \frac12,
  \end{equation}
  with boundary condition  $w(t) \to 1$ as $t\to\infty$.
  This equation is equivalent to 
  \begin{equation}
    w'(t) = -2C(\beta,\rho) w(t)  e^{-m^2 t}(1+t^{-\beta/8\pi}),
  \end{equation}
  and solved by
  \begin{equation}
    w(t) = \exp\qa{\int_t^\infty 2C(\beta,\rho) e^{-m^2 s} (1+s^{-\beta/8\pi})\, ds}.
  \end{equation}
  Thus this is our definition of the weight.
  The normed space %
  $X=C^0([0,\infty),C^0(\R^2,\C))$ with norm $\|u\|_X$ is a complex Banach space.

  Let $D_\eta = \{ u \in X: \|\re u\|_X \leq 1, \|\im u\|_{X} \leq \eta\}$.
  We will show that $u \mapsto F(u,z)$ maps $D_\eta$ to itself and is a contraction on $D_\eta$.
  Indeed, choosing $C(\beta,\rho)$ sufficiently large,
  \eqref{e:nablaV-bd-finvol} and \eqref{e:nablaV-imbd} imply that for $u \in D_\eta$,
  $|z|\leq \rho$ and $|\im z|\leq \eta$,

  \begin{gather}
    \|\re F(u)\|_X  \leq \sup_t\qa{ w(t)^{-1}C(\beta,\rho)\int_t^\infty w(s) e^{-m^2 s}(1+s^{-\beta/8\pi})ds}\leq \frac 12,
    \\
    \|\im F(u)\|_X  \leq \sup_t\qa{ w(t)^{-1}C(\beta,\rho)\eta \int_t^\infty w(s) e^{-m^2 s}(1+s^{-\beta/8\pi})ds} \leq \frac{\eta}{2},
  \end{gather}
  where we also used that $\Phi^\GFF_s$ is real valued.
  Similarly, \eqref{e:nablaV-Lipschitz-finvol} implies that, if $u,v \in D_\eta$,
  \begin{equation} \label{e:F-contract}
    \|F(u)-F(v)\|_X \leq \qa{  \sup_t w(t)^{-1}  C(\beta,\rho)\int_t^\infty w(s) e^{-m^2 s} (1+s^{-\beta/8\pi}) } \|u-v\|_X \leq \frac12 \|u-v\|_X.
  \end{equation}
  Thus $F(\cdot, z)$ maps $D_\eta$ to itself and is a contraction.
  By the Banach fixed point theorem, there is a fixed point $u=u(z) \in D_\eta$ for the map $u\mapsto F(u,z)$.

We next show that $u(z)$ is analytic in $z \in I$.
The idea for proving analyticity of $u$ (as an element of $X$) %
is to apply the analytic implicit function theorem to the function $(z,u)\mapsto u-F(u,z)$,
see \cite[Appendix~B]{MR0894477} for a Banach space valued version of this theorem.
To apply this theorem, we need to show two things: first of all, we need to show that for each $z\in I$, there exist open sets $U\subset I$ and $V\subset X$ such that $z\in U$, $u(z)\in V$ (where $u(z)$ is our fixed point), and $F:U\times V\to X$ is analytic. %
The second thing we need to show is that $D_u(u-F(u,z))$ evaluated at $(z,u(z))$ (where $u(z)$ is our fixed point)
is a linear isomorphism from $X$ to $X$. %
This follows from \eqref{e:F-contract} which implies that $\|D_uF(u,z)\| \leq \frac12$ so that $D_u(u-F(u,z)) = \id_X - D_uF(u,z)$ is invertible.

For analyticity, we note that by choosing $U\subset I$  small enough, and choosing $V$ to be $\bigcup_{w\in U}B_{X}(u(w),\delta)$ with $\delta$ small enough (to ensure that the imaginary part of $u$ is small enough), one can argue as in the proof of item (iv) of Proposition~\ref{prop:Vt} to see that $(z,u)\mapsto F(u,z)$ is analytic in $U\times V$.

  Now that we know $u$ exists,
  the H\"older continuity estimates follow by integrating the estimates for the renormalized potential.
  More precisely, we show that $F_0$ actually maps into $C^\eta$ and that $F_t$ maps into $C^k$ for any $k$ if $t>0$.
  To see that $F_0$ maps into $C^r(\R^2)$, we note that
  \eqref{e:nablaV-bd-finvol} implies
  \begin{equation} \label{e:Vt-Holder}
    \|e^{\Delta_x s} \nabla_\varphi V_t(\varphi,z)\|_{C^r(\R^2)}
    \leq C_r(\beta,z) (1+t^{-r/2})(1+t^{-\beta/2}).
  \end{equation}
  For example, if $r \in (0,1)$, %
  \begin{align}
    &|e^{\Delta_x s} \nabla_\varphi V_t(\varphi,z,x)- e^{\Delta_x s} \nabla_\varphi V_t(\varphi,z,y)|
      \nnb
    &\leq \min\hB{|x-y| \|\nabla_xe^{\Delta_x s} \nabla_\varphi V_t(\varphi,z)\|_{L_x^\infty}, 2\|e^{\Delta_x s} \nabla_\varphi V_t(\varphi,z)\|_{L_x^\infty}}
      \nnb
    &\lesssim  \min\hB{\frac{|x-y|}{\sqrt{t}}, 1} \leq \frac{|x-y|^r}{t^{r/2}}
      ,
  \end{align}
  and a similar argument would show H\"older estimates with $r>1$ (but we do not actually need these).
  Therefore, for $r < 2-\beta/4\pi$,
  \begin{align}
    \|F_0(u,z)\|_{C^r}
    &\leq \int_0^\infty e^{-m^2 s} \|e^{\Delta_x s} \nabla_\varphi V_s(\Phi^\GFF_s+u_s,z)\|_{C^r} \, ds
      \nnb
    &\lesssim \int_0^\infty e^{-m^2 s} (1+s^{-r/2}) (1+s^{-\beta/8\pi}) \, ds
      \lesssim 1.
  \end{align}
  That $F_t$ maps into $C^k$ is essentially immediate from the fact that $e^{\Delta_x t}\nabla_\varphi V_t$ takes values in $C^k$.

  Analyticity of the solutions follows by observing that the map
  \begin{equation} \label{e:F0-Ft-analytic}
    F_0: D_\eta \times I \to C^r(\R^2) %
  \end{equation}
  is actually analytic for any $t>0$.
  This is again seen as in the proof of item (iv) of Proposition~\ref{prop:Vt}.
  Since $u(z)= F(u(z),z)$ and $z \mapsto u(z)$ is analytic taking values in $D_\eta$
  this implies the analyticity of $z\mapsto u_0(z)$ taking values in $C^r(\R^2)$.
  
  For convergence as $\epsilon\to 0$, first observe that 
  it is clear from the above proof that all estimates apply also if $\epsilon >0$ and are uniform in $\epsilon >0$.
  To show convergence it suffices to show that $(\Phi_t)_{t \geq \delta}$ converges for any fixed $\delta>0$.
  Indeed, as $\delta\to 0$,
 \begin{equation}
   \sup_{t\leq \delta} \|\Phi_0^\Delta-\Phi_t^\Delta\|_{C^r} \lesssim \int_0^\delta s^{-r/2-\beta/8\pi} \, ds \lesssim \delta^{1-\beta/8\pi-r/2} \to 0
 \end{equation}
 uniformly in $\epsilon$. Then, by  \eqref{e:nablaVteps-conv} in Proposition~\ref{prop:Vt}, for any $\varphi \in B_I$,
  \begin{equation} \label{e:nablaVteps-conv-bis}
    \|(\sqrt{t}\nabla_x)^ke^{\Delta_x t}\nabla_\varphi V_t(\varphi,z|\Lambda,m,\epsilon)
    -
    (\sqrt{t}\nabla_x)^ke^{\Delta_x t}\nabla_\varphi V_t(\varphi,z|\Lambda,m)\|_{L_x^\infty} \to 0.
  \end{equation}
  Thus we are essentially in the situation of an SDE with smooth coefficients and
  \begin{align}
    f_k(t,\epsilon)
    &= \|(\sqrt{t}\nabla_x)^k(\Phi_t^{\Delta(\beta,z|\Lambda,m,\epsilon)} -     \Phi_t^{\Delta(\beta,z|\Lambda,m)})\|_{L_x^\infty}
      \nnb
    &\lesssim r_k(t,\epsilon) + \int_t^\infty e^{-m^2 s} (1+s^{-\beta/8\pi})\|\Phi_t^{\Delta(\beta,z|\Lambda,m,\epsilon)} -     \Phi_t^{\Delta(\beta,z|\Lambda,m)}\|_{L_x^\infty} \, ds
      \nnb
      &\lesssim r_k(t,\epsilon) + \int_t^\infty e^{-m^2 s} (1+s^{-\beta/8\pi}) f_0(s,\epsilon) \, ds
  \end{align}
  where
  \begin{equation}
    r_k(t,\epsilon) = \int_t^\infty \|(\sqrt{t}\nabla_x)^k[\nabla_\varphi V_t(\Phi^{\GFF}_s+\Phi^{\Delta}_s|\Lambda,m)-\nabla_\varphi V_t(\Phi^{\GFF}_s+\Phi^{\Delta}_s|\Lambda,m,\epsilon)]\|_{L_x^\infty} \, ds
  \end{equation} 
  tends to $0$ for every $t > 0$ by \eqref{e:nablaVteps-conv-bis} and the dominated convergence theorem. 
  By a Gronwall argument, we conclude that $f_0(t,\epsilon) \to 0$ for every $t>0$ and using that then
  also $f_k(t,\epsilon) \to 0$ for every $k\in \N$ and $t>0$.  
\end{proof}

\begin{proposition} \label{prop:SDE-Ito}
  Under the assumptions of Proposition~\ref{prop:SDE-existence} and in addition $z\in \R$, for $\epsilon >0$,
  \begin{equation}
    \Phi_{\epsilon^2}^{\SG(\beta,z|\Lambda,m,\epsilon)} \sim \avg{\cdot}_{\SG(\beta,z|\Lambda,m,\epsilon)},
  \end{equation}
  and, as a consequence, this statement also applies in the continuum limit $\epsilon \to 0$, i.e.,
  \begin{equation}
    \Phi_{0}^{\SG(\beta,z|\Lambda,m)} \sim \avg{\cdot}_{\SG(\beta,z|\Lambda,m)}.
  \end{equation}
\end{proposition}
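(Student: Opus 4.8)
The plan is to prove the $\epsilon>0$ statement first — where $V_{\epsilon^2}$ is a genuine bounded functional and no $\beta\ge 4\pi$ subtlety arises — by exhibiting an explicit bounded martingale along the flow \eqref{e:SDE-eps}, and then to obtain the $\epsilon\to 0$ statement by combining this with the uniform convergence in Proposition~\ref{prop:SDE-existence}. Fix $\epsilon,m>0$, a bounded domain $\Lambda$, and $z\in\R$, and write $\Phi_t=\Phi_t^{\SG(\beta,z|\Lambda,m,\epsilon)}$ for the solution of \eqref{e:SDE-eps}; by Proposition~\ref{prop:SDE-existence} this process is real valued (since $z\in\R$), $\Phi_t$ is smooth for $t>0$, and $\Phi_\infty=0$. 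For a bounded continuous cylinder functional $g$ (e.g.\ $g=e^{i(\varphi,f)}$, $f\in\cS(\R^2)$, which suffices to determine the law) set
\begin{equation*}
  H_t(\varphi)=\E_{C_t-C_{\epsilon^2}}\bigl[e^{-V_{\epsilon^2}(\varphi+\zeta,z|\Lambda,\epsilon)}\,g(\varphi+\zeta)\bigr],
  \qquad
  \Xi_t:=e^{V_t(\Phi_t,z|\Lambda,m,\epsilon)}\,H_t(\Phi_t).
\end{equation*}

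\emph{Martingale cancellation.} Since $H_t=e^{\frac12\langle (C_t-C_{\epsilon^2}),\nabla^2\rangle}\bigl(e^{-V_{\epsilon^2}}g\bigr)$ is a Gaussian convolution, it solves the heat equation $\partial_t H_t=\tfrac12\langle\dot C_t,\nabla^2 H_t\rangle$ with $\dot C_t=e^{-m^2t}e^{\Delta_x t}$; the special case $g\equiv 1$ (where $H_t=e^{-V_t}$ by \eqref{e:Vtdef}) gives the Polchinski equation $\partial_t V_t=\tfrac12\langle\dot C_t,\nabla^2 V_t\rangle-\tfrac12\langle\nabla_\varphi V_t,\dot C_t\nabla_\varphi V_t\rangle$. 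Applying the It\^o formula for Hilbert-space valued processes (see \cite{MR3236753}) to $t\mapsto\Xi_t$ along \eqref{e:SDE-eps} — whose drift is $-\dot C_t\nabla_\varphi V_t$ and whose martingale part has tensor quadratic variation $\dot C_t\,dt$, the pairings with the smoothing trace-class operator $\dot C_t$ being controlled by the estimates of Proposition~\ref{prop:Vt} — one finds that the drift of $\Xi_t$ equals
\begin{equation*}
  e^{V_t(\Phi_t)}\Bigl[\partial_t V_t\,H_t+\partial_t H_t+\tfrac12\langle\nabla_\varphi V_t,\dot C_t\nabla_\varphi V_t\rangle H_t-\tfrac12\langle\dot C_t,\nabla^2 V_t\rangle H_t-\tfrac12\langle\dot C_t,\nabla^2 H_t\rangle\Bigr],
\end{equation*}
the cross terms $\langle\nabla H_t,\dot C_t\nabla_\varphi V_t\rangle$ cancelling by symmetry of $\dot C_t$, and this expression vanishes identically by the Polchinski and heat equations. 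Hence $(\Xi_t)_{t\in[\epsilon^2,\infty]}$ is a local martingale in the (time-reversed) filtration generated by the increments of $\Phi^{\GFF(m)}$; since $z$ is real, $|\Xi_t|\le \|g\|_{L^\infty}\,e^{V_t(\Phi_t)}\E_{C_t-C_{\epsilon^2}}[e^{-V_{\epsilon^2}(\Phi_t+\zeta)}]=\|g\|_{L^\infty}$, so it is a bounded, hence uniformly integrable, martingale.

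\emph{Endpoints and the continuum limit.} At $t=\infty$ we have $\Phi_\infty=0$, so (since $\Phi^{\GFF(m)}_{\epsilon^2}$ has covariance $C_\infty-C_{\epsilon^2}$) $H_\infty(0)=Z(\beta,z|\Lambda,m,\epsilon)\,\avg{g}_{\SG(\beta,z|\Lambda,m,\epsilon)}$ and $e^{V_\infty(0)}=Z(\beta,z|\Lambda,m,\epsilon)^{-1}$, whence $\Xi_\infty=\avg{g}_{\SG(\beta,z|\Lambda,m,\epsilon)}$ is deterministic; at $t=\epsilon^2$ the covariance $C_{\epsilon^2}-C_{\epsilon^2}$ is zero, so $H_{\epsilon^2}(\varphi)=e^{-V_{\epsilon^2}(\varphi)}g(\varphi)$ and $\Xi_{\epsilon^2}=g(\Phi_{\epsilon^2})$. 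Taking expectations in $\E[\Xi_{\epsilon^2}]=\E[\Xi_\infty]$ gives $\E[g(\Phi_{\epsilon^2})]=\avg{g}_{\SG(\beta,z|\Lambda,m,\epsilon)}$ for all such $g$, i.e.\ $\Phi_{\epsilon^2}^{\SG(\beta,z|\Lambda,m,\epsilon)}\sim\nu^{\SG(\beta,z|\Lambda,m,\epsilon)}$. For the continuum limit, \eqref{e:SDE-eps} shows that the scales below $\epsilon^2$ contribute only the driving free field, so $\Phi_0^{\SG(\beta,z|\Lambda,m,\epsilon)}-\Phi_{\epsilon^2}^{\SG(\beta,z|\Lambda,m,\epsilon)}=\Phi^{\GFF(m)}_0-\Phi^{\GFF(m)}_{\epsilon^2}\to 0$ in $C^r$ in probability (its covariance $C_{\epsilon^2}\to 0$); combined with $\Phi_0^{\SG(\beta,z|\Lambda,m,\epsilon)}\to\Phi_0^{\SG(\beta,z|\Lambda,m)}$ from Proposition~\ref{prop:SDE-existence} this gives convergence of $\Phi_{\epsilon^2}^{\SG(\beta,z|\Lambda,m,\epsilon)}$ to $\Phi_0^{\SG(\beta,z|\Lambda,m)}$ in distribution, while $\nu^{\SG(\beta,z|\Lambda,m,\epsilon)}\to\nu^{\SG(\beta,z|\Lambda,m)}$ weakly by the construction recalled in Section~\ref{sec:mixing-modconst}; uniqueness of weak limits identifies the law of $\Phi_0^{\SG(\beta,z|\Lambda,m)}$ with $\nu^{\SG(\beta,z|\Lambda,m)}$.

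The main obstacle is the rigorous execution of the infinite-dimensional It\^o computation: one must verify that $V_t$ and $H_t$ are twice Fr\'echet differentiable with Hessians that pair with the trace-class $\dot C_t$ (true for $\epsilon>0$, these being logarithms and Gaussian convolutions of smooth bounded functionals, with the relevant norms controlled by Proposition~\ref{prop:Vt}), set up the time reversal so that the It\^o formula of \cite{MR3236753} applies in this backward filtration, and control the behaviour as $t\to\infty$ (joint continuity of $(t,\varphi)\mapsto V_t,H_t$ and convergence of the scale integrals, using the weight from the proof of Proposition~\ref{prop:SDE-existence}) and the continuity at $t=\epsilon^2$. A more pedestrian alternative is to discretize the scale interval into finitely many blocks, where the one-step identity $\int e^{-V_{t_{k-1}}(\varphi')}\,\mathcal N(\varphi,C_{t_k}-C_{t_{k-1}})(d\varphi')=e^{-V_{t_k}(\varphi)}$ is exactly \eqref{e:Vtdef} and the claim follows from the tower property, and then to pass to the continuum limit; but identifying that limit with the SDE solution of \eqref{e:SDE-eps} costs a comparable effort, so the martingale argument is preferable.
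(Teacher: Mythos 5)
Your argument coincides in substance with the paper's. The process $\Xi_t=e^{V_t(\Phi_t)}H_t(\Phi_t)$ is exactly $F_{\epsilon^2,t}(\Phi_t)$ in the paper's notation \eqref{e:F-semigroup}; you cancel the drift using the Polchinski equation for $V_t$ together with the heat equation for $H_t$, exactly as the paper verifies $(\partial_t-\LL_t)F_{s,t}=0$ in \eqref{e:F-semigroup2}; and you read off the endpoint values at $t=\epsilon^2$ (where $\Xi_{\epsilon^2}=g(\Phi_{\epsilon^2})$) and $t=\infty$ (where $\Xi_\infty=\avg{g}_{\SG(\beta,z|\Lambda,m,\epsilon)}$). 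The only difference from the paper is cosmetic: it reparametrizes time by $a(t)=1/t$ to cast the problem as a standard forward SDE before invoking the infinite-dimensional It\^o formula, whereas you state the same computation directly as a martingale property in the backward filtration. Your remark that $|\Xi_t|\le\|g\|_{L^\infty}$ for $z\in\R$ (giving uniform integrability and hence that the local martingale closes at $t=\infty$) is a clean way to dispense with the integrability issue, which the paper handles implicitly.

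One small slip in the continuum-limit step: $\Phi_0^{\SG(\beta,z|\Lambda,m,\epsilon)}-\Phi_{\epsilon^2}^{\SG(\beta,z|\Lambda,m,\epsilon)}=\Phi^{\GFF(m)}_{0,\epsilon^2}$ does \emph{not} tend to $0$ in $C^r(\R^2)$ for $r>0$: in two dimensions this small-scale Gaussian field has pointwise variance $\sim\frac{1}{4\pi}\log\epsilon^{-2}\to\infty$, so it does not vanish in any space of positive regularity. What is true, and what you need, is that it tends to $0$ as a random element of $\cS'(\R^2)$ (or in $C^{-s}$ for any $s>0$), e.g.\ because $\var\bigl((\Phi^{\GFF(m)}_{0,\epsilon^2},f)\bigr)\to 0$ for every test function $f$. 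Similarly, what Proposition~\ref{prop:SDE-existence} gives is convergence of the \emph{difference} fields $\Phi^\Delta$ in $C^r$; since the GFF part of $\Phi^{\SG(\beta,z|\Lambda,m,\epsilon)}_0$ and $\Phi^{\SG(\beta,z|\Lambda,m)}_0$ is the same realization $\Phi^{\GFF(m)}_0$, this still yields $\Phi^{\SG(\beta,z|\Lambda,m,\epsilon)}_0\to\Phi^{\SG(\beta,z|\Lambda,m)}_0$ as $\cS'$-valued random variables. With this repair the identification of weak limits goes through as you wrote it.
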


\begin{proof}
  It suffices to show the statement with regularization $\epsilon>0$.
  As $\epsilon\downarrow 0$, both sides converge to the respective sides in the claimed statement. Indeed, for $\avg{\cdot}_{\SG(\beta,z|\Lambda,m)}$ this is our
  definition of the sine-Gordon measure, see Section~\ref{sec:mixing},
  and for the SDE it follows from Proposition~\ref{prop:SDE-existence}.

  The proof  of this is an application of It\^o's formula,
  as explained in \cite[Section~4]{MR4798104} for example in a finite-dimensional setting (see also \cite{MR4399156}).
  For the reader's orientation, we remark that
  we use the smooth continuum regularization \eqref{e:SGdef} of the sine-Gordon measure
  that is not finite dimensional (but has other advantages), but that the difference is technical
  (and we could have used a finite-dimensional  lattice regularization instead).

  We now recall this argument.
  First, for $\epsilon>0$, $m>0$, $L<\infty$,
  it follows from its definition \eqref{e:Vtdef} that
  the renormalized potential 
  $V_t(\varphi) = V_t(\varphi,z|\Lambda,m,\epsilon)$ satisfies the Polchinski equation
\begin{equation} \label{e:Polchinski}
  \ddp{V_t}{t}(\varphi) = \frac12 \Delta_{\dot C_t} V_t(\varphi) - \frac12 (\nabla V_t(\varphi), \dot C_t \nabla V_t(\varphi))
\end{equation}
where
\begin{equation}
  \Delta_{\dot C_t} V_t(\varphi) = \int \He V_t(\varphi,x,y) \dot C_t(x-y)\, dx\, dy,
\end{equation}
and the gradient and Hessian of $V_t(\varphi,z|\Lambda,m,\epsilon)$ can be written as
\begin{align} \label{e:nablaVHessVkernels}
    (\nabla V_t(\varphi), f) &= \int_\Lambda \nabla V_t(\varphi,x) f(x) \, dx,
    \nnb
    (f,\He V_t(\varphi)f) &= \int_{\Lambda\times\Lambda} \He V_t(\varphi,x,y) f(x)f(y)\, dx\, dy.
  \end{align}
  Since we are considering the regularized version, the kernels $\nabla_\varphi V_t(\varphi,\cdot)$ and $\He_\varphi V_t(\varphi,\cdot,\cdot)$
  are bounded and supported in $\Lambda$ respectively $\Lambda \times \Lambda$; see for example \cite[Corollary 4.3]{MR4767492}.
  Let $F$ be a cylinder functional with $L^2(\R^2)$ gradient and Hessian analogously to the above satisfying
  \begin{equation}   \label{e:nablaFHessFkernels}
    DF(\varphi;f) = (\nabla F(\varphi), f), \qquad D^2 F(\varphi; f,f)= (f,\He F(\varphi)f).
  \end{equation}
  For example, if $F(\varphi) = e^{i(\varphi,g)}$ with $g\in C_c^\infty(\R^2)$ then
  $\nabla F(\varphi) = i e^{i(\varphi,g)} g \in L^\infty(\Lambda)$.
  Then $F_{s,t}$ defined by
  \begin{equation} \label{e:F-semigroup}
    F_{s,t}(\varphi) = e^{+V_t(\varphi)}\E_{C_{t}-C_s}\qb{e^{-V_s(\varphi+\zeta)} F(\varphi+\zeta)}.
  \end{equation}
  satisfies 
  \begin{equation} \label{e:F-semigroup2}
    (\ddp{}{t}-\LL_t) F_{s,t}(\varphi) = 0
  \end{equation}
  where
  \begin{equation}
    \LL_t F(\varphi) = \frac12 \Delta_{\dot C_t} F(\varphi) - (\dot C_t \nabla V_t(\varphi), \nabla F_t(\varphi)).
  \end{equation}
  This is verified exactly as in \cite[Proposition~3.5]{MR4798104}.

  To apply the It\^o formula in its usual formulation, one may
  reverse the time direction so that the SDE (which is adapted with respect to the backward filtration generated by $(\Phi^\GFF_s)_{s\geq t})$
  becomes a standard forward SDE, see \cite[Section~4.1]{MR4798104}.
  To be concrete, we can use the reparametrization $a(t)=1/t$ and set $\tilde\Phi_t = \Phi_{a(t)}$ %
  and $\tilde W_t=W_{a(t)}$. Then this forward SDE for $\tilde \Phi$ is:
  \begin{equation}
    \tilde \Phi_t = \int_0^t \dot a(s) \dot C_{a(s)}    \nabla V_{a(s)}(\tilde\Phi_s)\, ds
    + \int_0^t \sqrt{\dot C_{a(s)}} d\tilde W_{s}.
  \end{equation}
  As explained in \cite[Section~4.1]{MR4798104}, the It\^o formula  then implies that for suitable $F$:
  \begin{equation}
    \E_{\tilde \Phi_0=0}[F(\tilde\Phi_t)]
    \propto \E_{C_\infty-C_{a(t)}} \qa{e^{-V_{a(t)}(\Phi)} F(\Phi)},
  \end{equation}
  i.e.,
  \begin{equation}
    \E_{\Phi_\infty=0}[F(\Phi_t)]
    \propto \E_{C_\infty-C_{t}} \qa{e^{-V_{t}(\Phi)} F(\Phi)}.
  \end{equation}
  The same argument applies in our setting using the It\^o formula for Hilbert space valued Wiener processes
  from \cite[Theorem 4.32]{MR3236753}.
  To this end, the cylindrical Wiener process $W$ on $L^2(\R^2)$ used in \eqref{e:GFF-Wiener}
  can be realized as an $H$-valued Wiener process with $H$ a suitable larger Hilbert space
  into which $L^2(\R^2)$ is compactly embedded, see \cite[Section~4]{MR3236753}.
  The drift $\dot a(s)\dot C_{a(s)} \nabla V_{a(s)}(\tilde\Phi_s)$ is bounded in $L^2(\R^2)$
  (and thus $H$-valued).
  Provided that the function $F$ and its first two derivatives are uniformly continuous functions on $H$,
  the assumptions of \cite[Theorem~4.32]{MR3236753} are satisfied.
  For our purposes it suffices to consider cylinder functionals $F$ for which this is clear and
  the $H$-derivatives of $F$ in the direction $f \in L^2(\R^2)$ can be written
  as in \eqref{e:nablaFHessFkernels}.
  Using that $\tilde W$ has quadratic variation $-\dot a(s) \dot C_{a(s)}$ (note that $\dot a(s) = -1/s^2<0$),
  the It\^o formula then reads
  \begin{multline}
    F(a(r),\tilde\Phi_r)-F(a(0),\tilde\Phi_0) =
    \int_0^r (\nabla F(a(s),\tilde \Phi_s), \sqrt{\dot C_{a(s)}} d\tilde W_s)
    \\
    +
    \int_0^r \qa{\dot a(s) \ddp{F}{a(s)}(a(s),\tilde\Phi_s) - \frac12 \dot a(s) \Delta_{\dot C_{a(s)}} F(a(s),\tilde\Phi_s) + (\dot a(s)\dot C_{a(s)} \nabla V_{a(s)}(\tilde\Phi_s), \nabla F(a(s),\tilde\Phi_s))} \, ds.
  \end{multline}
  Therefore, if $F(t,\varphi) = F_{s,t}(\varphi)$ is as in \eqref{e:F-semigroup} then by \eqref{e:F-semigroup2} the last line vanishes and
  hence
  \begin{equation}
    \E\qb{F(a(r),\tilde\Phi_r)} = F(a(0),\tilde\Phi_0).
  \end{equation}
  In the original parametrization with $\Phi_\infty =\tilde\Phi_0=0$ and $a(r)=\epsilon^2$,
  \begin{align}
    \E_{\Phi_\infty=0}\qb{F(\Phi_{\epsilon^2})} 
  &= \E_{\Phi_\infty=0}\qb{F_{\epsilon^2,\epsilon^2}(\Phi_{\epsilon^2})}\nnb
   &= F_{\epsilon^2,\infty}(\Phi_\infty) = e^{+V_\infty(0)} \E_{C_\infty-C_{\epsilon^2}}\qa{ e^{-V_{\epsilon^2}(\zeta)} F(\zeta)}.
  \end{align}
  Since the functionals $F(\Phi)=e^{i(\Phi,f)}$ with $f\in C_c^\infty(\R^2)$ characterize the distribution
  this completes the first part of the statement in which $\epsilon>0$.
  Taking $\epsilon\to 0$ as discussed at the beginning of the proof completes the proof.
\end{proof}

\subsection{Proof of Proposition~\ref{prop:finvol-coupling-bis}}

The main properties and consequences of the construction are summarized in
Proposition~\ref{prop:finvol-coupling-bis} (which is a restatement of Proposition~\ref{prop:finvol-coupling})
which we now prove.

\begin{proof}[Proof of Proposition~\ref{prop:finvol-coupling-bis}]
  The probability space is the one from Section~\ref{sec:coupling} on which the cylindrical Brownian motion is defined.
  The neighborhood $I$ and the function $\eta(\rho)$ are as in Proposition~\ref{prop:Vt} and Proposition~\ref{prop:SDE-existence}.
  We recall that $\Phi^\GFF$ denotes the decomposed free field from Section~\ref{sec:Gauss-decomp}
  and that $\Phi^\SG$ is the solution of the SDE \eqref{e:SDE} as in Proposition~\ref{prop:SDE-existence},
  and set
  \begin{equation}
    Z = \Phi^{\GFF}_{0,t_0}, \qquad \tilde \varphi =\Phi^{\Delta}_{0,t_0} + \Phi^{\SG}_{t_0}.
  \end{equation}
  Then, by definition,
  \begin{equation}
    \Phi^{\SG}_0
    = \Phi^{\GFF}_{0,t_0} + \Phi^{\Delta}_{0,\infty} + \Phi^{\GFF}_{t_0,\infty}
    = Z+ \tilde \varphi,
  \end{equation}
  and it follows from  Proposition~\ref{prop:SDE-Ito}  that $Z+\tilde \varphi$ is distributed according to $\nu^{\SG(\beta,z|\Lambda,m)}$ for $z\in \R$,
  proving item (i).
  Clearly,  $Z$ is a Gaussian log-correlated field
  and the Besov--H\"older regularity of $Z$ follows from Proposition~\ref{prop:GFF-Besov},
  proving item (ii).
  
  For item (iii), from \eqref{e:SDE} and the estimates of Proposition~\ref{prop:Vt} we have that, for $\epsilon \geq 0$ (with $0$ included),
  $0 < r < 2-\beta/4\pi$,
  \begin{align}
    \|\Phi^\Delta_{\epsilon^2}\|_{C^r_x}
    &\leq \int_{\epsilon^2}^\infty e^{-m^2 s} \|e^{\Delta_x s} \nabla_\varphi V_s(\Phi_s,z,x|\Lambda,m,\epsilon)\|_{C^r_x} \, ds
      \nnb
    &\leq C_r(\beta,z,\Lambda) \int_0^\infty e^{-m^2 s} (1+s^{-\beta/8\pi-r/2})\, ds \leq C_r(\beta,z,\Lambda,m) <\infty
  \end{align}
  where the integral is finite if $m>0$.
  In particular, for $|\Lambda|<\infty$ and $m>0$, it follows that $\Phi^\Delta_{0}$ is deterministically bounded in $C^{r}_x$.
  If the $L_x^\infty$ norm omitted in we obtain an $s^{-r/2}$ improvement for $s>1$ by  \eqref{e:nablaV-bd-finvol} and thus
  the following bound which is uniform in $m>0$:
  \begin{align}
      [\Phi^\Delta_{\epsilon^2}]_{C^r_x}
    &\leq C_r(\beta,z,\Lambda) \int_{\epsilon^2}^\infty [e^{\Delta_x s} \nabla_\varphi V_s(\Phi_s,z,x|\Lambda,m,\epsilon)]_{C^r_x} \, ds
        \nnb
      &\lesssim C_r(\beta,z,\Lambda) \qa{ \int_0^1 s^{-\beta/8\pi-r/2} \, ds + \int_1^\infty s^{-1-r/2} \, ds} \leq C_r'(\beta,z,\Lambda) < \infty,
  \end{align}
  which is again deterministic.   
  Since $\Phi^{\GFF}_{t_0}$ is a smooth Gaussian field, similar estimates hold locally for $\Phi^{\GFF}_{t_0}$ in the sense of moments by
  standard Gaussian estimates, see Lemma~\ref{le:gaussianest}. Thus
  \begin{equation}
    Z = \Phi^{\GFF(m)}_{0,t_0}, \qquad \tilde \varphi = \Phi^{\Delta(\beta,z|\Lambda,m)}_0 + \Phi_{t_0}^{\GFF(m)}
  \end{equation}
  gives the desired decomposition.
    
  For item (iv), the analyticity is a direct consequence of Proposition~\ref{prop:SDE-existence}.

  Finally, for item (v),
  it suffices to remark that all statements above also hold for the regularized version \eqref{e:SDE-eps}
  whose distribution is $\nu^{\GFF+\SG(\beta,z|\Lambda,m,\epsilon)}$.
\end{proof}

We remark that, since $\Phi^{\GFF}$ is defined with mass $m>0$,
it is insignificant that we defined $Z$ in terms of the field $\Phi^{\GFF}_{0,t_0}$ instead of
the full free field $\Phi^{\GFF}_{0,\infty}$.
This would be somewhat simpler since the moment bound on $\tilde\varphi$ then
becomes deterministic.
We used this division in preparation for the massless infinite-volume limit in Section~\ref{sec:coupling-infvol}
which can then be done using the same definitions.

\subsection{Discussion: analytic continuation and complex sine-Gordon measure}
\label{sec:regexp}

The results of this last subsection are not directly applied in the remainder of the paper, but we include it to clarify
how the analytic continuation works.

For all $z\in I$, the solution defined by Proposition~\ref{prop:SDE-existence} is a random field
on the probability space on which the cylindrical Wiener process in \eqref{e:GFF-Wiener} is defined.
For $z \not\in\R$, this random field takes complex values.
In this section,
we will only need the regularized version with  $\epsilon >0$ (even though the limiting version is well defined).
We denote the corresponding expectation by
\begin{equation} \label{e:tildeSG-def}
  \avg{F(\varphi)}_{\widetilde \SG(\beta,z|\Lambda,m,\epsilon)} = \E\qb{F(\Phi_{\epsilon^2}^{\SG(\beta,z|\Lambda,m,\epsilon)})},
\end{equation}
where $(\Phi^{\SG(\beta,z|\Lambda,m,\epsilon)}_t)_{t}$ is the process defined using the renormalized potential $V_t(\varphi |\Lambda,m,\epsilon)$.
In particular, the expectation $\avg{\cdot}_{\widetilde \SG(\beta,z|\Lambda,m,\epsilon)}$ is the expectation  of a  probability measure.

On the other hand, on can directly extend the definition \eqref{e:SGdef-bis} of the sine-Gordon measure to $z \not\in \R$  with sufficiently small imaginary part (depending on $\Lambda$)
--- but this definition is at best a complex measure on real-valued fields.
Indeed, if $\epsilon>0$ then
\begin{equation}
  V_{\epsilon^2}(\varphi,z|\Lambda,\epsilon) = z V_{\epsilon^2}(\varphi,1|\Lambda,\epsilon), \qquad V_{\epsilon^2}(\varphi,1|\Lambda,\epsilon)= 2\int_{\Lambda} \epsilon^{-\beta/4\pi}\cos(\sqrt{\beta} \varphi)\, dx
\end{equation}
is deterministically bounded.
Hence, for $z\in \C$ with sufficiently small imaginary part (depending on $\Lambda\subset \R^2$ with $|\Lambda|<\infty$ but not on $\epsilon>0$), the definition
\begin{equation} \label{e:SG-def-eps-ratio}
  \avg{F(\varphi)}_{\SG(\beta,z|\Lambda,m,\epsilon)} =
  \frac{\E\qB{e^{zV_{\epsilon^2}(\Phi^{\GFF(m)}_{\epsilon^2,\infty},1|\Lambda,\epsilon)} F(\Phi^{\GFF(m)}_{\epsilon^2,\infty})}}
  {\E\qB{e^{z V_{\epsilon^2}(\Phi^{\GFF(m)}_{\epsilon^2,\infty},1|\Lambda,\epsilon)}}}
\end{equation}
makes sense for all bounded functions $F$ (defined on real-valued fields)
as the expectation of a complex measure.
Indeed, the
numerator and the denominator (partition function)
are entire in $z\in \C$. The partition function is further nonzero for $z$
in a complex neighborhood $I$ containing $\R$ which is independent of $\epsilon>0$, see \cite[Theorem~5.1]{MR4767492},
so that the ratio is well defined.

By Proposition~\ref{prop:SDE-Ito}, the two expectations \eqref{e:tildeSG-def} and \eqref{e:SG-def-eps-ratio}
agree as probability measures when $z\in \R$, but for $z\not\in\R$ the first is a probability measure
while the second a complex measure (at least when $\epsilon>0$).
Nonetheless, when applied to analytic functionals $F$,
the next proposition shows that the two expectations also coincide when $z\not\in\R$.

\begin{proposition} \label{prop:SG-tildeSG-identification}
  Under the assumptions of Proposition~\ref{prop:SDE-existence},
  for all $\epsilon>0$, $m>0$, $z\in I$,
  and $F: B_\eta \to \C$ is bounded and analytic,
  \begin{equation}
    \avg{F}_{\widetilde\SG(\beta,z|\Lambda,m,\epsilon)}
    =
    \avg{F}_{\SG(\beta,z|\Lambda,m,\epsilon)},
  \end{equation}
  and both sides are analytic in $z\in I$.
\end{proposition}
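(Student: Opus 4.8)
The plan is to prove the identity first for the simplest analytic functionals, namely the exponentials $F(\varphi) = e^{i(\varphi,g)}$ with $g \in C_c^\infty(\R^2)$ (more precisely, $g$ real and then complex with small norm, to stay inside $B_\eta$), and then to bootstrap to general bounded analytic $F$ via an analytic continuation argument in $z$. Both sides are functions of $z \in I$: the left-hand side $z \mapsto \avg{F}_{\widetilde\SG(\beta,z|\Lambda,m,\epsilon)} = \E[F(\Phi^{\SG(\beta,z|\Lambda,m,\epsilon)}_{\epsilon^2})]$ is analytic in $z \in I$ because, by Proposition~\ref{prop:SDE-existence}, the map $z \mapsto \Phi^{\Delta(\beta,z|\Lambda,m,\epsilon)}$ is analytic into $C^r(\R^2)$ uniformly in $\epsilon$, the driving free field term is independent of $z$, $F$ is analytic and bounded on $B_\eta$ (so that $z\mapsto F(\Phi^{\SG}_{\epsilon^2})$ is analytic and bounded pathwise), and one may differentiate under the expectation using dominated convergence (Morera plus Fubini). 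The right-hand side $z \mapsto \avg{F}_{\SG(\beta,z|\Lambda,m,\epsilon)}$ is a ratio of entire functions with nonvanishing denominator on $I$ by \cite[Theorem~5.1]{MR4767492}, hence analytic in $z \in I$. By the uniqueness principle for analytic functions, it therefore suffices to check the identity for $z$ in a nonempty open subset of $I$, and the natural choice is a real interval $z \in I \cap \R$.

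For real $z$, the identity is exactly Proposition~\ref{prop:SDE-Ito} (the It\^o/Polchinski argument): the SDE solution $\Phi^{\SG(\beta,z|\Lambda,m,\epsilon)}_{\epsilon^2}$ is distributed according to the regularized sine-Gordon measure $\nu^{\SG(\beta,z|\Lambda,m,\epsilon)}$, which for $\epsilon>0$ is precisely the measure defined by the ratio \eqref{e:SG-def-eps-ratio}. Thus for $z \in I \cap \R$ and any bounded measurable $F$ on real-valued fields,
\begin{equation}
  \avg{F}_{\widetilde\SG(\beta,z|\Lambda,m,\epsilon)}
  = \E\qb{F(\Phi^{\SG(\beta,z|\Lambda,m,\epsilon)}_{\epsilon^2})}
  = \avg{F}_{\SG(\beta,z|\Lambda,m,\epsilon)}.
\end{equation}
(For this we only need that $F$ is defined and bounded on real fields, which holds since $\Phi^{\SG}_{\epsilon^2}$ is real when $z$ is real, and that $F$ restricted to $B_\eta \cap \{\text{real fields}\}$ is bounded — both clear.)

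With the two sides agreeing on the real slice $I \cap \R$ of the connected open set $I$ and both being analytic on $I$, the identity theorem gives equality on all of $I$, completing the proof. The one point requiring a little care — and the main technical obstacle — is the analyticity of the left-hand side: one must justify differentiating $\E[F(\Phi^{\SG}_{\epsilon^2})]$ under the expectation sign, i.e. that $z \mapsto F(\Phi^{\SG(\beta,z|\Lambda,m,\epsilon)}_{\epsilon^2})$ is a bounded analytic $\C$-valued random variable with enough integrability. This follows from the uniform (in $\epsilon$) analyticity and the bound $|\im \Phi^{\Delta}| \le \eta$ from Proposition~\ref{prop:SDE-existence} — which keeps the argument inside $B_\eta$ where $F$ is bounded — together with Morera's theorem and Fubini's theorem (the contour integral $\oint F(\Phi^{\SG}_{\epsilon^2})\,dz$ vanishes pathwise, hence in expectation). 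The right-hand side's analyticity and the nonvanishing of the partition function are already recorded in \cite[Theorem~5.1]{MR4767492}, so no new work is needed there.
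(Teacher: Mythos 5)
Your proof is correct and follows essentially the same route as the paper: establish analyticity of both sides on $I$ (the left side via the $z$-analyticity of $\Phi^\Delta$ from Proposition~\ref{prop:SDE-existence} together with Morera and Fubini, the right side via nonvanishing of the partition function from \cite[Theorem~5.1]{MR4767492}), invoke Proposition~\ref{prop:SDE-Ito} for equality on $I\cap\R$, and conclude by uniqueness of analytic continuation. One small remark: your opening sentence about first treating exponentials $e^{i(\varphi,g)}$ and then ``bootstrapping to general $F$ via analytic continuation in $z$'' is never actually used in the argument and is not quite coherent as stated (analytic continuation in $z$ does not pass from special to general test functionals $F$), but the body of the proof correctly handles general bounded analytic $F$ directly, so this is just a misleading preamble.
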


\begin{proof}
  It suffices to show that the left-hand side is analytic in $z\in I$ for some complex neighborhood $I$ of $\R$.
  The right-hand side is analytic in $z \in I$ by the discussion preceding the statement of the proposition.
  Since, by Proposition~\ref{prop:SDE-Ito}, both sides are equal for $z\in \R$,
  the statement follows from uniqueness of the analytic continuation.

  For the analyticity of the left-hand side, note that,
for each realization of $(\Phi^\GFF_t)_t$,
  the assumption is such that $F(\Phi^\GFF_{\epsilon^2}+\Phi^\Delta_{\epsilon^2})$ is analytic in $\Phi^\Delta_{\epsilon^2} \in B_\eta \cap C^k(\R^2)$
  whereas $z \mapsto \Phi^{\Delta}_\epsilon = \Phi^{\Delta(\beta,z|\Lambda,m)}_\epsilon \in B_\eta \cap C^k(\R^2)$ is analytic in $z\in I$
  by Proposition~\ref{prop:SDE-existence}.
  It follows that
  \begin{equation} \label{e:F-extend-unique-pf}
    z\in I \mapsto \avg{F}_{\widetilde\SG(\beta,z|\Lambda,m,\epsilon)}  = \E[F(\Phi^{\GFF(m)}_{\epsilon^2}+\Phi^{\Delta(\beta,z|\Lambda,m)}_{\epsilon^2})]
  \end{equation}
  is analytic.
  Indeed, analyticity can be verified using Morera's theorem whose assumptions are verified using Fubini's theorem
  and the assumed boundedness of $F$ to exchange the expectation with the contour integral.
\end{proof}

\begin{remark}
  The above proposition identifies the expectation of analytic $F$ on complex-valued fields with integration against a complex-valued measure supported on real fields.
  The  following one-dimensional example illustrates this. %
  For a bounded function $F$ defined on an open horizontal strip containing the real axis, consider the complex random variable $\zeta+z$ (in place of the SDE)
  where $\zeta \sim \mathcal{N}(0,1/2)$ and $z$ lies in the same strip. Then
\begin{equation}
  \E[F(\zeta+z)]=\int_\R F(x+z)e^{-x^2}\frac{dx}{\sqrt{\pi}}=\int_{\R+z} F(x)e^{-(x-z)^2}\frac{dx}{\sqrt{\pi}}.
\end{equation}
In the case that $F$ is analytic, we can deform the contour $\R+z$ to $\R$, obtaining an integral of $F$ against a complex measure supported on $\R$.
\end{remark}

\section{Decomposition of massless sine-Gordon field -- Propositions~\ref{prop:coupling-infvol}--\ref{prop:coupling-infvol-convergence}}
\label{sec:coupling-infvol}

The goal of this section is to prove Propositions~\ref{prop:coupling-infvol}--\ref{prop:coupling-infvol-convergence} which we restate for convenience below.
We also recall the moment bounds assumption \eqref{e:SG-moments-bis2}, which is that there is a norm $\|\cdot\|$ on $\cS(\R^2)$ such that for any $g\in C_c^\infty(\R^2)$,
\begin{equation} \label{e:SG-moments-bis}
  \sup_{T\geq 1}
  \avg{|(\varphi,g-\hat g(0)\eta_T)|^p}_{\SG(\beta,z)} \leq C_p(\beta,z)\|g\|^p.
\end{equation}
For $\beta=4\pi$ and any $p\geq 1$, this assumption is verified in Section~\ref{sec:mixing},
see Corollary~\ref{cor:superexp-moments}.
For the statement of the proposition, also recall the definition of weighted function spaces from Section~\ref{sec:notation},
and the definition of the (infinite-volume) massless sine-Gordon measure from Section~\ref{sec:mixing}.

\begin{proposition}  \label{prop:coupling-infvol-bis}
  Let  $\beta\in (0,6\pi)$, $z\in \R$, and assume the moment bound
  \eqref{e:SG-moments-bis} holds with $p\geq 2$.
  Then there is a probability space and random variables $Z$ and $\tilde \varphi$ such that the following hold.

  \smallskip
  \noindent
  (i) The massless sine-Gordon field is decomposed as
  \begin{equation}
    Z+\tilde\varphi \sim \nu^{\SG(\beta,z)}.
  \end{equation}
  (ii) The random field $Z$ has the same properties as in Proposition~\ref{prop:finvol-coupling-bis}.
  
  \smallskip\noindent
  (iii)  For any $r<2-\beta/4\pi$ and weight of the form $\rho(x)=(1+|x|^2)^{-\sigma/2}$ for large enough $\sigma$,
  the field $\tilde\varphi$ is a random element in $C^r(\rho)$ satisfying
  \begin{equation}
    \E\qa{\|\tilde\varphi\|_{C^r(\rho)}^p} \leq C_{p,r}(\beta, z).
  \end{equation}
\end{proposition}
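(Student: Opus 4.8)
The plan is to deduce this proposition from the stronger, modulo-constants statement of Proposition~\ref{prop:coupling-infvol-convergence}, which is also proved in this section (by running the massless infinite-volume version of the SDE coupling of Section~\ref{sec:coupling-finvol} for the short scales, using $\Lambda$- and $m$-uniform renormalized potential estimates extending Proposition~\ref{prop:Vt}, and the Bosonization moment input of Section~\ref{sec:mixing} for the large scales), together with the almost Gaussian moment bound \eqref{e:SG-moments-bis} and Corollary~\ref{cor:superexp-moments}. The only gap between the two statements is the zero mode: the field produced by Proposition~\ref{prop:coupling-infvol-convergence} has the law of $\nu^{\SG(\beta,z)}$ only modulo constants, because the large scales of the finite-volume difference fields diverge as $m\to 0$ and $\Lambda\to\R^2$. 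Since $\nu^{\SG(\beta,z)}$ is nevertheless a genuine probability measure supported on $C^{-s}(\rho)$ (cf.\ \eqref{e:SG-Besov} and Corollary~\ref{cor:superexp-moments}) whose zero mode $(\varphi,\eta)$ has finite moments of all orders, this ambiguity will be removed by recentering with a random constant coupled in the right way.

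Fix $\eta\in C_c^\infty(\R^2)$ with $\hat\eta(0)=1$ and recall $P_\eta\varphi=\varphi-(\varphi,\eta)$ from \eqref{e:Peta-def}, a continuous projection on $C^{-s}(\rho)$ (and on $C^r(\rho)$) that annihilates constants. Proposition~\ref{prop:coupling-infvol-convergence} furnishes a probability space carrying $Z$ (with the properties of Proposition~\ref{prop:finvol-coupling-bis}), random elements $Y_{i,j,k}$ and $Y$ of $C^r(\rho)$, and real constants $X_{i,j,k}$ with $Z+Y_{i,j,k}\sim (P_{T_k})_\#\nu^{\SG(\beta,z|\Lambda_j,m_i)}$, $\|Y_{i,j,k}-Y+X_{i,j,k}\|_{C^r(\rho)}\to 0$ almost surely, $\limsup_{i,j,k}\E[\|Y_{i,j,k}\|_{C^r(\rho)}^p]\le C_{p,r}$, and $Z+Y$ distributed as $\nu^{\SG(\beta,z)}$ modulo constants. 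Applying $P_\eta$ (which kills the constants $X_{i,j,k}$) gives $\|P_\eta Y_{i,j,k}-P_\eta Y\|_{C^r(\rho)}\to 0$ almost surely, and since $\|P_\eta\psi\|_{C^r(\rho)}\lesssim\|\psi\|_{C^r(\rho)}$, Fatou's lemma yields $\E[\|P_\eta Y\|_{C^r(\rho)}^p]\le C_{p,r}$; moreover $Z+P_\eta Y$ and $Z+Y$ lie in the same class in $C^{-s}(\rho)/\mathrm{constants}$, so $Z+P_\eta Y$ is still distributed as $\nu^{\SG(\beta,z)}$ modulo constants.

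It remains to couple in the zero mode. Realize $\varphi^\star\sim\nu^{\SG(\beta,z)}$ on an auxiliary space; then $P_\eta\varphi^\star$ and $P_\eta(Z+P_\eta Y)$ have the same law $(P_\eta)_\#\nu^{\SG(\beta,z)}$ (the $\eta$-centered representative of $\nu^{\SG(\beta,z)}$ modulo constants), so, $C^{-s}(\rho)$ being Polish, I would couple $\varphi^\star$ with $(Z,P_\eta Y)$ on a common probability space so that $P_\eta\varphi^\star=P_\eta(Z+P_\eta Y)$ almost surely. Setting $\tilde\varphi:=\varphi^\star-Z$ gives $Z+\tilde\varphi=\varphi^\star\sim\nu^{\SG(\beta,z)}$, which is (i), and (ii) is inherited from $Z$. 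Using $(P_\eta\psi,\eta)=0$ one computes $\tilde\varphi=\varphi^\star-Z=P_\eta Y+\big((\varphi^\star,\eta)-(Z,\eta)\big)\1$, so $\tilde\varphi\in C^r(\rho)$ (for $\sigma>0$, since constants lie in $C^r(\rho)$) and
\begin{equation}
  \E\big[\|\tilde\varphi\|_{C^r(\rho)}^p\big]\lesssim_p \E\big[\|P_\eta Y\|_{C^r(\rho)}^p\big]+\E\big[|(\varphi^\star,\eta)|^p\big]+\E\big[|(Z,\eta)|^p\big],
\end{equation}
where the first term was just bounded, the second is finite by \eqref{e:superexp-moment1} of Corollary~\ref{cor:superexp-moments}, and the third because $(Z,\eta)$ is a centered Gaussian; this is (iii).

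Thus, granting Proposition~\ref{prop:coupling-infvol-convergence}, the argument above is soft. The genuine work — and the main obstacle — lies in that input proposition: establishing the $\Lambda$- and $m$-uniform estimates on the renormalized potential, running the massless infinite-volume SDE coupling so that the short-scale part is Hölder continuous with the weighted moment bounds, and proving the strong modulo-constants convergence of the difference fields, for which the decay of the large scales must be extracted from the Bosonization moment estimates of Section~\ref{sec:mixing} rather than from short-scale analysis. The one conceptual point in the present reduction is that the recentering has to be routed through the honest measure $\nu^{\SG(\beta,z)}$ — whose existence encodes the spontaneous symmetry breaking — since the constants $X_{i,j,k}$ themselves genuinely diverge.
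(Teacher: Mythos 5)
Your argument is correct but takes a genuinely different route from the paper. The paper proves this proposition directly from Lemma~\ref{lem:tight}: the triple $\pa{P_T\Phi^{\SG}_{t_0},P_T\Phi^{\Delta}_{0,t_0},P_T\Phi^{\GFF(m)}_{0,t_0}}$ is tight in $C^r(\rho)\times C^r(\rho)\times C^{-s}(\rho)$, so a subsequential weak limit exists; the law of the sum is identified with $\nu^{\SG(\beta,z)}$ directly via the defining $P_T$-limit characterization from Section~\ref{sec:mixing} (using the mapping theorem for weak convergence), and the decomposition is simply read off from the three components of the limit. No recentering step is needed because one never passes through a ``modulo constants'' intermediate object -- the zero mode is carried along by the full field $P_T\Phi^{\SG}_0$ whose limiting law is $\nu^{\SG(\beta,z)}$ by construction. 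You instead take the strictly stronger Proposition~\ref{prop:coupling-infvol-convergence-bis} as input and close the zero-mode gap by a coupling/disintegration argument. This is sound (the disintegration and Fatou steps are fine, and $P_\eta$ is indeed a bounded projection on $C^r(\rho)$), but it inverts the natural logical order: the paper proves the present, weaker statement first and then proves the harder almost-sure convergence version, because the latter requires the additional Skorokhod-representation and solution-map-continuity machinery (Lemmas~\ref{lem:GaussXrho-bis} and~\ref{lem:H0}). Deducing the weaker statement from the stronger one is a detour. One small citation correction: for the moment bound on $(\varphi^\star,\eta)$ you should invoke \eqref{e:SG-moments-limit}, the general consequence of the standing hypothesis \eqref{e:SG-moments-bis}, rather than Corollary~\ref{cor:superexp-moments}, which is stated only for $\beta=4\pi$, whereas the proposition covers all $\beta\in(0,6\pi)$.
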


The previous proposition gives a decomposition of the massless infinite-volume sine-Gordon measure.
The next proposition gives strong convergence of the massive finite-volume model.

\begin{proposition} \label{prop:coupling-infvol-convergence-bis}
  Under the same assumptions as in Proposition~\ref{prop:coupling-infvol-bis}, one can find sequences $m_i \to 0$ and $\Lambda_j \to \R^2$ and $T_k \to \infty$
  and a probability space with random variables $Z$ and $\tilde \varphi_{i,j,k}$ and $\tilde \varphi$ such that the following hold.

  \smallskip\noindent
  (i)
  The distribution $(P_{T_k})_\#\nu^{\SG(\beta,z|\Lambda_j,m_i)}$  of $P_{T_k}\varphi$ where $\varphi \sim \nu^{\SG(\beta,z|\Lambda_j,m_i)}$ and $P_T$ is defined in \eqref{e:Peta-def}
  coincides with the distribution of $Z + \tilde \varphi_{i,j,k}$:
  \begin{equation}
     \varphi_{i,j,k}= Z + \tilde \varphi_{i,j,k} \sim (P_{T_k})_\#\nu^{\SG(\beta,z|\Lambda_j,m_i)}.
  \end{equation}
  \smallskip\noindent
  (ii) The random field $Z$ has the same properties as in Proposition~\ref{prop:finvol-coupling-bis}
  and is independent of $i,j,k$.

  \smallskip
  \noindent
  (iii) The random field  $\tilde \varphi_{i,j,k} $ is a random element in $C^r(\rho)$, and there are real-valued random variables $X_{i,j,k}$
  and a random element $\tilde\varphi$ in $C^r(\rho)$ such that
  \begin{equation}
    \lim_{k\to\infty}\lim_{j\to\infty}\lim_{i\to\infty}  \|\tilde \varphi_{i,j,k} - \tilde \varphi  + X_{i,j,k}\|_{C^r(\rho)} \to 0,
  \end{equation}
  almost surely,
  as well as
  \begin{equation}
    \sup_{i,j,k}\E\qa{\|\tilde\varphi_{i,j,k}\|_{C^r(\rho)}^p} \leq C_{p,r}(\beta, z).
  \end{equation}

  \smallskip\noindent
  (iv)
  In particular, the law of $\tilde\varphi$ agrees with that of $\nu^{\SG(\beta,z)}$ modulo constants.
\end{proposition}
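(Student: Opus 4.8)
The plan is to obtain Proposition~\ref{prop:coupling-infvol-convergence-bis} (and, as a consequence, Proposition~\ref{prop:coupling-infvol-bis}) by taking a carefully coupled limit of the finite-volume massive construction of Section~\ref{sec:coupling-finvol}. All the fields $\Phi^{\SG(\beta,z|\Lambda_j,m_i)}$ will be realized, via the SDE \eqref{e:SDE} and Propositions~\ref{prop:SDE-existence}--\ref{prop:SDE-Ito}, on the single probability space carrying the cylindrical Brownian motion of \eqref{e:GFF-Wiener}; one then projects with $P_{T_k}$ to remove the diverging zero mode and lets $i\to\infty$, $j\to\infty$, $k\to\infty$. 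The first task is to upgrade Proposition~\ref{prop:Vt} to estimates on the renormalized potential that are uniform in the bounded domain $\Lambda$ and in $m\in(0,1]$ and that exploit the spatial locality of the model. For $t\le t_0$ this is already contained in Lemma~\ref{le:vtsmall}, whose kernel bounds are $\Lambda$- and $m$-independent. For $t>t_0$ one works from the representation \eqref{e:nablaVt-Vt0} of $\nabla_\varphi V_t$ through $V_{t_0}$: although $V_t-V_{t_0}$ is extensive in $|\Lambda|$, the heat-smoothed gradient $e^{\Delta_x s}\nabla_\varphi V_t(\varphi,\cdot)$ at a point $x$ depends only on the field within distance $O(\sqrt s)$ of $x$, up to exponentially small errors, because of the exponential decay built into the recursion for the kernels $\tilde V^n_{t_0}$ (a clustering statement at scale $t_0$) combined with the off-diagonal decay of $e^{\Delta_x s}$. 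This should give, uniformly in bounded $\Lambda$ and $m\in(0,1]$,
\begin{equation} \label{e:proposal-Vunif}
  \| \rho\,(\sqrt t\,\nabla_x)^k e^{\Delta_x t}\nabla_\varphi V_t(\varphi,z,\cdot\,|\Lambda,m)\|_{L^\infty_x} \le C_k(\beta,z)\,(1+t^{-\beta/8\pi})\,g(t),
\end{equation}
together with the analogues of \eqref{e:nablaV-imbd} and \eqref{e:nablaV-Lipschitz-finvol}, where the large-$t$ factor $g$ is integrable against $dt$ even at $m=0$; the point is that the mass gap of the sine-Gordon model makes $g$ decay fast for large $t$, a role played by $e^{-m^2 s}$ in the finite-volume analysis of Section~\ref{sec:coupling-finvol}. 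At $\beta=4\pi$ this last input is supplied by Bosonization, i.e.\ by the exponential decay of the two-point function and the mixing established in Section~\ref{sec:mixing} (Theorem~\ref{thm:SG4pi} and Corollaries~\ref{cor:mixing}--\ref{cor:superexp-moments}).

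With \eqref{e:proposal-Vunif} in hand the fixed-point argument of Proposition~\ref{prop:SDE-existence} goes through with $\Lambda$- and $m$-uniform constants, producing a drift $\Phi^{\Delta(\beta,z|\Lambda,m)}_0 = \Phi^{\SG(\beta,z|\Lambda,m)}_0 - \Phi^{\GFF(m)}_0$ that satisfies, for $r<2-\beta/4\pi$ and $\sigma$ large,
\begin{equation} \label{e:proposal-driftbd}
  \sup_{\Lambda \text{ bdd},\; m\in(0,1]}\; \E\big[\,\|\Phi^{\Delta(\beta,z|\Lambda,m)}_0\|_{C^r(\rho)}^p\,\big] \le C_{p,r}(\beta,z) < \infty,
\end{equation}
by integrating \eqref{e:proposal-Vunif} in $t$; the contribution of the short-scale correction and of $\Phi^{\GFF(m)}_{t_0,\infty}$ to $C^r(\rho)$ moments is handled by standard Gaussian estimates. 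This already gives the uniform $L^p$ bound and, by the compact embedding $C^r(\rho)\hookrightarrow C^{r'}(\rho')$ for $r'<r$, $\sigma'>\sigma$, together with the pointwise convergence of the kernels $\tilde V^n$ (Lemma~\ref{lem:tildeVn}), the a.s.\ pathwise convergence of $\Phi^{\Delta(\beta,z|\Lambda_j,m_i)}_0$ as $i\to\infty$ then $j\to\infty$, along suitable subsequences.

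Now I would set $Z:=\Phi^{\GFF}_{0,t_0}$ with $m=0$ (the fixed short-scale log-correlated field of Proposition~\ref{prop:finvol-coupling-bis}, independent of $i,j,k$) and define
\begin{equation}
  \tilde\varphi_{i,j,k} := P_{T_k}\Phi^{\SG(\beta,z|\Lambda_j,m_i)}_0 - Z,
\end{equation}
so that $Z+\tilde\varphi_{i,j,k}\sim (P_{T_k})_\#\nu^{\SG(\beta,z|\Lambda_j,m_i)}$ by Proposition~\ref{prop:SDE-Ito} and the definition \eqref{e:Peta-def} of $P_{T_k}$, which is item~(i). Expanding $P_{T_k}$ and using $\Phi^{\SG}_0 = Z + \Phi^{\GFF(m_i)}_{t_0,\infty} + \Phi^{\Delta}_0(z|\Lambda_j,m_i) + (\text{small smooth Gaussian term})$, one finds that $\tilde\varphi_{i,j,k}$ equals $P_{T_k}\Phi^{\GFF(m_i)}_{t_0,\infty} + \Phi^{\Delta}_0(z|\Lambda_j,m_i)$ plus a small smooth Gaussian field plus the real constants $-(\Phi^{\Delta}_0,\eta_{T_k})$ and $-(Z,\eta_{T_k})$. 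The first term converges in $C^r(\rho)$ (after subsequences in $m_i,T_k$) to a canonical mod-constants representative of the massless large-scale free field by standard Gaussian estimates; the second converges as above; the Gaussian term vanishes as $m_i\to0$; and the two constants are controlled by \eqref{e:proposal-driftbd} and the moment bound \eqref{e:SG-moments-bis}, with $(Z,\eta_{T_k})\to0$ and $(\Phi^{\Delta}_0,\eta_{T_k})$ converging by the clustering/mass-gap of the model. Collecting the pieces that converge only up to an additive constant into a single real random variable $X_{i,j,k}$ yields the a.s.\ convergence $\|\tilde\varphi_{i,j,k}-\tilde\varphi+X_{i,j,k}\|_{C^r(\rho)}\to0$ of item~(iii), and the uniform $L^p$ bound there is \eqref{e:proposal-driftbd} combined with \eqref{e:SG-moments-bis} and the Gaussian estimates; item~(ii) is immediate. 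For item~(iv), modulo constants the law of $Z+\tilde\varphi_{i,j,k}$ is that of $\nu^{\SG(\beta,z|\Lambda_j,m_i)}$, whose characteristic functionals on mean-zero test functions converge to those of $\nu^{\SG(\beta,z)}$ by Theorem~\ref{thm:SG4pi}; since a.s.\ convergence in $C^r(\rho)$ modulo constants implies convergence of those functionals, the limiting law of $Z+\tilde\varphi$ modulo constants is $\nu^{\SG(\beta,z)}$. Proposition~\ref{prop:coupling-infvol-bis} then follows by additionally fixing the constant through the extension procedure of Section~\ref{sec:mixing}, which rests on \eqref{e:SG-moments-bis}.

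\textbf{Main obstacle.} The crux is \eqref{e:proposal-Vunif}: obtaining the large-$t$ decay of $e^{\Delta_x t}\nabla_\varphi V_t$ \emph{uniformly in $\Lambda$ and without the help of a positive mass}. In Section~\ref{sec:coupling-finvol} the constants could depend on $|\Lambda|$ and the large-scale decay came for free from $e^{-m^2 s}$; removing both crutches forces one to use genuinely the locality of the renormalized potential (to strip the $|\Lambda|$ dependence) and the mass gap of the sine-Gordon model (to recover the large-$t$ decay), the latter being precisely where integrability/Bosonization input enters for large scales. A secondary difficulty is the consistent bookkeeping of the zero mode: the large-scale part of the finite-volume field does not converge as $m\to0$, only modulo constants, so the projections $P_{T_k}$, the residual averages $(\cdot,\eta_{T_k})$, and the auxiliary constants $X_{i,j,k}$ must be threaded through all the estimates, which is where the quantitative moment bound \eqref{e:SG-moments-bis} (verified at $\beta=4\pi$ in Section~\ref{sec:mixing}) is used.
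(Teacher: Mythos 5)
There is a genuine gap, and it is in the crux of your own plan: the claimed estimate \eqref{e:proposal-Vunif} with a factor $g(t)$ that is integrable over $t\in(t_0,\infty)$ uniformly in $m\in(0,1]$ (in particular at $m=0$). The paper never proves anything of this kind, and it is essentially false. What the paper does prove for large $t$, uniformly in $\Lambda$ and $m$, is the maximum-principle bound \eqref{e:gradVt-maxprinciple} of Proposition~\ref{prop:Vt-infvol}(ii): $\|e^{\Delta_x t_0}\nabla_\varphi V_t\|_{L^\infty_xL^\infty_\varphi}\le C(\beta,z)$. This gives \emph{uniform boundedness}, not decay; the only decay available for large $t$ comes from differentiating, $\|\nabla_x^k e^{\Delta_x t}\nabla_\varphi V_t\|\lesssim t^{-k/2}$, which is why Lemma~\ref{lem:thirdderivative} subtracts a second-order Taylor polynomial before integrating the drift over $[t_0,\infty)$. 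Without such a subtraction, $\int_0^\infty e^{\Delta_x s}\nabla_\varphi V_s\,ds$ does not converge when $m=0$ (the integrand is $O(1)$ in $L^\infty$ for $s$ large), so your bound \eqref{e:proposal-driftbd} on $\|\Phi^{\Delta(\beta,z|\Lambda,m)}_0\|_{C^r(\rho)}$, uniform in $m\in(0,1]$, cannot hold. This is not a technicality: it is exactly the divergence of the zero mode that makes the massless finite-volume measure fail to exist. Your suggested remedy — that "the mass gap of the sine-Gordon model" or "Bosonization" supplies the missing decay of $g(t)$ — confuses a property of the \emph{measure} with a property of the \emph{deterministic} renormalized potential: $\nabla_\varphi V_t(\varphi,\cdot\,|\Lambda,m)$ is a fixed function of $\varphi$ and $t$ defined by a Gaussian integral, and no spectral gap of the SG measure can make it small at large $t$.

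The paper's route is structurally different and avoids the issue. It never controls the full drift $\Phi^\Delta_{0,\infty}$; instead it splits at the fixed scale $t_0$, writing $\Phi^\SG_0=\Phi^\Delta_{0,t_0}+\Phi^\GFF_{0,t_0}+\Phi^\SG_{t_0}$. The piece $\Phi^\Delta_{0,t_0}$ is a \emph{finite} integral over $[0,t_0]$ of a uniformly bounded integrand (Proposition~\ref{prop:Vt-infvol}(i)), hence deterministically bounded in $C^r(\R^2)$ uniformly in $\Lambda,m$. The whole difficulty is then concentrated in $\Phi^\SG_{t_0}$ — the full sine-Gordon field with scales $>t_0$ retained — which is where the integrability/Bosonization input actually enters: Proposition~\ref{prop:Phit0-tightCk} controls $P_T\Phi^\SG_{t_0}$ in $C^k(\rho)$ by combining (a) the deterministic derivative bound of Lemma~\ref{lem:thirdderivative} (coming from \eqref{e:gradVt-maxprinciple-deriv}, i.e., decay only after differentiating $\ge 3$ times), and (b) the assumed moment bound \eqref{e:SG-moments-bis} on the \emph{random} field under the SG measure, which fixes the finitely many low-order Taylor coefficients that the deterministic bound does not control. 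To couple the limits in $i,j,k$ strongly enough for item~(iii), the paper then applies a Skorokhod representation to the tight family $P_{T_k}\Phi^\SG_{t_0}$, adjoins an independent cylindrical Brownian motion to regenerate the small scales, and uses a deterministic solution map $H_0$ for the SDE over $[0,t_0]$ (Lemma~\ref{lem:H0}) together with an Arzelà--Ascoli argument in $C(K_N,D)$ to get a.s.\ convergence in $C^r(\rho)$. Your sketch does not produce this coupling (tightness plus convergence of kernels would at best give convergence in law), and it relies on the unavailable estimate above; you would need to restructure the argument along the paper's lines rather than patch \eqref{e:proposal-Vunif}.
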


The main idea compared to Section~\ref{sec:coupling-finvol} is to combine the moment bound
\eqref{e:SG-moments-bis}
with a priori estimates from the SDE \eqref{e:SDE} in order to obtain tightness bounds in suitable function spaces.

\begin{remark}
  Using that the constants in \eqref{e:SG-moments-bis} can be taken to be as in Corollary~\ref{cor:superexp-moments},
  the constant $C_{p,r}(\beta,z)$ in Theorem~\ref{prop:coupling-infvol-bis} can also be taken to be of the
  form
  \begin{equation}
    C_r(\beta,z)^p (p\log p)^{p/2}.
  \end{equation}
  In particular, exponential moments of all orders exist.
\end{remark}

\subsection{Uniform estimates on the renormalized potential}

The following proposition extends Proposition~\ref{prop:Vt} by estimates that are valid
uniformly as $\Lambda \to \R^2$ and $m \to 0$. The statements for $t\leq t_0$ are essentially contained
in the proof of Proposition~\ref{prop:Vt} which includes a convergent series expansion in this regime.
The extension to $t>t_0$ is non-perturbative and essentially a consequence of the maximum principle.

\begin{proposition} \label{prop:Vt-infvol}
  There exists $t_0=t_0(\beta,z)>0$ such that the following
  uniform in volume estimates hold for $\beta\in(0,6\pi)$ and $z\in \R$. %

  \smallskip\noindent (i)
  For $t\leq t_0$,
  uniformly in $\Lambda \subset \R^2$ bounded and $m>0$,
  and uniformly in $\varphi : \R^2 \to \R$,
  \begin{equation}  \label{e:nablaV-bd-smallt}
    \|(\sqrt{t}\nabla)^ke^{\Delta_x t}\nabla_\varphi V_t(\varphi,z,x|\Lambda,m)\|_{L^\infty_x} \leq C_k(\beta,z)t^{-\beta/8\pi},
  \end{equation}
  and for any weight $\rho(x)=(1+|x|^2)^{-\sigma/2}$ with $\sigma \geq 0$,
  \begin{multline}\label{e:nablaV-Lipschitz-smallt-rho}
    \|(\sqrt{t}\nabla)^ke^{\Delta_x t}\nabla_\varphi V_{t}(\varphi,z,x|\Lambda,m)-(\sqrt{t}\nabla)^k e^{\Delta_x t}\nabla_\varphi V_{t}(\varphi',z,x|\Lambda,m)\|_{L^\infty_x(\rho)}
    \\
    \leq C_{\sigma,k}(\beta,z) t^{-\beta/8\pi} \|\varphi-\varphi'\|_{L^\infty_x(\rho)}.
  \end{multline}

  \smallskip\noindent (ii)
  For $t \geq t_0$,
  the following estimates hold uniformly in $\Lambda \subset \R^2$ bounded and $m>0$:
  \begin{equation} \label{e:gradVt-maxprinciple}
    \|e^{\Delta_x t_0}\nabla_\varphi V_{t}(\varphi,z,x|\Lambda,m)\|_{L^\infty_xL^\infty_\varphi} \leq C(\beta,z).
  \end{equation}
  In particular, for any $k\geq 0$ and $t \geq 2t_0$,
  \begin{equation} \label{e:gradVt-maxprinciple-deriv}
    \|\nabla_x^ke^{\Delta_x t}\nabla_\varphi V_{t}(\varphi,z,x|\Lambda,m)\|_{L^\infty_xL^\infty_\varphi} \leq C_k(\beta,z) t^{-k/2}. %
  \end{equation}
\end{proposition}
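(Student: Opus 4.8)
The plan is to handle the two regimes of $t$ by the two methods already set up in Section~\ref{sec:coupling-finvol}, while this time keeping track of uniformity in $\Lambda$ and $m$. Throughout, $t_0 = t_0(\beta,z) < 1$ is chosen as in Lemma~\ref{le:vtsmall} so that the series representation of $V_t$ converges on $B_\eta$ for $t\le t_0$ and $z$ in a neighbourhood of the real axis; since here $z\in\R$ is fixed, $t_0$ depends only on $\beta$ and $z$.

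\emph{Part (i), $t\le t_0$.} First I would start from the series \eqref{e:nablaV-def-bis} for $(f,\nabla_\varphi V_t(\varphi,z))$ evaluated at $f(x_1) = (\sqrt t\nabla_x)^k e^{\Delta_x t}(x,x_1)$, so that the left-hand side equals $(\sqrt t\nabla_x)^k e^{\Delta_x t}\nabla_\varphi V_t(\varphi,z,x)$. Because $z$ and $\varphi$ are real, every factor $e^{i\sqrt\beta\sum_j\sigma_j\varphi(x_j)}$ has modulus one, so the estimate \eqref{e:nablaV-bd-smallt} follows verbatim from the estimates in the proof of Lemma~\ref{le:vtsmall}(iii): the $n\ne 2$ and charged $n=2$ terms are summed using \eqref{e:tildeV1} and \eqref{e:tildeVn-bd}, while the neutral $n=2$ term is bounded via \eqref{eq:V2bd}/\eqref{eq:n2bound} exactly as in \eqref{e:nablaV2bd}. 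The point is that all the kernel bounds in Lemma~\ref{lem:tildeVn} hold with constants depending only on $\beta$ (for $0<m<1$, $\epsilon^2<t<1$), and the resulting $L^\infty_x$ bounds were already observed there to be independent of $\Lambda$; passing to the limits $\epsilon\to 0$ and then $m\to 0$ is licensed by \eqref{e:nablaVteps-conv} and the pointwise convergence of the kernels $\tilde V^n_t$. For the weighted Lipschitz bound \eqref{e:nablaV-Lipschitz-smallt-rho} I would use the same series, form the difference $e^{i\sqrt\beta\sum_j\sigma_j\varphi(x_j)}-e^{i\sqrt\beta\sum_j\sigma_j\varphi'(x_j)}$, bound it by $\sqrt\beta\sum_j|\varphi(x_j)-\varphi'(x_j)|\le \sqrt\beta\sum_j \rho(x_j)^{-1}\|\varphi-\varphi'\|_{L^\infty_x(\rho)}$, and then redistribute the weight by the slowly-varying estimate $\rho(x_j)^{-1}\lesssim_\sigma \rho(x)^{-1}(1+|x-x_1|)^\sigma(1+|x_1-x_j|)^\sigma$: the factor $(1+|x-x_1|)^\sigma$ is absorbed by the Gaussian kernel $(\sqrt t\nabla_x)^k e^{\Delta_x t}(x,\cdot)$ and each factor $(1+|x_1-x_j|)^\sigma$ by the exponential decay of $\tilde V^n_t$ at scale $\sqrt t\le\sqrt{t_0}$, each at the cost of a constant depending only on $\sigma$; the extra combinatorial factor $n$ from the sum over $j$ is harmless in the convergent series.

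\emph{Part (ii), $t\ge t_0$.} Here the decisive point is the representation \eqref{eq:nablatrep}, which for real $z$ and real $\varphi$ reads
\begin{equation}
  \nabla_\varphi V_t(\varphi,z)=\frac{\E_{C_t-C_{t_0}}\qb{e^{-\delta_\varphi V_{t_0}(\zeta,z)}\,\nabla_\varphi V_{t_0}(\varphi+\zeta,z)}}{\E_{C_t-C_{t_0}}\qb{e^{-\delta_\varphi V_{t_0}(\zeta,z)}}},
\end{equation}
with $\delta_\varphi V_{t_0}(\zeta,z)=V_{t_0}(\varphi+\zeta,z)-V_{t_0}(\varphi,z)$ real, so $e^{-\delta_\varphi V_{t_0}(\zeta,z)}>0$ and the denominator is a finite positive number (finiteness by \eqref{e:delta-Vt0} and \eqref{e:Gaussian3}; it is here that the $\Lambda$-dependent constants of Lemmas~\ref{le:logest} and~\ref{le:gaussianest} enter, but this will cancel). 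Applying $e^{\Delta_x t_0}$ in the $x$-variable and using Fubini — immediate since $e^{\Delta_x t_0}\nabla_\varphi V_{t_0}(\psi,z,\cdot)$ is bounded uniformly in $\psi$ by part (i) — gives
\begin{equation}
  e^{\Delta_x t_0}\nabla_\varphi V_t(\varphi,z,x)=\frac{\E_{C_t-C_{t_0}}\qb{e^{-\delta_\varphi V_{t_0}(\zeta,z)}\,e^{\Delta_x t_0}\nabla_\varphi V_{t_0}(\varphi+\zeta,z,x)}}{\E_{C_t-C_{t_0}}\qb{e^{-\delta_\varphi V_{t_0}(\zeta,z)}}}.
\end{equation}
Pulling the supremum $\|e^{\Delta_x t_0}\nabla_\varphi V_{t_0}(\cdot,z,\cdot)\|_{L^\infty_xL^\infty_\varphi}$ out of the numerator, the $\Lambda$- and $m$-dependent normalization cancels between numerator and denominator, yielding
\begin{equation}
  \|e^{\Delta_x t_0}\nabla_\varphi V_t(\varphi,z,x)\|_{L^\infty_xL^\infty_\varphi}\le \|e^{\Delta_x t_0}\nabla_\varphi V_{t_0}(\cdot,z,\cdot)\|_{L^\infty_xL^\infty_\varphi}\le C(\beta,z),
\end{equation}
the last inequality being \eqref{e:nablaV-bd-smallt} with $k=0$, $t=t_0$. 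This is \eqref{e:gradVt-maxprinciple}, first for $m>0$ and then in the limit $m\to 0$ using Proposition~\ref{prop:Vt}(i). For \eqref{e:gradVt-maxprinciple-deriv} with $t\ge 2t_0$ I would write $\nabla_x^k e^{\Delta_x t}\nabla_\varphi V_t = \nabla_x^k e^{\Delta_x(t-t_0)}\bigl[e^{\Delta_x t_0}\nabla_\varphi V_t\bigr]$ and apply the standard heat-semigroup bound $\|\nabla_x^k e^{\Delta_x s}g\|_{L^\infty}\le C_k s^{-k/2}\|g\|_{L^\infty}$ with $s=t-t_0\ge t/2$, together with the bound just proved.

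The genuinely new content is, first, the cancellation observed in part (ii): the non-perturbative large-$t$ bound inherits the uniformity of the perturbative small-$t$ bound because the smoothed renormalized gradient at scale $t$ is a convex combination (a probabilistic average over $\zeta$) of the same quantity at scale $t_0$, and the normalizing partition-function-type factor drops out of the ratio — this is the ``maximum principle'' alluded to in the statement. Second, the weight bookkeeping in \eqref{e:nablaV-Lipschitz-smallt-rho} is where I expect the main technical friction: one must upgrade the $L^1$-type bounds of Lemma~\ref{lem:tildeVn}(ii) to bounds against the polynomially-weighted kernels $\prod_j(1+|x_1-x_j|)^\sigma\,h^n_t$, which uses the exponential (rather than merely integrable) decay of $\tilde V^n_t$ at scale $\sqrt t$ furnished by the recursive construction (see \cite{MR4767492} and Appendix~\ref{app:fracrenorm}); absorbing a polynomial into $e^{-c|x_1-x_j|/\sqrt t}$ for $t\le t_0<1$ costs only a $\sigma$-dependent constant, but this is the step that must be carried out with care.
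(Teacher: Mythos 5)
Your proposal is correct and follows essentially the same route as the paper: part (i) is read off the convergent series for $\nabla_\varphi V_t$ using the weighted kernel bounds (Lemma~\ref{lem:tildeVn-weight}/Appendix~\ref{app:fracrenorm}) together with a slowly-varying-weight redistribution, and part (ii) uses the ratio representation \eqref{eq:nablatrep} so that the $\Lambda$- and $m$-dependent normalizations cancel between numerator and denominator. The only cosmetic differences are that you write the weight redistribution in the form $(1+|y|)^\sigma$ rather than $\rho(y)^{-1}$ and that you state the ratio formula in the (correct) $C_t-C_{t_0}$, $e^{-\delta_\varphi V_{t_0}(\zeta,z)}$ form, whereas the paper's displayed formula in this proof contains minor typos.
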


For the proof of the proposition, we need slight improvements
of the estimates of Lemma~\ref{lem:tildeVn} stated in the following lemma, namely the inclusion of a growing weight.
The weighted estimates follow from exactly the same proof.
For completeness, in Appendix~\ref{app:fracrenorm}, we included the weight
in the more complicated setting of the renormalized potential with fractional charge insertions (which is however not needed here),
and the lemma is the special case $k=0$ of Proposition~\ref{pr:vtnorm}.

\begin{lemma} \label{lem:tildeVn-weight}
  Under the assumption of Lemma~\ref{lem:tildeVn} and with the same notation $h_t^n$,
  for $t\leq 1$, one also has the weighted estimate
  \begin{equation}
    \label{e:tildeVn-bd-rho}
    \sup_{\xi_1} \int d\xi_2\cdots d\xi_n\, h^n_t(\xi_1,\xi_2,\dots,\xi_n) e^{|x_1-x_2|} \leq n^{n-2} t^{-1} (C_\beta t^{1-\beta/8\pi})^n, \qquad (n\neq 2),
  \end{equation}
  and analogously for the charged part $\tilde V^2(\xi_1,\xi_2)\1_{\sigma_1+\sigma_2\neq 0}$ of $\tilde V^2$.
  The neutral part of $\tilde V_t^2$ satisfies
  \begin{equation}
        \label{e:tildeV2-bd-rho}
    \sup_{\xi_1}\int d\xi_2 \, \frac{|x_1-x_2|}{\sqrt{t}}h_t^2(\xi_1,\xi_2)\1_{\sigma_1+ \sigma_2=0} e^{|x_1-x_2|}
    \leq  t^{-1} (C_\beta t^{1-\beta/8\pi})^2.  
  \end{equation}
\end{lemma}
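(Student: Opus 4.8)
The statement is the $k=0$ instance of Proposition~\ref{pr:vtnorm} in Appendix~\ref{app:fracrenorm}, so the plan is to re-run the inductive proof of Lemma~\ref{lem:tildeVn} (equivalently \cite[Section~4]{MR4767492}, based on \cite{MR914427}) while carrying the extra factor $e^{|x_1-x_2|}$ along. The underlying mechanism is that all spatial decay of the kernels $\tilde V_t^n$ is of Gaussian type and is generated by the heat kernels $e^{\Delta_x s}$ at scales $s\le t\le 1$, and such Gaussian decay dominates the \emph{linear} exponential weight. Concretely, for $0<s\le 1$ and $r\ge 0$,
\begin{equation}
  e^{-c r^2/s}\,e^{r}\le e^{1/(2c)}\, e^{-c r^2/(2s)},
\end{equation}
since $-cr^2/s+r\le \tfrac{s}{2c}-\tfrac{c}{2s}r^2\le \tfrac1{2c}-\tfrac{c}{2s}r^2$. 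Thus placing a factor $e^{|x-y|}$ next to a bond kernel $|\dot u_s(x-y)|\lesssim s^{-1}e^{-c|x-y|^2/s}$ costs only a bounded constant and replaces $c$ by $c/2$; since none of the estimates in Lemma~\ref{lem:tildeVn} use the precise value of the decay constant, \eqref{e:tildeVn-bd-rho} will follow with a possibly larger $C_\beta$.

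First I would recall the recursive representation of $\tilde V_t^n$ from iterating the Polchinski flow, as in \cite[(4.8)]{MR4767492} (or Appendix~\ref{app:fracrenorm}): a contribution to $\tilde V_t^n((\sigma_1,x_1),\xi_2,\dots,\xi_n)$ is a sum over splittings $\{1,\dots,n\}=I\sqcup J$, a choice of bond points $x_p$ ($p\in I$), $x_q$ ($q\in J$), an integral $\int_{\epsilon^2}^t ds$, and a factor $\dot u_s(x_p-x_q)\,\tilde V_s^{|I|}(\xi_I)\tilde V_s^{|J|}(\xi_J)$; iterating produces a sum over tree graphs on $\{1,\dots,n\}$ each of whose bonds carries such a Gaussian heat kernel. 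To route the weight I would prove by induction on $n$ the (by permutation invariance of $\tilde V_t^n$, equivalent but formally stronger) statement that
\begin{equation}
  \sup_{\xi_a}\int \Big(\prod_{j\neq a} d\xi_j\Big)\, h_t^n(\xi_1,\dots,\xi_n)\, e^{|x_a-x_b|}
\end{equation}
is bounded by the right-hand side of \eqref{e:tildeVn-bd-rho} for every choice of distinct $a,b$. In the inductive step, if $a$ and $b$ lie in the same sub-cluster $I$ then the $J$-integral decouples and is controlled by the unweighted bound of Lemma~\ref{lem:tildeVn} while the weighted $I$-integral is controlled by the inductive hypothesis; if instead $a\in I$ and $b\in J$, I would use $|x_a-x_b|\le |x_a-x_p|+|x_p-x_q|+|x_q-x_b|$ along the unique tree path through the bond $(p,q)$, absorb $e^{|x_p-x_q|}$ into the bond kernel via the inequality above, and apply the inductive hypothesis to $I$ (weight between $a$ and $p$) and to $J$ (weight between $q$ and $b$); the Gaussian integral $\int dx_q\, e^{-(c/2)|x_p-x_q|^2/s}\lesssim s$ cancels the $s^{-1}$ of the bond kernel and decouples the two cluster integrals, so the $s$-integral closes exactly as in the proof of \eqref{e:tildeVn-bd}. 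The charged part of $\tilde V_t^2$ satisfies the same recursive bound as the $n\neq 2$ terms (cf.\ Lemma~\ref{lem:tildeVn}) and is covered by the same argument, while the neutral $n=2$ term \eqref{eq:n2explicit} is handled directly: since $C_t-C_{\epsilon^2}$ is built from heat kernels at scales $s\le t\le 1$, its decay in $|x_1-x_2|$ is genuinely Gaussian (not merely the exponential bound recorded in \eqref{eq:n2bound}), so after multiplying by $e^{|x_1-x_2|}$ and $|x_1-x_2|/\sqrt t$ and applying the change of variables $y=(x_1-x_2)/\sqrt t$ used to prove \eqref{eq:V2bd}, one obtains \eqref{e:tildeV2-bd-rho}; the exponent $1-\beta/2\pi$ produced there is integrable near the origin in $\R^2$ precisely because $\beta<6\pi$.

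The main obstacle I anticipate is purely bookkeeping, namely arranging the induction so the single weight $e^{|x_1-x_2|}$ can be split along whichever tree path the recursion produces, which is what forces the inductive hypothesis to be stated for an arbitrary marked pair rather than for the distinguished first point. Once the hypothesis is in that symmetric form, each step is a routine modification of the corresponding step in the proof of Lemma~\ref{lem:tildeVn}, the only genuinely new input being the one-line absorption inequality; in particular no new analytic difficulty arises and, as in Lemma~\ref{lem:tildeVn}, the constants remain independent of $\Lambda$, $m$, and $\epsilon$.
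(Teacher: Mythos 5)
Your approach is correct and tracks the paper's own proof quite closely; the paper simply invokes the appendix version (Proposition~\ref{pr:vtnorm} with $k=0$), whose argument carries an exponential weight through exactly the same induction. The one genuine difference in bookkeeping is the choice of weight: you carry the single-pair weight $e^{|x_a-x_b|}$ and route it across a bipartition $I\sqcup J$ via the triangle inequality $|x_a-x_b|\le|x_a-x_p|+|x_p-x_q|+|x_q-x_b|$, so your inductive hypothesis must be stated for an arbitrary marked pair; the paper instead carries the (larger) spanning-tree weight $e^{\tdist(x_1,\ldots,x_n)}$ and uses its subadditivity $\tdist(x_{I\cup J})\le\tdist(x_I)+|x_p-x_q|+\tdist(x_J)$, which makes the routing automatic without marking a pair, at the cost of proving a formally stronger estimate. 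Both versions rely on precisely the same one-line absorption you identify, $\int_{\R^2}\frac{e^{-|u|^2/4s}}{4\pi s}\,e^{|u|}\,du\lesssim 1$ for $s\le 1$, and both correctly treat the neutral $n=2$ term by its explicit Gaussian form rather than the weaker exponential bound \eqref{eq:n2bound} (your observation that $e^{-c|x|/\sqrt t}$ with small $c$ would not absorb $e^{|x|}$ for $t$ near $1$ is exactly right and is what the appendix proof does implicitly by working directly with $\int_0^t\frac{e^{-|x_1-x_2|^2/4s}}{4\pi s}\,ds$). One point you gloss over but which the appendix spells out: for $\beta\in[4\pi,6\pi)$ the time integral $\int_0^t s^{-2+n(1-\beta/8\pi)}\,ds$ only converges once $n$ is large enough, so the recursion must be seeded by separate hands-on treatments of $n=2,3,4$ (exploiting the dipole cancellation, cf.\ Lemma~\ref{le:n3bound1}) before the clean induction for $n\ge 5$ can run; your phrase ``routine modification of the corresponding step'' is accurate but hides the fact that those corresponding steps are themselves not uniform in $n$.
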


\begin{proof}[Proof of Proposition~\ref{prop:Vt-infvol}]
Let us first comment on the choice of $t_0(\beta,z)$.
As indicated above, the reasoning will be to control the renormalized potential by the series expansion for $t\leq t_0(\beta,z)$.
The bounds for the series expansion come from Lemma~\ref{lem:tildeVn-weight}, 
and as seen in the proof of Proposition~\ref{prop:Vt} (note that we have $\eta=0$ now), the series can be controlled as long as
\begin{equation}
|z|C_\beta t_0(\beta,z)^{1-\frac{\beta}{8\pi}}<1,
\end{equation}
and $t_0(\beta,z) \leq 1$ to apply  Lemma~\ref{lem:tildeVn-weight}.
Having fixed $t_0(\beta,z)$, let us turn to the actual statements.

\subproof{item (i)}
For $t \leq t_0$ the proof of the $L_x^\infty$ bound \eqref{e:nablaV-bd-smallt} follows from the series expansion as in the proof of Proposition~\ref{prop:Vt}.
The Lipschitz bound \eqref{e:nablaV-Lipschitz-smallt-rho} in the weighted norms requires the improved integrability of $\tilde V_t$ stated in \eqref{e:tildeVn-bd-rho} and \eqref{e:tildeV2-bd-rho}.
Indeed, by \eqref{e:nablaV-def},
\begin{multline} \label{e:nablaV-def-section3}
  \nabla_\varphi V_t(\varphi,z,x)-  \nabla_\varphi V_t(\varphi',z,x)
  = \sum_{n=1}^\infty \frac{z^n}{(n-1)!} \sum_{\sigma_1} \int_{(\Lambda \times \{\pm 1\})^{n-1}}d\xi_2 \cdots d\xi_n\, \\
  \tilde V_t^n((\sigma_1,x),\xi_2,\dots,\xi_n) i\sqrt{\beta}\sigma_1 (e^{i\sqrt{\beta}\sum \sigma_j \varphi(x_j)}-e^{i\sqrt{\beta}\sum \sigma_j \varphi'(x_j)}),
\end{multline}
where we are omitting the $\Lambda,m$ from the notation.
Using that
\begin{align}
  |e^{i\sqrt{\beta}\sum \sigma_j \varphi(x_j)}-e^{i\sqrt{\beta}\sum \sigma_j \varphi'(x_j)}|
  &\leq \sqrt{\beta}\sum_j |\varphi(x_j)-\varphi'(x_j)|
    \nnb
  & \leq \sqrt{\beta}\sum_j \rho(x_j)^{-1} \|\varphi-\varphi'\|_{L^\infty_x(\rho)}
\end{align}
and $\rho(x)/\rho(y) \leq C\rho(x-y)^{-1}$ so that
\begin{equation}
  \rho(x)\rho(x_j)^{-1}
  \lesssim \rho(x_1) \rho(x-x_1)^{-1} \rho(x_j)^{-1}
  \lesssim \rho(x-x_1)^{-1}\rho(x_j-x_1)^{-1}
\end{equation}
we obtain
\begin{align}
  &\rho(x) \int d\xi_1 \cdots d\xi_n\, e^{\Delta_x t}(x,x_1) |\tilde V_t^n(\xi_1,\dots,\xi_n)| \rho(x_j)^{-1}
    \nnb
  &  \lesssim  \int d\xi_1 \cdots d\xi_n \, e^{\Delta_x t}(x,x_1)\rho(x-x_1)^{-1} |\tilde V_t^n(\xi_1,\dots,\xi_n)| \rho(x_j-x_1)^{-1}
    \nnb
  &\lesssim n^{n-2} t^{-1}(C t^{1-\beta/8\pi})^n.
\end{align}
The same estimate holds with $e^{\Delta_x t}(x,x_1)$ replaced by $(\sqrt{t}\nabla_x)^k e^{\Delta_x t}(x,x_1)$.
The $n\neq 2$ terms multiplied by the weight $\rho(x)$ are therefore bounded by
\begin{multline}
  \rho(x)
  \int_{(\Lambda \times \{\pm 1\})^{n}}d\xi_1 \cdots d\xi_n\, (\sqrt{t}\nabla_x)^k e^{\Delta_x t}(x,x_1)\tilde V_t^n(\xi_1,\xi_2,\dots,\xi_n) i\sqrt{\beta}\sigma_1 \sum_{j=1}^n\rho(x_j)^{-1} \|\varphi-\varphi'\|_{L^\infty(\rho)}
  \\\
  \lesssim n^{n-1} t^{-1} (Ct^{1-\beta/8\pi})^n \|\varphi-\varphi'\|_{L^\infty(\rho)}
  .
\end{multline}
The explicit neutral $n=2$ term is similar. Indeed, bounding the weight as for $n\neq2$
and the resulting integral as in \eqref{e:nablaVbd}--\eqref{e:nablaV2bd}, we obtain
\begin{align}
  &\rho(x)\int_{\Lambda^2}dx_1\, dx_2\, |\tilde V_t^2((1,x_1),(-1,x_2))||e^{\Delta_x t}(x,x_1)-e^{\Delta_x t}(x,x_2)|
    \rho(x_1)^{-1}
  \lesssim t^{1-\frac{\beta}{4\pi}},
\end{align}
which is the needed estimate for $k=0$, and again the same estimate holds after replacing $e^{\Delta_x t}(x,\cdot)$ by $(\sqrt{t}\nabla_x)^k e^{\Delta_x t}(x,\cdot)$.

\subproof{item (ii)}
That the uniform bound \eqref{e:gradVt-maxprinciple} continues to hold for $t\geq t_0(\beta,z)$
is a consequence of the maximum principle (explained for example in \cite[Lemma~3.11]{MR4798104}).
Explicitly,
\begin{equation}
  e^{\Delta_x t_0}\nabla V_t(\varphi,z) =
  \frac{\E_{C_t-C_0}\qB{e^{\Delta_x {t_0}}\nabla V_{t_0}(\varphi+\zeta,z) e^{-V_{t_0}(\varphi,z)}}}
  {\E_{C_t-C_0}\qB{e^{-V_{t_0}(\varphi,z)}}}.
\end{equation}
Thus
\begin{equation}
  \|e^{\Delta_x t_0}\nabla V_t(\varphi,z)\|_{L^\infty_x L^\infty_\varphi}
  \leq C(\beta,z) t_0^{-\beta/8\pi} = C'(\beta,z).
\end{equation}
The derivative bounds \eqref{e:gradVt-maxprinciple-deriv} then follow from standard estimates of
derivatives of the heat kernel:
\begin{equation}
  \|\nabla^k e^{\Delta_x (t-t_0)} f\|_{L_x^\infty} \lesssim (t-t_0)^{-k/2} \|f\|_{L_x^\infty}
\end{equation}
which is a direct consequence of the formula $e^{\Delta_x t}f(x) = \int \frac{e^{-|x-y|^2/4t}}{4\pi t} f(y) \, dy$ on $\R^2$.
\end{proof}

\subsection{Decomposition in infinite volume -- proof of Proposition~\ref{prop:coupling-infvol-bis}}

By Propositions~\ref{prop:SDE-existence}--\ref{prop:SDE-Ito}, the sine-Gordon field with large scale regularization $\Lambda \subset \R^2$
bounded and $m>0$ can be
constructed as the solution $\Phi^{\SG(\beta,z|\Lambda,m)}_0$ to the SDE \eqref{e:SDE}:
\begin{align} \label{e:SDE-bis}
  \Phi_t^{\SG(\beta,z|\Lambda,m)}
  &= -\int_t^\infty e^{-m^2 s} e^{\Delta_x s} \nabla_\varphi V_s(\Phi_s^{\SG(\beta,z|\Lambda,m)},z,x|\Lambda,m)\, ds + \Phi^{\GFF(m)}_t
  \nnb
  &= \Phi_t^{\Delta(\beta,z|\Lambda,m)} + \Phi_t^{\GFF(m)}.
\end{align}
Choosing $t_0>0$ be as in Proposition~\ref{prop:Vt-infvol}, we decompose \eqref{e:SDE-bis} according to
\begin{equation}
  \Phi_0^{\SG} %
  = \Phi^\Delta_{0,t_0} + \Phi^{\GFF}_{0,t_0} +\Phi^{\SG}_{t_0}, \label{e:PhiSG0-decomp}
\end{equation}
where
\begin{equation}
  \Phi^{\Delta}_{0,t_0} = \Phi_0^{\Delta}-\Phi^{\Delta}_{t_0},
  \qquad
  \Phi^{\GFF}_{0,t_0} = \Phi_0^{\GFF}-\Phi^{\GFF}_{t_0}.
\end{equation}
Our goal now is to pass to the massless infinite-volume limit in this decomposition.

For the proof of Proposition~\ref{prop:coupling-infvol-bis},
we consider the three terms on the right-hand side of \eqref{e:PhiSG0-decomp} separately and show that they are tight in suitable weighted spaces,
defined in Section~\ref{sec:notation}.
The embeddings $C^{r'}(\rho) \subset C^{r}(\rho')$ are compact when $r<r'$ and
$\rho'(x)/\rho(x) \to 0$ as $|x|\to\infty$, see for example \cite[Proposition~A.7]{2504.08606}
for the case $-1<r'<0$ and the Arzelà--Ascoli theorem for $r'>0$.  %

\begin{lemma} \label{lem:tight}
  Let $r\in (0,1)$, $s>0$, and $\rho(x)=(1+|x|^2)^{-\sigma/2}$ for suitable $\sigma>0$. Then
  assuming $p\geq 1$ is such that \eqref{e:SG-moments-bis} holds, we have
  \begin{equation} \label{Phit0-nocenter-tightCk-bis}
  \E \qbb{
    \|P_T\Phi^{\SG(\beta,z|\Lambda,m)}_{t_0} \|_{C^r(\rho)}^p 
    + \|P_T\Phi^{\Delta(\beta,z|\Lambda,m)}_{0,t_0}\|_{C^{r}(\R^2)}^p +
    \|P_T\Phi^{\GFF(m)}_{0,t_0}\|_{C^{-s}(\rho)}^p
  } \leq C_{r,s,p}(\beta,z),
\end{equation}
uniformly in $T \geq 1$, $\Lambda\subset\R^2$ bounded, and $m\in(0,1]$.
In particular,
  \begin{equation} \label{e:tight}
    \pa{P_T\Phi^{\SG(\beta,z|\Lambda,m)}_{t_0}, P_T\Phi^{\Delta(\beta,z|\Lambda,m)}_{0,t_0}, P_T\Phi^{\GFF(m)}_{0,t_0}} \in C^{r}(\rho) \times C^{r}(\rho)\times C^{-s}(\rho)
  \end{equation}
  where $\Lambda\subset \R^2$ bounded, $m\in (0,1]$, and $T \in [1,\infty)$ is a tight family of random variables.
\end{lemma}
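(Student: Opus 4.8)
The statement is a tightness estimate for the three pieces of the decomposition \eqref{e:PhiSG0-decomp}, centered by $P_T$ to remove the zero mode. The plan is to treat the three terms separately, since each requires a different mechanism. The Gaussian term $P_T\Phi^{\GFF(m)}_{0,t_0}$ is the easiest: it is a log-correlated Gaussian field with covariance $\int_{t_0}^{\infty}\ldots$ truncated from below at $t_0>0$ and with a large-scale cutoff at scale $1/m^2$; the moment bound $\E\|\cdot\|_{C^{-s}(\rho)}^p\le C$ uniformly in $m\in(0,1]$ and $T\ge1$ follows from Proposition~\ref{prop:GFF-Besov} in Appendix~\ref{app:Gauss} together with standard Gaussian hypercontractivity, noting that applying $P_T$ only subtracts a mean-zero Gaussian of variance $O(\log T)$ in $\dot H^{-1}$, which is harmless in the weighted $C^{-s}(\rho)$ norm once $\sigma$ is taken large enough. (In fact $\Phi^{\GFF}_{0,t_0}$ is already pointwise-defined and $C^\infty$, so the $C^{-s}(\rho)$ bound is very soft; the only reason to keep a negative index is convenience.)

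\textbf{The drift term.} For $P_T\Phi^{\Delta(\beta,z|\Lambda,m)}_{0,t_0}$ the point is that this is a \emph{deterministic} bound: from the SDE \eqref{e:SDE} we have
\begin{equation}
  \Phi^{\Delta}_{0,t_0} = -\int_0^{t_0} e^{-m^2 s} e^{\Delta_x s}\nabla_\varphi V_s(\Phi_s,z,x|\Lambda,m)\,ds,
\end{equation}
and applying $P_T$ only subtracts a spatial constant, so $[P_T\Phi^{\Delta}_{0,t_0}]_{C^r}=[\Phi^{\Delta}_{0,t_0}]_{C^r}$ while the $L^\infty(\rho)$ part of $P_T\Phi^{\Delta}_{0,t_0}$ is controlled by $\|\Phi^\Delta_{0,t_0}\|_{L^\infty}+|(\Phi^\Delta_{0,t_0},\eta_T)|$, both finite. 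Now I would invoke the \emph{uniform} small-$t$ estimates of Proposition~\ref{prop:Vt-infvol}(i): using $\|(\sqrt t\nabla)^k e^{\Delta_x t}\nabla_\varphi V_t(\varphi,z|\Lambda,m)\|_{L^\infty}\le C_k(\beta,z)t^{-\beta/8\pi}$ for $t\le t_0$, the same Hölder-interpolation argument as in \eqref{e:Vt-Holder} gives $\|e^{\Delta_x s}\nabla_\varphi V_s\|_{C^r}\lesssim s^{-\beta/8\pi-r/2}$, and since $r<2-\beta/4\pi$ the exponent $-\beta/8\pi-r/2>-1$ so the integral over $s\in(0,t_0)$ converges, yielding $\|\Phi^\Delta_{0,t_0}\|_{C^r}\le C_r(\beta,z)$ uniformly in $\Lambda$ and $m\in(0,1]$. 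The weight $\rho$ plays no role here since the bound is already a global $C^r(\R^2)$ bound.

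\textbf{The large-scale term.} The genuinely non-trivial piece is $P_T\Phi^{\SG(\beta,z|\Lambda,m)}_{t_0}$, and here the assumed moment bound \eqref{e:SG-moments-bis} must be used, because this field still ``sees'' the symmetry breaking / zero mode of the sine-Gordon measure and is not deterministically controlled. The strategy is: $\Phi^{\SG}_{t_0}$ is a smoothed-at-scale-$t_0$ version of the full sine-Gordon field $\Phi^{\SG}_0$, i.e.\ it is obtained from $\varphi\sim\nu^{\SG(\beta,z|\Lambda,m)}$ by convolving against a fixed smooth kernel (the one with covariance $C_{t_0}$ appearing implicitly, or more precisely $\Phi^{\SG}_{t_0}=\Phi^{\SG}_0-\Phi^\Delta_{0,t_0}-\Phi^{\GFF}_{0,t_0}$ — but cleaner is to note $\Phi^{\SG}_{t_0}=e^{\Delta t_0/2}\cdot(\text{something})$; in any case it is a smooth function of $\varphi$ tested against translates of a fixed Schwartz function). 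Then for each multi-index and each pair of points one writes $\nabla^k P_T\Phi^{\SG}_{t_0}(x)$ as $(\varphi, g_{x}^{(k)} - \widehat{g_x^{(k)}}(0)\eta_T)$ for an explicit smooth, rapidly decaying $g_x^{(k)}$ (a derivative of a heat kernel centered at $x$), apply \eqref{e:SG-moments-bis} to get $\E|\nabla^k P_T\Phi^{\SG}_{t_0}(x)|^p\le C_p\|g_x^{(k)}\|^p$, control the $x$-dependence of $\|g_x^{(k)}\|$ by a polynomial weight, and similarly bound increments $\nabla^k P_T\Phi^{\SG}_{t_0}(x)-\nabla^k P_T\Phi^{\SG}_{t_0}(y)$ by $(\varphi, g^{(k)}_x - g^{(k)}_y - (\ldots)\eta_T)$ with $\|g^{(k)}_x-g^{(k)}_y\|\lesssim|x-y|$ for $|x-y|\le1$. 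A Kolmogorov-type / Besov-embedding argument (as in the proof of Proposition~\ref{prop:GFF-Besov}) then upgrades these pointwise and increment moment bounds to $\E\|P_T\Phi^{\SG}_{t_0}\|_{C^{r}(\rho)}^p\le C_{r,p}(\beta,z)$ once $\sigma$ is large enough and $p$ large enough relative to the smoothness loss. \textbf{The main obstacle} is precisely this last term: one must verify that the norm $\|\cdot\|$ in which \eqref{e:SG-moments-bis} holds — with its explicit form \eqref{e:moments-norm} involving $L^{3/2}$ and $L^2$ of $\hat f$ and $\hat f/|p|$ — is compatible with taking $f=g_x^{(k)}$ a heat-kernel derivative and with the spatial localization, i.e.\ that $\sup_x\rho(x)^{c}\|g_x^{(k)}\|<\infty$ and $\|g^{(k)}_x-g^{(k)}_y\|\lesssim|x-y|^\theta$, which requires the (mild) regularization at scale $t_0$ to kill the small-$p$ singularity of $|p|^{-1}$ and careful bookkeeping of the polynomial weight under translation. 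Having established \eqref{Phit0-nocenter-tightCk-bis}, the tightness conclusion \eqref{e:tight} is immediate from the compact embeddings $C^{r'}(\rho)\hookrightarrow C^{r}(\rho')$, $C^{-s}(\rho)\hookrightarrow C^{-s'}(\rho')$ with $r'>r$, $s'>s$ and $\rho'/\rho\to0$, together with Markov's inequality applied to the uniform moment bounds.
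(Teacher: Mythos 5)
Your treatment of the second and third terms (the drift $\Phi^{\Delta}_{0,t_0}$ and the Gaussian piece $\Phi^{\GFF(m)}_{0,t_0}$) matches the paper: the drift is handled by integrating the uniform estimates of Proposition~\ref{prop:Vt-infvol}~(i) and interpolating to H\"older, and the Gaussian term is a standard weighted Besov estimate. The gap is in the first term, and it is a genuine one.

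Your key claim for $P_T\Phi^{\SG}_{t_0}$ --- that $\Phi^{\SG}_{t_0}$ ``is a smooth function of $\varphi$ tested against translates of a fixed Schwartz function'' --- is false, and this is precisely where your Kolmogorov argument cannot get started. The field $\Phi^{\SG}_{t_0}$ is the state of the backward SDE at scale $t_0$; it is measurable with respect to the scales $\ge t_0$ of the driving free field only, and is related to the full field $\varphi = \Phi^{\SG}_0$ by the \emph{additive} decomposition $\Phi^{\SG}_0 = \Phi^{\SG}_{t_0} + \Phi^{\Delta}_{0,t_0} + \Phi^{\GFF}_{0,t_0}$, not by a deterministic linear (convolution) map. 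Consequently one cannot write $\nabla^k\Phi^{\SG}_{t_0}(x) = (\varphi, g_x^{(k)})$ for any deterministic $g_x^{(k)}$, and applying the moment assumption~\eqref{e:SG-moments-bis} to a heat-kernel derivative centered at $x$ gives moments of $(\Phi^{\SG}_0, g_x^{(k)})$, not of $\nabla^k\Phi^{\SG}_{t_0}(x)$. You flag the correct identity $\Phi^{\SG}_{t_0}=\Phi^{\SG}_0-\Phi^{\Delta}_{0,t_0}-\Phi^{\GFF}_{0,t_0}$ parenthetically but then abandon it in favour of the incorrect convolution picture on which your argument hinges.

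The paper's actual proof (Proposition~\ref{prop:Phit0-tightCk} together with Lemmas~\ref{lem:thirdderivative} and~\ref{lem:Phit0-moment}) uses a different and more economical mechanism that you do not have. The first key observation is that \emph{all derivatives of $\Phi^{\Delta}_{t_0}$ of order $k>2$ are bounded by a deterministic constant}, uniformly in $\Lambda$ and $m$: this follows by integrating $\|\nabla_x^k e^{\Delta_x t}\nabla_\varphi V_t\|_{L^\infty}\lesssim t^{-k/2}$ from Proposition~\ref{prop:Vt-infvol}~(ii) over $t\ge t_0$. Combined with elementary Gaussian estimates on $\Phi^{\GFF}_{t_0}$, this controls $\Phi^{\SG}_{t_0}$ minus its degree-two Taylor polynomial at the origin, \emph{without any use of the moment assumption}. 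The only remaining randomness to control is then a finite family of Taylor coefficients of degree $\le 2$, and these are bounded by pairing $\Phi^{\SG}_{t_0}=\Phi^{\SG}_0-\Phi^{\SG}_{0,t_0}$ against finitely many compactly supported test functions constructed to extract the coefficients (the dualization trick in the proof of Proposition~\ref{prop:Phit0-tightCk}), applying~\eqref{e:SG-moments-bis} to the $\Phi^{\SG}_0$ part and the deterministic/Gaussian bounds to $\Phi^{\SG}_{0,t_0}$. If you repaired your argument to use the subtraction, you would still be missing this deterministic bound on the high derivatives of $\Phi^{\Delta}_{t_0}$, which is what allows the paper to avoid a uniform-in-$x$ H\"older increment estimate altogether and reduce the use of the moment assumption to finitely many test functions.
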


The main estimate is proved in Section~\ref{sec:infvol-tightness}; the remainder of the proof
is completed as follows.

\begin{proof}
By the compactness of the embeddings $C^{r}(\rho) \subset C^{r'}(\rho')$ if $r'<r$ and
$\rho'/\rho \to 0$, it suffices to show \eqref{Phit0-nocenter-tightCk-bis}.
Tightness then follows after decreasing the exponents and weights slightly.
  
The bound on the first term in  \eqref{Phit0-nocenter-tightCk-bis} is shown in Section~\ref{sec:infvol-tightness} below,
and the bound on the third term is a standard Gaussian estimate for which details are included in Appendix~\ref{sec:coupling-infvol-Gauss}.

For the second term, it is immediate from
the representation of $\Phi^\Delta$ in \eqref{e:SDE-bis} and
the uniform estimates on
the renormalized potential from Proposition~\ref{prop:Vt-infvol} that
\begin{equation}
  \|\Phi^{\Delta(\beta,z|\Lambda,m)}_{0,t_0}\|_{C^{r}(\rho)}
  \leq
  \|\Phi^{\Delta(\beta,z|\Lambda,m)}_{0,t_0}\|_{C^{r}(\R^2)}
  \lesssim 1,
\end{equation}
where we have interpolated the derivative estimates to H\"older estimates as in \eqref{e:Vt-Holder}.
In particular, from the second inequality in the line above,
\begin{equation}
  (\eta_T,\Phi_{0,t_0}^{\Delta(\beta,z|\Lambda,m)}) \leq \|\eta_T\|_{L^1}\|\Phi_{0,t_0}^{\Delta(\beta,z|\Lambda,m)}\|_{L^\infty} \lesssim 1,
\end{equation}
so that also $\|P_T\Phi^{\Delta(\beta,z|\Lambda,m)}_{0,t_0}\|_{C^{r}(\rho)} \leq \|P_T\Phi^{\Delta(\beta,z|\Lambda,m)}_{0,t_0}\|_{C^{r}(\R^2)}  \lesssim 1$.
These bounds on $\Phi^\Delta$ are actually deterministic.
\end{proof}

The above tightness lemma implies Proposition~\ref{prop:coupling-infvol-bis} as follows.

\begin{proof}[Proof of Proposition~\ref{prop:coupling-infvol-bis}]
  By Lemma~\ref{lem:tight}, we can find a sequences $m_i\to 0$, $\Lambda_j \to\R^2$, $T_k\to\infty$
  such that the triple in \eqref{e:tight} converges in distribution as first $m_i\to 0$ then $\Lambda_j \to \R^2$ and finally $T_k \to \infty$.
  Denote a random variable on the space $C^{r}(\rho)\times C^{r}(\rho)\times C^{-s}(\rho)$ with the corresponding limiting distribution by
  \begin{equation}
    ( \Phi^{\SG(\beta,z)}_{t_0}, \Phi^{\Delta(\beta,z)}_{0,t_0},\Phi^{\GFF(0)}_{t,t_0}).
  \end{equation}
By construction and the results of Section~\ref{sec:coupling-finvol},
\begin{equation}
  \Phi^{\SG(\beta,z|\Lambda,m)}_0 = \Phi^{\Delta(\beta,z|\Lambda,m)}_{0,t_0} + \Phi^{\GFF(m)}_{0,t_0} + \Phi_{t_0}^{\SG(\beta,z|\Lambda,m)} \sim \nu^{\SG(\beta,z|\Lambda,m)}.
\end{equation}
By the  construction of $\nu^{\SG(\beta,z)}$, see Section~\ref{sec:mixing},
in distribution as $m\to 0$  then $\Lambda \to \R^2$  then $T \to \infty$,
\begin{equation}
  P_T \Phi^{\SG(\beta,z|\Lambda,m)}_0 \to \nu^{\SG(\beta,z)}.
\end{equation}
On the other hand, by the above, if $(T,\Lambda,m)=(T_k,\Lambda_j,m_i)$ and $i\to\infty$ then $j\to\infty$ then $k\to\infty$,
then in distribution (by the mapping theorem for weak convergence \cite[Theorem 2.7]{MR1700749}):
\begin{equation}
  P_T \Phi^{\SG(\beta,z|\Lambda,m)}_0 \to \Phi^{\Delta(\beta,z)}_{0,t_0} + \Phi^{\GFF(0)}_{0,t_0} + \Phi_{t_0}^{\SG(\beta,z)}.
\end{equation}
The right-hand side provides the desired decomposition of $\varphi \sim \nu^{\SG(\beta,z)}$ into $\varphi = Z+\tilde\varphi$ where
\begin{equation}
  Z = \Phi^{\GFF(0)}_{0,t_0},
\end{equation}
which is essentially a massive GFF,
and the H\"older continuous field
\begin{equation}
  \tilde\varphi = \Phi^{\Delta(\beta,z)}_{0,t_0}+\Phi_{t_0}^{\SG(\beta,z)} \in C^{r}(\rho).
\end{equation}
This completes the proof of items (i) and (ii).
The moment bound of item (iii) holds because $\Phi^{\Delta}$ is deterministically bounded
and the moments of $\Phi^{\SG(\beta,z)}_{t_0}$ are bounded by Lemma~\ref{lem:tight}.
\end{proof}

\subsection{Decomposition and convergence in infinite volume -- proof of Proposition~\ref{prop:coupling-infvol-convergence-bis}}

By an application of the Skorokhod representation theorem \cite[Theorem~6.7]{MR1700749},
the weak convergence (along a subsequence)
implied by the tightness of Lemma~\ref{lem:tight} 
also implies that there is some probability space on which random variables with the same distributions as 
\eqref{e:tight} are defined such that the convergence is pointwise.
Unfortunately, this only provides a coupling in which
\begin{equation}
  \|\Phi^{\GFF(m)}_{0,t_0}-\Phi^{\GFF(0)}_{0,t_0}\|_{C^{-s}(\rho)} \to 0,
\end{equation}
while on the original probability space this convergence actually takes place in the space $C^{r}(\rho)$,
for suitable positive $r>0$.
Sufficiently high regularity is important to understand the convergence of the corresponding IMC.
To prove Proposition~\ref{prop:coupling-infvol-convergence},
we will therefore construct a coupling with improved regularity for the convergence of the difference.

To construct the desired probability space, 
we again start from the Skorokhod  representation theorem,
but now applied only to the tight family of random variables
$P_{T}\Phi_{t_0}^{\SG(\beta,z|\Lambda,m)} \in C^k(\rho)$.
The Skorokhod representation theorem implies that these random variables can be simultaneously defined on some probability space such that, almost surely,
\begin{equation} \label{e:infvol-prob-space1}
  P_{T}\Phi_{t_0}^{\SG(\beta,z|\Lambda,m)} \to \Phi^{\SG(\beta,z)}_{t_0} \quad \text{in $C^k(\rho)$,}
\end{equation}
where here (and implicitly from now on) the limit means that $(T,\Lambda,m) = (T_k,\Lambda_j,m_i)$ with $i\to \infty$ then $j\to \infty$ and then $k\to\infty$.
We extend the probability space by an independent cylindrical Brownian motion $(W_t)$ on $L^2(\R^2)$, as in Section~\ref{sec:coupling},
and also define, for $t<t_0$,
\begin{equation} \label{e:infvol-prob-space2}
  \Phi^{\GFF(m)}_{t,t_0} = \int_t^{t_0} e^{-\frac12 m^2 u}e^{\frac12 \Delta u} \, dW_u.
\end{equation}
As stated in Lemma~\ref{lem:GaussXrho-bis} below and proved in Appendix~\ref{sec:coupling-infvol-Gauss} by standard arguments,
these processes take values in the space $X^{-s}(\rho) \subset C((0,t_0], C^0(\rho))$, where for $s>0$, we set
\begin{equation}
  \norm{\chi}_{X^{-s}(\rho)} = \sup_{t\in (0,t_0]} t^{s} \|\chi_t\|_{C^0(\rho)},
\end{equation}
and define $X^{-s}(\rho)$ as the closure under this norm of the compactly supported smooth functions on $(t,x)\in (0,t_0] \times \R^2$.
This definition ensures $X^{-s}(\rho)$ is a separable Banach space. For $\delta>0$, we also define a H\"older version with norm
\begin{equation}
  \norm{\chi}_{X^{-s,\delta}(\rho)} = \|\chi\|_{X^{-s}(\rho)} + \sup_{t \in (0,t_0]} t^{s} \sup_{1/2 <r<1} \frac{\|\chi_{rt}-\chi_{t}\|_{C^{\delta}(\rho)}}{(\log r)^\delta}
  .
\end{equation}
The Arzelà--Ascoli theorem implies that the embedding $X^{-s',\delta'}(\rho') \subset X^{-s}(\rho)$
is compact if $s'<s$, $\rho/\rho' \to 0$, and $\delta>0$, see Appendix~\ref{sec:coupling-infvol-Gauss}.
The next lemma (which is also proved in Appendix~\ref{sec:coupling-infvol-Gauss})
shows that \eqref{e:infvol-prob-space2} indeed takes values in this space and converges as $m\to 0$ in a suitable sense.

\begin{lemma} \label{lem:GaussXrho-bis}
  For any $s>0$, $\delta>0$ small, and $\rho(x)=(1+|x|^2)^{-\sigma/2}$ with $\sigma>0$, almost surely,
  \begin{equation}
    \sup_{m \geq 0}\norm{\Phi^{\GFF(m)}}_{X^{-s}(\rho)}\leq \sup_{m\geq 0}     \norm{\Phi^{\GFF(m)}}_{X^{-s,\delta}(\rho)} < \infty,
  \end{equation}
  and
  \begin{equation}
    \lim_{m\to 0}\norm{\Phi^{\GFF(0)}-  \Phi^{\GFF(m)}}_{X^{-s}(\rho)}= 0.
  \end{equation}
  Moreover, for any $r<1$, with convergence in $L^p$ and (and thus almost surely along a subsequence),
  \begin{equation} \label{e:Phi0m-conv}
    \lim_{m\to 0} \norm{\Phi^{\GFF(0)}_{0,t_0}-\Phi^{\GFF(m)}_{0,t_0}}_{C^{r}(\rho)} = 0.
  \end{equation}
\end{lemma}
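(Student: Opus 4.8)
\textbf{Proof plan for Lemma~\ref{lem:GaussXrho-bis}.}
The statement is a collection of standard Gaussian regularity and convergence estimates for the truncated decomposed free field $\Phi^{\GFF(m)}_{t,t_0} = \int_t^{t_0} e^{-\frac12 m^2 u}e^{\frac12\Delta u}\,dW_u$, so the plan is to reduce everything to Kolmogorov-type continuity criteria plus dominated convergence after computing explicit covariances. First I would record the basic pointwise covariance bounds: for $t\in(0,t_0]$,
\begin{equation}
  \E[\Phi^{\GFF(m)}_{t,t_0}(x)^2] = \int_t^{t_0} e^{-m^2 u}\, e^{\Delta u}(x,x)\,du = \frac{1}{4\pi}\int_t^{t_0} e^{-m^2 u}\frac{du}{u} \lesssim \log\frac{t_0}{t} + 1,
\end{equation}
uniformly in $m\ge 0$, and more generally Hölder-type increment bounds in $x$ and in the scale variable $t$. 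The relevant point is that $\dot C^{m}_u(x,y) = e^{-m^2 u}e^{\Delta u}(x,y)$ has Gaussian decay in $|x-y|/\sqrt u$ and is smooth for $u\ge t>0$, so $\Phi^{\GFF(m)}_{t,t_0}$ is a smooth Gaussian field for each fixed $t>0$ with derivatives blowing up polynomially as $t\to 0$; this gives, after a Kolmogorov/Garsia--Rodemich--Rumsey argument in the combined variable $(t,x)$ with the logarithmic time-weight built into the $X^{-s,\delta}(\rho)$ norm, the almost sure finiteness $\sup_{m\ge 0}\|\Phi^{\GFF(m)}\|_{X^{-s,\delta}(\rho)}<\infty$. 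The polynomial weight $\rho(x)=(1+|x|^2)^{-\sigma/2}$ is handled by the usual trick: bound the field on unit cubes and sum $\rho$ over the lattice of cube centers, which converges for $\sigma$ large; uniformity in $m\in[0,\infty)$ is automatic since dropping $e^{-m^2 u}\le 1$ only increases all bounds, so one can work with $m=0$ throughout for the a priori bounds.

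For the convergence statement $\lim_{m\to 0}\|\Phi^{\GFF(0)}-\Phi^{\GFF(m)}\|_{X^{-s}(\rho)}=0$, I would write the difference as a single stochastic integral
\begin{equation}
  \Phi^{\GFF(0)}_{t,t_0}(x) - \Phi^{\GFF(m)}_{t,t_0}(x) = \int_t^{t_0} (1-e^{-\frac12 m^2 u}) e^{\frac12\Delta u}(x,\cdot)\,dW_u,
\end{equation}
compute its variance as $\frac{1}{4\pi}\int_t^{t_0}(1-e^{-\frac12 m^2 u})^2 \frac{du}{u}$, which is $O(m^2 t_0)$ uniformly in $t\le t_0$, and then run the same Kolmogorov argument on the \emph{difference} field (now in the slightly weaker space $X^{-s}(\rho)$ without the extra $\delta$-Hölder gain) to upgrade the pointwise-variance smallness to smallness of the $X^{-s}(\rho)$ norm in $L^p$; almost sure convergence along the original $m_i\to 0$ then follows, possibly after passing to a further subsequence, and since the full sequence is shown to converge in $L^p$ one gets the clean statement. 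The final claim \eqref{e:Phi0m-conv} concerns the large-scale-regularized field $\Phi^{\GFF(m)}_{0,t_0}$ evaluated down to scale $0$; here the point is that the integral runs over the \emph{bounded} interval $(0,t_0]$ rather than to $+\infty$, so $\Phi^{\GFF(m)}_{0,t_0}$ has the regularity of a GFF cut off at a fixed scale and is in particular in $C^r(\rho)$ for any $r<1$ (indeed any $r<1$ works because the small-scale integral $\int_0^{t_0}\frac{du}{u}$ diverges only logarithmically, giving Hölder regularity of every order below $1$ but not Lipschitz); the difference field again has variance $O(m^2)$ times a bounded constant, so the same scheme gives $L^p$ (hence almost-sure-along-a-subsequence) convergence in $C^r(\rho)$.

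The only mild subtlety — and the step I would treat most carefully — is the joint continuity in the scale variable $t$ encoded in the $X^{-s,\delta}(\rho)$ norm, i.e.\ controlling $\|\chi_{rt}-\chi_t\|_{C^\delta(\rho)}$ by $(\log r)^\delta$ uniformly. This requires estimating $\E[(\Phi^{\GFF(m)}_{rt,t_0}(x)-\Phi^{\GFF(m)}_{t,t_0}(x))^2] = \frac{1}{4\pi}\int_{rt}^{t}e^{-m^2u}\frac{du}{u}\le \frac{1}{4\pi}\log\frac1r$ together with a matching increment bound in $x$ at scale $\sqrt t$, and then applying a two-parameter chaining/Garsia--Rodemich--Rumsey estimate with the logarithmic modulus; this is routine but needs the weight $\rho$ threaded through consistently. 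Everything else — the a priori bounds, the variance computations, the dominated-convergence passage to $m=0$ — is standard Gaussian analysis, and I would simply cite the general principles (e.g.\ as in the references for the scale decomposition such as \cite{MR4798104} and the Gaussian estimates in Appendix~\ref{app:Gauss}) rather than reproduce the chaining arguments in detail. Since the lemma is explicitly stated to be proved in Appendix~\ref{sec:coupling-infvol-Gauss} ``by standard arguments,'' the write-up should be a concise assembly of these ingredients rather than a from-scratch development.
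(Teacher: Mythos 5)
Your proposal correctly identifies the Kolmogorov--Garsia--Rodemich--Rumsey argument in the joint scale-and-space variable that establishes the a.s.\ finiteness of $\|\Phi^{\GFF(0)}\|_{X^{-\kappa,\delta}(\rho)}$ (and the Sobolev-embedding argument for \eqref{e:Phi0m-conv} is also essentially what the paper does). However, there is a genuine gap in how you pass from this single-$m$ estimate to the almost-sure statements over the whole family $m\in[0,\infty)$.

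The first two displays of the lemma assert, almost surely, that $\sup_{m\geq 0}\|\Phi^{\GFF(m)}\|_{X^{-s,\delta}(\rho)}<\infty$ and that $\|\Phi^{\GFF(0)}-\Phi^{\GFF(m)}\|_{X^{-s}(\rho)}\to 0$ as $m\to 0$ (not just along a subsequence). You propose to get these from covariance/moment bounds that are uniform in $m$ ``since dropping $e^{-m^2u}\le 1$ only increases all bounds.'' But the inequality $e^{-m^2 u}\le 1$ is a bound on the \emph{covariance}, hence on moments: it gives $\E[\|\Phi^{\GFF(m)}\|^p]\lesssim 1$ uniformly in $m$, which yields a.s.\ finiteness for each \emph{fixed} $m$, but it does not control the supremum over the uncountable family, and it does not upgrade $L^p$-smallness of the difference to a.s.\ convergence of the full net $m\to 0$ (you yourself note you only get a.s.\ convergence along a subsequence). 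The fields $\Phi^{\GFF(m)}$ are not pathwise dominated by $\Phi^{\GFF(0)}$ just because their covariances are. The paper closes this gap with an algebraic pathwise identity: since $e^{-\frac12 m^2 s}$ is deterministic, stochastic integration by parts gives
\begin{equation}
  \Phi^{\GFF(m)}_{t,t_0}
  = e^{-\frac12 m^2 t}\,\Phi^{\GFF(0)}_{t,t_0}
    - \tfrac12 m^2\int_t^{t_0} e^{-\frac12 m^2 s}\,\Phi^{\GFF(0)}_{s,t_0}\,ds,
\end{equation}
which expresses every $\Phi^{\GFF(m)}$ deterministically in terms of the $m=0$ field. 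On the a.s.\ event where $\|\Phi^{\GFF(0)}\|_{X^{-\kappa,\delta}(\rho)}<\infty$ for a slightly smaller $\kappa<s$, this identity immediately gives a pathwise, uniform-in-$m$ bound on $\|\Phi^{\GFF(m)}\|_{X^{-s,\delta}(\rho)}$ and, upon subtracting, the bound $\|\Phi^{\GFF(0)}-\Phi^{\GFF(m)}\|_{X^{-s}(\rho)}\lesssim m^2\|\Phi^{\GFF(0)}\|_{X^{-\kappa}(\rho)}\to 0$ as $m\to 0$, with no need for subsequences. You should either incorporate this reduction, or supply some other argument (e.g.\ monotonicity in $m$ combined with Borel--Cantelli along a dyadic sequence, plus a separate continuity-in-$m$ argument) that upgrades the moment bounds to the claimed a.s.\ statements; as written, your plan does not establish them.
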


The final ingredient that we need is that,
given $\Phi_{t_0}^\SG$ and the decomposed free field $(\Phi^{\GFF}_{t,t_0})_{t\in (0,t_0]}$ above,
we can solve the SDE \eqref{e:SDE} from $t_0$ to $t$ and that the solution is uniformly continuous in $(\Phi^\GFF_{t,t_0})$ in $X^{-s}(\rho)$.
To simplify notation, for $t\in (0,t_0]$ and $\psi \in C^0(\rho)$, define
\begin{equation} \label{e:B-Phi}
  B_t(\psi) = e^{-m^2 t} e^{\Delta_x t} \nabla_\varphi V_t(\psi + \Phi^{\SG(\beta,z|\Lambda,m)}_{t_0}|\Lambda,m).
\end{equation}
Since $\Phi^{\SG}_{t_0}$ is random, also $B$ is random (and it also depends on the parameters $\Lambda$ and $m$),
but  Proposition~\ref{prop:Vt-infvol} guarantees the following  deterministic estimates uniformly in $\psi$:
\begin{equation} \label{e:B-nablaB}
  \|(\sqrt{t}\nabla_x)^kB_t(\psi)\|_{L_x^\infty} \leq C t^{-\beta/8\pi}
\end{equation}
and
\begin{equation} \label{e:B-Lip}
  \|(\sqrt{t}\nabla_x)^k [B_t(\psi)-B_t(\psi')]\|_{L_x^\infty(\rho)} \leq C t^{-\beta/8\pi}\|\psi-\psi'\|_{L_x^\infty(\rho)}.
\end{equation}
In particular, interpolating the derivative estimate \eqref{e:B-nablaB} with $k=0$ and $k=1$,
it follows that for any $r \in (0,1)$,
\begin{equation} \label{e:B-Cr}
  \|B_t(\psi)\|_{C^r_x} \leq C t^{-\beta/8\pi-r/2},
\end{equation}
and analogously for the Lipschitz in $\psi$ estimate.

\begin{lemma} \label{lem:H0}
  Assume that $\beta/8\pi+r/2+s <1$ with $r>0 $ and $s>0$.
  Given any $\varphi^G = (\varphi_{t,t_0}^G) \in X^{-s}(\rho)$ there is a unique
  $\varphi^\Delta =  (\varphi^\Delta_{t,t_0}) \in C([0,t_0], C^{r}(\rho))$ solving
  \begin{equation} \label{e:H0-ODE}
    \varphi^{\Delta}_{t,t_0} = -\int_t^{t_0} B_u(\varphi^G_{u,t_0}+\varphi^\Delta_{u,t_0}) \, du, \qquad \varphi^\Delta_{t_0,t_0} = 0.
  \end{equation}
  The solution map $H_0: X^{-s}(\rho) \to C^{r}(\rho)$  defined such that $\varphi^\Delta_{0,t_0} = H_0(\varphi^G)$ satisfies
  \begin{equation} \label{e:H0-bounds}
    \|H_0(\varphi^G)\|_{C^r(\R^2)} \leq C,
    \qquad
    \|H_0(\varphi^G)-H_0(\tilde\varphi^G)\|_{C^r(\rho)} \leq C \norm{\varphi^G-\tilde\varphi^G}_{X^{-s}(\rho)},
  \end{equation}
  where the constant $C$ is independent of $\Lambda \subset \R^2$ and $m>0$.
\end{lemma}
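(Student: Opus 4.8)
The statement is a standard Picard-type well-posedness result for the integral equation \eqref{e:H0-ODE}, with the twist that the ``driving'' term $\varphi^G$ lives only in the weighted space $X^{-s}(\rho)$ with a mild singularity as $u\to 0$, and that $B_u$ has the integrable singularity $u^{-\beta/8\pi}$ (together with an extra $u^{-r/2}$ when measured in $C^r$). The plan is to run a contraction mapping argument backwards in time from $t_0$, on the Banach space $Y = C([0,t_0],C^r(\rho))$ (or, for the a priori bound, on $C([0,t_0],C^r(\R^2))$), and then read off the two estimates in \eqref{e:H0-bounds} from the fixed point. First I would observe that for $\psi^\Delta \in Y$ the composite $u\mapsto B_u(\varphi^G_{u,t_0}+\psi^\Delta_{u,t_0})$ is well-defined: the argument lies in $C^0(\rho)$ with $\|\varphi^G_{u,t_0}+\psi^\Delta_{u,t_0}\|_{C^0(\rho)} \lesssim u^{-s} + 1$, and by \eqref{e:B-Cr} (applied in weighted form, using $\rho(x)/\rho(y)\lesssim\rho(x-y)^{-1}$ together with the Gaussian decay of the heat kernel as in the proof of Proposition~\ref{prop:Vt-infvol}(i)) we get
\begin{equation}
  \|B_u(\varphi^G_{u,t_0}+\psi^\Delta_{u,t_0})\|_{C^r(\rho)} \leq C u^{-\beta/8\pi - r/2},
\end{equation}
which is integrable near $u=0$ precisely because $\beta/8\pi + r/2 < 1$ (the hypothesis gives the stronger $\beta/8\pi+r/2+s<1$, needed below for the Lipschitz estimate). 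Hence the map
\begin{equation}
  (\Gamma \psi^\Delta)_{t,t_0} = -\int_t^{t_0} B_u(\varphi^G_{u,t_0}+\psi^\Delta_{u,t_0})\, du
\end{equation}
sends $Y$ into $C([0,t_0],C^r(\rho))$ with $\|\Gamma\psi^\Delta\|_{Y}\leq C\int_0^{t_0} u^{-\beta/8\pi-r/2}\,du \leq C$, uniformly in $\Lambda$ and $m$ (this is where uniformity comes from: the constants in Proposition~\ref{prop:Vt-infvol}(i) are $\Lambda$- and $m$-independent).

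For contraction, I would use the weighted Lipschitz bound \eqref{e:B-Lip} in its $C^r(\rho)$ form, namely $\|B_u(\psi)-B_u(\psi')\|_{C^r(\rho)} \leq C u^{-\beta/8\pi-r/2}\|\psi-\psi'\|_{C^0(\rho)}$, so that for $\psi^\Delta,\tilde\psi^\Delta\in Y$,
\begin{equation}
  \|(\Gamma\psi^\Delta)_{t,t_0} - (\Gamma\tilde\psi^\Delta)_{t,t_0}\|_{C^r(\rho)}
  \leq C\int_t^{t_0} u^{-\beta/8\pi-r/2} \sup_{v\in[t,t_0]}\|\psi^\Delta_{v,t_0}-\tilde\psi^\Delta_{v,t_0}\|_{C^0(\rho)}\, du.
\end{equation}
Since $\int_0^{t_0} u^{-\beta/8\pi-r/2}\,du \to 0$ as $t_0\to 0$, the map is a contraction on $Y$ once $t_0$ is small enough; but here $t_0=t_0(\beta,z)$ is already fixed by Proposition~\ref{prop:Vt-infvol}, so instead I would either (a) introduce an exponentially weighted norm in the time variable $\sup_t e^{\lambda w(t)}\|\cdot\|$ analogous to the weight used in the proof of Proposition~\ref{prop:SDE-existence}, with $w$ chosen so that the kernel $u^{-\beta/8\pi-r/2}$ is absorbed, or (b) iterate the contraction on a partition $t_0 > t_1 > \cdots$ with each subinterval short enough. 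Either way one gets a unique fixed point $\varphi^\Delta\in Y$, and the a priori bound $\|H_0(\varphi^G)\|_{C^r}\leq C$ follows by running the same estimate in the unweighted space $C^r(\R^2)$ (for which \eqref{e:B-nablaB} and \eqref{e:B-Cr} hold without the weight and give a bound independent of $\varphi^G$ because $B_u$ is bounded in $C^r_x$ uniformly in its argument). The second bound in \eqref{e:H0-bounds} is the Lipschitz dependence on the data: writing $\delta^\Delta = H_0(\varphi^G)-H_0(\tilde\varphi^G)$ as a solution of the linearized equation driven by $\varphi^G-\tilde\varphi^G$, and using $\|\varphi^G_{u,t_0}-\tilde\varphi^G_{u,t_0}\|_{C^0(\rho)} \leq u^{-s}\norm{\varphi^G-\tilde\varphi^G}_{X^{-s}(\rho)}$, one obtains
\begin{equation}
  \sup_{t}\|\delta^\Delta_{t,t_0}\|_{C^r(\rho)} \leq C\int_0^{t_0} u^{-\beta/8\pi-r/2}\big( u^{-s}\norm{\varphi^G-\tilde\varphi^G}_{X^{-s}(\rho)} + \sup_{v\geq u}\|\delta^\Delta_{v,t_0}\|_{C^0(\rho)}\big)\, du,
\end{equation}
and a Gronwall argument closes it, the first integral being finite thanks to $\beta/8\pi + r/2 + s < 1$ — this exponent condition is exactly what makes the $u^{-s}$ singularity of the driving term still integrable against $u^{-\beta/8\pi-r/2}$.

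\textbf{Main obstacle.} The only genuinely delicate point is keeping all estimates weighted and uniform in $\Lambda$ and $m$ simultaneously: the heat semigroup $e^{\Delta_x t}$ does not commute with the polynomial weight $\rho$, so one must repeatedly trade $\rho(x)$ against $\rho(x-x_1)^{-1}$ times $\rho(x_1)$ and absorb the loss using the Gaussian tail of the kernel, exactly as in the proof of Proposition~\ref{prop:Vt-infvol}(i); combined with the need to handle the non-short $t_0$ (hence an exponential-in-time weighted norm or a finite iteration) this is the part requiring care, though it is entirely routine once the bookkeeping is set up. Everything else — existence, uniqueness, the two displayed bounds — is a direct consequence of the fixed-point construction and the a priori bounds \eqref{e:B-nablaB}–\eqref{e:B-Cr} already available from Proposition~\ref{prop:Vt-infvol}.
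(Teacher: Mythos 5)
Your proposal is correct and follows essentially the same route as the paper: a Picard/contraction argument backwards in time with a bounded exponential-in-time weight (your option (a) is precisely what the paper does, with weight $w(t)=\exp[\int_t^{t_0}2Cs^{-\beta/8\pi}\,ds]\lesssim 1$), followed by reading off the $C^r$ a priori bound from the integral formula and closing the $C^r(\rho)$-Lipschitz estimate with Gronwall, with the exponent condition $\beta/8\pi+r/2+s<1$ making the $u^{-s}$ driving-term singularity integrable. The only minor organizational difference is that the paper first runs the fixed point in the simpler space $C([0,t_0],C^0(\R^2))$ using the $L^\infty$ Lipschitz bound with kernel $u^{-\beta/8\pi}$ (which is less singular than $u^{-\beta/8\pi-r/2}$) and upgrades to $C^r$ regularity afterwards, whereas you propose working directly in $C([0,t_0],C^r(\rho))$ — both are fine, the paper's ordering merely keeps the contraction itself a bit cleaner.
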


\begin{proof}
  Given any $\varphi^G \in C((0,t_0],C^0(\R^2))$ which is   not required to be bounded as $t\to 0$,
  a fixed point argument analogous to that in the proof of  Proposition~\ref{prop:SDE-existence} implies 
  there exists a uniformly bounded solution $\varphi^\Delta \in C([0,t_0],C^0(\R^2))$.
  Indeed, one can use the same argument with weight
  \begin{equation}
     w(t) = \exp\qa{\int_t^{t_0} 2Cs^{-\beta/8\pi}\, ds} \lesssim 1.
  \end{equation}
  Uniqueness also follows from the estimates \eqref{e:B-Lip}:
  if $\varphi^\Delta$ and $\tilde\varphi^\Delta$ are both solutions to \eqref{e:H0-ODE} with the same $\varphi^G$, then \eqref{e:B-Lip} implies
  \begin{equation}
    \|\varphi^\Delta_{t,t_0} -\tilde\varphi^\Delta_{t,t_0}\|_{L^\infty_x} \lesssim \int_t^{t_0} u^{-\beta/8\pi} \|\varphi^\Delta_{u,t_0} -\tilde\varphi^\Delta_{u,t_0}\|_{L_x^\infty} \, du.
  \end{equation}
  Using that $\|\varphi^\Delta_{t_0,t_0} - \tilde \varphi^\Delta_{t_0,t_0}\|_{L^\infty_x}= 0$,
  a Gronwall argument implies that $\varphi^\Delta_{t,t_0} = \tilde\varphi^\Delta_{t,t_0}$ for all $t\in [0,t_0]$.
  Denote the solution by  $\varphi^\Delta_{t,t_0} = H_t(\varphi^G)$. For any $r>0$ such that $\beta/8\pi+r/2<1$,
  by \eqref{e:B-Cr},
  \begin{equation}
    \|H_t(\varphi^G)\|_{C^r(\R^2)} \lesssim \int_t^{t_0} u^{-\beta/8\pi-r/2} \, du \lesssim 1.
  \end{equation}
  In particular, the first estimate in \eqref{e:H0-bounds} holds.
  Similarly, if $\beta/8\pi +r/2+s < 1$,
  \begin{align}
    &\|H_{t}(\varphi^G)- H_{t}(\tilde\varphi^G)\|_{C_x^r(\rho)}
      \nnb
      &\lesssim
      \int_t^{t_0} u^{-\beta/8\pi-r/2} \|\varphi^G_u-\tilde\varphi^G_u\|_{L_x^\infty(\rho)} \, du
        +
    \int_t^{t_0} u^{-\beta/8\pi-r/2}  \|H_{u}(\varphi^G)- H_{u}(\tilde\varphi^G)\|_{L_x^\infty(\rho)} \, du
      \nnb
    &\lesssim
      \|\varphi^G-\tilde\varphi^G\|_{X^{-s}(\rho)}
      +
    \int_t^{t_0} u^{-\beta/8\pi-r/2}  \|H_{u}(\varphi^G)- H_{u}(\tilde\varphi^G)\|_{L_x^\infty(\rho)} \, du
      .
  \end{align}
  Therefore, by another application of Gronwall's inequality,
  \begin{align}
    \|H_{t}(\varphi^G)- H_{t}(\tilde\varphi^G)\|_{C_x^r(\rho)}
    &\lesssim
    \|\varphi^G-\tilde\varphi^G\|_{X^{-s}(\rho)} \exp\qa{C\int_t^{t_0} u^{-\beta/8\pi-r/2} \, du}
    \nnb
    &\lesssim \|\varphi^G-\tilde\varphi^G\|_{X^{-s}(\rho)} .
  \end{align}
  This concludes the proof.
\end{proof}

\begin{proof}[Proof of Proposition~\ref{prop:coupling-infvol-convergence-bis}]
  On the probability space defined around
  \eqref{e:infvol-prob-space1}--\eqref{e:infvol-prob-space2}
  above, we define the solution maps $H_0(\cdot |\Lambda,m) \in C(X^{-s}(\rho),C^r(\rho))$
  in terms of $\Phi^{\SG(\beta,z|\Lambda,m)}_{t_0}$ as in Lemma~\ref{lem:H0} and set,  for bounded $\Lambda\subset \R^2$, $m>0$,
  \begin{equation} \label{e:PhiDelta-H0-finvol}
    \Phi^{\Delta(\beta,z|\Lambda,m)}_{0,t_0} = H_0(\Phi^{\GFF(m)}|\Lambda,m).
  \end{equation}
  Thus $\Phi^{\Delta(\beta,z|\Lambda,m)}_{0,t_0}$ is the solution to the SDE \eqref{e:SDE} (or more precisely the version for the difference field),
  and the results of Section~\ref{sec:coupling-finvol} imply 
  \begin{equation}
     \Phi^{\Delta(\beta,z|\Lambda,m)}_{0,t_0} + \Phi^{\GFF(m)}_{0,t_0} + \Phi^{\SG(\beta,z|\Lambda,m)}_{t_0} \sim \nu^{\SG(\beta,z|\Lambda,m)}.
  \end{equation}

  Given deterministic $\Lambda\subset \R^2$ and $m>0$,
  we would like to view $H_0(\cdot|\Lambda,m)$ as random variable in the space $C(X^{-s}(\rho), C^r(\rho))$
  with the topology of compact convergence. 
  Unfortunately, since $X^{-s}(\rho)$ is not compact, the space $C(X^{-s}(\rho), C^r(\rho))$ does not need to be separable
  (which is important for standard properties of Banach space valued random variables, see \cite[Chapter~1]{MR3236753}).
  However, by \cite[Theorem~2.4.3]{MR1619545}, the space $C(E,F)$ is a Polish space if $E$ is a compact metric space and $F$ is a Polish space
  (where a Polish space is a complete separable metric space).
  We will therefore replace $X^{-s}(\rho)$ by a sequence of compact subsets $K_N$ which  the decomposed free field $\Phi^{\GFF(m)}$ takes values in.
  Let $s'<s$, $r'>r$, $\delta'>0$, and $\rho/\rho' \to 0$ and define,
  for a constant $C>0$ fixed large in terms of the implicit constants of Lemma~\ref{lem:H0}, and all $N\in \N$, 
  \begin{align}
    D &= \overline{\{ \psi \in C^{r}(\rho) : \|\psi\|_{C^{r'}(\rho')} \leq C \}},
    \\
    K_N &= \overline{\{ \varphi^{G} \in X^{-s}(\rho): \|\varphi^G\|_{X^{-{s',\delta'}}(\rho')} \leq N\} }.
  \end{align}
  By the Arzelà--Ascoli theorem (see Section~\ref{sec:coupling-infvol-Gauss} for the formulation in terms of the $X^{-s}$ spaces),
  the spaces $D$ and $K_N$ are compact for any $N<\infty$
  and the solution maps of the last lemma are in $C(K_N,D)$.
  Since the space $C(K_N,D)$ is a separable Banach space,
  we can now view $H_0(\cdot|\Lambda,m)$ as a Banach space valued random variable in $C(K_N,D)$ for any $N$.
  
  Lemma~\ref{lem:H0} further shows that the collection $(H_0(\cdot|\Lambda,m)) \subset C(K_N,D)$ %
  is equicontinuous and that,
  for each $\varphi^G$, the images $H_0(\varphi^G|\Lambda,m)$ take values in the compact space $D$. %
  By the Arzelà--Ascoli theorem in the version \cite[(7.5.7)]{MR349288} now applied to the former family of maps in $C(K_N,D)$
  there are subsequences such that,
  uniformly on $\varphi^G \in K_N$,
  \begin{equation}
    \lim_{j\to\infty}\lim_{i \to 0}H_0(\varphi^{G}|\Lambda_j,m_i) \to H_0(\varphi^G).
  \end{equation}
  In terms of the limiting $H_0$, define the $C^r(\rho)$-valued random variables
  \begin{equation}
    \Phi^{\Delta(\beta,z)}_{0,t_0} = H_0(\Phi^{\GFF(0)}).
  \end{equation}
  Together with \eqref{e:PhiDelta-H0-finvol}, it then follows that, almost surely in $C^r(\rho)$,
  \begin{equation}
    \Phi^{\Delta(\beta,z|\Lambda,m)}_{0,t_0}
    \to \Phi^{\Delta(\beta,z)}_{0,t_0}.
  \end{equation}
  Indeed, since $\Phi^{\GFF(m)} \to \Phi^{\GFF(0)}$ in $X^{-s'}(\rho') = \cup_N K_N$ and $H_0$ converges in $C(K_N,D)$ for every $N$
  and $(H_0(\cdot|\Lambda,m))$ is uniformly continuous in $C(K_N,D)$,
  \begin{align}
    \Phi^{\Delta(\beta,z|\Lambda,m)}_{0,t_0}
    &=
      H_0(\Phi^{\GFF(m)} |\Lambda,m)
      \nnb
    &=
    H_0(\Phi^{\GFF(0)} |\Lambda,m)
      + \qB{ H_0(\Phi^{\GFF(m)}|\Lambda,m)-H_0(\Phi^{\GFF(0)}|\Lambda,m) }
      \nnb
      &\to H_0(\Phi^{\GFF(0)}) =     \Phi^{\Delta(\beta,z)}_{0,t_0}.
  \end{align}
  In summary, together with \eqref{e:infvol-prob-space1} for the first convergence below
  and \eqref{e:Phi0m-conv} for the second convergence below,
  we have obtained a probability space on which, along suitable sequence $m_i\to 0$ and then $\Lambda_j \to \R^2$
  (possibly passing to another subsequence of $m_i \to 0$ to deduce the almost sure convergence from the $L^p$ convergence in \eqref{e:Phi0m-conv}), almost surely
  \begin{gather}
    \|P_T\Phi^{\SG(\beta,z|\Lambda_j,m_i)}_{t_0} - \Phi^{\SG(\beta,z)}_{t_0}\|_{C^k(\rho)} \to 0,
    \\
    \|\Phi^{\GFF(0)}_{0,t_0}-\Phi^{\GFF(m_i)}_{0,t_0}\|_{C^{r}(\rho)} \to 0,
    \\
    \|\Phi^{\Delta(\beta,z|\Lambda_j,m_i)}_{0,t_0}  -    \Phi^{\Delta(\beta,z)}_{0,t_0}\|_{C^{r}(\rho)} \to 0.
  \end{gather}
  To finish the proof, we observe that
  \begin{equation} \label{e:infvol-coupling-final}
    P_T\Phi^{\SG(\beta,z|\Lambda,m)}_0 = \Phi^{\Delta(\beta,z|\Lambda,m)}_{0,t_0} + \Phi^{\GFF(m)}_{0,t_0} + P_T\Phi^{\SG(\beta,z|\Lambda,m)}_{t_0} + X^{T,\Lambda,m}
  \end{equation}
  where
  \begin{equation}
    - X^{T,\Lambda,m} =   (\eta_T, \Phi^{\Delta(\beta,z|\Lambda,m)}_{0,t_0}) +  (\eta_T, \Phi^{\GFF(m)}_{0,t_0}).
  \end{equation}
  Thus
  with $Z = \Phi^{\GFF(0)}_{0,t_0}$ and
  \begin{equation}
    \tilde\varphi_{i,j,k} =  \Phi^{\Delta(\beta,z|\Lambda_j,m_i)}_{0,t_0} + (\Phi^{\GFF(m_i)}_{0,t_0}-\Phi^{\GFF(0)}_{0,t_0}) + P_{T_k}\Phi^{\SG(\beta,z|\Lambda_j,m_i)}_{t_0} + X^{T_k,\Lambda_j,m_i}
  \end{equation}
  it follows that $Z+\tilde\varphi_{i,j,k} \sim (P_{T_k})_\#\nu^{\SG(\beta,z|\Lambda_j,m_i)}$ and with $X_{i,j,k} = X^{T_k,\Lambda_j,m_i}$,
  \begin{equation}
    \tilde\varphi_{i,j,k} + X_{i,j,k} \to \tilde\varphi =  \Phi^{\Delta(\beta,z)}_{0,t_0} + \Phi^{\SG(\beta,z)}_{t_0} + X \qquad \text{in $C^r(\rho)$.}
  \end{equation}
  where $X$ is the limit of $X_{i,j,k}$.
  The distribution of the left-hand side of \eqref{e:infvol-coupling-final} converges to $\nu^{\SG(\beta,z)}$,
  which shows that $Z+\tilde\varphi$ also has this distribution as desired.
\end{proof}

\subsection{Tightness of the large scale field}
\label{sec:infvol-tightness}

As the main step of the proof of Lemma~\ref{lem:tight},
we will show the moment bound on the large scale field $\Phi^{\SG}_{t_0}$ stated in the following Proposition~\ref{prop:Phit0-tightCk}.
We always assume that $\Lambda\subset \R^2$ is bounded and $m>0$ but all estimates are uniform in these parameters.
To simplify the notation, we write
\begin{equation}
  \Phi^\SG = \Phi^{\SG(\beta,z|\Lambda,m)}, \qquad \Phi^\Delta = \Phi^{\Delta(\beta,z|\Lambda,m)}
\end{equation}
in this section.
As $m\to 0$ the zero mode of $\Phi_{t_0}^\SG$ diverges in finite volume and $\Phi_{t_0}^\SG$ is not tight,
but  we will prove that
$P_T\Phi_{t_0}^\SG = \Phi_{t_0}^\SG-(\eta_T,\Phi_{t_0}^\SG)$ is tight in $C^k(\rho)$,
provided that the assumption \eqref{e:SG-moments-bis}  holds (which we proved if $\beta=4\pi$).
Essentially the same proof also shows that the difference $\Phi_{t_0}^{\SG}-\Phi^{\SG}_{t_0}(0)$ is tight
in $C^k(\rho)$ for all $\beta \in (0,6\pi)$.
We will not actually use this result, but include it as it follows from the same argument and could
be of some independent interest.

\begin{proposition} \label{prop:Phit0-tightCk}
  Let $\beta\in(0,6\pi)$ and $z\in \R$, and assume  that \eqref{e:SG-moments-bis} holds.
  Then
  \begin{equation} \label{Phit0-nocenter-tightCk}
    \sup_{T\geq 1}
    \E \qa{ \|P_T\Phi^{\SG}_{t_0} \|_{C^k(\rho)}^p } \leq C_{k,p}(\beta,z).
  \end{equation}
  Moreover, without the assumption \eqref{e:SG-moments-bis},
  \begin{equation} \label{Phit0-tightCk}
    \E \qa{ \| \Phi^{\SG}_{t_0} -\Phi_{t_0}^{\SG}(0)\|_{C^k(\rho)}^p } \leq C_{k,p}(\beta,z).
  \end{equation}
\end{proposition}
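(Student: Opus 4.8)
The goal is to control $\Phi^{\SG}_{t_0}$ after removing its (divergent as $m\to 0$) zero mode. The starting point is the decomposition
\begin{equation} \label{e:plan-decomp}
  \Phi^{\SG}_{t_0} = \Phi^{\GFF}_{t_0} + \Phi^{\Delta}_{t_0},
\end{equation}
where $\Phi^{\GFF}_{t_0}$ is the scale-$t_0$ tail of the decomposed free field and $\Phi^{\Delta}_{t_0} = -\int_{t_0}^\infty e^{-m^2 s} e^{\Delta_x s}\nabla_\varphi V_s(\Phi^{\SG}_s,z,x|\Lambda,m)\,ds$. The term $\Phi^{\Delta}_{t_0}$ is the easy one: by the uniform-in-volume estimates of Proposition~\ref{prop:Vt-infvol} (in particular \eqref{e:gradVt-maxprinciple} and the heat-kernel derivative bound \eqref{e:gradVt-maxprinciple-deriv}), one has the \emph{deterministic} bound $\|\Phi^\Delta_{t_0}\|_{C^k(\R^2)} \lesssim \int_{t_0}^\infty e^{-m^2 s}(1+s^{-\beta/8\pi})(1+s^{-k/2})\,ds \lesssim 1$, uniformly in $\Lambda$ and $m$, since the integrand is integrable for $s\geq t_0>0$. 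Applying $P_T$ only shifts by a bounded constant, so $\|P_T\Phi^\Delta_{t_0}\|_{C^k(\rho)} \lesssim \|\Phi^\Delta_{t_0}\|_{C^k(\R^2)} \lesssim 1$ as well. Hence everything reduces to controlling $P_T\Phi^{\GFF}_{t_0}$, or rather the combination that appears in the sine-Gordon field.

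\textbf{The core point.} The key idea is that $\Phi^{\SG}_{t_0} = \Phi^{\SG}_0 - \Phi^{\SG}_{0,t_0}$ where $\Phi^{\SG}_0 \sim \nu^{\SG(\beta,z|\Lambda,m)}$ is the full sine-Gordon field and $\Phi^{\SG}_{0,t_0} = \Phi^{\GFF}_{0,t_0} + \Phi^{\Delta}_{0,t_0}$ is a small-scale piece. The small-scale piece $\Phi^{\GFF}_{0,t_0}$ is a \emph{massless} Gaussian field (its covariance $\int_0^{t_0} e^{-m^2 s}e^{\Delta s}\,ds$ has a limit as $m\to 0$), and $\Phi^{\Delta}_{0,t_0}$ is deterministically bounded in $C^r(\R^2)$ for $r<2-\beta/4\pi$ by the small-$t$ estimates of Proposition~\ref{prop:Vt-infvol}(i) interpolated as in \eqref{e:Vt-Holder}. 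So the only genuinely dangerous term — the one whose zero mode diverges — is the macroscopic sine-Gordon field $\Phi^{\SG}_0$ itself. For the first assertion \eqref{Phit0-nocenter-tightCk}, I would write
\begin{equation} \label{e:plan-PT}
  P_T\Phi^{\SG}_{t_0} = P_T\Phi^{\SG}_0 - P_T\Phi^{\GFF}_{0,t_0} - P_T\Phi^{\Delta}_{0,t_0},
\end{equation}
handle the last two terms by the deterministic/Gaussian bounds just mentioned (the Gaussian term is controlled by Proposition~\ref{prop:GFF-Besov}-type estimates, uniformly in $m$), and handle $P_T\Phi^{\SG}_0 = \varphi - (\varphi,\eta_T)$ with $\varphi\sim\nu^{\SG(\beta,z|\Lambda,m)}$ using the assumed moment bound \eqref{e:SG-moments-bis} — but applied to the \emph{finite-volume massive} measure, with constants uniform in $\Lambda$ and $m$. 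This is exactly the content of Corollary~\ref{cor:superexp-moments} and its finite-volume precursor (the bounds in Proposition~\ref{prop:moments} and the Gaussian domination bound \eqref{e:SG-variance-0}, which both hold uniformly in $\Lambda,m$): applying $P_T$ to $\varphi$ and pairing against $C^r$-localized test functions (using Besov duality as in the proof of Proposition~\ref{prop:moments}, i.e. the $C^{-s}(\rho)$–$B^r_{1,1}(\rho^{-1})$ pairing) converts the $\|\cdot\|$-norm moment bounds into moments of $\|P_T\Phi^{\SG}_0\|_{C^{-s}(\rho)}$, uniformly in $T\geq 1$, $\Lambda$, $m$. Combining with the smoothing by the remaining scales from $t_0$ to $0$ — more precisely, rewriting $\Phi^{\SG}_{t_0}$ via \eqref{e:plan-decomp} so that the rough object is actually the \emph{smooth} field $\Phi^{\SG}_{t_0}$, which equals $\Phi^{\GFF}_{t_0}$ (a smooth Gaussian field, controlled in $C^k(\rho)$ by standard Gaussian estimates uniform in $m$) plus the bounded $\Phi^\Delta_{t_0}$ — gives \eqref{Phit0-nocenter-tightCk}. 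Actually the cleanest route: $\Phi^{\SG}_{t_0} = \Phi^{\SG}_0 - \Phi^{\GFF}_{0,t_0} - \Phi^{\Delta}_{0,t_0}$, and each term on the right is either covered by \eqref{e:SG-moments-bis} (after $P_T$), or is a smooth Gaussian field with uniform moments, or is deterministically bounded.

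\textbf{The second assertion} \eqref{Phit0-tightCk} is the same argument with $(\eta_T,\cdot)$ replaced by evaluation at the origin. The only difference is that instead of \eqref{e:SG-moments-bis} we use the modulo-constants Gaussian domination bound \eqref{e:SG-variance-0} (valid for all $\beta\in(0,6\pi)$, uniformly in $\Lambda,m$, with free boundary conditions), which gives $\avg{\|\varphi - \varphi(\eta)\|_{C^{-s}(\rho)}^p}_{\SG(\beta,z|\Lambda,m)} \leq C_{p,s}(\beta,z)$ uniformly; then $\Phi^{\SG}_{t_0}(x) - \Phi^{\SG}_{t_0}(0)$ is a difference of values of a $C^k$ field, and one subtracts the common zero mode. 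Since $\varphi\mapsto\varphi - \varphi(0)$ kills constants, the divergent mode drops out and the bound is uniform in $m$; no symmetry-breaking input is needed, which is why this one holds for all $\beta$.

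\textbf{Main obstacle.} The delicate point is not any single estimate but making sure all the pieces in \eqref{e:plan-PT} are controlled \emph{uniformly in $\Lambda\subset\R^2$ and $m>0$ simultaneously}, and that the decomposition $\Phi^{\SG}_{t_0} = \Phi^{\SG}_0 - \Phi^{\GFF}_{0,t_0} - \Phi^{\Delta}_{0,t_0}$ is handled so that the rough regularity of $\varphi\sim\nu^{\SG}$ (only $C^{-s}$) is compatible, after the cancellation of small scales, with the claimed $C^k$ regularity of $\Phi^{\SG}_{t_0}$. Concretely, $\Phi^{\SG}_{t_0}$ genuinely \emph{is} smooth (it equals the smooth Gaussian tail plus a bounded drift, by \eqref{e:plan-decomp}), so the $C^k(\rho)$ bound on $P_T\Phi^{\SG}_{t_0}$ should be obtained directly from \eqref{e:plan-decomp} together with uniform Gaussian estimates on $\Phi^{\GFF}_{t_0}$ and the deterministic bound on $\Phi^\Delta_{t_0}$ — the role of \eqref{e:SG-moments-bis} is only to control the zero mode $(\eta_T,\Phi^{\SG}_{t_0})$ that $P_T$ subtracts, and this is where symmetry breaking (through the hypothesis \eqref{e:SG-moments-bis}, proved via Bosonization for $\beta=4\pi$) enters. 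So the real work is bookkeeping: splitting $(\eta_T,\Phi^{\SG}_{t_0}) = (\eta_T,\Phi^{\SG}_0) - (\eta_T,\Phi^{\GFF}_{0,t_0}) - (\eta_T,\Phi^{\Delta}_{0,t_0})$ and bounding the first term by \eqref{e:SG-moments-bis} and the rest by the uniform Gaussian/deterministic estimates.
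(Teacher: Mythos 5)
There is a genuine gap. Your decomposition $\Phi^{\SG}_{t_0} = \Phi^{\GFF}_{t_0} + \Phi^{\Delta}_{t_0}$ is correct, but the claim that $\Phi^\Delta_{t_0}$ is deterministically bounded in $C^k(\R^2)$ uniformly in $\Lambda,m$ is false. The uniform-in-volume estimate \eqref{e:gradVt-maxprinciple-deriv} gives $\|\nabla_x^k e^{\Delta_x t}\nabla_\varphi V_t\|_{L^\infty} \lesssim t^{-k/2}$ for $t \geq 2t_0$, so
\begin{equation}
  \|\nabla^k\Phi^\Delta_{t_0}\|_{L^\infty} \lesssim \int_{t_0}^\infty e^{-m^2 s}\, s^{-k/2}\,ds,
\end{equation}
and the integral over $[t_0,\infty)$ converges uniformly in $m$ only for $k>2$. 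For $k=0,1,2$ the bound diverges as $m\to 0$. So after removing the zero mode there is still a divergent affine-plus-quadratic part of the drift, and the naive $C^k(\rho)$ bound you propose for $P_T\Phi^\Delta_{t_0}$ fails. (On the Gaussian side the obstruction is only at order $0$, but the drift genuinely has divergent degree-$1$ and degree-$2$ coefficients.)

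This is precisely why the paper's proof is organized around subtracting the degree-$2$ Taylor polynomial $\Tay_0\Phi^\SG_{t_0}$ rather than just the zero mode: Lemma~\ref{lem:thirdderivative} shows $\nabla^k\Phi^\Delta_{t_0}$ is deterministically bounded for $k\geq 3$, which via Taylor's theorem yields $\E[\|\Phi^\SG_{t_0}-\Tay_0\Phi^\SG_{t_0}\|_{C^k(\rho)}^p]\lesssim 1$. The low-order coefficients of $\Tay_0 P_T\Phi^\SG_{t_0}$ are then controlled separately: Lemma~\ref{lem:Phit0-moment} bounds $(P_T\Phi^\SG_{t_0},g)$ for $g\in C_c^\infty$ (using the assumed moment bound plus the $\Phi^\GFF_{0,t_0}$ and $\Phi^\Delta_{0,t_0}$ pieces, which \emph{are} well-behaved for the small-scale window $[0,t_0]$), and then one extracts the constant, linear, and quadratic coefficients by pairing against derivatives of a bump function — a small but essential argument you do not mention. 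For the second assertion the same scheme applies with the constant coefficient dropped ($N_0=1$) and the $\hat g(0)=0$ variant that avoids \eqref{e:SG-moments-bis}. Without the Taylor-polynomial device your bookkeeping does not close; the zero-mode subtraction by $P_T$ or by evaluation at $0$ is not enough to make the drift uniformly controllable.
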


The first lemma below shows that the third  derivative (and the same applies to higher derivatives) of the difference field are controlled uniformly in the volume.
Let
\newcommand{\Tay}{{\sf T}}
\begin{equation}
  \Tay_0f(x) = f(0)+\nabla f(0)\cdot x + \frac12 x^T \He f(0)x
\end{equation}
be the second order Taylor approximation of $f:\R^2 \to \R$ at $0$.

\begin{lemma} \label{lem:thirdderivative}
  Let $\beta\in (0,6\pi)$ and $z\in \R$. For any $k>2$,
  there are deterministic constants $C_k(\beta,z)$, independent of $\Lambda$ and $m$, such that
  for all $\Lambda\subset \R^2$ bounded, $m\in (0,1]$, almost surely,
  \begin{equation} \label{e:Phit0-thirdderivative}
    \|\nabla^k\Phi^{\Delta}_{t_0}\|_{L^\infty(\R^2)} \leq C_k(\beta,z).
  \end{equation}
  Therefore $\Phi^{\SG}_{t_0} \in C^\infty(\R^2)$ almost surely and, uniformly in $\Lambda\subset \R^2$ bounded, $m\in(0,1]$, for any $p>0$,
  \begin{equation} \label{e:1TPhit0-tightCk}
    \E \qa{ \|\Phi^{\SG}_{t_0}-\Tay_0\Phi_{t_0}^\SG\|_{C^k(\rho)}^p }
    \leq C_{k,p}(\beta,z)  %
  \end{equation}
  for a polynomial weight $\rho(x) = (1+|x|^2)^{-\sigma/2}$ with $\sigma > 3$.
\end{lemma}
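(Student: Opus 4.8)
The plan is to exploit the SDE representation $\Phi^\Delta_{t_0} = -\int_{t_0}^\infty e^{-m^2s}e^{\Delta_x s}\nabla_\varphi V_s(\Phi^{\SG}_s, z, \cdot\,|\Lambda,m)\,ds$ from \eqref{e:SDE-bis} together with the uniform-in-volume estimates of Proposition~\ref{prop:Vt-infvol}. First I would observe that since $t_0 = t_0(\beta,z)>0$ is fixed, every scale $s$ appearing in the integral is bounded below by $t_0$, so I may apply \eqref{e:gradVt-maxprinciple-deriv} with $t\geq 2t_0$ after a semigroup split $e^{\Delta_x s} = e^{\Delta_x(s-t_0)}e^{\Delta_x t_0}$, using \eqref{e:gradVt-maxprinciple} on the inner factor. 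This gives, for any $k$, a pointwise bound
\begin{equation}
  \|\nabla_x^k e^{\Delta_x s}\nabla_\varphi V_s(\Phi^{\SG}_s,z,\cdot\,|\Lambda,m)\|_{L^\infty_x} \leq C_k(\beta,z)\,(s-t_0)^{-k/2},
\end{equation}
valid for $s\geq 2t_0$, with a separate (constant) bound for $s\in[t_0,2t_0]$ coming again from \eqref{e:gradVt-maxprinciple-deriv} and \eqref{e:nablaV-bd-smallt} interpolated. Commuting $\nabla_x^k$ through the smooth heat flow and integrating against $e^{-m^2 s}\,ds$, for $k>2$ the integral $\int_{2t_0}^\infty e^{-m^2 s}(s-t_0)^{-k/2}\,ds$ converges uniformly in $m\in(0,1]$ because $(s-t_0)^{-k/2}$ is already integrable at infinity when $k>2$ (so we do not even need the mass for convergence). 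This yields \eqref{e:Phit0-thirdderivative}, deterministically and uniformly in $\Lambda$ and $m$. Smoothness of $\Phi^{\SG}_{t_0} = \Phi^\Delta_{t_0}+\Phi^{\GFF(m)}_{t_0}$ then follows since $\Phi^{\GFF(m)}_{t_0}\in C^\infty(\R^2)$ almost surely by the scale-decomposition discussion in Section~\ref{sec:Gauss-decomp}.

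Next, for \eqref{e:1TPhit0-tightCk}, I would note that subtracting the second-order Taylor polynomial $\Tay_0$ kills precisely the part of $\Phi^{\SG}_{t_0}$ not controlled by its derivatives of order $\geq 3$. Write $\psi = \Phi^{\SG}_{t_0} - \Tay_0\Phi^{\SG}_{t_0}$; then $\psi(0)=0$, $\nabla\psi(0)=0$, $\He\psi(0)=0$, and $\nabla^k\psi = \nabla^k\Phi^{\SG}_{t_0}$ for $k\geq 3$. A third-order Taylor-with-integral-remainder estimate bounds $|\psi(x)|\lesssim |x|^3\sup_{|y|\leq|x|}\|\nabla^3\Phi^{\SG}_{t_0}\|$ and similarly $|\nabla^j\psi(x)|\lesssim |x|^{3-j}\sup\|\nabla^3\Phi^{\SG}_{t_0}\|$ for $j\leq 2$, while for $j\geq 3$ one uses $\|\nabla^j\Phi^{\SG}_{t_0}\|_{L^\infty}$ directly. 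Multiplying by the weight $\rho(x)=(1+|x|^2)^{-\sigma/2}$ with $\sigma>3$ makes the $|x|^3$ growth integrable/bounded, so $\|\psi\|_{C^k(\rho)} \lesssim \sum_{j=3}^{k} \|\nabla^j\Phi^\Delta_{t_0}\|_{L^\infty} + \|\nabla^j\Phi^{\GFF(m)}_{t_0}\|_{L^\infty(\rho)} $ (the Gaussian term needs the weight only for $j\leq 2$, but after subtracting $\Tay_0$ of the Gaussian part too — which we may absorb since $\Tay_0$ of the whole field is what is subtracted — only $j\geq 3$ Gaussian derivatives remain and these are handled by standard stationary Gaussian field estimates as in Lemma~\ref{le:gaussianest} / Appendix~\ref{sec:coupling-infvol-Gauss}). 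Taking $L^p$ expectations, the $\Phi^\Delta$ contribution is deterministically bounded by the first part of the lemma and the Gaussian contribution has finite moments of all orders, giving $\E[\|\psi\|_{C^k(\rho)}^p]\leq C_{k,p}(\beta,z)$ uniformly in $\Lambda, m$.

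I expect the main obstacle to be a slightly delicate bookkeeping point rather than a deep difficulty: namely making sure that the subtraction of $\Tay_0\Phi^{\SG}_{t_0}$ interacts correctly with the splitting $\Phi^{\SG}_{t_0} = \Phi^\Delta_{t_0} + \Phi^{\GFF(m)}_{t_0}$, since $\Tay_0$ of the \emph{sum} is subtracted but the two pieces are estimated separately — one must either subtract $\Tay_0$ from each piece (introducing compensating lower-order polynomials that cancel) or, more cleanly, estimate $\Phi^\Delta_{t_0}$ via its own derivatives of all orders (including $0,1,2$, which \emph{are} bounded uniformly here because the $s$-integral starts at $t_0>0$, so no weight is needed for $\Phi^\Delta$ at all) and only use the Taylor subtraction to tame the Gaussian part $\Phi^{\GFF(m)}_{t_0}$. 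The latter route is cleanest: $\Phi^\Delta_{t_0}$ is bounded in $C^k(\R^2)$ (unweighted) for every $k$, uniformly in $\Lambda,m$, by the argument of the first paragraph applied also for $k=0,1,2$ using \eqref{e:nablaV-bd-smallt} and \eqref{e:gradVt-maxprinciple-deriv}; and $\Phi^{\GFF(m)}_{t_0}-\Tay_0\Phi^{\GFF(m)}_{t_0}$ is controlled in weighted $C^k(\rho)$ norm with all moments by stationarity of the Gaussian field and Kolmogorov-type estimates. Combining and recalling $\Tay_0\Phi^{\SG}_{t_0} = \Tay_0\Phi^\Delta_{t_0} + \Tay_0\Phi^{\GFF(m)}_{t_0}$, with $\Phi^\Delta_{t_0}-\Tay_0\Phi^\Delta_{t_0}$ bounded in weighted norm by the unweighted $C^{k}$ bound plus the polynomial growth control, completes the estimate.
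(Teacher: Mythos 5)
Your proof of \eqref{e:Phit0-thirdderivative} is correct and is essentially the paper's: plug the SDE representation for $\Phi^\Delta_{t_0}$ into \eqref{e:gradVt-maxprinciple-deriv} and use that $\int_{t_0}^\infty s^{-k/2}\,ds<\infty$ for $k>2$, deterministically and uniformly in $\Lambda,m$.

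There is however a genuine error in your second half. You assert that the derivatives $\nabla^j\Phi^\Delta_{t_0}$ for $j=0,1,2$ are \emph{also} uniformly bounded in $L^\infty(\R^2)$ "because the $s$-integral starts at $t_0>0$, so no weight is needed for $\Phi^\Delta$ at all," and you build your preferred route on that claim. It is false: the lower cutoff $s\geq t_0$ removes only the ultraviolet singularity; for $j\leq 2$ the integral $\int_{t_0}^\infty e^{-m^2 s}s^{-j/2}\,ds$ diverges at the infrared end as $m\to 0$ (for $j=0$ it equals $m^{-2}e^{-m^2 t_0}$). This is precisely why the Taylor subtraction must be applied to $\Phi^\Delta_{t_0}$, not only to the Gaussian part. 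The route you dismiss as less clean — subtract $\Tay_0$ from each summand separately, using linearity $\Tay_0\Phi^{\SG}_{t_0}=\Tay_0\Phi^\Delta_{t_0}+\Tay_0\Phi^{\GFF}_{t_0}$ — is what the paper does and is what actually works: from $\|\nabla^j\Phi^\Delta_{t_0}\|_{L^\infty}\leq C_j$ for $j\geq 3$ and Taylor's theorem one gets $|\nabla^j(\Phi^\Delta_{t_0}-\Tay_0\Phi^\Delta_{t_0})(x)|\lesssim (1+|x|)^{3-j}$ for $j\leq 2$, and multiplying by $\rho$ with $\sigma>3$ this is bounded deterministically; the Gaussian piece $\Phi^{\GFF}_{t_0}-\Tay_0\Phi^{\GFF}_{t_0}$ is treated in Lemma~\ref{lem:Gauss-Phit0}. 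Your final sentence actually invokes a weighted bound on $\Phi^\Delta_{t_0}-\Tay_0\Phi^\Delta_{t_0}$, which is the right structure, but it contradicts your stated premise. A minor secondary point: the weight $\rho$ is needed for \emph{all} orders $j$ of the Gaussian piece, not only $j\leq 2$, since a stationary smooth Gaussian field has supremum over $B_R(0)$ growing like $\sqrt{\log R}$; the paper's Lemma~\ref{lem:Gauss-Phit0} handles this via weighted Sobolev estimates rather than pointwise bounds.
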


\begin{proof}
  By the definition in \eqref{e:SDE-bis},
  \begin{equation}
    \Phi_t^\Delta = -\int_t^\infty e^{-m^2 s} e^{\Delta_x s} \nabla_\varphi V_s(\Phi_s^\SG,z,x|\Lambda,m)\, ds.
  \end{equation}
  The estimate \eqref{e:Phit0-thirdderivative} therefore follows from \eqref{e:gradVt-maxprinciple-deriv} in Proposition~\ref{prop:Vt-infvol} and $k>2$:
  \begin{align}
  \|\nabla_x^k \Phi^\Delta_{t_0}\|_{L^\infty_x(\R^2)}
    &\leq \int_{t_0}^\infty  \|\nabla_x^k e^{t\Delta_x}\nabla_\varphi V_t(\Phi^{\SG}_t,x|\Lambda,m)\|_{L^\infty_x(\R^2)} \, dt
      \nnb
  &\leq C_k(\beta,z) \int_{t_0}^\infty t^{-k/2} \, dt \leq C_k'(\beta,z)
\end{align}
with deterministic constants.

By Taylor's theorem, provided that the weight $\rho(x)=(1+|x|^2)^{-\sigma/2}$ is defined with $\sigma> 3$,
it also follows that (again with deterministic constants)
\begin{equation}
  \|\Phi^{\Delta}_{t_0}-\Tay_0\Phi_{t_0}^\Delta\|_{C^k(\rho)} \leq C_k''(\beta,z).
\end{equation}
The bound \eqref{Phit0-tightCk} is completed by the analogous moment bound for the Gaussian field $\Phi^{\GFF}_{t_0}$
with smooth covariance. This follows from standard arguments, for example, by estimating high weighted Sobolev norms, see
Lemma~\ref{lem:Gauss-Phit0} in Appendix~\ref{app:Gauss} for details.
\end{proof}

The following lemma shows that the large scale part $\Phi^{\SG}_{t_0}= \Phi^{\GFF}_{t_0}+\Phi^{\Delta}_{t_0}$ in the decomposition has bounded fluctuations
as $m\to 0$ and $\Lambda\to\R^2$.
We emphasize that individually $\Phi^{\GFF}_{t_0}$ and $\Phi^{\Delta}_{t_0}$ do not have bounded fluctuations.

\begin{lemma} \label{lem:Phit0-moment}
  Let $\beta \in (0,6\pi)$ and $z\in \R$ and assume the
  moment bound \eqref{e:SG-moments-bis} with norm $\|\cdot\|$ for some $p \geq 2$.
  Then for any $g\in C_c^\infty(\R^2)$ with $\hat g(0) = 0$, uniformly in $\Lambda\subset\R^2$ bounded, $m>0$,
  \begin{equation} \label{e:Phit0-moment-p}
    \E \qB{ \absa{(\Phi_{t_0}^{\SG},g)}^p} < C_p(\beta,z)(\|g\|_{L^1} + \|g\|)^p.
  \end{equation}
  In particular, without the assumption $\hat g(0)=0$,
  \begin{equation} \label{e:Phit0-moment-p-kappa}
    \sup_{T\geq 1}
    \E \qB{ |(P_T\Phi_{t_0}^{\SG},g)|^p}
    \leq C_p(\beta,z) (\|g\|_{L^1} + \|g\|)^p
    .
  \end{equation}
\end{lemma}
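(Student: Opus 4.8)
The plan is to start from the scale decomposition
\begin{equation}
  \Phi_0^{\SG} = \Phi^{\Delta}_{0,t_0} + \Phi^{\GFF}_{0,t_0} + \Phi^{\SG}_{t_0}
\end{equation}
of \eqref{e:PhiSG0-decomp}, together with $\Phi_0^{\SG}=\Phi_0^{\SG(\beta,z|\Lambda,m)}\sim\nu^{\SG(\beta,z|\Lambda,m)}$ from Proposition~\ref{prop:SDE-Ito}, so that $(\Phi^{\SG}_{t_0},g) = (\varphi,g)-(\Phi^{\GFF}_{0,t_0},g)-(\Phi^{\Delta}_{0,t_0},g)$ with $\varphi\sim\nu^{\SG(\beta,z|\Lambda,m)}$ on the right-hand side. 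First I would dispose of the two small-scale contributions, both controlled \emph{deterministically in $\Lambda$ and $m$} because they only involve scales below $t_0$ where the renormalized potential and the heat-kernel covariance are harmless at the origin. For the drift term, the uniform estimate \eqref{e:nablaV-bd-smallt} of Proposition~\ref{prop:Vt-infvol} with $k=0$ gives $\|\Phi^{\Delta}_{0,t_0}\|_{L^\infty(\R^2)}\le \int_0^{t_0}\|e^{\Delta_x s}\nabla_\varphi V_s(\Phi^{\SG}_s,z|\Lambda,m)\|_{L^\infty_x}\,ds\lesssim\int_0^{t_0}s^{-\beta/8\pi}\,ds\lesssim 1$ since $\beta/8\pi<3/4$, whence $|(\Phi^{\Delta}_{0,t_0},g)|\lesssim\|g\|_{L^1}$. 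For the Gaussian term, $(\Phi^{\GFF}_{0,t_0},g)$ is centered with variance $\frac1{(2\pi)^2}\int|\hat g(p)|^2\frac{1-e^{-(|p|^2+m^2)t_0}}{|p|^2+m^2}\,dp$; splitting at $|p|=1$ and bounding the multiplier by $t_0$ on $\{|p|<1\}$ (so this piece is $\le t_0\pi\|\hat g\|_{L^\infty}^2\le t_0\pi\|g\|_{L^1}^2$) and by $|p|^{-2}$ on $\{|p|\ge1\}$ (so this piece is $\lesssim\|g\|_{L^2}^2$), one gets a variance $\lesssim\|g\|_{L^1}^2+\|g\|_{L^2}^2$ uniformly in $\Lambda,m$, hence $\E[|(\Phi^{\GFF}_{0,t_0},g)|^p]\lesssim_p(\|g\|_{L^1}+\|g\|_{L^2})^p$.

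For the remaining term $(\varphi,g)$ with $\varphi\sim\nu^{\SG(\beta,z|\Lambda,m)}$ and $\hat g(0)=0$, I would invoke Gaussian domination \eqref{e:SG-variance-0}: applied to $tg$ it gives $\avg{e^{t(\varphi,g)}}_{\SG(\beta,z|\Lambda,m)}\le e^{\frac{t^2}{2}\|g\|_{\dot H^{-1}(\R^2)}^2}$ uniformly in $\Lambda$ and $m$, so $\varphi(g)$ is sub-Gaussian with variance proxy $\|g\|_{\dot H^{-1}}^2$ and $\avg{|(\varphi,g)|^p}_{\SG(\beta,z|\Lambda,m)}\le C^p p^{p/2}\|g\|_{\dot H^{-1}}^p$. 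Combining the three bounds gives $\E[|(\Phi^{\SG}_{t_0},g)|^p]\le C_p(\beta,z)(\|g\|_{L^1}+\|g\|_{\dot H^{-1}}+\|g\|_{L^2})^p$; since for $\hat g(0)=0$ one has $\dnorm{g}=\|g\|_{\dot H^{-1}}$, and since \eqref{e:SG-moments-bis} continues to hold if the norm $\|\cdot\|$ is enlarged so as to dominate both $\dnorm{\cdot}$ and $\|\cdot\|_{L^2}$ (enlarging the norm only weakens the hypothesis, and for $\beta=4\pi$ Corollary~\ref{cor:superexp-moments} provides such a norm), this is exactly \eqref{e:Phit0-moment-p}. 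For \eqref{e:Phit0-moment-p-kappa} I would apply \eqref{e:Phit0-moment-p} to the neutral function $g-\hat g(0)\eta_T$: the two small-scale terms are then bounded uniformly in $T\ge1$ exactly as above, using $\|g-\hat g(0)\eta_T\|_{L^1}\le\|g\|_{L^1}+|\hat g(0)|\|\eta\|_{L^1}\lesssim\|g\|_{L^1}$ and that the Schwartz decay of $\hat\eta$ makes the extra contribution of $\eta_T$ to the high-frequency piece of the Gaussian variance $O(T^{-N})$, while the term $(\varphi,g-\hat g(0)\eta_T)$ is controlled uniformly in $T\ge1$ precisely by the hypothesis \eqref{e:SG-moments-bis} transferred from the limiting measure $\nu^{\SG(\beta,z)}$ to finite volume via the Ginibre monotonicity of the characteristic functional in $\Lambda$ and $m$.

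The step that requires the most care — and the only place the integrability hypothesis \eqref{e:SG-moments-bis} is genuinely used rather than merely cited to make the statement meaningful — is this last one. Naively applying Gaussian domination to $(\varphi,g-\hat g(0)\eta_T)$ in finite volume only yields a bound deteriorating like $(\log T)^{p/2}$, because $\|g-\hat g(0)\eta_T\|_{\dot H^{-1}}^2\sim|\hat g(0)|^2\log T$; one therefore cannot prove \eqref{e:Phit0-moment-p-kappa} by Gaussian domination alone and must instead pass the $T$-uniform infinite-volume bound \eqref{e:SG-moments-bis} through the finite-volume approximation. Everything else reduces to the scale decomposition, the uniform-in-volume renormalized-potential estimates of Proposition~\ref{prop:Vt-infvol}, and elementary Gaussian computations.
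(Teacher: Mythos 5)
You follow the same scale decomposition as the paper, with the same treatment of each piece: the drift $(\Phi^{\Delta}_{0,t_0},g)$ bounded deterministically by $\|g\|_{L^1}$ via the uniform estimate \eqref{e:nablaV-bd-smallt}, the Gaussian term by an elementary variance bound, and the full sine-Gordon field by a moment estimate. Two small variations. For the Gaussian term the paper is shorter: the variance is $\int_0^{t_0}e^{-m^2t}(g,e^{\Delta t}g)\,dt\le t_0\|g\|_{L^2}^2$, so no frequency splitting is needed. For the full-field term with $\hat g(0)=0$ you use Gaussian domination \eqref{e:SG-variance-0} rather than the moment assumption \eqref{e:SG-moments-bis} itself; that is precisely the unconditional alternative singled out in Remark~\ref{rk:Phit0-moment-0}, and is perfectly acceptable, though it yields $\|g\|_{\dot H^{-1}}$ rather than the lemma's $\|g\|$, so your ``enlarge the norm'' step is what reconciles it with the stated conclusion (one also then needs $\|\cdot\|_{L^2}\lesssim\|\cdot\|$; the specific norms actually used, such as \eqref{e:moments-norm}, satisfy this, but it is an implicit property of $\|\cdot\|$ in the lemma in both your proof and the paper's).

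The one place I would push back is the assertion that \eqref{e:SG-moments-bis} for $\nu^{\SG(\beta,z)}$ transfers to the finite-volume field ``via the Ginibre monotonicity of the characteristic functional.'' You are right to flag that such a transfer is needed, and the paper's own proof is silent here (it applies \eqref{e:SG-moments-bis} to $\Phi^{\SG}_0\sim\nu^{\SG(\beta,z|\Lambda,m)}$ without comment), but the mechanism you name does not close the gap as stated. The Ginibre inequalities make $\avg{e^{i(\varphi,g)}}_{\SG(\beta,z|\Lambda,m)}$ increasing in both $\Lambda$ and $m$, while the infinite-volume measure is reached by first sending $m\to 0$ (which pushes the characteristic functional down) and then $\Lambda\to\R^2$ (which pushes it up), so the finite- and infinite-volume characteristic functionals are not a priori ordered. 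Moreover, even a pointwise ordering of characteristic functions does not by itself give a comparison of $p$-th moments. Making this step rigorous would instead require, for example, a finite-volume version of the Bosonization cumulant bounds underlying Proposition~\ref{prop:moments}, or another argument giving uniformity in $\Lambda,m$ directly.
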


\begin{remark} \label{rk:Phit0-moment-0}
  The bound %
  \eqref{e:Phit0-moment-p} also holds with $\|g\|_{\dot H^{-1}}$ on the right-hand sides
  without imposing the assumption \eqref{e:SG-moments-bis} (by instead using the bound \eqref{e:SG-variance-0} which is known also for $\beta \neq 4\pi$).
  However, the extension to $\hat g(0)\neq 0$ as in \eqref{e:Phit0-moment-p-kappa} then does not exist.
\end{remark}

\begin{proof}
  Using the SDE~\eqref{e:SDE-bis}, the massive sine-Gordon field $\Phi_0^{\SG}\sim \avg{\cdot}_{\SG(\beta,z|\Lambda,m)}$ with $|\Lambda|<\infty$, $m>0$
  can be expressed as
  \begin{equation}
    \Phi^{\SG}_0 = \Phi^{\SG}_{0,t_0} + \Phi^{\SG}_{t_0}
    .
  \end{equation}
  By the Jensen inequality one therefore has
  \begin{equation}
    \E\qB{ |(\Phi_{t_0}^{\SG},g)|^p}
    =
    \E\qB{ |(\Phi_{0}^{\SG}-\Phi_{0,t_0}^{\SG},g)|^p}
    \leq 2^{p-1}\qB{ \E |(\Phi^{\SG}_0,g)|^p + \E|(\Phi^{\SG}_{0,t_0},g)|^p }.
  \end{equation}
  The first term on the right-hand side is the moment of the full sine-Gordon field bounded by the
  moment assumption \eqref{e:SG-moments-bis} by $C_p(\beta,z) \|g\|^p$ provided that $\hat g(0)=0$
  or $P_T$ is applied.

  The second term is the variance of a massive sine-Gordon field with external mass essentially $1/\sqrt{t_0}$ and external field $h=C_{t_0}^{-1}\Phi_{t_0}^{\SG}$
  whose covariance can be  bounded as an operator uniformly in $h$, see \cite{MR4303014}.
Since we do not need this optimal bound,
  it is more convenient to derive an estimate with $\|g\|_{L^1}^2+\|g\|_{L^2}^2$ instead of $\|g\|_{L^2}^2$ on the right-hand side using that,
  deterministically,
  \begin{equation}
    (\Phi^\Delta_{0,t_0}, g) \leq \|\Phi^\Delta_{0,t_0}\|_{L^\infty(\R^2)} \|g\|_{L^1(\R^2)} \lesssim \|g\|_{L^1(\R^2)},
  \end{equation}
  and that the bound for the Gaussian term is elementary:
  \begin{equation}
    \E\qB{ |(\Phi^{\GFF(m)}_{0,t_0},g)|^p } \leq (Cp)^{p/2} \qa{\int_{0}^{t_0} e^{-m^2 t} (g, e^{\Delta t} g)}^{p/2} \leq (Cp)^{p/2} \|g\|_{L^2(\R^2)}^{p}.
  \end{equation}
  Since $\Phi^\SG_{0,t_0} = \Phi^\Delta_{0,t_0} + \Phi^{\GFF}_{0,t_0}$ this completes the proof of  %
  \eqref{e:Phit0-moment-p}.
  The bound \eqref{e:Phit0-moment-p-kappa} is an immediate consequence since
  \begin{equation}
    \|g-\hat g(0)\eta\|_{L^1} \lesssim \|g\|_{L^1}, \qquad \|g-\hat g(0) \eta_T \|_{L^2} \lesssim \|g\|_{L^1} + \|g\|_{L^2},
  \end{equation}
  where we have used $|\hat g(0)| \leq \|g\|_{L^1}$ and $\|\eta_T\|_{L^2} \lesssim 1$ since $T \geq 1$ in the second inequality.
\end{proof}

\begin{proof}[Proof of Proposition~\ref{prop:Phit0-tightCk}]
The moment bound \eqref{e:1TPhit0-tightCk} implies that for any $g\in C_c^\infty(\R^2)$ there is a constant $C_p(g)<\infty$ such that 
\begin{equation}
  \E \qB{\absa{((1-\Tay_0)\Phi_{t_0}^{\SG},g)}^p }
  \leq C_p(g),
\end{equation}
whereas Lemma~\ref{lem:Phit0-moment} implies that for any $g\in C_c^\infty(\R^2)$,
\begin{equation} \label{e:Phit0-moment}
  \sup_{T\geq 1}\E \qB{ \absa{(P_T\Phi_{t_0}^{\SG},g)}^p } \leq C_p(g).
\end{equation}
Together, using that $(1-\Tay_0)\Phi_{t_0}^\SG = (1-\Tay_0)P_T\Phi_{t_0}^\SG$, therefore 
\begin{equation} \label{e:TPhit0-moment}
  \sup_{T\geq 1}\E \qB{ \absa{(\Tay_0P_T\Phi_{t_0}^{\SG},g)}^p } \leq C_p(g).
\end{equation}

Moreover, without the assumption \eqref{e:SG-moments-bis},
using Remark~\ref{rk:Phit0-moment-0} with Lemma~\ref{lem:Phit0-moment},
it still follows that for all $g\in C_c^\infty(\R^2)$ with $\hat g(0)=0$,
\begin{equation} \label{e:TPPhit0-moment}
  \E \qB{ \absa{(\Tay_0\Phi_{t_0}^{\SG},g)}^p } \leq C_p(g).
\end{equation}

By choosing suitable $g \in C_c^\infty(\R^2)$, all coefficients of the random quadratic polynomial
$\Tay_0P_T\Phi_{t_0}^\SG$
respectively $\Tay_0\Phi_{t_0}^{\SG}-\Phi_{t_0}^\SG(0)$ have bounded second moments.
Explicitly, there are random variables $Y_n$ indexed by multi-indices $n=(n_1,n_2)$ with $|n| = n_1+n_2 \leq 2$ such that
  \begin{equation}
  \Tay_0P_T\Phi_{t_0}^\SG(x) = \sum_{|n|\leq 2} Y_n \frac{x^n}{n!}
\end{equation}
where $x^n= x_1^{n_1}x_2^{n_2}$ and $n!= n_1!n_2!$. Thus \eqref{e:TPhit0-moment} can be written as
\begin{equation} \label{e:Tphit0-moment-Y}
  \sup_{T\geq 1}\E\qa{ \absa{\sum_n Y_n \int \frac{x^n}{n!} g(x) \, dx}^p } \leq C_p(g),
\end{equation}
and the analogous bound holds for $\Tay_0\Phi_{t_0}^{\SG}$ if $\hat g(0)=0$.
The test functions $g \in C_c^\infty(\R^2)$ that extract the coefficients can be constructed as follows
(using a trick that we learned from \cite{MR3925533}).
Fix some function $\psi \in C_c^\infty(\R^2)$ with $\int \psi(x) \,dx =1$.
For any multi-indices $m=(m_1,m_2)$ and $n=(n_1,n_2)$, then set
\begin{equation}
  A_{nm}
  = \int \frac{x^n}{n!} (-\partial)^m \psi(x) \, dx
  = \int (\partial^m \frac{x^n}{n!}) \psi(x) \, dx.
\end{equation}
Then $A_{nm} = \1_{n=m}$ for $|m| \geq |n|$.
Thus for any  fixed integers $N_1 \geq N_0\geq 0$, the matrix $A= (A_{nm})_{N_0\leq |n|,|m|\leq N_1}$ is triangular with all diagonal entries (eigenvalues) $1$ and therefore invertible.
Thus given a multi-index $n$ with $N_0\leq |n|\leq N_1$ we can find $a = a^{(n)}$ such that $(Aa)_m = \1_{n=m}$ for $N_0 \leq |m|\leq N_1$.
We set
\begin{equation}
  g^{(n)} = \sum_{m: N_0\leq |m|\leq N_1} a_{m}^{(n)}(-\partial)^m \psi \in C_c^\infty(\R^2).
\end{equation}
Then for a multi-index $k$ with $N_0\leq |k|\leq N_1$,
\begin{equation}
  \int \frac{x^n}{n!} g^{(k)}(x) \, dx
  = \sum_m a_{m}^{(k)} \int \frac{x^n}{n!} (-\partial)^m \psi(x) \, dx
  = \sum_m A_{nm} a_m^{(k)} = \1_{n=k}.
\end{equation}
From now on, choose $N_0=0$ and $N_1=2$  and substitute the corresponding $g=g^{(n)}$ into  \eqref{e:Tphit0-moment-Y}
to conclude that, for all $0\leq |n|\leq 2$,
\begin{equation}
  \sup_{T\geq 1}\E[|Y_n|^p] \leq C_p,
\end{equation}
which is the claim.
In particular, if the weight $\rho(x)$ has $\sigma>2$, it follows that
\begin{equation}
  \sup_{T\geq 1}\E\qa{\|\Tay_0P_T\Phi^{\SG}_{t_0}\|_{C^k(\rho)}^p} \leq
  C_{k,p}(\beta,z)
\end{equation}
Together with \eqref{e:1TPhit0-tightCk}, where again $\Phi_{t_0}^\SG$ can be replaced $P_T\Phi_{t_0}^\SG$, the claimed bound \eqref{Phit0-nocenter-tightCk} follows.

For $\Tay_0\Phi_{t_0}^\SG-\Phi_{t_0}^\SG(0)$ an analogous argument applies (with the restriction
$\hat g(0)=0$). Taking $N_0=1$ and using $\hat g^{(n)}(0) =0$ the rest of the argument is identical.
\end{proof}

\section{Massless Bosonization of twisted free fermions}
\label{sec:massless-bosonization}

In this section, we define the Green's function of the massless twisted Dirac operator and define the corresponding fermionic correlation functions.
We then prove the Bosonization identities relating massless fermionic correlation functions with suitable GFF correlation functions.
The massless case considered here is well known,
but it provides the foundation of the massive Bosonization identities we prove in Section~\ref{sec:bosonization}
and also sets up the notation we need subsequently.

\subsection{Massless twisted free fermions}

To define massless twisted free fermions, we first introduce the Green's function for the massless Dirac operator on $\C \cong \R^2$,
\begin{equation} \label{e:Dirac-def}
  \Dirac = \begin{pmatrix} 0 & 2\bar\partial \\ 2\partial & 0 \end{pmatrix}
\end{equation}
with \emph{branching} 
around distinct points $x_1,\dots, x_n$ of the complex plane with a prescribed branching structure
with windings $\alpha_1,\dots, \alpha_n \in \R$ which we assume to satisfy $\sum_i \alpha_i =0$.
The branching structure is specified in terms of the function $\rho: \C \setminus \Gamma \to \C$ defined in \eqref{eq:rho}.

\begin{definition} \label{def:S0}
  The Green's function $S_0^\rho=S_0$ for the massless Dirac operator on $\C$ with branching described by $\rho$ is given
  for $z\notin \Gamma\cup\{w\}$  by
\begin{equation}\label{eq:S0}
S_0(z,w)=\begin{pmatrix}
0 & \frac{\overline{\rho(z)}}{\overline{\rho(w)}}\frac{1}{2\pi}\frac{1}{\bar z-\bar w}\\
\frac{\rho(w)}{\rho(z)}\frac{1}{2\pi}\frac{1}{z-w} & 0
\end{pmatrix}.
\end{equation}
\end{definition}

It is well known (and easy to verify) that the above Green's function $S_0^\rho$ can be regarded as the inverse to $\Dirac_\rho$
in the following sense. It %
satisfies
\begin{equation}
  \Dirac S_0^\rho(z,w) = \delta(z-w) \id_{2\times 2} \qquad \text{on $\C \setminus \Gamma$},
\end{equation}
where $\Dirac$ acts on the $z$ variable, $\lim_{z\to \infty} S_0^\rho(z,w)=0$ for every $w\in \C\setminus \Gamma$, and
\begin{equation}
  \begin{pmatrix}
    \frac{1}{\overline{\rho(z)}} & 0 \\
    0 & \rho(z)
  \end{pmatrix}
  S^\rho_0(z,w)
\end{equation}
has a continuous extension to $\C\setminus \{w\}$.

Instead of defining (Euclidean, Dirac) fermionic correlation functions as being analytic continuations of time-ordered correlation functions of fermionic operators on a Hilbert space or being given by a suitable infinite dimensional Grassmann integral, we take an operational approach.
Namely, given a kernel $S(z,w)=(S_{ij}(z,w))_{i,j=1}^2$, we define for distinct points $z_1,\dots,z_k\in \C$ and $w_1,\dots,w_k\in \C$ and labels $a_1,\dots,a_k,b_1,\dots,b_k\in \{1,2\}$,
\begin{equation}\label{eq:fermicor1}
\avg{\prod_{j=1}^k \bar \psi_{a_j}(z_j)\psi_{b_j}(w_j)}_{\FF(S)}=\det(S_{a_ib_j}(z_i,w_j))_{i,j=1}^k.
\end{equation}

In this section, $S$ is always the Green's function of the massless Dirac operator $S_0^\rho$ defined above and we then write $\FF_\rho(0)$ for $\FF(S_0^\rho)$.
We will be particularly interested in the situation where $z_i=w_i$.
The correlation functions are in general not well defined since $S$ has a singularity on the diagonal, but
\emph{truncated correlation functions} (cumulants) are well defined also for noncoincident points
(equivalently normal ordered fields have well-defined correlation functions).
Indeed, the following definition can be motivated through %
Grassmann integration (see for example \cite[Appendix~A]{MR4767492}),
but we can take it simply as a definition.

\begin{definition}
For distinct points $z_1,\dots,z_n\in \C$, define
\begin{equation}\label{eq:fermicor2}
\avg{\bar\psi_{a_1}\psi_{b_1}(z_1);\dots ;\bar \psi_{a_k}\psi_{b_k}(z_k)}_{\FF(S)}^\mathsf T=(-1)^{k+1} \sum_{\pi\in C_k}\prod_{i=1}^k S_{a_{\pi^i(1)},b_{\pi^{i+1}(1)}}(z_{\pi^i(1)},z_{\pi^{i+1}(1)}),
\end{equation}
where $C_k$ denotes the set of cyclic permutations on $\{1,\dots,k\}$.
\end{definition}

In particular, if $a_i=b_i$ for all $i$, the truncated correlation functions can be related to the untruncated ones (if they are finite) by
\begin{multline} \label{eq:fermcor3}
  \avg{\bar\psi_1\psi_1(z_1);\dots;\bar\psi_1\psi_1(z_k); \bar\psi_2\psi_2(z_{k+1});\dots;\bar\psi_2\psi_2(z_{k+k'})}_{\FF_\rho(0)}^\mathsf T
  \\
    =\sum_{\pi\in \frP_{k+k'}}(-1)^{|\pi|-1}(|\pi|-1)!\prod_{B\in \pi}\avg{\prod_{j\in B}\bar \psi_{a_j}\psi_{a_j}(z_j)}_{\FF_\rho(0)},
\end{multline}
where $a_j=1$ for $j\leq k$ and $a_j=2$ for $j>k$,
and $\frP_{k+k'}$ denotes the set of partitions of $\{1,\dots,k+k'\}$ and $|\pi|$ denotes the number of parts in such a partition.
We used that for the massless Dirac operator (with or without branching),  $S_{0,ii}^\rho=0$ and hence the untruncated correlation function
\begin{equation}
  \avg{\prod_{j=1}^k\bar \psi_{a_j}\psi_{a_j}(z_j)}_{\FF(S)} = \det(S_{0,a_ia_j}^\rho(z_i,z_j))_{i,j=1}^k
\end{equation}
is well defined for distinct points $z_i$ (consistent with \eqref{eq:fermicor1}).
The general formula for cumulants in terms of moments follows for example by
iterating \cite[Lemma~A.1]{MR4767492}.

\medskip
The partition function of the free fermions with kernel $S$ should formally be given by $\det(S^{-1})$.
In our case of interest, $S = \Dirac_\rho^{-1}$ and the corresponding determinant is not naively defined.
In Appendix~\ref{app:det} we however argue that the ratio $\det(\Dirac_\rho)/\det(\Dirac)$ is naturally interpreted as follows.

\begin{remark} \label{rk:det-rho1}
As explained in Appendix~\ref{app:det}, one should interpret
\begin{equation} \label{e:ren-det-massless}
  \frac{\det(\Dirac_\rho)}{\det(\Dirac)}
  \propto \prod_{1\leq r<s<n} |x_r-x_s|^{2\alpha_r\alpha_s}.
\end{equation}
For our purposes, it suffices to take the right-hand side as the definition of the left-hand side.
\end{remark}

\subsection{Massless free bosonic field with charge insertions}\label{sec:GFFinsert}

We first recall the standard Gaussian free field (without branching).
Denote by $\avg{\cdot}_{\GFF(m)}$ the expectation of the Gaussian free field with mass $m>0$ on $\R^2$,
and by $\avg{\cdot}_{\GFF(0)}$ the expectation of the massless Gaussian free field on $\R^2$ defined modulo constants.
We also consider versions of these expectations regularized at small scales. The precise choice of regularisation is not important,
but to be concrete we define
\begin{equation}
  \avg{F(\varphi)}_{\GFF(m,\epsilon)} = \E\qa{F(\Phi_{\epsilon^2,\infty}^{\GFF(m)})}
\end{equation}
with the notation $\Phi^\GFF$ from Section~\ref{sec:Gauss},
i.e., the above is the expectation of the Gaussian field with covariance
\begin{equation}
  \int_{\epsilon^2}^\infty e^{-m^2 t} e^{\Delta t} \, dt
\end{equation}
where $e^{\Delta t}$ is the heat kernel on $\R^2$, and the %
corresponding measure is supported on $C^\infty(\R^2)$.
The notation $\avg{\cdot}_{\GFF(m,\epsilon)}$ except that we switched the order of the parameters $m$ and $\epsilon$
is the one used in \cite{MR4767492} to which we refer for some statements.

\begin{definition}
Given the points $x_1,\dots,x_n$ and $\alpha_1,\dots,\alpha_n$ encoded by $\rho$ as in \eqref{eq:rho}, the ``expectation'' of the
Gaussian free field with charge insertions encoded by $\rho$ is defined by
\begin{equation} \label{e:GFF-rho}
  \avg{F(\varphi)}_{\GFF_\rho(0)}
  =\frac{\avg{ \prod_i \wick{e^{i\sqrt{4\pi}\alpha_i \varphi(x_i)}}F(\varphi)}_{\GFF(0)}}{\avg{\prod_i \wick{e^{i\sqrt{4\pi}\alpha_i \varphi(x_i)}}}_{\GFF(0)}}.
\end{equation}
More precisely, we understand the normal ordering (for suitable $F$)
as in Section~\ref{sec:Gauss-corr}, namely as defined with the heat kernel regularization as
\begin{equation}
  \avg{\prod_i \wick{e^{i\sqrt{4\pi}\alpha_i \varphi(x_i)}} F(\varphi) }_{\GFF(0)}
  = \lim_{m\to 0}\lim_{\epsilon \to 0}
  \avg{\prod_i \wick{e^{i\sqrt{4\pi}\alpha_i \varphi(x_i)}}_\epsilon F(\varphi)}_{\GFF(m,\epsilon)},
\end{equation}
and thus $\avg{F(\varphi)}_{\GFF_\rho(0)}$ is the limit of
\begin{equation} \label{e:GFF-rho-m-eps}
  \avg{F(\varphi)}_{\GFF_\rho(m,\epsilon)}
  = \frac{\avg{\prod_i \wick{e^{i\sqrt{4\pi}\alpha_i \varphi(x_i)}}_\epsilon F(\varphi)}_{\GFF(m,\epsilon)}}
  {\avg{\prod_i \wick{e^{i\sqrt{4\pi}\alpha_i \varphi(x_i)}}_\epsilon}_{\GFF(m,\epsilon)}}
\end{equation}
where
$\wick{e^{i\sqrt{4\pi}\alpha \varphi(x)}}_\epsilon = \epsilon^{-\alpha^2} e^{i\sqrt{4\pi}\alpha \varphi(x)}$.
\end{definition}

The expectation $\avg{\cdot}_{\GFF_\rho(0)}$ is not the expectation of a probability measure, but %
it is convenient to use the expectation notation nonetheless. The expectation of the normal ordered fields in the following equation is defined as above.

\begin{lemma} \label{lem:GFF-expi}
  For distinct $x_1,\dots, x_n\in\C$ and $\alpha_1,\dots,\alpha_n\in \R$ with $\sum_i \alpha_i =0$,
  \begin{equation}
    \avg{\prod_{i=1}^n \wick{e^{i\sqrt{4\pi}\alpha_i \varphi(x_i)}} }_{\GFF(0)}
    = (2e^{-\gamma/2})^{\sum_i \alpha_i^2} \prod_{1\leq r< s\leq n} |x_r-x_s|^{2\alpha_r\alpha_s}
  \end{equation}
\end{lemma}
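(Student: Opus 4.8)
The plan is to compute the regularized expectation
\eqref{e:GFF-rho-m-eps}'s numerator (which is, with $F=1$, exactly the quantity we want, up to the limit)
using the elementary Gaussian formula
$\avg{e^{i\sum_j c_j \varphi(y_j)}}_{\GFF(m,\epsilon)} = \exp\bigl(-\tfrac12 \sum_{j,k} c_jc_k\, G_{m,\epsilon}(y_j,y_k)\bigr)$,
where $G_{m,\epsilon}$ is the regularized covariance $\int_{\epsilon^2}^\infty e^{-m^2 t} e^{\Delta t}\, dt$.
First I would write
$\prod_i \wick{e^{i\sqrt{4\pi}\alpha_i\varphi(x_i)}}_\epsilon = \bigl(\prod_i \epsilon^{-\alpha_i^2}\bigr) e^{i\sqrt{4\pi}\sum_i \alpha_i \varphi(x_i)}$,
so that applying the Gaussian formula with $c_i = \sqrt{4\pi}\,\alpha_i$ gives
\begin{equation}
  \avg{\prod_i \wick{e^{i\sqrt{4\pi}\alpha_i\varphi(x_i)}}_\epsilon}_{\GFF(m,\epsilon)}
  = \prod_i \epsilon^{-\alpha_i^2}\exp\Bigl(-2\pi \sum_{i,j}\alpha_i\alpha_j G_{m,\epsilon}(x_i,x_j)\Bigr).
\end{equation}
Splitting the exponent into diagonal ($i=j$) and off-diagonal ($i\neq j$) parts, the diagonal part is
$-2\pi \sum_i \alpha_i^2\, G_{m,\epsilon}(x_i,x_i)$, which must cancel against $\prod_i\epsilon^{-\alpha_i^2}$
as $\epsilon\to 0$ and then $m\to 0$.

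The key input is the short-distance asymptotics of the regularized heat-kernel covariance: as $\epsilon\to 0$,
$G_{m,\epsilon}(x,x) = \tfrac{1}{2\pi}\log(1/\epsilon) + c_0 + o(1)$ for an explicit constant $c_0$ depending on the
normalization (this is exactly the computation underlying Proposition~\ref{prop:hkimc} and formula \eqref{e:Ceta});
concretely $\int_{\epsilon^2}^\infty e^{-m^2t}\frac{1}{4\pi t}\,dt = \tfrac{1}{2\pi}\log(1/\epsilon) - \tfrac{1}{4\pi}\log m^2 - \tfrac{\gamma}{4\pi} + o(1)$
as $\epsilon\to 0$, using the exponential-integral asymptotics. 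Therefore
$\epsilon^{-\alpha_i^2}\exp(-2\pi\alpha_i^2 G_{m,\epsilon}(x_i,x_i)) \to \exp(-2\pi\alpha_i^2 \cdot(-\tfrac{1}{4\pi}\log m^2 - \tfrac{\gamma}{4\pi}))
= (m^2)^{\alpha_i^2/2} e^{\gamma\alpha_i^2/2}$; but since $\sum_i\alpha_i = 0$ we actually want the combination to land on the stated constant, so I should track the normalization so that the remaining $m$-dependence and the constants combine. Here one uses the neutrality $\sum_i\alpha_i=0$ crucially: the off-diagonal contribution is
$-2\pi\sum_{i\neq j}\alpha_i\alpha_j G_{m,\epsilon}(x_i,x_j)$, and $G_{m,\epsilon}(x_i,x_j)\to G_m(x_i,x_j) = \tfrac{1}{2\pi}K_0(m|x_i-x_j|)$
as $\epsilon\to 0$ for $x_i\neq x_j$, with $K_0(r) = \log(1/r) - \log(m/2) - \gamma + o(1)$ as $r\to 0$ — wait, more precisely $K_0(mr)= -\log(mr/2)-\gamma + o(1)$ as $m\to0$. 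Then $-2\pi\sum_{i\neq j}\alpha_i\alpha_j\cdot\tfrac{1}{2\pi}(-\log|x_i-x_j| - \log(m/2)-\gamma)$; the $(-\log(m/2)-\gamma)$ piece multiplies $\sum_{i\neq j}\alpha_i\alpha_j = (\sum_i\alpha_i)^2 - \sum_i\alpha_i^2 = -\sum_i\alpha_i^2$, producing a factor that exactly cancels the $m$-dependence and the stray $\gamma$'s from the diagonal, leaving $\prod_{i\neq j}|x_i-x_j|^{\alpha_i\alpha_j} = \prod_{r<s}|x_r-x_s|^{2\alpha_r\alpha_s}$ together with the constant $(2e^{-\gamma/2})^{\sum_i\alpha_i^2}$.

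The main obstacle — really the only nontrivial point — is bookkeeping the constants: one must carefully track the finite constant in $G_{m,\epsilon}(x,x) = \tfrac{1}{2\pi}\log(1/\epsilon) + (\text{const})$ and in the small-argument expansion of $K_0$, and verify that after using $\sum_i\alpha_i=0$ all the $m\to 0$ divergences cancel and the surviving constant is precisely $(2e^{-\gamma/2})^{\sum_i\alpha_i^2}$. This is the same computation that appears in Proposition~\ref{prop:hkimc} (see \eqref{e:Ceta}), so I would cite that asymptotic expansion rather than redo it. The final step is simply to take $\epsilon\to 0$ then $m\to 0$ in the displayed product formula, justified by the convergence of $G_{m,\epsilon}$ at noncoincident points and the cancellation just described; continuity of everything in sight makes the limit routine once the constants are pinned down.
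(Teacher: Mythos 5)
Your proposal is correct and matches the paper's route: Lemma~\ref{lem:GFF-expi} is proved via Lemma~\ref{le:gfffrac} in Appendix~\ref{app:GMCGFF-cov}, which applies the Gaussian characteristic function formula to the heat-kernel-regularized field, uses the covariance asymptotics of Lemma~\ref{le:hkcov}(iv),(vi) together with the small-argument expansion of $K_0$, and invokes neutrality $\sum_i\alpha_i=0$ to cancel the $m$-divergences and isolate $(2e^{-\gamma/2})^{\sum_i\alpha_i^2}$ --- exactly as you describe. (One small citation slip: the variance asymptotics you want are Lemma~\ref{le:hkcov}(iv) for the heat-kernel regularization, not Proposition~\ref{prop:hkimc}/\eqref{e:Ceta}, which concern the mollifier-regularized field and its constant $C_\eta$, though the underlying exponential-integral computation is the same.)
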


\begin{proof}
  The proof is an elementary Gaussian computation, see Lemma~\ref{le:gfffrac} for details.
\end{proof}

\begin{lemma} \label{lem:GFFrho}
  For all $w_1,\dots, w_p,w_1',\dots,w_p' \in \C \setminus \Gamma$ distinct,
  \begin{equation}
    \avg{\prod_{l=1}^p \wick{e^{i\sqrt{4\pi}\varphi(w_l)}}\prod_{l'=1}^p \wick{e^{-i\sqrt{4\pi}\varphi(w_{l'}')}}}_{\GFF_\rho(0)}
    = (4e^{-\gamma})^{p}
    \prod_{l=1}^p \frac{|\rho(w_l)|^2}{|\rho(w_l')|^2}\frac{\prod_{1\leq r<s\leq p}|w_r-w_s|^2 |w_r'-w_s'|^2}{\prod_{r,s=1}^p |w_r-w_s'|^2}.
  \end{equation}
\end{lemma}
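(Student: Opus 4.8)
The statement is a purely Gaussian computation, and the strategy is to reduce it to Lemma~\ref{lem:GFF-expi} (applied to the massless GFF without branching). First I would unfold the definition \eqref{e:GFF-rho} of $\avg{\cdot}_{\GFF_\rho(0)}$: the left-hand side equals
\begin{equation*}
  \frac{\avg{\prod_i \wick{e^{i\sqrt{4\pi}\alpha_i\varphi(x_i)}} \prod_l \wick{e^{i\sqrt{4\pi}\varphi(w_l)}} \prod_{l'} \wick{e^{-i\sqrt{4\pi}\varphi(w_{l'}')}}}_{\GFF(0)}}{\avg{\prod_i \wick{e^{i\sqrt{4\pi}\alpha_i\varphi(x_i)}}}_{\GFF(0)}}.
\end{equation*}
Both numerator and denominator are of the form treated by Lemma~\ref{lem:GFF-expi}: in the numerator the charges are $\alpha_1,\dots,\alpha_n$ at $x_1,\dots,x_n$, then $+1$ at each $w_l$ and $-1$ at each $w_{l'}'$, and the total charge is $\sum_i\alpha_i + p - p = 0$, so the neutrality hypothesis of that lemma holds. (One should note that the $w_l, w_{l'}'$ need not be distinct from the $x_i$; but since the $x_i$ lie off $\Gamma$ only through $\rho$, and the claim is stated for $w$'s distinct from each other, I would just assume genericity — all $n+2p$ points distinct — and remark that the general case follows by continuity, or simply restrict to that case since it is all that is used later.)

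Next I would substitute the product formula from Lemma~\ref{lem:GFF-expi} into numerator and denominator and cancel. The $\alpha_i^2$ prefactor powers of $2e^{-\gamma/2}$ cancel between numerator and denominator, leaving $(2e^{-\gamma/2})^{2p} = (4e^{-\gamma})^p$ from the $p$ charges $+1$ and $p$ charges $-1$ (each contributing $(2e^{-\gamma/2})^{1}$). The pair-product over $x_i$--$x_j$ also cancels. What remains in the ratio is the product of $|{\rm point}_r - {\rm point}_s|^{2\,q_r q_s}$ over all pairs involving at least one $w$ or $w'$. I would organize these into three groups: (a) pairs of two $w$'s or two $w'$'s, giving $\prod_{r<s}|w_r-w_s|^2 \prod_{r<s}|w_r'-w_s'|^2$ from the $(+1)(+1)$ and $(-1)(-1)$ products, and $\prod_{r,s}|w_r-w_s'|^{-2}$ from the $(+1)(-1)$ products; (b) pairs of an $x_i$ with a $w_l$, contributing $\prod_{i,l}|x_i - w_l|^{2\alpha_i}$, and pairs of $x_i$ with $w_{l'}'$ contributing $\prod_{i,l'}|x_i - w_{l'}'|^{-2\alpha_i}$. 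Group (a) is exactly the combinatorial factor in the claimed identity.

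The remaining task is to identify the group-(b) factor with $\prod_l |\rho(w_l)|^2/|\rho(w_l')|^2$. By the definition \eqref{eq:rho}, $|\rho(w)|^2 = \prod_{j=1}^n |w - x_j|^{2\alpha_j}$, since the branch choices in $\log(w-x_j)$ affect only the phase of $\rho$, not $|\rho|$. Hence $\prod_{l}|\rho(w_l)|^2 = \prod_{l}\prod_j |w_l - x_j|^{2\alpha_j} = \prod_{i,l}|x_i-w_l|^{2\alpha_i}$, and similarly $\prod_{l'}|\rho(w_{l'}')|^{-2} = \prod_{i,l'}|x_i - w_{l'}'|^{-2\alpha_i}$; multiplying these two gives exactly group (b). Assembling (a) and (b) and the $(4e^{-\gamma})^p$ prefactor yields the stated formula. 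I do not expect any serious obstacle here — the only mild subtlety is making sure the $\pm1$ charges are correctly fed into Lemma~\ref{lem:GFF-expi} together with the $\alpha_i$'s and keeping the bookkeeping of signs in the exponents $2 q_r q_s$ straight; the "hard part", such as it is, is just the careful cancellation/regrouping, which is routine.
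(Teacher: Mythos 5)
Your proposal is correct and follows essentially the same route as the paper: unfold the definition of $\avg{\cdot}_{\GFF_\rho(0)}$, apply Lemma~\ref{lem:GFF-expi} to both numerator and denominator, cancel the pure-$x$ factors, and identify the cross $x$--$w$ products with $\prod_l |\rho(w_l)|^2/|\rho(w_l')|^2$. The genericity remark is not really needed, since $w_l, w_{l'}' \in \C\setminus\Gamma$ already forces them to be distinct from the branch points $x_i$.
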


\begin{proof}
  It suffices to compute the numerator and denominators in the definition of $\avg{\cdot}_{\GFF_\rho(0)}$.
  Both are instances of Lemma~\ref{lem:GFF-expi} which gives
\begin{align}
&\avg{\prod_{j=1}^n\wick{e^{i\sqrt{4\pi}\alpha_j\varphi(x_j)}}\prod_{l=1}^p \wick{e^{i\sqrt{4\pi}\varphi(w_l)}}\prod_{l'=1}^p \wick{e^{-i\sqrt{4\pi}\varphi(w_{l'}')}}}_{\GFF(0)}\nnb
  &=(4e^{-\gamma})^{p}
    \avg{\prod_{j=1}^n\wick{e^{i\sqrt{4\pi}\alpha_j\varphi(x_j)}}}_{\GFF(0)} \prod_{j=1}^n \prod_{l=1}^p \frac{|x_j-w_l|^{2\alpha_j}}{|x_j-w_{l}'|^{2\alpha_j}}\frac{\prod_{1\leq r<s\leq p}|w_r-w_s|^2 |w_r'-w_s'|^2}{\prod_{r,s=1}^p |w_r-w_s'|^2}.
\end{align}
By the definition of $\rho$, see \eqref{eq:rho}, we see that 
\begin{equation}
\prod_{j=1}^n \prod_{l=1}^p \frac{|x_j-w_l|^{2\alpha_j}}{|x_j-w_{l}'|^{2\alpha_j}}=\prod_{l=1}^p \frac{|\rho(w_l)|^2}{|\rho(w_l')|^2}
\end{equation}
and hence the claim follows.
\end{proof}

\subsection{Massless Bosonization}

As an important preliminary ingredient of our proof of massive Bosonization (in Section~\ref{sec:bosonization}), we need a massless version relating
fermionic correlation functions of bilinear fields as in \eqref{eq:fermicor2} and with the massless Dirac Green's function $S=S_0 = S_0^\rho$ from \eqref{eq:S0} to suitable GFF correlation functions.
We recall that the corresponding expectation is written $\avg{\cdot}_{\FF_\rho(0)}=\avg{\cdot}_{\FF(S_0^\rho)}$.

\begin{proposition}\label{le:boso0}
  For $x_1,\dots,x_n\in \C$ distinct points, $\alpha_1,\dots,\alpha_n\in (-\frac{1}{2},\frac{1}{2})$ satisfying $\sum_{j=1}^n \alpha_j=0$, $z_1,\dots,z_{k+k'}\in \C\setminus \{x_1,\dots,x_n\}$ distinct points,
\begin{equation}\label{eq:momboso}
  \avg{\prod_{l=1}^k \bar\psi_1\psi_1(z_l)\prod_{l'=1}^{k}\bar \psi_2\psi_2(z_{k+l'})}_{\FF_\rho(0)}
  = (\frac{e^{\gamma/2}}{4\pi})^{2k}
  \avg{\prod_{l=1}^k\wick{e^{i\sqrt{4\pi}\varphi(z_l)}}\prod_{l'=1}^{k}\wick{e^{-i\sqrt{4\pi}\varphi(z_{k+l'})}}}_{\GFF_\rho(0)},
\end{equation}
and both sides vanish if the number of $\bar\psi_1\psi_1$ and $\bar\psi_2\psi_2$ factors do not agree.
In particular,
\begin{align}
  &\avg{\bar \psi_{1}\psi_{1}(z_1);\dots;\bar \psi_{1}\psi_{1}(z_k);\bar \psi_2\psi_2(z_{k+1});\dots;\bar\psi_2\psi_2(z_{k+k'})}_{\FF_\rho(0)}^\mathsf T
    \nnb
  &= (\frac{e^{\gamma/2}}{4\pi})^{k+k'}
    \sum_{\pi\in \frP_{k+k'}}(-1)^{|\pi|-1}(|\pi|-1)! \prod_{B\in \pi}
\avg{\prod_{p\in B}\wick{e^{i\sigma_p \sqrt{4\pi}\varphi(z_p)}}}_{\GFF_\rho(0)}
,
\end{align}
where $\sigma_p=1$ if $p\leq k$ and $\sigma_p=-1$ if $p>k$, $\frP_{k+k'}$ denotes the set of partitions of $\{1,\dots,k+k'\}$ and $|\pi|$ denotes the number of parts in such a partition.
In fact, both sides vanish unless $k=k'$.
\end{proposition}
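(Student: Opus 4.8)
The plan is to prove Proposition~\ref{le:boso0} by reducing the fermionic side to an explicit determinant and matching it, term by term, with the Gaussian computation of Lemma~\ref{lem:GFFrho}. First I would record that both the left- and right-hand sides of \eqref{eq:momboso} vanish unless the number of $\bar\psi_1\psi_1$ insertions equals the number of $\bar\psi_2\psi_2$ insertions: on the fermionic side this is because $S_{0,11}^\rho = S_{0,22}^\rho = 0$, so the matrix $(S_{0,a_ib_j}^\rho(z_i,z_j))$ has a block structure that forces the determinant to vanish unless the $1$-rows can be paired bijectively with the $2$-columns; on the bosonic side it follows from the neutrality selection rule in Lemma~\ref{lem:GFF-expi} (the total charge $\sum_l \sigma_l$ plus $\sum_j \alpha_j = \sum_l\sigma_l$ must vanish, using $\sum_j\alpha_j = 0$). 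This justifies restricting to $k = k'$.

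For $k = k'$, the key step is to evaluate $\avg{\prod_{l=1}^k \bar\psi_1\psi_1(w_l)\prod_{l'=1}^k \bar\psi_2\psi_2(w_{l'}')}_{\FF_\rho(0)} = \det\bigl((S_0^\rho)_{\bullet}\bigr)$, where the relevant $2k\times 2k$ matrix, after the block reduction, becomes a product of two $k\times k$ Cauchy-type determinants coming from the off-diagonal entries $S_{0,12}^\rho(z,w) = \frac{\overline{\rho(z)}}{\overline{\rho(w)}}\frac{1}{2\pi}\frac{1}{\bar z - \bar w}$ and $S_{0,21}^\rho(z,w) = \frac{\rho(w)}{\rho(z)}\frac{1}{2\pi}\frac{1}{z-w}$. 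Concretely, the fermionic moment equals
\begin{equation}
(-1)^{?}\left(\frac{1}{2\pi}\right)^{2k}\left(\prod_{l}\frac{|\rho(w_l)|^2}{|\rho(w_l')|^2}\right)\det\!\left(\frac{1}{w_i - w_j'}\right)_{i,j=1}^k \overline{\det\!\left(\frac{1}{w_i - w_j'}\right)_{i,j=1}^k},
\end{equation}
where the $\rho$-prefactors assemble from the ratios along the permutation cycles (they telescope to $\prod_l |\rho(w_l)|^2/|\rho(w_l')|^2$, independently of the permutation, which is exactly the mechanism that makes the determinant factorize cleanly). Applying the Cauchy determinant identity $\det(1/(w_i - w_j')) = \frac{\prod_{r<s}(w_r - w_s)(w_s' - w_r')}{\prod_{r,s}(w_r - w_s')}$ and taking the modulus squared reproduces precisely the geometric factor $\frac{\prod_{r<s}|w_r - w_s|^2|w_r' - w_s'|^2}{\prod_{r,s}|w_r - w_s'|^2}$ appearing in Lemma~\ref{lem:GFFrho}. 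Matching constants then forces the identity $\bigl(\frac{1}{2\pi}\bigr)^{2k} = \bigl(\frac{e^{\gamma/2}}{4\pi}\bigr)^{2k} (4e^{-\gamma})^{-k}$, which is an algebraic check: $(4e^{-\gamma})^k \cdot (2\pi)^{-2k} = (e^{\gamma/2}/(4\pi))^{2k}$ since $(e^{\gamma/2}/(4\pi))^2 = e^{\gamma}/(16\pi^2)$ and $4e^{-\gamma}/(4\pi^2) = e^{-\gamma}/\pi^2 = (16\pi^2)^{-1}\cdot 16 e^{-\gamma}$; I will verify the normalization constant carefully since this is where arithmetic slips happen.

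Once \eqref{eq:momboso} is established for coincident insertion points $z_l = w_l$, $z_{k+l'} = w_{l'}'$ (legitimate because $S_{0,11}^\rho = S_{0,22}^\rho = 0$ means the diagonal singularity never enters and both sides of \eqref{eq:momboso} are honest analytic functions of distinct $z$'s), the cumulant version follows purely formally. On the fermionic side, \eqref{eq:fermcor3} expresses the truncated correlation as the usual alternating sum over partitions of products of (untruncated) moments; on the bosonic side, the same moments-to-cumulants inversion (iterating \cite[Lemma~A.1]{MR4767492}) expresses $\avg{\cdots}_{\GFF_\rho(0)}^{\mathsf T}$ in terms of products of untruncated $\avg{\prod_{p\in B}\wick{e^{i\sigma_p\sqrt{4\pi}\varphi(z_p)}}}_{\GFF_\rho(0)}$. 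Since \eqref{eq:momboso} identifies the moments block-by-block (each block $B$ contributing a factor $(e^{\gamma/2}/(4\pi))^{|B|}$, which multiplies up to the global $(e^{\gamma/2}/(4\pi))^{k+k'}$), the two alternating sums coincide term by term, giving the cumulant identity; the "vanishes unless $k = k'$" clause carries over because any partition block with unequal numbers of $+$ and $-$ charges gives a vanishing untruncated factor, and globally $\sum_p\sigma_p = k - k'$ must vanish.

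The main obstacle I expect is bookkeeping rather than conceptual: tracking the sign $(-1)^{?}$ and confirming that the $\rho$-dependent ratios genuinely telescope to a permutation-independent prefactor $\prod_l|\rho(w_l)|^2/|\rho(w_l')|^2$ inside the determinant expansion (this is what allows pulling it out of the determinant and reducing to the pure Cauchy determinant). I would handle this by writing $\det\bigl((S_0^\rho)_{a_ib_j}(z_i,z_j)\bigr)$ as a sum over permutations $\sigma$ matching the $k$ "$1$"-indices to the $k$ "$2$"-indices, observing that the product $\prod \rho(\cdot)^{\pm1}$ along any such $\sigma$ depends only on which variables carry $\rho$ versus $\rho^{-1}$ (each $w_l$ appears once in a numerator-type slot, each $w_l'$ once in a denominator-type slot, regardless of $\sigma$), and then factoring. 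A secondary (minor) point is being careful that the branch-cut set $\Gamma$ is avoided — but since all $z_l\notin\Gamma$ by hypothesis, $\rho$ is well-defined and continuous at each insertion, so no issue arises.
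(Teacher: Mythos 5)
Your approach is essentially the paper's: reduce the cumulant statement to the moment statement via \eqref{eq:fermcor3}, then evaluate the moment as a Cauchy determinant and match against Lemma~\ref{lem:GFFrho} via the Cauchy--Vandermonde identity. The structure, the selection-rule argument for $k=k'$, and the factorization of the $\rho$-ratios are all as in the paper's proof.

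However, your normalization check contains a concrete arithmetic error. The relation you need is
\begin{equation}
  \Bigl(\frac{1}{2\pi}\Bigr)^{2k} = \Bigl(\frac{e^{\gamma/2}}{4\pi}\Bigr)^{2k}(4e^{-\gamma})^{+k},
\end{equation}
since the fermionic prefactor must equal the conversion constant \emph{times} the bosonic prefactor $(4e^{-\gamma})^k$ from Lemma~\ref{lem:GFFrho}, and indeed $(e^{\gamma/2}/4\pi)^{2k}(4e^{-\gamma})^k = e^{k\gamma}(4\pi)^{-2k}\cdot 4^k e^{-k\gamma}= 4^k/(4\pi)^{2k} = (2\pi)^{-2k}$. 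You wrote the exponent of $(4e^{-\gamma})$ as $-k$, and your subsequent ``check'' actually exposes the problem: the two quantities you compute, $(e^{\gamma/2}/4\pi)^2 = e^\gamma/(16\pi^2)$ and $4e^{-\gamma}/(4\pi^2) = 16e^{-\gamma}/(16\pi^2)$, are plainly unequal (you need $16 = e^{2\gamma}$, which is false). The slip is in the direction of the relation, not the underlying argument, so this is repairable — but it is exactly the kind of constant-matching you flagged as error-prone, and the inconsistency should have triggered a re-derivation.
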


\begin{proof}
By \eqref{eq:fermcor3}, for $z_1, \dots, z_{k+k'}$ distinct,
\begin{equation}
  \avg{\bar \psi_{1}\psi_{1}(z_1);\dots;\bar \psi_{2}\psi_{2}(z_{k+k'})}_{\FF_\rho(0)}^\mathsf T
  =\sum_{\pi\in \frP_{k+k'}}(-1)^{|\pi|-1}(|\pi|-1)!\prod_{B\in \pi}\avg{\prod_{j\in B}\bar \psi_{a_j}\psi_{a_j}(z_j)}_{\FF_\rho(0)},
\end{equation}
where $a_j=1$ for $j\leq k$ and $a_j=2$ for $j>k$.
The bilinear correlation functions on the right-hand side are well defined since the diagonal entries to the Green's function vanish on the diagonal
and the $z_i$ are distinct.
Thus it is sufficient to prove \eqref{eq:momboso}, i.e., that for distinct points $w_1,\dots,w_p,w_1',\dots,w_{q}'\in \C\setminus \{x_1,\dots,x_n\}$,
\begin{equation}\label{eq:momboso-pf}
  \avg{\prod_{l=1}^p \bar\psi_1\psi_1(w_l)\prod_{l'=1}^{q}\bar \psi_2\psi_2(w_{l'}')}_{\FF_\rho(0)}=
  (\frac{e^{\gamma/2}}{4\pi})^{p+q}
  \avg{\prod_{l=1}^p\wick{e^{i\sqrt{4\pi}\varphi(w_l)}}\prod_{l'=1}^{q}\wick{e^{-i\sqrt{4\pi}\varphi(w_{l'}')}}}_{\GFF_\rho(0)}
  .
\end{equation}
As in  \cite[Lemma 2.1]{MR4767492}, both sides vanish if $p\neq q$, so assume from now on that $p=q$.
Moreover, arguing as in the same lemma, we can use anticommutativity and that $S_{11}=S_{22}=0$ to see that
\begin{align}
  \avg{\prod_{l=1}^p\bar\psi_{1}\psi_{1}(w_l)\prod_{l'=1}^p \bar \psi_2\psi_2(w_{l'}')}_{\FF_\rho(0)}
  &=(-1)^p \avg{\prod_{l=1}^p \bar\psi_1(w_l)\psi_2(w_{l}')}_{\FF_\rho(0)}\avg{\prod_{l=1}^p \bar\psi_2(w_l')\psi_1(w_{l})}_{\FF_\rho(0)}\nnb
  &=(-1)^p \det\left(S_{0,12}(w_l,w_{l'}')\right)_{l,l'=1}^p
    \nnb &\qquad\qquad \times \det\left(S_{0,21}(w_{l'}',w_{l})\right)_{l,l'=1}^p.
\end{align}
By \eqref{eq:S0}, we have $S_{0,12}(z,w)=\frac{\overline{\rho(z)}}{\overline{\rho(w)}}\frac{1}{2\pi}\frac{1}{\bar z-\bar w}$ and thus
\begin{align}
\det\left(S_{0,12}(w_l,w_{l'}')\right)_{l,l'=1}^p=\frac{1}{(2\pi)^p}\prod_{l=1}^p \frac{\overline{\rho(w_l)}}{\overline{\rho(w_{l}')}}\det\left(\frac{1}{\bar w_l-\bar w_{l'}'}\right)_{l,l'=1}^p.
\end{align}
Similarly,
\begin{align}
\det\left(S_{0,21}(w_{l'}',w_{l})\right)_{l,l'=1}^p=\frac{1}{(2\pi)^p}\prod_{l=1}^p \frac{\rho(w_l)}{\rho(w_{l}')}\det\left(\frac{1}{w_{l'}'- w_{l}}\right)_{l,l'=1}^p.
\end{align}
Thus we find that 
\begin{equation}
\avg{\prod_{l=1}^p\bar\psi_{1}\psi_{1}(w_l)\prod_{l'=1}^p \bar \psi_2\psi_2(w_{l'}')}=\frac{1}{(2\pi)^{2p}}\prod_{l=1}^p \frac{|\rho(w_l)|^2}{|\rho(w_l')|^2}\left|\det\left(\frac{1}{w_l-w_{l'}'}\right)_{l,l'=1}^p\right|^2.
\end{equation}
On the other hand, the right-hand side of \eqref{eq:momboso} is given by Lemma~\ref{lem:GFFrho}
and agrees by the Cauchy-Vandermonde identity
\begin{equation}
\frac{\prod_{1\leq r<s\leq p}|w_r-w_s|^2 |w_r'-w_s'|^2}{\prod_{r,s=1}^p |w_r-w_s'|^2}=\left|\det\left(\frac{1}{w_l-w_{l'}'}\right)_{l,l'=1}^p\right|^2.
\end{equation}
This concludes the proof.
\end{proof}

\begin{remark} \label{rk:det-rho}
By Remark~\ref{rk:det-rho1} and Lemma~\ref{lem:GFF-expi},
\begin{equation} \label{e:ren-det-massless-bis}
  \frac{\det(\Dirac_\rho)}{\det(\Dirac)} %
  \propto \avg{\prod_{j=1}^n \wick{e^{i\sqrt{4\pi}\alpha_j\varphi(x_j)}}}_{\GFF(0)}.
\end{equation}
Moreover, one can show that the Bosonization dictionary \eqref{e:corr1}--\eqref{e:corr4} holds if bosonic correlations
are weighted by complex exponentials and fermionic correlations have the corresponding twisting, i.e.,
bosonic ones are evaluated with respect to $\avg{\cdot}_{\GFF_\rho(0)}$ and fermionic ones with respect to $\avg{\cdot}_{\FF_\rho(0)}$.
We will not directly use this massless twisted Bosonization dictionary in this paper (so do not include the details).
\end{remark}

\section{Twisted Dirac operator with finite-volume mass}\label{sec:Green}

In this section, we introduce the Green's function for the \emph{twisted Dirac operator}
with finite-volume mass term $\mu\chi$:
\begin{equation}
  \Dirac_\rho+ \mu\chi = \begin{pmatrix} \mu\chi & 2\rho^{-1}\bar\partial\rho \\ 2\bar\rho\partial\bar\rho^{-1} & \mu \chi \end{pmatrix},
\end{equation}
where $\mu \in \R$ and where we assume through this section that
\begin{equation}
  \label{e:chi}
  \begin{gathered}
  \chi \in L^\infty(\C), \quad \chi \geq 0, \\
  \supp(\chi) \text{ is a simply connected compact set with smooth boundary}.
\end{gathered}
\end{equation}

The assumption on $\chi$ could probably be weakened to some degree, but it is more general than what we need for our
main results (which are indicator functions of disks).

The specification of the branching structure $\rho$ is as in \eqref{eq:rho}:
given distinct points $x_1,\dots,x_n\in\C$ 
and $\alpha_1,\dots,\alpha_n\in (-\frac{1}{2},\frac{1}{2})$ satisfying $\sum_{j=1}^n \alpha_j=0$,
\begin{equation}\label{eq:rho-bis}
\rho(z)=\prod_{j=1}^n (z-x_j)^{\alpha_j}=\prod_{j=1}^n e^{\alpha_j\log(z-x_j)},
\end{equation}
and the exact choice of the branches for the logarithm is not important.
The union of all branch cuts is denoted $\Gamma$ %
(and can be chosen as around \eqref{eq:rho} if the imaginary parts are distinct but does not have to be).
We also introduce the notation %
\begin{equation}
P(z)=P_\rho(z) =\begin{pmatrix} \rho(z) & 0 \\ 0 & 1/\overline{\rho(z)}\end{pmatrix},\label{BranchingMat}
\end{equation}
so that
\begin{equation}
\Dirac_\rho+\mu\chi=P^{-1}\Dirac \overline{P}^{-1}+\mu\chi I,
\end{equation}
where $I$ is the identity operator and $\overline{M}$ of a matrix $M$ denotes complex conjugation of the matrix elements.
Observe that
\begin{equation}
  \slashed\partial_\rho f(z)=\slashed\partial f(z) \quad \text{for all $f\in C_c^\infty(\C\setminus \Gamma;\C^2)$}.
\end{equation}

The massless Green's function $S_0=S_0^\rho$, i.e., the Green's function for the above twisted Dirac operator with
$\mu=0$, is explicitly defined in \eqref{eq:S0}.
For $\mu \neq 0$ the Green's function is however not explicit
and this section establishes the existence, uniqueness, and various bounds for the Green's function
of $\Dirac_\rho + \mu \chi$.

We begin with a theorem establishing existence and uniqueness.
For $\mu \neq 0$ we always assume $x_1,\dots,x_n\in\mathrm{supp}(\chi)$.

\begin{proposition}\label{pr:diracexist}
Let $\mu\in \R$, assume $\chi$ satisfies \eqref{e:chi}, %
and let $x_1,\dots,x_n\in \mathrm{supp}(\chi)$
and $\alpha_1,\dots,\alpha_n\in (-\frac{1}{2},\frac{1}{2})$ with $\sum_{j=1}^n \alpha_j=0$.
Then for every $w\in \C\setminus \Gamma$ (where $\Gamma$ is the branch cut of $\rho$ from \eqref{eq:rho-bis})
there exists a unique complex $2\times 2$ matrix-valued function $S_{\chi\mu}^\rho(\cdot,w)=S(\cdot,w)=(S_{ij}(\cdot,w))_{i,j=1}^2$
defined on $(\C\setminus \Gamma) \setminus \{w\}$
which satisfies the following three conditions:

\smallskip\noindent (i)
On $\C\setminus \Gamma$ we have, 
\begin{equation}\label{eq:dirac}
\begin{pmatrix}
\mu\chi(z) & 2\bar\partial_z \\
2\partial_z & \mu\chi(z)
\end{pmatrix}\begin{pmatrix}
S_{11}(z,w) & S_{12}(z,w)\\
S_{21}(z,w) & S_{22}(z,w)
\end{pmatrix}=\begin{pmatrix}
\delta(z-w) & 0\\
0 & \delta(z-w)
\end{pmatrix}.
\end{equation}

\smallskip\noindent (ii)
For each $w\in \C\setminus \Gamma$, 
\begin{equation}\label{eq:infty}
\lim_{z\to \infty}S(z,w)=0.
\end{equation}

\smallskip\noindent (iii)
For each $w\in \C\setminus \Gamma$, 
\begin{equation}\label{eq:continuity}
z\mapsto \begin{pmatrix}
\frac{1}{\overline{\rho(z)}}S_{11}(z,w) & \frac{1}{\overline{\rho(z)}}S_{12}(z,w) \\
\rho(z)S_{21}(z,w) & \rho(z)S_{22}(z,w)
\end{pmatrix}
\end{equation}
has a continuous extension to $\C\setminus \{w\}$.
\end{proposition}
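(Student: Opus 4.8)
The strategy is to reduce the twisted problem to an untwisted Dirac equation with an $L^\infty$ mass perturbation by conjugating with the branching matrix $P=P_\rho$, and then to solve the resulting integral equation via a perturbative Neumann series (equivalently, Fredholm theory) using the explicit massless Green's function $S_0^\rho$ from \eqref{eq:S0}. Concretely, write $S=S_0^\rho + R$, where $R$ should account for the mass term. Applying $\Dirac_\rho + \mu\chi$ to $S_0^\rho$ gives $\delta(z-w)I + \mu\chi S_0^\rho$, so the correction must satisfy $(\Dirac_\rho+\mu\chi)R = -\mu\chi S_0^\rho$, i.e.\ formally $R = -\mu (\Dirac_\rho+\mu\chi)^{-1}\chi S_0^\rho$. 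This motivates the ansatz that $S$ solves the integral equation
\begin{equation} \label{e:Green-inteq}
  S(z,w) = S_0^\rho(z,w) - \mu \int_{\C} S_0^\rho(z,u)\, \chi(u)\, S(u,w)\, du,
\end{equation}
and the first task is to show \eqref{e:Green-inteq} has a unique solution in an appropriate function space (e.g.\ bounded continuous on compacts away from $w$, decaying at infinity) and that any such solution satisfies (i)--(iii).

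\textbf{Key steps.} First I would record the mapping properties of the operator $T$ with kernel $S_0^\rho(z,u)\chi(u)$: since $S_0^\rho(z,u)$ has only a $|z-u|^{-1}$ singularity and $\chi$ is bounded with compact support (assumption \eqref{e:chi}), $T$ is a bounded operator — in fact Hilbert--Schmidt, hence compact — on $L^2(\supp\chi;\C^2)$, and also improves integrability/regularity (it maps $L^2$ into continuous functions vanishing at infinity by the explicit decay $|S_0^\rho(z,u)|\lesssim |z-u|^{-1}$ and the $\rho$-prefactors being bounded on bounded sets and $\to$ constants at infinity). The factors $\overline{\rho(z)}/\overline{\rho(w)}$ and $\rho(w)/\rho(z)$ in \eqref{eq:S0} are locally bounded except near the $x_j$, where $|\rho(z)|^{\pm1}$ blows up like $|z-x_j|^{\mp|\alpha_j|}$ with $|\alpha_j|<\tfrac12$, so the kernel is still locally $L^p$ for some $p>2$ near the punctures; this is exactly where the hypothesis $\alpha_j\in(-\tfrac12,\tfrac12)$ is used, and the combination with the $|z-u|^{-1}$ singularity keeps $T$ compact. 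Second, by the Fredholm alternative, $\id + \mu T$ is invertible on $L^2(\supp\chi;\C^2)$ provided $-1/\mu$ is not an eigenvalue of $T$; this needs to be ruled out. Here I would use a uniqueness argument: if $(\Dirac_\rho + \mu\chi)f = 0$ with $f$ satisfying the decay \eqref{eq:infty} and the continuity condition \eqref{eq:continuity}, then $g = \overline{P}^{-1}f$ (or the appropriate conjugate) solves an untwisted massive Dirac-type equation $\Dirac g = -\mu\chi \overline{P}\, g$ on all of $\C$ — the continuity condition \eqref{eq:continuity} is precisely what removes the would-be distributional contributions along $\Gamma$, so that $g$ is a genuine (weak, then elliptic-regularity-improved) solution with no source; an energy identity (pairing with $\bar g$, integrating, using $\mu\chi\ge0$ and the decay at infinity to discard boundary terms) then forces $f\equiv 0$. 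This gives uniqueness in (i)--(iii) and simultaneously shows $-1/\mu$ cannot be an eigenvalue of $T$, so \eqref{e:Green-inteq} is solvable: $S(\cdot,w) = (\id+\mu T)^{-1}S_0^\rho(\cdot,w)$ on $\supp\chi$, and then \eqref{e:Green-inteq} itself defines $S(\cdot,w)$ off $\supp\chi$.

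\textbf{Verification that the solution has the claimed properties.} Given the solution of \eqref{e:Green-inteq}, property (ii) is immediate from the $|z-u|^{-1}$ decay of $S_0^\rho(z,u)$ in $z$ together with $\chi S(\cdot,w)\in L^1$ (dominated convergence). Property (i) follows by applying $\Dirac_\rho+\mu\chi$ to both sides of \eqref{e:Green-inteq}: on $\C\setminus\Gamma$ we have $\Dirac_\rho = \Dirac$ acting on the conjugated functions, $\Dirac_z S_0^\rho(z,\cdot) = \delta$ (the known property of $S_0^\rho$ recalled after Definition~\ref{def:S0}), and the mass term reproduces exactly the $-\mu\chi S_0^\rho$ that cancels against the integral term; the interchange of $\Dirac_z$ with the convolution integral is justified by the local $L^p$, $p>2$, bound on the kernel plus elliptic regularity away from $z=w$. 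Property (iii) — the continuity of the $P$-conjugated Green's function across $\Gamma$ — I would obtain by writing $\widetilde S(z,w) = \mathrm{diag}(\overline{\rho(z)}^{-1},\rho(z))S(z,w)$ and checking from \eqref{e:Green-inteq} that $\widetilde S$ satisfies an integral equation whose inhomogeneous term $\mathrm{diag}(\overline{\rho}^{-1},\rho)S_0^\rho$ is, by the explicit formula \eqref{eq:S0}, continuous on $\C\setminus\{w\}$ (the $\rho$-discontinuities cancel), and whose integral operator preserves this continuity.

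\textbf{Main obstacle.} The technically delicate point is the interplay between the two singularities — the $|z-u|^{-1}$ of the massless Dirac kernel and the $|z-x_j|^{-|\alpha_j|}$ of the branching factors — and verifying compactness and the correct function-space mapping properties uniformly near the punctures; this is where $|\alpha_j|<\tfrac12$ is essential and where one must be careful to set up the right $L^p$/weighted space so that Fredholm theory applies cleanly. The second subtle point is the uniqueness/energy argument: one must argue that condition \eqref{eq:continuity} genuinely upgrades a solution of \eqref{eq:dirac} on $\C\setminus\Gamma$ to a distributional solution on all of $\C$ with no spurious single-layer distribution supported on $\Gamma$, and then that the elliptic regularity plus $\mu\chi\ge 0$ and decay at infinity really do force the solution to vanish — the positivity assumption on $\chi$ from \eqref{e:chi} should be exactly what makes the energy identity sign-definite.
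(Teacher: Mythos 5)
Your proposal has the right overall shape — reduce to a Fredholm problem for an integral equation built from the explicit massless Green's function $S_0^\rho$, rule out a nontrivial null space by an energy/Green's-theorem argument that uses the continuity condition \eqref{eq:continuity} to kill contributions from $\Gamma$, and then verify (i)--(iii) for the resulting solution. This is genuinely the same circle of ideas as the paper's proof. However, your specific technical setup has two gaps, and the paper avoids both by setting up the Fredholm problem differently.

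First, the parenthetical claim that the integral operator $T$ with kernel $S_0^\rho(z,u)\chi(u)$ is Hilbert--Schmidt on $L^2(\supp\chi;\C^2)$ is false. The off-diagonal entries of $S_0^\rho$ scale like $|z-u|^{-1}$, so $|K(z,u)|^2 \sim |z-u|^{-2}$, which is not integrable near the diagonal in $\R^2$: the HS norm diverges logarithmically. Compactness can still be obtained (the Cauchy transform on a bounded domain maps $L^2$ into a local Sobolev space and is compact by Rellich--Kondrachov, as the paper exploits in the weighted space $\H_\rho$ in Section 9.4), but the easy HS route does not work and you would have to argue compactness by a different mechanism, paying attention to the additional $|\rho(u)|^{\pm1}$ weights near the punctures.

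Second, your integral equation $(I+\mu T)S(\cdot,w)=S_0^\rho(\cdot,w)$ cannot be posed directly on $L^2(\supp\chi)$ when $w\in\supp\chi$, because the inhomogeneous term $S_0^\rho(\cdot,w)$ has a $|z-w|^{-1}$ singularity and hence lies in $L^p$ only for $p<2$, not in $L^2$. One could fix this by iterating once (write $S=S_0^\rho-\mu T S_0^\rho+\mu^2 T^2 S$ and observe $T S_0^\rho$ is more regular), but this is not addressed in the proposal. The paper sidesteps both issues at once: it peels off the leading singular terms of $S_{11},S_{21}$ by hand (via \eqref{S11definition-bis}--\eqref{S21definition-bis}), shows the remainders $\Delta_{11},\Delta_{21}$ satisfy an integral equation $(I-K)\varphi=h$ whose inhomogeneous term $h$ is jointly \emph{continuous} and vanishes at infinity (Lemma~\ref{hzwcontinuity}), and works with $K$ as a compact operator on $C_0(\C,\C^2)$ rather than on $L^2$, using Arzel\`a--Ascoli and the mapping properties of the Cauchy transform recorded in Lemma~\ref{le:cauchy}. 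This choice of function space also makes property (iii) — continuity of the $P$-conjugated Green's function across $\Gamma$ — fall out immediately from the construction, whereas in an $L^2$ framework it would require a separate bootstrap. Your uniqueness/energy argument is essentially the paper's Lemma~\ref{le:homogdirac}, though one small point: what the computation yields is $\re\mu\int\chi(|u_1|^2+|u_2|^2)=0$ (so the relevant hypothesis is $\re\mu\ne 0$, not the sign of $\mu\chi$), after which analyticity/antiholomorphicity of $u_1,u_2$ off $\supp\chi$ and the decay at infinity give $u\equiv 0$ by unique continuation, not directly by the integral identity.
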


\begin{definition} \label{def:diracmass}
  The function $S^\rho_{\mu\chi} = S$ from Proposition~\ref{pr:diracexist} is the Green's function of $\Dirac_\rho+\mu\chi$,
  or in other words the Green's function of $\Dirac+\mu\chi$ with branching structure $\rho$.
\end{definition}

The next proposition provides a short-distance expansion of the massive Green's function, i.e., a representation
in terms of a singular contribution (the massless Green's function) and more regular remainder terms $\Delta$.
We only consider the first column as we will show in Proposition~\ref{prop:Greensym} below that the first column determines the second.

\begin{proposition} \label{prop:Green-Delta}
The Green's function $S^\rho=S^\rho_{\mu\chi}$ has the representation
\begin{align}
  S_{11}^\rho(z,w)&=\mu\frac{\overline{\rho(z)}\rho(w)}{(2\pi)^2}\int_{\C}du\frac{\chi(u)}{|\rho(u)|^2}\frac{1}{\bar z-\bar u}\frac{1}{u-w}+\overline{\rho(z)}\rho(w)\Delta_{11}^\rho(z,w)\label{S11definition-bis},
  \\
S_{21}^\rho(z,w)&=\frac{1}{2\pi}\frac{\rho(w)}{\rho(z)}\frac{1}{z-w}+\frac{\rho(w)}{\rho(z)}\Delta_{21}^\rho(z,w)\label{S21definition-bis}
\end{align}
where $\Delta_{11}^\rho$ and $\Delta_{21}^\rho$ are joint continuous in $\C\times \C$,
depend only on $|\rho|$ (and are thus independent of the branch cut $\Gamma$), decay at infinity, and satisfy
\begin{align}
  2\partial_z \Delta_{11}^\rho(z,w)&=-\mu\chi(z)\frac{1}{|\rho(z)|^2}\Delta_{21}^\rho(z,w) \label{e:Deltadefinition1-bis}
                                \\
2\bar \partial_z \Delta_{21}^\rho(z,w)&=-\mu^2 \chi(z)\frac{|\rho(z)|^2}{(2\pi)^2}\int_{\C}du\, \frac{\chi(u)}{|\rho(u)|^2}\frac{1}{\bar z-\bar u}\frac{1}{u-w}-\mu \chi(z)|\rho(z)|^2 \Delta_{11}^\rho(z,w). \label{e:Deltadefinition2-bis}
\end{align}
\end{proposition}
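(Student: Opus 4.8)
The plan is to start from the defining properties (i)--(iii) of $S^\rho = S^\rho_{\mu\chi}$ from Proposition~\ref{pr:diracexist} and rewrite the $\Dirac_\rho + \mu\chi$ equation for the first column as an integral equation by inverting the off-diagonal operator (i.e.\ the massless Dirac operator $\Dirac_\rho$). Concretely, writing $S_{11} = \overline{\rho(z)}\rho(w)\,\widetilde S_{11}$ and $S_{21} = \frac{\rho(w)}{\rho(z)}\,\widetilde S_{21}$ strips off the branching prefactors, and condition (iii) becomes continuity of $\widetilde S_{11}$ and $\rho(z)^{-1}\overline{\rho(z)}\,\widetilde S_{21}$ times appropriate factors; after this substitution the system \eqref{eq:dirac} reads, schematically,
\begin{align}
  2\bar\partial_z \big(\overline{\rho}\rho(w)\widetilde S_{11}\big) &= \delta(z-w) - \mu\chi(z)\,\tfrac{\rho(w)}{\rho(z)}\widetilde S_{21}, \nnb
  2\partial_z \big(\tfrac{\rho(w)}{\rho(z)}\widetilde S_{21}\big) &= -\mu\chi(z)\,\overline{\rho(z)}\rho(w)\widetilde S_{11}.
\end{align}
The first step is therefore to apply the scalar Cauchy kernels $\frac{1}{2\pi}\frac{1}{\bar z - \bar u}$ (a right inverse of $2\bar\partial$) and $\frac{1}{2\pi}\frac{1}{z-u}$ (a right inverse of $2\partial$), using decay at infinity \eqref{eq:infty} to fix the homogeneous solution to zero, so that $\widetilde S_{11}$ picks up the $\delta$-source term $\frac{1}{2\pi}\frac{1}{\bar z-\bar w}$ (which gives $S_{21}$'s leading term after relabelling the roles; note the $\partial$ vs.\ $\bar\partial$ bookkeeping is what makes the explicit first term land in $S_{11}$ through the double Cauchy integral in \eqref{S11definition-bis}) and everything else is expressed through the Cauchy transforms of $\chi$-weighted quantities. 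This yields the defining equations for $\Delta^\rho_{11}, \Delta^\rho_{21}$ directly: I would \emph{define} $\Delta^\rho_{21}$ by subtracting the explicit $\frac{1}{2\pi}\frac{1}{z-w}$ from $\rho(z)\rho(w)^{-1} S_{21}$, and $\Delta^\rho_{11}$ by subtracting the explicit double-Cauchy term from $\overline{\rho(z)}^{-1}\rho(w)^{-1} S_{11}$, then verify that applying $2\partial_z$ resp.\ $2\bar\partial_z$ to these differences reproduces \eqref{e:Deltadefinition1-bis}--\eqref{e:Deltadefinition2-bis}. The $2\partial_z \Delta^\rho_{11}$ equation comes from the second line of \eqref{eq:dirac} (the $2\partial_z S_{11} + \mu\chi S_{21}=0$ relation, since the source is on the diagonal entry $(2,1)$ which is zero off $w$... more precisely from the $(2,1)$-component, giving $2\partial_z S_{11} = -\mu\chi S_{21}$ away from $w$), and the $2\bar\partial_z \Delta^\rho_{21}$ equation comes from substituting the representation \eqref{S11definition-bis} of $S_{11}$ into $2\bar\partial_z S_{21} = -\mu\chi S_{11}$ (the $(1,1)$-component away from $w$).

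The second step is to establish the regularity and structural claims about $\Delta^\rho_{11}$ and $\Delta^\rho_{21}$: joint continuity on $\C\times\C$, decay at infinity, and dependence only on $|\rho|$. For the latter, the key observation is that although $\rho$ individually depends on the branch cut $\Gamma$, the combinations $\overline{\rho(z)}\rho(z) = |\rho(z)|^2$ and the ratios $\rho(w)/\rho(z)$ appearing — once we also fix $w,z$ on the same sheet — enter the integral equations only through $|\rho(u)|^{-2}$ (which is globally defined and smooth away from the $x_j$) and through $\chi(u)$; I would check by inspection of the closed integral system that $\Delta^\rho_{11}/1$ and $\Delta^\rho_{21}/1$ solve a fixed-point equation whose coefficients involve only $\chi$, $|\rho|^2$, $|\rho|^{-2}$ and Cauchy kernels, hence are independent of the choice of branch cut. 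Joint continuity follows from the mapping properties of the Cauchy transform: since $\chi |\rho|^{-2} \in L^p(\C)$ for some $p>2$ near each $x_j$ (because $|\alpha_j|<\tfrac12$ forces $|\rho(u)|^{-2}$ to have an integrable and $L^p$-for-some-$p>2$ singularity of order $|u-x_j|^{-2\alpha_j}$ with $2\alpha_j < 1$), the single Cauchy transform of $\chi|\rho|^{-2}$ is Hölder continuous and bounded, and the double Cauchy integral in \eqref{S11definition-bis} is even more regular; then a Picard/Neumann iteration of the integral system (which converges because $\chi$ has compact support so the relevant Cauchy-transform operators have small norm, or at least are bounded, on weighted $L^\infty$ or Hölder spaces, for $|\mu|$ not too large, and for general $\mu$ by a standard Fredholm argument using compactness) produces $\Delta^\rho_{11},\Delta^\rho_{21}$ as limits of jointly continuous functions, with uniform decay at infinity inherited from the $\frac{1}{\bar z-\bar u}$, $\frac{1}{z-w}$ factors and the compact support of $\chi$. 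Uniqueness of this $\Delta$-representation follows from uniqueness of $S^\rho$ in Proposition~\ref{pr:diracexist}.

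\textbf{Main obstacle.} I expect the principal technical difficulty to be the careful handling of the Cauchy-kernel integrals near the branch points $x_j \in \supp(\chi)$, i.e.\ controlling expressions like $\int_\C \frac{\chi(u)}{|\rho(u)|^2}\frac{1}{\bar z - \bar u}\frac{1}{u-w}\,du$ and its $\bar\partial_z$, and proving these are genuinely jointly continuous (including as $z$ or $w$ approaches an $x_j$ or the branch cut) rather than merely $L^p_{loc}$; this is where the hypothesis $\alpha_j \in (-\tfrac12,\tfrac12)$ is essential, and one must track the precise integrability exponents through both Cauchy transforms. A secondary subtlety is justifying the exchange of $\partial_z$ / $\bar\partial_z$ with the $u$-integral and the passage from the distributional equation \eqref{eq:dirac} (which holds on $\C\setminus\Gamma$) to the pointwise statements \eqref{e:Deltadefinition1-bis}--\eqref{e:Deltadefinition2-bis}, together with verifying the stated branch-cut independence — which, while conceptually clear, requires being scrupulous about which quantities are single-valued. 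Much of the PDE/functional-analytic backbone (Fredholm theory for the integral equation, existence of $S^\rho$) is already provided by Proposition~\ref{pr:diracexist}, so the proof of Proposition~\ref{prop:Green-Delta} is essentially a matter of algebraically extracting the explicit singular parts and then invoking standard Cauchy-transform regularity.
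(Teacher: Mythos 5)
Your plan matches the paper's proof closely: strip the branching prefactors, subtract the explicit singular parts, recast the remaining system as $(I-K)\varphi = h$ on $C_0(\C,\C^2)$, verify that $K$ is compact and $h$ is jointly continuous and decaying, and conclude via the Fredholm alternative (with uniqueness of the homogeneous problem supplied by the argument behind Lemma~\ref{le:homogdirac}); the branch-cut independence of $\Delta^\rho$ is then read off exactly as you suggest, since \eqref{e:Deltadefinition1-bis}--\eqref{e:Deltadefinition2-bis} involve only $\chi$ and $|\rho|^{\pm 2}$.

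There is, however, a concrete slip in your first paragraph: your schematic equations put $2\bar\partial_z$ on $\widetilde S_{11}$ and attach the $\delta$-source to that row, whereas the first column of \eqref{eq:dirac} reads $2\bar\partial_z S_{21}(z,w) = -\mu\chi(z)\,S_{11}(z,w) + \delta(z-w)$ and $2\partial_z S_{11}(z,w) = -\mu\chi(z)\,S_{21}(z,w)$. The delta source belongs to the $S_{21}$ equation, which is why the explicit Cauchy kernel $\frac{1}{2\pi}\frac{1}{z-w}$ is peeled off from $S_{21}$ in \eqref{S21definition-bis}, and the double Cauchy integral in \eqref{S11definition-bis} then arises from applying the $\partial_z$-Cauchy transform to $\chi(u)$ times this leading $\frac{1}{u-w}$ singularity of $S_{21}$ in the second relation. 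You effectively repair this at the end of the paragraph (the $\Delta$-equations you list are the right ones, and you correctly attribute them to the two components of \eqref{eq:dirac}), but the initial bookkeeping is inverted and would, if followed literally, mislocate the leading singularities. One smaller point: the Picard/Neumann iteration for small $|\mu|$ you invoke plays no role in this proposition — the paper goes directly to the Fredholm alternative on $C_0(\C,\C^2)$ for all real $\mu$; the small-$\mu$ perturbative series only enters later, in the analyticity argument of Proposition~\ref{pr:analyt12}.
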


Another important ingredient that we need is analyticity of $S_{\chi \mu}$ as a function of $\mu$,
stated in the next proposition.
The condition that $\chi$ has compact support is important for analyticity.

\begin{proposition}\label{pr:analyt}
There exists a $\chi$-dependent complex neighborhood $I$
of the real axis into which $\mu\mapsto S^\rho_{\mu\chi}(z,w)$ has an analytic continuation (pointwise for each $z,w\in \mathrm{supp}(\chi)\setminus \Gamma$ and $z\neq w$).
The complex neighborhood does not depend on $z,w$, and it is uniform on compact subsets of $x_1,\dots,x_n$ distinct (and
is permitted to depend on the $\alpha_i$).
This continuation satisfies,
for some constant $C = C(U,\chi,\mu)>0$ where $U$ is any bounded subset of $\C\setminus \Gamma$ and $z,w\in U$, $z\neq w$,
\begin{equation}
  |S^\rho_{\mu\chi}(z,w)|\leq \frac{C}{|\tilde{\rho}(z)\tilde{\rho}(w)||z-w|}\label{tempineq12-bis},
\end{equation}
where $\tilde\rho$ is defined as $\rho$ with each $\alpha_j$ replaced by $|\alpha_j|$:
\begin{equation}\label{eq:tilderho}
\tilde\rho(z)=\prod_{j=1}^n (z-x_j)^{|\alpha_j|}.
\end{equation}
Moreover, there exists a $\chi$-dependent neighborhood of $\mu=0$ in which for each $z\neq w$ and $z,w\notin \Gamma$, 
\begin{equation}\label{eq:Spert}
S_{\mu\chi}^\rho(z,w)=\sum_{n=0}^\infty (-1)^n \mu^n \int_{\C^n}du\,  S_0^\rho(z,u_1)S_0^\rho(u_1,u_2) \cdots S_0^\rho(u_{n-1},u_n)S_0^\rho(u_n,w)\prod_{j=1}^n \chi(u_j),
\end{equation}
where $S_0^\rho$ is the massless Green's function \eqref{eq:S0}.
\end{proposition}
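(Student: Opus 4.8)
\textbf{Proof plan for Proposition~\ref{pr:analyt}.}
The plan is to treat the three assertions (analyticity in a neighborhood of the real axis, the a priori bound \eqref{tempineq12-bis}, and the Neumann series \eqref{eq:Spert} near $\mu=0$) by a resolvent/Fredholm argument. First I would recast the defining equation $(\Dirac_\rho+\mu\chi)S^\rho_{\mu\chi}=\mathrm{Id}$ from Proposition~\ref{pr:diracexist} as an integral equation: writing $S^\rho_{\mu\chi}=S_0^\rho-\mu\, S_0^\rho\,\chi\, S^\rho_{\mu\chi}$ (acting as operators, with $\chi$ multiplication), one sees formally that $S^\rho_{\mu\chi}=(\mathrm{Id}+\mu K_0)^{-1}S_0^\rho$ where $K_0$ is the operator with kernel $S_0^\rho(z,u)\chi(u)$. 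The key point is that, because $\chi$ has compact support and (by \eqref{e:chi}) lies in $L^\infty$, and because $S_0^\rho(z,u)$ has only an integrable $|z-u|^{-1}$ singularity together with the mild $|\tilde\rho(z)\tilde\rho(u)|^{-1}$ factors from the branching (and $|\alpha_j|<1/2$ makes these locally integrable too), the operator $\chi^{1/2}K_0\chi^{1/2}$ (or a suitable symmetrization) is Hilbert--Schmidt, indeed compact, on $L^2(\mathrm{supp}\,\chi;\C^2)$. I would verify this by the standard estimate $\int\!\!\int_{\mathrm{supp}\,\chi}|S_0^\rho(z,u)|^2\,dz\,du<\infty$, using $|\alpha_j|<1/2$ so that $|\tilde\rho|^{-2}$ is in $L^2_{\mathrm{loc}}$ against the $|z-u|^{-2}$ kernel after a Hölder/Young argument as in Lemma~\ref{lem:hint3}.

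With compactness of $K\coloneqq\chi^{1/2}K_0\chi^{1/2}$ in hand, analytic Fredholm theory applies: $\mu\mapsto(\mathrm{Id}+\mu K)^{-1}$ is a meromorphic operator-valued function on $\C$, holomorphic off the discrete set of $\mu$ for which $-1/\mu$ is an eigenvalue of $K$. On the real axis, uniqueness in Proposition~\ref{pr:diracexist} already gives that $S^\rho_{\mu\chi}$ exists for all real $\mu$, which forces $\mathrm{Id}+\mu K$ to be invertible for all $\mu\in\R$ (an eigenvalue would obstruct existence/uniqueness); hence the poles avoid $\R$, and by discreteness there is an open complex neighborhood $I\supset\R$ free of poles. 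Restoring $S^\rho_{\mu\chi}=\chi^{-1/2}(\mathrm{Id}+\mu K)^{-1}\chi^{1/2}K_0+S_0^\rho\!\restriction_{\text{complement}}$ — more carefully, writing $S^\rho_{\mu\chi}(z,w)=S_0^\rho(z,w)-\mu\int S_0^\rho(z,u)\chi(u)\big((\mathrm{Id}+\mu K)^{-1}(\chi^{1/2}S_0^\rho(\cdot,w))\big)(u)\,du$ — gives the claimed pointwise analytic continuation for $z,w\in\mathrm{supp}(\chi)\setminus\Gamma$, $z\neq w$. Uniformity on compact sets of distinct $x_1,\dots,x_n$ follows because $K$ depends continuously (in Hilbert--Schmidt norm) on the $x_i$, so its spectrum varies continuously and stays off a fixed neighborhood of $\R$; this is where I would invoke the locally uniform decay estimates and, if needed, Appendix~\ref{app:SMJ}.

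For the a priori bound \eqref{tempineq12-bis}: the singular term is exactly $S_0^\rho(z,w)$, which already satisfies the bound with $\tilde\rho$ in place of $\rho$ (since $|S_0^\rho(z,w)|\le |\rho(w)/\rho(z)|\,|z-w|^{-1}/(2\pi)$ and $|\rho(w)/\rho(z)|\le C|\tilde\rho(z)\tilde\rho(w)|^{-1}\cdot(\text{bounded})$ on bounded $U$). The correction term $-\mu\int S_0^\rho(z,u)\chi(u)\,[\cdots](u)\,du$ is bounded by $|\mu|\|(\mathrm{Id}+\mu K)^{-1}\|\cdot\|\chi^{1/2}S_0^\rho(\cdot,w)\|_{L^2}$ times $\|\chi^{1/2}S_0^\rho(z,\cdot)\|_{L^2}$, and each $L^2$ norm is $\lesssim |\tilde\rho(w)|^{-1}$ resp. $|\tilde\rho(z)|^{-1}$ by the integrability of $|S_0^\rho|^2$ near the punctures; I would absorb the resulting constant into $C(U,\chi,\mu)$ and note the correction is less singular than $|z-w|^{-1}$, so the stated bound holds. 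Finally, \eqref{eq:Spert} is just the Neumann series for $(\mathrm{Id}+\mu K_0)^{-1}$, convergent when $|\mu|\,\|K\|_{\mathrm{op}}<1$, i.e.\ in a $\chi$-dependent disk about $\mu=0$; expanding term by term and reading off kernels gives the displayed iterated-integral formula, with absolute convergence justified by the Hilbert--Schmidt/Young estimates.

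\textbf{Main obstacle.} The technical heart is the mapping-property step: showing $K=\chi^{1/2}K_0\chi^{1/2}$ is Hilbert--Schmidt (or at least compact) on $L^2(\mathrm{supp}\,\chi;\C^2)$ \emph{uniformly} as the branch points $x_i$ range over a compact set of distinct configurations, which requires carefully combining the $|z-w|^{-1}$ singularity of $S_0^\rho$ with the $\prod_j|z-x_j|^{-|\alpha_j|}$ factors and checking that the condition $|\alpha_j|<1/2$ is exactly what makes the double integral finite (and jointly continuous in the $x_i$). Relatedly, one must be careful that the formal manipulation $S^\rho_{\mu\chi}=S_0^\rho-\mu S_0^\rho\chi S^\rho_{\mu\chi}$ is legitimate — i.e.\ that the integral equation has the same unique solution as the PDE characterization in Proposition~\ref{pr:diracexist}, which I would pin down using the defining properties (i)--(iii) there together with Proposition~\ref{prop:Green-Delta}.
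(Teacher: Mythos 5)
Your high-level strategy (recast as an integral equation, exploit compactness and analytic-Fredholm theory) is reasonable and is \emph{not} the route the paper takes: the paper instead expands $S^\rho_{(\mu+\delta)\chi}$ perturbatively around each real $\mu$, proving geometric convergence of the iterated-kernel series by splitting off the explicit $\mathcal{O}(|z-w|^{-1})$ singular part and showing (Lemma~\ref{lemmatemp42}) that two applications of the kernel produce something continuous and $L^q_{\mathrm{loc}}$, then uses H\"older iteratively and identifies the sum with $S^\rho_{(\mu+\delta)\chi}$ via the uniqueness of Proposition~\ref{pr:diracexist}. Your route, if it worked, would be cleaner structurally and would buy you a conceptual identification of the obstruction set (eigenvalues of the integral operator), whereas the paper's route buys you an explicit and locally uniform lower bound on the radius of convergence, which is what the later sections actually need.

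However, there is a genuine gap in the central mapping-property step. You propose to verify that $\chi^{1/2}K_0\chi^{1/2}$ is Hilbert--Schmidt by showing $\iint_{\mathrm{supp}\chi}|S_0^\rho(z,u)|^2\,dz\,du<\infty$, but this is false: in two dimensions $\int_{|z-u|<1}|z-u|^{-2}\,dz$ diverges logarithmically, and the $|\rho(u)/\rho(z)|$ factors (which are controlled by $|\alpha_j|<\tfrac12$) do nothing to offset the diagonal singularity. So the Hilbert--Schmidt argument, which is the verification route you commit to, fails. Compactness of $K_0$ on $L^2(\mathrm{supp}\chi)$ is likely still true (weakly singular kernels on bounded domains are compact), but you would need a different proof (e.g.\ smoothing $|z-u|^{-1}\mapsto W^{1,p}$ and invoking Rellich, or the Arzel\`a--Ascoli route the paper uses for its operator $K$ on $C_0(\C,\C^2)$ in \eqref{e:Kdefinition}). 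There is a second, smaller issue: the iteration $S^\rho_{\mu\chi}(\cdot,w)=(\mathrm{Id}+\mu K_0)^{-1}S_0^\rho(\cdot,w)$ must be run with $S_0^\rho(\cdot,w)$ as a right-hand side, which is not in $L^2(\mathrm{supp}\chi)$ because of the $|z-w|^{-1}$ singularity when $w\in\mathrm{supp}\chi$; one needs either a weighted space or the paper's trick of first peeling off the explicit singular parts (Proposition~\ref{prop:Green-Delta}) and working with the regular remainders $\Delta_{ij}$. You flag the $K$-compactness step as the "main obstacle," which shows good instincts, but the specific verification offered would not go through as written.
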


The next proposition gives a factorization identity for derivatives of the Green's function with respect to the branch points.
\begin{proposition} \label{prop:factorization}
For $(z,w)\in (\C\setminus\Gamma)^2$ with $z\neq w$ and $k=1,\dots,n$ the Green's function $S^\rho = S^\rho_{\mu\chi}$
is differentiable in the distinct branch points $x_1,\dots,x_n$ and
\begin{equation}\label{derivfactorisation}
\frac{1}{2\pi}\partial_{x_k} S^\rho(z,w)=\alpha_k \lim_{z',w'\rightarrow x_k}\frac{\rho(z')}{\rho(w')}\begin{pmatrix} S_{11}^\rho(z,w')\\ S_{21}^\rho(z,w')\end{pmatrix}\otimes  \begin{pmatrix} S_{21}^\rho(z',w) & S_{22}^\rho(z',w)\end{pmatrix}
\end{equation}
and
\begin{equation}
\frac{1}{2\pi}\bar\partial_{x_k}S^\rho(z,w)=-\alpha_k \lim_{z',w'\rightarrow x_k}\frac{\overline{\rho(w')}}{\overline{\rho(z')}}\begin{pmatrix} S_{12}^\rho(z,w')\\ S_{22}^\rho(z,w')\end{pmatrix}\otimes  \begin{pmatrix} S_{11}^\rho(z',w) & S_{12}^\rho(z',w)\end{pmatrix}.
\end{equation}
\end{proposition}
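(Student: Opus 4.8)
The goal is to differentiate the Green's function $S^\rho_{\mu\chi}$ with respect to a branch point $x_k$ and identify the derivative with a rank-one (tensor) expression built from boundary values of $S^\rho$ near $x_k$. The natural strategy is to write down an integral representation of $S^\rho$ in which the $x_k$-dependence is isolated, differentiate under the integral, and recognize the resulting kernel. Concretely, I would use the resolvent identity relating two twisted Dirac operators with slightly shifted branch points: for $\rho$ and $\rho'$ corresponding to branch points $x$ and $x' = x + \varepsilon e_k$, one has (formally) $S^{\rho'} - S^\rho = - S^\rho (\Dirac_{\rho'} - \Dirac_\rho) S^{\rho'}$, and the operator difference $\Dirac_{\rho'} - \Dirac_\rho$ is supported near the branch cut emanating from $x_k$. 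Since $\Dirac_\rho = P^{-1}\Dirac \bar P^{-1} + \mu\chi$, the difference is a first-order differential operator with coefficients $\rho'^{-1}\bar\partial\rho' - \rho^{-1}\bar\partial\rho$ and its conjugate, and these are concentrated (as distributions) along the difference of the two branch cuts, with a jump of size proportional to $\alpha_k$. Passing to the limit $\varepsilon \to 0$, the distributional derivative of $\rho^{-1}\bar\partial\rho$ in $x_k$ produces a term localized at $x_k$ itself (a derivative of a delta-like contribution along the cut), and the factor $\alpha_k$ appears from the monodromy. This is the source of the $\alpha_k$ prefactor and of the factorization into an ``incoming'' and ``outgoing'' piece.

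The cleaner route — and the one I would actually write up — avoids the distributional bookkeeping along cuts by working directly with the representation of $S^\rho$ provided by Propositions~\ref{prop:Green-Delta} and~\ref{pr:analyt}. First, using the short-distance expansion \eqref{S11definition-bis}--\eqref{S21definition-bis} and the defining PDEs \eqref{e:Deltadefinition1-bis}--\eqref{e:Deltadefinition2-bis} for $\Delta^\rho$, I would set up a closed (Fredholm) system for $\Delta^\rho$ whose inhomogeneous term depends on $x_1,\dots,x_n$ only through $|\rho|$ via the explicit Cauchy-type double integral $\int_\C du\, \chi(u)|\rho(u)|^{-2}(\bar z - \bar u)^{-1}(u-w)^{-1}$. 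Since $\partial_{x_k}|\rho(u)|^{-2} = |\rho(u)|^{-2}\big(-\alpha_k(u-x_k)^{-1} - \alpha_k(\bar u - \bar x_k)^{-1}\big)$ up to the conjugate convention, and since $(u - x_k)^{-1}$ times $|\rho(u)|^{-2}$ is, up to the remaining factors of $\rho$, exactly the $u\to x_k$ limit of $S_{21}^\rho(u,\cdot)$ or $S_{11}^\rho(u,\cdot)$, differentiating the integral representation and the Fredholm system in $x_k$ and then resumming produces precisely the bilinear convolution $S^\rho(z,x_k^{-}) \cdot (\text{local weight}) \cdot S^\rho(x_k^{+}, w)$. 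The analyticity and local boundedness from Proposition~\ref{pr:analyt}, together with the explicit $\tilde\rho$ bounds \eqref{tempineq12-bis}, justify differentiation under the integral and guarantee that the one-sided limits $\lim_{z',w'\to x_k}$ in \eqref{derivfactorisation} exist (the singular prefactors $\rho(z')/\rho(w') \to (z'-x_k)^{\alpha_k}/(w'-x_k)^{\alpha_k}$ are exactly what renders the limit finite, since $S_{11}^\rho, S_{21}^\rho$ blow up like $|z-x_k|^{-\alpha_k}$ near $x_k$).

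The two formulas in the statement — one for $\partial_{x_k}$ and one for $\bar\partial_{x_k}$ — are handled symmetrically: $\partial_{x_k}$ hits the holomorphic factor $(u-x_k)^{\alpha_k}$ in $\rho$ (equivalently $(u - x_k)^{-\alpha_k}$ in $\rho^{-1}$), producing the first factorization with components $(S_{11}^\rho, S_{21}^\rho)$ tensored against $(S_{21}^\rho, S_{22}^\rho)$; while $\bar\partial_{x_k}$ hits $\overline{(u - x_k)^{\alpha_k}}$, producing the second, with the conjugate pair of column/row blocks and an overall sign flip coming from the sign difference between $\partial_{x_k}(u-x_k)^{-\alpha_k}$ and $\bar\partial_{x_k}\overline{(u-x_k)}^{-\alpha_k}$ after accounting for conjugation. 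One should verify consistency with the symmetry relation of Proposition~\ref{prop:Greensym} (the first column determines the second), which fixes all signs and indices. I expect the main obstacle to be the careful justification of differentiating under the integral sign \emph{and} interchanging that with the Fredholm inversion — i.e., showing the Fréchet differentiability of the resolvent of the relevant integral operator in the parameter $x_k$ on the appropriate weighted space — together with controlling the non-absolutely-convergent Cauchy kernels near $u = x_k$ uniformly as $x_k$ varies over a compact set; once that functional-analytic step is in place, the identification of the limit as the rank-one bilinear form is a matter of matching the explicit kernels. An alternative, purely local, check of the factorization structure: expand both sides in the perturbative series \eqref{eq:Spert} in $\mu$, differentiate term by term in $x_k$ using only the explicit massless kernel $S_0^\rho$, and observe that the known factorization of $\partial_{x_k} S_0^\rho$ propagates through each convolution, which gives an independent verification order by order and pins down the constant $\tfrac{1}{2\pi}$.
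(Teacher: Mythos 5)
Your closing remark — ``expand both sides in the perturbative series \eqref{eq:Spert} in $\mu$, differentiate term by term, and observe that the known factorization of $\partial_{x_k}S_0^\rho$ propagates through each convolution'' — is in fact the paper's actual proof, not merely a consistency check. What you are missing is that the series \eqref{eq:Spert} only converges in a $\chi$-dependent disk around $\mu=0$, so the order-by-order verification alone gives \eqref{derivfactorisation} only there. The paper turns this into a full proof by upgrading it to an \emph{induction in the base point $\mu$}: assuming \eqref{derivfactorisation} at $\mu$, it uses the series $\tilde{S}_\delta$ from \eqref{tildeSdelta} (expanding $S^\rho_{(\mu+\delta)\chi}$ around $S^\rho_{\mu\chi}$, not around $S^\rho_0$) to propagate the factorization to $\mu+\delta$, and then covers the interval $[0,\mu]$ by a chain of overlapping disks inside the strip of analyticity. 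Without that continuation step you do not reach general real $\mu$. The paper also dispatches the $\bar\partial_{x_k}$ formula in one line from the conjugation symmetry \eqref{Greensym}; you mention this symmetry only as a post-hoc consistency check.

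Your main route — differentiating the Fredholm system for $\Delta^\rho$ in $x_k$ and ``resumming'' — is a genuinely different approach from the paper and could plausibly be made to work, but the proposal leaves the load-bearing step unproved. Differentiating $(I-K)\Delta = h$ yields $\partial_{x_k}\Delta = (I-K)^{-1}\bigl[(\partial_{x_k}K)\Delta + \partial_{x_k}h\bigr]$. To get the rank-one form in \eqref{derivfactorisation} you would need to show that the right-hand side equals (the regular part of) a tensor product of a column and a row of $S^\rho$ evaluated at $x_k$, which requires identifying $(I-K)^{-1}$ applied to specific Cauchy-type data with the columns of $S^\rho$ itself. That identification is the substance of the claim, and ``resumming'' is not an argument; you would effectively have to rederive the column characterization of $S^\rho$ inside the differentiated Fredholm equation, and you have not done so. (Your preliminary ``plan'' via a resolvent identity for shifted branch cuts is heuristic, as you seem to recognize: the difference $\Dirac_{\rho'}-\Dirac_\rho$ is supported on a set of measure zero and a careful distributional treatment would in effect reproduce the Fredholm route.) There is also a minor sign slip in the claimed formula for $\partial_{x_k}\log|\rho(u)|^{-2}$: since $\partial_{x_k}\log(u-x_k)=-(u-x_k)^{-1}$ and only the holomorphic factor carries the $\partial_{x_k}$-dependence, one gets $+\alpha_k(u-x_k)^{-1}$, not the two-term expression with minus signs that you write.
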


The next proposition summarizes various symmetries of the Green's function.

\begin{proposition} \label{prop:Greensym}
The following symmetries hold for $\mu \in \R$ and $z,w \in \C$:
\begin{equation}
  S_{\mu\chi,22}^\rho(z,w)=\overline{S_{\mu\chi,11}^{1/\rho}(z,w)},
  \qquad S_{\mu\chi,12}^\rho(z,w)=\overline{S_{\mu\chi,21}^{1/\rho}(z,w)}
  .
  \label{Greensym}
\end{equation}
Moreover, with $A^{*}$ denoting the conjugate transpose of a matrix $A$, 
\begin{equation} \label{symmetriesinmass0}
S^\rho_{\mu\chi}(z,w)^{*}=-S^\rho_{-\mu\chi}(w,z),
\end{equation}
and  $S_{\mu\chi}^\rho$ is an odd function of $\mu$ on its diagonal and an even function of $\mu$ on the off-diagonal:
\begin{alignat}{2}
S_{\mu\chi,11}^\rho(z,w)&=-S_{-\mu\chi,11}^\rho(z,w), &\qquad S_{\mu\chi,22}^\rho(z,w)&=-S_{-\mu\chi,22}^\rho(z,w),\label{symmetriesinmass1}\\
 S_{\mu\chi,21}^\rho(z,w)&=S_{-\mu\chi,21}^\rho(z,w), &\qquad S_{\mu\chi,12}^\rho(z,w)&=S_{-\mu\chi,12}^\rho(z,w).\label{symmetriesinmass2}
\end{alignat}
In particular,
$\overline{S_{21}^\rho(z,w)}=-S_{12}^\rho(w,z)$,   $\overline{S_{ii}^\rho(z,w)}=S_{ii}^\rho(w,z)$,
and
$\overline{\Delta_{ii}^\rho(z,w)}=\Delta_{ii}^\rho(w,z)$ and therefore $\Delta^\rho_{ii}(z,z)$ is real.
\end{proposition}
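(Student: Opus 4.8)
The symmetries in Proposition~\ref{prop:Greensym} are all of the form ``$S^\rho_{\mu\chi}$ satisfies some transformation identity,'' and the strategy in every case is the same: apply a known transformation to the characterizing conditions (i)--(iii) of Proposition~\ref{pr:diracexist} and invoke the uniqueness part of that proposition. So the first thing I would do is record, as a lemma or inline observation, that the three defining conditions \eqref{eq:dirac}--\eqref{eq:continuity} uniquely characterize the Green's function, so that to prove $S^\rho_{\mu\chi} = F(S)$ for some candidate $F(S)$ it suffices to check that $F(S)$ also satisfies (i)--(iii) (with the appropriate $\rho$ and $\mu$). I would also recall the elementary fact that $\rho \mapsto 1/\rho$ corresponds to $\alpha_j \mapsto -\alpha_j$, and that $\overline{\rho(z)} = \overline{\rho}(z)$ in the sense used in the definition of $\Dirac_\rho$, so that conjugation swaps the roles of $\rho$ and $1/\rho$ on the two diagonal blocks.

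\textbf{First symmetry \eqref{Greensym}.}
For the identity $S^\rho_{\mu\chi,22}(z,w) = \overline{S^{1/\rho}_{\mu\chi,11}(z,w)}$ and $S^\rho_{\mu\chi,12}(z,w) = \overline{S^{1/\rho}_{\mu\chi,21}(z,w)}$, I would form the matrix-valued function obtained from $\overline{S^{1/\rho}_{\mu\chi}(z,w)}$ by swapping rows and columns (i.e., conjugating by $\begin{pmatrix} 0 & 1 \\ 1 & 0 \end{pmatrix}$) and taking complex conjugates. Conjugating the Dirac equation \eqref{eq:dirac}: complex conjugation turns $\partial_z$ into $\bar\partial_{\bar z}$ and vice versa (noting $\overline{\delta(z-w)} = \delta(z-w)$ and $\mu\chi$ real), and the row/column swap interchanges the two rows and the two off-diagonal derivative entries, so the conjugated-and-swapped function solves the Dirac equation for $\Dirac_{1/\rho}+\mu\chi$ — here one uses $\overline{1/\overline{\rho}} = 1/\rho$ to see that the branching data flips from $1/\rho$ back to $\rho$. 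Condition (ii) (decay at infinity) is preserved under conjugation and permutation. Condition (iii): the continuity statement for $S^{1/\rho}$ says $z\mapsto \mathrm{diag}(\rho(z), 1/\overline{\rho(z)})\,S^{1/\rho}(z,w)$ extends continuously (replacing $\rho$ by $1/\rho$ in \eqref{eq:continuity}); conjugating and swapping turns this into exactly condition (iii) for the candidate with branching $\rho$. Uniqueness then gives the claim.

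\textbf{Remaining symmetries.}
For \eqref{symmetriesinmass0}, I would check directly that $(z,w)\mapsto -S^\rho_{-\mu\chi}(w,z)^*$ satisfies (i)--(iii): the adjoint/transpose turns the operator $\Dirac_\rho+\mu\chi$ acting on the left into its formal adjoint acting on the right, and the formal adjoint of $\Dirac+\mu\chi$ is $-\Dirac + \mu\chi$ up to the sign conventions (the off-diagonal blocks $2\bar\partial, 2\partial$ pick up a sign under integration by parts and swap), which after flipping $\mu \to -\mu$ and the overall sign lands back on $\Dirac_\rho + \mu\chi$ in the $z$ variable; decay at infinity and the continuity condition are again manifestly preserved by transpose and swap of arguments. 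The parity identities \eqref{symmetriesinmass1}--\eqref{symmetriesinmass2} can be read off from the perturbative series \eqref{eq:Spert} in Proposition~\ref{pr:analyt} near $\mu=0$: each term has $n$ factors of $\mu$ and $n+1$ factors of $S_0^\rho$, whose diagonal entries vanish and whose off-diagonal entries are nonzero, so a diagonal entry of the product is a sum over paths alternating between off-diagonal blocks with an \emph{odd} number of $S_0$ factors, forcing $n$ even, hence only odd powers of $\mu$ contribute to the diagonal and only even powers to the off-diagonal; then analytic continuation in $\mu$ (Proposition~\ref{pr:analyt}) extends the parity from a neighborhood of $0$ to all real $\mu$. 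Finally, the listed consequences follow by specialization: combining \eqref{symmetriesinmass0} with \eqref{symmetriesinmass1}--\eqref{symmetriesinmass2} gives $\overline{S_{21}^\rho(z,w)} = -S_{12}^\rho(w,z)$ and $\overline{S_{ii}^\rho(z,w)} = S_{ii}^\rho(w,z)$; substituting the representation \eqref{S11definition-bis} (and its $\Delta_{22}$ analogue obtained from \eqref{Greensym}) isolates the claimed conjugation property of $\Delta_{ii}^\rho$, and setting $z=w$ in $\overline{\Delta_{ii}^\rho(z,w)} = \Delta_{ii}^\rho(w,z)$ shows $\Delta_{ii}^\rho(z,z)$ is real — one should note here that the diagonal value $\Delta^\rho_{ii}(z,z)$ makes sense precisely because the singular part in \eqref{S11definition-bis} has been removed, so $\Delta_{ii}^\rho$ is jointly continuous across the diagonal by Proposition~\ref{prop:Green-Delta}.

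\textbf{Main obstacle.}
The routine part is checking that each transformed candidate solves the Dirac equation — this is bookkeeping with $\partial/\bar\partial$ under conjugation and the $2\times 2$ matrix permutation. The genuinely delicate point is verifying condition (iii), the prescribed continuity/regularity at the branch points and across the branch cut, for the transformed candidates: one must make sure that the branch-cut-dependent normalization $\mathrm{diag}(1/\overline{\rho}, \rho)$ in \eqref{eq:continuity} transforms correctly under $\rho \leftrightarrow 1/\rho$ and under argument-swap, and that no spurious singularities are introduced at the $x_j$. I would handle this by leaning on the explicit short-distance structure from Proposition~\ref{prop:Green-Delta} (and Proposition~\ref{pr:analyt} for the bound \eqref{tempineq12-bis}, which already encodes that the only singularities are the expected $\tilde\rho$-type ones at the $x_j$ and the diagonal pole), so that condition (iii) reduces to an explicit check on the singular parts, which are all fully explicit rational-times-$\rho$ expressions.
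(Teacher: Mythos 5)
Your treatment of \eqref{Greensym} via the matrix transformation $S\mapsto P\overline{S}P$ with $P=\begin{pmatrix}0&1\\1&0\end{pmatrix}$ is correct and is essentially a cleaner packaging of the paper's column-by-column comparison argument. Your treatment of the parity claims \eqref{symmetriesinmass1}--\eqref{symmetriesinmass2} also follows the paper's route (perturbative series plus analytic continuation in $\mu$), modulo an exposition error: a product of $n+1$ copies of the off-diagonal matrix $S_0^\rho$ has nonzero \emph{diagonal} entries precisely when $n+1$ is \emph{even}, i.e.\ $n$ is \emph{odd} — you wrote the reverse ("odd number of $S_0$ factors, forcing $n$ even"), although your final conclusion ("only odd powers of $\mu$ contribute to the diagonal") is the correct one.

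The genuine gap is in your argument for the argument-swap symmetry \eqref{symmetriesinmass0}. You propose to verify directly that the candidate $T(z,w)=-S^\rho_{-\mu\chi}(w,z)^{*}$ satisfies conditions (i)--(iii) of Proposition~\ref{pr:diracexist} and then invoke uniqueness. But condition (i) requires controlling $(\Dirac_z+\mu\chi(z))T(z,w)$, i.e.\ derivatives of $S^\rho_{-\mu\chi}$ in its \emph{second} variable, whereas the defining characterization in Proposition~\ref{pr:diracexist} is entirely one-sided: it constrains only the dependence on the first variable. The formal-adjoint computation you sketch identifies what the answer should be, but to turn it into a proof one must first establish that $S^\rho_{\mu\chi}(z,w)$ satisfies the appropriate (adjoint) Dirac equation as a distribution in $w$, and this is an additional step that needs some care near the branch cuts. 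The paper avoids it entirely: it checks $S_0^\rho(z,w)^{*}=-S_0^\rho(w,z)$ by direct inspection of the explicit formula \eqref{eq:S0}, feeds this through the convergent expansion \eqref{eq:Spert} (conjugate-transposing a product reverses the factor order and the orientation of the chain $z\to u_1\to\dots\to u_n\to w$, yielding $S^\rho_{\mu\chi}(z,w)^*=-S^\rho_{-\bar\mu\chi}(w,z)$ for small $\mu$), and finally extends along all of $\R$ by the analyticity in $\mu$ from Proposition~\ref{pr:analyt}. If you want to keep the uniqueness-based route you would need to add a lemma on the second-variable equation; otherwise the series argument is shorter and already self-contained given \eqref{eq:Spert}.
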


\begin{proof}
To see \eqref{Greensym}, note that the first column of \eqref{eq:dirac} states that the solution should satisfy 
\begin{align}
2\bar \partial_z S_{21}(z,w)&=-\mu \chi(z)S_{11}(z,w)+\delta(z-w)\\
2\partial_z S_{11}(z,w)&=-\mu \chi(z)S_{21}(z,w),
\end{align}
whereas the second column of \eqref{eq:dirac} reads, after taking complex conjugates,
\begin{align}
2\bar\partial_z \overline{S_{12}}(z,w)&=-\mu\chi(z)\overline{S_{22}}(z,w)+\delta(z-w)\\
2\partial_z \overline{S_{22}}(z,w)&=-\mu\chi(z)\overline{S_{12}}(z,w).
\end{align}
The assertion thus follows from uniqueness after checking the branching \eqref{eq:continuity}.

To see \eqref{symmetriesinmass0} note that the symmetry $S^\rho_0(z,w)^{*}=-S^\rho_0(w,z)$ extends via \eqref{eq:Spert}  to $S_{\mu \chi}^\rho(z,w)^*=-S_{-\bar\mu \chi}^\rho(w,z)$ in a small $\mu$-neighbourhood of the origin. Both sides are antiholomorphic in a complex neighbourhood of the real axis so this symmetry extends to the full real line.
Since $S_0^\rho$ is zero on the diagonal, the expansion \eqref{eq:Spert} also implies that $S_{\mu\chi}^\rho$ is an odd function of $\mu$ on its diagonal, and an even function of $\mu$ on the off-diagonal, i.e., \eqref{symmetriesinmass1} and \eqref{symmetriesinmass2} hold.
Indeed, these hold in a neighborhood of $0$ and then extend to a neighborhood of $\R$ by analyticity.

The symmetries \eqref{symmetriesinmass0}, \eqref{symmetriesinmass1}, \eqref{symmetriesinmass2} imply
$\overline{S_{21}^\rho(z,w)}=-S_{12}^\rho(w,z)$ and   $\overline{S_{ii}^\rho(z,w)}=S_{ii}^\rho(w,z)$.
Using  \eqref{S11definition-bis}, in particular  $\overline{\Delta_{ii}^\rho(z,w)}=\Delta_{ii}^\rho(w,z)$ and $\Delta_{ii}^\rho(z,z)$ is real.
\end{proof}

As a final ingredient that we need,
we introduce the infinite-volume version $S_{\mu}^\rho$ of $S_{\mu \chi}^\rho$
corresponding to the limit $\chi \to 1$.
The following existence and uniqueness statement
is proven in \cite{MR1233355}, which we have brought forward in our notation.
The translation to our notation is explained in detail in Section~\ref{sec:Palmer-notation} below.

\begin{proposition}
\label{pr:ivlim}
For $\mu \in\R$,
there is a unique $2\times 2$ matrix valued function $S_\mu^\rho(z,w)$ %
such that for all $g\in C_c^\infty(\C\setminus \Gamma; \C^2)$ the function
\begin{equation}
  f(z)=\int S_\mu^\rho(z,w)g(w) \, dw
\end{equation}
satisfies
\begin{equation}
\begin{pmatrix} \frac{1}{\overline{\rho(z)}} & 0 \\ 0 & \rho(z)\end{pmatrix}f(z)\in H^1(\C;\C^2)
\end{equation}
and
\begin{equation}
(\slashed\partial_\rho+\mu)f(z)=g(z).
\end{equation}
\end{proposition}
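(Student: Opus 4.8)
The plan is to establish Proposition~\ref{pr:ivlim} by passing to the limit $\chi \uparrow 1$ in the finite-volume Green's functions $S_{\mu\chi}^\rho$ constructed in Proposition~\ref{pr:diracexist}, using the a priori bounds from Propositions~\ref{prop:Green-Delta} and~\ref{pr:analyt} together with the integrability estimates on the massless Green's function $S_0^\rho$. Since the statement is attributed to \cite{MR1233355}, a clean route is to first extract the existence from that reference's functional-analytic framework and then verify our normalization conventions match; but one can also give a self-contained argument as follows. First I would fix $g\in C_c^\infty(\C\setminus\Gamma;\C^2)$ and consider $f_\chi(z)=\int S_{\mu\chi}^\rho(z,w)g(w)\,dw$ for a sequence of cutoffs $\chi_R=\1_{B_R(0)}$ (smoothed so that \eqref{e:chi} holds) with $\supp g\subset B_R$. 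The representation \eqref{eq:Spert}, valid for small $\mu$, expresses $f_{\chi_R}$ as a Neumann series in $S_0^\rho$ convolved against $\chi_R$; the bound \eqref{tempineq12-bis} and the explicit kernel \eqref{eq:S0} give, via the generalized Young/Hölder convolution estimates already used in Lemma~\ref{lem:hint3}, uniform-in-$R$ bounds on $f_{\chi_R}$ in a weighted $L^2$ space. The exponential decay input from Appendix~\ref{app:SMJ} (locally uniform in the branch points) is what upgrades this to genuine $H^1$ control of $\begin{pmatrix}1/\overline{\rho}&0\\0&\rho\end{pmatrix}f_{\chi_R}$ away from small $\mu$.

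The key steps, in order, would be: (1) For $\mu$ in the small neighborhood of Proposition~\ref{pr:analyt}, sum the Neumann series \eqref{eq:Spert} with $\chi\equiv 1$ directly, using $\|S_0^\rho\|$-type bounds to see it converges in operator norm on the weighted space; check that the resulting $f$ solves $(\slashed\partial_\rho+\mu)f=g$ distributionally and has the stated $H^1$-after-conjugation regularity. (2) Establish uniqueness: if $f$ solves the homogeneous equation $(\slashed\partial_\rho+\mu)f=0$ with $\begin{pmatrix}1/\overline{\rho}&0\\0&\rho\end{pmatrix}f\in H^1$, then pairing against $f$ and integrating by parts (the conjugation by $P_\rho$ makes $\slashed\partial_\rho$ formally skew-adjoint up to the mass term, using $\supp\chi$-type arguments replaced here by the $H^1$ decay) forces $f\equiv 0$; the branching condition \eqref{eq:continuity}/the $H^1$ membership kills the boundary terms at the punctures $x_j$ because $\alpha_j\in(-\tfrac12,\tfrac12)$. (3) Extend from small $\mu$ to all real $\mu$ by analytic continuation: both $S_{\mu\chi}^\rho$ (Proposition~\ref{pr:analyt}) and the putative limit are analytic in $\mu$ in a neighborhood of $\R$, and the defining equation plus uniqueness propagate along $\R$; alternatively re-run the Fredholm argument of \cite{MR1233355} directly. (4) Identify $S_\mu^\rho=\lim_{R\to\infty}S_{\mu\chi_R}^\rho$ pointwise by showing $f_{\chi_R}\to f$ in the weighted space (dominated convergence in the series representation, monotone exhaustion of $\chi_R\uparrow 1$), which also pins down the normalization against Palmer's convention as recorded in Section~\ref{sec:Palmer-notation}.

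The main obstacle I expect is step (2)–(3): making the uniqueness/integration-by-parts argument rigorous near the branch points, where $f$ genuinely blows up like $|z-x_j|^{-|\alpha_j|}$, so that $f\notin L^2_{\mathrm{loc}}$ in general even though $P_\rho f\in H^1$. One must carefully excise small disks around each $x_j$, control the circle integrals $\oint_{|z-x_j|=\varepsilon} f^* \slashed\partial_\rho f$ using the precise leading singularity from \eqref{S11definition-bis}–\eqref{S21definition-bis} and the continuity of $\Delta^\rho$, and show these vanish as $\varepsilon\to 0$ precisely when $|\alpha_j|<\tfrac12$ — this is the algebraic reason the monodromy restriction appears. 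The decay at infinity needed to close the other boundary term is exactly the exponential-decay statement proven in Appendix~\ref{app:SMJ}, so invoking that reduces the infinity issue to a routine estimate. Once these boundary terms are handled, positivity of $\mu^2$ (or, for $\mu\in\R$, the skew-adjointness of $\slashed\partial_\rho$) gives uniqueness immediately, and the rest is bookkeeping. Given the attribution, I would in the write-up lean on \cite{MR1233355} for the hard analytic core and confine the original contribution to the convention-matching in Section~\ref{sec:Palmer-notation} and the $\chi\uparrow1$ convergence that links this to our finite-volume objects.
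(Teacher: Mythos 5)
The paper does not prove Proposition~\ref{pr:ivlim} at all: it states explicitly that the result ``is proven in \cite{MR1233355}, which we have brought forward in our notation,'' and the only original content is the convention dictionary in Section~\ref{sec:Palmer-notation}. Your closing remark --- to lean on \cite{MR1233355} for the hard analytic core and confine new work to convention matching and the $\chi\uparrow 1$ limit (which the paper does separately in Proposition~\ref{finiteGreentoinfGreen}, taking existence as given) --- is exactly the paper's route. But the bulk of your proposal is a self-contained attempt, and it has a concrete gap worth flagging.

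Your step (1) proposes to sum the Neumann series \eqref{eq:Spert} ``with $\chi\equiv 1$ directly, using $\|S_0^\rho\|$-type bounds to see it converges in operator norm on the weighted space.'' This fails for every $\mu\neq 0$. The neighborhood of $\mu$-analyticity in Proposition~\ref{pr:analyt} is $\chi$-dependent and shrinks to zero as $\supp\chi$ grows: the radius of convergence scales like the inverse of a constant $C_2$ controlling $\sup_z\int_{\supp\chi}|\tilde\rho(u)|^{-2}|z-u|^{-1}\,du$, which diverges as $\supp\chi\to\C$. Concretely, already the $n=2$ term $\int_\C S_0^\rho(z,u)S_0^\rho(u,w)\,du$ has the integrand $\sim |z-u|^{-1}|u-w|^{-1}$, whose integral over all of $\C$ diverges logarithmically at infinity; no weight on $z,w$ cures a divergence in the $u$-integration. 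This is not a technicality: the infinite-volume Green's function decays exponentially like $e^{-|\mu||z-w|}$, and that decay cannot be obtained by a convergent power series in $\mu$ around the massless kernel, which decays only algebraically. With no convergent base point, your step (3) of analytic continuation has nothing to continue from. Finally, invoking Appendix~\ref{app:SMJ} for decay at infinity would be circular: the exponential-decay estimates there (via \cite[Proposition~3.1.5]{MR555666}) sit inside the very SMJ/Palmer framework whose existence theorem you are trying to reprove. If you want a self-contained construction you would need a genuinely non-perturbative argument (e.g. Palmer's Grassmannian/determinant-bundle machinery, or a direct Fredholm analysis in spaces encoding exponential decay), not a Neumann series in the mass.
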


\begin{definition} \label{def:diracmass-infvol}
  The function $S^\rho_{\mu}$ from Proposition~\ref{pr:ivlim} is the Green's function of $\Dirac_\rho+\mu$,
  or in other words the Green's function of $\Dirac+\mu$ with branching structure $\rho$.
\end{definition}

For convergence of $S_{\mu\chi}^\rho$ to $S_\mu^\rho$, as $\chi\to 1$, we find it convenient to focus on the case where $\chi$ is an indicator function of the disk (and the radius of disk tends to infinity):
\begin{equation} \label{e:chi1Lambda}
  \chi = \1_{\Lambda_L}, \qquad \Lambda_L= B_L(0).
\end{equation}

\begin{proposition}\label{finiteGreentoinfGreen}
Let $\chi=\1_{\Lambda}$ denote the indicator function of the disk as in \eqref{e:chi1Lambda}, and let $\mu>0$.
Then
\begin{align}
S_{\mu\chi}^\rho(z,w)\rightarrow S_\mu^\rho(z,w)
\end{align}
in the limit $L \rightarrow \infty$, for fixed $z,w$ in $\C\setminus \Gamma$ with $z\neq w$.

Moreover, the symmetries \eqref{Greensym} hold for $\chi = 1$.
\end{proposition}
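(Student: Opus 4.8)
\textbf{Proof plan for Proposition~\ref{finiteGreentoinfGreen}.}
The plan is to obtain the convergence $S_{\mu\chi}^\rho(z,w)\to S_\mu^\rho(z,w)$ from a resolvent-type comparison identity, exploiting that the difference of the two Green's functions solves a twisted Dirac equation whose inhomogeneity is supported on the complement $\Lambda_L^c$, where we have exponential decay estimates. First I would fix $z\neq w$ in $\C\setminus\Gamma$ and introduce $D_L(z,w) = S_{\mu\chi}^\rho(z,w) - S_\mu^\rho(z,w)$ with $\chi = \1_{\Lambda_L}$. Applying $\Dirac_\rho + \mu$ in the $z$ variable and using the defining equations \eqref{eq:dirac} and Proposition~\ref{pr:ivlim} (written distributionally as $(\Dirac_\rho+\mu\chi)S_{\mu\chi}^\rho = \delta$ and $(\Dirac_\rho+\mu)S_\mu^\rho = \delta$), the $\delta$-singularities cancel and one gets
\begin{equation}
  (\Dirac_\rho + \mu\chi) D_L(z,w) = \mu(\chi(z) - 1) S_\mu^\rho(z,w) = -\mu \1_{\Lambda_L^c}(z)\, S_\mu^\rho(z,w),
\end{equation}
so that $D_L(\cdot,w) = -\mu \int_{\Lambda_L^c} S_{\mu\chi}^\rho(\cdot,u) S_\mu^\rho(u,w)\,du$ by the uniqueness in Proposition~\ref{pr:diracexist} (after verifying the branching condition \eqref{eq:continuity} and decay \eqref{eq:infty}, which hold termwise since $u$ ranges over $\Lambda_L^c$ away from the branch points).

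The second step is to control this integral. I would use the bound \eqref{tempineq12-bis} from Proposition~\ref{pr:analyt} for $S_{\mu\chi}^\rho$ — importantly, for fixed $\mu>0$ the constant $C$ there can be taken uniform in $L$ once $L$ is large enough that $B_L(0)$ contains all the branch points and a fixed neighborhood of $z$, since the short-distance structure is the same and the construction is stable (this uniformity is where one must be slightly careful, and I would either extract it from the proof of Proposition~\ref{pr:analyt} or re-derive a crude uniform-in-$L$ bound directly from the integral equation). For the infinite-volume factor $S_\mu^\rho(u,w)$ I would invoke the exponential decay of solutions to the massive twisted Dirac equation — this is exactly the type of estimate recorded in Appendix~\ref{app:SMJ} and used in Palmer~\cite{MR1233355}: $|S_\mu^\rho(u,w)| \lesssim e^{-c\mu|u-w|}$ (with polynomial corrections near the branch points absorbed into $\tilde\rho$). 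Then
\begin{equation}
  |D_L(z,w)| \lesssim |\mu| \int_{\Lambda_L^c} \frac{C}{|\tilde\rho(z)\tilde\rho(u)|\,|z-u|}\, e^{-c\mu|u-w|}\, du,
\end{equation}
and since $|u|\geq L\to\infty$ while $z,w$ are fixed, the exponential factor forces this integral to $0$; the mild singularities $1/|\tilde\rho(u)|$ and $1/|z-u|$ are harmless as they occur only near the fixed points $x_j$ and $z$, which lie inside $\Lambda_L$ for large $L$. This gives $D_L(z,w)\to 0$, i.e.\ the claimed pointwise convergence.

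For the final assertion, that the symmetries \eqref{Greensym} hold for $\chi=1$, I would simply pass to the limit: Proposition~\ref{prop:Greensym} gives $S_{\mu\chi,22}^\rho(z,w) = \overline{S_{\mu\chi,11}^{1/\rho}(z,w)}$ and $S_{\mu\chi,12}^\rho(z,w) = \overline{S_{\mu\chi,21}^{1/\rho}(z,w)}$ for every admissible $\chi$, in particular for $\chi = \1_{\Lambda_L}$; applying the convergence $S_{\mu\chi}^\rho \to S_\mu^\rho$ (and the corresponding one for the branching data $1/\rho$, whose windings $-\alpha_j$ are still in $(-\tfrac12,\tfrac12)$) to both sides, the identities persist in the limit. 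The main obstacle I anticipate is making the bound \eqref{tempineq12-bis} genuinely uniform in $L$ and pinning down the locally-uniform-in-branch-points exponential decay of $S_\mu^\rho$; both are essentially available (the former from stability of the Neumann-series/Fredholm construction in Proposition~\ref{pr:analyt} away from a neighborhood of $\mu=0$, the latter from Appendix~\ref{app:SMJ}), but the bookkeeping connecting the finite-volume bounds to the infinite-volume decay is the technical heart of the argument.
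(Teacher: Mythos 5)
Your high-level strategy is correct and closely mirrors the paper's: use the resolvent identity $S_{\mu\chi}^\rho(z,w)-S_\mu^\rho(z,w)=\mu\int_{\Lambda_L^c}S_{\mu\chi}^\rho(z,z')S_\mu^\rho(z',w)\,dz'$ (Proposition~\ref{prop-resolventidentity}), combine the exponential decay of the infinite-volume Green's function $S_\mu^\rho(z',w)=O(e^{-m|z'|})$ at large $|z'|$ with bounds on the finite-volume Green's function, and pass the Greensym symmetries to the limit. The symmetry argument and the derivation of the resolvent identity itself are fine.

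The gap is in your treatment of $S_{\mu\chi}^\rho(z,z')$ on the unbounded region $\Lambda_L^c$. You invoke the bound \eqref{tempineq12-bis} from Proposition~\ref{pr:analyt} and assert that the constant $C$ there is uniform in $L$. Two problems arise. First, \eqref{tempineq12-bis} is stated only for $(z,w)$ in a \emph{bounded} set $U$, whereas you need it for $z'$ ranging over $\Lambda_L^c$ with $|z'|\ge L\to\infty$; it does not directly give a decay rate at infinity. Second, and more seriously, $C=C(U,\chi,\mu)$ depends on $\chi=\1_{\Lambda_L}$ through the norm of $(I-K)^{-1}$ on $C_0(\C,\C^2)$, which comes from a Fredholm/compactness argument and has no \emph{a priori} control as $L\to\infty$; the hedge ``re-derive a crude uniform-in-$L$ bound'' is precisely where the work lies and is not a small bookkeeping step. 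The paper circumvents this by splitting $S_{\mu\chi}^\rho$ into the explicit singular part and the regular part $\Delta^\rho$ (Proposition~\ref{prop:Green-Delta}), observing that $\Delta^\rho(\cdot,w)$ is (anti)holomorphic on $\Lambda_L^c$ so that its size there is controlled by boundary traces on $\partial\Lambda_L$, and then proving in Proposition~\ref{prop:Ldepenbounds} (via a weighted $L^2$ skew-adjointness argument and a Sobolev trace inequality) that $\int_{\Lambda_L^c}\bigl(|\Delta_{11}^\rho|+|\Delta_{21}^\rho|\bigr)e^{-|\mu||z'|}\,dz'=o(1/L)$. Crucially this allows the boundary-trace bound to grow \emph{polynomially} in $L$, which is then beaten by the exponential decay from $S_\mu^\rho$; a genuinely $L$-uniform bound is not needed and likely false. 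Until you either prove such a uniform bound or replace it with a polynomial-growth estimate of the type in Proposition~\ref{prop:Ldepenbounds}, the argument does not close.
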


Since we need a slightly stronger version of the proposition in Section~\ref{sec:Palmer} (Proposition~\ref{diffatbpts})
we delay the proof of Proposition~\ref{finiteGreentoinfGreen} until the end of Section~\ref{ResolventIdentitysection}.
It follows from the resolvent identity
given in Proposition~\ref{prop-resolventidentity}, together with the exponential decay of $S_\mu^\rho$
and volume-dependent bounds on $S_{\mu\chi}^\rho$
provided by the next proposition.

\begin{proposition} \label{prop:Ldepenbounds}
Let $\chi=\1_{\Lambda_L}$ denote the indicator function of the disk as in \eqref{e:chi1Lambda}, and let $\mu \in \R$, $\mu \neq 0$.
Uniformly on compact subsets of $x_1,\dots, x_n \in \C$ distinct and of $w\in \C$,
\begin{equation} \label{e:Ldepenbounds}
\int_{\Lambda_{L}^c}\big(|\Delta_{11}^\rho(z',w)|+|\Delta_{21}^\rho(z',w)|\big)e^{-|\mu||z'|}\, dz' = o(1/L)
\end{equation}
in the limit $L\rightarrow \infty$.
\end{proposition}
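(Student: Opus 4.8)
The plan is to use the short-distance representation of the Green's function from Proposition~\ref{prop:Green-Delta}, which expresses $S^\rho_{\mu\chi}$ as an explicit singular piece plus remainders $\Delta^\rho_{11}$, $\Delta^\rho_{21}$ that satisfy the coupled system \eqref{e:Deltadefinition1-bis}--\eqref{e:Deltadefinition2-bis}. With $\chi = \1_{\Lambda_L}$, the key point is that the right-hand sides of \eqref{e:Deltadefinition1-bis}--\eqref{e:Deltadefinition2-bis} are supported in $\Lambda_L$, so that $\Delta^\rho_{11}(\cdot,w)$ and $\Delta^\rho_{21}(\cdot,w)$ are (anti)holomorphic outside $\Lambda_L$ and decay at infinity. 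First I would use the Cauchy/Pompeiu representation: for $z'\notin\Lambda_L$,
\begin{equation}
  \Delta^\rho_{21}(z',w) = -\frac{1}{\pi}\int_{\Lambda_L}\frac{\bar\partial_u \Delta^\rho_{21}(u,w)}{z'-u}\,du,
\end{equation}
and similarly for $\Delta^\rho_{11}$ with a conjugated kernel. Plugging in \eqref{e:Deltadefinition1-bis}--\eqref{e:Deltadefinition2-bis} expresses $\Delta^\rho_{21}(z',w)$, $\Delta^\rho_{11}(z',w)$ for $z'\notin\Lambda_L$ entirely in terms of integrals over $\Lambda_L$ of the interior values of $\Delta^\rho_{11},\Delta^\rho_{21}$ (and the explicit double integral in \eqref{e:Deltadefinition2-bis}), weighted by $1/|z'-u|$ with $u\in\Lambda_L$.

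Next I would insert the needed decay. From Proposition~\ref{pr:analyt} (the bound \eqref{tempineq12-bis}) together with the exponential decay of $S^\rho_\mu$ from Appendix~\ref{app:SMJ} and the convergence in Proposition~\ref{finiteGreentoinfGreen}, the interior values $\Delta^\rho_{ij}(u,w)$ are bounded, locally uniformly in the branch points and in $w$, by a constant times $|\tilde\rho(u)|^{-1}$ times an exponentially decaying factor $e^{-c|\mu||u|}$ (more precisely one extracts exponential decay of $S^\rho_{\mu\chi}-S^\rho_{0,\text{sing}}$ away from the support boundary; the cleanest route is to bound $\Delta^\rho_{ij}$ by $|S^\rho_{\mu\chi}|$ plus the explicit massless pieces and use the exponential decay established in Appendix~\ref{app:SMJ} for $S^\rho_\mu$ combined with the resolvent identity Proposition~\ref{prop-resolventidentity}). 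Then for $z'\notin\Lambda_L$,
\begin{equation}
  |\Delta^\rho_{11}(z',w)| + |\Delta^\rho_{21}(z',w)| \lesssim \int_{\Lambda_L} \frac{e^{-c|\mu||u|}}{|z'-u|\,|\tilde\rho(u)|}\,du + |\mu|\int_{\Lambda_L}\int_{\Lambda_L}\frac{e^{-c|\mu|(|u|+|v|)}\,|\rho(u)|\,\chi(v)\,du\,dv}{|z'-u|\,|u-v|\,|v-w|\,|\rho(v)|^2}.
\end{equation}
Since $|u|$ is large on most of $\Lambda_L\setminus \Lambda_{L/2}$, the exponential factor $e^{-c|\mu||u|}$ makes the contribution from $|u|>L/2$ superpolynomially small; and for $|u|\le L/2$ we have $|z'-u|\ge |z'|-L/2$, and after integrating the bounded kernel $\int_{\Lambda_L^c}\frac{dz'}{|z'-u|^2+1}$-type contributions I would conclude. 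More carefully, I would perform the $z'$-integral first: $\int_{\Lambda_L^c} e^{-|\mu||z'|}/|z'-u|\,dz'$ is $O(e^{-|\mu|L/2}/|\mu|^2)$ when $|u|\le L/2$ and is $O(1/|\mu|^2)$ in general, which combined with the exponential suppression of the interior integrand when $|u|>L/2$ gives a bound of order $e^{-c|\mu|L}$, hence $o(1/L)$.

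The main obstacle I anticipate is establishing the right decay for the interior values $\Delta^\rho_{ij}(u,w)$ uniformly up to the boundary $|u|\sim L$: away from $\partial\Lambda_L$ one expects exponential decay of the full Green's function minus its massless singular part, but near the boundary the massless tail $\propto 1/|z-w|$ is only polynomially small, so one must be careful that the $\Delta$ remainders — which by construction subtract off exactly the right massless singularity — genuinely inherit the exponential decay of the massive Green's function rather than the slow decay of $S_0^\rho$. This should follow from iterating the resolvent identity (Proposition~\ref{prop-resolventidentity}) once, writing $S^\rho_{\mu\chi} = S^\rho_\mu - \mu S^\rho_\mu(\1_{\Lambda_L^c})S^\rho_{\mu\chi}$ and using the exponential decay of $S^\rho_\mu$ from Appendix~\ref{app:SMJ} together with \eqref{tempineq12-bis} for the remaining factor; then $\Delta^\rho_{ij}$, being $S^\rho_{\mu\chi}$ minus explicit pieces that themselves match $S_0^\rho$ to leading order, decays exponentially locally uniformly. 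Keeping all constants locally uniform in the distinct branch points $x_1,\dots,x_n$ is then just a matter of citing the corresponding uniformity already built into Propositions~\ref{pr:analyt} and~\ref{finiteGreentoinfGreen} and Appendix~\ref{app:SMJ}.
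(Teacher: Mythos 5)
There is a genuine gap in your argument, and it is the one you flagged as the ``main obstacle'': the claimed exponential decay of the interior values $\Delta^\rho_{ij}(u,w)$ in $u$ is false. Inside $\Lambda_L$, away from $w$ and the branch points, the full Green's function $S^\rho_{\mu\chi}(u,w)$ does decay exponentially (it approximates $S^\rho_\mu$), but $\Delta^\rho_{21}(u,w)$ is defined by \emph{subtracting} the explicit massless singular piece $\frac{1}{2\pi}\frac{1}{u-w}$ from $\frac{\rho(u)}{\rho(w)}S^\rho_{21}(u,w)$; that singular piece only decays like $1/|u-w|$. So for $u$ deep in $\Lambda_L$ but far from $w$ one has $\Delta^\rho_{21}(u,w)\approx -\frac{1}{2\pi(u-w)}$, which is only polynomially small. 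The same applies to $\Delta^\rho_{11}$. Your attempt to get $e^{-c|\mu||u|}$ by bounding ``$S^\rho_{\mu\chi}$ minus explicit pieces'' by exponentials cannot work because the explicit pieces themselves are only polynomially small. Consequently the $z'$-integral cannot be estimated the way you propose. A secondary but also serious issue is the logical circularity: you cite Proposition~\ref{finiteGreentoinfGreen} as input for the interior bounds, but in the paper that proposition is proved \emph{using} the resolvent identity (Proposition~\ref{prop-resolventidentity}) together with Proposition~\ref{prop:Ldepenbounds} itself, so it is not available here.

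The paper's route avoids both problems by never trying to prove pointwise decay of $\Delta$ in the interior. Instead it (i) introduces a weighted Hilbert space $\H_\rho(\Lambda_L)$ on which the operator $K$ defining the integral equation $(I-K)\Delta=h$ is bounded and skew-adjoint, giving $\|\Delta(\cdot,w)\|_\rho\le\|h(\cdot,w)\|_\rho$; (ii) promotes this to an $L^2(\partial\Lambda_L)$ bound for $\Delta$ via a Sobolev-trace-theorem argument (using \eqref{eq:grad1}--\eqref{eq:grad4} to control $\nabla\Delta$ in an annulus), obtaining a boundary norm that grows at most polynomially in $L$; and (iii) exploits that $\Delta_{11}(\cdot,w)$ is antiholomorphic on $\Lambda_L^c$ and vanishes at infinity, so has a Laurent expansion in $1/\bar u$ whose coefficients are read off from the boundary $L^2$ norm via Parseval. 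The exponential smallness of the target integral then comes entirely from the explicit weight $e^{-|\mu||z'|}$ in the integrand, via Cauchy--Schwarz against $\int_{|u|\ge L}e^{-|\mu||u|}\,du$, not from any decay of $\Delta$ itself. This is the key structural difference you are missing.
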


\subsection{Existence and uniqueness of the solution -- proof of Propositions~\ref{pr:diracexist}
and~\ref{prop:Green-Delta}}

In this section, we will show existence and uniqueness of the finite-volume Green's function.
Before turning to the proof of existence, we record some basic facts about the Cauchy and Beurling transforms, as they will play a role in our proof.
Let $T$ be the Cauchy transform
\begin{equation} \label{Cauchytrans}
(T\varphi)(z)=\frac{1}{\pi}\int_{\C}du\, \frac{\varphi(u)}{z-u},
\end{equation}
and let $\mathcal{S}$ be the Beurling transform
\begin{equation}   \label{Beurlingtrans}
  (\mathcal{S}\varphi)(z)=-\frac{1}{\pi}\lim_{\varepsilon\rightarrow 0}\int_{|z-u|>\varepsilon}du\, \frac{\varphi(u)}{(z-u)^2} 
  .
\end{equation}
The space of continuous functions vanishing at infinity is denoted $C_0(\C)$. (In the conventions of Section~\ref{sec:notation},
it coincides with $C^0(\C)$.)

\begin{lemma}\label{le:cauchy}
The Cauchy transform $T$ and the Beurling transform $\mathcal{S}$ have the following properties.
\begin{itemize}
\item For $\varphi\in L^2(\C)$, the weak derivative $\bar \partial (T\varphi)$ exists and equals $\varphi$.
\item Let $1<q<2<p<\infty$ satisfy $\frac{1}{p}+\frac{1}{q}=1$. Then $T:L^p(\C)\cap L^q(\C)\to C_0(\C)$ is a bounded linear mapping and 
\begin{equation}\label{eq:cauchy1}
\|T\varphi\|_\infty\leq \frac{1}{\sqrt{2-q}}(\|\varphi\|_p+\|\varphi\|_q).
\end{equation}
\item For $p>2$, $\alpha=1-\frac{2}{p}$, and $\varphi\in L^p(\C)$, 
\begin{equation}\label{eq:cauchy2}
\sup_{z\neq w}\frac{|(T\varphi)(z)-(T\varphi)(w)|}{|z-w|^\alpha}\leq \frac{12p^2}{p-2}\|\varphi\|_p.
\end{equation}
\item  For all $1<p<2$,  $T: L^p(\C)\rightarrow L^{2p/(2-p)}(\C)$ is bounded. Moreover,
\begin{equation}
  \|T \varphi\|_{\frac{2p}{2-p}}\leq \frac{C}{(p-1)(2-p)}\|\varphi\|_p 
\end{equation}
for some absolute constant $C>0$.
\item For $1<p<\infty$, the Beurling transform $\mathcal{S}$ defines a bounded linear mapping on $L^p(\C)$ with
\begin{equation}
\|\mathcal{S}\varphi\|_p\leq C_p\|\varphi\|_p,
\end{equation}
where $C_2=1$. %
\item For $\varphi \in L^2(\C)$, we have
\begin{align}
\partial (T\varphi) =\mathcal{S}\varphi
\end{align}
in the sense of weak derivatives.
\end{itemize}
\end{lemma}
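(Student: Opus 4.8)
\textbf{Proof proposal for Lemma~\ref{le:cauchy}.} The statement is a compendium of classical facts about the Cauchy transform $T$ and the Beurling transform $\mathcal S$ on $\C \cong \R^2$, so the plan is to assemble standard proofs, being careful about which bounds are quantitative (with explicit constants) and which hold merely up to an absolute constant. First I would treat the weak-derivative identity $\bar\partial(T\varphi)=\varphi$ for $\varphi\in L^2(\C)$: this is the fundamental solution property of the Cauchy kernel $\frac1{\pi z}$, which satisfies $\bar\partial\frac1{\pi z}=\delta_0$ in the distributional sense, so $T\varphi = \frac1{\pi z}*\varphi$ and $\bar\partial(T\varphi)=\bar\partial(\frac1{\pi z})*\varphi = \varphi$. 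For $\varphi\in L^2$ one justifies the convolution-differentiation interchange by approximation with $C_c^\infty$ functions and boundedness of $T$ on the relevant spaces (established below). The last bullet, $\partial(T\varphi)=\mathcal S\varphi$, follows the same way: $\partial\frac1{\pi z}$ equals the principal-value kernel $-\frac1{\pi z^2}$ (the Beurling kernel) as a distribution, so $\partial(T\varphi)=\mathcal S\varphi$ weakly.

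Next I would prove the $L^\infty$ bound \eqref{eq:cauchy1}. Splitting the Cauchy kernel as $\frac1{\pi}\frac1{z-u} = \frac1\pi\frac{\mathbf 1_{|z-u|\le 1}}{z-u} + \frac1\pi\frac{\mathbf 1_{|z-u|>1}}{z-u}$, the first piece lies in $L^{q'}$ for all $q'<2$ and the second in $L^{p'}$ for all $p'>2$; by Hölder, $|T\varphi(z)| \le \frac1\pi\|\,|u|^{-1}\mathbf 1_{|u|\le1}\|_{L^{p}}\|\varphi\|_{L^q} + \frac1\pi\|\,|u|^{-1}\mathbf 1_{|u|>1}\|_{L^{q}}\|\varphi\|_{L^p}$ where $\frac1p+\frac1q=1$. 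A direct computation of the two explicit kernel norms gives $\|\,|u|^{-1}\mathbf 1_{|u|\le1}\|_{L^p}^p = 2\pi/(2-p)$ and similarly for the other term, leading after elementary estimates to the bound $\frac1{\sqrt{2-q}}(\|\varphi\|_p+\|\varphi\|_q)$; continuity and vanishing at infinity of $T\varphi$ follow from density of $C_c^\infty$ in $L^p\cap L^q$ and uniform convergence. The Hölder estimate \eqref{eq:cauchy2} is the standard Morrey-type bound for the Cauchy/Newtonian potential: for $z\neq w$ set $r=|z-w|$, split the integration domain into $B_{2r}(z)$, $B_{2r}(w)$, and the complement, use that on the complement $|\frac1{z-u}-\frac1{w-u}|\le \frac{2r}{|z-u|^2}$ roughly, and estimate each piece by Hölder with exponent $p$; the bookkeeping yields the constant $\frac{12p^2}{p-2}$ (one does not need the optimal constant, only this explicit one, so I would follow the computation in, e.g., Astala--Iwaniec--Martin or Vekua and track the constants). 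The mapping property $T:L^p\to L^{2p/(2-p)}$ for $1<p<2$ with the stated constant is the Hardy--Littlewood--Sobolev inequality applied to the kernel $|z|^{-1}$ of homogeneity $-1$ in dimension $2$; the dependence $\frac{C}{(p-1)(2-p)}$ on $p$ comes from the behaviour of the HLS constant as the exponents approach the endpoints, and I would cite or reproduce the standard sharp-form bookkeeping for this.

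For the Beurling transform boundedness on $L^p(\C)$, $1<p<\infty$, with $C_2=1$: the case $p=2$ is Plancherel, since $\mathcal S$ is the Fourier multiplier $\bar\xi/\xi$ (of modulus $1$), hence an isometry on $L^2$; for general $p$ it is a Calderón--Zygmund operator (convolution with a homogeneous kernel of degree $-2$ with mean zero on circles), so Calderón--Zygmund theory gives $L^p$ boundedness with some constant $C_p$, which I would simply invoke. I expect the only mildly delicate points to be (i) getting the \emph{explicit} constants in \eqref{eq:cauchy1} and \eqref{eq:cauchy2} and the precise $p$-dependence $\frac{C}{(p-1)(2-p)}$ rather than just qualitative boundedness, and (ii) the care needed to justify the weak-derivative identities for $L^2$ data (as opposed to Schwartz data) by the density/approximation argument — neither is conceptually hard, but both require attention to detail. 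None of this requires any input beyond classical real and complex analysis, so the proof is essentially a careful citation-and-computation assembly.
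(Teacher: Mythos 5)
The paper's proof is a bare citation to Astala--Iwaniec--Martin \cite{MR2472875} (their Theorems~4.0.10, 4.3.8, 4.3.10, 4.3.11, 4.3.13, 4.5.3), whereas you reconstruct each item from scratch; since you yourself say you would ``follow the computation in, e.g., Astala--Iwaniec--Martin or Vekua,'' the two routes are essentially the same in substance, and your sketches are correct in outline. One small slip worth fixing: in the derivation of \eqref{eq:cauchy1} you pair the near-field kernel $|u|^{-1}\mathbf 1_{|u|\le 1}$ with the $L^p$ norm ($p>2$) and the far-field kernel with the $L^q$ norm ($q<2$), but the Hölder pairing has to go the other way --- the near kernel fails to be in $L^p$ for $p\ge 2$, so the bound should read
\[
|T\varphi(z)|\le\tfrac1\pi\bigl\||u|^{-1}\mathbf 1_{|u|\le1}\bigr\|_{L^q}\,\|\varphi\|_{L^p}+\tfrac1\pi\bigl\||u|^{-1}\mathbf 1_{|u|>1}\bigr\|_{L^p}\,\|\varphi\|_{L^q},
\]
with $\|\,|u|^{-1}\mathbf 1_{|u|\le1}\|_{L^q}^q=2\pi/(2-q)$ (note $2-q>0$, not $2-p$ as written) and $\|\,|u|^{-1}\mathbf 1_{|u|>1}\|_{L^p}^p=2\pi/(p-2)$; using $p-2=(2-q)(p-1)$ one then checks both prefactors are $\le(2-q)^{-1/2}$, recovering the stated constant. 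The remaining items (fundamental solution for $\bar\partial$, Morrey-type Hölder estimate for the Riesz potential, Hardy--Littlewood--Sobolev for $|z|^{-1}$, Plancherel plus Calderón--Zygmund for $\mathcal S$) are identified correctly, so once the indices are swapped your assembly proves exactly the lemma.
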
 

\begin{proof}
See \cite[Theorem~4.0.10, Theorem~4.3.8, Theorem~4.3.10, Theorem~4.3.11, Theorem~4.3.13, Theorem~4.5.3]{MR2472875} and the discussion at the beginning of 
 \cite[Chapter~4]{MR2472875}.
\end{proof}

With these tools, we can prove the existence of the Green's function. We will focus on the first column; the second column can be treated analogously.
Our first remark is uniqueness of the Green's function.

\begin{lemma}\label{le:homogdirac}
Let $\re\mu\neq 0$.
Let the function $u=\begin{pmatrix}
u_1\\
u_2
\end{pmatrix}$  satisfy:
\begin{itemize} 
\item For $z\in \C\setminus \Gamma$,
\begin{equation}
\begin{pmatrix}
\mu \chi(z) & 2\bar \partial_z \\
2 \partial_z & \mu \chi(z)
\end{pmatrix}\begin{pmatrix}
u_1(z)\\
u_2(z)
\end{pmatrix}=0,
\end{equation}
in the weak sense.
\item $\lim_{z\to \infty}u(z)=0$.
\item The functions $z\mapsto\overline{\rho(z)}^{-1}u_1(z)$ and $z\mapsto \rho(z) u_2(z)$ have continuous extensions to $\C$.
\end{itemize}
Then $u=0$. In particular the Green's function is unique.

\end{lemma}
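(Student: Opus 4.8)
\textbf{Proof plan for Lemma~\ref{le:homogdirac}.}
The plan is to exploit that, away from the branch cut, the system $(\Dirac_\rho + \mu\chi)u = 0$ decouples into a pair of generalized Cauchy--Riemann-type equations, and then combine the maximum principle (via a Liouville-type argument using the decay at infinity) for the region where $\chi$ vanishes with an $L^2$ energy estimate for the region where $\chi$ is supported. Since only the first column is needed and the second is analogous by Proposition~\ref{prop:Greensym}, I will argue in the stated generality. First I would set $v_1 = \overline{\rho}^{-1} u_1$ and $v_2 = \rho u_2$, which by hypothesis extend continuously to all of $\C$; the branching is precisely designed so that this conjugation removes the cut, so $v=(v_1,v_2)$ is a genuine (weak) solution on all of $\C$ of $\Dirac v + \mu \chi P^* v = 0$ in the appropriate sense, i.e.\ $2\bar\partial v_2 = -\mu\chi \overline\rho\,\rho^{-1}\cdot(\text{something})$ — more precisely I would keep working with $u$ directly on $\C\setminus\Gamma$ and only use $v$ to license statements about behavior near $\Gamma$, since the decoupled scalar equations $2\bar\partial u_1 = -\mu\chi u_2$, $2\partial u_2 = -\mu\chi u_1$ (rewritten after conjugation) are what one actually estimates.

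Next I would derive the key identity by testing: since $u$ decays at infinity and solves the system weakly, integrate $\bar\partial(u_1 \overline{u_2})$-type quantities, or more cleanly, apply $2\partial$ to the first equation and use the second to obtain $4\partial\bar\partial u_1 = \Delta u_1 = -2\mu\,\partial(\chi u_2) = -\mu^2 \chi^2 u_1 - 2\mu (\partial\chi) u_2$ on $\C\setminus\supp(\partial\chi)$; but the cleanest route avoids second derivatives of $\chi$. Instead, on the open set $\C\setminus\supp(\chi)$ the equations read $\bar\partial u_1 = 0$ and $\partial u_2 = 0$, so $u_1$ is holomorphic and $u_2$ antiholomorphic there; combined with $u\to 0$ at infinity and continuity of the conjugated functions across $\Gamma$ (so no hidden singularities at the $x_j$ or along $\Gamma$ inside the unbounded component), a Liouville/Riemann-removable-singularity argument forces $u_1 = u_2 = 0$ on the unbounded component of $\C\setminus\supp(\chi)$, hence by continuity $u = 0$ on $\partial\supp(\chi)$. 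Then on the bounded region $\supp(\chi)$ I would run an energy estimate: multiply the first equation by $\overline{u_1}$, the second by $\overline{u_2}$, integrate over $\supp(\chi)$, integrate by parts (the boundary terms vanish since $u=0$ on $\partial\supp(\chi)$), add, and take real parts. Using $\int (\overline{u_1}\,\bar\partial u_1 + u_1\,\partial\overline{u_1}) = $ boundary term $=0$ and similarly for $u_2$, the cross terms combine so that $\re\int 2(\overline{u_1}\bar\partial u_1 + \overline{u_2}\partial u_2) = 0$, while the equations give this equals $-\re\int \mu\chi(|u_1|^2 + |u_2|^2)\,\cdot(\dots)$; with $\chi\ge 0$ and $\re\mu\neq 0$ one concludes $\int \chi(|u_1|^2+|u_2|^2) = 0$, hence $u\equiv 0$ on $\supp(\chi)$ as well. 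Finally, on the \emph{bounded} components of $\C\setminus\supp(\chi)$ (if $\supp(\chi)$ is not simply connected — though here \eqref{e:chi} assumes it is, so this case is vacuous; but I would remark on it for robustness) the same holomorphy argument with zero boundary data on the now-full boundary gives $u=0$ there too. Assembling the pieces, $u=0$ on all of $\C\setminus\Gamma$, and uniqueness of the Green's function follows by linearity: the difference of two Green's functions with the same pole satisfies exactly the hypotheses of the lemma.

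The main obstacle I expect is making the ``no hidden singularities at the branch points'' step rigorous: the conjugated functions $v_1 = \overline\rho^{-1}u_1$ and $v_2 = \rho u_2$ are continuous but a priori only continuous, so I need to upgrade to enough regularity to apply removable-singularity theorems and to justify the integration by parts in the energy estimate (which needs, say, $u \in H^1_{\mathrm{loc}}$ near $\partial\supp(\chi)$ and near the $x_j$). This I would handle using the Cauchy/Beurling transform mapping properties from Lemma~\ref{le:cauchy}: from $\bar\partial u_1 = -\tfrac{\mu}{2}\chi u_2 \in L^\infty_c \subset L^p$ for all $p$, the Cauchy transform representation gives $u_1 \in C^\alpha_{\mathrm{loc}}$ for any $\alpha<1$ up to an additive holomorphic (hence, by decay, zero) term, and bootstrapping once more using \eqref{eq:cauchy2} and $\partial u_1 = \mathcal S(\dots)$ puts $u_1$ (and $u_2$) in $W^{1,p}_{\mathrm{loc}}$; near $x_j$ one uses that $\alpha_j\in(-\tfrac12,\tfrac12)$ so that $|\rho|^{\pm 1}$ is locally in $L^p$ for $p$ close to $2$, which is exactly what is needed to keep all the transforms bounded. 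With this regularity in hand the removable singularity and integration-by-parts steps are routine, and the real-part/energy cancellation is a short computation.
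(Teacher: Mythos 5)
Your regularity bootstrap (Cauchy and Beurling transforms, $\rho^{\pm1}\in L^p_{\rm loc}$ for $p$ slightly above $2$, upgrade to $W^{1,p}_{\rm loc}$) is exactly what the paper does and is correct. But the main logical step is broken. You first want to conclude $u=0$ on the unbounded component of $\C\setminus\supp\chi$ by ``Liouville/Riemann-removable-singularity'' from holomorphy and decay at infinity, and then propagate zero boundary data into $\supp\chi$ via an energy estimate. The Liouville step fails: $\rho u_2$ (the single-valued object) is holomorphic on the exterior of the compact set $\supp\chi$, continuous up to the boundary, and vanishing at $\infty$, but such functions form an infinite-dimensional space — any Laurent series $\sum_{n\ge1}c_n z^{-n}$ will do. Liouville only kills holomorphic functions on all of $\C$; on the exterior of $\supp\chi$ it tells you nothing without boundary data, which is precisely what you do not yet have. (There is also a minor flip: from the stated equations $\partial u_1=0$ and $\bar\partial u_2=0$ off $\supp\chi$, so $u_1$ is \emph{anti}holomorphic and $u_2$ holomorphic there, not the reverse.)

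The fix, which is the paper's argument and is in fact close to your ``energy estimate'' step in disguise, is to treat interior and exterior simultaneously rather than sequentially. After the regularity upgrade, form the single-valued continuous quantity $\overline{u_1}u_2$ (single-valuedness is exactly what the continuity hypotheses on $\bar\rho^{-1}u_1$ and $\rho u_2$ give you, since the $\rho$-factors cancel), and compute from the system that
$\re(\mu)\,\chi\,(|u_1|^2+|u_2|^2)=-2\,\re\,\bar\partial(\overline{u_1}u_2)$.
Now integrate over a disk $B_R\supset\supp\chi$ and apply Green's theorem: the right side becomes a real part of $\oint_{|z|=R}\overline{u_1}u_2\,dz$. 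On $\{|z|>R_0\}\supset\C\setminus\supp\chi$, $u_1$ is antiholomorphic, $u_2$ is holomorphic, and $\overline{u_1}u_2$ is single-valued, hence holomorphic there, and it decays at least like $|z|^{-2}$; so the contour integral vanishes by Cauchy. This forces $\int\chi(|u_1|^2+|u_2|^2)=0$, hence $u=0$ on $\supp\chi$, hence $u=0$ on $\partial(\supp\chi)$ by continuity, and \emph{now} holomorphy plus this boundary data plus decay force $u=0$ on the exterior. The point is that the exterior holomorphy is used only to kill a boundary integral at $|z|=R$, not to run a Liouville argument; the vanishing on $\supp\chi$ comes first, from the real-part energy identity, exactly as in your second step — but with no boundary term to justify, because the integral is over the whole plane.
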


\begin{proof}
Let us begin by deducing that actually, $\rho(z)u_2(z), \overline{\rho(z)}^{-1}u_1(z)\in W_{\rm loc}^{1,p}(\C)$, for some $p>2$. Indeed, we can write
\begin{align} 
\bar\partial( \rho(z)u_2(z))=-\mu\chi(z)|\rho(z)|^2\overline{\rho(z)}^{-1}u_1(z),\label{tempw5et|}\\
\partial( \overline{\rho(z)}^{-1} u_1(z))=-\mu\chi(z)|\rho(z)|^{-2}\rho(z)u_2(z)
\end{align} 
off of $\Gamma$, in the weak sense. We extend the above equations to all of $\C$, by approximating an arbitrary test function in $C_c^\infty(\C)$ by functions in $C_c^\infty(\C\setminus \Gamma)$ and using continuity of the functions on the left-hand sides and the dominated convergence theorem on the right-hand sides. The first equation \eqref{tempw5et|} (and the second after taking the complex conjugate) is now of the form $\bar\partial f=g$, weakly on $\C$,  where $f$ is continuous and vanishes at infinity and $g\in L^{r}(\C)$ for some  $r>2$ and has compact support. 

By Lemma~\ref{le:cauchy}, $\bar\partial (f-Tg)=0$ weakly on $\C$ and so, by Weyl's lemma \cite[Proposition~1.9]{MR1419088},
there is an analytic function which is equal to $f-Tg$ a.e. Again using Lemma \ref{le:cauchy}, we see $f, Tg$ are both continuous functions so $f-Tg$ is actually equal to this analytic function pointwise, and since they both vanish at infinity, we see that $f= Tg$ by Liouville's theorem. Using Lemma \ref{le:cauchy}, $\partial f =\mathcal{S}g$ holds in the weak sense, where $\mathcal{S}$ is the Beurling transform (here we actually use that $g\in L^2(\C)$, which is easily checked). Recalling that $\mathcal{S}$ is a bounded linear mapping on $L^r(\C)$, we see $\mathcal{S}g\in L^{r}(\C)$. Hence both weak derivatives $\partial f, \bar\partial f$ lie in $L^{r}(\C)$. Using this it follows that $\rho(z)u_2(z), \overline{\rho(z)}^{-1}u_1(z)\in W_{\rm loc}^{1,p}(\C)$ for $p=r$.

Now notice that $\overline{u_1}u_2$ extends to a continuous function on $\C$. By the product rule for functions in $W_{\rm loc}^{1,p}(\C)$, we have
\begin{align}
2\bar\partial(\overline{u_1(z)}u_2(z))&=- \bar \mu \chi(z)|u_2(z)|^2-\mu\chi(z)|u_1(z)|^2\\
2\partial(u_1(z)\overline{u_2(z)})&=-\mu \chi(z)|u_2(z)|^2-\bar \mu \chi(z)|u_1(z)|^2.
\end{align}
In particular,
\begin{equation}
\mathrm{Re}(\mu)\chi(z)(|u_1(z)|^2+|u_2(z)|^2)=-(\bar \partial(\overline{u_1(z)}u_2(z))+\partial(u_1(z)\overline{u_2(z)})) = -2\mathrm{Re}(\bar\partial(\overline{u_1(z)}u_2(z)).
\end{equation}
Thus by Green's theorem in the form $\int_\Lambda \bar \partial_z f(z)\, dz=\frac{1}{2i}\oint_{\partial \Lambda}f(z)\, dz$ where $\partial\Lambda$ is oriented in the counter-clockwise manner,
if we take $R>0$ so large that $\mathrm{supp}(\chi)\subset B(0,R)$, then 
\begin{align}
&\mathrm{Re} \mu \int_{\C}\chi(z)(|u_1(z)|^2+|u_2(z)|^2)\, dz=i \mathrm{Re}\oint_{|z|=R}\overline{u_1(z)}u_2(z)\, dz.
\end{align}
(Note that if we had not interpreted $2\bar\partial(\overline{u_1}u_2)=-\mu\chi(|u_1|^2+|u_2|^2)$ as holding on $\Gamma$, we would have gotten here also integrals along $\Gamma$, but again by continuity of $\overline{u_1}u_2$, these would have vanished).

Directly from the equation $u$ satisfies, we see that $u_1$ is antiholomorphic in $(\mathrm{supp}(\chi))^\mathsf c\setminus \Gamma$ while $u_2$ is holomorphic here. Thus (again using continuity), $\overline{u_1}u_2$ is holomorphic in $(\mathrm{supp}(\chi))^\mathsf c$. Moreover, it vanishes at least as fast as $z^{-2}$ at infinity, so by Cauchy's integral theorem
\begin{equation}
\oint_{|z|=R}\overline{u_1(z)}u_2(z)\, dz=0.
\end{equation}
Since $\re\mu\neq 0$, we conclude that $u_1$ and $u_2$ are zero almost everywhere in $\mathrm{supp}(\chi)$, and by continuity, they vanish identically in $\mathrm{supp}(\chi)$.
Thus by continuity $u_1,u_2|_{\partial(\mathrm{supp}(\chi))}=0$, and again making use of continuity and analyticity, $u_1,u_2=0$ in all of $\C$ provided that $\mathrm{supp}(\chi)$ is sufficiently smooth which we have imposed in \eqref{e:chi}.

Uniqueness of the Green's function follows immediately from this since if we had two solutions, $S^{(1)}$ and $S^{(2)}$, then their difference $u=S^{(1)}-S^{(2)}$ would satisfy the assumptions of this lemma.
\end{proof}

Uniqueness will also play a key part in our proof of existence of the Green's function.
Again, we focus on the first column. The basic idea of the proof is to get rid of the branching, remove by hand the worst singularities from the Green's function, recast the problem as an integral equation and argue via the Fredholm alternative.

Let us write out explicitly the first column of \eqref{eq:dirac}. It says that the solution should satisfy 
\begin{align}
2\bar \partial_z S_{21}(z,w)&=-\mu \chi(z)S_{11}(z,w)+\delta(z-w)\\
2\partial_z S_{11}(z,w)&=-\mu \chi(z)S_{21}(z,w).
\end{align}
This suggests that the most singular behavior should be in $S_{21}$ and it should be such that $2\bar\partial_z$ acting on it produces the $\delta$-function. Keeping in mind the correct branching structure, this suggests that we should write 
\begin{equation}
S_{21}(z,w)=\frac{1}{2\pi}\frac{\rho(w)}{\rho(z)}\frac{1}{z-w}+\frac{\rho(w)}{\rho(z)}\Delta_{21}(z,w)\label{S21definition}
\end{equation}
for some function $z\mapsto \Delta_{21}(z,w)$ which is continuous in $\C$ (for each fixed $w$) and decays at infinity.
This in turn suggests that we write 
\begin{equation}
S_{11}(z,w)=\mu\frac{\overline{\rho(z)}\rho(w)}{(2\pi)^2}\int_{\C}du\frac{\chi(u)}{|\rho(u)|^2}\frac{1}{\bar z-\bar u}\frac{1}{u-w}+\overline{\rho(z)}\rho(w)\Delta_{11}(z,w)\label{S11definition},
\end{equation}
where $z\mapsto \Delta_{11}(z,w)$ is continuous in $\C$ (for each fixed $w$) and decays at infinity. We have been a bit sneaky in factoring out the $\rho(w)$ as well. Note that our equations now become
\begin{align}
2\bar \partial_z \Delta_{21}(z,w)&=-\mu^2 \chi(z)\frac{|\rho(z)|^2}{(2\pi)^2}\int_{\C}du\frac{\chi(u)}{|\rho(u)|^2}\frac{1}{\bar z-\bar u}\frac{1}{u-w}-\mu \chi(z)|\rho(z)|^2 \Delta_{11}(z,w)\\
2\partial_z \Delta_{11}(z,w)&=-\mu\chi(z)\frac{1}{|\rho(z)|^2}\Delta_{21}(z,w).
\end{align}

Our goal is to solve these equations, but we will be a bit indirect about it. First of all, since we are looking for functions decaying at infinity, Liouville's theorem implies that it should be equivalent to find solutions to the problem

\begin{multline}
  \Delta_{21}(z,w)+\frac{\mu}{2\pi}\int_{\C}du\,  \frac{\chi(u)|\rho(u)|^2\Delta_{11}(u,w)}{z-u}
  \\
  =-\frac{\mu^2}{2\pi}\int_{\C}du\, \frac{\chi(u)|\rho(u)|^2}{z-u}\int_{\C}dv \frac{\chi(v)}{|\rho(v)|^2}\frac{1}{(2\pi)^2}\frac{1}{\bar u-\bar v}\frac{1}{v-w}
\end{multline}
and
\begin{equation}
\Delta_{11}(z,w)+\frac{\mu}{2\pi}\int_{\C}du\, \frac{\chi(u)}{|\rho(u)|^2} \frac{\Delta_{21}(u,w)}{\bar z-\bar u}=0.
\end{equation}
This means that we are trying to find a solution $\varphi\in C_0(\C,\C^2)$ (two component complex valued continuous functions vanishing at infinity) 
\begin{equation} \label{e:IKphi}
(I-K)\varphi=h, 
\end{equation}
where
\begin{equation}
\left(K\begin{pmatrix}
\varphi_1\\
\varphi_2
\end{pmatrix}\right)(z)=-\frac{\mu}{2\pi}\int_\C du\,\chi(u)\begin{pmatrix}
|\rho(u)|^{-2}\frac{\varphi_2(u)}{\bar z-\bar u}\\
|\rho(u)|^2 \frac{\varphi_1(u)}{z-u}
\end{pmatrix}\label{e:Kdefinition}
\end{equation}
and 
\begin{equation}
h(z)=\begin{pmatrix}
0 \\
-\frac{\mu^2}{2\pi}\int_{\C}du\, \frac{\chi(u)|\rho(u)|^2}{z-u}\int_{\C}dv \frac{\chi(v)}{|\rho(v)|^2}\frac{1}{(2\pi)^2}\frac{1}{\bar u-\bar v}\frac{1}{v-w}
\end{pmatrix}.\label{hzwdefinition}
\end{equation}

\begin{lemma}
  $K:C_0(\C,\C^2)\to C_0(\C, \C^2)$ is compact.
\end{lemma}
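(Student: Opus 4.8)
The plan is to show that $K$ maps bounded sets of $C_0(\C,\C^2)$ to relatively compact sets by verifying the hypotheses of the Arzel\`a--Ascoli theorem (in the form appropriate for $C_0$, i.e.\ equicontinuity, uniform boundedness, and uniform decay at infinity), together with the fact that $K$ actually maps into $C_0(\C,\C^2)$ in the first place. Both components of $K\varphi$ are Cauchy transforms (or conjugate Cauchy transforms) of functions of the form $\chi(u)|\rho(u)|^{\pm 2}\varphi_j(u)$; the whole argument reduces to estimating these via the mapping properties of the Cauchy transform $T$ collected in Lemma~\ref{le:cauchy}.

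First I would record that for $\varphi\in C_0(\C,\C^2)$ with $\|\varphi\|_\infty\le 1$, the densities $g_1(u)=\chi(u)|\rho(u)|^{-2}\varphi_2(u)$ and $g_2(u)=\chi(u)|\rho(u)|^2\varphi_1(u)$ are supported in the fixed compact set $\mathrm{supp}(\chi)$, are bounded by $|\rho|^{\pm 2}$ there, and hence lie in $L^p(\C)\cap L^q(\C)$ for a suitable pair $1<q<2<p<\infty$. This uses $\alpha_j\in(-\tfrac12,\tfrac12)$: near $x_j$, $|\rho(u)|^{\pm 2}\sim |u-x_j|^{\pm 2\alpha_j}$ with $|2\alpha_j|<1$, which is locally $L^p$ for $p$ close enough to $2$ (and globally, since the support is compact). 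Moreover the $L^p\cap L^q$ norms of $g_1,g_2$ are bounded uniformly over $\|\varphi\|_\infty\le 1$ by a constant depending only on $\chi$ and the $\alpha_j$. Then: (i) by \eqref{eq:cauchy1}, $T g_j\in C_0(\C)$, so $K\varphi\in C_0(\C,\C^2)$ and $\|K\varphi\|_\infty\lesssim 1$ uniformly; (ii) by \eqref{eq:cauchy2} with $p>2$, the functions $Tg_j$ are uniformly H\"older continuous with a fixed exponent $\alpha=1-2/p$ and a modulus bounded uniformly over $\|\varphi\|_\infty\le1$, giving equicontinuity of $\{K\varphi:\|\varphi\|_\infty\le1\}$; (iii) since $g_j$ has support in a fixed ball $B_R(0)$ and bounded $L^1$ norm (being in $L^q$ with compact support), for $|z|\ge 2R$ one has $|Tg_j(z)|\le \tfrac1\pi \|g_j\|_{L^1}/(|z|-R)\to 0$ uniformly in $\varphi$, which is the required uniform decay at infinity. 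The version of Arzel\`a--Ascoli for $C_0(\C)$ (equivalently, one-point compactification and ordinary Arzel\`a--Ascoli on $S^2$) then yields that $\{K\varphi:\|\varphi\|_\infty\le1\}$ is relatively compact in $C_0(\C,\C^2)$, i.e.\ $K$ is compact.

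I expect the only genuine subtlety to be the integrability of $|\rho|^{\pm2}$ and keeping track of which $L^p$ exponents work simultaneously for \eqref{eq:cauchy1} (which needs a conjugate pair $p,q$ straddling $2$) and \eqref{eq:cauchy2} (which needs $p>2$); all of these are satisfied by choosing $p$ slightly above $2$ and $q$ its conjugate, since $\sup_j|2\alpha_j|<1$. Everything else is a routine application of Lemma~\ref{le:cauchy}, and the compactness of $\mathrm{supp}(\chi)$ is what makes the decay-at-infinity and global integrability statements automatic. No properties of $\chi$ beyond $\chi\in L^\infty$ with compact support are needed for this particular lemma.
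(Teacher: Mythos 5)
Your proof is correct and follows essentially the same route as the paper's: Arzelà--Ascoli for $C_0(\C,\C^2)$, with equicontinuity from the H\"older estimate \eqref{eq:cauchy2}, pointwise boundedness from \eqref{eq:cauchy1}, and uniform decay at infinity from the compact support of $\chi$. The only cosmetic difference is that you derive the uniform $L^\infty$ bound directly from \eqref{eq:cauchy1} rather than appealing to boundedness of $K$ as the paper does; your side remark about which hypotheses on $\chi$ are actually used is also accurate.
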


\begin{proof}
We use a version of the Arzel\`a--Ascoli theorem for $C_0(\C,\C^2)$.
For a bounded subset $B$ in $C_0(\C,\C^2)$, denote $\mathcal{F}=K(B)$. We will show that $\mathcal{F}$ is relatively compact. It is enough
to show that $\mathcal{F}$ is equicontinuous; for each $x\in\C$, $\mathcal{F}(x)=\{(K\varphi)(x): \varphi\in B\}$ is relatively compact; and $\mathcal{F}$ tends to zero uniformly at infinity (i.e., $(K\varphi)(x)\to 0$ as $x\to\infty$ uniformly in $\varphi\in B$).

By definition of $K$, for $\varphi=\begin{pmatrix} \varphi_1\\\varphi_2\end{pmatrix}\in B$,
\begin{equation}
  [K\varphi]_1(z)=T\tilde \varphi_2(z)
\end{equation}
where
\begin{equation}
  \tilde\varphi_2(u):=-\frac{\mu}{2}\chi(u)\frac{\varphi_2(u)}{|\rho(u)|^2}
\end{equation}
and $\tilde\varphi_2\in L^p(\C)$ for $p>2$ small enough (depending on $\{\alpha_j\}$). Hence, with $\beta=1-2/p$, Lemma~\ref{le:cauchy} implies that
\begin{equation}
\sup_{z\neq w}\frac{|[K\varphi]_1(z)-[K\varphi]_1(w)|}{|z-w|^\beta}\leq \frac{12p^2}{p-2}\|\tilde\varphi_2\|_p\leq C\sup_u|\varphi_2(u)|\leq C_B
\end{equation}
where $C_B=C\sup_{\varphi\in B}\|\varphi\|_{C_0}$. A similar argument for $[K\varphi]_2$ shows that $K\varphi$ is uniformly H\"older continuous for some exponent $\beta'>0$. We thus have equicontinuity.

We also have that $\mathcal{F}(x)=\{K\varphi(x):\varphi\in B\}\subset \C^2$ is bounded, hence relatively compact, since $K$ is a bounded operator.

Finally, take $R>0$ large enough that supp$(\chi)\subset B_R(0)$, then for all $|z|>R$
\begin{equation}
|[K\varphi]_1(z)|\leq \frac{C}{|z|-R}\int_{B_R(0)}du\frac{\varphi(u)}{|\rho(u)|^2}\leq \frac{C_B}{|z|-R}\rightarrow 0
\end{equation}
uniformly for $\varphi\in B$, hence $|K\varphi(z)|$ tends to zero uniformly for $\varphi\in B$ as $z\rightarrow \infty$.
\end{proof}

Also it follows readily that $h\in C_0(\C,\C^2)$, where $h$ is defined in \eqref{hzwdefinition} (depending on $w \in \C$).
In fact,  the following is true.

\begin{lemma}\label{hzwcontinuity}
The function $h(z,w)$ defined by the right-hand side of \eqref{hzwdefinition} is jointly continuous in $(z,w)\in \C^2$ and vanishes at infinity.
\end{lemma}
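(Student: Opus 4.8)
\textbf{Proof proposal for Lemma~\ref{hzwcontinuity}.}

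The plan is to rewrite $h(z,w)$ as a composition of Cauchy-type transforms applied to functions that we can control in suitable $L^p$ spaces, and then invoke the mapping properties of the Cauchy transform from Lemma~\ref{le:cauchy} together with a standard continuity-under-translation argument. Concretely, recalling \eqref{hzwdefinition}, the second component of $h$ is
\begin{equation*}
  h_2(z,w) = -\frac{\mu^2}{2\pi}\int_{\C}du\, \frac{\chi(u)|\rho(u)|^2}{z-u}\, g(u,w),
  \qquad
  g(u,w) = \frac{1}{(2\pi)^2}\int_{\C}dv\, \frac{\chi(v)}{|\rho(v)|^2}\frac{1}{\bar u-\bar v}\frac{1}{v-w},
\end{equation*}
while $h_1\equiv 0$. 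First I would fix $w$ and analyze the inner integral $g(\cdot,w)$. Writing $\psi_w(v) = \chi(v)|\rho(v)|^{-2}(v-w)^{-1}$, note that since $\chi$ has compact support and the $\alpha_j\in(-\tfrac12,\tfrac12)$, the weight $|\rho(v)|^{-2} = \prod_j |v-x_j|^{-2\alpha_j}$ has only integrable singularities, so $\psi_w\in L^q(\C)$ for some $q\in(1,2)$ (depending only on the $\alpha_j$), with $\|\psi_w\|_q$ locally bounded in $w$ even when $w$ lies near $\mathrm{supp}(\chi)$ — here one splits the $v$-integral into a neighborhood of $w$ and its complement, using that $(v-w)^{-1}\in L^q_{\mathrm loc}$ for $q<2$. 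Since $g(u,w)$ is (a constant times) the conjugate Cauchy transform of $\psi_w$ evaluated at $u$, i.e. $g(\cdot,w) = \tfrac{1}{2\pi}\overline{T\overline{\psi_w}}$ up to normalization, Lemma~\ref{le:cauchy} gives $g(\cdot,w)\in L^{2q/(2-q)}(\C)$ with norm controlled by $\|\psi_w\|_q$, hence locally bounded in $w$. Multiplying by $\chi|\rho|^2\in L^\infty_c$ and noting $|\rho|^2$ again has only integrable singularities, the function $u\mapsto \chi(u)|\rho(u)|^2 g(u,w)$ lies in $L^p(\C)\cap L^{p'}(\C)$ for a suitable pair of exponents straddling $2$, with norms locally bounded in $w$; then $h_2(\cdot,w) = -\tfrac{\mu^2}{2\pi}\,\pi\, T\!\big(\chi|\rho|^2 g(\cdot,w)\big)$ is in $C_0(\C)$ by \eqref{eq:cauchy1}, and in fact uniformly H\"older by \eqref{eq:cauchy2}.

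For joint continuity, I would establish continuity separately in $z$ (uniformly in $w$ on compacts) and in $w$ (uniformly in $z$), which together with local boundedness yields joint continuity. Continuity in $z$ with uniformity in $w$ follows from the H\"older bound \eqref{eq:cauchy2} applied to $T$, since the $L^p$ norm of $\chi|\rho|^2 g(\cdot,w)$ is bounded uniformly for $w$ in a compact set; this also handles decay at infinity since for $|z|$ large, $|h_2(z,w)|\le C_K/(|z|-R)$ uniformly for $w\in K$, exactly as in the proof that $K\varphi\in C_0$. Continuity in $w$ reduces, by the boundedness of $T$ from $L^p\cap L^{p'}$ to $C_0$, to showing $w\mapsto \chi|\rho|^2 g(\cdot,w)$ is continuous from $\C$ into $L^p(\C)\cap L^{p'}(\C)$; and this in turn reduces to continuity of $w\mapsto \psi_w$ in $L^q(\C)$, which is the standard fact that translation is continuous on $L^q$ combined with a dominated-convergence argument near the moving singularity at $v=w$ (the singularity $(v-w)^{-1}$ has $L^q$ mass, uniformly localized, so it can be excised on a shrinking ball whose contribution vanishes). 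The main obstacle is precisely this last point: controlling the behavior as $w$ approaches $\mathrm{supp}(\chi)$ — and possibly a branch point $x_j$ — where the singularities $(v-w)^{-1}$ and $|\rho(v)|^{-2}$ can collide; this is resolved by choosing $q$ close enough to $1$ that $|v-w|^{-1}\prod_j|v-x_j|^{-2|\alpha_j|}$ is still in $L^q$ on a neighborhood, with a bound uniform in $w$, since $1 + 2\sum_j|\alpha_j| < \infty$ and each exponent individually is less than $2$ in the relevant regime after a further splitting if two singular points are close. Once these $L^q$-continuity and uniform-integrability statements are in hand, the conclusion is immediate.
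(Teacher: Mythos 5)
There is a genuine gap, and it is in the central estimate, not in the peripheral matters you flagged. Your plan is to bound $g(\cdot,w)\in L^s(\C)$ \emph{globally} by a single exponent $s=2q/(2-q)>2$ via Lemma~\ref{le:cauchy}, and then apply H\"older against $\chi|\rho|^2$ to conclude $\chi|\rho|^2 g(\cdot,w)\in L^p$ for some $p>2$. This does not close when the $\alpha_j$'s are near $\pm\tfrac12$. Concretely, take $\alpha_1=0.4$, $\alpha_2=-0.4$. The worst singularity of $\psi_w$ (at $w=x_1$) forces $q<\tfrac{2}{2\alpha_1+1}=\tfrac{10}{9}$, hence $s=2q/(2-q)<2.5$; independently, near $x_2$ you have $|\rho(u)|^2\sim|u-x_2|^{-0.8}\in L^{p_0}$ only for $p_0<2.5$. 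H\"older then yields $1/r>1/p_0+1/s>0.4+0.4=0.8$, i.e. $r<1.25$, which is well below $2$. What rescues the lemma is that the global $L^s$ bound on $g$ throws away the fact that the \emph{location} of the singularity of $g$ (driven by $|\rho(v)|^{-2}$ near the $x_j$ with $\alpha_j>0$) is disjoint from the location of the singularity of $|\rho(u)|^2$ (near the $x_j$ with $\alpha_j<0$). Near $x_2$, the function $g(\cdot,w)$ is actually in $L^p(U_2)$ for all finite $p$, because $|\rho(v)|^{-2}$ has a \emph{zero} there, mollifying the Cauchy kernel; so locally $|\rho|^2 g$ is integrable to any order below $1/|\alpha_2|>2$. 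Capturing this requires the paper's partition of unity around each branch point, writing $g=T\varphi_w=\sum_j T\1_{U_j}\varphi_w$ and treating each piece separately, which is precisely where most of the work is; a global H\"older argument cannot see the cancellation and is doomed in this parameter regime.

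A smaller but real error in the same step: you assert $\chi|\rho|^2\in L^\infty_c$. This is false whenever some $\alpha_j<0$ (which the neutrality condition $\sum_j\alpha_j=0$ forces, unless all $\alpha_j=0$), since then $|\rho(u)|^2=\prod_j|u-x_j|^{2\alpha_j}$ blows up at $x_j$. You partially retract it in the next clause, but the subsequent conclusion ``$\chi|\rho|^2 g(\cdot,w)\in L^p\cap L^{p'}$ for exponents straddling $2$'' is exactly the assertion that requires the localized argument above, not just integrability of the singularities. Your closing remark about ``a further splitting if two singular points are close'' gestures in the right direction but addresses only the collision of $w$ with a branch point inside $\psi_w$; the splitting that is actually needed is the partition of $\C$ into neighborhoods of the $x_j$ before multiplying by $|\rho(u)|^2$, so that near each $x_j$ one can match the correct pair of local exponents. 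The rest of your strategy — $T$-mapping properties, H\"older continuity in $z$ uniformly in $w$ on compacts, $L^q$-continuity of $w\mapsto\psi_w$ via translation continuity and dominated convergence, and the decay at infinity via the ``$1/(|z|-R)$'' bound — matches the paper's route closely and is sound once the partition-of-unity estimate is in place.
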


\begin{proof}
Observe that 
\begin{equation}
h_2(z,w) = T[\chi(\cdot)|\rho(\cdot)|^2[\overline{T\varphi_w}](\cdot)](z),\qquad \text{with } \quad\varphi_w(v):=\frac{-\mu^2}{8\pi}\frac{\chi(v)}{|\rho(v)|^2}\frac{1}{\bar v-\bar w}
\end{equation} and where $h_2$ is the second component of $h$. We now prove joint continuity by showing that the arguments of the Cauchy transforms appearing in $h_2$ have sufficient regularity. Notice that the singularities of $1/|\rho|^2$ in $\varphi_w$ correspond to zeros in the factor of $|\rho|^2$ appearing in $h_2$ and vice versa. 

For $j=1,\dots,n$, take small balls $U_j$ centred at $x_j$ with radii less than $\min_{i \neq j}(|x_i-x_j|/3)$. Since $\chi$ has compact support, when $w\notin \{x_j\}$ it's clear that $\varphi_w\in L^p(\C)$, for any $1\leq p<2$,  (just note that the worst singularity in $\varphi_w$ is at $v=w$ which is locally almost $L^2$ integrable). Furthermore, we can see that $\varphi_{w=x_j}\in L^p(U_j)$ for $1\leq p<\frac{2}{2\alpha_j+1}$ and $\varphi_{w=x_j}\in L^p(U_j^c)$ for $1\leq p<2$. 

Denote $U_0=(\cup_j U_j)^c$. We now show that 
\begin{equation}
\chi(\cdot)|\rho(\cdot)|^2[\overline{T\1_{U_j}\varphi_w}](\cdot)\in L^p(\C)\label{temp34rgedv}
\end{equation} for some $p>2$ and all $j\in \{0,\dots,n\}$, all $w\in \C$. 

By the dominated convergence theorem, $[T\1_{U_j}\varphi_w](z)$ is continuous everywhere except at $z=x_j$, so $\chi(\cdot)|\rho(\cdot)|^2[T\1_{U_j}\varphi_w](\cdot)$ is in $L^p(U_j^c)$, for $2<p<\min\{\frac{-1}{\alpha_i}: i \neq j, \alpha_i<0\}$. Since we already argued that $\varphi_w\in L^p(U_j)$ for $1\leq p<\min(\frac{2}{2+\alpha_j+1},2)$, Lemma \ref{le:cauchy} implies  that $[T\1_{U_j}\varphi_w](\cdot)\in L^{2p/(2-p)}(U_j)$, for $p\in[1,p^*)\cap [1,2)$, and $w\in \C$, where $p^*=\frac{2}{2\alpha_j+1}$. If $\alpha_j\geq 0$ then $\chi(\cdot)|\rho(\cdot)|^2$ is continuous in $U_j$ and so letting $q=2p/(2-p)$, we have $\chi(\cdot)|\rho(\cdot)|^2[T\1_{U_j}\varphi_w](\cdot)\in L^q(U_j)$, $q\in (2,\frac{2p^*}{2-p*})=(2,1/\alpha_j)$, for all $w\in \C$. If instead $\alpha_j<0$, then $\chi(\cdot)|\rho(\cdot)|^2\in L^q(U_j)$, $2<q<-1/\alpha_j$ and $[T\1_{U_j}\varphi_w]\in L^{p}(U_j), 2<p<\infty$. Now using a variant of Hölder's inequality ($\|fg\|_r\leq \|f\|_p\|g\|_q$, where $1/r=1/p+1/q$) we find that $\chi(\cdot)|\rho(\cdot)|^2[\overline{T\chi_{j}\varphi_w}](\cdot)\in L^r(\C)$, $2<r<-1/\alpha_j$ for $\alpha_j<0$. 

Summarizing, we have shown that \eqref{temp34rgedv} holds with $2<p<\min\{\frac{1}{|\alpha_i|},i\in \{1,\dots,n\}\}$ for $j\in \{1,\dots,n\}$, $w\in\C$. The case $j=0$ holds by a similar argument. Now using that $\chi$ has compact support, \eqref{temp34rgedv} holds for $1\leq p<\min\{\frac{1}{|\alpha_i|},i\in \{1,\dots,n\}\}$. Hence by Lemma~\ref{le:cauchy} we have that $h_2(\cdot,w)\in C_0(\C)$ for all $w\in \C$.
We can see the joint continuity of $h(z,w)$ by writing $|h(z,w)-h(z_0,w_0)|\leq |h(z,w)-h(z,w_0)|+|h(z_0,w_0)-h(z,w_0)|$ and taking the limit $(z,w)\rightarrow (z_0,w_0)$,
where the first term goes to zero by continuity of the Cauchy transform and the second by continuity of $h(\cdot,w_0)$. This also shows that $h(z,w)$ vanishes at infinity, since $\varphi_w(v)\rightarrow 0$ in any $L^p(\C)$, $1\leq p <2$ as $w\rightarrow \infty$.
\end{proof}

\begin{proof}[Proof of Propositions~\ref{pr:diracexist} and~\ref{prop:Green-Delta}]
  To deduce the existence of a solution $\varphi$ to \eqref{e:IKphi},
  namely
  \begin{equation}
    \varphi = \begin{pmatrix}
                \Delta_{11}\\
                \Delta_{21}
              \end{pmatrix},
            \end{equation}
we can rely on the Fredholm alternative if we can show that the homogeneous equation $(I-K)\varphi=0$ does not have non-trivial solutions.

Let us assume that a non-trivial solution would exist. We note that it follows from Lemma~\ref{le:cauchy} that $\varphi=\begin{pmatrix}
\varphi_1\\ \varphi_2\end{pmatrix}$ satisfies the differential equation 
\begin{align}
2\partial \varphi_1(z)+\mu\frac{\chi(z)}{|\rho(z)|^2}\varphi_2(z)&=0\\
2\bar \partial\varphi_2(z)+\mu \chi(z)|\rho(z)|^2 \varphi_1(z)&=0.
\end{align}
Define then 
\begin{equation}
u(z)=\begin{pmatrix}
u_1(z)\\
u_2(z)
\end{pmatrix}=\begin{pmatrix}
\overline{\rho(z)}\varphi_1(z)\\
\frac{1}{\rho(z)}\varphi_2(z)
\end{pmatrix}.
\end{equation}
Now off of $\Gamma$, the differential equation for $\varphi$ becomes 
\begin{align}
2\partial u_1(z)+\mu \chi(z)u_2(z)=0\\
2\bar \partial u_2(z)+\mu \chi(z)u_1(z)=0.
\end{align}
Since $\varphi\in C_0(\C,\C^2)$ and $\lim_{z\to\infty}\rho(z)=1$, we are exactly in the setting of Lemma \ref{le:homogdirac}. This means that $u=0$ and $\varphi=0$. Thus by the Fredholm alternative, $(I-K)^{-1}$ exists and is bounded on $C_0(\C, \C^2)$. This means that we have found $\Delta_{11},\Delta_{21}$ as elements of $C_0(\C)$, which in turn means that we have constructed $S_{11},S_{21}$. This proves Proposition \ref{pr:diracexist}.

Let $\Delta(z,w)=(\Delta_{11}(z,w),\Delta_{21}(z,w))$ so that 
\begin{equation}
\Delta(z,w)=\big(I-K\big)^{-1}\big [z'\mapsto h(z',w)](z),\label{temp5265}
\end{equation}
where we have included the dependence on $w$ of the function $h$ appearing in \eqref{hzwdefinition}. We now show that $\Delta:\C^2\rightarrow \C^2$ is jointly continuous. By the triangle inequality,
\begin{align}
|\Delta(z,w)-\Delta(z_0,w_0)|&\leq |\big(I-K\big)^{-1}(h(\cdot,w)-h(\cdot,w_0))(z)|\label{tempg45}
\\&\quad+|\big(I-K\big)^{-1}(h(\cdot,w_0))(z)-\big(I-K\big)^{-1}(h(\cdot,w_0))(z_0)|.\nonumber
\end{align}
The first term on the right-hand side of the previous inequality is bounded above by 
\begin{equation}
\sup_{z'}|\big(I-K\big)^{-1}(h(\cdot,w)-h(\cdot,w_0))(z')|\leq C\sup_{z'}|h(z',w)-h(z',w_0)|\label{tempfr42}
\end{equation}
for some $C>0$, since $(I-K)^{-1}$ is a bounded linear map. Taking the limit $w\rightarrow w_0\in\C$,
we can see that \eqref{tempfr42} tends to zero since $h\in C_0(\C^2,\C^2)$.
Indeed, if it did not, there would be a sequence $(z_n,w_n)$ with $w_n \to w_0$ such that $|h(z_n,w_n)-h(z_n,w_0)|$ is bounded below by a positive constant.
Thus if $\{z_n\}$ is unbounded then we have a contradiction because $h$ vanishes at infinity,
whereas if $\{z_n\}$ is bounded we could take a convergent subsequence and again get a contradiction (by continuity). Therefore  \eqref{tempfr42} must tend to zero.
The second term in \eqref{tempg45} tends to zero just by continuity in $z$. This proves Proposition~\ref{prop:Green-Delta}.
\end{proof}

\subsection{Short-distance bounds on the Green's function -- proof of Proposition~\ref{pr:analyt} part (i)}

\begin{proposition}\label{pr:bounds22}
Let $S= S^\rho_{\mu\chi}$ be the Green's function from Proposition \ref{pr:diracexist}.

\smallskip\noindent (i)
$S(z,w)$ is jointly continuous on the set $(\C\setminus \Gamma)^2\setminus \{z=w\}$.

\smallskip\noindent (ii)
For any bounded $U \subset (\C\setminus \Gamma)^2\setminus\{z=w\}$ there is a constant $C=C(U,\chi,\mu)>0$ such that
\begin{equation}
|S(z,w)|\leq \frac{C}{|\tilde{\rho}(z)\tilde{\rho}(w)||z-w|}\label{tempineq12}
\end{equation}
holds uniformly for $(z,w)\in U$, where the matrix norm is the Frobenius norm and $\tilde{\rho}$ is defined as $\rho$ with each $\alpha_j$ replaced by $|\alpha_j|$.
\end{proposition}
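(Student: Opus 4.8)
The plan is to bootstrap from the existence proof in the previous subsection: recall that we constructed $\Delta = (\Delta_{11},\Delta_{21})$ as the image under $(I-K)^{-1}$ of the function $h(\cdot,w)$ from \eqref{hzwdefinition}, that $(I-K)^{-1}$ is bounded on $C_0(\C,\C^2)$, and that $h(z,w)$ is jointly continuous on $\C^2$ and vanishes at infinity (Lemma~\ref{hzwcontinuity}). So $\Delta(z,w)$ is jointly continuous on all of $\C^2$, and the singular structure of $S$ lives entirely in the explicitly written prefactors in \eqref{S11definition-bis}--\eqref{S21definition-bis}. From this, part (i) and the continuity half of part (ii) are essentially immediate, and I would state them first; the remaining work is the quantitative bound \eqref{tempineq12}.

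For part (ii), first I would handle $S_{21}^\rho$. From \eqref{S21definition-bis}, $S_{21}^\rho(z,w) = \frac{1}{2\pi}\frac{\rho(w)}{\rho(z)}\frac{1}{z-w} + \frac{\rho(w)}{\rho(z)}\Delta_{21}^\rho(z,w)$. On a bounded set $U$, $|\Delta_{21}^\rho|$ is bounded (by continuity), so $|S_{21}^\rho(z,w)| \lesssim \left|\frac{\rho(w)}{\rho(z)}\right|\left(\frac{1}{|z-w|}+1\right)$. The key estimate is the elementary inequality $\left|\frac{\rho(w)}{\rho(z)}\right| \le \frac{C}{\tilde\rho(z)\tilde\rho(w)}$ on bounded sets: indeed $\left|\frac{\rho(w)}{\rho(z)}\right| = \prod_j \frac{|w-x_j|^{\alpha_j}}{|z-x_j|^{\alpha_j}}$, and for each $j$ one bounds $|w-x_j|^{\alpha_j}/|z-x_j|^{\alpha_j}$ by $C |w-x_j|^{-|\alpha_j|}|z-x_j|^{-|\alpha_j|}$ separately for $\alpha_j \ge 0$ (use $|z-x_j|^{-\alpha_j}\le |z-x_j|^{-|\alpha_j|}$ and $|w-x_j|^{\alpha_j}\le C$ on $U$) and $\alpha_j < 0$ (symmetrically). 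Combining with the extra $\frac{1}{|z-w|}+1 \lesssim \frac{1}{|z-w|}$ on the bounded set gives the claimed bound for $S_{21}^\rho$. For $S_{11}^\rho$ in \eqref{S11definition-bis}, the $\Delta_{11}^\rho$ term is handled identically (the prefactor there is $\overline{\rho(z)}\rho(w)$, which is even better — it is bounded on $U$ and in fact $\le C$, certainly $\le C/(\tilde\rho(z)\tilde\rho(w)|z-w|)$ after the trivial enlargement). The genuinely singular term in $S_{11}^\rho$ is $\mu\,\overline{\rho(z)}\rho(w)(2\pi)^{-2}\int_\C \frac{\chi(u)}{|\rho(u)|^2}\frac{1}{\bar z-\bar u}\frac{1}{u-w}\,du$; here $\overline{\rho(z)}\rho(w)$ is bounded, so I only need to bound the integral by $C/|z-w|$ (or even just $C$, plus a $\log$, which is absorbed into $\frac{1}{|z-w|}$ on $U$) — and this follows from standard estimates: $1/|\rho(u)|^2 = \tilde\rho$-type singularities are locally $L^p$ for some $p>1$ since $|\alpha_j|<1/2$, $\chi$ is bounded with compact support, and the convolution-type kernel $\frac{1}{\bar z - \bar u}\cdot\frac{1}{u-w}$ is integrated against an $L^p$ function, which is a bounded operation by the Cauchy-transform mapping properties recorded in Lemma~\ref{le:cauchy}; the resulting function is continuous and in particular locally bounded.

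The main obstacle, and the only place requiring care, is the integral term in $S_{11}^\rho$ near coincident points and near the branch points: one must check that $\int_\C \frac{\chi(u)}{|\rho(u)|^2}\frac{1}{(\bar z-\bar u)(u-w)}\,du$ is locally bounded (up to a logarithm) uniformly for $(z,w)$ in a bounded subset of $(\C\setminus\Gamma)^2\setminus\{z=w\}$. I would treat it by splitting $u$ near $x_j$, near $z$, near $w$, and away from all of these, using $|\rho(u)|^{-2} = \prod_j|u-x_j|^{-2\alpha_j} \in L^p_{\rm loc}$ for $p<1/\max_j|\alpha_j|$ (so some $p>2$ is allowed since $|\alpha_j|<1/2$), then applying Hölder together with the local integrability of $|\bar z - \bar u|^{-1}$ and $|u-w|^{-1}$. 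None of this involves new ideas beyond Lemma~\ref{le:cauchy} and Lemma~\ref{hzwcontinuity}, so the proposition follows. Finally I would note that none of these bounds used anything about $\mu$ beyond $\mu\in\R$, and the constant $C$ depends only on $U$, $\chi$, $\mu$ (and implicitly the $\alpha_j$), as stated; this same estimate, with $S$ replaced by its analytic continuation in $\mu$, then reappears as \eqref{tempineq12-bis} in Proposition~\ref{pr:analyt} once analyticity is in hand.
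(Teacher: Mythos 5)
Your overall strategy — reduce to the explicit decompositions \eqref{S11definition-bis}--\eqref{S21definition-bis}, use joint continuity of $\Delta_{ij}$ from the Fredholm construction for part (i), and read off the quantitative bound from the explicit singular factors for part (ii) — is exactly the structure of the paper's proof, and your elementary estimate $|\rho(w)/\rho(z)|\leq C/(\tilde\rho(z)\tilde\rho(w))$ for $S_{21}$ is the right first step. There are, however, two flaws in the treatment of $S_{11}$.

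First, the claim that $\overline{\rho(z)}\rho(w)$ is bounded on a bounded $U$ is false: since $\sum_j\alpha_j=0$, either all $\alpha_j$ vanish or some $\alpha_j<0$, and for such $j$ one has $|\rho(z)|=\prod_k|z-x_k|^{\alpha_k}\to\infty$ as $z\to x_j$, and $U$ can approach $x_j$. What is true — and all that is needed — is the analogue of what you proved for $S_{21}$, namely $|\overline{\rho(z)}\rho(w)|\leq C/(\tilde\rho(z)\tilde\rho(w))$ on bounded $U$; this follows because $|\rho(z)|\tilde\rho(z)=\prod_j|z-x_j|^{\alpha_j+|\alpha_j|}$ has all exponents nonnegative and is therefore bounded on bounded sets. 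With this corrected prefactor estimate both the $\Delta_{11}^\rho$ term and the singular-integral term of $S_{11}^\rho$ fall into the desired bound exactly as in your argument.

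Second, the assertion that the integral $\int_{\C}\chi(u)|\rho(u)|^{-2}(\bar z-\bar u)^{-1}(u-w)^{-1}\,du$ is ``$C$ plus a $\log$'' uniformly on $U$ is not correct: when $z,w$ both approach a branch point $x_j$ with $\alpha_j>0$, the factor $|\rho(u)|^{-2}\sim|u-x_j|^{-2\alpha_j}$ makes the integral blow up like $|z-w|^{-2\alpha_j}$ (this is precisely the content of the paper's Lemma~\ref{finerbound}, proved by a scaling change of variable $u\mapsto x_j+(z-w)u$), which is a power singularity, not a logarithm. Your fallback option ``$\leq C/|z-w|$'' is the correct crude target (since $2\alpha_j<1$), but the Hölder-and-splitting sketch you give does not by itself produce it; the regime where $z,w,x_j$ all collapse together requires the dilation argument of Lemma~\ref{finerbound}, and the resulting bound must be inserted with the corrected prefactor estimate above for the exponents to come out right. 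With these two repairs the proof is complete and coincides with the paper's.
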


\begin{proof}
For the item (i), consider the first column of $S$ given by equations \eqref{S21definition}, \eqref{S11definition}.
One can see that the claim follows from the fact that $\Delta_{11}, \Delta_{21}$ appearing there are jointly continuous.

For the item (ii), it suffices to show that $|\tilde{\rho}(z)\tilde{\rho}(w)||z-w||S_{i,j}(z,w)|$ is bounded on $U$, for $i,j\in\{1,2\}$. For $i=2$, $j=1$, using \eqref{S21definition}, this follows by the triangle inequality and joint continuity of $\Delta_{21}(z,w)$. For $i=j=1$ this follows by joint continuity of $\Delta_{11}(z,w)$ and the bound \eqref{upperbound3edf} of the integral appearing in \eqref{S11definition} given in the following lemma.
\end{proof}

\begin{lemma}\label{finerbound}
Introduce for $z\neq w$,
\begin{align}
C_{x,\alpha}(z,w)=\begin{cases} \frac{1}{|z-w|^{2\alpha}}, \quad \alpha>0 \text{ and }  (z,w)\in B_\epsilon(x)^2\\
1+|\log|z-w||, \quad \alpha\leq 0 \text{ or } (z,w)\notin B_\epsilon(x)^2 
\end{cases}\label{Cxalpha}
\end{align}
where $\epsilon>0$ is fixed less than $\min\{\frac12 |x_i-x_j|;\; i\neq j\}$.
Then
\begin{align}
|\int du\frac{\chi(u)}{|\rho(u)|^2}\frac{1}{\bar z-\bar u}\frac{1}{u-w}|\leq C\sum_{j=0}^n C_{x_j,\alpha_j}(z,w)\label{upperbound3edf}
\end{align}
where $C_{x_0,\alpha_0}(z,w):= 1+|\log|z-w||$ and $C>0$ is a $\chi$- and $\rho$-dependent constant.
\end{lemma}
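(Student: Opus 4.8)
The plan is to localize the integral near each branch point $x_j$, where the only singularities of $1/|\rho(u)|^2$ occur, and to estimate the remaining ``bulk'' piece by a standard log bound. First I would fix a smooth partition of unity $1=\sum_{j=0}^n \psi_j$ subordinate to the balls $B_\epsilon(x_1),\dots,B_\epsilon(x_n)$ and the complement $B_0:=(\bigcup_j B_\epsilon(x_j))^c$, with $\epsilon$ as in the statement so the balls are pairwise disjoint. Write the integral as $\sum_{j=0}^n \int du\,\psi_j(u)\tfrac{\chi(u)}{|\rho(u)|^2}\tfrac{1}{(\bar z-\bar u)(u-w)}$. On $B_0$, the weight $\chi(u)/|\rho(u)|^2$ is bounded (and compactly supported since $\chi$ has compact support), so this term is controlled by $C\int_{\mathrm{supp}\chi} \tfrac{du}{|z-u|\,|u-w|}$, which is $\leq C(1+|\log|z-w||)=C\,C_{x_0,\alpha_0}(z,w)$ by the classical estimate for such convolutions in the plane (e.g.\ splitting the domain into $|u-z|\leq |z-w|/2$, $|u-w|\leq|z-w|/2$, and the rest).

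Near $x_j$, I would use that on $B_\epsilon(x_j)$ one has $|\rho(u)|^{-2}\asymp |u-x_j|^{-2\alpha_j}$ (the other factors $|u-x_i|^{\alpha_i}$, $i\neq j$, are bounded above and below there), so the $j$-th term is bounded by $C\int_{B_\epsilon(x_j)} |u-x_j|^{-2\alpha_j}\tfrac{du}{|z-u|\,|u-w|}$. If $\alpha_j\leq 0$ the weight is bounded and we are back to the log bound $C(1+|\log|z-w||)$. If $\alpha_j>0$ and at least one of $z,w$ lies outside $B_\epsilon(x_j)$ (hence at distance $\gtrsim\epsilon$ from the ball or from one of the singular factors), the corresponding Cauchy factor is bounded on the domain of integration, reducing the estimate to $\int_{B_\epsilon(x_j)}|u-x_j|^{-2\alpha_j}|u-\zeta|^{-1}du$ for the remaining point $\zeta\in\{z,w\}$, which since $2\alpha_j<1$ is bounded by a constant, again absorbed into $C(1+|\log|z-w||)$. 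The remaining, genuinely singular case is $\alpha_j>0$ and $z,w\in B_\epsilon(x_j)$: here one estimates $\int_{B_\epsilon(x_j)}|u-x_j|^{-2\alpha_j}\tfrac{du}{|z-u|\,|u-w|}$ directly. Translating $x_j$ to the origin and scaling by $|z-w|$, this is $\asymp |z-w|^{-2\alpha_j}\int |v|^{-2\alpha_j}|v-z'|^{-1}|v-w'|^{-1}dv$ with $|z'-w'|=1$ and the domain a ball of radius $\epsilon/|z-w|$; the integrand is integrable at $0$ (since $2\alpha_j<2$), at $z',w'$ (log-integrable), and at infinity it decays like $|v|^{-2-2\alpha_j}$, so the integral converges to a constant uniformly, giving the bound $C|z-w|^{-2\alpha_j}=C\,C_{x_j,\alpha_j}(z,w)$. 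Summing the $n+1$ contributions yields $C\sum_{j=0}^n C_{x_j,\alpha_j}(z,w)$.

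The only mildly delicate point is the scaling argument in the last case, where one must check the integral is bounded uniformly as the outer radius $\epsilon/|z-w|$ ranges over $[1,\infty]$ — this is where the decay $|v|^{-2-2\alpha_j}$ at infinity (and $\alpha_j>0$, so in particular the exponent exceeds $2$) is used; for $\alpha_j=0$ the bulk log bound takes over instead, which is precisely why the two regimes in the definition of $C_{x,\alpha}$ appear. No genuinely hard analytic input is needed beyond the standard planar convolution estimates and the constraint $|\alpha_j|<\tfrac12$; the bookkeeping over the partition of unity and the cases ``$z,w$ in the same small ball vs.\ not'' is the bulk of the work.
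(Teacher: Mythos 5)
The overall structure of your proof matches the paper's: localize via a partition of unity subordinate to small balls around the $x_j$, extract the $|z-w|^{-2\alpha_j}$ behavior by translating and rescaling when $\alpha_j>0$ and both $z,w$ lie near $x_j$, and use the standard planar logarithmic convolution estimate on the remainder; the decay rate $|v|^{-2-2\alpha_j}$ at infinity in the rescaled integral is indeed the reason the power bound only applies for $\alpha_j>0$.

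There is, however, a gap in your treatment of the case $\alpha_j>0$ with at least one of $z,w$ outside $B_\epsilon(x_j)$. You assert that if, say, $w\notin B_\epsilon(x_j)$, then the Cauchy factor $|u-w|^{-1}$ is ``bounded on the domain of integration'' $B_\epsilon(x_j)$, so that you may drop it and reduce to a single-Cauchy integral $\int_{B_\epsilon(x_j)}|u-x_j|^{-2\alpha_j}|u-z|^{-1}\,du$ bounded by a constant. That assertion is false: $w\notin B_\epsilon(x_j)$ does not force $w$ to be at distance $\gtrsim\epsilon$ from the ball --- $w$ can sit arbitrarily close to $\partial B_\epsilon(x_j)$, and then $|u-w|^{-1}$ is unbounded for $u$ in the ball. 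Concretely, if $z$ is just inside and $w$ just outside the boundary, with $|z-w|$ small, both Cauchy kernels have their singularities near the same boundary point, and the integral over $B_\epsilon(x_j)$ genuinely acquires a $|\log|z-w||$ contribution, not a constant bound. The lemma's claimed bound $C(1+|\log|z-w||)$ is still correct, but to obtain it you must split further: on $B_{\epsilon/2}(x_j)$ one has $|u-w|\geq\epsilon/2$, so the outside Cauchy factor is genuinely bounded and the remaining single-Cauchy integral is bounded by a constant (by the scaling argument you already used for the point $z$ near $x_j$), while on the annulus $B_\epsilon(x_j)\setminus B_{\epsilon/2}(x_j)$ the weight $|u-x_j|^{-2\alpha_j}$ is bounded and the double-Cauchy integral gives the logarithm. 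Equivalently, take the bump functions near $x_j$ to be supported in $B_{\epsilon/2}(x_j)$ rather than $B_\epsilon(x_j)$. Once this repair is made your argument is sound and coincides with the paper's.
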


\begin{proof}
To see that this upper bound holds, observe that, for $0<\alpha<1/2$,
\begin{align}
&|\int_{B_1(0)} du\, \frac{1}{|u|^{2\alpha}}\frac{1}{\bar z-\bar u}\frac{1}{u-w}|\nnb
&=|\int_{B_1(-w)} du\, \frac{1}{|u+w|^{2\alpha}}\frac{1}{\bar z-\bar w-\bar u}\frac{1}{u}|\nnb
&\leq \frac{1}{|z-w|^{2\alpha}} |\int_{B_{1/|z-w|}(-w)} du\, |z-w|\frac{1}{|u+\frac{w}{|z-w|}|^{2\alpha}}\frac{1}{\bar z-\bar w-\bar u |z-w|}\frac{1}{u}|\nnb
&\leq \frac{1}{|z-w|^{2\alpha}}\int_\C  du\, \frac{1}{|u+w'|^{2\alpha}}\frac{1}{|u-p|}\frac{1}{|u|},
\end{align}
where $|p|=|\frac{\bar z-\bar w}{|z-w|}|=1$ and $w'=w/|z-w|$.
This last integral is bounded for $w'\in \C$ and $|p|=1$.
For the case $\alpha\leq 0$ or $(z,w)\notin B_{\epsilon}(x_j)^2$, because $|\rho(u)|^{-2}$ is bounded in a neighborhood of $x_j$ whenever $\alpha\le 0$, the integrand is
dominated (up to a constant that depends on~$\chi$) by $|z-u|^{-1}|u-w|^{-1}$. Changing variables $u \rightarrow u-w$ and using polar coordinates gives the logarithmic upper bound.
Taking a partition of unity subordinate to small balls covering the branch points $\{x_j\}$, as in the proof of Lemma~\ref{hzwcontinuity}, it is straightforward from here to show \eqref{upperbound3edf}.
\end{proof}

\subsection{Analyticity -- proof of Proposition~\ref{pr:analyt} part (ii)}

In this subsection we prove that the finite-volume Green's function $S(z,w)$, as a function of $\mu$,
has an analytic extension into a neighborhood $I$ of the real line in the complex plane.
Introduce the notation
\begin{align} \label{e:Scdef}
  S_c(z,w)
  &= \overline{P_\rho}(z)^{-1} S^\rho_{\mu\chi}(z,w) P_\rho(z)
    \nnb
  &=\begin{pmatrix} \frac{\mu}{(2\pi)^2}\int du \frac{\chi(u)}{|\rho(u)|^2}\frac{1}{\bar z-\bar u}\frac{1}{u-w}+\Delta_{11}^\rho(z,w) &\frac{1}{2\pi}\frac{1}{\bar z-\bar w}+\overline{\Delta_{21}^{1/\rho}}(z,w)\\ \frac{1}{2\pi}\frac{1}{ z-w}+\Delta_{21}^\rho(z,w) &  \frac{\mu}{(2\pi)^2}\int du \chi(u)|\rho(u)|^2\frac{1}{ z- u}\frac{1}{\bar u-\bar w}+\overline{\Delta^{1/\rho}_{11}}(z,w) 
\end{pmatrix}
\end{align}
where the second equality follows from Proposition~\ref{prop:Green-Delta} and Proposition~\ref{prop:Greensym},
and $S_c$ is continuous in $(z,w)\in \C^2\setminus\{z=w\}$ (thus across the branch cuts of $\rho$).
To be precise, while the proof of Proposition~\ref{prop:Greensym} uses analyticity which we now wish to prove,
we only used  \eqref{Greensym} whose proof does not rely on analyticity -- so the argument is not circular.
We also need the following lemma.

\begin{lemma} Consider\label{lemmatemp42}
\begin{equation}
\int_\C du\, \chi(u) |P(z)|^{2}S_c(z,u)|P(u)|^{2}S_c(u,w)\label{temp234nf}
\end{equation}
where here $|A|$ denotes the element-wise absolute value of a matrix $A$.
Then \eqref{temp234nf} is continuous in $(z,w)\in ( (\C\setminus \{x_j\})\times \C)\setminus\{z=w\}$, and, as a function of $z$, in $L^q_{\loc}(\C)$ for some $q>2$.
\end{lemma}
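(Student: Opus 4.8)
The plan is to exhibit the integrand in \eqref{temp234nf} as a finite sum of terms, each of which is a product of an explicitly bounded factor and a (possibly singular but integrable) convolution-type kernel, and then to apply the mapping properties of the Cauchy transform from Lemma~\ref{le:cauchy} together with dominated convergence. First I would write out the matrix product $|P(z)|^2 S_c(z,u) |P(u)|^2 S_c(u,w)$ entrywise. Using the representation of $S_c$ in \eqref{e:Scdef}, each entry of $S_c(z,u)$ is a sum of a ``principal'' singular piece ($\frac{1}{2\pi}\frac{1}{z-u}$ or $\frac{1}{2\pi}\frac{1}{\bar z-\bar u}$, or the $\mu$-dependent double Cauchy integral $\frac{\mu}{(2\pi)^2}\int \frac{\chi(v)}{|\rho(v)|^2}\frac{dv}{(\bar z-\bar v)(v-u)}$ and its conjugate partner) plus a jointly continuous remainder $\Delta^{\rho}_{ij}$ or $\overline{\Delta^{1/\rho}_{ij}}$ (continuity from Proposition~\ref{prop:Green-Delta} and Proposition~\ref{prop:Greensym}). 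Crucially, the factor $|P(z)|^2$ has entries $|\rho(z)|^2$, $1/|\rho(z)|^2$, which on $\C\setminus\{x_j\}$ are locally bounded away from $z=x_j$ but blow up or vanish polynomially near the branch points; since we exclude $z=x_j$ this factor is simply a bounded continuous function of $z$ on the relevant domain, and the dependence on $u$ is absorbed into the integrand as a weight $|\rho(u)|^{\pm 2}\chi(u)$, which, combined with the compact support of $\chi$, lies in $L^p(\C)$ for every $p$ with $p|\alpha_j| < 1$ near each $x_j$, hence for some $p>2$ provided $\max_j|\alpha_j|<1/2$ (which holds).

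Next I would treat a generic term of the expanded integrand, which after collecting weights has the schematic form
\begin{equation}
  z \mapsto \int_\C du\, g(z) \, k_1(z,u)\, w(u)\, k_2(u,w),
\end{equation}
where $g$ is locally bounded and continuous on $\C\setminus\{x_j\}$, $w\in L^p_c(\C)$ for some $p>2$, and $k_1, k_2$ are each either a Cauchy-type kernel $\frac{1}{z-u}$ (or $\frac{1}{\bar z-\bar u}$), one of the double Cauchy integrals, or a bounded jointly continuous function of their two arguments. For the pure remainder--remainder terms (both $k_i$ continuous) continuity in $(z,w)$ and local $L^q$ bounds in $z$ are immediate. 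For terms with one or two genuine Cauchy kernels, I would view the $u$-integral as a Cauchy transform $T$ applied to $w(\cdot) k_2(\cdot, w)$ (for fixed $w$): since $w\in L^p_c$ with $p>2$ and $k_2(\cdot,w)$ is either bounded continuous or again of the form $T$ of an $L^p_c$ function (the double Cauchy integral is $T$ applied to something in $L^p_c$, by the same $|\alpha_j|<1/2$ argument and Lemma~\ref{le:cauchy}, fifth and fourth bullets), the product $w(\cdot)k_2(\cdot,w)$ is in $L^{p'}_c$ for some $p'>2$ (shrinking the exponent via Hölder), hence by Lemma~\ref{le:cauchy} (third bullet) its Cauchy transform is Hölder continuous in $z$, globally, and vanishes at infinity. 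Joint continuity in $(z,w)$ then follows from the triangle-inequality argument used in the proof of Lemma~\ref{hzwcontinuity}: split $|F(z,w)-F(z_0,w_0)| \le |F(z,w)-F(z,w_0)| + |F(z,w_0)-F(z_0,w_0)|$, bound the first term uniformly in $z$ using continuity of $w \mapsto w(\cdot)k_2(\cdot,w)$ in $L^{p'}$ and boundedness of $T$, and the second by $z$-continuity at fixed $w_0$. Multiplying by the continuous locally bounded $g(z)$ preserves all of this on $\C\setminus\{x_j\}$.

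For the local $L^q$ statement in $z$, the only subtlety is that $g(z)$ (a power of $|\rho(z)|^{\pm 2}$) is unbounded near $x_j$, but it is $L^{q_0}_{\loc}$ for $q_0$ with $q_0\max_j|\alpha_j| < 1$, hence for some $q_0>2$; and the Cauchy-transform factors are bounded (even Hölder continuous) in $z$. Hölder's inequality then gives $g \cdot (\text{bounded}) \in L^q_{\loc}$ for some $q>2$. I would also briefly note the term where $k_1$ itself is a double Cauchy integral in $z$: that is $T$ (in $z$) of an $L^{p}_c$ function, so it is Hölder continuous in $z$ and the argument is unchanged. Summing the finitely many terms yields the claim. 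The main obstacle I anticipate is purely bookkeeping: carefully matching each singular kernel against the correct weight $|\rho(u)|^{\pm2}$ so that every $u$-integrand genuinely lies in some $L^{p}_c$ with $p>2$ near each branch point $x_j$ — this is exactly where the hypothesis $|\alpha_j|<1/2$ is used, and one must check it uniformly over all $O(1)$ entry-combinations rather than slip into a configuration where two reciprocal powers of $|\rho|$ conspire to produce a non-integrable singularity; the analysis in the proof of Lemma~\ref{hzwcontinuity} (partition of unity near the $x_j$, separate treatment of $\alpha_j\gtrless 0$) is the template to follow.
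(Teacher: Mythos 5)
Your overall strategy — expand the matrix product entrywise, split each entry of $S_c$ into a Cauchy/double-Cauchy principal part plus a jointly continuous remainder $\Delta$, peel off the $|\rho(z)|^{\pm 2}$ weight, and run the $u$-integral through the mapping properties of the Cauchy transform from Lemma~\ref{le:cauchy} — is the same one the paper uses, and most of your term-by-term analysis is correct. There is, however, a genuine gap in the case of two genuine (single) Cauchy kernels, which is exactly the first term the paper isolates (its \eqref{temp43rfpn}, the contribution $\int du\,\chi(u)\,|\rho(z)|^2\frac{1}{\bar z-\bar u}|\rho(u)|^{-2}\frac{1}{u-w}$).

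You classify $k_2(\cdot,w)$ as ``either bounded continuous or again of the form $T$ of an $L^p_c$ function,'' and on that basis conclude $w(\cdot)k_2(\cdot,w)\in L^{p'}_c$ for some $p'>2$. But when $k_2(u,w)=\frac{1}{2\pi}\frac{1}{u-w}$ (the off-diagonal principal part) it is neither bounded continuous nor the Cauchy transform of an $L^p_c$ function — it is the Cauchy \emph{kernel} itself. The product $\chi(u)|\rho(u)|^{-2}\frac{1}{u-w}$ then has a singularity of exact order $1$ at $u=w$ (the weight $|\rho(u)|^{-2}$ is generically bounded there, since $w\notin\{x_j\}$), so it lies in $L^{p}_c$ only for $p<2$. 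The third bullet of Lemma~\ref{le:cauchy}, which would give Hölder continuity and boundedness of the $z$-dependence, is therefore unavailable. What is available is the fourth bullet, $T:L^p\to L^{2p/(2-p)}$ for $1<p<2$: choosing $p$ slightly below $\min(2,\,1/\max_j\alpha_j^+)$ this yields the $L^q_{\rm loc}$ statement for some $q>2$, but by itself gives no continuity. Continuity in $(z,w)$ away from $\{z=w\}$ then needs a separate localization argument (split the $u$-integral near and away from $z_0$ and $w_0$; the far part is anti-holomorphic/Hölder, the near part is controlled because $|u-w|$ and $|\rho(u)|^{-2}$ are bounded near $u=z_0$ when $z_0\ne w_0,x_j$). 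This is what the paper's proof is gesturing at when it refers back to the argument of Lemma~\ref{hzwcontinuity} and then invokes dominated convergence; your proposal as written simply does not reach this case. Your concluding worry about ``two reciprocal powers of $|\rho|$ conspiring'' near the branch points misidentifies the obstruction: the trouble is the diagonal singularity $u=w$, not the branch points, and no amount of weight-matching near the $x_j$ removes it.
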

\begin{proof}
We expand the matrix multiplications in \eqref{temp234nf} and consider the element (1,1), this reads
\begin{align}
&\int du \chi(u)\Big[|\rho(z)|^2\frac{1}{2\pi}\frac{1}{\bar z-\bar u}|\rho(u)|^{-2}\frac{1}{2\pi}\frac{1}{u-w}\label{temp43rfpn} \\
&+\frac{\mu}{(2\pi)^2}\int du_1\frac{|\rho(z)|^2}{|\rho(u_1)|^2}\chi(u_1)\frac{1}{\bar z-\bar u_1}\frac{1}{u_1-u}\frac{\mu}{(2\pi)^2}\int du_2\frac{|\rho(u)|^2}{|\rho(u_2)|^2}\chi(u_2)\frac{1}{\bar u-\bar u_2}\frac{1}{u_2-w}\Big]\label{temprefdrnr}
\end{align}
plus terms involving $\Delta_{11}, \Delta_{21}$ which are integrable in $u$ and thus continuous for all $(z,w)\in \C^2$. By essentially the same argument as that which appears in Lemma~\ref{hzwcontinuity}, we see the integral in \eqref{temp43rfpn} is $L^q_{\loc}(\C)$ for $2<q<\min_i\{\frac{1}{|\alpha_i|}\}$, and continuity just follows from the
dominated convergence theorem. Next observe that by Lemma \ref{finerbound}, the integrand \eqref{temprefdrnr} is bounded above by
\begin{equation}
C^2\sum_{j_1,j_2=0}^n |\rho(z)|^2C_{x_{j_1},\alpha_{j_1}}(z,u)|\rho(u)|^2C_{x_{j_2},\alpha_{j_2}}(u,w).
\end{equation}
If $\alpha_{j_2}>0$ then there is a zero in $|\rho(u)|^2\sim |u-x_{j_2}|^{2\alpha_{j_2}}$ at $x_{j_2}$ and by \eqref{Cxalpha} the singularity in $C_{x_{j_1},\alpha_{j_1}}(z,u)|\rho(u)|^2C_{x_{j_2},\alpha_{j_2}}(u,w)$ is integrable in $u$ for any $z,w\in \C$. Similarly, if $\alpha_{j_2}\leq 0$ then the singularity in $C_{x_{j_1},\alpha_{j_1}}(z,u)|\rho(u)|^2C_{x_{j_2},\alpha_{j_2}}(u,w)$ is at worst $\sim |z-u|^{2\alpha_{j_1}}|u-x_{j_2}|^{2\alpha_{j_2}}(1+|\log|u-w|)$ which is locally integrable for any $z,w$, so multiplying by $\chi(u)$ and integrating we see this term is continuous in $z,w$, then multiplying by $|\rho(z)|^2 \in L^q_{\loc}(\C)$ for some $q>2$, we see the lemma holds for the term \eqref{temprefdrnr} and thus holds for the element (1,1) of \eqref{temp234nf}. The remaining elements are similar.
\end{proof}

\begin{proposition}\label{pr:analyt12}
Let $S=S^\rho$ be as in Proposition \ref{pr:diracexist}.
There exists a $\chi$-dependent (complex) neighborhood of the real axis into which $\mu\mapsto S(z,w)$ has an analytic continuation.
The complex neighborhood does not depend on $z,w$, and it is uniform on compact subsets of $x_1,\dots,x_n$ distinct (and
is also permitted to depend on the $\alpha_i$).
This continuation also satisfies \eqref{tempineq12}.
Moreover, there exists a $\chi$-dependent neighborhood $O\subset \C$ of the origin in which for each $z\neq w$ and $z,w\notin \Gamma$, 
\begin{equation}\label{eq:Spert-bis}
S_{\mu\chi}(z,w)=\sum_{n=0}^\infty (-1)^n \mu^n \int_{\C^n}du\,  S_0(z,u_1)S_0(u_1,u_2) \cdots S_0(u_{n-1},u_n)S_0(u_n,w)\prod_{j=1}^n \chi(u_j),
\end{equation}
where $S_0$ is the Green's function of the massless twisted Dirac operator \eqref{eq:S0} and $\mu\in O$.
\end{proposition}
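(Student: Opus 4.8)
The plan is to establish Proposition~\ref{pr:analyt12} in three stages: first set up a Fredholm-analytic framework in which $\mu$ appears analytically, then extract the pointwise bound from the already-established estimates, and finally identify the small-$\mu$ series with the Neumann series of $S_0$.

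\textbf{Step 1: Analyticity via the Fredholm equation.} Recall from the proof of Propositions~\ref{pr:diracexist} and~\ref{prop:Green-Delta} that the first column of the Green's function is captured by $\Delta(\cdot,w)=(I-K)^{-1}h(\cdot,w)$ on the Banach space $C_0(\C,\C^2)$, where the compact operator $K=K_\mu$ defined in \eqref{e:Kdefinition} depends \emph{linearly} on $\mu$ and the inhomogeneity $h=h_\mu$ from \eqref{hzwdefinition} depends \emph{quadratically} on $\mu$ (with values in $C_0(\C,\C^2)$, jointly continuous in $(z,w)$ by Lemma~\ref{hzwcontinuity}). Thus $\mu\mapsto K_\mu$ is an entire $C_0$-valued map of finite-rank-approximable (compact) operators, and $\mu\mapsto h_\mu(\cdot,w)$ is entire. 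By the analytic Fredholm theorem (e.g.\ \cite[Appendix~A]{MR0894477} combined with the standard meromorphic-Fredholm statement), $(I-K_\mu)^{-1}$ exists and is analytic in $\mu$ on the complement of a discrete set $Z\subset\C$; moreover Lemma~\ref{le:homogdirac} shows $(I-K_\mu)^{-1}$ exists for every real $\mu\neq 0$, so $Z\cap(\R\setminus\{0\})=\emptyset$. Since $Z$ is discrete, it stays a fixed positive distance from each compact subinterval of $\R\setminus\{0\}$ (and, by the small-$\mu$ expansion below, also from $0$), so there is a complex neighborhood $I$ of $\R$ with $I\cap Z=\emptyset$; on $I$ we set $\Delta(z,w)=(I-K_\mu)^{-1}h_\mu(\cdot,w)(z)$, obtaining the analytic continuation of $\Delta_{11},\Delta_{21}$, hence via \eqref{S11definition-bis}--\eqref{S21definition-bis} of the first column of $S^\rho_{\mu\chi}$. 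The second column is handled identically (or deduced from the first by the symmetry \eqref{Greensym}, whose proof does not use analyticity). Uniformity on compact sets of distinct branch points follows because $K_\mu$ and $h_\mu$ depend continuously on $(x_1,\dots,x_n)$ in operator norm, so $Z$ varies upper-semicontinuously and the gap persists locally uniformly.

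\textbf{Step 2: The bound \eqref{tempineq12-bis}.} With $\Delta_{11},\Delta_{21}$ now defined and jointly continuous on $\C^2$ for $\mu\in I$ (joint continuity is inherited from the $C_0(\C,\C^2)$-valued analyticity and the argument in the proof of Proposition~\ref{prop:Green-Delta}), the proof of Proposition~\ref{pr:bounds22}(ii) applies verbatim: the explicit singular terms in \eqref{S11definition-bis}, \eqref{S21definition-bis} are controlled by Lemma~\ref{finerbound} and the remainders $\Delta^\rho_{ij}$ are bounded on any bounded $U\subset\C\setminus\Gamma$, yielding $|S^\rho_{\mu\chi}(z,w)|\le C(U,\chi,\mu)/(|\tilde\rho(z)\tilde\rho(w)||z-w|)$.

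\textbf{Step 3: The Neumann series near $\mu=0$.} For $|\mu|$ small one wants $S_{\mu\chi}=S_0(I+\mu\,\chi S_0)^{-1}$ expanded as the series \eqref{eq:Spert-bis}. The clean way is to work with the conjugated kernel $S_c$ from \eqref{e:Scdef}, which is continuous across $\Gamma$, and to define $T_\mu$ as the integral operator with kernel $\chi(u)|P(z)|^2 S_c(z,u)|P(u)|^2$; Lemma~\ref{lemmatemp42} shows its square has an $L^q_{\loc}$, locally bounded kernel, so $T_\mu$ is bounded (indeed its square is Hilbert--Schmidt-like) on an appropriate space of continuous functions, with operator norm $O(|\mu|)$ as $\mu\to 0$ after accounting for the explicit $\mu$-prefactors in $S_c$. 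Then for $|\mu|$ in a $\chi$-dependent neighborhood $O$ of $0$ the geometric series $\sum_{n\ge0}(-1)^n T_\mu^n$ converges in operator norm, and checking that it solves the defining equations of $S^\rho_{\mu\chi}$ (the differential equation \eqref{eq:dirac}, decay at infinity, and the branching \eqref{eq:continuity}) together with the uniqueness from Lemma~\ref{le:homogdirac} forces the identification \eqref{eq:Spert-bis}. Unwinding the conjugation by $P_\rho,\overline{P_\rho}$ turns $T_\mu^n$ acting on $S_0$ into the stated iterated convolution of the twisted massless Green's functions $S_0^\rho$. This also shows $0\notin Z$, closing the gap in Step~1.

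\textbf{Main obstacle.} The delicate point is Step~1, specifically verifying that the discrete exceptional set $Z$ of the analytic Fredholm theorem can be separated from \emph{all} of $\R$ by a single open neighborhood uniformly over compact families of branch points: discreteness alone does not preclude $Z$ accumulating at $0$ or at $\pm\infty$, so one genuinely needs the real-axis invertibility (Lemma~\ref{le:homogdirac}), the small-$\mu$ Neumann series (Step~3) for a neighborhood of $0$, and a mild large-$|\mu|$ or properness argument — or simply the observation that one only ever needs $I$ to contain a fixed bounded neighborhood of $\R$, since the applications (finite-volume tau functions) use $\mu$ ranging over bounded real sets. The rest is routine functional analysis plus the continuity estimates already proved.
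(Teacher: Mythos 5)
Your proposal is correct in outline but takes a genuinely different route from the paper's. The paper does \emph{not} invoke the analytic Fredholm theorem: instead it directly considers the resolvent series $\tilde S_\delta(z,w) = \sum_n(-\delta)^n\int S^\rho_{\mu\chi}(z,u_1)\cdots S^\rho_{\mu\chi}(u_n,w)\prod\chi(u_j)\,du$ around each \emph{real} $\mu$, proves absolute convergence uniformly in $z,w$ by iterating the bound \eqref{tempineq12} together with H\"older's inequality on the supports, and then checks that $\tilde S_\delta$ satisfies all three defining conditions of Proposition~\ref{pr:diracexist}, so uniqueness forces $\tilde S_\delta = S^\rho_{(\mu+\delta)\chi}$. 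This concreteness is exploited elsewhere: the convergent series centered at arbitrary real $\mu$ is exactly what the proof of Proposition~\ref{prop:factorization} iterates. Your Fredholm-analytic route (noting $K_\mu=\mu K_1$ is an entire compact-operator-valued family with $K_0=0$ trivially invertible, applying the analytic Fredholm theorem to get a closed discrete exceptional set $Z$, and using Lemma~\ref{le:homogdirac} to remove $\{\re\mu\neq 0\}$ from $Z$) is shorter, and since this in fact places $Z\subset i\R\setminus\{0\}$ with $0\notin Z$ and $Z$ closed discrete, it yields a uniform-width strip around $\R$ (indeed $\mathrm{dist}(\R,Z)\geq\tfrac12\mathrm{dist}(0,Z)>0$), which is slightly cleaner than what the paper states explicitly. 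Each approach buys something: yours avoids re-proving convergence of the explicit series; the paper's produces the explicit series it needs later.

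Two small inaccuracies worth fixing. First, in your ``Main obstacle'' paragraph you worry that $Z$ might accumulate at $0$; this cannot happen. The analytic Fredholm theorem gives $Z$ closed and discrete in all of $\C$, so $Z$ has no accumulation points anywhere, and $0\notin Z$ is immediate since $K_0=0$ makes $I-K_0=I$. Thus the separate small-$\mu$ argument you defer to Step~3 is not needed to exclude accumulation at $0$ (though Step~3 is still needed for the identity \eqref{eq:Spert-bis} itself). Second, in Step~3 the operator whose iterates should appear in \eqref{eq:Spert-bis} is the $\mu$-\emph{independent} one, namely convolution against $\chi$ times the conjugated massless kernel (the $\mu=0$ specialization of $S_c$), not the $\mu$-dependent $S_c$; one then shows that fixed operator is bounded (via Lemma~\ref{lemmatemp42} and Lemma~\ref{finerbound}) and the series $\sum_n(-\mu)^n T_0^n$ converges for $|\mu|<1/\|T_0\|$. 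Using the full $\mu$-dependent $S_c$ would instead reproduce the paper's expansion around arbitrary $\mu$, a different (also true) statement.
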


\begin{proof} Note that we can locally integrate over both coordinates of $S$ by Proposition~\ref{pr:bounds22}.
  First, we aim to show that for each $w\notin \Gamma$, the series
\begin{equation}
\tilde{S}_\delta(z,w)=\sum_{n=0}^\infty (-1)^n \delta^n \int_{\C^n}du\, S_{\mu\chi}^\rho(z,u_1)S_{\mu\chi}^\rho(u_1,u_2)\cdots S_{\mu\chi}^\rho(u_{n-1},u_n)S_{\mu\chi}^\rho(u_n,w)\prod_{j=1}^n \chi(u_j)\label{tildeSdelta}
\end{equation}
converges absolutely, for all $\delta\in \C$ such that $|\delta|$ is sufficiently small, independently of $z\notin \Gamma\cup\{w\}$. 
In terms of the definition \eqref{e:Scdef}, we rewrite this sum as
\begin{multline}
  \sum_{n=0}^\infty (-1)^n \delta^n \int_{\C^n}du\, \overline{P}(z)^{-1}S_c(z,u_1)|P(u_1)|^{-2}
  \\\times S_c(u_1,u_2)\cdots S_c(u_{n-1},u_n)|P(u_n)|^{-2}S_c(u_n,w)P(w)^{-1}\prod_{j=1}^n \chi(u_j)
  .
\end{multline}
Consider the last two integrals of each term $n\geq 2$:
\begin{equation}
  \int_{\C}d u_{n-1}\int_{\C}du_{n}\,  S_c(u_{n-2},u_{n-1})|P(u_{n-1})|^{ -2}S_c(u_{n-1},u_{n})|P(u_{n})|^{-2}S_c(u_{n},w)\chi(u_{n-1})\chi(u_n)
  .
  \label{tempgr3}
\end{equation}
By Lemma~\ref{lemmatemp42}, Lemma \ref{le:cauchy}, and H\"older's inequality $\|fg\|_r\leq \|f\|_p\|g\|_q$, where $1/r=1/p+1/q$,
we see \eqref{tempgr3} is continuous in the variables $u_{n-2},w\in \C$. Hence, it is bounded above by some constant $C_1$ uniformly for $u_{n-2}\in \text{supp}(\chi)$ and, since $S_c$  also vanishes at infinity in its second coordinate, uniformly for all $w\in \C$. 

The modulus of each element of the matrix $\tilde{S}_\delta$ is bounded above by the Frobenius norm of $\tilde{S}_\delta$. Hence by submultiplicativity, the second statement of Proposition \ref{pr:bounds22} and our bound on the last two integrals of the terms $n \geq2$, it suffices to show the series
\begin{equation}
\sum_{n=0}^\infty  C_1C^{n+1}\delta^n \int_{\text{supp}(\chi)^{n-2}}du\, \frac{1}{|z-u_1|}\prod_{j=1}^{n-3}\frac{1}{|\tilde\rho(u_j)|^2|u_j-u_{j+1}|}\frac{1}{|\tilde \rho(u_{n-2})|^2|u_{n-3}-u_{n-2}|}\label{temp45gd}
\end{equation}
converges uniformly for all $z\in \C$.
Now we use H\"older's inequality
\begin{equation}
  \sup_{z\in \C} \int_{\mathrm{supp}(\chi)} du \frac{1}{|\tilde \rho(u)|^2}\frac{1}{|z-u|}\leq \|\tilde\rho^{-2}\mathrm{1}_{\mathrm{supp}(\chi)}\|_p\sup_{z}\||z-\cdot|^{-1}\mathrm{1}_{\mathrm{supp}(\chi)}\|_q<\infty,
\end{equation}
with $1/p+1/q=1$ and $p$ just barely greater than 2 depending on the $\alpha_i \in (-\frac12,\frac12)$,
iteratively on each of the integrals over $u_{n-j}$, $j=2,\dots,n-2$ (ignoring the finite $u_1$ integral).
We see that each integral is bounded above \emph{by the same constant} $C_2>0$
and thus that the sum in \eqref{temp45gd} converges absolutely for all $\delta\in \C$ with $|\delta|< \epsilon$ where $ \epsilon<1/C_2C$ and this radius of convergence does not depend on $z$ or $w$.
Note that it depends on the windings $\{\alpha_i\}$ and branch points $\{x_i\}$, but is uniform on compact subsets of $x_1,\dots,x_n$ distinct.
 
Next we show that $\tilde{S}_\delta$, $\delta\in \R$ such that $|\delta|\leq \epsilon$,  satisfies the conditions of Proposition \ref{pr:diracexist} and hence equals $S_{(\mu+\delta)\chi}$.
The third condition \eqref{eq:continuity} follows by multiplying $\tilde{S}_\delta$
by the diagonal matrix $\overline{P}(z)^{-1} = \text{diag}(1/\overline{\rho(z)},\rho(z)$) on the left,
using the same condition for $S_{\mu\chi}$ and noting that we can swap limits and summation by the previous argument.
The second condition \eqref{eq:infty} follows by inspection on the terms $n=0,1,2$ in \eqref{tildeSdelta} and observing that the sum for $n>2$ is bounded above by
\begin{align}
C_4\int_{\text{supp}(\chi)}du_1\frac{1}{|\tilde{\rho}(z)||z-u_1||\tilde{\rho}(u_1)|^2}
\end{align}
for some $C_4>0$ by the previous argument, which tends to zero as $z\rightarrow \infty$.
Finally, we check the first condition \eqref{eq:dirac},
\begin{align}
  &([\Dirac + (\mu+\delta)\chi(\cdot) ]\tilde{S}_\delta(\cdot,w),\phi)\nnb
  &= ([\Dirac + \mu\chi(\cdot) ]\tilde{S}_\delta(\cdot,w),\phi)+([\delta\chi(\cdot)]\tilde S_\delta(\cdot,w),\phi) \nnb
  &=\phi(w)+([\delta\chi(\cdot)]\tilde S_\delta(\cdot,w),\phi)\nnb
  &\quad+(\sum_{n=1}^\infty \delta^n(-1)^n\chi(\cdot)\int_{\C^{n-1}}du\, S_{\mu\chi}^\rho(\cdot,u_1)\cdots S_{\mu\chi}^\rho(u_{n-1},w)\prod_{j=1}^{n-1}\chi(u_j),\phi)
\end{align}
by \eqref{eq:dirac}, where $\phi\in C_c^\infty(\C\setminus \Gamma)$, and derivatives are understood as weak derivatives.  
Note we can move the weak derivative inside the integral and sum by the above bounds and Fubini's theorem. Reindexing the last sum, we see these last two terms cancel. This finishes the proof.

Finally, we observe that the analytic continuation also satisfies the bound \eqref{tempineq12}.
For the $n=0$ terms this is clear and the $n\geq 2$ terms have been shown to be uniformly bounded above.
Thus it suffices to consider the $n=1$ term.
It is bounded as claimed by Lemma~\ref{finerbound}.
\end{proof}

\subsection{Factorization identity -- proof of Proposition~\ref{prop:factorization}}
Next we prove that the derivative of our Green's function with respect to one of the branch points satisfies a useful factorisation property.

\begin{proof}[Proof of Proposition~\ref{prop:factorization}]
By the symmetries \eqref{Greensym} it suffices to prove \eqref{derivfactorisation}.
First we check the case that $\mu=0$. A straightforward calculation with \eqref{eq:S0} yields
\begin{equation}
\frac{1}{2\pi}\partial_{x_k}S_0(z,w)=\begin{pmatrix} 0 & 0 \\ \alpha_k\frac{\rho(w)}{\rho(z)}\frac{1}{(2\pi)^2}\frac{1}{(z-x_k)(x_k-w)}&0\end{pmatrix},
\end{equation}
so we see that our formula holds for $\mu=0$. Now assume \eqref{derivfactorisation} holds for some $\mu\in \R$, we will prove that then the same identity holds for $\mu+\delta$ in a ball contained within in the region of analyticity of $S(z,w)$ in Proposition \ref{pr:analyt12}.
By using the series expansion for $S_{(\mu+\delta)\chi}$ as in \eqref{tildeSdelta},
\begin{multline}\label{derivseriesexp}
  \frac{1}{2\pi}\partial_{x_k}S_{(\mu+\delta)\chi}(z,w)
  \\
  =\sum_{n=0}^\infty (-1)^n \delta^n \int_{\C^n}du\,\sum_{j=0}^n S_{\mu\chi}(z,u_1)\cdots \frac{1}{2\pi}\partial_{x_k}S_{\mu\chi}(u_j,u_{j+1})\cdots S_{\mu\chi}(u_n,w)\prod_{j=1}^n \chi(u_j)
\end{multline}
where we identify $u_0=z,u_{n+1}=w$.
The justification for moving the derivative past the sum is a consequence of the following manipulations, which yield absolute convergence of the series (as the product of the two absolutely convergent series \eqref{temp43erdf}) and moving it past the integral is due to integrability of the singularities, which follows by \eqref{tempineq12-bis}, and Leibniz's integral rule. 

Now, recall the identities that for $n\times n$ matrices $A,B$ and column (row) vector $v$ $(w)$ of dimension $n$, we have $A(v\otimes w)= (Av)\otimes w$ and $(v\otimes w)B= v \otimes (wB)$. Using these identities we insert \eqref{derivfactorisation} into the integrand and find, with $S=S_{\mu\chi}$,
\begin{align}
&S(z,u_1)\cdots \frac{1}{2\pi}\partial_{x_k}S(u_j,u_{j+1})\cdots S(u_n,w)\nnb
  &=\alpha_k\lim_{z',w'\rightarrow x_k}\frac{\rho(z')}{\rho(w')} S(z,u_1)\cdots S(u_{j-1},u_j)\begin{pmatrix} S_{11}(u_j,w')\\ S_{21}(u_j,w')\end{pmatrix}\nnb
  &\qquad\otimes  \begin{pmatrix} S_{21}(z',u_{j+1}) & S_{22}(z',u_{j+1})\end{pmatrix}S(u_{j+1},u_{j+2})\cdots S(u_n,w)\nnb
  &=\alpha_k\lim_{z',w'\rightarrow x_k}\frac{\rho(z')}{\rho(w')} \Big(S(z,u_1)\cdots S(u_{j-1},u_j)\begin{pmatrix} S_{11}(u_j,w')\\ S_{21}(u_j,w')\end{pmatrix}\Big)\nnb
  &\qquad\otimes \Big( \begin{pmatrix} S_{21}(z',u_{j+1}) & S_{22}(z',u_{j+1})\end{pmatrix}S(u_{j+1},u_{j+2})\cdots S(u_n,w)\Big)\nnb
  &=\alpha_k\lim_{z',w'\rightarrow x_k}\frac{\rho(z')}{\rho(w')}\begin{pmatrix}[S(z,u_1)\cdots S(u_j,w')]_{11}\\ [S(z,u_1)\cdots S(u_j,w')]_{21}\end{pmatrix}\nnb
  &\qquad\otimes \begin{pmatrix}[S(z',u_{j+1})\cdots S(u_n,w)]_{21}&[S(z',u_{j+1})\cdots S(u_n,w)]_{22}\end{pmatrix}
\end{align}
where the third line follows by just multiplying out the last matrix in each of the matrix chain products on the right-hand side and looking at the appropriate index. 

Next we reinsert the above expression back into the series expansion \eqref{derivseriesexp}, reindex the double sum as $n=i+j$, then relabel the variables $u_{j+1}\rightarrow u_1',\dots, u_{i+j}\rightarrow u_i'$, this yields
\begin{align}
  &\alpha_k\sum_{i=0}^\infty\sum_{j=0}^\infty(-1)^{i+j}\delta^{i+j}\int_{\C^i}du'\int_{\C^j}du\lim_{z',w'\rightarrow x_k}\frac{\rho(z')}{\rho(w')}\begin{pmatrix}[S(z,u_1)\cdots S(u_j,w')]_{11}\\ [S(z,u_1)\cdots S(u_j,w')]_{21}\end{pmatrix}
    \nnb
  &\quad\quad\otimes \begin{pmatrix}[S(z',u_{1}')\cdots S(u_i',w)]_{21}&[S(z',u_{1}')\cdots S(u_i',w)]_{22}\end{pmatrix}\prod_{k=1}^i\chi(u_k')\prod_{k'=1}^j\chi(u_{k'})\label{temp43erdf}
  \nnb
&=\alpha_k \lim_{z',w'\rightarrow x_k}\frac{\rho(z')}{\rho(w')}\begin{pmatrix} S_{(\mu+\delta)\chi,11}(z,w')\\ S_{(\mu+\delta)\chi,21}(z,w')\end{pmatrix}\otimes  \begin{pmatrix} S_{(\mu+\delta)\chi,21}(z',w) & S_{(\mu+\delta)\chi,22}(z',w)\end{pmatrix}
\end{align}
by bilinearity of the tensor product. Note that in the above, the double sum splits as two independent sums, we can then move the limits outside of the integrals by using the fact that the $\rho(z')/\rho(w')$ factors cancel the singularities from the $\rho$ in the Green's function, so using Proposition \ref{prop:Green-Delta}, we get bounds similar to that of \eqref{tempineq12} and use a dominated convergence argument. We can move the limit outside of the integral because the geometric convergence of the series did not depend on $z,w$ in Proposition \ref{pr:analyt12}.

To see that this argument is sufficient to prove \eqref{derivfactorisation} for all $\mu\in \R$, note that for any fixed $\mu$ we can find a finite horizontal strip $\{\mu': \mu_1\leq \text{Re}[\mu']\leq \mu_2, -h<\text{Im}[\mu']<h\}$, where $(\mu_1,\mu_2)$ covers the interval from 0 to $\mu$ and its height $h$ is small enough that the strip is completely contained within the region of analyticity of $S(z,w)$. Place $N+1$ balls of radius $r$ less than $h$ at positions $\frac{k}{N}\mu,$ $k=0,\dots,N$ inside the strip, then fix $N$ sufficiently large so that the centre of each ball lies inside an adjacent ball. The above argument shows that \eqref{derivfactorisation} holds in each of these balls, and thus holds on the real line segment between 0 and $\mu$. Since this works for any $\mu\in \R$ our formula \eqref{derivfactorisation} holds for any $\mu\in\R$.
\end{proof}

\subsection{Volume-dependent bound on $\Delta$ -- proof of Proposition~\ref{prop:Ldepenbounds}}

Heuristically, when $\chi=\1_{\Lambda_L}$, the components of $\Delta$ are holomorphic or antiholomorphic on the exterior of the disk and vanish at infinity,
so their maximum modulus occurs on the boundary of the disk. Hence the integral \eqref{e:Ldepenbounds} is essentially controlled by the behaviour of $\Delta$ at the boundary $\partial \Lambda_L$. By Cauchy-Schwarz (see \eqref{temp24ehbc}), we instead use the $L^2$ norm of $\Delta$ on the boundary of the disk to bound the integral, which, in turn, we show grows at most polynomially in $L$ by an argument akin to the Sobolev trace theorem and analysing the operator which defines $\Delta$ in a suitable weighted $L^2$ space.

We recall that
$\begin{pmatrix}
\Delta_{11}(\cdot,w)\\
\Delta_{21}(\cdot, w)
\end{pmatrix}$
is the unique continuous function vanishing at infinity that satisfies 
\begin{equation}\label{eq:delta}
\left((I-K)\begin{pmatrix}
\Delta_{11}(\cdot,w)\\
             \Delta_{21}(\cdot, w)
\end{pmatrix}\right)(z)= h(z,w)
\end{equation}
for $z\in \C$, where the operator $K$ is defined in \eqref{e:Kdefinition} and the function $h$ in \eqref{hzwdefinition}.
Our goal is to show that this function satisfies
\begin{equation}
\|\Delta(\cdot,w)\|_{L^2(\partial {\Lambda_L})}\leq C(L)\|h(\cdot,w)\|_{W^{1,2}({\Lambda_L})}
\end{equation}
with some constant $C(L)$ that may grow mildly with $L$.

The idea will be to mimic the proof of the Sobolev trace theorem and get suitable Sobolev-type estimates for $\Delta$. We first prove a weighted $L^2$-bound for $\Delta$ inside $\Lambda_L$.

\subsubsection{A weighted $L^2$-bound for $\Delta$}

Let us consider the Hilbert space 
\begin{equation}
\H_\rho=\H_\rho(\Lambda_L)=\left\{\begin{pmatrix}
\varphi_1\\
\varphi_2
\end{pmatrix}: |\rho|\varphi_1,|\rho|^{-1}\varphi_2\in L^2(\Lambda_L)\right\}
\end{equation}
with the inner product
\begin{equation}
\langle \psi,\varphi\rangle_{\rho}=\int_{\Lambda_L}dz\, \pB{ |\rho(z)|^2 \overline{\psi_1(z)}\varphi_1(z)+|\rho(z)|^{-2}\overline{\psi_2(z)}\varphi_2(z) }.
\end{equation}
As a direct sum of two weighted $L^2$-spaces, this is naturally a Hilbert space. Note that since $|\rho|^{\pm 2}$ are integrable, $\Delta$ and $h$, being bounded functions on $\Lambda_L$, are elements of this space. We wish to study the operator $K$ on this space. We will show that it is a bounded skew adjoint operator on this space and use this to argue that
\begin{equation}
  \|\Delta(\cdot,w)\|_\rho\leq \|h(\cdot,w)\|_\rho.
\end{equation}

Let us begin by showing that $K$ is bounded. 
\begin{lemma}\label{le:KonHrhobounded}
$K:\H_\rho\to \H_\rho$ is a bounded operator.  
\end{lemma}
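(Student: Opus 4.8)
The plan is to show that $K$ maps $\mathcal H_\rho$ into itself boundedly by recognizing that, up to the weight factors that define $\mathcal H_\rho$, the two components of $K$ are essentially Cauchy transforms of $L^2$ functions on the bounded set $\Lambda_L$. Recall from \eqref{e:Kdefinition} that for $\varphi=\begin{pmatrix}\varphi_1\\\varphi_2\end{pmatrix}$,
\begin{equation}
  (K\varphi)(z) = -\frac{\mu}{2\pi}\int_\C du\, \chi(u)\begin{pmatrix} |\rho(u)|^{-2}\frac{\varphi_2(u)}{\bar z-\bar u}\\[1mm] |\rho(u)|^2\frac{\varphi_1(u)}{z-u}\end{pmatrix},
\end{equation}
with $\chi=\1_{\Lambda_L}$. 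So $(K\varphi)_1(z) = -\tfrac{\mu}{2}\,\overline{(T g_2)(z)}$ where $g_2 = \1_{\Lambda_L}|\rho|^{-2}\overline{\varphi_2}$ and $(K\varphi)_2(z) = -\tfrac{\mu}{2}(T g_1)(z)$ where $g_1 = \1_{\Lambda_L}|\rho|^2 \varphi_1$, with $T$ the Cauchy transform \eqref{Cauchytrans}. (One uses here that $\overline{1/(z-u)} = 1/(\bar z-\bar u)$ and that $|\rho|^{-2}$ is real.)

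The key estimate is then: $g_1\in L^2(\Lambda_L)$ with $\|g_1\|_{L^2(\Lambda_L)} = \||\rho|^2\varphi_1\|_{L^2(\Lambda_L)} \le \| |\rho|\varphi_1\|_{L^2(\Lambda_L)}\cdot\sup_{\Lambda_L}|\rho|$, and similarly for $g_2$; here $\sup_{\Lambda_L}|\rho|<\infty$ because each factor $(z-x_j)^{\alpha_j}$ with $\alpha_j>0$ is bounded and the ones with $\alpha_j<0$ only blow up at the branch points, but are locally $L^\infty$ away from them — wait, more carefully, $|\rho|$ itself is \emph{unbounded} near branch points where $\alpha_j<0$. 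So instead I would control $\|g_1\|_{L^p}$ and $\|g_1\|_{L^q}$ for a conjugate pair $1<q<2<p<\infty$ with $p$ close to $2$ (chosen depending on the $\alpha_j\in(-\tfrac12,\tfrac12)$): since $|\rho|^2 \sim \prod|z-x_j|^{2\alpha_j}$ has singularities of order $2|\alpha_j|<1$, the function $\1_{\Lambda_L}|\rho|^2|\varphi_1|$ lies in $L^p\cap L^q$ once $\varphi_1$ has the weighted $L^2$ bound, by Hölder's inequality applied to $|\rho|^2 = |\rho|\cdot|\rho|$ splitting one factor against $\varphi_1\in L^2_{\mathrm{loc}}$ (weighted) and the other into a pure-power $L^r$ function. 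Then by Lemma~\ref{le:cauchy}, bullet two, $Tg_1\in C_0(\C)\subset L^\infty$, hence $(K\varphi)_2 = -\tfrac\mu2 Tg_1$ is bounded; and since $\Lambda_L$ has finite measure, $|\rho|^{-2}\in L^1(\Lambda_L)$, so $\||\rho|^{-1}(K\varphi)_2\|_{L^2(\Lambda_L)} \le \|(K\varphi)_2\|_{L^\infty}\||\rho|^{-1}\|_{L^2(\Lambda_L)}<\infty$, which is bounded by a constant times $\|\varphi\|_\rho$. The analogous argument with the roles of the components swapped — now using that $\1_{\Lambda_L}|\rho|^{-2}|\varphi_2|\in L^p\cap L^q$ and $\||\rho|\|_{L^2(\Lambda_L)}<\infty$ (again finite since the singularities $|z-x_j|^{|\alpha_j|}$ are mild and $\Lambda_L$ bounded) — gives boundedness of $(K\varphi)_1$ in the weighted norm. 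Combining, $\|K\varphi\|_\rho \le C(L,\mu,\rho)\|\varphi\|_\rho$.

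The main obstacle, or rather the point requiring care, is the bookkeeping of which powers $p,q$ are admissible: one needs simultaneously $2|\alpha_j|\cdot p < 2$ (local integrability near branch points after pairing $|\rho|^2$ against $\varphi_1$) and $q<2$ with $1/p+1/q=1$, which is possible precisely because $|\alpha_j|<\tfrac12$ leaves room to take $p$ slightly above $2$. This same constraint already appeared in the proof of the existence lemma and in Lemma~\ref{hzwcontinuity}, so I would simply cite that the exponents can be chosen depending only on $\max_j|\alpha_j|<\tfrac12$. No genuinely new difficulty arises; the lemma is a routine consequence of the mapping properties of $T$ in Lemma~\ref{le:cauchy} together with the finiteness of $\||\rho|^{\pm1}\|_{L^2(\Lambda_L)}$, which holds since $\Lambda_L$ is bounded and the weights have only integrable-power singularities.
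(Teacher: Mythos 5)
Your key step does not work as written. You want $g_1 = \1_{\Lambda_L}|\rho|^2\varphi_1 \in L^p(\C)\cap L^q(\C)$ for a conjugate pair with $p>2$, so that the second bullet of Lemma~\ref{le:cauchy} gives $Tg_1\in C_0(\C)$. But the only information coming from $\varphi\in\H_\rho$ is $|\rho|\varphi_1\in L^2(\Lambda_L)$, and the H\"older splitting you describe, $|\rho|^2\varphi_1 = |\rho|\cdot(|\rho|\varphi_1)$ with $|\rho|\in L^r$, produces $g_1\in L^{p'}$ only for $1/p' = 1/2 + 1/r > 1/2$, i.e.\ $p'<2$. There is no way to push the exponent above $2$ starting from weighted $L^2$ data, so the $C_0$ bound on the Cauchy transform is simply not available. (The same issue arises for the first component with $|\rho|^{-2}\varphi_2$.)

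The paper circumvents this by a different decomposition. It renormalizes first, working with $|\rho(z)|(K\varphi)_1(z)$, and writes this as $T_1\varphi_2(z)+T_2\varphi_2(z)$ where $T_2\varphi_2$ is the bare Cauchy transform of $\1_{\Lambda_L}|\rho|^{-1}\varphi_2\in L^2(\Lambda_L)$ and $T_1\varphi_2$ has the improved kernel $(|\rho(z)|/|\rho(u)|-1)/(\bar z-\bar u)$. For $T_2$ one invokes the local $L^2(\Lambda_L)\to L^2(\Lambda_L)$ boundedness of the Cauchy transform (Astala--Iwaniec--Martin, Theorem~4.3.12), $\|T_2\varphi_2\|_{L^2(\Lambda_L)}\leq 6\mu L\||\rho|^{-1}\varphi_2\|_{L^2(\Lambda_L)}$, which is a purely $L^2$ statement and needs no $p>2$. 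For $T_1$, Cauchy--Schwarz reduces matters to the pointwise bound $\int_{\Lambda_L}\bigl||\rho(z)|/|\rho(u)|-1\bigr|^2|z-u|^{-2}\,du$, which is then shown to be integrable in $z$ by a change of variables $u=x_j+(z-x_j)w$ near each branch point. So the mechanism buying the $L^2$ bound is not a Cauchy transform mapping into a higher $L^p$ space, but the $L^2$-boundedness of $T$ on the bounded domain plus the cancellation in the $T_1$ kernel.

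A variant of your approach can be salvaged if you replace the $C_0$ bound by the fourth bullet of Lemma~\ref{le:cauchy}: from $g_1\in L^{p'}(\Lambda_L)$ with $p'<2/(1+\alpha)$ (where $\alpha=\max_j|\alpha_j|<1/2$) one gets $Tg_1\in L^{2p'/(2-p')}$ with $2p'/(2-p')$ approaching $2/\alpha$, and then H\"older with $|\rho|^{-1}\in L^s(\Lambda_L)$ for $s<2/\alpha$ closes the estimate since $2/(1-\alpha)<2/\alpha$ for $\alpha<1/2$. But as written, relying on the $C_0$ mapping property, the argument has a gap.
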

\begin{proof}
Let us focus on the first component $(K\varphi)_1$. To study integrability, we write
\begin{align}
|\rho(z)|(K\varphi)_1(z)&=-\frac{\mu}{2\pi}\int_{\Lambda_L} du \, \left(\frac{|\rho(z)|}{|\rho(u)|}-1\right)\frac{1}{\bar z-\bar u}|\rho(u)|^{-1}\varphi_2(u)\nnb
&\quad -\frac{\mu}{2\pi}\int_{\Lambda_L} du \, \frac{1}{\bar z-\bar u}|\rho(u)|^{-1}\varphi_2(u)\nnb
&=: (T_1\varphi_2)(z)+(T_2\varphi_2)(z).
\end{align}
To prove boundedness (of the first component), it is sufficient for us to show that
\begin{equation}
  \|T_j\varphi_2\|_{L^2(\Lambda_L)}\leq C_j \||\rho|^{-1}\varphi_2\|_{L^2(\Lambda_L)}.
\end{equation}
For $T_2$, this follows immediately from \cite[Theorem 4.3.12]{MR2472875}: it implies that  
\begin{equation}
\|T_2\varphi_2\|_{L^2(\Lambda_L)}\leq 6\mu L \||\rho|^{-1}\varphi_2\|_{L^2(\Lambda_L)}.
\end{equation}
Let us thus focus on $T_1\varphi_2$. We first note that by Cauchy-Schwarz, 
\begin{align}
|(T_1\varphi_2)(z)|\leq \frac{\mu}{2\pi}\left(\int_{\Lambda_L} du \left|\frac{|\rho(z)|}{|\rho(u)|}-1\right|^2 \frac{1}{|z-u|^2}\right)^{1/2}\||\rho|^{-1}\varphi_2\|_{L^2(\Lambda_L)}.
\end{align}
Thus if we can get an integrable (in $z$) bound for the $u$-integral here, we will be done. 

For this purpose, let us choose $\epsilon=\frac{1}{4}(\min_{i\neq j}|x_i-x_j|\wedge \min_i d(x_i, \partial \Lambda_L))$ and write $\Lambda_L=\bigcup_{j=1}^n B(x_j,\epsilon) \cup A_\epsilon$ where $A_\epsilon=\Lambda_L\setminus (\bigcup_{j=1}^n B(x_j,\epsilon))$. Consider first $z\in A_\epsilon$ and consider the integral over $A_\epsilon$. In this regime, the integrand is actually bounded by some constant depending on $L$,$x_i$, and $\alpha_i$. Now if we integrate over $B(x_j,\epsilon)$, we write 
\begin{align}
\left|\frac{|\rho(z)|}{|\rho(u)|}-1\right|^2\frac{1}{|z-u|^2}&=\left|\frac{|\rho(z)|}{|\rho(u)|}-\prod_{k\neq j}\frac{|z-x_k|^{\alpha_k}}{|u-x_k|^{\alpha_k}}+\prod_{k\neq j}\frac{|z-x_k|^{\alpha_k}}{|u-x_k|^{\alpha_k}}-1\right|^2\frac{1}{|z-u|^2}\nnb
&\leq 2 \left(\prod_{k\neq j}\frac{|z-x_k|^{2\alpha_k}}{|u-x_k|^{2\alpha_k}}\left|\frac{|z-x_j|^{\alpha_j}}{|u-x_j|^{\alpha_j}}-1\right|^2+\left|\prod_{k\neq j}\frac{|z-x_k|^{\alpha_k}}{|u-x_k|^{\alpha_k}}-1\right|^2\right)\nnb
&\quad \times \frac{1}{|z-u|^2}.
\end{align} 
As before, the quantity 
\begin{equation}
\left|\prod_{k\neq j}\frac{|z-x_k|^{\alpha_k}}{|u-x_k|^{\alpha_k}}-1\right|^2 \frac{1}{|z-u|^2}
\end{equation}
is actually bounded for $z\in A_\epsilon$ and $u\in B(x_j,\epsilon)$ (by a constant depending on $L,x_i,\alpha_i$). Let us thus focus on the first term. Here the prefactor $\prod_{k\neq j}\frac{|z-x_k|^{2\alpha_k}}{|u-x_k|^{2\alpha_k}}$ is bounded (depending on the same quantities as before), so our task is to bound the quantity 
\begin{equation}
\int_{B(x_j,\epsilon)}du \left|\frac{|z-x_j|^{\alpha_j}}{|u-x_j|^{\alpha_j}}-1\right|^2\frac{1}{|z-u|^2}.
\end{equation}
For this, we make the change of variables, $u=x_j+(z-x_j)w$ (or perhaps more precisely, first $u=x_j+|z-x_j|w$ and then make use of rotation invariance), so that we find
\begin{align}
\int_{B(x_j,\epsilon)}du \left|\frac{|z-x_j|^{\alpha_j}}{|u-x_j|^{\alpha_j}}-1\right|^2\frac{1}{|z-u|^2}&=\int_{B(0,\frac{\epsilon}{|z-x_j|})}du \left|\frac{1}{|w|^{\alpha_j}}-1\right|^2 \frac{1}{|1-w|^2}.
\end{align}
The singularities here are integrable in two dimensions and $\frac{\epsilon}{|z-x_j|}\leq 1$ since $z\in A_\epsilon$. Thus we see that for $z\in A_\epsilon$, the integral in question is actually a bounded function of $z$.

Let us then consider the behavior of the integral for $z\in B(x_j,\epsilon)$. We consider now three cases:
(i) the integral over $B(x_k,\epsilon)$ with $k\neq j$,
(ii) the integral over $A_\epsilon$, and
(iii) the integral over $B(x_j,\epsilon)$.
In the first case, $|z-u|$ is bounded from below by $2\epsilon$ and since $|\rho(u)|^{-2}$ is locally integrable, we see that the contribution from case (i) is a constant (depending on the same parameters as before), times $|\rho(z)|^2$.
For case (ii), we use the same trick as before: we write 
\begin{align}
\left|\frac{|\rho(z)|}{|\rho(u)|}-1\right|^2\frac{1}{|z-u|^2}&=\left|\frac{|\rho(z)|}{|\rho(u)|}-\prod_{k\neq j}\frac{|z-x_k|^{\alpha_k}}{|u-x_k|^{\alpha_k}}+\prod_{k\neq j}\frac{|z-x_k|^{\alpha_k}}{|u-x_k|^{\alpha_k}}-1\right|^2\frac{1}{|z-u|^2}\nnb
&\leq 2 \left(\prod_{k\neq j}\frac{|z-x_k|^{2\alpha_k}}{|u-x_k|^{2\alpha_k}}\left|\frac{|z-x_j|^{\alpha_j}}{|u-x_j|^{\alpha_j}}-1\right|^2+\left|\prod_{k\neq j}\frac{|z-x_k|^{\alpha_k}}{|u-x_k|^{\alpha_k}}-1\right|^2\right)\nnb
&\quad \times \frac{1}{|z-u|^2}\nnb
&\leq C_1 \left|\frac{|z-x_j|^{\alpha_j}}{|u-x_j|^{\alpha_j}}-1\right|^2 \frac{1}{|z-u|^2}+C_2
\end{align} 
for some constants $C_i=C_i(L,x,\alpha)$. By choosing $R$ large enough (depending on $L$), it is enough to study the integral 
\begin{equation}
\int_{\epsilon\leq |u-x_j|\leq R}du \left|\frac{|z-x_j|^{\alpha_j}}{|u-x_j|^{\alpha_j}}-1\right|^2 \frac{1}{|z-u|^2}.
\end{equation}
Using the same change of variables as before, this becomes 
\begin{equation}
\int_{\frac{\epsilon}{|z-x_j|}\leq |w|\leq \frac{R}{|z-x_j|}}du \left||w|^{-\alpha_j}-1\right|^2\frac{1}{|w-1|^2}\leq \int_{1\leq |w|\leq \frac{R}{|z-x_j|}}du \left||w|^{-\alpha_j}-1\right|^2\frac{1}{|w-1|^2}.
\end{equation}
The singularity at $w=1$ is integrable.
Thus it is only the long range behavior of the integral that could cause an issue.
However, if $\alpha_j>0$, then the integrand behaves like $|w|^{-2}$ for large $w$, and we get a $\log \frac{1}{|z-x_j|}$ bound, while if $\alpha_j<0$, the integrand behaves like $|w|^{-2\alpha_j-2}$ for large $w$, and we get a $|z-x_j|^{2\alpha_j}$ bound.

Finally for case (iii), we argue as in case (ii), but now we end up with the integral  
\begin{equation}
\int_{|w|\leq \frac{\epsilon}{|z-x_j|}}du \left||w|^{-\alpha_j}-1\right|^2\frac{1}{|w-1|^2}\leq \int_{1\leq |w|\leq \frac{R}{|z-x_j|}}du \left||w|^{-\alpha_j}-1\right|^2\frac{1}{|w-1|^2}
\end{equation}
and we get the same type of bound as in case (ii).

To summarize, for some constant $C$ (possibly different from before and depending on $L,x,\alpha$),
\begin{equation}
|(T_1\varphi_2)(z)|\leq C\mu \left(1+\sum_{j=1}^n (|z-x_j|^{2\alpha_j}+|\log |z-x_j||)\right)^{1/2} \||\rho|^{-1}\varphi_2\|_{L^2(\Lambda_L)}.
\end{equation}
This implies that for some constant $C$ (depending once again on the same parameters), 
\begin{equation}
\|T_1\varphi_2\|_{L^2(\Lambda_L)}\leq \mu C\||\rho|^{-1}\varphi_2\|_{L^2(\Lambda_L)}
\end{equation}
and our operator $T_1$ is bounded.
The argument for the second component is identical except the roles of $\alpha_j$ and $-\alpha_j$ are swapped.
\end{proof}

Next we show that $K$ is skew adjoint. 
\begin{lemma}
$K:\H_\rho\to \H_\rho$ satisfies $K^*=-K$. 
\end{lemma}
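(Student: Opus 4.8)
The plan is to prove that $K$ is skew adjoint on $\H_\rho$ by a direct computation with the kernel, using Fubini's theorem, which is justified because we have already shown in Lemma~\ref{le:KonHrhobounded} that $K$ is bounded on $\H_\rho$ (so all the integrals in sight are absolutely convergent once we pair against elements of $\H_\rho$). Recall from \eqref{e:Kdefinition} that $K$ sends $\varphi = (\varphi_1,\varphi_2)$ to the pair whose first component is $-\frac{\mu}{2\pi}\int_{\Lambda_L} du\, \chi(u) |\rho(u)|^{-2}\frac{\varphi_2(u)}{\bar z - \bar u}$ and whose second component is $-\frac{\mu}{2\pi}\int_{\Lambda_L} du\, \chi(u)|\rho(u)|^2 \frac{\varphi_1(u)}{z-u}$. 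The key structural feature is that $K$ is off-diagonal: it maps the ``$\varphi_2$-slot'' into the ``$\varphi_1$-slot'' and vice versa, and the weights $|\rho|^2$ and $|\rho|^{-2}$ in the inner product $\langle\cdot,\cdot\rangle_\rho$ are exactly arranged so that these two transfers are mutually adjoint up to a sign.

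Concretely, the first step is to write out $\langle \psi, K\varphi\rangle_\rho$ using the definition of the inner product. This gives
\begin{equation}
\langle \psi, K\varphi\rangle_\rho = -\frac{\mu}{2\pi}\int_{\Lambda_L} dz\, |\rho(z)|^2 \overline{\psi_1(z)} \int_{\Lambda_L} du\, \frac{\varphi_2(u)}{|\rho(u)|^2(\bar z-\bar u)} - \frac{\mu}{2\pi}\int_{\Lambda_L} dz\, \overline{\psi_2(z)} \int_{\Lambda_L} du\, \frac{|\rho(u)|^2\varphi_1(u)}{z-u},
\end{equation}
where I have absorbed the $\chi = \1_{\Lambda_L}$ into the domains of integration. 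Next I would swap the order of integration in both terms (Fubini, valid by boundedness of $K$ and finiteness of $\Lambda_L$, together with local integrability of $|\rho|^{\pm2}$), relabel $z \leftrightarrow u$, and use that $\overline{1/(\bar z - \bar u)} = 1/(z-u)$ and $\overline{1/(z-u)} = 1/(\bar z - \bar u)$. In the first term this produces $-\frac{\mu}{2\pi}\int du\, |\rho(u)|^{-2}\varphi_2(u)\overline{ (-1)\int dz\, |\rho(z)|^2 \psi_1(z)/(z-u)}$ — note the crucial sign: $1/(\bar z - \bar u) = -\overline{1/(z-u)}$ requires care, so I will write $\frac{1}{\bar z - \bar u} = \overline{\frac{1}{z - u}}$ directly (complex conjugation of a scalar), which is the correct identity. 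Matching against $\langle K\psi, \varphi\rangle_\rho = \overline{\langle\varphi,K\psi\rangle_\rho}$ and comparing the two resulting expressions, one finds $\langle\psi,K\varphi\rangle_\rho = -\langle K\psi,\varphi\rangle_\rho$, i.e. $K^* = -K$.

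The main thing to be careful about — and the only place a sign error could creep in — is the bookkeeping of complex conjugates versus the conjugation of the complex variable $\bar z$ in the Cauchy kernels: the operator $K$ has $\frac{1}{\bar z - \bar u}$ in one slot and $\frac{1}{z-u}$ in the other, and the inner product also involves complex conjugation of the first argument, so there are three distinct sources of conjugation that must be tracked. I would handle this by computing $\overline{\langle K\psi, \varphi\rangle_\rho}$ explicitly and showing it equals $-\langle \psi, K\varphi\rangle_\rho$, which after the Fubini swap and relabeling reduces to the identity $\overline{\overline{a}} = a$ applied to each kernel term; no cancellation of genuinely different quantities is needed, just matching. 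Once skew adjointness is established, the intended consequence (stated in the surrounding text) follows: $I - K$ is invertible with $\|(I-K)^{-1}\| \leq 1$ as an operator on $\H_\rho$, since $\mathrm{Re}\langle (I-K)\varphi,\varphi\rangle_\rho = \|\varphi\|_\rho^2$, giving $\|\Delta(\cdot,w)\|_\rho \leq \|h(\cdot,w)\|_\rho$ from \eqref{eq:delta}.
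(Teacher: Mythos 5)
Your proposal follows essentially the same route as the paper: expand $\langle\psi,K\varphi\rangle_\rho$ using the kernel, apply Fubini, move the conjugation through the kernel $\tfrac{1}{\bar z-\bar u}=\overline{\tfrac{1}{z-u}}$, and match against $-\langle K\psi,\varphi\rangle_\rho$. The sign bookkeeping at the end works out as you indicate.

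One point deserves tightening. You justify Fubini by saying that boundedness of $K$ on $\H_\rho$ makes ``all the integrals in sight absolutely convergent once we pair against elements of $\H_\rho$.'' That is not quite the right reason: $\|K\varphi\|_\rho<\infty$ and $|\langle\psi,K\varphi\rangle_\rho|<\infty$ do not by themselves guarantee that the iterated double integral
\begin{equation}
\int_{\Lambda_L}\!\!\int_{\Lambda_L}\frac{|\rho(z)|^2}{|\rho(u)|^2}\,\frac{|\psi_1(z)|\,|\varphi_2(u)|}{|z-u|}\,dz\,du
\end{equation}
is finite for arbitrary $\psi,\varphi\in\H_\rho$. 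The paper sidesteps this by first checking the identity for $\psi,\varphi$ bounded and continuous, where the double integral is controlled directly (e.g.\ via Young's inequality since $|\rho|^{\pm2}$ are in $L^p(\Lambda_L)$ for a range of $p$ and the Cauchy kernel is in $L^q_{\mathrm{loc}}$), and then passes to all of $\H_\rho$ by density and boundedness of $K$. You should make this reduction explicit; as stated, your Fubini step has a small gap. Relatedly, the intermediate display you write with the spurious $(-1)$ factor comes from the false identity $\tfrac{1}{\bar z-\bar u}=-\overline{\tfrac{1}{z-u}}$; since you immediately retract it in favor of the correct one, no harm is done, but it would be cleaner to just compute once with the correct identity rather than present and then repudiate the wrong computation.
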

\begin{proof}
By density of bounded continuous functions in $\H_\rho(\Lambda_L)$ and boundedness of $K$, it is enough to check that 
\begin{equation}
\langle \psi,K\varphi\rangle_\rho=-\langle K\psi,\varphi\rangle_\rho
\end{equation}
when $\psi,\varphi$ are bounded and continuous on $\Lambda_L$.

Using the definition, we have 
\begin{align}
\langle \psi,K\varphi\rangle_\rho &= -\frac{\mu}{2\pi}\int_{\Lambda_L}dz|\rho(z)|^2\overline{\psi_1(z)}\int_{\Lambda_L}du \frac{1}{|\rho(u)|^2}\frac{\varphi_2(u)}{\bar z-\bar u}\nnb
&\quad  -\frac{\mu}{2\pi}\int_{\Lambda_L}dz\frac{1}{|\rho(z)|^2}\overline{\psi_2(z)}\int_{\Lambda_L}du |\rho(u)|^2\frac{\varphi_1(u)}{z-u}.
\end{align}
E.g.~by Young's convolution inequality (and using the boundedness of $\psi,\varphi$), we can use Fubini to interchange the order of integrals and we can write this as 
\begin{align}
\langle \psi,K\varphi\rangle_\rho &= \frac{\mu}{2\pi}\int_{\Lambda_L}du\frac{1}{|\rho(u)|^2}\varphi_2(u)\overline{\int_{\Lambda_L}dz |\rho(z)|^2 \frac{\psi_1(u)}{u-z}}\nnb
&\quad  +\frac{\mu}{2\pi}\int_{\Lambda_L}du|\rho(u)|^2\varphi_1(u)\overline{\int_{\Lambda_L}dz \frac{1}{|\rho(z)|^2}\frac{\psi_2(z)}{\bar u-\bar z}}\nnb
&=-\langle K\psi,\varphi\rangle_\rho.
\end{align}
Thus $K^*=-K$.
\end{proof}

We can now start studying $\Delta(\cdot,w)$ as an element of $\H_\rho$.
\begin{lemma}
  $I-K$ is invertible on $\H_\rho$, $h(\cdot,w)\in \H_\rho$ for every $w$,
  and $\Delta(\cdot,w)=(I-K)^{-1}h(\cdot,w)$ where we understand $(I-K)^{-1}$ as the inverse in $\H_\rho$. Moreover, 
\begin{equation}
\|\Delta(\cdot,w)\|_\rho\leq \|h\|_\rho.
\end{equation}
\end{lemma}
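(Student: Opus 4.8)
The plan is to assemble the three ingredients established in the preceding lemmas: boundedness of $K$ on $\H_\rho$, skew-adjointness $K^*=-K$, and the membership $h(\cdot,w)\in\H_\rho$. First I would note that $h(\cdot,w)\in\H_\rho$ is immediate: by Lemma~\ref{hzwcontinuity}, $h(\cdot,w)$ is a bounded continuous function on $\C$, hence its restriction to $\Lambda_L$ is bounded, and since $|\rho|^{\pm2}$ are locally integrable (the exponents $2|\alpha_j|<1$), $\int_{\Lambda_L}(|\rho|^2|h_1|^2+|\rho|^{-2}|h_2|^2)<\infty$. The same reasoning shows $\Delta(\cdot,w)\in\H_\rho$, since $\Delta$ is also bounded and continuous by Proposition~\ref{prop:Green-Delta}.

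Next I would establish invertibility of $I-K$ on $\H_\rho$. Since $K$ is bounded and skew-adjoint on the Hilbert space $\H_\rho$, its spectrum lies on the imaginary axis $i\R$. In particular $1\notin\sigma(K)$, so $I-K$ is invertible with bounded inverse on $\H_\rho$. (Equivalently: for $\varphi\in\H_\rho$, $\re\langle\varphi,(I-K)\varphi\rangle_\rho=\|\varphi\|_\rho^2$ because $\langle\varphi,K\varphi\rangle_\rho$ is purely imaginary by skew-adjointness, so $\|(I-K)\varphi\|_\rho\,\|\varphi\|_\rho\ge\|\varphi\|_\rho^2$, giving the lower bound $\|(I-K)\varphi\|_\rho\ge\|\varphi\|_\rho$; together with the analogous bound for $(I-K)^*=I+K$ this yields bijectivity.)

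Then I would identify $\Delta(\cdot,w)$ with $(I-K)^{-1}h(\cdot,w)$ computed in $\H_\rho$. By Proposition~\ref{pr:diracexist} (more precisely \eqref{eq:delta} and \eqref{temp5265}), $\Delta(\cdot,w)$ is the unique element of $C_0(\C,\C^2)$ solving $(I-K)\Delta(\cdot,w)=h(\cdot,w)$; restricting to $\Lambda_L$, this equation also holds in $\H_\rho$ since both $\Delta(\cdot,w)$ and $h(\cdot,w)$ lie in $\H_\rho$ and the action of $K$ on bounded continuous functions agrees pointwise with the $\H_\rho$-operator. By uniqueness of the $\H_\rho$-inverse we conclude $\Delta(\cdot,w)=(I-K)^{-1}h(\cdot,w)$ in $\H_\rho$. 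Finally the norm bound follows from the lower bound derived above applied with $\varphi=\Delta(\cdot,w)$:
\begin{equation}
\|h(\cdot,w)\|_\rho=\|(I-K)\Delta(\cdot,w)\|_\rho\ge\|\Delta(\cdot,w)\|_\rho,
\end{equation}
which is the claimed inequality $\|\Delta(\cdot,w)\|_\rho\le\|h(\cdot,w)\|_\rho$.

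The only subtle point — not really an obstacle but worth care — is reconciling the two meanings of $(I-K)^{-1}$: the one on $C_0(\C,\C^2)$ used to construct $\Delta$ in Section~\ref{sec:Green}, and the one on $\H_\rho$. This is handled by the uniqueness arguments just above (both inverses must send $h(\cdot,w)$ to the same element since that element lies in both spaces and solves the same equation there), so no genuine difficulty arises. The substantive work was already done in Lemma~\ref{le:KonHrhobounded} (the weighted boundedness of $K$); the present statement is essentially a clean consequence of skew-adjointness.
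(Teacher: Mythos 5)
Your proof is correct and follows essentially the paper's approach: skew-adjointness of $K$ on $\H_\rho$ yields invertibility of $I-K$, membership $h(\cdot,w)\in\H_\rho$ follows from boundedness of $h$ on $\Lambda_L$, and $\Delta(\cdot,w)$ is identified with the $\H_\rho$-inverse applied to $h(\cdot,w)$ via uniqueness of solutions to $(I-K)u=h$ in $\H_\rho$. The only minor variation is in the norm bound: you derive it directly from the coercivity identity $\re\langle\varphi,(I-K)\varphi\rangle_\rho=\|\varphi\|_\rho^2$ (giving $\|(I-K)\Delta\|_\rho\geq\|\Delta\|_\rho$), whereas the paper invokes the fact that a normal resolvent's operator norm equals its spectral radius to obtain $\|(I-K)^{-1}\|_{\H_\rho\to\H_\rho}=d(1,\sigma(K))^{-1}\leq 1$; the two derivations are equivalent in substance, yours being slightly more elementary.
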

\begin{proof}
Invertibility of $I-K$ follows from $K$ being skew adjoint (implying that its spectrum is on the imaginary axis) and the spectral theorem (and the bounded inverse theorem). The fact that $h(\cdot,w)\in \H_\rho$ follows from $h(\cdot,w)$ being continuous on $\C$ and hence bounded on $\Lambda_L$. 

$u(\cdot,w):=(I-K)^{-1}h(\cdot,w)$ is the unique element of $\H_\rho$ for which $(I-K)u(\cdot,w)=h(\cdot,w)$ since $I-K$ is invertible. We know that $\Delta(\cdot,w)$ satisfies this, so we must have $\Delta(\cdot,w)=(I-K)^{-1}h(\cdot,w)$ when we view $K$ as an operator on $\H_\rho$.

For the norm estimates for $\Delta(\cdot,w)$, we note that since $K$ is skew adjoint (and thus normal), the resolvent $(1-K)^{-1}$ is bounded and normal, so its operator norm equals its spectral radius. Hence we have 
\begin{equation}
\|(I-K)^{-1}\|_{\H_\rho\to \H_\rho}=\sup_{\lambda\in \sigma(K)}\frac{1}{|1-\lambda|}= \frac{1}{d(1,\sigma(K))}\leq 1
\end{equation}
since $\sigma(K)$, the spectrum of $K$, is on the imaginary axis. We thus have
\begin{equation}
\|\Delta(\cdot,w)\|_\rho\leq \|(I-K)^{-1}\|_{\H_\rho\to \H_\rho}\|h\|_\rho\leq \|h\|_\rho
\end{equation}
as claimed.
\end{proof}

We will now translate this into a $L^2(\partial \Lambda_L)$-bound for $\Delta$.

\subsubsection{Boundary estimates for $\Delta$}

As mentioned, our approach for the boundary estimates of $\Delta$ mimic a proof of the Sobolev trace theorem. We need some preparations for this. First we record a simple fact about the regularity of partial derivatives of $\Delta$. 
\begin{lemma}\label{DeltainW12}
For every $w\in \C$ and $j\in \{1,2\}$,
\begin{equation}
z\mapsto \partial_z \Delta_{j1}(z,w) \in L^2(\Lambda_L), \qquad z\mapsto \bar \partial_z \Delta_{j1}(z,w)\in L^2(\Lambda_L),
\end{equation}
where the derivatives are understood in the weak sense.
\end{lemma}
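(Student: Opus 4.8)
The goal is a regularity statement: for each fixed $w$, the weak derivatives $\partial_z\Delta_{j1}(\cdot,w)$ and $\bar\partial_z\Delta_{j1}(\cdot,w)$ lie in $L^2(\Lambda_L)$ for $j=1,2$. The plan is to read off these derivatives directly from the defining PDE system for $\Delta$ established in Proposition~\ref{prop:Green-Delta}, namely
\begin{align}
2\partial_z\Delta_{11}^\rho(z,w) &= -\mu\chi(z)\,|\rho(z)|^{-2}\,\Delta_{21}^\rho(z,w),\nonumber\\
2\bar\partial_z\Delta_{21}^\rho(z,w) &= -\mu^2\chi(z)\frac{|\rho(z)|^2}{(2\pi)^2}\int_{\C}du\,\frac{\chi(u)}{|\rho(u)|^2}\frac{1}{\bar z-\bar u}\frac{1}{u-w} - \mu\chi(z)|\rho(z)|^2\,\Delta_{11}^\rho(z,w),\nonumber
\end{align}
and then to supply the two ``missing'' derivatives $\bar\partial_z\Delta_{11}$ and $\partial_z\Delta_{21}$ by using the integral-equation representation \eqref{e:IKphi}, i.e. applying $\bar\partial$ and $\partial$ to the Cauchy-transform expressions appearing in the operator $K$ and in $h$, and invoking the mapping properties of the Cauchy and Beurling transforms from Lemma~\ref{le:cauchy}.

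First I would treat $\partial_z\Delta_{11}$ and $\bar\partial_z\Delta_{21}$. These are handled by the identities above: since $\chi$ is bounded with compact support, $|\rho|^{-2}$ is locally $L^p$ for some $p>1$ (indeed for $p$ just below the reciprocal of $2\max_j|\alpha_j|$, hence above $1$ since $|\alpha_j|<\tfrac12$), and $\Delta_{21}(\cdot,w)\in C_0(\C)$ is bounded by Proposition~\ref{prop:Green-Delta}, the right-hand side of the first identity is a product of a bounded function and an $L^p_{\loc}$ function times $\chi$, hence lies in $L^p$ with compact support, so in particular in $L^2(\Lambda_L)$; similarly $|\rho|^2$ is bounded on $\Lambda_L$ and $\Delta_{11}(\cdot,w)$ is bounded, so the second term on the right of the second identity is in $L^\infty(\Lambda_L)\subset L^2(\Lambda_L)$, while the first term is $\chi\cdot|\rho|^2$ times the integral $\int du\,\chi(u)|\rho(u)|^{-2}(\bar z-\bar u)^{-1}(u-w)^{-1}$, which by Lemma~\ref{finerbound} is bounded by $C\sum_j C_{x_j,\alpha_j}(z,w)$; since each $C_{x_j,\alpha_j}(\cdot,w)$ is $L^2_{\loc}$ (the worst singularity $|z-x_j|^{-2\alpha_j}$ has exponent $2\alpha_j<1<2$, and the logarithmic term is $L^2_{\loc}$ trivially), multiplying by the bounded $\chi|\rho|^2$ keeps it in $L^2(\Lambda_L)$.

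Next I would treat $\bar\partial_z\Delta_{11}$ and $\partial_z\Delta_{21}$, which are not given by the first-order system but can be extracted from \eqref{e:IKphi}. Writing $(\Delta_{11},\Delta_{21})^T=(I-K)^{-1}h$, one has $\Delta_{11}=h_1+(K\varphi)_1$ and $\Delta_{21}=h_2+(K\varphi)_2$ with $\varphi=(\Delta_{11},\Delta_{21})^T$; here $h_1\equiv 0$ and $(K\varphi)_1(z)=-\tfrac{\mu}{2\pi}\int\chi(u)|\rho(u)|^{-2}\Delta_{21}(u,w)(\bar z-\bar u)^{-1}du$ is (up to constants) the Cauchy transform $T$ applied to $\tilde\varphi_2:=-\tfrac{\mu}{2}\chi|\rho|^{-2}\Delta_{21}(\cdot,w)$, which as above is an $L^p$ function with compact support for some $p>2$. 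By the last two bullets of Lemma~\ref{le:cauchy}, $\bar\partial(T\tilde\varphi_2)=\tilde\varphi_2\in L^2(\Lambda_L)$ and $\partial(T\tilde\varphi_2)=\mathcal{S}\tilde\varphi_2\in L^p\subset L^2(\Lambda_L)$, using boundedness of the Beurling transform. This gives $\bar\partial_z\Delta_{11}\in L^2(\Lambda_L)$. An entirely symmetric argument with the roles of $\partial$ and $\bar\partial$ interchanged handles $(K\varphi)_2$ (a Cauchy transform in the $z$ variable of $\chi|\rho|^2\Delta_{11}(\cdot,w)$), and the contribution of $h_2$, which is an iterated Cauchy transform of compactly supported $L^p$ data as analyzed in Lemma~\ref{hzwcontinuity}; applying $\partial$ and $\bar\partial$ to it again reduces to one Cauchy transform and one Beurling transform of an $L^p$ function (with $p>2$ and compact support), both in $L^2(\Lambda_L)$. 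Hence $\partial_z\Delta_{21}\in L^2(\Lambda_L)$. All four weak derivatives are thereby controlled.

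\textbf{Main obstacle.} The only delicate point is the borderline integrability near the branch points: one must verify that the exponents produced by $|\rho|^{\pm2}$ together with the singularity of the Cauchy kernel stay strictly inside the $L^2$ (and, for the Beurling step, some $L^p$ with $p>2$) range, and that the relevant Hölder-type product inequalities line up — but this is exactly the bookkeeping already carried out in Lemma~\ref{finerbound} and Lemma~\ref{hzwcontinuity}, so it is routine rather than substantive. No compactness or fixed-point machinery is needed here; the statement is purely a consequence of the explicit PDE/integral-equation structure plus the Calderón–Zygmund mapping properties of $T$ and $\mathcal{S}$.
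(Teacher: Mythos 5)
Your plan reproduces the paper's proof almost verbatim: compute $\partial_z\Delta_{11}$ and $\bar\partial_z\Delta_{21}$ directly from the first–order system of Proposition~\ref{prop:Green-Delta}, and obtain $\bar\partial_z\Delta_{11}$, $\partial_z\Delta_{21}$ by applying the Cauchy/Beurling transform properties of Lemma~\ref{le:cauchy} to the integral equation $\Delta = K\Delta + h$, using Lemma~\ref{finerbound} for the $H$–type term. So the route is the same. However, there are two concrete slips in your intermediate assertions that should be repaired even though the conclusion survives.

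First, you assert that $|\rho(z)|^{-2}$ is locally $L^p$ for $p$ just below $1/(2\max_j|\alpha_j|)$ and then invoke ``$L^p$ with compact support $\Rightarrow L^2(\Lambda_L)$''. That inclusion needs $p\geq 2$, not $p>1$, and on a bounded domain $L^p\subset L^2$ only for $p\geq 2$. Fortunately your exponent is off: near a branch point $x_j$ with $\alpha_j>0$ one has $|\rho(z)|^{-2}\sim |z-x_j|^{-2\alpha_j}$, which is in $L^p_{\loc}$ iff $2p\alpha_j<2$, i.e.\ $p<1/\alpha_j$; with $|\alpha_j|<\tfrac12$ this allows $p>2$, matching what you later (correctly) use for $\tilde\varphi_2$ in the Beurling step. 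With the exponent fixed, the implication to $L^2(\Lambda_L)$ is sound. Second, you claim that $|\rho|^2$ is bounded on $\Lambda_L$. Under the neutrality condition $\sum_j\alpha_j=0$ (with the $\alpha_j$ not all zero) some $\alpha_j$ are negative, and there $|\rho(z)|^2\sim|z-x_j|^{2\alpha_j}\to\infty$. The correct statement, which is what the paper uses and which suffices everywhere you invoke boundedness, is that both $|\rho|^{\pm 2}\in L^2_{\loc}$ (indeed $L^p_{\loc}$ for some $p>2$); combined with boundedness of $\Delta$ and the bound from Lemma~\ref{finerbound}, that still yields $|\rho|^{\mp 2}\Delta_{j1}(\cdot,w)\in L^2(\Lambda_L)$ and $H(\cdot,w)\in L^2(\Lambda_L)$.
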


\begin{proof}
Using the fact that $(I-K)\Delta=h$, i.e., $\Delta = K\Delta +h$
and  that $K\Delta$ and $h$ can be expressed in terms of the Cauchy transform of $L^2$ functions,
see \eqref{e:Kdefinition} and \eqref{hzwdefinition},
it follows from Lemma \ref{le:cauchy} that  all of the relevant weak derivatives exist.
Indeed, for $K\Delta$ this follows from $\frac{1}{|\rho|^2}\Delta_{21}\in L^2(\Lambda_L)$ and $|\rho|^2 \Delta_{11}\in L^2(\Lambda_L)$ by continuity of $\Delta$ and that $h$ is the Cauchy transform of an $L^2$ function follows from Lemma~\ref{finerbound}. Moreover,
\begin{align}
\label{eq:grad1}2\partial_z \Delta_{11}(z,w)&=-\mu \frac{1}{|\rho(z)|^2}\Delta_{21}(z,w)\\
\label{eq:grad2}2\bar \partial_z \Delta_{21}(z,w)&=-\mu |\rho(z)|^2 \Delta_{11}(z,w)-\mu^2 H(z,w),
\end{align}
where 
\begin{equation}
H(z,w)=|\rho(z)|^2\int_{\Lambda_L}du \frac{1}{|\rho(u)|^2}\frac{1}{(2\pi)^2}\frac{1}{\bar z-\bar u}\frac{1}{u-w}.\label{CapH}
\end{equation}
This same lemma implies that 
\begin{align}
\label{eq:grad3}2\bar \partial_z \Delta_{11}(z,w)&=-\mu \left(\overline{\mathcal S \left(\1_{\Lambda_L}\frac{1}{|\rho|^2}\overline{\Delta_{21}(\cdot,w)}\right)}\right)(z)\\
\label{eq:grad4}2\partial_z \Delta_{21}(z,w)&=-\mu (\mathcal S(\1_{\Lambda_L}|\rho|^2\Delta_{11}(\cdot,w)))(z)-\mu^2 \left(\mathcal S (\1_{\Lambda_L} H(\cdot,w))\right)(z),
\end{align}
where $\mathcal S$ is the Beurling transform. It follows from Lemma \ref{finerbound} that $z\mapsto H(z,w)\in L^2(\Lambda_L)$.
Since the Beurling transform is a bounded operator on $L^2(\C)$ (with norm one),
see Lemma~\ref{le:cauchy},
we see the question is to show that $\frac{1}{|\rho|^2}\Delta_{21}(\cdot,w),|\rho|^2\Delta_{11}(\cdot,w)\in L^2(\Lambda_L)$. This follows from the boundedness of $\Delta$ on $\Lambda_L$, so we are done.
\end{proof}

Let us now start looking at the boundary values of $\Delta$. For this purpose, let $R>0$ be large enough that $x_1,\dots,x_n\in B(0,R)$ and $\epsilon>0$ be small enough that $\overline{B(0,R+2\epsilon)}\subset \Lambda_L$. Let us choose some increasing smooth function $\eta:[0,L]\to [0,1]$ such that $\eta(L)=1$, $\mathrm{supp}(\eta)\subset [R+\epsilon,L]$ and $\|\eta'\|_\infty \leq \frac{1}{\epsilon}$.  We first record the following simple fact which is essentially the proof of the simplest variant of the Sobolev trace theorem.
\begin{lemma}
For $f\in W^{1,2}(\Lambda_L)$, we have for $R+2\epsilon<L$ and $0<\epsilon<1$
\begin{align}
\int_{\partial \Lambda_L}|dz| |f(z)|^2\leq   \frac{4}{R \epsilon}\left(\int_{R+\epsilon\leq |u|\leq L}du (|f(u)|^2+|\nabla f(u)|^2)\right)\label{sobtracethm}
\end{align}
where we interpret the $f$ appearing on the left-hand side as the trace of $f$ on the boundary $\partial \Lambda_L$.
\end{lemma}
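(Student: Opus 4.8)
The plan is to prove this boundary trace estimate directly, following the standard Sobolev trace theorem argument but keeping explicit track of the constants in $L$ and $\epsilon$. First I would fix polar coordinates $z = r e^{i\theta}$ and note that it suffices to prove the bound for $f \in C^\infty(\overline{\Lambda_L})$ since such functions are dense in $W^{1,2}(\Lambda_L)$ and the trace operator is continuous. On $\partial \Lambda_L$ we have $r = L$, so writing $g(r,\theta) = \eta(r) f(re^{i\theta})$ we have $g(L,\theta) = f(Le^{i\theta})$ and $g(R+\epsilon,\theta) = 0$ (since $\mathrm{supp}(\eta) \subset [R+\epsilon, L]$), whence by the fundamental theorem of calculus
\begin{equation}
|f(Le^{i\theta})|^2 = \int_{R+\epsilon}^L \partial_r\big(|g(r,\theta)|^2\big)\, dr = \int_{R+\epsilon}^L 2\,\mathrm{Re}\big(\overline{g(r,\theta)}\, \partial_r g(r,\theta)\big)\, dr.
\end{equation}
Then I would use $2|\mathrm{Re}(\bar a b)| \leq |a|^2 + |b|^2 \leq 2(|f|^2 + |\eta'|^2|f|^2 + |\eta|^2 |\partial_r f|^2)$ together with $\|\eta'\|_\infty \leq 1/\epsilon$, $\epsilon < 1$, and $|\partial_r f| \leq |\nabla f|$ to get $|\partial_r g|^2 \lesssim \epsilon^{-2}(|f|^2 + |\nabla f|^2)$, so that the integrand is bounded by a constant times $\epsilon^{-2}(|f(re^{i\theta})|^2 + |\nabla f(re^{i\theta})|^2)$.

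Next I would integrate over $\theta \in [0,2\pi)$ against the arc-length measure $|dz| = L\, d\theta$ on $\partial \Lambda_L$, giving
\begin{equation}
\int_{\partial \Lambda_L}|dz|\, |f(z)|^2 = L \int_0^{2\pi} |f(Le^{i\theta})|^2\, d\theta \lesssim \frac{L}{\epsilon^2} \int_0^{2\pi}\int_{R+\epsilon}^L \big(|f(re^{i\theta})|^2 + |\nabla f(re^{i\theta})|^2\big)\, dr\, d\theta.
\end{equation}
To convert the $dr\, d\theta$ integral into the area integral $du$ over the annulus $\{R+\epsilon \leq |u| \leq L\}$ appearing on the right-hand side of \eqref{sobtracethm}, I use $du = r\, dr\, d\theta \geq (R+\epsilon) dr\, d\theta \geq R\, dr\, d\theta$ on that annulus, so $\int_0^{2\pi}\int_{R+\epsilon}^L (\cdots)\, dr\, d\theta \leq R^{-1}\int_{R+\epsilon \leq |u|\leq L}(\cdots)\, du$. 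Combining, the prefactor becomes a constant times $\frac{L}{R\epsilon^2}$; after tracking the numerical constants carefully through $2|\mathrm{Re}(\bar a b)| \leq |a|^2+|b|^2$ and the crude bounds on $\eta$, one arrives at a constant that can be taken to be $4/(R\epsilon)$ once the extra factor $\epsilon^{-1} \leq 1$ from $\epsilon<1$ is absorbed — or more honestly, one shows the bound with $C L /(R\epsilon^2)$ and notes $\epsilon < 1$ lets one write $\epsilon^{-2} \le \epsilon^{-1}\cdot\epsilon^{-1}$; I would simply state the clean form with whatever absolute constant the computation produces, which the downstream application only needs up to polynomial growth in $L$.

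I do not anticipate a genuine obstacle here: this is the elementary half of the Sobolev trace theorem specialized to an annular collar of the disk, and the only mild care needed is (i) the density/trace-operator reduction to smooth $f$, and (ii) bookkeeping the $\epsilon$ and $L$ dependence of the constant, since the whole point of this lemma (as used in Proposition~\ref{prop:Ldepenbounds}) is that $C(L)$ grows at most polynomially in $L$. The cleanest presentation first proves the inequality for $f \in C^\infty(\overline{\Lambda_L})$ by the computation above and then extends to $f\in W^{1,2}(\Lambda_L)$ by approximation, using that the right-hand side is continuous in the $W^{1,2}$ norm and the left-hand side is controlled by it (which is exactly what the inequality, once established on the dense subspace, provides).
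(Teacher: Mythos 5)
Your approach is the same as the paper's: reduce to $f\in C^\infty(\overline{\Lambda_L})$, apply the fundamental theorem of calculus in $r$ with the cutoff $\eta$, bound pointwise, convert $dr\,d\theta$ to $du$ using $r\geq R$, and pass to $W^{1,2}$ by density.

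There are two points worth noting. First, a technical variant: you differentiate $|g|^2=\eta^2|f|^2$, so $\partial_r g = \eta' f + \eta\,\partial_r f$ and the bound $2|\mathrm{Re}(\bar g\,\partial_r g)|\le |g|^2+|\partial_r g|^2$ produces an $|\eta'|^2\sim\epsilon^{-2}$ factor. The paper instead differentiates $\eta(r)|f(r,\theta)|^2$ (one power of $\eta$), so $\eta'$ only appears once and the cross terms $\eta(\partial_r\bar f\,f + \bar f\,\partial_r f)$ are handled by Cauchy--Schwarz at the integral level followed by $\sqrt{ab}\le\tfrac{1}{\sqrt2}(a+b)$; this yields $\tfrac{1}{R\epsilon}$ rather than your $\tfrac{1}{R\epsilon^2}$. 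Either constant is fine for the downstream application (Proposition~\ref{prop:polyboundonL2int} only needs polynomial growth in $L$, and $\epsilon$ is fixed in terms of $K$), but you should be aware of why your constant is weaker.

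Second, an outright error in your discussion of constants: you write that ``the extra factor $\epsilon^{-1}\le 1$ from $\epsilon<1$ is absorbed.'' This is backwards --- $\epsilon<1$ gives $\epsilon^{-1}>1$, so you cannot improve $\epsilon^{-2}$ to $\epsilon^{-1}$ this way. Your ``more honest'' fallback, stating the bound with $CL/(R\epsilon^2)$, is the correct conclusion of your argument, and you should simply assert that. (Incidentally, your explicit factor of $L$ coming from $|dz|=L\,d\theta$ is legitimate; the paper's own computation ends with a bound on $\int_0^{2\pi}|f(Le^{i\theta})|^2\,d\theta$ rather than on $\int_{\partial\Lambda_L}|dz|\,|f(z)|^2$, so the lemma as stated is missing that factor of $L$ on the right-hand side --- a harmless bookkeeping slip since the application only cares about polynomial growth in $L$.)
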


\begin{proof}
We first prove the inequality for $f\in C^{\infty}(\overline{\Lambda_L})$.
We write $(r,\theta)$ for the polar coordinates on $\Lambda_L$.
Noting that $\eta(R+\epsilon)=0$, we use the fundamental theorem of calculus in the radial coordinate to write
\begin{align}
\int_{\Lambda_L} d\theta \, r\, dr\,  \frac{1}{r}\partial_r(\eta(r) |f(r,\theta)|^2)=\int_0^{2\pi}d\theta \,  |f(L,\theta)|^2.
\end{align}
On the other hand, computing the derivative, we can bound the left hand side as 
\begin{align}
&\int_{\Lambda_L} d\theta\, r\, dr\, \frac{1}{r}\partial_r(\eta(r) |f(r,\theta)|^2)\nnb
&=\int_{R+\epsilon \leq |r|\leq L} d\theta\, dr\, (\eta'(r)|f(r,\theta)|^2+\eta(r)(\partial_r \overline{f(r,\theta)})f(r,\theta)+\eta(r)\overline{f(r,\theta)}\partial_r f(r,\theta))\nnb
&\leq \frac{1}{R \epsilon} \int_{R+\epsilon\leq |u|\leq L}du \, |f(u)|^2+\frac{2}{R}\left(\int_{R+\epsilon\leq |u|\leq L} du\,|\nabla f(u)|^2\int_{R+\epsilon\leq |u|\leq L}du\, |f(u)|^2\right)^{1/2},
\end{align}
where in the last step we used Cauchy-Schwarz. Finally using that fact that for $a,b\geq 0$, 
\begin{equation}
(ab)^{1/2}\leq  \frac{1}{\sqrt{2}}(a+b),
\end{equation}
we find that 
\begin{equation}
\int_0^{2\pi}d\theta \, |f(L,\theta)|^2\leq \frac{4}{R \epsilon}\left(\int_{R+\epsilon\leq |u|\leq L}du \, (|f(u)|^2+|\nabla f(u)|^2)\right),
\end{equation}
which was the claim for $f\in C^\infty(\overline{\Lambda_L})$.
Now we use that for any $f\in W^{1,2}(\Lambda_L)$ there is a sequence $\{f_n\}\subset C^\infty(\overline{\Lambda_L})$ such that $\|f_n-f\|_{W^{1,2}(\Lambda_L)}\rightarrow 0$
(see for example \cite[Theorem~3 in Section 5.3.3]{MR1625845}),
which implies that $\|f_n\|_{W^{1,2}(A)}\rightarrow \|f\|_{W^{1,2}(A)}$ where $A=\{u: R+\epsilon \leq |u|\leq L\}\subset {\Lambda_L}$,  using this together with continuity of the trace operator
(see for example \cite[Theorem~1 in Section 5.5]{MR1625845}),
allows us to take the limit in the inequality, proving \eqref{sobtracethm}.
\end{proof}
We now have all the tools we need for the boundary estimates. 

\begin{proposition}\label{prop:polyboundonL2int}
With the same notation as above, we have for $w\in B_R(0)$, 
\begin{equation}
\int_{\partial \Lambda_L}|dz||\Delta_{j1}(z,w)|^2\leq C(\epsilon,x,\alpha,w,\mu)(\|h(\cdot,w)\|_\rho^2+\|H(\cdot,w)\|_{L^2(\Lambda_L)}^2)\label{temp56yt}
\end{equation}
where $H$ is defined \eqref{CapH} and $h$ in \eqref{hzwdefinition}.
Moreover, the right-hand side of \eqref{temp56yt} grows at most polynomially in $L$, for all $L$ sufficiently large.
\end{proposition}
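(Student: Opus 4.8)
The plan is to establish \eqref{temp56yt} by combining the Sobolev trace inequality \eqref{sobtracethm} with the weighted $L^2$ bound $\|\Delta(\cdot,w)\|_\rho\leq\|h(\cdot,w)\|_\rho$ obtained in the previous subsection, and then to control the growth in $L$ of each ingredient. First I would apply \eqref{sobtracethm} with $f=\Delta_{j1}(\cdot,w)$ (which lies in $W^{1,2}(\Lambda_L)$ for each $j$ by Lemma~\ref{DeltainW12}), so that
\begin{equation}
\int_{\partial\Lambda_L}|dz|\,|\Delta_{j1}(z,w)|^2
\leq \frac{4}{R\epsilon}\int_{R+\epsilon\leq|u|\leq L}du\,\bigl(|\Delta_{j1}(u,w)|^2+|\nabla\Delta_{j1}(u,w)|^2\bigr).
\end{equation}
On the annulus $R+\epsilon\leq|u|\leq L$ the weight $|\rho|^{\pm2}$ is bounded above and below by $L$-dependent constants (both branch points and their $\epsilon$-neighbourhoods lie inside $B_R(0)$), so $\|\Delta_{j1}(\cdot,w)\|_{L^2(\text{annulus})}^2$ is bounded by a constant times $\|\Delta(\cdot,w)\|_\rho^2\leq\|h(\cdot,w)\|_\rho^2$. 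The gradient term is handled using the explicit formulas \eqref{eq:grad1}--\eqref{eq:grad4}: the weak derivatives $\partial_z\Delta_{j1}$, $\bar\partial_z\Delta_{j1}$ are (up to $L$-dependent bounded weights) equal to $\frac{1}{|\rho|^2}\Delta_{21}$, $|\rho|^2\Delta_{11}$, Beurling transforms of the same, and Beurling transforms of $H(\cdot,w)$. Since $\mathcal{S}$ has operator norm one on $L^2(\C)$ and the weighted quantities are controlled by $\|\Delta(\cdot,w)\|_\rho\leq\|h(\cdot,w)\|_\rho$, we get $\|\nabla\Delta_{j1}(\cdot,w)\|_{L^2(\Lambda_L)}^2\lesssim\|h(\cdot,w)\|_\rho^2+\|H(\cdot,w)\|_{L^2(\Lambda_L)}^2$ with constant depending on $\epsilon,x,\alpha,w,\mu$. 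Assembling these gives \eqref{temp56yt}.

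For the polynomial-in-$L$ claim I would estimate $\|h(\cdot,w)\|_\rho^2$ and $\|H(\cdot,w)\|_{L^2(\Lambda_L)}^2$ directly. Recall from \eqref{hzwdefinition} and \eqref{CapH} that both $h_2(\cdot,w)$ and $H(\cdot,w)$ are built from iterated Cauchy transforms of functions supported on $\Lambda_L$ with the weight $\chi|\rho|^{\pm2}$; the inner transform $T[\chi|\rho|^{-2}(\cdot-\bar w)^{-1}]$ is, by the regularity analysis already carried out in Lemma~\ref{hzwcontinuity}, an $L^p$ function with $p$ just above $2$, and one checks that its $L^2(\Lambda_L)$ norm grows at most polynomially in $L$ (the kernel $(z-u)^{-1}$ restricted to $\Lambda_L$ has $L^q$ norm growing like a power of $L$; alternatively the Cauchy transform maps $L^p(\Lambda_L)\to L^{2p/(2-p)}(\Lambda_L)$ with $\|Tf\|_{2p/(2-p)}\lesssim\|f\|_p$ from Lemma~\ref{le:cauchy}, and $\|f\|_p$ over $\Lambda_L$ is bounded by a power of $L$ since $\chi|\rho|^{\pm2}$ is globally bounded away from the branch points). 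Applying the outer Cauchy transform against the bounded multiplier $\chi|\rho|^{\pm2}$ once more keeps the $L^2(\Lambda_L)$ norm polynomially bounded. The $\rho$-weighted norm $\|h(\cdot,w)\|_\rho$ differs only by the integrable weights $|\rho|^{\pm2}$, which on $\Lambda_L$ contribute another power of $L$ at worst. Hence the right-hand side of \eqref{temp56yt} is $O(L^N)$ for some $N$ as $L\to\infty$.

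The main obstacle I anticipate is the bookkeeping of the $L$-dependence of the various constants: the trace inequality produces a factor $\frac{4}{R\epsilon}$ (harmless, $L$-independent once $\epsilon$ and $R$ are fixed), but the comparison between $L^2(\Lambda_L)$ norms with and without the weight $|\rho|^{\pm2}$, and the $L^2(\Lambda_L)$ estimate of the iterated Cauchy transforms defining $h$ and $H$, both require tracking how the relevant $L^p$ norms over the growing disk scale in $L$. This is where one must be careful to extract a genuine polynomial bound rather than an exponential one — but since all kernels involved are $|z-u|^{-1}$ or $|z-u|^{-2}$ (Beurling) convolutions against compactly-supported-on-$\Lambda_L$, bounded multipliers, and the Cauchy/Beurling operators have the mapping properties recorded in Lemma~\ref{le:cauchy} with constants independent of $L$, the only $L$-growth enters through the measure of $\Lambda_L$ and the diameter of its kernels, both polynomial. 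I do not expect any conceptual difficulty beyond this; the estimate \eqref{eq:grad1}--\eqref{eq:grad4} together with the skew-adjointness bound does all the real work.
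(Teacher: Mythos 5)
Your overall strategy — Sobolev trace inequality $\to$ annular $L^2$ and gradient bounds via the skew-adjointness estimate $\|\Delta\|_\rho\leq\|h\|_\rho$ and the formulas \eqref{eq:grad1}--\eqref{eq:grad4} $\to$ polynomial growth of $\|h\|_\rho$ and $\|H\|_{L^2}$ — is exactly what the paper does. But two points need correction. First, you describe the bounds on $|\rho|^{\pm2}$ over the annulus $R+\epsilon\leq|u|\leq L$ as ``$L$-dependent''; they are in fact $L$-\emph{independent}, since $|\rho(u)|=\prod_j|1-x_j/u|^{\alpha_j}\to 1$ as $|u|\to\infty$ and the branch points are all inside $B_R(0)$ (your own parenthetical gives the correct reason). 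This matters, because the constant $C(\epsilon,x,\alpha,w,\mu)$ in \eqref{temp56yt} must be $L$-free. Second, you assert $\|\nabla\Delta_{j1}(\cdot,w)\|_{L^2(\Lambda_L)}^2\lesssim\|h\|_\rho^2+\|H\|_{L^2(\Lambda_L)}^2$ over the \emph{whole disk}, but this is not what your reasoning delivers: from \eqref{eq:grad1}, $\int_{\Lambda_L}|\partial_z\Delta_{11}|^2\sim\int_{\Lambda_L}|\rho|^{-4}|\Delta_{21}|^2$, and near a branch point with $\alpha_j>0$ the extra factor $|\rho|^{-2}$ beyond what $\|\Delta\|_\rho$ controls is not bounded. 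The paper avoids this by estimating the gradient only on the annulus $R+\epsilon\leq|u|\leq L$, where one can pull out $\sup_{\rm annulus}|\rho|^{\pm 2}$ (finite, $L$-independent) and then apply $\|\Delta\|_\rho\leq\|h\|_\rho$ over the full $\Lambda_L$; and that is all the trace inequality asks for. You should restrict your gradient estimate accordingly — otherwise it is an overreach your argument does not justify.

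On the polynomial-growth part you depart from the paper: you propose to track the $L^2(\Lambda_L)$-norms of the iterated Cauchy/Beurling transforms defining $h$ and $H$ through the mapping properties of Lemma~\ref{le:cauchy}, whereas the paper argues more cheaply — $h$ is continuous and vanishes at infinity (Lemma~\ref{hzwcontinuity}), hence bounded, and $\rho^{-2}$ is integrable near the branch points and bounded outside $\Lambda_R$, giving $\|h\|_\rho^2=O(L^2)$; and $\|H\|_{L^2(\Lambda_L)}^2$ is handled by Lemma~\ref{finerbound} on $\Lambda_{R+1}$ plus the explicit decay $|H(z,w)|\lesssim 1/|z|$ for $|z|>R+1$. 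Your route can work but requires genuinely verifying that the relevant $L^p(\Lambda_L)$ norms grow only polynomially, which is several extra steps compared to the paper's pointwise estimates.
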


\begin{proof} From Lemma \ref{DeltainW12}, we see that the components $\Delta_{j,1}$ are both in $W^{1,2}(\Lambda_L)$.
Using the previous lemma, we see that our task is to bound 
\begin{equation}
\int_{R+\epsilon\leq u\leq L}du \, ((|\Delta_{j1}(u,w)|^2+|\nabla_u \Delta_{j1}(u,w)|^2).
\end{equation}
For the $L^2$-part, we can write $|\Delta_{j1}(u,w)|^2=|\rho(u)|^2 |\rho(u)|^{-2}|\Delta_{j1}(u,w)|^2$, and use the fact that for $|u|\geq R+\epsilon$, we have 
\begin{equation}
|\rho(u)|=\prod_{j=1}^n \left|1-\frac{x_j}{u}\right|^{\alpha_j}
\end{equation}
which can be bounded from above by a constant depending only on $n$, $\epsilon$. One has the same bound for $|\rho(u)|^{-1}$. This means that we can bound the $L^2$-norm of $\Delta$ on the annulus by this constant times the $\H_\rho$-norm, which we can bound by $\|h\|_\rho$.

Let us then look at the gradient term. We have that 
\begin{align}
\int_{R+\epsilon\leq |u|\leq L} du \, |\nabla_u \Delta_{j,1}(u,w)|^2=2\int_{R+\epsilon\leq |u|\leq L} du\, (|\partial  \Delta_{j,1}(u,w)|^2+|\bar\partial  \Delta_{j,1}(u,w)|^2).
\end{align} 
Using \eqref{eq:grad1}, we see that 
\begin{align}
  \int_{R+\epsilon\leq u\leq L} du\, |2\partial_u \Delta_{11}(u,w)|^2&\leq \mu^2 \sup_{R+\epsilon\leq |u|\leq L}|\rho(u)|^{-2}\int_{\Lambda_L}du\, |\rho(u)|^{-2}|\Delta_{21}(u,w)|^2
                                                                 \nnb
&\leq \mu^2 \sup_{R+\epsilon\leq |u|\leq L}|\rho(u)|^{-2} \|h\|_\rho^2.
\end{align}

Next we note from \eqref{eq:grad3} that for $R+\epsilon\leq |u|\leq L$,  
\begin{align}
|2\bar\partial_u\Delta_{11}(u,w)|&=\mu \left|\int_{\Lambda_L} dv\frac{1}{|\rho(v)|^2}\overline{\Delta_{21}(v,w)}\frac{1}{(u-v)^2}\right|\nnb
  &\leq \frac{4\mu}{\epsilon^2} \left|\int_{|v|\leq R+\frac{\epsilon}{2}}dv\frac{1}{|\rho(v)|^2}|\Delta_{21}(v,w)|\right|
    \nnb
  &\qquad\qquad +\mu \left|\int_{R+\frac{\epsilon}{2}\leq |v|\leq L}dv\frac{1}{|\rho(v)|^2}\overline{\Delta_{21}(v,w)}\frac{1}{(u-v)^2}\right|
    .
\end{align}
For the first part, we can use Cauchy-Schwarz to bound it by a constant times $\|h\|_\rho$ while for the second part, we can use the fact that the Beurling transform is bounded on $L^2(\C)$ to bound its $L^2$ norm by the $L^2$-norm of 
\begin{equation}
\1_{R+\frac{\epsilon}{2}\leq |v|\leq L}\frac{1}{|\rho(v)|^2}|\Delta_{21}(v,w)|
\end{equation}
which we can bound by a constant times the $L^2$-norm of $\frac{1}{|\rho|}|\Delta_{21}(\cdot,w)|$, which we bound by $\|h\|_\rho$. Thus we get a bound by a constant (depending on the relevant quantities) times $\|h\|_\rho$. 

Similarly from \eqref{eq:grad2} and the inequality $|a+b|^2\leq 2|a|^2+2|b|^2$, we see that 
\begin{align}
\int_{R+\epsilon\leq u\leq L}|2\bar\partial_u \Delta_{21}(u,w)|^2&\leq 2 \mu^2\left(\sup_{R+\epsilon\leq |u|\leq L}|\rho(u)|^{2}\|h\|_\rho^2+\mu^2\int_{R+\epsilon\leq |u|\leq L}|H(u,w)|^2\right).
\end{align} 
For the remaining estimate for $2\partial_u \Delta_{21}(u,w)$ we get
\begin{align}
  &\int_{R+\epsilon\leq|u|\leq L}|2\partial_u \Delta_{21}(u,w)|^2
  \nnb
  &\leq 2\mu^2\int_{R+\epsilon\leq |u|\leq L}du\Big[\Big|\mathcal{S}\Big( \1_{\Lambda_L}|\rho|^2\Delta_{11}(\cdot,w)\Big)(u)\Big|^2+\mu^2\Big |\mathcal{S}\Big(\1_{\Lambda_L}H(\cdot,w)\Big)(u)\Big|^2\Big]
\end{align}
which can be bounded above by a similar argument to that of the $2\bar\partial_u\Delta_{11}(u,w)$ term.

To see that $\|H(\cdot,w)\|^2_{L^2(\Lambda_L)}$ grows at most polynomially in $L$, split up region of integration into $\Lambda_{R+1}$ and $\Lambda_L\setminus\Lambda_R$. On $\Lambda_{R+1}$ use \eqref{finerbound} to see it contributes a finite integral, and then on $\Lambda_L\setminus \Lambda_{R+1}$ use that $|H(z,w)|\leq C_R/|z-R|$ for $|z|>R+1$. To see that $\|h(\cdot,w)\|_\rho^2=\int_{\Lambda_L}\rho^{-2}|h(\cdot,w)|^2$ also grows at most polynomially in $L$,
by Lemma \ref{hzwcontinuity}, $h(z,w)$ is continuous and vanishes at infinity, so it is bounded, and $\rho^{-2}$ is bounded outside of $\Lambda_{R}$.
\end{proof}

\begin{proof}[Proof of Proposition~\ref{prop:Ldepenbounds}]\label{proofLdepenbounds}
From Proposition~\ref{prop:polyboundonL2int} we know that
\begin{equation}
\|\Delta(\cdot,w)\|_{L^2(\partial \Lambda_L)}\leq C(x,\alpha,L,\mu),
\end{equation}
where the constant grows at most polynomially in $L$.
We consider the integral
\begin{equation}
\int_{|u|\geq L} e^{-|\mu| |u|}|\Delta_{11}(u,w)| \, du \label{temp46tgrf}
\end{equation}
as the integral involving $\Delta_{21}$ is similar.
We know that $\Delta_{11}(\cdot,w)$ is anti-holomorphic in the exterior of $\Lambda_L$, and vanishes at infinity so  we have the Laurent expansion
\begin{equation}
\Delta_{11}(u,w)=\sum_{n=1}^\infty \beta_n \frac{1}{\bar u^n}
\end{equation}
in the exterior of the disk, and this holds on $\partial \Lambda_L$ by continuity. By Cauchy-Schwarz, we have 
\begin{equation}\label{temp24ehbc}
  \left(\int_{|u|\geq L} e^{-|\mu| |u|}|\Delta_{11}(u,w)|\, du\right)^2
  \leq \int_{|u|\geq L} du \, e^{-|\mu| |u|}\int_{|u|\geq L} du\, e^{-|\mu||u|}\left|\sum_{n=1}^\infty \beta_n \frac{1}{\bar u^n}\right|^2. 
\end{equation}
The first integral is
\begin{align}
\int_{|u|\geq L}du\,  e^{-|\mu||u|}=\frac{2\pi}{\mu^2}(|\mu|L+1)e^{-|\mu|L}
\end{align}
and by Parseval's identity on the circle, the second integral is
\begin{align}
  \int_L^\infty dr \int _0^{2\pi}d\theta \ e^{-|\mu|r}r \left|\sum_{n=1}^\infty \beta_n \frac{e^{in\theta}}{r^n}\right|^2
&=2\pi\sum_{n=1}^\infty |\beta_n|^2 \int_L^\infty dr e^{-|\mu| r} r^{-2n+1}\nnb
&\leq 2\pi L^2 \sum_{n=1}^\infty |\beta_n|^2 L^{-2n}\int_{1}^\infty dr\, e^{-|\mu| L r}\nnb
&=\frac{2\pi}{|\mu|} L e^{-|\mu| L} \sum_{n=1}^\infty |\beta_n|^2 L^{-2n}\nnb
&=\frac{1}{|\mu|} L e^{-|\mu| L} \|\Delta_{11}(\cdot,w)\|_{L^2(\partial \Lambda_L)}. 
\end{align}
Now by Proposition \ref{prop:polyboundonL2int}, we know the $L^2$ norm on $\partial \Lambda_L$ grows at most polynomially in $L$, so indeed \eqref{temp46tgrf} tends exponentially fast to  zero as $L\rightarrow \infty$.
\end{proof}

\section{Finite-volume tau functions and massive Bosonization}\label{sec:bosonization}

\subsection{Massive Bosonization identity}

In this section we extend the Bosonization identities of Section~\ref{sec:massless-bosonization} from the massless situation to the case with finite-volume mass.
More precisely, on the fermionic side, we consider finite-volume mass term $\mu {\bf 1}_{\Lambda}$ as in Section~\ref{sec:Green},
whereas on the bosonic side the massless free field is replaced
by the sine-Gordon measure with finite-volume interaction $2z \int_\Lambda \wick{\cos(\sqrt{4\pi} \varphi)} \, dx$ defined in Section~\ref{sec:mixing}.

Let $x_1,\dots,x_n\in\C$ be distinct points, $\alpha_1,\dots,\alpha_n \in (-\frac12,\frac12)$ with $\sum_i \alpha_i=0$,
and recall the definition of $\rho$ from \eqref{eq:rho}.
By \eqref{e:ren-det-massless} (see also Appendix~\ref{app:det}),  the fractional correlation functions of the massless GFF have the interpretation of
a renormalized partition function of massless free fermions with winding, i.e.,
\begin{equation} \label{e:ren-det-massless-Zrho}
  \avg{\prod_{j=1}^n \wick{e^{i\sqrt{4\pi}\alpha_j\varphi(x_j)}}}_{\GFF(0)}
  \propto Z_\rho(0)
\end{equation}
where $Z_\rho(0)$ may formally be interpreted as
\begin{equation}
  Z_\rho(0) = \text{``}  \frac{\det(\slashed{\partial}_\rho)}{\det(\slashed\partial)} \text{''} .
\end{equation}
To extend \eqref{e:ren-det-massless-Zrho} to the massive situation,
we first give a suitable interpretation to the renormalized partition function
\begin{equation} \label{e:Zrho-bis}
  Z_\rho(\mu{\bf 1}_\Lambda) = \text{``} \frac{\det(\Dirac_\rho+\mu{\bf 1}_\Lambda)}{\det(\Dirac + \mu{\bf 1}_\Lambda)} \text{''} ,
\end{equation}
in Definition~\ref{def:Zrho} below. The infinite-volume limit of these functions will be shown 
in Section~\ref{sec:Palmer} to agree with the tau functions of Sato--Miwa--Jimbo \cite{MR555666} and Palmer \cite{MR1233355}
(at least up to a normalization constant),
so our renormalized partition functions may be considered finite-volume versions of the tau functions.

Besides the definition of these finite-volume tau functions and the derivation of their important properties and regularity,
the main result of this section then is the following finite-volume Bosonization identity,
identifying the fractional sine-Gordon correlation functions with the former finite-volume tau functions.

\begin{theorem} \label{thm:massive-bosonization}
  Let $\Lambda \subset \R^2$ be a bound and simply connected with smooth boundary, and let $z \in \R$.
  Then for any $\alpha_1,\dots,\alpha_n \in (-\frac12,\frac12)$ with $\sum_i\alpha_i=0$
  and any $f_1,\dots,f_n\in C_c^\infty(\C)$ with disjoint supports, 
\begin{equation} \label{e:massive-bosonization}
  \avg{\prod_{j=1}^n M_{\alpha_j}(f_j)}_{\SG(4\pi, z|\Lambda,0)}
  \propto \int_{\C^n}dx\prod_{j=1}^n f_j(x_j) Z_\rho(\mu{\bf 1}_\Lambda)
\end{equation}
where $\mu = Az$ with $A$ as in Theorem~\ref{thm:main-tau},
and $Z_\rho(\mu \1_\Lambda)$ is defined in Definition~\ref{def:Zrho} below.
\end{theorem}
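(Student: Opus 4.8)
The plan is to prove Theorem~\ref{thm:massive-bosonization} by matching power series in $z$ (equivalently $\mu$) on both sides, using the finite-volume analyticity established earlier, and then invoking analytic continuation. Concretely, both sides of \eqref{e:massive-bosonization} are, by the results already proved, analytic in $z$ (equivalently $\mu$) in a complex neighborhood $I$ of $\mathbb{R}$: the left-hand side by Theorem~\ref{thm:finvol-massless-corr} (which gives analyticity of the massless-limit finite-volume correlation functions in $z\in I$, together with their Taylor coefficients at $z=0$ in terms of free-field truncated correlations), and the right-hand side because $Z_\rho(\mu\mathbf{1}_\Lambda)$ is built out of the Green's function $S^\rho_{\mu\mathbf{1}_\Lambda}$ which by Proposition~\ref{pr:analyt} is analytic in $\mu$ in a $\chi$-dependent neighborhood of $\mathbb{R}$, uniformly on compact sets of branch points (and the test-function integral $\int f_j(x_j)\,dx$ preserves analyticity by dominated convergence, using the short-distance bound \eqref{tempineq12-bis}). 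So it suffices to verify that the two analytic functions agree on a neighborhood of $z=0$, which we do by comparing Taylor coefficients at the origin.

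First I would expand the right-hand side. Using the perturbative expansion \eqref{eq:Spert} of $S^\rho_{\mu\chi}$ in powers of $\mu$ around $\mu=0$, together with the definition of $Z_\rho(\mu\mathbf{1}_\Lambda)$ (Definition~\ref{def:Zrho}), one obtains a series in $\mu$ whose coefficients are integrals over $\Lambda^p$ of truncated fermionic correlation functions $\langle\bar\psi_1\psi_2;\cdots;\bar\psi_1\psi_2\rangle^{\mathsf T}_{\mathrm{FF}_\rho(0)}$ (the variational/log-derivative structure of the partition function in the mass is that differentiating $\log\det(\Dirac_\rho+\mu\chi)$ in $\mu$ brings down $\mathrm{tr}(\chi S^\rho_{\mu\chi})$, i.e. diagonal bilinears $\bar\psi_1\psi_1+\bar\psi_2\psi_2 = \tfrac12\bar\psi(\mathbf1)\psi$ smeared against $\mathbf 1_\Lambda$, and iterating gives cumulants). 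By the massless twisted Bosonization identity Proposition~\ref{le:boso0}, each such truncated fermionic correlation function equals (up to an explicit power of $e^{\gamma/2}/4\pi$) the corresponding truncated GFF correlation function with charge insertions $\rho$, i.e. $\langle\wick{\cos(\sqrt{4\pi}\varphi)}(\mathbf1_\Lambda);\cdots\rangle^{\mathsf T}_{\mathrm{GFF}_\rho(0)}$. Unwinding the $\mathrm{GFF}_\rho(0)$ normalization (dividing out $\langle\prod_j\wick{e^{i\sqrt{4\pi}\alpha_j\varphi(x_j)}}\rangle_{\mathrm{GFF}(0)}$, which by \eqref{e:ren-det-massless-Zrho} is exactly the $\mu=0$ value / prefactor), and multiplying by $\prod_j f_j(x_j)$ and integrating over $x$, yields precisely the series whose $p$-th coefficient is $\langle\wick{e^{i\alpha_1\sqrt{4\pi}\varphi}}(f_1)\cdots;\wick{\cos(\sqrt{\beta}\varphi)}(\mathbf1_\Lambda);\cdots\rangle^{\mathsf T}_{\mathrm{GFF}(0)}$ with $p$ cosine slots, $\beta=4\pi$. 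This is exactly the right-hand side of \eqref{e:finvol-corfunc-deriv} in Theorem~\ref{thm:finvol-massless-corr}, which identifies it with the $p$-th Taylor coefficient (up to the normalization constants $\prod_i C_\eta^{-\alpha_i^2}$ and $2^{-p}$, absorbed into $\propto$, and the conversion $\mu=Az$) of the left-hand side $\langle\prod_j M_{\alpha_j}(f_j)\rangle_{\mathrm{SG}(4\pi,z|\Lambda,0)}$. Hence the Taylor series agree term by term near $z=0$; by uniqueness of analytic continuation on the connected neighborhood $I$ (shrinking $I$ if necessary so both sides are analytic there — this is legitimate since the radius of $I$ may depend on $\Lambda$ but not on $m$, and we have already taken $m\to0$), the identity \eqref{e:massive-bosonization} holds for all $z\in\mathbb{R}$.

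The main obstacle I expect is twofold. The first, more bookkeeping-heavy point, is matching the constant $A$ and all the normalization factors: one must carefully track the $e^{\gamma/2}/4\pi$ per bilinear in Proposition~\ref{le:boso0}, the factor $2$ from $\cos = \tfrac12(e^{i\cdot}+e^{-i\cdot})$, the $C_\eta^{\alpha^2}$ heat-kernel-vs-convolution constants from Proposition~\ref{prop:hkimc}, and the $(-iB)^n=(-i\sqrt\pi)^n$-type constants from the Bosonization dictionary, and check they combine into $\mu = Az$ with the asserted $A=4\pi e^{-\gamma/2}$; since the statement is only up to $\propto$ for the overall constant, only the $z\leftrightarrow\mu$ proportionality needs to be pinned down, which is where care is required. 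The second, more structural point, is justifying that the formal manipulation "$\partial_\mu^p\log Z_\rho = $ integral of a fermionic $p$-point cumulant, Bosonized via Proposition~\ref{le:boso0}" is rigorous at the level of the absolutely convergent series \eqref{eq:Spert}: one needs the local integrability of the relevant truncated GFF correlation functions (which is exactly Proposition~\ref{th:fracapp-bis}/Lemma~\ref{lem:hint3}) and the short-distance bounds \eqref{tempineq12-bis} to interchange the $x$-integration, the $u$-integrations in \eqref{eq:Spert}, and the summation — but all the needed a priori bounds have been assembled in Sections~\ref{sec:Green} and~\ref{sec:massless-bosonization}, so this is a matter of organizing the estimates rather than proving anything genuinely new. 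Everything else (the analyticity inputs, the decomposition-based construction of the IMC, the finite-volume tau function definition) is available off the shelf from the preceding sections.
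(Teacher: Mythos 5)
Your proposal is correct and follows essentially the same route the paper takes: prove analyticity of both sides in $\mu=Az$ (the paper cites Theorem~\ref{thm:finvol-massless-corr} and Lemma~\ref{le:pfanalyticity}, you reach the same analyticity through Proposition~\ref{pr:analyt} plus the perturbative expansion \eqref{eq:Spert}, which is exactly how Lemma~\ref{le:pfanalyticity} is proved), match Taylor coefficients at $z=\mu=0$ using the massless Bosonization identity Proposition~\ref{le:boso0}, and conclude by analytic continuation. The one place your sketch is a little thinner than the actual argument is the step you call ``unwinding the $\GFF_\rho(0)$ normalization'': turning truncated fermionic bilinear correlations into the mixed truncated $\GFF(0)$ correlation with the $\wick{e^{i\alpha_j\sqrt{4\pi}\varphi}}$ factors all in one slot and each cosine in its own slot is not just dividing out a prefactor but a Faà di Bruno--type cumulant identity; the paper isolates this as Proposition~\ref{prop:boso00} and proves it via the auxiliary generating function $L(t_1,\dots,t_p)=\log\big(\langle Xe^{\sum t_jY_j}\rangle/\langle e^{\sum t_jY_j}\rangle\big)$. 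You do flag this as one of the two obstacles, so the gap is more a matter of emphasis than of a missing idea. (Also a small slip: you once write the truncated fermionic correlators with $\bar\psi_1\psi_2$, but the mass perturbation brings down the diagonal bilinears $\bar\psi_1\psi_1+\bar\psi_2\psi_2$, which you correct immediately afterward.)
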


\begin{remark}
  Explicitly, the proportionality constant on the right-hand side is
  \begin{equation}
    \pb{4\pi e^{-\gamma/2}}^{n} (2e^{-\gamma/2}C_\eta)^{\sum_j \alpha_j^2} 
  \end{equation}
  where the mollifier-dependent constant $C_\eta$ is as in \eqref{e:Ceta}.
\end{remark}

The strategy for the identification in finite volume is their identification when $z=\mu=0$ provided by Section~\ref{sec:massless-bosonization},
together with analyticity of both sides proved in Section~\ref{sec:finvol-massless} for the sine-Gordon side and
established in this section for the right-hand side (as a consequence of Section~\ref{sec:Green}).

\subsection{The finite-volume tau functions}
\label{sec:Z-renorm}

Our goal is to define \eqref{e:Zrho-bis},
and to this end we start with formal manipulations of the right-hand side to motivate our rigorous definition
in Definition~\ref{def:Zrho}.
In the next few equations discussing these manipulations,
we drop the quotation marks from the right-hand side of \eqref{e:Zrho-bis} and write:
\begin{equation}
  Z_\rho(\mu\chi) =
\frac{\det(\slashed{\partial}_\rho+\mu\chi)}{\det(\slashed{\partial}+\mu\chi)},\label{Zrhoform}
\end{equation}
where we recall that the subscript $\rho$ indicates that in the numerator, the operator acts on functions with branching determined by $\rho$, while in the denominator, this is simply the Dirac operator with no branching. Continuing with formal manipulations, we can write this as 
\begin{equation}
  Z_\rho(\mu\chi) =
  \frac{\det(\slashed{\partial}_\rho)}{\det(\slashed{\partial})}\frac{\det(\slashed{\partial}_\rho+\mu\chi)}{\det(\slashed{\partial}_\rho)}\frac{\det(\slashed{\partial})}{\det(\slashed{\partial}+\mu\chi)}.
\end{equation}
Comparing with the formal Grassmann integral representation of the determinant as
\begin{equation}
  \det(\Dirac_\rho + \mu\chi)
  =
  \int d_\psi d_{\bar\psi} e^{-\int_{\C} du\, \psi \Dirac_\rho\bar\psi - \mu \int_\C du\, \chi \psi\bar\psi},
\end{equation}
and the corresponding formal expression for the fermionic expectation
\begin{equation}
  \avg{F}_{\FF_\rho(\mu)}
  =
  \frac{\int d_\psi d_{\bar\psi} e^{-\int_{\C} du\, \psi \Dirac_\rho\bar\psi - \mu \int_\C du\, \chi \psi\bar\psi}\, F}{\int d_\psi d_{\bar\psi} e^{-\int_{\C} du\, \psi \Dirac_\rho\bar\psi - \mu \int_\C du\, \chi \psi\bar\psi}},
\end{equation}
see, e.g., \cite[Appendix~A]{MR4767492},
it is natural to interpret the renormalized partition function as
\begin{equation}
  Z_\rho(\mu\chi) =
\frac{\det(\slashed{\partial}_\rho)}{\det(\slashed\partial)}\frac{\avg{e^{\mu \int_{\C}du\chi(u)(\bar \psi_1(u)\psi_1(u)+\bar\psi_2(u)\psi_2(u))}}_{\FF_\rho(0)}}{\avg{e^{\mu \int_{\C}du\chi(u)(\bar \psi_1(u)\psi_1(u)+\bar\psi_2(u)\psi_2(u))}}_{\FF(0)}},
\end{equation}
where $\FF_\rho(0)$ indicates the free fermion expectation with Green's function $S_0^\rho$ as defined in \eqref{eq:S0}
and $\FF(0)$ that with $S_0$ but with $\rho=1$ (no winding).
As discussed above, we interpret the first factor as in \eqref{e:ren-det-massless-bis}.
Still formally, the second factor should satisfy
\begin{align}
&\partial_\mu \log \frac{\avg{e^{\mu \int_{\C}du\chi(u)(\bar \psi_1(u)\psi_1(u)+\bar\psi_2(u)\psi_2(u))}}_{\FF_\rho(0)}}{\avg{e^{\mu \int_{\C}du\chi(u)(\bar \psi_1(u)\psi_1(u)+\bar\psi_2(u)\psi_2(u))}}_{\FF(0)}}\nnb
&=\int_{\C}du\, \chi(u)\pB{\avg{\bar\psi_1(u)\psi_1(u)+\bar \psi_2(u)\psi_2(u)}_{\FF_\rho(\mu)}-\avg{\bar\psi_1(u)\psi_1(u)+\bar \psi_2(u)\psi_2(u)}_{\FF(\mu)}}.
\end{align}
The individual terms in the integral do not make sense (are formally infinite), but we can define a well-defined renormalized partition function by so-called ``point-splitting''
(and the infinities then cancel in the difference).
This is the content of the following definition.
As mentioned previously,
a further justification for this definition is given a posteriori in Section~\ref{sec:Palmer} where we show that it is consistent with
the usual tau functions.

\begin{definition}\label{def:Zrho}
Given $\chi$ as in \eqref{e:chi} %
and $\rho$ as in \eqref{eq:rho} with windings $\alpha_1,\dots,\alpha_n \in (-\frac12,\frac12)$ satisfying $\sum_i \alpha_i = 0$ and
branch points $x_1,\dots,x_n \in \C$,
define the renormalized partition function (or finite-volume tau function) as
\begin{equation}\label{Zrho}
  Z_{\rho}(\mu\chi) = Z_\rho(0)\times \tilde Z_{\rho}(\mu\chi)
\end{equation}
where $\tilde Z_{\rho}(\mu\chi)$ is defined as
\begin{equation}\label{Ztilderho}
  \exp\qa{\int_0^\mu ds
    \int_{\C}du\, \chi(u)\,
    \lim_{h\to 0} \Big(S_{s\chi,11}^\rho(u_h,u)-S_{s\chi,11}^1(u_h,u)+S_{s\chi,22}^\rho(u_h,u)-S_{s\chi,22}^1(u_h,u)\Big)},
\end{equation}
where $u_h = u+h$,
and  $Z_\rho(0)$ is defined precisely as the right-hand side of \eqref{e:ren-det-massless}:
\begin{equation} \label{Zrho0}
  Z_\rho(0)=\prod_{1\leq r<s<n} |x_r-x_s|^{2\alpha_r\alpha_s}.
\end{equation}
\end{definition}

Important properties of our finite-volume tau functions, which follow from properties of the twisted Green's function
from Section~\ref{sec:Green}, are summarized in the following proposition.

\begin{proposition} \label{prop:Zrho}
  The definition makes sense, i.e., the limit \eqref{Ztilderho} exists and is finite.
  Moreover, $Z_\rho(\mu\chi)$ is independent of the branch cuts of $\rho$, $Z_\rho(\mu\chi)>0$, 
  and
  \begin{equation} \label{e:Zrho-symmetries}
    Z_\rho(\mu\chi)=Z_\rho(-\mu\chi), \qquad
    Z_\rho(\mu\chi) = Z_{1/\rho}(\mu\chi).
  \end{equation}
\end{proposition}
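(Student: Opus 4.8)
The plan is to prove Proposition~\ref{prop:Zrho} by reducing every claimed property of $Z_\rho(\mu\chi)$ to a corresponding property of the twisted Green's function $S_{\mu\chi}^\rho$ established in Section~\ref{sec:Green}. First I would establish that the limit \eqref{Ztilderho} exists. The point-splitting combination inside the limit is
\begin{equation*}
  \big(S_{s\chi,11}^\rho(u_h,u)-S_{s\chi,11}^1(u_h,u)\big)+\big(S_{s\chi,22}^\rho(u_h,u)-S_{s\chi,22}^1(u_h,u)\big),
\end{equation*}
and by the short-distance expansion in Proposition~\ref{prop:Green-Delta} (applied to $\rho$ and then with $\rho=1$, using the diagonal symmetry $S_{22}^\rho=\overline{S_{11}^{1/\rho}}$ from Proposition~\ref{prop:Greensym} to handle the $(2,2)$ entry), the singular Cauchy-transform pieces are of the form $\mu\,\overline{\rho(u_h)}\rho(u)/( 2\pi)^2\int\chi(v)|\rho(v)|^{-2}(\bar u_h-\bar v)^{-1}(v-u)^{-1}\,dv$, which has only a logarithmic singularity as $h\to0$ (Lemma~\ref{finerbound}) that is killed by the prefactor $\overline{\rho(u_h)}\rho(u)-1\to 0$ — more precisely, the $\rho$ and $\rho=1$ versions of the singular integrals agree up to terms vanishing as $h\to 0$ because $\overline{\rho(u_h)}\rho(u)\to|\rho(u)|^2$ and one must check the integrals themselves converge; actually the cleaner route is: the difference $S_{s\chi,11}^\rho-S_{s\chi,11}^1$ equals $(\overline{\rho(u_h)}\rho(u)-1)$ times the singular integral (which is $O(\log|h|)$) plus $\overline{\rho(u_h)}\rho(u)\Delta_{11}^\rho(u_h,u)-\Delta_{11}^1(u_h,u)$, and $\Delta_{11}^\rho$ is jointly continuous by Proposition~\ref{prop:Green-Delta}, so taking $h\to0$ gives a finite limit equal to $|\rho(u)|^2\Delta_{11}^\rho(u,u)-\Delta_{11}^1(u,u)+(\text{limit of the }(\overline\rho\rho-1)\log\text{ term, which is }0)$. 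One then checks this is integrable in $u$ over $\supp(\chi)$ (the $\Delta$'s are bounded there) and in $s$ over $[0,\mu]$, using that $S_{s\chi}^\rho$ depends continuously — in fact analytically by Proposition~\ref{pr:analyt} — on $s$.

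Next I would address positivity and the symmetries. Positivity $Z_\rho(\mu\chi)>0$ follows once we show the exponent in \eqref{Ztilderho} is real and $Z_\rho(0)>0$; the latter is immediate from \eqref{Zrho0} since it is a product of positive powers of positive numbers. For reality of the exponent, use the symmetry $\overline{\Delta_{ii}^\rho(u,u)}=\Delta_{ii}^\rho(u,u)$ from Proposition~\ref{prop:Greensym} (so $\Delta_{ii}^\rho(u,u)\in\R$), together with the fact that for real $s$ the relevant limit equals $|\rho(u)|^2\Delta_{11}^\rho(u,u)-\Delta_{11}^1(u,u)+|\rho(u)|^{-2}\Delta_{22}^{?}\dots$ — more carefully, combine the $(1,1)$ and $(2,2)$ contributions and invoke $S_{22}^\rho=\overline{S_{11}^{1/\rho}}$ to see the sum is manifestly real. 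For $Z_\rho(\mu\chi)=Z_\rho(-\mu\chi)$: by \eqref{symmetriesinmass1} the diagonal entries $S_{s\chi,ii}^\rho$ are odd in $s$, so the integrand $u\mapsto\lim_h(\cdots)$ is odd in $s$; substituting $s\mapsto-s$ in $\int_0^\mu ds$ and using oddness shows $\int_0^{-\mu}=\int_0^{\mu}$, hence the exponent is even in $\mu$, and $Z_\rho(0)$ does not involve $\mu$. For $Z_\rho(\mu\chi)=Z_{1/\rho}(\mu\chi)$: the identity \eqref{Greensym}, $S_{\mu\chi,22}^\rho=\overline{S_{\mu\chi,11}^{1/\rho}}$ and $S_{\mu\chi,11}^\rho=\overline{S_{\mu\chi,22}^{1/\rho}}$, shows that the symmetric combination $S_{11}^\rho+S_{22}^\rho$ transforms into $\overline{S_{22}^{1/\rho}}+\overline{S_{11}^{1/\rho}}=\overline{S_{11}^{1/\rho}+S_{22}^{1/\rho}}$; since the whole integrand is real, swapping $\rho\leftrightarrow 1/\rho$ leaves it invariant, and $Z_\rho(0)=Z_{1/\rho}(0)$ because replacing each $\alpha_j$ by $-\alpha_j$ leaves $\prod|x_r-x_s|^{2\alpha_r\alpha_s}$ unchanged.

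Finally, independence of the branch cuts of $\rho$: this is inherited directly from the corresponding statement in Proposition~\ref{prop:Green-Delta}, which asserts $\Delta_{11}^\rho$ and $\Delta_{21}^\rho$ depend only on $|\rho|$ (hence not on $\Gamma$), together with $Z_\rho(0)$ in \eqref{Zrho0} being manifestly cut-independent, and the observation that the singular Cauchy-transform piece that appears transiently in the point-splitting also depends only on $|\rho|$. I expect the main obstacle to be the first step — making the $h\to0$ limit rigorous, in particular controlling the $(\overline{\rho(u_h)}\rho(u)-1)$ times logarithmically-singular-integral term uniformly enough to conclude it vanishes, and simultaneously verifying joint integrability in $(s,u)$ so that the outer integrals in \eqref{Ztilderho} are well defined; once the expansion of Proposition~\ref{prop:Green-Delta} and the bound of Lemma~\ref{finerbound} are in hand this is a careful but routine estimate, and all the symmetry statements are then short consequences of Proposition~\ref{prop:Greensym}.
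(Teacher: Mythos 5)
Your overall strategy — reduce everything to the short-distance expansion of Proposition~\ref{prop:Green-Delta} and the symmetries of Proposition~\ref{prop:Greensym} — is the same as the paper's, and your treatment of positivity, cut-independence, and the two symmetries in \eqref{e:Zrho-symmetries} is essentially correct. But there is a genuine gap in your proof of existence of the limit \eqref{Ztilderho}, and it is exactly at the place you flagged as the hardest step.

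The decomposition you propose for $S^\rho_{s\chi,11}-S^1_{s\chi,11}$, namely $(\overline{\rho(u_h)}\rho(u)-1)$ times a common singular integral plus the $\Delta$ difference, is not correct. From \eqref{S11definition-bis} the singular Cauchy-transform pieces for $\rho$ and for $\rho=1$ are \emph{different integrals}: the $\rho$ version carries the extra weight $|\rho(v)|^{-2}$ inside the $v$-integral. So the difference of the singular parts is
\begin{equation*}
\frac{\mu}{(2\pi)^2}\int_\C dv\,\chi(v)\Big(\frac{\overline{\rho(u_h)}\rho(u)}{|\rho(v)|^2}-1\Big)\frac{1}{(\bar u_h-\bar v)(v-u)},
\end{equation*}
and as $h\to 0$ the bracket tends to $\frac{|\rho(u)|^2}{|\rho(v)|^2}-1$, which vanishes as $v\to u$ (this, not a prefactor $\overline{\rho(u_h)}\rho(u)-1\to 0$, is what kills the logarithmic divergence; note that $\overline{\rho(u_h)}\rho(u)-1\to |\rho(u)|^2-1\neq 0$ in general). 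The resulting $h\to 0$ limit is therefore \emph{not} $|\rho(u)|^2\Delta^\rho_{11}(u,u)-\Delta^1_{11}(u,u)$ alone, but additionally carries the finite but nontrivial term
\begin{equation*}
\frac{\mu}{(2\pi)^2}\int_\C dv\,\chi(v)\Big(\frac{|\rho(u)|^2}{|\rho(v)|^2}-1\Big)\frac{1}{|u-v|^2}.
\end{equation*}
Your claim that this piece "is $0$" is wrong, and dropping it also means you never address the real integrability question: this term is the one that blows up as $u\to x_j$ (like $\log|u-x_j|$ after the $v$-integral), so one must still show it is integrable in $u$ over $\supp\chi$. The paper's proof does precisely this, reducing by a partition of unity and scaling to the convergence of $\int_{B_1(0)}du\int_{B_{1/|u|}(0)}dv\,\big||v|^{-\alpha}-1\big||1-v|^{-2}$, which has an inner integral of order $1+\log|u|^{-1}$ and is therefore finite. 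Without that estimate your argument does not establish the finiteness of \eqref{Ztilderho}. (The rest of your write-up — reality from $\Delta^\rho_{ii}(u,u)\in\R$, oddness in $\mu$ from \eqref{symmetriesinmass1}, dependence only on $|\rho|$ for cut-independence and $\rho\leftrightarrow 1/\rho$ — is the paper's argument and is fine.)
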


\begin{proof}
The statements follow from the short-distance expansion of Proposition~\ref{prop:Green-Delta}, using which
\begin{align}
  &   \int_\C du \, \chi(u) \,\lim_{h\to 0} \Big(S^\rho_{\mu\chi,11}(u+h,u)-S_{\mu\chi,11}^1(u+h,u)\Big)\label{temp76uyth}\nnb
&= \int_{\C}du \, \chi(u)\, \left[\frac{\mu}{(2\pi)^2}\int_{\C}dv \, \chi(v)\, \left(\frac{|\rho(u)|^2}{|\rho(v)|^2}-1\right)\frac{1}{|u-v|^2}+|\rho(u)|^2\Delta_{11}^\rho(u,u)-\Delta_{11}^1(u,u)\right],
\end{align}
and analogously for the $22$ component,
where $\Delta_{ii}^\rho$ and $\Delta_{ii}$ depend on $\mu$. %

To see that the integral on the right-hand side is convergent (and therefore that the definition makes sense),
by a partition of unity argument as in Lemma~\ref{le:KonHrhobounded},
we can reduce the existence of the integral to
\begin{equation}
  \int_{B_1(x)^2}du\, dv\,  |\frac{|u-x|^\alpha}{|v-x|^\alpha}-1||u-v|^{-2} < \infty.
\end{equation}
By shifting and scaling, this convergence is equivalent to the convergence of the integral
\begin{equation}
  \int_{B_1(0)}du \int_{B_{1/|u|}(0)}dv \, ||v|^{-\alpha}-1||1-v|^{-2}.
\end{equation}
The inner integral is $O(1+\log |u|^{-1})$ so we have convergence.

The positivity $Z_\rho(\mu\chi) > 0$ follows from $\Delta_{ii}(z,z)\in \R$ by Proposition~\ref{prop:Greensym}, which implies that
the term inside the exponential in the definition is real.

To see that $Z_\rho(\mu\chi)$ is independent of the branch cuts, it suffices to observe that it only depends on $|\rho|^2$.
This is clear from \eqref{temp76uyth} and the fact that $\Delta^\rho$ only depends on $|\rho|^2$, see \eqref{e:Deltadefinition1-bis}--\eqref{e:Deltadefinition2-bis}.

The symmetry  $\overline{Z_\rho(\mu\chi)} = Z_{1/\rho}(\mu\chi)$ follows from the 
symmetries of the Green's function \eqref{Greensym}. Together with the positivity this gives the second symmetry in \eqref{e:Zrho-symmetries}.

Similarly, the symmetry $Z_\rho(\mu\chi)=Z_\rho(-\mu\chi)$ in the mass follows from
the symmetry  of the diagonal components of the finite-volume Green's function \eqref{symmetriesinmass1}.
\end{proof}

\begin{remark}
We expect that we could have alternatively defined the formal determinant \eqref{Zrhoform} as the renormalized determinant
(see \cite{MR2154153} for the definition of $\det_4$):
\begin{equation}
\frac{\det_4(I+\mu\chi S^\rho_0)}{\det_4(I+\mu\chi S^1_0)} \times \text{``traces"},
\end{equation}
after showing that $\chi S^\rho_0$ defines an integral operator $\chi S^\rho_0:L^2(\supp\chi)\oplus L^2(\supp \chi)\rightarrow L^2(\supp\chi)\oplus L^2(\supp \chi)$ in the fourth Schatten class, i.e., $\text{Tr}((\chi S^\rho_0)^*\chi S^\rho_0)^2<\infty$.
The ``traces'' above refer to terms missing in $\det_4$ compared to a usual determinant (and would be divergent).
To make sense of the divergent traces (and thus obtain a version that correspond to the usual determinant) would require regularizing these
as in \eqref{temp76uyth}, and then extending to an analytic function of $\mu$.

As our more explicit definition of the regularized determinant is convenient for our analysis
(which is to connect to the sine-Gordon correlation functions and tau functions in the infinite-volume limit),
we do not discuss this further.
\end{remark}

\subsection{Regularity of the finite-volume tau function}

As preparation for the proof of Theorem~\ref{thm:massive-bosonization} we first establish some
regularity and analyticity properties of the finite-volume tau function.

\begin{lemma}\label{le:pfanalyticity}
  For each $\chi$ as in \eqref{e:chi},
  \begin{equation}
    \mu\rightarrow \log \tilde Z_\rho(\mu\chi)
  \end{equation} has an analytic extension to a $\chi$-dependent neighborhood of the real axis.
  The neighborhood also depends on $\rho$ but is uniform on compact subsets of $x_1,\dots,x_n$ distinct.
  The $\mu$-derivatives are
  \begin{align} \label{e:pfanalyticity}
    &\partial_\mu^{p}|_{\mu=0} \log \tilde Z_\rho(\mu\chi)\nnb&=\int du\prod_{j=1}^{p}\chi(u_j)\Big( \langle\bar\psi_1\psi_1(u_1)+\bar\psi_2\psi_2(u_1);\cdots;\bar\psi_1\psi_1(u_{p})+\bar\psi_2\psi_2(u_{p})\rangle^{\sf T}_{\FF_\rho(0)}\nnb&\qquad- \langle\bar\psi_1\psi_1(u_1)+\bar\psi_2\psi_2(u_1);\cdots;\bar\psi_1\psi_1(u_{p})+\bar\psi_2\psi_2(u_{p})\rangle^{\sf T}_{\FF(0)}\Big)
  \end{align}
  for $p\in \N$, which is equal to zero when $p$ is odd.
  
  As a consequence, for $f_1,\dots,f_n\in C_c^\infty(\C)$ with disjoint supports, the function
  \begin{equation} \label{e:Zrho-analyticmu}
    \mu \mapsto \int_{\C^n}dx\prod_{j=1}^n f_j(x_j) Z_\rho(\mu\chi)
  \end{equation}
  is analytic in a neighborhood of the real axis, and the $\mu$-derivatives are given by
  \begin{equation} \label{e:Zrho-derivmu}
    \partial_\mu^p|_{\mu=0} \int_{\C^n}dx \prod_{j=1}^n f_j(x_j)Z_\rho(\mu \chi)= \int_{\C^n}dx \prod_{j=1}^n f_j(x_j)\partial_\mu^p|_{\mu=0}Z_\rho(\mu \chi).
  \end{equation}
\end{lemma}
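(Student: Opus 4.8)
The plan is to establish analyticity of $\mu \mapsto \log \tilde Z_\rho(\mu\chi)$ near the real axis and to identify its Taylor coefficients at $\mu=0$, then to propagate this to the statement about the smeared functions \eqref{e:Zrho-analyticmu}--\eqref{e:Zrho-derivmu}. First I would recall the definition \eqref{Ztilderho}, which expresses $\log \tilde Z_\rho(\mu\chi)$ as an $s$-integral from $0$ to $\mu$ of the quantity
\begin{equation*}
  G_\rho(s) = \int_\C du \, \chi(u) \lim_{h\to 0}\bigl(S^\rho_{s\chi,11}(u_h,u) - S^1_{s\chi,11}(u_h,u) + S^\rho_{s\chi,22}(u_h,u) - S^1_{s\chi,22}(u_h,u)\bigr),
\end{equation*}
which, via the short-distance expansion of Proposition~\ref{prop:Green-Delta}, equals the explicitly convergent expression in \eqref{temp76uyth} (and its $22$-analogue). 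The key input is Proposition~\ref{pr:analyt}: for each fixed $z\neq w$ in $\mathrm{supp}(\chi)\setminus\Gamma$, the map $s\mapsto S^\rho_{s\chi}(z,w)$ extends analytically to a $\chi$-dependent neighborhood $I$ of $\R$, uniform on compact subsets of $x_1,\dots,x_n$ distinct, with the pointwise bound \eqref{tempineq12-bis}. I would check that the subtracted, point-split integrand in $G_\rho(s)$ is dominated uniformly for $s$ in a fixed compact subset of $I$ by a function integrable in $u$ (using the bounds from Lemma~\ref{finerbound}, the continuity of $\Delta^\rho_{ii}$ from Proposition~\ref{prop:Green-Delta}, and the convergence argument already given in the proof of Proposition~\ref{prop:Zrho}). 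Then $G_\rho$ is analytic on $I$ by Morera's theorem combined with Fubini (exchanging the $u$-integral with a contour integral in $s$), and $\log\tilde Z_\rho(\mu\chi) = \int_0^\mu G_\rho(s)\,ds$ is analytic on the connected component of $I$ containing $\R$.

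Next I would compute the Taylor coefficients at $\mu=0$. Since $\partial_\mu^p|_{\mu=0}\log\tilde Z_\rho = \partial_\mu^{p-1}|_{\mu=0} G_\rho$, it suffices to expand $G_\rho(s)$ in powers of $s$. For this I would use the Neumann-type series \eqref{eq:Spert-bis} for $S^\rho_{s\chi}(z,w)$ in terms of the massless Green's function $S^\rho_0$, valid for $|s|$ small, and the same for $\rho = 1$. Inserting into the point-split difference and recognizing that the $n$-fold iterated integrals of products of $S^\rho_0$ with $\chi$-insertions are exactly (after the point-splitting regularization) the connected correlators of $\bar\psi_1\psi_1 + \bar\psi_2\psi_2$ in $\FF_\rho(0)$, as given by the fermionic cumulant formula \eqref{eq:fermicor2}; the contribution from $\rho=1$ gives the subtracted term. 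This matches \eqref{e:pfanalyticity}. The vanishing for odd $p$ follows because each $S^\rho_0$ is an odd function of position-exchange / each factor in the cyclic product \eqref{eq:fermicor2} is odd, so cumulants with an odd number of insertions cancel in pairs $\pi \leftrightarrow \pi^{-1}$ (this is the same parity argument used just after \eqref{eq:fermicor2} in the excerpt and in the discussion of $\hat C^{\sf T}_\mu$), equivalently one invokes the symmetry $S^\rho_{\mu\chi,ii}(z,w) = -S^\rho_{-\mu\chi,ii}(z,w)$ from \eqref{symmetriesinmass1}, which forces $\log\tilde Z_\rho(\mu\chi) = \log\tilde Z_\rho(-\mu\chi)$ (this is already in Proposition~\ref{prop:Zrho}) and hence kills all odd derivatives.

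Finally, for the statement about \eqref{e:Zrho-analyticmu}, write $Z_\rho(\mu\chi) = Z_\rho(0)\,\tilde Z_\rho(\mu\chi)$ with $Z_\rho(0) = \prod_{r<s}|x_r-x_s|^{2\alpha_r\alpha_s}$ as in \eqref{Zrho0}. Since $f_1,\dots,f_n$ have disjoint supports, the $x_j$ range over a compact set on which they are pairwise separated, so $Z_\rho(0)$ is bounded (and bounded away from zero) and, crucially, the neighborhood of analyticity $I$ for $\tilde Z_\rho(\mu\chi)$ can be taken uniform over this compact set of branch points by the uniformity clause in Proposition~\ref{pr:analyt}. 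Thus $(\mu,x)\mapsto Z_\rho(\mu\chi)$ is jointly continuous and, for each $x$, analytic in $\mu\in I$, with a locally uniform bound in $\mu$ on compacta. Integrating against $\prod_j f_j(x_j)$ over the compact product of supports and applying Morera plus Fubini once more gives analyticity of \eqref{e:Zrho-analyticmu} and the interchange \eqref{e:Zrho-derivmu}.

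The main obstacle I anticipate is the first step: justifying that the point-split integrand defining $G_\rho(s)$ is dominated by an $s$-uniformly (and $x$-uniformly) $L^1_u$ function, since the short-distance expansion \eqref{S11definition-bis} involves the singular double integral controlled only by Lemma~\ref{finerbound} (with its $\alpha_j$-dependent, only barely-$L^1$ singularities near the branch points) together with the $\Delta^\rho_{ii}$ terms whose $s$-dependence must be tracked through the Fredholm solution $\Delta = (I-K)^{-1}h$; one must confirm that the Fredholm inverse, the inhomogeneity $h$, and hence $\Delta^\rho_{ii}(u,u)$ depend analytically on $s$ with locally uniform bounds — this is essentially contained in the analyticity proof of Proposition~\ref{pr:analyt12} (the geometric series \eqref{tildeSdelta} has $z,w$-independent radius of convergence), but assembling it into the required dominated-convergence estimate for the regularized diagonal is the technical heart.
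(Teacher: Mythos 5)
Your outline follows the paper's approach closely: establish dominated convergence for the $u$-integral uniformly in $s$ small, use the perturbative series / analyticity of $S^\rho_{s\chi}$ from Proposition~\ref{pr:analyt} to conclude analyticity of the integrand $G_\rho(s)$, expand via \eqref{eq:Spert-bis} to read off the Taylor coefficients as cyclic traces of $S_0^\rho$-products which match the fermionic cumulants, and finish with Morera/Fubini for the smeared statement. Your parity argument via the symmetry $Z_\rho(\mu\chi)=Z_\rho(-\mu\chi)$ from Proposition~\ref{prop:Zrho} is a slightly cleaner route to vanishing of odd derivatives than the paper's explicit observation that the trace of an odd cyclic product of $S_0^\rho$ factors can only pick up the sequences $(1,2,1,2,\dots)$ and $(2,1,2,\dots)$, which requires an even number of insertions; both work.

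Two gaps worth flagging. First, for the analyticity of $G_\rho(s)$ you correctly identify the dominated-convergence bound as the ``technical heart'' but leave it unresolved: the paper sidesteps the direct tracking of the $s$-dependence of $\Delta^\rho_{ii}(u,u)$ by expanding $S^\rho_{(\mu+\delta)\chi}-S^1_{(\mu+\delta)\chi}=\sum_n \delta^n a_n^\mu$ and exploiting that the $n\geq 2$ terms are \emph{jointly continuous on $\C^2$} (after conjugation by $\overline{P}^{-1},P^{-1}$) with a $z,w$-independent radius of convergence (from the proof of Proposition~\ref{pr:analyt12}), so the $h$-limit and $u$-integral can be exchanged termwise; the $n=0,1$ terms are then handled by hand. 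A direct Morera/dominated-convergence attack as you describe needs to make the $s$-uniformity of $(I-K)^{-1}h$ explicit, which is not stated as such in Proposition~\ref{pr:analyt}. Second, for \eqref{e:Zrho-analyticmu}--\eqref{e:Zrho-derivmu} you assert joint continuity of $(\mu,x)\mapsto Z_\rho(\mu\chi)$ from the uniformity clause of Proposition~\ref{pr:analyt}, but that clause only gives a uniform domain of analyticity, not continuity in the branch points $x$; the paper needs a separate argument (Lemma~\ref{differentiabilitylogZ}, which differentiates $\log\tilde Z_\rho$ in $x_j$ via the factorization identity of Proposition~\ref{prop:factorization}) to justify the Fubini step. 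With those two points supplied, your argument matches the paper's.
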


\begin{proof}
From Definition~\ref{def:Zrho}, recall that $\log \tilde{Z}_\rho(\mu\chi)$ is given by
\begin{equation}\label{recalllogtildeZ}
   \int_0^\mu ds  \int_{\C}du\, \chi(u)  \lim_{h\to 0}\Big((S_{s\chi,11}^\rho(u+h,u)-S_{s\chi,11}^1(u+h,u))+(S_{s\chi,22}^\rho(u+h,u)-S_{s\chi,22}^1(u+h,u))\Big).
\end{equation}
We will show that this has an analytic extension into the same region of analyticity of $S^\rho_{\mu\chi}(z,w)$ as in Proposition~\ref{pr:analyt}.
By symmetry in the components
and the fact that the integral of an analytic function is analytic, it is sufficient to show analyticity for \eqref{temp76uyth},
i.e., without the $s$ integral in  \eqref{recalllogtildeZ}.
Further note that using similar upper bounds as in the proof of Proposition~\ref{prop:Zrho} and a partition of unity,
we can find an integrable function that dominates $|S_{s\chi,11}^\rho(u+h,u)-S_{s\chi,11}^1(u+h,u)|$ for all $h$ small,
so that by dominated convergence we can interchange the limit in $h$ and the integral in $u$ in \eqref{recalllogtildeZ} and \eqref{temp76uyth}.

First using \eqref{tildeSdelta} we write
 \begin{align}
 S^\rho_{(\mu+\delta)\chi}(z,w)-S_{(\mu+\delta)\chi}^1(z,w)=\sum_{n=0}^\infty \delta^n a_n^\mu(z,w)
 \end{align}
 where
 \begin{align}
 a_0^\mu(z,w)=S_{\mu\chi}^\rho(z,w)-S_{\mu\chi}^1(z,w)
 \end{align}
 and 
 \begin{align}
 a_n^\mu(z,w)=(-1)^n\int_{\C^n} \prod_{j=1}^n\chi(u_j)(S_{\mu\chi}^\rho(z,u_1)\cdots S_{\mu\chi}^\rho(u_n,w)-S_{\mu\chi}^1(z,u_1)\cdots S_{\mu\chi}^1(u_n,w)).
 \end{align}
To show that \eqref{temp76uyth} is analytic, we observe that by the proof of Proposition~\ref{pr:analyt12}, 
\begin{align}
\int_{\C^n}du_1\cdots du_n \prod_{j=1}^n\chi(u_j)S_{\mu\chi}^\rho(z,u_1)\cdots S_{\mu\chi}^\rho(u_n,w)
\end{align} is jointly continuous in $z,w\notin \Gamma$ for $n\geq 2$. Hence recalling the series expansion of $S$ in \eqref{tildeSdelta}, we see that $\sum_{n=2}^\infty \delta^n a_n(z,w)$ is absolutely convergent for $\delta$ small enough, uniformly in $z,w\in\text{supp}(\chi)$. This also shows that $\overline{P}(z)^{-1}a_n(z,w)P(w)^{-1}$ is joint continuous for $(z,w)\in\C^2$, $n\geq 2$ and these two facts together allow us to move the integral and $h$-limit past the series:
\begin{align}
  \lim_{h\to 0}\int_\C du\,\chi(u)\sum_{n=0}^\infty \delta^na_n(u+h,u)
  =\sum_{n=2}^\infty \delta^n\int_\C du\chi(u)a_n(u,u)\nnb
  +\lim_{h\to 0}\int_\C du\,\chi(u)\, a_0(u+h,u)
  +\delta \lim_{h\to 0}\int_\C du\, \chi(u)\, a_1(u+h,u)
\end{align}
The term $\lim_{h\to 0}\int_\C du\, \chi(u) \, a_0(u+h,u)$ is equal to \eqref{temp76uyth} and
the limit in $\delta \lim_{h\to 0}\int_\C du\, \chi(u) \, a_1(u+h,u)$ exists by an argument similar to Lemma~\ref{lemmatemp42}.
This yields analyticity of $\log \tilde Z_\rho(\mu\chi)$.

From the above, we have the following series expansion for $\mu$ sufficiently small,
\begin{multline}
  \log\tilde Z_{\rho}(\mu\chi)
  =\sum_{n \in 2\N+1}\int du(-1)^n \frac{\mu^{n+1}}{n+1}\text{Tr}\Big(S_0^\rho(u_1,u_2)\cdots S_0^\rho(u_{n+1},u_1)\\
  -S_0^1(u_1,u_2)\cdots S_0^1(u_{n+1},u_1)\Big)\prod_{j=1}^{n+1}\chi(u_j),
\end{multline}
so, with $C_p$ the set of cyclic permutations, by cyclicity of the trace  we have
\begin{align}
  &\partial_\mu^{p+1}|_{\mu=0} \log \tilde Z_\rho(\mu\chi)\nnb 
  &=p!(-1)^p\int du \ \text{Tr}\Big(S_0^\rho(u_1,u_2)\cdots S_0^\rho(u_{p+1},u_1)-S_0^1(u_1,u_2)\cdots S_0^1(u_{p+1},u_1)\Big)\prod_{j=1}^{p+1}\chi(u_j),\nnb
&=(-1)^p\sum_{\sigma\in C_{p+1}}\int du\, \text{Tr}\Big(S_0^\rho(u_{\sigma(1)},u_{\sigma^2(1)})\cdots S_0^\rho(u_{\sigma^{p+1}(1)},u_{\sigma(1)})
  \nnb &\qquad\qquad\qquad\qquad\qquad-S_0^1(u_{\sigma(1)},u_{\sigma^2(1)})\cdots S_0^1(u_{\sigma^{p+1}(1)},u_{\sigma(1)}\Big)\prod_{j=1}^{p+1}\chi(u_j),
\end{align}
when $p\in 2\N+1$ and the $p \in 2\N$ derivatives are zero.
Now observe that
\begin{align}
&\sum_{\sigma\in C_{p+1}}\text{Tr}\Big(S_0^\rho(u_{\sigma(1)},u_{\sigma^2(1)})\cdots S_0^\rho(u_{\sigma^{p+1}(1)},u_{\sigma(1)})\Big)\nnb
&=\sum_{\sigma\in C_{p+1}}\sum_{(a_i)\in\{1,2\}^{p+1}}S^\rho_{a_1,a_2}(u_{\sigma(1)},u_{\sigma^2(1)})\cdots S^\rho_{a_{p+1},a_1}(u_{\sigma^{p+1}(1)},u_{\sigma(1)})\nnb
&=\sum_{\sigma\in C_{p+1}}\sum_{(a_i)\in\{1,2\}^{p+1}}S^\rho_{a_{\sigma(1)},a_{\sigma^2(1)}}(u_{\sigma(1)},u_{\sigma^2(1)})\cdots S^\rho_{a_{\sigma^{p+1}(1)},a_{\sigma(1)}}(u_{\sigma^{p+1}(1)},u_{\sigma(1)})\nnb
&=(-1)^p\langle\bar\psi_1\psi_1(u_1)+\bar\psi_2\psi_2(u_1);\cdots;\bar\psi_1\psi_1(u_{p+1})+\bar\psi_2\psi_2(u_{p+1})\rangle^{\mathsf T}_{\FF_\rho(0)}\label{derivlogZcumulant}
\end{align}
where the second equality holds because $a_i\leftrightarrow a_{\sigma^i(1)}$ is a bijection of $p+1$ tuples.
Note that the only nonzero contributions are from $(a_i)=(1,2,1,2,\dots)$ and $(2,1,2,1,\dots)$. The last equality holds by multilinearity and definition of the cumulant
(see the discussion around \cite[(1.10)]{MR4767492}).

For \eqref{e:Zrho-analyticmu}, we observe that 
by taking the exponential, $Z_\rho(\mu\chi)$ is analytic in the same region as $\log \tilde Z_\rho(\mu\chi)$.
By differentiablility in the branch points $x_i$ given by Lemma~\ref{differentiabilitylogZ} stated below the proof,
which implies continuity,
we can consider any closed, rectifiable curve $\gamma$ in the same region of analyticity to get 
\begin{align}
\int_{\gamma}d\mu\int_{\C^n}dx\prod_{j=1}^n f_j(x_j) Z_\rho(\mu\chi)=\int_{\C^n}dx\prod_{j=1}^n f_j(x_j)\int_\gamma d\mu \, Z_\rho(\mu\chi)=0
\end{align}
by Fubini's and Morera's theorems.
To integrate over the test functions $f_1,\dots,f_n \in C_c^\infty(\C)$ we used that the neighborhood
of analyticity is uniform on compact subsets $K$ of $x_1,\dots,x_n$ distinct.

Finally, to show \eqref{e:Zrho-derivmu}, let $K\subset \{(x_1,...,x_n)\in \C^n: x_i\neq x_j \text{ for }i\neq j\}$ be compact and fix $r>0$ small so that $\mu\mapsto Z_\rho(\mu\chi)$ is analytic in the disk $B_{2r}(0)$, and $\sup_{x\in K,\,|\mu|=r}|\tilde Z_\rho(\mu\chi)|<\infty$ (such an $r$ exists by the uniformity in $x\in K$ in the domain of analyticity). Then define
$a_k(x)=\frac{1}{2\pi i}\int_{|\mu|=r} \tilde Z_\rho(\mu\chi)\mu^{-(k+1)}\,d\mu$ where we recall that the right-hand sides depend on $x=(x_1,\dots,x_n)$
through $\rho$.
By Cauchy's integral formula $a_k(x)=\partial_\mu^k|_{\mu=0}\tilde Z_\rho(\mu \chi)$, and by analycity, $\tilde Z_\rho(\mu\chi)=\sum_{k\ge0} a_k(x)\mu^k$ with convergence uniform on $|\mu|\le r$ and $x\in K$.
Thus, by Fubini's theorem and the dominated convergence theorem,
\begin{equation}
\int_{\C^n} \, dx \prod_j f_j(x_j)\,Z_\rho(\mu\chi)=\sum_{k\ge0}\mu^k \int_{\C^n} dx \prod_j f_j(x_j)\, Z_\rho(0)a_k(x),
\end{equation}
and termwise differentiation at $\mu=0$ yields \eqref{e:Zrho-derivmu} since $Z_\rho(0)a_k(x)=Z_\rho(0)\partial^k_\mu|_{\mu=0}\tilde Z_\rho(\mu\chi)=\partial_\mu^k|_{\mu=0}Z_\rho(\mu \chi)$.
\end{proof}

\begin{lemma}\label{differentiabilitylogZ}
  For each $\chi$ as in \eqref{e:chi} and $\mu \in \R$, %
  $\log \tilde Z_\rho(\mu\chi)$ is differentiable in the distinct branch points $x_1,\dots, x_n$ (in particular continuous in this region),
  and $\mu \mapsto \partial_{x_j}\log \tilde Z_\rho(\mu\chi)$ has an analytic extension into the same neighborhood of the real axis as $S^\rho_{\mu\chi}$ does.
\end{lemma}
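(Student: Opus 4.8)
The plan is to reduce the statement to differentiability and $\mu$-analyticity of the integrand of the $s$-integral in Definition~\ref{def:Zrho}, and then to differentiate the explicit short-distance representation of that integrand in the branch points. Write $g_\rho(s)$ for the quantity inside the $s$-integral in \eqref{Ztilderho}, so that $\log\tilde Z_\rho(\mu\chi)=\int_0^\mu g_\rho(s)\,ds$ along a path in the $\chi$- and $\rho$-dependent neighborhood of the real axis on which $\mu\mapsto S^\rho_{\mu\chi}$ is analytic (Proposition~\ref{pr:analyt}). It suffices to prove that, for each such $s$, $g_\rho(s)$ is differentiable in the distinct branch points $x_1,\dots,x_n$, with $\partial_{x_k}g_\rho(s)$ jointly continuous in $(s,x_1,\dots,x_n)$, analytic in $s$ on that neighborhood, and bounded locally uniformly in $s$. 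Granting this, differentiation under the integral sign (legitimate by the local uniformity) gives $\partial_{x_k}\log\tilde Z_\rho(\mu\chi)=\int_0^\mu\partial_{x_k}g_\rho(s)\,ds$, which is continuous in the $x_j$ and, by Morera's and Fubini's theorems exactly as in the proof of Lemma~\ref{le:pfanalyticity}, analytic in $\mu$ on the same neighborhood as $S^\rho_{\mu\chi}$.

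To analyze $g_\rho(s)=\int_\C du\,\chi(u)\lim_{h\to0}\sum_{i=1,2}\big(S^\rho_{s\chi,ii}(u+h,u)-S^1_{s\chi,ii}(u+h,u)\big)$ I would use its explicit form. By the short-distance expansion \eqref{temp76uyth} for the $11$-component, its analogue for the $22$-component obtained from the symmetry $S^\rho_{s\chi,22}=\overline{S^{1/\rho}_{s\chi,11}}$ of Proposition~\ref{prop:Greensym}, and the fact (Proposition~\ref{prop:Zrho}) that $g_\rho$ depends on $\rho$ only through $|\rho|^2$, the $x_k$-dependence of $g_\rho(s)$ enters only through the weights $|\rho|^{\pm2}$ appearing explicitly in \eqref{temp76uyth} and through the diagonal values $\Delta^\rho_{11}(u,u)$, $\Delta^{1/\rho}_{11}(u,u)$ of the regular parts from Proposition~\ref{prop:Green-Delta} (the conjugations attached to the $22$-contribution are immaterial since $\Delta^\rho_{11}(u,u)\in\R$). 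Differentiation in $x_k$ of the purely explicit part of \eqref{temp76uyth} — the double $\chi(u)\chi(v)$-integral of $\big(\tfrac{|\rho(u)|^2}{|\rho(v)|^2}-1\big)\tfrac{1}{|u-v|^2}$ — is elementary: each $\partial_{x_k}|w-x_k|^{2\alpha_k}$ contributes a factor of size $|w-x_k|^{2\alpha_k-1}$ with $2\alpha_k-1>-2$, so the resulting kernels remain locally integrable; a partition of unity as in Lemma~\ref{le:KonHrhobounded} together with the rescaling estimates of Lemma~\ref{finerbound} shows that differentiation under both integrals is legitimate and produces a continuous function of the distinct $x_j$, affine (hence analytic) in $s$.

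It remains to differentiate the diagonal value $\Delta^\rho_{11}(u,u)$ in $x_k$. I would differentiate the defining integral equation $\Delta^\rho(\cdot,w)=(I-K)^{-1}h(\cdot,w)$ from \eqref{eq:delta} in $x_k$: both $K$, through the weights $|\rho|^{\pm2}$ in \eqref{e:Kdefinition}, and $h$, through the same weights in \eqref{hzwdefinition}, depend smoothly on $x_k$, and their $x_k$-derivatives again introduce only the locally integrable singularities $|\cdot-x_k|^{2\alpha_k-1}$; since $(I-K)^{-1}$ is bounded on $C_0(\C,\C^2)$ by the Fredholm argument of Proposition~\ref{pr:diracexist}, one obtains $\partial_{x_k}\Delta^\rho(\cdot,w)=(I-K)^{-1}\big[(\partial_{x_k}K)\Delta^\rho(\cdot,w)+\partial_{x_k}h(\cdot,w)\big]$, and joint continuity of $(z,w)\mapsto\partial_{x_k}\Delta^\rho(z,w)$ together with decay at infinity follows by repeating the arguments of Lemma~\ref{hzwcontinuity} and the proof of Proposition~\ref{prop:Green-Delta}. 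Restricting to $z=w=u$ gives the regularity of $\partial_{x_k}\Delta^\rho_{11}(u,u)$, and analyticity in $s$ on the neighborhood of Proposition~\ref{pr:analyt} is inherited from that of $K$, $h$ and $\Delta^\rho$ via the displayed formula; the local uniformity in $s$ and in the distinct $x_j$ is likewise inherited. (An equivalent route would start from the factorization identity of Proposition~\ref{prop:factorization}, which expresses $\partial_{x_k}S^\rho(z,w)$ for $z\ne w$ as a product of Green's-function matrix elements with explicit singular parts, and read off $\partial_{x_k}\Delta^\rho$ as the resulting regular part.)

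The main obstacle is the integrability bookkeeping in the last two steps: one must verify that differentiating in $x_k$ — which sharpens the branch-point singularities from $|\cdot-x_k|^{\pm2\alpha_k}$ to $|\cdot-x_k|^{\pm2\alpha_k-1}$ — preserves both the local integrability needed to differentiate under the $u$- and $v$-integrals defining $g_\rho$, $K$ and $h$, and the joint continuity of the point-split diagonal $\lim_{h\to0}$. This is the same flavor of estimate as the ones carried out throughout Section~\ref{sec:Green}, now with one extra derivative; it works precisely because $\alpha_k\in(-\tfrac12,\tfrac12)$, so that $\pm2\alpha_k-1\in(-2,0)$ and all the planar integrals that occur remain convergent.
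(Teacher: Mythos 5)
Your high-level plan — reduce to differentiability and $\mu$-analyticity of the integrand $g_\rho(s)$, split $g_\rho$ via the representation \eqref{temp76uyth} into an explicit $\chi\times\chi$ double integral plus a $\Delta$-part, and then control each piece — is reasonable, and your observation that the integrability eventually hinges on $\pm 2\alpha_k - 1 > -2$ is correct. However, the paper does not go through the Fredholm equation at all: it invokes the factorization identity (Proposition~\ref{prop:factorization}) to write $\partial_{x_j}S_{11}(z,w)$ directly as $2\pi\alpha_j\lim_{z',w'\to x_j}\rho(z')\rho(w')^{-1}S_{11}(z,w')S_{21}(z',w)$, reads off the bound $|\partial_{x_j}S_{11}(u+h,u)|\leq C|x_j-u-h|^{-|\alpha_j|}|x_j-u|^{-1-|\alpha_j|}$ from the short-distance representation \eqref{S11definition-bis}, \eqref{S21definition-bis} and Lemma~\ref{finerbound}, and then applies Leibniz's rule to swap $\partial_{x_j}$ with the $u$-integral, with analyticity following from Proposition~\ref{pr:analyt12}. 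You mention this as an ``equivalent route'' but default to the Fredholm differentiation, which is where your argument breaks down.

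The gap is in the step $\partial_{x_k}\Delta^\rho(\cdot,w)=(I-K)^{-1}\big[(\partial_{x_k}K)\Delta^\rho(\cdot,w)+\partial_{x_k}h(\cdot,w)\big]$ and the accompanying claim that $\partial_{x_k}\Delta^\rho(z,w)$ is jointly continuous. The operator $(I-K)^{-1}$ constructed via the Fredholm alternative in Proposition~\ref{pr:diracexist} is bounded on $C_0(\C,\C^2)$, so the displayed formula requires the bracket to lie in $C_0$. But the first component of $(\partial_{x_k}K)\Delta^\rho$ is the Cauchy transform of $\chi(u)\,\partial_{x_k}|\rho(u)|^{-2}\,\Delta_{21}^\rho(u,w)$, and for $\alpha_k>0$ the weight-derivative has singularity $|u-x_k|^{-2\alpha_k-1}$, which lies only in $L^p$ for $p<2/(1+2\alpha_k)<2$; a rescaling argument shows its Cauchy transform blows up like $|z-x_k|^{-2\alpha_k}$ near $z=x_k$. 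So the bracket is not in $C_0$, $(I-K)^{-1}$ does not directly apply, and $\partial_{x_k}\Delta^\rho(z,w)$ is in fact not jointly continuous near $z=x_k$ (the singularity only cancels \emph{after} multiplying by the $|\rho(u)|^2$ weight present in \eqref{temp76uyth}, which your argument does not exploit). To make the Fredholm route work one would have to move to a weighted space such as the $\H_\rho$ of Section~\ref{sec:Green}; the factorization identity you flagged as an alternative sidesteps all of this, which is precisely why the paper uses it.
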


\begin{proof}
To simplify notation we write $S$ in place of $S^\rho_{\mu\chi}$ thoughout the proof.
By \eqref{derivfactorisation} we have
\begin{equation}
\frac{1}{2\pi}\partial_{x_j}S_{11}(z,w)=\alpha_j\lim_{z',w'\rightarrow x_j}\frac{S_{11}(z,w')}{\rho(w')}S_{21}(z',w)\rho(z').
\end{equation}
Using  \eqref{S11definition-bis}, \eqref{S21definition-bis} and Lemma~\ref{finerbound}, we see the bound
\begin{equation}
|\partial_{x_j}S_{11}(u+h,u)|\leq C \frac{1}{|x_j-u-h|^{|\alpha_j|}|x_j-u|^{1+|\alpha_j|}}
\end{equation}
for $u$ in a small ball in which $x_j$ lies and $h$ small. This is integrable for $x_j$ varying in this small ball, for fixed $h$ small. Hence Leibniz's integral rule gives
 \begin{equation}
 \partial_{x_j}\int_\C du\, \chi(u)\, (S^\rho_{s\chi,11}(u+h,u)-S_{s\chi,11}^1(u+h,u))=\int_\C du\, \chi(u)\, \partial_{x_j}S^\rho_{s\chi,11}(u+h,u).\label{tempdfgcxeq}
 \end{equation}
Now noting that \eqref{tempdfgcxeq} is bounded uniformly for $x_j$ in the small ball and all $h$ small, and noting analyticity, we have
\begin{align}  \label{eq:logZdervic}
  \partial_{x_j}\log \tilde Z_\rho(\mu\chi)
  &=\partial_{x_j}\int_0^\mu ds\lim_{h\to 0}\int_\C du\, \chi(u)\, (S^\rho_{s\chi,11}(u+h,u)-S_{s\chi,11}^1(u+h,u))+(1\leftrightarrow 2)\nnb
  &=\int_0^\mu ds\int_\C du\, \chi(u)\, (\partial_{x_j}S^\rho_{s\chi,11}(u,u)+\partial_{x_j}S^\rho_{s\chi,22}(u,u)).
\end{align}
Since the argument argument holds also for the antiholomorphic derivatives,
this shows differentiability of $\log \tilde Z_\rho(\mu\chi)$.
That the derivative has an analytic extension follows by analyticity of the derivative of the Green's function, which in turn follows by the factorization identity
(Proposition~\ref{prop:factorization})
and analyticity of the Green's function itself (Proposition~\ref{pr:analyt12}).
\end{proof}

\subsection{Proof of Theorem~\ref{thm:massive-bosonization}}

The proof of the theorem is by extending the left- and right-hand sides analytically to complex $\mu=Az$ and uniqueness of analytic continuation.

\begin{proposition}\label{prop:boso00}
  For $f_1,\dots,f_n\in C_c^\infty(\C)$ with disjoint supports and
  $\chi$ as in \eqref{e:chi},
  \begin{align}
    &\avg{\wick{e^{i\alpha_1\sqrt{4\pi}\varphi}}(f_1)\cdots \wick{e^{i\alpha_n \sqrt{4\pi}\varphi}}(f_n); \wick{\cos(\sqrt{4\pi}\varphi)}(\chi);\cdots; \wick{\cos(\sqrt{4\pi}\varphi)}(\chi)}_{\GFF(0)}^\mathsf T
      \nnb
    &\qquad =  \pb{2e^{-\gamma/2}}^{\sum_j \alpha_j^2} \pb{2\pi e^{-\gamma/2}}^{p} \int_{\C^{n}}dx\prod_{j=1}^n f(x_j)\partial_\mu^p|_{\mu=0}Z_\rho(\mu\chi),
  \end{align}
  where there are $p$ terms $\wick{\cos(\sqrt{4\pi}\varphi)}(\chi)$ on the left-hand side
  and we emphasize that there is no truncation between the $\wick{e^{i\alpha_j\sqrt{4\pi} \varphi}}$ terms.
\end{proposition}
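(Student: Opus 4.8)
The plan is to expand both sides of the asserted identity in $\mu=Az$ and match the coefficient of $\mu^p$, thereby reducing the statement to the massless Bosonization of Proposition~\ref{le:boso0} together with the moment--cumulant combinatorics.

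\emph{Step 1 (the fermionic side).} By \eqref{e:Zrho-derivmu} the $p$-th $\mu$-derivative commutes with the $x$-integral, and $\partial_\mu^p\big|_{\mu=0}Z_\rho(\mu\chi)=Z_\rho(0)\,\partial_\mu^p\big|_{\mu=0}\tilde Z_\rho(\mu\chi)$. Since $\log\tilde Z_\rho(\mu\chi)$ is analytic near $\mu=0$ and vanishes there (Lemma~\ref{le:pfanalyticity}), the exponential (Fa\`a di Bruno) formula gives $\partial_\mu^p\big|_{\mu=0}\tilde Z_\rho(\mu\chi)=\sum_{\pi\in\frP_p}\prod_{V\in\pi}\partial_\mu^{|V|}\big|_{\mu=0}\log\tilde Z_\rho(\mu\chi)$, and \eqref{e:pfanalyticity} expresses each factor as the difference, between the twisted ($\rho$) and the untwisted ($\rho\equiv1$) massless free fermions, of the cumulant of $\bar\psi_1\psi_1+\bar\psi_2\psi_2$ smeared against $\chi$.

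\emph{Step 2 (Bosonizing the building blocks).} By multilinearity, $\langle(\bar\psi_1\psi_1+\bar\psi_2\psi_2)(u_1);\cdots;(\bar\psi_1\psi_1+\bar\psi_2\psi_2)(u_l)\rangle^{\mathsf T}_{\FF_\rho(0)}$ is the sum over $(\epsilon_1,\dots,\epsilon_l)\in\{1,2\}^l$ of $\langle\bar\psi_{\epsilon_1}\psi_{\epsilon_1}(u_1);\cdots;\bar\psi_{\epsilon_l}\psi_{\epsilon_l}(u_l)\rangle^{\mathsf T}_{\FF_\rho(0)}$; Proposition~\ref{le:boso0} rewrites each such term as $(e^{\gamma/2}/4\pi)^l$ times the corresponding cumulant of $\wick{e^{\pm i\sqrt{4\pi}\varphi}}$ under $\GFF_\rho(0)$, and resumming with $\wick{e^{i\sqrt{4\pi}\varphi}}+\wick{e^{-i\sqrt{4\pi}\varphi}}=2\wick{\cos(\sqrt{4\pi}\varphi)}$ yields $(e^{\gamma/2}/2\pi)^l\langle\wick{\cos(\sqrt{4\pi}\varphi(u_1))};\cdots;\wick{\cos(\sqrt{4\pi}\varphi(u_l))}\rangle^{\mathsf T}_{\GFF_\rho(0)}$, and likewise with $\rho\equiv1$. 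Writing $D_l(x)$ for the (finite) difference of the two smeared bosonic cumulants, Step~1 becomes $\partial_\mu^p\big|_{\mu=0}\tilde Z_\rho(\mu\chi)=(e^{\gamma/2}/2\pi)^p\sum_{\pi\in\frP_p}\prod_{V\in\pi}D_{|V|}(x)$.

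\emph{Step 3 (the bosonic side and conclusion).} Fixing $x=(x_1,\dots,x_n)$ and abbreviating $A_x=\prod_j\wick{e^{i\alpha_j\sqrt{4\pi}\varphi(x_j)}}$ and $B=\wick{\cos(\sqrt{4\pi}\varphi)}(\chi)$, I would use $\langle A_x;B^{;p}\rangle^{\mathsf T}_{\GFF(0)}=\partial_t^p\big|_{t=0}\big(\avg{A_x e^{tB}}_{\GFF(0)}/\avg{e^{tB}}_{\GFF(0)}\big)$ together with \eqref{e:GFF-rho}, which gives $\avg{A_x e^{tB}}_{\GFF(0)}=\avg{A_x}_{\GFF(0)}\avg{e^{tB}}_{\GFF_\rho(0)}$, to conclude $\avg{A_x e^{tB}}_{\GFF(0)}/\avg{e^{tB}}_{\GFF(0)}=\avg{A_x}_{\GFF(0)}\exp\!\big(\sum_{l\ge1}\tfrac{t^l}{l!}D_l(x)\big)$ and hence $\langle A_x;B^{;p}\rangle^{\mathsf T}_{\GFF(0)}=\avg{A_x}_{\GFF(0)}\sum_{\pi\in\frP_p}\prod_{V\in\pi}D_{|V|}(x)$. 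These generating-function manipulations I would first carry out for the $(\epsilon,m)$-regularized fields, where all moments are finite, and then pass to the limit $\epsilon\to0$, $m\to0$, using Proposition~\ref{th:fracapp-bis} to control the mixed cumulant (and, combined with Steps~1--2, the finiteness and convergence of each $D_l$) and Lemma~\ref{lem:GFF-expi} for $\avg{A_x}_{\GFF(0)}=(2e^{-\gamma/2})^{\sum_j\alpha_j^2}Z_\rho(0)\neq0$. Integrating against $\prod_j f_j(x_j)$ by linearity of the cumulant in its smeared slot, combining with the previous steps, and collecting the constant $(2e^{-\gamma/2})^{\sum_j\alpha_j^2}$ from Lemma~\ref{lem:GFF-expi} and $(2\pi e^{-\gamma/2})^p$ (the inverse of the Bosonization factor $(e^{\gamma/2}/2\pi)^p$) then yields the claimed formula.

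The main obstacle is the analytic bookkeeping forced by the fact that at $\beta=4\pi$ the individual cumulants and moments of $\wick{\cos(\sqrt{4\pi}\varphi)}$ diverge (the cosine has winding $\pm1$, on the boundary of the $L^1$ phase): only the twisted-minus-untwisted differences $D_l$ and the mixed cumulants $\langle A_x;B^{;p}\rangle^{\mathsf T}$, in which the $\rho$-independent $|u_i-u_j|^{-2}$ coincidence singularities cancel, are finite. The generating-function and Fa\`a di Bruno identities therefore have to be run at the regularized level and transported to the limit through Proposition~\ref{th:fracapp-bis}, with careful tracking of the Bosonization constants; the remaining combinatorial and convergence steps are routine.
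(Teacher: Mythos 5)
Your proposal is correct and takes essentially the same route as the paper: both hinge on \eqref{e:pfanalyticity}, Proposition~\ref{le:boso0}, a Fa\`a di Bruno rearrangement relating $\partial_\mu^p \log\tilde Z_\rho$ to $\partial_\mu^p Z_\rho$, the factorization $\langle A_x F\rangle_{\GFF(0)}=\langle A_x\rangle_{\GFF(0)}\langle F\rangle_{\GFF_\rho(0)}$, and passage to the limit via Proposition~\ref{th:fracapp-bis} after running the combinatorics at the regularized level. The only organizational difference is cosmetic: the paper states and proves the needed cumulant identity abstractly for bounded random variables $X,Y_1,\dots,Y_p$ with $\langle X\rangle=1$ using the $p$-variable generating function $L(t_1,\dots,t_p)=\log\big(\langle Xe^{\sum t_jY_j}\rangle/\langle e^{\sum t_jY_j}\rangle\big)$, whereas you exploit that all the $Y_j$ coincide and use the single-variable ratio $\partial_t^p\big|_{t=0}\big(\langle A_xe^{tB}\rangle/\langle e^{tB}\rangle\big)$, packaging the twisted-minus-untwisted data into the cumulant differences $D_l$ from the outset — equivalent, and arguably a hair cleaner.
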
 

\begin{proof}
From \eqref{e:GFF-rho}, recall the definition of the expectation of the Gaussian free field with charge insertions encoded by $\rho$:
\begin{equation} \label{e:GFF-rho-bis}
  \avg{F(\varphi)}_{\GFF_\rho(0)}
  =\frac{\avg{ \prod_i \wick{e^{i\sqrt{4\pi}\alpha_i \varphi(x_i)}}F(\varphi)}_{\GFF(0)}}{\avg{\prod_i \wick{e^{i\sqrt{4\pi}\alpha_i \varphi(x_i)}}}_{\GFF(0)}}.
\end{equation}
Let us begin by noting that we have by \eqref{e:pfanalyticity},  the moments to cumulants formula, and multilinearity of the cumulants that
\begin{align}
  &\partial_\mu^p|_{\mu=0} \log Z_\rho(\mu\chi)\nnb
  &=\int_{\C}du\, \chi(u_1)\cdots \chi(u_p)\sum_{\pi\in \frP_p}(-1)^{|\pi|-1}(|\pi|-1)!\nnb
  &\qquad \times \left[\prod_{B\in \pi}\avg{\prod_{l\in B}(\psi_1\bar\psi_1(u_l)+\psi_2\bar\psi_2(u_l))}_{\FF_\rho(0)}-\prod_{B\in \pi}\avg{\prod_{l\in B}(\psi_1\bar\psi_1(u_l)+\psi_2\bar\psi_2(u_l))}_{\FF(0)}\right],
\end{align}
where $\frP_p$ denotes the set of  partitions of $\{1,\dots, p\}$.
By Proposition~\ref{le:boso0} (and $e^{ix}+e^{-ix}=2\cos x$), this equals
\begin{align}
  &\pB{\frac{e^{\gamma/2}}{4\pi}}^{p}\int_{\C}du\, \chi(u_1)\cdots \chi(u_p)\sum_{\pi\in \frP_p}(-1)^{|\pi|-1}(|\pi|-1)!\nnb
&\qquad \times \left[\prod_{B\in \pi}\avg{\prod_{l\in B}2\wick{\cos(\sqrt{4\pi}\varphi(u_l))}}_{\GFF_\rho(0)}-\prod_{B\in \pi}\avg{\prod_{l\in B}2\wick{\cos(\sqrt{4\pi}\varphi(u_l))}}_{\GFF(0)}\right].
\end{align}
Thus, by Faà di Bruno's formula,
\begin{align}
  \partial_\mu^p|_{\mu=0}Z_\rho(\mu\chi)
  &= Z_\rho(0) \pB{\frac{e^{\gamma/2}}{2\pi}}^p 
    \int_{\C^p}du\, \chi(u_1)\cdots \chi(u_p)\sum_{\pi\in \frP_p}\prod_{B\in \pi}\sum_{\tau\in \frP_B}(-1)^{|\tau|-1}(|\tau|-1)!\nnb
  &\quad\times \left[\prod_{C\in \tau}\avg{\prod_{l\in C}\wick{\cos(\sqrt{4\pi}\varphi(u_l))}}_{\GFF_\rho(0)}-\prod_{C\in \tau}\avg{\prod_{l\in C}\wick{\cos(\sqrt{4\pi}\varphi(u_l))}}_{\GFF(0)}\right],
\end{align}
where $\frP_B$ denotes the set of partitions of the finite set $B$.
Since, by the definition of $Z_\rho(0)$, see \eqref{Zrho0} and Lemma~\ref{lem:GFF-expi},
\begin{equation}
  Z_\rho(0) = (\frac12 e^{\gamma/2})^{\sum_j \alpha_j^2} \avg{\wick{e^{i\sqrt{4\pi}\alpha_1\varphi(x_1)}}\cdots \wick{e^{i\sqrt{4\pi}\alpha_n\varphi(x_n)}}}_{\GFF(0)},
\end{equation}
it suffices to show that
\begin{align}
&\avg{\wick{e^{i\sqrt{4\pi}\alpha_1\varphi(x_1)}}\cdots \wick{e^{i\sqrt{4\pi}\alpha_n\varphi(x_p)}}; \wick{\cos(\sqrt{4\pi}\varphi(u_1))};\cdots; \wick{\cos(\sqrt{4\pi}\varphi(u_n))}}_{\GFF(0)}^\mathsf T\nnb
&=\avg{\wick{e^{i\sqrt{4\pi}\alpha_1\varphi(x_1)}}\cdots \wick{e^{i\sqrt{4\pi}\alpha_n\varphi(x_n)}}}_{\GFF(0)}\sum_{\pi\in \frP_p}\prod_{B\in \pi}\sum_{\tau\in \frP_B}(-1)^{|\tau|-1}(|\tau|-1)!\nnb
&\quad\times \left[\prod_{C\in \tau}\avg{\prod_{l\in C}\wick{\cos(\sqrt{4\pi}\varphi(u_l))}}_{\GFF_\rho(0)}-\prod_{C\in \tau}\avg{\prod_{l\in C}\wick{\cos(\sqrt{4\pi}\varphi(u_l))}}_{\GFF(0)}\right].
\end{align}
Since  both sides are defined by the $\epsilon,m\to 0$ limit of the corresponding regularized objects, it is sufficient for us to prove that
\begin{align}
&\avg{\wick{e^{i\sqrt{4\pi}\alpha_1\varphi(x_1)}}_\epsilon \cdots \wick{e^{i\sqrt{4\pi}\alpha_n\varphi(x_n)}}_\epsilon; \wick{\cos(\sqrt{4\pi}\varphi(u_1))}_\epsilon;\dots; \wick{\cos(\sqrt{4\pi}\varphi(u_n))}_\epsilon}_{\GFF(m,\epsilon)}^\mathsf T\nnb
&=\avg{\wick{e^{i\sqrt{4\pi}\alpha_1\varphi(x_1)}}_\epsilon\cdots \wick{e^{i\sqrt{4\pi}\alpha_n\varphi(x_n)}}_\epsilon}_{\GFF(m,\epsilon)}\sum_{\pi\in \frP_p}\prod_{B\in \pi}\sum_{\tau\in \frP_B}(-1)^{|\tau|-1}(|\tau|-1)!\nnb
&\quad\times \left[\prod_{C\in \tau}\avg{\prod_{l\in C}\wick{\cos(\sqrt{4\pi}\varphi(u_l))}_\epsilon}_{\GFF_\rho(m,\epsilon)}-\prod_{C\in \tau}\avg{\prod_{l\in C}\wick{\cos(\sqrt{4\pi}\varphi(u_l))}_\epsilon}_{\GFF(m,\epsilon)}\right],
\end{align}
where  $\GFF_\rho(m,\epsilon)$ is defined in \eqref{e:GFF-rho-m-eps}.
Note that all the Wick ordered objects above are bounded random variables for fixed $\epsilon,m$, so we are dealing with correlation functions of honest (even bounded) random variables. Let us introduce some notation to simplify the proof. Let us write 
\begin{equation}
X=\frac{\wick{e^{i\sqrt{4\pi}\alpha_1\varphi(x_1)}}_\epsilon\cdots \wick{e^{i\sqrt{4\pi}\alpha_n\varphi(x_n)}}_\epsilon}{\avg{\wick{e^{i\sqrt{4\pi}\alpha_1\varphi(x_1)}}_\epsilon\cdots \wick{e^{i\sqrt{4\pi}\alpha_n\varphi(x_n)}}_\epsilon}_{\GFF(m,\epsilon)}}
\end{equation}
and $Y_j=\wick{\cos(\sqrt{4\pi}\varphi(u_j))}_\epsilon$ and note that $\avg{X}=1$. With this notation, our claim becomes 
\begin{align}
\avg{X;Y_1;\dots;Y_p}^\mathsf T=\sum_{\pi\in \frP_p}\prod_{B\in \pi}\sum_{\tau\in \frP_B}(-1)^{|\tau|-1}(|\tau|-1)!\left[\prod_{C\in \tau}\avg{X\prod_{l\in C}Y_l}-\prod_{C\in \tau}\avg{\prod_{l\in C}Y_l}\right],
\end{align}
where we dropped the subscript $\GFF(m,\epsilon)$ from $\avg{\cdots}$. 

This is actually a general identity for cumulants.
To see why it is true for bounded random variables, introduce the function 
\begin{equation}
L(t_1,\dots,t_p)=\log \frac{\avg{X e^{\sum_{j=1}^p t_j Y_j}}}{\avg{e^{\sum_{j=1}^p t_j Y_j}}}.
\end{equation}
Since our random variables are bounded and $\avg{X}=1$, this is analytic in some neighborhood of the origin and by the definition of cumulants,
\begin{equation}
\avg{X;Y_1;\dots;Y_p}^\mathsf T=\partial_{t_1}\cdots \partial_{t_p}|_{t=0} e^{L(t_1,\dots,t_p)}.
\end{equation}
On the other hand, by the Faà di Bruno formula, 
\begin{equation}
\partial_{t_1}\cdots \partial_{t_p}|_{t=0} e^{L(t_1,\dots,t_p)}=\sum_{\pi\in \frP_p}\prod_{B\in \pi}\prod_{j\in B}\partial_{t_j}|_{t=0}L(t_1,\dots,t_p).
\end{equation}
Comparing with our goal, we see that it suffices to prove that for each $B\subset \{1,..,p\}$ 
\begin{equation}
\prod_{j\in B}\partial_{t_j}|_{t=0}L(t_1,\dots,t_p)=\sum_{\tau\in \frP_B}(-1)^{|\tau|-1}(|\tau|-1)!\left[\prod_{C\in \tau}\avg{X\prod_{l\in C}Y_l}-\avg{\prod_{l\in C}Y_l}\right].
\end{equation}
In fact, it is even sufficient to prove that 
\begin{equation}
\prod_{j\in B}\partial_{t_j}|_{t=0}\log \avg{Xe^{\sum_{j=1}^p t_jY_j}}=\sum_{\tau\in \frP_B}(-1)^{|\tau|-1}(|\tau|-1)!\prod_{C\in \tau}\avg{X\prod_{l\in C}Y_l}
\end{equation}
since the denominator corresponds to setting $X=1$. This is now a straightforward application of Faà di Bruno and the fact that $\frac{d^n}{dx^n}\log x=(-1)^{n-1}(n-1)!x^{-n}$ combined with the fact that $\avg{X}=1$. This completes the proof.
\end{proof}

\begin{proof}[Proof of Theorem~\ref{thm:massive-bosonization}]
  By Theorem~\ref{thm:finvol-massless-corr} and Lemma~\ref{le:pfanalyticity}, both sides of \eqref{e:massive-bosonization} are analytic functions of $\mu=Az$ in a neighborhood of the real axis.
  It is thus sufficient to verify that they agree in a neighborhood of the origin, or equivalently that all of their derivatives at zero match. For the left-hand side, we know from Theorem~\ref{thm:finvol-massless-corr} that 
\begin{align}
&\pa{\prod_{i=1}^n C_\eta^{-\alpha_i^2}} \frac{1}{2^p} \ddp{^p}{z^p}\Big|_{z=0}\avg{\prod_{j=1}^n M_{\alpha_j}(f_j) }_{\SG(\sqrt{4\pi},z | 0,\Lambda)}\nnb
&= \avg{\wick{e^{i\alpha_1\sqrt{4\pi}\varphi}}(f_1)\cdots \wick{e^{i\alpha_p \sqrt{4\pi}\varphi}}(f_n); \wick{\cos(\sqrt{4\pi}\varphi)}(\chi);\dots ; \wick{\cos(\sqrt{4\pi}\varphi)}(\chi)}_{\GFF(0)}^\mathsf T.
\end{align}
From Proposition~\ref{prop:boso00} we know that this agrees (up to a constant)
with the corresponding derivative on the right-hand side.
Thus the theorem is proved.
\end{proof}

\section{Convergence to infinite-volume tau function}
\label{sec:Palmer}

In this section we show that the renormalized partition function $Z_\rho(\mu\chi)$, defined in Definition~\ref{def:Zrho},
converges in the infinite-volume limit $\chi \to 1$
to the tau function $\tau_\rho(\mu)$ defined in \cite[(4.9)]{MR1233355}.

This definition of the tau functions depends on monodromy parameters
$\alpha_1,\dots,\alpha_n \in (-\frac12,\frac12)$ and puncture points $x_1,\dots,x_n\in\C$ (both encoded by $\rho$)
and mass $\mu \neq 0$.
As in Section~\ref{sec:introferm}, even though they do not depend on the choice of branch cuts encoded by $\rho$,
we write $\tau_\rho(\mu)$.
The translation of our convention to that of \cite{MR1233355} is discussed in detail in Section~\ref{sec:Palmer-notation} below.

For the proof of the following theorem, all we need to know about the tau functions is that they are characterized (up to a multiplicative constant)
by Proposition~\ref{prop:logderivtaufnformula} below (and one might even consider this as their definition).

\begin{theorem} \label{thm:convergence-to-palmer}
  Let $\alpha_1,\dots,\alpha_n \in (-\frac12,\frac12)$ with $\sum_i\alpha_i=0$, and let $x_1,\dots, x_n \in \C$ be distinct.
  Let  $\mu \in \R$ and  $\chi(z)=\1_{\Lambda_L}(z)$ where $\Lambda_L$ is the open disk of radius $L$ centred at the origin. Then,
  in the limit $L\rightarrow \infty$,
\begin{equation}
  \partial_{x_j}\log Z_\rho(\mu\chi)\rightarrow \partial_{x_j}\log \tau_\rho(\mu),
\end{equation}
an analogous statement holds for the antiholomorphic derivatives,
and the convergence is uniform on compact subsets of $x_1,\dots, x_n \in \C$ distinct.
\end{theorem}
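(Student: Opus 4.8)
The plan is to reduce the statement to a convergence question for the twisted Green's function and then apply the estimates of Section~\ref{sec:Green}. By the evenness $Z_\rho(\mu\chi)=Z_\rho(-\mu\chi)$ (Proposition~\ref{prop:Zrho}) and the corresponding symmetry of $\tau_\rho$ we may assume $\mu\ge 0$. Since $Z_\rho(\mu\chi)=Z_\rho(0)\,\tilde Z_\rho(\mu\chi)$ with $Z_\rho(0)=\prod_{r<s}|x_r-x_s|^{2\alpha_r\alpha_s}$ independent of $L$, it suffices to prove convergence of $\partial_{x_j}\log\tilde Z_\rho(\mu\chi)$. By Lemma~\ref{differentiabilitylogZ} (see \eqref{eq:logZdervic}),
\begin{equation}
\partial_{x_j}\log\tilde Z_\rho(\mu\chi)=\int_0^\mu ds\int_{\Lambda_L}du\,\bigl(\partial_{x_j}S^\rho_{s\chi,11}(u,u)+\partial_{x_j}S^\rho_{s\chi,22}(u,u)\bigr),
\end{equation}
and by the factorization identity (Proposition~\ref{prop:factorization}) the diagonal values $\partial_{x_j}S^\rho_{s\chi,ii}(u,u)$ are, up to the factor $2\pi\alpha_j$, limits $\lim_{z',w'\to x_j}\frac{\rho(z')}{\rho(w')}(\cdots)$ of products of two components of $S^\rho_{s\chi}$ with one pair of arguments collapsing onto $x_j$. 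The characterization of the log-derivative of the tau function (Proposition~\ref{prop:logderivtaufnformula}, resting on \cite{MR1233355}) gives the same expression with the finite-volume Green's function $S^\rho_{s\chi}$ replaced by the infinite-volume one $S^\rho_s$ of Proposition~\ref{pr:ivlim}, together with the matching massless contribution $\partial_{x_j}\log Z_\rho(0)$ at $s=0$. Thus the theorem amounts to exchanging $\lim_{L\to\infty}$ with the $s$- and $u$-integrals.

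First I would establish pointwise convergence: for fixed distinct branch points, fixed $s\in(0,\mu]$ and fixed $u\notin\{x_1,\dots,x_n\}$, $\partial_{x_j}S^\rho_{s\chi,ii}(u,u)\to\partial_{x_j}S^\rho_{s,ii}(u,u)$. This follows from convergence of $S^\rho_{s\chi}$ to $S^\rho_s$ at and near the branch points together with control of the collapsing limits in Proposition~\ref{prop:factorization} — the slightly strengthened form of Proposition~\ref{finiteGreentoinfGreen} recorded as Proposition~\ref{diffatbpts}, obtained from the resolvent identity of Proposition~\ref{prop-resolventidentity}, the exponential decay of $S^\rho_s$ (locally uniform in the branch points by Appendix~\ref{app:SMJ}), and the volume-dependent bound $\int_{\Lambda_L^c}(|\Delta^\rho_{11}(\cdot,w)|+|\Delta^\rho_{21}(\cdot,w)|)e^{-|\mu||\cdot|}=o(1/L)$ of Proposition~\ref{prop:Ldepenbounds}. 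Next I would produce an $L$-independent integrable majorant on $[0,\mu]\times\C$: near each $x_k$ the short-distance expansion \eqref{S11definition-bis}--\eqref{S21definition-bis} with Lemma~\ref{finerbound} and the bound \eqref{tempineq12-bis} give, uniformly in $L$, in $s$ bounded, and in branch points on a fixed compact set, a local singularity of order $|u-x_k|^{-1-2|\alpha_k|}\in L^1_{\mathrm{loc}}(\C)$ since $|\alpha_k|<\frac12$; away from the branch points the finite-volume integrand inherits the exponential decay of $S^\rho_s$ via the resolvent comparison and Proposition~\ref{prop:Ldepenbounds}, with a rate degenerating as $s\to0$ but compensated by the vanishing of the massless term ($\partial_{x_j}S^\rho_{0,11}=\partial_{x_j}S^\rho_{0,22}=0$, from the proof of Proposition~\ref{prop:factorization}) and the bound $O(|s|)$ for $|s|$ small coming from the perturbative expansion \eqref{eq:Spert}, which makes the majorant integrable in $s$ near $0$. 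Dominated convergence then gives convergence of $\partial_{x_j}\log\tilde Z_\rho(\mu\chi)$, hence of $\partial_{x_j}\log Z_\rho(\mu\chi)$; the antiholomorphic case is identical after the substitution in Proposition~\ref{prop:factorization}. Uniformity on compact subsets of distinct $x_1,\dots,x_n$ is automatic because every bound used — Propositions~\ref{prop:Green-Delta}, \ref{pr:analyt}, \ref{prop:Ldepenbounds}, the factorization identity, and the decay from Appendix~\ref{app:SMJ} — is stated uniformly over such compacta, and the limit is continuous there.

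The main obstacle is the tail estimate away from the branch points: in finite volume $S^\rho_{s\chi}$ has no a priori exponential decay, so controlling $\int_{\Lambda_L}du$ of the integrand uniformly in $L$ — which is exactly what is needed to exchange $\lim_{L\to\infty}$ with the integrals — requires comparing $S^\rho_{s\chi}$ with the exponentially decaying $S^\rho_s$ through the resolvent identity, using precisely the boundary/volume bounds of Section~\ref{sec:Green} (Propositions~\ref{finiteGreentoinfGreen}, \ref{prop:Ldepenbounds}, and their strengthening \ref{diffatbpts}). Making this comparison quantitative near $\partial\Lambda_L$, where the finite-volume Green's function is least well approximated by its infinite-volume counterpart, while keeping all constants uniform over a compact set of branch points, is the technical heart of the argument; the remaining steps — the factorization rewriting, local integrability near the punctures, and the identification of the limit with Palmer's formula — are comparatively routine.
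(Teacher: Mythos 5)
Your proposal takes a genuinely different route from the paper's, and the difference matters: the paper does not exchange $\lim_{L\to\infty}$ with the $(s,u)$-integrals. It first invokes Proposition~\ref{prop:logderivpartfnformula} to convert the integral representation of Lemma~\ref{differentiabilitylogZ} into a single boundary limit,
\[
  \partial_{x_j}\log\tilde Z_\rho(\mu\chi)=-2\pi\alpha_j\lim_{z,w\to x_j}\tfrac{\rho(z)}{\rho(w)}\bigl(S^\rho_{21,\mu\chi}(z,w)-S^\rho_{21,0}(z,w)\bigr),
\]
pairs this with the analogous formula for $\tau_\rho$ from Proposition~\ref{prop:logderivtaufnformula}, subtracts, and sees that the difference is exactly the quantity in Proposition~\ref{diffatbpts}, which vanishes as $L\to\infty$. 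No dominating function is needed at all, and the $L\to\infty$ limit is a single pointwise limit at the branch point.

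Your plan has two genuine gaps. First, passing to the limit under $\int_0^\mu\int_{\Lambda_L}$ requires an $L$-independent integrable majorant for $\partial_{x_j}S^\rho_{s\chi,ii}(u,u)$ over all $u\in\Lambda_L$; the tools you cite do not give this. Proposition~\ref{prop:Ldepenbounds} controls $\Delta^\rho$ only in the \emph{exterior} of $\Lambda_L$ and its proof has constants growing polynomially in $L$, and Propositions~\ref{finiteGreentoinfGreen} and~\ref{diffatbpts} give pointwise convergence at fixed $(z,w)$, not uniform decay of the finite-volume Green's function inside $\Lambda_L$ away from the branch points. You candidly flag this as ``the technical heart'' but do not supply the missing estimate, and it is far from routine — it is precisely what the paper's resummation is designed to avoid. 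Second, even granting the limit exchange, you would land on $\int_0^\mu\int_\C\bigl(\partial_{x_j}S^\rho_{s,11}(u,u)+\partial_{x_j}S^\rho_{s,22}(u,u)\bigr)\,du\,ds$, and you would still need to identify this with $\partial_{x_j}\log\tau_\rho(\mu)-\partial_{x_j}\log Z_\rho(0)$. Proposition~\ref{prop:logderivtaufnformula} gives the tau function's log-derivative as a boundary limit, not as an integral over $u$; you would need an infinite-volume version of the integral-to-boundary conversion of Proposition~\ref{prop:logderivpartfnformula}, which the paper does not provide (and whose proof relies crucially on compactly supported $\chi$ through the series expansion \eqref{eq:Spert}). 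Your phrasing that Proposition~\ref{prop:logderivtaufnformula} ``gives the same expression with $S^\rho_{s\chi}$ replaced by $S^\rho_s$'' conflates the two representations and obscures exactly this gap.

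The fix is simple and is what the paper does: apply Proposition~\ref{prop:logderivpartfnformula} \emph{before} taking $L\to\infty$, so that both sides are expressed as boundary limits, and convergence becomes Proposition~\ref{diffatbpts} — no dominating function, no infinite-volume integral identity needed.
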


\begin{remark}
  The independence of $\tau_\rho(\mu)$ of the branch cuts does not seem to be explicitly stated in~\cite{MR1233355},
 but it now follows from our Proposition~\ref{prop:Zrho} for finite-volume versions
 of the tau functions and their convergence to the infinite-volume ones
 --  at least up to a multiplicative constant which is all that is relevant for us.
\end{remark}

The starting points for the proof of Theorem~\ref{thm:convergence-to-palmer} are the following two analogous formulas for the logarithmic derivatives
of the finite-volume renormalized partition function $Z_\rho(\mu\chi)$ from Definition~\ref{def:Zrho} and the tau functions from \cite{MR1233355}, proved in the remainder of this section.

\begin{proposition}\label{prop:logderivpartfnformula}
  Let the $x_i$ and $\alpha_i$ be as in Theorem~\ref{thm:convergence-to-palmer}.
  Then, for any $\chi$ as in \eqref{e:chi},
  \begin{align}\label{eq:logderivpartfnformula}
    \partial_{x_j}\log \tilde{Z}_\rho(\mu\chi)
    =-2\pi\alpha_j\lim_{z\rightarrow x_j}\lim_{w\rightarrow x_j}\frac{\rho(z)}{\rho(w)}(S_{21,\mu\chi}^\rho(z,w)-S_{21,0}^\rho(z,w)),
  \end{align}
  and $\bar\partial_{x_j}\log \tilde{Z}_\rho(\mu\chi) = \overline{\partial_{x_j}\log \tilde{Z}_{1/\rho}(\mu\chi)}$, and
  where $S^\rho_{\mu\chi}$ denotes the twisted Dirac Green's function with finite-volume mass from
  Definition~\ref{def:diracmass}. 
\end{proposition}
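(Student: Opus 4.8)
The plan is to differentiate the defining formula for $\log\tilde Z_\rho(\mu\chi)$ in \eqref{Ztilderho} with respect to the branch point $x_j$, commute the $x_j$-derivative past the $s$-integral and the $u$-integral, and then use the factorization identity from Proposition~\ref{prop:factorization} to recognize the resulting expression as the limit of $S_{21}^\rho$ along the diagonal. Concretely, the starting point is \eqref{eq:logZdervic}, namely
\begin{equation}
  \partial_{x_j}\log\tilde Z_\rho(\mu\chi)
  = \int_0^\mu ds\int_\C du\,\chi(u)\,\big(\partial_{x_j}S^\rho_{s\chi,11}(u,u)+\partial_{x_j}S^\rho_{s\chi,22}(u,u)\big),
\end{equation}
which is already established in Lemma~\ref{differentiabilitylogZ} (together with the justification for interchanging $\partial_{x_j}$ with the $h\to 0$ limit and the two integrals, using the local integrability bounds on $\partial_{x_j}S_{11}$ coming from Proposition~\ref{prop:Green-Delta} and Lemma~\ref{finerbound}). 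The first step is therefore just to quote this identity.

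The second step is to rewrite the diagonal derivative $\partial_{x_j}S^\rho_{s\chi,11}(u,u)$ using the factorization identity \eqref{derivfactorisation}. Evaluating \eqref{derivfactorisation} on the diagonal $z=w=u$ and taking the $(1,1)$ entry gives
\begin{equation}
  \tfrac{1}{2\pi}\partial_{x_j}S^\rho_{s\chi,11}(u,u)
  = \alpha_j\lim_{z',w'\to x_j}\frac{\rho(z')}{\rho(w')}\,S^\rho_{s\chi,11}(u,w')\,S^\rho_{s\chi,21}(z',u),
\end{equation}
and similarly the $(2,2)$ entry of \eqref{derivfactorisation} produces $S^\rho_{s\chi,21}(u,w')S^\rho_{s\chi,22}(z',u)$; combining with the antiholomorphic version one assembles a matrix product. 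Substituting these into the $u$-integral, the integral $\int_\C du\,\chi(u)\,S^\rho_{s\chi,1k}(u,w')S^\rho_{s\chi,k1}(z',u)$ is exactly (up to sign) the $(1,1)$ entry of the first-order term in the resolvent-type series \eqref{tildeSdelta}: since $\partial_s S^\rho_{s\chi}(z',w') = -\int_\C du\,\chi(u)S^\rho_{s\chi}(z',u)S^\rho_{s\chi}(u,w')$ (which follows by differentiating \eqref{eq:Spert-bis}/\eqref{tildeSdelta} in $s$, valid in a neighborhood of each $s\in[0,\mu]$ by analyticity in $\mu$ from Proposition~\ref{pr:analyt12}), the $u$-integral collapses the whole expression to $-\tfrac{1}{2\pi}\partial_{x_j}\partial_s$ acting on a single Green's function. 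After integrating over $s\in[0,\mu]$ and using $S^\rho_{0,11}=S^\rho_{0,22}=0$ on the diagonal, one is left with a difference $S^\rho_{21,\mu\chi}-S^\rho_{21,0}$ evaluated in the coincidence limit, weighted by $\rho(z)/\rho(w)$ — this is \eqref{eq:logderivpartfnformula}. The antiholomorphic formula then follows from the symmetry $Z_\rho = Z_{1/\rho}$ (Proposition~\ref{prop:Zrho}) together with $\overline{S^\rho_{21}(z,w)} = -S^{1/\rho}_{12}(w,z)$ and the relation between $S_{12}$ and $S_{21}$ under $\rho\mapsto 1/\rho$ in Proposition~\ref{prop:Greensym}, exactly as in the stated claim $\bar\partial_{x_j}\log\tilde Z_\rho = \overline{\partial_{x_j}\log\tilde Z_{1/\rho}}$.

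The main obstacle I anticipate is bookkeeping the order of limits: the factorization identity produces the $z',w'\to x_j$ limit \emph{inside} the $u$-integral, and one must move it outside so that it becomes the coincidence limit appearing in \eqref{eq:logderivpartfnformula}. This is licit because the factors $\rho(z')/\rho(w')$ cancel the singularities of $\rho$ in the Green's function, yielding a bound of the form $|z'-x_j|^{-|\alpha_j|}|w'-x_j|^{-|\alpha_j|}$ times something integrable in $u$ uniformly in $(z',w')$ near $x_j$ (this is precisely the estimate used at the end of the proof of Proposition~\ref{prop:factorization}), so dominated convergence applies; one also needs that the $h\to 0$ point-splitting limit from \eqref{Ztilderho} has already been taken and matches the diagonal evaluation, which again follows from the joint continuity of the $\Delta^\rho_{ii}$ away from the relevant singular set and the explicit short-distance expansion \eqref{S11definition-bis}. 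The only genuinely delicate point is checking that the combination that survives is $S_{21}$ (rather than a more complicated matrix entry): this uses $S^\rho_{0,ii}=0$ together with the observation that, among the four matrix-product terms assembled from the holomorphic and antiholomorphic parts of \eqref{derivfactorisation}, the $s$-integral of the $\partial_s$-derivative of $S_{11}$ and $S_{22}$ vanishes at $s=0$ on the diagonal, leaving only the off-diagonal contribution.
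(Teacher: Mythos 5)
Your proposal is correct and takes a genuinely different route from the paper's proof, and I think it is worth pointing out the contrast. The paper proves \eqref{eq:logderivpartfnformula} by inserting the explicit $s$-power series for $S^\rho_{s\chi,11}$ and $S^\rho_{s\chi,22}$ (from \eqref{e:S11-expand-bis}--\eqref{S21expanstemp}), differentiating the $\rho$-factors term by term, re-indexing the cyclic sums, and recognizing the result as $\partial_s$ applied to the series for $S^\rho_{s\chi,21}$; it then extends from small $|\mu|$ to all real $\mu$ by uniqueness of analytic continuation. You instead invoke the factorization identity of Proposition~\ref{prop:factorization} at coincident points, combine the $(1,1)$ and $(2,2)$ entries into the single matrix entry $[S^\rho_{s\chi}(z',u)S^\rho_{s\chi}(u,w')]_{21}$, and use the resolvent-type relation $\partial_s S^\rho_{s\chi}(z',w')=-\int_\C du\,\chi(u)\,S^\rho_{s\chi}(z',u)S^\rho_{s\chi}(u,w')$ (the $\delta$-derivative at $\delta=0$ of \eqref{tildeSdelta}, valid pointwise at every real $s$ by Proposition~\ref{pr:analyt12}) to collapse the $u$-integral. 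This is cleaner and avoids the re-indexing bookkeeping, and it works directly for all real $\mu$ without a separate analytic continuation step; the trade-off is that it leans on the factorization identity, which in the paper is itself proved by a series manipulation of the same flavor, so the overall chain of reasoning is not shorter, only better modularized.

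Two small corrections to your presentation. First, Proposition~\ref{prop:factorization} is stated for $z\neq w$, so applying it at $z=w=u$ requires a one-line continuity remark: both sides are continuous in $(z,w)$ for $z,w$ near (but distinct from) $u\notin\Gamma\cup\{x_i\}$ because $\partial_{x_j}S^\rho_{11}(z,w)$ has no singularity on the diagonal (it involves $\Delta^\rho_{11}$ and a logarithmically bounded integral from Lemma~\ref{finerbound}), so the identity passes to $z=w=u$. Second, your final explanation of why only $S_{21}$ survives is muddled: you invoke ``$S^\rho_{0,ii}=0$'' and a vanishing of the $s$-integral at $s=0$, but neither is the real reason. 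What actually happens is purely algebraic: the sum of the $(1,1)$ and $(2,2)$ entries of the factorization identity is
\begin{equation}
  S^\rho_{21}(z',u)S^\rho_{11}(u,w')+S^\rho_{22}(z',u)S^\rho_{21}(u,w')
  =\bigl[S^\rho(z',u)S^\rho(u,w')\bigr]_{21},
\end{equation}
and $\int_\C du\,\chi(u)\bigl[S^\rho_{s\chi}(z',u)S^\rho_{s\chi}(u,w')\bigr]_{21}=-\partial_s S^\rho_{s\chi,21}(z',w')$; integrating $\partial_s$ over $s\in[0,\mu]$ gives the difference $S^\rho_{\mu\chi,21}-S^\rho_{0,21}$ by the fundamental theorem of calculus, not by any vanishing at $s=0$. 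With those adjustments the argument is complete.
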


\begin{proposition}\label{prop:logderivtaufnformula}
  Let the $x_i$ and $\alpha_i$ be as in Theorem~\ref{thm:convergence-to-palmer}. Then for $\mu \in \R$,
 \begin{equation}
   \partial_{x_j}\log \tau_\rho(\mu)
   =
   -2\pi\alpha_j\lim_{z\rightarrow x_j}\lim_{w\rightarrow x_j}\frac{\rho(z)}{\rho(w)}(S_{21,\mu}^\rho(z,w)-S_{21,0}^\rho(z,w))
   +\alpha_j\sum_{i\neq j}\alpha_i\frac{1}{x_j-x_i},
   \label{holologderiv}
 \end{equation}
 and $\bar\partial_{x_\nu}\log\tau_\rho(\mu)= \overline{\partial_{x_\nu}\log\tau_{1/\rho}(\mu)}$, and
 where $S^\rho_{\mu}$ denotes the infinite-volume twisted Dirac Green's function from  Definition~\ref{def:diracmass-infvol}.
 The limits in $z,w$ are taken such that $z,w\in \C\setminus \Gamma$.
\end{proposition}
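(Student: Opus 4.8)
The plan is to deduce \eqref{holologderiv} directly from the definition of $\tau_\rho(\mu)$ in \cite[Section~4]{MR1233355}, translated into our conventions as in Section~\ref{sec:Palmer-notation}, rather than from Theorem~\ref{thm:convergence-to-palmer} (which would be circular, since that theorem is proved using the present proposition). Palmer's construction produces, for distinct punctures and fixed mass, a formula for the logarithmic derivative of $\tau$ with respect to each puncture $a_j$, expressed through the canonical coefficients of the local expansion at $a_j$ of solutions of the twisted Dirac equation — equivalently, through the short-distance expansion of the massive twisted Green's function $S^\rho_\mu$ at $a_j$; indeed, as the statement before Theorem~\ref{thm:convergence-to-palmer} indicates, one may essentially regard this formula as the definition of $\tau$ (up to a multiplicative constant). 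The first step is therefore to record Palmer's formula and to make the translation of conventions fully explicit: Palmer's Dirac operator carries different factors of $2$, his Green's function an offsetting factor of $2$, his mass is $|\mu|$ with $\mu$ playing the role of $-m$, and his monodromy labels $\lambda_i$ are our $\alpha_i$ — all of this bookkeeping is collected in Section~\ref{sec:Palmer-notation}.

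The second step is to identify Palmer's expansion coefficient at $a_j$ with the quantity $\lim_{z\to x_j}\lim_{w\to x_j}\frac{\rho(z)}{\rho(w)}\bigl(S_{21,\mu}^{\rho}(z,w)-S_{21,0}^{\rho}(z,w)\bigr)$ occurring in \eqref{holologderiv}. Here one uses the short-distance structure of the infinite-volume Green's function: by \eqref{eq:S0} the massless Green's function satisfies $S_{21,0}^{\rho}(z,w)=\frac{1}{2\pi}\frac{\rho(w)}{\rho(z)}\frac{1}{z-w}$ \emph{exactly}, while Palmer's local regularity theory for the twisted Dirac equation (as in \cite{MR555666,MR1233355}) gives that $S_{21,\mu}^{\rho}(z,w)-S_{21,0}^{\rho}(z,w)$ has the form $\frac{\rho(w)}{\rho(z)}\Delta(z,w)$ with $\Delta$ jointly continuous near $(x_j,x_j)$; hence the iterated limit equals $\Delta(x_j,x_j)$, which is precisely the canonical coefficient entering Palmer's formula, up to the $2\pi$ and $\alpha_j$ factors dictated by the convention translation. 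This step is the infinite-volume analogue of the computation underlying Proposition~\ref{prop:logderivpartfnformula}, where the same role is played by the finite-volume regular part $\Delta^\rho_{21}(x_j,x_j)$, see \eqref{S21definition-bis}.

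The third step accounts for the rational term $\alpha_j\sum_{i\neq j}\alpha_i\frac{1}{x_j-x_i}$. One checks that
\begin{equation}
\partial_{x_j}\log\prod_{1\le r<s\le n}|x_r-x_s|^{2\alpha_r\alpha_s}=\alpha_j\sum_{i\neq j}\alpha_i\frac{1}{x_j-x_i},
\end{equation}
so this term is exactly $\partial_{x_j}\log Z_\rho(0)$ with $Z_\rho(0)$ as in \eqref{Zrho0}: it appears in \eqref{holologderiv} either because it is already present in Palmer's formula (his $\tau$ incorporating the corresponding ``classical'' prefactor) or, if Palmer normalises $\tau$ without it, because our normalisation differs by exactly this factor; in either case matching normalisations yields the stated identity. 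Finally, the antiholomorphic statement $\bar\partial_{x_j}\log\tau_\rho(\mu)=\overline{\partial_{x_j}\log\tau_{1/\rho}(\mu)}$ follows from the symmetries $S_{\mu\chi,22}^\rho(z,w)=\overline{S_{\mu\chi,11}^{1/\rho}(z,w)}$ and $S_{\mu\chi,12}^\rho(z,w)=\overline{S_{\mu\chi,21}^{1/\rho}(z,w)}$ of Proposition~\ref{prop:Greensym}, which by Proposition~\ref{finiteGreentoinfGreen} persist for $\chi=1$, together with the corresponding conjugation symmetry of the normalising factor (so that $\partial_{x_j}\log Z_{1/\rho}(0)=\overline{\bar\partial_{x_j}\log Z_\rho(0)}$ holds as well).

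The main obstacle is the convention bookkeeping in the first two steps: carrying Palmer's factors of $2$ (in the Dirac operator, in his Green's function, and the $\frac12$ in the mass term), his sign conventions ($\mu$ versus $-m$), and any difference in the overall normalisation of $\tau$, correctly through his derivation so as to land on exactly the prefactor $-2\pi\alpha_j$ and exactly the term $\alpha_j\sum_{i\neq j}\alpha_i/(x_j-x_i)$. A secondary subtlety, which is however covered by Palmer's local theory, is checking that the iterated short-distance limit of $S^\rho_\mu$ at $x_j$ exists and extracts precisely the coefficient used in his formula, rather than some other combination of the local expansion data.
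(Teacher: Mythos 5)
Your high-level plan — use Palmer's formula for $\partial_{x_j}\log\tau^\alpha$, express it through short-distance data of the Green's function, and track conventions — is the right one and matches the paper's approach. But there are two substantive gaps.

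First, your Step~2 treats the identification of the iterated limit
$\lim_{z\to x_j}\lim_{w\to x_j}\frac{\rho(z)}{\rho(w)}\bigl(S_{21,\mu}^\rho(z,w)-S_{21,0}^\rho(z,w)\bigr)$
with Palmer's expansion coefficient $\mathsf a_{\frac12,\nu}^\nu(x,-\alpha)$ as a routine matter of convention factors. It is not. Palmer's log-derivative formula \cite[Theorem~3]{MR1233355} is stated in terms of the coefficient in the local Bessel-function expansion \eqref{eq:Wnu} of a certain multivalued solution $W_\nu$, which is in turn tied to the Green's function through a nontrivial chain: the local expansion \cite[(2.6)]{MR1233355} of $G^{x,\alpha}$ in the functions $w_l, w_l^*$, the identity \eqref{eq:Wanda} relating the $k=\tfrac12$ column coefficient of $G$ to $W_\nu(\cdot,-\alpha)$, the $z\to 0$ asymptotics of the modified Bessel functions $I_\beta$, and the reflection identity $\Gamma(1-\alpha_\nu)\Gamma(1+\alpha_\nu)=\pi\alpha_\nu/\sin(\pi\alpha_\nu)$. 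One must also pass from the $G_{11}$ component (which Palmer's formula naturally produces) to our $S_{21}$ component through the symmetry $S_{21,\mu}^\rho=\overline{S_{12,\mu}^{1/\rho}}$, which flips the sign of the monodromy parameters. None of this is "bookkeeping": it is the actual content of the proof, and invoking "Palmer's local regularity theory" as giving joint continuity of a remainder $\Delta$ tells you that the iterated limit exists, but not what it equals.

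Second, your Step~3 misattributes the origin of the rational term $\alpha_j\sum_{i\neq j}\alpha_i/(x_j-x_i)$. It does \emph{not} arise from a discrepancy in normalisation between Palmer's $\tau$ and ours (which could be matched "by fiat"). It is produced inside the computation by the conversion between Palmer's local coordinate $(z-x_j)^{\alpha_j}(w-x_j)^{-\alpha_j}$ and the globally defined $\rho(z)/\rho(w)$; concretely, it is the finite limit
\begin{equation}
\lim_{z,w\to x_j}\frac{1}{z-x_j}\left(\frac{\rho(z)}{\rho(w)}\,\frac{(w-x_j)^{\alpha_j}}{(z-x_j)^{\alpha_j}}-1\right)=\sum_{i\neq j}\alpha_i\frac{1}{x_j-x_i},
\end{equation}
obtained when the discrepancy between the two prefactors is multiplied by the free singularity $\frac{1}{z-x_j}$ appearing in the subtraction. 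That the result happens to equal $\partial_{x_j}\log Z_\rho(0)$ is a pleasant consistency check against the finite-volume formula \eqref{eq:logderivpartfnformula}, but it is an output of the Bessel-asymptotics calculation, not a normalisation one may impose. As written, Step~3 is a hole in the argument that the paper fills with an explicit computation.

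The antiholomorphic statement is also handled differently in the paper: it relies on Palmer's \cite[Theorem~4.3]{MR1233355} giving $\bar\partial_{x_\nu}\log\tau^\alpha(x)=\frac{m}{2}\overline{\mathsf a_{1/2,\nu}^\nu(x,\alpha)}$, together with the definition $\tau_\rho(\mu)=\overline{\tau^\lambda(x)}$. Your proposal to deduce it from the Green's-function symmetries of Proposition~\ref{prop:Greensym} alone is plausible but would need the same missing Bessel-function identifications to be carried out under conjugation, so it inherits the gap from Step~2.
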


The resolvent identity together with the crude growth bound of Proposition~\ref{prop:Ldepenbounds}
implies that our finite-volume version of the Green's function converges to the infinite-volume one, including on the diagonal
if this is interpreted suitably as follows.

\begin{proposition}\label{diffatbpts}
Let $\chi=\1_{\Lambda_L}$
where $\Lambda_L$ is the open disk of radius $L$ centred at the origin. Then we have that 
\begin{equation}
\lim_{z\rightarrow x_j}\lim_{w\rightarrow x_j}\frac{\rho(z)}{\rho(w)}\Big(S^\rho_{\mu\chi}(z,w)-S^{\rho}_\mu(z,w)\Big)_{21}\rightarrow 0
\end{equation}
as $L\rightarrow \infty$, uniformly in compact subsets of $x_1,\dots, x_n$ distinct.
The limits in $z,w$ are taken such that $z,w\in \C\setminus \Gamma$.
\end{proposition}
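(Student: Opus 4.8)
The plan is to compare the two Green's functions through the resolvent identity localized to the exterior of $\Lambda_L$. Since $\chi=\1_{\Lambda_L}$ gives $\mu\chi-\mu=-\mu\1_{\Lambda_L^c}$, the resolvent identity (Proposition~\ref{prop-resolventidentity}, see Section~\ref{ResolventIdentitysection}) yields
\[
  S^\rho_{\mu\chi}(z,w)-S^\rho_\mu(z,w)=\mu\int_{\Lambda_L^c}S^\rho_\mu(z,u)\,S^\rho_{\mu\chi}(u,w)\,du .
\]
By the $\mu$-parity \eqref{symmetriesinmass2} of the $(2,1)$ entry we may assume $\mu>0$ (the case $\mu=0$ being trivial). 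Multiplying by $\rho(z)/\rho(w)$ and extracting the $(2,1)$ entry turns the right-hand side into $\mu\sum_{k=1}^2\int_{\Lambda_L^c}\bigl(\rho(z)S^\rho_{\mu,2k}(z,u)\bigr)\bigl(\rho(w)^{-1}S^\rho_{\mu\chi,k1}(u,w)\bigr)\,du$. For $L$ large, all branch points lie in $\Lambda_L$; fixing a small ball $B\ni x_j$ with $\overline B\subset\Lambda_L$ and $\overline B$ avoiding the other punctures, for $(z,w)\in B\times B$ and $u\in\Lambda_L^c$ the integrand is jointly continuous, and each factor extends continuously as its argument approaches $x_j$ by condition (iii) of Proposition~\ref{pr:diracexist} applied to $S^\rho_\mu$ in the first variable and, after the conjugation symmetry $\overline{S^\rho_{\mu\chi,11}(u,w)}=S^\rho_{\mu\chi,11}(w,u)$ of Proposition~\ref{prop:Greensym}, to $S^\rho_{\mu\chi}$ in the second variable. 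Dominated convergence (with the $u$-integrable, $(z,w)$-uniform envelope produced by the estimates below) lets me take the iterated limit $z\to x_j$, $w\to x_j$ inside the integral, and joint continuity identifies it with the value at $z=w=x_j$. So it suffices to show
\[
  \mu\sum_{k=1}^2\int_{\Lambda_L^c}\bigl|\lim_{z\to x_j}\rho(z)S^\rho_{\mu,2k}(z,u)\bigr|\;\bigl|\lim_{w\to x_j}\rho(w)^{-1}S^\rho_{\mu\chi,k1}(u,w)\bigr|\,du\longrightarrow 0 ,
\]
uniformly on compact sets of distinct $x_i$.

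For the infinite-volume factor I would invoke Palmer's exponential decay of solutions to the twisted Dirac equation, which by Appendix~\ref{app:SMJ} is locally uniform in the branch points: this gives $\bigl|\lim_{z\to x_j}\rho(z)S^\rho_{\mu,2k}(z,u)\bigr|\lesssim e^{-c|u-x_j|}\lesssim e^{-c|u|}$ for $u\in\Lambda_L^c$, uniformly for $z$ in a neighbourhood of $x_j$. For the finite-volume factor I would substitute the short-distance expansion of Proposition~\ref{prop:Green-Delta}: for $k=2$, $\rho(w)^{-1}S^\rho_{\mu\chi,21}(u,w)=\rho(u)^{-1}\bigl(\tfrac1{2\pi(u-w)}+\Delta^\rho_{21}(u,w)\bigr)\to\rho(u)^{-1}\bigl(\tfrac1{2\pi(u-x_j)}+\Delta^\rho_{21}(u,x_j)\bigr)$, with $|u-x_j|^{-1}=O(1/L)$ and $|\rho(u)|^{\pm1}$ bounded on $\Lambda_L^c$ for $L$ large; for $k=1$ the same computation via the conjugation symmetry produces $\rho(u)$ times $\Delta^\rho_{11}(x_j,u)$ plus the Cauchy-type integral $\int_{\Lambda_L}\tfrac{\chi(v)}{|\rho(v)|^2}\tfrac1{\overline{x_j}-\bar v}\tfrac1{v-u}\,dv$. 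Assembling the pieces: the $O(1/L)$ terms contribute $\tfrac1L\int_{\Lambda_L^c}e^{-c|u|}\,du=O(e^{-cL/2})$; the $\Delta$-terms contribute $\int_{\Lambda_L^c}|\Delta^\rho_\bullet(u,x_j)|e^{-c|u|}\,du=o(1/L)$ directly from Proposition~\ref{prop:Ldepenbounds}, after using $\overline{\Delta^\rho_{ii}(z,w)}=\Delta^\rho_{ii}(w,z)$ and the identity $\Delta^\rho_{21}(z,w)=-\Delta^{1/\rho}_{21}(w,z)$ (valid since $1/\rho$ has windings $-\alpha_i\in(-\tfrac12,\tfrac12)$ summing to zero) to present the integrand in the form to which Proposition~\ref{prop:Ldepenbounds} applies; and for the Cauchy-type term in the $k=1$ channel I would exchange orders by Fubini and bound $\int_{\Lambda_L^c}\tfrac{e^{-c|u-x_j|}}{|v-u|}\,du\lesssim e^{-cL/2}$ (writing $e^{-c|u-x_j|}\le e^{-\frac c2(L-|x_j|)}e^{-\frac c2|u-x_j|}$ and bounding the remaining $L^1\!*\!|\cdot|^{-1}$ convolution uniformly in $v$), while $\int_{\Lambda_L}\tfrac{dv}{|\rho(v)|^2|x_j-v|}\lesssim L^2$. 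All contributions vanish as $L\to\infty$, with the uniformity on compact sets of distinct $x_i$ inherited from Proposition~\ref{prop:Ldepenbounds} and Appendix~\ref{app:SMJ} (and from the $x$-continuity of the Fredholm construction of Section~\ref{sec:Green} for the boundary limits).

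The main obstacle is precisely the bookkeeping in the second paragraph: after performing the regularizing limits $z,w\to x_j$ in every matrix entry, one must be sure that the \emph{genuine} exponential decay of the infinite-volume Green's function is strong enough to absorb the polynomial-in-$L$ growth entering both through the volume $|\Lambda_L|\sim L^2$ in the Cauchy-type integral of the $S_{11}$-channel and through the polynomial growth of the boundary norms of $\Delta^\rho$ that Proposition~\ref{prop:Ldepenbounds} already encodes in its weighted $o(1/L)$ bound; simultaneously one must keep the $\rho\leftrightarrow 1/\rho$ and argument-transposition symmetries straight so that Proposition~\ref{prop:Ldepenbounds} is applied in exactly the form in which it is stated. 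A secondary technical point, needed to license the interchange of limit and integral, is to exhibit, for each fixed large $L$, an $L$-dependent but $(z,w)$-independent dominating function on $\Lambda_L^c$ of the exponential-times-Cauchy type above; this is routine given the bounds of Proposition~\ref{prop:Green-Delta} and the exponential decay, since these are locally uniform as $z,w$ range over the fixed ball $B$.
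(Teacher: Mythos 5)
Your proposal uses the same high-level strategy as the paper's proof: reduce via the resolvent identity to an integral over $\Lambda_L^c$, pair the exponential decay of the infinite-volume factor (traced back to Palmer's wave functions $W_j$ and the uniformity of Appendix~\ref{app:SMJ}) against the polynomial growth of the finite-volume factor (via the $\Delta$ decomposition, Proposition~\ref{prop:Ldepenbounds}, and a crude $O(L)$ bound for the Cauchy-type integral). You identify every key input. However, two points need to be straightened out before the argument closes.

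First, you cite Proposition~\ref{prop-resolventidentity} but write the resolvent identity with the factors in the opposite order, $\mu\int_{\Lambda_L^c}S^\rho_\mu(z,u)S^\rho_{\mu\chi}(u,w)\,du$, whereas the proposition establishes $\mu\int_{\Lambda_L^c}S^\rho_{\mu\chi}(z,z')S^\rho_\mu(z',w)\,dz'$. The flipped form is also true, but it is not what the paper proves: you must derive it, e.g.\ by taking the conjugate transpose of the stated identity and invoking $S^\rho_{\mu\chi}(z,w)^*=-S^\rho_{-\mu\chi}(w,z)$ from \eqref{symmetriesinmass0}, then relabeling $\mu\to-\mu$. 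Without that line, you are applying an unproved variant.

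Second, and more substantively, the identification $\bigl|\lim_{z\to x_j}\rho(z)S^\rho_{\mu,2k}(z,u)\bigr|\lesssim e^{-m|u|}$ is asserted rather than shown. The paper performs this identification explicitly via the expansion \eqref{eq:Gasy}, which is an expansion of Palmer's Green's function in the \emph{second} variable, combined with formula \eqref{eq:Wanda} to pin down the leading coefficient as a multiple of $W_j$. Because your ordering puts the infinite-volume factor with $x_j$ approached in the \emph{first} slot, \eqref{eq:Gasy} does not directly apply; you would need either a first-variable analogue (not quoted in the paper) or to pass through the argument-transposition symmetry $\overline{S^\rho_\mu(z,u)}=-S^\rho_{-\mu}(u,z)^{T}$ and carefully track the resulting $\rho\leftrightarrow 1/\rho$ and $\mu\leftrightarrow-\mu$ flips before invoking \eqref{eq:Gasy}--\eqref{eq:Wanda}. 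This is doable but is exactly the kind of bookkeeping that can introduce sign or index errors (for instance, the bars in your Cauchy integrand and the exponent $L^2$ where $O(L)$ suffices are already symptoms of this slippage), and it is the main non-trivial content of the step you compress into ``invoke Palmer's exponential decay.''
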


By combining the above three propositions (proved in the remainder of this section), we first deduce 
Theorem~\ref{thm:convergence-to-palmer}.

\begin{proof}[Proof of Theorem~\ref{thm:convergence-to-palmer}]
By the symmetries under complex conjugation, it suffices to consider the holomorphic derivative and show that
\begin{equation}
  \partial_{x_j}\log Z_{\rho}(\mu\chi)\rightarrow \partial_{x_j}\log \tau_\rho(\mu).
\end{equation}
From Propositions~\ref{prop:logderivpartfnformula} and~\ref{prop:logderivtaufnformula} (and recalling Definition \ref{def:Zrho}) we have
\begin{align}
  \partial_{x_j}\log Z_{\rho}(\mu\chi)&=-2\pi\alpha_j\lim_{z\rightarrow x_j}\lim_{w\rightarrow x_j}\frac{\rho(z)}{\rho(w)}(S^\rho_{21,\mu\chi}(z,w)-S^\rho_{21,0}(z,w))+\partial_{x_j}\log Z_{\rho}(0)\\
  \partial_{x_j}\log \tau_{\rho}(\mu)&=-2\pi\alpha_j\lim_{z\rightarrow x_j}\lim_{w\rightarrow x_j}\frac{\rho(z)}{\rho(w)}(S^\rho_{21,\mu}(z,w)-S^\rho_{21,0}(z,w))+\partial_{x_j}\log Z_{\rho}(0)                                                ,\label{temp24weds}
\end{align}
where we used that
\begin{equation}
\partial_{x_j}\log Z_{\rho}(0)= \alpha_j\sum_{i\neq j}\alpha_i\frac{1}{x_j-x_i}
\end{equation}
Thus
\begin{equation}
  \partial_{x_j}\log Z_{\rho}(\mu\chi)-\partial_{x_j}\log \tau_\rho(\mu)
  =-\alpha_j2\pi\lim_{z\rightarrow x_j}\lim_{w\rightarrow x_j}\frac{\rho(z)}{\rho(w)}\Big(S^\rho_{21,\mu\chi}(z,w)-S^\rho_{21,\mu}(z,w)\Big),\label{temp34nf}
\end{equation}
which tends to $0$ as $L\to\infty$,
uniformly on compact subsets of $x_1,\dots,x_n$ distinct, by Proposition~\ref{diffatbpts}.
\end{proof}

\subsection{Logarithmic derivatives in finite volume -- proof of Proposition~\ref{prop:logderivpartfnformula}}

\begin{proof}[Proof of Proposition~\ref{prop:logderivpartfnformula}]
Since $\chi$ is fixed,
we write $S_s^\rho$ in place of $S_{s\chi}^\rho$ throughout the proof to simplify notation.
First observe that, as in \eqref{eq:logZdervic},
\begin{equation}
\partial_{x_j}\log \tilde{Z}_\rho(\mu\chi)=\int_0^\mu ds\int_{\C}du\, \chi(u)\,\Big(\partial_{x_j}(S_{s,11}^\rho(u,u)-S_{s,11}^1(u,u))+\partial_{x_j}(S_{s,22}^\rho(u,u)-S_{s,22}^1(u,u))\Big)
\end{equation}
 where we understand $S^{\rho}_{s,ii}(u,u)-S_{s,ii}^1(u,u)$ as $\lim_{h\to 0}(S^\rho_{s,ii}(u,u+h)-S_{s,ii}^1(u,u+h))$ for $i=1,2$. 
 By Proposition \ref{pr:analyt}, for $|s|$ sufficiently small, we have the expansion \eqref{eq:Spert}:
 \begin{align} \label{e:S11-expand-bis}
   S^\rho_{s,11}(z,w)&=-\overline{\rho(z)}\rho(w)\sum_{k=0}^\infty s^{2k+1}\int_{\C^{2k+1}} du\prod_{i=1}^k\frac{|\rho(u_{2i})|^2}{|\rho(u_{2i-1})|^2}\frac{1}{|\rho(u_{2k+1})|^2}
                  \frac{1}{(2\pi)^{2k+2}}
                                                                                                                                                                             \nnb
   &\qquad\qquad
     \times \frac{1}{\bar z-\bar u_1}                                                                                                                                                    \frac{1}{u_1-u_2}   \cdots\frac{1}{u_{2k+1}-w}\prod_{i=1}^{2k+1}\chi(u_i)
 \end{align}
 and, for later, we also need
 \begin{align}
   S^\rho_{s,21}(z,w)=\frac{\rho(w)}{\rho(z)}\sum_{k=0}^\infty s^{2k}\int_{\C^{2k}} du\prod_{i=1}^k\frac{|\rho(u_{2i-1})|^2}{|\rho(u_{2i})|^2}&\frac{1}{(2\pi)^{2k+1}}\frac{1}{z- u_1}\frac{1}{\bar u_1-\bar u_2}\nnb
   &\dots\frac{1}{u_{2k}-w}\prod_{i=1}^{2k}\chi(u_i).\label{S21expanstemp}
 \end{align}
Next note the calculation
\begin{align}
\partial_{x_j}\frac{\rho(u_{2i})}{\rho(u_{2i-1})}&=\alpha_{j}\frac{u_{2i}-u_{2i-1}}{(u_{2i}-x_j)(u_{2i-1}-x_j)}\frac{\rho(u_{2i})}{\rho(u_{2i-1})}.\label{temp40334}
\end{align}
So we see that $\partial_{x_j}(S_{s,11}^\rho(u,u)-S_{s,11}^1(u,u))$ is equal to
\begin{align}
-\sum_{k=0}^\infty s^{2k+1}\int_{\C^{2k+1}} du\sum_{q=1}^{k+1}\alpha_j \frac{u_{2q}-u_{2q-1}}{(u_{2q}-x_j)(u_{2q-1}-x_j)}\prod_{i=1}^{k+1}\frac{|\rho(u_{2i})|^2}{|\rho(u_{2i-1})|^2}\frac{1}{(2\pi)^{2k+2}}\nnb
\prod_{i=1}^{2k+1}\chi(u_i)\prod_{i=1}^{k+1}\frac{1}{ u_{2i-1}- u_{2i}}\frac{1}{\bar u_{2i-2}-\bar u_{2i-1}}
\end{align}
where we let $u_0=u_{2k+2}=u$,
and the interchange of the derivative and the sum and integral are justified the geometric convergence of the series following an
argument similar to
that used in the proof of Proposition~\ref{pr:analyt12}.
If we multiply by $\chi(u)$ and integrate (using Fubini whose application is again justified similarly), we see that
\begin{equation}
  \int du\, \chi(u)\, \partial_{x_j}(S_{s,11}^\rho(u,u)-S_{s,11}^1(u,u))
\end{equation}
equals
\begin{align}
-\sum_{k=0}^\infty s^{2k+1}\int_{\C^{2k+2}} du\,\sum_{q=1}^{k+1}\alpha_j \frac{u_{2q}-u_{2q-1}}{(u_{2q}-x_j)(u_{2q-1}-x_j)}\prod_{i=1}^{k+1}\frac{|\rho(u_{2i})|^2}{|\rho(u_{2i-1})|^2}\frac{1}{(2\pi)^{2k+2}}\nnb
\prod_{i=1}^{2k+2}\chi(u_i)\prod_{i=1}^{k+1}\frac{1}{ u_{2i-1}- u_{2i}}\frac{1}{\bar u_{2i-2}-\bar u_{2i-1}}\nnb
=\sum_{k=0}^\infty s^{2k+1}\sum_{q=1}^{k+1}\int_{\C^{2k+2}} du\alpha_j \frac{1}{(u_{2q}-x_j)(u_{2q-1}-x_j)}\prod_{i=1}^{k+1}\frac{|\rho(u_{2i})|^2}{|\rho(u_{2i-1})|^2}\frac{1}{(2\pi)^{2k+2}}\label{temp8459}\nnb
\frac{1}{\bar u_{2q-2}-\bar u_{2q-1}}\prod_{i=1}^{2k+2}\chi(u_i)\prod_{i\neq q}\frac{1}{ u_{2i-1}- u_{2i}}\frac{1}{\bar u_{2i-2}-\bar u_{2i-1}}.
\end{align}
Relabel the variables in the previous expression by $u_i\rightarrow u_{i'}$ where $i'=i-(2q-1)$ mod $2k+2$. Note that this changes $\prod_{i=1}^{k+1}\frac{|\rho(u_{2i})|^2}{|\rho(u_{2i-1})|^2}\rightarrow \prod_{i=1}^{k+1}\frac{|\rho(u_{2i-1})|^2}{|\rho(u_{2i})|^2}$. Then \eqref{temp8459} is equal to
\begin{equation}
-\alpha_j \sum_{k=0}^\infty s^{2k+1} (k+1)\int_{\C^{2k+2}} du\prod_{i=1}^{2k+2}\chi(u_i)\prod_{i=1}^{k+1}\frac{|\rho(u_{2i-1})|^2}{|\rho(u_{2i})|^2}\frac{1}{(2\pi)^{2k+2}}\frac{1}{x_j-u_1}\frac{1}{\bar u_1-\bar u_2}\dots \frac{1}{u_{2k+2}-x_j}
\end{equation}
which, using \eqref{S21expanstemp} and the geometric convergence of the sums as before, we see is equal to 
\begin{equation}
-2\pi\frac{\alpha_j}{2}\lim_{z \rightarrow x_j}\lim_{w\rightarrow x_j}\partial_s\frac{\rho(z)}{\rho(w)}S_{21,s}^\rho(z,w).
\end{equation}
So we have 
\begin{multline}
  \int_0^\mu ds\int_{\C}du\chi(u)\partial_{x_\nu}(S_{s,11}^\rho(u,u)-S_{s,11}^1(u,u))\\
  =-2\pi\frac{\alpha_j}{2}\lim_{z\rightarrow x_j}\lim_{w\rightarrow x_j}(\frac{\rho(z)}{\rho(w)}S_{21,\mu}^\rho(z,w)-\frac{\rho(z)}{\rho(w)}S_{21,0}^\rho(z,w)).
\end{multline}\\
Next, using the symmetry $S_{22}^\rho(z,w)=\overline{S_{11}^{1/\rho}}(z,w)$ (from Proposition \ref{prop:Greensym}),  for $|s|$ sufficiently small, we have the expansion
 \begin{align}
   S^\rho_{s,22}(z,w)=-\frac{1}{\rho(z)\overline{\rho(w)}}\sum_{k=0}^\infty s^{2k+1}\int_{\C^{2k+1}} du|\rho(u_{1})|^2\prod_{i=1}^k\frac{|\rho(u_{2i+1})|^2}{|\rho(u_{2i})|^2}&\frac{1}{(2\pi)^{2k+2}}\frac{1}{ z- u_1}\frac{1}{\bar u_1-\bar u_2}\nnb
   &\dots\frac{1}{\bar u_{2k+1}-\bar w}\prod_{i=1}^{2k+1}\chi(u_i).
 \end{align}
 Now use the formula \eqref{temp40334} but with variables $u_{2i}\rightarrow u_{2i+1}$, $u_{2i-1}\rightarrow u_{2i}$. We see that
 \begin{equation}
   \int du \chi(u)\partial_{x_j}(S^\rho_{s,22}(u,u)-S^1_{s,22}(u,u))
 \end{equation}
 has the expansion 
\begin{align}
-\sum_{k=0}^\infty s^{2k+1}\int_{\C^{2k+1}} du\sum_{q=0}^{k}\alpha_\nu \frac{u_{2q+1}-u_{2q}}{( u_{2q}-x_j)(u_{2q+1}- x_j)}\prod_{i=0}^{k}\frac{|\rho(u_{2i+1})|^2}{|\rho(u_{2i})|^2}\frac{1}{(2\pi)^{2k+2}}\nnb
\prod_{j=0}^{2k+1}\chi(u_i)\prod_{i=0}^{k}\frac{1}{  u_{2i}-  u_{2i+1}}\frac{1}{\bar u_{2i+1}-\bar u_{2i+2}}
\end{align}
where we let $u_0=u_{2k+2}=u$. Like before, take the sum over $q$ out of the integral, but this time relabel $u_i\rightarrow u_{i'}$ where $i'=i-2q$ mod $2k+2$. Notice that this does not flip the factors of $\rho$ here.
Now following the same argument as before we obtain
\begin{align}
-2\pi\frac{\alpha_j}{2} \lim_{z\rightarrow x_j}\lim_{w\rightarrow x_j}\partial_s\frac{\rho(z)}{\rho(w)}S_{21,s}^\rho(z,w).
\end{align}
Integrating in $s$ we see the result holds for all $|\mu|$ sufficiently small.  

The left- and right-hand sides of \eqref{eq:logderivpartfnformula} are both analytic in the same neighborhood of the real axis,
and we proved that they agree in a ball, so they agree everywhere in this neighborhood.
The analyticity of the left-hand side is Lemma~\ref{differentiabilitylogZ},
and that of the right-hand side follows from Proposition~\ref{pr:analyt},
that $\partial_s (\rho(z)/\rho(w)) S_{21,s}^\rho(z,w) = \partial_s \Delta_{21,s}^\rho(z,w)$ is continuous in $(z,w)$ by Proposition~\ref{prop:Green-Delta},
and Montel's theorem to take the limit $(z,w) \to (x_j,x_j)$.

The symmetry under complex conjugation is immediate from \eqref{e:Zrho-symmetries}.
\end{proof}

\subsection{Palmer's notation}
\label{sec:Palmer-notation}

Here we recall the notation and conventions used by Palmer \cite{MR1233355} to relate them to ours.
In \cite[Section 1]{MR1233355}, the massless Dirac operator is denoted
\begin{equation}
D =\begin{pmatrix} 0 &-\partial \\ -\bar\partial & 0\end{pmatrix},
\end{equation}
which relates to our definition \eqref{e:Dirac-def} of the massless Dirac operator $\Dirac$ by $\Dirac = -2\bar D$.
Our branch points $\{x_j\}$ and associated monodromy parameters $\{\alpha_j\}$
correspond in Palmer's notation to $\{a_j\}$ and $\{-\lambda_j\}$.
We will write $\{x_j\}$ rather than $\{a_j\}$ but distinguish between $\{\alpha_j\}$
and $\{\lambda_j\}$ to avoid confusion.
For $\sum_j \lambda_j=0$, Palmer defines his massive twisted Dirac operator
by\footnote{Note that on \cite[p.267]{MR1233355} the Dirac operator is defined with $mI$ instead of $\frac{m}{2}I$. This appears to
  be a typo and all of the remainder of the paper uses the convention $\frac{m}{2}$, see, e.g., \cite[p.276]{MR1233355}.}  
\begin{equation} \label{e:Palmer-Dirac}
D_{x,\lambda}(m) =\overline{P_\lambda} D P_\lambda+\frac{m}{2} I, \quad m>0,
\end{equation}
where $P_\lambda$ 
is defined as in \eqref{BranchingMat} except that $\alpha$ in $\rho$ from \eqref{eq:rho}
is replaced by $\lambda$ (and not $-\lambda$).
The domain of $D_{x,\lambda}(m)$ is
\begin{equation} 
\mathscr{D}_{x,\lambda}=\{f:\C\rightarrow \C^2; P_\lambda f\in H^1(\C;\C^2)\}
\end{equation}
where $H^1$ is the standard Sobolev space of square-integrable functions with square-integrable first derivatives.
Note that this slightly differs from the definition in \cite{MR1233355}, but is easily seen to be equivalent under the neutrality condition $\sum_j\lambda_j=0$.

Palmer defines his version of the Green's function $G^{x,\lambda}(z,w)$ of $D_{x,\lambda}(m)$ such that, for any function $g\in C_c^\infty(\C\setminus \Gamma;\C^2)$, 
\begin{equation} \label{e:Palmer-green}
f(z)=2 \int_{\C\setminus \Gamma}G^{x,\lambda}(z,w)Jg(w) \, dw
\end{equation} 
lies in $\mathscr{D}_{x,\lambda}$ and satisfies 
\begin{equation}
D_{x,\lambda}(m)f(z)=g(z),\label{Palmergreenisinv}
\end{equation}
where
\begin{equation}
J=\begin{pmatrix} 0 & i\\ -i & 0\end{pmatrix}.
\end{equation}
We emphasize the additional factor $2$ on the right-hand side of \eqref{e:Palmer-green} so that in our notation:
\begin{equation} \label{e:Dirac-identification}
  \Dirac_\rho+\mu
  =
  -2\overline{D_{x,\lambda}(m)}
  =
  -2\overline{D_{x,-\alpha}(m)}
\end{equation}
with
\begin{equation} \label{e:Palmer-m}
  \mu =- m, \qquad \lambda_j=-\alpha_j, \quad j=1,...,n,
\end{equation}
and
\begin{equation} \label{e:Green-identification}
  S_\mu^\rho(z,w) = (\Dirac_\rho + \mu)^{-1}(z,w)
  = (-2\overline{ D_{x,\lambda}(m)})^{-1}(z,w)
  =  \overline{G^{x,\lambda}(z,w)}J
  =  \overline{G^{x,-\alpha}(z,w)}J,
\end{equation}
and the left-hand sides correspond to our notation from Section~\ref{sec:Green}.
Formally,
\begin{equation}
  Z_\rho(\mu\chi) =  \det(\Dirac_\rho+\mu\chi), \qquad \tau^{\lambda}(x) = \det(D_{x,\lambda}(m)),
\end{equation}
and since %
$\Dirac_\rho+\mu=-2\overline{D_{x,-\alpha}(m)}$ we expect that, 
as $L\rightarrow \infty$,
\begin{equation}
  \partial_{x_j}\log Z_{\rho}(\mu\chi)\rightarrow \partial_{x_j}\log \tau_\rho(\mu) = \partial_{x_j}\log \overline{\tau^{\lambda}(x)}
\end{equation}
where $\tau^\lambda(x)$ refers to Palmer's notation in \cite{MR1233355}.
We make the definition
\begin{equation}
  \tau_\rho(\mu) =\overline{\tau^{\lambda}(x)}
  , \label{taufnidentifier}
\end{equation}
and we will see that, at least up to a constant, the right-hand side actually equals
\begin{equation}
  \tau^{-\lambda}(x) = \tau^{\alpha}(x).
\end{equation}

\begin{remark}  
 The main formula that connects our work to the tau function in \cite{MR1233355} is the logarithmic derivative of $\tau^\lambda(x)$, see below \eqref{e:tau-a}. We explain this in Section~\ref{subsectionlogderivtaufn}.
For the reader's orientation in Palmer's paper \cite{MR1233355},
we summarize his introduction of the tau function as a regularized determinant of $D_{x,\lambda}(m)$ as follows.
Since we do not use this, this can be skipped.

Palmer localises $D_{x,\lambda}(m)$ outside a collection of thin, disjoint horizontal strips $\{S_j\}$ that cover the branch cuts $\{\Gamma_j\}$. The localisations of different $D_{x,\lambda}$ 
(with $x$ varying in a small ball $B_\epsilon(x_0)$) are parametrized by subspaces of boundary conditions $W_x\subset H^{1/2}(\cup \partial S_j)$. The family $x\rightarrow W_x$ of subspaces lie inside a Grassmannian manifold which serves as a base for a holomorphic line bundle det* of the type introduced by Segal and Wilson \cite{MR783348},
see also the book \cite{MR900587}.
Palmer defines his tau function $\tau(x,x_0)$ as a ratio of certain sections in the det* line bundle, see \cite[Equation~4.9]{MR1233355}. This has the drawback of depending on the branch points $x_0$ for which Palmer uses to define the Grassmannian, and a principal result of Palmer \cite{MR1233355} is an expression for the logarithmic derivative of $\tau(x,x_0)$ with respect to the branch points that does not depend on $x_0$ (in fact, $\tau(x,x_0)$ actually factorises as $\tau(x)/\tau(x_0)$, see \cite[Equation~4.16]{MR1233355} where $\tau(x)$ is an explicit Hilbert-Schmidt determinant). Below, we show that the logarithmic derivative of $\tau(x)$ has an expression in terms of the Green's function of $D_{x,\lambda}(m)$, which we relate to the logarithmic derivative of our renormalized partition function.
\end{remark}

\subsection{Logarithmic derivatives of tau functions -- proof of Proposition~\ref{prop:logderivtaufnformula}}
\label{subsectionlogderivtaufn}

Using Palmer's notation \eqref{e:Green-identification} and the symmetries
of the Green's function stated in Proposition~\ref{prop:Greensym} (which also hold in the infinite-volume limit
by Proposition~\ref{finiteGreentoinfGreen}),
\begin{equation}
  S^\rho_{\mu,21}=\overline{S_{\mu,12}^{1/\rho}}
  =-iG_{11}^{x,-\lambda}
  =-iG_{11}^{x,\alpha}
\end{equation}
we have  that the claim \eqref{holologderiv} reads
\begin{align} 
  \partial_{x_j}\log \overline{\tau^\lambda(x)}
  =\partial_{x_j}\log\tau^{-\lambda}(x)
  &=-\lambda_j\lim_{z\to x_j}\lim_{w\to x_j}\left(2\pi i\rho(z)\rho(w)^{-1}G_{11}^{x,-\lambda}(z,w)+\frac{1}{z-x_j}\right)
    \nnb&\qquad+ \lambda_j\sum_{i\neq j}\lambda_i\frac{1}{x_j-x_i}.
          \label{holologderiv-Palmer}
\end{align}
Thus
\begin{equation}
  \partial_{x_j}\log \tau_\rho(\mu)
  = \partial_{x_j}\log\overline{\tau^{\lambda}(x)}
  =\partial_{x_j}\log \tau^{-\lambda}(x).
\end{equation}

\begin{proof}[Proof of Proposition~\ref{prop:logderivtaufnformula}]
We will prove \eqref{holologderiv-Palmer} and write $\lambda_j=-\alpha_j$.
It is shown in \cite[(2.6)]{MR1233355} that $j$-th column of Palmer's Green's function $G_{j,\cdot}^{x,\alpha}$ has the following expansion: for $z$ close to $x_\nu$, we have
\begin{align}\label{eq:Gasy}
G^{x,\alpha}_{j,\cdot}(z,w)=\sum_{\substack{k\in \Z+\frac{1}{2}:\\ k>0}}\left(a_{k,j}^\nu(z)w_{k+\alpha_\nu}(w-x_\nu)+b_{k,j}^\nu(z)w_{k-\alpha_\nu}^*(w-x_\nu)\right),
\end{align}
where $a_{k,j}^\nu$ and $b_{k,j}^\nu$ are currently unknown coefficient functions (we have a slightly different notation than in \cite{MR1233355}: Palmer writes $\alpha$ and $\beta$ for these coefficients and in \cite{MR1233355}, $a$ and $b$ are actually coefficients one obtains when one expands for $z$ close to $x_\nu$ instead) and for $z=re^{i\theta}$ we have
\begin{align}
w_l(z)=\begin{pmatrix}
e^{i(l-\frac{1}{2})\theta}I_{l-\frac{1}{2}}(m r)\\
e^{i(l+\frac{1}{2})\theta}I_{l+\frac{1}{2}}(m r)
\end{pmatrix} \qquad \text{and} \qquad 
w_l^*(z)=\begin{pmatrix}
e^{-i(l+\frac{1}{2})\theta}I_{l+\frac{1}{2}}(m r)\\
e^{-i(l-\frac{1}{2})\theta}I_{l-\frac{1}{2}}(m r)
\end{pmatrix},
\end{align}
where $I_\beta$ is a modified Bessel function of the first kind.

Then at the very end of \cite[Section 3]{MR1233355}, it is argued that for each $\nu\in \{1,2,\dots,n\}$, there exists a solution to the twisted Dirac equation, which is called $W_{\nu}(z,\alpha)$, and that close to $x_{\nu'}$ we have an expansion of the form 
\begin{equation} \label{eq:Wnu}
  W_{\nu}(z,\alpha)=\delta_{\nu,\nu'}w_{-\frac{1}{2}+\alpha_\nu}(z-x_{\nu'})+\sum_{\substack{k\in\Z+\frac{1}{2}:\\ k>0}}(\mathsf a_{k,\nu}^{\nu'}(x,\alpha)w_{k+\alpha_{\nu'}}(z-x_{\nu'})
  +\mathsf b_{k,\nu}^{\nu'}(x,\alpha)w_{k-\alpha_{\nu'}}^*(z-x_{\nu'})).
\end{equation}
Currently the only constraint for the coefficients $\mathsf a,\mathsf b$ is that $W_\nu$ needs to solve the twisted Dirac equation. Note that $W_\nu$ is a two-dimensional vector (since $w_l$ is).
On \cite[p.~315]{MR1233355} it is then argued that
\begin{align}\label{eq:Wanda}
\begin{pmatrix}
a_{\frac{1}{2},1}^\nu(z)\\
a_{\frac{1}{2},2}^\nu(z)
\end{pmatrix}=-\frac{im}{4 \sin \pi\alpha_\nu}W_\nu(z,-\alpha),
\end{align}
where $a_{\frac{1}{2},j}^\nu$ is the $k=\frac{1}{2}$-term from \eqref{eq:Gasy}.

The point of all this is that in \cite[Theorem 3]{MR1233355}, it is proven that the tau function satisfies
\begin{equation} \label{e:tau-a}
  \frac{\partial}{\partial x_\nu} \log \tau^\alpha(x)
  =\frac{m}{2}\mathsf a_{\frac{1}{2},\nu}^\nu(x,-\alpha).
\end{equation}
We now relate this to our setting. The first remark here is that since 
\begin{equation}
I_\beta(r)=(1+o(1))\frac{1}{\Gamma(1+\beta)}\left(\frac{r}{2}\right)^\beta,
\end{equation}
as $r\to 0$, we have as $z\to 0$,
\begin{align}
w_l(z)&=(1+o(1))\begin{pmatrix}
\frac{1}{\Gamma(l+\frac{1}{2})}(\frac{m}{2} z)^{l-\frac{1}{2}}\\
\frac{1}{\Gamma(l+\frac{3}{2})}(\tfrac{m}{2} z)^{l+\frac{1}{2}}
                \end{pmatrix}
        \\
  w_l^*(z)&=(1+o(1))\begin{pmatrix}
\frac{1}{\Gamma(l+\frac{3}{2})}(\tfrac{m}{2} \bar z)^{l+\frac{1}{2}}\\
\frac{1}{\Gamma(l+\frac{1}{2})}(\tfrac{m}{2} \bar z)^{l-\frac{1}{2}}
\end{pmatrix} .
\end{align}
This means that we have %
\begin{equation}
\mathsf a_{\frac{1}{2},\nu}^\nu(x,-\alpha)=\Gamma(1-\alpha_\nu)(\tfrac{m}{2})^{\alpha_\nu}\lim_{z\to 0} z^{\alpha_\nu}(W_{\nu,1}(x_\nu+z,-\alpha)-w_{-\frac{1}{2}-\alpha_\nu,1}(z)).
\end{equation}
We reformulate this expression as reading something off from the Green's function. For this, we go to \eqref{eq:Wanda}. From there, we have
\begin{align}
W_{\nu,1}(x_\nu+z,-\alpha)=\frac{4i \sin\pi\alpha_\nu}{m}a_{\frac{1}{2},1}^\nu(x_\nu+z).
\end{align}
Then recalling \eqref{eq:Gasy}, along with the asymptotic behavior of $w_l,w_l^*$, we see that 
\begin{equation}
a_{\frac{1}{2},1}^\nu(z)=\Gamma(1+\alpha_\nu)(\tfrac{m}{2})^{-\alpha_\nu}\lim_{w\to 0}w^{-\alpha_\nu}G_{11}^{x,\alpha}(x_\nu+z,x_\nu+w).
\end{equation}
This means that we have
\begin{align}
  \mathsf a_{\frac{1}{2},\nu}^\nu(x,-\alpha)
  &=\lim_{z\to 0}z^{\alpha_\nu}\bigg(\frac{4i\sin(\pi \alpha_\nu)\Gamma(1-\alpha_\nu)\Gamma(1+\alpha_\nu)}{m}\lim_{w\to 0}w^{-\alpha_\nu}G_{11}^{x,\alpha}(x_\nu+z,x_\nu+w)\nnb
  &\qquad \qquad -\Gamma(1-\alpha_\nu)(\tfrac{m}{2})^{\alpha_\nu}w_{-\frac{1}{2}-\alpha_\nu,1}(z)\bigg).
\end{align}
Using the identity 
\begin{equation}
\Gamma(1-\alpha_\nu)\Gamma(1+\alpha_\nu)=\frac{\pi\alpha_\nu}{\sin(\pi \alpha_\nu)} 
\end{equation}
along with the fact (coming from asymptotics of Bessel functions) that 
\begin{align}
z^{\alpha_\nu}\Gamma(1-\alpha_\nu)(\tfrac{m}{2})^{\alpha_\nu}w_{-\frac{1}{2}-\alpha_\nu,1}(z)&=z^{\alpha_\nu}\Gamma(1-\alpha_\nu)(\tfrac{m}{2})^{\alpha_\nu}\left(\frac{1}{\Gamma(-\alpha_\nu)}(\tfrac{zm}{2})^{-1-\alpha_\nu}+O(|z|^{1-\alpha_\nu})\right)\nnb
&=-\alpha_\nu \frac{2}{zm}+o(1),
\end{align}
we find that 
\begin{equation}
  \mathsf a_{\frac{1}{2},\nu}^\nu(x, -\alpha)
  =2\frac{\alpha_\nu}{m}\lim_{z,w\to x_\nu}\left(2\pi i (z-x_\nu)^{\alpha_\nu}(w-x_\nu)^{-\alpha_\nu}G_{11}^{x,\alpha}(z,w)+\frac{1}{z}\right),
\end{equation}
or in other words, 
\begin{equation}
\partial_{x_j}\log \tau^\alpha(x)=\alpha_j\lim_{z,w\to x_j}\left(2\pi i\rho(z)\rho(w)^{-1}G_{11}^{x,\alpha}(z,w)+\frac{1}{z-x_j}\right)+\alpha_j\sum_{i\neq j}\alpha_i\frac{1}{x_j-x_i}\label{tempt3frzed}
\end{equation}
where the second term in \eqref{tempt3frzed} comes from 
\begin{align}
\lim_{z,w\rightarrow x_j}\frac{1}{z-x_j}\Big(\frac{\rho(z)}{\rho(w)}\frac{(w-x_j)^{\alpha_j}}{(z-x_j)^{\alpha_j}}-1\Big)=\sum_{i\neq j}\alpha_i\frac{1}{x_j-x_i}.
\end{align}
This proves the first equality \eqref{holologderiv}. From \cite[Theorem~4.3]{MR1233355} and \eqref{e:tau-a} we also have
\begin{equation}
  \bar\partial_{x_\nu}\log\tau^\alpha(x)=\frac{m}{2}\overline{\mathsf a_{\frac{1}{2},\nu}^\nu(x, \alpha)}
  = \overline{\partial_{x_\nu}\log\tau^{-\alpha}(x)}.\label{temp5tgzbn}
\end{equation}
Now \eqref{holologderiv-Palmer} follows from \eqref{tempt3frzed} since by definition \eqref{taufnidentifier} we have
\begin{equation}
  \partial_{x_j}\log\overline{\tau^{-\alpha}(x)}
  =\overline{\overline{\partial_{x_j}}\log\tau^{-\alpha}(x)}
  =\partial_{x_j}\log \tau^\alpha(x)
\end{equation}
where the last equality is  by \eqref{temp5tgzbn}.
\end{proof}

\subsection{Resolvent identity and the proof of Proposition~\ref{diffatbpts}}
 \label{ResolventIdentitysection}
We have the following resolvent type identity involving our Green's function $S$ of the Dirac operator with branching.

\begin{proposition}\label{prop-resolventidentity}
  Let $\chi(z)=\1_{\Lambda_L}(z)$ %
  where $\Lambda_L$ is the open disk of radius $L$ centred at the origin.
  Then, for $z,w\in \C\setminus \Gamma, z\neq w$,
 \begin{equation}
   S^\rho_{\mu\chi}(z,w)-S^\rho_\mu(z,w) = \mu\int_{\Lambda_L^c}S^\rho_{\mu\chi}(z,z') S^{\rho}_\mu(z',w) \, dz'.
   \label{resolventidentity}
 \end{equation}
 for all $L$ sufficiently large (depending on $x,w$).
\end{proposition}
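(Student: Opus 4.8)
The plan is to prove the resolvent identity \eqref{resolventidentity} by a direct argument based on the defining equations for the two Green's functions and the uniqueness theorems of Section~\ref{sec:Green}. Fix $w\in\C\setminus\Gamma$ and define the candidate right-hand side
\begin{equation}
  R(z,w) = S^\rho_{\mu\chi}(z,w) - \mu\int_{\Lambda_L^c}S^\rho_{\mu\chi}(z,z')\,S^\rho_\mu(z',w)\,dz'.
\end{equation}
First I would check that this integral makes sense and that $R$ has the right regularity: by the short-distance bound \eqref{tempineq12-bis} on $S^\rho_{\mu\chi}$ (which has a local singularity of type $|z-z'|^{-1}$ weighted by $|\tilde\rho(z)\tilde\rho(z')|^{-1}$, locally integrable since $\alpha_j\in(-\tfrac12,\tfrac12)$) together with the exponential decay of $S^\rho_\mu(z',w)$ in $z'$ (from Proposition~\ref{pr:ivlim}/Appendix~\ref{app:SMJ}, and the representation in Proposition~\ref{prop:Green-Delta} valid in infinite volume via Proposition~\ref{finiteGreentoinfGreen}), the integral over $\Lambda_L^c$ converges absolutely for $z$ outside a neighborhood of $w$, and for $z$ near $w$ the integrand is non-singular since $w\notin\Lambda_L^c$ for $L$ large. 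Differentiating under the integral sign (justified by dominated convergence using these same bounds) and using that $(\Dirac_\rho+\mu\chi)S^\rho_{\mu\chi}(\cdot,z')=\delta(\cdot-z')$ on $\C\setminus\Gamma$ while on $\Lambda_L^c$ one has $\chi=0$, I would compute
\begin{equation}
  (\Dirac_\rho+\mu\chi)R(\cdot,w) = \delta(\cdot-w) - \mu\int_{\Lambda_L^c}\delta(\cdot-z')\,S^\rho_\mu(z',w)\,dz' - \mu\chi(\cdot)\int_{\Lambda_L^c}S^\rho_{\mu\chi}(\cdot,z')S^\rho_\mu(z',w)\,dz'.
\end{equation}
The middle term equals $-\mu\,\1_{\Lambda_L^c}(\cdot)\,S^\rho_\mu(\cdot,w)$ and the last term is $-\mu\chi(\cdot)(R(\cdot,w)-S^\rho_{\mu\chi}(\cdot,w))$; rearranging and using $\chi+\1_{\Lambda_L^c}=1$ gives $(\Dirac_\rho+\mu)R(\cdot,w)=\delta(\cdot-w)$, i.e. $R$ satisfies the defining PDE for $S^\rho_\mu(\cdot,w)$ on $\C\setminus\Gamma$.

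Next I would verify the two remaining conditions characterizing $S^\rho_\mu$ in Proposition~\ref{pr:ivlim}, namely the weighted $H^1$ membership of $P_\rho R$ (equivalently, the branching condition \eqref{eq:continuity} together with suitable decay) and that $R$ is the Green's function in the sense that $z\mapsto\int R(z,w)g(w)\,dw$ lies in the correct weighted $H^1$ space and inverts $\Dirac_\rho+\mu$. The branching condition is inherited termwise: $\overline{P_\rho}(z)^{-1}S^\rho_{\mu\chi}(z,z')$ extends continuously across $\Gamma$ by Proposition~\ref{prop:Green-Delta}/\eqref{e:Scdef}, and integrating against $S^\rho_\mu(z',w)$ over $\Lambda_L^c$ preserves this. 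For the decay/integrability at infinity I would again use \eqref{tempineq12-bis} and the exponential decay of $S^\rho_\mu$. Having shown $R$ satisfies all the conditions of Proposition~\ref{pr:ivlim}, uniqueness there forces $R(z,w)=S^\rho_\mu(z,w)$, which is exactly \eqref{resolventidentity}.

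I expect the main obstacle to be the rigorous justification of differentiating under the integral sign and of the manipulation of the distributional identity $(\Dirac_\rho+\mu\chi)S^\rho_{\mu\chi}(\cdot,z')=\delta(\cdot-z')$ inside the convolution — specifically, interchanging the weak derivative $\Dirac_\rho$ with the $z'$-integral over $\Lambda_L^c$ and correctly tracking the boundary contribution along $\partial\Lambda_L$ (there is none, because $\chi S^\rho_\mu$ on $\Lambda_L^c$ is smooth up to the boundary from the $\Lambda_L^c$ side and the $\delta$-source $z'$ ranges in the open set $\Lambda_L^c$, so no distributional mass sits on $\partial\Lambda_L$, but this needs the continuity of $z'\mapsto S^\rho_{\mu\chi}(z,z')$ across $\partial\Lambda_L$, which follows from Proposition~\ref{pr:bounds22}). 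A secondary technical point is ensuring all the integrals converge uniformly for $z,w$ in the relevant range so that the resulting identity holds pointwise for $z\neq w$, not merely distributionally; this is where the hypothesis ``$L$ sufficiently large depending on $x,w$'' enters, guaranteeing $w$ and all branch points lie well inside $\Lambda_L$ so that the singularities of the two Green's functions never collide with $\partial\Lambda_L$ or with each other in the integration region. Alternatively, if the PDE-plus-uniqueness route proves cumbersome, a fallback is to derive \eqref{resolventidentity} perturbatively: expand both sides using the Neumann series \eqref{eq:Spert-bis} for $S^\rho_{\mu\chi}$ and the analogous small-$\mu$ expansion for $S^\rho_\mu$ (with $\chi$ replaced by $1$), match coefficients of $\mu^k$ using the telescoping identity $\int_{\C^k}\prod S_0^\rho\prod\chi(u_j) - \int_{\C^k}\prod S_0^\rho = \sum_j(\text{terms with }\1_{\Lambda_L^c}\text{ in slot }j)$, and then extend from a neighborhood of $\mu=0$ to all real $\mu$ by analyticity in $\mu$ of both sides (Proposition~\ref{pr:analyt}, and analyticity of $S^\rho_\mu$ which follows similarly).
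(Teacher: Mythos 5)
Your overall strategy is in the right family as the paper's: show that the left- and right-hand sides of \eqref{resolventidentity} differ by a solution of the Dirac equation that must vanish by uniqueness. However, the way you close the argument has a genuine circularity, and you reach for the wrong uniqueness result.

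The gap is in the claimed rearrangement to $(\Dirac_\rho+\mu)R(\cdot,w)=\delta(\cdot-w)$. Your PDE computation (stripping out the notational slips) correctly gives
\begin{equation}
(\Dirac_\rho+\mu\chi)R(\cdot,w)=\delta(\cdot-w)-\mu\,\1_{\Lambda_L^c}(\cdot)\,S^\rho_\mu(\cdot,w).
\end{equation}
To pass from here to $(\Dirac_\rho+\mu)R=\delta$ you would need to add $\mu\,\1_{\Lambda_L^c}R$ to both sides and have it cancel $\mu\,\1_{\Lambda_L^c}S^\rho_\mu$, which requires $R=S^\rho_\mu$ on $\Lambda_L^c$ — precisely what is to be proved. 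So the step ``$R$ satisfies the defining PDE for $S^\rho_\mu$'' does not follow, and consequently you cannot invoke Proposition~\ref{pr:ivlim}. The fix is simple and brings you in line with the paper: note that $(\Dirac_\rho+\mu)S^\rho_\mu=\delta$ trivially rewrites as $(\Dirac_\rho+\mu\chi)S^\rho_\mu=\delta-\mu\,\1_{\Lambda_L^c}S^\rho_\mu$, so subtracting gives $(\Dirac_\rho+\mu\chi)(R-S^\rho_\mu)=0$, and then the appropriate uniqueness result is Lemma~\ref{le:homogdirac} for the \emph{finite-volume homogeneous} equation (for $\re\mu\neq 0$), not the infinite-volume Green's function characterization. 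After checking that $R-S^\rho_\mu$ decays at infinity and has the branching/continuity property \eqref{eq:continuity}, Lemma~\ref{le:homogdirac} forces $R=S^\rho_\mu$. Note that the paper carries out exactly this computation but smeared against a test function $g\in C_c^\infty(\C\setminus\Gamma)$ rather than pointwise, which is how the regularity hypotheses of Lemma~\ref{le:homogdirac} are verified cleanly (the smeared $S^\rho_\mu$ is an SMJ-type solution with genuine exponential decay, and the branch-cut continuity is argued via the Cauchy transform); if you work pointwise in $w$ you need to be more careful about the $H^1_{\loc}$ regularity and decay of the difference $R(\cdot,w)-S^\rho_\mu(\cdot,w)$.

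Your fallback via the Neumann series would not work as stated: the infinite-volume Green's function $S^\rho_\mu$ does not admit a convergent expansion of the form \eqref{eq:Spert-bis} with $\chi$ replaced by $1$, since the massless kernel $S^\rho_0$ decays only like $|z-w|^{-1}$ and the multiple integrals over all of $\C$ diverge. A perturbative matching of coefficients in $\mu$ is therefore not available on the infinite-volume side.
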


\begin{proof}
In Palmer's notation for the infinite-volume Green's function, \eqref{resolventidentity} reads
\begin{equation}
  S^\rho_{\mu\chi}(z,w)-\overline{G^{x,-\alpha}}(z,w)J= \mu\int_{\Lambda_L^c}S^\rho_{\mu\chi}(z,z') \overline{G^{x,-\alpha}}(z',w)J \, dz',
 \label{resolventidentity-Palmer}
\end{equation}
and we use this notation in the proof. 
To simplify notation we also write $S$ instead of $S^\rho_{\mu\chi}$ thoughout the proof.
For $g\in C_c^\infty (\C\setminus \Gamma;\R\times \R)$, $z\in \C\setminus \Gamma$, define 
\begin{equation}
H_g(z)=\mu\int_{\Lambda_L^c}S(z,z')\int \overline{G^{x,-\alpha}}(z',y)Jg(y)\,  dy\, dz'.
\end{equation}

To see that the integral over $z'$ converges, observe that for $L>0$ sufficiently large, $v_\alpha(z)=\int G^{x,\alpha}(z,y)J g(y)\, dy$ is a solution to the Dirac equation $D_{x,\alpha}(m)v_\alpha(z)=0$.
By G1 in \cite[p.278]{MR1233355} and $\sum_j\alpha_j=0$ we have $v_\alpha(e^{2\pi i}z)=v_\alpha(z)$ for all $|z|\geq L$.
By G2 in \cite[p.279]{MR1233355}, $\sup_{|z|\geq L}|v_\alpha(z)|<\infty$,
and so \cite[Proposition 3.1.5]{MR555666} %
gives $|v_\alpha(z)|\leq C e^{-m|z|}$ for some constant $C>0$ for all $|z|\geq L$. 
By Proposition~\ref{prop:Greensym}, we see that $S(z,\cdot)$ also vanishes at infinity. Thus $|H_g(z)|\leq |\mu| \int_{\Lambda_L^c}|S(z,z')||v_{-\alpha}(z')|dz'$ converges. 
Next we see the following facts:
\begin{align}
(\slashed \partial+\mu\chi)H_g(z)=\mu \1_{\Lambda_L^c}(z)\int \overline{G^{x,-\alpha}}(z,y)Jg(y)dy,
\end{align}
and \begin{align}
\lim_{z\rightarrow \infty}H_g(z)=0.
\end{align}
The latter of which follows from using \eqref{tempineq12-bis} to obtain the bound
$|H_g(z)|\leq C\int_{\Lambda_L^c}\frac{1}{|z-z'|}|v_{-\alpha}(z')|dz'$ for $|z|$ large and the exponential decay of $v$. Furthermore, we see that the second row of \eqref{Palmergreenisinv} can be rewritten
\begin{equation}\label{temp54tuyj|}
\bar\partial(\rho(z)f_1(z))=\frac{m}{2}|\rho(z)|^2\frac{f_2(z)}{\bar\rho(z)}-\rho(z)g_2(z),
\end{equation}
where we recall that $\mu=-m$.
Since $\rho(z)f_1(z), f_2(z)/\bar\rho(z)\in H^1(\C)$ and $|\rho(z)|^2\in L^{2+\epsilon}_{\loc}(\C)$ for small enough $\epsilon>0$,
it follows from the above and Sobolev's embedding that $f_2(z)/\bar\rho(z)\in L^q_{\loc}(\C)$ for all finite $q$ which together with \cite[Proposition 3.1.5]{MR555666} and that $f$ solves the Dirac equation outside of a large ball, we get $f_2(z)/\bar\rho(z)\in L^q(\C)$. Then by H\"older's inequality, the right-hand side of \eqref{temp54tuyj|}
is in $L^r(\C)$ for some $r>2$.
Hence the solution $\rho(z)f_1(z)$ is given by the Cauchy transform following a similar argument as found in Lemma~\ref{le:homogdirac}, and thus is H\"older continuous with parameter $\alpha=1-2/r$ by \eqref{eq:cauchy2}. A similar argument on the first row yields that $P_\alpha(z)f(z)$ is continuous.
Hence $\int\overline{P}_{-\alpha}(\cdot)\overline{G^{x,-\alpha}}(\cdot,y)g(y)dy$ is actually continuous on $\C$. A short calculation gives
\begin{align} 
& (\slashed\partial +\mu\chi)\int (S(z,y)-\overline{G^{x,-\alpha}}(z,y)J)g(y)dy
\nnb
&= (\slashed\partial +\mu\chi-(-2(\overline D+\frac{m}{2})))\int -\overline{G^{x,-\alpha}}(z,y)Jg(y)dy
\nnb
&=\mu \1_{\Lambda_L^c}(z)\int \overline{G^{x,-\alpha}}(z,y)Jg(y)dy,
\end{align}
which holds off of $\Gamma$ and where we used that $\slashed \partial+\mu=-2(\overline D+m/2)$.
Thus we are in the setting of Lemma \ref{le:homogdirac}, which gives us
\begin{equation}
  H_g(z)=\int (S(z,y)-\overline{G^{x,-\alpha}}(z,y)J)g(y)dy.
\end{equation}
Which in turn gives, for $z,w\in\C\setminus \Gamma$, $z\neq w$ with $|w|<L-1$,
 \begin{equation}
S(z,w)-\overline{G^{x,-\alpha}}(z,w)J= \mu\int_{\Lambda_L^c\setminus \Gamma}S(z,z') \overline{G^{x,-\alpha}}(z',w)Jdz'.
 \end{equation}
 as claimed.
\end{proof}

\begin{proof}[Proof of Proposition~\ref{diffatbpts}]
 By taking the complex conjugate and flipping the signs of the monodromy parameters in \eqref{resolventidentity}, we see that it is equivalent to show 
 \begin{align}
 \lim_{z,w\rightarrow x_j}\frac{\rho(z)}{\rho(w)}\int_{\Lambda_L^c}\Big(\overline{S^{1/\rho}_{11}}(z,z') (G^{x,\alpha}(z',w)\overline{J})_{11}+\overline{S_{12}^{1/\rho}}(z,z')(G^{x,\alpha}(z',w)\overline{J})_{21}\Big)dz'\label{temp35bgre}
 \end{align}
 tends to zero as $L\rightarrow \infty$. For $i\in \{1,2\}$, observe that via the expansion \eqref{eq:Gasy},
\begin{align}
  \lim_{w\rightarrow x_j}\frac{\Big(G^{x,\alpha}(z',w)\overline{J}\Big)_{i,2}}{(w-x_j)^{\alpha_j}}
  &=\lim_{w\rightarrow x_j}\frac{-iG^{x,\alpha}_{i,1}(z',w)}{(w-x_j)^{\alpha_j}}
  \nnb
  &=-i(\frac{m}{2})^{\alpha_j}\frac{a_{\frac{1}{2},i}^{j}(z')}{\Gamma(1+\alpha_j)}
  =-\frac{m^{1+\alpha_j}W_{j,i}(z',-\alpha)}{2^{\alpha_j}4\sin(\pi\alpha_j)\Gamma(1+\alpha_j)}
\end{align}
where the last equality is \eqref{eq:Wanda}.
Here $W_j(z,\alpha)$ are multivalued solutions to the Dirac equation which are square-integrable at infinity.
By \cite[Proposition~1.2]{MR1233355}, we have
\begin{align}
W_j(z,\alpha)=O(e^{-m|z|})\label{expboundWjz}
\end{align}
for $|z|$ sufficiently large.
As we prove in Lemma~\ref{le:wjreg} in Appendix~\ref{app:SMJ},
this exponential decay is uniform in compact subsets of the branch points $x_1,\dots,x_n$ distinct.  We use the symmetries $\overline{S_{11}(z,w)}=S_{11}(w,z)$ and $\overline{S_{21}(z,w)}=-S_{12}(w,z)$
from Proposition~\ref{prop:Greensym} in \eqref{temp35bgre} and then use Proposition~\ref{prop:Green-Delta} to write  $S$ in terms of its singular part and its regular part $\Delta$. Hence, using \eqref{expboundWjz},
we see that for $L$ large enough, the modulus of \eqref{temp35bgre} is bounded above by
\begin{align}
  C\int_{\Lambda_L^c}\Big(|\rho(z')|(\frac{1}{|z'-x_j|}+|\Delta_{21}^{1/\rho}(z',x_j)|)+\frac{|\mu|}{(2\pi)^2|\rho(z')|}|\int du \chi(u)|\rho(u)|^2\frac{1}{x_j-u}\frac{1}{\bar u -\bar z'}|
  \nnb
  +\frac{1}{|\rho(z')|}|\Delta_{11}^{1/\rho}(z',x_j)|\Big)e^{-m|z'|}\, dz'
  .\label{temp76uxck}
\end{align}
Now note that $|\rho(z')|$ is bounded for $|z'|$ large (due to the condition $\sum_{j}\alpha_j=0$).
It is also easy to prove the (cheap) bound
\begin{equation}
|\int du \, \chi(u)|\rho(u)|^2\frac{1}{x_j-u}\frac{1}{\bar u -\bar z'}|\leq\int du \, \frac{\chi(u)|\rho(u)|^2}{|x_j-u|}
\leq C'L
\end{equation}
for some $C'>0$, for all $|z'|>L+1$, so we see its contribution in \eqref{temp76uxck} tends to zero.
So for $L$ large enough, the modulus of \eqref{temp35bgre} is bounded above by
\begin{equation}
C''L\int_{\Lambda_{L}^c}\big(|\Delta_{21}^{1/\rho}(z',x_j)|+|\Delta_{11}^{1/\rho}(z',x_j)|\big)e^{-m|z'|}\, dz'
\end{equation}
for some $C''>0$ fixed in $L$. By Proposition \ref{prop:Ldepenbounds}, this tends to zero as $L\rightarrow \infty$.
\end{proof}

\begin{proof}[Proof of Proposition~\ref{finiteGreentoinfGreen}]
Just as in the previous proof, we use the resolvent identity \eqref{resolventidentity} and show that 
\begin{equation}
 \mu\int_{\Lambda_L^c}S^\rho_{\mu\chi}(z,z') S^{\rho}_\mu(z',w) \, dz'\label{temp54thg}
\end{equation}
tends to zero in the limit $L \rightarrow \infty$. Note that Palmer's Green's function is a multivalued solution to the Dirac equation which is square-integrable at infinity, so again by \cite[Proposition~1.2]{MR1233355},
for $|z'|$ sufficiently large, we have
\begin{equation}
G^{x,\alpha}(z',w)=O(e^{-m|z'|}).
\end{equation}
Using the symmetries in Proposition \ref{prop:Greensym}, we see \eqref{temp54thg} is bounded above for large enough $L$ by 
\begin{equation}
  C\int_{\Lambda_L^c}\begin{pmatrix} |S_{11}^\rho(z,z')| & |\overline{S_{21}^{1/\rho}(z,z')}|\\ |S_{21}^\rho(z,z')| & |\overline{S_{11}^{1/\rho} (z,z')}|
                     \end{pmatrix}e^{-m|z|}\begin{pmatrix} 1 & 1\\ 1 & 1\end{pmatrix} \, dz'.\label{tem346uyhxc}
\end{equation}
Now we use Proposition \ref{prop:Green-Delta} and observe that, because $\rho(z)$ is bounded, after multiplying the matrices in \eqref{tem346uyhxc}, each element in the resulting matrix is bounded above by an integral of the form \eqref{temp76uxck}. Each of these integrals then tends to zero by the same argument following \eqref{temp76uxck}.

The symmetry \eqref{Greensym} for $\chi=1$ just follows by convergence.
\end{proof}

\section{Proof of main results}
\label{sec:main-proofs}

\subsection{Proof of Theorems~\ref{thm:main-tau} and~\ref{thm:Mexists}}

The existence of the IMC with respect to $\nu^{\SG(4\pi,z)}$ stated in Theorem~\ref{thm:Mexists}
immediately follows from  Theorem~\ref{thm:sg-infvol}.
It also follows from Theorem~\ref{thm:sg-infvol} that
\begin{equation} \label{e:main-pf1}
  \avg{\prod_{i=1}^n M_{\alpha_i}(f_i)}_{\SG(4\pi,z)}
  = \lim_{\Lambda\to\R^2} \avg{\prod_{i=1}^n M_{\alpha_i}(f_i)}_{\SG(4\pi,z|\Lambda,0)},
\end{equation}
where the limit $\Lambda \to \R^2$ is along a suitable sequence of $\Lambda$ as in the statement of
Theorem~\ref{thm:sg-infvol}.
By Theorem~\ref{thm:massive-bosonization}, the right-hand side is equal to
\begin{equation} \label{e:main-pf2}
\avg{\prod_{i=1}^n M_{\alpha_i}(f_i)}_{\SG(4\pi,z|\Lambda,0)} = \int dx_1 \cdots dx_n\; f_1(x_1) \cdots f_n(x_n) \, Z_\rho(\mu \1_{\Lambda}).
\end{equation}
By Theorem~\ref{thm:convergence-to-palmer}, uniformly on nonoverlapping compact subsets of $x_i \in \C$, %
and $\alpha_i \in (-\frac12,\frac12)$ with $\sum_i \alpha_i=0$, as $L\to \infty$,
\begin{equation}
  \nabla \log  Z_\rho(\mu\1_{\Lambda_L})  \to \nabla \log \tau_\rho(\mu),
\end{equation}
where $\nabla = (\nabla_{x_1}, \cdots, \nabla_{x_n})$. It follows that
\begin{align}
 Z_\rho(\mu\1_{\Lambda_L}) \times   \frac{\tau_{\rho_0}(\mu)}{Z_{\rho_0}(\mu \1_{\Lambda_L})}  \to \tau_\rho(\mu)
\end{align}
where $\rho_0$ corresponds to some fixed distinct punctures $x_1^0, \dots, x_n^0 \in \C$, %
and the convergence is also uniform on compact sets.
In particular,
\begin{equation}
  \frac{\tau_{\rho_0}(\mu)}{Z_{\rho_0}(\mu \1_{\Lambda_L})}  \avg{\prod_{i=1}^n M_{\alpha_i}(f_i)}_{\SG(4\pi,z|\Lambda_L,0)} \to
  \int dx_1 \cdots dx_n\; f_1(x_1) \cdots f_n(x_n) \, \tau_\rho(\mu).\label{temp65bvt}
\end{equation}
Choosing $f_1,\dots,f_n$ so that $\avg{\prod_{i=1}^n M_{\alpha_i}(f_i)}_{\SG(4\pi,z)}  \neq 0$ and so that the right-hand side above is nonzero,
by using \eqref{e:main-pf1},
we also conclude that $\tau_{\rho_0}(\mu)/Z_{\rho_0}(\mu \1_{\Lambda_L})$ converges to a constant $C(\alpha,\mu)$.
Putting everything together, %
\begin{equation}
  \avg{\prod_{i=1}^n M_{\alpha_i}(f_i)}_{\SG(4\pi,z)} = C(\alpha,\mu)   \int dx_1 \cdots dx_n\; f_1(x_1) \cdots f_n(x_n) \, \tau_\rho(\mu),
\end{equation}
concluding the proof.

\subsection{Proof of Corollary~\ref{cor:BasorTracy}: Basor--Tracy asymptotics}
\label{sec:BasorTracy}

\begin{proof}[Proof of Corollary~\ref{cor:BasorTracy}]
The logarithm of the Fredholm determinant is given by
\begin{align}
&\log\det(I-K)\nnb
&=-\sum_{n=1}^\infty \frac{1}{n} \mathrm{Tr}K^n\nnb
&=-\sum_{n=1}^\infty \frac{(-1)^n}{n}\left(\frac{\sin^2(\pi \alpha)}{\pi^2}\right)^n \int_{[0,\infty)^n}da \int_{[0,\infty)^n}du \frac{e^{-|x-y|\sum_{j=1}^n \omega(a_j)-|x-y|\sum_{j=1}^n \omega(u_j)}}{\prod_{i=1}^n(a_i+u_i)(u_i+a_{i+1})}\prod_{i=1}^n \left(\frac{a_i a_{i+1}}{u_i^2}\right)^\alpha 
\end{align}
with the interpretation that $a_{n+1}=a_1$.
If we rename our integration variables as follows:
\begin{equation}
v_{2i-1}=a_i, \qquad
v_{2i}=u_i,
\end{equation}
then we can write this as 
\begin{align}
&\log\det(I-K)\nnb
&=-\sum_{n=1}^\infty \frac{(-1)^n}{n}\left(\frac{\sin^2(\pi \alpha)}{\pi^2}\right)^n \int_{[0,\infty)^{2n}}dv \frac{e^{-|x-y|\sum_{j=1}^{2n} \omega(v_j)}}{\prod_{i=1}^{2n}(v_i+v_{i+1})}\left(\frac{\prod_{i=1}^n v_{2i-1}}{\prod_{i=1}^n v_{2i}}\right)^{2\alpha} 
\end{align}
with the understanding that $v_{2n+1}=v_1$.
Basor--Tracy \cite{MR1187544} have studied the $|x-y|\to 0$ asymptotics of this Fredholm determinant.
Indeed, they study  the function %
\begin{align}
\tau(t,\theta,\lambda)=\exp\left(-\sum_{k=1}^\infty \frac{\lambda^{2k}}{k}c_{2k}(t,\theta)\right),
\end{align}
where 
\begin{align}
  c_{2k}(t,\theta)=\int_{[0,\infty)^{2k}}dv \prod_{j=1}^{2k}\frac{\exp(-\frac{t}{2}(v_j+v_j^{-1})}{v_j+v_{j+1}}\left(\frac{v_1 v_3\cdots v_{2k-1}}{v_2v_4\cdots v_{2k}}\right)^\theta
  .
\end{align}
Therefore $\det(I-K)=\tau(t,\theta,\lambda)$ with the choices 
\begin{equation}
t=\mu  |x-y|, \qquad
\theta =2\alpha, \qquad
\lambda^2=-\frac{\sin^2(\pi\alpha)}{\pi^2},
\end{equation}
so $\lambda$ is purely imaginary.
The main result of \cite{MR1187544} is as follows: assume that $2\pi^2 \lambda^2-\cos \pi\theta<1$ (they actually allow for complex parameters and the only assumption is that this is not a real number greater or equal to $1$) and let $\sigma$ be defined as the solution to 
\begin{equation}
\pi^2 \lambda^2=\sin \left(\frac{\pi}{2}(\sigma+\theta)\right)\sin \left(\frac{\pi}{2}(\sigma-\theta)\right)
\end{equation}
with the constraint $\mathrm{Re}(\sigma)\in[0,1)$, then as $t\to 0$, 
\begin{align}
\tau(t,\theta,\lambda)=(1+o(1))\tau_0(\theta,\lambda)t^{\frac{1}{2}(\sigma^2-\theta^2)},
\end{align}
where 
\begin{align}
\tau_0(\theta,\lambda)=2^{-2(p^2-q^2)}\frac{G(1+p+q)G(1+p-q)G(1-p+q)G(1-p-q)}{G(1+2p)G(1-2p)},
\end{align}
where 
\begin{align}
2p =\sigma \qquad \text{and} \qquad 2q=\theta.
\end{align}

Note that in our setting, since $\lambda^2<0$, the constraint $2\pi^2\lambda^2-\cos\pi\theta<1$ is automatically satisfied.  Moreover, the equation defining $\sigma$ becomes 
\begin{align}
-\sin^2(\pi\alpha)&=\sin \left(\frac{\pi}{2}(\sigma+2\alpha)\right)\sin \left(\frac{\pi}{2}(\sigma-2\alpha)\right)\nnb
&=\frac{1}{2}(\cos (2\pi \alpha)-\cos(\pi \sigma))\nnb
&=\frac{1}{2}(-1+2\cos^2(\pi\alpha)-\cos(\pi\sigma)),
\end{align}
from which we find 
\begin{align}
-1=-\frac{1}{2}(1+\cos(\pi\sigma)),
\end{align}
or in other words
\begin{align}
\cos(\pi\sigma)=1.
\end{align}
Thus $\sigma=0$ is our only solution. We conclude that as $|x-y|\to 0$, 
\begin{equation}
\det(I-K)=(1+o(1))\left(\mu |x-y|\right)^{-2\alpha^2}2^{2\alpha^2}G(1+\alpha)^2G(1-\alpha)^2.
\end{equation}
\end{proof}

\subsection{Proof of Corollary~\ref{cor:main-diffeq}: Bernard--LeClair differential equation}
\label{sec:BernardLeclair}

\begin{proof}[Proof of Corollary~\ref{cor:main-diffeq}]
In \cite[(5.37) and (5.40)]{MR1233355}, it is shown that
$\tau = \tau^\alpha$ with $|x_1-x_2|=r$ and
$\alpha=(-\frac12\lambda,\frac12 \lambda)$, where we note that $\lambda = \lambda_2-\lambda_1$ explaining the factor $\frac12$ (see \cite[below (5.27)]{MR1233355}),
\begin{align}
  \label{e:palmer-diffeq1}
  \ddp{}{r} \log \tau &= - \frac{1}{2r} \pa{ r^2 \psi'(r)^2 - \lambda^2 \tan^2(\psi) - m^2 r^2 \sin^2(\psi)}
                        \intertext{where}
                        \label{e:palmer-diffeq2}
                        r \ddp{}{r} (r \psi'(r))  &= \lambda^2 \sec^2(\psi) \tan(\psi) + \frac{m^2r^2}{2} \sin (2\psi).
\end{align}
We rewrite the differential equations %
as stated in Corollary~\ref{cor:main-diffeq}.
Using that $\Delta = \ddp{^2}{r^2} + \frac{1}{r}\ddp{}{r}$ on radially symmetric functions, the second equation reads
\begin{equation}
  \Delta\psi
  = \frac{m^2}{2} \sin(2\psi) + \frac{\lambda^2}{r^2} \sec^2(\psi)\tan(\psi)
  = \frac{m^2}{2} \sin(2\psi) + \frac{\lambda^2}{r^2}\tan(\psi)(1+\tan^2(\psi))
  ,
\end{equation}
The first equation gives
\begin{align}
  (\log \tau)'' = -\frac{1}{r} (\log \tau) &- \frac{1}{2r}\pa{2r \psi'(r)^2 + 2r^2 \psi'(r)\psi''(r) - \lambda^2 (\tan^2(\psi))' \psi'} \nnb
  &-\frac{1}{2r}\pa{ - 2m^2 r \sin^2(\psi) - 2m^2 r^2 \sin(\psi)\cos(\psi)\psi'}
\end{align}
so
\begin{align}
  \Delta (\log \tau)
  &=
    -\psi'(r)^2 - r \psi'(r)\psi''(r) + \frac{\lambda^2}{2r} (\tan^2(\psi))' \psi' + m^2 \sin^2(\psi) + m^2 r \sin(\psi)\cos(\psi)\psi'
    \nnb
  &=
    -r\psi'(r) (\Delta \psi - \frac{\lambda^2}{2r^2} (\tan^2(\psi))' - m^2 \sin(\psi)\cos(\psi)) + m^2 \sin^2(\psi)
    \nnb
  &=
    m^2 \sin^2(\psi)
    = \frac{m^2}{2}(1-\cos(2\psi)).
\end{align}
Since Palmer's $m$ is related to our $\mu$ by $m=-\mu$, see \eqref{e:Palmer-m}, this implies the claim.
\end{proof}

\appendix
\section{Determinants of massless twisted Dirac operators}
\label{app:det}

In this appendix, we give a formal argument for the identity 
\begin{equation} \label{e:det-free-identity}
\text{``}\frac{\det (\slashed \partial_\rho)}{\det(\slashed \partial)}\text{''} =\prod_{1\leq r<s\leq n}|x_r-x_s|^{2\alpha_r\alpha_s},
\end{equation}
where the twisted Dirac operator $\slashed \partial_\rho$ and the singularities $x_1,\dots,x_n$ and windings $\alpha_1,\dots,\alpha_n$ are as in Section~\ref{sec:introferm}.
As a rigorous interpretation or a proof is not necessary for us, we simply give a heuristic argument of the flavor
one might find in the theoretical physics literature.

For a rigorous version on the lattice, where $\det(\Dirac)$ becomes the partition function of the Dimer model,
we also refer to Dubédat~\cite{MR3369909}.

\subsection{Branching as gauge field}

We first argue that one can view $\slashed\partial_\rho$ as a massless Dirac operator coupled to an external singular gauge field. In our convention for the Dirac operator, coupling to an external gauge field $A=A_0-iA_1$ is equivalent to replacing
the massless Dirac operator $\slashed \partial$ by 
\begin{equation}
\slashed \partial_A=\slashed \partial-i\slashed A=\slashed \partial +\begin{pmatrix}
0 & A_0-iA_1\\
-A_0-iA_1 & 0
\end{pmatrix}=:\begin{pmatrix}
0 & 2\bar \partial + A\\
2\partial-\bar{A} & 0
\end{pmatrix}.
\end{equation}
The correlation functions are then given in terms of the Green's function $S_A$ of $\slashed\partial_A$, which is the solution of the problem 
\begin{equation}
\slashed \partial_A S_A(z,w)=\begin{pmatrix}
\delta(z-w) & 0\\
0 & \delta(z-w)
\end{pmatrix} \qquad \text{and} \qquad \lim_{z\to \infty} S_A(z,w)=0.
\end{equation}
For nice enough $A$, one readily checks that the solution to this is
\begin{equation}
S_A(z,w)=\begin{pmatrix}
0 & \frac{1}{2\pi}\frac{1}{\bar z-\bar w}e^{\frac{1}{2\pi}\int_{\R^2} du\, (\frac{\bar{A}(u)}{\bar z-\bar u}-\frac{\bar{A}(u)}{\bar w-\bar u})}\\
 \frac{1}{2\pi}\frac{1}{z-w}e^{-\frac{1}{2\pi}\int_{\R^2} du\, (\frac{A(u)}{z-u}-\frac{A(u)}{w-u})} & 0
\end{pmatrix}.
\end{equation}
Thus for correlation functions of massless twisted free fermions to agree with correlation functions of massless free fermions in an external gauge field $A$, we must have $S_A=(\slashed\partial_\rho)^{-1}$, which in view of \eqref{eq:S0} means that we should have 
\begin{equation}
\frac{1}{2\pi}\int_{\R^2}du \, \frac{A(u)}{z-u}=\log \rho(z),
\end{equation}
or in other words, 
\begin{equation}
  A(z)=2\bar \partial \log \rho(z).
\end{equation}
Note that off of the branch cut $\Gamma$, the right hand side is zero, so $A$ would have to be a distribution supported on $\Gamma$, and thus a rather singular object.
To understand it better, let us try to understand the action of $2\bar\partial \log \rho$ on a test function $f\in C_c^\infty(\R^2)$.
By our convention for the choice of the branch of $\rho$ from Section \ref{sec:introferm}, a simple application of Green's theorem shows that 
\begin{equation}
\int_{\R^2}dz\, 2\bar\partial \log \rho(z) f(z)=-2\sum_{j=1}^n \alpha_j \int_{\R^2}dz\, \log (z-x_j)\bar \partial f(z)=2\pi \sum_{j=1}^n \alpha_j \int_0^\infty du f(u+x_j),
\end{equation}
see also Figure~\ref{fig:branches}.
This means that we should understand $A$ as 
\begin{equation}
  A(z)=2\bar\partial \log \rho(z)=2\pi \sum_{j=1}^n \alpha_j \delta_{\Gamma_j}(z),
\end{equation}
where $\Gamma_j$ denotes the branch cut of $(z-x_j)^{\alpha_j}$, so also $\bar{A}(z)=A(z)$, and the action of $\delta_{\Gamma_j}$ on a test function is really shorthand for the integral appearing above.

\subsection{Determinant with gauge field}

Having established that massless twisted free fermions are equivalent to massless free fermions coupled to this singular external gauge field, let us move on to computing the determinant we are after.
Formally, if $A$ depends on $x_i$ one has
\begin{equation}
  \partial_{x_i} \log \det(\Dirac_A)
  = \partial_{x_i} \tr \pa{  \log\Dirac_A }
  = \tr \pa{ (\partial_{x_i} \Dirac_A) \Dirac_A^{-1} }.
\end{equation}
Since
\begin{equation}
  \partial_{x_i}\Dirac_A
  = \partial_{x_i} \begin{pmatrix}
      0 & 2 \bar\partial +A \\
      2 \partial - \bar A & 0
  \end{pmatrix}
  = \begin{pmatrix}
      0 & \partial_{x_i}A \\
       - \partial_{x_i}\bar A & 0
  \end{pmatrix},
\end{equation}
this gives
\begin{equation}
  \partial_{x_i} \log \det(\Dirac_A)
  = \int_{\R^2} du \, \partial_{x_i}A(u) S_{A,21}(u,u)
    -  \int_{\R^2} du \, \partial_{x_i}\bar A(u) S_{A,12}(u,u).
  \end{equation}
This is still problematic since $S_{A,21}(u,u)$ and $S_{A,12}(u,u)$ are both infinite.

\subsection{Determinant for twisted Dirac operator}

We note however that if $A$ is the gauge potential formally corresponding to the branching $\rho$,
for $\sum_{j=1}^n \alpha_j=0$, we have, for any constant $c$,
\begin{equation}
  \int_{\R^2}du \, \partial_{x_j} A(u)c=2\pi \partial_{x_j}\int_0^\infty du \sum_{j=1}^n \alpha_j c=0.
\end{equation}
Thus one can formally regularize the above expression as
\begin{align}\label{eq:partialdiag}
  \partial_{x_j}\log \frac{\det(\slashed\partial_A)}{\det(\slashed\partial)}
  &=\int_{\R^2}du \, \partial_{x_j}A(u)[S_{A,21}(u,u)-S_{21}(u,u)]\nnb
    &\quad-\int_{\R^2} du \, \partial_{x_j}\bar{A}(u)[S_{A,12}(u,u)-S_{12}(u,u)],
\end{align}
where $S_{ij} =(\Dirac)^{-1}_{ij}$ and $S_{A,21}(u,u)-S_{21}(u,u)$ is interpreted suitably, for example by point splitting.
Indeed, using that
\begin{align}
  \lim_{v\to u}[S_{A,21}(u,v)-S_{21}(u,v)]
  &=
  \lim_{v\to u}\frac{1}{2\pi(v-u)}\left(\frac{\rho(v)}{\rho(u)}-1\right) = \frac{1}{2\pi} \partial_u \log \rho(u)
  \\
  \lim_{v\to u}[S_{A,12}(u,u)-S_{12}(u,u)]
  &=
    \lim_{v\to u}\frac{1}{2\pi(\bar v-\bar u)}\left(\frac{\overline{\rho(u)}}{\overline{\rho(v)}}-1\right)
    = \frac{1}{2\pi} \bar\partial_u \overline{\log \rho(u)},
\end{align}
we find
\begin{align}
\partial_{x_j}\log \frac{\det(\slashed\partial_\rho)}{\det(\slashed\partial)}
&=\frac{1}{2\pi}\int_{\R^2}du\,\partial_{x_j} A(u)\partial_u \log\rho(u)+\frac{1}{2\pi}\int_{\R^2}du \, \partial_{x_j}\bar{A}(u)\bar \partial_u \overline{\log \rho(u)}\nnb
  &=\frac{1}{2\pi}\int_{\R^2}du \, \partial_{x_j} A(u)\sum_{k=1}^n \alpha_k \frac{1}{u-x_k}+\frac{1}{2\pi}\int_{\R^2} du \, \partial_{x_j}\bar{A}(u)\sum_{k=1}^n \alpha_k \frac{1}{\bar u-\bar x_k}.
\end{align}
If we had a nice test function $f$, then the natural interpretation of $\int du \, \partial_{x_j}A(u)f(u)$ would be 
\begin{align}
2\pi \alpha_j \partial_{x_j}\int_0^\infty du \, f(u+x_j)=2\pi \alpha_j\int_0^\infty du \, \partial f(u+x_j).
\end{align}
However, for our setting $f(u)=\sum_{k=1}^n \alpha_k \frac{1}{u-x_k}$ and the $k=j$ term is so singular at $u=x_j$ that $\int_0^\infty du \, \partial f(u+x_j)$ does not exist. To remedy this, we will regularize $A(u)$ and argue that as we remove the regularization, there exists a finite limit.

For this regularization, let $A_\epsilon(u)=(\eta_\epsilon*A)(u)$, where $\eta_\epsilon(u)=\epsilon^{-2}\eta(\epsilon^{-1}u)$ and $\eta\in C_c^\infty(\R^2)$ satisfies $\eta(-u)=\eta(u)$, $\eta\geq 0$ and $\int_{\R^2}\eta=1$ (so that $A_\epsilon\to A$ as $\epsilon\to 0$).
Then one readily checks that %
\begin{equation}
A_\epsilon(v)=2\pi \sum_{j=1}^n \alpha_j \int_0^\infty du\, \eta_\epsilon(v-u-x_j).
\end{equation}
Thus 
\begin{equation}
\partial_{x_j} A_\epsilon(v)=-2\pi \alpha_j \int_0^\infty du\,  \partial \eta_\epsilon(v-u-x_j)
\end{equation}
and 
\begin{align}
&\frac{1}{2\pi}\int_{\R^2}du\, \partial_{x_j}A_\epsilon(u)\sum_{k=1}^n \alpha_k \frac{1}{u-x_k}\nnb
&=-\alpha_j \int_{\R^2}du \int_0^\infty dv \partial\eta_\epsilon(u-v-x_j)\sum_{k=1}^n \alpha_k \frac{1}{u-x_k}\nnb
&=-\alpha_j \int_{\R^2}du\,  \partial\eta_\epsilon(u)\int_0^\infty dv \sum_{k=1}^n \alpha_k \frac{1}{u+v+x_j-x_k}.
\end{align}
Since 
\begin{equation}
\int_0^\infty dv \int_{\R^2}du |\partial \eta_\epsilon(u)|\left|\sum_{k=1}^n \alpha_k \frac{1}{u+v+x_j-x_k}\right|<\infty,
\end{equation}
we can replace the integral over $\R^2$ by one over $\R^2\setminus \R$ in which case (due to the constraint $\sum_{k=1}^n \alpha_k=0$), we have 
\begin{equation}
\int_0^\infty dv \sum_{k=1}^n \alpha_k \frac{1}{u+v+x_j-x_k}=-\sum_{k=1}^n \alpha_k \log (u+x_j-x_k),
\end{equation}
and 
\begin{equation}
\frac{1}{2\pi}\int_{\R^2}du\, \partial_{x_j}A_\epsilon(u)\sum_{k=1}^n \alpha_k \frac{1}{u-x_k}=-\alpha_j \sum_{k=1}^n \alpha_k \int_{\R^2}du\, \partial \eta_{\epsilon}(u)\log(u+x_j-x_k). 
\end{equation}
A similar calculation shows that 
\begin{equation}
\frac{1}{2\pi}\int_{\R^2}du\, \partial_{x_j}\bar{A}_\epsilon(u)\sum_{k=1}^n \alpha_k \frac{1}{\bar u-\bar x_k}=-\alpha_j \sum_{k=1}^n \alpha_k \int_{\R^2}du\,  \partial \eta_{\epsilon}(u)\log(\bar u+\bar x_j-\bar x_k). 
\end{equation}
Thus (for small enough $\epsilon$)
\begin{align}
&\frac{1}{2\pi}\int_{\R^2}du\, \partial_{x_j} A_\epsilon(u)\sum_{k=1}^n \alpha_k \frac{1}{u-x_k}+\frac{1}{2\pi}\int_{\R^2}du\, \partial_{x_j}\bar{A}_\epsilon(u)\sum_{k=1}^n \alpha_k \frac{1}{\bar u-\bar x_k}\nnb
&=\alpha_j \sum_{k:k\neq j}\alpha_k \int_{\R^2} du\, \eta_\epsilon(u)\frac{1}{u+x_j-x_k}-\alpha_j^2 \int_{\R^2}du\, \partial \eta_\epsilon(u)2\log |u|\nnb
&=\alpha_j \sum_{k:k\neq j}\alpha_k \int_{\R^2} du\, \eta_\epsilon(u)\frac{1}{u+x_j-x_k}+\alpha_j^2 \int_{\R^2}du\, \eta_\epsilon(u)\frac{1}{u}.
\end{align}
Since $\eta$ is even, the last integral vanishes while the first one tends to $\alpha_j \sum_{k: k\neq j}\frac{\alpha_k}{x_j-x_k}$. 

Putting everything together, the above suggests that the correct interpretation is that 
\begin{equation}
\partial_{x_j}\log \frac{\det(\slashed \partial_\rho)}{\det(\slashed\partial)}=\alpha_j \sum_{k:k\neq j}\frac{\alpha_k}{x_j-x_k}=\partial_{x_j}\log \prod_{1\leq r<s\leq n}|x_r-x_s|^{2\alpha_r\alpha_s},
\end{equation}
and a similar argument shows that 
\begin{align}
\bar \partial_{x_j}\log \frac{\det(\slashed \partial_\rho)}{\det(\slashed\partial)}=\bar\partial_{x_j}\log \prod_{1\leq r<s\leq n}|x_r-x_s|^{2\alpha_r\alpha_s}.
\end{align}
Thus we conclude that one should interpret that for some constant $C$ (independent of $x_1,\dots,x_n$, but possibly depending on $n$ and $\alpha_1,\dots,\alpha_n$), we should have
\begin{equation}
  \log \frac{\det(\slashed \partial_\rho)}{\det(\slashed\partial)}= \log \prod_{1\leq r<s\leq n}|x_r-x_s|^{2\alpha_r\alpha_s}+C.
\end{equation}
This constant is likely to be non-universal (depending on exactly how one regularizes the determinant).
For our purposes, this constant does not matter and the most natural interpretation of the ratio of determinants is
\eqref{e:det-free-identity}.

\section{Gaussian multiplicative chaos and free field correlations}
\label{app:GMCGFF}

In this appendix, we collect some estimates on the Gaussian free field and its associated IMC.

\subsection{Covariance estimates and fractional correlations of the free field}
\label{app:GMCGFF-cov}

The first lemma below records some basic covariance estimates for the GFF with convolution and heat kernel regularization.
From Section~\ref{sec:Gauss}, recall that the heat kernel regularized Gaussian free field with covariance
\begin{equation} \label{e:appGFFeps}
  \int_{\epsilon^2}^\infty e^{-m^2 t} e^{\Delta_x t}\,dt
\end{equation}
is denoted  $\Phi^{\GFF(m)}_{\epsilon^2}$ whereas
$\Phi^{\GFF(m),\epsilon}_{0} = \eta_\epsilon *\Phi^{\GFF(m)}_{0}$ will be the convolution regularized field.

\begin{lemma}\label{le:hkcov}
Let $K_0$ be the zeroth modified Bessel function of the second kind and $\gamma$ be the Euler-Mascheroni constant. The following covariance estimates hold.

\smallskip\noindent (i)
For fixed $m>0$, as $\epsilon\to 0$
\begin{align}
&\lim_{\epsilon\to 0}\left(\E[\Phi_0^{\GFF(m),\epsilon}(x)^2]-\frac{1}{2\pi}\log \epsilon^{-1}\right)\\
&\qquad =\frac{1}{2\pi}\left(\log \frac{1}{m}-\gamma+\log 2+\int_{\R^2\times \R^2}dudv\, \eta(u)\eta(v)\log \frac{1}{|u-v|}\right)
\end{align}
uniformly in $x\in \R^2$. %

\smallskip\noindent (ii)
For fixed $m>0$ and $(x,y)\in K\subset \{(x,y)\in \R^2\times \R^2: x\neq y\}$ compact, as $\epsilon\to 0$
\begin{equation}
\E[\Phi_0^{\GFF(m),\epsilon}(x)\Phi_0^{\GFF(m),\epsilon}(y)]=\frac{1}{2\pi}K_0(m|x-y|)+o(1),
\end{equation}
where %
the error is uniform in $(x,y)\in K$.

\smallskip\noindent (iii)
For fixed $m>0$ and $K\subset \R^2$ compact we have as $\epsilon\to 0$
\begin{equation}
\E[\Phi_0^{\GFF(m),\epsilon}(x)\Phi_0^{\GFF(m),\epsilon}(y)]= \frac{1}{2\pi}\log (\epsilon^{-1}\wedge |x-y|^{-1})+O(1)
\end{equation}
uniformly in $x,y\in K$.

\smallskip\noindent (iv)
For fixed $m>0$, as $\epsilon\to 0$ 
\begin{equation}
\lim_{\epsilon\to 0}\left(\E[\Phi_{\epsilon^2}^{\GFF(m)}(x)^2]-\frac{1}{2\pi}\log \epsilon^{-1}\right)=\frac{1}{2\pi}\log m^{-1}-\frac{\gamma}{4\pi}
\end{equation}
uniformly in $x\in \R^2$.

\smallskip\noindent (v)
For fixed $m>0$ and $(x,y)\in K\subset \{(x,y)\in \R^2\times \R^2: x\neq y\}$ compact, as $\epsilon\to 0$
\begin{equation}
\E[\Phi_0^{\GFF(m),\epsilon}(x)\Phi_{\epsilon^2}^{\GFF(m)}(y)]=\frac{1}{2\pi}K_0(m|x-y|)+o(1),
\end{equation}
where %
the error is uniform in $(x,y)\in K$.

\smallskip\noindent (vi)
For fixed $m>0$ and $(x,y)\in K\subset \{(x,y)\in \R^2\times \R^2: x\neq y\}$ compact, as $\epsilon\to 0$
\begin{equation}
\E[\Phi_{\epsilon^2}^{\GFF(m)}(x)\Phi_{\epsilon^2}^{\GFF(m)}(y)]=\frac{1}{2\pi}K_0(m|x-y|)+o(1),
\end{equation}
where %
the error is uniform in $(x,y)\in K$.

\smallskip\noindent (vii)
For fixed $m>0$ and $K\subset \R^2$ compact, we have as $\epsilon\to 0$,
\begin{equation}
\E[\Phi_0^{\GFF(m),\epsilon}(x)\Phi_{\epsilon^2}^{\GFF(m)}(y)]= \frac{1}{2\pi}\log(\epsilon^{-1}\wedge |x-y|^{-1})+O(1)
\end{equation}
uniformly in $x,y\in K$.

\smallskip\noindent (viii)
For fixed $m>0$ and $K\subset \R^2$ compact, we have as $\epsilon\to 0$,
\begin{equation}
\E[\Phi_{\epsilon^2}^{\GFF(m)}(x)\Phi_{\epsilon^2}^{\GFF(m)}(y)]= \frac{1}{2\pi}\log(\epsilon^{-1}\wedge|x-y|^{-1})+O(1)
\end{equation}
uniformly in $x,y\in K$.
\end{lemma}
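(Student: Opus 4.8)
The plan is to reduce every statement to an explicit integral representation and to the classical small-argument expansions
\begin{equation*}
  K_0(r) = \log\tfrac{2}{r} - \gamma + O(r^2|\log r|) \quad (r\to 0), \qquad
  E_1(z) = -\gamma - \log z + O(z) \quad (z\to 0),
\end{equation*}
where $E_1(z)=\int_z^\infty s^{-1}e^{-s}\,ds$, together with the exponential decay of $K_0$ and $E_1$ at infinity. Recall that on $\R^2$ one has $(-\Delta+m^2)^{-1}(x,y)=\frac{1}{2\pi}K_0(m|x-y|)$ and $e^{\Delta s}(x-y)=\frac{1}{4\pi s}e^{-|x-y|^2/4s}$. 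Write $C_\infty(x,y)=\frac{1}{2\pi}K_0(m|x-y|)$ and $C_{\epsilon^2}(x,y)=\int_0^{\epsilon^2}e^{-m^2 s}e^{\Delta s}(x-y)\,ds$. Using the It\^o isometry for the driving cylindrical Wiener process of Section~\ref{sec:Gauss-decomp} (equivalently, directly from the covariances), one gets $\E[\Phi_{\epsilon^2}^{\GFF(m)}(x)\Phi_{\epsilon^2}^{\GFF(m)}(y)]=(C_\infty-C_{\epsilon^2})(x,y)$, $\E[\Phi_0^{\GFF(m),\epsilon}(x)\Phi_{\epsilon^2}^{\GFF(m)}(y)]=(\eta_\epsilon*(C_\infty-C_{\epsilon^2}))(x,y)$ with convolution in the first variable, $\E[\Phi_0^{\GFF(m),\epsilon}(x)\Phi_0^{\GFF(m),\epsilon}(y)]=(\eta_\epsilon*\eta_\epsilon*C_\infty)(x-y)$, and $\E[\Phi_{\epsilon^2}^{\GFF(m)}(x)^2]=\frac{1}{4\pi}\int_{\epsilon^2}^\infty s^{-1}e^{-m^2 s}\,ds=\frac{1}{4\pi}E_1(m^2\epsilon^2)$.

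For (i) and (ii) I would substitute $w\mapsto x-\epsilon w$, $w'\mapsto y-\epsilon w'$ in the defining double integral to obtain $\frac{1}{2\pi}\int\!\int\eta(w)\eta(w')K_0(m|x-y-\epsilon(w-w')|)\,dw\,dw'$; for (i) (so $x=y$) the $K_0$ expansion and $\int\eta=1$ yield $\frac{1}{2\pi}\log\epsilon^{-1}$ plus the stated constant, with the $o(1)$ error uniform in $x$ by dominated convergence ($\eta$ compactly supported, $K_0(r)+\log r$ locally bounded), and for (ii) continuity of $K_0$ on compact subsets of $(0,\infty)$ and dominated convergence give the limit $\frac{1}{2\pi}K_0(m|x-y|)$, uniformly for $(x,y)$ in a compact $K$ avoiding the diagonal since $\{|x-y|:(x,y)\in K\}$ is then a compact subset of $(0,\infty)$. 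Part (iv) follows at once from $\E[\Phi_{\epsilon^2}^{\GFF(m)}(x)^2]=\frac{1}{4\pi}E_1(m^2\epsilon^2)$ and $E_1(m^2\epsilon^2)=-\gamma-2\log m-2\log\epsilon+O(\epsilon^2)$. For (v) and (vi) one observes that $C_{\epsilon^2}(x,y)\le\frac{1}{4\pi}\int_0^{\epsilon^2}s^{-1}e^{-\delta^2/4s}\,ds$ tends to $0$ exponentially fast, uniformly over $|x-y|\ge\delta$; hence $(C_\infty-C_{\epsilon^2})\to C_\infty$ uniformly there, and for (v) convolving once with $\eta_\epsilon$ adds only a uniformly small error by uniform continuity of $C_\infty$ on $\{|u-y|\ge\delta/2\}$ intersected with a bounded set.

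The quantitative statements (iii), (vii), (viii) all rest on the elementary lemma that, uniformly over $0<a\le 1$ and $b$ in a fixed compact set,
\begin{equation*}
  \frac{1}{4\pi}\int_a^\infty \frac{e^{-m^2 s}}{s}\,e^{-b^2/4s}\,ds
  = \frac{1}{2\pi}\log\frac{1}{\sqrt a\vee (b/2)} + O(1).
\end{equation*}
This I would prove by splitting the integral at $s=b^2/4$ and $s=1$: on $(a,b^2/4)$ the substitution $s=b^2/(4u)$ gives a tail bounded by $\int_1^\infty u^{-1}e^{-u}\,du=O(1)$; on $(b^2/4,1)$ both exponential factors lie between fixed positive constants, so the integral equals $\log(1/(a\vee b^2/4))+O(1)$; and on $(1,\infty)$ the integral is $O(1)$; the matching lower bound comes from restricting to $(a\vee b^2/4,\ \mathrm{const})$. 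Applying this with $(a,b)=(\epsilon^2,|x-y|)$ gives $(C_\infty-C_{\epsilon^2})(x,y)=\frac{1}{2\pi}\log(\epsilon^{-1}\wedge|x-y|^{-1})+O(1)$, which is (viii). For (iii) and (vii) one then checks that convolving this kernel with $\eta_\epsilon$ in one or both variables changes it by only $O(1)$: the kernel $(C_\infty-C_{\epsilon^2})$ is nonnegative and, since $e^{-|\cdot|^2/4s}\le 1$, everywhere bounded by its diagonal value $\frac{1}{4\pi}E_1(m^2\epsilon^2)=\frac{1}{2\pi}\log\epsilon^{-1}+O(1)$, so in the regime $|x-y|\le C\epsilon$ both sides equal $\frac{1}{2\pi}\log\epsilon^{-1}+O(1)$ because $\epsilon\vee|x-y|\asymp\epsilon$, while for $|x-y|>C\epsilon$ one has $|u-y|\asymp|x-y|$ and hence $(C_\infty-C_{\epsilon^2})(u,y)=\frac{1}{2\pi}\log|x-y|^{-1}+O(1)$ throughout the mollifier's support; alternatively (iii) may be handled by rescaling as in (i) and using uniform integrability of $\int\!\int\eta(w)\eta(w')|\log|c-w+w'||\,dw\,dw'$ for $c$ bounded. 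The main obstacle is purely bookkeeping: tracking the $O(1)$ and $o(1)$ errors so that they are genuinely uniform across the three regimes $|x-y|\ll\epsilon$, $|x-y|\asymp\epsilon$, $|x-y|\gg\epsilon$ (but bounded), and in particular controlling the mollifier convolution near the diagonal in (iii) and (vii); all the analytic input beyond that is routine.
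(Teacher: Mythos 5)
Your proposal is correct and follows essentially the same route as the paper: explicit covariance representations, the $K_0$ small-argument expansion for the exact limits (i), (ii), (v), (vi), and a splitting of the heat-kernel time integral with a case analysis in $|x-y|$ versus $\epsilon$ for the $O(1)$ bounds (iii), (vii), (viii) — your ``key lemma'' is the same computation the paper performs after the substitution $s\mapsto r^2/s$. The only difference is cosmetic: you prove (iv) and (vi) directly via the exponential integral and the off-diagonal vanishing of $C_{\epsilon^2}$, where the paper cites an external reference.
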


\begin{proof}
\subproof{item (i)}
  By definition, 
\begin{align}
\E[\Phi_0^{\GFF(m),\epsilon}(x)^2]&=\int_{\R^2\times \R^2}dudv\, \eta_\epsilon(x-u)\eta_\epsilon(x-v)\int_0^\infty ds\, e^{-m^2 s}\frac{e^{-\frac{|u-v|^2}{4s}}}{4\pi s}\nnb
&=\int_{\R^2\times \R^2}\eta(u)\eta(v)\int_{0}^\infty e^{-m^2 s}\frac{e^{-\frac{\epsilon^2|u-v|^2}{4s}}}{4\pi s}\nnb
&=\int_{\R^2\times \R^2}dudv\, \eta(u)\eta(v)\frac{1}{2\pi}K_0(m\epsilon |u-v|), 
\end{align}
where $K_0$ is a modified Bessel function of the second kind (given by $K_0(r)=\int_0^\infty \frac{ds}{2s} e^{-s-\frac{r^2}{4s}}$ for $r>0$). The basic fact about this function we need is that as $r\to 0$
\begin{equation}\label{eq:K0asy}
K_0(r)=\log \frac{1}{r}-\gamma+\log 2+O(r),
\end{equation}
where $\gamma$ is the Euler-Mascheroni constant.  As $\eta\in C_c^\infty(\R^2)$ and $\int_{\R^2}dx\, \eta(x)=1$, this means that as $\epsilon\to 0$
\begin{align}
\E[\Phi_0^{\GFF(m),\epsilon}(x)^2]&=\int_{\R^2\times \R^2}dudv\, \eta(u)\eta(v)\frac{1}{2\pi}\left(\log \frac{1}{\epsilon}+\log \frac{1}{m}+\log \frac{1}{|u-v|}-\gamma+\log 2\right)+o(1)\nnb
&=\frac{1}{2\pi}\left(\log \frac{1}{\epsilon}+\log \frac{1}{m}-\gamma+\int_{\R^2\times \R^2}dudv\, \eta(u)\eta(v)\log \frac{1}{|u-v|}\right)+o(1),
\end{align}
which was the claim. Uniformity in $x$ is trivial since the variance is independent of $x$.

\subproof{item (ii)}
As in the previous claim, we find 
\begin{align}
\E[\Phi_0^{\GFF(m),\epsilon}(x)\Phi_0^{\GFF(m),\epsilon}(y)]=\int_{\R^2\times \R^2}dudv\, \eta(u)\eta(v)\frac{1}{2\pi}K_0(m|x-y+\epsilon(u-v)|).
\end{align}
Since $(x,y)\in K\subset \{(x,y)\in \R^2\times \R^2: x\neq y\}$ compact, $|x-y|$ is bounded away from zero. Moreover, as the support of $\eta$ is compact, for small enough $\epsilon$, $|x-y+\epsilon(u-v)|$ is bounded away from zero. We thus find that as $\epsilon\to 0$ 
\begin{align}
K_0(m|x-y+\epsilon(u-v)|)\rightarrow K_0(m|x-y|)
\end{align}
uniformly in $(x,y)\in K$ and $u,v\in \mathrm{supp}(\eta)$. This means that as $\epsilon\to 0$ 
\begin{equation}
\E[\Phi_0^{\GFF(m),\epsilon}(x)\Phi_0^{\GFF(m),\epsilon}(y)]\to \int_{\R^2\times \R^2}dudv\, \eta(u)\eta(v)\frac{1}{2\pi}K_0(m|x-y|)=\frac{1}{2\pi}K_0(m|x-y|)
\end{equation}
uniformly in $(x,y)\in K$, which was the claim.

\subproof{item (iii)}
Arguing as before, we have 
\begin{align}
\E[\Phi_0^{\GFF(m),\epsilon}(x)\Phi_0^{\GFF(m),\epsilon}(y)]&=\int_{\R^2\times \R^2}dudv\, \eta(u)\eta(v)\frac{1}{2\pi}K_0(m|x-y+\epsilon(u-v)|).
\end{align}
Using \eqref{eq:K0asy}, we see that it is sufficient to prove the claim for 
\begin{align}
\int_{\R^2\times \R^2}dudv\, \eta(u)\eta(v)\frac{1}{2\pi}\log |x-y+\epsilon(u-v)|^{-1}.
\end{align}
Consider two cases: (i) $|x-y|\geq 3\epsilon$ and (ii) $|x-y|<3\epsilon$. In case (i), we note that since $\mathrm{supp}(\eta)\subset B(0,1)$, we have 
\begin{align}
\log |x-y+\epsilon(u-v)|^{-1}=\log |x-y|^{-1}+O(1)
\end{align}
uniformly in $|x-y|\geq 3\epsilon$ and $u,v\in \mathrm{supp}(\eta)$. Thus the claim is fine in this case. For $|x-y|\leq 3\epsilon$, we have on the other hand 
\begin{align}
&\int_{\R^2\times \R^2}dudv\, \eta(u)\eta(v)\frac{1}{2\pi}\log |x-y+\epsilon(u-v)|^{-1}\nnb
&=\int_{\R^2}dv\, \eta(v)\left(\frac{1}{2\pi}\log \epsilon^{-1}+\int_{\R^2}du\, \rho(u)\frac{1}{2\pi}\log |u-v+\epsilon^{-1}(x-y)|^{-1}\right).
\end{align}
Since $-v+\epsilon^{-1}(x-y)\in B(0,4)$ under our assumptions and 
\begin{align}
\sup_{z\in B(0,4)}\int_{\R^2}du\, \rho(u)\left|\log(|u+z|)\right|<\infty, 
\end{align}
we find that for $|x-y|<3\epsilon$ 
\begin{align}
\int_{\R^2\times \R^2}dudv\, \eta(u)\eta(v)\frac{1}{2\pi}\log |x-y+\epsilon(u-v)|^{-1}=\frac{1}{2\pi}\log \epsilon^{-1}+O(1)
\end{align}
where the error term has the required uniformity. The claim follows by noting that for $\epsilon\leq |x-y|<3\epsilon$, $\log |x-y|^{-1}=\log \epsilon^{-1}+O(1)$ (with the required uniformity).

\subproof{item (iv)}
First of all, we see that $\E[\Phi_{\epsilon^2}^{\GFF(m)}(x)^2]$ is independent of $x$, so uniformity will follow from pointwise convergence. Pointwise convergence is proven in \cite[Lemma 2.4]{MR4767492}.

\subproof{item (v)}
We begin with the representation
\begin{align}
\E[\Phi_0^{\GFF(m),\epsilon}(x)\Phi_{\epsilon^2}^{\GFF(m)}(y)]&=\int_{\R^2}du \eta_\epsilon(x-u)\int_{\epsilon^2}^\infty ds\, e^{-m^2 s} \frac{e^{-\frac{|y-u|^2}{4s}}}{4\pi s}\nnb
&=\int_{\R^2}du\, \eta(u)\int_{\epsilon^2}^\infty ds\, e^{-m^2 s}\frac{e^{-\frac{|x-y-\epsilon u|^2}{4s}}}{4\pi s}.
\end{align}
As $|x-y|$ is bounded away from zero, one readily checks that the $\epsilon\to 0$ limit can be taken inside of the integrals and we have convergence to $\frac{1}{2\pi}K_0(m|x-y|)$ with the appropriate uniformity.

\subproof{item (vi)}
This follows from the proof of \cite[Lemma 2.4]{MR4767492}.

\subproof{item (vii)}
We again start by writing 
\begin{align}
\E[\Phi_{0}^{\GFF(m),\epsilon}(x)\Phi_{\epsilon^2}^{\GFF(m)}(y)]&=\int_{\R^2}du\, \rho(u)\int_{\epsilon^2}^\infty ds\, e^{-m^2 s} \frac{e^{-\frac{|x-y-\epsilon u|^2}{4s}}}{4\pi s}
\end{align}
We write the $s$-integral as 
\begin{align}
&\int_{\epsilon^2}^\infty ds\, e^{-m^2 s}\frac{e^{-\frac{|x-y-\epsilon u|^2}{4s}}}{4\pi s}\nnb
&=\int_{1}^\infty ds\, e^{-m^2 s}\frac{e^{-\frac{|x-y-\epsilon u|^2}{4s}}}{4\pi s}+\int_{\epsilon^2}^1 ds (e^{-m^2 s}-1)\frac{e^{-\frac{|x-y-\epsilon u|^2}{4s}}}{4\pi s}+\int_{\epsilon^2}^1 ds \frac{e^{-\frac{|x-y-\epsilon u|^2}{4s}}}{4\pi s}.
\end{align}
For fixed $m$, the first two integrals on the right hand side are bounded uniformly in $x,y,u\in \R^2$ and $0<\epsilon<1$, so our task is to estimate the last integral. For this purpose, we note that for $r>0$
\begin{align}
\int_{\epsilon^2}^1ds \frac{e^{-\frac{r^2}{4s}}}{4\pi s}&=\int_{r^2}^{r^2/\epsilon^2}ds \frac{1}{4\pi s} e^{-s/4}.
\end{align}
Consider now two cases: (a) $r^2\leq \epsilon^2$ and (b) $r^2>\epsilon^2$. In case (a), we have 
\begin{align}
\int_{r^2}^{r^2/\epsilon^2}ds \frac{1}{4\pi s}+O(1)=\frac{1}{2\pi}\log \epsilon^{-1}+O(1), 
\end{align}
where the implied constant is universal. 
In case (b), $r\geq \epsilon^2$, we have 
\begin{align}
\int_{r^2}^{r^2/\epsilon^2}ds \frac{1}{4\pi s}e^{-s/4}=\int_{r^2}^1 \frac{ds}{4\pi s}+O(1)=\frac{1}{2\pi}\log r^{-1}+O(1)
\end{align}
where again the implied constant is absolute.
We conclude that 
\begin{align}
\E[\Phi_{0}^{\GFF(m),\epsilon}(x)\Phi_{\epsilon^2}^{\GFF(m)}(y)]&=\int_{\R^2}du\rho(u)\frac{1}{2\pi}\log(\epsilon^{-1}\wedge |x-y-\epsilon u|^{-1})+O(1),
\end{align}
where the implied constant is uniform in the relevant $x,y$.

To conclude, consider now two cases (1) $|x-y|\geq 2\epsilon$ and (2) $|x-y|<2\epsilon$. In case (1), we have $\log(\epsilon^{-1}\wedge |x-y-\epsilon u|^{-1})=\log |x-y-\epsilon u|^{-1}=\log |x-y|^{-1}+O(1)$ (with the error uniform in $x,y,u,\epsilon$) so in case (1)
\begin{equation}
\E[\Phi_{0}^{\GFF(m),\epsilon}(x)\Phi_{\epsilon^2}^{\GFF(m)}(y)]=\frac{1}{2\pi}\log |x-y|^{-1}+O(1).
\end{equation}
In case (2), we have $\log (\epsilon^{-1}\wedge |x-y-\epsilon u|^{-1})=\log \epsilon^{-1}+O(1)$ and 
\begin{equation}
\E[\Phi_{0}^{\GFF(m),\epsilon}(x)\Phi_{\epsilon^2}^{\GFF(m)}(y)]=\frac{1}{2\pi}\log \epsilon^{-1}+O(1)
\end{equation}
with the appropriate uniformity. The claim follows by noting that for $\epsilon\leq |x-y|\leq 2\epsilon$, we have $\log |x-y|^{-1}=\log \epsilon^{-1}+O(1)$ (with the relevant uniformity). This concludes the proof of this item

\subproof{item (viii)}
In this case, we argue as in the previous case and write 
\begin{align}
\E[\Phi_{\epsilon^2}^{\GFF(m)}(x)\Phi_{\epsilon^2}^{\GFF(m)}(y)]&=\int_{\epsilon^2}^1 ds\, \frac{e^{-\frac{|x-y|^2}{4s}}}{4\pi s}+O(1)
\end{align}
(with the appropriate uniformity). For the remaining integral, we already argued in the previous case that it is 
\begin{align}
\int_{\epsilon^2}^1 ds\, \frac{e^{-\frac{|x-y|^2}{4s}}}{4\pi s}=\frac{1}{2\pi}\log(\epsilon^{-1}\wedge |x-y|^{-1})+O(1).
\end{align}
This concludes the proof.
\end{proof}

Next we deduce the existence of the massless fractional correlation functions 
\begin{equation}
\avg{\wick{e^{i\alpha_1\sqrt{4\pi}\varphi(x_1)}}\cdots \wick{e^{i\alpha_n\sqrt{4\pi}\varphi(x_n)}}}_{\GFF(0)}
\end{equation}
that were discussed in Section \ref{sec:Gauss-corr}.
The main claim is recorded in the following lemma.

\begin{lemma}\label{le:gfffrac}
For fixed $x_1,\dots,x_n\in \R^2$ distinct and fixed $\alpha_1,\dots,\alpha_n\in \C$, we have 
\begin{align}
&\avg{\wick{e^{i\alpha_1\sqrt{4\pi}\varphi(x_1)}}\cdots \wick{e^{i\alpha_1\sqrt{4\pi}\varphi(x_1)}}}_{\GFF(0)}\nnb
&\quad:=\lim_{m\to 0}\lim_{\epsilon\to 0}\E\left[\wick{e^{i\alpha_1\sqrt{4\pi}\Phi_{\epsilon^2}^{\GFF(m)}(x_1)}}_\epsilon\cdots \wick{e^{i\alpha_n\sqrt{4\pi}\Phi_{\epsilon^2}^{\GFF(m)}(x_n)}}_\epsilon\right]\nnb
&\quad\: =\begin{cases}
0, & \sum_{j=1}^n \alpha_j\neq 0\\
\left(2 e^{-\frac{\gamma}{2}}\right)^{\sum_{j=1}^n \alpha_j^2}\prod_{1\leq j<k\leq n}|x_j-x_k|^{2\alpha_j \alpha_k}, & \sum_{j=1}^n \alpha_j=0
\end{cases}.
\end{align}
The notation $\wick{e^{i\sqrt{4\pi}\alpha\Phi_{\epsilon^2}^{\GFF(m)}(x)}}_\epsilon = \epsilon^{-\alpha^2}e^{i\sqrt{4\pi}\alpha\Phi_{\epsilon^2}^{\GFF(m)}(x)}$ is defined in \eqref{eq:fracwick}.
\end{lemma}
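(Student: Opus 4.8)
The plan is a direct Gaussian computation, reducing everything to the covariance asymptotics recorded in Lemma~\ref{le:hkcov}. First I would fix $m,\epsilon>0$, for which $\Phi_{\epsilon^2}^{\GFF(m)}$ is a genuine smooth centered Gaussian field, and write $C_{m,\epsilon}(x,y)=\E[\Phi_{\epsilon^2}^{\GFF(m)}(x)\Phi_{\epsilon^2}^{\GFF(m)}(y)]$. For real coefficients the Wick/Laplace transform identity gives
\[
\E\Big[\prod_{j=1}^n e^{i\sqrt{4\pi}\alpha_j\Phi_{\epsilon^2}^{\GFF(m)}(x_j)}\Big]
= \exp\Big(-2\pi\sum_{j,k=1}^n \alpha_j\alpha_k\, C_{m,\epsilon}(x_j,x_k)\Big),
\]
and since the left-hand side is an entire function of $(\alpha_1,\dots,\alpha_n)$ (the field is bounded in every $L^p$ for fixed $\epsilon$, so its multivariate moment generating function is entire), this identity extends by analytic continuation to all $\alpha_j\in\C$. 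Multiplying by the renormalization $\prod_j\epsilon^{-\alpha_j^2}$ and separating the diagonal from the off-diagonal part of the double sum reduces the problem to the $\epsilon\to0$ and then $m\to0$ asymptotics of $C_{m,\epsilon}$.

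For the diagonal terms I would use Lemma~\ref{le:hkcov}(iv), namely $C_{m,\epsilon}(x_j,x_j)=\frac1{2\pi}\log\epsilon^{-1}+\frac1{2\pi}\log m^{-1}-\frac{\gamma}{4\pi}+o(1)$ as $\epsilon\to0$, so that
\[
\epsilon^{-\alpha_j^2}e^{-2\pi\alpha_j^2 C_{m,\epsilon}(x_j,x_j)}
= \epsilon^{-\alpha_j^2}\cdot\epsilon^{\alpha_j^2}\cdot m^{\alpha_j^2}e^{\gamma\alpha_j^2/2}(1+o(1))
\ \xrightarrow[\epsilon\to0]{}\ m^{\alpha_j^2}e^{\gamma\alpha_j^2/2};
\]
the counterterm exactly cancels the logarithmic divergence. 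For the off-diagonal terms, Lemma~\ref{le:hkcov}(vi) gives $C_{m,\epsilon}(x_j,x_k)\to\frac1{2\pi}K_0(m|x_j-x_k|)$ as $\epsilon\to0$, locally uniformly in the distinct points. Hence, after the inner limit,
\[
\lim_{\epsilon\to0}\;\prod_j\epsilon^{-\alpha_j^2}\E\Big[\prod_j e^{i\sqrt{4\pi}\alpha_j\Phi_{\epsilon^2}^{\GFF(m)}(x_j)}\Big]
= m^{\sum_j\alpha_j^2}\,e^{\frac\gamma2\sum_j\alpha_j^2}
\exp\Big(-2\sum_{j<k}\alpha_j\alpha_k\,K_0(m|x_j-x_k|)\Big).
\]

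It remains to let $m\to0$. Using the Bessel asymptotic $K_0(r)=\log r^{-1}+\log 2-\gamma+O(r)$ (recalled as \eqref{eq:K0asy}) together with $2\sum_{j<k}\alpha_j\alpha_k=(\sum_j\alpha_j)^2-\sum_j\alpha_j^2$, one collects the powers of $m$ in the resulting product and finds that they combine to $m^{(\sum_j\alpha_j)^2}$. When $\sum_j\alpha_j\neq0$ this factor forces the whole expression to $0$ in the limit (for real $\alpha_j$, where this is the only case ultimately used via the neutrality restriction in the applications; I would add a remark to this effect), whereas when $\sum_j\alpha_j=0$ the $m$-powers cancel and a short bookkeeping of the surviving $\gamma$- and $\log 2$-constants yields $2^{\sum_j\alpha_j^2}e^{-\frac\gamma2\sum_j\alpha_j^2}\prod_{j<k}|x_j-x_k|^{2\alpha_j\alpha_k}=(2e^{-\gamma/2})^{\sum_j\alpha_j^2}\prod_{j<k}|x_j-x_k|^{2\alpha_j\alpha_k}$. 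There is no serious obstacle in this proof: the only points requiring care are the analytic continuation in $\alpha$, keeping the order of limits ($\epsilon\to0$ then $m\to0$) straight, and tracking the renormalization constants; the local uniformity claimed in the statement is inherited directly from the corresponding uniformity in Lemma~\ref{le:hkcov}.
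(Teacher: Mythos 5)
Your proposal is correct and follows essentially the same route as the paper's proof: evaluate the Gaussian characteristic function exactly, apply Lemma~\ref{le:hkcov}~(iv) and~(vi) to send $\epsilon\to0$, then use the Bessel asymptotic \eqref{eq:K0asy} and the algebraic identity $2\sum_{j<k}\alpha_j\alpha_k=(\sum_j\alpha_j)^2-\sum_j\alpha_j^2$ to collect the $m$-powers as $m\to0$. The paper does not spell out the analytic continuation in $\alpha$ that justifies the Gaussian formula for complex exponents, nor the caveat you correctly flag that the factor $m^{(\sum_j\alpha_j)^2}$ only forces the limit to vanish when $\Re\big((\sum_j\alpha_j)^2\big)>0$ (e.g.\ for real $\alpha_j$), so your version is marginally more careful on those two points, but the substance and key inputs are identical.
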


\begin{proof}
Using the formula for the characteristic function of Gaussian random vector, we find that 
\begin{align}
&\E\left[\wick{e^{i\alpha_1\sqrt{4\pi}\Phi_{\epsilon^2}^{\GFF(m)}(x_1)}}_\epsilon\cdots \wick{e^{i\alpha_n\sqrt{4\pi}\Phi_{\epsilon^2}^{\GFF(m)}(x_n)}}_\epsilon\right]\nnb
&=\prod_{j=1}^n \left(\epsilon^{-\alpha_j^2} e^{-2\pi \alpha_j^2 \E[\Phi_{\epsilon^2}^{\GFF(m)}(x_j)^2]}\right)\prod_{1\leq j<k\leq n} e^{-4\pi \alpha_j \alpha_k \E[\Phi_{\epsilon^2}^{\GFF(m)}(x_j)\Phi_{\epsilon^2}^{\GFF(m)}(x_k)]}.
\end{align}
From item (iv) and item (vi) of Lemma \ref{le:hkcov}, we see that for fixed and distinct $x_1,\dots,x_n$, we have 
\begin{align}
&\lim_{\epsilon\to 0}\E\left[\wick{e^{i\alpha_1\sqrt{4\pi}\Phi_{\epsilon^2}^{\GFF(m)}(x_1)}}_\epsilon\cdots \wick{e^{i\alpha_n\sqrt{4\pi}\Phi_{\epsilon^2}^{\GFF(m)}(x_n)}}_\epsilon\right]\nnb
&=\prod_{j=1}^n \left( e^{-2\pi \alpha_j^2 (\frac{1}{2\pi}\log m^{-1}-\frac{\gamma}{4\pi})}\right)\prod_{1\leq j<k\leq n} e^{-2 \alpha_j \alpha_k K_0(m|x_j-x_k|)}.
\end{align}
From \eqref{eq:K0asy}, we deduce that as $m\to 0$, 
\begin{align}
&\prod_{j=1}^n \left( e^{-2\pi \alpha_j^2 (\frac{1}{2\pi}\log m^{-1}-\frac{\gamma}{4\pi})}\right)\prod_{1\leq j<k\leq n} e^{-2 \alpha_j \alpha_k K_0(m|x_j-x_k|)}\nnb
&=(1+o(1))(2m e^{\gamma})^{(\sum_{j=1}^n \alpha_j)^2} \left(2 e^{-\frac{\gamma}{2}}\right)^{\sum_{j=1}^n \alpha_j^2} \prod_{1\leq j<k\leq n}|x_j-x_k|^{2\alpha_j \alpha_k}.
\end{align}
Thus unless $\sum_{j=1}^n \alpha_j=0$, the prefactor tends to zero as $m\to 0$ and if $\sum_{j=1}^n \alpha_j=0$, we have 
\begin{align}
  &\lim_{m\to 0}\lim_{\epsilon\to 0}\E\left[\wick{e^{i\alpha_1\sqrt{4\pi}\Phi_{\epsilon^2}^{\GFF(m)}(x_1)}}_\epsilon\cdots \wick{e^{i\alpha_n\sqrt{4\pi}\Phi_{\epsilon^2}^{\GFF(m)}(x_n)}}_\epsilon\right]
    \nnb
    &=\left(2 e^{-\frac{\gamma}{2}}\right)^{\sum_{j=1}^n \alpha_j^2}\prod_{1\leq j<k\leq n}|x_j-x_k|^{2\alpha_j \alpha_k}
\end{align}
as claimed.
\end{proof}

\subsection{Gaussian imaginary multiplicative chaos -- proofs of Proposition~\ref{prop:GMC}--\ref{prop:hkimc}}
\label{app:GMC}

Proposition~\ref{prop:GMC} was essentially shown in \cite[Theorem 3.16]{MR4149524} and our proof below
therefore focuses on the differences compared to our setting.
As a preliminary,
the definition of the Besov norm used in \cite{MR4149524} does not exactly match \eqref{e:Besov-def}.
However, it is not difficult to see that the proof is straightforward to adapt to the definition \eqref{e:Besov-def},
and, alternatively, one can use that the norm \eqref{e:Besov-def}
is equivalent to the standard definition.\footnote{For example,
  the proof of \cite[Proposition~A.1]{2504.08606} shows that the norm \eqref{e:Besov-def} is equivalent to 
\begin{equation} \label{e:Besov-heat}
  \sup_{t\leq 1} t^{s/2}\|e^{\Delta t} f\|_{L^\infty(\R^2)} . 
\end{equation}
Indeed, without weight, the support restriction on $\varphi$ in that proof is not used and one apply
the conclusion with $(g,\varphi)=(\Psi,p_1)$ where $p_1(x) = e^{\Delta}(0,x)$ and with the role of the two functions reversed.
For a proof that \eqref{e:Besov-heat} is in turn equivalent to the standard $B^{-s}_{\infty,\infty}$ norm, defined in terms of Littlewood-Paley blocks, see
\cite[Theorem~2.34]{MR2768550}.}

\begin{proof}[Proof of Proposition~\ref{prop:GMC}]
As discussed above we focus on the differences to \cite[Theorem 3.16]{MR4149524}.

The moment bound  \eqref{e:GMC-moments} is not explicitly stated in \cite{MR4149524}, but it is part of the proof of
item~(i) of  \cite[Theorem~3.16]{MR4149524}.
Below we will also need the generalization that the convolution regularized imaginary chaos \eqref{e:Meps-bis}
satisfies the same moment bound uniformly in $\epsilon>0$, i.e.,
\begin{equation} \label{e:GMC-moments-eps}
  \sup_{\epsilon>0}\E\qa{\|\chi M_\alpha^\epsilon\|_{C^{-s}}^p} < \infty.
\end{equation}
This follows from a small modification of the proof in \cite{MR4149524}. Namely, one simply needs to replace
the application of item (i) of \cite[Proposition~3.6]{MR4149524} by that of item (ii). The additional term
$N^2$ only changes the constant of the moment bound (since $N$ is related to $p$ above).

The convergence  in $C^{-s}_\loc$, $s>\alpha^2$ was also not explicitly derived in \cite{MR4149524}.
It was only stated that $M^\epsilon_\alpha$ converges in $H^{-s}(\R^2)$, $s>d/2=1$ and that the limit
$M_\alpha$ is supported in $C^{-s}_\loc$, $s>\alpha^2$.
That the convergence also takes places in $C^{-s}_\loc$, $s>\alpha^2$ follows from \eqref{e:GMC-moments-eps}.
Indeed,
the embedding $C^{-s}_\loc \subset C^{-s'}_\loc$ is compact if $s'>s$,
see, e.g., \cite[Proposition~A.7]{2504.08606},
and since the moment bound can be applied with $s'$ instead of $s$ (which is arbitary as long as $s>\alpha^2$),
this implies tightness in $C^{-s}_\loc$
and hence the convergence also takes place in that space.

Finally, in \cite[Theorem 3.16]{MR4149524}, the regularized IMC is normalized with the standard Wick ordering
$e^{\frac12 4\pi \alpha^2 \var(\eta_\epsilon * \varphi(0))}$ instead of $\epsilon^{-\alpha^2}$ in our definition \eqref{e:Meps-bis}.
That the two agree up to a deterministic multiplicative constant (which can depend on the mollifier) follows from Lemma~\ref{le:hkcov}.
\end{proof}

Using the covariance estimates given in Lemma~\ref{le:hkcov} the equivalence of the convolution and heat-kernel regularized
IMC can be established as follows.

\begin{proof}[Proof of Proposition~\ref{prop:hkimc}]
We know from \cite[Theorem~1.1, Lemma~2.8, Lemma~3.10, and Corollary~3.11]{MR4149524} along with Vitali's convergence theorem that 
\begin{equation}
\lim_{\epsilon\to 0} \epsilon^{-\alpha^2}\int_{\R^2}dx\, e^{i\sqrt{4\pi}\alpha\Phi_0^{\GFF(m),\epsilon}(x)}f(x)=M_\alpha(f)
\end{equation}
with convergence in $L^p$ for any $p\in[1,\infty)$. Moreover, we claim that for each $p\in[1,\infty)$
\begin{equation}
\E\left[\left|\epsilon^{-\alpha^2}\int_{\R^2}dx\, e^{i\sqrt{4\pi}\alpha\Phi_{\epsilon^2}^{\GFF(m)}(x)}f(x)\right|^p\right]
\end{equation}
is bounded in $\epsilon$.
This follows for example from Theorem~\ref{th:fracapp} (which however considers a much more complicated setting).
Thus by Vitali's convergence theorem, it is sufficient for us to prove convergence in probability, which in turn is implied by convergence in $L^2$, namely that 
\begin{equation}
\lim_{\epsilon\to 0}\E\left[\left|\epsilon^{-\alpha^2}\int_{\R^2}dx\, e^{i\sqrt{4\pi}\alpha\Phi_0^{\GFF(m),\epsilon}(x)}f(x)-\epsilon^{-\alpha^2}C_{\eta}^{\alpha^2}\int_{\R^2}dx\, e^{i\sqrt{4\pi}\alpha\Phi_{\epsilon^2}^{\GFF(m)}(x)}f(x)\right|^2\right]=0.
\end{equation}
We can compute this expectation by a routine Gaussian calculation:
\begin{align}
&\E\left[\left|\epsilon^{-\alpha^2}\int_{\R^2}dx\, e^{i\sqrt{4\pi}\alpha\Phi_0^{\GFF(m),\epsilon}(x)}f(x)-\epsilon^{-\alpha^2}C_{\eta,\alpha}\int_{\R^2}dx\, e^{i\sqrt{4\pi}\alpha\Phi_{\epsilon^2}^{\GFF(m)}(x)}f(x)\right|^2\right]\nnb
  &=\epsilon^{-2\alpha^2}\int_{\R^2\times \R^2}dxdy\,f(x)f(y)\nnb
    &\quad \times \Bigg[ e^{-2\pi\alpha^2 \E[\Phi_0^{\GFF(m),\epsilon}(x)^2]-2\pi\alpha^2 \E[\Phi_0^{\GFF(m),\epsilon}(y)^2]}e^{4\pi \alpha^2 \E[\Phi_0^{\GFF(m),\epsilon}(x)\Phi_0^{\GFF(m),\epsilon}(y)]}\nnb
&\qquad -2C_{\eta}^{\alpha^2} e^{-2\pi\alpha^2 \E[\Phi_0^{\GFF(m),\epsilon}(x)^2]-2\pi\alpha^2 \E[\Phi_{\epsilon^2}^{\GFF(m)}(y)^2]}e^{4\pi \alpha^2 \E[\Phi_0^{\GFF(m),\epsilon}(x)\Phi_{\epsilon^2}^{\GFF(m)}(y)]}\nnb
&\qquad +(C_{\eta}^{\alpha^2})^2 e^{-2\pi\alpha^2 \E[\Phi_{\epsilon^2}^{\GFF(m)}(x)^2]-2\pi\alpha^2 \E[\Phi_{\epsilon^2}^{\GFF(m)}(y)^2]}e^{4\pi \alpha^2 \E[\Phi_{\epsilon^2}^{\GFF(m)}(x)\Phi_{\epsilon^2}^{\GFF(m)}(y)]}\Bigg].
\end{align}
We will argue that we can take the $\epsilon\to 0$ limit under the integral.
From Lemma~\ref{le:hkcov}, we see that 
\begin{align}
\epsilon^{-2\alpha^2} e^{-2\pi \alpha^2\E[\Phi_0^{\GFF(m),\epsilon}(x)^2]-2\pi \alpha^2\E[\Phi_0^{\GFF(m),\epsilon}(y)^2]}
\end{align} 
is bounded in $\epsilon$ uniformly in $x,y\in \mathrm{supp}(f)$. The same holds for the other variance terms as well. Moreover, we see from Lemma \ref{le:hkcov} that all of the exponentials of the covariance terms can be bounded by a constant time $|x-y|^{-2\alpha^2}$. Thus in the support of $(x,y)\mapsto f(x)f(y)$, we can bound our integrand by $C |x-y|^{-2\alpha^2}$ for a suitable constant $C$ (which may depend on $m$). This is locally integrable in $\R^2\times\R^2$ as $\alpha^2<1$, so by the dominated convergence theorem, we can take the $\epsilon\to 0$ limit under the integral. Using Lemma \ref{le:hkcov}, we thus find 
\begin{align}
&\lim_{\epsilon\to 0}\E\left[\left|\epsilon^{-\alpha^2}\int_{\R^2}dx\, e^{i\sqrt{4\pi}\alpha\Phi_0^{\GFF(m),\epsilon}(x)}f(x)-\epsilon^{-\alpha^2}C_{\eta}^{\alpha^2}\int_{\R^2}dx\, e^{i\sqrt{4\pi}\alpha\Phi_{\epsilon^2}^{\GFF(m)}(x)}f(x)\right|^2\right]\nnb
&=\int_{\R^2\times \R^2}dxdy\, f(x)f(y) e^{2\alpha^2 K_0(m|x-y|)}\bigg[e^{-2\alpha^2(\log m^{-1}-\gamma+\log 2+\int_{\R^2\times \R^2}dudv\eta(u)\eta(v)\log \frac{1}{|u-v|})}\nnb
&\quad -2C_{\eta}^{\alpha^2} e^{-\alpha^2(\log m^{-1}-\gamma+\log 2+\int_{\R^2\times \R^2}dudv\eta(u)\eta(v)\log \frac{1}{|u-v|})} e^{-\alpha^2(\log m^{-1}-\frac{\gamma}{2})}\nnb
&\quad +(C_{\eta}^{\alpha^2})^2 e^{-2\alpha^2(\log m^{-1}-\frac{\gamma}{2})}\bigg].
\end{align}
The quantity in the brackets vanishes by definition of $C_{\eta}^{\alpha^2}$.
\end{proof}

\section{Gaussian moment and continuity estimates}
\label{app:Gauss}

In this appendix, we collect the proofs of various moment and continuity estimates on the GFF and its regularized and
scale decomposed version. All arguments follow from standard methods.

\subsection{Regularity estimate from Section~\ref{sec:mixing}}

The following proposition implies the Besov regularity estimate for the sine-Gordon measure
stated in \eqref{e:SG-Besov}.

\begin{proposition} \label{prop:GFF-Besov}
  Let $\nu$ be a probability measure on $\cS'(\R^2)/\text{constants}$
  with expectation $\avg{\cdot}$ satisfying for any $g \in \cS(\R^2)$ with $\int g\, dx = 0$:
  \begin{equation} \label{e:IRB-variance-0}
  \avg{e^{(\varphi,g)}} \leq
  e^{\frac12 \|g\|_{\dot H^{-1}(\R^2)}^2}, \qquad \|g\|_{\dot H^{-1}(\R^2)} = (g, (-\Delta)^{-1}g)^{1/2}.
\end{equation}
  Then $\nu$ is supported on $C^{-s}(\rho)/\text{constants}$ where $\rho(x)=(1+|x|^2)^{-\sigma/2}$ with any $\sigma>0$ and any $s>0$.
  More precisely, for any $\eta \in \cS(\R^2)$ with $\int \eta \, dx=1$ and any $p\geq 1$ one has
  \begin{equation} \label{e:IRB-Besov}
    \avg{\|\varphi-(\varphi,\eta)\|_{C^{-s}(\rho)}^p} \leq C^p p^{p/2}.
  \end{equation}
  Moreover, if $\|g\|_{\dot H^{-1}(\R^2)}^2$ in \eqref{e:IRB-variance-0} is replaced by
  $(g,(-\Delta+m^2)^{-1}g)$ with fixed $m>0$ or by
  the covariance of $\Phi^{\GFF(0)}_{0,t_0}$ with fixed $t_0<\infty$,
  an analogous estimate holds   without subtracting $(\varphi,\eta)$.
\end{proposition}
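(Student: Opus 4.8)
The plan is to prove the moment bound \eqref{e:IRB-Besov}; the claim that $\nu$ is supported on $C^{-s}(\rho)/\text{constants}$ follows from finiteness of the $p$-th moment (for $p>1$, say) together with the definition of the Besov spaces as completions of $C_c^\infty(\R^2)$. Write $\psi = \varphi - (\varphi,\eta)$, so that for any $f\in C_c^\infty(\R^2)$ one has $(\psi,f) = (\varphi, f - \hat f(0)\eta) = (\varphi, f_\eta)$ with $f_\eta = f - \hat f(0)\eta$ satisfying $\int f_\eta\, dx = 0$. Thus for the test function $\Psi_R(\cdot - x)$ appearing in the definition \eqref{e:Besov-def} of the $C^{-s}(\rho)$ norm we set $g_{R,x} = \Psi_R(\cdot - x) - \widehat{\Psi_R}(0)\eta(\cdot) = \Psi_R(\cdot - x) - \eta$ (using $\int \Psi_R = 1$), which has mean zero, so that $(\Psi_R * \psi)(x) = (\psi, \Psi_R(\cdot - x)) = (\varphi, g_{R,x})$ and the Gaussian-domination bound \eqref{e:IRB-variance-0} applies to it.

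The key steps are as follows. First I would control $\|g_{R,x}\|_{\dot H^{-1}(\R^2)}$ uniformly in $x$ for $R\le 1$: since $g_{R,x} = \Psi_R(\cdot-x) - \eta$ and both pieces are Schwartz with mean zero, $\|g_{R,x}\|_{\dot H^{-1}}^2 = \int |p|^{-2}|\widehat{\Psi_R}(p)e^{-ip\cdot x} - \hat\eta(p)|^2\, dp$; splitting into $|p|\le 1$ and $|p|>1$ and using $|\widehat{\Psi_R}(p) - 1| \lesssim R|p|$ near the origin (as $\int\Psi_R = 1$ and $\Psi_R$ has all moments), and rapid decay of $\hat\eta$ and $\widehat{\Psi_R}$ at infinity (with the latter bounded uniformly in $R\le 1$), gives $\sup_{x\in\R^2}\sup_{R\le 1}\|g_{R,x}\|_{\dot H^{-1}} \le C_\eta < \infty$. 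Feeding this into \eqref{e:IRB-variance-0} and the standard extraction $\avg{e^{tX}}\le e^{\frac12 t^2\sigma^2}\Rightarrow \avg{X^p}\le (C\sigma)^p p^{p/2}$ (applied to $X = \pm(\Psi_R * \psi)(x)$, which has the same distribution up to sign only if $\nu$ is symmetric — in general one uses the two-sided bound $\avg{|X|^p}\le \avg{X^p} + \avg{(-X)^p}$ and \eqref{e:IRB-variance-0} for both $g_{R,x}$ and $-g_{R,x}$) yields the pointwise moment bound
\begin{equation}
  \avg{|(\Psi_R * \psi)(x)|^p} \le C^p p^{p/2} \qquad \text{uniformly in } x\in\R^2,\ R\le 1.
\end{equation}
Second, I would convert this pointwise bound into a bound on the weighted $L^\infty$ norm $\|\Psi_R * \psi\|_{L^\infty(\rho)}$ and then on the Besov norm. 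For this I would pass through a weighted Sobolev embedding: $\|F\|_{L^\infty(\rho)} \lesssim \|\rho F\|_{W^{k,q}(\R^2)}$ for $k$ large and $q$ large enough (with $\sigma>0$ ensuring integrability of $\rho^q$), and $F = \Psi_R * \psi$ has derivatives $\nabla^j F = (\nabla^j\Psi_R) * \psi = (\Psi'_{R,j}) * \psi$ with $\Psi'_{R,j}$ a test function at scale $R$ but whose $\dot H^{-1}$-corrected version has norm $\lesssim R^{-j}$; so I would instead work at a slightly smaller regularity exponent, using the standard trick: bound $R^{s}\|\Psi_R * \psi\|_{L^\infty(\rho)}$ by interpolating — actually cleaner is to bound a higher weighted Sobolev moment of $\psi$ itself (the $B^{-s'}_{q,q}$ norm with $s'<s$ and $q$ finite) via the pointwise moments above and then use the embedding $B^{-s'}_{q,q}(\rho) \hookrightarrow C^{-s}(\rho)$ for $q$ large depending on $s-s'$. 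Concretely, compute $\avg{\|\psi\|_{B^{-s'}_{q,q}(\rho)}^q} = \int R^{s'q}\int \rho(x)^q\avg{|(\Psi_R * \psi)(x)|^q}\, dx\, \frac{dR}{R} \lesssim C^q q^{q/2}$ by Minkowski/Fubini and the pointwise bound, then upgrade from the $q$-th moment to the $p$-th moment by Hölder (or Nelson hypercontractivity since everything is Gaussian-dominated).

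The main obstacle I expect is the bookkeeping in the second step: getting the correct scaling in $R$ after introducing derivatives or after invoking the embedding, and making sure the weight $\rho$ is handled compatibly (one needs $\rho(x)/\rho(y) \lesssim \rho(x-y)^{-1}$ and that convolution with $\Psi_R$, supported in $B_R(0)\subset B_1(0)$, only distorts the weight by a bounded factor). None of this is deep, but it requires care to route through a finite-$q$ Besov space so that $\|g\|_{\dot H^{-1}}$ — which is what \eqref{e:IRB-variance-0} controls — is the only Gaussian input used, and then to recover the $C^{-s}(\rho)$ norm with $s$ arbitrarily small by taking $q$ large. For the variants with $(-\Delta+m^2)^{-1}$ or the covariance of $\Phi^{\GFF(0)}_{0,t_0}$, the mean-zero subtraction is no longer needed because the covariance $\hat C(p)$ is bounded (by $1/m^2$ or $t_0$) near $p=0$, so $\|\Psi_R(\cdot-x)\|_{C}^2 = \int \hat C(p)|\widehat{\Psi_R}(p)|^2\, dp$ is already bounded uniformly in $x$ and $R\le 1$ directly; the rest of the argument is identical.
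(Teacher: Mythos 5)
Your overall strategy — reduce the Besov norm to pointwise moments of $\Psi_R*\psi$ via an embedding, then apply the Gaussian domination bound with the test function $g_{R,x}=\Psi_R(\cdot-x)-\eta$ — matches the paper's. But there is a genuine gap in your step 1, and it is not a minor one.

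You claim
\[
\sup_{x\in\R^2}\sup_{R\le 1}\|g_{R,x}\|_{\dot H^{-1}} \le C_\eta < \infty,
\]
and you justify it by saying $\widehat{\Psi_R}$ decays rapidly at infinity ``bounded uniformly in $R\le 1$''. This is false. Since $\widehat{\Psi_R}(p)=\widehat\Psi(Rp)$, the Fourier transform spreads out as $R\to 0$, and the integral $\int_{|p|>1}|p|^{-2}|\widehat\Psi(Rp)|^2\,dp$ contributes $\sim\log(1/R)$. Similarly the phase $e^{-ip\cdot x}$ that you did not track carefully produces a contribution $\sim\log(1+|x|)$ from the region $1/|x|<|p|<1$ (there $|e^{-ip\cdot x}\widehat{\Psi_R}(p)-\hat\eta(p)|\sim 1$ and $\int|p|^{-2}$ diverges logarithmically). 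The correct estimate is
\[
\|\Psi_R^x-\eta\|_{\dot H^{-1}}^2 \le C + C\log\big((1+|x|)/R\big),
\]
which is what the paper proves. A quick sanity check that your uniform bound cannot be right: if it were, the proof would go through with $\sigma=0$, i.e.\ with no weight at all, yet a log-correlated field modulo constants is certainly not in $C^{-s}(\R^2)$ unweighted. The weight $\rho$ with $\sigma>0$ is needed precisely to kill the $(1+|x|)^{\delta p}$ factor that arises from this logarithmic growth.

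The paper handles this by optimizing the exponential moment with the choice $t^2=2\delta p/C$, which turns the logarithmic bound into a polynomial factor $C_\delta^p(1+|x|)^{\delta p}R^{-\delta p}$; the $R^{-\delta p}$ is absorbed by the $R^{ps}$ weight from the Besov norm (requiring $\delta<s$), and the $(1+|x|)^{\delta p}$ is absorbed by $\rho(x)^p$ (requiring $\delta<\sigma$). Without this bookkeeping your argument breaks: you cannot conclude the uniform pointwise moment bound that the rest of your plan relies on. The same issue recurs in your treatment of the variants with $(-\Delta+m^2)^{-1}$ or with the covariance of $\Phi^{\GFF(0)}_{0,t_0}$: there the $x$-dependence drops out, but the $\log(1/R)$ growth is still present (from $\int_{|p|<1/R}\hat C(p)\,dp$), so ``bounded uniformly in $R\le 1$'' is again false, though in that case only $\delta<s$ is needed to close the argument (the weight can be relaxed). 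To repair the proof you should replace your uniform-boundedness claim by the logarithmic bound and carry the resulting polynomial factors through your $B^{-s'}_{q,q}(\rho)$ computation, which then essentially reproduces the paper's argument.
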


\begin{proof}
  By Jensen's inequality, it suffices to prove the bound for $p$ an even integer.
  Let $\Psi \in C_c^\infty(\R^2)$ and $\Psi_R(x) = R^{-d}\Psi(x/R)$ be as in the definition of the Besov norm in \eqref{e:Besov-def}, where throughout the proof $d=2$, but we sometimes write $d$ for emphasis.
  By the embedding of $B^{-s}_{\infty,\infty}(\rho)$ into $B^{-s+d/s}_{p,p}(\rho)$ one has
  (see, for example, \cite[Proposition~A.6]{2504.08606}):
  \begin{equation}
    \|f\|_{C^{-s}(\rho)}^p \leq C^p \int_0^1 R^{ps-d} \|\Psi_R*f\|_{L^p(\rho)}^p \, \frac{dR}{R}.
  \end{equation}
  Denoting  $\Psi_R^x = \Psi_R(x-\cdot)$ so that $\Psi_R*f(x) = (\Psi_R^x,f)$ and $(\Psi_R^x,1) = 1$,
  the left-hand side of \eqref{e:IRB-Besov} is thus bounded by
  \begin{equation}
    C^p \int_0^1 R^{ps-d} \int_{\R^2} \avg{(\Psi_R^x-\eta,\varphi)^p} \rho(x)^p \, dx \frac{dR}{R}.
  \end{equation}
  By \eqref{e:IRB-variance-0},
  \begin{equation}
    \avg{(\Psi_R^x-\eta,\varphi)^p}
    \leq
    \frac{p!}{t^p} \avg{e^{t(\Psi_R^x-\eta,\varphi)}}
    \leq
    \frac{p!}{t^p} e^{\frac12 t^2 \|\Psi_R^x-\eta\|_{\dot H^{-1}}^2}
    .
  \end{equation}
  The exponent on the right-hand side is bounded by
  \begin{equation} \label{e:PsietaHminus1}
    \|\Psi_R^x-\eta\|_{\dot H^{-1}}^2
    \leq C+ C \log((1+|x|)/R).
    \end{equation}
  Choosing $t^2=2\delta p/C$ we obtain
  \begin{equation}
    \avg{(\Psi_R^x-\eta,\varphi)^p}
    \leq C_\delta^p p^{p/2} (1+|x|)^{\delta p}R^{-\delta p}
   \end{equation}
   It follows that
   \begin{equation}
     \avg{\|\varphi-(\varphi,\eta)\|_{C^{-s}(\rho)}^p} \leq C_\delta^p p^{p/2} \int_0^1 R^{ps-d} R^{-\delta p} \frac{dR}{R} \int_{\R^2} (1+|x|)^{\delta p} \rho(x)^p \, dx
   \leq C^p p^{p/2}.
  \end{equation}
  where the integral is finite if $\delta<\min\{s,\sigma\}$ and $p>0$ is chosen sufficiently large.

  Finally, for completeness, we include the proof of \eqref{e:PsietaHminus1}.
  We start from (using that $\hat\Psi$ and $\hat\eta$ both have rapid decay in the last inequality below and that $1/R \geq 1$):
    \begin{align}
      \|\Psi_R^x-\eta\|_{\dot H^{-1}}^2
    &= \frac{1}{(2\pi)^2}\int_{\R^2} |\hat\Psi_R^x(p)-\hat\eta(p)|^2  \frac{dp}{|p|^2}
      \nnb
    &= \frac{1}{(2\pi)^2}\int_{\R^2} |e^{ip\cdot x}\hat\Psi(Rp)-\hat\eta(p)|^2  \frac{dp}{|p|^2}
      \nnb
    &\leq C +  \frac{1}{(2\pi)^2}\int_{B_{1/R}(0)} |e^{ip\cdot x}\hat\Psi(Rp)-\hat\eta(p)|^2  \frac{dp}{|p|^2}.
    \end{align}
    Using $\hat \Psi(0)=\hat\eta(0)=1$ and $(a+b)^2 \leq 2a^2+2b^2$ the integral on the right-hand side is less than
    \begin{equation}
      \frac{2}{(2\pi)^2}\int_{B_{1/R}(0)} |e^{ip\cdot x}\hat\Psi(Rp)-\hat\Psi(0)|^2 \frac{dp}{|p|^2} +
      \frac{2}{(2\pi)^2}\int_{B_{1/R}(0)} |\hat\eta(p)-\hat\eta(0)|^2  \frac{dp}{|p|^2}.
    \end{equation}
    The second term is bounded by $C\log(1/R)$ and the first term equals
    (using  $|\hat \Psi(p)|\leq \|\Psi\|_{L^1} =1$):
    \begin{align}
      &\frac{2}{(2\pi)^2} \int_{B_{1}(0)} |e^{ip\cdot x/R}\hat\Psi(p)-\hat\Psi(0)|^2 \frac{dp}{|p|^2} 
      \nnb
      &
      \leq \frac{4}{(2\pi)^2}
        \int_{B_{1}(0)} |e^{ip\cdot x/R}-1|^2 \frac{dp}{|p|^2}
        +
        \frac{4}{(2\pi)^2}
        \int_{B_{1}(0)} |\hat\Psi(p)-\hat\Psi(0)|^2 \frac{dp}{|p|^2}.
    \end{align}
    The second term on the last right-hand side is uniformly bounded whereas the first term equals
    \begin{align}
      \int_{B_{1}(0)} (2-2\cos(p\cdot x/R)) \frac{dp}{|p|^2}
      &=
       2\pi \int_0^1 (2-2\cos(r |x|/R)) \frac{dr}{r}
        \nnb
      &=
        2\pi \int_0^{|x|/R} (2-2\cos (r)) \frac{dr}{r} \leq C \log(1+|x|/R),
    \end{align}
    up to a multiplicative constant, as claimed.

    Finally, the estimate under the assumption with $m>0$ or $t_0<\infty$ is analogous
    (but slightly simpler since the subtraction of $(\eta,\varphi)$ is not needed),
    and we omit the details.
\end{proof}

\subsection{Gaussian estimates from Section~\ref{sec:coupling-finvol}}
\label{app:gaussianest}

In this appendix, we prove Lemma~\ref{le:gaussianest}.
The lemma can be deduced most immediately from the estimates stated in \cite[Section~5.1]{MR4767492}, in particular (5.17),
as follows. An argument using Sobolev embedding akin to the one in the next subsection would also be possible.

\begin{proof}[Proof of Lemma~\ref{le:gaussianest}]
  Throughout the proof, probabilities and expectations refer to the Gaussian measure with covariance $C_t-C_{t_0}$.
  An application of Dudley's theorem and the Borell--TIS inequality show
  (see \cite[Section~5.1]{MR4767492}, in particular (5.17))
  that there are $c>0$ and $C(\Lambda)>0$ such that:
  \begin{equation}
    \P\qa{\sup_{x\in\Lambda} \partial_{\mu}\zeta(x) > u}\leq e^{-c u^2} \qquad  \text{for $u>C(\Lambda)$ and $\mu\in \{0,1\}$},
  \end{equation}
  where we write $x=(x_0,x_1)\in \R^2$.
  By a union bound, possibly decreasing $c$ and increasing $C(\Lambda)$,
  \begin{equation}\label{eq:nablatail}
    \P\qB{\|\nabla\zeta\|_{L^\infty(\Lambda)} > u}\leq e^{- cu^2} \qquad \text{for $u>C(\Lambda)$.}
  \end{equation}
  In particular,
  \begin{align}
    \E\qa{e^{-p\|\nabla \zeta\|_{L^\infty(\Lambda)}}}
    &= \int_0^\infty pe^{-pu} \P\qB{\|\nabla \zeta\|_{L^\infty(\Lambda)} \leq u} \, du
      \nnb
    &\geq \int_{C(\Lambda)}^\infty pe^{-pu}(1-e^{-cu^2}) \, du
      \nnb
    &\geq \frac12 \int_{C(\Lambda)}^\infty pe^{-pu} \, du
      = \frac12 e^{-pC(\Lambda)}.
  \end{align}
  Similarly,
  \begin{align}
    \E\qa{e^{+p\|\nabla \zeta\|_{L^\infty(\Lambda)}}\1_{\|\nabla \zeta\|_{L^\infty(\Lambda)>T}}}
    &= \int_T^\infty pe^{pu} \P\qB{\|\nabla \zeta\|_{L^\infty(\Lambda)} > u} \, du - e^{pT}\P(\|\nabla \zeta\|_{L^\infty(\Lambda)}>T)%
  \end{align}
  For $T>C(\Lambda)$ therefore,
  \begin{align}
    \E\qa{e^{+p\|\nabla \zeta\|_{L^\infty(\Lambda)}}\1_{\|\nabla \zeta\|_{L^\infty(\Lambda)>T}}}
    &\leq \int_T^\infty pe^{pu} e^{-cu^2} \, du
      \nnb
    &\leq e^{-\frac12 c T^2} \int_{-\infty}^\infty e^{u} e^{-\frac12 cu^2/p^2} \, du
      \nnb
    &= e^{-\frac12 c T^2}(2\pi p^2/c)^{1/2}e^{\frac{p^2}{2c}}
      \leq e^{-\frac12 cT^2} e^{Cp^2}.
  \end{align}
  The third estimate is similar. 
\end{proof}

\subsection{Gaussian estimates from Section~\ref{sec:coupling-infvol}}
\label{sec:coupling-infvol-Gauss}

In this section, we prove the Gaussian estimates used in Section~\ref{sec:coupling-infvol},
i.e., the bound of the third term of Lemma~\ref{lem:tight}, Lemma~\ref{lem:GaussXrho-bis}
and an estimate for the Gaussian term in the proof of Lemma~\ref{lem:thirdderivative}.
We also state the Arzelà--Ascoli theorem
in the form applying to the $X^{-s}(\rho)$ spaces that we will use.

\begin{lemma}
  Suppose that $(f_i) \subset X^{-s}(\rho)$.
  Then $(f_i)$ is relatively compact if there are $\delta'>0$ and $s'<s$ and $\rho/\rho'\to 0$ such that
  $\norm{f_i}_{X^{-s',\delta'}(\rho')} \leq 1$.
\end{lemma}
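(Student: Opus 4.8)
The plan is a weighted Arzel\`a--Ascoli argument followed by a diagonal extraction and three tail estimates, exploiting the three ``gains'' in the hypothesis: the strict inequality $s'<s$ will control the region $t\to 0$ in the $X^{-s}(\rho)$ norm, the decay $\rho/\rho'\to 0$ will control the region $|x|\to\infty$, and the H\"older exponent $\delta'>0$ (in $x$ through $C^{\delta'}(\rho')$ and in $\log t$ through the modulus-of-continuity term) will supply equicontinuity on compact subsets of $(0,t_0]\times\R^2$. Since $X^{-s}(\rho)$ is a complete metric space, it suffices to show that every subsequence of $(f_i)$ has a further subsequence that is Cauchy in $X^{-s}(\rho)$.

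First I would record what $\|f_i\|_{X^{-s',\delta'}(\rho')}\le 1$ gives: for every $i$, every $t\in(0,t_0]$ and every $r\in(1/2,1)$,
\[
  t^{s'}\|f_{i,t}\|_{C^0(\rho')}\le 1,\qquad t^{s'}\|f_{i,rt}-f_{i,t}\|_{C^{\delta'}(\rho')}\le |\log r|^{\delta'} .
\]
Fix a compact $K\subset\R^2$ and $a\in(0,t_0]$. On $[a,t_0]\times K$ the first bound gives a uniform (in $i$) sup bound, and, after bounding $\rho'$ above and below on $K$, the $C^{\delta'}(\rho')$-control of the $t$-increments gives an equi-H\"older bound of $f_{i,t}$ in $x$ on $K$. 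For equicontinuity in $t$ I would chain the increment bound: any $a\le t_1<t_2\le t_0$ is joined by $N\le 1+\log(t_0/a)/\log 2$ geometrically spaced scales with consecutive ratios in $(1/2,1)$, and summing the increment estimates (using $\delta'<1$, which is no loss) yields $\|f_{i,t_1}-f_{i,t_2}\|_{C^0(\rho')}\lesssim_{a,t_0}|\log(t_1/t_2)|^{\delta'}\lesssim_a |t_1-t_2|^{\delta'}$ on $K$, uniformly in $i$. By the classical Arzel\`a--Ascoli theorem, $\{f_i|_{[a,t_0]\times K}\}_i$ is relatively compact in $C([a,t_0]\times K)$. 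Taking an exhaustion $K_m\uparrow\R^2$ and $a_m\downarrow 0$ and diagonalizing, I extract a subsequence, still written $(f_i)$, converging uniformly on each $[a_m,t_0]\times K_m$ to some $f\in C((0,t_0]\times\R^2)$; passing to the pointwise limit gives $|f_t(x)|\le t^{-s'}\rho'(x)^{-1}$.

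The crux is upgrading this local convergence to convergence in $X^{-s}(\rho)$. Given $\epsilon>0$, I would split $\sup_{t\in(0,t_0]}t^s\|f_{i,t}-f_{j,t}\|_{C^0(\rho)}$ into three parts. For $t\le\tau$: $t^s|f_{i,t}(x)-f_{j,t}(x)|\rho(x)\le 2\,t^{s-s'}\sup_x(\rho/\rho')(x)\le 2\tau^{s-s'}\sup_x(\rho/\rho')$, which is $<\epsilon/3$ once $\tau$ is fixed small, using $s'<s$. For $t\in[\tau,t_0]$ and $|x|\ge R$: $t^s|f_{i,t}(x)-f_{j,t}(x)|\rho(x)\le 2\,t_0^s\tau^{-s'}\sup_{|x|\ge R}(\rho/\rho')(x)$, which is $<\epsilon/3$ once $R$ is fixed large, using $\rho/\rho'\to 0$. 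On the compact set $[\tau,t_0]\times\overline{B_R(0)}$, which lies inside some $[a_m,t_0]\times K_m$, the uniform convergence gives $\sup t^s|f_{i,t}-f_{j,t}|\rho<\epsilon/3$ for all large $i,j$. Hence $\|f_i-f_j\|_{X^{-s}(\rho)}<\epsilon$ for large $i,j$, so $(f_i)$ is Cauchy, hence convergent, in $X^{-s}(\rho)$, with limit the pointwise limit $f$. Since every subsequence of the original sequence admits such a further subsequence, $(f_i)$ is relatively compact in $X^{-s}(\rho)$.

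I expect the main obstacle to be the bookkeeping in the middle step: converting the ratio-$(1/2,1)$ increment bound from the definition of $\|\cdot\|_{X^{-s,\delta'}}$ into a genuine, $i$-uniform modulus of continuity in $t$ on each compact subinterval (the chaining), and transferring the equi-H\"older-in-$x$ control from the weighted norm down to a fixed compact $K$; together with the careful ordering $\tau$, then $R$, then the subsequence index in the final step. Everything else is the standard Arzel\`a--Ascoli/diagonal machinery, of the same flavor as the compact embeddings already invoked in the paper for the $C^r(\rho)$ and $C^{-s}(\rho)$ scales.
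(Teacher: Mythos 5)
Your argument is correct in substance but takes a genuinely different, more hands-on route than the paper. The paper's own proof is a two-line reduction: after the change of variable $t=e^{-\tau}$, the norm $\|\cdot\|_{X^{-s}(\rho)}$ becomes a weighted $C^0$ norm on $[0,\infty)\times\R^2$ with weight $\tilde\rho(\tau,x)=e^{-s\tau}\rho(x)$, the norm $\|\cdot\|_{X^{-s',\delta'}(\rho')}$ becomes a weighted $C^{\delta'}$ norm with weight $\tilde\rho'(\tau,x)=e^{-s'\tau}\rho'(x)$, and since $s'<s$ and $\rho/\rho'\to 0$ force $\tilde\rho/\tilde\rho'\to 0$ as $\tau+|x|\to\infty$, the known compact embedding $C^{\delta'}(\tilde\rho')\hookrightarrow C^0(\tilde\rho)$ (already used in the paper for the $C^r(\rho)$ and $C^{-s}(\rho)$ scales) finishes it. You instead re-derive that compact embedding from scratch: diagonal extraction over $[a_m,t_0]\times K_m$, then a three-way split (small $t$ controlled by $s'<s$; large $|x|$ controlled by $\rho/\rho'\to 0$; compact middle controlled by uniform convergence). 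Both work; the paper's route is shorter because it recycles a fact it has already established, while yours is more self-contained and makes the role of each hypothesis explicit.

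One step you should be careful about, and which in fact reflects a subtlety in the definition itself: you write that ``the $C^{\delta'}(\rho')$-control of the $t$-increments gives an equi-H\"older bound of $f_{i,t}$ in $x$ on $K$.'' This does not follow from the displayed formula for $\|\cdot\|_{X^{-s,\delta}(\rho)}$ as stated: H\"older control of the increments $\chi_{rt}-\chi_t$ in $x$ does not bound the H\"older seminorm of $\chi_t$ itself (take $\chi_t(x)=h(x)$ constant in $t$ with $h$ bounded but not H\"older; all increments vanish). What Arzel\`a--Ascoli on $[a,t_0]\times K$ actually requires is equicontinuity of $(t,x)\mapsto f_{i,t}(x)$, and for that you need some control of the $x$-modulus of $f_{i,t}$ at fixed $t$, not only of the increments. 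The paper's proof leans on the same reading of the definition (it identifies $X^{-s',\delta'}(\rho')$ with a weighted $C^{\delta'}$ norm in $(\tau,x)$, which already presumes $x$-H\"older control of $\chi_t$), so this is best understood as the intended meaning of the norm rather than a flaw in either argument; but as written, your sentence asserts an implication that is not there, and you should either strengthen the definition you are working from or add the telescoping/anchoring argument that recovers the $x$-H\"older bound of $f_{i,t}$ from the increment bounds and a bound at $t=t_0$.
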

\begin{proof}
  This follows from the Arzelà--Ascoli theorem. Indeed, to simplify notation, assume $t_0=1$. Then
  \begin{equation}
    \|f\|_{X^{-s}(\rho)} = \sup_{\tau \in [0,\infty)}\sup_{x\in \R^d} e^{-\tau s} \rho(x) |f_{e^{-\tau}}(x)|
  \end{equation}
  is actually a weighted $C^0$ norm on $[0,\infty) \times \R^d$ with weight $\tilde \rho(\tau,x)=e^{-\tau s}\rho(x)$.
  Similarly, the $X^{-s',\delta'}(\rho')$ norm is a weighted $C^{\delta'}$ norm with weight $\tilde \rho'(\tau,x)=e^{-\tau s'}\rho'(x)$.
  The assumption implies that $\tilde\rho/\tilde\rho' \to 0$ as $\tau+|x|\to\infty$ so the embedding is compact by the
  Arzelà--Ascoli theorem.
\end{proof}

\begin{lemma} \label{lem:GaussXrho}
  For any $s>0$, $\delta>0$ small, and $\rho(x)=(1+|x|^2)^{-\sigma/2}$ with $\sigma>0$, almost surely,
  \begin{equation}
    \sup_{m \geq 0}\norm{\Phi^{\GFF(m)}}_{X^{-s}(\rho)}\leq \sup_{m\geq 0}     \norm{\Phi^{\GFF(m)}}_{X^{-s,\delta}(\rho)} < \infty,
  \end{equation}
  and
  \begin{equation}
    \lim_{m\to 0} \norm{\Phi^{\GFF(0)}-  \Phi^{\GFF(m)}}_{X^{-s}(\rho)}= 0.
  \end{equation}
  Moreover, for any $r<1$,  with convergence in $L^p$ and (and thus almost surely along a subsequence),
  \begin{equation}
    \lim_{m\to 0} \norm{\Phi^{\GFF(0)}_{0,t_0}-\Phi^{\GFF(m)}_{0,t_0}}_{C^{r}(\rho)} = 0.
  \end{equation}
\end{lemma}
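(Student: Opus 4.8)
The plan is as follows. The inequality $\norm{\Phi^{\GFF(m)}}_{X^{-s}(\rho)}\le\norm{\Phi^{\GFF(m)}}_{X^{-s,\delta}(\rho)}$ is immediate from the two definitions, so everything reduces to three Gaussian estimates: a.s.\ finiteness of $\sup_{m\ge0}\norm{\Phi^{\GFF(m)}}_{X^{-s,\delta}(\rho)}$, the convergence $\norm{\Phi^{\GFF(0)}-\Phi^{\GFF(m)}}_{X^{-s}(\rho)}\to0$, and \eqref{e:Phi0m-conv}. Here and below $\Phi^{\GFF(m)}$ denotes the process $(\Phi^{\GFF(m)}_{t,t_0})_{t\in(0,t_0]}$ from \eqref{e:infvol-prob-space2}. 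All three are instances of the standard Kolmogorov--Chentsov / Gaussian-chaining toolbox (as already used, e.g., in Proposition~\ref{prop:GFF-Besov}), the only inputs being the heat-kernel covariance $e^{\Delta u}(x,y)=(4\pi u)^{-1}e^{-|x-y|^2/4u}$ and the elementary inequalities $1-e^{-\frac12 m^2 u}\le\frac12 m^2 u$ and $|e^{-\frac12 m^2 u}-e^{-\frac12(m')^2 u}|\le C|m-m'|\sqrt u$, valid for all $m,m'\ge0$, $u\in(0,t_0]$. After the substitution $\tau=\log(t_0/t)$ the norm $\norm{\cdot}_{X^{-s,\delta}(\rho)}$ becomes a weighted $C^\delta$-type norm on $[0,\infty)\times\R^2$ with weight $e^{-\tau s}\rho(x)$, so the abstract Kolmogorov/Besov-embedding statements apply verbatim; the polynomial weight $\rho$ is needed only because each $\Phi^{\GFF(m)}_{t,t_0}$ is spatially stationary, hence its unweighted supremum over $\R^2$ is infinite.

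For the first item I would regard $\Phi^{\GFF(m)}_{t,t_0}(x)$ as a centred Gaussian field in $(m,t,x)\in[0,\infty)\times(0,t_0]\times\R^2$ and record four second-moment bounds: the amplitude $\E[\Phi^{\GFF(m)}_{t,t_0}(x)^2]=\int_t^{t_0}e^{-m^2 u}\frac{du}{4\pi u}\le\frac1{4\pi}\log(t_0/t)$ uniformly in $m\ge0,x$ (and, after the substitution $w=m^2u$, $\sup_t t^{2s}\E[\Phi^{\GFF(m)}_{t,t_0}(x)^2]\lesssim m^{-4s}\sup_{w>0}w^{2s}\int_w^\infty v^{-1}e^{-v}\,dv\lesssim m^{-4s}$, using $s>0$); the $t$-increment $\E[(\Phi^{\GFF(m)}_{rt,t_0}(x)-\Phi^{\GFF(m)}_{t,t_0}(x))^2]=\int_{rt}^{t}e^{-m^2 u}\frac{du}{4\pi u}\le\frac1{4\pi}\log(1/r)$ for $\tfrac12<r<1$; the spatial increment $\E[(\Phi^{\GFF(m)}_{t,t_0}(x)-\Phi^{\GFF(m)}_{t,t_0}(y))^2]\lesssim\min(|x-y|^2/t,1)$ from $1-e^{-a}\le\min(a,1)$; and the $m$-increment $\E[(\Phi^{\GFF(m)}_{t,t_0}(x)-\Phi^{\GFF(m')}_{t,t_0}(x))^2]\lesssim|m-m'|^2\log(t_0/t)$. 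By Gaussian hypercontractivity these upgrade to $L^p$ bounds of every order, and the Kolmogorov/Besov argument yields $\E[\sup_{m\in[0,M]}\norm{\Phi^{\GFF(m)}}_{X^{-s,\delta}(\rho)}^p]<\infty$ for each $M$ and $p$. To pass to $M=\infty$ I would split $[M,\infty)$ into dyadic blocks $m\in[2^j,2^{j+1}]$: interpolating the $m$-increment bound with the decay $e^{-m^2u}$ shows that within each block both the field and its $m$-increments carry an amplitude $\lesssim 2^{-2sj}$ (after the $t^{2s}$ weighting), so a further chaining over each block gives $\E[\sup_{m\in[2^j,2^{j+1}]}\norm{\Phi^{\GFF(m)}}_{X^{-s,\delta}(\rho)}^p]^{1/p}\lesssim 2^{-sj}$, which is summable in $j$; hence $\E[\sup_{m\ge M}\norm{\Phi^{\GFF(m)}}_{X^{-s,\delta}(\rho)}^p]\to0$ as $M\to\infty$. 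Together this gives $\E[\sup_{m\ge0}\norm{\Phi^{\GFF(m)}}_{X^{-s,\delta}(\rho)}^p]<\infty$, in particular a.s.\ finiteness, and as a byproduct the a.s.\ continuity of $m\mapsto\Phi^{\GFF(m)}$ into $X^{-s,\delta}(\rho)$.

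For the second item I would apply the same machinery to the difference process $D^{(m)}:=\Phi^{\GFF(0)}-\Phi^{\GFF(m)}$, whose representation $D^{(m)}_{t,t_0}=\int_t^{t_0}(1-e^{-\frac12 m^2 u})e^{\frac12\Delta u}\,dW_u$ shows that every one of the above second moments picks up an extra factor $\lesssim m^4$, since $(1-e^{-\frac12 m^2 u})^2\le\frac14 m^4 u^2\le\frac14 m^4 t_0^2$. Thus $\E[\norm{D^{(m)}}_{X^{-s,\delta}(\rho)}^p]\lesssim m^{2p}$ for every $p$, so $\norm{\Phi^{\GFF(0)}-\Phi^{\GFF(m)}}_{X^{-s}(\rho)}\to0$ in every $L^p$; taking $m=1/k$ and using $\sum_k k^{-2p}<\infty$ gives a.s.\ convergence along that sequence, and the full a.s.\ limit then follows from the a.s.\ continuity of $m\mapsto\Phi^{\GFF(m)}$ in $X^{-s,\delta}(\rho)\hookrightarrow X^{-s}(\rho)$ established in the previous step.

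For the last item, \eqref{e:Phi0m-conv}, the key observation is that although the individual fields $\Phi^{\GFF(m)}_{0,t_0}$ include the ultraviolet scales and are therefore only distribution-valued, the difference $\Phi^{\GFF(0)}_{0,t_0}-\Phi^{\GFF(m)}_{0,t_0}=\int_0^{t_0}(1-e^{-\frac12 m^2 u})e^{\frac12\Delta u}\,dW_u$ is a genuine (smooth) function, because the factor $1-e^{-\frac12 m^2 u}=O(m^2u)$ removes the logarithmic divergence at $u=0$. Its variance at a point is $\int_0^{t_0}(1-e^{-\frac12 m^2 u})^2\frac{du}{4\pi u}\le\frac{m^4 t_0^2}{32\pi}$, uniform in $x$, and its spatial increment variance is $\lesssim m^4\int_0^{t_0}\min(|x-y|^2,u)\,du\lesssim m^4|x-y|^2$ for $|x-y|\le1$. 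Raising to a high even power $2k$ with $k>1/(1-r)$, Gaussian hypercontractivity and Kolmogorov--Chentsov for the Hölder seminorm (together with the polynomial weight for the spatial decay, using the $x$-uniform variance bound) give $\E[\norm{\Phi^{\GFF(0)}_{0,t_0}-\Phi^{\GFF(m)}_{0,t_0}}_{C^r(\rho)}^{2k}]^{1/2k}\lesssim m^2$, hence $L^p$-convergence to $0$ for every $p<\infty$ and a.s.\ convergence along a subsequence. The hard part will be none of the individual estimates, which are routine, but rather the bookkeeping needed to make the bounds uniform over the non-compact mass range $m\in[0,\infty)$ in the first item, i.e.\ the dyadic-in-$m$ chaining combined with the decay $\sup_t t^{2s}\E[\Phi^{\GFF(m)}_{t,t_0}(x)^2]\lesssim m^{-4s}$.
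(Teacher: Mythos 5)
Your proposal is correct, but it takes a genuinely different route from the paper for the first two estimates. The paper does \emph{not} chain jointly in $(m,t,x)$; instead it applies Kolmogorov/Besov chaining only for $m=0$ (establishing $\|\Phi^{\GFF(0)}\|_{X^{-\kappa,\delta}(\rho)}<\infty$ a.s.), and then uses a pathwise identity obtained from It\^o integration by parts,
\begin{equation*}
\Phi_{t,t_0}^{\GFF(m)} \;=\; -\frac12 m^2\int_t^{t_0} e^{-\frac12 m^2 u}\,\Phi^{\GFF(0)}_{u,t_0}\,du \;+\; e^{-\frac12 m^2 t}\,\Phi_{t,t_0}^{\GFF(0)},
\end{equation*}
which represents every $\Phi^{\GFF(m)}$ as a deterministic linear functional of the \emph{same} realization of $\Phi^{\GFF(0)}$. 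All the $m$-dependent estimates in the first two displays of the lemma — including uniformity over the unbounded range $m\in[0,\infty)$ and the pathwise bound $\|\Phi^{\GFF(0)}-\Phi^{\GFF(m)}\|_{X^{-s}(\rho)}\lesssim m^2\|\Phi^{\GFF(0)}\|_{X^{-\kappa}(\rho)}$ — then follow from elementary deterministic integral estimates, with no further probabilistic input. Your approach replaces this trick by treating $m$ as another chaining variable and then controlling the non-compact $m$-range by dyadic blocks, exploiting the decay $\sup_t t^{2s}\E[\Phi^{\GFF(m)}_{t,t_0}(x)^2]\lesssim m^{-4s}$. That works and is more \emph{modular} — it would survive in settings where no convenient pathwise representation is available — but it costs an extra chaining argument over $m$ and a dyadic summation that the paper entirely avoids. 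For the third display (the $C^r(\rho)$ convergence) your covariance computation and Sobolev/Kolmogorov argument match the paper's proof. Two small cosmetic points: the final a.s.\ convergence in your second item is already immediate from the a.s.\ continuity of $m\mapsto\Phi^{\GFF(m)}$ in $X^{-s,\delta}(\rho)$ that you establish in item one, so the intermediate $L^p$ and subsequence step is redundant; and when you write that the field and its $m$-increments carry amplitude $\lesssim 2^{-2sj}$, you should make explicit that this requires interpolating the crude increment bound $\lesssim|m-m'|^2\log(t_0/t)$ against the amplitude decay within the block rather than using it raw, since the raw increment bound does not decay in $j$ on its own.
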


\begin{proof}
  For notational simplicity, again assume $t_0=1$.
  Throughout the proof we write $\kappa$ in place of $s$ so that $s$ can be used as an integration variable.
  We will first show that
  \begin{equation} \label{e:GFFproof}
    \|\Phi^{\GFF(0)}\|_{X^{-\kappa,\delta}(\rho)} < \infty.
  \end{equation}
  This follows from a version of Kolmogorov's continuity theorem: Denote $\Phi_t=\Phi^{\GFF(0)}_{t,1}$. Then 
  \begin{equation}
    \E[\Phi_t(x)\Phi_s(y)] %
    = \int_{s\vee t}^1 p_u(x-y) \, du, \qquad p_u(x) = \frac{e^{-|x|^2/4u}}{4\pi u},
  \end{equation}
  and, for any small $\gamma>0$, with $s=e^{-\sigma}$ and $t=e^{-\tau}$ and $|\sigma-\tau|\leq 1$ and $|x-y|\leq 1$, $s >t$,
  \begin{align} \label{e:decomp-cov-pf}
    \E[(\Phi_t(x)-\Phi_{s}(y))^{2}]
    &= \int_{t}^{s} p_u(0) \, du  + 2 \int_s^1 [p_u(0)-p_u(x-y)] \, du
      \nnb
    &= \frac{1}{4\pi} \log(s/t) + 2 \int_{s}^1 \frac{1-e^{-|x-y|^2/4u}}{4\pi u} \, du
      \nnb
      &\lesssim |\sigma-\tau|  +  t^{-2\gamma} |x-y|^{4\gamma}
        \lesssim t^{-2\gamma} |\xi-\eta|^{4\gamma}
        \lesssim e^{2\gamma\tau} |\xi-\eta|^{4\gamma}
    \end{align}
    where $\xi=(\tau,x)$ and $\eta=(\sigma,y)$. Also write $\Phi(\xi)=\Phi_{e^{-\tau}}(x)$ and $\Phi(\eta)=\Phi_{e^{-\sigma}}(y)$.
    
  The Kolmogorov continuity theorem for $B \subset \R^{n}$ with $n=d+1=3$ given in \cite[Theorem~3.5]{MR3236753}, or more precisely its proof, gives
  that there is a version of $\Phi$ such that
  \begin{equation} \label{e:KolmogorovO}
    \E\qa{\sup_{\xi,\eta \in B} \frac{|\Phi(\xi)-\Phi(\eta)|^p}{|\xi-\eta|^{\delta p}}}
    \leq C^p \int_{B\times B} \frac{\E|\Phi(\xi)-\Phi(\eta)|^p}{|\xi-\eta|^{\delta p+2n}} \, d\xi\, d\eta.
  \end{equation}
  By partitioning $\R^n$ into unit balls $B$, for example, one can deduce the following weighted estimate for the infinite space $\R^n$:
  \begin{equation} \label{e:Kolmogorovrho}
    \E\qa{ [\Phi]_{X^{-\kappa,\delta}(\rho)}^p}
    = \E\qa{ [\Phi]_{C^\delta(\tilde\rho)}^p}
    \leq C^p \int_{\R^3} \tilde\rho(\xi)^p \int_{B_4(0)}  \frac{\E|\Phi(\xi)-\Phi(\xi+\zeta)|^p}{|\zeta|^{\delta p+2n}} \, d\xi\, d\eta.
  \end{equation}
  
  In detail, to see the weighted estimate, for any $\bar\xi \in \R^3$, the local estimate \eqref{e:KolmogorovO} implies
  \begin{equation}
    \E\qa{\sup_{\xi\in B_1(\bar \xi)}\sup_{\zeta \in B_1(0)} \frac{|\Phi(\xi)-\Phi(\xi+\zeta)|^p}{|\zeta|^{\delta p}}}
    \leq C^p \int_{B_2(\bar\xi) \times B_4(0)} \frac{\E|\Phi(\xi)-\Phi(\xi+\zeta)|^p}{|\zeta|^{\delta p+2n}} \, d\xi\, d\zeta,
  \end{equation}
  where we have not optimized the size of the balls on the right-hand side.
  Since the minimum and maximum of the
  weight $\tilde\rho(\xi) = \tilde\rho(\tau,x) =  e^{-\kappa\tau}\rho(x)$ are comparable on unit balls of $\xi = (\tau,x)$,
  i.e., $\tilde\rho(\xi) \lesssim \tilde\rho(\bar\xi) \lesssim \tilde\rho(\xi)$, it also follows that
  (possibly with a different constant $C$)
  \begin{equation}
    \E\qa{\sup_{\xi\in B_1(\bar \xi)}\tilde\rho(\xi)^p\sup_{\zeta \in B_1(0)} \frac{|\Phi(\xi)-\Phi(\xi+\zeta)|^p}{|\zeta|^{\delta p}}}
    \leq C^p \int_{B_2(\bar\xi) \times B_4(0)} \tilde\rho(\xi)^p\frac{\E|\Phi(\xi)-\Phi(\xi+\zeta)|^p}{|\zeta|^{\delta p+2n}} \, d\xi\, d\zeta.
  \end{equation} 
  Using that for $f$ nonnegative and the sum over $\bar\xi$ below ranging over $(\frac12 \Z)^3$ for example,
  \begin{equation}
    \sup_{\xi\in\R^3} f(\xi)
    \leq  \sum_{\bar\xi} \sup_{\xi\in B_1(\bar\xi)} f(\xi),
    \qquad \sum_{\bar\xi} \int_{B_2(\bar\xi)} f(\xi) \, d\xi \lesssim \int_{\R^3} f(\xi) \, d\xi
  \end{equation}
  we obtain the claimed weighted estimate \eqref{e:Kolmogorovrho}.

  Since $\Phi$ is Gaussian (and hence the moments are controlled by the covariance), the right-hand side of \eqref{e:Kolmogorovrho} is bounded using \eqref{e:decomp-cov-pf} by
  \begin{equation}
    (Cp)^{p/2} \int_{\R^3} \tilde\rho(\xi)^p e^{\gamma\tau p} \, d\xi \int_{B_4(0)}  \frac{|\zeta|^{2\gamma p}}{|\zeta|^{\delta p+6}} \, d\eta
    \leq (\tilde Cp)^{p/2},
  \end{equation}
  provided that $\gamma<\kappa$ and $\sigma>0$ in the definition of $\tilde\rho$, that $\delta<2\gamma$, and that $p$ is sufficiently large.
  Since $\Phi_1(x)=0$ it also follows that 
  \begin{align}
    \|\Phi\|_{X^{-\kappa}(\rho)}
    &= \sup_{t \in (0,1]} t^{\kappa} \|\Phi_t-\Phi_1\|_{C^0(\rho)}
      \nnb
    &= \sup_{t \in (0,1]} \sup_{x\in\R^2} t^{\kappa}\rho(x) |\Phi_t(x)-\Phi_1(x)|
      \nnb
    &\leq \sup_{t\in(0,1]} t^{\kappa} |\log t|^\delta t^{-\kappa'} [\Phi]_{X^{-\kappa',\delta}(\rho)}
    \lesssim [\Phi]_{X^{-\kappa',\delta}(\rho)},
  \end{align}
  provided $\delta>0$ and $\kappa'<\kappa$.
  This completes the proof that \eqref{e:GFFproof} holds for some $\kappa>0$ and $\delta>0$.

  We now deduce the first two estimates stated in the lemma. Indeed, by definition and Ito's formula,
  \begin{equation}
    \Phi_{t,1}^{\GFF(m)}
    = \int_{t}^1 e^{-\frac12 m^2 s} e^{\frac12 \Delta s} dW_s
    = -\int_{t}^1 e^{-\frac12 m^2 s} d\Phi^{\GFF(0)}_{s,1}.
  \end{equation}
  Thus
  \begin{equation}
    \Phi_{t,1}^{\GFF(m)}
    = -\frac12 m^2  \int_t^1 e^{-\frac12 m^2 s} \Phi^{\GFF(0)}_{s,1}  \, ds + e^{-\frac12 m^2 t}\Phi_{t,1}^{\GFF(0)}.
  \end{equation}
  In particular,
  \begin{equation}
    \Phi_{t,1}^{\GFF(0)}- \Phi_{t,1}^{\GFF(m)}
    =
    \frac12 m^2  \int_t^1 e^{-\frac12 m^2 s} \Phi^{\GFF(0)}_{s,1}  \, ds + (1-e^{-\frac12 m^2 t}) \Phi_{t,1}^{\GFF(0)}.
  \end{equation}
  Therefore, for any $\kappa \in (0,1)$,
  \begin{align}
    \|\Phi_{t,1}^{\GFF(0)}- \Phi_{t,1}^{\GFF(m)}\|_{C^0(\rho)}
    &\leq \qa{ \frac12 m^2 \int_t^1 s^{-\kappa} \, ds + \frac12 m^2 t^{1-\kappa}} \|\Phi^{\GFF(0)}\|_{X^{-\kappa}(\rho)}
      \nnb
    &\lesssim m^2 \|\Phi^{\GFF(0)}\|_{X^{-\kappa}(\rho)}.
  \end{align}
  
  Finally, the third estimate for the $C^r$ norm of $\Phi_{0,t_0}^{\GFF(0)}-\Phi_{0,t_0}^{\GFF(m)}$ can be
  obtained by computing its covariance and using Sobolev embedding. Again assuming that $t_0=1$, we have
  \begin{equation}
    \Phi_{0,1}^{\GFF(0)}-\Phi_{0,1}^{\GFF(m)}
    =
    \int_0^1 (1-e^{-\frac12 m^2 s}) e^{\frac12 \Delta s} \, dW_s
  \end{equation}
  so the covariance of it is
  \begin{equation}
    C(x-y) = \int_0^1 (1-e^{-\frac12 m^2 s})^2 e^{\Delta s}(x,y) \, ds
    = \int_0^1 O(m^4 s^2) e^{\Delta s}(x,y) \, ds.
  \end{equation}
  In particular, for $n<2$,
  \begin{equation}
    |\nabla^{2n}C(0)| \lesssim m^4 \int_0^1 s^{2-n} \frac{ds}{s} < \infty,
  \end{equation}
  and the Sobolev embedding gives that for $r=1-2/p$ and $p$ sufficiently  large,
  \begin{equation}
    \E\qa{\|\Phi^{\GFF(0)}_{0,1}-\Phi^{\GFF(m)}_{0,1}\|_{C^r(\rho)}^p}
    \lesssim 
    (Cp)^{p/2} \sum_{n=0}^1 |\nabla^{2n}C(0)|^p \int  \rho(x)^p \, dx  \leq (Cp)^{p/2}
    .     
  \end{equation}
  This completes the proof.
\end{proof}

The next lemma gives the bound on the third term in \eqref{Phit0-nocenter-tightCk-bis}.

\begin{lemma}
For $t_0\leq 1$, and $p\geq 1$
\begin{equation}
  \sup_{T \geq 1}\sup_{m\in(0,1]}\E\qB{ \|P_T\Phi^{\GFF(m)}_{0,t_0}\|_{C^{-s}(\rho)}^p }<\infty
  .
\end{equation}
\end{lemma}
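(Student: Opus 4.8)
The plan is to reduce the statement to the ``moreover'' clause of Proposition~\ref{prop:GFF-Besov} together with a standard Gaussian moment bound, exploiting that the covariance of $\Phi^{\GFF(m)}_{0,t_0}$ is dominated, uniformly in $m$, by that of $\Phi^{\GFF(0)}_{0,t_0}$. First I would record that $\|1\|_{C^{-s}(\rho)}=1$: since $\Psi_R*1=1$ and $\sup_x\rho(x)=\rho(0)=1$, the definition \eqref{e:Besov-def} gives $\|1\|_{C^{-s}(\rho)}=\sup_{R\leq 1}R^s=1$, and moreover $1\in C^{-s}(\rho)$ because $\chi_n\to 1$ in this norm whenever $\chi_n\in C_c^\infty$ with $\chi_n=1$ on $B_n(0)$ (the tail contribution is $\lesssim\rho(n-1)\to0$). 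Since $P_T\varphi=\varphi-(\varphi,\eta_T)\,1$, the triangle inequality gives
\begin{equation}
  \|P_T\Phi^{\GFF(m)}_{0,t_0}\|_{C^{-s}(\rho)}
  \leq \|\Phi^{\GFF(m)}_{0,t_0}\|_{C^{-s}(\rho)} + |(\Phi^{\GFF(m)}_{0,t_0},\eta_T)|,
\end{equation}
so it suffices to bound the $p$-th moments of the two terms on the right uniformly in $T\geq1$ and $m\in(0,1]$.

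For the first term I would invoke Gaussian domination. The field $\Phi^{\GFF(m)}_{0,t_0}$ is centered Gaussian with covariance operator $C^{(m)}_{0,t_0}=\int_0^{t_0}e^{-m^2 s}e^{\Delta s}\,ds$, whose Fourier multiplier $\int_0^{t_0}e^{-(m^2+|p|^2)s}\,ds=\frac{1-e^{-(m^2+|p|^2)t_0}}{m^2+|p|^2}$ is pointwise dominated, since $a\mapsto\int_0^{t_0}e^{-as}\,ds$ is decreasing, by the multiplier of $C^{(0)}_{0,t_0}$. Hence $\avg{e^{(\varphi,g)}}=e^{\frac12(g,C^{(m)}_{0,t_0}g)}\leq e^{\frac12(g,C^{(0)}_{0,t_0}g)}$ for every $g\in\cS(\R^2)$, i.e., the hypothesis of the ``moreover'' part of Proposition~\ref{prop:GFF-Besov} holds with a bound independent of $m$. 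Since the proof of that proposition uses only this Gaussian domination, it yields $\avg{\|\Phi^{\GFF(m)}_{0,t_0}\|_{C^{-s}(\rho)}^p}\leq C^p p^{p/2}$ with $C=C(s,\sigma,t_0)$ independent of $m\in(0,1]$; in particular $\Phi^{\GFF(m)}_{0,t_0}\in C^{-s}(\rho)$ almost surely.

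For the second term, $(\Phi^{\GFF(m)}_{0,t_0},\eta_T)$ is a centered Gaussian with variance $(\eta_T,C^{(m)}_{0,t_0}\eta_T)\leq(\eta_T,C^{(0)}_{0,t_0}\eta_T)=\frac{1}{(2\pi)^2}\int\frac{1-e^{-|p|^2t_0}}{|p|^2}|\hat\eta(Tp)|^2\,dp$. I would bound the multiplier by $\min\{t_0,|p|^{-2}\}$ and split: on $|p|\leq1$ use $\int|\hat\eta(Tp)|^2\,dp=(2\pi)^2T^{-2}\|\eta\|_{L^2}^2\leq(2\pi)^2\|\eta\|_{L^2}^2$ for $T\geq1$; on $|p|>1$ use the rapid decay $|\hat\eta(Tp)|\leq C_N(1+T|p|)^{-N}\leq C_N(1+|p|)^{-N}$ (again $T\geq1$) to make $\int_{|p|>1}|p|^{-2}|\hat\eta(Tp)|^2\,dp$ finite and $T$-independent. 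This gives $(\eta_T,C^{(m)}_{0,t_0}\eta_T)\leq\sigma^2$ with $\sigma^2=\sigma^2(t_0,\eta)$ independent of $m$ and of $T\geq1$, whence $\E[|(\Phi^{\GFF(m)}_{0,t_0},\eta_T)|^p]\leq(C\sigma)^p p^{p/2}$. Combining the two bounds via $|a+b|^p\leq 2^{p-1}(|a|^p+|b|^p)$ finishes the proof.

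There is no substantial obstacle here beyond bookkeeping: the only step needing a little care is the uniform-in-$T$ control of the variance of $(\Phi^{\GFF(m)}_{0,t_0},\eta_T)$, which relies on the scale-invariance of the $L^2$-mass of $\hat\eta(T\cdot)$ on $\{|p|\leq1\}$ together with the rapid decay of $\hat\eta$ on $\{|p|>1\}$; everything else follows immediately from Proposition~\ref{prop:GFF-Besov} and standard Gaussian estimates.
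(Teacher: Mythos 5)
Your proof is correct and follows essentially the same route as the paper: split $P_T\Phi^{\GFF(m)}_{0,t_0}$ into the field part and the scalar $(\Phi^{\GFF(m)}_{0,t_0},\eta_T)$, control the first via Gaussian domination by the $m=0$ covariance and the ``moreover'' clause of Proposition~\ref{prop:GFF-Besov}, and bound the variance of the second uniformly in $T\ge1$. Two minor stylistic differences: you explicitly verify that $1\in C^{-s}(\rho)$ with norm $1$ (a technical point the paper leaves implicit, and worth spelling out given the spaces are defined as completions of $C_c^\infty$), and your variance estimate for $(\Phi^{\GFF(m)}_{0,t_0},\eta_T)$ is more elaborate than necessary — the paper simply bounds $e^{-|p|^2u}\le1$ and $\int_0^{t_0}du\le1$ to get $(\eta_T,C^{(m)}_{0,t_0}\eta_T)\le\|\eta_T\|_{L^2}^2=T^{-2}\|\eta\|_{L^2}^2$, with no need to split at $|p|=1$ or invoke rapid decay of $\hat\eta$.
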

\begin{proof}
  It suffices to estimate the expected norms of $\Phi^{\GFF(m)}_{0,t_0}$ and of $(\eta_T, \Phi^{\GFF(m)}_{0,t_0})$ separately.
  
  For the first term we observe that the covariance of $\Phi_{0,t_0}^{\GFF(m)}$
  is smaller than that of $\Phi_{0,t_0}^{\GFF(0)}$ which is effectively a massive free field with mass $1/\sqrt{t_0}$,
  and thus the estimate follows from Proposition~\ref{prop:GFF-Besov}.
  (Alternatively, 
  we could have used the decomposition into $\Phi^{\GFF(m)}_{0,t_0}-(\eta,\Phi^{\GFF(m)}_{0,t_0})$
  and $(\eta_T, \Phi^{\GFF(m)}_{0,t_0}) -(\eta, \Phi^{\GFF(m)}_{0,t_0})$ to apply the massless
  version of  Proposition~\ref{prop:GFF-Besov}.)
  
  The second term $(\eta_T,\Phi^{\GFF(m)}_{0,t_0})$ is a single Gaussian random variable with variance
  \begin{align}
    \int_0^{t_0} e^{-m^2 u} (\eta_T,e^{\Delta u}\eta_T) \, du
    &=
      \int_0^{t_0} e^{-m^2 u} \int |\hat\eta_T(p)|^2 e^{-|p|^2 u} \, \frac{dp}{(2\pi)^2} \, du
      \nnb
    &\leq
       \int_0^{t_0} e^{-m^2 u} \int |\hat\eta_T(p)|^2 \, \frac{dp}{(2\pi)^2} \, du
    \leq \|\eta_T\|_{L^2}^2.
  \end{align}
  Since $\|\eta_T\|_{L^2} = T^{-d/2}\|\eta_1\|_{L^2} \leq \|\eta\|_{L^2}$ for $T\geq 1$ (see Remark~\ref{rk:etaT}), this implies the claim.
\end{proof}

We conclude with an estimate used in the proof of Lemma~\ref{lem:thirdderivative}.
The bound could be derived in a similar way as the one in the previous subsection, but it now
seems more efficient to use Sobolev embeddings to reduce the computation to a covariance estimate.

\begin{lemma} \label{lem:Gauss-Phit0}
  For any $p>0$, $k>0$, $\sigma\geq 3$, uniformly in $m>0$,
  \begin{equation}
    \E\qa{ \|\Phi^{\GFF}_{t_0}-\Tay_0\Phi_{t_0}^\GFF\|_{C^k(\rho)}^p}
    \leq C_{k,p}.
  \end{equation}
\end{lemma}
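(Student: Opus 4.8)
The plan is to use that $\Phi^{\GFF(m)}_{t_0}$ is a smooth stationary Gaussian field whose \emph{derivatives} all have covariances bounded uniformly in $m\in(0,1]$; the only quantity that degenerates as $m\to0$ is the undifferentiated variance $\E[\Phi^{\GFF(m)}_{t_0}(x)^2]$, and this ``zero mode'' is exactly what disappears after subtracting the second-order Taylor polynomial $\Tay_0\Phi^{\GFF(m)}_{t_0}$. Throughout I write $R=R_m:=\Phi^{\GFF(m)}_{t_0}-\Tay_0\Phi^{\GFF(m)}_{t_0}$.

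First I would record covariance estimates. Writing $C_{t_0}(x)=\int_{t_0}^\infty e^{-m^2 s}\,\tfrac{1}{4\pi s}e^{-|x|^2/4s}\,ds$ and using the scaling bound $|\nabla^{2\ell}_x(\tfrac{1}{4\pi s}e^{-|x|^2/4s})|\lesssim s^{-\ell-1}$ on $\R^2$, one gets $\sup_{m>0}|\nabla^{2\ell}C_{t_0}(0)|\le |c_\ell|\int_{t_0}^\infty s^{-\ell-1}\,ds<\infty$ for every $\ell\ge1$. Hence $\E[|\nabla^\ell\Phi^{\GFF(m)}_{t_0}(x)|^2]$ is bounded uniformly in $m$ and $x$ for $\ell\ge1$, and by standard Gaussian moment bounds $\|\nabla^\ell\Phi^{\GFF(m)}_{t_0}(0)\|_{L^p(\P)}\lesssim\sqrt p$ uniformly in $m$, for $\ell\ge1$ and $p\ge1$. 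Next I would invoke the integral form of Taylor's theorem at the origin: $R(0)=\nabla R(0)=\nabla^2 R(0)=0$, $\nabla^3 R=\nabla^3\Phi^{\GFF(m)}_{t_0}$, and for $0\le\ell\le3$
\begin{equation}
  |\nabla^\ell R(x)|\lesssim |x|^{\,3-\ell}\sup_{|y|\le|x|}|\nabla^3\Phi^{\GFF(m)}_{t_0}(y)|,
\end{equation}
while $\nabla^\ell R=\nabla^\ell\Phi^{\GFF(m)}_{t_0}$ for $\ell>3$. Taking $L^p(\P)$-norms, Minkowski's inequality together with stationarity and the previous bound yields $\|\nabla^\ell R(x)\|_{L^p(\P)}\lesssim\sqrt p\,(1+|x|)^{(3-\ell)_+}$, uniformly in $m\in(0,1]$.

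It then remains to convert these pointwise-in-$x$ moment bounds into a bound on $\E[\|R\|_{C^k(\rho)}^p]$. Since $|\nabla^j\rho|\lesssim\rho$ for the polynomial weight, $\|R\|_{C^k(\rho)}$ is comparable to $\|\rho R\|_{C^k(\R^2)}$, so it suffices to bound the latter. Fixing $n=k+1$, applying the (unweighted) Sobolev embedding $\|g\|_{C^k(\R^2)}\lesssim\|g\|_{W^{n,p}(\R^2)}$ valid for $p$ large, expanding $\nabla^{\le n}(\rho R)$ by Leibniz and using $|\nabla^j\rho|\lesssim\rho$ once more, and then Fubini's theorem,
\begin{equation}
  \E\qa{\|\rho R\|_{C^k(\R^2)}^p}\lesssim\int_{\R^2}\sum_{\ell\le n}\E\qa{|\rho(x)\nabla^\ell R(x)|^p}\,dx\lesssim p^{p/2}\int_{\R^2}(1+|x|)^{-(\sigma-3)p}\,dx,
\end{equation}
where the worst term comes from $\ell=0$. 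The last integral is finite once $(\sigma-3)p>2$; for $\sigma>3$ this holds for all sufficiently large $p$, and the case of general $p>0$ then follows by Jensen's (Hölder's) inequality. Since every estimate above is uniform in $m\in(0,1]$, the resulting constant $C_{k,p}$ is independent of $m$.

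The point requiring care is precisely this uniformity in $m$: one must check that the only $m$-divergence sits in $C_{t_0}(0)$ and is annihilated by the Taylor subtraction, and that no hidden $m$-dependence is introduced by the Sobolev step. (A minor remark: the crude Taylor bound forces $\sigma>3$ rather than $\sigma\ge3$; this is harmless, since the lemma is only ever used in combination with Lemma~\ref{lem:thirdderivative}, which already requires $\sigma>3$.)
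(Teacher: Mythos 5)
Your proof is correct and follows the same overall architecture as the paper's: convert the weighted $C^k$ norm to a weighted Sobolev norm via Sobolev embedding, bound the pointwise $L^p(\P)$-norms of $\nabla^\ell(\Phi^\GFF_{t_0}-\Tay_0\Phi^\GFF_{t_0})(x)$ by a polynomial in $|x|$ with a uniform-in-$m$ constant, and integrate against the weight. The one place you diverge is how you control the remainder: the paper decomposes it as the log-correlated increment $\Phi^\GFF_{t_0}(x)-\Phi^\GFF_{t_0}(0)$ plus the degree-$2$ Taylor polynomial, giving variance $\lesssim(1+|x|)^4$ (hence $L^p$-norm $\lesssim\sqrt p\,(1+|x|)^2$ and the constraint $\sigma>2$); you instead invoke the integral remainder form of Taylor's theorem together with Minkowski and stationarity, which is cleaner and avoids having to know the increment is log-correlated, but gives the cruder bound $\lesssim\sqrt p\,(1+|x|)^3$ and thus needs $\sigma>3$. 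As you correctly note, the lemma is only ever invoked with $\sigma>3$ (inside Lemma~\ref{lem:thirdderivative}), so the slightly stronger hypothesis is harmless, and both arguments establish the stated uniformity in $m$.
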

  
\begin{proof}
  By Sobolev embedding, for any $k>0$ and $p \geq 1$, there is $l>0$ such that
  \begin{equation}
    \|f\|_{C^k(\rho)} \lesssim \|f\|_{W^{l,p}(\rho)}.
  \end{equation}
  Indeed, the usual Sobolev embedding gives that, for any ball $B$ of fixed radius,
  \begin{equation}
    \|f\|_{C^k(B)}^p \lesssim  \sum_{n=0}^l \int_B |\nabla^n f(x)|^p \, dx
  \end{equation}
  and thus the weighted estimate follows by covering $\R^2$  by unit balls:
  \begin{align}
    \|f\|_{C^k(\rho)}^p
    \lesssim \sup_{\bar x\in \Z^2} \rho(\bar x)^p \|f\|_{C^k(B_2(\bar x))}^p
    &\lesssim \sup_{\bar x\in \Z^2} \rho(\bar x)^p \sum_{n=0}^l \int_{B_2(\bar x)} |\nabla^n f(x)|^p \, dx
      \nnb
    &\lesssim \sum_{n=0}^l \int_{\R^2} \rho(x)^p |\nabla^n f(x)|^p \, dx = \|f\|_{W^{l,p}(\rho)},
  \end{align}
  where the implicit constants can depend on $k,l,p,\sigma$.
 
  For any fixed $n$, using that $\Tay_0$ is a Taylor polynomial of degree $2$,
  one can check that
  \begin{equation} \label{e:Phit0-cov}
    \E |\nabla^n\Phi^\GFF_{t_0}(x)-\nabla^n\Tay_0\Phi^\GFF_{t_0}(x)|^2 \lesssim (1+|x|)^4.
  \end{equation}
  Indeed, $\Phi^{\GFF}_{t_0}$ is smooth with covariance
  \begin{equation}
    \int_{t_0}^\infty e^{-m^2 s} e^{\Delta_x s} \, ds.
  \end{equation}
  Thus for $n>0$, one can bound the two terms in the difference separately since $\nabla^l\Phi^{\GFF}_{t_0}$
  has bounded variance for any $l>0$ uniformly in $m>0$;  we omit the   details. For $n=0$, one has
  \begin{equation}
    \E |\Phi^\GFF_{t_0}(x)-\Phi^\GFF_{t_0}(0)|^2 \lesssim 1+\log(1+|x|) \lesssim (1+|x|)^4
  \end{equation}
  since $\Phi^{\GFF}_{t_0}$ is a log-correlated at large distances as $m\to 0$ (and smooth at small distances).
  The remaining terms contributing to the left-hand side of \eqref{e:Phit0-cov} with $n=0$
  come from the other terms the derivative terms in $\Tay_0$ which are bounded in terms of the
  bounded variances of  $\nabla^m\Phi^{\GFF}_{t_0}(0)$.
  
  Using that $\Phi^\GFF_{t_0}$ is Gaussian to bound higher moments,
  it follows from the Sobolev embedding that
  \begin{align}
    \E\qa{\|\Phi^\GFF_{t_0}-\Tay_0\Phi^\GFF_{t_0}\|_{C^k(\rho)}^p}
    &\lesssim \sum_{n=0}^l \int \E |\nabla^n\Phi^\GFF_{t_0}(x)-\nabla^n\Tay_0\Phi^\GFF_{t_0}(x)|^p \rho(x)^p \, dx
      \nnb
    & \lesssim
      \int (1+|x|^4)^{p/2} \rho(x)^p \, dx
      \lesssim 1
      ,
  \end{align}
  where we also used that  $\int (1+|x|^4)^{p/2}  \rho(x)^p \, dx$ is uniformly bounded for $\sigma>2$ and $p$ sufficiently large.
\end{proof}

\section{Renormalized potential with fractional charge insertions}\label{app:fracrenorm}

This appendix provides input for  the proof of Theorem~\ref{th:fracapp} in Appendix~\ref{app:renormpart}
(which is a restatement of Theorem~\ref{th:fracapp-bis}),
but it may also be of independent interest as it provides the analysis of the renormalized potential
for potentials that are sums of periodic potentials with different periodicities and
multiple ultraviolet regularization scales. The renormalized potential has many applications,
see for example \cite{MR4798104}.
Our analysis builds on \cite[Section 4]{MR4767492} (which in turn is based on \cite{MR914427}),
which concerns the situation with a single scale.

\subsection{Renormalized potential with two scales}
\label{app:renormpot}

We begin by introducing the renormalized potential with two regularization scales $\delta<\epsilon$.
Let $\beta\in[4\pi,6\pi)$ and $\Sigma\subset (-\sqrt{6\pi},\sqrt{6\pi})$ be a finite set
of the form\footnote{The case $\beta<4\pi$ is conceptually simpler, but would require some changes in notation, and we are primarily interested in $\beta=4\pi$, so we focus on the case $\beta\in[4\pi,6\pi)$. In fact, the case $\beta<4\pi$ can eventually be deduced from our analysis by setting $\eta(x,\pm \sqrt{\beta})=0$, but we do not discuss this further.}
\begin{equation}
  \Sigma=\Sigma_\beta \cup \Sigma_\alpha,
  \quad \text{where} \quad
  \Sigma_\beta=\{-\sqrt{\beta},\sqrt{\beta}\}
  \quad \text{and} \quad
  \Sigma_\alpha=\{\sqrt{4\pi}\alpha_1, \dots,\sqrt{4\pi}\alpha_n\}
  ,
\end{equation}
where the $\alpha_i$ are distinct and satisfy $0<\alpha_i^2<2-\beta/4\pi$ (note that $2-\beta/4\pi\leq 1$ for $\beta\in[4\pi,6\pi)$). Moreover, let $\eta\in L_c^\infty(\R^2\times \Sigma,\C)$,
meaning that for each $\sigma\in \Sigma$, $x\mapsto \eta(x,\sigma)$ is a complex valued, compactly supported, measurable, and essentially bounded function on $\R^2$,
$m>0$, and $0<\delta<\epsilon$.
We write $\xi=(x,\sigma)\in \R^2\times \Sigma$, and $\int_{\R^2\times \Sigma_\alpha}d\xi=\int_{\R^2}dx\sum_{\sigma\in \Sigma_\alpha}$, and similarly for $\Sigma_\beta$.

For $\varphi^1,\varphi^2\in C(\R^2)$, we define the microscopic potential
\begin{equation}
  V_{\delta^2,\epsilon^2}(\varphi^1,\varphi^2)
  =\int_{\R^2\times \Sigma_\alpha}d\xi\, \eta(\xi)\wick{e^{i\sigma\varphi^1(x)}}_\delta+\int_{\R^2\times \Sigma_\beta}d\xi\, \eta(\xi) \wick{e^{i\sigma \varphi^2(x)}}_\epsilon,
\end{equation}
where
\begin{equation}
  \wick{e^{i\sigma\varphi^1(x)}}_\delta = \delta^{-\frac{\sigma^2}{4\pi}}e^{i\sigma\varphi^1(x)},\qquad
  \wick{e^{i\sigma \varphi^2(x)}}_\epsilon = \epsilon^{-\frac{\sigma^2}{4\pi}}e^{i\sigma \varphi^2(x)}
\end{equation}
Then in terms of a decomposed Gaussian free field as in Section~\ref{sec:Gauss-decomp},
typically omitting the dependence on $m$ from the notation in $\Phi$ and $\dot C$,
\begin{equation} \label{e:app-Phi}
  \Phi_{s,t} = \Phi_{s,t}^{\GFF(m)} = \int_{s}^t \sqrt{\dot C_u} \, dW_u = \int_{s}^t e^{-\frac12 m^2 u} e^{\frac12 u\Delta} \, dW_u
  ,
\end{equation}
we introduce the renormalized potential: for deterministic $\varphi_1,\varphi_2\in C(\R^2)$,
\begin{equation}
  V_{t}(\varphi^1,\varphi^2|m,\delta,\epsilon)
  = -\log \E\qb{e^{-V_{\delta^2,\epsilon^2}(\Phi_{\delta^2,t}+\varphi^1, \Phi_{\epsilon^2,\epsilon^2\vee t}+\varphi^2)}},
\end{equation}
where we use the principal branch of the logarithm.

Our goal is to expand the renormalized potential as a series in terms of coefficients $\tilde V_t=\tilde V_t^{k,n}(\xi_1,...,\xi_n|m,\delta,\epsilon)$ as
\begin{multline} \label{e:Vt-expand}
  V_t(\varphi^1,\varphi^2|m,\delta,\epsilon)
  =\sum_{n=1}^\infty \frac{1}{n!}
  \sum_{k=0}^n {n \choose k}\int_{(\R^2\times \Sigma_\alpha)^k\times (\R^2\times \Sigma_\beta)^{n-k}}d\xi_1\cdots d\xi_n\, \eta(\xi_1)\cdots \eta(\xi_n)
  \\
  \tilde V_t^{k,n}(\xi_1,\dots,\xi_n)
  e^{i\sum_{j=1}^k \sigma_j \varphi^1(x_j)+i\sum_{j=k+1}^n \sigma_j \varphi^2(x_j)}
  .
\end{multline}
We will first define the coefficients through a recursion that they satisfy (which corresponds to a variant of the Polchinski equation for $V_t$)
and then verify that they indeed expand $V_t$ as above.

\begin{definition}
For $n=1$ define
\begin{align}\label{eq:vtilk1}
  \tilde V_t^{k,1}(\xi|m,\delta,\epsilon)=\delta_{k,1}e^{-\frac{\sigma^2}{2}(\int_{\delta^2}^t ds\,\dot C_s(0)+\frac{1}{4\pi}\log \delta^2)}+\delta_{k,0}e^{-\frac{\sigma^2}{2}(\int_{\epsilon^2}^{\epsilon^2 \vee t} ds\,\dot C_s(0)+\frac{1}{4\pi}\log \epsilon^2)}.
\end{align}
For $n \geq 2$  and $\delta^2\leq t\leq \epsilon^2$, define  $\tilde V_t^n = \tilde V_t^{n,n}$ recursively by
\begin{align}\label{eq:vtiln}
&\tilde V_t^n(\xi_1,\dots,\xi_n|m,\delta,\epsilon)\nnb
&\quad =
\frac{1}{2}\sum_{I_1\dot \cup I_2=[n]}\sum_{i\in I_1,j\in I_2}\sigma_i \sigma_j \int_{\delta^2}^t ds\, \dot C_s(x_i-x_j)\tilde V_s^{|I_1|}(\xi_{I_1}|m,\delta,\epsilon)\tilde V_s^{|I_2|}(\xi_{I_2}|m,\delta,\epsilon)\nnb
&\qquad \qquad \qquad \qquad \qquad \qquad \times e^{-\frac{1}{2}\sum_{i,j=1}^n \sigma_i \sigma_j \int_s^t dr\, \dot C_r(x_i-x_j)}.
\end{align}
and set $\tilde V_t^{k,n} = 0$ if $n \geq 2$ and $k<n$.

For $n\geq 2$ and $t>\epsilon^2$, define recursively
\begin{align}\label{eq:vtilkn}
&\tilde V_t^{k,n}(\xi_1,\dots,\xi_n|m,\delta,\epsilon)\nnb
&\quad = \delta_{n,k}e^{-\frac{1}{2}\sum_{i,j=1}^n \sigma_i \sigma_j \int_{\epsilon^2}^t ds\, \dot C_s(x_i-x_j)}\tilde V_{\epsilon^2}^n(\xi_1,\dots,\xi_n|m,\delta,\epsilon)\nnb
&\qquad +\frac{1}{2}\sum_{I_1\dot \cup I_2=[n]}\sum_{i\in I_1,j\in I_2}\sigma_i \sigma_j \int_{\epsilon^2}^t ds\, \dot C_s(x_i-x_j)\tilde V_s^{|I_1\cap [k]|,|I_1|}(\xi_{I_1}|m,\delta,\epsilon)\tilde V_s^{|I_2\cap[k]|,|I_2|}(\xi_{I_2}|m,\delta,\epsilon)\nnb
&\qquad \qquad \qquad \qquad \qquad \qquad \times e^{-\frac{1}{2}\sum_{i,j=1}^n \sigma_i \sigma_j \int_s^t dr\, \dot C_r(x_i-x_j)}.
\end{align}
\end{definition}

The parameters $n$ respectively $k,n$ of $\tilde V^n_t$ and $\tilde V_t^{k,n}$ are determined by
the arguments of the functions, and for notational simplicity we will often drop them when the arguments are present.

The following bounds for the kernels $\tilde V_t$ %
will guarantee convergence of the expansion of the renormalized potential
uniformly in $\delta,\epsilon,m>0$.
For $n\geq 1$ and $f:(\R^2\times \Sigma)^n\to \C$, define %
\begin{equation}
\|f\|_n=\begin{cases}
\|f\|_{L^\infty(\R^2\times \Sigma)}, & n=1\\
\sup_{\xi_1\in \R^2\times \Sigma}\int_{(\R^2\times \Sigma)^{n-1}}d\xi_2\cdots d\xi_n |f(\xi_1,\dots,\xi_n)|, & n\geq 2.
\end{cases}
\end{equation}
\newcommand{\tdist}{\lambda}
In fact, as in \cite[Extension 2.3]{MR914427}, we can alternatively use the following stronger norms. Define
\begin{equation} \label{e:rho-def}
  \tdist(x_1,\cdots, x_n) = \inf_T \sum_{kl \in T} |x_k-x_l|
\end{equation}
where the infimum is over spanning trees $T$ on the complete graph with vertices $\{1,\dots,n\}$
and $kl\in T$ denotes that $kl=\{k,l\}$ is an edge in $T$.
In particular, $\tdist(x_1,x_2)=|x_1-x_2|$ and
\begin{equation} \label{e:rho-add}
  \tdist(x_k,x_{k+1}) + \tdist(x_1,\dots, x_k) + \tdist(x_{k+1}, \dots, x_n) \geq \tdist(x_1,\dots,x_n).
\end{equation}
Then the following proposition also holds with the following weighted norm:
\begin{equation}
  \|f\|_n = \|f\|_{n,\tdist} = \sup_{\xi_1\in \R^2\times \Sigma}\int_{(\R^2\times \Sigma)^{n-1}}d\xi_2\cdots d\xi_n |f(\xi_1,\dots,\xi_n)| e^{\tdist(x_1,\dots,x_n)}, \quad n\geq 2.
\end{equation}

\begin{proposition}\label{pr:vtnorm}
For $t>0$ and $n\neq 2$ or $n=2$ and $\sigma_1\sigma_2\neq -\beta$, and $0\leq k\leq n$, there exist functions $h_t^{k,n}:(\R^2\times \Sigma)^n\to [0,\infty]$ which are symmetric in their variables, independent of $m,\delta,\epsilon$ such that for $0<\delta^2<\epsilon^2<t<m^{-2}$, one has
\begin{equation}
|\tilde V_t^{k,n}(\xi_1,\dots,\xi_n|m,\delta,\epsilon)|\leq h_t^{k,n}(\xi_1,\dots,\xi_n)
\end{equation}
for all $\xi_1,\dots,\xi_n\in \R^2\times \Sigma$, and for $0<t<1$ 
\begin{equation}
\|h_t^{k,n}\|_n\leq n^{n-2}t^{-1}(C_\Sigma t^{1-\frac{\beta}{8\pi}})^n
\end{equation}
for some finite $C_\Sigma>0$ (which depends only on $\Sigma$).
\end{proposition}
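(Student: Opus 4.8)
The plan is to prove Proposition~\ref{pr:vtnorm} by induction on $n$, following closely the strategy of \cite[Proposition~4.1]{MR4767492} (which in turn goes back to \cite[Section~4]{MR914427}) but now tracking the two scales $\delta<\epsilon$ and the two types of charges $\Sigma_\alpha$, $\Sigma_\beta$. The functions $h_t^{k,n}$ will be defined by the same recursions \eqref{eq:vtilk1}, \eqref{eq:vtiln}, \eqref{eq:vtilkn} but with $\tilde V$ replaced by $h$, all signs $\sigma_i\sigma_j\dot C_s$ replaced by $|\sigma_i\sigma_j|\dot C_s$ in the outer factor, and the exponential damping factor $e^{-\frac12\sum_{i,j}\sigma_i\sigma_j\int_s^t\dot C_r}$ replaced by its modulus; since $\dot C_r\geq 0$ and $\dot C_r(0)\geq \dot C_r(x)$, this modulus is bounded by $e^{\frac12\sum_i\sigma_i^2\int_s^t \dot C_r(0)\,dr}$, which combines with the Wick renormalization constants exactly as in the single-scale case. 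The pointwise bound $|\tilde V_t^{k,n}|\leq h_t^{k,n}$ is then immediate by induction: the base case $n=1$ is an identity, and for $n\geq 2$ one bounds each term of \eqref{eq:vtiln}/\eqref{eq:vtilkn} in absolute value, using the inductive pointwise bounds on the two factors $\tilde V_s^{|I_1|}$, $\tilde V_s^{|I_2|}$ and the triangle inequality over the bipartitions $I_1\dot\cup I_2=[n]$.

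The quantitative estimate $\|h_t^{k,n}\|_n\leq n^{n-2}t^{-1}(C_\Sigma t^{1-\beta/8\pi})^n$ is the heart of the matter and is again proved by induction. First I would record the two scale-dependent elementary bounds used throughout: for $0<t<1$ and any $\sigma\in\Sigma$, $\delta^{-\sigma^2/4\pi}e^{-\frac{\sigma^2}{2}\int_{\delta^2}^t\dot C_s(0)\,ds}\leq C_\Sigma t^{-\sigma^2/8\pi}\leq C_\Sigma t^{-\beta/8\pi}$ (and the analogous bound with $\epsilon$ in place of $\delta$), using $\dot C_s(0)=\frac{1}{4\pi s}e^{-m^2 s}$ and $s<m^{-2}$; this gives the $n=1$ bound. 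For the inductive step with $n\neq 2$, I would bound the $s$-integral using the pointwise heat-kernel bound $\dot C_s(x)\leq C s^{-1}e^{-c|x|^2/s}$, integrate out the tree-variable structure as in \cite{MR914427} — the crucial combinatorial input being Cayley's formula $\sum_{I_1\dot\cup I_2=[n]}|I_1|^{|I_1|-2}|I_2|^{|I_2|-2}|I_1||I_2|\leq n^{n-2}$ (more precisely the standard inequality used there, counting the ways to join two trees by an edge) — and perform the $\int_0^t s^{\,n(1-\beta/8\pi)-2}\,ds$ integral, which converges and produces the right power of $t$ provided $n(1-\beta/8\pi)-1>0$; this fails only for $n=1$ (treated separately) and for $n=2$ at $\beta=4\pi$, where $2(1-\beta/8\pi)-1=1-\beta/8\pi$ which is still positive for $\beta<8\pi$, but the $\sigma_1\sigma_2=-\beta$ configuration makes the $s\to0$ behavior of the integrand worse (the renormalization no longer wins) — this is precisely the neutral $n=2$ term excluded from the statement. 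The weighted version with $e^{\tdist(x_1,\dots,x_n)}$ uses the subadditivity \eqref{e:rho-add} together with $e^{|x_i-x_j|}\dot C_s(x_i-x_j)\leq C s^{-1}e^{-c|x_i-x_j|^2/s}$ valid for $s<1$, exactly as in \cite[Extension~2.3]{MR914427}; I would simply remark that this replacement is routine.

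The one genuinely new bookkeeping issue — and the step I expect to be the main obstacle — is handling the transition at scale $t=\epsilon^2$ encoded in \eqref{eq:vtilkn}: for $t>\epsilon^2$ the kernel $\tilde V_t^{k,n}$ with $k<n$ is built by gluing, via a single $\dot C_s$ edge with $s\in(\epsilon^2,t)$, two lower-order kernels $\tilde V_s^{|I_1\cap[k]|,|I_1|}$ and $\tilde V_s^{|I_2\cap[k]|,|I_2|}$, while the $k=n$ term carries the extra "frozen" contribution $e^{-\frac12\sum\sigma_i\sigma_j\int_{\epsilon^2}^t\dot C_s}\tilde V_{\epsilon^2}^n$. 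One must check that the inductive bound propagates through this gluing with the same constant $C_\Sigma$: the point is that for $s>\epsilon^2$ all the $n=1$ factors hiding inside the lower-order kernels already paid their $\delta$- or $\epsilon$-renormalization at scales below $\epsilon^2$, so they contribute bounded factors (of size $\leq C_\Sigma\epsilon^{-\beta/8\pi}\leq C_\Sigma s^{-\beta/8\pi}$ when they are $\Sigma_\alpha$-type, since $s>\epsilon^2>\delta^2$), and the remaining $s$-integral $\int_{\epsilon^2}^t s^{\,n(1-\beta/8\pi)-2}\,ds\leq \int_0^t s^{\,n(1-\beta/8\pi)-2}\,ds$ is controlled exactly as before. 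The frozen $k=n$ term is bounded using the already-established estimate at $t=\epsilon^2$ and $e^{-\frac12\sum\sigma_i\sigma_j\int_{\epsilon^2}^t\dot C_s}\leq e^{\frac12\sum_i\sigma_i^2\int_{\epsilon^2}^t\dot C_s(0)}\leq C_\Sigma(t/\epsilon^2)^{\sum_i\sigma_i^2/8\pi}$, and one checks this compensates correctly against $\tilde V_{\epsilon^2}^n$'s dependence on $\epsilon$. Symmetry of $h_t^{k,n}$ in the variables and independence of $m,\delta,\epsilon$ follow by inspection of the recursions (the defining formulas for $h$ only involve $|\sigma_i\sigma_j|$, $\dot C$, and the combinatorial sums, all symmetric), completing the induction; independence of $m,\delta,\epsilon$ of the $h_t^{k,n}$ themselves requires replacing $\dot C_s^{m^2}$ and the lower integration limits by their $m\to0$, $\delta,\epsilon\to0$ versions in the definition of $h$, which only increases the right-hand sides and is legitimate since all the estimates above were uniform in these parameters.
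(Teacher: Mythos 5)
There is a genuine gap, and it lies exactly where you flagged the ``main obstacle'' but then argued past too quickly: the small-$n$ base cases of the induction are much more delicate than you allow for, and your proposed bound on the exponential factor is not strong enough to save them.

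First, a computational error that masks the problem: you write $2(1-\beta/8\pi)-1 = 1-\beta/8\pi$, but in fact $2(1-\beta/8\pi)-1 = 1-\beta/4\pi$, which is $\leq 0$ throughout the range $\beta\in[4\pi,6\pi)$ treated here. More generally the convergence condition $n(1-\beta/8\pi)-1>0$ fails not only for $n=1,2$ but also for $n=3$ when $\beta>16\pi/3$ and for $n=4$ as $\beta\to 6\pi$. So the ``automatic'' induction step you describe only kicks in at $n\geq 5$, and $n=2,3,4$ with $k<n$ must each be treated by hand. The paper does precisely this: it first proves the $k=n$ case (where the exponent involves $\max_{\sigma\in\Sigma_\alpha}\sigma^2<4\pi$ rather than $\beta$, so the integral converges already for $n\geq 2$); then it treats $n=2,3,4$ with $k<n$ separately; and only then runs the generic induction for $n\geq 5$.

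Second, your proposed bound on the damping factor is counterproductive. You suggest replacing $e^{-\frac12\sum_{i,j}\sigma_i\sigma_j\int_s^t\dot C_r}$ by $e^{\frac12\sum_i\sigma_i^2\int_s^t\dot C_r(0)\,dr}$, which grows like $(t/s)^{\sum_i\sigma_i^2/8\pi}$. Since $\dot C_r$ is positive-definite and the sum runs over all $i,j$ including the diagonal, the quadratic form is $\geq 0$ and the factor is actually $\leq 1$ — already a stronger bound with no work. The point is that even the optimal bound $\leq 1$ is insufficient for $n=3,4$: the paper instead proves the substantially sharper bound $\exp(-\sum_{i<j}\sigma_i\sigma_j\int_s^t\dot C_r(x_i-x_j))\leq C_\Sigma(t/s)^{\beta/4\pi}$ for $n=3$ by a case analysis on the signs and magnitudes of the $\sigma_i$, and couples it with a dipole-cancellation estimate (the paper's Lemma on $|\sigma_1\sigma_2\dot C_s(x_1-x_2)+\sigma_1\sigma_3\dot C_s(x_1-x_3)|\cdot|1-e^{-\sigma_2\sigma_3\int\dot C_r}|$). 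For $n=4$ one needs an analogous two-dipole lemma to handle the $|I_1|=|I_2|=2$ bipartition.

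Third, even for $n\geq 5$ the induction step is not the crude one you describe: the recursion can produce a factor $\tilde V_s^{0,2}$, which is not bounded by an integrable kernel (the neutral $\beta$-dipole is too singular). The paper therefore splits the bipartition sum into $|I_1|,|I_2|\neq 2$ (where the plain estimate works) and $|I_1|=2$ or $|I_2|=2$ (where one again invokes the dipole-cancellation lemma). Your plan of ``using the inductive pointwise bounds on the two factors and the triangle inequality over bipartitions'' would stall here, because there is no inductive bound on $\tilde V^{0,2}$ of the stated form — this is precisely the case excluded from the statement of the Proposition.

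To summarize: the architecture is correct (recursion, pointwise domination by $h$, tree combinatorics, weighted norms via subadditivity of $\tdist$), but the heart of the argument — the separate treatment of $n\leq 4$, the improved exponential bound via sign/magnitude case analysis, the dipole-cancellation lemmas for the $|I|=2$ pieces, and the isolation of $\tilde V^{0,2}$ inside the induction step — is missing, and the bound you propose as a substitute for it does not close the argument.
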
 

For Theorem~\ref{th:fracapp}, we only care about the situation where $t$ is of order one and $\epsilon$ is small, so we do not state bounds in the region $t\in[\delta^2,\epsilon^2]$ (though these also feature in the proof).

The proof is given in Section~\ref{app:vtbounds}.
It follows quite carefully \cite[Section 4]{MR4767492}. While the proof is quite lengthy, it is essentially  routine. %

\begin{proposition}\label{pr:rpexp}
  For $0<\delta^2<\epsilon^2<t<\min(1,m^{-2})$, $\varphi^1,\varphi^2\in C(\R^2)$ and $\eta\in L_c^\infty(\R^2\times \Sigma,\C)$ satisfying
  (with $C_\Sigma$ is as in Proposition~\ref{pr:vtnorm})
\begin{equation}\label{eq:etabound}
\|\eta\|_{L^\infty(\R^2\times \Sigma)}<\frac{1}{2 eC_\Sigma t^{1-\frac{\beta}{8\pi}}},
\end{equation}
the renormalized potential is given by the series in \eqref{e:Vt-expand}, i.e.,
  \begin{multline} \label{e:Vt-expand2}
    V_t(\varphi^1,\varphi^2|m,\delta,\epsilon)
    =\sum_{n=1}^\infty \frac{1}{n!}
    \sum_{k=0}^n {n \choose k}\int_{(\R^2\times \Sigma_\alpha)^k\times (\R^2\times \Sigma_\beta)^{n-k}}d\xi_1\cdots d\xi_n\, \eta(\xi_1)\cdots \eta(\xi_n)
    \\
    \tilde V_t(\xi_1,\dots,\xi_n)
    e^{i\sum_{j=1}^k \sigma_j \varphi^1(x_j)+i\sum_{j=k+1}^n \sigma_j \varphi^2(x_j)}
    .
  \end{multline}
  The same holds for complex-valued $\varphi^1$ and $\varphi^2$ with bounded imaginary part (with $C_\Sigma$ now depending on the bound on the imaginary part as well).
\end{proposition}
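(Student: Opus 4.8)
\textbf{Proof proposal for Proposition~\ref{pr:rpexp}.}

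The plan is to verify that the series on the right-hand side of \eqref{e:Vt-expand2} both converges absolutely (uniformly in the parameters $m,\delta,\epsilon$) and actually represents $V_t(\varphi^1,\varphi^2|m,\delta,\epsilon)$, by matching the series against the defining expression $-\log \E[e^{-V_{\delta^2,\epsilon^2}(\cdots)}]$ through a Polchinski-type differential equation. First I would establish convergence: the coefficients $\tilde V_t^{k,n}$ are controlled by Proposition~\ref{pr:vtnorm}, which gives $\|h_t^{k,n}\|_n \leq n^{n-2} t^{-1}(C_\Sigma t^{1-\beta/8\pi})^n$ for $n\neq 2$, together with the explicit form of the $n=2$ neutral term (which is singular but, as in \cite[Section~4]{MR4767492}, is still integrable once paired against bounded $\eta$ and a bounded field, since $\beta<6\pi$). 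Bounding $|e^{i\sum\sigma_j\varphi(x_j)}|$ by $e^{n\|\im\varphi\|_\infty}$ (this is $1$ in the real case), using $\sum_{k}\binom{n}{k}\|\eta\|_{L^\infty}^n = (2\|\eta\|_{L^\infty})^n$, $n^{n-2}/n! \leq e^n/n^2$, and the hypothesis \eqref{eq:etabound}, the series is dominated by $\sum_n n^{-2} t^{-1}(2eC_\Sigma\|\eta\|_{L^\infty}t^{1-\beta/8\pi})^n<\infty$ with the stated threshold (in the complex case one absorbs $e^{n\|\im\varphi\|_\infty}$ into a redefined $C_\Sigma$, which is why the threshold then depends on $\|\im\varphi\|_\infty$). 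This also shows the $\delta^2\to 0$ and $m$-uniformity is harmless: the bounds in Proposition~\ref{pr:vtnorm} are uniform in $0<\delta^2<\epsilon^2<t<m^{-2}$.

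The second and main part is to identify the sum of the series with $V_t$. The cleanest route is to show both sides satisfy the same ODE in $t$ with the same initial condition. Denote by $\hat V_t$ the sum of the series in \eqref{e:Vt-expand2}. At $t=\epsilon^2$ (or really: for $t$ decreasing to $\delta^2$ on the first regime, then across $\epsilon^2$) one checks directly from \eqref{eq:vtilk1} and \eqref{eq:vtiln} that $\hat V_{\delta^2} = V_{\delta^2,\epsilon^2}$, matching the microscopic potential; the Wick-ordering counterterms $\delta^{-\sigma^2/4\pi}$, $\epsilon^{-\sigma^2/4\pi}$ are exactly reproduced by the $n=1$ coefficients \eqref{eq:vtilk1} since $\int_{\delta^2}^{\delta^2}\dot C_s(0)\,ds=0$. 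Then one differentiates the series termwise in $t$ — justified by the absolute convergence just established, with the $t$-derivatives of the coefficients again controlled by the recursions \eqref{eq:vtiln}, \eqref{eq:vtilkn} — and verifies that the recursion is precisely the statement that $\hat V_t$ solves the Polchinski equation $\partial_t \hat V_t = \tfrac12 \Delta_{\dot C_t}\hat V_t - \tfrac12(\nabla\hat V_t,\dot C_t\nabla\hat V_t)$ on each regime $t\in(\delta^2,\epsilon^2)$ and $t\in(\epsilon^2,\cdot)$, with the two-scale bookkeeping handled by the split $\Sigma=\Sigma_\alpha\cup\Sigma_\beta$ and the indices $k,n$ (on the regime $t>\epsilon^2$ the field $\varphi^1$-scales are frozen, which forces $k=n$ in the first term of \eqref{eq:vtilkn}). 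Since $V_t$ by its definition $-\log\E[e^{-V_{\delta^2,\epsilon^2}(\Phi_{\delta^2,t}+\varphi^1,\Phi_{\epsilon^2,\epsilon^2\vee t}+\varphi^2)}]$ also solves this equation with the same initial data (a standard computation with the heat-kernel decomposition \eqref{e:app-Phi}, exactly as in \cite[Section~4]{MR4767492} or \cite{MR914427}), uniqueness of solutions to this ODE — which holds because the nonlinearity is locally Lipschitz and we stay in the regime where the series is absolutely convergent — gives $\hat V_t = V_t$.

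For the extension to complex-valued $\varphi^1,\varphi^2$ with bounded imaginary part, I would note that, with $\delta,\epsilon>0$ fixed, the right-hand side of \eqref{e:Vt-expand2} is an absolutely and uniformly convergent series of entire functions of $(\varphi^1,\varphi^2)$ in, say, the $L^\infty$-topology on a strip $\{\|\im\varphi^j\|_\infty<\eta\}$, hence analytic there; and $-\log\E[e^{-V_{\delta^2,\epsilon^2}(\cdots)}]$ is likewise analytic on a (possibly smaller) such strip, since $V_{\delta^2,\epsilon^2}$ is bounded and the expectation stays in the domain of the principal logarithm for $\|\im\varphi^j\|_\infty$ small. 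As the two analytic functions agree on the real slice by the argument above, they agree on the strip by uniqueness of analytic continuation; the threshold on $\|\eta\|_{L^\infty}$ then depends on the bound on the imaginary part through the redefined $C_\Sigma$. The main obstacle I anticipate is purely bookkeeping: carefully tracking the two scales $\delta<\epsilon$ and the two families $\Sigma_\alpha,\Sigma_\beta$ through the recursion so that the termwise $t$-derivative of the series matches the Polchinski equation across the breakpoint $t=\epsilon^2$ — the singular neutral $n=2$, $\sigma_1\sigma_2=-\beta$ term is excluded from the clean bound in Proposition~\ref{pr:vtnorm} and must be handled by hand (using its explicit form, as in \cite[(4.60)]{MR4767492}), and one must check its contribution to the series is still finite against bounded $\eta$ and field, which uses $\beta<6\pi$ and the local integrability of $|x|^{-\beta/2\pi}$-type singularities in two dimensions exactly as in the single-scale case.
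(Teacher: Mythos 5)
Your convergence estimate is essentially the paper's: you use the kernel bounds from Proposition~\ref{pr:vtnorm}, the combinatorial bounds $\sum_k \binom{n}{k} = 2^n$ and $n^{n-2}/n! \le e^n/n^2$, and the threshold \eqref{eq:etabound}, and you correctly explain that for complex $\varphi$ with bounded imaginary part the factor $e^{n\sqrt{6\pi}\|\im\varphi\|_\infty}$ shrinks the admissible ball of $\|\eta\|_{L^\infty}$. That part matches.

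Your identification of the series with $V_t$ takes a genuinely different route, and it is worth contrasting with the paper's. You propose to show that the series $\hat V_t$ and the functional $V_t = -\log\E[e^{-V_{\delta^2,\epsilon^2}(\cdots)}]$ both solve the nonlinear Polchinski equation $\partial_t V_t = \tfrac12\Delta_{\dot C_t}V_t - \tfrac12(\nabla V_t,\dot C_t\nabla V_t)$ with the same data at $t=\delta^2$, and then to conclude by uniqueness of that functional PDE. The paper instead introduces the auxiliary coupling $z$, observes via Lemma~\ref{le:cumuexp} that $f_t(z)=\log\E[e^{-zV_{\delta^2,\epsilon^2}(\cdots)}]$ is automatically analytic near $z=0$ with cumulant coefficients (because the underlying random variable is deterministically bounded), identifies those cumulants with $\tilde V_t^{k,n}$ by a separate result (Proposition~\ref{prop:cumulants}, proved via Lemmas~\ref{le:utdif1}--\ref{le:utvt}), and then uses the same radius-of-convergence estimate you have to analytically continue the identity from small $z$ to $z=1$. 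The crucial structural difference is that the paper's uniqueness argument lives entirely at the level of the kernels $\tilde V_t^{k,n}$, where the evolution equation (items (iv) in Lemmas~\ref{le:utdif1} and \ref{le:utdif2}) reduces, for each fixed $n$, to a first-order \emph{linear} ODE in $t$ once the lower-order kernels are plugged in, so that uniqueness is elementary by Gronwall on the scalar level.

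That is the point where your proposal currently has a gap rather than just a stylistic difference. You invoke ``uniqueness of the Polchinski PDE, which holds because the nonlinearity is locally Lipschitz,'' but you never say in which Banach space of potentials the Lipschitz estimate holds, nor how you would control $\Delta_{\dot C_t}V_t$ (which involves the full Hessian of the potential against $\dot C_t$) in a Gronwall argument at the level of the resummed functional $V_t$. For fixed $\delta,\epsilon>0$ this can be done, but it requires setting up a function space for the potentials and establishing a priori bounds on gradients and Hessians that are not simply proportional to the bounds in Proposition~\ref{pr:vtnorm} (those control kernels, not the quadratic form $(\nabla V,\dot C_t\nabla V)$). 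In addition, showing that $\hat V_t$ solves the nonlinear PDE requires justifying termwise $t$-differentiation of the series \emph{and} resumming the bilinear convolution of all lower-order kernels into $(\nabla\hat V_t,\dot C_t\nabla\hat V_t)$; the absolute convergence you established is a uniform bound in $t$, but the termwise differentiation requires uniform convergence of the $t$-derivative series as well, which is an additional estimate. The paper never needs any of this because it never resums; it only compares series coefficients. So your route is not wrong in spirit --- it is close to the original \cite{MR914427} philosophy --- but as written it leaves the hard step unargued, and you should be aware that the paper deliberately avoids exactly that step by working at the kernel/cumulant level and routing the identification through analyticity in the auxiliary parameter $z$.
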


The following proposition states that the kernels $\tilde V_t$ are equivalent to cumulants. This
observation is a variant of the result of \cite{MR914427}.
For completeness, it is proved in Section~\ref{sec:cumulants}.

\begin{proposition} \label{prop:cumulants}
  The solution to the recursion for $\tilde V_t$ is given by cumulants:
  if $\delta^2\leq t \leq \epsilon^2$,
  \begin{align} \label{eq:tVtn}
    \tilde V_t(\xi_1,\dots, \xi_n|m,\delta,\epsilon)
  &=     (-1)^{n-1} \avg{\wick{e^{i\sigma_1\Phi_{\delta^2,t}(x_1)}}_\delta;\cdots ;\wick{e^{i\sigma_n\Phi_{\delta^2,t}(x_n)}}_\delta}^\mathsf T
    ,
\end{align}
and for $t\geq \epsilon^2$ as well as $\sigma_1,\dots,\sigma_k\in \Sigma_\alpha$ and $\sigma_{k+1},\dots,\sigma_n\in \Sigma_\beta$ (with $0\leq k\leq n$),
\begin{align}\label{eq:tVtkn}
\tilde V_t(\xi_1,\dots, \xi_n|m,\delta,\epsilon) 
  &=
  (-1)^{n-1} 
    \langle \wick{e^{i\sigma_1\Phi_{\delta^2,t}(x_1)}}_\delta;\cdots ;\wick{e^{i\sigma_k\Phi_{\delta^2,t}(x_k)}}_\delta; \nnb
 & \qquad\qquad\qquad \wick{e^{i\sigma_{k+1}\Phi_{\epsilon^2,t}(x_{k+1})}}_\epsilon; \cdots ; \wick{e^{i\sigma_{n}\Phi_{\epsilon^2,t}(x_{n})}}_\epsilon\rangle^\mathsf T.
\end{align}
\end{proposition}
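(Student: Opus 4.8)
\textbf{Proof plan for Proposition~\ref{prop:cumulants}.}
The plan is to verify that the cumulants on the right-hand sides of \eqref{eq:tVtn} and \eqref{eq:tVtkn} satisfy the same recursions \eqref{eq:vtilk1}--\eqref{eq:vtilkn} (and the same initial/boundary data) as the kernels $\tilde V_t$, and then invoke uniqueness of solutions to those recursions. Since $\tilde V_t$ is defined precisely as the unique solution of that recursion, this suffices.

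First I would treat $n=1$. By an elementary Gaussian computation,
$\avg{\wick{e^{i\sigma\Phi_{\delta^2,t}(x)}}_\delta} = \delta^{-\sigma^2/4\pi} e^{-\frac12 \sigma^2 \var(\Phi_{\delta^2,t}(0))} = e^{-\frac{\sigma^2}{2}(\int_{\delta^2}^t \dot C_s(0)\,ds + \frac{1}{4\pi}\log\delta^2)}$ (using $\var(\Phi_{\delta^2,t}(0)) = \int_{\delta^2}^t \dot C_s(0)\,ds$), which matches the first term of \eqref{eq:vtilk1}, and similarly for the $\Phi_{\epsilon^2,t}$ case when $t>\epsilon^2$, giving the second term. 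This handles the base case.

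Next, for $n\geq 2$ in the regime $\delta^2\leq t\leq \epsilon^2$: I would differentiate the cumulant $\avg{\wick{e^{i\sigma_1\Phi_{\delta^2,t}(x_1)}}_\delta;\cdots}^{\sf T}$ in $t$. Since $\Phi_{\delta^2,t}$ has independent increments in $t$ with $\partial_t \operatorname{Cov}(\Phi_{\delta^2,t}(x),\Phi_{\delta^2,t}(y)) = \dot C_t(x-y)$, the standard cumulant/Polchinski computation (as in \cite[Section~4]{MR4767492} or \cite{MR914427}) gives that $(-1)^{n-1}$ times the cumulant satisfies exactly \eqref{eq:vtiln}: the $t$-derivative produces a sum over ordered pairs $(i,j)$ with a factor $\sigma_i\sigma_j \dot C_t(x_i-x_j)$ times the product of the cumulants over the two blocks of a bipartition of $[n]$, and the exponential prefactor $e^{-\frac12\sum_{i,j}\sigma_i\sigma_j\int_s^t \dot C_r(x_i-x_j)\,dr}$ arises upon integrating this ODE in $t$. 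One also checks the boundary condition at $t=\delta^2$: the cumulant of order $n\geq 2$ vanishes there because $\Phi_{\delta^2,\delta^2}=0$, matching $\tilde V_{\delta^2}^n=0$. For the mixed regime $t\geq \epsilon^2$ and $\sigma_1,\dots,\sigma_k\in\Sigma_\alpha$, $\sigma_{k+1},\dots,\sigma_n\in\Sigma_\beta$: here the $t$-evolution only affects the increment $\Phi_{\epsilon^2,t}$ common to all factors (the $\Phi_{\delta^2,\epsilon^2}$ parts are frozen), so the same differentiation argument yields \eqref{eq:vtilkn}, where the $\delta_{n,k}$ term comes from the boundary value at $t=\epsilon^2$ — when $k=n$ all factors use $\Phi_{\delta^2,t}$ and the value at $\epsilon^2$ is $\tilde V_{\epsilon^2}^n$ by the previous regime, while when $k<n$ there is a boundary contribution that one checks vanishes (the $\beta$-factors decouple at $t=\epsilon^2$, forcing the cumulant to split, but mixing $\Sigma_\alpha$ and $\Sigma_\beta$ slots across blocks in a connected cumulant is impossible at the boundary scale).

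The main obstacle I anticipate is bookkeeping: carefully matching the combinatorial structure of the derivative of a connected cumulant (sum over pairs $i\in I_1$, $j\in I_2$ for bipartitions $I_1\dot\cup I_2=[n]$) with the explicit form of the recursion, and correctly tracking the $(-1)^{n-1}$ signs and the interplay of the two scales $\delta,\epsilon$ in the mixed case — in particular verifying that the $\delta_{n,k}$ boundary term in \eqref{eq:vtilkn} is precisely what the cumulant produces at $t=\epsilon^2$, and that no spurious $k<n$ boundary terms appear. This is standard (it is the point-splitting/Polchinski-equation argument for cumulants), but the two-scale structure requires care. Once the recursion and boundary data are matched, uniqueness of the solution (immediate by induction on $n$, since each $\tilde V_t^{k,n}$ is determined by an explicit integral of lower-order kernels) finishes the proof.
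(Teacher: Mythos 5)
Your proposal is correct and follows essentially the same route as the paper: the paper characterizes the cumulants $U_t^{k,n}$ by a differential-in-$t$ recursion with the given boundary data (derived by differentiating the connected-graph representation of Gaussian exponential cumulants, Lemma~\ref{le:gexpcumu}), proves uniqueness of its solution by induction on $n$, and then checks that $(-1)^{n-1}\tilde V_t^{k,n}$ satisfies the same recursion and boundary data. Your observation that the $k<n$ boundary terms at $t=\epsilon^2$ vanish because $\Phi_{\epsilon^2,\epsilon^2}=0$ turns the $\Sigma_\beta$ slots into constants (and higher-order cumulants with constant entries vanish) is exactly the argument in the paper's Lemma~\ref{le:utdif2}, item~(ii).
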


\subsection{Proof of Proposition~\ref{pr:rpexp}}

We begin
with the expansion for fixed $\epsilon,\delta,m>0$.

\begin{lemma}\label{le:cumuexp}
For $t\geq\delta^2$ and $\varphi^1,\varphi^2\in C(\R^2)$, the function
\begin{equation}
  z\mapsto f_t(z)
  =     \log \E\qb{e^{-zV_{\delta^2,\epsilon^2}(\varphi^1+\Phi_{\delta^2,t},\varphi^2+\Phi_{\epsilon^2,\epsilon^2\vee t})}}
\end{equation}
is analytic in some disk containing the origin (which possibly depends on $\delta,\epsilon,t$).
In this disk, for $\delta^2\leq t\leq \epsilon^2$, it has the series expansion
\begin{align}
 f_t(z)&=
-z\int_{\R^2\times \Sigma_\beta}d\xi\, \eta(\xi)\wick{e^{i\sigma\varphi^2(x)}}_{\epsilon}\nnb
&\quad + \sum_{n=1}^\infty \frac{z^n}{n!}(-1)^n \int_{(\R^2\times \Sigma_\alpha)^n}d\xi_1\cdots d\xi_n\, \eta(\xi_1)\cdots \eta(\xi_n)e^{i\sum_{j=1}^n \sigma_j \varphi^1(x_j)}\nnb
&\qquad \qquad \qquad \qquad  \times \avg{\wick{e^{i\sigma_1\Phi_{\delta^2,t}(x_1)}}_\delta;\cdots ;\wick{e^{i\sigma_n\Phi_{\delta^2,t} (x_n)}}_\delta}^\mathsf T,
\end{align}
while for $t\geq \epsilon^2$, the series expansion is given by
\begin{align}
  f_t(z)&=\sum_{n=1}^\infty \frac{z^n}{n!}(-1)^n\sum_{k=0}^n {n \choose k}\int_{(\R^2\times \Sigma_\alpha)^k\times (\R^2\times \Sigma_\beta)^{n-k}}d\xi_1\cdots d\xi_n\, \eta(\xi_1)\cdots \eta(\xi_n) \nnb
          &\qquad \times e^{i\sum_{j=1}^k \sigma_j \varphi^1(x_j)+\sum_{j=k+1}^n \sigma_j \varphi^2(x_j)}\nnb
  &\qquad \times\avg{\wick{e^{i\sigma_1\Phi_{\delta^2,t}(x_1)}}_\delta;\cdots ;\wick{e^{i\sigma_k\Phi_{\delta^2,t}(x_k)}}_\delta; 
\wick{e^{i\sigma_{k+1}\Phi_{\epsilon^2,t}(x_{k+1})}}_\epsilon; \cdots ; \wick{e^{i\sigma_{n}\Phi_{\epsilon^2,t}(x_{n})}}_\epsilon}^\mathsf T
    .
\end{align}
\end{lemma}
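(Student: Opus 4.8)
\textbf{Plan of proof for Lemma~\ref{le:cumuexp}.}
The strategy is to fix $\delta,\epsilon,m>0$ (so all objects are genuinely integrable and all Gaussian fields are smooth), and to obtain the claimed series as the generating function of cumulants of the bounded random variable $V_{\delta^2,\epsilon^2}(\varphi^1+\Phi_{\delta^2,t},\varphi^2+\Phi_{\epsilon^2,\epsilon^2\vee t})$. First I would verify analyticity near $z=0$: since $\eta$ has compact support, $\Phi_{\delta^2,t}$ and $\Phi_{\epsilon^2,\epsilon^2\vee t}$ are smooth Gaussian fields (realized as in Section~\ref{sec:Gauss-decomp}), and the Wick-ordered exponentials $\wick{e^{i\sigma\Phi_{\delta^2,t}(x)}}_\delta$ are bounded pointwise (uniformly on $\mathrm{supp}(\eta)$) by a deterministic constant depending on $\delta,\epsilon,t$. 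Hence $|V_{\delta^2,\epsilon^2}|\leq K(\delta,\epsilon,t,\eta)$ almost surely, so $z\mapsto \E[e^{-zV_{\delta^2,\epsilon^2}}]$ is entire and nonzero in a disk $|z|<1/(2K)$, making $f_t(z)=\log\E[e^{-zV_{\delta^2,\epsilon^2}}]$ analytic there. This gives the existence of the series expansion $f_t(z)=\sum_{n\geq 1}\frac{(-z)^n}{n!}\kappa_n$ where $\kappa_n$ is the $n$-th cumulant of $V_{\delta^2,\epsilon^2}$.

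Next I would expand $\kappa_n$ using multilinearity of cumulants. For $\delta^2\leq t\leq \epsilon^2$ we have $\Phi_{\epsilon^2,\epsilon^2\vee t}=\Phi_{\epsilon^2,\epsilon^2}=0$, so $V_{\delta^2,\epsilon^2}$ splits as a deterministic constant $\int_{\R^2\times\Sigma_\beta}d\xi\,\eta(\xi)\wick{e^{i\sigma\varphi^2(x)}}_\epsilon$ (which only contributes to $\kappa_1$, i.e. the $-z$ term) plus the random part $\int_{\R^2\times\Sigma_\alpha}d\xi\,\eta(\xi)\wick{e^{i\sigma(\varphi^1(x)+\Phi_{\delta^2,t}(x))}}_\delta$. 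Writing $\wick{e^{i\sigma(\varphi^1(x)+\Phi_{\delta^2,t}(x))}}_\delta = e^{i\sigma\varphi^1(x)}\wick{e^{i\sigma\Phi_{\delta^2,t}(x)}}_\delta$, multilinearity of the $n$-th cumulant together with Fubini (justified by compact support of $\eta$, boundedness of the Wick exponentials, and smoothness of $\Phi$) yields
\begin{equation}
\kappa_n = \int_{(\R^2\times\Sigma_\alpha)^n}d\xi_1\cdots d\xi_n\,\prod_j\eta(\xi_j)\,e^{i\sum_j\sigma_j\varphi^1(x_j)}\,\avg{\wick{e^{i\sigma_1\Phi_{\delta^2,t}(x_1)}}_\delta;\cdots;\wick{e^{i\sigma_n\Phi_{\delta^2,t}(x_n)}}_\delta}^{\mathsf T},
\end{equation}
which is exactly the claimed formula. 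For $t\geq\epsilon^2$ the field is $\Phi_{\delta^2,t}$ resp.\ $\Phi_{\epsilon^2,t}$ on the two pieces, and $V_{\delta^2,\epsilon^2}$ is a sum $A+B$ with $A$ the $\Sigma_\alpha$-part (involving $\Phi_{\delta^2,t}$) and $B$ the $\Sigma_\beta$-part (involving $\Phi_{\epsilon^2,t}$). Expanding $\kappa_n(A+B)=\sum_{k=0}^n\binom{n}{k}\kappa(\underbrace{A;\dots;A}_k;\underbrace{B;\dots;B}_{n-k})$ by multilinearity and then expanding each $A$ and $B$ in the integral representation exactly as above gives the second displayed formula, with the two mollification scales $\delta$ on the first $k$ slots and $\epsilon$ on the remaining $n-k$ slots.

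Finally, to pass from $f_t$ to $V_t = -f_t(1)$, I would invoke Proposition~\ref{pr:vtnorm} (the uniform kernel bounds): under the hypothesis \eqref{eq:etabound}, the series $\sum_n\frac{1}{n!}\sum_k\binom{n}{k}\int\prod\eta\,\tilde V_t^{k,n}\,e^{i\sum\sigma\varphi}$ converges absolutely and uniformly in $\delta,\epsilon,m$, so in particular $z=1$ lies inside the disk of convergence of $f_t$ (after possibly noting that the disk of analyticity of $f_t$ from the first step can be enlarged using the a priori bounds, or simply that the two analytic functions agree on a neighborhood of $0$ and both extend to $|z|\leq 1$). Combining with Proposition~\ref{prop:cumulants}, which identifies $(-1)^{n-1}$ times the cumulant appearing in $\kappa_n$ with $\tilde V_t$, yields \eqref{e:Vt-expand2}. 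The extension to complex $\varphi^1,\varphi^2$ with bounded imaginary part is immediate since a bound $\|\im\varphi^i\|_{L^\infty}\leq h$ only inflates $|e^{i\sigma\varphi^i(x)}|$ by $e^{|\sigma|h}$, which is absorbed into $C_\Sigma$. The main obstacle I anticipate is purely bookkeeping: making the multilinear-cumulant-plus-Fubini interchange fully rigorous and matching the combinatorial factors $\binom{n}{k}$ and the scale assignment $\delta$ vs.\ $\epsilon$ correctly against the recursive definition of $\tilde V_t^{k,n}$; there is no analytic difficulty once the uniform bounds of Proposition~\ref{pr:vtnorm} are in hand.
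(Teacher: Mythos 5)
Your proof of the lemma is correct and follows essentially the same route as the paper: boundedness of $V_{\delta^2,\epsilon^2}$ for fixed $\delta,\epsilon>0$ makes $\E[e^{-zV_{\delta^2,\epsilon^2}}]$ entire and nonvanishing near $z=0$, hence $f_t$ is analytic on a small disk and equals (up to sign) the cumulant generating function; multilinearity of cumulants plus Fubini, together with the observation that cumulants of order $\geq 2$ involving a constant vanish (since $\Phi_{\epsilon^2,\epsilon^2\vee t}=0$ when $t\leq\epsilon^2$) in the first case, and the sum over subsets of cardinality $k$ producing $\binom{n}{k}$ in the second, give the two displayed expansions. One remark: your final paragraph (passing to $z=1$, invoking Proposition~\ref{pr:vtnorm}, and treating complex $\varphi^i$) belongs to the proof of Proposition~\ref{pr:rpexp}, not to this lemma, which only asserts the series expansion on some possibly small disk; in particular the phrase ``so $z=1$ lies inside the disk of convergence of $f_t$'' conflates absolute convergence of the power series with single-valuedness of $\log\E[e^{-zV}]$ out to $z=1$, and the paper instead simply identifies the coefficients near $0$ and evaluates the resulting series at $z=1$, which defines $V_t$.
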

\begin{proof}
  Let us begin with the claims for $\delta^2\leq t\leq \epsilon^2$. Here 
  $V_{\delta^2,\epsilon^2}(\cdot,\cdot)$
  is deterministically bounded so that the function 
\begin{equation}
  z\mapsto
  \E\qb{e^{-zV_{\delta^2,\epsilon^2}(\varphi^1+\Phi_{\delta^2,t},\varphi^2)}}
\end{equation}
is an entire function.
This function takes the value $1$ at the origin $z=0$,
so there exists some disk containing the origin in which it does not vanish. This means that $f_t$, as the logarithm of this function, is also analytic in this disk.
In this disk of convergence, $f_t$ is of course (up to the minus sign) the cumulant generating function of this random variable, so we have 
\begin{equation}
  f_t(z)
  =\sum_{n=1}^\infty \frac{z^n}{n!}(-1)^n \avg{V_{\delta^2,\epsilon^2}(\varphi^1+\Phi_{\delta^2,t},\varphi^2);\cdots;V_{\delta^2,\epsilon^2}(\varphi^1+\Phi_{\delta^2,t},\varphi^2)}^\mathsf T
  .
\end{equation}
Using multilinearity of the cumulant, Fubini's theorem, and the fact that cumulants of order greater than one vanish if one of the entries is a constant, we find the desired series expansion in the case $\delta^2\leq t\leq \epsilon^2$.

The proof of analyticity in the second case is identical. Also the expansion is derived in a similar manner, but now with the differences that when we use multilinearity and Fubini, we have to sum over all possible subsets $I\subset \{1,\dots,n\}$ of indices $i$ for which $\sigma_i\in \Sigma_\alpha$ (and $\sigma_j\in \Sigma_\beta$ for $j\notin I$). By renaming our integration variables and using the fact that cumulants are permutation invariant, we see that it is only the cardinality of the set $I$ that matters. Since the number of subsets $I$ of cardinality $k$ is ${n\choose k}$, this argument produces the claimed series expansion.
\end{proof}

Using the bounds on $\tilde V_t^{k,n}$ from Proposition~\ref{pr:vtnorm} together with the identification with cumulants
from Proposition~\ref{prop:cumulants},
we can now argue that our cumulant expansion actually converges in a larger domain and we can use it to control the renormalized potential up to a scale $t$ which is of order one (depending only on $\Sigma$ and $\eta$).

\begin{proof}[Proof of Proposition~\ref{pr:rpexp}]
Consider the function $f_t$ from Lemma \ref{le:cumuexp}, observe
$V_t(\varphi^1,\varphi^2|m,\delta,\epsilon)=-f_t(1)$, so it is sufficient for us to show that $-f_t(1)$ is given by this series expansion.

First of all, we note since  that since $e^{f_t(z)}$ is an entire function, %
using that cumulants are equivalent to $\tilde V_t$ by Proposition~\ref{prop:cumulants},
for $z$ in any neighborhood of the origin where the series converges (absolutely), we have 
\begin{align}
-f_t(z)&=\sum_{n=1}^\infty \frac{z^n}{n!}\sum_{k=0}^n {n\choose k}\int_{(\R^2\times \Sigma_\alpha)^k\times (\R^2\times \Sigma_\beta)^{n-k}}d\xi_1\cdots d\xi_n \eta(\xi_1)\cdots \eta(\xi_n)\nnb
&\qquad\qquad\qquad\qquad \times e^{i\sum_{j=1}^k \sigma_j\varphi^1(x_j)+\sum_{j=k+1}^n \sigma_j \varphi^2(x_j)}\tilde V_t^{k,n}(\xi_1,\dots,\xi_n|m,\delta,\epsilon).
\end{align}
Our task is to show that under the restriction \eqref{eq:etabound}, this series converges absolutely for $z=1$.
Note that the terms  with $n \leq 2$ do not affect convergence of this series, so for the sum of the $n\geq 3$ terms, we have by Proposition \ref{pr:vtnorm} and the inequality $\frac{n^n}{n!}\leq e^n$, the bound 
\begin{align}
&\sum_{n=3}^\infty \frac{|z|^n}{n!}\sum_{k=0}^n {n\choose k}\|\eta\|_{L^1(\R^2\times \Sigma)}\|\eta\|_{L^\infty(\R^2\times \Sigma)}^{n-1} \|\tilde V_t^{k,n}\|_n\nnb
&\leq C_\Sigma  \|\eta\|_{L^1(\R^2\times \Sigma)} t^{-\frac{\beta}{8\pi}} \sum_{n=3}^\infty \frac{|z|^n}{n!}\left(C_\Sigma t^{1-\frac{\beta}{8\pi}}\|\eta\|_{L^\infty(\R^2\times \Sigma)}\right)^{n-1} n^{n-2}\sum_{k=0}^n 1\nnb
&\leq  2|z|C_\Sigma \|\eta\|_{L^1(\R^2\times \Sigma)}t^{-\frac{\beta}{8\pi}}\sum_{n=3}^\infty \frac{1}{n^2} \left(2 e |z| C_\Sigma t^{1-\frac{\beta}{8\pi}}\|\eta\|_{L^\infty(\R^2\times \Sigma)}\right)^{n-1}.
\end{align}
This series converges for 
\begin{align}
|z|\leq \frac{1}{2e C_\Sigma t^{1-\frac{\beta}{8\pi}}\|\eta\|_{L^\infty(\R^2\times \Sigma)}},
\end{align}
so under our assumption \eqref{eq:etabound}, the radius of convergence is greater than one. In particular, we have a convergent series expansion at $z=1$. This concludes the proof.
\end{proof}

\subsection{Cumulants and the recursion -- proof of Proposition~\ref{prop:cumulants}}
\label{sec:cumulants}

The proposition is essentially the equivalence of the Polchinski equation
in the representation \eqref{e:Vt-expand} with the Mayer expansion, observed in \cite{MR914427} in particular  \cite[Lemma 3.3]{MR914427}.
For completeness and because our setting is more general, we include a detailed proof.

\begin{lemma}\label{le:gexpcumu}
Let $X_1,\dots,X_n$ be centered and jointly Gaussian random variables. We then have 
\begin{align}
\avg{e^{iX_1};\cdots;e^{iX_n}}^\mathsf T=\sum_{G\in \mathcal C_n}\prod_{\{i,j\}\in G}(e^{-\avg{X_iX_j}}-1)\prod_{i=1}^n e^{-\frac{1}{2}\avg{X_i^2}},
\end{align}
where $\mathcal C_n$ denotes the set of connected graphs with vertex set $\{1,\dots,n\}$ and $\{i,j\}\in G$ indicates that $\{i,j\}$ is an edge in the graph $G$. 
\end{lemma}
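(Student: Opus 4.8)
The statement is a standard identity for joint cumulants of complex exponentials of Gaussians, and the plan is to reduce it to two well-known facts: first, the moment-to-cumulant (Mayer-type) inversion formula expressing a truncated correlation as a sum over connected graphs of products of (untruncated, hence here mixed) pair contributions; second, the explicit Gaussian computation of $\avg{\prod_{i\in B} e^{iX_i}}$ for any subset $B$. First I would recall that for any random variables, the joint cumulant $\avg{Y_1;\cdots;Y_n}^{\sf T}$ equals $\sum_{\pi \in \frP_n} (-1)^{|\pi|-1}(|\pi|-1)! \prod_{B\in\pi} \avg{\prod_{i\in B} Y_i}$, where $\frP_n$ is the set of partitions of $\{1,\dots,n\}$. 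Applying this with $Y_i = e^{iX_i}$ and using that $\avg{\prod_{i\in B} e^{iX_i}} = \exp(-\frac12 \sum_{i,j\in B}\avg{X_iX_j}) = \prod_{i\in B} e^{-\frac12\avg{X_i^2}} \cdot \prod_{\{i,j\}\subset B} e^{-\avg{X_iX_j}}$, I can pull out the factor $\prod_{i=1}^n e^{-\frac12\avg{X_i^2}}$ (which is common to every term since the partition blocks cover $\{1,\dots,n\}$) and reduce the claim to
\begin{equation}
  \sum_{\pi\in\frP_n}(-1)^{|\pi|-1}(|\pi|-1)!\prod_{B\in\pi}\prod_{\{i,j\}\subset B} e^{-\avg{X_iX_j}} = \sum_{G\in\mathcal C_n}\prod_{\{i,j\}\in G}(e^{-\avg{X_iX_j}}-1).
\end{equation}

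Next I would prove this last purely combinatorial/algebraic identity, which holds for arbitrary real numbers $c_{ij}=c_{ji}$ in place of $e^{-\avg{X_iX_j}}$, i.e.\ the general fact that the cumulant of a product, when each ``moment'' $\avg{\prod_{i\in B}Y_i}$ is replaced by $\prod_{\{i,j\}\subset B} c_{ij}$, equals the sum over connected graphs of $\prod_{\{i,j\}\in G}(c_{ij}-1)$. The cleanest route is to expand $\prod_{\{i,j\}\subset B} c_{ij} = \prod_{\{i,j\}\subset B}\big((c_{ij}-1)+1\big) = \sum_{H\subset K_B}\prod_{\{i,j\}\in H}(c_{ij}-1)$, where $H$ ranges over all subgraphs of the complete graph $K_B$ on $B$. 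Substituting this into the left-hand side, interchanging sums, and grouping by the graph $H'$ obtained as the union over blocks, one sees that each $H'$ on $\{1,\dots,n\}$ appears with a coefficient that is a sum over partitions $\pi$ refining the connected-component partition of $H'$, weighted by $(-1)^{|\pi|-1}(|\pi|-1)!$. A classical Möbius-function computation on the partition lattice (or equivalently the fact that $\sum_{\pi}(-1)^{|\pi|-1}(|\pi|-1)! = 0$ unless the sum is over the single-block partition, applied within the quotient where components of $H'$ are atoms) shows this coefficient vanishes unless $H'$ is connected, in which case it is $1$. This is exactly the right-hand side.

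The only genuinely delicate point is the combinatorial cancellation in the second paragraph, i.e.\ showing that the partition-lattice weights $(-1)^{|\pi|-1}(|\pi|-1)!$ telescope to leave only connected graphs; I expect that to be the main obstacle, though it is entirely standard (it is the statement that the exponential formula / linked-cluster theorem is consistent, and can also be derived by differentiating $\log$ of a generating function). An alternative that sidesteps the direct lattice computation is induction on $n$: assume the identity for all smaller index sets, use the recursion $\avg{Y_1;\cdots;Y_n}^{\sf T} = \avg{\prod_i Y_i} - \sum_{\pi\neq\{[n]\},\, 1\in B_1}\prod_{B\in\pi}\avg{Y;B}^{\sf T}$ relating the top cumulant to the full moment and lower cumulants, and on the graph side peel off the connected component of vertex $1$; matching the two recursions gives the claim. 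Either way, once the algebraic identity is in hand, the proof is complete modulo the elementary Gaussian formula for $\avg{\prod_{i\in B} e^{iX_i}}$, which follows from the characteristic-function formula $\avg{e^{i\sum_i t_i X_i}} = e^{-\frac12 \sum_{i,j} t_it_j\avg{X_iX_j}}$ specialized to $t_i\in\{0,1\}$.
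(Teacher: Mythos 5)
Your proposal is correct and follows essentially the same route as the paper: both start from the moment-to-cumulant inversion formula, substitute the Gaussian moment $\avg{\prod_{i\in B}e^{iX_i}}=\prod_{i\in B}e^{-\frac12\avg{X_i^2}}\prod_{\{j,k\}\subset B}e^{-\avg{X_jX_k}}$, expand each pair factor as $(e^{-\avg{X_jX_k}}-1)+1$ to get a sum over graphs, decompose graphs into connected components, and invoke the partition-lattice Möbius identity $\sum_{\pi\geq\tau}(-1)^{|\pi|-1}(|\pi|-1)!=\delta_{\tau,\{[n]\}}$. The paper packages the Möbius step as a standalone preliminary claim (``if moments factor as $\sum_{\pi}\prod_{B\in\pi}\kappa_B$, then $\kappa_{[n]}$ is the cumulant'') before identifying the $\kappa_B$ via the graph expansion, whereas you substitute first and factor out the variance prefactor to reduce to a cleaner purely algebraic identity in the $c_{ij}$, but the content and the key cancellation are identical.
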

\begin{proof}
Let us begin with a general statement about the relationship of cumulants and moments. Let us assume that we are given (for simplicity, bounded) random variables $Y_1,\dots,Y_n$. We claim that if there exist numbers $(\kappa_I)_{I\subset[n]}$ which satisfy for each $I\subset [n]$
\begin{equation}
\avg{\prod_{i\in I}Y_i}=\sum_{\pi\in \frP_I}\prod_{B\in \pi}\kappa_B,
\end{equation}
where $\frP_I$ denotes the set of partitions of $I$, then 
\begin{equation}\label{eq:kappacumu}
\kappa_{[n]}=\avg{Y_{1};\cdots; Y_{n}}^\mathsf T.
\end{equation}
To see why this is true, note that using the definition of cumulants as a sum over partitions, and our assumption about expressing moments in terms of the numbers $\kappa_B$, we have 
\begin{align}
\avg{Y_1;\cdots;Y_n}^\mathsf T=\sum_{\pi\in \frP_n}(-1)^{|\pi|-1}(|\pi|-1)!\prod_{B\in \pi}\sum_{\tau_B\in P_B}\prod_{C\in \tau_B}\kappa_{C}.
\end{align}
Next we note that we can join all of the partitions $\tau_B$ into a single partition of $[n]$ and this partition, let us call it $\tau$, is a refinement of $\pi$ in the sense that each block of $\tau$ is a subset of a block of $\pi$. If we write $\tau\leq \pi$ for this relation, then we have in fact
\begin{align}
\avg{Y_1;\cdots;Y_n}^\mathsf T=\sum_{\tau\in \frP_n}\prod_{C\in \tau}\kappa_C\sum_{\pi\in \frP_n}\mathbf 1\{\tau\leq \pi\}(-1)^{|\pi|-1}(|\pi|-1)!.
\end{align}
From \cite[Section 7, Example 1]{MR174487}, we know that 
\begin{align}
\sum_{\pi\in \frP_n}\mathbf 1\{\tau\leq \pi\}(-1)^{|\pi|-1}(|\pi|-1)!=\delta_{\tau,\{[n]\}},
\end{align}
so we see that 
\begin{align}
\kappa_{[n]}=\avg{Y_1;\cdots ;Y_n}^\mathsf T
\end{align}
as claimed.

To apply this to our setting, we take $Y_j=e^{iX_j}$ and write for each $I\subset [n]$
\begin{align}
\avg{\prod_{j\in I}e^{iX_j}}=e^{-\frac{1}{2}\avg{(\sum_{j\in I}X_j)^2}}&=\prod_{j\in I} e^{-\frac{1}{2}\avg{X_j^2}}\prod_{\substack{j,k\in I:\\ j<k}}e^{-\avg{X_jX_k}}\nnb
&=\prod_{j\in I} e^{-\frac{1}{2}\avg{X_j^2}}\prod_{\substack{j,k\in I:\\ j<k}}(e^{-\avg{X_jX_k}}-1+1)\nnb
&=\prod_{j\in I} e^{-\frac{1}{2}\avg{X_j^2}}\sum_{G\subset \{(j,k)\in I^2: j<k\}}\prod_{(j,k)\in G}(e^{-\avg{X_jX_k}}-1).
\end{align}
Note that any $G\subset \{(j,k)\in I^2: j<k\}$ can be identified with a graph with vertex set $I$ and edges between vertices $j$ and $k$ (with $j<k$) if and only if $(j,k)\in G$. If we write $\mathcal G_I$ for the set of all graphs with vertex set $I$, then we have shown that 
\begin{align}
\avg{\prod_{j\in I}e^{iX_j}}=\prod_{j\in I} e^{-\frac{1}{2}\avg{X_j^2}}\sum_{G\in \mathcal G_I}\prod_{\{j,k\}\in G}(e^{-\avg{X_jX_k}}-1),
\end{align} 
where $\{j,k\}\in G$ indicates that $\{j,k\}$ is an edge in $G$.

Now each graph $G\in \mathcal G_I$ can be decomposed into connected components, and each component is a graph on some subset of $I$. Summing over all graphs on $I$ is equivalent to summing over all partitions of $I$ and summing over all connected graphs on each block of the partition. So we have 
\begin{align}
\avg{\prod_{j\in I}e^{iX_j}}&=\prod_{j\in I} e^{-\frac{1}{2}\avg{X_j^2}}\sum_{\pi\in P_I}\prod_{B\in \pi}\sum_{G\in \mathcal C_B}\prod_{\{j,k\}\in G}(e^{-\avg{X_jX_k}}-1)\nnb
&=\sum_{\pi\in P_I}\prod_{B\in \pi}\sum_{G\in \mathcal C_B}\prod_{\{j,k\}\in G}(e^{-\avg{X_jX_k}}-1)\prod_{j\in B} e^{-\frac{1}{2}\avg{X_j^2}}
\end{align} 
where $\mathcal C_I$ denotes the set of connected graphs with vertex set $I$. This is precisely of the form 
\begin{align}
\avg{\prod_{j\in I}e^{iX_j}}=\sum_{\pi\in P_I}\prod_{B\in \pi}\kappa_B,
\end{align}
with 
\begin{align}
\kappa_B=\sum_{G\in \mathcal C_B}\prod_{\{j,k\}\in G}(e^{-\avg{X_jX_k}}-1)\prod_{j\in B} e^{-\frac{1}{2}\avg{X_j^2}}.
\end{align}
Thus the claim follows by \eqref{eq:kappacumu}.
\end{proof}

We now prove that the cumulants satisfy essentially the same recursions as $\tilde V_t$ (up to a conventional sign).
To simplify notation slightly, we denote the cumulants as follows.
For $\delta^2\leq t\leq \epsilon^2$ and $\sigma_1,\dots,\sigma_n\in \Sigma_\alpha$,
\begin{align}
  U_t(\xi_1,\dots,\xi_n|m,\delta,\epsilon)
  &= U_t^n(\xi_1,\dots,\xi_n|m,\delta,\epsilon)\nnb
  &= \avg{\wick{e^{i\sigma_1\Phi_{\delta^2,t}(x_1)}}_\delta;\cdots ;\wick{e^{i\sigma_n\Phi_{\delta^2,t}(x_n)}}_\delta}^\mathsf T
    ,
\end{align}
and for $t\geq \epsilon^2$ as well as $\sigma_1,\dots,\sigma_k\in \Sigma_\alpha$ and $\sigma_{k+1},\dots,\sigma_n\in \Sigma_\beta$ (with $0\leq k\leq n$),
\begin{align}\label{eq:utkn}
  U_t(\xi_1,\dots,\xi_n|m,\delta,\epsilon)
  &=U_t^{k,n}(\xi_1,\dots,\xi_n|m,\delta,\epsilon) \nnb
 &=\langle \wick{e^{i\sigma_1\Phi_{\delta^2,t}(x_1)}}_\delta;\cdots ;\wick{e^{i\sigma_k\Phi_{\delta^2,t}(x_k)}}_\delta; \nnb
 & \qquad\qquad\qquad \wick{e^{i\sigma_{k+1}\Phi_{\epsilon^2,t}(x_{k+1})}}_\epsilon; \cdots ; \wick{e^{i\sigma_{n}\Phi_{\epsilon^2,t}(x_{n})}}_\epsilon\rangle^\mathsf T.
\end{align}
We extend this definition to arbitrary $\xi_1,\dots,\xi_n\in \Sigma$ satisfying $|\{i: \xi_i\in \Sigma_\alpha\}|=k$ by imposing permutation invariance in the arguments (recall that cumulants are permutation invariant).
As before (and indicated in the notation), the parameters $n$ respectively $k,n$ of  the functions $U^n_t$ and $U_t^{k,n}$ are determined by the arguments of the functions, and for notational simplicity we will therefore often drop them
when the arguments are present.

We will now characterise these functions in terms of a differential recursion. The arguments we present are essentially variants of \cite[Lemma 3.3]{MR914427}. We focus first on the case $t\in[\delta^2,\epsilon^2]$.

\begin{lemma}\label{le:utdif1}
For $t\in [\delta^2,\epsilon^2]$, the sequence of functions $(U_t^n)_{n=1}^\infty$ is the unique solution to the following problem:
\begin{enumerate}
\item For $n\geq 1$ and $\xi_1,\dots,\xi_n\in \Sigma_\alpha$, the function $t\mapsto U_t^n(\xi_1,\dots,\xi_n|m,\delta,\epsilon)$ is continuous on $[\delta^2,\epsilon^2]$ and continuously differentiable on $(\delta^2,\epsilon^2)$.
\item For $n\geq 1$ and $\xi_1,\dots,\xi_n\in \Sigma_\alpha$, we have 
\begin{equation}
U_{\delta^2}^n(\xi_1,\dots,\xi_n|m,\delta,\epsilon)=\delta_{n,1}\delta^{-\frac{\sigma^2}{4\pi}}.
\end{equation}
\item For $n=1$, $\xi\in \Sigma_\alpha$, and $\delta^2\leq t\leq \epsilon^2$ we have 
\begin{align}
U_t^1(\xi|m,\delta,\epsilon)=e^{-\frac{\sigma^2}{2}(\int_{\delta^2}^t ds\dot C_s(0)+\frac{1}{4\pi}\log \delta^2)}.
\end{align}
\item For $n\geq 2$, $\xi_1,\dots.,\xi_n\in \Sigma_\alpha$,  and $t\in(\delta^2,\epsilon^2)$ we have 
\begin{align}
  &\partial_t U_t^n(\xi_1,\dots,\xi_n|m,\delta,\epsilon)\nnb
  &=-\frac{1}{2}\sum_{I_1\dot\cup I_2=[n]}\sum_{i\in I_1,j\in I_2}\sigma_i \sigma_j \dot C_t(x_i-x_j)U_t^{|I_1|}(\xi_{I_1}|m,\delta,\epsilon)U_t^{|I_2|}(\xi_{I_2}|m,\delta,\epsilon)\nnb
&\quad -\frac{1}{2}\sum_{i,j=1}^n \sigma_i\sigma_j \dot C_t(x_i-x_j)U_t^n(\xi_1,\dots,\xi_n|m,\delta,\epsilon),
\end{align}
where, as before, $[n]=\{1,\dots,n\}$, and the sum is over non-empty sets $I_1,I_2\subset [n]$ which are disjoint and $I_1\cup I_2=[n]$. We have also written for $I=\{i_1,\dots,i_k\}$, $\xi_I=\{\xi_{i_1},\dots,\xi_{i_k}\}$. 
\end{enumerate}
\end{lemma}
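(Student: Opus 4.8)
\textbf{Proof proposal for Lemma~\ref{le:utdif1}.}
The plan is to establish the four listed properties directly from the definition of $U_t^n$ as a cumulant of the Wick-ordered exponentials and from elementary Gaussian calculus, and then to invoke the uniqueness of the solution to the stated ODE system. First I would record the explicit formula for $U_t^n$ provided by Lemma~\ref{le:gexpcumu}: with $X_j = \sigma_j \Phi_{\delta^2,t}(x_j)$ and $\wick{e^{i\sigma_j \Phi_{\delta^2,t}(x_j)}}_\delta = \delta^{-\sigma_j^2/4\pi}e^{i\sigma_j \Phi_{\delta^2,t}(x_j)}$, one has
\begin{equation}
U_t^n(\xi_1,\dots,\xi_n|m,\delta,\epsilon) = \prod_{j=1}^n \delta^{-\sigma_j^2/4\pi} \sum_{G\in \mathcal C_n}\prod_{\{i,j\}\in G}\big(e^{-\sigma_i\sigma_j C_{\delta^2,t}(x_i-x_j)}-1\big)\prod_{j=1}^n e^{-\frac12 \sigma_j^2 C_{\delta^2,t}(0)},
\end{equation}
where $C_{\delta^2,t}(x) = \int_{\delta^2}^t \dot C_s(x)\, ds$ is the covariance of $\Phi_{\delta^2,t}$. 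Properties (1) and (2) are then immediate: the map $t \mapsto C_{\delta^2,t}(x)$ is continuous on $[\delta^2,\epsilon^2]$ and $C^1$ on $(\delta^2,\epsilon^2)$ with $\partial_t C_{\delta^2,t}(x) = \dot C_t(x)$, so $U_t^n$ inherits this regularity by composition; and at $t=\delta^2$ the covariance vanishes, so every factor $(e^{0}-1)=0$ kills all graphs with at least one edge, leaving only the empty graph on one vertex, which forces $n=1$ and gives $U_{\delta^2}^1(\xi) = \delta^{-\sigma^2/4\pi}$.

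For property (3), with $n=1$ there are no edges, so $U_t^1(\xi) = \delta^{-\sigma^2/4\pi} e^{-\frac12 \sigma^2 C_{\delta^2,t}(0)}$; rewriting $\delta^{-\sigma^2/4\pi} = e^{-\frac{\sigma^2}{2}\cdot\frac{1}{4\pi}\log\delta^2}$ gives exactly the claimed closed form. Property (4), the differential recursion, is the computational heart and is the step I expect to require the most care. I would differentiate the graph-sum formula in $t$: the derivative acts either on one of the edge factors $(e^{-\sigma_i\sigma_j C_{\delta^2,t}(x_i-x_j)}-1)$, producing $-\sigma_i\sigma_j\dot C_t(x_i-x_j)e^{-\sigma_i\sigma_j C_{\delta^2,t}(x_i-x_j)}$, or on one of the on-diagonal factors $e^{-\frac12\sigma_j^2 C_{\delta^2,t}(0)}$, producing $-\frac12 \sigma_j^2\dot C_t(0)$ (the $j$th term of the diagonal double sum). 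The diagonal contributions immediately reassemble into $-\frac12 \sum_{i,j}\sigma_i\sigma_j\dot C_t(x_i-x_j)U_t^n$ once one notes the off-diagonal diagonal-sum terms $-\frac12\sigma_i\sigma_j\dot C_t(x_i-x_j)U_t^n$ ($i\ne j$) are created and then cancelled against part of the edge-derivative terms — this bookkeeping cancellation is the delicate point. For the edge-derivative terms: differentiating the factor on an edge $\{i,j\}$ of a connected graph $G$, one either (a) the edge is a bridge whose removal disconnects $G$ into components $I_1 \ni i$ and $I_2 \ni j$, and summing over all connected $G$ containing that bridge factorizes into $U_t^{|I_1|}(\xi_{I_1})\,U_t^{|I_2|}(\xi_{I_2})$ times $-\sigma_i\sigma_j\dot C_t(x_i-x_j)$, giving the first sum on the right-hand side; or (b) the edge is not a bridge, and the factor $e^{-\sigma_i\sigma_j C_{\delta^2,t}(x_i-x_j)} = (e^{-\sigma_i\sigma_j C_{\delta^2,t}(x_i-x_j)}-1)+1$ splits the contribution into one piece that restores a full $U_t^n$ (contributing to the diagonal reassembly above) and one piece that, summed appropriately, combines with (a). This is precisely the combinatorial identity behind the Polchinski/Mayer equivalence; I would carry it out by the standard argument of pairing, for each connected graph and distinguished edge, the ``$-1$'' and ``$+1$'' expansions, exactly as in \cite[Lemma~3.3]{MR914427}.

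Finally, uniqueness: given (1)--(4), the functions $(U_t^n)_n$ are determined by induction on $n$. For $n=1$ the value is prescribed outright by (3); assuming $U_t^m$ known for all $m<n$, property (4) expresses $\partial_t U_t^n$ as a linear (inhomogeneous) ODE in $U_t^n$ with continuous coefficients — the inhomogeneity $-\frac12\sum_{I_1\dot\cup I_2=[n]}\cdots$ involves only the lower-order functions, and the homogeneous part is $-\frac12(\sum_{i,j}\sigma_i\sigma_j\dot C_t(x_i-x_j))U_t^n$ — together with the initial condition $U_{\delta^2}^n = 0$ from (2) (since $n\ge2$). By the standard existence-uniqueness theorem for linear scalar ODEs this determines $U_t^n$ uniquely on $[\delta^2,\epsilon^2]$, completing the induction. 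I would remark that (4) is stated for $t\in(\delta^2,\epsilon^2)$ and continuity up to the endpoints from (1) makes the integrated form hold on the closed interval, so the recursion $\tilde V_t^n$ of \eqref{eq:vtiln} — which is exactly the integrated (Duhamel) form of this ODE system up to the sign $(-1)^{n-1}$ — has $(-1)^{n-1}U_t^n$ as its unique solution, which is the content of \eqref{eq:tVtn} in Proposition~\ref{prop:cumulants}.
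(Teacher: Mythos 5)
Your overall strategy --- derive the connected-graph formula from Lemma~\ref{le:gexpcumu}, read off items (1)--(3) by inspection, differentiate for item (4), and close with the uniqueness induction --- is exactly the paper's, and your treatment of (1), (2), (3), and the uniqueness is fine. The issue is in your organization of item~(4), specifically claim~(a). Differentiating the edge factor on an edge $\{i,j\}$ (bridge or not) produces $-\sigma_i\sigma_j\dot C_t(x_i-x_j)\,e^{-\sigma_i\sigma_j\int_{\delta^2}^t ds\,\dot C_s(x_i-x_j)}\prod_{\{p,q\}\neq\{i,j\}}(\cdots)$, so after summing over the connected $G$ having $\{i,j\}$ as a bridge separating $I_1$ from $I_2$ you get an extra surviving factor $e^{-\sigma_i\sigma_j\int_{\delta^2}^t ds\,\dot C_s(x_i-x_j)}$ multiplying $U_t^{|I_1|}(\xi_{I_1})U_t^{|I_2|}(\xi_{I_2})$. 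That exponential does not disappear, and the target formula has no such factor, so (a) as stated does not give ``the first sum on the right-hand side.''

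The fix --- and the paper's actual organization --- is to apply the split $e^{-\sigma_i\sigma_j\int\dot C} = (e^{-\sigma_i\sigma_j\int\dot C}-1)+1$ on \emph{every} differentiated edge before distinguishing bridges from non-bridges. The $(e-1)$-piece reabsorbs the differentiated edge into the full product over $G$; combined with $\sum_{G:\{i,j\}\in G}\prod = \sum_{G}\prod - \sum_{G:\{i,j\}\notin G}\prod$ and the original diagonal derivative, this reassembles the full quadratic term $-\tfrac12\sum_{i,j}\sigma_i\sigma_j\dot C_t(x_i-x_j)U_t^n$. What remains are the ``$+1$''-pieces together with the $\sum_{G:\{i,j\}\notin G}$ correction; the non-bridge ``$+1$''-pieces cancel the $\{i,j\}\notin G$ sum (the map $G\mapsto G\setminus\{i,j\}$ is a bijection between connected graphs whose edge $\{i,j\}$ is not a bridge and connected graphs avoiding $\{i,j\}$), and only the bridge ``$+1$''-pieces survive, whose bridgeless product now factorizes cleanly into $U_t^{|I_1|}(\xi_{I_1})U_t^{|I_2|}(\xi_{I_2})$ without any spurious exponential. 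Your pointer to \cite[Lemma~3.3]{MR914427} is exactly the right reference for this bookkeeping, but as written your version applies the split only in case (b), which leaves (a) short by the extra factor; the split has to come first, uniformly over all edges.
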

\begin{proof}
\subproof{uniqueness}
Let us first prove that if the problem has a solution, it is unique. We prove this by induction. Clearly $U_t^1$ is uniquely specified by item (iii). Let us then assume that uniqueness holds for $n\leq k$ with $k\geq 1$, and that for $n=k+1$ we have two solutions $U_t^{k+1}$ and $\widetilde U_t^{k+1}$. Writing $\Delta_t=U_t^{k+1}(\xi_1,\dots,\xi_{k+1}|m,\delta,\epsilon)-\widetilde U_t^{k+1}(\xi_1,\dots,\xi_{k+1}|m,\delta,\epsilon)$, we see from our induction hypothesis and item (iv) that $\Delta_t$ satisfies the equation 
\begin{align}
\partial_t \Delta_t=-\frac{1}{2}\sum_{i,j=1}^n \sigma_i\sigma_j \dot C_t(x_i-x_j)\Delta_t
\end{align}
with the initial condition (coming from item (ii) since $k+1>1$)
\begin{equation}
\Delta_{\delta^2}=0.
\end{equation} 
By uniqueness of solutions to linear first order differential equations, we have $\Delta_t=0$, so we see that if our problem has a solution it is unique.

\subproof{existence -- item (i)}
Using for example the definition of cumulants, this boils down to the corresponding claims for the covariance $\avg{\Phi_{\delta^2,t}(x)\Phi_{\delta^2,t}(y)}=\int_{\delta^2}^t ds\, \dot C_s(x-y)$ which is a smooth function.

\subproof{existence -- item (ii)}
Note that $\Phi_{\delta^2,\delta^2}(x)=0$ so 
\begin{align}
U_{\delta^2}^n(\xi_1,\dots,\xi_n|m,\delta,\epsilon)=\avg{\delta^{-\frac{\sigma_1^2}{4\pi}};\cdots; \delta^{-\frac{\sigma_n^2}{4\pi}}}^\mathsf T.
\end{align}
Cumulants of order two or greater involving constants vanish (as one readily checks e.g. from the cumulant generating function), so this vanishes for $n\neq 1$. For $n=1$, the cumulant is simply the expectation, and we find our claim.

\subproof{existence -- item (iii)}
Again, the order $n=1$ cumulant is simply the expectation so we have 
\begin{align}
  U_t^1(\xi|m,\delta,\epsilon)
  =\E\qb{\delta^{-\frac{\sigma^2}{4\pi}}e^{i\sigma \Phi_{\delta^2,t}(x)}}=\delta^{-\frac{\sigma^2}{4\pi}}e^{-\frac{\sigma^2}{2}\E[\Phi_{\delta^2,t}(x)^2]}
\end{align} 
and the claim follows from \eqref{e:app-Phi}. %
\medskip

\subproof{existence -- item (iv)}
This is the most involved part of the proof. For this, we use multilinearity of the cumulants and Lemma \ref{le:gexpcumu} to write 
\begin{multline}
  U_t^n(\xi_1,\dots,\xi_n|m,\delta,\epsilon)=\delta^{-\frac{1}{4\pi}\sum_{j=1}^n \sigma_j^2} e^{-\frac{1}{2}\sum_{j=1}^n \sigma_j^2 \int_{\delta^2}^tds\, \dot C_s(0)}
  \\
  \times \sum_{G\in \mathcal C_n}\prod_{\{i,j\}\in G}\left(e^{-\sigma_i \sigma_j \int_{\delta^2}^t ds\, \dot C_s(x_i-x_j)}-1\right) .
\end{multline}
Differentiating, we find 
\begin{align}
&\partial_t U_t^n(\xi_1,\dots,\xi_n|m,\delta,\epsilon)\nnb
&=-\frac{1}{2}\sum_{j=1}^n \sigma_j^2 \dot C_t(0)U_t^n(\xi_1,\dots,\xi_n|m,\delta,\epsilon)\nnb
&\quad -\delta^{-\frac{1}{4\pi}\sum_{j=1}^n \sigma_j^2} e^{-\frac{1}{2}\sum_{j=1}^n \sigma_j^2 \int_{\delta^2}^tds\, \dot C_s(0)} \sum_{G\in \mathcal C_n}\sum_{\{i,j\}\in G}\sigma_i\sigma_j \dot C_t(x_i-x_j)\nnb
  &\qquad \times e^{-\sigma_i\sigma_j\int_{\delta^2}^t ds\, \dot C_s(x_i-x_j)} \prod_{\substack{\{p,q\}\in G:\\ \{p,q\}\neq \{i,j\}}}\left(e^{-\sigma_p\sigma_q\int_{\delta^2}^t ds\, \dot C_s(x_p-x_q)}-1\right).
\end{align}
By writing
\begin{equation}
  e^{-\sigma_i\sigma_j\int_{\delta^2}^t ds\, \dot C_s(x_i-x_j)}=e^{-\sigma_i\sigma_j\int_{\delta^2}^t ds\, \dot C_s(x_i-x_j)}-1+1
\end{equation}
and using that
\begin{equation}
  \sum_G\sum_{\{i,j\}\in G} f(G)=\frac{1}{2}\sum_{i,j\in [n], i\neq j}\sum_G\mathbf 1\{\{i,j\}\in G\} f(G),
\end{equation}
we obtain
\begin{align}
&\partial_t U_t^n(\xi_1,\dots,\xi_n|m,\delta,\epsilon)\nnb
&=-\frac{1}{2}\sum_{i,j=1}^n \sigma_i\sigma_j \dot C_t(x_i-x_j)U_t^n(\xi_1,\dots,\xi_n|m,\delta,\epsilon)\nnb
&\quad +\delta^{-\frac{1}{4\pi}\sum_{j=1}^n \sigma_j^2} e^{-\frac{1}{2}\sum_{j=1}^n \sigma_j^2 \int_{\delta^2}^tds\, \dot C_s(0)}\sum_{1\leq i<j\leq n} \sigma_i\sigma_j\dot C_t(x_i-x_j)\nnb
&\quad \times\Bigg[\sum_{\substack{G\in \mathcal C_n:\\ \{i,j\}\notin G}}\prod_{\{p,q\}\in G}\left(e^{-\sigma_p\sigma_q\int_{\delta^2}^t ds\, \dot C_s(x_p-x_q)}-1\right) \nnb&\qquad\qquad\qquad-\sum_{\substack{G\in \mathcal C_n:\\ \{i,j\}\in G}}\prod_{\substack{\{p,q\}\in G:\\ \{p,q\}\neq \{i,j\}}}\left(e^{-\sigma_p\sigma_q\int_{\delta^2}^t ds\, \dot C_s(x_p-x_q)}-1\right)\Bigg],
\end{align}

To compute the term in the square brackets, we decompose the set of graphs $\{G\in \mathcal C_n: \{i,j\}\in G\}$ into two sets: 
\begin{align}
\mathcal C_n^{(1)}&=\{G\in \mathcal C_n: \{i,j\}\in G, G\setminus \{i,j\} \text{ is connected}\}\\
\mathcal C_n^{(2)}&=\{G\in \mathcal C_n: \{i,j\}\in G, G\setminus \{i,j\} \text{ is disconnected}\},
\end{align}
where $G\setminus\{i,j\}$ denotes the graph obtained by removing the edge $\{i,j\}$ (but not the vertices).
Then the sum over $\mathcal C_n^{(1)}$ cancels with the first sum in the square brackets, so we have 
\begin{align}
&\Bigg[\sum_{\substack{G\in \mathcal C_n:\\ \{i,j\}\notin G}}\prod_{\{p,q\}\in G}\left(e^{-\sigma_p\sigma_q\int_{\delta^2}^t ds\, \dot C_s(x_p-x_q)}-1\right) -\sum_{\substack{G\in \mathcal C_n:\\ \{i,j\}\in G}}\prod_{\substack{\{p,q\}\in G:\\ \{p,q\}\neq \{i,j\}}}\left(e^{-\sigma_p\sigma_q\int_{\delta^2}^t ds\, \dot C_s(x_p-x_q)}-1\right)\Bigg]\nnb
&=-\sum_{G\in \mathcal C_n^{(2)}}\prod_{\substack{\{p,q\}\in G:\\ \{p,q\}\neq \{i,j\}}}\left(e^{-\sigma_p\sigma_q\int_{\delta^2}^t ds\, \dot C_s(x_p-x_q)}-1\right).
\end{align}
Removing the edge $\{i,j\}$ from $G\in \mathcal C_n^{(2)}$ produces two connected components $G_1$ and $G_2$ which do not share vertices but for which the union of the vertex sets is $[n]$, and say $i\in G_1$ and $j\in G_2$. Thus summing over $\mathcal C_n^{(2)}$ is equivalent to summing over $I_1,I_2$ satisfying $i\in I_1$, $j\in I_2$, $I_1\dot \cup I_2=[n]$, and $G_1$ a connected graph with vertex set $I_1$ and $G_2$ a connected graph with vertex set $I_2$. Swapping the order of the sum over $i<j$ and the sum over $I_1,I_2$, we find (using again Lemma \ref{le:gexpcumu}) that 
\begin{align}
&\delta^{-\frac{1}{4\pi}\sum_{j=1}^n \sigma_j^2} e^{-\frac{1}{2}\sum_{j=1}^n \sigma_j^2 \int_{\delta^2}^tds\, \dot C_s(0)}\sum_{1\leq i<j\leq n} \sigma_i \sigma_j\dot C_t(x_i-x_j)\nnb
&\quad \times \Bigg[\sum_{\substack{G\in \mathcal C_n:\\ \{i,j\}\notin G}}\prod_{\{p,q\}\in G}\left(e^{-\sigma_p\sigma_q\int_{\delta^2}^t ds\, \dot C_s(x_p-x_q)}-1\right) \nnb&\qquad\qquad\qquad-\sum_{\substack{G\in \mathcal C_n:\\ \{i,j\}\in G}}\prod_{\substack{\{p,q\}\in G:\\ \{p,q\}\neq \{i,j\}}}\left(e^{-\sigma_p\sigma_q\int_{\delta^2}^t ds\, \dot C_s(x_p-x_q)}-1\right)\Bigg]\nnb
&=-\frac{1}{2}\sum_{I_1\dot\cup I_2=[n]}\sum_{i\in I_1,j\in I_2}\sigma_i\sigma_j \dot C_t(x_i-x_j)U_t^{|I_1|}(\xi_{I_1}|m,\delta)U_t^{|I_2|}(\xi_{I_2}|m,\delta).
\end{align}
This concludes the proof item (iv), and thus the proof of the lemma.
\end{proof}

Let us now turn to the case $t\geq \epsilon^2$.
\begin{lemma}\label{le:utdif2}
For $t\geq \epsilon^2$, the sequence of functions $(U_t^{k,n})_{n\geq 1, 0\leq k\leq n}$ is the unique solution to the following problem:
\begin{enumerate}
\item For $n\geq 1$ and $0\leq k\leq n$, $\xi_1,\dots,\xi_k\in \Sigma_\alpha$ and $\xi_{k+1},\dots,\xi_n\in \Sigma_\beta$, the function $t\mapsto U_t^{k,n}(\xi_1,\dots,\xi_n|m,\delta,\epsilon)$ is continuous on $[\epsilon^2,\infty)$ and continuously differentiable on $(\epsilon^2,\infty)$.
\item For $n\geq 1$ and $0\leq k\leq n$, $\xi_1,\dots,\xi_k\in \Sigma_\alpha$ and $\xi_{k+1},\dots,\xi_n\in \Sigma_\beta$, we have 
\begin{equation}
U_{\epsilon^2}^{k,n}(\xi_1,\dots,\xi_n|m,\delta,\epsilon)=\delta_{n,1}\delta_{k,0}\epsilon^{-\frac{\sigma^2}{4\pi}}+\delta_{n,k}U_{\epsilon^2}^n(\xi_1,\dots,\xi_n|m,\delta,\epsilon),
\end{equation}
where,  on the right-hand side, $U_{\epsilon^2}^n$ is the kernel from the $[\delta^2,\epsilon^2]$ interval.
\item For $n=1$, $\xi\in \Sigma$, and $t\geq \epsilon^2$ we have 
\begin{align}
U_t^{k,1}(\xi|m,\delta,\epsilon)=\delta_{k,1}e^{-\frac{\sigma^2}{2}(\int_{\delta^2}^t ds\,\dot C_s(0)+\frac{1}{4\pi}\log \delta^2)}+\delta_{k,0}e^{-\frac{\sigma^2}{2}(\int_{\epsilon^2}^t ds\,\dot C_s(0)+\frac{1}{4\pi}\log \epsilon^2)}.
\end{align}
\item For $n\geq 2$, $0\leq k\leq n$, $\xi_1,\dots.,\xi_k\in \Sigma_\alpha$, $\xi_{k+1},\dots,\xi_n\in \Sigma_\beta$ and $t>\epsilon^2$ we have 
\begin{align}
\partial_t U_t^{k,n}(\xi_1,\dots,\xi_n|m,\delta,\epsilon)&=-\frac{1}{2}\sum_{I_1\dot\cup I_2=[n]}\sum_{i\in I_1,j\in I_2}\sigma_i \sigma_j \dot C_t(x_i-x_j)U_t^{|I_1\cap[k]|,|I_1|}(\xi_{I_1}|m,\delta,\epsilon)\nnb
&\qquad\qquad  \times U_t^{|I_2\cap[k]|,|I_2|}(\xi_{I_2}|m,\delta,\epsilon)\nnb
&\quad -\frac{1}{2}\sum_{i,j=1}^n \sigma_i\sigma_j \dot C_t(x_i-x_j)U_t^{k,n}(\xi_1,\dots,\xi_n|m,\delta,\epsilon),
\end{align}
where $[n]=\{1,\dots,n\}$, and the sum is over non-empty sets $I_1,I_2\subset [n]$ which are disjoint and $I_1\cup I_2=[n]$. As before, for $I=\{i_1,\dots,i_j\}$, we have written $\xi_I=\{\xi_{i_1},\dots,\xi_{i_j}\}$.
\end{enumerate}
\end{lemma}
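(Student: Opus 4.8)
The plan is to prove Lemma~\ref{le:utdif2} following the same pattern as the proof of Lemma~\ref{le:utdif1}, which handles the interval $[\delta^2,\epsilon^2]$. Uniqueness is again established by induction on $n$: the $n=1$ kernels are explicitly prescribed by item (iii), and if uniqueness holds for all orders up to $k'$, then the difference $\Delta_t$ of two solutions of order $k'+1$ (with fixed $k$) solves a linear first-order ODE $\partial_t\Delta_t = -\tfrac12\sum_{i,j}\sigma_i\sigma_j\dot C_t(x_i-x_j)\Delta_t$ with $\Delta_{\epsilon^2}=0$ by item (ii) (since $k'+1 \geq 2$ forces $\delta_{n,1}\delta_{k,0}=0$; note the second term $\delta_{n,k}U^n_{\epsilon^2}$ is the already-determined data inherited from the previous interval), so $\Delta_t \equiv 0$.

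For existence, item (i) reduces to smoothness of the relevant covariances $\int_{\delta^2\vee a}^{t}\dot C_s(x-y)\,ds$ (with lower limit $\delta^2$ or $\epsilon^2$ depending on whether the argument carries a winding in $\Sigma_\alpha$ or $\Sigma_\beta$), which is clear. Item (ii) follows because at $t=\epsilon^2$ the field $\Phi_{\epsilon^2,\epsilon^2}$ vanishes, so the $\Sigma_\beta$-entries become constants $\epsilon^{-\sigma^2/4\pi}$; cumulants of order $\geq 2$ vanish if any entry is constant, which forces either $n=1$ (giving the $\delta_{n,1}\delta_{k,0}$ term) or $k=n$ (giving the term $U^n_{\epsilon^2}$ matching the endpoint of the previous interval). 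Item (iii) is again just the computation of a single expectation, $\E[\wick{e^{i\sigma\Phi_{\delta^2,t}}}_\delta] = \delta^{-\sigma^2/4\pi}e^{-\tfrac{\sigma^2}{2}\int_{\delta^2}^t\dot C_s(0)ds}$ for $\sigma\in\Sigma_\alpha$ and analogously with $\epsilon$ for $\sigma\in\Sigma_\beta$.

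The substantive part is item (iv), the differential recursion. I would proceed exactly as in the proof of Lemma~\ref{le:utdif1}: use multilinearity of cumulants together with Lemma~\ref{le:gexpcumu} to write $U_t^{k,n}$ as a sum over connected graphs $G\in\mathcal C_n$ of $\prod_{\{i,j\}\in G}(e^{-\sigma_i\sigma_j c_t(x_i,x_j)}-1)$ times a product of single-site factors, where now the pairwise ``covariance'' $c_t(x_i,x_j) = \int_{a_i\vee a_j}^{t}\dot C_s(x_i-x_j)\,ds$ and single-site factor depend on whether the sites carry $\Sigma_\alpha$- or $\Sigma_\beta$-labels (lower cutoff $\delta^2$ versus $\epsilon^2$). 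Since $t > \epsilon^2$, the $t$-derivative $\partial_t c_t(x_i,x_j) = \dot C_t(x_i-x_j)$ is the same for all pairs regardless of labels — this is the key point that makes the computation go through verbatim. Differentiating in $t$, adding and subtracting $1$ in the exponential factor on the differentiated edge $\{i,j\}$, using $\sum_G\sum_{\{i,j\}\in G}f(G) = \tfrac12\sum_{i\neq j}\sum_G \mathbf 1\{\{i,j\}\in G\}f(G)$, and splitting the graphs containing $\{i,j\}$ according to whether $G\setminus\{i,j\}$ is connected or not, one gets the diagonal term $-\tfrac12\sum_{i,j}\sigma_i\sigma_j\dot C_t(x_i-x_j)U_t^{k,n}$ plus, from the disconnected case, a factorized term indexed by partitions $I_1\dot\cup I_2=[n]$ with $i\in I_1$, $j\in I_2$. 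The only bookkeeping difference from Lemma~\ref{le:utdif1} is tracking how the $\Sigma_\alpha$-labels distribute: the factor $U_t^{|I_1\cap[k]|,|I_1|}$ appears precisely because the $\Sigma_\alpha$-sites in $I_1$ are those in $I_1\cap[k]$ (after the conventional relabeling so that $\Sigma_\alpha$-entries come first), and likewise for $I_2$. This is the main obstacle in the sense of being the one place where care is needed, but it is purely combinatorial bookkeeping rather than a genuine difficulty; everything else transfers directly from the single-scale argument. I would then conclude that $(U_t^{k,n})$ solves the problem, and by the uniqueness shown above it is the unique solution.
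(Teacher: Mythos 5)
Your proposal follows the same route as the paper's proof: uniqueness by induction on $n$ via the linear first-order ODE with zero initial increment, items (i)--(iii) as direct observations from smoothness of the covariance and vanishing of higher cumulants with constant entries, and item (iv) via the connected-graph expansion of Lemma~\ref{le:gexpcumu}, with the key observation (which you correctly isolate) that $\partial_t$ of the pair covariance equals $\dot C_t(x_i-x_j)$ regardless of whether the edge joins $\Sigma_\alpha$- or $\Sigma_\beta$-labeled sites. The bookkeeping of how $|I_1\cap[k]|$ enters the factorized term is also handled as in the paper, so this is correct and essentially identical in structure.
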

\begin{proof}
The proof is very similar to that of Lemma \ref{le:utdif1}. We again begin with uniqueness.

\subproof{uniqueness}
This is the essentially the same induction argument as in the proof of Lemma \ref{le:utdif1}. Uniqueness for $n=1$ is manifest for both $k=0$ and $k=1$. As an induction hypothesis, one assumes that uniqueness holds for each pair $(n,k)$ with $0\leq k\leq n$, and $1\leq n\leq N$. One then finds that if we had two solutions for some $0\leq K\leq N+1$, then by the induction hypothesis and item (iv), their difference $\Delta_{K,N}(t)$ would satisfy the equation $\partial_t \Delta_{K,N}(t)=-\frac{1}{2}\sum_{i,j=1}^{N+1} \sigma_i \sigma_j \dot C_t(x_i-x_j)\Delta_{K,N}(t)$ with zero initial data (by item (ii)). So again, uniqueness follows from uniqueness of the solution to this first order linear differential equation.

\subproof{existence --  item (i)}
This again follows from the corresponding properties for the covariances involving $\Phi_{\delta^2,t}$ and $\Phi_{\epsilon^2,\epsilon^2\vee t}$, and eventually boils down to smoothness of $\dot C_s(x-y)$.

\subproof{existence -- item (ii)}
Since $\Phi_{\epsilon^2,\epsilon^2}(x)=0$, and cumulants of order two or more vanish if one of the random variables is constant, we see that the cumulant vanishes unless $k=n$ or $n=1$ and $k=0$, and in these cases, the cumulant is given by the claim in item (ii).

\subproof{existence -- item (iii)}
This again follows immediately from from computing the variance of $\Phi_{\delta^2,t}(x)$ and $\Phi_{\epsilon^2,t}(x)$.

\subproof{existence -- item (iv)}
Again, this is the most complicated part of the proof, but it is very similar to the corresponding part of Lemma \ref{le:utdif1}, so we omit most of the details. The starting point is to use Lemma \ref{le:gexpcumu} to write  
\begin{align}
&U_t^{k,n}(\xi_1,\dots,\xi_n|m,\delta,\epsilon)\nnb
&=\delta^{-\frac{1}{4\pi}\sum_{j=1}^k\sigma_j^2}\epsilon^{-\frac{1}{4\pi}\sum_{j=k+1}^n \sigma_j^2}e^{-\frac{1}{2}\sum_{j=1}^n \sigma_j^2\int_{\delta^2}^t ds\, \dot C_s(0)} e^{-\frac{1}{2}\sum_{j=k+1}^n \sigma_j^2\int_{\epsilon^2}^t ds\, \dot C_s(0)}\nnb
&\quad \times \sum_{G\in \mathcal C_n}\prod_{\{i,j\}\in G}\left(e^{-\sigma_i \sigma_j \1\{i>k \text{ or } j>k\}\int_{\epsilon^2}^tds\, \dot C_s(x_i-x_j)-\sigma_i\sigma_j \1\{i,j\leq k\}\int_{\delta^2}^t ds\, \dot C_s(x_i-x_j)}-1 \right).
\end{align}
The next step is again to differentiate with respect to $t$. Note that we have for example
\begin{align}
&\partial_t \delta^{-\frac{1}{4\pi}\sum_{j=1}^k\sigma_j^2}\epsilon^{-\frac{1}{4\pi}\sum_{j=k+1}^n \sigma_j^2}e^{-\frac{1}{2}\sum_{j=1}^n \sigma_j^2\int_{\delta^2}^t ds\, \dot C_s(0)} e^{-\frac{1}{2}\sum_{j=k+1}^n \sigma_j^2\int_{\epsilon^2}^t ds\, \dot C_s(0)}\nnb
  &=-\frac{1}{2}\sum_{j=1}^n \sigma_j^2 \dot C_t(0) \delta^{-\frac{1}{4\pi}\sum_{j=1}^k\sigma_j^2}\epsilon^{-\frac{1}{4\pi}\sum_{j=k+1}^n \sigma_j^2}
    \nnb
    &\qquad \qquad \times e^{-\frac{1}{2}\sum_{j=1}^n \sigma_j^2\int_{\delta^2}^t ds\, \dot C_s(0)} e^{-\frac{1}{2}\sum_{j=k+1}^n \sigma_j^2\int_{\epsilon^2}^t ds\, \dot C_s(0)}
\end{align}
and
\begin{align}
&\partial_t \left(e^{-\sigma_i \sigma_j \1\{i>k \text{ or } j>k\}\int_{\epsilon^2}^tds\, \dot C_s(x_i-x_j)-\sigma_i\sigma_j \1\{i,j\leq k\}\int_{\delta^2}^t ds\, \dot C_s(x_i-x_j)}-1 \right)\nnb
&=-\sigma_i\sigma_j \dot C_t(x_i-x_j)e^{-\sigma_i \sigma_j \1\{i>k \text{ or } j>k\}\int_{\epsilon^2}^tds\, \dot C_s(x_i-x_j)-\sigma_i\sigma_j \1\{i,j\leq k\}\int_{\delta^2}^t ds\, \dot C_s(x_i-x_j)},
\end{align}
so the arguments from the proof of Lemma~\ref{le:utdif1}, we find 
\begin{align}
&\partial_t U_t^{k,n}(\xi_1,\dots,\xi_n|m,\delta,\epsilon)\nnb
&=-\frac{1}{2}\sum_{i,j=1}^n \sigma_i\sigma_j \dot C_t(x_i-x_j)U_t^{k,n}(\xi_1,\dots,\xi_n|m,\delta,\epsilon)\nnb
&\quad -\delta^{-\frac{1}{4\pi}\sum_{j=1}^k \sigma_j^2} \epsilon^{-\frac{1}{4\pi}\sum_{j=k+1}^n \sigma_j^2} e^{-\frac{1}{2}\sum_{j=1}^k \int_{\delta^2}^t ds\, \dot C_s(0)-\frac{1}{2}\sum_{j=k+1}^n \sigma_j^2\int_{\epsilon^2}^t ds\, \dot C_s(0)} \nnb &\qquad \times \sum_{1\leq i<j\leq n}\sigma_i \sigma_j \dot C_t(x_i-x_j)\nnb
&\qquad \times \sum_{G\in \mathcal C_n^{(2)}}\prod_{\substack{\{p,q\}\in G:\\ \{p,q\}\neq \{i,j\}}}\left(e^{-\sigma_p \sigma_q \1\{p>k \text{ or } q>k\}\int_{\epsilon^2}^tds\, \dot C_s(x_p-x_q)-\sigma_p\sigma_q \1\{p,q\leq k\}\int_{\delta^2}^t ds\, \dot C_s(x_p-x_q)}-1 \right),
\end{align}
from which the claim in item (iv) follows in the same way as in Lemma \ref{le:utdif1}. This concludes the proof of the lemma.
\end{proof}

The connection of $\tilde V_t^n$ and $\tilde V_t^{k,n}$ with the cumulants $U_t^n$ and $U_t^{k,n}$ is given by the next lemma,
which proves Proposition~\ref{prop:cumulants}.

\begin{lemma}\label{le:utvt}
We have for $t\in[\delta^2,\epsilon^2]$
\begin{equation}
\tilde V_t^n(\xi_1,\dots,\xi_n|m,\delta,\epsilon)=(-1)^{n-1}U_t^n(\xi_1,\dots,\xi_n|m,\delta,\epsilon),
\end{equation}
and for $t\geq \epsilon^2$ and $0\leq k\leq n$,
\begin{equation}
\tilde V_t^{k,n}(\xi_1,\dots,\xi_n|m,\delta,\epsilon)=(-1)^{n-1}U_t^{k,n}(\xi_1,\dots,\xi_n|m,\delta,\epsilon).
\end{equation}
\end{lemma}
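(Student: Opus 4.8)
The plan is to prove both identities via the uniqueness assertions of Lemmas~\ref{le:utdif1} and~\ref{le:utdif2}. I will show that the sign-adjusted kernels $W_t^n := (-1)^{n-1}\tilde V_t^n$ (for $t\in[\delta^2,\epsilon^2]$) and $W_t^{k,n} := (-1)^{n-1}\tilde V_t^{k,n}$ (for $t\geq\epsilon^2$) satisfy exactly the same characterizing conditions as the cumulant kernels $U_t^n$ and $U_t^{k,n}$; since those lemmas assert that each system has a unique solution, this forces $W=U$, which is the claim. Everything is read pointwise at distinct points $x_1,\dots,x_n$, so the singular $n=2$, $\sigma_1\sigma_2=-\beta$ term needs no separate treatment.

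First I would treat the regime $t\in[\delta^2,\epsilon^2]$. The continuity and $C^1$-regularity in $t$ required by item~(i) of Lemma~\ref{le:utdif1} are immediate from the integral recursion \eqref{eq:vtiln} and the smoothness of $s\mapsto\dot C_s(x)$ for $s>0$. Evaluating \eqref{eq:vtiln} at $t=\delta^2$ kills the integral $\int_{\delta^2}^{\delta^2}$, leaving $\tilde V_{\delta^2}^n=0$ for $n\geq2$, while \eqref{eq:vtilk1} gives $\tilde V_{\delta^2}^1=\delta^{-\sigma^2/4\pi}$; this matches $U_{\delta^2}^n=\delta_{n,1}\delta^{-\sigma^2/4\pi}$ (item~(ii)). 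The $n=1$ formula (item~(iii)) is a direct comparison of \eqref{eq:vtilk1} with the stated expression for $U_t^1$. The crucial step is item~(iv): differentiating the Duhamel-type recursion \eqref{eq:vtiln} in $t$, the derivative hitting the upper limit of $\int_{\delta^2}^t$ produces (the exponential then being $1$) the term $\tfrac12\sum_{I_1\dot\cup I_2=[n]}\sum_{i\in I_1,j\in I_2}\sigma_i\sigma_j\dot C_t(x_i-x_j)\tilde V_t^{|I_1|}\tilde V_t^{|I_2|}$, while the derivative hitting the $t$ inside the exponential produces $-\tfrac12\sum_{i,j=1}^n\sigma_i\sigma_j\dot C_t(x_i-x_j)\tilde V_t^n$. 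Multiplying through by $(-1)^{n-1}$ and using $(-1)^{|I_1|-1}(-1)^{|I_2|-1}=(-1)^{n-2}=-(-1)^{n-1}$ on the product term, one checks that $W_t^n$ satisfies precisely the differential recursion of item~(iv) of Lemma~\ref{le:utdif1}. By uniqueness, $W_t^n=U_t^n$.

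For $t\geq\epsilon^2$ I would argue analogously with Lemma~\ref{le:utdif2}, the only new points being the matching at the boundary $t=\epsilon^2$ and the bookkeeping of the split index $|I\cap[k]|$. Regularity (item~(i)) is again routine. For item~(ii), evaluating \eqref{eq:vtilkn} at $t=\epsilon^2$ kills the integral term, leaving $\tilde V_{\epsilon^2}^{k,n}=\delta_{n,k}\tilde V_{\epsilon^2}^n$ for $n\geq2$ and, from \eqref{eq:vtilk1}, $\tilde V_{\epsilon^2}^{k,1}=\delta_{k,1}\tilde V_{\epsilon^2}^1+\delta_{k,0}\epsilon^{-\sigma^2/4\pi}$; here it is essential to invoke the already-established identity $\tilde V_{\epsilon^2}^n=(-1)^{n-1}U_{\epsilon^2}^n$ from the first regime to recognize this as $U_{\epsilon^2}^{k,n}$. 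Item~(iii) again just compares \eqref{eq:vtilk1} (with $\epsilon^2\vee t=t$) with the stated $U_t^{k,1}$. Finally, differentiating \eqref{eq:vtilkn} in $t$ yields: from the first term, $-\tfrac12\sum_{i,j}\sigma_i\sigma_j\dot C_t(x_i-x_j)$ times that term; from the integral term, $\tfrac12\sum\sigma_i\sigma_j\dot C_t(x_i-x_j)\,\tilde V_t^{|I_1\cap[k]|,|I_1|}\tilde V_t^{|I_2\cap[k]|,|I_2|}$ (upper limit) and $-\tfrac12\sum_{i,j}\sigma_i\sigma_j\dot C_t(x_i-x_j)$ times that term (exponential); summing and multiplying by $(-1)^{n-1}$, with the same sign identity on the product term, reproduces the differential recursion of item~(iv) of Lemma~\ref{le:utdif2}, and uniqueness gives $W_t^{k,n}=U_t^{k,n}$. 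The computation is essentially routine; the only things requiring care are the sign bookkeeping through $(-1)^{|I_1|-1}(-1)^{|I_2|-1}=-(-1)^{n-1}$ and the order of the argument — the identity on $[\delta^2,\epsilon^2]$ must be established first so that it supplies the initial data at $t=\epsilon^2$ for the second regime. I expect no genuine obstacle beyond this accounting.
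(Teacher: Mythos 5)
Your proposal is correct and follows essentially the same route as the paper: differentiate the Duhamel-type recursions \eqref{eq:vtiln} and \eqref{eq:vtilkn} in $t$, verify that $(-1)^{n-1}\tilde V_t^n$ and $(-1)^{n-1}\tilde V_t^{k,n}$ satisfy items (i)--(iv) of Lemmas~\ref{le:utdif1} and~\ref{le:utdif2} respectively (with the sign bookkeeping $(-1)^{|I_1|-1}(-1)^{|I_2|-1}=-(-1)^{n-1}$ handling the bilinear term), and invoke uniqueness, treating the regime $[\delta^2,\epsilon^2]$ first so it supplies the boundary data at $t=\epsilon^2$. The observation about the singular $n=2$ term is harmless but superfluous here, since for fixed $\delta,\epsilon>0$ and distinct points all kernels are finite by construction.
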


\begin{proof}
  We use the uniqueness of the solutions to the problems described in Lemma \ref{le:utdif1} and Lemma \ref{le:utdif2}.
  We first consider the case  $\delta^2\leq t\leq \epsilon^2$ and check that $(-1)^{n-1}\tilde V_t^{n}(\xi_1,\dots,\xi_n|m,\delta,\epsilon)$ satisfies the conditions of item (i) -- item (iv).

\subproofx{Item (i) of Lemma \ref{le:utdif1}}
By the recursive definition (and an induction argument if one wants to be very precise), we see that $(-1)^{n-1}\tilde V_t^n(\xi_1,\dots,\xi_n|m,\delta,\epsilon)$ is continuous on $[\delta^2,\epsilon^2]$ and continuously differentiable on $(\delta^2,\epsilon^2)$, so this condition is satisfied.

\subproofx{Item (ii) of Lemma \ref{le:utdif1}}
From the recursion, we see that $\tilde V_{\delta^2}^n(\xi_1,\dots,\xi_n|m,\delta,\epsilon)=0$ for $n\geq 2$, while for $n=1$, we have 
\begin{equation}
\tilde V_{\delta^2}^1(\xi|m,\delta,\epsilon)=\delta^{-\frac{\sigma^2}{4\pi}}
\end{equation}
so also item (ii) is satisfied.

\subproofx{Item (iii) of Lemma \ref{le:utdif1}}
This is satisfied by definition of $\tilde V_t^1(\xi|m,\delta,\epsilon)$.

\subproofx{Item (iv) of Lemma \ref{le:utdif1}}
Differentiating the recursion defining $\tilde V_t^n$ we have
\begin{align}
\partial_t &(-1)^{n-1}\tilde V_t^n(\xi_1,\dots,\xi_n|m,\delta,\epsilon)\nnb
&=-\frac{1}{2}\sum_{I_1\dot \cup I_2=[n]}\sum_{i\in I_1,j\in I_2}\sigma_i \sigma_j \dot C_t(x_i-x_j))(-1)^{|I_1|-1}\tilde V_t^{|I_1|}(\xi_{I_1}|m,\delta,\epsilon)(-1)^{|I_2|-1}\tilde V_t^{|I_2|}(\xi_{I_2}|m,\delta,\epsilon)\nnb
&\quad -\frac{1}{2}\sum_{i,j=1}^n \sigma_i \sigma_j \dot C_t(x_i-x_j)(-1)^{n-1}\tilde V_t^n(\xi_1,\dots,\xi_n|m,\delta,\epsilon)
\end{align}
so also item (iv) is satisfied.

Thus by uniqueness of Lemma \ref{le:utdif1}, we conclude the $(-1)^{n-1}\tilde V_t^n=U_t^n$ as claimed. 

\smallskip

We consider now the case $t\geq \epsilon^2$. Our task is now to check that $(-1)^{n-1}\tilde V_t^{k,n}$ satisfies the conditions of Lemma \ref{le:utdif2}.

\subproofx{Item (i) of Lemma \ref{le:utdif2}}
This follows from the recursive definition of $\tilde V_t^{k,n}$ (and an induction argument).

\subproofx{Item (ii) of Lemma \ref{le:utdif2}}
This follows from the definition of $\tilde V_t^{k,n}$ and the result for $t\in[\delta^2,\epsilon^2]$.

\subproofx{Item (iii) of Lemma \ref{le:utdif2}}
This follows immediately from the definition of $\tilde V_t^{k,1}(\xi|m,\delta,\epsilon)$.

\subproofx{Item (iv) of Lemma \ref{le:utdif2}}
This is essentially the same calculation as in the case $t\in[\delta^2,\epsilon^2]$.
\end{proof}

\subsection{Bounds -- proof of Proposition~\ref{pr:vtnorm}}
\label{app:vtbounds}

The proof follows \cite[Section 4]{MR4767492}  quite carefully.
Since it is still quite lengthy, we split the proof into several parts.

\subsubsection{Proof of Proposition \ref{pr:vtnorm} for $k=n$}

One of the reasons that the proof is simpler for $k=n$ is that, by combining \eqref{eq:vtiln} and \eqref{eq:vtilkn}, we have for $t\geq \epsilon^2$ and $n\geq 2$
\begin{align}\label{eq:nnrec}
\tilde V_t^{n,n}&(\xi_1,\dots,\xi_n|m,\delta,\epsilon)\nnb
&=\frac{1}{2}\sum_{I_1\dot \cup I_2=[n]}\sum_{i\in I_1,j\in I_2}\sigma_i \sigma_j \int_{\delta^2}^t ds\, \dot C_s(x_i-x_j)\tilde V_s^{|I_1|,|I_1|}(\xi_{I_1}|m,\delta,\epsilon)\tilde V_s^{|I_2|,|I_2|}(\xi_{I_2}|m,\delta,\epsilon)\nnb
&\qquad \qquad \qquad \qquad \qquad \qquad \times e^{-\frac{1}{2}\sum_{i,j=1}^n \sigma_i \sigma_j \int_s^t dr\, \dot C_r(x_i-x_j)},
\end{align}
where we understand that $\tilde V_s^{|I|,|I|}(\xi_I|m,\delta,\epsilon)=\tilde V_s^{|I|}(\xi_I|m,\delta,\epsilon)$ for $s\leq \epsilon^2$.
In fact, with this interpretation, the claim is also true for $\delta^2\leq t\leq \epsilon^2$ (by \eqref{eq:vtiln}). This is sufficient preparation for the proof of Proposition \ref{pr:vtnorm} for $k=n$. 
\begin{proof}[Proof of Proposition \ref{pr:vtnorm} for $k=n$]
We will prove a slightly stronger version of the statement. We will show that the claim holds for $\delta^2\leq t\leq 1$ and we will show that there exist constants $C_\Sigma$ and $C$ (the former depending only on $\Sigma$ and the latter being universal) so that 
\begin{equation} \label{e:hnn-stronger}
\|h_t^{n,n}\|_n \leq n^{n-2} t^{-1}C_\Sigma^{n-1}\left(C t^{1-\frac{\max_{\sigma\in \Sigma_\alpha}(\sigma^2)}{8\pi}}\right)^n.
\end{equation}
Note that for $t<1$, this upper bound is smaller than the one in the statement of Proposition \ref{pr:vtnorm} since $\beta\geq 4\pi>\max_{\sigma\in \Sigma_\alpha}(\sigma^2)$.

First of all, we note from %
\eqref{eq:vtilk1} that for $t\geq \delta^2$ (recalling our convention that $\tilde V_t^{1,1}=\tilde V_t^1$ for $t\leq \epsilon^2$ and that $m^2 t<1$)
\begin{align}
0\leq \tilde V_t^{1,1}(\xi|m,\delta,\epsilon)&=e^{-\frac{\sigma^2}{2}(\int_{\delta^2}^t ds\, \dot C_s(0)+\frac{1}{4\pi}\log \delta^2)}\nnb
&=e^{-\frac{\sigma^2}{2}(\int_{\delta^2}^tds\,  \frac{e^{-m^2 s}-1}{4\pi s}+\frac{1}{4\pi}\log t) }\nnb
&\leq t^{-\frac{\sigma^2}{8\pi}} e^{\frac{\sigma^2}{8\pi}m^2 t}
\leq e t^{-\frac{\sigma^2}{8\pi}}=:h_t^{1,1}(\xi). 
\end{align}
Since $t^{-\frac{\sigma^2}{8\pi}}\leq t^{-\frac{\max_{\sigma\in \Sigma_\alpha}(\sigma^2)}{8\pi}}$ for $t < 1$,
we have verified \eqref{e:hnn-stronger} for $n=1$.

We now make the induction hypothesis that \eqref{e:hnn-stronger} %
holds for $n<N$ for some $N\geq 2$. We prove that it holds for $N$.
First note that $\sum_{i,j=1}^{N} \sigma_i \sigma_j \int_s^t dr\, \dot C_r(x_i-x_j)\geq 0$ since $\dot C_r(x-y)$ is a covariance (i.e., of positive type). Feeding our induction hypothesis into \eqref{eq:nnrec}, we thus find
\begin{align}
|\tilde V_t^{N,N}(\xi_1,\dots,\xi_N|m,\delta,\epsilon)|&\leq 2\pi \sum_{I_1\dot \cup I_2=[N]}\sum_{i\in I_1,j\in I_2}\int_0^t ds \frac{e^{-\frac{|x_i-x_j|^2}{4s}}}{4\pi s} h_s^{|I_1|,|I_1|}(\xi_{I_1}) h_s^{|I_2|,|I_2|}(\xi_{I_2})\nnb
&=:h_t^{N,N}(\xi_1,\dots,\xi_N).
\end{align}
The sum over $I_1\dot \cup I_2$ symmetrizes this function so $h_t^{N,N}$ is symmetric in its arguments by construction. Using this symmetry in arguments, our induction hypothesis, and that, for $s \leq 1$,
\begin{equation}
  \int_{\R^2}du\, \frac{e^{-\frac{|u|^2}{4s}}}{4\pi s}
  e^{\tdist(0,u)} \lesssim 1,
\end{equation}
and \eqref{e:rho-add},
we find that
\begin{equation}
\|h_t^{N,N}\|_N\leq 2\pi \sum_{I_1\dot \cup I_2=[N]}|I_1|^{|I_1|-1}|I_2|^{|I_2|-1} C_\Sigma^{N-2} C^N \int_0^t ds s^{-2+N(1-\frac{\max_{\sigma\in \Sigma_\alpha}(\sigma^2)}{8\pi})}.
\end{equation}
Recalling that $\max_{\sigma\in \Sigma_\alpha}(\sigma^2)<4\pi$, we see that this integral converges for $N\geq 2$. Moreover,
\begin{align}
\sum_{I_1\dot \cup I_2=[N]}|I_1|^{|I_1|-1}|I_2|^{|I_2|-1}=\sum_{k=1}^{N-1}k^{k-1}(N-k)^{N-k-1}=2(N-1)N^{N-2},
\end{align}
see, for example, \cite[Proof of Proposition 4.1]{MR4767492}.
Thus we have 
\begin{align}
\|h_t^{N,N}\|_N \leq \frac{4\pi (N-1)}{N(1-\frac{\max_{\sigma\in \Sigma_{\alpha}}(\sigma^2)}{8\pi})-1} N^{N-2} t^{-1}C_\Sigma^{N-2}(Ct^{1-\frac{\max_{\sigma\in \Sigma_\alpha}(\sigma^2)}{8\pi}})^N.
\end{align}
For $N\geq 2$, this prefactor is bounded by some $\Sigma$-dependent quantity, so by possibly adjusting $C_\Sigma$ so that it is larger than this prefactor for all $N$, we recover the stronger claim, and as mentioned, this implies Proposition \ref{pr:vtnorm} for the case $n=k$.
\end{proof}

For $k<n$ we may encounter integrals of the type $\int_0^t ds\, s^{-2+n(1-\frac{\beta}{8\pi})}$. As $\beta\in [4\pi,6\pi)$, this integral may not converge for small $n$. Thus for the $k<n$ case, we need to consider the small $n$ terms separately. After this, we can run essentially the same induction argument.

\subsubsection{Proof of Proposition \ref{pr:vtnorm} for $0\leq k<n\leq 4$}

The case of $n=1$ (and $k=0$) is simple since from \eqref{eq:vtilk1} we have (with the same calculation as in the $n=k=1$ case) for $\epsilon^2\leq t<\min(1,m^{-2})$
\begin{align}
0\leq \tilde V_t^{0,1}(\xi|m,\delta,\epsilon)\leq e t^{-\frac{\beta}{8\pi}}=: h_t^{0,1}(\xi).
\end{align}
For $n=2$, our starting point is that, from \eqref{eq:vtilk1} and \eqref{eq:vtilkn},
\begin{align}\label{eq:V2explicit1}
\tilde V_t^{0,2}(\xi_1,\xi_2|m,\delta,\epsilon)&=\sigma_1\sigma_2\int_{\epsilon^2}^t ds\, \dot C_s(x_1-x_2) e^{-\frac{\sigma_1^2+\sigma_2^2}{2}(\int_{\epsilon^2}^s dr\, \dot C_r(0)+\frac{1}{4\pi}\log \epsilon^2)}e^{-\frac{\sigma_1^2+\sigma_2^2}{2}\int_s^t dr\, \dot C_r(0)} \nnb
&\qquad \qquad \times e^{-\sigma_1\sigma_2 \int_s^tdr\, \dot C_r(x_1-x_2)}\nnb
&=\tilde V_t^{0,1}(\xi_1|m,\delta,\epsilon)\tilde V_t^{0,1}(\xi_2|m,\delta,\epsilon)\left(1-e^{-\sigma_1\sigma_2\int_{\epsilon^2}^t dr\, \dot C_r(x_1-x_2)}\right)
\end{align}
and similarly 
\begin{equation}\label{eq:V2explicit2}
  \tilde V_t^{1,2}(\xi_1,\xi_2|m,\delta,\epsilon)=\tilde V_t^{1,1}(\xi_1|m,\delta,\epsilon)\tilde V_t^{0,1}(\xi_2|m,\delta,\epsilon)\left(1-e^{-\sigma_1\sigma_2\int_{\epsilon^2}^t dr\, \dot C_r(x_1-x_2)}\right).
\end{equation}
Using these explicit formulas, we can prove Proposition~\ref{pr:vtnorm} for $n=2$.

\begin{proof}[Proof of Proposition~\ref{pr:vtnorm} for $n=2$]
We first of all note that 
\begin{equation}
\left|1-e^{-\sigma_1\sigma_2\int_{\epsilon^2}^t dr\, \dot C_r(x_1-x_2)}\right|\leq \left|1-e^{-\sigma_1\sigma_2 \int_0^t ds\, \dot C_s(x_1-x_2)}\right|.
\end{equation}
In case $\sigma_1\sigma_2\geq 0$, we simply define (using that fact that for $x\geq 0$, $|1-e^{-x}|\leq x$ and noting that for $t<1$, $h_t^{1,1}(\xi)\leq h_t^{0,1}(\xi)$)
\begin{equation}
  h_t^{k,2}(\xi_1,\xi_2)=h_t^{0,1}(\xi_1)h_t^{0,1}(\xi_2)\sigma_1\sigma_2 \int_0^t ds\, \frac{1}{4\pi s} e^{-\frac{|x_1-x_2|^2}{4s}},
\end{equation}
and note that for $0<t<1$ (by Fubini and shifting the $x_2$ integration variable by $x_1$)
\begin{equation}
  \|h_t^{k,2}\|_{2}
  \leq C t^{-\frac{\beta}{8\pi}}  t^{-\frac{\beta}{8\pi}} \sigma_1\sigma_2 \int_0^t ds\, \int_{\R^2} dx_2 \frac{1}{4\pi s} e^{-\frac{|x_2|^2}{4s}} e^{\tdist(0,x_2)}
  \leq C 6\pi  t^{1-\frac{\beta}{4\pi}}, 
\end{equation}
which is the desired bound. 

We thus look at $\sigma_1\sigma_2<0$. Here we note that 
\begin{align}
\left|1-e^{-\sigma_1\sigma_2 \int_0^t ds\, \dot C_s(x_1-x_2)}\right|=|\sigma_1\sigma_2|\int_0^t ds\, \dot C_s(x_1-x_2) e^{|\sigma_1\sigma_2|\int_0^s dr\, \dot C_r(x_1-x_2)}.
\end{align}
By \cite[Lemma 4.4]{MR4767492}, we have for $s<m^{-2}$ (which is satisfied in our situation), 
\begin{equation}
\int_0^s dr\, \dot C_r(x_1-x_2)=-\frac{1}{2\pi}\log \left(\frac{|x_1-x_2|}{\sqrt{s}}\wedge 1\right)+O(1),
\end{equation}
where the implied constant is universal. We thus define for a suitable universal $C>0$ 
\begin{align}
h_t^{k,2}(\xi_1,\xi_2)=C h_t^{0,1}(\xi_1)h_t^{0,1}(\xi_2)\int_0^t ds\, \frac{1}{4\pi s}e^{-\frac{|x_1-x_2|^2}{4s}}\left(\frac{|x_1-x_2|}{\sqrt{s}}\wedge 1\right)^{-\frac{|\sigma_1\sigma_2|}{2\pi}}
\end{align}
Recall from the statement of Proposition \ref{pr:vtnorm} that we are considering only the situation where $\sigma_1\sigma_2\neq -\beta$. This means that $\sigma_1=\sqrt{4\pi}\alpha_1\in \Sigma_\alpha$ so by the assumption $\alpha_1^2<2-\frac{\beta}{4\pi}$, we see that 
\begin{align}
|\sigma_1\sigma_2| < 4\pi  \sqrt{\left(2-\frac{\beta}{4\pi}\right)\frac{\beta}{4\pi}}\leq 4\pi.
\end{align}
Thus (with a change of variables)
\begin{align}
\|h_t^{k,2}\|_2 &\leq C t^{-\frac{\beta}{4\pi}}\int_0^t ds \int_{\R^2} du\, \frac{1}{4\pi s} e^{-\frac{|u|^2}{4s}}\left(\frac{|u|}{\sqrt{s}}\wedge 1\right)^{-\frac{|\sigma_1\sigma_2|}{2\pi}} e^{\tdist(0,u)}\nnb
&\leq C t^{1-\frac{\beta}{4\pi}}\int_{\R^2} du\, \frac{1}{4\pi} e^{-\frac{|u|^2}{4}}(|u|\wedge 1)^{-\frac{|\sigma_1\sigma_2|}{2\pi}} e^{|u|}.
\end{align}
Since $|\sigma_1\sigma_2|<4\pi$, this last integral is just some $\Sigma$-dependent constant. Thus we have the bound claimed in Proposition \ref{pr:vtnorm} also in this case. This concludes the proof for $0\leq k<n=2$.
\end{proof}

We move on to $n=3$. For $\tilde V_t^{k,3}$, we find from \eqref{eq:vtilkn} and the explicit formulas for $n=2$ that
\begin{align}\label{eq:vt03}
\tilde V_t^{0,3}(\xi_1,\xi_2,\xi_3|m,\delta,\epsilon)&=\prod_{j=1}^3\tilde V_t^{0,1}(\xi_j|m,\delta,\epsilon) \int_{\epsilon^2}^tds\, (\sigma_1 \sigma_2\dot C_2(x_1-x_2)+\sigma_1\sigma_3 \dot C_s(x_1-x_3))\nnb
&\quad \times \left(1-e^{-\sigma_2\sigma_3\int_{\epsilon^2}^s dr\, \dot C_r(x_2-x_3)}\right)e^{-\sum_{1\leq i<j\leq 3}\sigma_i\sigma_j \int_{s}^t dr\, \dot C_r(x_i-x_j)}\nnb
&\quad +\text{permutations},
\end{align}
where ``permutations'' refers to permutations of the indices under the $s$ integral. Similarly
\begin{align}\label{eq:vt13}
\tilde V_t^{1,3}(\xi_1,\xi_2,\xi_3|m,\delta,\epsilon)&=\tilde V_t^{1,1}(\xi_1|m,\delta,\epsilon) \prod_{j=2}^3 \tilde V_t^{0,1}(\xi_j|m,\delta,\epsilon) \nnb
&\quad \times \int_{\epsilon^2}^tds\, (\sigma_1 \sigma_2\dot C_2(x_1-x_2)+\sigma_1\sigma_3 \dot C_s(x_1-x_3))\nnb
&\quad \times \left(1-e^{-\sigma_2\sigma_3\int_{\epsilon^2}^s dr\, \dot C_r(x_2-x_3)}\right)e^{-\sum_{1\leq i<j\leq 3}\sigma_i\sigma_j \int_{s}^t dr\, \dot C_r(x_i-x_j)}\nnb
&\quad +\text{permutations}
\end{align}
and
\begin{align}\label{eq:vt23}
\tilde V_t^{2,3}(\xi_1,\xi_2,\xi_3|m,\delta,\epsilon)&=\prod_{j=1}^2\tilde V_t^{1,1}(\xi_j|m,\delta,\epsilon)  \tilde V_t^{0,1}(\xi_3|m,\delta,\epsilon) \nnb
&\quad \times \int_{\epsilon^2}^tds\, (\sigma_1 \sigma_2\dot C_2(x_1-x_2)+\sigma_1\sigma_3 \dot C_s(x_1-x_3))\nnb
&\quad \times \left(1-e^{-\sigma_2\sigma_3\int_{\epsilon^2}^s dr\, \dot C_r(x_2-x_3)}\right)e^{-\sum_{1\leq i<j\leq 3}\sigma_i\sigma_j \int_{s}^t dr\, \dot C_r(x_i-x_j)}\nnb
&\quad +\text{permutations},
\end{align}
where now in ``permutations'' one needs to be slightly more careful as for the permutation for which $\sigma_2\sigma_3$ gets replaced by $\sigma_1\sigma_2$, we need to replace $1-e^{-\sigma_2\sigma_3\int_{\epsilon^2}^s dr\, \dot C_r(x_2-x_3)}$ by 
\begin{equation}
1-e^{-\sigma_1\sigma_2\int_{\delta^2}^s dr\, \dot C_r(x_1-x_2)}
\end{equation}
(since in this case, we have the term $\tilde V_s^{2,2}(\xi_1,\xi_2|m,\delta,\epsilon)$ appearing in our recursion).

To bound $\tilde V_t^3$, we record the following lemma (which is a generalization of \cite[Lemma 4.5]{MR4767492}).
\begin{lemma}\label{le:n3bound1}
For $s>0$, there exists a function $F_s:(\R^2\times \Sigma)^3\to [0,\infty]$ which is invariant under permutations of its arguments and is independent of $\delta,m$ such that for $\xi_1,\xi_2,\xi_3\in \R^2\times \Sigma$ and $0<\epsilon^2\leq s\leq m^{-2}$, we have 
\begin{align}
\left| (\sigma_1\sigma_2\dot C_s(x_1,x_2)+\sigma_1\sigma_3\dot C_s(x_1,x_3))
\left(1-e^{-\sigma_2\sigma_3\int_{\epsilon^2}^s dr\, \dot C_r(x_2-x_3)}\right)\right|\leq F_s(\xi_1,\xi_2,\xi_3)
\end{align}
and 
\begin{align}
\|F_s\|_3\leq C_\Sigma s
\end{align}
for some finite $C_\Sigma>0$ depending only on $\Sigma$. The same bound holds if we replace in the exponential $\epsilon^2$ by $\delta^2$ with $\delta<\epsilon$.
\end{lemma}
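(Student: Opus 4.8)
\textbf{Proof proposal for Lemma~\ref{le:n3bound1}.} The plan is to decompose the product into the case where $\sigma_2\sigma_3 \geq 0$ and the case $\sigma_2\sigma_3 < 0$, since the factor $1-e^{-\sigma_2\sigma_3\int_{\epsilon^2}^s \dot C_r(x_2-x_3)\,dr}$ behaves quite differently in the two situations. Throughout I will use the two standard estimates on the scale decomposition that were already established in the excerpt and its references (see \cite[Lemma~4.4]{MR4767492}): first, the pointwise Gaussian bound $0 \leq \dot C_r(x) \lesssim r^{-1} e^{-|x|^2/Cr}$, and second, the integrated bound $\int_0^s \dot C_r(x)\,dr = -\frac{1}{2\pi}\log(|x|/\sqrt{s}\wedge 1) + O(1)$ with a universal implied constant, valid for $s < m^{-2}$.

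First I would handle $\sigma_2\sigma_3 \geq 0$, where $|1-e^{-\sigma_2\sigma_3\int_{\epsilon^2}^s \dot C_r\,dr}| \leq \sigma_2\sigma_3 \int_0^s \dot C_r(x_2-x_3)\,dr \lesssim 1 + |\log(|x_2-x_3|/\sqrt s \wedge 1)|$. Combining this with $|\sigma_1\sigma_2|\dot C_s(x_1-x_2) + |\sigma_1\sigma_3|\dot C_s(x_1-x_3) \lesssim s^{-1}(e^{-|x_1-x_2|^2/Cs} + e^{-|x_1-x_3|^2/Cs})$ gives a dominating function $F_s$; the $\|\cdot\|_3$ norm then reduces, after translating out $x_1$ and rescaling $x_i = \sqrt s\,y_i$, to a convergent integral of the form $s \int_{\R^4} (e^{-|y_2|^2/C} + e^{-|y_3|^2/C})(1 + |\log(|y_2-y_3|\wedge 1)|)\, e^{\lambda(0,\sqrt s y_2,\sqrt s y_3)}\,dy_2\,dy_3$, where the weight $e^{\lambda}$ is controlled using the tree inequality \eqref{e:rho-add} and the fact that $\lambda(0,\sqrt s y_2,\sqrt s y_3) \leq \sqrt s(|y_2| + |y_2-y_3|)$, absorbed into the Gaussian for $s\le 1$ and bounded by a $\Sigma$-dependent constant for $s \le m^{-2}$ (here one can afford a crude bound since the statement only claims $\|F_s\|_3 \leq C_\Sigma s$, and $s$ ranges over a bounded set once $m$ is fixed — but in fact the Gaussian decay makes the weight harmless uniformly). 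This yields $\|F_s\|_3 \lesssim s$.

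The genuinely delicate case is $\sigma_2\sigma_3 < 0$, where $1-e^{-\sigma_2\sigma_3\int_{\epsilon^2}^s \dot C_r\,dr} = |\sigma_2\sigma_3| \int_{\epsilon^2}^s \dot C_r(x_2-x_3)\,dr\; e^{|\sigma_2\sigma_3|\int_{\epsilon^2}^s \dot C_r(x_2-x_3)\,dr}$, and by the integrated bound the exponent contributes a factor $\lesssim (|x_2-x_3|/\sqrt s \wedge 1)^{-|\sigma_2\sigma_3|/2\pi}$, i.e.\ a \emph{positive} power singularity at $x_2 = x_3$. The key point — exactly as in the $n=2$ analysis — is that since we exclude $\sigma_2\sigma_3 = -\beta$, at least one of $\sigma_2,\sigma_3$ lies in $\Sigma_\alpha$, and the constraint $\alpha_i^2 < 2 - \beta/4\pi$ forces $|\sigma_2\sigma_3| < 4\pi\sqrt{(2-\beta/4\pi)(\beta/4\pi)} \le 4\pi$ (using $\max_{\sigma\in\Sigma_\beta}\sigma^2 = \beta$ and that the other factor is at most $\beta$ or $4\pi(2-\beta/4\pi)$, in either case bounded by $4\pi$ times something $\le 1$). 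Hence $|\sigma_2\sigma_3|/2\pi < 2$, so $|x_2-x_3|^{-|\sigma_2\sigma_3|/2\pi}$ is locally integrable in $\R^2$. I would then define $F_s$ by the product of this power-singularity bound with the Gaussian bounds on $\dot C_s(x_1-x_2)$ and $\dot C_s(x_1-x_3)$, plus the corresponding integral $\int_0^s \dot C_r(x_2-x_3)\,dr \lesssim 1 + |\log(|x_2-x_3|/\sqrt s \wedge 1)|$; after translating out $x_1$ and rescaling by $\sqrt s$, the $x_3$-integral converges because $-|\sigma_2\sigma_3|/2\pi > -2$, and the overall factor is $s$ times a $\Sigma$-dependent constant. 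The claimed variant with $\delta^2$ in place of $\epsilon^2$ follows verbatim, since $\delta < \epsilon \le \sqrt s$ and the integrated bound is insensitive to the lower cutoff up to the universal $O(1)$. The main obstacle is simply bookkeeping the weight $e^{\lambda(x_1,x_2,x_3)}$ together with the power singularity so that the rescaled integral stays finite; this is routine once one invokes \eqref{e:rho-add} and the exclusion $\sigma_2\sigma_3\ne-\beta$.
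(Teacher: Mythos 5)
There is a genuine gap: you have misread the quantifier in the lemma. The statement asserts a bound valid for \emph{all} $\xi_1,\xi_2,\xi_3 \in \R^2\times\Sigma$; it does not exclude $\sigma_2\sigma_3 = -\beta$. (The exclusion $\sigma_1\sigma_2\ne-\beta$ appears in Proposition~\ref{pr:vtnorm} for the $n=2$ kernel, but it is not a hypothesis here.) Your second branch $\sigma_2\sigma_3<0$ therefore silently contains the case $\sigma_2=-\sigma_3=\pm\sqrt{\beta}$, where your argument fails: the power singularity you isolate is $(|x_2-x_3|/\sqrt s\wedge 1)^{-|\sigma_2\sigma_3|/2\pi}=(|x_2-x_3|/\sqrt s\wedge 1)^{-\beta/2\pi}$, and since $\beta\in[4\pi,6\pi)$ in this appendix, the exponent $\beta/2\pi\ge 2$ makes this \emph{not} locally integrable in $\R^2$. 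Your claim that $|\sigma_2\sigma_3|<4\pi$ is only correct when at least one of $\sigma_2,\sigma_3$ lies in $\Sigma_\alpha$, which is precisely what fails in this case.

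The paper handles this by splitting on $\sigma_2\sigma_3 = -\beta$ versus $\sigma_2\sigma_3\ne-\beta$ rather than on the sign. When $\sigma_2\sigma_3=-\beta$, one has $\sigma_2=-\sigma_3$, so the first factor collapses to
\begin{equation}
\sigma_1\sigma_2\,\dot C_s(x_1-x_2)+\sigma_1\sigma_3\,\dot C_s(x_1-x_3)
=\sigma_1\sigma_2\bigl(\dot C_s(x_1-x_2)-\dot C_s(x_1-x_3)\bigr),
\end{equation}
a \emph{difference} of heat kernels that vanishes linearly as $x_3\to x_2$. This cancellation supplies the extra factor of $|x_2-x_3|/\sqrt{s}$ that compensates the non-integrable power singularity; the paper simply cites the analysis of this exact expression from \cite[Proof of Lemma~4.5, case~1]{MR4767492}. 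Your proposal never exploits (or even notices) this cancellation, so it cannot produce an integrable dominating function in the case $\sigma_2\sigma_3=-\beta$. For the remaining case $\sigma_2\sigma_3\ne-\beta$ your reasoning is essentially the paper's, including the observation that $|\sigma_2\sigma_3|<4\pi$ and the scaling argument, and your treatment of the $\delta^2$ versus $\epsilon^2$ lower cutoff is fine.
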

\begin{proof}
There are essentially two cases to consider: i) $\sigma_2\sigma_3=-\beta$ and ii) $\sigma_2\sigma_3\neq -\beta$. In the first case, we have 
\begin{align}
|\sigma_1\sigma_2\dot C_s(x_1-x_2)+\sigma_1\sigma_3\dot C_s(x_1-x_3)|=|\sigma_1|\sqrt{\beta}|\dot C_s(x_1-x_2)-\dot C_s(x_1-x_3)|.
\end{align}
Apart from the multiplicative prefactor, which does not affect the bound we are after, this is exactly the situation that was dealt with in \cite[Proof of Lemma 4.5, case 1]{MR4767492}. We can thus focus on case ii).
In case ii), we note that due to the condition that $\alpha_i^2<2-\frac{\beta}{4\pi}$, we have 
\begin{align}
\sigma_2\sigma_3>-\gamma_\Sigma,
\end{align}
where $\gamma_\Sigma<4\pi$. To be more precise, if $\sigma_2=\sqrt{4\pi}\alpha_i$ and $\sigma_3=\sqrt{4\pi}\alpha_j$, then this follows from the condition that $\alpha_i^2<1$ (since $\beta\geq 4\pi$), while if $\sigma_2=\sqrt{4\pi}\alpha_i$ and $\sigma_3=\pm\sqrt{\beta}$, then as $\alpha_i^2<2-\frac{\beta}{4\pi}$, we have $|\sigma_2\sigma_3|<\sqrt{\beta(8\pi-\beta)}\leq 4\pi$ (and the case $\sigma_2=\sigma_3=\sqrt{\beta}$ is obvious). We thus see that 
\begin{align}
&\left| (\sigma_1\sigma_2\dot C_s(x_1,x_2)+\sigma_1\sigma_3\dot C_s(x_1,x_3))
\left(1-e^{-\sigma_2\sigma_3\int_{\epsilon^2}^s dr\, \dot C_r(x_2-x_3)}\right)\right|\nnb
&\leq 6\pi(\dot C_s(x_1-x_2)+\dot C_2(x_1-x_3))\left(e^{\gamma_\Sigma\int_0^{s}dr\, \dot C_r(x_2-x_3)}-1\right) \nnb
&=6\pi \gamma_\Sigma(\dot C_s(x_1-x_2)+\dot C_s(x_1-x_3))\int_0^s dr\, \dot C_r(x_2-x_3)e^{\gamma_\Sigma C_r(x_2-x_3)}.  
\end{align}
The same bound holds if $\epsilon^2$ is replaced by $\delta^2$.

Note that this is symmetric in $x_2$ and $x_3$, so it is enough to control only one of the terms appearing in the sum. By \cite[Lemma 4.4]{MR4767492}, we have for $r<s\leq m^{-2}$ 
\begin{align}
C_r(x_2-x_3)=-\frac{1}{2\pi}\log\left(\frac{|x_2-x_3|}{\sqrt{r}}\wedge 1\right)+O(1),
\end{align}	
where the implied constant is universal, so we have 
\begin{align}
&\dot C_s(x_1-x_2)\int_0^s dr\, \dot C_r(x_2-x_3)e^{\gamma_\Sigma C_r(x_2-x_3)}\nnb
&\leq C \frac{1}{4\pi s} e^{-\frac{|x_1-x_2|^2}{4s}}\int_0^s dr\, \frac{1}{4\pi r} e^{-\frac{|x_2-x_3|^2}{4r}}\left(\frac{|x_2-x_3|}{\sqrt{r}}\wedge 1\right)^{-\frac{\gamma_\Sigma}{2\pi}}. 
\end{align} 
By changing variables, we see that the $\|\cdot\|_3$-norm of this quantity is bounded by 
\begin{align}
  &C\int_{\R^2}da\, \frac{1}{4\pi s}e^{-\frac{|a|^2}{2s}} \int_0^s dr\, \int_{\R^2} db \frac{1}{4\pi r} e^{-\frac{|b|^2}{4r}}\left(\frac{|b|}{\sqrt{r}}\wedge 1\right)^{-\frac{\gamma_\Sigma}{2\pi}}
    e^{\tdist(0,a,b)}
    \nnb
  &\leq Cs  \int_{\R^2} da\, \frac{1}{4\pi} e^{-\frac{|a|^2}{2}} e^{|a|} \int_{\R^2} db \frac{1}{4\pi} e^{-\frac{|b|^2}{4}}\left(|b|\wedge 1\right)^{-\frac{\gamma_\Sigma}{2\pi}} e^{|b|}.
\end{align}
This last integral is finite due to the condition that $\gamma_\Sigma<4\pi$, and we have the bound we are after. So in this case, we find our function $F_s$ by symmetrizing the function 
\begin{align}
6\pi \gamma_\Sigma C \frac{1}{4\pi s}e^{-\frac{|x_1-x_2|^2}{4s}} \int_0^s dr \frac{1}{4\pi r}e^{-\frac{|x_2-x_3|^2}{4r}} \left(\frac{|x_2-x_3|}{\sqrt{r}}\wedge 1\right)^{-\frac{\gamma_\Sigma}{2\pi}}.
\end{align}
over the permutations of $\{1,2,3\}$.
\end{proof}

This allows us to prove Proposition \ref{pr:vtnorm} in the $n=3$ case (where again the proof mimics that of \cite[Lemma 4.7]{MR4767492}).
\begin{proof}[Proof of Proposition \ref{pr:vtnorm} for $n=3$]
Given Lemma \ref{le:n3bound1} and our explicit formulas for $\tilde V_t^{k,3}$, our task is to upper bound the quantity 
\begin{align}
e^{-\sum_{1\leq i<j\leq 3}\sigma_i\sigma_j\int_s^t dr\, \dot C_r(x_i-x_j)}.
\end{align}
Indeed, we will prove that for $s\leq t$  
\begin{equation}\label{eq:n3energy}
\exp\left(-\sum_{1\leq i<j\leq 3}\sigma_i\sigma_j \int_s^t dr\, \dot C_r(x_i-x_j)\right)\leq C_\Sigma \left(\frac{t}{s}\right)^{\frac{\beta}{4\pi}}
\end{equation}
For some finite $C_\Sigma>0$ depending only on $\Sigma$.

If all of the $\sigma_i$ have the same sign, the exponential is simply bounded by one, so we can focus on the case where the sign of one is different from the other two. By symmetry, we can assume that this is $\sigma_3$. Again by symmetry, we can assume that the worst case scenario is that $|\sigma_1\sigma_2|\leq |\sigma_1\sigma_3|,|\sigma_2\sigma_3|$ and $|x_1-x_2|\leq |x_1-x_3|,|x_2-x_3|$.

By the triangle inequality, either $|x_1-x_3|\geq \frac{|x_1-x_2|}{2}$ or $|x_2-x_3|\geq \frac{|x_1-x_2|}{2}$.
Let us assume that it is the first case we are dealing with (the latter one is dealt with in a similar manner).
Then
\begin{align} \label{eq:n3energy-bis}
&\sigma_1\sigma_2\int_s^t dr\, \dot C_r(x_1-x_2)+\sigma_1\sigma_3\int_s^t dr\, \dot C_r(x_1-x_3)+\sigma_2\sigma_3\int_s^t dr\, \dot C_r(x_2-x_3)\nnb
&\geq |\sigma_1\sigma_2|\int_s^t dr\,  \dot C_r(x_1-x_2)-|\sigma_1\sigma_3|\int_s^t dr \, \dot C_r((x_1-x_2)/2) -|\sigma_2\sigma_3| \int_s^t dr\, \dot C_r(0)\nnb
&\geq |\sigma_1\sigma_2|\int_s^t dr\,  \dot C_r(x_1-x_2)-|\sigma_1\sigma_3|\int_s^t dr \, \dot C_r((x_1-x_2)/2) -\frac{|\sigma_2\sigma_3|}{4\pi}\log \frac{t}{s}\nnb
&\geq |\sigma_1\sigma_2|\int_s^t dr\, (\dot C_r(x_1-x_2)-\dot C_r((x_1-x_2)/2))-\frac{|\sigma_1\sigma_3|-|\sigma_1\sigma_2|+|\sigma_2\sigma_3|}{4\pi}\log \frac{t}{s}.
\end{align}
We begin with the first term on the right-hand side and show that it is uniformly bounded.
Using that the integrand is negative here (by definition of $\dot C_r$) we have in fact
\begin{align}
\int_s^t dr\, (\dot C_r(x_1-x_2)-\dot C_r((x_1-x_2)/2))&\geq \int_0^t dr\, (\dot C_r(x_1-x_2)-\dot C_r((x_1-x_2)/2))\nnb
&=C_t(x_1-x_2)-C_t((x_1-x_2)/2). 
\end{align} 
Now \cite[Lemma 4.4]{MR4767492} states that for $t<m^{-2}$, 
\begin{equation}
C_t(u)=-\frac{1}{2\pi}\log \left(\frac{|u|}{\sqrt{t}}\wedge 1\right)+O(1),
\end{equation}
where the implied constant is again universal.
Thus going over the cases $|x_1-x_2|\leq \sqrt{t}$, $\sqrt{t}<|x_1-x_2|\leq 2\sqrt{t}$, $|x_1-x_2|>2\sqrt{t}$, one readily sees that 
\begin{equation}
C_t(x_1-x_2)-C_t((x_1-x_2)/2)=O(1)
\end{equation}
where the implied constant is universal.
To bound the second term on the right-hand side of \eqref{eq:n3energy-bis} and deduce \eqref{eq:n3energy},
it is sufficient for us to prove that 
\begin{equation}
|\sigma_1\sigma_3|-|\sigma_1\sigma_2|+|\sigma_2\sigma_3|\leq \beta.
\end{equation}
Recall that we have the conditions $\sigma_1\sigma_2>0$, $\sigma_1\sigma_3<0$ and $|\sigma_1\sigma_2|\leq |\sigma_1\sigma_3|,|\sigma_2\sigma_3|$.
The bound follows by considering the different possible cases.
Namely, there are three different cases: (1) $|\sigma_i|=\sqrt{\beta}$ for all $i$,
(2) $|\sigma_i|=\sqrt{\beta}$ for exactly two $i$, and (3) $|\sigma_i|=\sqrt{\beta}$ for exactly one $i$
(note that $|\sigma_i|\neq \sqrt{\beta}$ for all $i$ is not possible since we have $k\neq 3$).

In case (1), we see that $|\sigma_1\sigma_3|-|\sigma_1\sigma_2|+|\sigma_2\sigma_3|=\beta$, and the claim is fine. 
In case (2), the worst case is $|\sigma_1|\neq \sqrt{\beta}$ (or by symmetry $|\sigma_2|\neq \sqrt{\beta}$). In any event, we see that $|\sigma_1\sigma_3|-|\sigma_1\sigma_2|+|\sigma_2\sigma_3|=\beta$ (since two terms cancel and the third one is $\beta$).
In case (3), the worst case is $|\sigma_1|,|\sigma_2|\neq \sqrt{\beta}$. Since the function 
\begin{equation}
f(x,y)=\sqrt{\beta}x-xy+\sqrt{\beta}y
\end{equation}
is increasing in both $x,y\in [0,\sqrt{4\pi}]$, we see that in this case, 
\begin{equation}
|\sigma_1\sigma_3|-|\sigma_1\sigma_2|+|\sigma_2\sigma_3|\leq \sqrt{4\pi \beta}-4\pi +\sqrt{4\pi \beta}=-(\sqrt{\beta}-\sqrt{4\pi})^2 +\beta\leq \beta.
\end{equation}
Thus we have proven \eqref{eq:n3energy}.

To conclude our proof, we combine \eqref{eq:n3energy} with Lemma \ref{le:n3bound1} and \eqref{eq:vt03}--\eqref{eq:vt23}, and define 
\begin{equation}
h_t^{k,3}(\xi_1,\xi_2,\xi_3)=3C_\Sigma h_t^{0,1}(\xi_1) h_t^{0,1}(\xi_2)h_t^{0,3}(\xi_3)\int_0^t ds\, F_s(\xi_1,\xi_2,\xi_3)\left(\frac{t}{s}\right)^{\frac{\beta}{4\pi}}.
\end{equation}
With this definition, we see that $|\tilde V_t^{k,3}|\leq h_t^{k,3}$ and from Lemma \ref{le:n3bound1}, we see with a change of variables that 
\begin{align}
\|h_t^{k,3}\|_3\leq C_\Sigma t^{-3\frac{\beta}{8\pi}}t^2 \int_0^1 ds\, s^{1-\frac{\beta}{4\pi}}
\end{align}
for some possibly different constant $C_\Sigma$ (still depending only on $\Sigma$). Since this integral is convergent, this is precisely the claim of Proposition \ref{pr:vtnorm} for $n=3$.
\end{proof}

Now consider the $n=4$ case. From \eqref{eq:vtilkn}, we see that (up to permutations) there are two types of terms that we encounter when computing $\tilde V_t^{k,4}$ with $0\leq k<4$:
\begin{align}
\int_{\epsilon^2}^t ds\, \sum_{j=2}^4 \sigma_1 \sigma_j \dot C_s(x_1-x_j)\tilde V_s^{k_1,1}(\xi_1|m,\delta,\epsilon)\tilde V_s^{k_2,3}(\xi_2,\xi_3,\xi_4|m,\delta,\epsilon) e^{-\frac{1}{2}\int_s^t ds\, \dot C_r(x_i-x_j)}
\end{align}
and 
\begin{align}
\int_{\epsilon^2}^t ds\, \sum_{i\in \{1,2\},j\in \{3,4\}} \sigma_i \sigma_j \dot C_s(x_i-x_j)\tilde V_s^{k_1,2}(\xi_1,\xi_2|m,\delta,\epsilon)\tilde V_s^{k_2,2}(\xi_3,\xi_4|m,\delta,\epsilon) e^{-\frac{1}{2}\int_s^t ds\, \dot C_r(x_i-x_j)},
\end{align}
where $k_1+k_2=k$.

The first case will be simple to bound based on our bound for $n=3$, but the second case will require some more care. Using the explicit form of $\tilde V_t^{k,2}$, we see that it becomes important to bound terms of the form 
\begin{align}
\sum_{i\in \{1,2\},j\in\{3,4\}}\sigma_i\sigma_j \dot C_s(x_i-x_j)(1-e^{-\sigma_1\sigma_2\int_{\epsilon^2}^s dr\, \dot C_r(x_1-x_2)})(1-e^{-\sigma_3\sigma_4\int_{\epsilon^2}^s dr\, \dot C_r(x_3-x_4)}),
\end{align}
(and terms where $\int_{\epsilon^2}^s dr\, \dot C_r(x_1-x_2)$ is replaced by $\int_{\delta^2}^s dr\, \dot C_r(x_1-x_2)$ in the $k=2$ and $k=3$ cases).
The next lemma (which is a generalization of \cite[Lemma 4.6]{MR4767492}) takes care of this bound.

\begin{lemma}\label{le:n4bound}
For $s>0$ there exists a function $G_s:(\R^2\times \Sigma)^4\to [0,\infty]$ which is independent of $\delta,m$, symmetric in its arguments, such that for $0<\epsilon^2<s<m^{-2}$
\begin{align}
&\left|\sum_{i\in \{1,2\},j\in\{3,4\}}\sigma_i\sigma_j \dot C_s(x_i-x_j)(1-e^{-\sigma_1\sigma_2\int_{\epsilon^2}^s dr\, \dot C_r(x_1-x_2)})(1-e^{-\sigma_3\sigma_4\int_{\epsilon^2}^s dr\, \dot C_r(x_3-x_4)})\right|\nnb
&\qquad \qquad \leq G_s(\xi_1,\xi_2,\xi_3,\xi_4)
\end{align}
and 
\begin{align}
\|G_s\|_4\leq C_\Sigma s^2
\end{align}
for some finite $C_\Sigma$ which depends only on $\Sigma$. The same bound holds if we replace $\int_{\epsilon^2}^\infty ds\, C_r(x_1-x_2)$ by $\int_{\delta^2}^s dr\, \dot C_r(x_1-x_2)$ for any $\delta<\epsilon$.
\end{lemma}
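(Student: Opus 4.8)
\textbf{Proof proposal for Lemma~\ref{le:n4bound}.}

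The plan is to follow the blueprint of Lemma~\ref{le:n3bound1} and \cite[Lemma~4.6]{MR4767492}, treating the cases according to whether or not a factor $\sigma_i\sigma_j$ equals $-\beta$. First I would dispose of the (easy) case where all of $\sigma_1,\sigma_2,\sigma_3,\sigma_4$ have the same sign, where each $1-e^{-\sigma_a\sigma_b\int\dot C}$ has a sign making the product bounded by the naive estimate $|\sigma_a\sigma_b|\int_0^s\dot C_r(x_a-x_b)\,dr$ and one reduces directly to integrable heat-kernel convolutions using the bound $\int_0^s\dot C_r(u)\,dr\lesssim 1+|\log(|u|/\sqrt s)|$ from \cite[Lemma~4.4]{MR4767492} together with $\dot C_s(u)\le\frac{1}{4\pi s}e^{-|u|^2/4s}$. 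The more delicate case is when, say, $\sigma_1\sigma_2<0$ (and possibly also $\sigma_3\sigma_4<0$). Here one writes, exactly as in the proof of Lemma~\ref{le:n3bound1},
\begin{equation}
\big|1-e^{-\sigma_1\sigma_2\int_{\epsilon^2}^s\dot C_r(x_1-x_2)\,dr}\big|\le|\sigma_1\sigma_2|\int_0^s\dot C_r(x_1-x_2)\,dr\; e^{|\sigma_1\sigma_2|\int_0^s\dot C_r(x_1-x_2)\,dr},
\end{equation}
and invokes the key point that since the configuration has $k<4$, at least one of $\sigma_1,\sigma_2$ lies in $\Sigma_\alpha$; the hypothesis $\alpha_i^2<2-\beta/4\pi$ then forces $|\sigma_1\sigma_2|<\gamma_\Sigma$ for some $\gamma_\Sigma<4\pi$ (the three subcases — both charges in $\Sigma_\alpha$; one in $\Sigma_\alpha$ and one in $\Sigma_\beta$; the excluded all-$\Sigma_\beta$ pair — are checked exactly as on the page), and similarly for $\sigma_3\sigma_4$ whenever that product is negative. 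Using \cite[Lemma~4.4]{MR4767492} again, $e^{|\sigma_1\sigma_2|\int_0^s\dot C_r(x_1-x_2)\,dr}\lesssim(|x_1-x_2|/\sqrt s\wedge1)^{-|\sigma_1\sigma_2|/2\pi}$, and the exponent $|\sigma_1\sigma_2|/2\pi<\gamma_\Sigma/2\pi<2$ keeps the resulting singularity locally integrable in two dimensions.

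The remaining step is to handle the degenerate case $\sigma_1\sigma_2=-\beta$ (say), where the above bound on $1-e^{-\sigma_1\sigma_2\int\dot C}$ is useless because $\beta/2\pi$ can exceed $2$. As in \cite[Lemma~4.5, case~1]{MR4767492} and the corresponding part of Lemma~\ref{le:n3bound1}, one must not expand the two exponentials independently but rather exploit a cancellation: since $\sigma_1=-\sigma_2=\pm\sqrt\beta$, the sum $\sum_{i\in\{1,2\},j\in\{3,4\}}\sigma_i\sigma_j\dot C_s(x_i-x_j)$ becomes $\pm\sqrt\beta\,\sigma_3\big(\dot C_s(x_1-x_3)-\dot C_s(x_2-x_3)\big)\pm\sqrt\beta\,\sigma_4\big(\dot C_s(x_1-x_4)-\dot C_s(x_2-x_4)\big)$, i.e.\ a \emph{difference} of heat kernels, and one estimates $|\dot C_s(x_1-x_j)-\dot C_s(x_2-x_j)|\lesssim \frac{|x_1-x_2|}{\sqrt s}\dot C_{cs}(x_i-x_j)$ for a suitable $i\in\{1,2\}$ and constant $c$; the factor $|x_1-x_2|/\sqrt s$ then absorbs the singular weight $(|x_1-x_2|/\sqrt s\wedge1)^{-\beta/2\pi}$ coming from the first exponential, since $\beta/2\pi<3$. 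If \emph{both} $\sigma_1\sigma_2=-\beta$ and $\sigma_3\sigma_4=-\beta$ — which requires $k=0$ — then one applies the difference trick simultaneously in the $\{1,2\}$ and $\{3,4\}$ variables and gains two such factors, but now one also uses $|\sigma_1\sigma_3|=\beta$ etc.\ so the surviving exponent is controlled, again as in \cite[Lemma~4.6]{MR4767492}.

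Finally I would assemble the dominating function $G_s$ by symmetrizing over permutations of $\{1,2,3,4\}$ the (finitely many) explicit majorants produced above — each of the schematic form
\begin{equation}
\text{const}_\Sigma\;\frac{1}{4\pi s}e^{-c|x_i-x_j|^2/s}\int_0^s\frac{dr}{4\pi r}e^{-c|x_a-x_b|^2/r}\big(\tfrac{|x_a-x_b|}{\sqrt r}\wedge1\big)^{-\theta_1}\big(\cdots\big)^{-\theta_2}
\end{equation}
with $\theta_1,\theta_2<2$ — and compute its $\|\cdot\|_4$-norm (equivalently, its weighted version with the tree-distance weight $e^{\tdist}$, using \eqref{e:rho-add} to split the exponential along the tree of heat kernels). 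Each successive integration over one spatial variable against a heat kernel times a weakly singular weight produces one power of the relevant time variable and a finite $\Sigma$-dependent constant, after the change of variables that rescales by $\sqrt s$ or $\sqrt r$; carrying this out yields $\|G_s\|_4\le C_\Sigma s^2$, where the power $s^2$ is $s$ from the final $\int_0^s dr$ times $s$ from the outermost heat-kernel normalization. The bound with $\epsilon^2$ replaced by $\delta^2$ in an exponential is identical since every estimate above used only $\int_0^s\dot C_r\ge\int_{\delta^2}^s\dot C_r,\int_{\epsilon^2}^s\dot C_r$ and the sign of $\sigma_a\sigma_b$. The main obstacle is the bookkeeping in the doubly-degenerate subcase $\sigma_1\sigma_2=\sigma_3\sigma_4=-\beta$: one must verify that the two difference-of-heat-kernel factors, together with the genuinely negative cross terms $\sigma_1\sigma_3\int\dot C_r(x_1-x_3)$ etc., still leave an integrable singularity — this is precisely the $\beta\in[4\pi,6\pi)$ threshold computation, and it is where the hypotheses $\alpha_i^2<2-\beta/4\pi$ and $\sigma_a\sigma_b\ne-\beta$ for at least one pair are all used.
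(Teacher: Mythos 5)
Your proposal is correct in substance and draws on the same three ideas as the paper's proof — case-split on whether $\sigma_1\sigma_2$ and/or $\sigma_3\sigma_4$ equal $-\beta$, use the difference-of-heat-kernels cancellation for neutral $\pm\sqrt\beta$ pairs, bound $|\sigma_i\sigma_j|$ by $\gamma_\Sigma<4\pi$ for the remaining negative products, and defer the doubly degenerate case $\sigma_1\sigma_2=\sigma_3\sigma_4=-\beta$ to \cite[Lemma~4.6]{MR4767492} — but the organization is different and noticeably heavier. The paper never re-derives the cancellation at the four-point level: having assumed (without loss) that $\sigma_1\sigma_2\neq-\beta$, it splits the $i$-sum and observes that each of the two resulting terms
\[
|\sigma_i|\,\Big|\sum_{j\in\{3,4\}}\sigma_j\dot C_s(x_i-x_j)\Big|\,\big|1-e^{-\sigma_3\sigma_4\int_{\epsilon^2}^s\dot C_r(x_3-x_4)\,dr}\big|
\]
is exactly the quantity controlled by $F_s(\xi_i,\xi_3,\xi_4)$ from Lemma~\ref{le:n3bound1}; the potential degeneracy $\sigma_3\sigma_4=-\beta$ and its cancellation are already built into $F_s$. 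The four-point estimate then reduces to a 3-point factor times a 2-point factor, and only the last, non-degenerate factor $|1-e^{-\sigma_1\sigma_2\int\dot C(x_1-x_2)}|$ needs the two easy subcases $\sigma_1\sigma_2\ge0$ and $-\gamma_\Sigma<\sigma_1\sigma_2<0$. Your version re-derives the cancellation directly for whichever pair is degenerate; this works, but it duplicates work already done in Lemma~\ref{le:n3bound1} and is harder to organize cleanly when both pairs carry singularities.

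One small slip to flag: you justify ``at least one of $\sigma_1,\sigma_2$ lies in $\Sigma_\alpha$'' by ``since the configuration has $k<4$''. That is not the right reason — with $k=2$ one can perfectly well have $\sigma_1,\sigma_2\in\Sigma_\beta$ with opposite signs and $\sigma_3,\sigma_4\in\Sigma_\alpha$, in which case $\sigma_1\sigma_2=-\beta$ and neither $\sigma_1$ nor $\sigma_2$ is in $\Sigma_\alpha$. Moreover, Lemma~\ref{le:n4bound} itself carries no constraint involving $k$. The correct justification, as in Lemma~\ref{le:n3bound1}, is purely local to the pair: if $\sigma_1\sigma_2<0$ and $\sigma_1\sigma_2\neq-\beta$, then (since two opposite-sign elements of $\Sigma_\beta$ would multiply to $-\beta$) at least one of $\sigma_1,\sigma_2$ must be in $\Sigma_\alpha$, and then $\alpha_i^2<2-\beta/4\pi$ forces $|\sigma_1\sigma_2|\le\sqrt{\beta(8\pi-\beta)}\le4\pi$, hence $|\sigma_1\sigma_2|<\gamma_\Sigma<4\pi$. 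Your conclusion is right; just attribute it to the case hypothesis $\sigma_1\sigma_2\neq-\beta$, not to the value of $k$.
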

\begin{proof}
We split the proof into two cases: case 1), where $\sigma_1\sigma_2=-\beta$ and $\sigma_3\sigma_4=-\beta$, case 2) $\sigma_1\sigma_2\neq -\beta$ or $\sigma_3\sigma_4\neq -\beta$ (or both).

Case 1) is dealt with\footnote{The fact that $\int_{\epsilon^2}^sdr\, \dot C_r(x_1-x_2)$ can be replaced by $\int_{\delta^2}^s dr\, \dot C_r(x_1-x_2)$ for any $\delta<\epsilon$ is not explicitly mentioned in \cite{MR4767492}, but it follows immediately from the proof there since the proof is based on bounding $|1-e^{-\sigma_1\sigma_2\int_{\epsilon^2}^s dr\, \dot C_r(x_1-x_2)}|$ by $|1-e^{-\sigma_1\sigma_2\int_{0}^s dr\, \dot C_r(x_1-x_2)}|$.} in \cite[Lemma 4.6]{MR4767492}, so we can focus on case 2). By symmetry, we can assume that $\sigma_1\sigma_2\neq -\beta$, and use Lemma \ref{le:n3bound1} to write 
\begin{align}
&\left|\sum_{i\in \{1,2\},j\in\{3,4\}}\sigma_i\sigma_j \dot C_s(x_i-x_j)(1-e^{-\sigma_1\sigma_2\int_{\epsilon^2}^s dr\, \dot C_r(x_1-x_2)})(1-e^{-\sigma_3\sigma_4\int_{\epsilon^2}^s dr\, \dot C_r(x_3-x_4)})\right|\nnb
&\leq |\sigma_1|\left|\sum_{j\in \{3,4\}}\sigma_j \dot C_s(x_1-x_j)\right| \left|1-e^{-\sigma_3\sigma_4\int_{\epsilon^2}^s dr\, \dot C_r(x_3-x_4)}\right| \left|1-e^{-\sigma_1\sigma_2\int_{\epsilon^2}^s dr\, \dot C_r(x_1-x_2)}\right|\nnb
&\quad +|\sigma_2|\left|\sum_{j\in \{3,4\}}\sigma_j \dot C_s(x_2-x_j)\right| \left|1-e^{-\sigma_3\sigma_4\int_{\epsilon^2}^s dr\, \dot C_r(x_3-x_4)}\right| \left|1-e^{-\sigma_1\sigma_2\int_{\epsilon^2}^s dr\, \dot C_r(x_1-x_2)}\right|\nnb
&\leq (F_s(\xi_1,\xi_3,\xi_4)+F_s(\xi_2,\xi_3,\xi_4))\left|1-e^{-\sigma_1\sigma_2\int_{\epsilon^2}^sdr\, \dot C_r(x_1-x_2)}\right|.
\end{align}
We now have two subcases: either i) $\sigma_1\sigma_2\geq 0$ or ii) $\sigma_1\sigma_2<0$. In the latter case, the constraint $\alpha_i^2<2-\frac{\beta}{4\pi}$ implies that we have $\sigma_1\sigma_2>-\gamma_\Sigma$ for some $0<\gamma_\Sigma<4\pi$.

Let us look at the subcase i) first. Here we have 
\begin{align}
\left|1-e^{-\sigma_1\sigma_2\int_{\epsilon^2}^sdr\, \dot C_r(x_1-x_2)}\right|\leq \left|1-e^{-\sigma_1\sigma_2\int_{0}^sdr\, \dot C_r(x_1-x_2)}\right|\leq 6\pi \int_0^s dr\, \dot C_r(x_1-x_2).
\end{align}
If we define 
\begin{align}
G_t^{4,\mathrm{i}}(\xi_1,\dots,\xi_4)=\frac{3}{2}(F_s(\xi_1,\xi_3,\xi_4)+F_s(\xi_2,\xi_3,\xi_4))\int_0^s dr\, e^{-\frac{|x_1-x_2|^2}{4r}}\frac{dr}{r},
\end{align}
then this dominates the quantity we are interested in, and we have by Lemma \ref{le:n3bound1}
\begin{equation}
\|G_t^{4,\mathrm{i}}\|_4\leq C_\Sigma s\int_0^sdr=C_\Sigma s^2.
\end{equation}
Symmetrizing over the arguments yields the claim in this case. 

In case ii), we have 
\begin{align}
  \left|1-e^{-\sigma_1\sigma_2\int_{\epsilon^2}^sdr\, \dot C_r(x_1-x_2)}\right|
  &\leq e^{\gamma_\Sigma \int_0^s dr\, \dot C_r(x_1-x_2)}-1\nnb
  &=\gamma_\Sigma\int_0^s dr\, \dot C_r(x_1-x_2) e^{\gamma_\Sigma \int_0^r du\, \dot C_u(x_1-x_2)}.
\end{align}
From \cite[Lemma 4.4]{MR4767492}, we know that for $0<r<m^{-2}$ (as in our situation since $0<r<s<m^{-2}$)
\begin{align}
\int_0^r du\, \dot C_u(x_1-x_2)=-\frac{1}{2\pi}\log \left(\frac{|x_1-x_2|}{\sqrt{r}}\wedge 1\right)+O(1),
\end{align} 
where the implied constant is universal. We thus define for a large enough constant $C>0$,
\begin{align}
G_t^{4,\mathrm{ii}}(\xi_1,\dots,\xi_4)=C(F_s(\xi_1,\xi_3,\xi_4)+F_s(\xi_2,\xi_3,\xi_4))\int_0^s \frac{dr}{r}\, e^{-\frac{|x_1-x_2|^2}{4r}}\left(\frac{|x_1-x_2|}{\sqrt{r}}\wedge 1\right)^{-\frac{\gamma_\Sigma}{2\pi}}. 
\end{align}
For $C$ large enough (but universal), this dominates the quantity we are interested in, and by Lemma~\ref{le:n3bound1}, we have
\begin{align}
\|G_t^{4,\mathrm{ii}}\|_4\leq C_\Sigma s \int_0^s dr\, \int_{\R^2} dy \frac{1}{r}e^{-\frac{|y|^2}{4r}}\left(\frac{|y|}{\sqrt{r}}\wedge 1\right)^{-\frac{\gamma_\Sigma}{2\pi}} e^{\tdist(0,y)}.
\end{align}
By scaling $y$ by $\sqrt{r}$ and using that $\tdist(0,\sqrt{r}y) \leq \tdist(0,y)$,
we see that this last integral is bounded independently of $r$, and convergent since $\gamma_\Sigma<4\pi$.
We thus have the bound we want, and symmetrizing $G_t^{4,\mathrm{ii}}$ with respect to its arguments produces our object. 

Finally, we note that everywhere in the proof, it was irrelevant whether we had $\delta^2$ or $\epsilon^2$ as the lower limit in the integral. This concludes the proof.
\end{proof}

The final ingredient we need for small $n$ is the proof of Proposition \ref{pr:vtnorm} for $n=4$. We do this now.
\begin{proof}[Proof of Proposition \ref{pr:vtnorm} for $n=4$]
For $n=4$, it turns out that we can drop the exponential term $e^{-\frac{1}{2}\sum_{i,j=1}^4 \sigma_i\sigma_j \int_s^t dr\, \dot C_r(x_i-x_j)}$ -- this is less than one since $\dot C_r(x_i-x_j)$ is a covariance (positive-definite). We thus use the $n=1$ and $n=3$ cases of Proposition \ref{pr:vtnorm} and Lemma \ref{le:n4bound} to define $h_t^{k,4}$ to be the symmetrization of 
\begin{equation}
\frac{1}{2}\int_0^t ds\, \sum_{j=2}^4 |\sigma_1\sigma_j|\frac{1}{4\pi s}e^{-\frac{|x_1-x_j|^2}{4s}} h_s^{0,1}(\xi_1)h_s^{k,3}(\xi_2,\xi_3,\xi_4)+\int_0^t ds\, G_s(\xi_1,\xi_2,\xi_3,\xi_4)\prod_{j=1}^4 h_s^{0,1}(\xi_j).
\end{equation}
Using the bound from the $n=1$ and $n=3$ cases of Proposition~\ref{pr:vtnorm} and Lemma~\ref{le:n4bound} we find 
\begin{align}
\|h_t^{k,4}\|_4&\leq C_\Sigma \int_0^t \int_{\R^2}dy \frac{1}{4\pi s} e^{-\frac{|y|^2}{4s}} e^{\tdist(0,y)} \|h_s^{0,1}\|_1 \|h_s^{k,3}\|_3  +C_\Sigma \int_0^t ds\, \|h_s^{0,1}\|_1^4 \|G_s\|_4\nnb
&\leq \widetilde C_\Sigma \int_0^t ds s^{-2} (s^{1-\frac{\beta}{8\pi}})^4+\widetilde C_\Sigma\int_0^t ds\, s^2 s^{-\frac{\beta}{2\pi}}\nnb
&=2\widetilde C_\Sigma \int_0^t ds\, s^{2-\frac{\beta}{2\pi}}\nnb
&= \widehat C_\Sigma t^{3-\frac{\beta}{2\pi}}\nnb
&= \widehat C_\Sigma t^{-1} t^{(1-\frac{\beta}{8\pi})4},
\end{align}
for suitable constants $C_\Sigma,\widetilde C_\Sigma, \widehat C_\Sigma$ depending only on $\Sigma$. This is exactly the bound we are after. Note that the integrals are convergent as $\beta<6\pi$. This concludes our proof for $n=4$.
\end{proof}

\subsubsection{Proof of the remaining cases of Proposition \ref{pr:vtnorm} for general $n$}

In this subsection, we carry out the induction argument for proving Proposition \ref{pr:vtnorm} in the remaining cases. The proof does not require further preparation. 

\begin{proof}[Proof of Proposition \ref{pr:vtnorm} for $n\geq 5$ and $0\leq k<n$] 
This is very similar to the induction argument in the proof of \cite[Proposition 4.1]{MR4767492}, and in fact quite similar to the induction argument we presented in the proof of the $k=n$ case of this proposition.

As for the $k=n$ case, we make the induction hypothesis that $|\tilde V_t^{k,n}|\leq h_t^{k,n}$ (as in the statement of the proposition)
\begin{align}
\|h_t^{k,n}\|_n \leq n^{n-2} t^{-1}C_\Sigma^{n-1}\left(C t^{1-\frac{\beta}{8\pi}}\right)^n
\end{align}
for $0<k\leq n\leq N-1$ for some $N\geq 5$. 

As opposed to the $k=n$ case, we need to be slightly more careful in analysing the recursion \eqref{eq:vtilkn}. The issue is that we can have terms of the form $\tilde V_t^{0,2}$ which cannot be bounded by integrable functions. We thus treat separately the cases where (i) $|I_1|,|I_2|\neq 2$ and (ii) $|I_1|=2$ or $|I_2|=2$.

In case (i), we can proceed as $k=n$ and define
\begin{align}
h_t^{k,N,\mathrm{i}}(\xi_1,\dots,\xi_n)=\frac{1}{8\pi}\sum_{\substack{I_1\dot \cup I_2=[N]: \\ |I_1|,|I_2|\neq 2}}\sum_{i\in I_1,j\in I_2}|\sigma_i \sigma_j|\int_0^t ds\, \frac{e^{-\frac{|x_i-x_j|^2}{4s}}}{4\pi s} h_s^{|I_1\cap [k]|,|I_1|}(\xi_{I_1})h_s^{|I_2\cap[k]|,|I_2|}(\xi_{I_2}).
\end{align}
The contribution to $\tilde V_t^{k,|N|}$ from the $|I_1||I_2|\neq 2$ terms is bounded by this and using our induction hypothesis, the fact that the heat kernel integrates to one, and symmetry in the variables, we see as before that 
\begin{align}
\|h_t^{k,N,\mathrm{i}}\|_N&\leq \frac{6\pi}{8\pi}\sum_{\substack{I_1\dot \cup I_2=[N]: \\ |I_1|,|I_2|\neq 2}}|I_1|^{|I_1|-1} |I_2|^{|I_2|-1} C_\Sigma^{N-2} C^N \int_0^t ds\, s^{-2} \left(s^{1-\frac{\beta}{8\pi}}\right)^N\nnb
&\leq \frac{3}{2}\frac{N-1}{-1+N(1-\frac{\beta}{8\pi})} N^{N-2}C_\Sigma^{N-2}   t^{-1}(Ct^{1-\frac{\beta}{8\pi}})^N.
\end{align}
Since $\beta<6\pi$, the integral is convergent for $N\geq 5$, and the prefactor is bounded in $N$ so possibly adjusting $C_\Sigma$ yields a bound of the type we are after.

It remains to treat the terms with $|I_1|=2$ or $|I_2|=2$. Using similar arguments as before (namely our recursion, positive definiteness of $\dot C_r$ etc) as well as Lemma \ref{le:n3bound1}, we can bound the contributions of such terms to $\tilde V_t^{k,N}$ by 
\begin{align}
\sum_{1\leq a<b\leq N}&\int_0^t ds\, \left|\sum_{j\in [N]\setminus \{a,b\}}(\sigma_a\sigma_j \dot C_s(x_a-x_j)+\sigma_b\sigma_j\dot C_s(x_b-x_j))\right|\left|1-e^{-\sigma_a\sigma_b\int_{\nu_{\delta,\epsilon}}^s dr\, \dot C_r(x_a-x_b)}\right| \nnb
&\qquad \times e^2 s^{-\frac{\beta}{4\pi}}h_s^{|[k]\setminus \{a,b\}|,N-2}(\xi_{[n]\setminus \{a,b\}})\nnb
&\leq e^2\sum_{1\leq a<b\leq N}\sum_{j\in [n]\setminus \{a,b\}}\int_0^t ds\, F_s(\xi_a,\xi_b,\xi_j)s^{-\frac{\beta}{4\pi}}h_s^{|[k]\setminus \{a,b\}|,N-2}(\xi_{[N]\setminus \{a,b\}})\nnb
&=: h_t^{k,N,\mathrm{ii}}(\xi_1,\dots,\xi_N),
\end{align}
where $\nu_{\delta,\epsilon}$ is either $\delta^2$ or $\epsilon^2$, depending on $(\sigma_a,\sigma_b)$.

Using our induction hypothesis, Lemma \ref{le:n3bound1}, and symmetry in the variables, we have (possibly adjusting $C$)
\begin{align}
\|h_t^{k,N,\mathrm{ii}}\|_N &\leq e^2 \frac{N(N-1)}{2}(N-2) C_\Sigma \int_0^tds\, s s^{-\frac{\beta}{4\pi}} s^{-1}(N-2)^{N-4}C_\Sigma^{N-3} C^{N-2}(s^{1-\frac{\beta}{8\pi}})^{N-2}\nnb
&\leq C^N N N^{N-2}C_\Sigma^{N-2}\int_0^t ds\, s^{-2} (s^{1-\frac{\beta}{8\pi}})^N,
\end{align}
which yields the type of bound we want with a similar argument as in the $|I_1|,|I_2|\neq 2$ case.

Thus defining $h_t^{k,n}$ to be (the symmetrization of) $h_t^{k,n,\mathrm{i}}+h_t^{k,n,\mathrm{ii}}$ yields our claim.
\end{proof}

\section{Uniform integrability of fractional free field correlation functions}
\label{app:renormpart}

In this appendix, we use Appendix~\ref{app:fracrenorm} to prove Theorem~\ref{th:fracapp-bis} which is restated as the following theorem.
The roles of $n$ and $p$ are switched in this appendix,
for consistency with
Appendix~\ref{app:fracrenorm} in which the roles are switched to be consistent with \cite{MR4767492}.
Our argument generalizes \cite[Section 5]{MR4767492}.

\begin{theorem}\label{th:fracapp}
  Let $\beta \in (0,6\pi)$ and $\alpha_1,\dots,\alpha_p \in \R$ with $\alpha_i^2 < \min\{1,2-\beta/4\pi\}$. Then for $n \geq 0$,
  \begin{equation}
    \avg{\wick{e^{i\alpha_1 \sqrt{4\pi} \varphi(x_1)}}\cdots \wick{e^{i\alpha_p \sqrt{4\pi} \varphi(x_p)}}; \wick{\cos(\sqrt{\beta}\varphi(y_1))}; \cdots; \wick{\cos(\sqrt{\beta}\varphi(y_n))}}_{\GFF(0)}^{\mathsf T}\label{e:finvol-corfunc-deriv-explicit-app}
  \end{equation}
  is locally integrable in $x_1,\dots, x_p, y_1, \dots, y_n\in \C$, and
  the regularized version \eqref{eq:gffregcf} is uniformly integrable in $m,\delta,\epsilon\in(0,1)$
  and as $\delta\to 0$  then $\epsilon \to 0$ then $m\to 0$ the regularized correlation function converges
  to its limiting version, see   \eqref{eq:gffregcfconv}.
\end{theorem}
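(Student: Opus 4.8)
\textbf{Proof plan for Theorem~\ref{th:fracapp}.}
The plan is to reduce everything to the renormalized potential with two regularization scales developed in Appendix~\ref{app:fracrenorm}. First I would set up the dictionary: take $\Sigma_\alpha = \{\sqrt{4\pi}\alpha_1,\dots,\sqrt{4\pi}\alpha_p\}$ and $\Sigma_\beta = \{\pm\sqrt{\beta}\}$, and choose the function $\eta \in L^\infty_c(\R^2\times\Sigma,\C)$ so that the microscopic potential $V_{\delta^2,\epsilon^2}$ encodes the sources $\wick{e^{i\alpha_j\sqrt{4\pi}\varphi(x_j)}}_\delta$ at scale $\delta$ (with test functions approximating $\delta$-functions at the $x_j$) and the $\wick{\cos(\sqrt\beta\varphi(y_l))}_\epsilon$ factors at scale $\epsilon$ (with $\1_\Lambda$ or localized test functions for the $y_l$-integrations). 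Then by Proposition~\ref{prop:cumulants} the kernel $\tilde V_t^{k,n}(\xi_1,\dots,\xi_n|m,\delta,\epsilon)$ with $t$ of order $1$ literally \emph{is} (up to a sign $(-1)^{n-1}$) the regularized truncated correlation function \eqref{eq:gffregcf}, once one differentiates out/peels off the appropriate number of $\eta$-factors, i.e.\ it is a ``one-block'' object with $k$ of the $\Sigma_\alpha$ insertions and $n-k$ of the $\Sigma_\beta$ insertions, which is exactly the cumulant structure in the statement (all $\alpha$-entries in one slot, each $\cos$ in its own slot). The key quantitative input is Proposition~\ref{pr:vtnorm}: for $t$ fixed of order one, $|\tilde V_t^{k,n}| \le h_t^{k,n}$ with $\|h_t^{k,n}\|_n \le n^{n-2}t^{-1}(C_\Sigma t^{1-\beta/8\pi})^n$, and crucially $h_t^{k,n}$ is \emph{independent of $m,\delta,\epsilon$}.

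The local integrability claim then follows by fixing $t$ of order $1$ and integrating the bound $h_t^{k,n}$ against the (bounded, compactly supported) test functions; the only singularities are the integrable short-distance singularities already controlled in the $\|\cdot\|_n$ norm (and, if one uses the weighted norm $\|\cdot\|_{n,\tdist}$ from \eqref{e:rho-def}, one even gets exponential decay, though that is not needed here). For uniform integrability of the regularized version \eqref{eq:gffregcf} over $m,\delta,\epsilon\in(0,1)$, the same $m,\delta,\epsilon$-independent dominating function $h_t^{k,n}$ does the job via the de la Vallée–Poussin / Vitali criterion: the bound $h_t^{k,n}$ is integrable against the test functions, which dominates the family and gives uniform integrability. (One must take a little care that one is in the regime $\delta^2<\epsilon^2<t<\min(1,m^{-2})$ where Proposition~\ref{pr:vtnorm} applies; since $t$ is fixed and we send $m,\delta,\epsilon\to 0$ this is automatic for all small enough parameters, and for the remaining finitely-behaved range one uses that for fixed positive $\delta,\epsilon$ the regularized cumulants are manifestly finite and continuous.)

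Finally, for the convergence \eqref{eq:gffregcfconv}: by Proposition~\ref{prop:cumulants} the kernels are cumulants of the scale-decomposed GFF, so pointwise convergence (at distinct points $x_1,\dots,x_p$, and then integrated against the $y$'s) as $\delta\to 0$, then $\epsilon\to0$, then $m\to 0$ follows from the convergence of the relevant Gaussian covariances $\int p_s(x-y)e^{-m^2 s}\,ds$ together with the explicit Gaussian (Mayer/graph) formula for $\tilde V_t^{k,n}$ from Lemma~\ref{le:gexpcumu}; this is the routine covariance bookkeeping already done in Appendix~\ref{app:GMCGFF}, Lemma~\ref{le:hkcov}, giving the limiting value $\avg{\wick{e^{i\alpha_1\sqrt{4\pi}\varphi(x_1)}}\cdots;\wick{\cos(\sqrt\beta\varphi(y_1))};\cdots}^{\mathsf T}_{\GFF(0)}$ and in particular that the $\sum\alpha_j\neq 0$ pieces are killed in the $m\to0$ limit exactly as in Lemma~\ref{le:gfffrac}. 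Combining pointwise convergence with the uniform-integrability bound and the dominated convergence theorem yields convergence of the integrals against $f_1,\dots,f_p$ and $\1_\Lambda$, which is \eqref{eq:gffregcfconv}. The main obstacle is purely bookkeeping: carefully matching the one-block cumulant structure of the statement (one slot holding all $p$ fractional insertions, $n$ further slots each holding one cosine) with the $\tilde V_t^{k,n}$ indexing, and checking that the hypothesis $\alpha_i^2<\min\{1,2-\beta/4\pi\}$ is precisely what is needed for Proposition~\ref{pr:vtnorm} to apply (it guarantees $|\sigma_i\sigma_j|<4\pi$ for the problematic $n=2,3$ neutral terms and handles the $\sigma_1\sigma_2=-\beta$ exclusion); once that is in place, the estimates are entirely supplied by Appendix~\ref{app:fracrenorm}.
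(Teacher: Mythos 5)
You have the right general orientation (reduce to the two-scale renormalized potential of Appendix~\ref{app:fracrenorm}, use Proposition~\ref{pr:vtnorm} for uniform bounds, use a Gaussian-covariance computation for the limit), but the central identification you propose does not hold, and it hides exactly the difficulty that the paper's argument is designed to overcome.

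First, $\tilde V_t^{k,n}$ is \emph{not} the regularized cumulant in \eqref{eq:gffregcf}, even after peeling off $\eta$-factors. By Proposition~\ref{prop:cumulants}, $\tilde V_t^{k,n}$ is (up to sign) the \emph{fully truncated} cumulant in which each of the $n$ exponentials occupies its own slot, and it is built from the scale-restricted field $\Phi_{\delta^2,t}$ (respectively $\Phi_{\epsilon^2,t}$), not the full field $\Phi_{\delta^2,\infty}$. The object in \eqref{eq:gffregcf} puts the entire product $\prod_{j=1}^p\wick{e^{i\alpha_j\sqrt{4\pi}\varphi(x_j)}}$ in a \emph{single} slot, with each cosine in its own slot, and uses the full field. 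Matching these two cumulant structures requires the generating-function machinery: one must go through $\mathcal Z(\eta_{u,z}|m,\delta,\epsilon)$ (Lemmas~\ref{le:mncal1} and~\ref{le:mnrp}), whose $z$- and $u$-derivatives naturally produce the right partition sums and the large-scale expectations of $\prod_j(e^{i\sum\sigma\Phi_{t,\infty}}-\delta_{k_j,0}\delta_{n_j,2})$. Your ``one-block'' claim gets the slot structure backwards.

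Second, and more seriously, Proposition~\ref{pr:vtnorm} explicitly \emph{excludes} the case $n=2$ with $\sigma_1\sigma_2=-\beta$ (the neutral cosine pair): $\tilde V_t^{0,2}$ has a nonintegrable short-distance singularity $\sim|x_1-x_2|^{-\beta/2\pi}$ for $\beta\geq 4\pi$, and no bound $\|h_t^{0,2}\|_2<\infty$ is available. Your de la Vall\'ee-Poussin argument using $h_t^{k,n}$ therefore does not close for precisely the terms that are divergent. The paper handles this in two different ways, and both are missing from your plan: (a) the partition function is \emph{renormalized} by the counterterm $A(\xi_1,\xi_2|m,\epsilon)$ appearing in $\mathcal Z$, so that only the \emph{difference} $A-\tilde V_t^{0,2}$ ever enters (controlled by \cite[Lemma~5.4]{MR4767492}); and (b) any remaining $\tilde V_t^{0,2}$ factor appearing inside a block with other insertions is tamed by replacing the large-scale exponential by $e^{i\sqrt\beta(\Phi_{t,\infty}(x)-\Phi_{t,\infty}(y))}-1$ and using the gradient bound $|e^{i\sqrt\beta(\Phi(x)-\Phi(y))}-1|\le|x-y|\,\|\nabla\Phi_{t,\infty}\|_{L^\infty(K)}$, so that the extra factor $|x_1-x_2|$ restores local integrability (since $1-\beta/2\pi>-2$ for $\beta<6\pi$). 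Without these two ingredients, the uniform-integrability step fails, and with it the passage to the limit. The rest of your outline (pointwise convergence from the explicit GFF formula, dominated convergence) is fine once the bounds are in place, but the core of the theorem is the treatment of the neutral $n=2$ singularity, which your plan leaves untouched.
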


\subsection{The renormalized partition function}

We use the notation and set-up of Appendix~\ref{app:fracrenorm}.
Our main goal is to understand $m,\delta,\epsilon\to 0$ asymptotics of the generalized partition function 
\begin{align}
  Z(\eta|m,\delta,\epsilon)
  &=\E\qB{e^{-\int_{\R^2\times \Sigma_\alpha}d\xi\, \eta(\xi)\delta^{-\frac{\sigma^2}{4\pi}}e^{i\sigma\Phi_{\delta^2,\infty}(x)}-\int_{\R^2\times \Sigma_\beta}d\xi\, \eta(\xi)\epsilon^{-\frac{\sigma^2}{4\pi}}e^{i\sigma \Phi_{\epsilon^2,\infty}(x)})}}
    \nnb
  &= \E\qB{e^{-V_{\delta^2,\epsilon^2}(\Phi_{\delta^2,\infty},\Phi_{\epsilon^2,\infty})}}
  ,
\end{align}
where $\Phi_{s,t}$ depends on $m$ (suppressed in the notation).
It will use useful to express this partition function in terms of the renormalized potential:
for any $t \in [0,+\infty]$,
\begin{equation}\label{eq:partfunc}
  Z(\eta|m,\delta,\epsilon)
  =
  \E\qb{e^{-V_{t}(\Phi_{t,\infty},\Phi_{\epsilon^2\vee t,\infty}|m,\delta,\epsilon)}}
  .
\end{equation}
For $\beta \geq 4\pi$, the generalized partition function is divergent in the $\epsilon\to 0$ limit,
and to control this divergence, we renormalize it by a $\epsilon$-dependent counterterm.
More precisely, we introduce
\begin{equation}
A(\xi_1,\xi_2|m,\epsilon)=\E\qa{ \epsilon^{-\frac{\beta}{4\pi}}e^{i\sigma_1\Phi_{\epsilon,\infty}(x_1)}; \epsilon^{-\frac{\beta}{4\pi}}e^{i\sigma_2\Phi_{\epsilon,\infty}(x_2)}}^\mathsf T,
\end{equation}
and define
\begin{multline}
  \mathcal Z(\eta|m,\delta,\epsilon)=
  \E\qa{\prod_{j=1}^p \delta^{-\alpha_j^2} e^{i\sqrt{4\pi}\alpha_j \Phi_{\delta^2,\infty}}(f_j) e^{2z \epsilon^{-\frac{\beta}{4\pi}}\int_{\R^2} dx\, \eta(x) \cos(\sqrt{\beta}\Phi_{\epsilon^2,\infty}(x))}}
  \\
  e^{-\frac{z^2}{2}\int_{(\Lambda\times \Sigma_\beta)^2}d\xi_1d\xi_2 A(\xi_1,\xi_2|m,\epsilon)}.
\end{multline}
Since we are dealing with entire functions for $\delta>0$ and $\epsilon>0$ (due to our random variables being bounded),
one readily checks that the smeared correlation function of interest can be written as 
\begin{equation}
\left.\partial_{u_1}\cdots \partial_{u_p}\right|_{u=0}\mathcal Z(\eta_{u,z}|m,\delta,\epsilon),
\end{equation}
divided by $\mathcal Z(\eta_{u,z}|m,\delta,\epsilon)|_{u=0}$, where 
\begin{equation}\label{eq:etauz}
\eta_{u,z}(\xi)=-\sum_{j=1}^pu_j \delta_{\sigma,\sqrt{4\pi}\alpha_j}f_j(x)-z\1_{\sigma\in \Sigma_\beta}\1_\Lambda(x).
\end{equation}
To control the derivatives, we study the Taylor coefficients of the entire function $z\mapsto \mathcal Z(z\eta|m,\delta,\epsilon)$
for a given $\eta\in L_c^\infty(\R^2\times \Sigma)$.
In the next lemma we express these in terms of the GFF correlation functions. In the lemma following it we will instead
represent them in terms of the renormalized potential.

\begin{lemma}\label{le:mncal1}
For $z\in \C$ and $\eta\in L_c^\infty(\R^2\times \Sigma)$, we have  
\begin{equation} \label{e:mncal1-1}
\mathcal Z(z\eta|m,\delta,\epsilon)= \sum_{n=0}^\infty \frac{z^n}{n!}\mathcal M_n(\eta|m,\delta,\epsilon),
\end{equation}
where 
\begin{equation} \label{e:mncal1-2}
\mathcal M_n(\eta|m,\delta,\epsilon)=\int_{(\R^2\times \Sigma)^n}d\xi_1\cdots d\xi_n \, \eta(\xi_1)\cdots \eta(\xi_n) \, \mathcal M_n(\xi_1,\dots,\xi_n|m,\delta,\epsilon)
\end{equation}
with $\mathcal M_n(\xi_1,\dots,\xi_n|m,\delta,\epsilon)$ an explicit symmetric function given by symmetrizing
the integrals in \eqref{e:cZ-explicit} in the proof.
This function is the unique continuous function on $(\R^2\times \Sigma)^n$ for which
\begin{multline}
  \frac{1}{n!}\partial_{u_1}\cdots \partial_{u_n}\mathcal M(u_1 f_1+\cdots+u_n f_n|m,\delta,\epsilon)
  \\
  =\int_{(\R^2\times \Sigma)^n}d\xi_1\cdots d\xi_n f_1(\xi_1)\cdots f_n(\xi_n) \mathcal M_n(\xi_1,\dots,\xi_n|m,\delta,\epsilon)
\end{multline}
for all $f_1,\dots,f_n\in C_c^\infty(\R^2\times \Sigma)$ (meaning that for each $\sigma\in \Sigma$, $x\mapsto f_j(x,\sigma)\in C_c^\infty(\R^2)$).
\end{lemma}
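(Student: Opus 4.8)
The plan is to expand the exponentials defining $\mathcal Z(z\eta|m,\delta,\epsilon)$ directly. Since $\epsilon,\delta>0$ are fixed and the random variables
$\delta^{-\alpha_j^2}e^{i\sqrt{4\pi}\alpha_j\Phi_{\delta^2,\infty}}$ and $\epsilon^{-\beta/4\pi}e^{i\sigma\Phi_{\epsilon^2,\infty}}$ are bounded (in fact $\Phi$ is smooth for positive regularization scale), the three factors in the definition of $\mathcal Z$ are all entire in $z$ (the last, deterministic, factor being an entire function of $z$ as well since $A$ integrated over $(\Lambda\times\Sigma_\beta)^2$ is a finite number). First I would expand each exponential in its power series, use multilinearity and Fubini (justified by absolute convergence, the integrands being bounded and compactly supported because $\eta$ has compact support) to interchange sum and expectation, and collect powers of $z$. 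This produces \eqref{e:mncal1-1} with $\mathcal M_n(\eta|m,\delta,\epsilon)$ given by \eqref{e:mncal1-2}, where the kernel before symmetrization is a finite sum, over the number $k$ of ``$\alpha$-legs'' and over partitions governing how many vertices come from the $\cos$-term versus the counterterm, of products of the form
\begin{equation} \label{e:cZ-explicit}
  \E\qB{\prod_{j=1}^k \delta^{-\alpha_j^2} e^{i\sqrt{4\pi}\alpha_j\Phi_{\delta^2,\infty}(x_j)}\prod_{l=k+1}^{m} \epsilon^{-\beta/4\pi}e^{i\sigma_l\Phi_{\epsilon^2,\infty}(x_l)}}
  \prod \Bigl(-\tfrac12 A(\xi_a,\xi_b|m,\epsilon)\Bigr),
\end{equation}
with the remaining $n-m$ variables paired off by the counterterm $A$; each such Gaussian expectation is an explicit product of exponentials of covariances, computed via the characteristic function of a Gaussian vector exactly as in Lemma~\ref{le:gexpcumu} and Lemma~\ref{le:gfffrac}. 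Symmetrizing this expression over the $n$ arguments $\xi_1,\dots,\xi_n$ yields the desired symmetric kernel $\mathcal M_n(\xi_1,\dots,\xi_n|m,\delta,\epsilon)$.

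For the continuity statement, I would argue that each summand in the (symmetrized) explicit formula is a product of Gaussian covariances of $\Phi_{\delta^2,\infty}$ and $\Phi_{\epsilon^2,\infty}$ at the relevant scales, together with the counterterm contributions $A(\xi_a,\xi_b|m,\epsilon)$; for fixed $\delta,\epsilon>0$ these covariances are smooth functions of their spatial arguments (the heat-kernel regularized covariances $\int_{\delta^2}^\infty e^{-m^2 s}e^{\Delta s}\,ds$ etc.\ are smooth on all of $\R^2\times\R^2$, including the diagonal), so the kernel is continuous, indeed smooth. The identification with $\frac{1}{n!}\partial_{u_1}\cdots\partial_{u_n}\mathcal Z(u_1 f_1+\cdots+u_n f_n|m,\delta,\epsilon)$ is then immediate: substituting $\eta=u_1f_1+\cdots+u_n f_n$ into \eqref{e:mncal1-2}, expanding $\eta(\xi_1)\cdots\eta(\xi_n)$ multilinearly, differentiating once in each $u_j$ and evaluating at $u=0$ picks out exactly the term where $\xi_j$ is tested against $f_j$, and the symmetry of $\mathcal M_n$ absorbs the $n!$ permutations into the factor $1/n!$. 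Uniqueness of the continuous kernel follows because a continuous function on $(\R^2\times\Sigma)^n$ is determined by its integrals against all products $f_1\otimes\cdots\otimes f_n$ with $f_j\in C_c^\infty(\R^2\times\Sigma)$ (a standard density/separation argument: the linear span of such products is dense in $C_c(\R^2\times\Sigma)^n$, e.g.\ by Stone--Weierstrass, so two continuous kernels with the same pairings against all such test products agree pointwise).

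The only mildly delicate point — and the step I expect to require the most care — is the bookkeeping in \eqref{e:cZ-explicit}: correctly organizing the three-way expansion (the $p$ fixed $\alpha$-insertions from the $\partial_u$ slots, the variable number of $\cos(\sqrt\beta\varphi)$ vertices, and the paired counterterm vertices), verifying that the combinatorial factors (binomial coefficients for choosing which of the $n$ legs are $\alpha$-legs, and the matching structure for the $A$-pairings) are exactly right, and checking that the symmetrization reproduces a single well-defined symmetric function independent of how the expansion was grouped. This is the same kind of computation as in \cite[Section~5]{MR4767492} but with two scales $\delta<\epsilon$ rather than one, so I would follow that reference closely. There is no analytic subtlety here since everything is at fixed positive regularization; the genuine estimates (local integrability and uniform integrability as $\delta,\epsilon,m\to 0$) are deferred to the subsequent lemmas, which will re-expand $\mathcal M_n$ in terms of the renormalized potential and invoke the bounds of Proposition~\ref{pr:vtnorm}.
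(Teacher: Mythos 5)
Your proposal is correct and follows essentially the same route as the paper: expand all three exponential factors (justified by boundedness at fixed $\delta,\epsilon>0$), interchange sum and expectation by Fubini, collect powers of $z$ via the constraint $j+k+2l=n$ on the three vertex types, symmetrize, and establish uniqueness of the continuous kernel by a density argument. The paper's proof is slightly more explicit about the triple-sum combinatorics, but you correctly identify that bookkeeping as the only delicate step and otherwise your argument (including the density/Stone--Weierstrass reasoning for uniqueness, which the paper defers to \cite[Lemma~5.6]{MR4767492}) matches.
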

\begin{proof}
This is analogous to parts of \cite[Lemma 5.5 and Lemma 5.6]{MR4767492}. We begin by writing out explicitly the definition of $\mathcal Z(z\eta|m,\delta,\epsilon)$:
\begin{align}
\mathcal Z(z\eta|m,\delta,\epsilon)&=\avg{e^{-z\int_{\R^2\times \Sigma_\alpha}d\xi\, \eta(\xi)\delta^{-\frac{\sigma^2}{4\pi}}e^{i\sigma \Phi_{\delta^2,\infty}}}e^{-z\int_{\R^2\times \Sigma_\beta}d\xi\, \eta(\xi)\epsilon^{-\frac{\sigma^2}{4\pi}}e^{i\sigma \Phi_{\epsilon^2,\infty}}}}_{\GFF(m,\delta)} \nnb
&\quad \times e^{-\frac{z^2}{2}\int_{(\R^2\times \Sigma_\beta)^2}d\xi_1 d\xi_2\, A(\xi_1,\xi_2|m,\epsilon)}.
\end{align}
As the random variables appearing in the expectation are bounded random variables, we can expand both exponentials and interchange the order of summation and the expectation. Expanding also the last expectation, we find 
\begin{align}
&\mathcal Z(z\eta|m,\delta,\epsilon)\nnb
&=\sum_{j,k,l=0}^\infty \frac{(-1)^{j+k+l}}{2^l j!k!l!}z^{j+k+2l}\int_{(\R^2\times \Sigma_\alpha)^j \times (\R^2\times \Sigma_\beta)^{k+2l}}d\xi_1\cdots d\xi_j d\xi_1'\cdots d\xi_k' d\tilde\xi_1\cdots d\tilde \xi_{2l}\nnb
&\qquad  \times \eta(\xi_1)\cdots \eta(\xi_j)\eta(\xi_1')\cdots \eta(\xi_k') \eta(\tilde \xi_1)\cdots \eta(\tilde \xi_{2l})A(\tilde \xi_1,\tilde \xi_2|m,\epsilon)\cdots A(\tilde \xi_{2l-1},\tilde \xi_{2l}|m,\epsilon)\nnb
  &\qquad  \times \avg{\delta^{-\frac{\sigma_1^2}{4\pi}}e^{i\sigma_1 \Phi_{\delta^2,\infty}(x_1)}\cdots \delta^{-\frac{\sigma_j^2}{4\pi}}e^{i\sigma_j \Phi_{\delta^2,\infty}(x_j)}\nnb
    &\qquad\qquad \times \epsilon^{-\frac{(\sigma_1')^2}{4\pi}}e^{i\sigma_1' \Phi_{\epsilon^2,\infty}(x_1)}\cdots \epsilon^{-\frac{(\sigma_l')^2}{4\pi}}e^{i\sigma_l' \Phi_{\epsilon^2,\infty}(x_l')}}_{\GFF(m,\delta)}.
\end{align}
Writing $n=j+k+2l$ and renaming the integration variables ($\xi_i'=\xi_{j+i}$ and $\tilde \xi_i=\xi_{j+k+i}$), we can write this as 
\begin{align} \label{e:cZ-explicit}
&\mathcal Z(z\eta|m,\delta,\epsilon)\nnb
&=\sum_{n=0}^\infty \frac{z^n}{n!}\int_{(\R^2\times \Sigma)^n}d\xi_1\cdots d\xi_n \, \eta(\xi_1)\cdots \eta(\xi_n)\nnb
&\quad \times \sum_{ j,k,l=0}^n (-1)^{j+k+l}\frac{n!}{2^l j!k!l!}\1\{j+k+2l=n\}\1\{\sigma_1,\dots,\sigma_j\in \Sigma_\alpha,\sigma_{j+1},\dots,\sigma_n\in \Sigma_\beta\}\nnb
&\quad \times A(\xi_{j+k+1},\xi_{j+k+2}|m,\delta,\epsilon)\cdots A(\xi_{n-1},\xi_{n}|m,\delta,\epsilon)\nnb
  &\quad \times \E\Big[\delta^{-\frac{\sigma_1^2}{4\pi}}e^{i\sigma_1 \Phi_{\delta^2,\infty}(x_1)}\cdots \delta^{-\frac{\sigma_j^2}{4\pi}}e^{i\sigma_j \Phi_{\delta^2,\infty}(x_j)}\nnb
    &\qquad\qquad \times \epsilon^{-\frac{\sigma_{j+1}^2}{4\pi}}e^{i\sigma_{j+1} \Phi_{\epsilon^2,\infty}(x_{j+1})}\cdots \epsilon^{-\frac{\sigma_{j+l}^2}{4\pi}}e^{i\sigma_{j+l} \Phi_{\epsilon^2,\infty}(x_{j+l})}\Big].
\end{align}
For any integrable function $F(\xi_1,\dots,\xi_n)$ we have %
\begin{align}
  &\int_{(\R^2\times \Sigma)^n}d\xi_1\cdots d\xi_n \eta(\xi_1)\cdots \eta(\xi_n)F(\xi_1,\dots,\xi_n)
  \nnb
  &=\frac{1}{n!}\sum_{\tau\in S_n}\int_{(\R^2\times \Sigma)^n}d\xi_1\cdots d\xi_n \eta(\xi_1)\cdots \eta(\xi_n)F(\xi_{\tau_1},\dots,\xi_{\tau_n}).
\end{align}
This implies \eqref{e:mncal1-1}--\eqref{e:mncal1-2}. %
The uniqueness claim is proven exactly as in \cite[Lemma~5.6]{MR4767492}, and we therefore omit further details.
\end{proof}

We next express $\mathcal M_n(\xi_1,\dots,\xi_n|m,\delta,\epsilon)$ in terms of the renormalized potential.
\begin{lemma}\label{le:mnrp}
For %
$0<\delta^2<\epsilon^2<t<\min(1,m^{-2})$, the
kernels $\cM_n(\xi_1,\dots,\xi_n|m,\delta,\epsilon)$ can be written in terms of the renormalized potential as
as a sum over permutations of linear combinations (with possibly some constraints on the $\sigma$-variables) of the product of %
\begin{align}\label{eq:funct1}
  &\prod_{j=1}^p \tilde V_t^{k_j,n_j}(\xi_{\sum_{q=1}^{j-1}n_q+1},\dots,\xi_{\sum_{q=1}^j n_q}|m,\delta,\epsilon)
    \nnb
    &\qquad
    \times \E\qa{\prod_{j=1}^p(e^{i\sum_{r=\sum_{q=1}^{j-1}n_q+1}^{\sum_{q=1}^j n_q}\sigma_r \Phi_{t,\infty}(x_r)}-\delta_{k_j,0}\delta_{n_j,2}) }
\end{align}
with (possibly multiple) factors of
\begin{equation}\label{eq:funct2}
A(\xi_1,\xi_2|m,\epsilon)-\tilde V_t^{0,2}(\xi_1,\xi_2|m,\delta,\epsilon),
\end{equation}
and the variables $\xi_1,\xi_2$ in \eqref{eq:funct2} do not appear in the \eqref{eq:funct1} terms,
and $p$ denotes the number of $\xi_i \in \R^2 \times \Sigma_\alpha$.
Explicitly, $\mathcal M_n$ is given by \eqref{e:Mn-horrible}.
\end{lemma}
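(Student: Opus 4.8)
\textbf{Proof proposal for Lemma~\ref{le:mnrp}.}
The plan is to start from the definition of $\mathcal Z(z\eta|m,\delta,\epsilon)$ and insert, for a fixed scale $t$ with $0<\delta^2<\epsilon^2<t<\min(1,m^{-2})$, the decomposition of the Gaussian free field $\Phi_{s,\infty} = \Phi_{s,t} + \Phi_{t,\infty}$ from Section~\ref{sec:Gauss-decomp} (applied with $s=\delta^2$ for the $\Sigma_\alpha$-insertions and with $s=\epsilon^2$, $\epsilon^2\vee t = t$, for the $\Sigma_\beta$-interaction terms). Conditioning on $\Phi_{t,\infty}$ and using independence of the increments, the expectation over the small scales $\Phi_{\delta^2,t}$ and $\Phi_{\epsilon^2,t}$ produces precisely the renormalized potential: one gets $e^{-V_t(\Phi_{t,\infty},\Phi_{t,\infty}|m,\delta,\epsilon)}$ (this is just \eqref{eq:partfunc} applied with the extra vertex-operator insertions, which can be absorbed into $\eta$ since they only enter through exponentials of $\Phi_{\delta^2,\infty}$). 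The point is then to expand $V_t$ using the series representation from Proposition~\ref{pr:rpexp}, which is valid here because the coefficient $\eta_{u,z}$ in \eqref{eq:etauz} has compact support and, for $z$ in a neighborhood of the origin (equivalently, as a formal power series whose coefficients we are after), satisfies the smallness condition \eqref{eq:etabound}.

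The key bookkeeping step is to match this expansion against the expansion of $\mathcal Z(z\eta|m,\delta,\epsilon) = \sum_n \frac{z^n}{n!}\mathcal M_n(\eta|m,\delta,\epsilon)$ from Lemma~\ref{le:mncal1}, remembering the explicit counterterm $e^{-\frac{z^2}{2}\int A}$ in the definition of $\mathcal Z$. Expanding $e^{-V_t}$ as $\sum_k \frac{(-1)^k}{k!}V_t^k$ and then each $V_t$ via \eqref{e:Vt-expand2}, one obtains a sum over ways of grouping the $n$ integration variables $\xi_1,\dots,\xi_n$ into clusters; each cluster of size $n_j$ with $k_j$ of its variables lying in $\R^2\times\Sigma_\alpha$ contributes a factor $\tilde V_t^{k_j,n_j}$ times the expectation over $\Phi_{t,\infty}$ of the corresponding product of exponentials $e^{i\sum\sigma_r\Phi_{t,\infty}(x_r)}$. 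The subtraction $-\delta_{k_j,0}\delta_{n_j,2}$ and the factors \eqref{eq:funct2} arise by combining the neutral $n=2$ pieces $\tilde V_t^{0,2}$ of the potential expansion with the counterterm $A$: whenever a two-variable neutral cluster from a $V_t$-factor pairs up, it appears in the combination $A - \tilde V_t^{0,2}$ (this is the finite-volume analogue of the cancellation of divergences in \cite[Section~5]{MR4767492}), and whenever it does not pair it is exactly the subtraction inside the expectation in \eqref{eq:funct1}. Symmetrizing over the permutations $\tau\in S_n$ (as in Lemma~\ref{le:mncal1}) and identifying coefficients of $z^n$ then gives the stated formula for $\mathcal M_n(\xi_1,\dots,\xi_n|m,\delta,\epsilon)$, and by the uniqueness clause of Lemma~\ref{le:mncal1} (the kernel is determined by its action against test functions $f_1,\dots,f_n$) this symmetrized sum is $\mathcal M_n$.

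Concretely, I would write $\mathcal M_n(\xi_1,\dots,\xi_n|m,\delta,\epsilon)$ as $\frac{1}{n!}\sum_{\tau\in S_n}$ of a sum over: a choice of how many factors $A-\tilde V_t^{0,2}$ appear (say $l$ of them, using up $2l$ of the relabelled variables in consecutive pairs), and a set partition of the remaining $n-2l$ variables into blocks (the clusters coming from the $V_t^k$ expansion), each block carrying its $\tilde V_t^{k_j,n_j}$ and contributing to the global $\Phi_{t,\infty}$-expectation the term $(e^{i\sum_{r\in B_j}\sigma_r\Phi_{t,\infty}(x_r)} - \delta_{k_j,0}\delta_{n_j,2})$, with the indicator constraints $\{\sigma_r\in\Sigma_\alpha \text{ for } r \text{ in the first } k_j \text{ slots of } B_j\}$ as in \eqref{e:cZ-explicit}; this display is \eqref{e:Mn-horrible}. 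The main obstacle is purely combinatorial: carefully tracking which neutral $n=2$ contributions get absorbed into $A$ versus which stay inside the expectation, and verifying the signs and multinomial factors so that the regrouped series genuinely equals the one produced by Lemma~\ref{le:mncal1} rather than differing by lower-order terms. Once the algebra is set up correctly there is nothing deep; the estimates (that each $\tilde V_t^{k_j,n_j}$ is integrable, that $A - \tilde V_t^{0,2}$ is integrable, that the $\Phi_{t,\infty}$-expectation with the subtractions is harmless) are deferred to the subsequent lemmas of this appendix and to Proposition~\ref{pr:vtnorm}, and are not needed here — the present lemma is just the identity.
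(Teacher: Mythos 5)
Your proposal is correct and follows the same route as the paper's proof: both start from the representation of $\mathcal Z$ via \eqref{eq:partfunc}, expand $V_t$ using Proposition~\ref{pr:rpexp}, regroup the deterministic $\tilde V_t^{0,2}$ piece with the counterterm $A$, and then extract the $z^n$ coefficient by Fa\`a di Bruno combined with the Leibniz rule for the two multiplicative factors. Two small points the paper makes explicit that you gloss over: (i) the mechanism by which $A-\tilde V_t^{0,2}$ appears is literally a subtraction of the constant $\delta_{k,0}\delta_{n,2}\tilde V_t^{0,2}$ from inside the exponent, which then exits the $\Phi_{t,\infty}$-expectation as a deterministic factor $e^{-\frac{z^2}{2}\int\tilde V_t^{0,2}}$ and combines with $e^{-\frac{z^2}{2}\int A}$ \emph{before} any derivative is taken (it is not a clustering-dependent pairing); (ii) the exchange of $\partial_z^l|_{z=0}$ with the $\Phi_{t,\infty}$-expectation requires a justification, which the paper supplies via Cauchy's integral formula for derivatives and Fubini (valid on the disk where \eqref{eq:etabound} ensures uniform convergence of the series in the exponent), and which is part of the identity itself rather than an estimate that can be deferred.
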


\begin{proof}
Let us begin by using \eqref{eq:partfunc} and Proposition \ref{pr:rpexp} to write (for $t$ satisfying the assumptions of the lemma and $|z|$ small enough that \eqref{eq:etabound} is satisfied with $\eta$ replaced by $z\eta$)
\begin{align}
&\mathcal Z(z\eta|m,\delta,\epsilon)\nnb
&=\E\qa{e^{-\sum_{n=1}^\infty \frac{z^n}{n!}\sum_{k=0}^n {n\choose k}\int_{(\R^2\times \Sigma_\alpha)^k \times(\R^2\times \Sigma_\beta)^{n-k}}d\xi_1\cdots d\xi_n \eta(\xi_1)\cdots \eta(\xi_n) e^{i\sum_{j=1}^n \sigma_j \Phi_{t,\infty}(x_j)}\tilde V_t^{k,n}(\xi_1,\dots,\xi_n|m,\delta,\epsilon)}}\nnb
&\quad \times e^{-\frac{z^2}{2}\int_{(\R^2\times \Sigma_\beta)^2}d\xi_1d\xi_2\, \eta(\xi_1)\eta(\xi_2) A(\xi_1,\xi_2|m,\epsilon)}\nnb
&=\E\qa{e^{-\sum_{n=1}^\infty \frac{z^n}{n!}\sum_{k=0}^n {n\choose k}\int_{(\R^2\times \Sigma_\alpha)^k \times(\R^2\times \Sigma_\beta)^{n-k}}d\xi_1\cdots d\xi_n \eta(\xi_1)\cdots \eta(\xi_n) (e^{i\sum_{j=1}^n \sigma_j \Phi_{t,\infty}(x_j)}-\delta_{k,0}\delta_{n,2})\tilde V_t^{k,n}(\xi_1,\dots,\xi_n|m,\delta,\epsilon)}}\nnb
&\quad \times e^{-\frac{z^2}{2}\int_{(\R^2\times \Sigma_\beta)^2}d\xi_1d\xi_2\, \eta(\xi_1)\eta(\xi_2)(\avg{\epsilon^{-\frac{\beta}{4\pi}}e^{i\sigma_1 \varphi(x_1)}; \epsilon^{-\frac{\beta}{4\pi}}e^{i\sigma_2\varphi(x_2)}}_{\GFF(\epsilon,m)}^\mathsf T-\tilde V_t^{0,2}(\xi_1,\xi_2|m,\delta,\epsilon))}.
\end{align}
Thus 
\begin{align}
&\mathcal M_n(\eta|m,\delta,\epsilon)\nnb
&=\left.\frac{d^n}{dz^n}\right|_{z=0}\mathcal Z(z\eta|m,\delta,\epsilon)\nnb
&=\sum_{l=0}^n {n\choose l}\left.\frac{d^l}{dz^l}\right|_{z=0}\nnb
&\quad \E\qa{e^{-\sum_{n=1}^\infty \frac{z^n}{n!}\sum_{k=0}^n {n\choose k}\int_{(\R^2\times \Sigma_\alpha)^k \times(\R^2\times \Sigma_\beta)^{n-k}}d\xi_1\cdots d\xi_n \eta(\xi_1)\cdots \eta(\xi_n) (e^{i\sum_{j=1}^n \sigma_j \Phi_{t,\infty}(x_j)}-\delta_{k,0}\delta_{n,2})\tilde V_t^{k,n}(\xi_1,\dots,\xi_n|m,\delta,\epsilon)}}\nnb
&\qquad \times \left.\frac{d^{n-l}}{dz^{n-l}}\right|_{z=0}e^{-\frac{z^2}{2}\int_{(\R^2\times \Sigma_\beta)^2}d\xi_1d\xi_2\, \eta(\xi_1)\eta(\xi_2)(A(\xi_1,\xi_2|m,\epsilon)-\tilde V_t^{0,2}(\xi_1,\xi_2|m,\delta,\epsilon))}.
\end{align} 
In the first derivative, we want to take the derivative inside of the expectation. To justify this, we note that if we choose $r$ small enough that \eqref{eq:etabound} is satisfied with $\eta$ replaced by $r\eta$, then the expectation is analytic in a domain containing the set $\{z\in \C:|z|\leq r\}$ and we have by Cauchy's integral formula for derivatives and Fubini
\begin{align}
&\left.\frac{d^l}{dz^l}\right|_{z=0}\big\langle\exp\big(-\sum_{n=1}^\infty \frac{z^n}{n!}\sum_{k=0}^n {n\choose k}\int_{(\R^2\times \Sigma_\alpha)^k \times(\R^2\times \Sigma_\beta)^{n-k}}d\xi_1\cdots d\xi_n \eta(\xi_1)\cdots \eta(\xi_n) \nnb
&\qquad \qquad \times (e^{i\sum_{j=1}^n \sigma_j \Phi_{t,\infty}(x_j)}-\delta_{k,0}\delta_{n,2})\tilde V_t^{k,n}(\xi_1,\dots,\xi_n|m,\delta,\epsilon)\big)\big\rangle_{\GFF(\sqrt{t},m)}\nnb
&=l!\big\langle\oint_{|z|=r}\frac{dz}{2\pi i z^{l+1}}\exp\big(-\sum_{n=1}^\infty \frac{z^n}{n!}\sum_{k=0}^n {n\choose k}\int_{(\R^2\times \Sigma_\alpha)^k \times(\R^2\times \Sigma_\beta)^{n-k}}d\xi_1\cdots d\xi_n \eta(\xi_1)\cdots \eta(\xi_n)\nnb
&\qquad \qquad \times  (e^{i\sum_{j=1}^n \sigma_j \Phi_{t,\infty}(x_j)}-\delta_{k,0}\delta_{n,2})\tilde V_t^{k,n}(\xi_1,\dots,\xi_n|m,\delta,\epsilon)\big)\big\rangle_{\GFF(\sqrt{t},m)}\nnb
&=\big\langle \left.\frac{d^l}{dz^l}\right|_{z=0}\exp\big(-\sum_{n=1}^\infty \frac{z^n}{n!}\sum_{k=0}^n {n\choose k}\int_{(\R^2\times \Sigma_\alpha)^k \times(\R^2\times \Sigma_\beta)^{n-k}}d\xi_1\cdots d\xi_n \eta(\xi_1)\cdots \eta(\xi_n) \nnb
&\qquad \times (e^{i\sum_{j=1}^n \sigma_j \Phi_{t,\infty}(x_j)}-\delta_{k,0}\delta_{n,2})\tilde V_t^{k,n}(\xi_1,\dots,\xi_n|m,\delta,\epsilon)\big)\big\rangle_{\GFF(\sqrt{t},m)}.
\end{align}
To compute this derivative, we first note that by Faà di Bruno's formula, for any $(a_n)_{n=1}^\infty$ such that $\sum_{n=1}^\infty \frac{a_n}{n!}z^n$ is analytic in some neighborhood of the origin,
\begin{equation}
\left.\frac{d^l}{dz^l}\right|_{z=0}e^{\sum_{n=1}^\infty \frac{a_n}{n!}z^n}=\sum_{\tau\in \frP_l}\prod_{B\in \tau}a_{|B|}.
\end{equation}
If we write $\tau\in \frP_l$ as $\tau=\{B_1,\dots,B_{|\tau|}\}$ and define $S_p(\tau)=\sum_{q=1}^p|B_q|$ (understanding that $S_0(\tau)=0$), we thus find by relabeling our integration variables 
\begin{align}
&\big\langle \left.\frac{d^l}{dz^l}\right|_{z=0}\exp\big(-\sum_{n=1}^\infty \frac{z^n}{n!}\sum_{k=0}^n {n\choose k}\int_{(\R^2\times \Sigma_\alpha)^k \times(\R^2\times \Sigma_\beta)^{n-k}}d\xi_1\cdots d\xi_n \eta(\xi_1)\cdots \eta(\xi_n) \nnb
&\qquad \times (e^{i\sum_{j=1}^n \sigma_j \Phi_{t,\infty}(x_j)}-\delta_{k,0}\delta_{n,2})\tilde V_t^{k,n}(\xi_1,\dots,\xi_n|m,\delta,\epsilon)\big)\big\rangle_{\GFF(\sqrt{t},m)}\nnb
&=\sum_{\tau\in \frP_l}(-1)^{|\tau|} \sum_{k_1=0}^{|B_1|}\cdots \sum_{k_{|\tau|}=0}^{|B_{|\tau|}|}\prod_{j=1}^{|\tau|}{|B_j|\choose k_j}\int_{\prod_{j=1}^{|\tau|}(\R^2 \times \Sigma_\alpha)^{k_j}\times (\R^2\times \Sigma_\beta)^{|B_j|-k_j}}d\xi_1\cdots d\xi_l\eta(\xi_1)\cdots \eta(\xi_l)\nnb
&\quad \times \prod_{j=1}^{|\tau|}\tilde V_t^{k_j,|B_j|}(\xi_{S_{j-1}+1},\dots,\xi_{S_j}|m,\delta,\epsilon)\avg{\prod_{j=1}^{|\tau|}(e^{i\sum_{p=S_{j-1}(\tau)+1}^{S_j(\tau)}\sigma_p \Phi_{t,\infty}(x_p)}-\delta_{k_j,0}\delta_{|B_j|,2})}_{\GFF(\sqrt{t},m)}\nnb
&=\int_{(\R^2\times \Sigma)^l}d\xi_1\cdots d\xi_l \eta(\xi_1)\cdots \eta(\xi_l)\sum_{\tau\in \frP_l}(-1)^{|\tau|}\sum_{k_1=0}^{|B_1|}\cdots \sum_{k_{|\tau|}=0}^{|B_{|\tau|}|}\prod_{j=1}^{|\tau|}{|B_j|\choose k_j}\nnb
&\quad \times \1\{\sigma_{S_{j-1}(\tau)+1},\dots,\sigma_{S_{j-1}(\tau)+k_j}\in \Sigma_\alpha, \sigma_{S_{j-1}(\tau)+k_j+1},\dots,\sigma_{S_j(\tau)}\in \Sigma_\beta \text{ for }1\leq j\leq |\tau|\}\nnb
&\quad \times \prod_{j=1}^{|\tau|}\tilde V_t^{k_j,|B_j|}(\xi_{S_{j-1}(\tau)+1},\dots,\xi_{S_{j}(\tau)}|m,\delta,\epsilon)\avg{\prod_{j=1}^{|\tau|}(e^{i\sum_{p=S_{j-1}(\tau)+1}^{S_j(\tau)}\sigma_p \Phi_{t,\infty}(x_p)}-\delta_{k_j,0}\delta_{|B_j|,2})}_{\GFF(\sqrt{t},m)}
\end{align}
For the $\left.\frac{d^{n-l}}{dz^{n-l}}\right|_{z=0}$-derivative, we note that 
\begin{align}
&\left.\frac{d^{n-l}}{dz^{n-l}}\right|_{z=0}e^{-\frac{z^2}{2}\int_{(\R^2\times \Sigma_\beta)^2}d\xi_1d\xi_2\, \eta(\xi_1)\eta(\xi_2)(\avg{\epsilon^{-\frac{\beta}{4\pi}}e^{i\sigma_1 \varphi(x_1)}; \epsilon^{-\frac{\beta}{4\pi}}e^{i\sigma_2\varphi(x_2)}}_{\GFF(\epsilon,m)}^\mathsf T-\tilde V_t^{0,2}(\xi_1,\xi_2|m,\delta,\epsilon))}\nnb
  &=\1\{n-l \text{ is even}\}(-1)^{\frac{n-l}{2}}2^{-\frac{n-l}{2}}{n-l\choose \frac{n-l}{2}}
    \nnb
    &\qquad\qquad \times \left(\int_{(\R^2\times \Sigma_\beta)^2}d\xi d\xi'\, \eta(\xi)\eta(\xi')\left(A(\xi,\xi'|m,\epsilon)-\tilde V_t^{0,2}(\xi,\xi'|m,\delta,\epsilon)\right)\right)^{\frac{n-l}{2}}.
\end{align}
Combining these identities, we find that 
\begin{align} \label{e:Mn-horrible}
&\mathcal M_n(\eta|m,\delta,\epsilon)\nnb
&=\int_{(\R^2\times \Sigma)^n}d\xi_1\cdots d\xi_n\, \eta(\xi_1)\cdots \eta(\xi_n) \sum_{l=0}^n {n\choose l}\1\{n-l \text{ is even}\}(-1)^{\frac{n-l}{2}}2^{-\frac{n-l}{2}}{n-l\choose \frac{n-l}{2}}\nnb
&\quad \times \sum_{\tau \in \frP_l}(-1)^{|\tau|}\sum_{k_1=0}^{|B_1|}\cdots \sum_{k_{|\tau|}=0}^{|B_{|\tau|}|}\prod_{j=1}^{|\tau|}{|B_j|\choose k_j}\nnb
&\quad \times \1\{\sigma_{S_{j-1}(\tau)+1},\dots,\sigma_{S_{j-1}(\tau)+k_j}\in \Sigma_\alpha, \sigma_{S_{j-1}(\tau)+k_j+1},\dots,\sigma_{S_j(\tau)}\in \Sigma_\beta \text{ for }1\leq j\leq |\tau|, \, \sigma_p\in \Sigma_{\beta} \text{ for } p>l\}\nnb
&\quad \times \prod_{j=1}^{|\tau|}\tilde V_t^{k_j,|B_j|}(\xi_{S_{j-1}(\tau)+1},\dots,\xi_{S_{j}(\tau)}|m,\delta,\epsilon)\avg{\prod_{j=1}^{|\tau|}(e^{i\sum_{p=S_{j-1}(\tau)+1}^{S_j(\tau)}\sigma_p \Phi_{t,\infty}(x_p)}-\delta_{k_j,0}\delta_{|B_j|,2})}_{\GFF(\sqrt{t},m)}\nnb
&\quad \times \prod_{p=0}^{\frac{n-l}{2}-1} \left(A(\xi_{n-2p-1},\xi_{n-2p}|m,\epsilon)-\tilde V_t^{0,2}(\xi_{n-2p-1},\xi_{n-2p}|m,\delta,\epsilon)\right).
\end{align}
This formula is of the desired form.
\end{proof}

\subsection{Proof of Theorem~\ref{th:fracapp}}
Note that for Theorem \ref{th:fracapp}, due to the moments to cumulants map, it is sufficient for us to prove the corresponding claims for
\begin{align}
\mathcal M_n(f_1,\dots,f_p|m,\epsilon,\delta)=\left.\frac{\partial^n}{\partial z^n}\right|_{z=0}\prod_{j=1}^p \left.\frac{\partial}{\partial u_j}\right|_{u=0} \mathcal Z(\eta_{u,z}|m,\epsilon,\delta),
\end{align}
where $\eta_{u,z}$ is as in \eqref{eq:etauz}.
As $\eta_{u,z}$ is linear in $u$ and $z$, this quantity can be extracted from the order $n+p$ term in the expansion of $\mathcal Z(w \eta_{u,z}|m,\epsilon,\delta)$ (as a function of $w$). In other words,
\begin{align}
\mathcal M_n(f_1,\dots,f_p|m,\epsilon,\delta)=\frac{\partial^n}{\partial z^n}\prod_{j=1}^p \frac{\partial}{\partial u_j}\mathcal M_{n+p}(\eta_{u,z}|m,\epsilon,z).
\end{align}
Noting that 
\begin{align}
\frac{\partial^n}{\partial z^n} z^n=\prod_{j=1}^n \frac{\partial}{\partial v_j}\left(\sum_{j=1}^n v_j\right)^n,
\end{align}
we see from Lemma \ref{le:mncal1}  that we can write this as
\begin{align}
&\mathcal M_n(f_1,\dots,f_p|m,\epsilon,\delta)\nnb
&=\prod_{j=1}^n \frac{\partial}{\partial v_j}\prod_{k=1}^p \frac{\partial}{\partial u_k}\mathcal M_{n+p}\left(\left.-\sum_{j=1}^n v_j\1_{\sigma\in \Sigma_\beta}\1_\Lambda-\sum_{k=1}^p u_k \delta_{\sigma,\sqrt{4\pi}\alpha_k}f_k\right|m,\epsilon,\delta\right)\nnb
  &=(n+p)!(-1)^{n+p}\int_{(\R^2\times \Sigma)^{n+p}}d\xi_1\cdots d\xi_{n+p}\nnb
    &\qquad \qquad \qquad\qquad \qquad \times \1_{\sigma_1,\dots,\sigma_n\in \Sigma_\beta}\1_\Lambda(x_1)\cdots \1_\Lambda(x_n)\delta_{\sigma_{n+1},\sqrt{4\pi}\alpha_1}\cdots \delta_{\sigma_{n+p},\sqrt{4\pi}} \nnb
&\qquad \qquad \qquad\qquad \qquad \times f_1(x_{n+1})\cdots f_p(x_{n+p})\mathcal M_{n+p}(\xi_1,\dots,\xi_{n+p}|m,\epsilon,\delta).
\end{align}
Thus Theorem \ref{th:fracapp} boils down to convergence and bounds for $\mathcal M_{n+p}(\xi_1,\dots,\xi_{n+p}|m,\epsilon,\delta)$.

\begin{proof}[Proof of Theorem \ref{th:fracapp}]
As just argued, our task is to show that $\mathcal M_{n+p}(\xi_1,\dots,\xi_{n+p}|m,\epsilon,\delta)$ converges as $m,\epsilon,\delta\to 0$
and that it satisfies the desired bounds that are uniform in $m,\epsilon,\delta$. We split the proof into two parts: convergence and bounds.

\subproof{convergence} The convergence follows from Lemma~\ref{le:mncal1} which gives the explicit representation
in terms of GFF correlations, namely as the symmetrization of the integrals \eqref{e:cZ-explicit},
together with the convergence of the GFF correlation functions,  similar to~Lemma~\ref{le:gfffrac}.

\subproof{bounds}
For the bounds, we apply Lemma~\ref{le:mnrp} which gives a representation of the kernels in terms of the renormalized potential.
More specifically, $\mathcal M_{n+p}(\xi_1,\dots,\xi_{n+p}|m,\epsilon,\delta)$ can be written
as a sum over permutations of linear combination (with possibly some constraints on the $\sigma$-variables) of products of functions of the form  \eqref{eq:funct1} and \eqref{eq:funct2}.

The term \eqref{eq:funct2} was already bounded in \cite[Lemma 5.4]{MR4767492}.
For \eqref{eq:funct1}, we note that if $(k_j,n_j)\neq (0,2)$, then we can bound the corresponding exponentials simply by one and the $\tilde V_t^{k_j,n_j}$-term can be bounded by $h_t^{k_j,n_j}$ from Proposition \ref{pr:vtnorm}. This also works if $(k_j,n_j)=(0,2)$ and $\sigma_{\sum_{q=1}^j n_q}=\sigma_{\sum_{q=1}^j n_q-1}$. If on the other hand we have $(k_j,n_j)=(0,2)$ and $\sigma_{\sum_{q=1}^j n_q}\neq \sigma_{\sum_{q=1}^j n_q-1}$, then we make use of the bound (for $x,y\in K$ with $K\subset \R^2$ compact)
\begin{align}
|e^{i\sqrt{\beta}(\Phi_{t,\infty}(x)-\Phi_{t,\infty}(y))}-1|\leq |x-y| \|\nabla \Phi_{t,\infty}\|_{L^\infty(K)}.
\end{align}
Thus the question boils down to bounding objects of the form
\begin{align}
\prod_{j=1}^p \tilde V_t^{0,2}(\xi_{2j-1},\xi_{2j}|m,\epsilon,\delta)|x_{2j-1}-x_{2j}|\E\qa{\|\nabla \Phi_{t,\infty}\|_{L_\infty(K)}^p}.
\end{align}
From \eqref{eq:V2explicit1} (and \cite[Lemma 4.4]{MR4767492}), one readily finds a bound of the form 
\begin{align}
|\tilde V_t^{0,2}(\xi_{2j-1},\xi_{2j}|m,\epsilon,\delta)|\leq C_t |x_1-x_2|^{-\frac{\beta}{2\pi}},
\end{align}
with $C_t$ independent of $m,\epsilon,\delta$, so due to the $|x_{2j-1}-x_{2j}|$ factor, we see that our quantity is locally integrable for $\beta<6\pi$. We only need to argue that the remaining expectation is bounded in $m$. This follows for example from \cite[Lemma 5.2]{MR4767492}. This concludes the proof. 
\end{proof}

\section{Regularity of solutions to the full plane twisted Dirac equation}
\label{app:SMJ}

In this appendix, we derive some regularity estimates for certain singular solutions (not in $L^2(\C)$)
to the twisted massive Dirac equation in the full plane -- referred to as wave functions in \cite{MR1233355}.
These estimates are important for estimates that are locally uniform in the branch points in Section~\ref{sec:Palmer}.
They are likely to be clear for experts in \cite{MR555666,MR1233355}, but for readers not familiar with these articles, we try to provide a review of the relevant concepts and results needed to derive these estimates.
For the review of the analysis of \cite{MR555666}, we follow the presentation of \cite[Section~6]{MR695532} (with minor notational changes),
where the results of \cite{MR555666} are reviewed in a concise manner.

The precise statement we wish to prove in this appendix is as follows.
The functions $W_j$ are the ones defined at the end of \cite[Section 3]{MR1233355}, see also \eqref{eq:Wnu} where they are denoted $W_\nu$.

\begin{lemma}\label{le:wjreg}
Let $K\subset \{(x_1,\dots,x_n)\in \C^n: x_i\neq x_j \text{ for } i\neq j\}$ be compact, let $m>0$, and let $\alpha_1,\dots,\alpha_n\in(-\frac{1}{2},\frac{1}{2})$ satisfy $\sum_{j=1}^n \alpha_j=0$. There exist constants $C_K,R_K>0$ (depending only on $K$) such that for $j=1,\dots,n$, $W_j(z)=W_j(z,\alpha,x)$ satisfies 
\begin{equation}\label{e:wjreg}
|W_j(z)|\leq C_K e^{-m|z|}
\end{equation}
for $x\in K$ and $|z|>R_K$.
\end{lemma}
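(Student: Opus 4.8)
The starting point is the uniform-in-$x$ decay estimate for $W_j$ that is already available pointwise from \cite[Proposition~1.2]{MR1233355}; what we need is to upgrade this to a bound that is uniform over the compact set $K$ of branch-point configurations. The plan is to set up a family of singular solutions $W_j(z,\alpha,x)$ that depends continuously (indeed smoothly) on $(x,\alpha)$ in a suitable topology, and then exploit compactness of $K$ to conclude uniformity. Concretely, I would first recall from \cite{MR555666} (in the presentation of \cite[Section~6]{MR695532}) and \cite[Sections~1--3]{MR1233355} that $W_j$ is characterized as the unique solution of the massive twisted Dirac equation $(\Dirac_\rho+\mu)W_j=0$ away from the branch points, with the prescribed local expansion \eqref{eq:Wnu} at each $x_{\nu}$ (leading behavior $w_{-1/2+\alpha_j}(z-x_j)$ at $x_j$ and only the regular branches $w_{k\pm\alpha_\nu}$, $k>0$, at the other $x_\nu$), and square-integrability at infinity. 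This last property, together with the single-valuedness $W_j(e^{2\pi i}z)=W_j(z)$ for $|z|$ large (which uses $\sum_\nu\alpha_\nu=0$, exactly as in the proof of Proposition~\ref{diffatbpts}), places $W_j(\cdot)$ for $|z|\ge R_0$ in the setting of \cite[Proposition~3.1.5]{MR555666}, which yields exponential decay $|W_j(z)|\le C e^{-m|z|}$.

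The substance is the uniformity of the constant $C=C_K$ and the threshold $R=R_K$ as $(x,\alpha)$ ranges over $K\times\{\sum\alpha_i=0\}$. Here I would argue as follows. Away from a fixed neighborhood of $\Gamma$, the Green's function $S^\rho_\mu$ of $\Dirac_\rho+\mu$ exists and, by \cite[Section~2.6]{MR1233355}, so do the wave functions $W_j$, and their construction (via the resolvent / integral-equation formulation of \cite[Section~3]{MR1233355}, or equivalently the formulation through $S^\rho_{\mu}$ in our Proposition~\ref{pr:ivlim} and the factorization Proposition~\ref{prop:factorization}) depends continuously on $(x,\alpha)$ in, say, $L^2$ on an annulus $R_0\le|z|\le 2R_0$. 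Since $\{x\in K\}$ keeps the branch points in a fixed compact set, we may choose $R_0$ (hence $R_K$) uniformly so that all branch points lie in $B_{R_0/2}(0)$ and the annulus is disjoint from all the $\Gamma_\nu$. Then $\sup_{x\in K}\|W_j\|_{L^2(R_0\le|z|\le 2R_0)}\le C_K'<\infty$ by continuity and compactness. Now I would invoke the uniform version of the decay estimate: the proof of \cite[Proposition~3.1.5]{MR555666} (and of \cite[Proposition~1.2]{MR1233355}) produces $e^{-m|z|}$ decay with a constant controlled by the $L^2$-norm of $W_j$ on such an initial annulus and by $m$ alone — the branch structure enters only through requiring $|z|$ beyond all branch points, which is already ensured. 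Hence $|W_j(z)|\le C_K e^{-m|z|}$ for $|z|>R_K$, uniformly in $x\in K$.

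The main obstacle I anticipate is making the ``the construction depends continuously on $(x,\alpha)$ in $L^2$ on a fixed annulus'' step fully rigorous while staying within the framework we are permitted to cite. One clean route is to avoid re-deriving continuity of $W_j$ directly and instead use the factorization formula of Proposition~\ref{prop:factorization} together with the analyticity/continuity of $S^\rho_\mu$ in the branch points: $W_j$ is essentially read off from the $k=\tfrac12$ coefficient of the local expansion of $S^\rho_\mu$ at $x_j$ (cf. \cite[p.~315]{MR1233355} and \eqref{eq:Wanda}), and the relevant short-distance expansion $S^\rho_\mu=(\text{explicit singular part})+\Delta^\rho$ with $\Delta^\rho$ jointly continuous (our Proposition~\ref{prop:Green-Delta}, or its infinite-volume analogue obtained via Proposition~\ref{finiteGreentoinfGreen} and Proposition~\ref{prop:Ldepenbounds}) is exactly the input needed to control $W_j$ on the annulus locally uniformly in $x\in K$. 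With that continuity in hand, compactness of $K$ finishes the argument; the exponential-decay constant is then extracted from the Dirac equation itself on $|z|>R_K$ by the standard Combescure--Ginibre / SMJ argument, whose constant depends only on $m$ and the annulus data. I would write the details of this last extraction following \cite[Section~6]{MR695532} and \cite[Proposition~3.1.5]{MR555666}.
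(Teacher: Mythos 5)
Your high-level plan (continuity in $x$ over $K$ plus compactness, with the $z\to\infty$ decay coming from SMJ) matches the paper's, and you correctly identify the continuity step as the crux. However, the route you propose for that step has two concrete gaps that the paper avoids by going a different way.

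First, you propose to read $W_j$ off the local expansion of $S^\rho_\mu$ at $x_j$ via \eqref{eq:Wanda}, controlling this through the $\Delta$-decomposition of Proposition~\ref{prop:Green-Delta}. But that decomposition isolates only the leading $1/(z-w)$ singularity and leaves a remainder $\Delta^\rho(z,w)$ that is merely jointly \emph{continuous}; the coefficient $a^{\nu}_{1/2}(z)$ sits in the Bessel-function expansion \eqref{eq:Gasy} of the Green's function in $(w-x_\nu)^{\pm\alpha_\nu}$-powers, and extracting it requires knowing that $\Delta^\rho(z,\cdot)$ itself admits such an expansion near $x_\nu$ with coefficients continuous in $x$. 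That is a genuinely stronger statement than Proposition~\ref{prop:Green-Delta} provides, and is not a trivial upgrade. Second, Proposition~\ref{prop:Green-Delta} (and Proposition~\ref{prop:factorization}, Proposition~\ref{pr:analyt}) concern the \emph{finite-volume} Green's function $S^\rho_{\mu\chi}$, while you need the infinite-volume $S^\rho_\mu$; the convergence in Proposition~\ref{finiteGreentoinfGreen} is only pointwise for fixed $(z,w)$ away from the diagonal and the branch points, so it does not hand you the locally uniform continuity of the expansion coefficients near $x_\nu$ that your argument needs.

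The paper's proof takes a different and more direct route precisely to sidestep this. It recalls from SMJ the canonical $L^2$ basis $(\mathsf w_k)_{k=1}^n$ of $W_a^l$, whose \emph{joint} continuity (indeed analyticity) in $(z,a)$ away from coincidences is a citable off-the-shelf result (\cite[Corollary~3.3.11]{MR555666}, restated as Theorem~\ref{th:wjreg}). It then expresses $W_j$ explicitly as a finite linear combination of the $\mathsf w_k$ (and, when $\alpha_j>0$, of $\partial\mathsf w_j$), with coefficients $\gamma_k$ determined by a linear system whose matrix entries $\mathsf c_k(\mathsf w_{k'},-)$, $\mathsf c_k(\partial\mathsf w_j,+)$ are read off by Fourier integrals of $\mathsf w_{k'}$ over small circles around the $a_k$; these are continuous in $a$ by continuity of the basis and the choice of a circle radius that varies continuously with $a$. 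This removes any need to re-analyze the Green's function at the branch points or to control the infinite-volume limit. If you want to pursue your Green's-function route, you would have to first establish an infinite-volume version of Proposition~\ref{prop:Green-Delta} with a Bessel-type local expansion near each $x_\nu$ whose coefficients are jointly continuous in $(z,x)$ — which is essentially re-proving the SMJ result rather than citing it.
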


\subsection{A review of some of the results of \cite{MR555666} following \cite{MR695532}} To introduce the main objects needed for proving Lemma \ref{le:wjreg}, we review briefly some central ideas from \cite{MR555666} following the presentation of \cite[Section 6]{MR695532}. Compared to our treatment, the conventions in \cite{MR555666,MR695532} are slightly different. In particular, they consider the Dirac operator 
\begin{equation}
\begin{pmatrix}
-\msmj &  \partial\\
\bar\partial & -\msmj
\end{pmatrix}
\end{equation}
instead of 
\begin{equation}
\begin{pmatrix}
\frac{m}{2} & -\partial\\
-\bar\partial & \frac{m}{2}
\end{pmatrix}
\quad \text{ or }\quad
\begin{pmatrix}
\mu & 2\bar \partial\\
2\partial & \mu
\end{pmatrix}
\end{equation}
as Palmer does in  \cite{MR555666} respectively as  we do in Section~\ref{sec:Green} and \cite{MR4767492},
see in particular Section~\ref{sec:Palmer-notation}. We choose to keep separate notation for all of the different conventions (i.e., to write $\msmj, m, \mu$).
The difference between $\partial$ and $2\bar\partial$ (along with the conjugates) can be dealt with by rescaling the spatial variables and interchanging the roles of $z$ and $\bar z$ (the latter being equivalent to choosing a different representation for the $\gamma$-matrices).
The difference in the sign of the mass term on the other hand is irrelevant by the symmetries of the Dirac operator.
Note that in the normalization of  \cite{MR555666}  the rate in \eqref{e:wjreg}  corresponds to the rate $\msmj$ in the conventions of \cite{MR555666,MR695532}. We will thus prove Lemma \ref{le:wjreg} in the setting of \cite{MR555666,MR695532} (and with a rate $2\msmj$).  %

Also the notation for the monodromy data or branching structure of the solutions is indexed slightly differently in \cite{MR555666,MR695532} than in \cite{MR1233355} or in our treatment in Sections~\ref{sec:massless-bosonization}--\ref{sec:Palmer};
we will come back to this issue when describing the results of \cite{MR1233355} concerning the existence of $W_j$.
The setting of \cite{MR695532} is as follows: we consider given distinct points (branch points) $a_1,\dots,a_n\in \C$ and constants (monodromy data) $l_1,\dots,l_n\in (-\frac{1}{2},\frac{1}{2})\setminus \{0\}$.
The restriction to $l_i\neq 0$ is not very important, but simplifies our discussion slightly -- see \cite{MR695532} for a more general setting.
(For the relation of the $l_j$ to the $\lambda_j$ in the convention of \cite{MR555666,MR695532} that we also use,
see \eqref{eq:lambda-l} below.)
We write $a=(a_1,\dots,a_n)$ and $l=(l_1,\dots,l_n)$. For $\msmj>0$, we are interested in multivalued solutions $w=\begin{pmatrix} w_+\\ w_-\end{pmatrix}$ to the Dirac equation on $\C\setminus \{a_1,\dots,a_n\}$:
\begin{equation}\label{eq:deq}
\begin{pmatrix}
-\msmj &  \partial\\
\bar\partial & -\msmj
\end{pmatrix}\begin{pmatrix}
w_+\\
w_-
\end{pmatrix}=0,
\end{equation}
which satisfy, for each $j=1,\dots,n$ and $|z-a_j|$ small enough,
\begin{equation}\label{eq:monod}
\lim_{\theta\to 2\pi^-}w(a_j+e^{i\theta}(z-a_j))=e^{2\pi i(l_j-\frac{1}{2})}w(z).
\end{equation}
\begin{remark}\label{rem:universalcover}
Specifying multivaluedness only locally near the branch points in \eqref{eq:monod} may feel a bit imprecise. There are various ways to make this more concrete. For example, as we do for the Green's function in Section \ref{sec:Green}, one could specify branch cuts, require existence of boundary values on these branch cuts, and that the boundary values are related by \eqref{eq:monod}.

Another, slightly more abstract, but also in many ways more useful, approach to this is taken in  \cite{MR555666}, where the equation is studied on the (simply connected) universal cover of $\C\setminus\{a_1,\dots,a_n\}$. This has the benefit of allowing the use of (local) elliptic regularity arguments for the solution of the equation, and one can readily prove for example that a solution must be a smooth function on the universal cover. The connection to elliptic equations comes from the fact that if we write out \eqref{eq:deq} in component form, we have from the first component $w_+=\frac{1}{\msmj}\partial w_-$ and plugging this into the second component,
\begin{equation}
0=\partial w_+-\msmj w_-=\frac{1}{4\msmj}\Delta w_--\msmj w_-,
\end{equation}
so $w_-$ satisfies a constant coefficient elliptic equation on the universal cover.

The precise sense in which this multivaluedness is realized does not matter much for our discussion,
though we will occasionally make use of smoothness of the solutions in which case we refer to elliptic regularity and the interpretation involving the universal cover. 
\end{remark}

While $W_j$ may not be in $L^2(\C)$, nevertheless, a fundamental building block in understanding solutions to this type of equations is to understand solutions to \eqref{eq:deq} and \eqref{eq:monod} that are globally in $L^2(\C)$. As we will see, $W_j$ can then be built from such solutions via differentiation. Note that due to \eqref{eq:monod}, $|w|^2=|w_+|^2+|w_-|^2$ is single valued and $\int_{\C}|w|^2$ is well defined. We introduce some notation for this $L^2$-space.
\begin{definition}
For $a=(a_1,\dots,a_n)\in \C^n$ (with $a_i$ distinct) and $l=(l_1,\dots,l_n)\in ((-\frac{1}{2},\frac{1}{2})\setminus \{0\})^n$, we write $W_a^l$ for the complex vector space of multivalued solutions to \eqref{eq:deq} and \eqref{eq:monod} which satisfy 
\begin{equation}
\|w\|_{L^2(\C)}^2=\int_{\C}(|w_+|^2+|w_-|^2)<\infty.
\end{equation}
\end{definition}

An important part of the analysis of \cite{MR555666} (and \cite{MR1233355,MR695532}) is that $W_a^l$ is $n$-dimensional, and there is a certain \emph{canonical} basis for it. To introduce this canonical basis, we need to introduce certain special multivalued functions. For $l\in \R\setminus (\Z\cup (\Z+\frac{1}{2}))$ and $z=|z|e^{i\theta}\in \C$, we (modify the notation of \cite{MR695532} slightly and)
define\footnote{Note these functions are essentially as the same ones as in \eqref{eq:Wnu}.
We use slightly different notation to be locally consistent with the references used, i.e., \cite{MR1233355} in Section~\ref{sec:Palmer}
and \cite{MR695532} in this appendix.}
\begin{align}
v_l(z)=\begin{pmatrix}
e^{il\theta}I_l(2\msmj |z|)\\
e^{i(l+1)\theta} I_{l+1}(2\msmj |z|)
\end{pmatrix} \qquad \text{and} \qquad \bar v_l(z)=\begin{pmatrix}
e^{-i(l+1)\theta}I_{l+1}(2\msmj |z|)\\
e^{-il\theta}I_l(2\msmj |z|)
\end{pmatrix},
\end{align}
where $I_l$ is a modified Bessel function of the first kind.
The next lemma records some properties of these functions. 

\begin{lemma}\label{le:vlprops}
We have
\begin{align}\label{eq:vlpart}
\partial v_l=\msmj v_{l-1}, \quad \partial \bar v_l=\msmj \bar v_{l+1}, \quad \bar\partial v_l=\msmj v_{l+1}, \quad \bar\partial \bar v_l=\msmj \bar v_{l-1}.
\end{align}
Moreover, as $z\to 0$
\begin{align}\label{eq:vasy}
v_l(z)=(1+o(1))\frac{1}{\Gamma(1+l)}(\msmj z)^{l}\begin{pmatrix}
1\\
\frac{1}{1+l}\msmj z 
\end{pmatrix}
\end{align}
and 
\begin{align}\label{eq:vbarasy}
\bar v_l(z)=(1+o(1))\frac{1}{\Gamma(1+l)}(\msmj \bar z)^{l}\begin{pmatrix}
\frac{1}{1+l}\msmj \bar z \\
1
\end{pmatrix}.
\end{align}
\end{lemma}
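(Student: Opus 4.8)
The plan is to prove Lemma~\ref{le:vlprops} directly from the definitions of $v_l$ and $\bar v_l$ together with two standard facts about modified Bessel functions of the first kind: the recurrence-cum-derivative identities and the series expansion near the origin. No results from the main body of the paper are needed here; everything is an elementary computation, and the point is merely to record it cleanly in the conventions being used.

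For the derivative identities \eqref{eq:vlpart}, I would first rewrite $\partial=\frac12 e^{-i\theta}(\partial_r - \tfrac{i}{r}\partial_\theta)$ and $\bar\partial = \frac12 e^{i\theta}(\partial_r + \tfrac{i}{r}\partial_\theta)$ in polar coordinates $z=re^{i\theta}$. Applying $\partial$ to the first component $e^{il\theta}I_l(2\msmj r)$ gives $\tfrac12 e^{-i\theta}\big(2\msmj e^{il\theta}I_l'(2\msmj r) + \tfrac{l}{r} e^{il\theta}I_l(2\msmj r)\big) = \msmj e^{i(l-1)\theta}\big(I_l'(2\msmj r) + \tfrac{l}{2\msmj r}I_l(2\msmj r)\big)$. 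The classical identity $I_l'(x) + \tfrac{l}{x}I_l(x) = I_{l-1}(x)$ then yields $\msmj e^{i(l-1)\theta}I_{l-1}(2\msmj r)$, which is exactly the first component of $\msmj v_{l-1}$. The same manipulation on the second component $e^{i(l+1)\theta}I_{l+1}(2\msmj r)$, using $I_{l+1}'(x) + \tfrac{l+1}{x}I_{l+1}(x) = I_l(x)$, gives $\msmj e^{il\theta}I_l(2\msmj r)$, the second component of $\msmj v_{l-1}$; this proves $\partial v_l = \msmj v_{l-1}$. The identity $\bar\partial v_l = \msmj v_{l+1}$ is obtained the same way using $I_l'(x) - \tfrac{l}{x}I_l(x) = I_{l+1}(x)$, and the two identities for $\bar v_l$ follow either by the identical computation or simply by observing that $\bar v_l(z)$ is obtained from $v_l(z)$ by swapping the two components and replacing $z\mapsto\bar z$, under which $\partial$ and $\bar\partial$ interchange. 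I would present one case in full and indicate that the remaining three are verbatim analogous.

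For the asymptotics \eqref{eq:vasy}--\eqref{eq:vbarasy}, I would invoke the standard series $I_\nu(x) = (x/2)^\nu \sum_{k\ge 0} \frac{(x^2/4)^k}{k!\,\Gamma(\nu+k+1)}$, valid for $\nu\notin\{-1,-2,\dots\}$, which applies since $l\in(-\tfrac12,\tfrac12)$ so $l,l+1 > -1$. Thus $I_l(2\msmj r) = (\msmj r)^l\big(\tfrac{1}{\Gamma(1+l)} + O(r^2)\big)$ and $I_{l+1}(2\msmj r) = (\msmj r)^{l+1}\big(\tfrac{1}{\Gamma(2+l)} + O(r^2)\big) = (\msmj r)^{l+1}\big(\tfrac{1}{(1+l)\Gamma(1+l)} + O(r^2)\big)$. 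Multiplying the first by $e^{il\theta}$ gives $\tfrac{(\msmj z)^l}{\Gamma(1+l)}(1+O(|z|^2))$ since $r^l e^{il\theta}=z^l$ for the chosen branch, and multiplying the second by $e^{i(l+1)\theta}$ gives $\tfrac{(\msmj z)^{l+1}}{(1+l)\Gamma(1+l)}(1+O(|z|^2)) = \tfrac{(\msmj z)^l}{\Gamma(1+l)}\cdot\tfrac{\msmj z}{1+l}(1+O(|z|^2))$. Factoring out $\tfrac{(\msmj z)^l}{\Gamma(1+l)}$ from both components and absorbing the $O(|z|^2)$ errors into a $(1+o(1))$ scalar prefactor (legitimate since both components carry the same leading $(\msmj z)^l/\Gamma(1+l)$ factor) gives \eqref{eq:vasy}; the statement \eqref{eq:vbarasy} follows by the same computation with $z$ replaced by $\bar z$ and the components swapped.

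There is essentially no obstacle here — the lemma is purely bookkeeping — so the only ``hard part'' is cosmetic: making sure the branch conventions for $z^l = r^l e^{il\theta}$ are stated consistently with how $v_l$ is defined (i.e.\ $\theta\in$ a fixed fundamental domain, with multivaluedness handled as in Remark~\ref{rem:universalcover}), and making sure the Bessel recurrences are quoted in the form that matches the $2\msmj$-scaling in the argument. I would state the two Bessel facts explicitly at the start of the proof (with a reference to a standard handbook) so that the polar-coordinate computation is self-contained, and then carry out the $\partial v_l$ case and the $v_l$-asymptotics case in full, leaving the rest to ``the same computation.''
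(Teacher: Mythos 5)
Your proof is correct and follows essentially the same route as the paper's: express $\partial,\bar\partial$ in polar coordinates, apply the Bessel recurrences (you quote $I_l'\pm\frac{l}{x}I_l=I_{l\mp 1}$, which are just the sum and difference of the two identities the paper cites), and read off the leading small-$z$ behavior from the series for $I_\nu$. The observation that $\bar v_l(z)$ is $v_l(\bar z)$ with components swapped, so the $\bar v$ identities follow by symmetry, is a small economy the paper does not bother to spell out, but the substance is the same.
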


\begin{proof}
Expressing the complex derivatives in polar coordinates and using that the Bessel functions satisfy $I_l'(r)=\frac{1}{2}(I_{l-1}(r)+I_{l+1}(r))$ and $\frac{l}{r}I_l(r)=\frac{1}{2}(I_{l-1}(r)-I_{l+1}(r))$, one readily checks how the operators $\partial,\bar\partial$ act on $v_l$ and $\bar v_l$. 
For further details on this, see  \cite[Proposition~3.1.2]{MR555666}
(though note the difference in notation: our $v_l(z)$ and $\bar v_l(z)$ correspond to $w_{l+\frac{1}{2}}(z,\bar z)$ and $w^*_{l+\frac{1}{2}}(z,\bar z)$ in \cite{MR555666} and $z=\frac{1}{2}re^{i\theta}$ in \cite{MR555666}).

The small $|z|$ asymptotics of $v_l$ and $\bar v_l$
follow readily from the small $r$ asymptotics of $I_l(r)$,
namely $I_l(r)=(1+o(1))\frac{1}{\Gamma(1+l)}(r/2)^l$.
\end{proof}

One can readily check from \eqref{eq:vlpart} that $v_l$ and $\bar v_l$ satisfy \eqref{eq:deq} when we only have one branch point at the origin.
Indeed, one should think of these functions as a basis of multivalued local solutions to the Dirac equation. The precise statement (which is \cite[Theorem~6.0]{MR695532}) is as follows.

\begin{theorem}\label{th:localexp}
Let $w\in W_a^l$. Then for each $j=1,\dots,n$, there exist constants $(c_j(k,\pm))_{k=0}^\infty$ (depending on $a$ and $l$) such that in a small enough punctured neighborhood of $a_j$, we have the convergent expansion (actually, locally uniformly, and absolutely convergent due to regularity of $w$)
\begin{equation}\label{eq:localexp}
w(z)=\sum_{k=0}^\infty (c_j(k,+) v_{k+l_j-\frac{1}{2}}(z-a_j)+c_j(k,-) \bar v_{k-l_j-\frac{1}{2}}(z-a_j)).
\end{equation}
Moreover, if $w'$ is another element of $W_a^l$ (with expansion coefficients $c_j'(k,\pm)$), we have 
\begin{equation}\label{eq:l2ip}
\langle w,w'\rangle_{L^2(\C)}=\int_{\C}(w_+\bar w_+'+w_-\bar w_-')=-\frac{2}{\msmj ^2}\sum_{j=1}^n c_j(0,+)\overline{c_j'(0,-)}\cos(\pi l_j).
\end{equation}
Finally, there exist some $C,R>0$ (depending on $w,a,l$) such that for $|z|>R$, 
\begin{align}\label{eq:wexpdecay}
|w(z)|\leq  \frac{C}{\sqrt{|z|}}e^{-2\msmj |z|}.
\end{align}
\end{theorem}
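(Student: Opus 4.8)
\textbf{Plan for the proof of Theorem~\ref{th:localexp}.}

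The plan is to reconstruct (following \cite{MR555666}, as reviewed in \cite[Section~6]{MR695532}) the three assertions in the order: local expansion, inner product formula, exponential decay. Throughout I would use the elliptic-regularity interpretation of Remark~\ref{rem:universalcover}: a solution $w\in W_a^l$ is smooth on the universal cover of $\C\setminus\{a_1,\dots,a_n\}$, and each component of $w$ satisfies the modified Helmholtz equation $\tfrac{1}{4\msmj^2}\Delta w_\pm = w_\pm$ there.

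First I would establish the local expansion \eqref{eq:localexp}. Fix $j$ and work in a punctured disk $0<|z-a_j|<\delta$ small enough to contain no other branch point; lift to the universal cover, which is an infinite strip in the variable $(\log|z-a_j|,\arg(z-a_j))$. On this strip $w_-$ solves a constant-coefficient elliptic PDE, and separation of variables in the angular variable produces a Fourier-type series whose angular modes are forced by the monodromy \eqref{eq:monod} to have exponents in $l_j-\tfrac12+\Z$; the radial factors are modified Bessel functions $I_\nu$ and $K_\nu$. The functions $v_{k+l_j-\frac12}$ and $\bar v_{k-l_j-\frac12}$ (for $k\ge 0$) are precisely the $I$-type solutions with the correct monodromy that remain square-integrable near $a_j$, by the small-$z$ asymptotics \eqref{eq:vasy}, \eqref{eq:vbarasy} (recall $l_j\in(-\tfrac12,\tfrac12)$, so $|k\pm l_j - \tfrac12|<1$ only for $k=0$ and the exponents $\pm l_j - \tfrac12$ give locally $L^2$ singularities); the $K$-type solutions and the negative-$k$ $I$-type solutions must be excluded because $\int_{|z-a_j|<\delta}|w|^2<\infty$. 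The Dirac system then fixes $w_+$ from $w_-$ via $w_+ = \msmj^{-1}\partial w_-$ together with \eqref{eq:vlpart}, pairing each $v$-mode of $w_-$ with the matching $v$-mode of $w_+$. This yields \eqref{eq:localexp}, with absolute and locally uniform convergence following from the standard growth estimates on $I_\nu$ and the smoothness of $w$ (equivalently, from elliptic a priori estimates on annuli).

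Second, for the inner-product formula \eqref{eq:l2ip} I would integrate by parts. Using the Dirac equations for $w$ and $w'$ one checks the pointwise identity $\partial(w_-\overline{w_-'}) + \bar\partial(w_+\overline{w_+'}) = \msmj(w_+\overline{w_-'} + w_-\overline{w_+'})$ and a symmetric companion, so that $\msmj(|w_+|^2\text{-type terms})$ equals a total derivative. Applying Green's theorem on the region $\C$ with small disks of radius $\rho$ removed around each $a_k$ and a large disk of radius $R$ added, the boundary term at $|z|=R$ vanishes as $R\to\infty$ by the (to-be-proven) decay \eqref{eq:wexpdecay}, so $\langle w,w'\rangle_{L^2}$ reduces to a sum of contour integrals over the small circles $|z-a_j|=\rho$. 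On each such circle I substitute the local expansions \eqref{eq:localexp}; only the most singular cross-terms survive the $\rho\to 0$ limit, and using the explicit leading asymptotics \eqref{eq:vasy}--\eqref{eq:vbarasy}, the orthogonality of distinct angular modes, and the integral $\oint e^{il\theta}e^{-il'\theta}\,d\theta$, one extracts exactly $-\tfrac{2}{\msmj^2}c_j(0,+)\overline{c_j'(0,-)}\cos(\pi l_j)$ from the $j$-th circle. (The $\cos(\pi l_j)$ arises from the $\Gamma(1+l_j)\Gamma(1-l_j) = \pi l_j/\sin(\pi l_j)$–type reflection identity combined with the $I_{l}I_{-l}$ product in the leading term; I would check the precise constant against \cite[Section~6]{MR695532}.)

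Finally, for the exponential decay \eqref{eq:wexpdecay}: outside a large disk $|z|>R_0$ containing all branch points, $w$ is single-valued (since $\sum$ of all local monodromies is trivial once one loops around everything — here I would note that the product of the $e^{2\pi i(l_j-\frac12)}$ over all $j$ need not be $1$ in general, but one can instead argue on the universal cover and use that $|w|$ is single-valued, or appeal directly to \cite[Proposition~3.1.5]{MR555666}) and each component solves $\tfrac1{4\msmj^2}\Delta w_\pm = w_\pm$. Expanding $w_-$ in a Fourier series in $\arg z$ on $|z|>R_0$, the square-integrability $\int_{|z|>R_0}|w|^2<\infty$ forces every Fourier coefficient to be a multiple of the decaying modified Bessel function $K_\nu(2\msmj|z|)$ (the $I_\nu$ parts are excluded by integrability), and $K_\nu(r)\sim \sqrt{\pi/(2r)}\,e^{-r}$. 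Summing the Fourier series and using the analogous representation for $w_+$ gives the bound $|w(z)|\le C|z|^{-1/2}e^{-2\msmj|z|}$. The main obstacle is bookkeeping the precise constants and monodromy factors in the second step — getting $\cos(\pi l_j)$ with the right sign and the factor $-2/\msmj^2$ requires careful tracking of the Bessel normalizations and the orientation of the contours — but this is exactly the computation carried out in \cite{MR555666} and recorded in \cite[Theorem~6.0]{MR695532}, which I would follow line by line rather than reprove from scratch.
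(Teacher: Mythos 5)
You should first note that the paper does not actually prove this theorem: it is imported as a quotation of \cite[Theorem~6.0]{MR695532} (i.e.\ the results of \cite{MR555666}), with the decay bound \eqref{eq:wexpdecay} attributed to \cite[Proposition~1.2]{MR1233355}, which in turn rests on \cite[Proposition~3.1.5]{MR555666}. So there is no in-paper argument to match; your sketch reconstructs the standard SMJ route (separation of variables with modified Bessel radial parts and monodromy-shifted angular exponents, Green's theorem reduced to small circles around the $a_j$, and a Fourier--Bessel expansion at infinity with the $K_\nu$ branch selected by square integrability), which is indeed how the cited sources proceed, and your final decision to defer the precise constants and the uniform-in-mode control of the $K_\nu$ asymptotics (the source of the $|z|^{-1/2}$ prefactor) to \cite{MR555666} is exactly what the paper itself does.

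One concrete slip in your second step: the pointwise identity you wrote,
\begin{equation*}
\partial\bigl(w_-\overline{w_-'}\bigr)+\bar\partial\bigl(w_+\overline{w_+'}\bigr)=\msmj\bigl(w_+\overline{w_-'}+w_-\overline{w_+'}\bigr),
\end{equation*}
is false: expanding it with \eqref{eq:deq} leaves the uncontrolled terms $w_+\overline{\partial w_+'}+w_-\overline{\bar\partial w_-'}$, and the equation says nothing about $\partial w_+'$ or $\bar\partial w_-'$. The correct divergence identity, which is what makes the contour argument work, is
\begin{equation*}
\bar\partial\bigl(w_+\overline{w_-'}\bigr)=\msmj\bigl(w_+\overline{w_+'}+w_-\overline{w_-'}\bigr),
\end{equation*}
obtained from $\bar\partial w_+=\msmj w_-$ and $\bar\partial\overline{w_-'}=\overline{\partial w_-'}=\msmj\,\overline{w_+'}$ (equivalently one may use $\partial(w_-\overline{w_+'})$). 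With this identity the $L^2$ pairing is $\msmj^{-1}$ times a total derivative, the boundary term at $|z|=R$ dies by \eqref{eq:wexpdecay}, and on the circles $|z-a_j|=\rho$ only the cross terms pairing the exponent $l_j-\tfrac12$ of $w$ with $-l_j-\tfrac12$ of $w'$ survive as $\rho\to0$ (their sum is $-1$, compensating the circumference), yielding the $c_j(0,+)\overline{c_j'(0,-)}$ structure in \eqref{eq:l2ip}; the $\cos(\pi l_j)$ then comes out of the explicit leading Bessel asymptotics \eqref{eq:vasy}--\eqref{eq:vbarasy} rather than needing a separate reflection-formula argument. A minor further remark on your first step: per angular mode the radial solution space is spanned by $I_{\pm\nu}$ (non-integer order), so the exclusion is of the $I_{-\nu}$-type (equivalently the $k\le-1$ members of the $v,\bar v$ families) by local square integrability, which is what you in effect argue; phrased this way there is no separate "$K$-type" case to discuss near the punctures.
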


\begin{remark}
To be precise, \eqref{eq:wexpdecay} is not explicitly stated in \cite[Theorem 6.0]{MR695532}. Instead, they argue the existence of an expansion in terms of modified Bessel functions of the second kind that holds outside of a large enough disk.
It is however a special case of \cite[Proposition 1.2]{MR1233355}, where it is pointed out that this is a consequence of \cite[Proposition 3.1.5]{MR555666}. 
\end{remark}

Note that \eqref{eq:localexp} and \eqref{eq:l2ip} imply in particular that if there exists a set $I\subset\{1,\dots,n\}$ such that $c_j(0,+)=0$ for $j\in I$ and $c_j(0,-)=0$ for $j\notin I$, then $w=0$. This suggests  that $c_j(0,\pm)$ play an important role in the analysis of such functions. We point out some basic properties of these quantities and introduce notation for them.

First of all, note that the coefficients $c_j(k,\pm)$ can be extracted from $w$ through a Fourier expansion. In particular, for $k=0$, one has for small enough $r>0$ 
\begin{align}\label{eq:coefint}
\int_0^{2\pi}\frac{d\theta}{2\pi} e^{-i(l_j-\frac{1}{2})\theta}w_+(a_j+re^{i\theta})=c_j(0,+)I_{l_j-\frac{1}{2}}(2\msmj r)+c_j(0,-)I_{-l_j+\frac{1}{2}}(2\msmj r),
\end{align}
so using asymptotics of $I_\nu(2r)$ as $r\to 0$ (namely $I_\nu(2r)=(1+o(1))\frac{1}{\Gamma(1+\nu)}r^\nu$), we find
\begin{align}
c_j(0,+)=\lim_{r\to 0}\frac{1}{I_{l_j-\frac{1}{2}}(2\msmj r)}\int_0^{2\pi}\frac{d\theta}{2\pi} e^{-i(l_j-\frac{1}{2})\theta}w_+(a_j+re^{i\theta}).
\end{align}
Therefore $w\mapsto (c_j(0,+))_{j=1}^n$ is a linear mapping from $W_a^l\to \C^n$ and by \eqref{eq:l2ip}, it is an injection. Thus the dimension of $W_a^l$ is at most $n$.
In \cite{MR555666}, it is proved that it is $n$-dimensional by constructing a particular basis for the space (the canonical basis mentioned earlier). The precise statement (see e.g. \cite[Theorem 6.1]{MR695532}) is as follows.
\begin{theorem}\label{th:canonical}
There exists a basis $(\mathsf w_j)_{j=1}^n$ for $W_a^l$ which is uniquely characterized by
\begin{equation}
  c_k^j(0,+)=
  \delta_{jk},
\end{equation}
where $c_k^j(0,+)$ is the expansion coefficient $c_k(0,+)$ for $\mathsf w_j$.
\end{theorem}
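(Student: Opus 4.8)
The plan is to split the statement into a linear-algebra core, which is immediate, and one substantive analytic input, the dimension count. Observe that the leading-coefficient map
\[
\Phi\colon W_a^l \to \C^n, \qquad \Phi(w) = \bigl(c_1(0,+), \dots, c_n(0,+)\bigr),
\]
with $c_j(0,+)$ the expansion coefficient from Theorem~\ref{th:localexp}, is linear and injective: applying the inner-product identity \eqref{eq:l2ip} with $w' = w$ shows that if $c_j(0,+) = 0$ for every $j$ then $\|w\|_{L^2(\C)}^2 = 0$, so $w = 0$. Hence $\dim W_a^l \le n$. Once we know $\dim W_a^l = n$, the linear map $\Phi$ between two $n$-dimensional spaces is a bijection; the preimages $\mathsf w_j := \Phi^{-1}(e_j)$ of the standard basis vectors $e_1,\dots,e_n$ of $\C^n$ then satisfy $c_k^j(0,+) = \delta_{jk}$, are linearly independent because their $\Phi$-images are, hence form a basis of $W_a^l$, and each is the unique element with that property, since two candidates would differ by an element of $\ker\Phi = \{0\}$. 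Thus the entire theorem reduces to the surjectivity of $\Phi$, equivalently $\dim W_a^l \ge n$, i.e., the existence of a nonzero element of $W_a^l$ with prescribed leading $v$-coefficients $(c_j(0,+))_j$.

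For the surjectivity I would construct the canonical wave functions explicitly from the infinite-volume twisted Dirac Green's function $S_\mu^\rho$ of Proposition~\ref{pr:ivlim}, where $\mu$ corresponds to $\msmj$ after the change of conventions described above (cf.\ Section~\ref{sec:Palmer-notation}). Fix $j$ and expand $S_\mu^\rho(z,w)$ for $w$ in a small punctured neighborhood of $a_j$, using the short-distance structure of Proposition~\ref{prop:Green-Delta} (in its $\chi = 1$ limit, via Proposition~\ref{finiteGreentoinfGreen}) together with the Bessel asymptotics of Lemma~\ref{le:vlprops}; the coefficient of the most singular power of $(w - a_j)$ defines a matrix-valued function of $z$, a suitable column $\mathsf w_j(z)$ of which is the candidate. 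One then checks: (a) $\mathsf w_j$ solves the homogeneous equation $(\Dirac_\rho + \mu)\mathsf w_j = 0$ on $\C\setminus\{a_1,\dots,a_n\}$, since the $\delta$-source of $S_\mu^\rho$ disappears once $w$ is separated from $z$, and near $z = a_j$ only the branching structure survives; (b) $\mathsf w_j$ has the monodromy \eqref{eq:monod}, inherited from that of $S_\mu^\rho$ in $z$; (c) $\mathsf w_j \in L^2(\C)$, with exponential decay at infinity from the exponential decay of $S_\mu^\rho$ used in Appendix~\ref{app:SMJ} (via \cite[Proposition~1.2]{MR1233355} and Proposition~\ref{finiteGreentoinfGreen}), and local square-integrability near each branch point $a_k$ because there the singularities are of type $|z - a_k|^{\pm l_k - 1/2}$ with exponent in $(-1,0)$ since $l_k \in (-\tfrac12,\tfrac12)$; and (d), after rescaling by a nonzero constant, $c_k^j(0,+) = \delta_{jk}$ — the value at $k = j$ coming from the $v$–$\bar v$ pairing in the coincident-point expansion of $S_\mu^\rho$ near $a_j$ (the computation behind \eqref{eq:Wanda}), while for $k \neq j$ the leading $v_{l_k - 1/2}$-coefficient vanishes, this being exactly condition~(iii)/\eqref{eq:continuity} of Proposition~\ref{pr:diracexist}: multiplying the relevant column by the appropriate power of $\rho(z)$ yields a continuous function, which forces the leading $v$-type singularity at $a_k$ to vanish while leaving the allowed $\bar v$-type one. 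Since the $\mathsf w_j$ so produced have linearly independent leading-coefficient vectors $e_1,\dots,e_n$, we conclude $\dim W_a^l \ge n$, closing the argument.

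The main obstacle is step (d): the careful bookkeeping of the two-variable short-distance expansion of $S_\mu^\rho$, both at coincident branch points and at the remaining ones, to confirm that precisely the normalization $c_k^j(0,+) = \delta_{jk}$ — and in particular the vanishing of the $v$-type but not the $\bar v$-type leading singularity away from $a_j$ — is realized; this reproduces, in our conventions, the computation sketched around \eqref{eq:Gasy}–\eqref{eq:Wanda} and uses the full short-distance structure of Section~\ref{sec:Green} with the Bessel asymptotics of Lemma~\ref{le:vlprops}. A more economical alternative, to be used if the explicit construction becomes unwieldy, is to invoke that \cite{MR555666} (reviewed in \cite[Section~6]{MR695532}) establishes $\dim W_a^l = n$ directly; the existence and uniqueness of the canonical basis is then purely the linear-algebra consequence of the injectivity of $\Phi$ recorded in the first paragraph.
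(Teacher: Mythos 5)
Your linear-algebra reduction is fine and matches the discussion surrounding the theorem: the map $\Phi(w)=(c_j(0,+))_{j=1}^n$ is injective by \eqref{eq:l2ip}, so once $\dim W_a^l=n$ is known, existence and uniqueness of the canonical basis follow immediately. Be aware, however, that the paper does not prove this theorem at all — it is quoted from Sato--Miwa--Jimbo \cite{MR555666}, in the form \cite[Theorem~6.1]{MR695532} — so your ``economical alternative'' in the last sentence is exactly the paper's route, and the only substantive content of your proposal is the dimension count via the Green's function.

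That primary argument has a genuine gap at step (c)/(d). Extracting the coefficient of the leading singular term in $w$ of $S_\mu^\rho(z,w)$ near $a_j$ does not produce an element of $W_a^l$: by the computation recalled around \eqref{eq:Gasy}--\eqref{eq:Wanda}, that coefficient function of $z$ is proportional to the wave function $W_j$, whose local expansion at $a_j$ begins with the $v_{-1-\lambda_j}$-type term of \eqref{eq:palmerasy}, i.e.\ its first component behaves like $|z-a_j|^{-1-\lambda_j}$ (equivalently $|z-a_j|^{-3/2+l_j}$ in the $l$-notation). This exponent is $\le -1$ whenever $\lambda_j\ge 0$, so the function is not locally square integrable at $a_j$ and in particular does not admit the expansion \eqref{eq:localexp} required of members of $W_a^l$; since $\sum_j\alpha_j=0$, such $j$ always exist unless all windings vanish. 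Your claim that all singularities have exponent $\pm l_k-\tfrac12\in(-1,0)$ is therefore false precisely at the distinguished point $a_j$. Indeed the logical order in the paper (Lemma \ref{le:palmexist}) is the reverse of what you propose: for $\lambda_j>0$ the non-$L^2$ wave function $W_j$ is \emph{built from} the canonical basis as $\tfrac1{\msmj}\partial\mathsf w_j+\tilde w_j$, not the canonical basis from the Green's function. There is also a circularity concern: $S_\mu^\rho$ (Proposition \ref{pr:ivlim}) and the convergence in Proposition \ref{finiteGreentoinfGreen} are imported from Palmer \cite{MR1233355}, whose construction presupposes the SMJ $L^2$ theory containing the very statement you are proving. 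So the surjectivity of $\Phi$ is not established by your construction; closing it by citing \cite{MR555666}/\cite[Theorem~6.1]{MR695532}, as you suggest in the fallback, is what the paper does.
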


Given the importance of these expansion coefficients $c_j(0,\pm)$, we introduce some notation for them. To analyze $W_j$ in Lemma \ref{le:wjreg}, it is  convenient to extend the definition from $W_a^l$ to solutions that can be so singular at $a_j$ that they are not square integrable.
\begin{definition}\label{def:cj}
Let $w$ be a solution to \eqref{eq:deq} and \eqref{eq:monod} which for some nonnegative integer $p$ has for each $j=1,\dots,n$ the local expansion
\begin{align}\label{eq:localexp2}
w(z)=\sum_{k=-p}^\infty (c_j(k,+) v_{k+l_j-\frac{1}{2}}(z-a_j)+c_j(k,-) \bar v_{k-l_j-\frac{1}{2}}(z-a_j)).
\end{align}
Then define for each $j=1,\dots,n$ the linear mappings $w\mapsto \mathsf c_j(w,+)=c_j(0,+)$ and $w\mapsto \mathsf c_j(w,-)=c_j(0,-)$ where $c_j(0,\pm)$ are the expansion coefficients from \eqref{eq:localexp2}. 

\end{definition}

For the proof of Lemma~\ref{le:wjreg}, we will need the fact that the canonical basis depends nicely on $a_1,\dots,a_n$. The following is \cite[Corollary 3.3.11]{MR555666}. 
\begin{theorem}\label{th:wjreg}
The the function $(z,a)\mapsto (\mathsf w_j(z))_{j=1}^n$ is a continuous function of $z$ and $a_1,\dots,a_n$ as long as none of the points coincide ($z\neq a_i$ for all $i=1,\dots,n$ and $a_i\neq a_j$ for $i\neq j$). 

More precisely, for each $a^0\in \C^n$ with $a^0_i\neq a^0_j$ for $i\neq j$, there exists a neighborhood $U_0\subset \C^n$ of $a^0$ such that $(z,a)\mapsto (\mathsf w_j(z))_{j=1}^n$ is continuous on the universal cover of $\{(z, a)\in \C^{n+1}:  a\in U_0, \, z\neq  a_j \quad \text{for} \quad j=1,\dots,n\}$. 
\end{theorem}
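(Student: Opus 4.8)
The statement is \cite[Corollary~3.3.11]{MR555666}; here is the route I would take, which is perhaps most natural given the machinery of Section~\ref{sec:Green}. The plan is to identify the canonical basis with the Dirac Green's function. First, matching leading local expansion coefficients identifies $\mathsf w_j$ with the wave function $W_j$ of \eqref{eq:Wnu}, up to a nonzero constant depending only on $\msmj$ and $l_j$: by \eqref{eq:Wnu} the leading local term of $W_j$ at $a_k$ is of ``$+$''-type exactly when $k=j$, which, with the normalization fixed in Theorem~\ref{th:localexp}, is the characterization of $\mathsf w_j$ in Theorem~\ref{th:canonical}. Next, as established in the proof of Proposition~\ref{prop:logderivtaufnformula}, $W_j(z)$ equals an explicit constant times the limit as $w\to a_j$ of $(w-a_j)$ raised to the appropriate negative power times a fixed column of Palmer's Green's function --- equivalently, via \eqref{e:Green-identification}, of the infinite-volume twisted Dirac Green's function $S^\rho_\mu$ of Definition~\ref{def:diracmass-infvol} --- minus the explicit singular model term that makes the limit finite. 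Writing $S^\rho_{\mu,\cdot 1}(z,w)=\tfrac{\rho(w)}{\rho(z)}(S_c)_{\cdot 1}(z,w)$ with $S_c$ the factor of \eqref{e:Scdef} (built from the Cauchy-transform integral and the remainders $\Delta^\rho$ of Proposition~\ref{prop:Green-Delta}, continuous across the branch cuts) and $\rho(w)=(w-a_j)^{l_j}\prod_{k\neq j}(a_j-a_k)^{l_k}(1+o(1))$ near $a_j$, under the correspondence between the monodromy parameters here and the $\alpha_j/\lambda_j$ of Sections~\ref{sec:Green}--\ref{sec:Palmer}, this exhibits $W_j(z)$, hence $\mathsf w_j(z)$, as an explicit expression in $S_c$, its $x$-derivatives, and the branch points, evaluated at $w=a_j$ (legitimate since $\Delta^\rho$, hence $S_c$, extends continuously across $z=w$). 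Thus the claim reduces to joint continuity of $S^\rho_\mu$, equivalently of $\Delta^\rho$, in $(z,w)$ and in the branch points $a=(a_1,\dots,a_n)$.

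That joint continuity I would assemble from Section~\ref{sec:Green}: $\Delta^\rho$ is jointly continuous in $(z,w)$ for fixed $a$ (Propositions~\ref{prop:Green-Delta} and \ref{pr:bounds22}); it is continuous in $a$ for fixed off-diagonal $(z,w)$, since the finite-volume $S^\rho_{\mu\chi}$ is differentiable in the distinct branch points (Proposition~\ref{prop:factorization}) and $S^\rho_{\mu\1_{\Lambda_L}}\to S^\rho_\mu$ as $L\to\infty$ locally uniformly in the branch points (Proposition~\ref{finiteGreentoinfGreen} together with the uniform-in-$a$ bounds of Propositions~\ref{prop:Ldepenbounds} and \ref{prop:polyboundonL2int}), so $S^\rho_\mu$ is a locally uniform limit of $a$-continuous functions; and the short-distance bounds \eqref{tempineq12-bis} of Proposition~\ref{pr:analyt}, being locally uniform in the distinct branch points, combine with Proposition~\ref{prop:factorization} to give local Lipschitz control in $a$, which upgrades the two separate continuities to joint continuity. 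The $(1+o(1))$ controls used in the reduction (from the Bessel asymptotics \eqref{eq:vasy}--\eqref{eq:vbarasy} and the coefficient formula \eqref{eq:coefint}) are likewise locally uniform in $a$, so the $w\to a_j$ limit commutes with the branch-point dependence. The universal-cover formulation is just the branch-cut-free rephrasing: fixing branch cuts that vary continuously with $a$ makes all objects single-valued and continuous, and $\Delta^\rho$ (hence the final expression) is branch-cut independent by Proposition~\ref{prop:Green-Delta}, so the continuity descends to the universal cover.

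Alternatively, and closer to \cite{MR555666}, one can argue purely in the wave-function framework. By Theorem~\ref{th:canonical} and the inner-product identity \eqref{eq:l2ip}, the map $W_a^l\to\C^n$, $w\mapsto(\mathsf c_j(w,+))_{j=1}^n$, is injective (if all $\mathsf c_j(w,+)$ vanish then $\|w\|_{L^2(\C)}^2=0$) hence an isomorphism, so it suffices to produce a basis $\{\varphi_j^a\}$ of $W_a^l$ with $(z,a)\mapsto\varphi_j^a(z)$ jointly continuous and with the matrix $\bigl(\mathsf c_k(\varphi_j^a,+)\bigr)_{k,j}$ continuous and invertible; then $\mathsf w_j^a$ is a Cramer's-rule combination of the $\varphi_j^a$. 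Such a family is built by cutting off, near each puncture, the explicit exponentially-decaying one-puncture solution (the combination of $K$-Bessel functions carrying monodromy $e^{2\pi i(l_k-\frac12)}$ at $a_k$), summing these with coefficients, and solving an inhomogeneous twisted Dirac equation --- with source supported in the transition annuli, at positive distance from all punctures --- to restore $D\varphi_j^a=0$: the source moves continuously in $C_c^\infty$ with $a$, and the SMJ existence theory for the (appropriately twisted, decaying) solution gives continuous $a$-dependence, while the local-expansion bookkeeping of Theorems~\ref{th:localexp}--\ref{th:canonical} and \eqref{eq:coefint} shows the coefficient matrix is essentially the matrix of connection coefficients of these one-puncture models, nonzero since $\sin\pi l_k\neq0$.

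The main obstacle, in either route, is the same passage: upgrading the known partial regularity of solutions of the twisted Dirac equation --- joint continuity in the spatial variables for fixed branch points, together with (separate) continuity or differentiability in the branch points for fixed spatial variables --- to genuine joint continuity, and then commuting the degenerating limit $w\to a_j$ (Green's-function route), respectively the patching of the inhomogeneous problem across the transition annuli (wave-function route), with the branch-point dependence. Both interchanges are controlled by the short-distance and volume bounds that are uniform on compact sets of distinct branch points (Propositions~\ref{pr:analyt}, \ref{prop:Ldepenbounds}, \ref{prop:polyboundonL2int}), together with the smooth dependence of the local Bessel models $v_l(\cdot-a_k)$, $\bar v_l(\cdot-a_k)$ and their one-puncture decaying counterparts on $a_k$, and of their small-argument asymptotics \eqref{eq:vasy}--\eqref{eq:vbarasy}. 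Everything else is linear algebra and bookkeeping of local expansions.
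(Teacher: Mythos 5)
You should first note that the paper does not actually prove Theorem~\ref{th:wjreg}: it is imported verbatim from \cite[Corollary~3.3.11]{MR555666} (which in fact gives analyticity in $z,\bar z,a,\bar a$ on the universal cover, of which continuity is a weakening), so there is no internal argument to match, and a self-contained proof would have to reproduce a substantial piece of the SMJ deformation theory rather than follow from the rest of the paper.

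Your Green's-function route has two genuine defects. First, the identification ``$\mathsf w_j$ equals $W_j$ up to a constant'' is false. Palmer's wave function $W_j$ has leading local term $v_{-1-\lambda_j}(z-a_j)$ at $a_j$ (see \eqref{eq:palmerasy}); for $\lambda_j>0$ it is not even locally square integrable there, while $\mathsf w_j\in W_a^l$ is. Even in the case where $W_j\in W_a^l$, the relation worked out in Lemma~\ref{le:palmexist} is $W_j=\mathsf w_j+\sum_{k\in\mathbf P}\gamma_k\mathsf w_k$, and in the complementary case $W_j=\frac{1}{\msmj}\partial\mathsf w_j+\tilde w_j$; the coefficients $\gamma_k$ solve linear systems whose entries are expansion data of the canonical basis itself, so recovering $\mathsf w_j$ from limits of the Green's function requires inverting maps whose continuity in $a$ is exactly what you are trying to establish. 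Second, the route is circular: the locally-uniform-in-$a$ convergence of $S^\rho_{\mu\chi}$ to $S^\rho_\mu$ and the uniform exponential decay you invoke are Proposition~\ref{diffatbpts} and Lemma~\ref{le:wjreg}, whose proofs in the paper rest on Theorem~\ref{th:wjreg}; Proposition~\ref{finiteGreentoinfGreen} by itself is only pointwise in $(z,w)$ for fixed branch points. Your second, wave-function route is closer to a viable strategy, but its key step --- that solving the inhomogeneous twisted Dirac problem with source moving continuously in $a$ yields a solution depending continuously on $a$ --- is precisely the content of the theorem and is asserted rather than proved, and the invertibility of the connection-coefficient matrix is likewise only gestured at. As written, neither sketch closes the statement; the honest options are to cite \cite{MR555666} as the paper does, or to develop a continuity theory in the branch points for the twisted resolvent, which your proposal does not supply.
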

\begin{remark}
\cite[Corollary~3.3.11]{MR555666} is actually a stronger statement. It says that the canonical basis $(\mathsf w_j)_{j=1}^n$ is analytic in the variables $a_1,\dots.,a_n,\bar a_1,\dots,\bar a_n$ (and $z,\bar z$) as long as the points do not coincide. Again, the precise statement means analyticity on a suitable universal cover and analyticity in variables $z,\bar z$ (or $a,\bar a$) means existence of a convergent expansion in powers of $z$ and $\bar z$. We refer to the discussion preceding \cite[Corollary~3.3.11]{MR555666}  and the proof of \cite[Corollary~3.3.11]{MR555666} for details. Nevertheless, such analyticity implies continuity, and continuity is all we need. 
\end{remark}

Having reviewed the main results of \cite{MR555666} that we need, we turn to results from \cite{MR1233355} concerning the existence of $W_j$.

\subsection{Existence of the wave functions $W_j$ following \cite{MR1233355}}

As mentioned, in \cite{MR1233355}, the setting is slightly different from \cite{MR555666}. One fact that is needed in \cite{MR1233355} is the existence, for given $a_1,\dots,a_n\in\C$ (distinct) and $\lambda_1,\dots,\lambda_n\in (-\frac{1}{2},\frac{1}{2})\setminus \{0\}$ multivalued solutions $w$ to \eqref{eq:deq} with \eqref{eq:monod} replaced by (for small enough $|z-a_j|$) 
\begin{equation}\label{eq:pmono}
\lim_{\theta\to 2\pi^-}w(a_j+e^{i\theta}(z-a_j))=e^{-2\pi i \lambda_j}w(z).
\end{equation}
Compared to the definition of $W_a^l$, the main difference is however that the solutions needed in \cite{MR1233355} may not be in $L^2$ near some $a_j$. Instead one searches for solutions $(W_j)_{j=1}^n$ (that are called wave functions in \cite{MR1233355}) that have a local expansion (for $|z-a_k|$ small enough) 
\begin{align}\label{eq:palmerasy}
W_j(z)=\delta_{j,k}v_{-1-\lambda_k}(z-a_k)+\sum_{p=1}^\infty (\alpha_{p}^{j,k}v_{-1+p-\lambda_k}(z-a_k)+\beta_p^{j,k}\bar v_{-1+p+\lambda_k}(z-a_k))
\end{align}
for some constants $\alpha_p^{j,k},\beta_p^{j,k}\in \C$. Note that Lemma \ref{le:vlprops}, such a $W_j$ would not be locally in $L^2$ in a neighborhood of $a_j$ if $\lambda_j>0$. Thus these are not necessarily elements of $W_a^l$.

The following fact is argued in \cite{MR1233355} (using results of \cite{MR555666}), but we will state (and prove) it as a separate result here as we will need some of the details of the proof for Lemma \ref{le:wjreg}. 
\begin{lemma}\label{le:palmexist}
For given $\lambda_1,\dots,\lambda_n\in (-\frac{1}{2},\frac{1}{2})\setminus \{0\}$ and distinct $a_1,\dots,a_n$, there exist unique multivalued solutions to \eqref{eq:deq},  $W_1,\dots,W_n$, satisfying \eqref{eq:pmono}, \eqref{eq:palmerasy}, and which are square integrable in a neighborhood of infinity.  
\end{lemma}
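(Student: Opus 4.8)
\textbf{Proof plan for Lemma~\ref{le:palmexist}.}

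The plan is to construct the wave functions $W_j$ by combining two ingredients: a finite-dimensional space of \emph{singular} local solutions built out of the canonical basis of $W_a^{l}$ (or its derivatives) with suitable monodromy, and the orthogonality relation \eqref{eq:l2ip} to pin down the coefficients. First I would translate between the two conventions. Setting
\begin{equation}
l_k = \tfrac12 - \lambda_k,
\end{equation}
the monodromy factor $e^{2\pi i(l_k-\frac12)} = e^{-2\pi i\lambda_k}$ matches \eqref{eq:monod} with \eqref{eq:pmono}, so the $L^2$ space $W_a^l$ of Theorem~\ref{th:canonical} is exactly the space of square-integrable solutions with Palmer's monodromy. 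Its canonical basis $(\mathsf w_k)_{k=1}^n$ is characterized by $\mathsf c_k(\mathsf w_j,+)=\delta_{jk}$, and near $a_k$ it expands as $\mathsf w_j(z) = \mathsf c_k(\mathsf w_j,+)\,v_{l_k-\frac12}(z-a_k) + \mathsf c_k(\mathsf w_j,-)\,\bar v_{-l_k+\frac12}(z-a_k) + (\text{higher order})$. In Palmer's indexing $l_k-\frac12 = -\lambda_k$, so $v_{l_k-\frac12} = v_{-\lambda_k}$, which is the \emph{mildest} allowed singularity but not the leading term $v_{-1-\lambda_k}$ appearing in \eqref{eq:palmerasy}.

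The second step is to produce the more singular leading behavior $v_{-1-\lambda_k}(z-a_k)$. By \eqref{eq:vlpart}, $\partial v_{-\lambda_k} = \msmj\, v_{-1-\lambda_k}$, and $\partial$ maps solutions of \eqref{eq:deq} to solutions of \eqref{eq:deq} (since $[\Dirac,\partial]=0$ componentwise, as the coefficients are constant), and preserves the monodromy \eqref{eq:pmono}. Hence $\partial_{a_k}\mathsf w_j$ — or, more robustly, an appropriate linear combination of $\partial_z$ and $\partial_{a_j}$ derivatives of the $\mathsf w_j$, together with the $\mathsf w_j$ themselves — spans a space containing solutions whose local expansion at $a_k$ starts at order $v_{-1-\lambda_k}$. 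Concretely I would consider the $(2n)$-dimensional space $V$ spanned by the $\mathsf w_j$ and by $\partial_{a_j}\mathsf w_j$ (differentiation in the branch point with respect to which the solution is singular); elements of $V$ have local expansions of the form \eqref{eq:localexp2} with $p=1$ at each $a_k$, i.e. leading term a combination of $v_{-1-\lambda_k}$ and $\bar v_{-1+\lambda_k}$, plus possibly $v_{-\lambda_k},\bar v_{\lambda_k}$ and higher. One then uses the extended linear functionals $\mathsf c_k(\cdot,\pm)$ of Definition~\ref{def:cj} to select, inside $V$, the unique element $W_j$ with $\mathsf c_k(W_j,\text{leading }+) = \delta_{jk}$ and with \emph{no} $\bar v_{-1+\lambda_k}$ term (i.e. the coefficient of $\bar v_{-1+\lambda_k}$ in the expansion at each $a_k$ vanishes); the latter $n$ conditions plus the $n$ normalization conditions are $2n$ linear conditions on the $2n$-dimensional space $V$, so generically this determines $W_j$ uniquely. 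Square-integrability near infinity is automatic from \eqref{eq:wexpdecay} applied to the $\mathsf w_j$ and the fact that $\partial$ commutes with the asymptotics (the exterior expansion in modified Bessel functions of the second kind is preserved under differentiation, still decaying like $|z|^{-1/2}e^{-2\msmj|z|}$).

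For uniqueness, suppose $W_j$ and $W_j'$ both satisfy the conclusions; then $u = W_j - W_j'$ solves \eqref{eq:deq}, has monodromy \eqref{eq:pmono}, is $L^2$ near infinity, and at each $a_k$ its expansion has no $v_{-1-\lambda_k}$ or $\bar v_{-1+\lambda_k}$ term (the leading singular terms cancel by the normalization and the vanishing-$\bar v$ condition), hence is of order $v_{-\lambda_k},\bar v_{\lambda_k}$ and higher — in particular $u \in W_a^l$. Applying \eqref{eq:l2ip} to $u$ with the observation that $\mathsf c_k(u,+)=0$ for all $k$ (it is the coefficient of $v_{l_k-\frac12}=v_{-\lambda_k}$, which we just argued is the \emph{leading} surviving term, and this coefficient is forced to vanish by a further normalization clause — here I would need to be slightly careful to include among the defining conditions that the $v_{-\lambda_k}$ coefficients are also pinned, which is possible since $V$ is large enough), one gets $\|u\|_{L^2}^2 = 0$, so $u=0$. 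The main obstacle I anticipate is exactly this bookkeeping: one must verify that the $2n$ linear conditions imposed on the $2n$-dimensional space $V$ are \emph{independent} (so that the system is invertible and $W_j$ genuinely exists and is unique), which amounts to showing that no nonzero element of $W_a^l$ has all its $v_{l_k-\frac12}$-coefficients vanishing — but that is precisely the injectivity consequence of \eqref{eq:l2ip} noted after Theorem~\ref{th:localexp}. So the existence/uniqueness reduces cleanly to the nondegeneracy of the pairing \eqref{eq:l2ip}, and the remaining work is purely linear-algebraic organization of the local expansions.
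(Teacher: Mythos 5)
Your high-level idea — pair the canonical basis $(\mathsf w_k)$ of $W_a^l$ with solutions obtained by applying $\partial$ to generate the more singular leading behavior $v_{-1-\lambda_k}$, then pin down $W_j$ using the nondegeneracy of the $L^2$ pairing \eqref{eq:l2ip} — is the right one and matches the paper's strategy. But the execution has a gap that runs through the whole argument.

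The translation $l_k = \tfrac12 - \lambda_k$ is only valid when $\lambda_k>0$: for $\lambda_k\in(-\tfrac12,0)$ it gives $l_k\in(\tfrac12,1)$, outside the admissible range $(-\tfrac12,\tfrac12)$ required by the definition of $W_a^l$ and Theorem~\ref{th:localexp}. The correct choice is sign-dependent: $l_k=-\lambda_k-\tfrac12$ for $\lambda_k<0$ and $l_k=-\lambda_k+\tfrac12$ for $\lambda_k>0$. This is not a cosmetic issue. Once you use the correct $l$, you find that for $\lambda_j<0$ one has $l_j-\tfrac12 = -1-\lambda_j$, so the leading term $v_{-1-\lambda_j}$ in \eqref{eq:palmerasy} \emph{is} the leading admissible term $v_{l_j-\frac12}$ of $W_a^l$ — hence $W_j\in W_a^l$ already, no derivative needed, and existence/uniqueness follows from the bijectivity of $w\mapsto((\mathsf c_k(w,+))_{k:\lambda_k<0},(\mathsf c_k(w,-))_{k:\lambda_k>0})$, which is injective by \eqref{eq:l2ip} between spaces of the same dimension $n$. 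Only for $\lambda_j>0$ is $v_{-1-\lambda_j}$ one order more singular than allowed, and there the correct move is to set $\tilde w_j := W_j - \tfrac{1}{\msmj}\partial\mathsf w_j \in W_a^l$ and solve $n$ linear conditions for $\tilde w_j$ — again invertible by \eqref{eq:l2ip}. Your uniform $l$-convention hides this dichotomy entirely and leads you to think every $W_j$ requires the derivative trick.

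The second gap is the $2n$-dimensional space $V$ and the claim that imposing $2n$ conditions "generically" determines $W_j$. "Generically" is not a proof — existence and uniqueness are needed for all distinct $a$ and all admissible $\lambda$. Moreover your claim that invertibility of the $2n\times 2n$ system reduces to the injectivity consequence of \eqref{eq:l2ip} does not follow: the derivatives $\partial\mathsf w_j$ (or $\partial_{a_j}\mathsf w_j$) lie \emph{outside} $W_a^l$, so \eqref{eq:l2ip} gives you nothing directly about linear relations among them, only about the $n$-dimensional subspace $W_a^l\subset V$. You yourself flag that the uniqueness step needs additional normalization conditions beyond the $2n$ you imposed, which on a $2n$-dimensional space would be over-determined. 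The paper's sign-split keeps the unknown inside $W_a^l$ (or an affine translate $\tfrac{1}{\msmj}\partial\mathsf w_j + W_a^l$), where \eqref{eq:l2ip} applies cleanly to an $n\times n$ system; that is the mechanism that makes the existence argument close.
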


\begin{proof}
The sign of $\lambda_j$ will play a role in our discussion.
We begin by defining the sets $\mathbf P=\{j\in \{1,\dots,n\}: \lambda_j>0\}$ and $\mathbf N=\{j\in \{1,\dots,n\}: \lambda_j<0\}$. To transform \eqref{eq:pmono} into the form \eqref{eq:monod} (to make use of the formalism of \cite{MR555666,MR695532}), we set
\begin{equation} \label{eq:lambda-l}
l_j=\begin{cases}
-(\lambda_j+\frac{1}{2}), & j\in \mathbf N\\
-(\lambda_j-\frac{1}{2}), & j\in \mathbf P
\end{cases}.
\end{equation}
Note that $l_1,\dots,l_n\in (-\frac{1}{2},\frac{1}{2})\setminus \{0\}$, and $l_j<0$ for $j\in \mathbf N$ and $l_j>0$ for $j\in \mathbf P$.

In terms of $l$, the local expansion at $a_k$ becomes
\begin{equation}\label{eq:lexp}
W_j(z)=\begin{cases}
\delta_{jk}v_{-\frac{1}{2}+l_k}(z-a_k)+\sum_{p=0}^\infty (\alpha_{p+1}^{j,k}v_{p+l_k+\frac{1}{2}}(z-a_k)+\beta_{p+1}^{j,k}\bar v_{p-\frac{1}{2}-l_k}(z-a_k)), & k\in \mathbf N\\
\delta_{jk}v_{-\frac{3}{2}+l_k}(z-a_k)+\sum_{p=0}^\infty (\alpha_{p+1}^{j,k}v_{p-\frac{1}{2}+l_k}(z-a_k)+\beta_{p+1}^{j,k}\bar v_{p-l_k+\frac{1}{2}}(z-a_k)), & k\in \mathbf P
\end{cases}.
\end{equation}

We see that for $k\in \mathbf N$, this expansion is of the from \eqref{eq:localexp}, but for $k\in \mathbf P$, the quantity $v_{-\frac{3}{2}+l_k}$ is not allowed in \eqref{eq:localexp}.
We note however that if $j\in \mathbf N$, due to the $\delta_{jk}$-term, the $v_{-\frac{3}{2}+l_k}$-term is not present. Thus we should expect that for $j\in \mathbf N$, $W_j\in W_a^l$. For $j\in \mathbf P$, we will shortly see that we can however express $W_j$ in terms of functions in $W_a^l$ and suitable derivatives of them. Let us first however show that for each $j\in \mathbf N$, there exists a unique function $W_j$ with local expansion \eqref{eq:lexp}.

\subproofx{The case $j\in \mathbf N$}
To reiterate, we wish to show that there is a unique element $W_j\in W_a^l$ which has the local expansion
\begin{equation}\label{eq:lexp-}
W_j(z)=\begin{cases}
\delta_{jk}v_{-\frac{1}{2}+l_k}(z-a_k)+\sum_{p=0}^\infty (\alpha_{p+1}^{j,k}v_{p+l_k+\frac{1}{2}}(z-a_k)+\beta_{p+1}^{j,k}\bar v_{p-\frac{1}{2}-l_k}(z-a_k)), & k\in \mathbf N\\
\sum_{p=0}^\infty (\alpha_{p+1}^{j,k}v_{p-\frac{1}{2}+l_k}(z-a_k)+\beta_{p+1}^{j,k}\bar v_{p-l_k+\frac{1}{2}}(z-a_k)), & k\in \mathbf P
\end{cases}.
\end{equation}
Comparing with Definition \ref{def:cj}, we see that if such a $W_j$ would exist, it would have to satisfy  
\begin{align}
\begin{cases}
\mathsf c_k(W_j,+)=\delta_{jk}, & k\in \mathbf N\\
\mathsf c_k(W_j,-)=0, & k\in \mathbf P
\end{cases}.
\end{align}
The second constraint comes from the fact there there is no $\bar v_{-l_k-\frac{1}{2}}$-term in the expansion \eqref{eq:lexp-} for $k\in \mathbf P$. The claim is that there is a unique element of $W_a^l$ that satisfies this condition. To see why this is the case, consider the linear mapping $w\mapsto ((\mathsf c_{k}(w,+))_{k\in \mathbf N},(\mathsf c_k(w,-))_{k\in \mathbf P})$ from $W_a^l$ to $\C^n$. By \eqref{eq:l2ip}, this linear mapping is an injection (any element in the kernel has zero $L^2$-norm). Thus as the spaces have the same dimension, this mapping must be a bijection. The element $W_j$ is thus the unique one that maps to $((\delta_{jk})_{k\in \mathbf N},(0)_{k\in \mathbf P})$. Note that for $j\in \mathbf N$, any $W_j$ that satisfies \eqref{eq:deq}, \eqref{eq:pmono}, \eqref{eq:palmerasy} and is square integrable at infinity is in $W_a^l$
(by elliptic regularity away from $a_1,\dots,a_n,\infty$).
Thus we have argued the existence of the unique $W_j$ we are after for $j\in \mathbf N$. 

We will later on need some regularity estimates on $W_j$ in terms of $a$, so we will be slightly more explicit about what $W_j$ looks like in the basis $(\mathsf w_k)_{k=1}^n$. By definition of this basis and the local expansion of $W_j$, we see that we must be able to write 
\begin{align}\label{eq:jnegbas}
W_j=\mathsf w_j+\sum_{k\in \mathbf P}\gamma_k \mathsf w_k
\end{align}
for some complex constants $(\gamma_k)_{k\in \mathbf P}$. Indeed, these constants are the unique solution to 
\begin{align}\label{eq:jnegbas2}
\sum_{k'\in \mathbf P}\mathsf c_{k}(\mathsf w_{k'},-)\gamma_{k'}=-\mathsf c_k(\mathsf w_j,-)
\end{align}
This linear system of equations has a unique solution (by the existence and uniqueness of $W_j$ which we already argued) which means that the matrix $(\mathsf c_k(\mathsf w_{k'},-))_{k,k'\in \mathbf P}$ is invertible. We will later make use of the fact that $\mathsf c_k(\mathsf w_{k'},-)$ and $\mathsf c_k(\mathsf w_j,-)$ are continuous functions of $a$ (as long as the entries are distinct). Since the matrix is invertible, this will mean that also $\gamma_k$ are continuous in $a$. This will provide the regularity of $W_j$ that we need for Lemma \ref{le:wjreg}. But as mentioned, we will go into details later.

\subproofx{The case $j\in \mathbf P$}
Let us start by writing down the local expansions that $W_j$ would have if it existed. We would have 
\begin{align}
W_j(z)=\begin{cases}
\sum_{p=0}^\infty (\alpha_{p+1}^{j,k}v_{p+l_k+\frac{1}{2}}(z-a_k)+\beta_{p+1}^{j,k}\bar v_{p-\frac{1}{2}-l_k}(z-a_k)), & k\in \mathbf N\\
\delta_{jk}v_{-\frac{3}{2}+l_k}(z-a_k)+\sum_{p=0}^\infty (\alpha_{p+1}^{j,k}v_{p-\frac{1}{2}+l_k}(z-a_k)+\beta_{p+1}^{j,k}\bar v_{p-l_k+\frac{1}{2}}(z-a_k)), & k\in \mathbf P
\end{cases}.
\end{align}
Recalling our notation from Definition \ref{def:cj}, we see that if $W_j$ existed, it would have to satisfy 
\begin{equation}\label{eq:ckWj}
\begin{cases}
\mathsf c_k(W_j,+)=0, & k\in \mathbf N\\
\mathsf c_k(W_j,-)=0, & k\in \mathbf P
\end{cases}.
\end{equation}
Note that as $W_j$ is not assumed to be in $W_a^l$, \eqref{eq:l2ip} does not imply that it would have to be zero.

We would however like to make use of $W_a^l$ (and \eqref{eq:l2ip}) somehow to argue existence of $W_j$. The trick for this is that by Lemma \ref{le:vlprops}, $\partial v_l=\msmj v_{l-1}$. Thus one finds using the local expansions (and elliptic regularity and the behavior at infinity) that if $W_j$ existed, then we would have to have
\begin{align}
\tilde w_j:=W_j-\frac{1}{\msmj}\partial \mathsf w_j\in W_a^l
\end{align}
Moreover, from \eqref{eq:ckWj}, we would have to have 
\begin{align}
\begin{cases}
\mathsf c_k(\tilde w_j,+)=-\frac{1}{\msmj}\mathsf c_k(\partial \mathsf w_j,+), & k\in \mathbf N\\
\mathsf c_k(\tilde w_j,-)=-\frac{1}{\msmj}\mathsf c_k(\partial \mathsf w_j,-), & k\in \mathbf P
\end{cases}.
\end{align}
Once again, the mapping $w\mapsto ((\mathsf c_k(w,+))_{k\in \mathbf N},(\mathsf c_k(w,-))_{k\in \mathbf P})$ is a bijection from $W_a^l$ to $\C^n$ (the dimensions of the spaces match and the mapping is an injection by \eqref{eq:l2ip}). This means that $\tilde w_j$ is the unique element of $W_a^l$ that maps to $-\frac{1}{\msmj}((\mathsf c_k(\partial \mathsf w_j,+))_{k\in \mathbf N},(\mathsf c_k(\partial \mathsf w_j,-))_{k\in \mathbf P})$. We then define 
\begin{align}
W_j=\frac{1}{\msmj}\partial \mathsf w_j+\tilde w_j.
\end{align}
One readily checks that this $W_j$ satisfies all of the desired properties (for example, using that $\partial \mathsf w_j$ is also a solution to \eqref{eq:deq}). Uniqueness follows from the fact that the difference (say $\Delta$) of two such functions would be in $W_a^l$ with $\mathsf c_k(\Delta,+)=0$ for $k\in \mathbf N$ and $\mathsf c_k(\Delta,-)=0$ for $k\in \mathbf P$. Again, by \eqref{eq:l2ip}, this would mean that $\Delta=0$.

As for $j\in \mathbf N$, we will need to know something more precise about how to express $W_j$ in terms of $(\mathsf w_k)_{k=1}^n$. For this purpose, we express $\tilde w_j$ in the basis $(\mathsf w_k)$. Using the definition of the basis, we see that we can write 
\begin{align}
\tilde w_j=-\frac{1}{\msmj}\sum_{k\in \mathbf N}\mathsf c_k(\partial \mathsf w_j,+)\mathsf w_k+\sum_{k\in \mathbf P}\gamma_k \mathsf w_k
\end{align}
for some constants $\gamma_k$. Indeed, these constants are uniquely specified by the system of equations 
\begin{align}\label{eq:gammapos}
\sum_{k'\in \mathbf P}\gamma_{k'}\mathsf c_k(\mathsf w_{k'},-)=\frac{1}{\msmj}\sum_{k''\in \mathbf N}\mathsf c_{k''}(\partial \mathsf w_j,+)\mathsf c_k(\mathsf w_{k''},-).
\end{align}
The matrix on the left is invertible (as we argued in the case where $j\in \mathbf N$), so the coefficients $\gamma_{k'}$ can be solved uniquely from this system of equations. As we will argue later on, it follows from this that the $\gamma_{k'}$ are continuous in $a$.
\end{proof}

We can finally begin the proof of Lemma \ref{le:wjreg}.

\subsection{Regularity properties of the wave functions -- proof of Lemma \ref{le:wjreg}}

\begin{proof}[Proof of Lemma \ref{le:wjreg}]
We will prove that given $K\subset \{a\in \C^n: a_i\neq a_j \text{ for } i\neq j\}$ compact, there exists a $R_K$ such that $(z,a)\mapsto |W_j(z)|e^{2\msmj |z|}$ is a bounded continuous function on $\{(z,a)\in \C^{n+1}: |z|>R \text{ and } a\in K\}$. In fact, making use of an argument based on compactness, it is slightly more convenient to allow $z$ to take the value $\infty$ as well. So it is sufficient for us to prove that this is a continuous function on the compact set $\{(z,a)\in (\C\cup \{\infty\})\times \C^{n}: |z|>R \text{ and } a\in K\}$.

Given $K$, let us now take some $R_K>\sup_{a\in K}\max_{1\leq i\leq n}|a_i|$. Then we see from \eqref{eq:monod} that $z\mapsto |W_j(z)|$ is actually a single-valued function (since $\lambda_j\in \R$) and from the regularity on the universal cover, we see that it is continuous for $R_K<|z|<\infty$. Moreover, we see from \eqref{eq:wexpdecay}, that this function is also continuous at $\infty$. It thus remains to prove continuity in $a$.

For this purpose, we express $W_j$ in terms of the canonical basis as discussed in the proof of Lemma \ref{le:palmexist}. We know from Theorem \ref{th:wjreg} that the canonical basis is continuous in $a_1,\dots,a_n$, so it is enough for us to show that the expansion coefficients are continuous. We will make use of the notation from Lemma \ref{le:palmexist}.

We first look at the case $j\in \mathbf N$. From \eqref{eq:jnegbas}, we have the expression 
\begin{align}
W_j=\mathsf w_j+\sum_{k\in \mathbf P}\gamma_k\mathsf w_k
\end{align}
with (by \eqref{eq:jnegbas2}), $(\gamma_k)_{k\in \mathbf P}$ being the unique solution to 
\begin{align}
\sum_{k'\in \mathbf P}\mathsf c_k(\mathsf w_{k'},-)\gamma_{k'}=-\mathsf c_k(\mathsf w_j,-).
\end{align}
for $k\in \mathbf P$. As the matrix here is invertible for each of the relevant $a$s (we know a unique solution exists), it is enough to prove that the quantities $\mathsf c_k(\mathsf w_{k'},-)$ and $\mathsf c_k(\mathsf w_j,-)$ are continuous. For this, we recall \eqref{eq:coefint} which states that for a solution $w$, we have for small enough $r$
\begin{align}
\int_0^{2\pi}\frac{d\theta}{2\pi} e^{-i(l_k-\frac{1}{2})\theta}\mathsf w_{k',+}(a_k+re^{i\theta})=\mathsf c_k(\mathsf w_{k'},+)I_{l_k-\frac{1}{2}}(2\msmj r)+\mathsf c_k(\mathsf w_{k'},-)I_{-l_k+\frac{1}{2}}(2\msmj r).
\end{align}
To get a hold of continuity, let us first address which $r$ is small enough. For this, it is perhaps most convenient to fix some concrete branch cuts (for example horizontal ones as in Section \ref{sec:intro}). By elliptic regularity, the relevant series expansion is convergent as long as we don't hit any of the branch cuts. Thus if we take $r$ to be half of the smallest distance between the branch cuts (which is a continuous function of $a$), this identity is valid. Thus recalling the definition of the canonical basis, for such a choice of $r$, we have 
\begin{align}
\mathsf c_k(\mathsf w_{k'},-)=\frac{1}{I_{-l_k+\frac{1}{2}}(2\msmj r)}\left(\int_0^{2\pi}\frac{d\theta}{2\pi} e^{-i(l_k-\frac{1}{2})\theta}\mathsf w_{k',+}(a_k+re^{i\theta})-\delta_{kk'}I_{l_k-\frac{1}{2}}(2\msmj r)\right).
\end{align}
Using the regularity of the canonical basis and the choice of $r$, we see that this is a continuous function of $a$. The same argument works for $\mathsf c_k(\mathsf w_j,-)$. We conclude that $W_j$ is continuous for $j\in \mathbf N$.

It remains to consider the case $j\in \mathbf P$. Here we have 
\begin{align}
W_j=\frac{1}{\msmj}\partial \mathsf w_j-\frac{1}{\msmj}\sum_{k\in \mathbf N}\mathsf c_k(\partial \mathsf w_j,+)\mathsf w_k+\sum_{k\in \mathbf P}\tilde \gamma_k \mathsf w_j
\end{align}
where (by \eqref{eq:gammapos}) $(\tilde \gamma_k)_{k\in \mathbf P}$ is the unique solution to 
\begin{equation}
\sum_{k'\in \mathbf P}\tilde \gamma_{k'}\mathsf c_k(\mathsf w_{k'},-)=\frac{1}{\msmj}\sum_{k''\in \mathbf N}\mathsf c_{k''}(\partial \mathsf w_j,+)\mathsf c_k(\mathsf w_{k''},-).
\end{equation}
The task is thus to show that $\mathsf c_k(\partial \mathsf w_j,+)$ and $\mathsf c_k(\mathsf w_{k''},-)$ and the quantities $\tilde \gamma_{k'}$ are continuous functions. The only thing that is slightly different from the $j\in \mathbf N$ case, is that we need to show that $\mathsf c_k(\partial \mathsf w_j,+)$ is continuous.

For this purpose, let us first write in a punctured $r$-neighborhood of $a_k$ the local expansion for $\mathsf w_j$ 
\begin{align}
\mathsf w_j(z)=\sum_{p=0}^\infty (c_{j,k}(p,+)v_{p+l_k-\frac{1}{2}}(z-a_k)+c_{j,k}(p,-)\bar v_{p-l_k-\frac{1}{2}}(z-a_k)),
\end{align}
where $c_{j,k}(0,+)=\delta_{jk}$. 

Recalling from Lemma \ref{le:vlprops} that $\partial v_l=\msmj v_{l-1}$ and $\partial \bar v_l=\msmj\bar v_{l+1}$, so we have the expansion
\begin{align}
\frac{1}{\msmj}\partial\mathsf w_j(z)=\sum_{p=0}^\infty (c_{j,k}(p,+)v_{p+l_k-\frac{3}{2}}(z-a_k)+c_{j,k}(p,-)\bar v_{p-l_k-\frac{3}{2}}(z-a_k)),
\end{align}
again with $c_{j,k}(0,+)=\delta_{jk}$, which means that $\mathsf c_k(\partial \mathsf w_j,-)=\msmj c_{j,k}(1,-)$. The idea is now to solve the coefficients $c_{j,k}(p,\pm)$ using the Fourier expansion.

For the Fourier expansion, note that 
\begin{align}
v_{p+l_k-\frac{1}{2}}(z-a_k)=e^{i(l_k-\frac{1}{2})\theta}e^{ip\theta}\begin{pmatrix}
I_{p+l_k-\frac{1}{2}}(2\msmj |z-a_k|)\\
e^{i\theta}I_{p+1+l_k-\frac{1}{2}}(2\msmj |z-a_k|)
\end{pmatrix} 
\end{align}
\begin{align}
 \bar v_{p-l_k-\frac{1}{2}}(z-a_k)=e^{i(l_k-\frac{1}{2})\theta}e^{-ip\theta}\begin{pmatrix}
I_{p-l_k+\frac{1}{2}}(2\msmj |z-a_k|)\\
e^{i\theta}I_{p-l_k-\frac{1}{2}}(2\msmj |z-a_k|)
\end{pmatrix}
\end{align}
This means in particular that for our choice of $r$, 
\begin{align}
c_{j,k}(1,-)I_{l_k+\frac{1}{2}}(2\msmj r)=\int_0^{2\pi} \frac{d\theta}{2\pi} e^{-i(l_k-\frac{3}{2})\theta}\mathsf w_{j,+}(a_k+re^{i\theta}).
\end{align}
Continuity of this follows again from regularity of $\mathsf w_j$ and our choice of $r$. 

So to summarize, $(z,a)\mapsto |W_j(z)|e^{2\msmj |z|}$ is a continuous function on a compact set, so in particular, it is bounded. As already mentioned, this also implies the corresponding result in the conventions of \cite{MR1233355}. This concludes the proof.
\end{proof}

\section*{Acknowledgements}

R.B. and C.W. thank David Brydges for many helpful discussions on Bosonization in the context of a related project.

R.B. acknowledges funding from NSF grant DMS-2348045.

S.M. was supported by the Knut and Alice Wallenberg Grant KAW 2022.0295.

C.W. was supported by the Academy of Finland through the grant 348452 and ERC grant CONFSTAT funded by the European Union. Views and opinions expressed are however those of the authors only and do not necessarily reflect those of the European Union or ERC. Neither the European Union nor ERC can be held responsible for them.

\bibliography{all}
\bibliographystyle{plain}

\end{document}